\newtheorem{theorem}{Theorem}[section]
\newtheorem{proposition}[theorem]{Proposition}
\newtheorem{notation}[theorem]{Notation}
\newtheorem{fact}[theorem]{Fact}
\newtheorem{remark}[theorem]{Remark}
\newtheorem{example}[theorem]{Example}
\newtheorem{lemma}[theorem]{Lemma}
\newtheorem{corollary}[theorem]{Corollary}
\newtheorem{openquestion}[theorem]{Open Question}
\newtheorem{property}[theorem]{Property}
\newtheorem{claim}[theorem]{Claim}
\newtheorem{definition}[theorem]{Definition}
\newtheorem{exercise}[theorem]{Exercise}
\newtheorem{note}[theorem]{Notes}
\newtheorem{conj}[theorem]{Conjecture}
\newcommand{\cut}{\mathbf{Cut}}
\newcommand{\dga}{\dot{\gamma}}
\newcommand{\R}{\mathbb{R}}
\newcommand{\Z}{\mathbb{Z}}
\newcommand{\Q}{\mathbb{Q}}
\newcommand{\N}{\mathbb{N}}
\newcommand{\haus}{\mathcal{H}}
\DeclareMathOperator{\diam}{\textbf{diam}}
\DeclareMathOperator{\rad}{\textbf{rad}}
\newcommand{\supp}{\mathbf{supp}\,}
\newcommand{\br}[1]{\left\{#1\right\}}
\newcommand{\inp}[1]{\langle #1 \rangle}
\newcommand{\brac}[1]{\left(#1\right)}
\newcommand{\rms}[1]{\mathcal{M}^{k}} 
\newcommand{\length}{\mathbf{Length}}
\newcommand{\grad}{\mathbf{grad}}
\newcommand{\id}{\mathbf{Id}}
\newcommand{\hess}{\mathbf{Hess}}
\newcommand{\ric}{\mathbf{Ric}}
\newcommand{\sect}{\mathbf{Sec}}
\newcommand{\tr}{\mathbf{Trace}}
\renewcommand{\det}[1]{\mathbf{det}{(#1)}}
\newcommand{\vol}{\mathbf{Vol}}
\newcommand{\dist}{\mathbf{dist}}
\newcommand{\pik}{\varpi^\kappa}
\newcommand{\eps}{\varepsilon}
\newcommand{\modsp}[1]{\mathbb{M}_{#1}^n}
\newcommand{\mdk}{\mathbf{md}_\kappa}
\newcommand{\snk}{\mathbf{sn}_\kappa}
\newcommand{\csk}{\mathbf{cs}_\kappa}
\newcommand{\cs}{\mathbf{cs}}
\newcommand{\ctk}{\mathbf{ct}_\kappa}
\newcommand{\tnk}{\mathbf{tan}_\kappa}
\newcommand{\an}{An}
\providecommand{\abs}[1]{\lvert #1\rvert}
\providecommand{\norm}[1]{\lvert\lvert #1\rvert\rvert}
\newcommand{\qst}[1]{\textcolor{red}{(\textbf{Question: }{#1})}}
\newcommand{\cmtv}[1]{\textcolor{magenta}{(\textbf{Vitali: }{#1})}}
\newcommand{\corrv}[1]{\textcolor{ForestGreen}{{#1}}}
\newcommand{\C}{\mathbb{C}}
\newcommand{\sh}{\mathfrak{Sh}}
\newcommand{\inj}{\mathbf{Injrad}}
\newcommand{\ghto}{\xrightarrow{\mathbf{G-H}}}
\newcommand{\pghto}{\xrightarrow{\text{pointed $\mathbf{G-H}$}}}
\newcommand{\modangle}{\Tilde{\measuredangle}}
\newcommand{\modtriangle}{\Tilde{\triangle}}
\newcommand{\mtr}[3]{[\Tilde{#1}\Tilde{#2}\Tilde{#3}]}
\newcommand{\modcvee}{\Tilde{\curlyvee}}
\newcommand{\mangle}{\measuredangle}
\newcommand{\peri}{\mathbf{Per}}
\newcommand{\acts}{\curvearrowright}
\newcommand{\rank}{\mathbf{rank}}
\newcommand{\h}{\mathbb{H}}
\newcommand{\alex}{\mathfrak{Alex}}
\newcommand{\sn}{\mathbf{sn}}
\newcommand{\md}{\mathbf{md}}
\newcommand{\modspace}[1]{\mathbb{M}^2\brac{#1}}
\def\dashint{\,\ThisStyle{\ensurestackMath{%
  \stackinset{c}{.2\LMpt}{c}{.5\LMpt}{\SavedStyle-}{\SavedStyle\phantom{\int}}}%
  \setbox0=\hbox{$\SavedStyle\int\,$}\kern-\wd0}\int}
\newcommand{\bigzero}{\mbox{\normalfont\Large\bfseries 0}}
\newcommand{\rvline}{\hspace*{-\arraycolsep}\vline\hspace*{-\arraycolsep}}
\newcommand{\isom}{\textbf{Isom}}
\newcommand{\CR}{\mathfrak{CompRad}}
\newcommand{\curv}{curv}
\newcommand{\aut}{\mathbf{Aut}}
\author{Xinze Li}
\title{Lecture Notes on Comparison Geometry}
\begin{document}
\frontmatter
\maketitle
\begin{abstract}
This note is based on Professor Vitali Kapovitch's comparison geometry course at the University of Toronto. It delves into various comparison theorems, including those by Rauch and Toponogov, focusing on their applications, such as Bishop-Gromov volume comparison, critical point theory of distance functions, diameter sphere theorem, and negative and nonnegative curvature. Additionally, it covers the soul theorem, splitting theorem, and covering theorem by Cheeger-Gromoll, as well as Perelman's proof of the soul conjecture. Finally, the note introduces Gromov-Hausdorff convergence, Alexandrov Spaces, and the Finite Homotopy type theorem by Grove-Peterson.
\end{abstract}

\acknowledgements
I want to express my deep gratitude to Professor Vitali Kapovitch for his insightful lectures and profound mathematical knowledge, which have significantly shaped the content of this note. His enduring patience, devoted guidance, and careful proofreading were crucial in completing this project. The finalization of this note stands as a testament to his invaluable contributions. Without his unwavering support, this note would not have been possible, and I am genuinely thankful for that.

I'd also like to thank Professor Yevgeny Liokumovitch for his encouragement throughout the writing process.

A significant portion of this note was crafted during my visit to Peking University BIMCR in the summer of 2023. I want to thank Professor Gang Tian for his hospitality and support. I'm also grateful for the wonderful friends I met during my travels. Engaging in discussions, sharing ideas, and exploring mathematics with them enriched this experience.

I would also like to thank Yueheng Bao, Wenkui Du, Shengxuan Zhou, and Xingyu Zhu for our enriching discussions while writing the notes. 

\quad 
\tableofcontents

\mainmatter


\chapter{Solutions of Jacobi Equations}

	\section{Jacobi Fields and Exponential Maps}
	\begin{definition}[Variation of Geodesics]
		Suppose $I, K \subseteq \R$ are intervals, $\gamma: I \to M$ is a geodesic. Then a variation $\Gamma: K \times I$ of $\gamma$ is called a \textbf{variation through geodesics} if each of the  curves $\Gamma_s(t) = \Gamma(s, t)$ is also a geodesic.
	\end{definition}
	\begin{theorem}[See Theorem 10.1 and Proposition 10.4 in \cite{Lee19}]\label{thm: variational field jacobi}
		Let $(M, g)$ be a Riemannian manifold, and let $\gamma$ be a geodesic in $M$. If $J$ is a variation field of a variation through geodesics, then $J$ satisfies the Jacobi equation
		\begin{equation}\label{eq: Jacobi}
			D_t^2J + R(J, \dga)\dga = 0.
		\end{equation}
		The converse of the theorem is true if $M$ is complete or $I$ is a compact interval.
	\end{theorem}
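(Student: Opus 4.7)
The plan is to establish the two directions separately. For the forward implication, I would start from the defining property that each $\Gamma_s$ is a geodesic, namely $D_t \partial_t \Gamma \equiv 0$ on $K \times I$, and differentiate this identity in the $s$ direction. The key algebraic manipulation is to use the symmetry lemma $D_s \partial_t \Gamma = D_t \partial_s \Gamma$ to swap the order of covariant differentiation at the cost of a single curvature term via the commutator formula $D_t D_s V - D_s D_t V = R(\partial_t \Gamma, \partial_s \Gamma) V$. Applying this with $V = \partial_t \Gamma$ and then restricting to $s = 0$ produces exactly $D_t^2 J + R(J, \dga)\dga = 0$, so this direction amounts to a routine computation once the symmetry lemma and the commutator formula on parametrized surfaces are in hand.

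For the converse, given a Jacobi field $J$ along $\gamma$, I would manufacture the desired variation through geodesics from its initial conditions $J(t_0)$ and $D_t J(t_0)$ at some chosen base point $t_0 \in I$. Pick a short curve $\sigma: (-\eps, \eps) \to M$ with $\sigma(0) = \gamma(t_0)$ and $\sigma'(0) = J(t_0)$. Parallel transport $\dga(t_0)$ and $D_t J(t_0)$ along $\sigma$ to obtain vector fields $V_0(s)$ and $V_1(s)$, then set $W(s) = V_0(s) + s V_1(s)$ and define
\[
\Gamma(s, t) = \exp_{\sigma(s)}\bigl((t - t_0)\, W(s)\bigr).
\]
Each slice $\Gamma_s$ is a geodesic by construction, so the forward direction already forces its variation field $\tilde J(t) := \partial_s \Gamma(0, t)$ to be a Jacobi field. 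A direct check gives $\tilde J(t_0) = \sigma'(0) = J(t_0)$, and applying the symmetry lemma together with the parallel condition yields $D_t \tilde J(t_0) = D_s W(0) = V_1(0) = D_t J(t_0)$. Uniqueness of solutions to the linear second-order Jacobi ODE then forces $\tilde J = J$ on all of $I$.

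The main obstacle is ensuring that $\Gamma$ is actually well-defined on all of $K \times I$. The construction above only guarantees definiteness on a neighbourhood of $\{0\} \times \{t_0\}$, since the domain of $\exp_{\sigma(s)}$ depends on $s$. If $I$ is compact, I can shrink $K$ uniformly by using that the domain of the exponential map is open in $TM$ and contains the compact set $\{(t - t_0)V_0(0) : t \in I\}$, so a tubular neighbourhood of this set in $TM$ still lies in the domain of $\exp$. If instead $(M, g)$ is complete, Hopf-Rinow makes $\exp_p$ defined on all of $T_p M$, which removes the obstruction entirely. Either hypothesis therefore suffices to extend $\Gamma$ to the full parameter range and close the argument.
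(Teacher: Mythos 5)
Your proposal is correct and follows essentially the same path as the paper: the forward direction is the standard commutator computation via the symmetry lemma, and the converse builds a variation $\Gamma(s,t) = \exp_{\sigma(s)}((t-t_0)W(s))$ with $W(0)$ and $D_sW(0)$ matched to $J(t_0)$ and $D_tJ(t_0)$, then invokes uniqueness of the Jacobi ODE, with the completeness/compactness hypothesis used exactly as in the paper to ensure $\Gamma$ is defined on the full parameter domain. The only cosmetic difference is that you construct the vector field $W$ explicitly (as $V_0 + sV_1$ using parallel transport) where the paper merely asserts that a suitable $V$ along $\sigma$ exists with the required initial conditions.
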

	\begin{proof}
		Denote $T(s, t) = \partial_t\Gamma(s, t)$ and $S(s, t) = \partial_s\Gamma(s, t)$. Because $\Gamma$ is a variation through geodesics, then by the geodesic equation, we have for all $(s, t) \in K \times I$, we have $D_tT \equiv 0$. Then $D_sD_tT \equiv 0$. By proposition 7.5 in \cite{Lee19} the commutativity of the covariant derivative over a smooth vector field $V$ along any smooth one-parameter family of curves $\Gamma: J \times I$ in $M$, i.e.
		\begin{equation*}
			D_sD_tV - D_tD_sV = R(\partial_s\Gamma, \partial_t\Gamma)V, 
		\end{equation*}
		we have 
		\begin{align*}
			0 =& D_sD_tT = D_tD_sT + R(S, T)T.
		\end{align*}
		Then by the symmetry lemma (Lemma 6.2 in \cite{Lee19}) of any admissible family of curves in a Riemannian manifold, i.e $D_t\partial_s\Gamma = D_s\partial_t\Gamma$, we have
		\begin{align*}
			&0 = D_tD_tS + R(S, T)T\\
			\implies &0 = D_t^2J + R(J, \dga)\dga\quad\text{evaluating at $s = 0$.}
		\end{align*}
		Conversely, let $J$ be a Jacobi field. After applying a translation in $t$, we can assume $I$ is the interval contains $0$, and write $p = \gamma(0)$ and $v = \dga(0)$. Note that this implies $\gamma(t) = \exp_p(tv)$ for all $t \in I$ by uniqueness of ODE. Next, we are going to construct the variation through geodesics. Choose a smooth curve $\sigma: (-\varepsilon, \varepsilon) \to M$ and a smooth vector field $V$ along $\sigma$ satisfying
		\begin{align*}
			\sigma(0) = p, &\quad V(0) = v,\\
			\Dot{sigma}(0) = J(0), &\quad D_sV(0) = D_tJ(0)
		\end{align*}
		where $D_s$ and $D_t$ are covariant differentiation along $\sigma$ and $\gamma$. We define a variation of $\gamma$ by setting
		\begin{equation}\label{eq: Jacobi variation}
			\Gamma(s, t) = \exp_{\sigma(s)}(tV(s)).
		\end{equation}
		\begin{itemize}
			\item If $M$ is geodesically complete, this is defined for all $(s, t) \in (-\eps, \eps) \times I$. 
			\item If $I$ is compact. Then the fact that the domain of the exponential map is an open subset of $TM$ that contains the compact set $\br{(p, tv): t \in I}$ guarantees that there is some $\delta > 0$ such that $\Gamma(s, t)$ is defined for all $(s, t) \in (-\delta, \delta) \times I$.
		\end{itemize}
		Note that 
		\begin{equation}\label{eq: exponential variation 1}
			\Gamma(0, t) = \exp_{\sigma(0)}(tV(0)) = \exp_p(tv) = \gamma(t),
		\end{equation}
		\begin{equation}\label{eq: exponential variation 2}
			\Gamma(s, 0) = \exp_{\sigma(s)}(0) = \sigma(s).
		\end{equation}
		In particular, \ref{eq: exponential variation 1} shows that $\Gamma$ is a variation of $\gamma$. By the properties of the exponential map, $\Gamma$ is a variation through geodesics, and therefore its variation field $W(t) = \partial_s\Gamma(0, t)$ is a Jacobi field along $\gamma$. 
		
		Now we want to show $W \equiv J$. Notice that we can write the Jacobi equation as a system of second-order linear ordinary differential equations using the orthonormal frame. So, given initial values for $J$ and $D_tJ$ there is a unique Jacobi field that solves the equation \ref{eq: Jacobi} by the existence and uniqueness theorem in ODE theory (See Proposition 10.2 in \cite{Lee19}). Therefore, to show $W \equiv J$, we only need to show
		\begin{equation*}
			J(0) = W(0)\quad\&\quad D_tJ(0) = D_tW(0)
		\end{equation*}
		By the equation \ref{eq: exponential variation 1}, we know that
		\begin{equation*}
			W(0) = \partial_s \Gamma_s(0)|_{s = 0} = \dot{\sigma}(0) = J(0).
		\end{equation*}
		Because each $\Gamma_s(t)$ is a geodesic with the initial velocity $V(s)$, 
		\begin{align*}
			\partial_t\Gamma(s, 0) = \partial_t\Gamma_s(t) = 0 = V(s)
		\end{align*}
		Then the symmetry lemma $D_t\partial_s\Gamma = D_s\partial_t\Gamma$ implies $D_tJ(0) = D_tW(0)$,
		\begin{equation*}
			D_tW(0) = D_t\partial_s\Gamma(0, 0) = D_s\partial_t\Gamma(0, 0) = D_sV(0) = D_sJ(0).
		\end{equation*} 
	\end{proof}
	\begin{notation}
		We denote $\mathfrak{X}(\gamma)$ the space of all smooth vector fields along $\gamma$. 
	\end{notation}
	\begin{definition}
		A smooth vector field along a geodesic that satisfies the Jacobi equation \ref{eq: Jacobi} is called a \textbf{Jacobi field}.
	\end{definition}
	If we think $\mathfrak{X}(\gamma)$ as a linear space, then as the corollary (see corollary 10.3 in \cite{Lee19}), $\mathfrak{J}(\gamma) \subseteq \mathfrak{X}(\gamma)$ is a $2n$-dimensional linear subspace of $\mathfrak{X}(\gamma)$ where $\mathfrak{J}(\gamma)$ denotes the set of Jacobi fields along $\gamma$. The Jacobi field is also invariant under local isometry by proposition 10.5 in \cite{Lee19}.
	
	Jacobi fields can be used to determine whether the exponential map is a local diffeomorphism. To discuss that, we need to introduce what are conjugate points. For a more detailed discussion on the motivation of conjugate points see \cite{Lee19}. 
	\begin{definition}[See \cite{Lee19}]
		Let $(M, g)$ be a Riemannian manifold, $\gamma: I \to M$ a geodesic, and $p = \gamma(a), q = \gamma(b)$ for some $a, b \in I$. We say that $p$ and $q$ are conjugate along $\gamma$ if there is a Jacobi field vanishing at $t = a$ and $t = b$ but not identically zero along $\gamma$.
	\end{definition}
	It is also important to consider the Jacobi fields vanish at a point for this purpose. Let $(M, g)$ be a Riemannian manifold, $I \subseteq \R$ an interval containing $0$, and $\gamma: I \to M$ a geodesic. Assume $M$ is complete or $I$ is compact, then by the theorem \ref{thm: variational field jacobi}, the Jacobi fields are given by the variational fields of the variation \ref{eq: Jacobi variation}. If moreover, we assume $ J(0)= 0$, then $J$ is the variation field of 
	 \begin{equation*}
	 	\Gamma(s, t) = \exp_p(t(v + sw))
	 \end{equation*}
	 where $p = \gamma(0)$, $v = \dga(0)$, and $w = D_tJ(0)$ (see lemma 10.9 in \cite{Lee19}). This result allows us to write the explicit formula for all Jacobi fields vanishing at a point. 
	\begin{theorem}[See proposition 10.10 in \cite{Lee19}]\label{thm: Ja vanishes at a point}
		Let $(M, g)$ be a Riemannian manifold and $p \in M$. Suppose $\gamma: I \to M$ is a geodesic such that $0 \in I$ and $\gamma(0) = p$. For every $w \in T_pM$, the Jacobi field $J$ along $\gamma$ such that $J(0) = 0$ and $D_tJ(0) = w$ is given by
		\begin{equation*}
			J(t) = d(\exp_p)_{tv}(tw)
		\end{equation*}
		where $v = \dga(0)$, and we regard $tw$ as an element of $T_{tv}(T_pM)$ by means of the canonical identification $T_{tv}(T_pM) \cong T_pM$.
	\end{theorem}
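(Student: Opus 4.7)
The plan is to produce the Jacobi field of interest as the variation field of an explicit variation through geodesics, and then appeal to the existence and uniqueness of Jacobi fields with prescribed initial data. Guided by the construction mentioned just before the theorem, I would take
\begin{equation*}
    \Gamma(s,t) = \exp_p\!\brac{t(v + sw)}.
\end{equation*}
Since we are assuming $M$ is complete or $I$ is compact (as in the hypothesis of Theorem~\ref{thm: variational field jacobi}), the same compactness argument used in its proof shows $\Gamma$ is defined on some product $(-\delta,\delta) \times I$. Each curve $t \mapsto \Gamma(s,t)$ is a geodesic through $p$ with initial velocity $v + sw$, so $\Gamma$ is a variation through geodesics, and $\Gamma(0,t) = \exp_p(tv) = \gamma(t)$.

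Next I would identify the variation field with the right-hand side of the theorem. Differentiating through the exponential gives $\partial_s\Gamma(s,t) = d(\exp_p)_{t(v+sw)}(tw)$, and setting $s = 0$ yields $J(t) := \partial_s\Gamma(0,t) = d(\exp_p)_{tv}(tw)$, which is the claimed formula. By Theorem~\ref{thm: variational field jacobi}, this $J$ is a Jacobi field along $\gamma$. The condition $J(0) = 0$ is immediate, since $d(\exp_p)_0$ is applied to the zero vector $0 \cdot w$.

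The step I expect to be the main obstacle is verifying $D_tJ(0) = w$, since differentiating $d(\exp_p)_{tv}(tw)$ covariantly along $\gamma$ is unpleasant. The trick I would use is the symmetry lemma: $D_tJ(0) = D_t\partial_s\Gamma(0,0) = D_s\partial_t\Gamma(0,0)$. Now $\partial_t\Gamma(s,0)$ is the initial velocity of the $s$-th geodesic, which equals $v + sw$, viewed as a vector field along the constant curve $s \mapsto \Gamma(s,0) = p$. Along a constant curve the covariant derivative coincides with the ordinary derivative in the fixed tangent space $T_pM$, so $D_s\partial_t\Gamma(0,0) = \frac{d}{ds}\bigr|_{s=0}(v + sw) = w$. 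Hence $D_tJ(0) = w$.

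Finally, I would close the argument by invoking uniqueness: written in a parallel orthonormal frame along $\gamma$, equation~\ref{eq: Jacobi} becomes a second-order linear system of ODEs in the components of $J$, so a Jacobi field is determined by the pair $(J(0), D_tJ(0))$. Therefore the $J$ produced above is the unique Jacobi field along $\gamma$ with $J(0) = 0$ and $D_tJ(0) = w$, which is the desired conclusion.
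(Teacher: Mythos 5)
Your proposal is correct and takes essentially the same route as the paper: both produce $J$ as the variation field of $\Gamma(s,t) = \exp_p(t(v+sw))$, differentiate through the exponential map to get the formula, and (implicitly or explicitly) appeal to uniqueness of Jacobi fields with given initial data. One small caveat: you insert the hypothesis "$M$ complete or $I$ compact," which is not part of the theorem statement; the paper dispenses with it by observing that any $t\in I$ lies in a compact subinterval of $I$, on which $\Gamma$ is defined for $s$ small, and that suffices to evaluate $\partial_s\Gamma(0,t)$ pointwise. Aside from that, your filled-in verifications of $J(0)=0$, of $D_tJ(0)=w$ via the symmetry lemma, and the closing uniqueness argument match what the paper leaves implicit by citing the preceding lemma.
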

	\begin{proof}
		Since every $t \in I$ is contained in some compact interval, then by translating $t$, we can show that $J$ is the variational field of $\Gamma(s, t) = \exp_p(t(v + sw))$ for all $t$. Then by chain rule
		\begin{align*}
			J(t) 
			&= \partial_s\Gamma(0, t) = \partial_s\Gamma(s, t)|_{s = 0}\\
			&= d(\exp_{p})_{t(v + sw)}(tw)|_{s = 0}\\
			&= d(\exp_p)_{tv}(tw)
		\end{align*}
	\end{proof}
	\begin{proposition}[See proposition 10.20 in \cite{Lee19}]
		Suppose $p \in M$, $v \in \mathcal{E}_p \subseteq T_pM$. Let $\gamma := \gamma_v: [0, 1] \to M$ be the geodesic segment $\gamma(t) = \exp_p(tv)$. Take $q = \gamma(1) = \exp_p(v)$. Then $\exp_p$ is a local diffeomorphism in a neighborhood of $v$ if and only if $q$ is not conjugate to $p$ along the geodesic $\gamma$.
	\end{proposition}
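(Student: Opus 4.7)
The plan is to reduce the statement to an assertion about the injectivity of the differential $d(\exp_p)_v : T_v(T_pM) \to T_qM$, and then exploit Theorem \ref{thm: Ja vanishes at a point} to translate that injectivity into a statement about Jacobi fields vanishing at the endpoints of $\gamma$.

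First, I would invoke the inverse function theorem: since $\dim T_v(T_pM) = \dim T_qM = n$, the map $\exp_p$ is a local diffeomorphism at $v$ if and only if $d(\exp_p)_v$ is injective. So the task becomes showing that $d(\exp_p)_v$ fails to be injective if and only if there is a nontrivial Jacobi field along $\gamma$ vanishing at $t = 0$ and $t = 1$.

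Next, using the canonical identification $T_v(T_pM) \cong T_pM$, take an arbitrary $w \in T_pM$ and consider the Jacobi field $J_w$ along $\gamma$ uniquely determined by $J_w(0) = 0$ and $D_tJ_w(0) = w$. By Theorem \ref{thm: Ja vanishes at a point},
\begin{equation*}
J_w(t) = d(\exp_p)_{tv}(tw),
\end{equation*}
and in particular $J_w(1) = d(\exp_p)_v(w)$. Thus $w \in \ker d(\exp_p)_v$ if and only if $J_w(1) = 0$.

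With this equivalence in hand, the proposition follows in both directions. If $q$ is conjugate to $p$ along $\gamma$, there is a nontrivial Jacobi field $J$ with $J(0) = J(1) = 0$; setting $w = D_tJ(0)$, uniqueness of Jacobi fields with prescribed initial data forces $w \neq 0$ (otherwise $J \equiv 0$), and then $d(\exp_p)_v(w) = J(1) = 0$, so $d(\exp_p)_v$ is not injective. Conversely, if $d(\exp_p)_v$ fails to be injective, choose $0 \neq w \in \ker d(\exp_p)_v$; then $J_w$ is not identically zero (as $D_tJ_w(0) = w \neq 0$) and satisfies $J_w(0) = 0 = J_w(1)$, witnessing that $p$ and $q$ are conjugate along $\gamma$. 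The main (minor) subtlety is keeping the canonical identification $T_{tv}(T_pM) \cong T_pM$ straight so that the formula from Theorem \ref{thm: Ja vanishes at a point} applies cleanly at $t = 1$; beyond that, the argument is a direct application of the previous theorem.
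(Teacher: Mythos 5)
Your proof is correct and follows essentially the same route as the paper's: both reduce the statement to whether $d(\exp_p)_v$ is injective and then identify kernel vectors $w$ with nontrivial Jacobi fields $J$ satisfying $J(0)=J(1)=0$ via the variation $\Gamma(s,t)=\exp_p(t(v+sw))$. Your version is slightly tidier in that you invoke Theorem~\ref{thm: Ja vanishes at a point} as a black box and explicitly justify nontriviality of $J$ from $w\neq 0$ and uniqueness of solutions to the Jacobi IVP, whereas the paper re-derives the variation computation inline and leaves nontriviality implicit.
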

	\begin{proof}
		We know that 
		\begin{align*}
			\text{$\exp_p$ is a local diffeomorphism at $v$}
			\iff \text{$v$ is not a critical point of $\exp_p$}
		\end{align*}
		Therefore, suppose first that $v$ is a critical point of $\exp_p$. Then there is a nonzero vector $w \in T_v(T_pM)$ such that $d(\exp_p)_v(w) = 0$. Since $T_v(T_pM) \cong T_pM$,
		\begin{align*}
			0 = & d(\exp_p)_v(w) = \frac{\partial}{\partial s}\Big|_{s = 0}\exp_p(v + sw)\\
			=& \partial_s\Gamma(0, 1) = J(1) 
		\end{align*}
		Thus, the Jacobi field $J$ vanishes at $t = 1$. Therefore, $q$ is a conjugate point. Thus, we have proved that if $q$ is not conjugate to $p$ then $\exp_p$ is a local diffeomorphism. 
		
		Conversely, if $q$ is conjugate to $p$ along $\gamma$, then there is some nontrivial Jacobi field $J$ along $\gamma$ such that $J(0) = J(1) = 0$. On can show that $J$ is the variation field of the following variation of $\gamma$ through geodesics $\Gamma(s, t) = \exp_p(t(v + sw))$ with $w = D_tJ(0) \in T_pM$. Thus, by the computation in the preceding paragraph, we know that $d(\exp_p)_v(w) = J(1) = 0$. Thus $v$ is the critical point for $\exp_p$.
	\end{proof}
	
\section{Scalar Riccati Equation for $n = 2$}
Let $\gamma$ be a geodesic. Recall that a vector field $J$ along $\gamma$ is a Jacobi field if the following Jacobi equation holds
\begin{equation*}
    D_t^2 J + R(J, \Dot{\gamma})\Dot{\gamma} = 0.
\end{equation*}
As an introduction, we want to show that we can derive the Riccati equation from the Jacobi equation in the case when $n = 2$. Consider $(M^2, g)$ and $\gamma: [0, l] \to M^2$ a unit speed geodesic. Let $J$ be a Jacobi field along $\gamma$ such that $J\perp \dga$. Then $J(t) = y(t)X(t)$ for each $t$ where $X$ is a unit length vector field parallel along $\gamma$.
Thus,
\begin{equation*}
	D_tJ = y'(t)X(t) \quad\&\quad D_t^2 J = y''(t)X(t).
\end{equation*}
We can substitute these into the Jacobi equation, then
\begin{align*}
	& 0 = D_t^2J + R(J, \dot{\gamma})\dot{\gamma} = y''(t)X(t) + y(t)R(X, \dot{\gamma})\dot{\gamma}\\
	\implies & 0 = \inp{y''(t)X(t) + y(t)R(X, \dot{\gamma})\dot{\gamma}, X}\\
	\implies & 0 = y''(t) + \kappa(t)y(t)
\end{align*}
where $\kappa(t) = Rm(X, \dot{\gamma}, \dot{\gamma}, X)$ is the sectional curvature at $\gamma(t)$, which is the Gauss curvature in this case. 
Assume $y(t)\ne 0$ and put $a = \frac{y'}{y} \iff y' = ay$. Then, 
\begin{align*}
	& y'' = a'y + ay' \\
	\iff & -\kappa(t)y = a'y + a^2y\\
	\iff & (a' + a^2 + \kappa(t))y = 0\\
	\iff & a' + a^2 + \kappa(t) = 0 \quad\text{since $y \neq 0$}
\end{align*}
Thus, we can split the second-order equation equation $y'' + \kappa(t)y = 0$ into two first-order equations:
\begin{equation}\label{eq: scalar riccati}
    y'' + \kappa(t)y = 0 \iff 
    \begin{cases}
        a' + a^2 + \kappa(t) = 0 \quad\text{(Scalar Riccati Equation)}\\
        y' = ay
    \end{cases}
\end{equation}
\section{Matrix Riccati Equation}
Let $(M, g)$ be a Riemannian manifold, and Let $\gamma: [0, l] \to M$ be a unit speed geodesic starting at a point $p \in M$. Then any Jacobi field $J$ such that $J(0) \neq 0$ can be split into $J_1$ the tangential component and $J_2$ the perpendicular component, i.e. $J = J_1 + J_2$ such that for each $t \in [0, l]$, $J_2(t) \perp \dga(t)$ and $J_1(t) \parallel \dga(t)$. 
\begin{lemma}
	$J_1(t) = \lambda_1\dga(t) + \lambda_2 t\dga(t)$ for some constants $\lambda_1, \lambda_2 \in \R$. 
\end{lemma}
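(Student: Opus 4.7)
The plan is to write $J_1(t) = f(t)\dga(t)$ for some scalar function $f:[0,l]\to\R$ and to show that $f$ must be affine in $t$. Since $\gamma$ has unit speed, pairing with $\dga$ gives $f(t) = \inp{J(t), \dga(t)}$, so establishing the claim reduces to proving $f''\equiv 0$.

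First I would differentiate $f$ twice along $\gamma$, using that $\gamma$ is a geodesic so $D_t\dga \equiv 0$. This yields
\begin{align*}
f'(t) &= \inp{D_tJ, \dga} + \inp{J, D_t\dga} = \inp{D_tJ, \dga},\\
f''(t) &= \inp{D_t^2 J, \dga}.
\end{align*}
Next I would substitute the Jacobi equation \ref{eq: Jacobi} to get $f''(t) = -\inp{R(J, \dga)\dga, \dga}$. The main observation is that this quantity vanishes identically by the skew-symmetry of the Riemann curvature tensor in its last two slots: $Rm(J, \dga, \dga, \dga) = -Rm(J, \dga, \dga, \dga)$, hence equals zero. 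So $f''\equiv 0$, giving $f(t) = \lambda_1 + \lambda_2 t$ for constants $\lambda_1, \lambda_2\in\R$, and therefore $J_1(t) = \lambda_1\dga(t) + \lambda_2 t\dga(t)$.

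There is no real obstacle here; the only thing to be careful about is the bookkeeping when moving between the vector field $J_1$ and the scalar function $f$, and invoking the correct curvature symmetry. One could equivalently observe directly that $f\dga$ satisfies the Jacobi equation because $R(\dga,\dga)\dga = 0$, reducing the Jacobi equation for $J_1$ to $f''(t)\dga(t) = 0$, and then reach the same conclusion. Either route gives a short, self-contained proof of the lemma.
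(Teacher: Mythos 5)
Your proposal is correct and follows essentially the same route as the paper: both compute $g(J,\dga)'' = g(D_t^2 J,\dga) = -g(R(J,\dga)\dga,\dga) = 0$ and conclude that $g(J,\dga)$ is affine in $t$. You simply spell out the curvature symmetry that makes the paper's middle equality zero, which the paper leaves implicit.
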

\begin{proof}
	To compute $J_1$, we notice that 
\begin{align*}
	& g(J, \dga)'' = g(D_t^2J, \dga) = g(-R(J, \dga)\dga, \dga) = 0\\
    \implies & g(J, \dga) = \lambda_2 t + \lambda_1\\
    \implies & J_1 = \lambda_1\dga + \lambda_2 t \dga
\end{align*}
for some constant $\lambda_1 , \lambda_2 \in \R$.
\end{proof}
\begin{remark}
	The key point is that when $n = 2$, then any  $J\perp\dga$ is as a scalar multiple of a unit perpendicular vector field $X$ parallel along $\gamma$. Indeed. Firstly we take $X(0) \in T_{\gamma(0)}M^2$ a vector perpendicular to $\gamma$. Then we extend $X$ along $\gamma$ via parallel transport. Since we assume $J$ a vector field perpendicular along $J$, then we can write $J$ as $J = y(t)X(t)$ for some scalar function $y(t)$.
\end{remark}

\begin{lemma}
	$J_1$ is still a Jacobi field, i.e.
	\begin{equation*}
		D_t^2J_1 + R(J_1, \dga)\dga = 0
	\end{equation*}
\end{lemma}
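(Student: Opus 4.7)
The plan is to plug the explicit formula for $J_1$ from the preceding lemma directly into the Jacobi equation and observe that both terms vanish for trivial reasons. Since $\gamma$ is a geodesic we have $D_t\dga = 0$, so $\dga$ is parallel along $\gamma$, and this is what makes everything collapse.

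First I would compute $D_t J_1$ using $J_1 = \lambda_1 \dga + \lambda_2 t \dga$. By the product rule and $D_t \dga = 0$, only the derivative of the factor $t$ survives, yielding $D_t J_1 = \lambda_2 \dga$. Differentiating once more and again invoking $D_t \dga = 0$ gives $D_t^2 J_1 = 0$. So the first term of the Jacobi equation vanishes identically.

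Next I would handle the curvature term. Since $J_1$ is a pointwise scalar multiple of $\dga$, namely $J_1 = (\lambda_1 + \lambda_2 t)\dga$, the $\R$-linearity of $R$ in its first slot together with the antisymmetry $R(\dga, \dga) = 0$ forces $R(J_1, \dga)\dga = 0$. Combining the two observations gives $D_t^2 J_1 + R(J_1, \dga)\dga = 0$, which is exactly the Jacobi equation for $J_1$.

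There is really no main obstacle here: the statement is essentially the remark that the tangential Jacobi fields along a geodesic are precisely the fields of the form $(\lambda_1 + \lambda_2 t)\dga$, and both sides of the Jacobi equation vanish on such fields for independent reasons (parallelism of $\dga$ on the one hand, antisymmetry of $R$ on the other). As a bonus consequence, since $J = J_1 + J_2$ is Jacobi by assumption and $J_1$ is Jacobi by what we just showed, linearity of the Jacobi equation then gives that $J_2$ is also a Jacobi field, which is presumably the point of recording this lemma before moving on to the matrix Riccati equation.
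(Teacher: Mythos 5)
Your proof is correct and uses essentially the same argument as the paper: both rely on $D_t\dga = 0$ (so $D_t^2 J_1 = 0$) and the antisymmetry $R(\dga,\dga)\dga = 0$ (so the curvature term vanishes). The paper just verifies the two basis fields $\dga$ and $t\dga$ separately and invokes linearity, whereas you compute directly on $J_1 = (\lambda_1 + \lambda_2 t)\dga$; this is a cosmetic difference, not a different route.
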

\begin{proof}
	We only need to check $\dga$ and $t\dga$ are both Jacobi fields. Since $\gamma$ is a geodesic, then $D_t\dga = 0$, so that $D_t^2\gamma = 0$ and $R(\dga, \dga)\dga= 0$, thus $\dga(t)$ is a Jacobi field, i.e. $D_t^2J + R(J, \dga)\dga = 0 + 0 = 0$. Next, $D_t(t\dga(t)) = \dga(t)$ then clearly $D_t^2(t\dga(t)) = 0$ and $R(t\dga(t), \dga(t))\dga(t) = tR(\dga(t), \dga(t))\dga(t) = 0$. And we have the same conclusion.
\end{proof}
Therefore, $J_2 = J - J_1$ is also a Jacobi field. That means, essentially, we only need to solve for perpendicular Jacobi fields $J_2 \perp \dga$. In the rest of this section, we always assume $J(t) \perp \dga(t)$ for all $t$. We will call the vector fields orthogonal to $\dga$ \textbf{normal}. The following theorem shows how a general Jacobi equation can be reduced to two first-order equations which can be solved separately.
\begin{theorem}\label{thm: split Jacobi equation}
	Let $(M, g)$ be a Riemannian manifold, $p \in M$ and $\gamma: [0, l] \to M$ a unit speed geodesic starting at $p$. Let $J(t)$ be a normal Jacobi field along $\gamma$.
	Then for some $0 < T \le l$ there is a symmetric linear operator $S(t): (\dga(t))^\perp\to  (\dga(t))^\perp$ along $\gamma$  on $[0,T]$ such that
	\begin{equation}\label{eq: Matrix Riccati Shape Operator}
    \begin{cases}
        D_tJ = SJ\\
        D_tS + S^2 + R_\nu = 0
    \end{cases}
\end{equation}
Conversely, if a symmetric $S$ is as above then a normal field $J$ satisfying \eqref{eq: Matrix Riccati Shape Operator} is a normal Jacobi field.
\end{theorem}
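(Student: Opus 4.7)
The plan is to reduce the second-order linear Jacobi equation to a first-order nonlinear Riccati equation via the substitution $S = (D_t A) A^{-1}$, where $A(t)$ is an operator-valued ``matrix'' of Jacobi fields. Set $V_0 = (\dga(0))^\perp$ and use parallel transport along $\gamma$ to identify each $(\dga(t))^\perp$ with $V_0$; under this identification the normal Jacobi equation takes the form $D_t^2 J + R_\nu J = 0$, where $R_\nu(t) : V_0 \to V_0$ is self-adjoint by the pair-exchange symmetry $\langle R(X, \dga)\dga, Y\rangle = \langle X, R(Y, \dga)\dga\rangle$ of the curvature tensor.

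Assuming $J(0) \neq 0$ (the case $J(0) = 0$ is handled by restarting at some small $t_0 > 0$ with $J(t_0) \neq 0$), I would first choose a symmetric linear map $S_0 : V_0 \to V_0$ with $S_0 J(0) = D_t J(0)$; such an $S_0$ exists because, in an orthonormal basis of $V_0$ starting with $J(0)/|J(0)|$, the constraint prescribes only the first column of $S_0$, which is compatible with symmetry (the remaining principal block is free). Then let $A(t) : V_0 \to V_0$ be the operator-valued solution of $D_t^2 A + R_\nu A = 0$ with $A(0) = \id$ and $D_t A(0) = S_0$. Since $A(0)$ is invertible, so is $A(t)$ on some $[0,T]$, and I define $S(t) := (D_t A)(t)\, A(t)^{-1}$.

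Three verifications remain. The equation $D_t J = SJ$ holds because $t \mapsto A(t) J(0)$ is a Jacobi field agreeing with $J$ in value and derivative at $t = 0$, so $J(t) = A(t) J(0)$ and hence $D_t J = (D_t A) J(0) = S\, A J(0) = SJ$. The Riccati identity follows from
\begin{equation*}
D_t S = (D_t^2 A)\, A^{-1} - (D_t A)\, A^{-1} (D_t A)\, A^{-1} = -R_\nu A \cdot A^{-1} - S^2 = -R_\nu - S^2.
\end{equation*}
For the symmetry of $S$, I compute
\begin{equation*}
D_t\bigl(A^*\, D_t A - (D_t A)^* A\bigr) = A^*\, D_t^2 A - (D_t^2 A)^* A = -A^* R_\nu A + A^* R_\nu A = 0,
\end{equation*}
using self-adjointness of $R_\nu$; since the expression equals $S_0 - S_0^* = 0$ at $t=0$, it vanishes identically, hence $A^*\, D_t A = (D_t A)^* A$, which rearranges to $S = (D_t A) A^{-1} = (A^{-1})^* (D_t A)^* = S^*$.

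The converse is a short calculation: if $J \perp \dga$ satisfies $D_t J = SJ$ with $S$ symmetric on $(\dga)^\perp$ solving the Riccati equation, then
\begin{equation*}
D_t^2 J = (D_t S) J + S\, D_t J = (-R_\nu - S^2) J + S^2 J = -R_\nu J,
\end{equation*}
so $J$ is a Jacobi field (and it remains normal since $SJ \perp \dga$). The main obstacle is the symmetry of $S$: it depends both on making a symmetric choice of initial data $S_0$ and on the Wronskian-type identity displayed above, the latter crucially requiring the self-adjointness of $R_\nu$, which is itself a manifestation of the pair-exchange symmetry of the Riemann tensor.
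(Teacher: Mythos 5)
Your proof is correct, but it takes a genuinely different route from the paper. The paper realizes $S(t)$ geometrically: one arranges for $\gamma$ to be a gradient curve of a distance function $f$ with $|\nabla f|\equiv 1$ (to a point, as in Remark~\ref{exist-S-I}, or to the geodesic submanifold defined by $\dga(0)$, as in Remark~\ref{exist-S-II}); the Weingarten equation for the level hypersurfaces $\{f=t\}$ then gives $D_tJ = S(J)$ with $S$ the shape operator of $\{f=t\}$, which is automatically symmetric, and differentiating once more produces the Riccati equation. Your argument instead runs the classical Wronskian computation on the operator solution $A(t)$ of the matrix Jacobi equation with $A(0)=\id$, $D_tA(0)=S_0$ symmetric, setting $S=(D_tA)A^{-1}$. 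The two proofs trade advantages. Yours is self-contained: it needs only the ODE, the pair-exchange symmetry of $Rm$ (to make $R_\nu$ self-adjoint, which drives the Wronskian identity $D_t(A^*D_tA-(D_tA)^*A)=0$), and the linear-algebra observation that a symmetric $S_0$ with prescribed first column exists; it avoids tubular neighborhoods, conjugate/focal point hypotheses, and the Weingarten machinery. It also makes the freedom in the choice of $S_0$ transparent, which matches the paper's Step~1. The paper's geometric realization is what licenses the specific initial conditions $S(t)\sim \tfrac1t\,\id$ (Remark~\ref{exist-S-I}) and $S(0)=0$ (Remark~\ref{exist-S-II}) that are used later in the Rauch and Hessian comparison theorems, so it is not merely an alternative proof but an active ingredient in the sequel. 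One small point worth tightening in your write-up: when you say the case $J(0)=0$ is "handled by restarting at $t_0>0$," note that this only produces $S$ on an interval $[t_0,T]$ not containing $0$; obtaining an $S$ defined (or at least controlled) near $t=0$ in this case genuinely requires the singular initial behavior $S(t)\sim\tfrac1t\,\id$, which is exactly what the paper's geometric picture provides.
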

The operator $S$ plays the role of $a$ in the scalar Riccati equation \ref{eq: scalar riccati}. Hence, we can solve the Jacobi field by reducing the Jacobi equation via the following steps:

\textbf{Step 1:} By the assumption, $J(t) \perp \dga(t)$, then $D_tJ(t) \perp \dga(t)$ as well. This is because
\begin{align*}
	0 = g(J(t), \dga(t))' = g(D_t J(t), \dga(t)) + g(J(t), \underbrace{D_t\dga(t)}_{= 0})
\end{align*}
Suppose the initial conditions $J(0)$ and $D_tJ(0)$ are given and are both perpendicular to $\gamma'(0)$ and $J(0)\ne 0$. Then there always exist a symmetric linear map $A: \dga(0)^\perp \to \dga(0)^\perp$ such that
\begin{equation*}
	D_tJ(0) = A(J(0))
\end{equation*}

\textbf{Step 2:} Solve the following Riccati matrix ODE
\begin{equation}\label{eq-riccati-matrix}
	\begin{cases}
		D_tS + S^2 + R_{\dga(t)} = 0\\
		S(0) = A
	\end{cases}
\end{equation}
and we can obtain $S(t)$ along $\gamma$.

Note that the equation on $S$ is nonlinear and hence the maximal existing time $T$ for the solution can be smaller than $l$.

\textbf{Step 3:} Then we can solve for  $J$ by solving 
\begin{equation*}
		D_tJ(t) = S(t)(J(t))
\end{equation*}
given initial conditions $J(0)$. This equation is linear and hence the solution is always guaranteed to exist on any interval $[0, T]$ on which $S$ is defined. 

The operator $S$ and 
equation \ref{eq: Matrix Riccati Shape Operator} naturally arise via the following geometric construction.
  Let $(M, g)$ be a Riemannian manifold. Suppose $f: M \to \R$ smooth and $\abs{\grad(f)} = 1$. Remember that we denote $\nabla$ the Levi-Civita connection on $M$ and for a smooth function on a Riemannian manifold, we have
\begin{equation*}
	\nabla f(X) = \nabla_X f = df(X) = \langle \grad(f), X\rangle \quad\text{for $X \in \mathfrak{X}(M)$}
\end{equation*} 
where $\nabla f$ is the total differential. 
\begin{notation}
	We will abuse our notation and denote the $\grad(f)$ by $\nabla f$ if there is no confusion.
\end{notation}
Denote $M_t = \br{x \in M: f(x) = t}$ the $t$-level set of $f$. We know that for each $t$, $M_t$ is a regular submanifold of codimension $1$ by the regular value theorem ($\abs{\nabla f} \neq 0$). Since $f$ is smooth, by definition the gradient curve $\gamma(t)$ of $f$ satisfies $\Dot{\gamma}(t) = \nabla f_{\gamma(t)}$. And we can see that along $\gamma$, $f$ increases with unit speed:
\begin{align*}
    f(\gamma(t))' &= \inp{\nabla f_{\gamma(t)}, \underbrace{\dot{\gamma}(t)}_{= \nabla f_{\gamma(t)}}} \quad\text{By the definition of gradient.}\\
    &=  \abs{\nabla f}^2(\gamma(t)) = 1
\end{align*}
Thus $f(\gamma(t)) = t + C$ for some constant $C$. 
\begin{lemma}\label{cla: gradient curve exp}
    The gradient curve $\gamma(t)$ is a unit speed geodesic.
\end{lemma}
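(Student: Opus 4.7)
The plan is to verify the two defining properties of a unit speed geodesic separately: the unit speed condition $|\dot\gamma| \equiv 1$, and the geodesic equation $D_t \dot\gamma \equiv 0$. The first is immediate from the hypotheses: since $\dot\gamma(t) = \nabla f|_{\gamma(t)}$ and $|\nabla f| = 1$ on all of $M$, we automatically have $|\dot\gamma(t)| = 1$ for all $t$. So the substantive content is proving $D_t \dot\gamma = 0$, which, by the chain rule for the Levi-Civita connection along $\gamma$, amounts to showing that the ambient vector field $\nabla f$ satisfies $\nabla_{\nabla f} \nabla f = 0$.

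To obtain this identity, I would exploit the fact that $\langle \nabla f, \nabla f \rangle \equiv 1$ is a constant function on $M$. Differentiating in the direction of an arbitrary vector field $X$ and using compatibility of the Levi-Civita connection with the metric gives
\begin{equation*}
    0 = X\langle \nabla f, \nabla f \rangle = 2\langle \nabla_X \nabla f, \nabla f \rangle.
\end{equation*}
This says that $\nabla_X \nabla f \perp \nabla f$ for every $X$, but it is not yet the desired statement. To convert it, I would invoke the symmetry of the Hessian: recalling that $\hess(f)(X, Y) = \langle \nabla_X \nabla f, Y \rangle$ is a symmetric $(0,2)$-tensor (because $\nabla$ is torsion-free). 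Therefore
\begin{equation*}
    \langle \nabla_{\nabla f} \nabla f, X \rangle = \hess(f)(\nabla f, X) = \hess(f)(X, \nabla f) = \langle \nabla_X \nabla f, \nabla f \rangle = 0
\end{equation*}
for every vector field $X$, which forces $\nabla_{\nabla f} \nabla f = 0$ pointwise. Restricting to the curve $\gamma$ yields $D_t \dot\gamma = 0$, completing the proof.

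There is no real obstacle here; the argument is a clean two-step application of (i) differentiating the constant norm condition and (ii) the symmetry of the Hessian. The only point that deserves care is making explicit that $\nabla_{\dot\gamma}\dot\gamma$ along $\gamma$ coincides with $(\nabla_{\nabla f}\nabla f)\circ \gamma$, which follows directly from $\dot\gamma(t) = \nabla f|_{\gamma(t)}$ and the definition of covariant differentiation along a curve induced by an ambient vector field.
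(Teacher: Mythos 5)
Your proof is correct, and it is a genuinely different argument from the one in the paper. You give the classical local differential-geometric proof: differentiate the constant-norm condition $\langle\nabla f,\nabla f\rangle\equiv 1$ to get $\langle\nabla_X\nabla f,\nabla f\rangle=0$ for all $X$, then use the symmetry of the Hessian (torsion-freeness of the Levi-Civita connection) to swap the arguments and conclude $\nabla_{\nabla f}\nabla f=0$. Every step is sound, and your concluding remark about identifying $D_t\dot\gamma$ with $(\nabla_{\nabla f}\nabla f)\circ\gamma$ is exactly the point that needs to be made explicit.

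The paper instead takes a metric-geometric route: it first establishes the equivalence $|\nabla f|\le 1\iff f$ is $1$-Lipschitz, then sandwiches the length of the gradient curve between $d(p,q)$ and $f(q)-f(p)\le d(p,q)$ to force every inequality to be an equality, so $\gamma|_{[t_1,t_2]}$ is a unit-speed curve whose length realizes the distance between its endpoints and is therefore a geodesic. The two approaches trade off as follows. Your argument is shorter, purely local, and uses only that $\nabla$ is metric and torsion-free; it requires $f$ to be $C^2$ (so the Hessian is defined), and it only directly gives the geodesic equation, i.e.\ local minimality. The paper's argument is coordinate-free in a stronger sense --- it never invokes the Hessian or the connection at all --- and it actually establishes that $\gamma|_{[t_1,t_2]}$ is a \emph{globally} distance-minimizing segment, not merely a geodesic. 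More importantly for this text, the paper's proof is built to survive the passage to Alexandrov spaces later on, where there is no Levi-Civita connection and your symmetry-of-Hessian argument has no analogue. So the choice of proof in the paper is deliberate, even though your argument is the more standard one in a Riemannian geometry course.
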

\begin{proof}
We will show this lemma by claiming that for a smooth function  \begin{equation}\label{eq: grad < 1 iff 1-lip}
    \abs{\nabla f} \leq 1 \iff \text{$f$ is $1$-Lipschitz}
\end{equation}
Suppose \ref{eq: grad < 1 iff 1-lip} is true. Consider $t_1 < t_2$. If $\gamma(t_1) = p \in M_{t_1}$ then by our construction,$f(\gamma(t_1)) = f(p) = t_1$ and $f(\gamma(t_2)) = t_1 + (t_2 - t_1) = t_2$. Therefore, $f(q) = t_2$ and $f(p) = t_1$.
    
     Since $\abs{\nabla f} \leq 1$, then by \ref{eq: grad < 1 iff 1-lip}, we know that $f$ is $1$-Lipschitz, i.e.
    \begin{equation*}
        t_2 - t_1 = f(p) - f(q) \leq d(p, q) \implies d(p, q) \geq t_2 - t_1.
    \end{equation*}
    On the other hand, $\length(\gamma|_{[t_1, t_2]}) \geq d(p, q)$ by the definition of $d(p,q)$. Therefore, we
    have
    \begin{equation*}
    	d(p,q)\le \length(\gamma|_{[t_1, t_2]}) =t_2-t_1=f(q)-f(p)\le d(p,q).
    \end{equation*}
So all the inequalities are equalities. Hence  $\gamma|_{[t_1, t_2]}$ is a unit curve whose length is equal to the distance between its endpoints, and hence it's a geodesic.

This means that the gradient curve $\gamma$ is unit speed geodesic.  Now we only need to show \ref{eq: grad < 1 iff 1-lip}. Suppose that $\abs{\nabla f} \leq 1$, take a unit speed geodesic $c(t)$ from $x$ to $y$. Denote $d = d(x, y)$. Then $c(0) = x$ and $c(d) = y$. Then 
        \begin{align*}
            f(y) - f(x) = & f(c(0)) - f(c(d))\\
            = & \int_0^d (f\circ c)'(t)dt\\
            = & \int_0^d \inp{\nabla f, \Dot{c}(t)}dt\\
            \leq & \int_0^d \underbrace{\abs{\nabla f}}_{ \leq 1}\cdot\underbrace{\abs{\Dot{c}(t)}}_{= 1} dt \quad\text{By Cauchy Schwartz}\\
            \leq & \int_0^d 1dt = d
        \end{align*}
On the other hand, if $\abs{\nabla f} > 1$. Consider $v = \frac{\nabla f(p)}{\abs{\nabla f(p)}}$, then
\begin{align*}
	\abs{df_p(v)} 
	&= \inp{\nabla f(p), \frac{\nabla f(p)}{\abs{\nabla f(p)}}}\\
	&= \frac{\abs{\nabla f(p)}^2}{\abs{\nabla f(p)}} = \abs{\nabla f(p)} > 1. 
\end{align*}
By the definition of directional derivative, taking $\gamma(t) = \exp_p(tv)$, then 
\begin{equation*}
	\abs{df_p(v)} = \lim_{t \to 0^+}\frac{\abs{f(\gamma(t)) - f(p)}}{t} > 1
\end{equation*}
which turns out that $f$ is not $1$-Lipschitz. 
\end{proof}

Now we conclude that if $\abs{\nabla f} = 1$, then for any $p \in M_t$, the gradient curve starting at $p$ are unit speed geodesics normal to the level sets, i.e. $\nabla f \perp M_t$. Moreover, the level sets are equidistant with respect to $f$. Namely, for any point $p \in M_{t_1}$, the distance $d_{M_{t_2}}(p) = \abs{t_2 - t_1}$. \begin{figure}[htbp]
    \centering
        \includegraphics[width=0.6\textwidth]{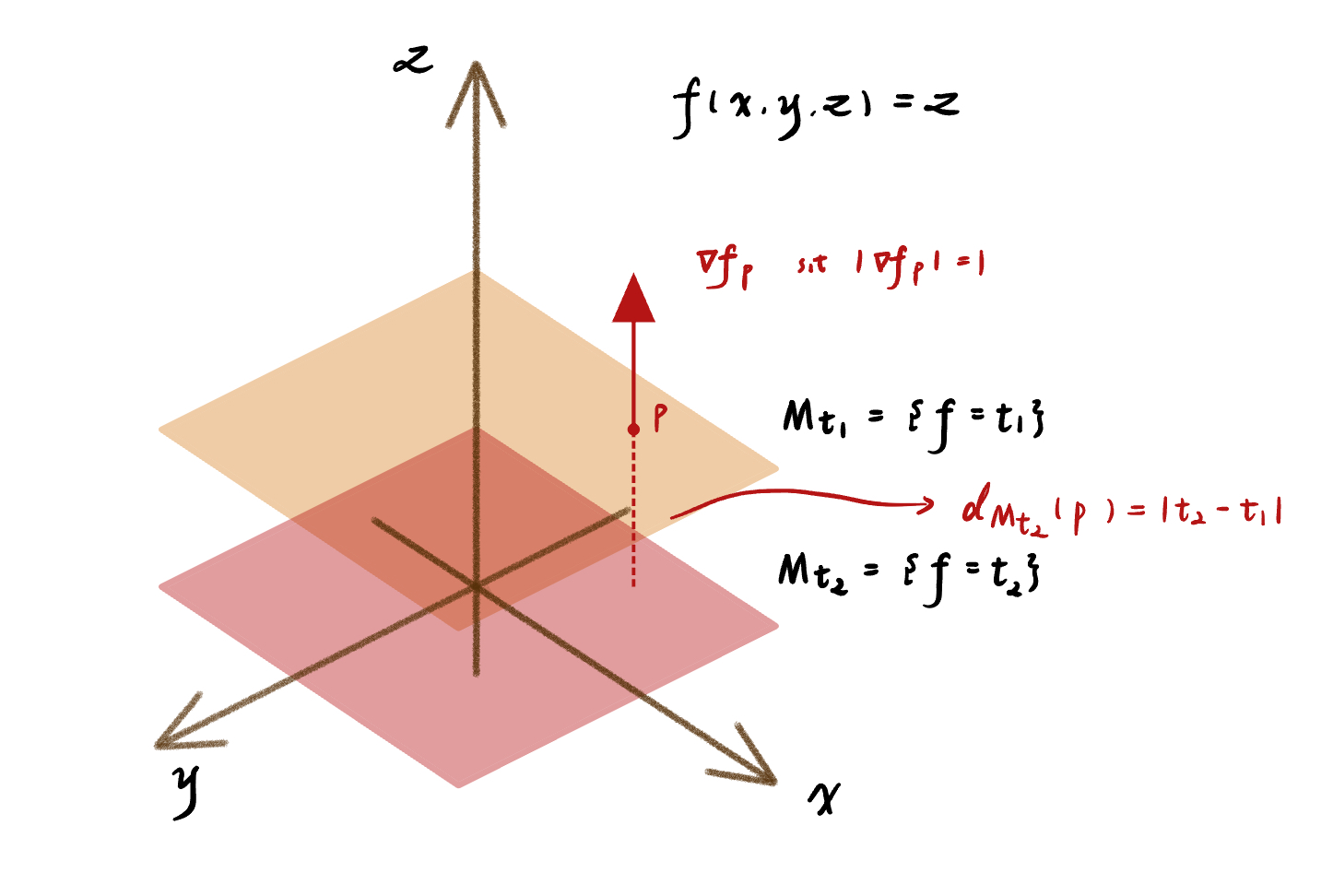}\caption{Gradient and level sets}
\end{figure}

Remember in the theorem \ref{thm: variational field jacobi}, we produce Jacobi fields by computing the variational field of the variation \ref{eq: Jacobi variation}. Now we are particularly interested in the situation that the geodesic is the gradient curve of $f$. Fix $t_1 \in \R$ and $p \in M_{t_1}$, Let $c: (-\eps, \eps) \to M_{t_1}$ be a smooth curve in $M_{t_1}$ such that $c(0) = p \in M_{t_1}$.
\begin{figure}[htbp]
    \centering
        \includegraphics[width=0.6\textwidth]{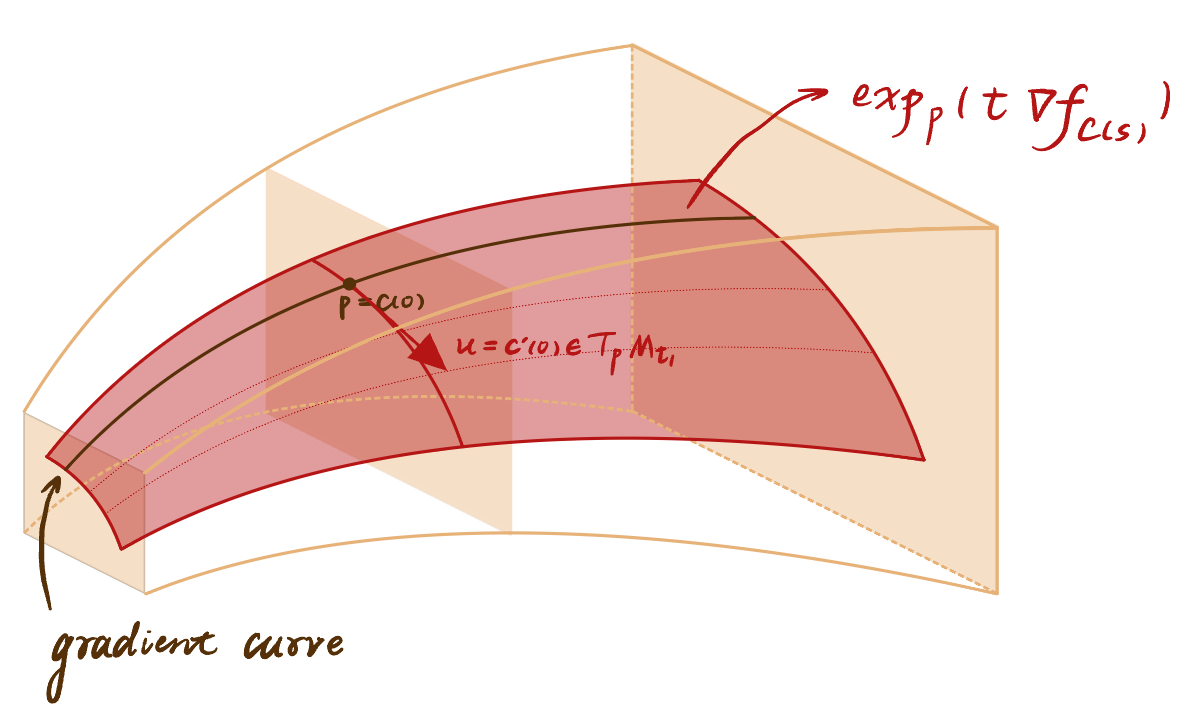}\caption{Variation}
\end{figure}
 Then the variation
\begin{equation}\label{eq: variation of gradient curve}
	\Gamma(s, t) = \exp_p(t\nabla f_{c(s)})
\end{equation}
also produce Jacobi fields (see the proof of the theorem \ref{thm: variational field jacobi}).
\begin{remark}
	Remember that the variation \ref{eq: variation of gradient curve} is defined Jacobi field when the level set $M_{t_1}$ is complete or the interval of the geodesic is compact. And since we already define our geodesic $\gamma$ over the interval $[0, l]$ we don't need to worry about whether $M_{t_1}$ is complete or not. 
\end{remark}
\begin{remark}
	It immediately follows from   Lemma~\ref{cla: gradient curve exp}  that $\Gamma(t, s) = \phi_t(c(s))$ where $\phi_t$ is the gradient flow of $f$.
\end{remark}
Because $\abs{\nabla f} = 1$, $\nabla f \perp M_t$ along $M_t$, $\nu = \nabla f$ is the unit normal to $M_{t_1}$. Therefore by the Weingarten equation for a hypersurface (Theorem~8.13 in \cite{Lee19})
\begin{equation*}
	D_tJ(t) = \nabla_{J(t)}\nabla f = \nabla_{J(t)}\nu = S(J(t))
\end{equation*}
where $S = S^\nu$ is the shape operator of $M_{t_1}$. This tells us that the Hessian operator of $f$ is the shape operator of the level sets of $f$. 

We can differentiate $D_tJ = S(J)$ along $\gamma(t)$. Then, on the one hand,
\begin{equation}\label{eq: shape for grad curve 1}
    D_t^2 J  = D_t(S(J)) 
    = (D_tS)(J) + S(D_tJ)
    = (D_tS)(J) + S^2(J)
\end{equation}
On the other hand, by the Jacobi equation and $\nu = \nabla f = \dga$,
\begin{equation}\label{eq: shape for grad curve 2}
	    D_t^2 J = -R(J, \dga)\dga = -R(J, \nu)\nu.
\end{equation}
Therefore, combining \ref{eq: shape for grad curve 1} and \ref{eq: shape for grad curve 2},
\begin{align*}
    D_tS(J) + S^2(J) + R_{\nu}(J) = 0 
\end{align*}
where $R_{\nu}(\cdot)$ is a symmetric and bilinear form $u \mapsto R(u, \nu)\nu = R_{\nu}(u)$. Since we can choose an arbitrary initial vector $J(0)$ perpendicular to the geodesic, we can conclude that 
\begin{equation*}
	 D_tS + S^2 + R_{\nu} = 0 
\end{equation*}
Therefore we always split the Jacobi equations of the Jacobi fields arising from variations of gradient curves of $f$ into two equations.
\begin{equation*}
D_t^2 J + R_{\nu}(J)= 0 \iff
    \begin{cases}
        D_tJ = SJ\\
        D_tS + S^2 + R_{\nu} = 0
    \end{cases}
\end{equation*}
So the shape operator $S$ plays the role of $a$ as in the scalar Riccati equation \ref{eq: scalar riccati}.

\begin{remark}
	 Let $f = d(\cdot, \Sigma)$, then outside of $\Sigma$, $f$ is smooth near $\Sigma$ and gradient curves are unit speed geodesics perpendicular to $\Sigma$. Especially, consider the hypersurface $\Sigma^{n - 1} \subseteq M^n$. If $\Sigma$ is a compact embedded submanifold, then by the tubular neighborhood theorem, $U_\eps(\Sigma)$ is diffeomorphic to $\Sigma \times \R$. And if the normal bundle $\nu\Sigma$ is trivial, $\Sigma$ separates $M$ locally, i.e. $U_\eps(\Sigma)\backslash\Sigma$ has two components. 
	 
	 In general $\Sigma$ does not separate $M$ locally when $\nu\Sigma$ is not trivial. For example, consider the case when $M$ is a Mobius band and $\Sigma$ is a circle. $\Sigma$ does not separate locally. 
\end{remark}

\begin{lemma} Let $M$ be a complete Riemannian manifold and $A$  be a closed subset in $M$. Let $f=d_A$. Suppose $f$ is smooth on some open set $U\subset M\setminus A$. Then $\abs{\nabla f} = 1$ on $U$.
\end{lemma}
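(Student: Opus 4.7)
The plan is to prove the two inequalities $|\nabla f|\le 1$ and $|\nabla f|\ge 1$ separately on $U$.

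For the upper bound, I would observe that $f=d_A$ is $1$-Lipschitz on all of $M$ by the triangle inequality (for any $x,y\in M$ and any $a\in A$, $d_A(x)\le d(x,a)\le d(x,y)+d(y,a)$, so taking infimum over $a$ gives $d_A(x)-d_A(y)\le d(x,y)$). Then the equivalence \eqref{eq: grad < 1 iff 1-lip} established in Lemma~\ref{cla: gradient curve exp} immediately yields $|\nabla f|\le 1$ on $U$.

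For the lower bound, fix $p\in U$ and set $L=f(p)=d(p,A)>0$. The first step is to produce a footpoint $a\in A$ with $d(p,a)=L$. Pick a minimizing sequence $a_n\in A$ with $d(p,a_n)\to L$; this sequence is bounded, hence by completeness of $M$ and the Hopf--Rinow theorem it lies in a compact set, and so has a subsequential limit $a\in A$ (using that $A$ is closed). Again by Hopf--Rinow, there is a unit-speed minimizing geodesic $\gamma:[0,L]\to M$ with $\gamma(0)=a$ and $\gamma(L)=p$. I would then check that $f(\gamma(t))=t$ for every $t\in[0,L]$: the inequality $f(\gamma(t))\le d(\gamma(t),a)=t$ is trivial, while the reverse inequality uses the $1$-Lipschitz property together with $f(p)=L$, giving $f(\gamma(t))\ge f(p)-d(\gamma(t),p)=L-(L-t)=t$.

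Finally, since $U$ is open and $M$ is complete, $\gamma$ extends smoothly past $p$ and stays in $U$ on some interval $(L-\delta,L+\delta)$, so $f\circ\gamma$ is smooth near $t=L$. Because $f\circ\gamma(t)=t$ for $t\in(L-\delta,L]$, continuity of the derivative forces $(f\circ\gamma)'(L)=1$, i.e.\ $\langle\nabla f(p),\dot\gamma(L)\rangle=1$. Since $|\dot\gamma(L)|=1$, Cauchy--Schwarz gives $|\nabla f(p)|\ge 1$, completing the proof. The only nontrivial step is the existence of the footpoint $a\in A$; it is the place where completeness of $M$ is genuinely used, and once $a$ is in hand the rest is a short calculation on a minimizing geodesic.
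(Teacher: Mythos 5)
Your proof is correct, and in fact it does more than the paper's: the paper only sketches the case $A=\{p\}$ and then remarks that the general case ``can be proved using the first variation formula,'' whereas you carry out the general case in full. Both arguments share the same skeleton --- the $1$-Lipschitz bound gives $|\nabla f|\le 1$, and the lower bound comes from the observation that $f$ increases with unit speed along a minimizing geodesic to $A$. The difference is in how that geodesic is set up: the paper takes $\gamma$ emanating from the base point $p$ (and implicitly stays inside the region where $f$ is smooth along the whole segment), which is why it restricts to $A=\{p\}$ and runs the integral/Cauchy--Schwarz argument over $[0,t]$. You instead fix an arbitrary $p\in U$, extract a footpoint $a\in A$ using completeness of $M$ (Hopf--Rinow) and closedness of $A$, run the unit-speed geodesic \emph{into} $p$, verify $f\circ\gamma(t)=t$ on $[0,L]$, and then differentiate at the endpoint where smoothness of $f$ is available. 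This localization at $p$ is exactly what makes the argument work for a general closed set $A$ and an arbitrary smoothness domain $U$, without ever needing $\gamma$ to stay in $U$ along its whole length or invoking the first variation formula as a black box. It is the cleaner route, and it is the one the paper only alludes to.
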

\begin{proof}
We prove it in a special case $A=\{p\}$ is a point. The general case can be proved using the first variation formula.
Take $p \in M$ and $\gamma$ the unit speed geodesic started at $p$. In the case when the initial condition $J(0) = 0$ is given, we can also solve $J$ by choosing appropriate $f$. Just take $f = d(\cdot, p)$ so that $\gamma$ is the gradient curve of $f$ such that $f(\gamma(t)) = t$. Then $f$ is smooth inside the injectivity radius, and more generally, outside of $\cut(p)$. We want to show $\abs{\nabla f} = 1$. We first notice that $f$ is $1$-Lipschitz by the triangle inequality.
\begin{equation*}
    f(x) - f(y) = d(x, p) - d(y, p) \leq d(x, y) \implies \abs{\nabla f} \leq 1
\end{equation*}
On the other hand, because $f(\gamma(t)) - f(\gamma(0)) = t$, then 
\begin{align*}
    f(\gamma(t)) - f(\gamma(0)) = & \int_0^t (f\circ \gamma)'(\tau)d\tau\\
    = & \int_0^t \inp{\nabla f, \Dot{\gamma}}(\tau)d\tau\\
    \leq & \int_0^t \abs{\nabla f}\cdot\abs{\Dot{\gamma}}(\tau)d\tau\\
    = t
\end{align*}
implies all inequalities are equalities, thus $\inp{\nabla f, \Dot{\gamma}} = \abs{\nabla f}\cdot\abs{\Dot{\gamma}}$. Since $\abs{\dga} = 1$, and we have already known that $\abs{\nabla f} \leq 1$, we must have $\abs{\nabla f} = 1$. 

\end{proof}

Given a unit speed geodesic $\gamma$  with $\gamma(0)=p$
 using  function $f=d(,\cdot p)$ the above construction can produce any normal Jacobi field $J$ along $\gamma$ with $J(0)=0$ on $[0,l]$ where $l$ is smaller than the injectivity radius at $p$.
 Indeed, let $v=\dga(0)$ and let $w\in T_pM$ be a vector orthogonal to $v$. Let $c(s)$ be a curve in the unit sphere in $T_pM$ with $c'(0)=w)$.  Let $\hat \Gamma(t,s)=t\cdot c(s)$ and $\Gamma(t,s)=\exp_p\hat \Gamma(t,s)$. then by construction $\Gamma(t,0)\gamma(t)$ and $J=\frac{\partial \Gamma}{\partial s}(t,0)$ is a Jacobi field along $\gamma$ with $J(0)=0, J'(0)=w$.
 Also by construction, we have that $d(\Gamma(t,s), p)=t$ for any $t<l$ and for any fixed $s$ the curve $t\mapsto \Gamma(t,s)$ is a gradient curve of $f=d(\cdot, p)$. Note that here $S(t)$ blows up as $t\to 0$, more precisely  $S(t) \sim\frac {1} {t} \id$ as $t\to 0$.

\begin{remark}\label{exist-S-I}
  More generally the above works inside the conjugate locus rather than the cut locus of $p$. That is if $\gamma :[0, l]\to M$ is a (not necessarily shortest) unit speed geodesic with $\gamma(0)=p$ and there are no conjugate points along $\gamma$ then such $f$ can be defined locally near $\gamma$ and hence there exists $S(t)$ on $[0,l]$  satisfying the Riccati equation \eqref{eq-riccati-matrix}  with initial condition $S(t) \sim\frac {1} {t} \id$.
\end{remark}

\begin{remark}\label{exist-S-II}
For later applications, we will also want to be able to guarantee the existence interval of $S$ with initial conditions $S(0)=0$. This can be accomplished as follows.
Given a unit speed geodesic $\gamma :[0, l]\to M$ with $\gamma(0)=p$ and $\dga(0)=v\in T_pM$  let $N^{n-1} = \exp_p(v^\perp)\cap B_\eps(p)$  where $\eps\ll \inj(p)$.

We will call such $N$ a \textbf{geodesic submanifold defined by $v$}. Note that by construction the second fundamental form of $N$ at $p$ is 0.

We will say that there are no \emph{focal points} of $N$ along $\gamma$ if the normal exponential map to $N$ is a local diffeomorphism near $[0,lv]$. This condition guarantees the function $f=d_N$ defined locally near $\gamma$ is smooth.

Then $S(t)$ given by the second fundamental form of $\{f=t\}$ at $\gamma(t)$ solves the Riccati equation \eqref{eq-riccati-matrix} along $\gamma$ wit initial condition $S(0)=0$.

\end{remark}


\chapter{Model Spaces, Hessian, Cosine Law}
The key in the last lecture is to split the Jacobi equation
\begin{equation*}
    D_t^2J + R_{\nu}(J) = 0 \iff
    \begin{cases}
        D_tJ = SJ\\
        D_tS + S^2 + R_{\nu} = 0 \quad\text{Riccati Equation}
    \end{cases}
\end{equation*}
Next, we want to do comparison theory for these equations.

Let $E_1, E_2, \dots, E_{n - 1}$ be a  unit orthonormal basis at $T_p{M_{t_1}}$. We can extend $E_1, \dots, E_{n - 1}$ to parallel vector fields along $\gamma$. Since we know the parallel transport preserves the orthogonality, we obtain an orthonormal frame along $\gamma$. This allows us to write the shape operator $S(t)$ in the basis $E_1(t), \dots, E_{n - 1}(t)$ as a symmetric $(n - 1) \times (n - 1)$ matrix $A$. Then, the equation \ref{eq: Matrix Riccati Shape Operator} can be rewritten as
\begin{equation*}
    J'' + R(J, \Dot{\gamma})\Dot{\gamma} = 0 \iff
    \begin{cases}
        D_tJ = AJ\\
        D_tA + A^2 + R_{\nu}(\cdot) = 0
    \end{cases}
\end{equation*}
which is a first-order matrix ODE system. Moreover, the second equation can be solved first, independently of the first equation.

This simplifies our problem. However, it is very difficult to solve this ODE system explicitly for the general Riemannian manifold. In fact, we will see soon that we can solve this matrix ODE explicitly for the spaces of constant sectional curvature. And we can develop our comparison theory based on that.  When $\sect_M \geq \kappa$, we compare things to the equations in the model space $\mathbb{M}^2_\kappa$, the simply connected space of $\sect \equiv \kappa$.
This section is the preparation of the comparison theory of the constant sectional curvature. We will first develop the trigonometry of the model space. This helps us to write the explicit expression of the Jacobi fields in the model space. Next, using these trigonometry functions, we can define the modified distance function so that we can have a scalar function version of the Hessian matrix. 
\section{Angles and Triangles in the Models Spaces}
\begin{definition}
	Given a number $\kappa$, the define the \textbf{$n$-dimensional model $\kappa$-space} to be a complete simply connected $n$-dimensional Riemannian manifold of constant curvature $\kappa$. And the $n$-dimensional model $\kappa$-space will be denoted by $\modsp{\kappa}$. 
	\begin{itemize}
		\item If $\kappa > 0$, $\modsp{\kappa}$ is isometric to a $n$-sphere of radius $\frac{1}{\sqrt{\kappa}}$. In particular, $\modsp{1}$ is the unit $n$-sphere which is also denoted by $S^n$.
		\item If $\kappa = 0$, $\modsp{0}$ is just the $n$-dimensional Euclidean space which is also denoted by $\R^n$. 
		\item If $\kappa < 0$, $\modsp{\kappa}$ is the hyperbolic space rescaled by $\frac{1}{\sqrt{|\kappa|}}$. In particular, $\modsp{-1}$ is the standard $n$-dimensional hyperbolic space denoted by $\h^n$.
	\end{itemize}
\end{definition}
\begin{notation}[See \cite{AKP22}]
We denote $\varpi^\kappa = \diam(\modsp{\kappa})$ so that 
\begin{equation*}
\varpi^\kappa = 
	\begin{cases}
		\infty \quad\text{if $\kappa \leq 0$},\\
		\frac{\pi}{\sqrt{\kappa}} \quad\text{if $\kappa > 0$}
	\end{cases}
\end{equation*}
\end{notation}	
\begin{notation}[See \cite{AKP22}]
	The distance between points $x, y \in \modsp{\kappa}$ will be denoted by $\abs{x - y}$. 
\end{notation}
\begin{notation}[See \cite{AKP22}]
	Let $x , y \in \modsp{\kappa}$, a shortest geodesic segment connecting $x$ and $y$ will be denoted by $[xy]$.
\end{notation}
The segment $[xy]$ is uniquely defined for $\kappa \leq 0$.  For $\kappa > 0$  it is defined uniquely if $\abs{x - y} < \varpi^\kappa$. This is a strict inequality because when $\abs{x - y} = \varpi^\kappa$, we can think of $x$ and $y$ as the north and the south poles and there are infinitely many choices of the geodesic segments connecting these two points. 

\begin{definition}[Triangle (See \cite{AKP22})]\label{def: triangle}
We have two ways of defining triangles, one is by vertices and another one is by side lengths. 
\begin{itemize}
	\item \textbf{Triangle by Vertices}: A triangle in $\modsp{\kappa}$ with vertices $x, y, z \in \modsp{\kappa}$ is the ordered set of the three sides $([yz], [zx], [xy])$. In short, it will be denoted by $[xyz]$. 
	\item \textbf{Triangle by Side Lengths}: A triangle in $\modsp{\kappa}$ with side lengths $a, b, c$ will be denoted by $\modtriangle^\kappa\br{a, b, c}$.
\end{itemize}
\end{definition}
Given $x, y, z \in \modsp{\kappa}$, the triangle $[xyz]$ is uniquely defined as long as the segments $[yz], [zx], [xy]$ are uniquely defined which happens iff $|x-y|<\pik, |x-z|<\pik, |z-y|<\pik$. So $[xyz] = \modtriangle^\kappa\br{a, b, c}$ means that $x, y, z \in \modsp{\kappa}$ are such that 
\begin{equation*}
	\abs{x - y} = c, \quad\abs{y - z} = a,\quad \abs{z - x} = b
\end{equation*}

For  $\modtriangle^\kappa\br{a, b, c}$ to be defined, the sides $a, b, c$ must satisfy the triangle inequalities 
\begin{equation}\label{eq: tri eqs}
	a + b \geq c, \quad a + c \geq b, \quad b + c \geq a.
\end{equation}
Conversely, given $a, b, c$ satisfying the triangle inequality, when can we find a triangle in $\modsp{\kappa}$ with sides $a, b, c$? 
\begin{proposition}\label{prop: model triangle defined}
Let $a, b, c > 0$ satisfies the triangle inequalities \ref{eq: tri eqs}. Then for $\kappa \leq 0$, we can always find such a triangle and it is unique up to isometry. However if $\kappa > 0$, the triangle $\modtriangle^\kappa\br{a, b, c}$ exists if and only if 
\begin{equation*}
	a + b + c \leq 2 \varpi^\kappa
\end{equation*}
	Moreover, if 
\begin{equation*}
	a + b + c < 2 \varpi^\kappa
\end{equation*}
the triangle is unique up to an isometry of $\modsp{\kappa}$. 
\end{proposition}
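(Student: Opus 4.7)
The plan is to build the triangle by fixing two sides meeting at a common vertex and varying the angle between them, then using continuity to hit the prescribed third side length.

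First I would fix a base point $x\in\modsp{\kappa}$ together with a unit vector $u\in T_x\modsp{\kappa}$, and set $y=\exp_x(cu)$; this is well defined since the hypotheses will force $c<\varpi^\kappa$. For each $\alpha\in[0,\pi]$ I would pick a unit vector $v_\alpha\in T_x\modsp{\kappa}$ with $\mangle(u,v_\alpha)=\alpha$ and set $z_\alpha=\exp_x(bv_\alpha)$, giving a continuous one-parameter family of triangles $[xyz_\alpha]$ whose two sides meeting at $x$ have the required lengths $b$ and $c$. Define $f(\alpha)=|yz_\alpha|$; the goal is to find $\alpha\in[0,\pi]$ with $f(\alpha)=a$.

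Next I would analyze $f$ via the law of cosines in $\modsp{\kappa}$, which presents $f(\alpha)$ as an explicit, continuous, strictly monotonic function of $\alpha$ on $(0,\pi)$ as long as $b,c\in(0,\varpi^\kappa)$. Computing the two endpoints: $f(0)=|b-c|$ always; $f(\pi)=b+c$ when $\kappa\le 0$; and when $\kappa>0$ the two opposite geodesic rays from $x$ reconnect at the antipode, so $f(\pi)=b+c$ if $b+c\le\varpi^\kappa$ and $f(\pi)=2\varpi^\kappa-(b+c)$ otherwise. An intermediate value argument then realizes any $a\in[\,|b-c|,\,f(\pi)\,]$ as $f(\alpha)$ for some $\alpha$, producing the desired triangle. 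For existence, the condition $a\le f(\pi)$ together with the triangle inequalities is equivalent to the triangle inequalities alone when $\kappa\le 0$, and to the triangle inequalities together with $a+b+c\le 2\varpi^\kappa$ when $\kappa>0$; conversely, non-existence for $\kappa>0$ when $a+b+c>2\varpi^\kappa$ follows because each side is bounded by $\varpi^\kappa$ and the spherical perimeter bound obstructs closing any such triangle.

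For uniqueness, assume the strict hypothesis $a+b+c<2\varpi^\kappa$ (automatic if $\kappa\le 0$). Then the triangle inequalities force all three sides to be strictly less than $\varpi^\kappa$, so each segment $[yz],[zx],[xy]$ is uniquely determined by its endpoints; strict monotonicity of $f$ on $(0,\pi)$ pins down the angle $\alpha$ uniquely; and transitivity of $\isom(\modsp{\kappa})$ on pairs (point, unit tangent direction), combined with reflections to fix the side of the plane containing $z$, upgrades this to uniqueness of the whole triangle up to isometry. The hard part will be the careful bookkeeping in the spherical case $\kappa>0$: verifying strict monotonicity of $f$ on $(0,\pi)$ even when $b+c>\varpi^\kappa$, and identifying $f(\pi)=2\varpi^\kappa-(b+c)$ in that regime, which is precisely what produces the global constraint $a+b+c\le 2\varpi^\kappa$.
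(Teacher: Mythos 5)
Your construction is correct and takes a genuinely different route from the paper's. The paper proves necessity of the perimeter bound for $\kappa>0$ via an antipodal trick: given $[xyz]$, replace $z$ by its antipode $z'$; the triangle $[xyz']$ has sides $\varpi^\kappa-a$, $\varpi^\kappa-b$, $c$, and its triangle inequality $(\varpi^\kappa-a)+(\varpi^\kappa-b)\ge c$ rearranges to $a+b+c\le 2\varpi^\kappa$. Existence and uniqueness are then only lightly sketched (the paper mainly records that the triangle inequalities together with the perimeter bound force $a,b,c\le\varpi^\kappa$, and points to the north-pole/south-pole degeneration to explain why uniqueness fails at equality). Your approach is a direct hinge construction: fix two sides at a vertex, sweep the angle $\alpha\in[0,\pi]$, and use the law of cosines to show $f(\alpha)=|yz_\alpha|$ is continuous and strictly increasing, with endpoints $|b-c|$ and $\min\{b+c,\ 2\varpi^\kappa-(b+c)\}$; IVT then gives existence exactly under the stated conditions, and strict monotonicity plus transitivity of the isometry group on pointed unit frames gives uniqueness in the strict case. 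Your route buys a genuine, self-contained proof of existence and uniqueness (the paper's is really only a necessity argument with uniqueness remarks), while the paper's antipodal observation is shorter for extracting the perimeter bound.

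One small point worth tightening: the boundary case $a+b+c=2\varpi^\kappa$ can have one side equal to $\varpi^\kappa$ (e.g.\ $c=\varpi^\kappa$, $a+b=\varpi^\kappa$), in which case $y=\exp_x(cu)$ is antipodal to $x$ and the segment $[xy]$ is not unique, and $f$ ceases to be strictly monotone. Existence still holds (pick any minimizing geodesic from $x$ to its antipode and place $z$ on it), but it falls outside the clean $b,c\in(0,\varpi^\kappa)$ regime you set up, so it deserves a sentence. Uniqueness of course fails there, which is consistent with the statement.
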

\begin{proof}
	The case when $\kappa \leq 0$ is trivial. Suppose $\kappa > 0$. If $\modtriangle^\kappa\br{a, b, c}$ is  defined, we can consider $x, y, z \in \modsp{\kappa}$ such that $[xyz] = \modtriangle^\kappa\br{a, b, c}$. 
	By moving $z$ to the north pole by an isometry and extending $[zx], [zy]$ to the south pole, we observe that
	\begin{align*}
		& a + b \geq c\\
		\iff & (\varpi^\kappa - a) + (\varpi^\kappa - b) \geq c\\
		\iff & a + b + c \leq 2\varpi^\kappa
	\end{align*}
	
	\begin{figure}[htbp]
    \centering
        \includegraphics[width=0.5\textwidth]{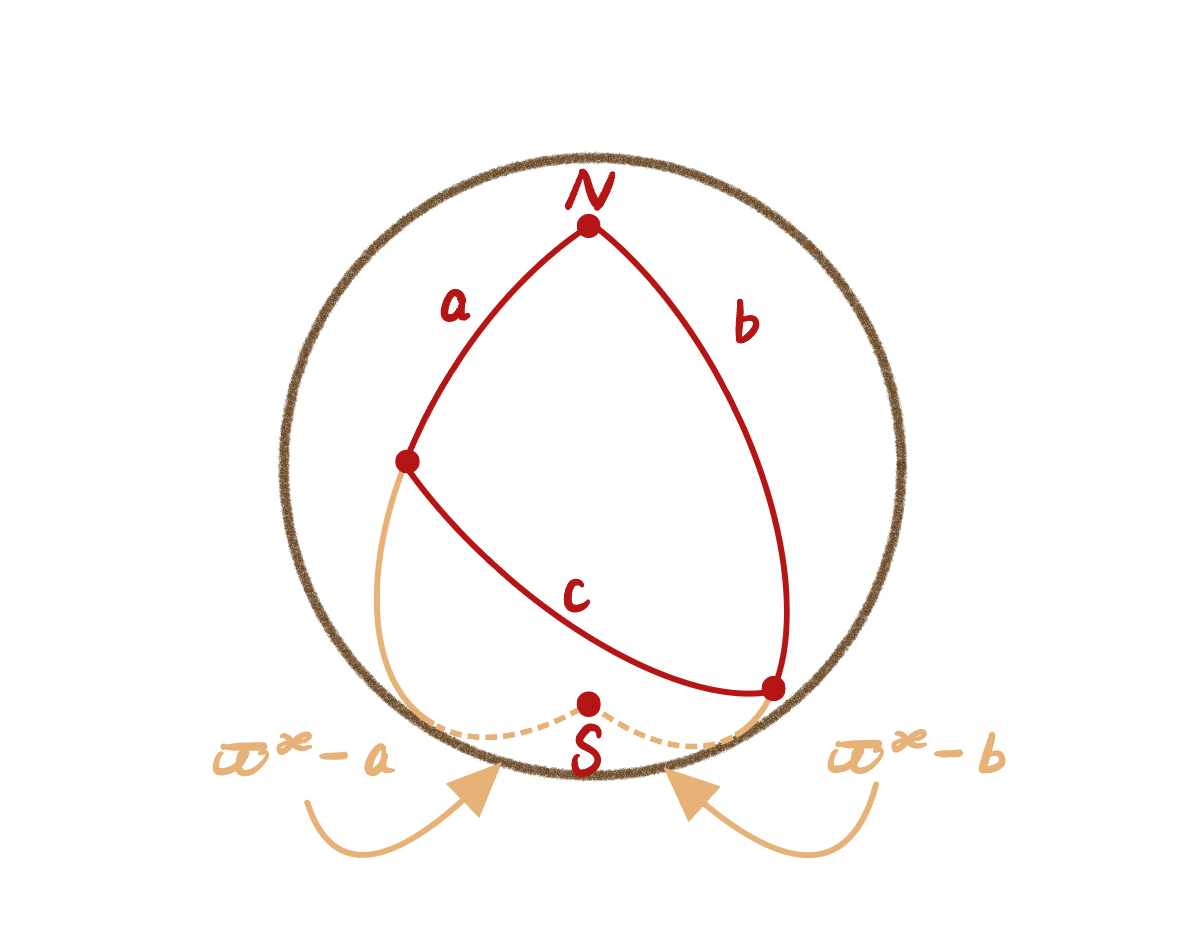}\caption{$a + b + c \leq 2\varpi^\kappa$.}
	\end{figure}
	
	Similarly, we can also show 
	\begin{align*}
		a + b + c \leq 2\varpi^\kappa \iff a + c \geq b\\
		a + b + c \leq 2\varpi^\kappa \iff b + c \geq a
	\end{align*}
	This means, if $a + b + c \leq 2\pik$, then $a, b, c \leq \pik$ since otherwise, for example, if $c > \pik$, we have $a + b \geq c > \pik$ thus, $a + b + c > 2\pik$. Contradiction
	Uniqueness can fail when equality holds. For example, there are infinitely many geodesics of length $\varpi^\kappa$ connecting the north and the south poles. 
	\begin{figure}[htbp]
    \centering
        \includegraphics[width=0.6\textwidth]{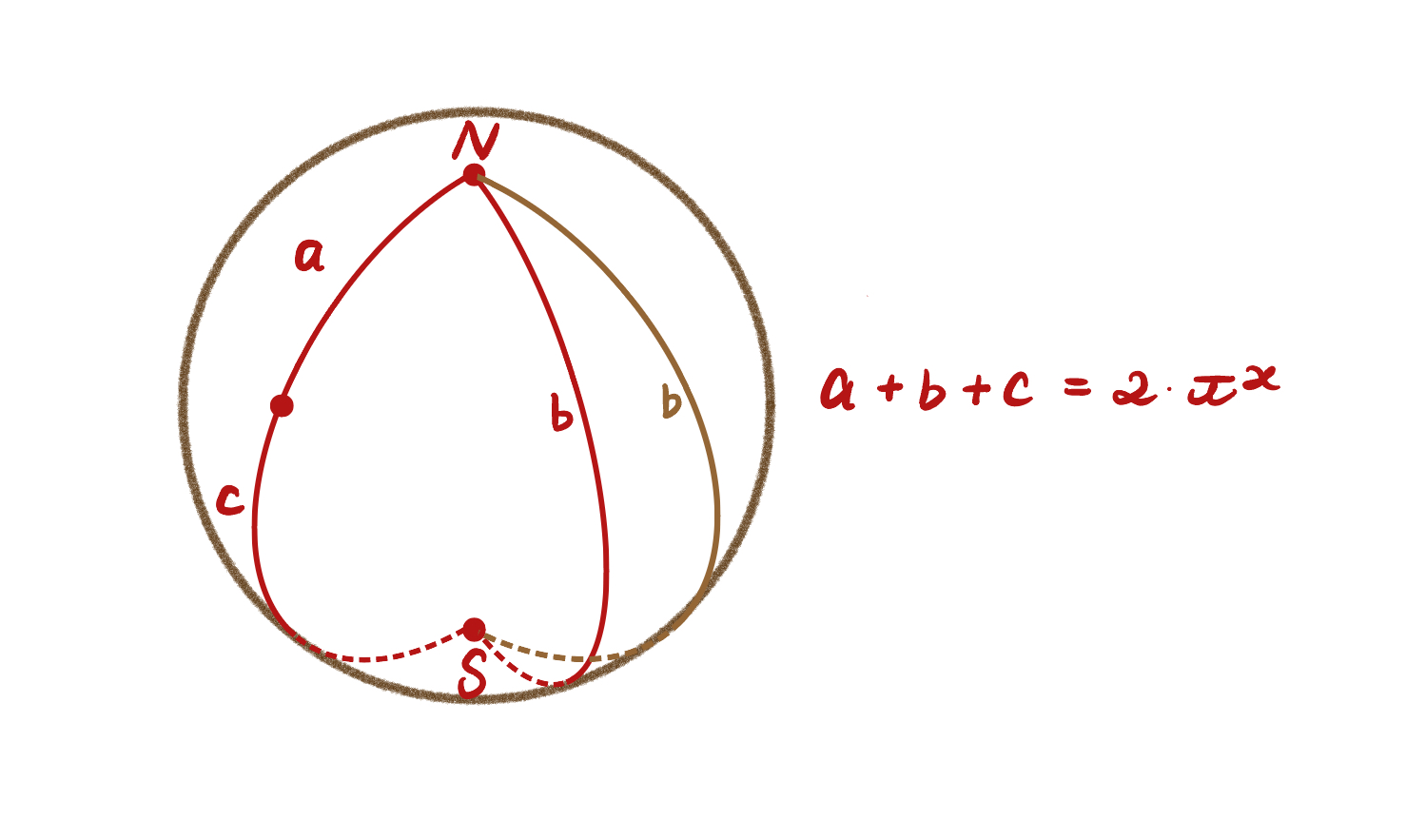}\caption{Uniqueness fails in the equality case. We have infinitely many options for the sides of length $b$.}
	\end{figure}
\end{proof}
\begin{definition}[Hinge (See \cite{AKP22})]
	Let $p, x, y \in \modsp{\kappa}$ be a triple of points such that $p$ is distinct from $x$ and $y$. A pair of geodesics $([px], [py])$ will be called a \textbf{hinge} and will be denoted by
	\begin{equation*}
		[p_y^x] = ([px], [py]).
	\end{equation*}
\end{definition}
\begin{notation}[See \cite{AKP22}]\label{not: angle}
	Let $[xyz] = \modtriangle^\kappa\br{a, b, c}$ be a triangle in $\modsp{\kappa}$. Then the angle $\varphi$ at $x$ will be denoted by $\mangle[x_z^y]$. Notice that $\varphi$ is also the angle of $\modtriangle^\kappa\br{a, b, c}$ opposite to $a$. In this case, we will write 
	\begin{equation*}
		a = \modcvee^\kappa\br{\varphi; b, c} \quad\text{or}\quad \varphi = \modangle^\kappa\br{a; b, c}
	\end{equation*}
	where the functions $\modcvee^\kappa$ and $\modangle^\kappa$ will be called respectively the \textbf{model side} and the \textbf{model angle}. 
\end{notation}
\section{Trigonometry of the Model Space}

Consider the model space $\modsp{\kappa}$. Let $\gamma: [0, l] \to \modsp{\kappa}$ be a unit speed geodesic, i.e. $\abs{\dga}(t) = 1$ for each $t \in [0, l]$. We want to solve for the explicit expression for the Jacobi fields perpendicular to the geodesic, i.e. $\inp{J(t), \dga(t)} \equiv 0$. Because the model space $\modsp{\kappa}$ is the space of constant curvature $\kappa$, we can choose $X_0 \in T_{\gamma(t_0)}M$ a unit vector perpendicular to $\gamma$, and extend $X_0$ to $X = X(t)$ a vector field along $\gamma$ obtained by parallel transport. In particular, $X(0) = X_0$. Obviously, $\abs{X} = 1$ and $\inp{X(t), \dga}\equiv 0$ for all $t$.

We can set $J(t) = \sum_{i = 1}^n\eta_i(t)X_i(t) =: \sum_{i = 1}^n J_i(t)$ so that we only need to solve for $\eta_i(t)$ to find the explicit expression of $J(t)$ in $\modsp{\kappa}$. Since space $\modsp{\kappa}$ is the space of constant sectional curvature $\kappa$. Then by Proposition 8.36 in~\cite{Lee19}, 
\begin{align*}
	Rm(X, Y, Z, T) = \kappa(\inp{X, Z}\inp{Y, T} - \inp{X, T}\inp{Y, Z})
	\quad\forall X, Y, Z, T \in \mathfrak{X}(\modsp{{\kappa}}).
\end{align*}
Thus, for any vector fields $T$ along $\gamma$,
\begin{align*}
	\inp{R(\dga, J_i)\dga, T} 
	&= \kappa(\inp{\dga, \dga}\inp{J_i, T} - \inp{\dga, T}\inp{\dga, J_i})\\
	&= \kappa\inp{J_i, T}
\end{align*}
Therefore, $R(\dga, J_i)\dga = \kappa J_i$ and we can rewrite the Jacobi equation as
\begin{align*}
	D_t^2 J_i + \kappa J_i = 0.
\end{align*}
Remember that $X(t)$ is parallel, therefore $D_tX(t) \equiv 0$, and we have
\begin{equation*}
	D_t^2 J_i(t) = D_t^2 (\eta(t)X(t)) = D_t(\eta'(t)X(t)) = \eta''(t)X(t)
\end{equation*}
Thus, we conclude that $J$ is a Jacobi field if and only if
\begin{equation}\label{eq: eta solution}
	\eta_i''(t) + \kappa \eta_i(t) = 0
\end{equation}
for each $i = 1, \dots, n$. By solving $\eta(t)$, we can find the explicit expression of the Jacobi fields in the model spaces. 
\begin{remark}
	Remember from the last lecture, that we can derive the scalar Riccati equation from \ref{eq: eta solution}. When $n = 2$, we can derive \ref{eq: eta solution} from any Jacobi field of any $2$-dimensional manifolds. For $n > 2$, we can derive the scalar Riccati equation only for model spaces. For general manifolds in higher dimensions, we can only have matrix Riccati equations.
\end{remark}

The general solution, as a second-order constant-coefficient linear ODE equation \ref{eq: eta solution}, is
\begin{align*}
	&\eta(t) = C_1\sinh{(t\sqrt{\abs{\kappa}})}
	 + C_2\cosh{(t\sqrt{\abs{\kappa}}))} \quad \text{if $\kappa < 0$}\\
	&\eta(t) = C_1\sin{(t\sqrt{\abs{\kappa}})}
	 + C_2\cos{(t\sqrt{\abs{\kappa}})} \quad \text{if $\kappa > 0$} \\
	&\eta(t) = C_1 t + C_2 \quad \text{if $\kappa = 0$}
\end{align*}
Under the initial condition $\eta(0) = 1$ and $\eta'(0) = 0$, The explicit solution of \ref{eq: eta solution} is denoted by $\sn_{(\cdot)}$ such that for any $\kappa \geq 0$,
\begin{equation}\label{eq: equation system for x}
    \sn_{\pm \kappa} = \frac{1}{\sqrt{\kappa}}\sn_{\pm 1}(t\sqrt{\kappa});
\end{equation}
where
\begin{equation*}
	\sn_{1}(t) = \sin(t), \quad \sn_{-1}(t) = \sinh(t), \quad \sn_0(x) = x;
\end{equation*}
On the other hand, under the initial condition $\eta(0) = 0$ and $\eta'(0) = 1$, the explicit solution of \ref{eq: eta solution} is denoted by $\cs_\kappa{(\cdot)}$ such that for any $\kappa \geq 0$, 
\begin{equation}\label{eq: equation system for y}
    \cs_{\pm \kappa} = \cs_{\pm 1}(t\sqrt{\kappa})
\end{equation}
where
\begin{equation*}
	\cs_{1}(t) = \cos(t), \quad \cs_{-1}(t) = \cosh(t), \quad \cs_0(x) = 1.
\end{equation*}

In terms of the $\csk$ and $\snk$, we can rewrite the general solution of the equation \ref{eq: eta solution} as 
    \begin{equation*}
    	\eta(t) = C_1\snk(t) + C_2\csk(t)
    \end{equation*}
    so that the Jacobi field $J$ in $\modsp{\kappa}$ can be written as
    \begin{equation*}
    	J(t) = (C_1\snk(t) + C_2\csk(t)) X(t)
    \end{equation*}
We will call $\csk$ and $\snk$ the \textbf{cosine} and \textbf{sine} functions of the model space $\modsp{\kappa}$. 

A straightforward computation gives us the laws of the trigonometry functions in the model spaces, 
\begin{equation*}
	(\snk)' = \csk ,\quad (\csk)' = -\kappa\snk.
\end{equation*}
Remember that we used the replacement $a = \frac{\eta'}{\eta} \iff \eta' = a\eta$ to derive the scalar Riccati equation $a' + a^2 + \kappa = 0$. When $\eta = \snk$, then $a = \frac{\eta'}{\eta}$ is the model cotangent function which is denoted by $ \ctk := \frac{y'}{y} = \frac{\csk}{\snk}$. Then the Riccati equation can be written as
\begin{equation*}
	(\ctk)' + (\ctk)^2 + \kappa = 0.
\end{equation*}
This can also be easily checked by a direct computation. Similarly, if we take $\eta=\csk(t)$ which also solves \ref{eq: eta solution} then  $a = \frac{\eta'}{\eta}=-k\tnk(t):=-k \frac{\snk}{\csk} $ also solves the Riccati equation $a'+a^2+k=0$.
\section{Hessians Operator}
In order to compare the Hessian operator of a general Riemannian manifold with those of the model spaces, we need the following explicit expression of the Hessian operator in the case of model space. 
\begin{proposition}\label{prop: projection ctk}
	Let $p \in \modsp{\kappa}$, $v\in T_p\modsp{\kappa}$. Take $\gamma(t)$ a unit speed geodesic starting at $p$ with $\dga(0) = v$. Denote $d(\cdot, p)$ by $d_p$, then
		\begin{equation*}
    		\hess_{d_p} = \ctk(t)\pi_{d_p}
    	\end{equation*}
   	is defined for any $q \in \modsp{\kappa}$ with $d_p(q) < \varpi^\kappa$ and $\pi_f: T_q\modsp{\kappa} \to T_q\modsp{\kappa}$ is the orthogonal projection onto the tangent space of the sphere centered at $p$ of radius $d_p(q)$.
\end{proposition}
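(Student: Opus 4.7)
The plan is to identify $\hess_{d_p}$ with the shape operator $S$ of the distance spheres $\{d_p = t\}$, and then compute $S$ explicitly in the model space using the Jacobi field formulas just derived. By the last lemma of the previous chapter, $\abs{\nabla d_p} = 1$ on the smooth locus of $d_p$, which in $\modsp{\kappa}$ is all of $\{0 < d_p < \pik\}$ (here the cut locus of $p$ is either empty or the antipodal point). Fix such a $q$ and let $\gamma\colon[0,t]\to\modsp{\kappa}$ be the unit speed geodesic from $p$ to $q$, so that $\dga(t) = \nabla d_p(q)$. From the construction in Remark~\ref{exist-S-I} (with the function $f = d_p$), the shape operator $S(t)\colon \dga(t)^\perp \to \dga(t)^\perp$ of the level set $\{d_p = t\}$ at $\gamma(t)$ coincides with $\hess_{d_p}$ restricted to $\dga(t)^\perp$, via $D_tJ = S(J) = \hess_{d_p}(J)$ for any normal Jacobi field coming from the variation of radial geodesics.

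Next I would compute $S(t)$ explicitly. Fix a unit vector $w \in \dga(0)^\perp \subset T_p\modsp{\kappa}$ and let $W(t)$ be its parallel transport along $\gamma$. By Theorem~\ref{thm: Ja vanishes at a point} together with the explicit solution of the Jacobi equation in the model space worked out above (with initial data $J(0)=0$, $D_tJ(0)=w$), the Jacobi field along $\gamma$ with these initial conditions is
\begin{equation*}
    J(t) = \snk(t)\,W(t),
\end{equation*}
and therefore $D_tJ(t) = \csk(t)\,W(t)$. The relation $D_tJ = S(J)$ then gives
\begin{equation*}
    S(t)\bigl(\snk(t)W(t)\bigr) = \csk(t)\,W(t),
\end{equation*}
so that $S(t)(W(t)) = \ctk(t)\,W(t)$. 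Since $w \in \dga(0)^\perp$ was arbitrary and parallel transport is a linear isomorphism of $\dga(0)^\perp$ onto $\dga(t)^\perp$, this shows $S(t) = \ctk(t)\,\id$ on $\dga(t)^\perp$.

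To finish, I would handle the tangential direction: because $\gamma$ is a geodesic and $\nabla d_p = \dga$ along $\gamma$, we compute $\hess_{d_p}(\dga) = \nabla_{\dga}\nabla d_p = \nabla_{\dga}\dga = 0$. Since $\dga(t)^\perp$ is precisely the tangent space to the distance sphere $\{d_p = d_p(q)\}$ at $q$, the projection $\pi_{d_p}$ is the orthogonal projection onto $\dga(t)^\perp$ with kernel $\R\dga(t)$. Combining the tangential and normal computations yields $\hess_{d_p}(q) = \ctk(d_p(q))\,\pi_{d_p}$.

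The main subtlety I anticipate is bookkeeping for the domain: one must verify that $S(t)$ given by the construction of Remark~\ref{exist-S-I} with initial condition $S(t)\sim \tfrac{1}{t}\id$ is genuinely the shape operator of the distance sphere for all $t \in (0,\pik)$ in $\modsp{\kappa}$, rather than only on the short-time existence interval of the Riccati ODE. This is not automatic from the local theory, but in the model space it follows because there are no conjugate points along $\gamma$ before $t = \pik$ (the first zero of $\snk$), so the normal exponential map from $\{p\}$ is a local diffeomorphism on all of $\{0 < |v| < \pik\} \subset T_p\modsp{\kappa}$ and $d_p$ is smooth there.
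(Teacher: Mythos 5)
Your proof is correct and follows essentially the same route as the paper's: identify $\hess_{d_p}$ with the shape operator via $D_tJ = S(J)$, use the explicit model-space Jacobi field $J(t) = \snk(t)W(t)$ with $W$ parallel to compute $S = \ctk(t)\id$ on $\dga^\perp$, and then handle the radial direction separately. Your treatment of the radial direction via $\hess_{d_p}(\dga) = \nabla_{\dga}\dga = 0$ is in fact slightly cleaner than the paper's, which only computes the diagonal entry $g(\hess_f(E_0),E_0)=0$ and leaves the vanishing of the off-diagonal entries $g(\hess_f(E_0),E_i)$ implicit (it follows from symmetry of the Hessian once the $E_i$ are known to be eigenvectors); your observation about the existence interval of $S(t)$ on $(0,\pik)$ is also a correct and worthwhile point that the paper passes over.
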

\begin{proof}
Let $J$ be a normal Jacobi field along $\gamma$ such that $J(0) = 0$. Remember that we have already know that we can split the Jacobi equation into 
\begin{equation*}
	D_tJ(t) = S(t)(J(t))
\end{equation*}
and the Riccati equation. Because the shape operator along $\gamma$ is just the Hessian matrix.
This is 
\begin{equation}\label{eq: Hessian Jacobi}
	D_tJ(t) = \hess_f(J(t)).
\end{equation}
Let $(E_0(t), \dots, E_{n - 1}(t))$ be a parallel orthonormal frame along $\gamma$ and in particular we take $E_0(t) = \dga(t)$ for $t \in [0, \pik]$. It is easy to check that for each $i = 1, \dots, n - 1$, the vector fields 
\begin{equation*}
	J_i(t) = C_1\snk(t)E_i(t)
\end{equation*}
are also normal Jacobi fields along $\gamma$ for some constant $C_1$. Then
    \begin{equation}\label{eq: Hessian 1}
    	D_tJ_i(t) = C_1\csk(t)E_i(t)
    \end{equation}
    On the other hand, by the above equality \ref{eq: Hessian Jacobi}, by the linearity of the Hessian operator, we have 
    \begin{equation}\label{eq: Hessian 2}
    	D_tJ_i(t) = \hess_f(J_i(t)) = C_1\snk(t)\hess_f(E_i(t))
    \end{equation}
    As the right hand sides of the equation \ref{eq: Hessian 1} and \ref{eq: Hessian 2} are equal, we have
    \begin{align*}
    	\hess_f(E_i(t)) 
    	 &= \frac{C_1\csk(t)}{C_1\snk(t)}E_i(t)\\
    	 &= \ctk(t)\pi_f(E_i(t))
    \end{align*}
    On the other hand, since $E_0 = \nabla f_{\gamma(t)} = \dga(t)$. Then, as $f(\gamma(t)) = t$ is linear, we have
    \begin{equation}\label{eq: f''}
    	0 = f(\gamma(t))'' = (\nabla^2 f) (\dga(t), \dga(t)) = g(\hess_f(E_0), E_0)
    \end{equation}
    Thus, in terms of the basis $\br{E_0, \dots, E_{n - 1}}$, the explicit expression of $\hess_f$ is
    \begin{equation*}
    	\hess_f = \ctk(t)\pi_f
    \end{equation*}
    \end{proof}
    
    Since $t=d(\gamma(t),p)$  $t \in [0, \pik]$  this can be rewritten as  
    
     \begin{equation*}
    	\hess_f = \ctk(d_p)\pi_f
    \end{equation*}

    Therefore, under the basis $\br{E_0, \dots, E_{n - 1}}$ we can write $\hess_{d_p}$ as the following matrix. 
    \begin{equation*}
        \hess_{d_p} = 
        \begin{bmatrix}
            0 & 0 & 0 & \cdots & 0\\
            0 & \ctk(t) & 0 & \cdots & 0 \\
            0 & 0 & \ctk(t) & \dots & 0\\
            \vdots & \vdots & \vdots & \ddots & \vdots\\
            0 & 0 & 0 & \cdots & \ctk(t)
        \end{bmatrix}=
        \begin{bmatrix}
            0 & 0 & 0 & \cdots & 0\\
            0 & \ctk(d_p) & 0 & \cdots & 0 \\
            0 & 0 & \ctk(d_p) & \dots & 0\\
            \vdots & \vdots & \vdots & \ddots & \vdots\\
            0 & 0 & 0 & \cdots & \ctk(d_p)
        \end{bmatrix}
    \end{equation*}

\section{The Cosine Law for Model Spaces}
In this section, we want to derive the cosine law for the model space. The key point here is to obtain a scalar Hessian at every point of the model space (instead of just on the equidistant sphere of $d_p$) by changing $d_p$ to $\phi\circ d_p$ via some smooth function $\phi: \R \to \R$. Namely, we want
\begin{equation}\label{eq: scalar hessian}
	\hess_{\phi\circ d_p}|_x = \lambda(x)\id
\end{equation}
for each $x \in \modsp{\kappa}$ with $0 < d_p(x) < \pik$. And $\lambda$ is some scalar-valued function over $M$. Denote $f = \phi\circ d_p$, if we have \ref{eq: scalar hessian}, then for any $p \in \modsp{\kappa}$, and any unit speed geodesics $\gamma(t)$ in $\modsp{\kappa}$ (not necessarily passing through $p$), we can easily compute the second-order derivative along any unit speed geodesic. Indeed, remember the step \ref{eq: f''}
\begin{equation*}
	f(\gamma(t))'' = (\nabla^2 f)(\dga(t), \dga(t)) = g(\hess_f(\dga(t)), \dga(t)) = \lambda(\gamma(t)).
\end{equation*}
\subsection{Modification Function}
We define the \textbf{modification function} by
\begin{equation}\label{eq: mdk}
	\mdk(t) = 
	\begin{cases}
		\int_0^t \snk(s)ds \quad \text{for $0 \leq t \leq \varpi^\kappa$},\\
		\frac{2}{\kappa} \quad \text{for $t > \varpi^\kappa$}.
	\end{cases}
\end{equation}
Then the explicit expression for $\mdk$ is
\begin{equation*}
	\md_{\pm \kappa} = \frac{1}{\kappa}\md_{\pm 1}(t\sqrt{\kappa});
\end{equation*} 
where in particular when $\kappa = 1, 0, -1$, 
\begin{equation*}
	\md_1(t) = 
	\begin{cases}
		1 - \cos(t) \quad\text{for $t \leq \pi$,}\\
		2 \quad\text{for $t > \pi$.}
	\end{cases}
	\md_{-1}(t) = \cosh(t) - 1, \quad \md_{0}(t) = \frac{1}{2}t^2 
\end{equation*}

\begin{corollary}
	By the formula \ref{eq: mdk}, it is not hard to check that $\mdk$ solves the following initial value problem: 
	\begin{equation}\label{eq: equation system for z}
    \begin{cases}
        z'' + \kappa z = 1\\
        z(0) = 0\\
        z'(0) = 0.
    \end{cases}
\end{equation}
\end{corollary}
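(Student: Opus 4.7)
The plan is a direct verification of the three conditions of the initial value problem by differentiating the integral definition of $\mdk$ and invoking the trigonometric identities $(\snk)' = \csk$, $(\csk)' = -\kappa\snk$, $\snk(0) = 0$, and $\csk(0) = 1$ that were established in the preceding subsection on the trigonometry of the model spaces. The whole argument is essentially a single line of calculus; no geometric input is needed beyond those identities.

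First, $\mdk(0) = \int_0^0 \snk(s)\,ds = 0$, giving $z(0)=0$. Second, by the fundamental theorem of calculus $\mdk'(t) = \snk(t)$ on $[0,\varpi^\kappa)$, hence $\mdk'(0) = \snk(0) = 0$, giving $z'(0)=0$. For the ODE itself, differentiate once more to obtain $\mdk''(t) = (\snk)'(t) = \csk(t)$, so the content of the corollary reduces to the identity $\csk(t) + \kappa\,\mdk(t) = 1$ on $[0,\varpi^\kappa]$. I would derive this uniformly in the sign of $\kappa$ by writing
\begin{equation*}
\kappa\,\mdk(t) \;=\; \kappa\int_0^t \snk(s)\,ds \;=\; -\int_0^t (\csk)'(s)\,ds \;=\; \csk(0) - \csk(t) \;=\; 1 - \csk(t),
\end{equation*}
where the second equality uses $(\csk)' = -\kappa\snk$. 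Rearranging gives $\mdk''(t) + \kappa\,\mdk(t) = \csk(t) + (1-\csk(t)) = 1$, as required. This avoids a case split on $\kappa>0$, $\kappa=0$, $\kappa<0$, handling all three at once.

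There is no real obstacle; the only point that deserves a comment is the second branch of the piecewise definition. When $\kappa > 0$ and $t > \varpi^\kappa$ one has $\mdk \equiv 2/\kappa$, for which $z'' + \kappa z = 0 + 2 \neq 1$. Accordingly, I would state the corollary as asserting that $\mdk$ solves the IVP on its natural smooth domain $[0,\varpi^\kappa]$ (which is all of $[0,\infty)$ when $\kappa \le 0$); the constant extension past $\varpi^\kappa$ is a convention adopted to keep $\mdk$ globally defined for the later comparison arguments, not a genuine solution of the ODE. As a small sanity check on the continuity of the two branches, one verifies $\int_0^{\pi/\sqrt{\kappa}} \tfrac{1}{\sqrt{\kappa}}\sin(s\sqrt{\kappa})\,ds = 2/\kappa$, matching the value on the second branch at $t=\varpi^\kappa$.
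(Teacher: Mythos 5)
Your verification is correct, and it is essentially the one the paper has in mind — the paper supplies no proof beyond the remark that ``it is not hard to check,'' which is shorthand for exactly the direct computation you carry out using $\mdk' = \snk$, $(\snk)' = \csk$, and $(\csk)' = -\kappa\snk$. Your uniform derivation of $\kappa\,\mdk(t) = 1 - \csk(t)$ by rewriting the integrand as $-(\csk)'$ is a clean way to avoid the three-way case split, and the observation that the constant extension past $\varpi^\kappa$ (where $z''+\kappa z = 2 \neq 1$) is a convention rather than a solution of the ODE is a legitimate caveat the paper leaves implicit; in the later applications $\mdk\circ d_p$ is only differentiated where $d_p < \varpi^\kappa$, so nothing is lost.
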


\subsection{Scalar Hessian Operator}
Let $p \in \modsp{\kappa}$, the distance function from $p$ is $d_p = d(\cdot, p)$. We will call $\mdk\circ d_p$ the \textbf{modified distance function} to $p$. It turns out that $\mdk$ plays the role of the function $\phi$ in \ref{eq: scalar hessian}.

\begin{theorem}\label{thm: hessian csk}
	The Hessian of $\mdk\circ d_p$ is scalar. Namely,
	\begin{equation}\label{eq: Hess cs}
		\hess_{\mdk\circ d_p}|_x = \csk(d_p(x))\id
	\end{equation}
	at each $x \in \modsp{\kappa}$.
\end{theorem}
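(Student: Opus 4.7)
The plan is to compute $\hess_{\md_\kappa\circ d_p}$ by combining the chain rule for Hessians with the explicit formula for $\hess_{d_p}$ established in Proposition~\ref{prop: projection ctk}. For a smooth function $\phi:\R\to\R$ and a smooth function $g$ on $\modsp{\kappa}$, one has the identity
\begin{equation*}
\hess_{\phi\circ g} = \phi''(g)\, dg\otimes dg + \phi'(g)\,\hess_g,
\end{equation*}
which I would first justify briefly by differentiating $\nabla(\phi\circ g) = \phi'(g)\nabla g$ in an arbitrary direction $X$, using the Leibniz rule and $\nabla_X(\nabla g) = \hess_g(X)$.

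Next, I would specialize to $g = d_p$ and $\phi = \mdk$. From the defining ODE \eqref{eq: equation system for z} and the relation $(\snk)' = \csk$, we have $\mdk' = \snk$ and $\mdk'' = \csk$. Since $d_p$ is smooth and has unit gradient away from $p$ and inside the conjugate locus (which is all of $\{0 < d_p < \pik\}$ in the model space), the term $dd_p\otimes dd_p$ represents the orthogonal projection $P_{\nabla d_p}$ onto the line spanned by $\nabla d_p$. Moreover, the projection $\pi_{d_p}$ onto the tangent space of the distance sphere is exactly $\id - P_{\nabla d_p}$. Substituting Proposition~\ref{prop: projection ctk}, I get
\begin{equation*}
\hess_{\mdk\circ d_p} = \csk(d_p)\,P_{\nabla d_p} + \snk(d_p)\,\ctk(d_p)\,\pi_{d_p}.
\end{equation*}

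The final step uses the identity $\snk\cdot\ctk = \snk\cdot\tfrac{\csk}{\snk} = \csk$, which turns both coefficients into $\csk(d_p)$ and collapses the expression to
\begin{equation*}
\hess_{\mdk\circ d_p} = \csk(d_p)\bigl(P_{\nabla d_p} + \pi_{d_p}\bigr) = \csk(d_p)\,\id,
\end{equation*}
as desired. I expect no serious obstacle here since every ingredient is already in place; the main conceptual point is simply the orthogonal decomposition $\id = P_{\nabla d_p}+\pi_{d_p}$, which ensures that the tangential and normal eigenvalues of $\hess_{\mdk\circ d_p}$ coincide precisely because the derivative identity $\mdk'\cdot\ctk = \mdk''$ holds. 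A minor remark is that the formula extends continuously to $x=p$ (where $d_p = 0$) since $\csk(0)=1$ and $\mdk$ is smooth at $0$, so one can also interpret $\hess_{\mdk\circ d_p}|_p = \id$ by a limiting argument if desired.
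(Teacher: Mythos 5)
Your proof is correct and follows essentially the same route as the paper's: both invoke the Hessian chain rule for $\phi\circ g$, substitute Proposition~\ref{prop: projection ctk} for $\hess_{d_p}$, and then use the identities $\mdk'=\snk$, $\snk\ctk=\csk$ to see that the radial and tangential eigenvalues agree. The only cosmetic difference is that the paper verifies the equality entry-by-entry in an orthonormal frame $\{E_0,\dots,E_{n-1}\}$ along a radial geodesic, whereas you phrase the same computation invariantly via the orthogonal decomposition $\id = P_{\nabla d_p}+\pi_{d_p}$; both are fine.
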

\begin{proof}
	Recall the Hessian formula \cite{Lee19}, for any $X, Y \in \mathfrak{X}(M)$, 
		\begin{align*}
		g(\hess_{\phi\circ f}(X), Y) 
		&= Y(X(\phi\circ f)) - (\nabla_Y X)(\phi\circ f)\\
		&= Y(\phi'(f)X(f)) - \phi'(f)(\nabla_Y X)f\\
		&= \phi''(f)Y(f)X(f) + \phi'(f)Y(Xf) - \phi'(f)(\nabla_Y X)f\\
		&= \phi''(f)g(\nabla f, Y)g(\nabla f, X) + \phi'(f)g(\hess_f(X), Y)
	\end{align*}
	for any smooth functions $\phi: \R \to \R$ and $f: M \to \R$. Setting $\phi = \mdk$ and $f = d_p$. Along a unit speed geodesic $\gamma(t)$ starting at $p$, since $f(\gamma(t)) = t$, for $i = 1, \dots, n - 1$, we have
	\begin{equation*}
		g(\hess_{\mdk\circ d_p}(E_i), E_i) = \mdk''(t)g(\nabla f_{\gamma(t)}, E_i)^2 + \snk(t)g(\hess_{d_p}(E_i), E_i)
	\end{equation*}
	which followed by definition $\mdk'(t) = \snk(t)$ and the law $\snk'(t) = \csk(t)$. Then since $E_i(t) \perp \dga(t)$ along $\gamma$ and $\nabla f_{\gamma(t)} = \dga(t)$, we have
	\begin{equation*}
		g(\hess_{\mdk\circ d_p}(E_i), E_i) = \snk(t)g(\hess_{d_p}(E_i), E_i) = \snk(t)\ctk(t) = \csk(t)
	\end{equation*}
	because $g(\nabla f_{\dga(t)}, E_i) = 0$ for $i = 1, \dots, n - 1$. 	On the other hand, since $g(\hess_{d_p}(E_n), E_n) = 0$,
	\begin{equation*}
		g(\hess_{\mdk\circ d_p}(E_n), E_n) = \mdk''(t) = \csk(t)
	\end{equation*}
	The second term vanishes due to $E_n = \dga(t) = -N$ so that $g(\nabla f_{\gamma(t)}, E_n) = 1$. Therefore, we can conclude that using the modified distance function $\mdk$, we have
\begin{equation*}
	\hess_{\mdk\circ d_p} = \csk(d_p)\cdot \id 
\end{equation*}    
\end{proof}
\begin{corollary}\label{cor: key of scalar hessian}
Let $p \in \modsp{\kappa}$ and $\sigma$ be any unit-speed geodesic in $\modsp{\kappa}$ (not necessarily passing through $p$), then $z(s) = \mdk(d_p(\sigma(s)))$ solves \ref{eq: equation system for z}. Thus, for any $x \in \modsp{\kappa}$, we can alternatively express the scalar Hessian (equation \ref{eq: Hess cs}) as
\begin{equation}\label{model-hess-ode}
	\hess_{\mdk\circ d_p}|_x + \kappa\mdk(d_p(x)) \id = \id.
\end{equation}
for any $x \in \modsp{\kappa}$. 
\end{corollary}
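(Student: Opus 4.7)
The plan is to deduce the corollary directly from Theorem \ref{thm: hessian csk} combined with the one-variable identity $\csk(t) + \kappa\,\mdk(t) = 1$. This identity follows immediately from $\mdk'(t) = \snk(t)$, $\snk'(t) = \csk(t)$, together with $\mdk(0) = 0$ and $\csk(0) = 1$; equivalently, it is nothing but the ODE part of \eqref{eq: equation system for z} applied to $\mdk$ itself, which is already recorded in the preceding corollary on $\mdk$.

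For the ODE claim about $z(s)$, I would apply the general identity $(f \circ \sigma)''(s) = g(\hess_f(\dot{\sigma}(s)), \dot{\sigma}(s))$, valid for any smooth $f$ and any geodesic $\sigma$ because $D_s \dot{\sigma} = 0$ (so the first-order correction disappears, exactly as used in the derivation \eqref{eq: f''}). Taking $f = \mdk \circ d_p$ and invoking Theorem \ref{thm: hessian csk} gives
\[
z''(s) = g\bigl(\csk(d_p(\sigma(s)))\,\dot{\sigma}(s),\, \dot{\sigma}(s)\bigr) = \csk(d_p(\sigma(s))),
\]
since $\abs{\dot{\sigma}} = 1$. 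Substituting $\csk(d_p(\sigma(s))) = 1 - \kappa\,\mdk(d_p(\sigma(s))) = 1 - \kappa\,z(s)$ then yields $z''(s) + \kappa\,z(s) = 1$, which is the ODE in \eqref{eq: equation system for z}. Note that $\sigma$ is not assumed to pass through $p$, so the initial conditions $z(0) = z'(0) = 0$ listed in \eqref{eq: equation system for z} will generally fail; the content here is that $z$ satisfies the differential equation with whatever initial data $\sigma$ supplies.

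For the alternative scalar Hessian expression \eqref{model-hess-ode}, there is nothing further to compute: rewriting $\csk(d_p(x)) = 1 - \kappa\,\mdk(d_p(x))$ inside the formula $\hess_{\mdk \circ d_p}|_x = \csk(d_p(x))\,\id$ of Theorem \ref{thm: hessian csk} and transposing the $\kappa\,\mdk(d_p(x))\,\id$ term gives precisely \eqref{model-hess-ode}. No real obstacle arises; the corollary is a clean repackaging of Theorem \ref{thm: hessian csk} in a form tailored for later comparison arguments, where one wants a pointwise \emph{scalar} ODE controlling $\mdk \circ d_p$ along arbitrary geodesics rather than just along radial geodesics out of $p$.
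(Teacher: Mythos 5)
Your proof is correct and follows the same route the paper implicitly intends: Theorem \ref{thm: hessian csk} gives $\hess_{\mdk\circ d_p} = \csk(d_p)\id$, the one-variable identity $\csk(t) + \kappa\mdk(t) \equiv 1$ (immediate from $\mdk' = \snk$, $\csk' = -\kappa\snk$, and the values at $t=0$) converts that into \eqref{model-hess-ode}, and the formula $(f\circ\sigma)'' = g(\hess_f(\dot\sigma),\dot\sigma)$ along a unit-speed geodesic yields the ODE $z'' + \kappa z = 1$. Your remark that the initial conditions $z(0)=z'(0)=0$ appearing in \eqref{eq: equation system for z} will generally fail when $\sigma$ does not pass through $p$ is also correct and worth making explicit — as the later use in Corollary \ref{eq: modcvee eq} confirms, the corollary's actual content is only the ODE, not the full IVP.
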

\begin{remark}
	Unlike $d_p$, the modified distance function $\mdk\circ d_p$ is smooth at $p$.
\end{remark}

\subsection{The Cosine Law in the Model Space}
The above corollary \ref{cor: key of scalar hessian} is essential to derive the \textbf{cosine law} in the model space $\modsp{\kappa}$. To do that, we introduce the follow notations for metric spaces.

\begin{definition}[Model Angle]
    Let $X$ be a metric space, $p, x, y \in X$ and $\mtr{p}{x}{y} = \modtriangle^\kappa(pxy)$ is defined and $\abs{p - x}, \abs{p - y} > 0$, then the angle measure of $\mtr{p}{x}{y}$ at $\Tilde{p}$ will be called the \textbf{model angle of the triple $p, x, y$ at $p$} and will be denoted by
    \begin{equation*}
        \modangle^\kappa(p_x^y) := \modangle^\kappa\br{\abs{x - y}, \abs{p - x}, \abs{p - y}}
    \end{equation*}
\end{definition}
\begin{definition}[Hinge Angle \& Model Side of Hinge]\label{def-angle}
    Let $X$ be a metric space, $p, x, y \in X$. Let $[p_y^x]$ be a hinge, then we can define the \textbf{hinge angle} of the hinge $[p_y^x]$ as
    \begin{equation*}
        \mangle[p_y^x] = \lim_{\overline{x}, \overline{y} \to p}\modangle^\kappa(p_{\overline{y}}^{\overline{x}}),
    \end{equation*}
    for $\overline{x} \in ]px]$ and $\overline{y} = ]py]$ if this limit exists. And we can define the \textbf{model side} of the hinge $[p_y^x]$ as
    \begin{equation*}
    	\modcvee^\kappa[p_x^y] = \modcvee^\kappa\br{\mangle[p_y^x]; \abs{p - x}, \abs{p - y}},
    \end{equation*}
\end{definition}

Fix $a, b, c  > 0$ such that $\modtriangle^\kappa\br{a, b, c}$ is a triangle uniquely defined in the model space $\modsp{\kappa}$. Then we can write $\varphi = \modangle^\kappa\br{a; b, c}$ and $a = \modcvee^\kappa\br{\varphi; b, c}$. 
\begin{corollary}[See \cite{AKP22}]\label{eq: modcvee eq}
	Fix $a, \varphi$, the function $z(t) = \mdk(\modcvee^\kappa\br{\varphi; a, t})$ solves the IVP \ref{eq: equation system for z}. Namely, $z'' + \kappa z = 1$.
\end{corollary}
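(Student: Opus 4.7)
The plan is to realize the function $z(t) = \mdk(\modcvee^\kappa(\varphi;a,t))$ geometrically as the composition $\mdk \circ d_x \circ \sigma$, where $\sigma$ is a unit-speed geodesic in $\modsp{\kappa}$ not passing through a certain point $x$, and then quote Corollary \ref{cor: key of scalar hessian} directly.

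More precisely, I would begin by picking any two points $p, x \in \modsp{\kappa}$ with $|p - x| = a$ (which is possible provided $a \leq \pik$), and then choose a unit tangent vector $v \in T_p\modsp{\kappa}$ making angle $\varphi$ with the initial direction of the segment $[px]$. Let $\sigma(t) = \exp_p(tv)$ be the resulting unit-speed geodesic starting at $p$. Then by construction, for each $t$ in the range where the hinge $[p_{\sigma(t)}^x]$ determines a triangle (i.e. as long as the relevant triangle inequalities and $\pik$-bound from Proposition \ref{prop: model triangle defined} are satisfied), the triangle with vertices $p$, $x$, $\sigma(t)$ has two sides of lengths $a$ and $t$ meeting at angle $\varphi$ at $p$. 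By Notation \ref{not: angle} and Definition \ref{def-angle}, the third side has length
\[
|x - \sigma(t)| = \modcvee^\kappa(\varphi; a, t) = d_x(\sigma(t)).
\]
Therefore $z(t) = \mdk(d_x(\sigma(t)))$.

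Now Corollary \ref{cor: key of scalar hessian} tells us exactly that, for any point $x \in \modsp{\kappa}$ and any unit-speed geodesic $\sigma$ in $\modsp{\kappa}$, the function $s \mapsto \mdk(d_x(\sigma(s)))$ satisfies the ODE $z'' + \kappa z = 1$. Applying this to our $x$ and $\sigma$ gives the desired conclusion for $z(t)$.

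The only mild subtlety is the domain: the identity $|x - \sigma(t)| = \modcvee^\kappa(\varphi;a,t)$ must be checked to hold on the full interval where both sides are defined, and one should note that at and beyond the first $t$ where the triangle degenerates (for $\kappa > 0$, near $t = \pik$), both $\mdk$ and $\modcvee^\kappa$ have been extended continuously so that the ODE continues to be satisfied in a weak sense (and trivially in the constant regime, where $z \equiv 2/\kappa$ gives $z'' + \kappa z = 1$). Aside from that bookkeeping, there is no serious obstacle: the corollary does all the analytic work, and the content of this statement is simply the geometric observation that the model-side function is itself a distance function along a geodesic.
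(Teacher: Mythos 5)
Your proof is correct and takes essentially the same approach as the paper: both realize $z(t)$ as $\mdk(d_x(\sigma(t)))$ for a suitably placed geodesic $\sigma$ in $\modsp{\kappa}$ and then invoke Corollary~\ref{cor: key of scalar hessian} to conclude. Your added remark on the domain and the degenerate regime is a minor point of care the paper elides.
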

\begin{proof}
		We define $f = \mdk\circ d_p$ and it is easy to see from Figure~\ref{fig: starting at q such that d = t} that $f(\gamma(t)) = z(t)$.
	\begin{figure}[htbp]
    \centering
        \includegraphics[width=0.5\textwidth]{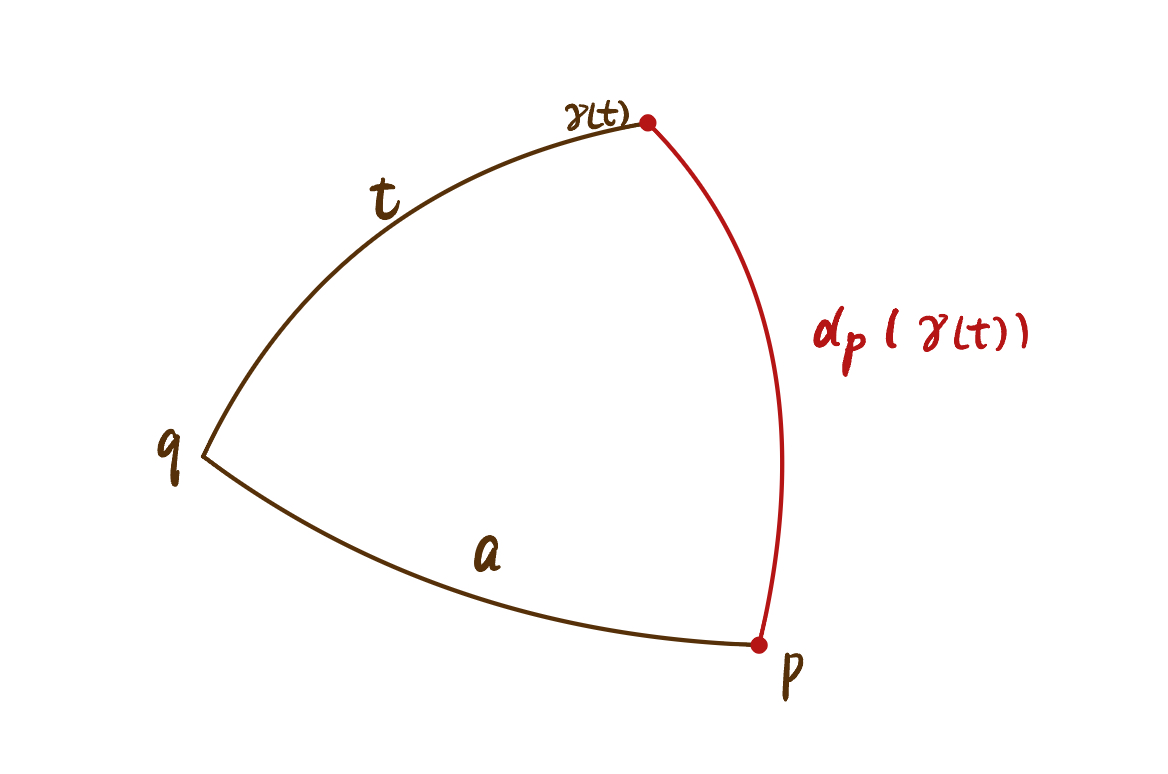}\caption{$\gamma$ is the geodesics starting at $q$ such that $d(q, \gamma(t)) = t$.}
        	\label{fig: starting at q such that d = t}
	\end{figure}
	
	Then by \ref{cor: key of scalar hessian}, we can know that along $\gamma(t)$, 
	\begin{equation*}
		f(\gamma(t))'' + \kappa f(\gamma(t)) = 1
	\end{equation*}
	Therefore, $z'' + \kappa z = 1$. 
\end{proof}
Moreover, at $t = 0$, we have
\begin{align*}
	d_p'(0) = g((\nabla d_p)_{q}, \dga(0)) = \cos{(\pi - \alpha)} = -\cos{(\alpha)}.
\end{align*}
Therefore, 
\begin{align*}
	z(0) &= \mdk(a);\\
	z'(0) &= -\snk(a)\cos{(\alpha)}.
\end{align*}
Knowing  $z(0)$ and $z'(0)$  we can easily find $z(t)$ by solving the IVP for the equation $z'' + \kappa z = 1$.  This allows us to find $\mdk ( d(\gamma(t),p))$ and hence also $d(\gamma(t),p)$ which yields the cosine law in $\modsp{\kappa}$.

\begin{theorem}
	In $\modsp{\kappa}$, the formula of $a = \modcvee^\kappa\br{\varphi; b, c}$ and $\varphi = \modtriangle^\kappa\br{a; b, c}$ can be written as the cosine law in $\modsp{\kappa}$:
	\begin{equation*}
	\cos{(\varphi)} = 
		\begin{cases}
			\frac{b^2 + c^2 - a^2}{2bc} \quad\text{if $\kappa = 0$}\\
			\frac{\csk(a) - \csk(b)\csk(c)}{\kappa\snk(b)\snk(c)} \quad\text{if $\kappa \neq 0$}
		\end{cases}
	\end{equation*}
\end{theorem}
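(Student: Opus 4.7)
The plan is to reduce the theorem to solving an elementary second-order linear ODE. Fix $p, q \in \modsp{\kappa}$ with $\abs{p - q} = b$, and let $\gamma$ be a unit speed geodesic starting at $q$ whose initial velocity makes angle $\varphi$ with $[qp]$; then $\gamma(c)$ is the third vertex of a triangle realizing $\modtriangle^\kappa\{a, b, c\}$. Set $z(t) := \mdk(d_p(\gamma(t)))$. By Corollary~\ref{eq: modcvee eq}, together with the initial value computation carried out in the lines immediately before the theorem, $z$ solves the IVP
\begin{equation*}
z'' + \kappa z = 1, \qquad z(0) = \mdk(b), \qquad z'(0) = -\snk(b)\cos\varphi.
\end{equation*}
The theorem reduces to writing down the explicit solution, evaluating at $t = c$, and using that $z(c) = \mdk(a)$.

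For $\kappa \neq 0$, I would write the general solution as $z(t) = \tfrac{1}{\kappa} + A\csk(t) + B\snk(t)$, since $\tfrac{1}{\kappa}$ is a particular solution and $\{\csk, \snk\}$ spans the homogeneous kernel. Using the identity $\mdk(t) = \tfrac{1 - \csk(t)}{\kappa}$, matching initial conditions gives $A = -\tfrac{\csk(b)}{\kappa}$ and $B = -\snk(b)\cos\varphi$. Setting $t = c$ and equating with $\mdk(a) = \tfrac{1 - \csk(a)}{\kappa}$ produces
\begin{equation*}
\csk(a) = \csk(b)\csk(c) + \kappa\,\snk(b)\snk(c)\cos\varphi,
\end{equation*}
which rearranges to the stated formula after dividing by $\kappa\snk(b)\snk(c)$.

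For $\kappa = 0$, the same procedure with $\mdk(t) = \tfrac{t^2}{2}$, $\snk(t) = t$, and $\csk(t) = 1$ yields the general solution $z(t) = \tfrac{t^2}{2} + A + Bt$; the initial conditions give $A = \tfrac{b^2}{2}$ and $B = -b\cos\varphi$, and equating $z(c) = \tfrac{c^2}{2} + \tfrac{b^2}{2} - bc\cos\varphi$ with $\mdk(a) = \tfrac{a^2}{2}$ produces the Euclidean law of cosines $a^2 = b^2 + c^2 - 2bc\cos\varphi$.

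There is no genuine obstacle here; the argument is routine ODE bookkeeping once the preceding corollary is in hand. The only minor subtlety is the sign of $z'(0)$, which comes from the fact that $\nabla d_p$ at $q$ is the unit vector pointing away from $p$ along $[qp]$ extended, whereas $\dga(0)$ makes angle $\varphi$ with the direction from $q$ to $p$; hence $\langle \nabla d_p, \dga(0)\rangle = \cos(\pi - \varphi) = -\cos\varphi$, which is exactly what the display just above the theorem records.
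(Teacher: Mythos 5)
Your proof is correct and follows the same route as the paper: reduce to the IVP $z'' + \kappa z = 1$ via Corollary~\ref{eq: modcvee eq} and the initial-value computation for $z(0)$, $z'(0)$, then solve explicitly and evaluate at $t = c$. The paper only carries out the case $\kappa = 0$ (using $y = a^2$, which differs from your $z = \mdk\circ d_p = a^2/2$ only by a factor of $2$); your explicit treatment of the $\kappa \neq 0$ case via the ansatz $z = \tfrac{1}{\kappa} + A\csk + B\snk$ and the identity $\mdk = \tfrac{1 - \csk}{\kappa}$ is a correct completion of what the paper leaves to the reader.
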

\begin{proof}
Here we only compute the case when $\kappa = 0$. Given $p, q, \varphi$ and $\gamma(t)$ a unit speed geodesic starting at $q$ as in Figure~\ref{fig: cosine law}
\begin{figure}[htbp]
    \centering
        \includegraphics[width=0.5\textwidth]{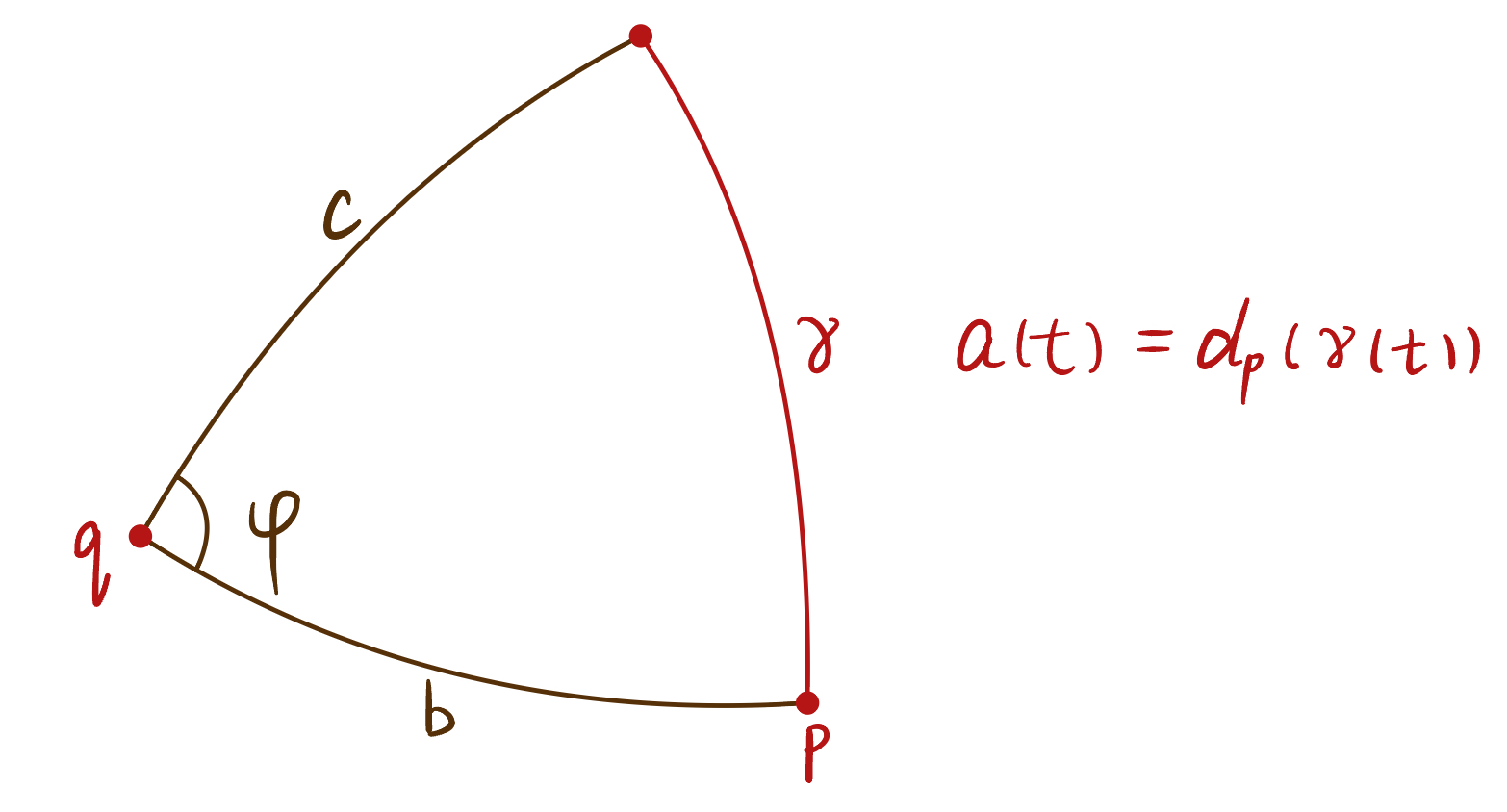}\caption{$\gamma$ is the geodesics starting at $p$ and we denote $a(t) = d_p(\gamma(t))$.}
        	\label{fig: cosine law}
	\end{figure}
We define $a(t) = d_p(\gamma(t))$. The question is to find $\varphi$. It is easy to see that $\hess_{a^2} = 2\cdot \id$. Then $a(0)^2 = b^2$ and $(a^2)'(0) = 2a(0)a'(0) = -2b\cos{\varphi}$. Therefore, denote $y(t) = a^2(t)$, by solving the IVP
\begin{equation*}
	\begin{cases}
		& y''(t) = 2\\
		& y(0) = b^2\\
		& y'(0) = -2b\cos{(\varphi)}
	\end{cases}
\end{equation*}
we can derive the cosine law for $\kappa = 0$, i,e.
\begin{equation*}
	a^2(t) = y(t) = b^2 + t^2 - 2tb\cos{(\varphi)}
\end{equation*}
\end{proof}

\section{Hessian Comparison Theorem}

The cosine law is essential to the theory of comparison geometry. And it is very important for proving Toponogov's comparison theorems. The key to proving the cosine law in the model spaces is the identity \ref{model-hess-ode} for  the Hessian operator $\hess_{\mdk\circ d_{\bar{p}}}$ for $\bar{p} \in \modsp{\kappa}$. In this section, we are going to compare this operator to the Hessian operator of the modified distance function for Riemannian manifolds of sectional curvature bounded below via the following theorem. 

\begin{theorem}[Hessian Comparison Theorem]\label{cor: Hess ineq}
    Let $(M^n, g)$ be a Riemannian manifold such that $\sect_M \geq \kappa$. Fix $p \in M$, let $f = \mdk \circ d_p$. Then for $r < \inj(p)$ or more generally outside of the cut-locus of $p$, we have
    \begin{equation}\label{eq: hessian comparison theorem}
        \hess_f + \kappa f\id \leq \id.
    \end{equation}
\end{theorem}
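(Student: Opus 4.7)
The plan is to reduce the operator inequality \eqref{eq: hessian comparison theorem} to a matrix Riccati comparison for the shape operator of the distance spheres $\{d_p=t\}$ in $M$ against the constant-curvature model, and then to transfer the result to the modified distance function via the chain rule. I would fix $q\notin\cut(p)$ and let $\gamma:[0,r]\to M$ be the unique unit-speed minimizing geodesic from $p$ to $q$. Then $d_p$ is smooth on a neighborhood of $\gamma\setminus\{p\}$ with $\nabla d_p=\dga$, and by the discussion preceding Remark~\ref{exist-S-I}, the restriction of $\hess_{d_p}|_{\gamma(t)}$ to $\dga(t)^\perp$ equals the shape operator $S(t)$ of the level set, which satisfies
\begin{equation*}
D_t S + S^2 + R_{\dga} = 0,\qquad S(t)\sim \tfrac{1}{t}\,\id\ \text{as}\ t\to 0^+,
\end{equation*}
where $R_{\dga}\colon v\mapsto R(v,\dga)\dga$ on $\dga^\perp$, while $\hess_{d_p}(\dga)=0$ because $d_p\circ\gamma$ is linear. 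In $\modsp{\kappa}$, Proposition~\ref{prop: projection ctk} supplies the model solution $\bar S(t)=\ctk(t)\,\id$ of the same Riccati equation with $R_{\dga}$ replaced by $\kappa\,\id$ and with the same singular initial profile.

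Next I would establish the matrix Riccati comparison $S(t)\leq \ctk(t)\,\id$ on $\dga(t)^\perp$. Setting $E(t):=\ctk(t)\,\id - S(t)$ and subtracting the two Riccati equations, the fact that $\ctk(t)\,\id$ is scalar (and therefore commutes with $S$) yields
\begin{equation*}
E'(t) + \bigl(\ctk(t)\,\id + S(t)\bigr)\,E(t) = R_{\dga} - \kappa\,\id.
\end{equation*}
The hypothesis $\sect_M\geq\kappa$ is exactly $R_{\dga}\geq\kappa\,\id$ on $\dga^\perp$, so the right-hand side is symmetric and non-negative; since $E(t)\to 0$ as $t\to 0^+$, the standard symmetric Riccati comparison lemma will then yield $E(t)\geq 0$ for $0<t\leq r$.

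To conclude, I would apply the chain rule as in the proof of Theorem~\ref{thm: hessian csk} to obtain
\begin{equation*}
\hess_{\mdk\circ d_p} = \csk(d_p)\,dd_p\otimes dd_p + \snk(d_p)\,\hess_{d_p}.
\end{equation*}
Along $\dga$, using $\hess_{d_p}(\dga)=0$ and $|\nabla d_p|=1$, this gives $\hess_f(\dga,\dga)=\csk(d_p)=1-\kappa\,\mdk(d_p)$, where the last equality uses the identity $\csk+\kappa\,\mdk\equiv 1$ obtained by integrating $\mdk''+\kappa\,\mdk=1$. For a unit $v\perp\dga$, the preceding Riccati comparison together with $\snk(d_p)\geq 0$ for $d_p<\pik$ gives
\begin{equation*}
\langle \hess_f(v),v\rangle = \snk(d_p)\,\langle S\,v,v\rangle \leq \snk(d_p)\,\ctk(d_p) = \csk(d_p) = 1-\kappa\,\mdk(d_p),
\end{equation*}
which combined with the radial equality is precisely $\hess_f+\kappa f\,\id\leq\id$ as symmetric operators on $T_qM$.

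The main obstacle I anticipate is the Riccati comparison at $t=0$: both $S(t)$ and $\ctk(t)\,\id$ blow up like $\tfrac{1}{t}\,\id$, so the differential inequality for $E$ cannot be integrated from $0$ by bare Gronwall. The standard remedy is to conjugate $E$ by a fundamental solution $P(t)$ of the linear system determined by $\ctk(t)\id+S(t)$, so that $Q:=P^T E P$ satisfies a differential inequality with $Q(0)=0$ and non-negative right-hand side, which will then integrate to give $Q\geq 0$ and therefore $E\geq 0$.
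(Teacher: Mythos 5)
Your proposal is correct, and both the setup (shape operator solves the Riccati equation, $S(t)\sim\tfrac{1}{t}\id$) and the conclusion (chain rule for $\mdk\circ d_p$ via Theorem~\ref{thm: hessian csk}, the identity $\csk+\kappa\,\mdk\equiv1$, and $\snk\geq0$ to preserve the inequality direction) match the paper. Where you genuinely diverge is the Riccati comparison step. The paper reduces the matrix inequality to a \emph{scalar} one: for a unit parallel normal field $Y$ it sets $y(t)=\langle S(t)Y,Y\rangle$, differentiates, and applies the Cauchy--Schwarz inequality $\langle S^2Y,Y\rangle=|SY|^2\geq\langle SY,Y\rangle^2=y^2$ to obtain $y'+y^2+\kappa\leq0$; it then handles the $t\to0^+$ blow-up (Lemma~\ref{lem: g(t) < ct_k}) by a shifted barrier --- if $g(t_0)>\ctk(t_0)$, compare against $G(t)=\ctk(t-\eps)$, which is finite near $t=0$, and run the ordinary scalar Riccati comparison (Lemma~\ref{lem-riccati}) backward to a contradiction. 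Your approach works directly with the matrix difference $E=\ctk\,\id-S$, deriving $E'+(\ctk\,\id+S)E=R_{\dga}-\kappa\,\id\geq0$ and then conjugating by a fundamental solution $P$ to get $Q=P^TEP$ monotone with $Q(0)=0$. That does go through --- your subtraction identity relies on $\ctk\,\id$ being scalar, which is exactly what makes $(\bar S+S)E=E(\bar S+S)$ symmetric, and one checks that $P\sim t$ and $E\sim ct$ near $t=0$ so $Q(0)=0$ indeed. The paper's scalar reduction is the lighter tool: testing against one vector at a time suffices because the conclusion only needs the top eigenvalue of $S$, and it entirely avoids matrix Gronwall. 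Your direct matrix route is more uniform and would be the natural starting point if the model shape operator were not scalar; but in that generality the commutation fails and one needs the full matrix Riccati comparison theorem (the Eschenburg--Heintze result the paper quotes later without proof), which is considerably more delicate. So the proposal is a valid alternate proof, heavier in mechanism but producing the same estimate.
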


Let $(M, g)$ be a Riemannian manifold, $p \in M$ and $\gamma: [0, l] \to M$ a unit speed geodesic starting at $p$. By the theorem \ref{thm: split Jacobi equation}, we can use a symmetric operator $S(t)$ along the geodesic $\gamma$ to split the Jacobi equation: 
\begin{equation*}
D_t^2 J + R_{\nu}(J)= 0 \iff
    \begin{cases}
        D_tJ = SJ\\
        D_tS + S^2 + R_{\nu} = 0 \quad(\text{Riccati Equation})
    \end{cases}
\end{equation*}
Let $Y$ be a normal unit vector field parallel along $\gamma$. We use $\inp{\cdot, \cdot}$ to denote $g(\cdot, \cdot)$. Let $y(t) = \inp{SY, Y}(t)$. Then by the Riccati equation and the fact that $S$ is symmetric, i.e. $\inp{SX, Y} = \inp{X, SY}$, 
\begin{align*}
	y'(t) =& \inp{(D_t S)Y, Y} = -\inp{S^2 Y, Y} - \inp{R_{\nu}(Y), Y}\\
	=& -\abs{SY}^2 - \kappa(t)
\end{align*}
where $\kappa(t) = \sect(\sigma(t)) = \inp{R_{\dga(t)}(Y), Y}$ and $\sigma(t) = \mathbf{Span}(\dga(t), Y(t))$.

By Cauchy Schwartz: $\inp{SY, Y}^2 \leq \abs{SY}^2\cdot \abs{Y}^2 = \abs{SY}^2$, we can conclude that
\begin{equation*}
	y' = -\abs{SY}^2 - \kappa(t) \leq -\inp{SY, Y}^2- \kappa(t) =-y^2- \kappa(t)
\end{equation*}
That is 
\begin{equation}\label{eq: y' + y^2 + k < 0}
	y'+y^2+\kappa(t) \leq 0
\end{equation}
\begin{lemma}\label{lem-riccati}
	if we have
    \begin{equation*}
        \begin{cases}
            g'(t) + g^2(t) + \kappa_1(t) = 0\\
            G'(t) + G^2(t) + \kappa_2(t) = 0
        \end{cases}
    \end{equation*}
    and $\kappa_1 \geq \kappa_2$. Then,
    \begin{itemize}
    	\item if $g(0) \leq G(0) \implies g(t)\leq G(t)$ for $t \geq 0$;
    	\item if $g(0) \geq G(0) \implies g(t)\geq G(t)$ for $t \geq 0$;
    \end{itemize}
\end{lemma}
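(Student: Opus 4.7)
The plan is to subtract the two Riccati equations, derive a first-order linear ODE inequality for the difference $h := G - g$, and then conclude by the standard integrating-factor trick, which reduces the nonlinear Riccati comparison to a Gr\"onwall-type estimate.

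Concretely, assume first that $g(0) \leq G(0)$, so $h(0) \geq 0$. Subtracting the two Riccati equations and factoring $g^2 - G^2 = -(g+G)h$ yields
\begin{equation*}
h'(t) \;=\; -(g+G)(t)\, h(t) \;+\; \bigl(\kappa_1(t) - \kappa_2(t)\bigr),
\end{equation*}
so $h$ satisfies the linear inequality $h' + (g+G)h \geq 0$, using $\kappa_1 \geq \kappa_2$. Multiplying by the strictly positive integrating factor $\mu(t) := \exp\!\left(\int_0^t (g+G)(s)\, ds\right)$ converts this into $(\mu h)'(t) \geq 0$, and integrating gives $\mu(t)h(t) \geq \mu(0)h(0) = h(0) \geq 0$. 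Since $\mu > 0$, this forces $h(t) \geq 0$, i.e.\ $g(t) \leq G(t)$, on the common interval of existence. The reverse case $g(0) \geq G(0)$ follows by symmetry (swap the roles of $g$ and $G$). An alternative, cleaner-sounding variant is by contradiction using a first crossing time: if $t_0 > 0$ is the infimum of $t$ with $g(t) > G(t)$, then $g(t_0) = G(t_0)$ by continuity and $g'(t_0) - G'(t_0) = -(\kappa_1 - \kappa_2)(t_0) \leq 0$, which contradicts $g$ overtaking $G$ at $t_0$.

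The main subtlety, rather than a genuine obstacle, is the domain of validity: Riccati-type equations can blow up in finite time, so the comparison is only meaningful on a common interval where both solutions remain finite. In the intended applications, one compares the radial shape operator (or scalar quantities such as $y = \langle SY, Y\rangle$ satisfying the Riccati \emph{inequality} \eqref{eq: y' + y^2 + k < 0}) to the explicit model solution $\ctk$; since the singular behavior of $\ctk$ at $0$ and at $\pik$ is known a priori, the monotonicity propagates up to the first such singularity without modification. Once the lemma is in place, it immediately feeds into the proof of the Hessian Comparison Theorem by choosing $g = \ctk$ and $G$ equal to the radial part of the Riemannian shape operator.
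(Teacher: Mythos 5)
Your main argument (subtract the two Riccati equations, factor the difference of squares, apply an integrating factor to the resulting linear inequality) is exactly the argument the paper uses, just with $h = G - g$ in place of the paper's $u = g - G$; that part is fine and correctly delivers the first bullet for $t \ge 0$.

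The second case is where there is a genuine gap. You dispatch $g(0) \ge G(0)$ "by symmetry (swap the roles of $g$ and $G$)," but swapping $g$ and $G$ also swaps $\kappa_1$ and $\kappa_2$, so you would then need the hypothesis $\kappa_2 \ge \kappa_1$, which is the wrong direction. What the integrating factor actually gives you is that $\mu h$ is non-decreasing on its whole interval of existence, so you should simply read off both cases from that one fact: for $t \ge 0$ you get $\mu(t)h(t) \ge h(0)$, and for $t \le 0$ you get $\mu(t)h(t) \le h(0)$. From $h(0) \le 0$ this yields $h(t) \le 0$, i.e.\ $g(t) \ge G(t)$, only for $t \le 0$, not for $t \ge 0$. (A time-reversal substitution $\tilde g(t) = -g(-t)$, $\tilde G(t) = -G(-t)$ is the "symmetry" that actually works, and it also lands you on $t \le 0$.) Indeed the second bullet as printed in the lemma is a typo — the paper's own proof concludes "$g(t)\geq G(t)$ for $t\leq 0$," and the corollary that immediately follows states the two cases with $t>0$ and $t<0$ respectively; the version with $t\ge 0$ in both bullets is false, as you can check with $\kappa_1=1$, $\kappa_2=0$, $g=\cot(t+\pi/4)$, $G = 1/(t+1)$. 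Finally, your "cleaner-sounding" first-crossing alternative has the standard defect: at the first crossing time $t_0$ you only obtain $(g-G)'(t_0)\le 0$, and the borderline case $(g-G)'(t_0)=0$ does not by itself contradict $g$ overtaking $G$; it is the integrating-factor (Gr\"onwall) form that closes that gap, so the contradiction variant should not be presented as an independent shortcut.
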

\begin{proof}
	Take the subtraction, then 
    \begin{equation*}
       g' - G' + \underbrace{ g^2 - G^2}_{(g - G)(g + G)} = \kappa_2 - \kappa_1 \leq 0
    \end{equation*}
    Take $u = g - G$, then we have
    \begin{equation*}
        u' + u(g + G) \leq 0
    \end{equation*}
    Multiplying both sides by $e^{\int g + G}$ then 
    \begin{equation*}
        \underbrace{u'e^{\int g + G} + u(g + G)e^{\int g + G}}_{(ue^{\int g + G})'} \leq 0
    \end{equation*}
    Therefore, $ue^{\int g + G}$ is non-increasing. If $g(0) \leq G(0)$, then $u(0) \leq 0 $ and hence $u(t) \leq 0$ for $t \ge 0$, or equivalently $ g(t)\le G(t)$ for $t \geq 0$. Similarly if $g(0) \geq G(0)$, then $u(0) \geq 0 $ and hence $u(t)\geq 0$ for $t\leq 0$ i.e. $ g(t)\ge G(t)$ for $t\leq 0$. 
\end{proof}
\begin{corollary}
    Let $\kappa \in \R$, consider
    \begin{equation*}
        \begin{cases}
            g'(t) + g^2(t) + \kappa \leq 0\\
            G'(t) + G^2(t) + \kappa \geq 0
        \end{cases}
    \end{equation*}
    Then 
    \begin{equation*}
        \begin{cases}
            g'(t) + g^2(t) + \kappa_1(t) = 0\\
            G'(t) + G^2(t) + \kappa_2(t) = 0
        \end{cases}
    \end{equation*}
 	For some $\kappa_1(t) \geq \kappa \geq \kappa_2(t)$. Apply what we just discussed, then  
    \begin{itemize}
        \item If $g(0) \leq G(0) \implies g(t) \leq G(t)$ for $t > 0$;
        \item If $g(0) \geq G(0) \implies g(t) \geq G(t)$ for $t < 0$;
    \end{itemize}
\end{corollary}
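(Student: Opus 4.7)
The plan is to reduce the corollary directly to Lemma~\ref{lem-riccati} by absorbing the slack in each differential inequality into a time-varying curvature. Concretely, I would define
\begin{equation*}
\kappa_1(t) := -g'(t) - g^2(t), \qquad \kappa_2(t) := -G'(t) - G^2(t),
\end{equation*}
so that by construction $g$ solves $g' + g^2 + \kappa_1 \equiv 0$ and $G$ solves $G' + G^2 + \kappa_2 \equiv 0$ on the common domain of $g$ and $G$. The hypothesis $g' + g^2 + \kappa \leq 0$ rearranges to $\kappa_1(t) \geq \kappa$, while $G' + G^2 + \kappa \geq 0$ rearranges to $\kappa_2(t) \leq \kappa$; in particular $\kappa_1(t) \geq \kappa_2(t)$ pointwise, which is precisely the hypothesis required by Lemma~\ref{lem-riccati}.

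With this reformulation the two bullets of the corollary are simply the two cases of the lemma applied to this particular pair $\kappa_1, \kappa_2$. The first case ($g(0) \leq G(0) \Rightarrow g(t) \leq G(t)$ for $t > 0$) is a direct invocation. The second case ($g(0) \geq G(0) \Rightarrow g(t) \geq G(t)$ for $t < 0$) is the second part of the lemma: setting $u = g - G$ one obtains $(u\, e^{\int (g+G)})' \leq 0$, so this product is non-increasing, and starting from $u(0) \geq 0$ forces $u(t) \geq 0$ throughout the intersection of the common domain with $(-\infty, 0]$.

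I do not expect any genuine obstacle here. The only point to verify is that the substitution $\kappa \leadsto \kappa_i(t)$ is legitimate, which is automatic because $\kappa_1$ and $\kappa_2$ are defined as the explicit residuals $-g'-g^2$ and $-G'-G^2$ and inherit whatever regularity $g'$ and $G'$ possess. The content of the corollary is nothing more than the fact that Riccati-type differential inequalities compare in exactly the same way as the corresponding equations, and the slack-absorption device makes this conversion transparent.
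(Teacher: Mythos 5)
Your argument is correct and matches the paper's intended reasoning: the corollary's statement itself implicitly defines $\kappa_1$ and $\kappa_2$ as the residuals $-g'-g^2$ and $-G'-G^2$, and your slack-absorption step makes that explicit before invoking Lemma~\ref{lem-riccati}. You also correctly observe that the second bullet (for $t<0$) comes from the monotonicity of $u\,e^{\int(g+G)}$ read to the left of the origin, which agrees with the actual proof of the lemma even though the lemma's statement contains a typo ($t\geq 0$ where it should read $t\leq 0$).
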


Therefore, for Riemannian manifold $(M^n, g)$ of sectional curvature bounded below, i.e. $\sect_M \geq \kappa$, $\kappa(t) \geq \kappa$ for all $t \in [0, l]$. Hence for $y = \inp{SY, Y}$, by \ref{eq: y' + y^2 + k < 0}, we have
\begin{equation*}
	y' + y^2 + \kappa \leq 0
\end{equation*}
we can now compare it to what we obtained in $\modsp{\kappa}$, i.e. $\bar{y}' + \bar{y}^2 + \kappa = 0$. 
Especially, we need to be careful with the blowing up case when $y(t) \to \infty$ as $t \to 0^+$, which happens when $J(0) = 0$ and $D_tJ(0) \neq 0$. This is because for $f = d(\cdot, p)$ on $S_t(p)$, the shape operator $S_t \sim \frac{1}{t}$ as $t \to 0^+$. In order to address this case, we need the following lemma. 
\begin{lemma}\label{lem: g(t) < ct_k}
    Let $g:(0, T) \to \R$ satisfies 
    \begin{itemize}
        \item $g' + g^2 + \kappa \leq 0$;
        \item $\lim_{t \to 0^+}g(t) = \infty$
    \end{itemize}
    Then $g(t) \leq \ctk(t) = \frac{\csk(t)}{\snk(t)}$ for all $t$  for which  $g(t)$ is defined
\end{lemma}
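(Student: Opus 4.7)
The plan is to compare $g$ with shifted copies of $\ctk$ and pass to a limit. The key obstacle is that both $g$ and $\ctk$ blow up at $t=0$, so we cannot feed initial conditions at $0$ directly into the comparison Lemma~\ref{lem-riccati} (or its Corollary); the shifting trick converts this singular initial value problem into a family of regular ones.

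First I would record two facts about $\ctk$. By direct computation (using $(\snk)' = \csk$ and $(\csk)' = -\kappa\snk$), the function $\ctk = \csk/\snk$ satisfies the Riccati equation with equality,
\begin{equation*}
    (\ctk)' + (\ctk)^2 + \kappa = 0,
\end{equation*}
on its domain of definition. Moreover, since $\csk(0)=1$ and $\snk(t)\sim t$ as $t\to 0^+$, we have $\ctk(t)\sim \tfrac{1}{t}\to +\infty$ as $t\to 0^+$.

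Next, for each small $\delta>0$, define the shifted barrier $h_\delta(t):=\ctk(t-\delta)$ on $(\delta,\delta+\pik)$. By translation invariance $h_\delta$ again solves $h_\delta'+h_\delta^2+\kappa=0$, and $h_\delta(t)\to +\infty$ as $t\to \delta^+$. Now fix any $t_0\in(0,T)$ on which $g$ is defined. Since $g(t_0)$ is a finite real number and $h_\delta(t)\to\infty$ as $t\to\delta^+$, by continuity we may choose $t_1\in(\delta,t_0)$ with $g(t_1)\le h_\delta(t_1)$; in fact, once $\delta$ is fixed, any $t_1$ sufficiently close to $\delta$ works because $g$ is continuous on $(0,T)$ while $h_\delta$ diverges.

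Then I would apply the Corollary to Lemma~\ref{lem-riccati} on the interval $[t_1,t_0]$: the inequalities $g'+g^2+\kappa\le 0$ and $h_\delta'+h_\delta^2+\kappa=0\ge 0$ together with $g(t_1)\le h_\delta(t_1)$ yield
\begin{equation*}
    g(t_0) \le h_\delta(t_0) = \ctk(t_0-\delta).
\end{equation*}
Finally, letting $\delta\to 0^+$ and using continuity of $\ctk$ at $t_0>0$ gives $g(t_0)\le \ctk(t_0)$. Since $t_0\in(0,T)$ was arbitrary, the claim follows. The only delicate point in this plan is the one already flagged — producing a valid initial comparison despite the blowup at $0$ — and the shift $t\mapsto t-\delta$ resolves it by trading the singular initial condition $\lim_{t\to 0^+}g(t)=\infty$ for the regular one $g(t_1)\le h_\delta(t_1)$ at a positive time.
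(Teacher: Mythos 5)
Your proof is correct and relies on the same key mechanism as the paper's: compare $g$ against the shifted barrier $\ctk(t-\delta)$ via the Riccati comparison lemma. The only real difference is orientation and framing — the paper argues by contradiction running the comparison \emph{backward} in time (from $g(t_0) > \ctk(t_0-\eps)$ it deduces $g(t) > \ctk(t-\eps)$ all the way down to $t\to\eps^+$, where the barrier blows up while $g(\eps)$ stays finite), whereas you run it \emph{forward} from an anchor point $t_1$ near $\delta$ chosen so that the barrier dominates $g$ there, obtain $g(t_0)\le\ctk(t_0-\delta)$, and then let $\delta\to 0^+$. One small observation worth keeping in mind: neither your argument nor the paper's actually invokes the stated hypothesis $\lim_{t\to 0^+}g(t)=\infty$; both use only that $g$ is finite (hence continuous) at each positive time, so the blow-up condition is naturally satisfied in the geometric application but is not logically needed for the conclusion.
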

\begin{proof}
    Suppose not. that is $\exists t_0 > 0$ such that $g(t_0) > \ctk(t_0)$. This implies $\exists \eps > 0$ such that $g(t_0) > \ctk(t_0 - \eps)$ by the continuity of $\ctk$. Define $G(t) = \ctk(t - \epsilon)$. It satisfies
    \begin{equation*}
        G' + G^2 + \kappa = 0
    \end{equation*}
    where $G(t_0) = \ctk(t_0 - \epsilon) < g(t_0)$. Thus by the above discussion, we have 
    \begin{equation*}
        G(t) < g(t) \quad\forall t \leq t_0
    \end{equation*}
    Now we have the contradiction: $g$ is finite over $(0, T)$. 
    \begin{equation*}
        \infty > g(\eps) > \lim_{t \to \eps^+}G(t) = \lim_{(t - \eps) \to 0^+}g(t - \eps) = \infty \quad\lightning
    \end{equation*}
    contradiction.
\end{proof}

Now we can prove the Hessian comparison theorem. 

\begin{proof}[Proof of the Hessian comparison theorem~\ref{cor: Hess ineq}]
    Let $\eta(t)$ be a unit speed radial geodesic starting at $p$. Let $S(t)$ be the shape operator of the sphere $S_t(p)$ at $\eta(t)$ . Let $Y$ be a unit normal vector field parallel along $\eta$. If we take $y(t) = \inp{SY, Y}(t)$, then $a$ satisfies $y' + y^2 + \kappa \leq 0$ and $\lim_{t \to 0^+}y(t) = \infty$. By the lemma \ref{lem: g(t) < ct_k}, we have $y(t) \leq \ctk(t)$. Since $y(t) = \inp{SY, Y}(t)$, then 
    \begin{equation*}
    	y(t) \leq \ctk(t) \implies \underbrace{\max_{\abs{Y} = 1}\inp{S(t)Y, Y}}_{\text{$\lambda_{\max}(S(t))$ is the largest eigenvalue}} \leq \ctk(t)
    \end{equation*}
    Therefore, $S(t) \leq \ctk(t)\pi_{d_p}$ where $\pi_d$ is the projection map to $T_{\eta(t)}S_t(p)$. Recall that $S(t) = \hess_{d_p}$ on $T_{\eta(t)}S_t(p) = (\Dot{\eta}(t))^\perp$.
   And in the direction parallel to $\dot{\eta}(t)$ we have that both $ \hess_{d_p}=0$ and $\pi_{d_p}=0$.

    Thus we got the inequality version of the proposition \ref{prop: projection ctk}:
    \begin{equation}\label{eq: Hessian inequality for d_p}
    	\hess_{d_p} \leq \ctk(d_p)\pi_{d_p}
    \end{equation}
    If we switch $d_p$ to $\mdk\circ d_p$, then by our computation in \ref{thm: hessian csk}, $\hess_{\mdk\circ d_p} \leq \csk(d_p)\id$. Therefore, we can conclude \ref{eq: hessian comparison theorem} the inequality version of the corollary \ref{cor: key of scalar hessian}
\end{proof}
As we explained before, along any other unit speed geodesics $\gamma(t)$ in the $M^n$ with $\sect_M \geq \kappa$,
because
    \begin{align*}
        & (f(\gamma(t)))'' = (\nabla^2 f)(\dga(t), \dga(t)) = \inp{\hess_f(\dga(t)), \dga(t)}.
    \end{align*}
it holds that 
    \begin{equation*}
        (f(\gamma(t)))''  + \kappa f(\gamma(t)) \leq 1
    \end{equation*}


\chapter{Local Comparison Theorems}

\section{Rauch Comparison Theorem}
\begin{theorem}[Rauch ($I \& II$) Comparison Theorem]\label{thm: Rauch}
    Let $(M^n, g)$ be a Riemannian manifold, $\sect_M \geq \kappa$. Let $\gamma: [0, l] \to M$ be a unit speed geodesic, i.e. $\gamma(t) = \exp_p(tv)$. Let $J$ be a normal Jacobi field along $\gamma$. 
    We will also denote by $\Tilde{\gamma}$ a unit speed geodesic in $\modspace{\kappa}$ and by $\Tilde{J}$ a normal Jacobi field along $\Tilde{\gamma}$. Suppose one of the following holds
    \begin{itemize}
        \item \textbf{Rauch $I$:} $J(0) = \Tilde{J}(0) = 0$ and $\abs{D_tJ(0)} = \abs{D_t\Tilde{J}(0)} \neq 0$ and there are no conjugate points to $p$ along $\gamma$ on $[0,l)$ ; or
        \item \textbf{Rauch $II$:}  $|J(0)| = |\Tilde{J}(0)| =1$ and $D_tJ(0) = D_t\Tilde{J}(0)= 0$ and there are no focal points along $\gamma$ on $[0,l)$ for the geodesic submanifold defined $N$  by $\dga(0)$ ;
    \end{itemize}
    Then $\frac{|J(t)|}{\abs{\tilde{J}(t)}}$ is non-increasing. And $\abs{J(t)} \leq \abs{\Tilde{J}(t)}$ on $[0,l]$.
    
\end{theorem}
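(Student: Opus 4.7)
The plan is to reduce the vector Jacobi field comparison to the scalar Riccati comparison machinery already developed, applied to each parallel direction and combined with a maximum-eigenvalue bound. Write $\rho(t) = |J(t)|$ and $\tilde\rho(t) = |\tilde J(t)|$. For both parts of the theorem, Theorem~\ref{thm: split Jacobi equation} together with Remark~\ref{exist-S-I} (for Rauch~I, in which $S(t)\sim \tfrac{1}{t}\id$ as $t\to 0^+$) or Remark~\ref{exist-S-II} (for Rauch~II, in which $S(0)=0$) provides a symmetric operator $S(t):\dga(t)^\perp\to\dga(t)^\perp$ satisfying $D_tJ = SJ$ on $[0,l)$. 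Since $J$ is normal, differentiating $|J|^2$ and using the symmetry of $S$ gives
\begin{equation*}
  \frac{\rho'(t)}{\rho(t)} \;=\; \left\langle S(t)\frac{J(t)}{|J(t)|},\;\frac{J(t)}{|J(t)|}\right\rangle \;\leq\; \lambda_{\max}\bigl(S(t)\bigr)
\end{equation*}
wherever $\rho(t)>0$. In the model space the analogous operator $\tilde S(t)$ is a scalar multiple of the identity, and the same identity yields $\tilde\rho'/\tilde\rho = \tilde\lambda(t)$, where $\tilde\lambda(t)=\ctk(t)$ for Rauch~I and $\tilde\lambda(t)=-\kappa\,\tnk(t)$ for Rauch~II.

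The heart of the proof is therefore the pointwise bound $\lambda_{\max}(S(t))\leq\tilde\lambda(t)$. Following the approach in the proof of Theorem~\ref{cor: Hess ineq}, I would fix a parallel unit vector field $Y$ in $\dga^\perp$ and set $y(t)=\langle S(t)Y,Y\rangle$; the computation preceding Lemma~\ref{lem-riccati} yields the Riccati inequality $y'+y^2+\kappa \leq 0$, using $\sect_M \geq \kappa$ and $\inp{SY,Y}^2 \leq |SY|^2$. For Rauch~I the singular initial behavior $y(t)\to\infty$ as $t\to 0^+$ puts us in the setting of Lemma~\ref{lem: g(t) < ct_k}, yielding $y(t) \leq \ctk(t)$. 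For Rauch~II the conditions $D_tJ(0)=0$ and $S(0)=0$ from Remark~\ref{exist-S-II} give $y(0)=0$, matching the model solution $G(t)=-\kappa\,\tnk(t)$ (which satisfies $G'+G^2+\kappa=0$) at $t=0$; the Riccati comparison Lemma~\ref{lem-riccati} then gives $y(t)\leq G(t)$. Taking the supremum over parallel unit $Y$ promotes this to $\lambda_{\max}(S(t))\leq\tilde\lambda(t)$, and combining with the display above yields $\rho'/\rho\leq\tilde\rho'/\tilde\rho$, i.e.\ $(\log(\rho/\tilde\rho))'\leq 0$, so the ratio $|J|/|\tilde J|$ is non-increasing.

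To upgrade the monotonicity to $|J(t)|\leq|\tilde J(t)|$, I would check the initial value of the ratio. For Rauch~II this is immediate from $\rho(0)=\tilde\rho(0)=1$. For Rauch~I, Theorem~\ref{thm: Ja vanishes at a point} gives $J(t)=d(\exp_p)_{tv}(tw)$ with $w=D_tJ(0)$, so $|J(t)|/t\to |w|$ as $t\to 0^+$, and the analogous formula in $\modspace{\kappa}$ gives $|\tilde J(t)|/t\to|w|$; hence $\rho/\tilde\rho\to 1$. A non-increasing ratio starting at $1$ is bounded by $1$, completing the proof. The main obstacle is the Rauch~I case: both $S$ and the scalar $y$ blow up at $t=0$, so the singular comparison Lemma~\ref{lem: g(t) < ct_k} is essential, and one must also separately verify the limit of the ratio at this same singular time. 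A related subtlety is that $J/|J|$ is not generally a parallel field, which forces the argument to proceed through the uniform eigenvalue bound $\lambda_{\max}(S(t))\leq\tilde\lambda(t)$ rather than through a single fixed direction.
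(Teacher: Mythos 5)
Your proposal is correct and matches the paper's argument essentially step for step: both reduce to the operator bound $S(t)\leq\ctk(t)\id$ (respectively $S(t)\leq -\kappa\,\tnk(t)\id$ for Rauch~II), proved via the scalar Riccati inequality for $y=\langle SY,Y\rangle$ with $Y$ parallel unit normal and the singular comparison Lemma~\ref{lem: g(t) < ct_k} (or Lemma~\ref{lem-riccati} with $S(0)=0$), then combine with $\frac{|J|'}{|J|}=\frac{\langle SJ,J\rangle}{|J|^2}$ to show the ratio is non-increasing and evaluate the limit at $t\to0^+$. The only cosmetic difference is that the paper invokes L'H\^opital for the Rauch~I initial limit while you use Theorem~\ref{thm: Ja vanishes at a point}, and you spell out the $\lambda_{\max}$ passage that the paper imports implicitly from the Hessian comparison proof.
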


\begin{remark}
The conclusion of the theorem implies that in both cases of Rauch I and Rauch II, the first zero of $J$ (if it exists) must occur before the first zero of $\tilde J$. In our applications Rauch I will only be used for shortest geodesics in which case the no conjugate points assumption is always satisfied.
\end{remark}

\begin{proof}[Proof of Rauch Comparison Theorem $I \& II$]
Let us treat Rauch I first.

As before, we denote by $S(t)$ the shape operator of $S_t(p)$. By Remark~\ref{exist-S-I}  the assumption that there are no conjugate points along $\gamma$ guarantees that $S(t)$ is smooth on $(0,l]$ and $S(t)\sim \frac{1}{t}\id$  as $t\to 0$.

Recall that we can convert the Jacobi equation of $J$ and $\Tilde{J}$ to 
    \begin{equation}\label{eq-ricc}
        \begin{cases}
            D_tJ = SJ\\
            D_tS + S^2 + R_J = 0
        \end{cases}
        \quad\&\quad
        \begin{cases}
            D_t\Tilde{J} = S\Tilde{J}\\
            D_tS + S^2 + R_{\Tilde{J}} = 0
        \end{cases}
    \end{equation}
with initial conditions $J(0)=\tilde J(0)=0$ and $S(t)\sim \frac{1}{t}\id$ and $\tilde S(t)\sim \frac{1}{t}\id$ as $t\to 0$.

    Look at
    \begin{align*}
        \frac{\abs{J}'}{\abs{J}} = & \frac{(\sqrt{\inp{J, J}})'}{\abs{J}}\\
        = & \frac{\frac{1}{2\sqrt{\inp{J, J}}}\cdot (\inp{J, J})'}{\abs{J}} \\
        = & \frac{\cancel{2}\inp{D_tJ, J}}{\cancel{2}\abs{J}^2} = \frac{\inp{D_tJ, J}}{\abs{J}^2}\\
        = & \underbrace{\frac{\inp{SJ, J}}{\inp{J}^2} 
        \leq \ctk(t) = \frac{\inp{\Tilde{S}\Tilde{J}, \Tilde{J}}}{\abs{\Tilde{J}}^2}}_{\text{$S = S(t) \leq \ctk(t)\id$ and the equality holds in  $\modsp{\kappa}$}}\\
        = & \frac{\abs{\Tilde{J}}'}{\abs{\Tilde{J}^2}}
    \end{align*}

    Therefore, we proved
    \begin{align*}
        & (\ln{\abs{J}})' = \frac{\abs{J}'}{\abs{J}} \leq \frac{\abs{\Tilde{J}}'}{\abs{\Tilde{J}}} = (\ln{\abs{\Tilde{J}}})'\\
        \implies & \ln{\brac{\frac{\abs{J}}{\abs{\Tilde{J}}}}}' \leq 0\\
        \implies & \text{$\ln{\brac{\frac{\abs{J}}{\abs{\Tilde{J}}}}}$ is non-increasing}\\
        \implies & \text{$\frac{\abs{J}}{\abs{\Tilde{J}}}$ is non-increasing by the monotonicity of $\ln{\empty}$}
    \end{align*}
    By our assumption, (using L'Hopital's rule), then $\lim_{t \to 0^+}\frac{\abs{J(t)}}{\abs{\Tilde{J}(t)}} = 1$. Because it goes down, the first zero of $J$ occurs before the first zero of $\Tilde{J}$. 
    This proves Rauch $I$.
    
    The proof of Rauch $II$ is very similar. We only indicate the differences.
    
    Take a hypersurface $N^{n-1}$ containing $p$ and such that $T_pN=\gamma(0)^\perp$ and  its second fundamental form is zero at $p$. For example one can take $N$ to be the image under $\exp_p$ of a small ball in $B_\eps(0)\cap\gamma(0)^\perp\subset T_pM$. Then set $S(t)$ to be the shape operator of of the $t$-sphere around $N$ at $\gamma(t)$. Then the comparison argument is the same except we get $S = S(t) \leq -k\tnk(t)\id$ and the equality holds in the $\modsp{\kappa}$. Then the initial conditions for system \eqref{eq-ricc} become $|J(0)|=|\tilde J(0)|=1$ and $S(0)=\tilde S(0) =0$.  The rest of the proof is the same except we don't need to use L'Hopital.    
\end{proof}
\begin{remark}
The above proof only works so long as there are no conjugate points (Rauh I) or focal points (Rauch II) along $\gamma$. In particular, it means that in either case  $J(t)$ is not zero on $(0,l)$. Nothing can be said after the first zero of $J$.
\end{remark}

\begin{proposition}[Rigidity Case of Rauch Comparison Theorem]\label{prop: rigidity rauch}
	Under the assumptions of Rauch comparison if there is a positive $t_0\le l$ such that $|J(t_0)|=|\bar J(t_0)|$ then $|J(t)|=|\bar J(t)|$ on $[0,t_0]$ and moreover there is a parallel normal vector field $Y$ along $\gamma$ such that on  $[0,t_0]$ it holds that 
    \begin{itemize}
        \item \textbf{Rauch $I$:} $J(t)=\snk(t) Y(t)$;
        \item \textbf{Rauch $II$:}  $J(t)=\csk(t) Y(t)$.
    \end{itemize}
\end{proposition}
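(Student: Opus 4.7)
My plan is to revisit the proof of the Rauch comparison theorem and identify exactly where the chain of inequalities becomes a chain of equalities, then extract the algebraic consequences. First, the claim $|J(t)|=|\bar J(t)|$ on $[0,t_0]$ is immediate from the monotonicity of $|J|/|\bar J|$ established in the proof of Theorem~\ref{thm: Rauch}: in Rauch~I the quotient tends to $1$ as $t\to 0^+$ by l'H\^opital, and in Rauch~II it equals $1$ at $t=0$ directly. A non-increasing function that equals $1$ at both endpoints of $[0,t_0]$ is identically $1$ there. In particular $|J(t)|>0$ for $t\in(0,t_0]$ (Rauch~I) or $t\in[0,t_0]$ (Rauch~II), so the unit vector field $Y(t):=J(t)/|J(t)|$ is well defined on the corresponding open interval.

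Tracing through the original proof, the crucial inequality was
$$\frac{|J|'}{|J|} \;=\; \frac{\langle SJ,J\rangle}{|J|^2} \;\le\; \ctk(t) \;=\; \frac{|\bar J|'}{|\bar J|},$$
with $-\kappa\tnk(t)$ in place of $\ctk(t)$ in Rauch~II. Equality throughout $[0,t_0]$ yields $\langle S(t)J(t),J(t)\rangle=\ctk(t)|J(t)|^2$. The Hessian comparison Theorem~\ref{cor: Hess ineq} translates to the operator inequality $\ctk(t)\id - S(t)\ge 0$ on the normal bundle, so the equality above forces $J(t)$ to lie in the kernel of this positive semidefinite operator; equivalently, $S(t)J(t)=\ctk(t)J(t)$ on $(0,t_0]$ (and $S(t)J(t)=-\kappa\tnk(t)J(t)$ in Rauch~II).

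With this in hand, $D_tJ=SJ=\ctk(t)J$ and a direct computation gives
$$D_tY=\frac{D_tJ}{|J|}-\frac{J\,|J|'}{|J|^2}=\ctk(t)\frac{J}{|J|}-\ctk(t)\frac{J}{|J|}=0,$$
so $Y$ is parallel along $\gamma$ on the interior. Combined with $|J(t)|=|\bar J(t)|=|D_tJ(0)|\snk(t)$ in Rauch~I (respectively $|J(t)|=\csk(t)$ in Rauch~II), this gives the desired form $J(t)=\snk(t)Y(t)$ or $J(t)=\csk(t)Y(t)$, after rescaling $Y$ by the appropriate constant.

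The principal obstacle is the behavior at $t=0$ in Rauch~I: there $J(0)=0$ and $S(t)$ blows up like $\frac{1}{t}\id$, so $Y=J/|J|$ is only defined on $(0,t_0]$. I would establish parallelism on this open interval and then extend $Y$ across $t=0$ by setting $Y(0):=\lim_{t\to 0^+}J(t)/|J(t)|=D_tJ(0)/|D_tJ(0)|$, which exists by $J(t)\sim t\,D_tJ(0)$; the extended $Y$ is then smooth and parallel on $[0,t_0]$. No such extension is needed in Rauch~II.
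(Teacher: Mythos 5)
Your argument is correct and follows essentially the same route as the paper: monotonicity of $|J|/|\tilde J|$ forces equality throughout $[0,t_0]$, which in turn forces $\langle SJ,J\rangle/|J|^2$ to equal the model quantity, and the eigenvector observation (equality in $\langle SJ,J\rangle \le \lambda |J|^2$ for a symmetric operator with $S\le\lambda\id$ forces $SJ=\lambda J$) is exactly the paper's key step. The only cosmetic difference is the final conclusion: you show $D_t(J/|J|)=0$ by direct computation, whereas the paper compares $J$ against the explicit solution $\csk(t)Y(t)$ of the same first-order IVP and invokes uniqueness; these are equivalent. Your explicit treatment of the $t=0$ endpoint in Rauch~I (extending $Y$ by continuity using $J(t)\sim t\,D_tJ(0)$) is a welcome addition, since the paper only writes out Rauch~II and defers Rauch~I to "similar".
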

\begin{proof}
We will only give proof for Rauch II, the proof for Rauch I is similar. Without loss of generality $|J(0)|=1$.

    Suppose $|J(t_0)|=|\tilde J(t_0)|\ne 0$  and $J(t)\ne 0$ for $0<t\le t_0$.  Then by monotonicity of $\frac{\abs{J}}{\abs{\Tilde{J}}}$ we have that $|J(t)|=|\tilde J(t)|=\csk(t)$ for all $0\le t\le t_0$.

      The proof of Rauch comparison gives that
$ \frac{\inp{SJ, J}}{\inp{J}^2} 
        \leq-k\tnk(t)= \frac{\inp{\Tilde{S}\Tilde{J}, \Tilde{J}}}{\abs{\Tilde{J}}^2}$ and $S(t)\le -k\tnk(t)\id$ before the first zero of $J$.
        
        If the inequality is strict at any point  $t\in [0,t_0]$ then the proof gives that $|J(t_0)|< |\tilde J(t_0)|$ which we know is false.        
        
        Recall that if a symmetric matrix $A$  satisfies $A\le \lambda \id$ and $\langle AV,V\rangle =\lambda |V|^2$ for some nonzero vector $V$ then $V$ is a $\lambda$-eigenvector of $A$, i.e. $AV=\lambda V$.
        
        Hence $S(J(t))=-k\tnk(t)J(t) $ for any $t\in [0,t_0]$. But $J'=SJ$ which means that $J'=-k\tnk(t)J(t) $ on  $ [0,t_0]$.   Let $Y$ be parallel  along $\gamma$ with  $Y(0)=J(0)$ and Let $V=\csk(t) Y(t)$. Then $V$ also satisfies $V'=-
k\tnk(t)V$ on $ [0,t_0]$. Hence both $V$ and $J$ satisfy the same first-order IVP and hence $J=V$ on  $ [0,t_0]$.

\end{proof}

\section{Berger Comparison Theorem}
We are going to introduce the Berger comparison theorem \cite{Be62} in this section. This is useful in proving the concavity of the distance function. 
The following Berger comparison theorem is an important implication of the Rauch $II$ comparison theorem.

\begin{theorem}[Berger Comparsion Theorem (Figure~\ref{fig: Berger's comparison})]\label{lem: Berger comparison}
    Let $(M^n, g)$ be a manifold with $\sect_M \geq \kappa$. Let $\gamma: [0, l] \to M$ be a (not necessarily shortest) unit speed geodesic in $M$. Let $V$ be a unit parallel vector field along $\gamma$ such that for all $t \in [0, l]$, $V(t) \perp \Dot{\gamma}(t)$. Let
    \begin{equation*}
        \gamma(t, s) = \exp_{\gamma(t)}(sV(t)).
    \end{equation*}
    
    Now, consider $\tilde{\gamma}$, $\tilde{V}$, $\tilde{\gamma}(t, s)$ in the corresponding picture in the model space $\modsp{\kappa}$. Namely,
    \begin{equation*}
        \tilde{\gamma}(t, s) = \exp_{\tilde{\gamma}(t)}(s\tilde{V}(t)).
    \end{equation*}

    Then for all small $s>0$ it holds that
    \begin{equation*}
        \length(\gamma_s) \leq \length(\tilde{\gamma}_s)
    \end{equation*}
    where
    \begin{equation*}
        \gamma_s(t) = \gamma(t, s),\quad \tilde{\gamma}_s(t) = \tilde{\gamma}(t, s)
    \end{equation*}
    In the special case $\kappa=0$, the above estimate becomes 
     \begin{equation*}
        \length(\gamma_s) \leq \length(\tilde{\gamma}_0)
    \end{equation*}
    for all small $s$.
    \begin{figure}[htbp]
    \centering
        \includegraphics[width=0.7\textwidth]{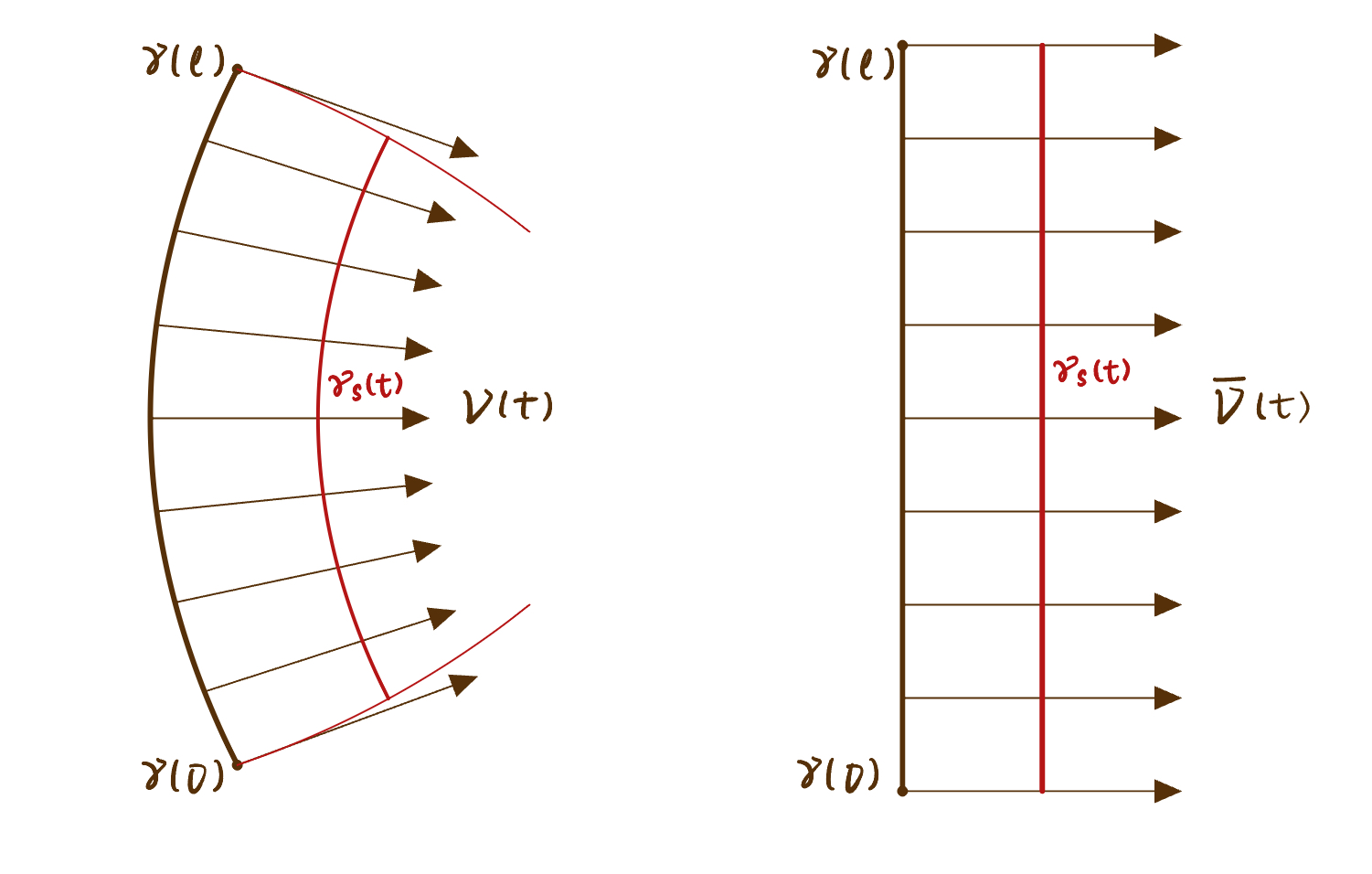}
        \caption{Berger's comparison theorem.}
        \label{fig: Berger's comparison}
    \end{figure}
\end{theorem}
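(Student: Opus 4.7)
The plan is to recognize $\Gamma(t,s) := \gamma(t,s) = \exp_{\gamma(t)}(sV(t))$ as a variation through geodesics, but with the unusual twist that the geodesic parameter is $s$ rather than $t$: for every fixed $t_0 \in [0,l]$, the curve $c_{t_0}(s) := \gamma(t_0, s)$ is a radial geodesic by construction, so $\Gamma$ is a variation of $c_{t_0}$ through geodesics. By Theorem~\ref{thm: variational field jacobi}, the variation field
\[
J_{t_0}(s) := \partial_t \gamma(t_0, s)
\]
is a Jacobi field along $c_{t_0}$.

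Next I would compute its initial data. Clearly $J_{t_0}(0) = \dga(t_0)$, so $|J_{t_0}(0)| = 1$. The symmetry lemma together with the parallelness of $V$ along $\gamma$ gives
\[
D_s J_{t_0}(0) = D_s \partial_t \Gamma(t_0, 0) = D_t \partial_s \Gamma(t_0, 0) = D_t V(t_0) = 0.
\]
Since $J_{t_0}(0) = \dga(t_0) \perp V(t_0) = \dot c_{t_0}(0)$ and $D_s J_{t_0}(0) = 0$, the tangential component $\langle J_{t_0}, \dot c_{t_0}\rangle$ satisfies a trivial ODE with zero initial data, so $J_{t_0}$ is a \emph{normal} Jacobi field. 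The same construction performed in $\modsp{\kappa}$ yields a normal Jacobi field $\tilde J_{t_0}$ along $\tilde c_{t_0}$ with identical initial data, and from the explicit form of normal Jacobi fields in constant curvature one reads off $\tilde J_{t_0}(s) = \csk(s)\tilde Y(s)$ for a parallel unit field $\tilde Y$, hence $|\tilde J_{t_0}(s)| = \csk(s)$.

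Now I would invoke Rauch~II (Theorem~\ref{thm: Rauch}) to conclude $|J_{t_0}(s)| \leq \csk(s)$. The no-focal-points hypothesis holds automatically for small $s$ since the normal exponential map to any geodesic submanifold is a local diffeomorphism near its zero section, and by compactness of $[0,l]$ a single $s_0 > 0$ works uniformly in $t_0$. Integrating over $t$ then yields
\[
\length(\gamma_s) = \int_0^l |\partial_t \gamma(t,s)|\, dt = \int_0^l |J_t(s)|\, dt \leq \int_0^l \csk(s)\, dt = l\,\csk(s) = \length(\tilde\gamma_s),
\]
and the $\kappa=0$ sharpening is immediate because $\cs_0 \equiv 1$, so $\length(\tilde\gamma_s) = l = \length(\tilde\gamma_0)$.

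The main obstacle is really conceptual rather than computational: it is tempting to try to view $J_{t_0}$ as a Jacobi field along $\gamma$ itself (since the curve whose length we care about is the $t$-curve $t \mapsto \Gamma(t, s)$), but that is the wrong perspective — $\gamma$ is not a member of the one-parameter family of geodesics, whereas each $c_{t_0}$ is. Once we commit to freezing $t_0$ and viewing $J_{t_0}$ as a Jacobi field along the transverse geodesic $c_{t_0}$, the initial data exactly match the Rauch~II hypotheses (unit length at $s=0$, vanishing covariant derivative at $s=0$, normality), and the rest of the argument reduces to a single integration.
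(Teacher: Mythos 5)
Your proof is correct and follows essentially the same route as the paper's: identify $Y = \partial_t\gamma$ as a Jacobi field along each transverse geodesic $s \mapsto \gamma(t_0,s)$, compute $Y(0)=\dga(t_0)$ and $D_sY(0)=D_tV(t_0)=0$ via the symmetry lemma, and invoke Rauch~II before integrating over $t$. You supply a few details the paper leaves implicit — explicitly checking normality of $J_{t_0}$ from the initial data, observing that compactness of $[0,l]$ gives a uniform $s_0$ for the no-focal-points hypothesis, and writing out $\length(\tilde\gamma_s)=l\,\csk(s)$ — but the argument is the same in substance.
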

\begin{proof}
    Use Rauch $II$ in \ref{thm: Rauch}. Since for any fixed $t$ the curve $s \mapsto \gamma(t, s)$ is a geodesic, $Y=\frac{\partial \gamma}{\partial t}$ is Jacobi along $s \mapsto \gamma(t, s)$. Also, at $s = 0$, $Y(0) = \dga(0)$ ad $\abs{Y(0)} = 1$.
    \begin{equation*}
        Y'(0) = \frac{D}{ds}\brac{\frac{\partial \gamma}{\partial t}} = \frac{D}{dt}\brac{\frac{\partial \gamma}{\partial s}}\Big|_{t,s = 0} = \frac{D}{dt}(V) = 0
    \end{equation*}
    Notice that the last equality holds because $V$ is a parallel vector field. Thus, we know that $Y'(0) = 0$ and $\abs{Y(0)} = 1$. Similarly, the above also works for $\tilde{Y} = \frac{\partial \tilde{\gamma}}{\partial t}$. Then by Rauch $II$, we have that
    \begin{align*}
        & \abs{Y(s)} \leq \abs{\tilde{Y}(s)} \quad\text{for small $s$}\\
        \implies & \abs{\dga_s(t)} \leq \abs{\dot{\tilde{\gamma}}_s(t)} \quad\text{for any $t$ (for small $s$)}\\
        \implies & \length(\gamma_s) \leq \length(\tilde{\gamma}_s) \quad\text{for small $s$}
    \end{align*}

In particular, if $\sect_M \geq 0$, then the model space $M^n_\kappa$ is just $\R^n$. 
Then $\tilde{\gamma}$ is a straight line and $\tilde{Y}$ a constant vector field along $\tilde{\gamma}$. 
Notice that we can write the straight line $\tilde{\gamma}$ as a straight passing through $0$, i.e. $\tilde{\gamma} = tV$ (WLOG we assume $\bar p=0$) for some vector $V$. 
Then 
    \begin{equation*}
        \tilde{\gamma}(t, s) = tV + sY.
    \end{equation*}
In this case, $\length(\tilde{\gamma}_s)$ is constant in $s$. 
In Rauch $II$, $\frac{\partial \tilde \gamma}{\partial t}$ is parallel along $s \mapsto \tilde{\gamma}(t, s)$.
By Berger's comparison theorem \ref{lem: Berger comparison}, 
    \begin{equation*}
        \length(\gamma_s) \leq \length(\tilde{\gamma}_s) \quad\text{for small $s$ and $\length(\tilde{\gamma}_s)$ is a constant}. 
    \end{equation*}
    
    \end{proof}
    This immediately gives.
    \begin{corollary}\label{berger-k=0}
    Suppose under the assumptions of Berger's comparison $\gamma_0$ is shortest and $\kappa=0$.
    Then $d(\gamma(0,s), \gamma(l,s))\le d(\gamma(0,0), \gamma(l,0))$ for all small $s$.
    
    \end{corollary}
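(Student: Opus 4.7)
The plan is a short chain of inequalities combining Berger's comparison theorem with the fact that $\gamma_0$ is shortest. First I would invoke Theorem~\ref{lem: Berger comparison} in the case $\kappa = 0$, which gives $\length(\gamma_s) \le \length(\tilde\gamma_s)$ for all small $s$. As noted in the second half of the statement of that theorem, when $\kappa = 0$ the model side curves $\tilde\gamma_s$ all have the same length as $\tilde\gamma_0$, since $\tilde\gamma(t,s) = \tilde\gamma(t,0) + s\tilde V$ with $\tilde V$ constant and perpendicular to $\dot{\tilde\gamma}$, so the family is obtained by rigid translation. In particular $\length(\tilde\gamma_s) = \length(\tilde\gamma_0) = l$.

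Next, I would use the shortest hypothesis on $\gamma_0$. Because $\gamma_0 = \gamma$ is a unit speed shortest geodesic of length $l$ between $\gamma(0,0)$ and $\gamma(l,0)$, we have $d(\gamma(0,0), \gamma(l,0)) = l$. Combining this with the Berger estimate gives $\length(\gamma_s) \le l = d(\gamma(0,0), \gamma(l,0))$ for all small $s$.

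Finally, since $\gamma_s$ is a smooth curve joining $\gamma(0,s)$ to $\gamma(l,s)$, the metric distance is bounded above by its length:
\begin{equation*}
d(\gamma(0,s), \gamma(l,s)) \le \length(\gamma_s) \le d(\gamma(0,0), \gamma(l,0)),
\end{equation*}
which is the desired inequality. There is no real obstacle here; the only thing to verify carefully is the constancy in $s$ of $\length(\tilde\gamma_s)$ in the Euclidean model, but that was essentially observed in the proof of Theorem~\ref{lem: Berger comparison} (where $\tilde Y$ is a constant vector field along $\tilde\gamma$ and each $\tilde\gamma_s$ is the image of $\tilde\gamma_0$ under a translation by $s\tilde V$).
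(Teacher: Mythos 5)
Your argument is correct and is exactly the chain of inequalities the paper intends when it writes ``This immediately gives'' after stating Berger's comparison: apply the $\kappa=0$ case of Theorem~\ref{lem: Berger comparison} to get $\length(\gamma_s)\le \length(\tilde\gamma_0)=l$, use that $\gamma_0$ is shortest so $l=d(\gamma(0,0),\gamma(l,0))$, and bound the distance $d(\gamma(0,s),\gamma(l,s))$ by the length of the connecting curve $\gamma_s$. No issues.
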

    
Moreover, for later applications we will need to understand the rigidity case in the above corollary.


\begin{proposition}[Rigidity Case of Berger Comparison Theorem]\label{lem: rigidity case}
Suppose that in the assumptions of the Corollary \ref{berger-k=0} we have that 
\[
d(\gamma(0,s_0), \gamma(l,s_0))=d(\gamma(0,0), \gamma(l,0))
\]
 for some small $s_0 > 0$.

Then 
    \begin{equation*}
        \br{\gamma(t, s): 0 \leq t \leq l, 0\leq s\leq s_0}
    \end{equation*}
    is a totally geodesic flat isometrically immersed rectangle in $M$. 
\end{proposition}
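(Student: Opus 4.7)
The plan is to use the rigidity case of Rauch II (Proposition~\ref{prop: rigidity rauch}) along the $s$-geodesics to force parallelism of the tangent fields on the rectangle, and then to use nonnegative sectional curvature to promote this to total geodesicity.

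First, set $T = \partial_t \gamma$ and $S = \partial_s \gamma$. For each fixed $t$, the curve $s \mapsto \gamma(t, s)$ is a geodesic and $Y_t(s) = T(t, s)$ is a Jacobi field along it with $|Y_t(0)| = |\dga(t)| = 1$ and $D_s Y_t(0) = D_t V(t) = 0$ (since $V$ is parallel along $\gamma_0$). In the $\kappa = 0$ model the comparison field $\tilde Y_t$ is parallel with constant length $1$, so Rauch II gives $|Y_t(s)| \leq 1$ on $[0, s_0]$, and the chain
\[
l = d(\gamma(0, s_0), \gamma(l, s_0)) \leq \length(\gamma_{s_0}) = \int_0^l |Y_t(s_0)|\, dt \leq l
\]
forces $|Y_t(s_0)| \equiv 1$. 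Rauch monotonicity then gives $|Y_t(s)| \equiv 1$ on $[0, s_0]$, and Proposition~\ref{prop: rigidity rauch} yields $D_s T \equiv 0$. The symmetry lemma next gives $D_t S = D_s T = 0$, while $D_s S = 0$ since $s$-curves are geodesics. These parallelisms together with the initial orthogonality $\inp{\dga(t), V(t)} = 0$ show that $\inp{T, S} \equiv 0$ and $|T| = |S| \equiv 1$ on the rectangle, so the pullback metric on $[0, l] \times [0, s_0]$ equals $dt^2 + ds^2$ and the map is already a flat isometric immersion. The Jacobi equation for $Y_t$ combined with $D_s T = 0$ also gives $R(T, S) S \equiv 0$, so the tangent plane $\spa(T, S)$ has zero sectional curvature at every point of the rectangle.

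The main obstacle is the remaining identity $D_t T \equiv 0$, which says each $\gamma_s$ is itself a geodesic and is needed for total geodesicity; this does not follow from $D_s T = 0$ alone. I would use the commutation formula
\[
D_s D_t T = D_t D_s T + R(S, T) T = R(S, T) T,
\]
reducing the task to proving $R(S, T) T = 0$. Here nonnegative curvature is essential: for any $X$ perpendicular to $\spa(T, S)$, expanding the inequality $\sect(T, S + \eps X) \geq 0$ in $\eps$ gives
\[
0 \leq 2\eps\, Rm(T, S, X, T) + \eps^2\, Rm(T, X, X, T)
\]
for all small $\eps$ of either sign, which forces $Rm(T, S, X, T) = 0$ for all such $X$; combined with the symmetries of $Rm$ this yields $R(S, T) T = 0$. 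Therefore $D_t T$ is parallel along each $s$-geodesic and vanishes at $s = 0$ since $\gamma_0$ is a geodesic, so $D_t T \equiv 0$ throughout. All four covariant derivatives of $T$ and $S$ in the tangent directions then vanish, proving that the rectangle is totally geodesic and flat as required.
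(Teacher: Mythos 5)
Your proof is correct and the first half matches the paper exactly: equality in Berger forces $\length(\gamma_{s_0})=\length(\gamma_0)$, hence $|Y(t,s_0)|=|\tilde Y(t,s_0)|$ for all $t$, Rauch monotonicity gives $|Y|\equiv 1$ on the whole rectangle, the rigidity case of Rauch~II yields $D_sT\equiv 0$, and with $D_tS=D_sT$, $D_sS=0$, and the pointwise orthonormality of $T,S$ you get the flat isometric immersion.

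Where you genuinely diverge is the final step, $D_tT\equiv 0$. The paper argues that $\gamma_s$ is unit speed of length $l=d(\gamma(0,s),\gamma(l,s))$ for \emph{every} $s\in[0,s_0]$ and hence each $\gamma_s$ is itself a minimizing geodesic; but the equality of distance and length is only given by hypothesis at $s=s_0$ (and trivially at $s=0$), and the intermediate-$s$ case is asserted without justification. Your route avoids this entirely: from the Jacobi equation for $Y_t$ and $D_sT\equiv 0$ you read off $R(T,S)S\equiv 0$, so $\sect(T,S)\equiv 0$ on the rectangle; then a two-sided perturbation $\sect(T,S+\eps X)\ge 0$ for $X\perp\spa(T,S)$ uses the \emph{ambient} curvature bound $\sect_M\ge 0$ to force $Rm(T,S,X,T)=0$, which by the Bianchi-type symmetries of $Rm$ gives $R(S,T)T=0$; finally the commutation identity $D_sD_tT=D_tD_sT+R(S,T)T=0$ shows $D_tT$ is parallel along each $s$-geodesic and vanishes at $s=0$, hence vanishes identically. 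This is a self-contained pointwise argument that replaces the paper's global distance-comparison step, and it makes explicit where the lower curvature bound is used in establishing total geodesicity; the paper's approach, if its missing claim is supplied, is slightly more geometric but requires that extra lemma.
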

\begin{proof}

Recall that a submanifold $N \subseteq (M, g)$ is called totally geodesic if for each $p \in N$, there is a neighborhood of $U_\delta(0) \subseteq T_pN \subseteq T_pM$ is mapped  into $N$ via the exponential map of $M$, i.e. $\exp^M_p(U_\delta(0)) \subseteq N$. 
This is well known to be equivalent to the second fundamental form of $N$ vanishing.

And a Riemannian manifold is called flat if it is locally isometric to Euclidean space. This is well known to be equivalent to the curvature of this manifold to be identically zero,


By Berger's comparison, we have that 
\begin{equation*}
	d(\gamma(0,s_0), \gamma(l,s_0))=\length(\gamma_{s_0})\ge \length(\gamma_0)=d(\gamma(0,0), \gamma(l,0))
\end{equation*}
However, we are given that $d(\gamma(0,s_0), \gamma(l,s_0))=d(\gamma(0,0), \gamma(l,0))$ which means that all of the above inequalities are equalities. In other words

\begin{equation}
d(\gamma(0,s_0), \gamma(l,s_0))=\length(\gamma_{s_0})=\length(\gamma_0)=d(\gamma(0,0), \gamma(l,0))
\end{equation}

In particular $\length(\gamma_0)= \length(\gamma_{s_0})$

In the proof of Berger's comparison we have that for any small fixed $s$ we have $|Y(t,s)|\le |\tilde Y(t,s)|$ and $\length (\gamma_s)=\int_0^l|Y(t,s)| dt$ and similarly $\length (\tilde \gamma_s)=\int_0^l \tilde Y(t,s)| dt$.

Hence the equality $\length(\gamma_{s_0}) = \length(\tilde{\gamma}_{s_0})$ implies that   $|Y(t,s_0)|= |\tilde Y(t,s_0)|$  for any $t$. By the rigidity case of Rauch II with $\kappa=0$ this implies  that for any fixed $t$ the field  $Y(t,s)=\frac{\partial \gamma}{\partial t}$ is parallel along $s \to \gamma(t, s), 0\le s\le s_0$.

Moreover, since $\frac{\partial \gamma}{\partial s}$ is also parallel along this curve we wee see that  $\frac{\partial \gamma}{\partial s}$ and  $\frac{\partial \gamma}{\partial t}$ are orthonormal for any $t\in[0,l,s\in [0,s_0] $.
This immediately gives that the map $\gamma$ is an isometric immersion.

It remains to be observed that this immersion is totally geodesic. We already know  that $ \frac{D}{ds}\brac{\frac{\partial \gamma}{\partial t}} = \frac{D}{dt}\brac{\frac{\partial \gamma}{\partial s}} = \frac{D}{ds}\brac{\frac{\partial \gamma}{\partial s}} =0$. We will show that  $\frac{D}{dt}\brac{\frac{\partial \gamma}{\partial t}} =0$ too.

 Recall that we proved that 
 \begin{equation*}
 	\length(\gamma_{s_0})=d(\gamma(0,0), \gamma(l,0))=d(\gamma(0,s), \gamma(l,s))
 \end{equation*}
 
 for any $s\in [0,s_0]$. Since $|Y|\equiv 1$ for $s\le s_0$ we also have that $\gamma_s$ is unit speed for $s\le s_0$. Hence it's a geodesic and therefore $\frac{D}{dt}\brac{\frac{\partial \gamma}{\partial t}} =0$ for any $t\in[0,l,s\in [0,s_0] $. This proves that the map $\gamma$ restricted to this rectangle has trivial second fundamental form and hence is totally geodesic.

	\begin{figure}[htbp]
    \centering
        \includegraphics[width=0.7\textwidth]{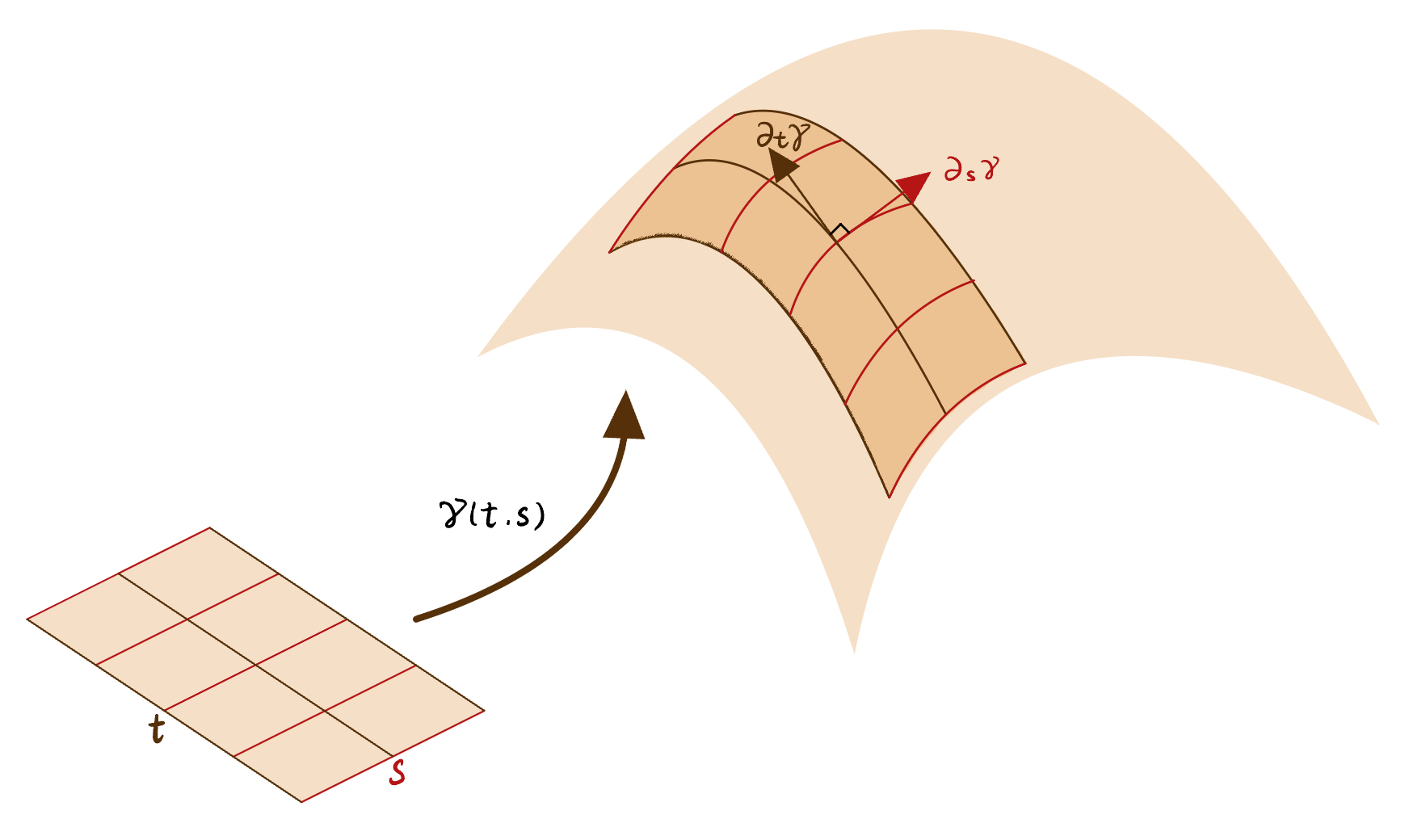}
        \caption{The totally geodesic flat rectangle is an isometric embedding.}
    \end{figure}
\end{proof}

\section{Local Toponogov Comparison Theorem}
The most important corollary that we are introducing now is the hinge version of the Toponogov comparison theorem, which is further equivalent to the angle version of the Toponogov comparison theorem. 
\begin{theorem}[Local Toponogov Hinge Comparison]\label{thm: hinge}
    Let $(M^n, g)$ be a Riemannian manifold of $\sect_M \geq \kappa$.
    Fix $z \in M$. There is a small  $\eps\in (0, \frac{\pik}{2})$ such that the following holds.
For any  $p, x, y \in B_\eps(z)$ we have
    \begin{equation*}
        \abs{x - y} \leq \modcvee^\kappa(\mangle[p_x^y]; \abs{x - p}, \abs{y - p}).
    \end{equation*}
\end{theorem}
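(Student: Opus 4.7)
The plan is to reduce the statement to a one-dimensional Sturm-type ODE comparison via the Hessian comparison theorem (Theorem~\ref{cor: Hess ineq}). Write $a=\abs{p-x}$, $b=\abs{p-y}$ and $\alpha=\mangle[p_x^y]$. First I would choose $\eps$ small enough (depending only on $z$) that $B_{10\eps}(z)$ is a strongly convex normal ball with $10\eps<\pik/2$. This guarantees that for $p,x,y\in B_\eps(z)$ the segments $[px],[py]$ are unique, and the unit-speed shortest geodesic $\gamma:[0,b]\to M$ from $p$ to $y$ stays in $B_{3\eps}(z)$, hence strictly outside the cut locus of $x$, so that $\mdk\circ d_x$ is smooth in a neighborhood of $\gamma([0,b])$.

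The key object is
\[
w(t)=\mdk\bigl(d_x(\gamma(t))\bigr),\qquad t\in[0,b].
\]
By Theorem~\ref{cor: Hess ineq} applied to $\mdk\circ d_x$ along the unit-speed geodesic $\gamma$,
\[
w''(t)+\kappa\,w(t)\le 1\qquad\text{on }[0,b].
\]
The initial data are $w(0)=\mdk(a)$ and, using $\nabla d_x|_p=-u$ where $u=\exp_p^{-1}(x)/a$, so that $\langle -u,\dot\gamma(0)\rangle=-\cos\alpha$,
\[
w'(0)=\snk(a)\,\langle\nabla d_x|_p,\dot\gamma(0)\rangle=-\snk(a)\cos\alpha.
\]

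The model-space comparison function is $z(t)=\mdk\bigl(\modcvee^\kappa(\alpha;a,t)\bigr)$. By Corollary~\ref{eq: modcvee eq} it satisfies $z''+\kappa z=1$, and the same cosine-law computation (as in the derivation preceding that corollary) gives exactly the matching initial data $z(0)=\mdk(a)$, $z'(0)=-\snk(a)\cos\alpha$. Setting $h=z-w$, one has $h(0)=h'(0)=0$ and $h''+\kappa h\ge 0$ on $[0,b]$. A standard Sturm trick now finishes the proof: since $\snk''+\kappa\snk=0$,
\[
\bigl(h'\snk-h\snk'\bigr)'=(h''+\kappa h)\,\snk\ge 0,
\]
so $h'\snk-h\snk'$ is nondecreasing and vanishes at $t=0$. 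Hence $(h/\snk)'\ge 0$ on $(0,\pik)$ while $h/\snk\to 0$ as $t\to 0^+$ by L'Hopital, so $h\ge 0$ on $[0,b]$. Evaluating at $t=b$ gives $\mdk(\abs{x-y})=w(b)\le z(b)=\mdk(\modcvee^\kappa(\alpha;a,b))$, and strict monotonicity of $\mdk$ on $[0,\pik]$ yields the claim.

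The main technical point is the choice of $\eps$: one must ensure that $\gamma$ never meets the cut locus of $x$ so that Theorem~\ref{cor: Hess ineq} is applicable along all of $[0,b]$. This is exactly why the theorem is only local; a shortest geodesic between two points of $M$ can in general cross the cut locus of a third point, and upgrading the estimate to a global Toponogov comparison requires a separate chaining argument rather than a single direct application of the Hessian bound.
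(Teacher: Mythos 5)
Your proof is correct, but it takes a genuinely different route from the paper's own proof of this theorem. The paper proves the local hinge comparison from Rauch~$I$: it shows that the map $\exp_p\circ I\circ\exp_{\tilde p}^{-1}:\modsp{\kappa}\to M$ is $1$-Lipschitz near the base point by decomposing tangent vectors into radial and perpendicular parts, invoking the Gauss lemma for the radial part and Rauch~$I$ (comparison of Jacobi fields vanishing at $p$) for the perpendicular part; the hinge estimate then follows because this map preserves the hinge data (radial lengths and angle) while not increasing distances. Your argument instead runs entirely through the Hessian comparison theorem~\ref{cor: Hess ineq}: you set up the scalar ODE inequality $w''+\kappa w\le 1$ for $w(t)=\mdk(d_x(\gamma(t)))$ along the geodesic $[py]$, match the \emph{initial} data $w(0),w'(0)$ with the model solution $z(t)=\mdk(\modcvee^\kappa(\alpha;a,t))$, and close with a Sturm--Picone trick using $\snk$ as a test solution. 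This is close in spirit to the paper's Jensen's-inequality proof of point-on-a-side comparison (Section~\ref{sect: function comparison}), but it is not the same argument: Jensen matches \emph{boundary} data at both endpoints and concludes $f\ge\bar f$ (yielding point-on-a-side), whereas you match \emph{initial} data at one endpoint and conclude $w\le z$ (yielding the hinge directly, with no need to route through the equivalence of the various local Toponogov statements). Both proofs ultimately descend from Riccati comparison, but yours packages it through the precomputed Hessian bound rather than through Rauch, and has the advantage of being a single ODE argument that gives the hinge inequality without the exponential-map machinery. One small point worth flagging: Theorem~\ref{cor: Hess ineq} is stated away from the cut locus, and its proof via the shape operator of $S_t(x)$ implicitly takes $t>0$, so if $x$ happens to lie on $[py]$ you should either treat that degenerate hinge separately (it is an equality) or note that $\mdk\circ d_x$ is smooth at $x$ and the Hessian bound extends there by continuity.
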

\begin{figure}[htbp]
    \centering
        \includegraphics[width=0.6\textwidth]{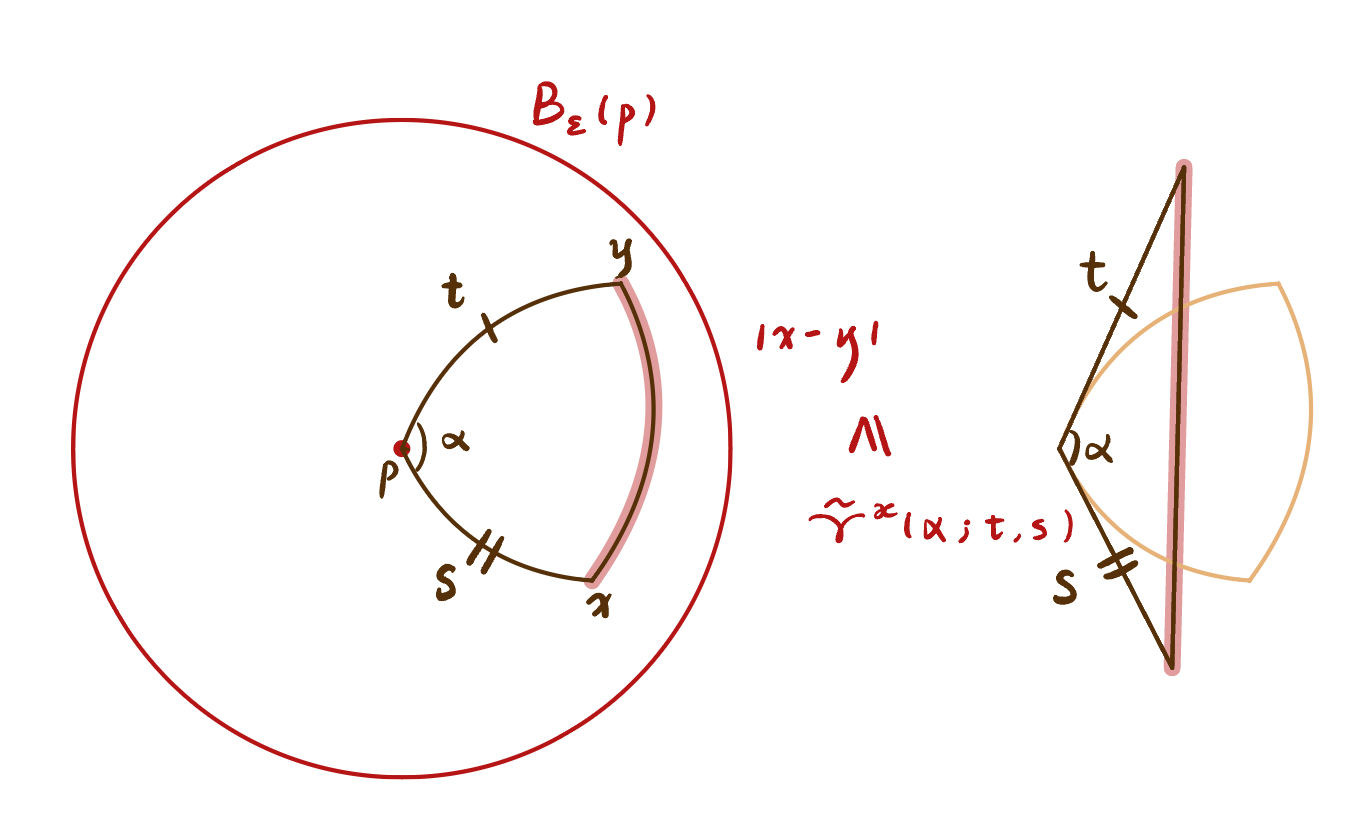}\caption{Local Hinge Comparison theorem \ref{thm: hinge} where $t = \abs{p - y}$, $s = \abs{p - x}$ and $\alpha = \mangle[p_x^y]$.}
\end{figure}
\begin{theorem}[Local Toponogov Angle Comparison]\label{thm: angle}
    Let $(M^n, g)$ be a Riemannian manifold of $\sect_M \geq \kappa$. 
   	Fix $z \in M$. There is a small $\eps\in (0, \frac{\pik}{2})$ such that the following holds.
      For any $p, x, y \in B_\eps(z)$  and the comparison triangle $[\Tilde{p}\Tilde{x}\Tilde{y}] := \modtriangle^\kappa(p, x, y) := \modtriangle^\kappa(\abs{p - x}, \abs{p - y}, \abs{x - y})$, we have
    \begin{align*}
        & \mangle[p_x^y] \geq \mangle[{{\Tilde{p}_{\Tilde{x}}}^{\Tilde{y}}}]\\
        & \mangle[x_p^y] \geq \mangle[{{\Tilde{x}_{\Tilde{p}}}^{\Tilde{y}}}]\\
        & \mangle[y_x^p] \geq \mangle[{{\Tilde{y}_{\Tilde{x}}}^{\Tilde{p}}}]
    \end{align*}
\end{theorem}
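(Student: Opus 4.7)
The plan is to derive the angle comparison directly from the hinge comparison (Theorem~\ref{thm: hinge}) by invoking the monotonicity of the model functions $\modcvee^\kappa$ and $\modangle^\kappa$ in their first arguments. I would take $\eps$ to be the same small radius supplied by Theorem~\ref{thm: hinge}, possibly shrunk further so that any three pairwise distances among points of $B_\eps(z)$ satisfy the triangle inequalities with strict total $< 2\pik$; such a shrinking can be absorbed while keeping $\eps < \pik/2$. This guarantees that every hinge angle $\mangle[\cdot]$ is defined via Definition~\ref{def-angle} and that the comparison triangle $\modtriangle^\kappa(|p-x|, |p-y|, |x-y|)$ is uniquely defined by Proposition~\ref{prop: model triangle defined}.

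For the inequality at the vertex $p$, the hinge theorem gives
\[
|x-y| \leq \modcvee^\kappa\bigl(\mangle[p_x^y];\, |p-x|,\, |p-y|\bigr).
\]
The core observation is that, for fixed arm lengths $b=|p-x|$ and $c=|p-y|$ in $(0, \pik)$, the model side function $\varphi \mapsto \modcvee^\kappa(\varphi; b, c)$ is a strictly increasing bijection from $[0, \pi]$ onto its range, with inverse $a \mapsto \modangle^\kappa(a; b, c)$. Geometrically this is clear from the cosine law derived in the previous section: opening a hinge with fixed arm lengths strictly increases the distance between its tips. Applying the increasing function $\modangle^\kappa(\,\cdot\,;\, b, c)$ to both sides of the displayed inequality yields
\[
\modangle^\kappa\bigl(|x-y|;\, |p-x|,\, |p-y|\bigr) \leq \mangle[p_x^y],
\]
and the left-hand side is, by Notation~\ref{not: angle}, exactly $\mangle[\Tilde{p}_{\Tilde{x}}^{\Tilde{y}}]$. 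The same argument with $p$ replaced in turn by $x$ and by $y$ gives the remaining two inequalities.

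The main obstacle is essentially bookkeeping, since the heavy analytic content (the Rauch/Berger-based hinge comparison) is already packaged in Theorem~\ref{thm: hinge}. The two items to verify are (i) the monotonicity of $\varphi \mapsto \modcvee^\kappa(\varphi; b, c)$, which follows immediately from the explicit cosine-law formulas and the fact that $\cos$ is strictly decreasing on $[0, \pi]$; and (ii) that all three sides, together with the hinge angle, remain in the admissible regime where $\modcvee^\kappa$ and $\modangle^\kappa$ are genuinely inverse, which is guaranteed by the size restriction on $\eps$. Once these are noted, the three angle inequalities drop out of the hinge comparison purely algebraically, so no further geometric input is needed.
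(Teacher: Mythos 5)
Your proof is correct and follows essentially the same route as the paper's: both pass from the hinge comparison inequality $|x-y| \le \modcvee^\kappa(\mangle[p_x^y];|p-x|,|p-y|)$ to the angle comparison by applying the monotonicity of $\varphi \mapsto \modcvee^\kappa(\varphi;b,c)$ (equivalently, of $a \mapsto \modangle^\kappa(a;b,c)$) that comes from the cosine law. Your extra attention to shrinking $\eps$ so that the perimeter stays below $2\pik$ and $\modcvee^\kappa$ and $\modangle^\kappa$ are genuine inverses is a sensible explicit check, but the argument itself is the same.
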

\begin{figure}[htbp]
    \centering
        \includegraphics[width=0.6\textwidth]{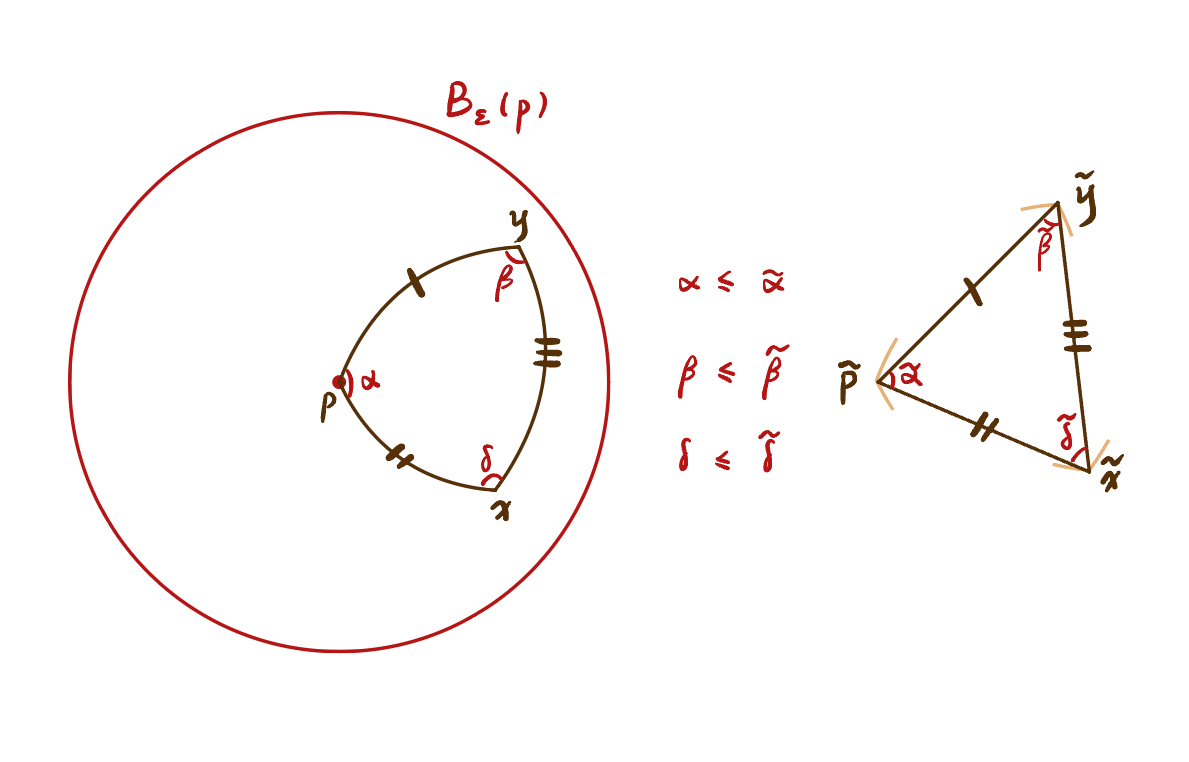}\caption{Local Angle Comparison theorem \ref{thm: angle}.}
\end{figure}
We are going to argue that the two versions of local Toponogov comparison theorems are equivalent. 
\begin{proof}[Proof (Local Hinge Comparison to Local  Angle Comparison)]
We only need to prove $\alpha \leq \tilde{\alpha}$ and the same argument works for the other two angles. 
The key observation is that $\alpha\mapsto-\cos\alpha$ is monotone in creasing on $(0,\pi)$ and hence by the cosine law in the model plane the map $\alpha\mapsto  \modcvee^\kappa(\alpha; t, s)$ is monotone increasing for any fixed $0<t,s<\pi_k$.

Suppose the hinge comparison holds. Consider the triangle $\triangle(p, x, y)$ in $M$ and denote $t = \abs{p - y}$, $s = \abs{p - x}$ and $\alpha = \mangle[p_x^y]$. We also define $d(t, s) = \abs{x - y}$. Similarly, we define $\tilde{d}(t, s) = \modcvee^\kappa(\alpha; t, s)$. On the other hand, consider the triangle $[\Tilde{p}\Tilde{x}\Tilde{y}]$.
Then by the hinge comparison, 
\begin{equation*}
	\tilde{d}(t, s) \geq d(t, s).
\end{equation*}
 By its definition, we know that $t = \abs{\tilde{p} - \tilde{y}}$, $s = \abs{\tilde{p} - \tilde{x}}$ and $d(t, s) = \abs{\tilde{x} - \tilde{y}}$. Now by the cosine law, we know that
 \begin{equation*}
 	\tilde{\alpha} := \modangle^\kappa\br{d(t, s); t, s} \leq \modangle^\kappa\{\tilde{d}(t, s); t, s\} = \alpha
 \end{equation*}

	\begin{figure}[htbp]
    \centering
        \includegraphics[width=0.6\textwidth]{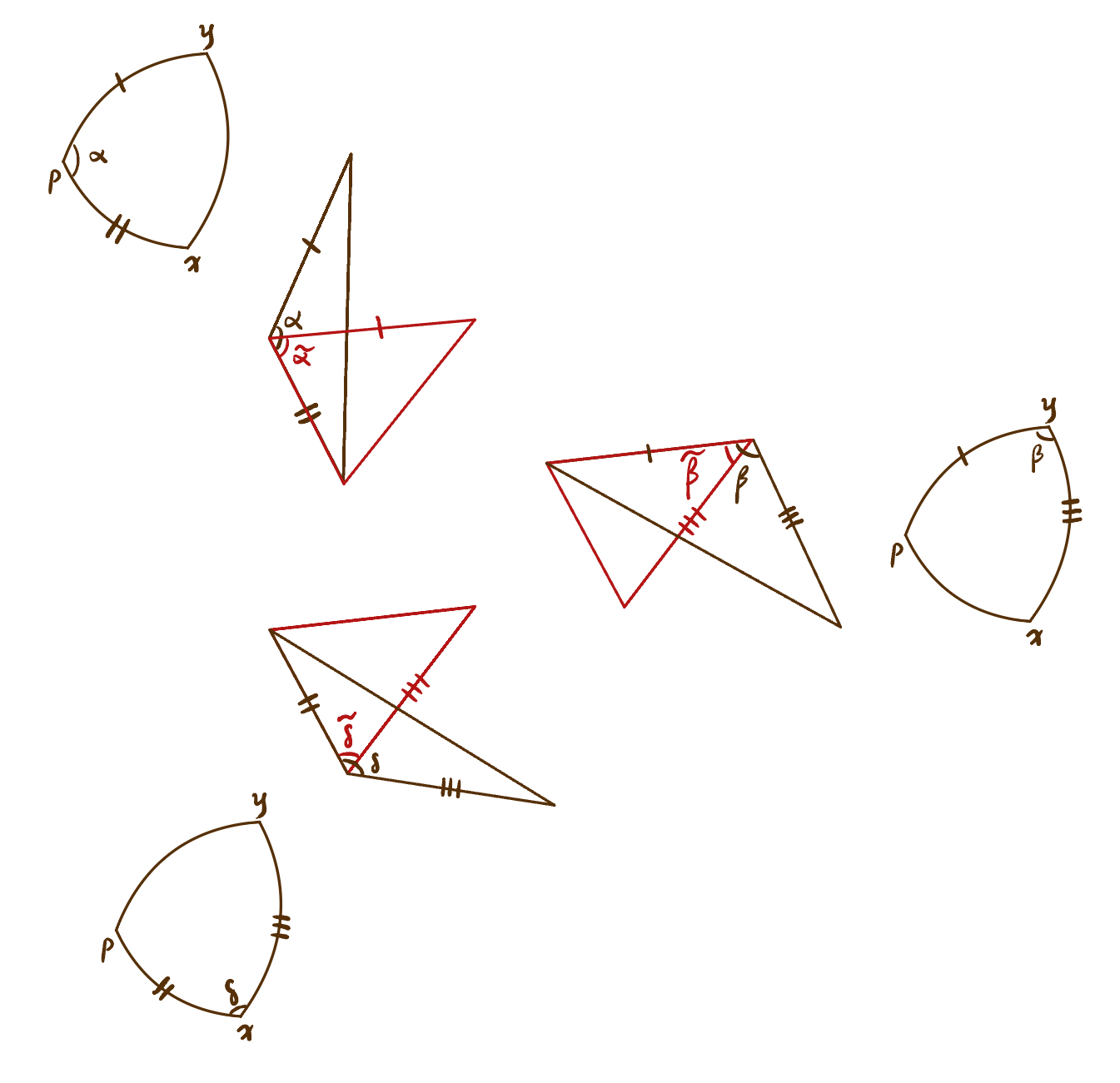}\caption{The same argument works for all the three angles}
\end{figure}
It is also easy to see that the local angle comparison implies the local hinge comparison. The arguments are the same and the key point is still the cosine law. 
\end{proof}
Now we relate the local Toponogov theorem to Rauch comparison theorem
\begin{theorem}
	Rauch $I$ comparison implies the hinge comparison
\end{theorem}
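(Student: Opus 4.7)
My plan is to build an auxiliary curve $\Gamma$ in $M$ from $x$ to $y$ whose length is at most that of the model segment $[\tilde x \tilde y]$, by pulling the model minimizing geodesic back to $T_{\tilde p}\modsp{\kappa}$ under $\exp_{\tilde p}^{-1}$, copying it linearly to $T_pM$, and then exponentiating; Rauch~I applied pointwise will control $|\Gamma'|$ by $|\tilde \Gamma'|$, and the triangle inequality will close the argument.

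First I set up the matching data. Let $v_1, v_2 \in T_pM$ be the unit initial velocities of the hinge at $p$, so $x = \exp_p(|p-x|\,v_1)$, $y = \exp_p(|p-y|\,v_2)$, and write $\alpha = \mangle[p_x^y]$. In $\modsp{\kappa}$ take the model hinge at $\tilde p$ with matching side lengths and angle $\alpha$, with endpoints $\tilde x, \tilde y$; fix any linear isometry $I: T_pM \to T_{\tilde p}\modsp{\kappa}$ sending $v_i$ to $\tilde v_i$. Provided $\eps$ is chosen small (smaller than a fixed fraction of $\inj(z)$ and of $\varpi^\kappa$), the minimizing geodesic $[\tilde x \tilde y]$ lies inside a normal neighborhood of $\tilde p$ and therefore pulls back smoothly to a curve $\tilde u : [0,1] \to T_{\tilde p}\modsp{\kappa}$ bounded away from $0$. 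Set $u = I^{-1}\circ \tilde u$ and $\Gamma(s) = \exp_p(u(s))$; by construction $\Gamma$ joins $x$ to $y$ in $M$, while $\tilde\Gamma(s) := \exp_{\tilde p}(\tilde u(s))$ is a reparametrization of $[\tilde x \tilde y]$.

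Next I aim for the pointwise inequality $|\Gamma'(s)| \leq |\tilde\Gamma'(s)|$ via Rauch~I. For each $s$, set $\ell_s = |u(s)| = |\tilde u(s)|$ and consider the unit-speed radial geodesic $\sigma_s(\tau) = \exp_p(\tau u(s)/\ell_s)$. Theorem~\ref{thm: Ja vanishes at a point} identifies $\Gamma'(s)$ with $J_s(\ell_s)$, where $J_s$ is the Jacobi field along $\sigma_s$ satisfying $J_s(0) = 0$ and $D_\tau J_s(0) = u'(s)/\ell_s$; the model analogue gives $\tilde\Gamma'(s) = \tilde J_s(\ell_s)$ with $|D_\tau \tilde J_s(0)| = |\tilde u'(s)|/\ell_s = |u'(s)|/\ell_s = |D_\tau J_s(0)|$, since $I$ is a linear isometry. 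Then Theorem~\ref{thm: Rauch} (Rauch~I) delivers the desired pointwise estimate, and integrating in $s$ gives
\begin{equation*}
|x - y| \leq \length(\Gamma) \leq \length(\tilde\Gamma) = |\tilde x - \tilde y| = \modcvee^\kappa\bigl(\alpha;\,|p-x|,\,|p-y|\bigr).
\end{equation*}

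The main obstacle, and the reason the statement is only local, will be verifying the hypotheses of Rauch~I uniformly in $s$: I need no conjugate points along each $\sigma_s$ on $[0, \ell_s)$ and nonvanishing $u(s)$ so that a unit-speed direction is defined. Both can fail for large $\eps$; for instance, a hinge with angle close to $\pi$ combined with $\eps$ close to $\varpi^\kappa/2$ could force $[\tilde x \tilde y]$ to pass near $\tilde p$ (so $\tilde u$ crosses $0$), and the radial geodesics $\sigma_s$ could reach past their first conjugate point. Choosing $\eps$ to be a sufficiently small fraction of $\inj(z)$ and $\varpi^\kappa$ pins the entire construction inside a common normal ball around both $p$ and $\tilde p$, which eliminates both failure modes; quantifying the admissible $\eps$ is the principal technical check.
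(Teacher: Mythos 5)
Your overall strategy is the same as the paper's: both reduce the hinge comparison to the claim that $\exp_p\circ I\circ\exp_{\tilde p}^{-1}$ is distance non-increasing near the base point, then verify this infinitesimally by identifying the differential of $\exp_p$ with a Jacobi field vanishing at $0$ and invoking Rauch~I. Pulling back the model segment $[\tilde x\tilde y]$ and integrating $|\Gamma'(s)|\le|\tilde\Gamma'(s)|$ is just a curve-level restatement of that $1$-Lipschitz claim.

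There is, however, a genuine gap in the step where you invoke Rauch~I. As stated in Theorem~\ref{thm: Rauch}, Rauch~I applies only to \emph{normal} Jacobi fields, but the $J_s$ you construct is not normal in general: $D_\tau J_s(0) = u'(s)/\ell_s$ may well have a nonzero component parallel to $u(s)$, i.e.\ to $\dot\sigma_s(0)$, whenever $|u(s)|$ is not constant in $s$ — which is typically the case since $[\tilde x\tilde y]$ is not a circular arc around $\tilde p$. You must first split $u'(s)$ into a radial part $w^\parallel\parallel u(s)$ and a perpendicular part $w^\perp$. The radial part is carried isometrically by $d(\exp_p)_{u(s)}$, while the perpendicular part gives rise to a normal Jacobi field $J_s^\perp$ vanishing at $0$ with $|D_\tau J_s^\perp(0)| = |w^\perp|/\ell_s$; since $I$ is a linear isometry it respects this decomposition, so the corresponding normal data in the model match, and Rauch~I gives $|J_s^\perp(\ell_s)|\le|\tilde J_s^\perp(\ell_s)|$. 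Finally, the Gauss lemma is what lets you conclude that $d(\exp_p)_{u(s)}(w^\perp)$ and $d(\exp_p)_{u(s)}(w^\parallel)$ remain orthogonal in $M$, so the Pythagorean theorem recombines the two estimates into $|\Gamma'(s)|\le|\tilde\Gamma'(s)|$. Without this decomposition and the explicit appeal to the Gauss lemma, the pointwise inequality does not follow from the theorem as stated; this is precisely the step the paper's proof spells out.
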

\begin{proof}
    To show how Rauch $I$ comparison implies local hinge comparison. Take $p \in M^n$ and $\tilde{p} \in \modsp{\kappa}$. Notice that both $\exp_p$ and $\exp_{\tilde{p}}$ are diffeomorphism near $p$ and $\tilde{p}$ respectively. 
    
    Especially in $\modsp{\kappa}$, we know that $\exp_{\tilde{p}}: B_{\varpi^\kappa}(\bar{0}) \subseteq T_{\tilde{p}}\modsp{\kappa} \to \modsp{\kappa}$ is a diffeomorphism near $0$, we can pull back the Riemannian metric from $\modsp{\kappa}$ to $B_{\varpi^\kappa}(\bar{0}) \subseteq T_{\tilde{p}}\modsp{\kappa}$, i.e $\tilde{h} = \exp_{\tilde{p}}^*(\tilde{g})$ where $\tilde{g}$ is the Riemannian metric of $\modsp{\kappa}$. Take some isometry $I: T_p\modsp{\kappa} \to T_pM$. Consider the map 
    \begin{equation*}
    	\Phi = \exp_p\circ I: (T_{\tilde{p}}\modsp{\kappa}, \tilde{h}) \to M
    \end{equation*}
    and the map
    \begin{equation*}
    	\tilde{\Phi} = \exp_p\circ I\circ \exp_{\tilde{p}}^{-1}: \modsp{\kappa} \to M.
    \end{equation*}
    It is easy to see that 
    \begin{equation*}
    	\text{$\Phi$ is $1$-Lipschitz} \iff \text{$\tilde{\Phi}$ is $1$-Lipschitz}.
    \end{equation*}
    because by construction $\exp_{\tilde{p}}$ is an isometry from $\tilde{h}$ to $\tilde{g}$. Notice that it is easy to see $\tilde{\Phi}$ being $1$-Lipschitz is equivalent to the hinge comparison because for any geodesics $\gamma_1, \gamma_2$ starting at $p \in M$, their length, and the angle between them are preserved by $\tilde{\Phi}$. 
    
   Therefore, we only need to show $\Phi$ is $1$-Lipschitz. Here we only consider the case when $\kappa \equiv 0$ and in this case, $\modsp{\kappa} = \R^n$ and $\exp_{\tilde{p}} = \id$. And in this case, we just need to show $\exp_p: T_pM \to M$ is $1$-Lipschitz on a small neighborhood of $0$. We need to show that for $v \in B_\epsilon(0) \subseteq T_pM$ and $w \in T_v(T_pM) \simeq T_pM$, 
   \begin{equation*}
   	\abs{d(\exp_p)_v(w)} \leq \abs{w}
   \end{equation*}
   This is obvious for radial vectors, i.e. $w \parallel v$. Because the exponential map is an isometry, in this case, for general $w \in T_v(T_pM)$, we can decompose $w = w^\perp + w^\parallel$. where $w^\parallel = \lambda v$ for some $\lambda \in \R$ is pointing to the radius direction. Then we have
   \begin{equation*}
   	d(\exp_p)_v(w) = d(\exp_p)_v(w^\perp) + \underbrace{d(\exp_p)_v(w^\parallel)}_{\text{has the same length as $w^\parallel$}}.
   \end{equation*}
   Recall that by the Gauss lemma, a geodesic starting at $p$ is perpendicular to a small sphere $S_t(p)$ near $p$ for some small $t$. Then by the Gauss lemma, we notice that 
   \begin{equation*}
   	d(\exp_p)_v(w^\perp) \perp d(\exp_p)_v(w^\parallel)
   \end{equation*}
   Therefore, by the Pythagorean theorem, we have
   \begin{align*}
   	& \abs{w}^2 = \abs{w^\perp}^2 + \abs{w^\parallel} \\
   	\implies & \abs{d(\exp_p)_v(w)}^2 = \abs{d(\exp_p)_v(w^\perp)}^2 + \underbrace{\abs{d(\exp_p)_v(w^\parallel)}^2}_{= \abs{w^\parallel}^2}.
   \end{align*}
   It is sufficient to show 
   \begin{equation*}
   		\abs{d(\exp_p)_v(w^\perp)} \leq \abs{w^\perp}
   \end{equation*}
   Recall the explicit construction of the Jacobi field vanishing at the initial point (see theorem \ref{thm: Ja vanishes at a point}). For $J(0) = 0$ and $D_tJ(0) = w^\perp$ along $\gamma(t)$, we have 
   \begin{equation*}
   	(d\exp_p)_v(w^\perp) = J(1)
   \end{equation*}
   We denote $\bar{J}$ the corresponding Jacobi field in the model space. In our case, $\bar{J}(t) = t\cdot w$ thus $\bar{J}(1) = w$. By the Rauch $I$, we get that
   \begin{equation*}\abs{d(\exp_p)_v(w^\perp)} =  \abs{J(1)} \leq \abs{\bar{J}(1)} = \abs{w^\perp}
   \end{equation*}  
   \end{proof}
\begin{exercise}
	Show that a Riemannian manifold $(M, g)$ satisfies $\sect_M \geq \kappa$ if the local angle comparison holds. \textit{hint: This follows from the Taylor expansion formula in Meyer's Notes (see \cite{Meyer04})} 
\end{exercise}

What if we reverse the inequality in our angle comparison theorem? i.e. Is that correct that if $\sect_M \leq \kappa$ the angle comparison holds (At least locally) with appropriate inequality? The answer is YES! However, our proof DOES NOT work in this setting. The recall what we did for lower curvature bound

Consider $D_tS + S^2 + R_{\nu} = 0$ along $\gamma$ where $S$ is the shape operator of the equidistant hypersurface and $\nu$ the unit normal vector field of the hypersurface. Let $Y$ be unit parallel vector fields along $\gamma$. If $R_{\nu} \geq \kappa \id$, then $D_tS + S^2 + \kappa\id \leq 0$. Look at $a(t) = \inp{SY, Y}$. Then 
\begin{align*}
	a'(t) 
	&= \inp{D_tS Y, Y} = \inp{(-S^2 - R_{\nu})Y, Y}\\
	&\leq -\inp{SY, Y}^2  - \inp{R_{\nu}(Y), Y}
\end{align*}
by the Cauchy-Schwartz inequality, i.e $\inp{S^2 Y, Y} = \inp{SY, SY} \geq \inp{SY, Y}^2$. Also, because $\inp{R_{\nu}(Y) , Y} \leq \kappa\abs{Y}^2$, we can conclude that
\begin{equation*}
	a' + a^2 + \kappa \leq 0
\end{equation*}
This allows us to reduce the study of the matrix Riccati  $D_tS + S^2 + \kappa\id \leq 0$ to the scalar Riccati inequality $a' + a^2 + \kappa \leq 0$.
However, this trick does not work for $\sect_M \leq \kappa$. In that case, we need to deal with the Ricatti inequality  $D_tS + S^2 + \kappa\id \geq 0$. Here the above trick using the Cauchy-Schwartz inequality doesn't work and one has to study the matrix Riccati inequality directly.
This can be done with the help of the following result. We omit the proof as in this notes we are only interested in applications to lower curvature bounds.
\begin{theorem}[See \cite{EH90}]
	Let $R_1, R_2: \R \to S(E)$ be smooth with $R_1 \geq R_2$. For $i = \br{1, 2}$ let $A_i: [t_0, t_1) \to S(E)$ be a solution of 
	\begin{equation*}
		D_tA_i + A_i^2 + R_i = 0
	\end{equation*}
	with maximal $t_i \in (t_0, \infty]$. Assume that $A_1(t_0) \leq A_2(t_0)$. Then $t_1 \leq t_2$ and $A_1(t) \leq A_2(t)$ on $(t_0, t_1)$
\end{theorem}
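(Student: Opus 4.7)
My plan is to adapt the scalar argument (where $u := a_2 - a_1$ satisfies $u' + (a_1 + a_2) u = r_1 - r_2 \ge 0$ and one concludes via the integrating factor $e^{\int(a_1+a_2)}$) to the matrix setting. The integrating-factor trick fails because $A_1$ and $A_2$ do not commute, so I would replace it by a first-failure-time argument on the symmetric difference $U(t) := A_2(t) - A_1(t)$, combined with a small perturbation to reduce to a strict version of the hypothesis.

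First I would set $U := A_2 - A_1$ and subtract the two Riccati equations to obtain $U' = (A_1^2 - A_2^2) + (R_1 - R_2)$. The first term has no obvious sign, so no direct pointwise argument works. To reduce to the strict case, I would replace $R_2$ by $R_2 - \epsilon I$ and $A_2(t_0)$ by $A_2(t_0) + \epsilon I$ for small $\epsilon > 0$ and let $A_2^\epsilon$ be the corresponding perturbed solution; then $R_1 > R_2 - \epsilon I$ and $A_1(t_0) < A_2^\epsilon(t_0)$ are both strict, and the existence interval for $A_2^\epsilon$ contains that of $A_2$ on compact subintervals (smaller curvature term and larger initial data both prolong life of a Riccati solution). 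Standard continuous dependence of ODEs on data then lets $\epsilon \to 0$ to recover the non-strict conclusion.

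The heart of the proof is the strict case: assuming $R_1 > R_2$ and $A_1(t_0) < A_2(t_0)$, I claim $U > 0$ on the common existence interval. I would set $t^{**} := \inf\{\,t > t_0 : U(t) \text{ is not positive definite}\,\}$ and argue by contradiction that $t^{**} < \min(t_1, t_2)$ is impossible. At such a $t^{**}$, continuity gives $U(t^{**}) \ge 0$ with a unit null-vector $v$, i.e.\ $A_1(t^{**}) v = A_2(t^{**}) v$. Freezing this $v$ and setting $f(t) := \langle U(t) v, v\rangle$, one has $f \ge 0$ on $[t_0, t^{**}]$ with $f(t^{**}) = 0$, hence $f'(t^{**}) \le 0$. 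But symmetry of the $A_i$ together with $A_1 v = A_2 v$ at $t^{**}$ gives
\[
\langle (A_1^2 - A_2^2) v, v\rangle = |A_1 v|^2 - |A_2 v|^2 = 0,
\]
so $f'(t^{**}) = \langle (R_1 - R_2) v, v\rangle > 0$ strictly, a contradiction.

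Finally, to upgrade $A_1 \le A_2$ to $t_1 \le t_2$: if instead $t_2 < t_1$, then on $[t_0, t_2)$ both solutions exist with $A_1 \le A_2$, yet $A_2$ becomes unbounded at $t_2$. Since $\langle A_2' v, v\rangle = -|A_2 v|^2 - \langle R_2 v, v\rangle$ is bounded above on compact $t$-intervals for any unit $v$, the top eigenvalues of $A_2$ remain bounded above, so the blow-up at $t_2$ must occur through an eigenvalue of $A_2$ tending to $-\infty$; picking unit eigenvectors $v_t$ for those eigenvalues and using $\langle A_1(t) v_t, v_t\rangle \le \langle A_2(t) v_t, v_t\rangle \to -\infty$, the solution $A_1$ must also blow up by $t_2$, contradicting $t_2 < t_1$. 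The main obstacle I anticipate is the key step above: the cancellation $\langle (A_1^2 - A_2^2) v, v\rangle = 0$ relies crucially on the symmetry of the $A_i$ together with the common-eigenvector feature at $t^{**}$. This is exactly where the matrix proof genuinely diverges from the scalar one; without symmetry one would be forced into the full matrix Sturm-theoretic machinery of \cite{EH90}.
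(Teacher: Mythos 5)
The paper does not prove this theorem; immediately after the statement it says ``We omit the proof as in this notes we are only interested in applications to lower curvature bounds'' and simply cites \cite{EH90}, so there is no in-paper argument to compare against.

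Your proposal is correct, and it is the standard first-failure-time route to the matrix Riccati comparison. The decisive step---at a unit null vector $v$ of $U(t^{**})$, use symmetry of the $A_i$ to get the cancellation $\langle(A_1^2-A_2^2)v,v\rangle=|A_1v|^2-|A_2v|^2=0$, leaving $f'(t^{**})=\langle(R_1-R_2)v,v\rangle>0$ in conflict with $f'(t^{**})\le 0$---is exactly right, and you correctly pinpoint symmetry as the genuinely matrix-specific ingredient. Perturbing both $R_2$ and the initial datum is also the right choice: perturbing only $R_2$ leaves $U(t_0)\ge 0$ with a possibly nontrivial kernel, and then the first-failure-time argument does not start cleanly because $t_0$ need not lie in the open set $\{t: U(t)>0\}$. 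A complete write-up should still supply two routine ODE facts that you only gesture at: (i) a solution on a maximal finite interval must leave every compact set, so $\lVert A_2(t)\rVert\to\infty$ as $t\to t_2^-$ and not merely along a sequence, which together with the upper bound on $\lambda_{\max}(A_2)$ coming from $\frac{d}{dt}\langle A_2 v,v\rangle = -\lvert A_2 v\rvert^2-\langle R_2 v,v\rangle\le -\langle R_2 v,v\rangle$ forces $\lambda_{\min}(A_2(t))\to-\infty$; and (ii) the $\epsilon\to 0$ limit first gives $A_1\le A_2$ on $[t_0,\min(t_1,t_2))$, and only then does the blowup argument upgrade this to $t_1\le t_2$, so the logical order is strict comparison $\Rightarrow$ limit $\Rightarrow$ blowup. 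For the survival of $A_2^\epsilon$ on compact subintervals of $[t_0,t_2)$, plain continuous dependence of ODE solutions on parameters and data is simpler than your parenthetical remark and avoids any appearance of invoking the comparison principle to justify its own perturbation step. These are bookkeeping matters; the outline correctly identifies and resolves the essential obstacle.
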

Then the angle comparison for the upper curvature bound can be explained by the following general Rauch comparison theorem.
\begin{theorem}[General Rauch $I$ and Rauch $II$ Comparison]
	Let $(M_1, g_1)$ and $(M_2, g_2)$ be two Riemannian manifold along their geodesics $\gamma_1: [0,T]\to M_1$ and $\gamma_2: [0,T]\to  M_2$. Assume that 
	\begin{equation*}
		\sect(\dga_1, Y_1) \geq \sect(\dga_2, Y_2)
	\end{equation*}
	for any normal unit vector fields $Y_1, Y_2$ along $\gamma_1, \gamma_2$. Let $J_i$ be Jacobi fields along $\gamma_i$ for $i = 1, 2$. Suppose either
	\begin{itemize}
		\item \textbf{Rauch $I$:}  Assume that $\gamma_1$ has no conjugate points on $[0, T)$,  $J_i(0) = 0$ and $\abs{D_tJ_1(0)} = \abs{D_tJ_2(0)}$; or
		\item \textbf{Rauch $I$:} Assume that $\gamma_1$ has no focal points on $[0,T)$, $D_tJ_1(0) = D_tJ_2(0) = 0$ and $\abs{J_1(0)} = \abs{J_2(0)}$.
	\end{itemize}
	Then $\frac{\abs{J_1}}{\abs{J_2}}$ is non-increasing up to the first zero of $J_1$. 
\end{theorem}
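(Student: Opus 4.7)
The strategy is to introduce a scalar ``intermediate curvature'' $\kappa(t)$ and reduce the comparison between $M_1$ and $M_2$ to two scalar Riccati comparisons, one on each side. The hypothesis $\sect(\dga_1,Y_1)\ge \sect(\dga_2,Y_2)$ for all normal unit $Y_1,Y_2$ is unusually strong: it forces $\lambda_{\min}(R_{\nu_1}(t))\ge \lambda_{\max}(R_{\nu_2}(t))$ at each $t$, so after fixing parallel orthonormal frames on the normal bundles and identifying each with $\R^{n-1}$, I may choose a continuous $\kappa(t)$ with $R_1(t)\ge \kappa(t)I\ge R_2(t)$ as symmetric matrices, where $R_i$ denotes the matrix representation of $R_{\nu_i}$. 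Let $a_\kappa(t)$ be the scalar solution of $a'+a^2+\kappa(t)=0$ matching the initial asymptotic of the Jacobi problem ($a_\kappa(t)\sim 1/t$ for Rauch I, $a_\kappa(0)=0$ for Rauch II).

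Construct shape operators $S_i(t)$ along $\gamma_i$ as in Remarks~\ref{exist-S-I}--\ref{exist-S-II} solving $S_i'+S_i^2+R_i=0$. For the $M_1$ side the argument from the original Rauch proof goes through verbatim: for any parallel unit normal $Y$ along $\gamma_1$, $y_1(t)=\langle S_1Y,Y\rangle$ satisfies $y_1'+y_1^2+\kappa\le 0$ by Cauchy-Schwarz and $\langle R_1 Y,Y\rangle \ge \kappa$, and Lemma~\ref{lem-riccati} together with the singular-initial-condition refinement Lemma~\ref{lem: g(t) < ct_k} yields $y_1\le a_\kappa$, hence $S_1\le a_\kappa I$. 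This gives $\frac{|J_1|'}{|J_1|}=\frac{\langle S_1 J_1,J_1\rangle}{|J_1|^2}\le a_\kappa(t)$. For the $M_2$ side I work directly with the Jacobi field $J_2$ rather than a parallel field: setting $\tilde y_2=|J_2|'/|J_2|=\langle S_2 J_2,J_2\rangle/|J_2|^2$, a direct differentiation using $S_2'=-S_2^2-R_2$, $J_2'=S_2J_2$, and $\langle S_2^2 J_2,J_2\rangle=|S_2J_2|^2\ge \tilde y_2^2 |J_2|^2$ yields the \emph{reversed} Riccati inequality
\[
\tilde y_2'+\tilde y_2^2+\frac{\langle R_2 J_2,J_2\rangle}{|J_2|^2}\ge 0,
\]
and $\langle R_2 J_2,J_2\rangle/|J_2|^2\le \lambda_{\max}(R_2)\le \kappa$ upgrades this to $\tilde y_2'+\tilde y_2^2+\kappa\ge 0$. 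The reverse-direction Riccati comparison (the mirror of Lemma~\ref{lem: g(t) < ct_k}) then gives $\tilde y_2(t)\ge a_\kappa(t)$.

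Combining the two bounds, $(\ln|J_1|)'\le a_\kappa(t)\le (\ln|J_2|)'$, so $|J_1|/|J_2|$ is non-increasing on the interval up to the first zero of $J_1$; the initial ratio is $1$ by L'H\^opital in the Rauch I case and by the matching initial norms in Rauch II, so $|J_1|\le |J_2|$. The main technical obstacle I foresee is precisely the Rauch I handling of the singular initial time: both $\tilde y_2(t)$ and $a_\kappa(t)$ blow up like $1/t$ at $0^+$, so the basic Riccati comparison cannot be applied at $t=0$ directly. I would adapt the shift-contradiction argument of Lemma~\ref{lem: g(t) < ct_k} to the reversed inequality direction, comparing $\tilde y_2$ against the family of solutions $\{H_\eps\}$ of $H'+H^2+\kappa=0$ with $H_\eps(t_0)=a_\kappa(t_0+\eps)$: if $\tilde y_2(t_0)<a_\kappa(t_0)$ at some $t_0>0$, one picks $\eps>0$ so that $H_\eps(t_0)=\tilde y_2(t_0)$, applies Lemma~\ref{lem-riccati} on $(0,t_0]$ to conclude $H_\eps\ge \tilde y_2$ there, and derives a contradiction from the fact that $H_\eps$ stays finite near $0$ while $\tilde y_2(t)\to \infty$.
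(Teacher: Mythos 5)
The paper states this theorem without proof (it sits immediately after the Eschenburg--Heintze matrix Riccati comparison, with the proof explicitly omitted), so there is no ``paper proof'' to compare against; the implicit suggestion is that one invokes the matrix Riccati comparison directly. Your route is different and in some ways better suited to the actual hypothesis: since the assumption is that \emph{all} sectional curvatures along $\gamma_1$ dominate \emph{all} along $\gamma_2$ (equivalently $\lambda_{\min}(R_{\nu_1})\ge\lambda_{\max}(R_{\nu_2})$), you can interpolate with a scalar curvature $\kappa(t)$ and reduce both sides to the scalar Riccati comparison of Lemma~\ref{lem-riccati}, avoiding the matrix-level theorem entirely. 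This also sidesteps the awkward point that the two normal bundles live over different manifolds (possibly of different dimensions), so the matrix Riccati inequality $R_1\ge R_2$ does not literally make sense without first reducing to a scalar. The core steps --- $S_1\le a_\kappa\,\id$ on the $M_1$ side, the reversed inequality $\tilde y_2'+\tilde y_2^2+\kappa\ge 0$ on the $M_2$ side via Cauchy--Schwarz, and the conclusion $(\ln|J_1|)'\le a_\kappa\le (\ln|J_2|)'$ --- are all correct, and the Rauch~II endgame (matching zero initial conditions, direct use of Lemma~\ref{lem-riccati}) is clean.

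Two points deserve tightening. First, on the $M_2$ side you phrase the computation through $S_2$, writing $\tilde y_2=\langle S_2J_2,J_2\rangle/|J_2|^2$ and using $S_2'=-S_2^2-R_2$; but there is no hypothesis that $\gamma_2$ is free of conjugate or focal points, so $S_2$ need not exist. Fortunately the argument does not need $S_2$ at all: starting directly from the Jacobi equation, $\tilde y_2=\langle J_2',J_2\rangle/|J_2|^2$ gives
\begin{equation*}
\tilde y_2'+\tilde y_2^2+\frac{\langle R_2J_2,J_2\rangle}{|J_2|^2}=\frac{|J_2'|^2}{|J_2|^2}-\tilde y_2^2\ge 0
\end{equation*}
by Cauchy--Schwarz, with no reference to a shape operator; you should present it that way. (One should also remark that $J_2\ne 0$ on $(0,T)$: if $J_2$ first vanished at some $t_2<T$ one would have $|J_1|/|J_2|\to\infty$ there while the ratio tends to $1$ at $0$ and is monotone, a contradiction; so the differential inequality is available on the full interval.) Second, in the singular Rauch~I endgame your contradiction hinges on ``$H_\eps$ stays finite near $0$''. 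This is true but needs a reason: converting the Riccati solution to a linear second-order solution $\eta$ with $H_\eps=\eta'/\eta$, the solution $\eta$ exists on all of $[0,T)$; $H_\eps(0^+)=\infty$ would force $\eta(0)=0$, which identifies $H_\eps$ with $a_\kappa$, contradicting $H_\eps(t_0)<a_\kappa(t_0)$. Without that observation the claimed contradiction is not yet established. An alternative that avoids the shift entirely is to set $u=a_\kappa-\tilde y_2$, derive $(u\,e^{\int(a_\kappa+\tilde y_2)})'\le 0$, and use the Taylor expansions $a_\kappa(t)=1/t+O(t)$, $\tilde y_2(t)=1/t+O(t)$ to see that $u\cdot e^{\int_\delta^t(a_\kappa+\tilde y_2)}\to 0$ as $t\to 0^+$, forcing $u\le 0$; this is a small amount of asymptotic bookkeeping but is fully rigorous and mirrors the paper's own proof of Lemma~\ref{lem: g(t) < ct_k} more closely.
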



\section{Other Local Comparison Theorems}
	We studied the two versions of local Toponogov's comparison theorem (local angle comparison and local hinge comparison) holding in a small ball $B_\eps(p) \subseteq M$ where $\sect_M \geq \kappa$. In this lecture, we are going to introduce a few more comparison theorems that are equivalent to the angle comparison and the local hinge comparison theorem. 
\subsection{Monotonicity of Angle Comparison}
\begin{theorem}[Local Monotonicity of Angle Comparison]\label{thm: monotonicity comparison}
    Let $(M^n, g)$ be a Riemannian bounded from below.  
    For any $z \in M$  there is a small $0<\eps<\pik/2$ such that the following holds.
    Let $p,x,y\in B_\eps(z)$.
    Let $\gamma: [0,|px|]\to M$, $\sigma: [0,|py|]\to M$ be unit speed parameterizations of $[px], [p,y]$ respectively. Let $d_{s, t};=d(\gamma(t,\sigma(s))$.
   Then the model angle $\alpha_{s, t} = \modangle^\kappa(d_{s, t}; s, t)$ is monotonically non-increasing with respect to both $t$ and $s$. 
\end{theorem}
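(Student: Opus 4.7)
The plan is to reduce the monotonicity claim to two applications of the local Toponogov comparison (Theorems~\ref{thm: hinge} and~\ref{thm: angle}): once inside the ``inner'' triangle $[p,\gamma(t_1),\sigma(s)]$ and once inside the triangle $[\gamma(t_1),\gamma(t_2),\sigma(s)]$. The elementary ingredient used throughout is that on $[0,\pi]$ the map $\alpha\mapsto -\cos\alpha$, and hence $\alpha\mapsto\modcvee^\kappa(\alpha; a, b)$ for fixed $a,b\in(0,\pik)$, is monotone non-decreasing. First I would shrink $\eps$ so that every shortest segment with endpoints in $B_\eps(z)$ lies in a larger ball $B_{C\eps}(z)$ on which both comparison theorems apply, and so that all model triangles appearing below are uniquely defined by Proposition~\ref{prop: model triangle defined}. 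By the obvious symmetry in the two variables it suffices to prove that $\alpha_{s,t}$ is non-increasing in $t$, so I fix $s$ and pick $0<t_1<t_2\le|p-x|$.

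The heart of the argument is a ``model extension'' trick. Build the comparison triangle $[\tilde p\,\tilde a\,\tilde b]=\modtriangle^\kappa(t_1,s,d_{s,t_1})$, whose angle at $\tilde p$ is $\alpha_{s,t_1}$ by definition; denote by $\tilde\beta$ its angle at $\tilde a$. Extend the geodesic from $\tilde p$ through $\tilde a$ in $\modsp{\kappa}$ to distance $t_2$ from $\tilde p$, obtaining a point $\tilde c$. Because $\tilde p,\tilde a,\tilde c$ are collinear in $\modsp{\kappa}$ with $\tilde a$ in between, the angle at $\tilde a$ toward $\tilde c$ and $\tilde b$ is the supplement $\pi-\tilde\beta$; two applications of the cosine law in $\modsp{\kappa}$ then give
\[
\modcvee^\kappa(\pi-\tilde\beta;\,t_2-t_1,\,d_{s,t_1})=|\tilde c-\tilde b|=\modcvee^\kappa(\alpha_{s,t_1};\,t_2,\,s),
\]
the second equality coming from the larger triangle $[\tilde p\,\tilde c\,\tilde b]$, whose angle at $\tilde p$ is still $\alpha_{s,t_1}$.

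The whole claim now reduces to the single estimate $d_{s,t_2}\le|\tilde c-\tilde b|$, since then monotonicity of $\modangle^\kappa$ in its opposite-side slot yields $\alpha_{s,t_2}=\modangle^\kappa(d_{s,t_2};t_2,s)\le\modangle^\kappa(|\tilde c-\tilde b|;t_2,s)=\alpha_{s,t_1}$. To establish it, I would first apply the local angle comparison (Theorem~\ref{thm: angle}) to the triangle $[p,\gamma(t_1),\sigma(s)]$ in $M$ to get $\mangle[\gamma(t_1)_p^{\sigma(s)}]\ge\tilde\beta$. Since $p,\gamma(t_1),\gamma(t_2)$ all lie on the single unit speed geodesic $\gamma$ with $\gamma(t_1)$ strictly between the other two, the two honest hinges at $\gamma(t_1)$ are genuine supplements in $M$, so $\mangle[\gamma(t_1)_{\gamma(t_2)}^{\sigma(s)}]\le\pi-\tilde\beta$. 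Plugging this into the local hinge comparison (Theorem~\ref{thm: hinge}) applied to the hinge $[\gamma(t_1)_{\gamma(t_2)}^{\sigma(s)}]$ (sides $t_2-t_1$ and $d_{s,t_1}$), and using the monotonicity of $\modcvee^\kappa$ in the angle slot noted above, I obtain
\[
d_{s,t_2}\le\modcvee^\kappa(\mangle[\gamma(t_1)_{\gamma(t_2)}^{\sigma(s)}];\,t_2-t_1,\,d_{s,t_1})\le\modcvee^\kappa(\pi-\tilde\beta;\,t_2-t_1,\,d_{s,t_1})=|\tilde c-\tilde b|,
\]
which is precisely the estimate needed.

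The main obstacle I anticipate is not any of the individual comparison steps but rather the bookkeeping about the size of $\eps$: one must ensure simultaneously that the shortest segments realizing $d_{s,t_1}$ and $d_{s,t_2}$ lie in the good neighborhood of $z$, that the auxiliary triangle $[\gamma(t_1),\gamma(t_2),\sigma(s)]$ also fits in the region where Theorems~\ref{thm: hinge} and~\ref{thm: angle} apply, and, when $\kappa>0$, that the perimeters of all model triangles used stay strictly below $2\pik$ so that Proposition~\ref{prop: model triangle defined} guarantees unique comparisons. Shrinking $\eps$ once at the start, depending only on $z$ and $\kappa$, handles all of these constraints; the comparison core of the argument then runs verbatim, and a parallel argument (or symmetry $s\leftrightarrow t$) yields monotonicity in $s$ as well.
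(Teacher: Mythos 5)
Your proof is correct, and its core idea is the same as the paper's: look at the two sub-triangles at $\gamma(t_1)$ (the inner one with vertex $p$ and the outer one with vertex $\gamma(t_2)$), use the fact that the honest angles there are supplementary, and then straighten the two model triangles. The organizational difference is that the paper applies the local angle comparison to \emph{both} sub-triangles to conclude $\tilde\mu + \tilde\theta \le \pi$ and then invokes Alexandrov's lemma (Lemma~\ref{lem: Alexandrov's lemma}) to carry out the straightening; you instead apply angle comparison only to the inner sub-triangle, pass to the supplementary bound on the outer hinge, and then apply the hinge comparison directly, carrying out the model-space straightening explicitly (your two-cosine-law identity for $|\tilde c - \tilde b|$). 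Since hinge and angle comparison are equivalent and Alexandrov's lemma is precisely the straightening step, the two routes are mathematically the same; yours is slightly more self-contained (no separate invocation of Alexandrov's lemma), while the paper's is more modular (it reuses Alexandrov's lemma elsewhere, e.g.\ in the four-point-comparison equivalence). Your remarks on shrinking $\eps$ and, for $\kappa > 0$, keeping perimeters below $2\pik$, are the right housekeeping, and the symmetry observation correctly reduces the $s$-monotonicity to the $t$-case.
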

\begin{figure}[htbp]
    \centering
        \includegraphics[width=0.6\textwidth]{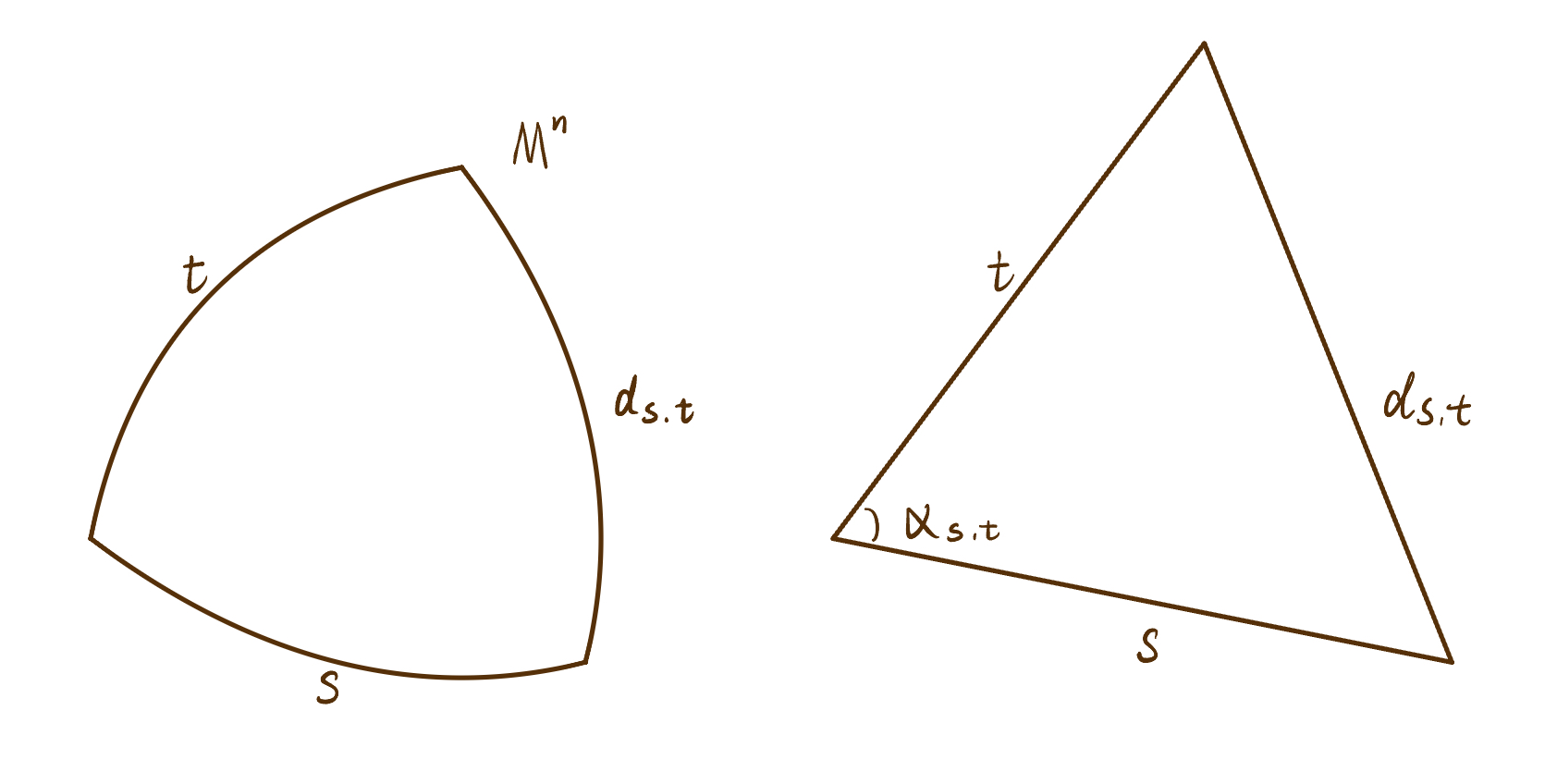}\caption{Monotonicity of angle comparison}
    \end{figure}

In this section, we mainly want to show the local monotonicity of angle comparison is equivalent to the local angle comparison. The key lemma used in proving the local angle comparison implies the local monotonicity of angle comparison is  Alexandrov's lemma. 
\begin{lemma}[Alexandrov's Lemma]\label{lem: Alexandrov's lemma}
In $\mathbb{M}_\kappa^n$, let $x, y, z, d > 0$ and consider the following picture where the triangles $\modtriangle^\kappa\br{x, y, z}$ and $\modtriangle^\kappa\br{y, z, d}$ is uniquely defined. We also marked the angles $\alpha, \beta, \gamma_1$ in the picture. Assume 
\begin{itemize}
	\item $\alpha + \beta \leq \pi$; 
	\item And if $\kappa > 0$, assume $x + y + z + d < 2\varpi^\kappa$. \footnote{This assumption is necessary since we want the triangle $\modtriangle^\kappa\br{x + y, z, d}$ to be uniquely defined up to isometry.}
\end{itemize}
Now consider the triangle $\modtriangle^\kappa\br{x + y, z, d}$ and the angle $\gamma_2$ in the following picture, we claim that $\gamma_2 \leq \gamma_1$. 
\end{lemma}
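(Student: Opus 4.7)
My plan is a classical extension-and-monotonicity argument in $\modsp{\kappa}$. I fix notation as follows: place the two model triangles in $\modsp{\kappa}$ so that they share a common edge $[PQ]$ and lie on opposite sides of this edge, with $R_1$ and $R_2$ the remaining vertices of the first and second triangles. Choose labels so that $|QR_1|=x$, $|QR_2|=y$, $|PR_1|=z$, $|PR_2|=d$, while $\alpha$ and $\beta$ are the angles at $Q$ in the first and second triangles, respectively, and $\gamma_1$ is the angle at $R_1$ in the first triangle.

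The central geometric move is to extend the geodesic $[R_1Q]$ beyond $Q$ in $\modsp{\kappa}$ to a point $R_2^\ast$ with $|QR_2^\ast|=y$. The hypothesis $x+y+z+d<2\varpi^\kappa$ (when $\kappa>0$), together with Proposition~\ref{prop: model triangle defined}, ensures that $R_2^\ast$ is uniquely defined and $[R_1R_2^\ast]$ is a genuine geodesic of length $x+y$. By construction, $R_1$, $Q$, $R_2^\ast$ are collinear, so $\mangle[Q_{R_2^\ast}^P]=\pi-\alpha$.

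I then invoke two monotonicity consequences of the cosine law in $\modsp{\kappa}$. The hinges at $Q$ with common legs $|QP|$ and $y$ but opening angles $\beta$ and $\pi-\alpha$ have opposite sides $d=|PR_2|$ and $|PR_2^\ast|$, respectively. Since $\alpha+\beta\le\pi$ gives $\beta\le\pi-\alpha$, monotonicity of $\modcvee^\kappa$ in its angle argument yields $|PR_2^\ast|\ge d$. Moreover, because $R_2^\ast$ lies on the geodesic continuation of $[R_1Q]$, the rays from $R_1$ to $P$ and from $R_1$ to $R_2^\ast$ coincide with the rays defining the angle of the first triangle at $R_1$, so the angle of the triangle $[PR_1R_2^\ast]$ at $R_1$ equals exactly $\gamma_1$.

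Finally I compare $[PR_1R_2^\ast]$ with $\modtriangle^\kappa\br{x+y,z,d}$. Both have the same two sides $z$ and $x+y$ meeting at the vertex corresponding to $R_1$; they differ only in the opposite side, which is $|PR_2^\ast|$ versus $d$. Combining $|PR_2^\ast|\ge d$ with monotonicity of $\modangle^\kappa$ in its first (opposite-side) argument gives $\gamma_1\ge\gamma_2$, as claimed. The main technical obstacle is the bookkeeping for $\kappa>0$: one must verify that the extension point $R_2^\ast$ and each auxiliary triangle remain uniquely defined in $\modsp{\kappa}$, which ultimately reduces to the hypothesis $x+y+z+d<2\varpi^\kappa$.
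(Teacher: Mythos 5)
Your proof is correct and follows essentially the same route as the paper's: you extend one side through the shared vertex to straighten the broken path into a single geodesic of length $x+y$, use $\alpha+\beta\le\pi$ together with cosine-law monotonicity to deduce $|PR_2^\ast|\ge d$, and then compare angles opposite $|PR_2^\ast|$ versus $d$ via cosine-law monotonicity again. The paper's proof is a terser version of the identical argument, compressing your two explicit monotonicity steps into the phrase ``by the cosine law.''
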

\begin{figure}[htbp]
    \centering
        \includegraphics[width=0.6\textwidth]{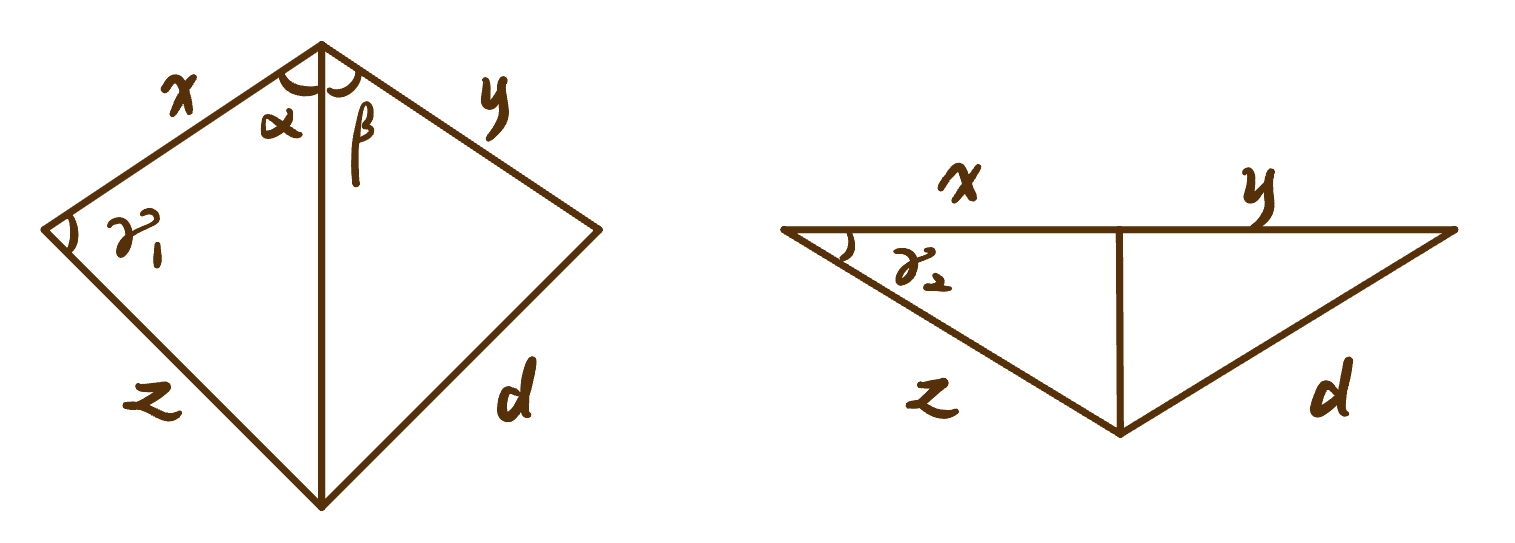}\caption{The Alexandrov's lemma \ref{lem: Alexandrov's lemma}}
    \end{figure}
\begin{proof}
	We extend the side of length $x$ by length $y$ to point   $q$. We also marked the point $p$ in the picture below. 
	
	\begin{figure}[htbp]
    \centering
        \includegraphics[width=0.5\textwidth]{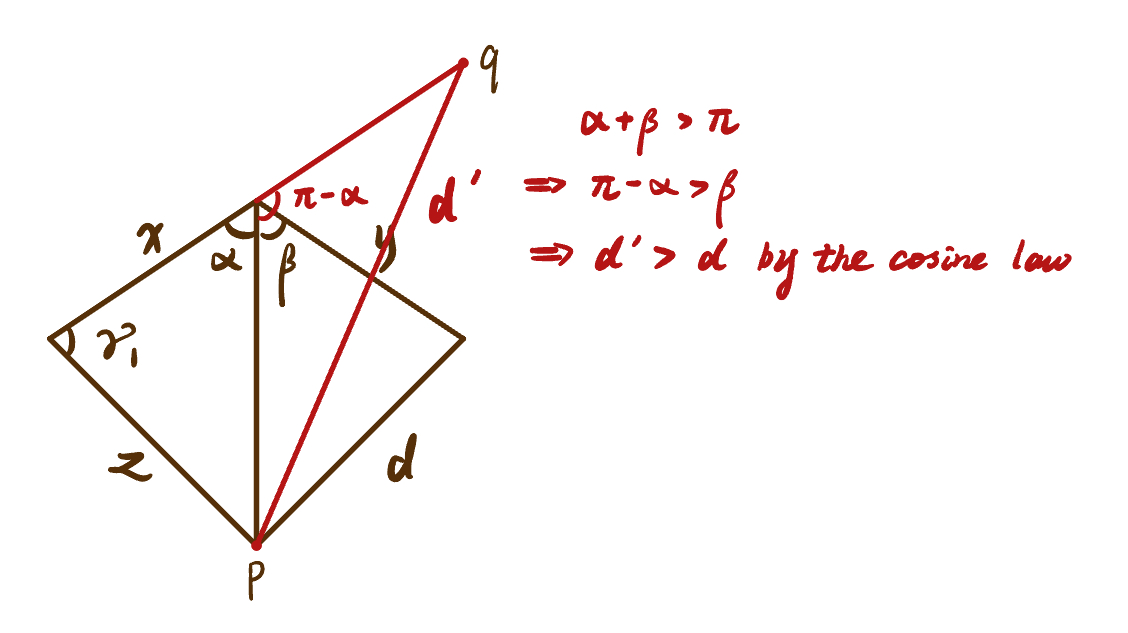}
    \end{figure}
    
	Denote $d' = \abs{p - q}$. Now it is easy to see that $d' \geq d$ by the cosine law. 
		
	\begin{figure}[htbp]
    \centering
        \includegraphics[width=0.25\textwidth]{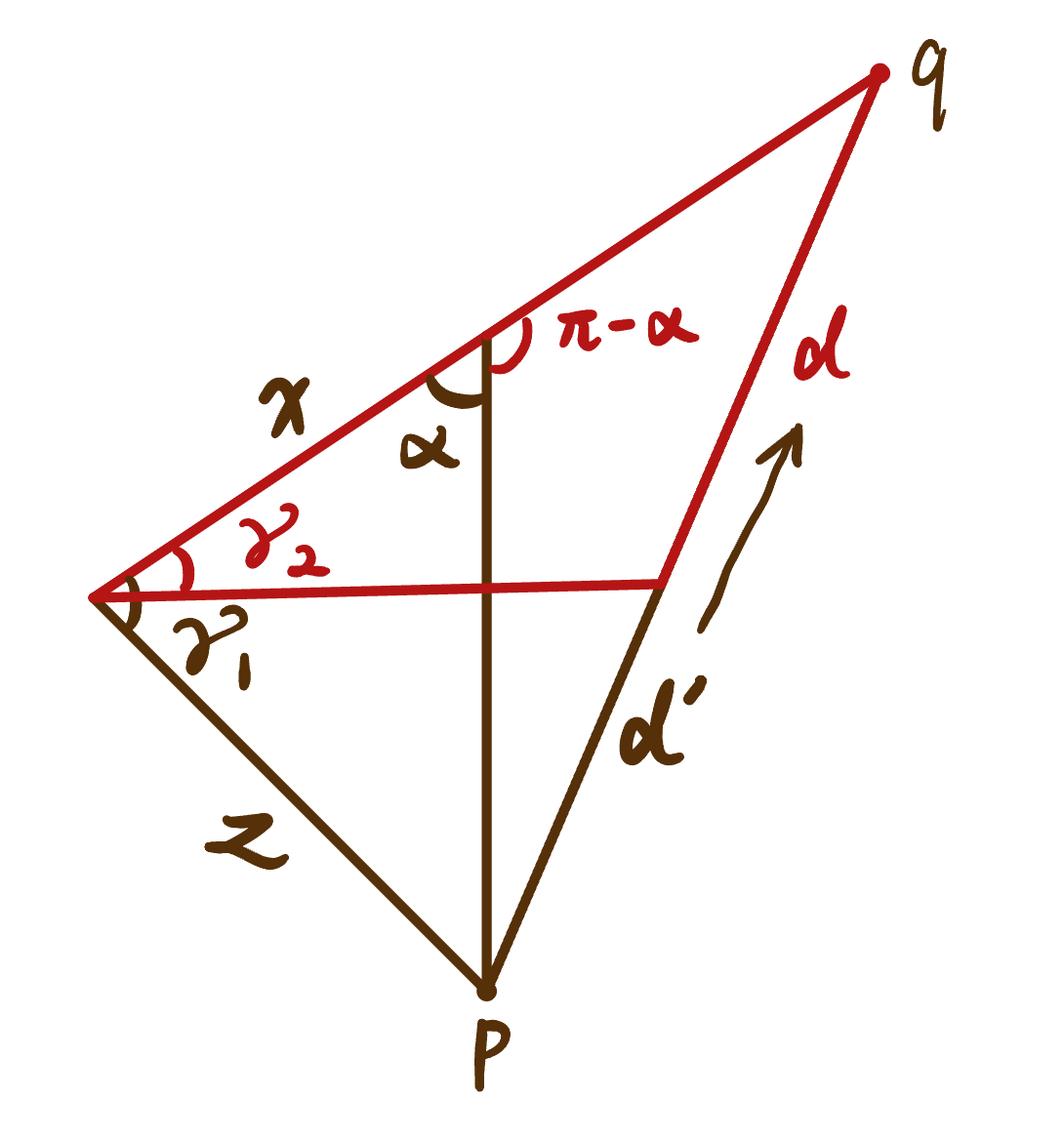}
    \end{figure}
    
	Next, still by the cosine law, by reducing $d'$ to $d$ in the picture above, we got $\gamma_1 \geq \gamma_2$. 
\end{proof}
\begin{theorem}
	The local monotonicity comparison theorem \ref{thm: monotonicity comparison} is equivalent to the local angle comparison theorem \ref{thm: angle}. 
\end{theorem}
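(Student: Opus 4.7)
The plan is to prove the equivalence by showing both implications, using the limit characterization of the hinge angle in one direction and Alexandrov's lemma (Lemma~\ref{lem: Alexandrov's lemma}) in the other.

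For the direction monotonicity $\Rightarrow$ angle comparison, I appeal to the definition of the hinge angle in Definition~\ref{def-angle}:
\[
  \mangle[p^x_y] = \lim_{s,t \to 0^+} \alpha_{s,t}.
\]
Under the assumption of monotonicity, $\alpha_{s,t}$ is non-increasing in both arguments, so this limit is the supremum of $\alpha_{s,t}$ over all admissible $s,t>0$. In particular, taking $s_0 = |py|$ and $t_0 = |px|$ gives $\alpha_{s_0, t_0} = \modangle^\kappa(|xy|; |px|, |py|)$, which is precisely the model angle at $\tilde p$ of the full comparison triangle, so $\mangle[p^x_y] \geq \modangle^\kappa(|xy|; |px|, |py|)$. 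The angle comparisons at the vertices $x$ and $y$ follow by the same argument applied to hinges rebased at those points, since Theorem~\ref{thm: monotonicity comparison} holds for any choice of base point inside $B_\eps(z)$.

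For the harder direction angle comparison $\Rightarrow$ monotonicity, I show monotonicity in $t$; monotonicity in $s$ then follows by swapping the roles of $\gamma$ and $\sigma$. Fix $0 < t_1 < t_2 \leq |px|$ and $s \in (0, |py|]$, and label $A = p$, $B = \gamma(t_1)$, $C = \gamma(t_2)$, $D = \sigma(s)$. The crucial observation is that $B$ is an interior point of the shortest geodesic from $A$ to $C$, so the two actual angles $\mangle[B^A_D]$ and $\mangle[B^C_D]$ sum to $\pi$. Applying the assumed angle comparison separately to the triangles $[ABD]$ and $[BCD]$ at the vertex $B$, I get that the model angles at $\tilde B$ in $\modtriangle^\kappa(|AB|, |BD|, |AD|)$ and $\modtriangle^\kappa(|BC|, |BD|, |CD|)$ are each bounded above by the corresponding actual angle, and therefore sum to at most $\pi$. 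I then invoke Alexandrov's lemma on these two model triangles glued along their common side of length $|BD|$: the lemma yields that the angle at $\tilde A$ in the straightened big triangle $\modtriangle^\kappa(|AC|, |CD|, |AD|)$ is no greater than the angle at $\tilde A$ in the subtriangle $\modtriangle^\kappa(|AB|, |BD|, |AD|)$, which is exactly $\alpha_{s, t_2} \leq \alpha_{s, t_1}$.

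The main technical point is to ensure that every model triangle appearing in the Alexandrov configuration — in particular the big triangle $\modtriangle^\kappa(|AC|, |CD|, |AD|)$ — is well defined and unique in $\modsp{\kappa}$. By Proposition~\ref{prop: model triangle defined}, this reduces to the perimeter bound $|AC| + |CD| + |AD| < 2\pik$, which is guaranteed by choosing $\eps$ small relative to $\pik$, as already built into the hypotheses of Theorems~\ref{thm: angle} and~\ref{thm: monotonicity comparison}. Apart from this bookkeeping, the argument is a direct geometric application of Alexandrov's lemma combined with the limit characterization of hinge angles.
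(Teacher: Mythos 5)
Your proof is correct and follows essentially the same route as the paper: for the forward direction you use that monotonicity turns the defining limit of the hinge angle into a supremum, so any finite $\alpha_{s,t}$ is bounded above by $\mangle[p_x^y]$; for the converse you apply the angle comparison at an interior point $B$ of $[p\,\gamma(t_2)]$ to the two sub-triangles, use that the actual angles at $B$ sum to $\pi$, and feed the resulting model-angle bound into Alexandrov's lemma (Lemma~\ref{lem: Alexandrov's lemma}). The only cosmetic difference is that the paper spells out (via the Taylor-expansion remark, equation~\ref{eq: limit model angle is actual angle}) that the limit equals the Riemannian angle, whereas you appeal directly to Definition~\ref{def-angle}; this is the same identification, just cited differently.
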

\begin{proof}
	Suppose the angle $\alpha_{s, t}$ is monotone with respect to $s$ and $t$, then (see \cite{AKP22})
	\begin{equation}\label{eq: limit model angle is actual angle}
		\lim_{t \to 0^+}\alpha_{t, t} = \alpha
	\end{equation}
	since $M^n$ is infinitesimally close to $\R^n$, and we can show this using Taylor expansion formula for $d_{s, t}$ in Meyer's notes \cite{Meyer04}. 
	
	Therefore $\alpha\ge \alpha_{s, t}$ for any $s, t>0$. 
	
	Conversely, suppose the local angle comparison theorem holds. Consider the following picture, 
	\begin{figure}[htbp]
    \centering
        \includegraphics[width=0.6\textwidth]{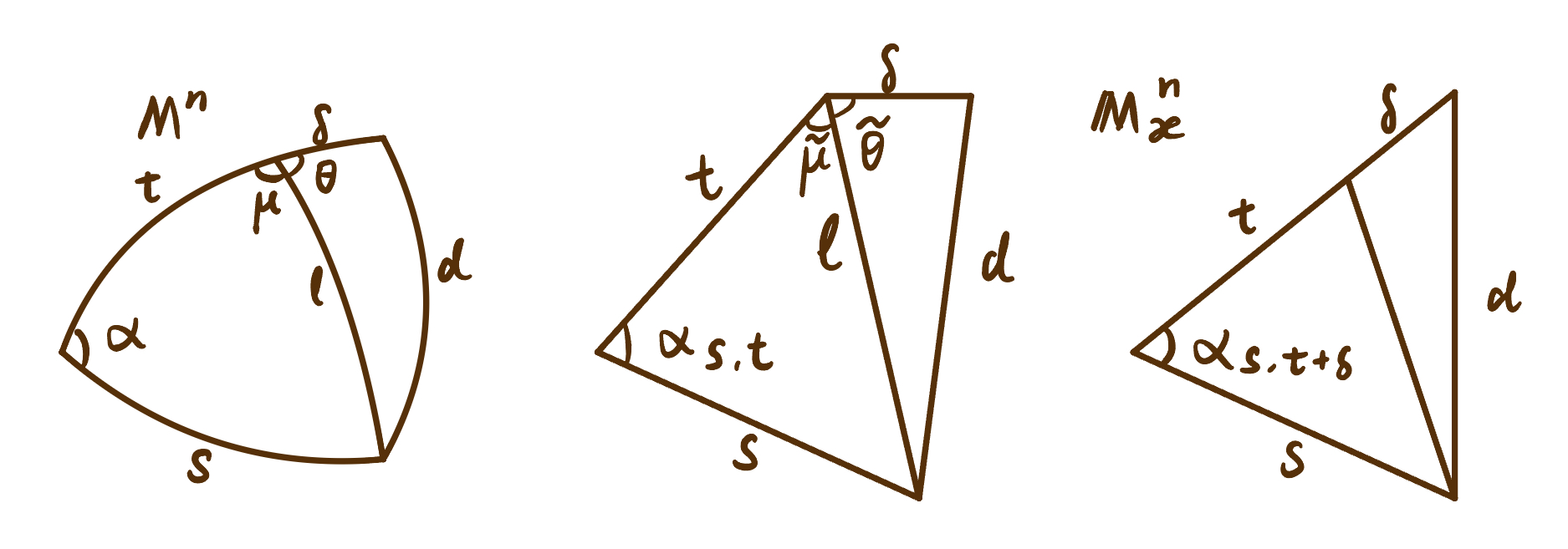}
    \end{figure}
    
	we want to show that $\alpha_{s, t} \geq \alpha_{s, t + \delta}$ for $\delta > 0$. By the local angle comparison \ref{thm: angle} we have $\tilde{\mu} \leq \mu$ and $\tilde{\theta} \leq \theta$ so that $\tilde{\mu} + \tilde{\theta} \leq \mu + \theta = \pi$. Next, by the Alexandrov lemma \ref{lem: Alexandrov's lemma}, we can conclude that $\alpha_{s, t} \geq \alpha_{s, t + \delta}$.
\end{proof}
\subsection{Point-on-a-side Comparison}
\begin{theorem}[Point-on-a-side Comparison]\label{thm: point-on-a-side}
Let $(M^n, g)$ be a Riemannian manifold with $\sect_M \geq \kappa$. Let $t, s, x, y > 0$ be positive numbers such that $t + s, x, y$ satisfies the triangle inequalities such that there exists a triangle in the $B_\eps(p)$ for some small $\eps > 0$ and $p \in M$ (See Figure~\ref{fig: point-on-a-side}). We claim that $d \geq \tilde{d}$, where $d$ is the length in $M^n$ and $\tilde{d}$ is the length in $\modsp{\kappa}$.
\end{theorem}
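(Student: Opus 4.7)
The plan is to apply Alexandrov's Lemma (Lemma~\ref{lem: Alexandrov's lemma}) to the pair of sub-triangles obtained by cutting the given triangle along the cevian from the apex to the division point on the opposite side. Write the vertices as $A, B, C$ with $|AB|=x$, $|AC|=y$, $|BC|=t+s$, and let $D\in[BC]$ be the division point with $|BD|=t$, $|DC|=s$, so that $d=|AD|_M$ and $\tilde d=|\tilde A\tilde D|_{\modsp{\kappa}}$ in the comparison triangle $\tilde\triangle\tilde A\tilde B\tilde C$.

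First, consider the two sub-triangles $\triangle ABD$ (side lengths $x, t, d$) and $\triangle ADC$ (side lengths $d, s, y$) in $M$. Form their $\kappa$-comparison triangles $\hat T_1=\modtriangle^\kappa(x,t,d)$ and $\hat T_2=\modtriangle^\kappa(d,s,y)$ in $\modsp{\kappa}$, and let $\hat\gamma_1,\hat\gamma_2$ denote the model angles at the $\hat D$-vertex of each (opposite the sides $x$ and $y$ respectively). Since the entire configuration lies in $B_\eps(p)$ with $\eps$ small, the local angle comparison theorem (Theorem~\ref{thm: angle}) applies to each sub-triangle and gives $\hat\gamma_1\le\mangle ADB$ and $\hat\gamma_2\le\mangle ADC$. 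Because $D$ lies on the geodesic $[BC]$ in $M$, the actual angles at $D$ sum to $\pi$, hence
\[
\hat\gamma_1+\hat\gamma_2\le\pi.
\]

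Next, glue $\hat T_1$ and $\hat T_2$ in $\modsp{\kappa}$ along their common side $[\hat A\hat D]$ of length $d$, placing $\hat B$ and $\hat C$ on opposite sides of the line through $\hat A$ and $\hat D$. Now apply Alexandrov's Lemma at the shared vertex $\hat D$: following the proof, extend $[\hat B\hat D]$ past $\hat D$ by length $s$ to a new point $\hat C'$; since $\hat\gamma_1+\hat\gamma_2\le\pi$, the intermediate distance step of the lemma (a direct cosine-law calculation) yields $|\hat A\hat C'|\ge|\hat A\hat C|=y$. In the straightened configuration, $\hat B,\hat D,\hat C'$ are colinear with $|\hat B\hat C'|=t+s$, while $|\hat A\hat B|=x$, $|\hat A\hat D|=d$, and $\hat D$ lies on $[\hat B\hat C']$ at distance $t$ from $\hat B$.

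Finally, compare this straightened configuration with the comparison triangle $\tilde\triangle\tilde A\tilde B\tilde C=\modtriangle^\kappa(x,y,t+s)$ equipped with the cevian foot $\tilde D\in[\tilde B\tilde C]$ at distance $t$ from $\tilde B$ (where $|\tilde A\tilde D|=\tilde d$). The two configurations agree in the two sides $x$ and $t+s$ and in the position of the division point on the opposite side; they differ only in the third side, where $|\hat A\hat C'|\ge y$. The cevian length from the apex to a fixed division point of the base is monotone increasing in the third side of the triangle in $\modsp{\kappa}$ (a consequence of the $\kappa$-cosine law, a model-space analogue of Stewart's theorem), so $d=|\hat A\hat D|\ge|\tilde A\tilde D|=\tilde d$, as claimed. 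The main technical point is this final monotonicity, which reduces to a direct computation with the $\kappa$-cosine law; equivalently, one can invoke Alexandrov's Lemma a second time within $\modsp{\kappa}$ to extract the desired distance inequality.
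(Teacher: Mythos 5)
Your proof is correct but follows a genuinely different route from the paper's. The paper proves this implication with the monotonicity of angle comparison (Theorem~\ref{thm: monotonicity comparison}) as its premise, applied to the hinge at the base endpoint $B$: as the parameter along $[BC]$ increases from $t$ to $t+s$, the comparison angle $\modangle^\kappa(\cdot;\cdot,x)$ at $B$ is non-increasing, so $\modangle^\kappa(d;t,x)\geq\modangle^\kappa(y;t+s,x)$, and a single cosine-law step (model side monotone in angle) immediately gives $d=\modcvee^\kappa(\modangle^\kappa(d;t,x);t,x)\geq\modcvee^\kappa(\modangle^\kappa(y;t+s,x);t,x)=\tilde d$. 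You instead start from the local angle comparison (Theorem~\ref{thm: angle}) applied at the interior division point $D$: you cut the triangle along the cevian, compare angles at $D$ in each sub-triangle to get $\hat\gamma_1+\hat\gamma_2\leq\pi$, glue the two comparison triangles and straighten via the distance estimate inside the proof of Alexandrov's Lemma (worth noting: the stated conclusion of Lemma~\ref{lem: Alexandrov's lemma} is an angle inequality, and you correctly flag that you are using the intermediate distance step), and then finish with the cevian-monotonicity argument. Your route in effect inlines exactly the mechanism (angle comparison at an interior point plus Alexandrov's Lemma) that the paper used separately to prove ``angle comparison $\Rightarrow$ monotonicity,'' so you derive point-on-a-side directly from angle comparison in one pass, without the monotonicity comparison as an intermediate statement. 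Both arguments ultimately come down to the same cosine-law cevian monotonicity in $\modsp{\kappa}$; the paper's version is shorter if one already has monotonicity in hand, while yours is more self-contained and arguably makes it clearer that only angle comparison and Alexandrov's Lemma are really needed.
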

\begin{figure}[htbp]
    \centering
        \includegraphics[width=0.5\textwidth]{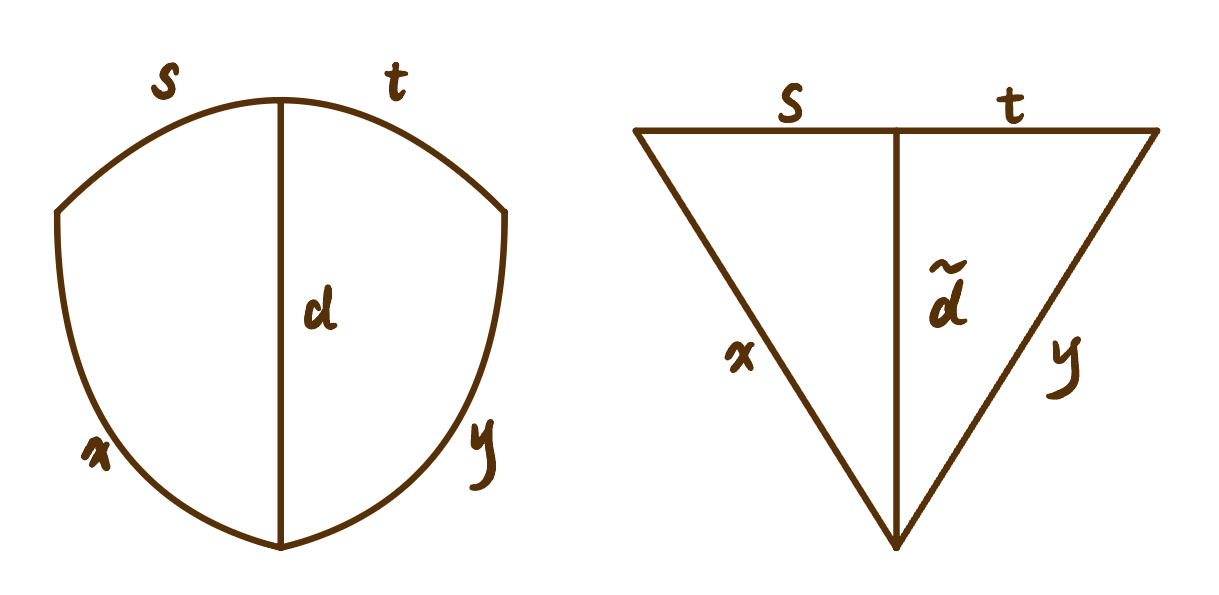}\caption{Point-on-a-side Comparison [Theorem \ref{thm: point-on-a-side}]}
        \label{fig: point-on-a-side}
    \end{figure}
\begin{theorem}
	The point-on-a-side comparison is equivalent to the monotonicity of angle comparison. 
\end{theorem}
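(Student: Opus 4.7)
The plan is to prove the two statements are equivalent by using the cosine law as the bridge between angle measurements and opposite-side measurements. The crucial fact, valid in every $\modsp{\kappa}$ and an immediate consequence of the cosine law in the model plane, is that for any fixed $s,t\in(0,\pik)$ the map
\[
a\longmapsto \modangle^\kappa(a;\, s,t)
\]
is strictly increasing in $a$ on the range where the model triangle is defined, with monotone inverse $\alpha\mapsto \modcvee^\kappa(\alpha;\, s,t)$. Equivalently, larger opposite side corresponds to larger opposite angle (with the two adjacent sides held fixed).

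For the direction ``monotonicity $\Rightarrow$ point-on-a-side'', I would set up the point-on-a-side configuration as a triangle $[pxy]$ in $B_\eps(z)$ with $q\in[px]$, $|pq|=t$, $|qx|=s$, $|py|=y$. Let $\gamma$ be the unit speed parametrization of $[px]$ and $\sigma$ the unit speed parametrization of $[py]$, so $\gamma(t)=q$, $\gamma(t+s)=x$, $\sigma(y)=y$. Applied to these choices, Theorem~\ref{thm: monotonicity comparison} says
\[
\modangle^\kappa\bigl(|qy|;\, t, y\bigr) \;=\; \alpha_{y,t} \;\ge\; \alpha_{y,t+s}\;=\;\modangle^\kappa\bigl(|xy|;\, t+s, y\bigr).
\]
The right-hand side is exactly the angle at $\tilde p$ in the comparison triangle $\modtriangle^\kappa(|px|,|py|,|xy|)$, which is also the angle at $\tilde p$ in the subtriangle $[\tilde p\tilde q\tilde y]$ where $\tilde q\in[\tilde p\tilde x]$ with $|\tilde p\tilde q|=t$. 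By the model-plane cosine law, $\tilde d=|\tilde q\tilde y|=\modcvee^\kappa\bigl(\alpha_{y,t+s};\, t,y\bigr)$, and monotonicity of $\modcvee^\kappa$ in the angle gives $\tilde d \le \modcvee^\kappa\bigl(\alpha_{y,t};\, t,y\bigr) = |qy| = d$.

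For the converse ``point-on-a-side $\Rightarrow$ monotonicity'', I would fix a hinge $[p_x^y]$ in $B_\eps(z)$ with unit speed parametrizations $\gamma$ of $[px]$ and $\sigma$ of $[py]$, and show $\alpha_{s,t}$ is non-increasing in $t$ (the argument in $s$ is symmetric). Pick $0<t_1<t_2$ and apply Theorem~\ref{thm: point-on-a-side} to the triangle $[\,p\,\gamma(t_2)\,\sigma(s)\,]$ with the interior point $\gamma(t_1)$ on the side $[p\,\gamma(t_2)]$. The theorem yields $d_{s,t_1}=|\gamma(t_1)\sigma(s)|\ge \tilde d$, where $\tilde d$ is the corresponding length in the comparison triangle $\modtriangle^\kappa(t_2,s,d_{s,t_2})$ after placing $\tilde z$ on the side of length $t_2$ at distance $t_1$ from $\tilde p$. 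Since the angle at $\tilde p$ in the subtriangle $[\tilde p\tilde z\tilde\sigma(s)]$ equals $\alpha_{s,t_2}$ and has opposite side $\tilde d\le d_{s,t_1}$, the monotonicity of the model angle in the opposite side gives
\[
\alpha_{s,t_2}\;=\;\modangle^\kappa(\tilde d;\, t_1,s)\;\le\;\modangle^\kappa(d_{s,t_1};\, t_1,s)\;=\;\alpha_{s,t_1},
\]
which is exactly the required monotonicity.

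The real content of the proof is just the monotonicity of $\modangle^\kappa(\cdot;t,s)$, and the only place to be careful is to verify that all model triangles involved are actually defined inside $\modsp{\kappa}$ (which is why the theorems restrict to a small ball of radius $\eps<\pik/2$, so that side lengths stay below $\pik$ and triangle inequalities hold). I do not expect a serious obstacle; the ``hard'' step is mostly bookkeeping with the notation $\alpha_{s,t}$, $d_{s,t}$, $\modangle^\kappa$, $\modcvee^\kappa$, and making sure the subtriangle of the comparison triangle in the point-on-a-side setup is genuinely the comparison triangle of the subhinge in the monotonicity setup.
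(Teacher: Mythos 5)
Your argument is correct and takes essentially the paper's route: the cosine-law monotonicity of $\modangle^\kappa$ and $\modcvee^\kappa$ is the bridge, and the geometric heart of both directions is that the subtriangle $[\tilde p\tilde q\tilde y]$ of the full comparison triangle shares its angle at $\tilde p$ with the full triangle (namely $\alpha_{y,t+s}$). Your write-up is if anything cleaner than the paper's: for monotonicity $\Rightarrow$ point-on-a-side the paper extends the small hinge and the small model triangle by $s$ and then lets $s\to 0$, whereas you pass directly from $\alpha_{y,t}\ge\alpha_{y,t+s}$ to $d\ge\tilde d$ via $\tilde d=\modcvee^\kappa(\alpha_{y,t+s};t,y)\le\modcvee^\kappa(\alpha_{y,t};t,y)=d$ without any limit, and you also spell out the converse direction (the paper only says it is ``similar''). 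The bookkeeping point you flag — that all comparison triangles must exist — is exactly what the restriction to $B_\eps(z)$ with $\eps<\pik/2$ guarantees, so there is no hidden obstruction.
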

\begin{proof}
	Suppose the monotonicity of angle comparison holds, we can extend side $t$ as in the following picture.
	
	\begin{figure}[htbp]
    \centering
        \includegraphics[width=0.5\textwidth]{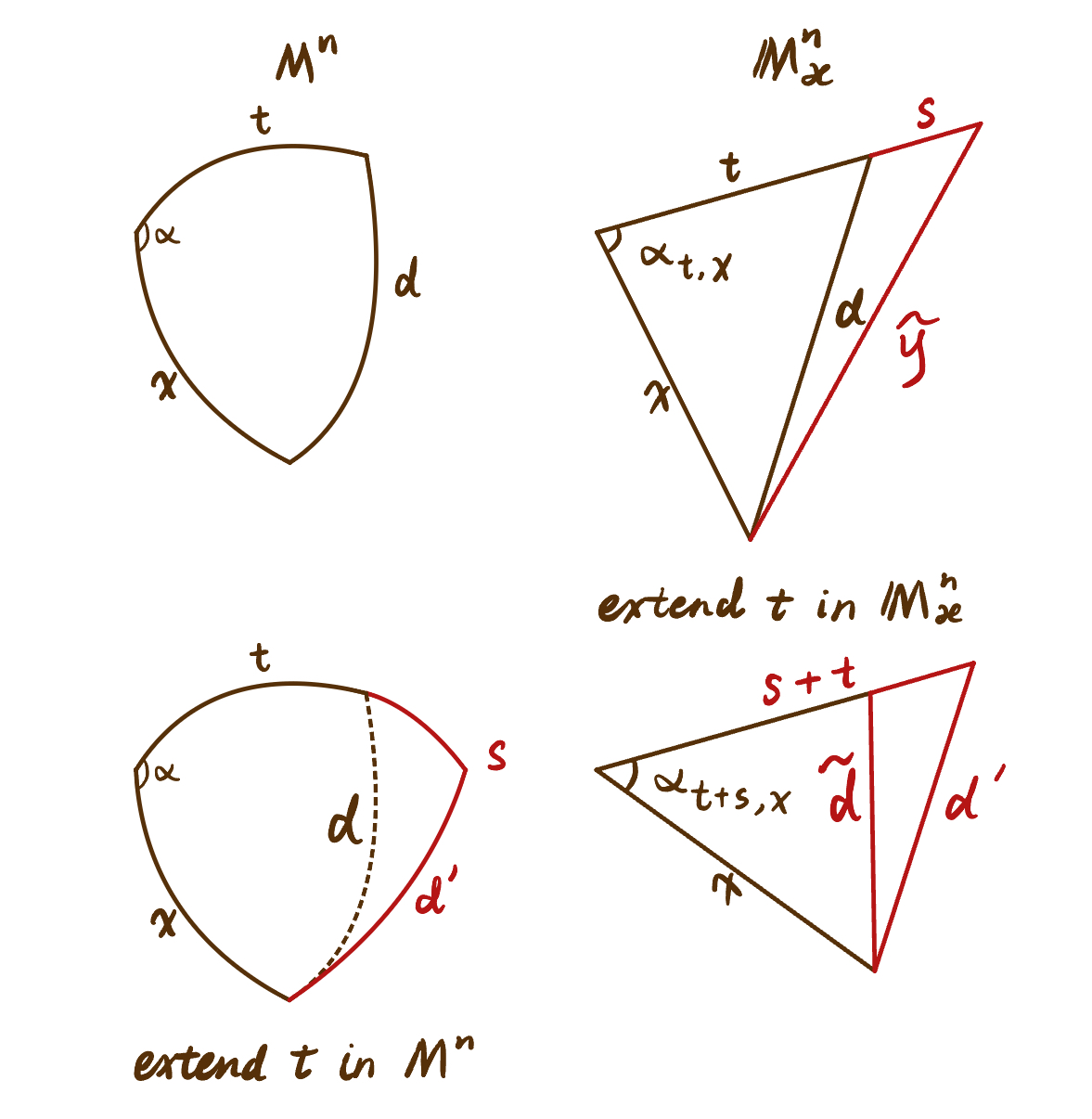}
    \end{figure}
    
    Draw the triangle $\modtriangle^\kappa\br{t, x, d}$ and extend the $t$ by $s$ in $\modsp{\kappa}$ so that we can obtain a new triangle $\modtriangle^\kappa\br{x, t + s, \tilde{y}}$. We denote $\alpha_{t, x} = \modangle^\kappa\br{d; t, x}$. On the other hand, we extend the side $t$ of the triangle $\triangle(x, t, d)$ by $s$ directly in $M$ so that we can have the triangle $\triangle(x, t + s, d')$. Compare $\modtriangle^\kappa\br{x, t + s, \tilde{y}}$ with $\modtriangle^\kappa\br{x, t + s, d'}$, by the monotonicity of angle comparison, we have 
    \begin{equation*}
    	\alpha_{t, x} \geq \alpha_{s + t, x} := \modangle^\kappa\br{d'; x, s + t}.
    \end{equation*}
    By the cosine law, this implies that $\tilde{y} \geq d'$. Sending $s \to 0$, so that $\tilde{y} \to d$ and $d' \to \tilde{d}$, we can conclude that $d \geq \tilde{d}$ which is the point-on-a-side comparison. The converse direction is similar.  
\end{proof}
\subsection{Four-points Comparison}
\begin{theorem}[Four-points Comparison]\label{thm: four-point}
    Let $(M^n, g)$ be a Riemannian manifold such that $\sect_M \geq \kappa$. Fix $q \in M$ and consider quadruple of points $P, A, B, C \in B_\eps(q)$ for some small $\eps$. We draw the following picture.

\begin{figure}[htbp]
    \centering
        \includegraphics[width=0.7\textwidth]{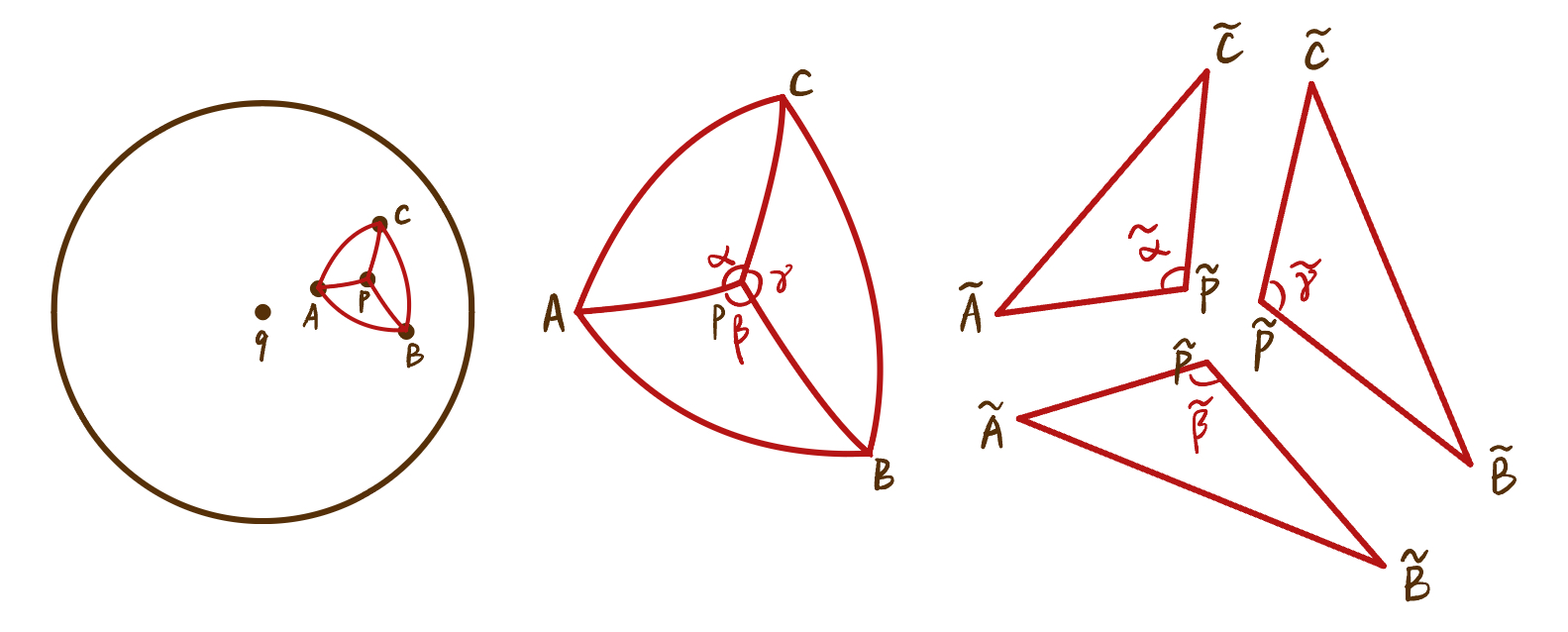}\caption{Four-Point Comparison \ref{thm: four-point}}
\end{figure}
And we assume all comparison triangles exist and are unique. This is automatic if $\eps$ is sufficiently small. Then for $\tilde{\alpha}, \tilde{\beta}, \tilde{\gamma}$ in their corresponding comparison triangle, i.e. 
	\begin{equation*}
		[\tilde{A}\tilde{C}\tilde{P}] = \modtriangle^\kappa\br{\abs{A - C}, \abs{C - P}, \abs{A - P}}
	\end{equation*}
Then, 
    \begin{equation*}
    	\tilde{\alpha} + \tilde{\beta} + \tilde{\gamma} \leq 2\pi
    \end{equation*}
\end{theorem}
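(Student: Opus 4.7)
The plan is to reduce the statement to the local angle comparison theorem (Theorem \ref{thm: angle}) via an elementary spherical inequality on the unit tangent sphere at $P$. First I would shrink $\eps$ (depending on $q$) so that $B_\eps(q)$ lies inside a totally normal neighborhood and satisfies the hypotheses of Theorem \ref{thm: angle}; in particular the shortest geodesics $[PA]$, $[PB]$, $[PC]$ are unique, and I can set $v_A,v_B,v_C\in T_PM$ to be their initial unit velocities. The \emph{actual} angles at $P$ in the three sub-triangles are
\begin{equation*}
\alpha_M=\mangle[P_A^B],\qquad \beta_M=\mangle[P_B^C],\qquad \gamma_M=\mangle[P_C^A],
\end{equation*}
which by definition are the Euclidean angles between $v_A,v_B,v_C$, i.e.\ the spherical (geodesic) distances between the corresponding three points of $S_PM\cong S^{n-1}$.

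The first main step is to show that these actual angles in $M$ already satisfy the $2\pi$ bound: $\alpha_M+\beta_M+\gamma_M\le 2\pi$. This is a purely linear-algebraic fact about three unit vectors in a Euclidean space. The vectors $v_A,v_B,v_C$ span at most a $3$-dimensional subspace of $T_PM$, so their pairwise spherical distances are realized inside some $S^k\subset S_PM$ with $k\le 2$. On $S^2$ (or $S^1$) any three points form a spherical triangle whose perimeter is at most $2\pi$; this follows from the spherical triangle inequality together with each side lying in $[0,\pi]$ (or can be checked by case analysis on $S^1$). Hence $\alpha_M+\beta_M+\gamma_M\le 2\pi$.

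The second main step is to compare each actual angle to its model counterpart. Applying Theorem \ref{thm: angle} to each of the three triangles $\triangle PAB,\ \triangle PBC,\ \triangle PCA$ (all with vertices in $B_\eps(q)$) yields
\begin{equation*}
\tilde\alpha\le \alpha_M,\qquad \tilde\beta\le \beta_M,\qquad \tilde\gamma\le \gamma_M,
\end{equation*}
where $\tilde\alpha,\tilde\beta,\tilde\gamma$ are the angles at $\tilde P$ in the respective $\kappa$-comparison triangles in $\mathbb{M}^2_\kappa$. Summing the three inequalities and combining with the result of the previous paragraph gives $\tilde\alpha+\tilde\beta+\tilde\gamma\le \alpha_M+\beta_M+\gamma_M\le 2\pi$, as desired.

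The main obstacle I anticipate is making the spherical-perimeter bound rigorous in degenerate configurations (two of $v_A,v_B,v_C$ coinciding, or all three coplanar), but these are easily dealt with by taking limits of the generic case on $S^2$, or by noting that degenerate cases only make the sum smaller. The rest of the argument is essentially bookkeeping on top of Theorem \ref{thm: angle}, with the smallness of $\eps$ chosen so that the angle comparison applies simultaneously to each of the three triangles.
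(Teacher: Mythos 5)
Your argument is structurally the same as the paper's: set $v_A,v_B,v_C\in T_PM$ to be the initial unit velocities, observe that the actual angles $\alpha_M,\beta_M,\gamma_M$ are the pairwise spherical distances between these three points of $S^{n-1}\subset T_PM$, bound $\alpha_M+\beta_M+\gamma_M\le 2\pi$ using that $S^{n-1}$ is a round sphere, and finish by summing the three angle-comparison inequalities from Theorem~\ref{thm: angle}. The reduction to $S^2$ via the span of $v_A,v_B,v_C$ is a harmless extra step; the paper simply works in $S^{n-1}=\modsp{1}^{n-1}$ directly.

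There is, however, a real flaw in your justification of the spherical perimeter bound. You assert that ``any three points on $S^2$ form a spherical triangle whose perimeter is at most $2\pi$'' follows ``from the spherical triangle inequality together with each side lying in $[0,\pi]$.'' That is not a proof: take $a=b=c=0.9\pi$. All three triangle inequalities hold, each side lies in $[0,\pi]$, yet $a+b+c=2.7\pi>2\pi$. So the two conditions you cite do not imply the perimeter bound; the bound is a genuinely spherical fact and needs a genuinely spherical argument. The standard one, which the paper uses in the proof of Proposition~\ref{prop: model triangle defined}, is to extend two sides from one vertex $v$ to the antipodal point $-v$ and apply the triangle inequality to the resulting triangle with vertex $-v$: one gets $|v_2v_3|\le(\pi-|v\,v_2|)+(\pi-|v\,v_3|)$, which rearranges to the $2\pi$ bound. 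You can (and should) simply invoke Proposition~\ref{prop: model triangle defined} (for $\kappa=1$) at that point; the rest of your argument is correct and matches the paper's.
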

\begin{theorem}
	We want to show that the four-points comparison \ref{thm: four-point} is equivalent to the angle comparison. 
\end{theorem}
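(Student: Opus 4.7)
The plan is to establish both implications of the equivalence, using a spherical perimeter bound in the forward direction and a degenerate auxiliary triangle in the converse.

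\emph{Angle comparison implies four-point comparison.} First I would observe that the three unit tangent vectors $v_A, v_B, v_C \in T_PM$ of the geodesics $[PA]$, $[PB]$, $[PC]$ determine three points on the unit sphere $S(T_PM)$ whose pairwise spherical distances are exactly the hinge angles $\alpha = \mangle[P_A^B]$, $\beta = \mangle[P_B^C]$, $\gamma = \mangle[P_A^C]$. Restricting to the $2$-sphere spanned by these three directions, the classical fact that any spherical triangle has perimeter at most $2\pi$ yields $\alpha + \beta + \gamma \leq 2\pi$. Applying the local angle comparison (Theorem~\ref{thm: angle}) to each of the three triangles $[PAB]$, $[PBC]$, $[PAC]$ separately gives $\tilde\alpha \leq \alpha$, $\tilde\beta \leq \beta$, $\tilde\gamma \leq \gamma$, and summing produces $\tilde\alpha + \tilde\beta + \tilde\gamma \leq 2\pi$.

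\emph{Four-point comparison implies angle comparison.} Conversely, fix a hinge $[P_X^Y]$ in $B_{\eps/2}(q)$ (shrinking $\eps$ slightly at the outset so that the next construction stays in $B_\eps(q)$), and set $\alpha = \mangle[P_X^Y]$. The idea is to introduce an auxiliary point $Z$ on the prolongation of the geodesic $[XP]$ past $P$, at small distance $t > 0$. The collinearity $X, P, Z$ with $P$ between forces the comparison triangle $[\tilde P \tilde X \tilde Z]$ to be degenerate with angle $\pi$ at $\tilde P$, so the four-point condition for the quadruple $(P, X, Y, Z)$ collapses to
\[
\modangle^\kappa(P_X^Y) + \modangle^\kappa(P_Y^Z) \leq \pi.
\]
As $t \to 0^+$, the initial direction of $[PZ]$ at $P$ is $-v_X$, so by the first variation formula $|YZ| = |YP| + t\cos\alpha + O(t^2)$; feeding this into the cosine law of $\modsp{\kappa}$ yields $\modangle^\kappa(P_Y^Z) \to \pi - \alpha$. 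Passing to the limit in the four-point inequality gives $\modangle^\kappa(P_X^Y) \leq \alpha = \mangle[P_X^Y]$, which is the local hinge comparison and hence, by the earlier equivalence, the local angle comparison.

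\emph{Main obstacle.} The genuine work lies in the limit computation in the converse direction: verifying $\modangle^\kappa(P_Y^Z) \to \pi - \alpha$ requires a first-order Taylor expansion of $|YZ|$ in $t$ via the first variation formula, together with substitution into the cosine law for $\modsp{\kappa}$ (both the numerator and the denominator in the expression for $\cos\modangle^\kappa(P_Y^Z)$ vanish linearly in $t$, and the surviving ratio equals $-\cos\alpha$). A secondary but unavoidable technicality is to choose the working radius small enough so that the extended auxiliary point $Z$ stays within the ball where the four-point condition is assumed to hold; this is handled by a mild shrinking of $\eps$ at the start of the argument.
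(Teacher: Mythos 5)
Your proof is correct, and while the forward direction follows the paper exactly (reading the hinge angles as spherical distances on $S^{n-1} \subseteq T_PM$, applying the spherical perimeter bound $\leq 2\pi$, then angle comparison on each of the three triangles), the converse direction takes a genuinely different route. The paper also creates a degenerate configuration with one comparison angle equal to $\pi$, obtaining $\tilde\alpha + \tilde\beta \leq \pi$, but then feeds this into Alexandrov's Lemma~\ref{lem: Alexandrov's lemma} to deduce the monotonicity of comparison angles, which was established earlier to be equivalent to angle comparison. You instead let the auxiliary point $Z = Z(t)$ slide toward $P$ along the prolongation of $[XP]$ and compute the limit $\modangle^\kappa(P_Y^Z) \to \pi - \mangle[P_X^Y]$ via the first variation formula and the cosine law, which directly yields $\modangle^\kappa(P_X^Y) \leq \mangle[P_X^Y]$; I checked your expansion and it is right (both the numerator $\csk(|YZ|) - \csk(|PY|)\csk(t)$ and the denominator $\kappa\snk(|PY|)\snk(t)$ vanish to first order in $t$, and the ratio tends to $-\cos\mangle[P_X^Y]$; the $\kappa = 0$ case is analogous with squares). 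The trade-off between the two routes is worth noting: the paper's Alexandrov-lemma argument is purely metric and transfers verbatim to Alexandrov spaces, which matters for the remark following the theorem, whereas your argument leans on the Riemannian first variation formula~\ref{thm: the first variation formula} and Taylor expansion and is therefore tied to the smooth setting; in the context of a Riemannian manifold both are perfectly valid, and yours is arguably more self-contained since it avoids threading through the monotonicity equivalence. You also correctly anticipate the one bookkeeping point, namely shrinking $\eps$ so that $Z$ stays inside the ball on which the four-point hypothesis is assumed.
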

\begin{proof}
	We take $v_1, v_2, v_3$ three initial vectors of $[PA], [PB], [PC]$ starting at $P$. They belong to the unit sphere $S^{n - 1} \subseteq T_PM$. Because $S^{n - 1} = \mathbb{M}_\kappa^{n - 1}$, then we have 
	\begin{equation*}
		\alpha + \beta + \gamma \leq 2\varpi^1 = 2\pi
	\end{equation*}
	
	\begin{figure}[htbp]
    \centering
        \includegraphics[width=0.5\textwidth]{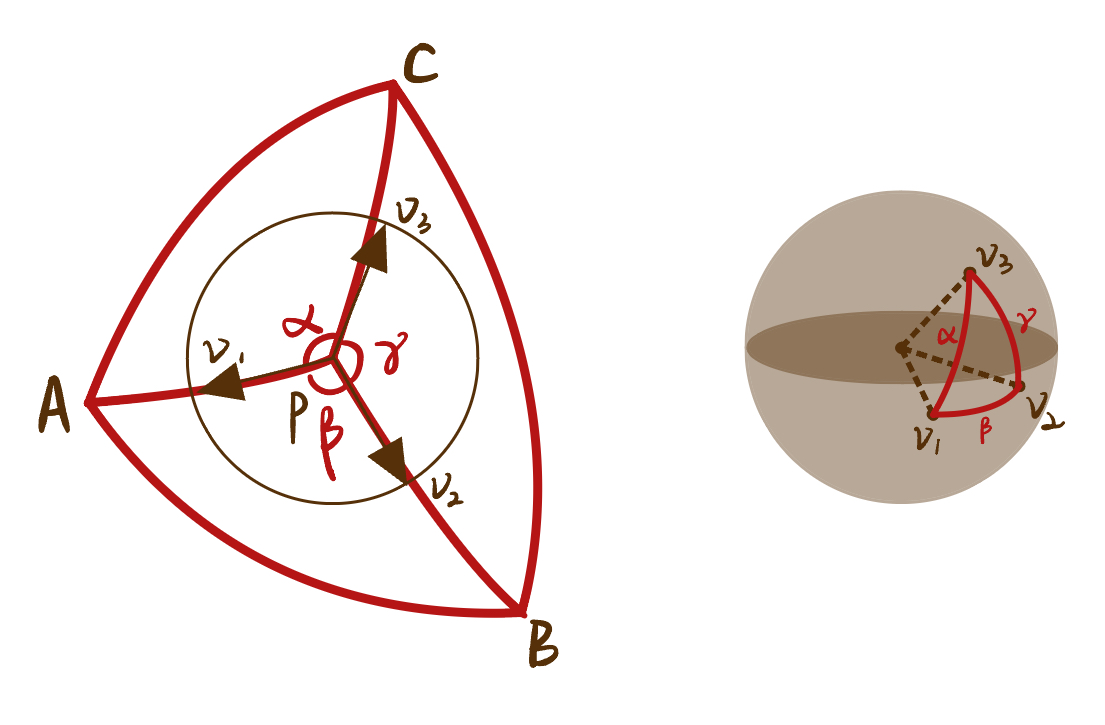}
    \end{figure}
    Then by the angle comparison, since $\alpha \geq \tilde{\alpha}, \beta \geq \tilde{\beta}, \gamma \geq \tilde{\gamma}$, we derive that 
    \begin{equation*}
    	\tilde{\alpha} + \tilde{\beta} + \tilde{\gamma} \leq 2\pi
    \end{equation*}
  
  	Conversely, we instead prove the four-point comparison implies the monotonicity of the comparison angle. We draw the following picture, $P \in [AC]$ so that $\tilde{P}, \tilde{A}, \tilde{C}$ is also in a straight line, thus $\gamma = \tilde{\gamma} = \pi$. 
  	
  	\begin{figure}[htbp]
    \centering
        \includegraphics[width=0.7\textwidth]{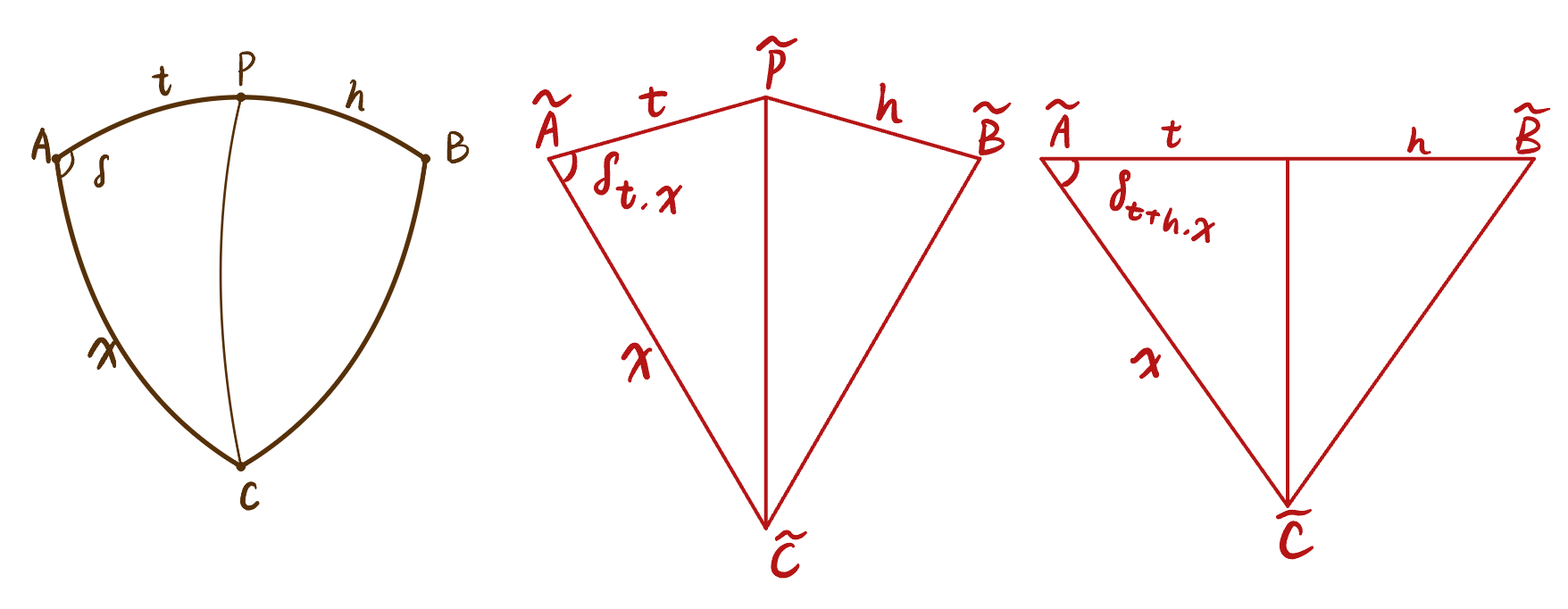}
    \end{figure}
  	Therefore, by the four-points comparison, $\tilde{\alpha} + \tilde{\beta} \leq \pi$. Then by Alexandrov's lemma, we can conclude that the comparison angle satisfies $\delta_{t, x} \geq \delta_{t + h, x}$. 
\end{proof}
\begin{remark}
	The four-points comparison involves only distances in $M$.  This allows it to be used to give a metric definition of lower sectional curvature bound. It is also clear that it behaves well with respect to most notions of convergence of metric spaces. We will see later that it is stable under Gromov-Hausdorff convergence. 	
	\end{remark}
\begin{remark}
In this section, we have established the equivalence of various local versions of angle comparison including the four-points comparison. Equivalence of global versions of these comparisons also holds but for $\kappa>0$ some implications require the globalization theorem. For $\kappa\le 0$  the local proofs directly generalize to global ones.
\end{remark}

\subsection{Point-on-a-side Comparison by Jensen's Inequality}\label{sect: function comparison}
In this section, we want to study Jensen's inequality of $f = \mdk\circ d_p$ and give a new proof of local point-on-a-side comparison~\ref{thm: point-on-a-side}. 
Remember we already proved that if $f = \mdk\circ d_p$ in $(M, g)$ where $\sect_M \geq \kappa$. Then near $p$ (or more generally, outside of cut locus of $p$)  $f$ satisfies 
\begin{equation*}
	\hess_f + \kappa f\id \leq \id
\end{equation*}
as a matrix. Equivalently, for any unit speed geodesics $\gamma(t)$ in a small ball around $p$ (or more generally, outside of cut locus of $p$) it holds that  
\begin{equation}\label{eq: functional comparison ineq}
	f(\gamma(t))'' + \kappa f(\gamma(t)) \leq 1
\end{equation}
{In the model space the inequalities above are equalities, in particular $ f(\gamma(t))'' + \kappa f(\gamma(t)) = 1$.}

\begin{example}
For example, when $\kappa = 0$, if $\sect_M \geq 0$, then $\mdk(t) = \frac{t^2}{2}$ satisfies \ref{eq: equation system for z}, therefore $f = \frac{d_p^2}{2}$. Then along any geodesic $\gamma$, we have $f(\gamma(t))'' \leq 1$. And for $p = 0 \in \R^n$, $x \in \R^n$, we have
	\begin{align*}
		& d_p(x) = \sqrt{\sum_i x_i^2}\\
		\implies & f(x) = \frac{\sum_i x_i^2}{2}\\
		\implies & \hess_f \equiv \id\\
		\implies & f(\gamma(t))'' = 1
	\end{align*}
\end{example}
In this section, we are going to justify that the above result of the modified distance function gives a different proof of local  point-on-a-side comparison \ref{thm: point-on-a-side}. To show this we need to understand the inequality \ref{eq: functional comparison ineq} geometrically. Let $f: \R \to \R$ be a smooth function, fix $\kappa, \lambda \in \R$, and suppose it satisfies
\begin{equation*}
	f'' + \kappa f \leq \lambda
\end{equation*}
We want to understand this condition geometrically.
Firstly, we consider the simple case when $\kappa = 0$, then $f'' \le  \lambda$ is easy to understand:
\begin{align*}
	& f'' \leq \lambda\\
	\iff & (f(t) - \frac{\lambda t^2}{2})'' \leq 0\\
	\iff & \text{$f(t) - \frac{\lambda t^2}{2}$ is concave.}
\end{align*}
Denote $g(t) = f(t) - \frac{\lambda t^2}{2}$, since $g$ is concave, then for $t \in [0, 1]$, by definition, we have
\begin{equation*}
	g((1 - t)x + t y) \geq (1 - t)g(x) + t g(y)
\end{equation*}
In terms of $f$, this is 
\begin{equation}\label{eq: concavity ineq}
	f((1 - t)x + t y) \geq (1 - t)f(x) + t f(y) - \frac{\lambda}{2}t(1 - t)(x - y)^2
\end{equation}
the concavity inequality of $f$. Notice that the concavity inequality \ref{eq: concavity ineq} is equivalent to Jensen's inequality, which says that if $f$ is a function that satisfies $f'' \leq \lambda$, then for any function $\bar{f}$ satisfying $\bar{f}(x) = f(x)$ and $\bar{f}(y) = f(y)$ and $\bar{f}'' = \lambda$, we have 
\begin{equation*}
	f((1 - t)x + ty) \geq \bar{f}((1 - t)x + ty) \quad \forall t \in [0, 1]
\end{equation*} 
That means the graph of the comparison function $\bar{f}$ is below the function $f$. 
\begin{figure}[htbp]
    \centering
        \includegraphics[width=0.3\textwidth]{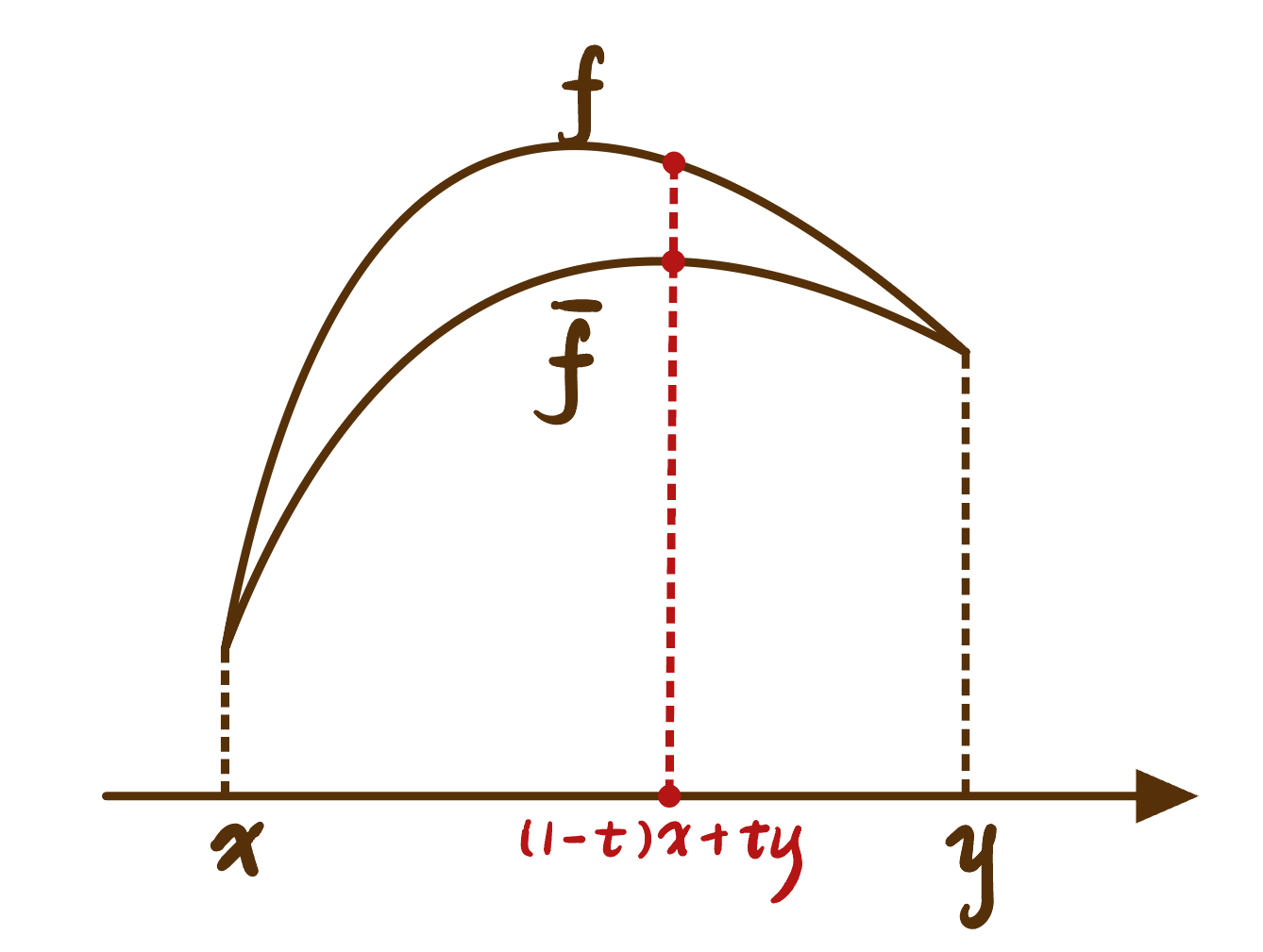}\caption{Jensen's inequality}
    \end{figure}
\begin{remark}
	The $\bar{f}(t)$ above can be $\frac{\lambda t^2}{2}$ plus some affine term $at + b$ for some constant $a, b \in \R$. 
\end{remark}
Now let's consider the case when $\kappa \neq 0$ so that 
\begin{equation}\label{eq: kappa neq 0}
	f'' + \kappa f \leq \lambda
\end{equation}
Then we want to derive a similar Jensen's inequality as above, 
\begin{lemma}[Jensen's inequality]\label{thm: Jensen's inequality}
	Suppose inequality \ref{eq: kappa neq 0} holds for some $\kappa \neq 0$.  Let $x, y \in \R$ such that $\abs{x - y} < \varpi^\kappa$  (note that this is only a restriction if $\kappa > 0$). Fix $\bar{f}$ satisfying $\bar{f}(x) = f(x)$ and $\bar{f}(y) = f(y)$ and $\bar{f}'' + \kappa \bar{f} = \lambda$. Then $f \ge \bar{f}$ on $[x, y]$, i.e. for any $t \in [0, 1]$, we have the Jensen's inequality
	\begin{equation*}
		f((1 - t)x + ty) \geq \bar{f}((1 - t)x + ty).
	\end{equation*}
\end{lemma}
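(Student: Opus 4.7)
The plan is to reduce Jensen's inequality to a Sturm-type maximum principle for the difference $u := f - \bar f$. By construction $u$ vanishes at both endpoints and satisfies the linear differential inequality $u'' + \kappa u \le 0$ on $[x,y]$, so the claim is equivalent to showing $u \ge 0$ throughout this interval. Uniqueness of $\bar f$ (and hence well-posedness of the statement) is a separate issue: for $\kappa > 0$ it follows from the same spectral condition $|x-y| < \varpi^\kappa$ that will drive the main argument.

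For $\kappa < 0$ the argument I would run is the elementary maximum principle: if $u$ achieved a negative interior minimum at some $t_0 \in (x,y)$, then $u''(t_0) \ge 0$ would clash with the inequality $u''(t_0) \le -\kappa\, u(t_0) < 0$. Hence $u \ge 0$ at once. This case does not use the restriction $|x-y| < \varpi^\kappa$, which is vacuous for $\kappa < 0$ anyway.

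For $\kappa > 0$ the naive principle fails and the hypothesis $|x-y| < \varpi^\kappa$ becomes essential. Here the plan is to use the classical Wronskian trick. Set $\sigma(t) := \snk(t-x)$, so that $\sigma'' + \kappa \sigma = 0$, $\sigma(x) = 0$, $\sigma'(x) = 1$, and crucially $\sigma > 0$ on $(x,y]$ precisely because $y-x < \varpi^\kappa$. Then the Wronskian $W := u'\sigma - u\sigma'$ satisfies
\[
W' \;=\; u''\sigma - u\sigma'' \;=\; (u'' + \kappa u)\,\sigma \;\le\; 0,
\]
with $W(x) = u'(x)\cdot 0 - 0\cdot 1 = 0$, so $W \le 0$ on $[x,y]$. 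Dividing by $\sigma^2 > 0$ on $(x,y]$ gives $(u/\sigma)' \le 0$, i.e.\ $u/\sigma$ is non-increasing on $(x,y]$. Since $u(y)/\sigma(y) = 0$, the monotonicity forces $u/\sigma \ge 0$ on $(x,y]$, hence $u \ge 0$ there, and $u(x) = 0$ closes the argument.

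The main obstacle I anticipate is the $\kappa > 0$ case: a pure sign-of-second-derivative argument cannot close because $-\kappa u$ can be positive at a negative minimum, so one really needs an auxiliary comparison function sensitive to the spectral threshold of $d^2/dt^2 + \kappa$ on the interval. The Sturm trick succeeds exactly because the hypothesis $|x-y| < \varpi^\kappa$ places us below the first Dirichlet eigenvalue, which is what guarantees positivity of $\sigma = \snk(t-x)$ on $(x,y]$; without this assumption the statement genuinely fails, as one can already see from the fact that $\bar f$ itself need not exist or be unique.
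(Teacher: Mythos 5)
Your proof is correct, and for $\kappa > 0$ it takes a genuinely different route from the paper. The paper also splits into $\kappa < 0$ and $\kappa > 0$, and its $\kappa < 0$ case is the same maximum-principle argument you give. For $\kappa > 0$, however, the paper first strengthens the hypothesis to the strict inequality $f'' + \kappa f < \lambda$, rescales to $\kappa = 1$, picks a translated $v(t) = \sin(t - \eps) > 0$ on the whole closed interval, and examines an interior minimum of $\phi = u/v$; the second-derivative test at that minimum yields $u'' + u \ge 0$ there, which contradicts the strict inequality but not the non-strict one, so the weak case is then recovered by an approximation step $\lambda \rightsquigarrow \lambda + \eps$. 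Your Wronskian argument with $\sigma = \snk(\cdot - x)$ is cleaner on exactly this point: because $W = u'\sigma - u\sigma'$ satisfies $W' = (u'' + \kappa u)\sigma \le 0$ with $W(x) = 0$, monotonicity of $W$ closes the argument with no strictness needed and hence no approximation step, and the comparison function $\sigma$ is pinned to the interval canonically (zero at $x$, positive on $(x,y]$ by the spectral hypothesis $|x-y| < \varpi^\kappa$) rather than chosen by translation. The trade-offs: the paper's sin-trick treats the minimum of $u/v$ symmetrically and generalizes a touch more directly to weak (generalized-sense) derivatives, which is relevant to the subsequent discussion in the notes; your Sturm--Wronskian argument is the more standard ODE-comparison route and delivers the lemma in one pass. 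Both correctly identify $|x-y| < \varpi^\kappa$ as the sub-eigenvalue condition that makes the statement true.
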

\begin{proof}
Let us first prove the lemma under a slightly stronger assumption $f'' + \kappa f < \lambda$.

Suppose the conclusion of the lemma is false. Namely, there exists $t_0 \in (x, y)$ such that 
	\begin{equation}\label{eq: f(t_0)}
		 f(t_0) < \bar{f}(t_0)
	\end{equation}
	Because $f'' + \kappa f < \lambda$ and $\bar{f}'' + \kappa \bar{f} = \lambda$,  
	\begin{equation*}
		(f - \bar{f})'' + \kappa(f - \bar{f}) \leq 0. 
	\end{equation*}
	Denote $u = f - \bar{f}$. We have $u(x) = u(y) = 0$ and $u'' + \kappa u < 0$. Therefore, our assumption \ref{eq: f(t_0)} in terms of $u$ becomes 
	\begin{equation*}
		\min_{(x, y)} u < 0
	\end{equation*}
	
	Suppose $\kappa <0$.

	Let $t_{\min} \in [x, y]$ be value minimizing $u$, then $u'(t_{\min}) = 0$ and $u''(t_{\min}) \geq 0$. Notice that since $u(x) = u(y) = 0$, then $t_{\min} \in (x, y)$. When $\kappa < 0$, 
	\begin{equation*}
		u''(t_{\min}) + \kappa u(t_{\min}) \leq 0 \implies u''(t_{\min}) \leq -u(t_{\min})\kappa < 0, 
	\end{equation*}
	which contradicts to  $u''(t_{\min}) \geq 0$. 
	
	Let us now consider the case $\kappa>0$. By rescaling it is enough to consider the case $\kappa=1$.
	
	Thus we are assuming that $f$ satisfies $f''+f<\lambda$ and  $\abs{x - y} < \pi$.  We will use the following \textbf{$\sin$-trick}: Since $|x-y|<\pi$ we can find an $\eps \geq 0$ such that $v(t) = \sin(t - \eps) > 0$ on $[x, y]$ (translation if necessary). Note that $v$ solves $v'' + v = 0$ and $v > 0$ on $[x, y]$. Still, we denote $u = f - \bar{f}$.
	Then $u''+u<0$ on $[x, y]$.

	 Denote $\phi = \frac{u}{v}$ on $[x, y]$ then there exists the minimizing point $t_{\min} \in (x, y)$ such that 
	\begin{equation*}
		\phi(t_{\min}) = \min_{[x, y]}\phi < 0.
	\end{equation*}
	Then by the second order derivative test, we have $\phi'(t_{\min}) = 0$ and $\phi''(t_{\min}) \geq 0$. 
	Notice that 
	\begin{equation*}
		\brac{\frac{u}{v}}' = \frac{u'v - uv'}{v^2} \implies (u'v - uv')(t_{\min}) = 0. 
	\end{equation*} 
	Then, since
	\begin{align*}
		\brac{\frac{u}{v}}'' 
		& = \brac{\frac{u'v - uv'}{v^2}}'\\
		& = \frac{(u'v - uv')'v^2 - (u'v - uv')(v^2)'}{v^4}
	\end{align*}
	by substituting $(u'v - uv')(t_{\min}) = 0$ and using that $ v + v'' = 0$, we have
	\begin{align*}
		& \phi''(t_{\min}) = \frac{u''v - uv''}{v^2}(t_{\min}) = \frac{(u'' + u)}{v}(t_{\min}) \\
		\implies & u'' (t_{\min}) + u(t_{\min}) \geq 0
	\end{align*}
	This contradicts the assumption that $u'' + u < 0$.

	The case of weaker assumption $f'' + \kappa f \leq \lambda$ can be proved via approximation. We take $\lambda + \epsilon$ for arbitrary small $\eps > 0$, then 
	\begin{equation*}
		f'' + \kappa f < \lambda + \eps
	\end{equation*}
	Then we obtain Jensen's inequality 
	\begin{equation*}
		f((1 - t)x + ty) \geq \bar{f}((1 - t)x + ty).
	\end{equation*}
	for some $\bar{f}'' + \kappa \bar{f} = \lambda + \eps$. And since $\eps$ is arbitrary, we can prove Jensen's inequality for $\eps = 0$ as well. 
\end{proof}

We are going to show the above Jensen's inequality gives a different proof of the point-on-a-side comparison in $B_\eps(p)$. 
\begin{proof}
	This is a new proof to the point-on-a-side comparison \ref{thm: point-on-a-side}. Consider the following picture, fix $p \in M$ and $\bar{p} \in \modsp{\kappa}$ and $\gamma$ and $\bar{\gamma}$ the geodesics parameterize the opposite sides of $p$ and $\bar{p}$.

	we denote $f = \mdk\circ d_p$ and $\bar{f} = \mdk\circ d_{\bar{p}}$. Then $f'' + \kappa f \leq 1$ along $\gamma$ and $\bar{f}'' + \kappa \bar{f} = 1$ along $\bar{\gamma}$. Since $f(x) = \bar{f}(x)$ and $f(y) = \bar{f}(y)$, by Jensen's inequality \ref{thm: Jensen's inequality}, we can conclude that $f(z) \geq \bar{f}(\bar{z})$ for each $z$ on $[xy]$ and its corresponding point $\bar{z}$ in the model space. Then, since $\mdk$ is monotone increasing, we can conclude that $d_p(z) \geq d_{\bar{p}}(\bar{z})$. 
\end{proof}

\chapter{Global Toponogov Comparison Theorems}
In this lecture, we will prove the global Toponogov comparison theorem via the key lemma (Remember that we only proved the Toponogov comparison theorem on $B_\eps(p)$).
\section{The Key Lemma}
\begin{theorem}[The Key Lemma]\label{lem: The Key Lemma}
    Let $(M^n, g)$ be complete and $\sect_M \geq \kappa$, , $p \in M$ and $0<l\le  \varpi^\kappa$ such that $\forall q \in B_l(p)$ if the comparison
        \begin{equation*}
            \mangle[x_q^y] \geq \modangle^\kappa(x_q^y)
        \end{equation*}
        hold for any hinge $[x_q^y]$ satisfying
        \begin{equation}\label{eq: key assumption 1}
            \abs{x - p} + \abs{x - q} < \frac{2}{3}l
        \end{equation}
        Then the comparison 
        \begin{equation*}
            \mangle[x_p^q] \geq \modangle^\kappa(x_p^q)
        \end{equation*}
        hold for any hinge $[x_p^q]$ satisfying
        \begin{equation}\label{eq: key assumption 2}
            \abs{x - p} + \abs{x - q} < l
        \end{equation}
\end{theorem}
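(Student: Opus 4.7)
The plan is to reformulate the sought hinge comparison as a Jensen-type inequality for the modified distance $\mdk\circ d_p$ along the side $[xq]$, and then to propagate this inequality from $t=0$ to $t=|xq|$ by a continuity-in-$t$ argument, using the hypothesis to supply the local upper barriers needed near the cut locus of $p$.

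Fix a hinge $[x_p^q]$ with $|xp|+|xq|<l$; set $\alpha=\mangle[x_p^q]$ and parametrize $[xq]$ by a unit-speed geodesic $\sigma\colon[0,|xq|]\to M$ with $\sigma(0)=x$, $\sigma(|xq|)=q$. Let
\[
F(t)=\mdk\bigl(d(p,\sigma(t))\bigr),
\]
and let $\bar F$ be the unique solution of $\bar F''+\kappa\bar F=1$ on $[0,|xq|]$ with $\bar F(0)=\mdk(|xp|)$ and $\bar F'(0)=-\snk(|xp|)\cos\alpha$. As in the derivation of the cosine law in $\modsp{\kappa}$, $\bar F(t)$ is precisely $\mdk$ of the distance from the basepoint to the corresponding moving point on the model hinge of angle $\alpha$. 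Since $\modcvee^\kappa(\,\cdot\,;|xp|,|xq|)$ is monotone in its first argument, the sought inequality $\mangle[x_p^q]\ge\modangle^\kappa(x_p^q)$ is equivalent to $F(|xq|)\le\bar F(|xq|)$.

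I would prove $F\le\bar F$ on $[0,|xq|]$ by setting
\[
T=\sup\{\,t\in[0,|xq|]\,:\,F\le \bar F\text{ on }[0,t]\,\}
\]
and showing $T=|xq|$. At $t=0$ one has $F(0)=\bar F(0)$ and $F'(0)=\bar F'(0)$, and the Hessian comparison of Theorem~\ref{cor: Hess ineq}, applied on the short interval where both $d_p$ and the hypothesis are in force, yields $F''+\kappa F\le 1$ in a neighborhood of $0$, so $T>0$. Suppose for contradiction $T<|xq|$. Since $|\sigma(T)p|+|\sigma(T)q|\le|xp|+|xq|<l$, the triangle inequality lets me pick on a shortest segment from $\sigma(T)$ toward $p$ or toward $q$ an intermediate auxiliary point $m$ with $|mp|+|mq|<\tfrac{2}{3}l$; the hypothesis at $m$ (with the auxiliary side running in various test directions $y$) gives hinge comparisons at $m$, which by the equivalence of hinge comparison with point-on-side and four-point comparisons from Section~4.3 translates into the Jensen-type bound $F''+\kappa F\le 1$ in the barrier sense in a neighborhood of $T$. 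Combined with $\bar F''+\kappa\bar F=1$ and $F(T)=\bar F(T)$, the same argument used in Lemma~\ref{thm: Jensen's inequality} (the $\sin$-trick when $\kappa>0$; concavity when $\kappa\le 0$) then extends $F\le\bar F$ slightly beyond $T$, contradicting the definition of $T$.

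The main obstacle is the cut-locus analysis: the function $t\mapsto d(p,\sigma(t))$ is only Lipschitz in general, so the inequality $F''+\kappa F\le 1$ must be interpreted in the barrier/viscosity sense, and the hypothesis is exactly what supplies the required local upper barriers at non-smooth points. The combinatorial constant $\tfrac{2}{3}$ enters because the Alexandrov-lemma gluing at the auxiliary point $m$ must produce two sub-hinges that both fit under the $\tfrac{2}{3}l$ threshold; the remaining $\tfrac{1}{3}l$ of slack is precisely what allows the choice of $m$ to be made in either direction (toward $p$ or toward $q$), covering in particular the near-degenerate case in which $|pq|$ is close to $l$ and $x$ lies near a shortest $[pq]$.
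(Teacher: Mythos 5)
Your overall strategy --- reformulate the hinge inequality as $F\le\bar F$ for $F=\mdk\circ d_p\circ\sigma$ and propagate by a continuity argument using barriers --- is genuinely different from the paper's proof, which is a combinatorial iteration: it repeatedly replaces the offending hinge $[x^q_p]$ by a strictly ``smaller'' one $[{x'}^q_p]$ (by choosing $x'$ on $[xq]$ so that $|px|+3|xx'|=\tfrac23 l$), producing a decreasing sequence $d_0>d_1>\dots$ of model sides and then extracting a contradiction by a careful case analysis showing either some hinge falls under the $\tfrac23 l$ threshold or the quantities $\tilde\alpha_n\to\pi$ force $d_n\to|pq|$. Your analytic route would be elegant if it worked, but as written it has a fatal gap at the point where the hypothesis is supposed to supply the barriers.

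The concrete problem is the sentence ``the triangle inequality lets me pick on a shortest segment from $\sigma(T)$ toward $p$ or toward $q$ an intermediate auxiliary point $m$ with $|mp|+|mq|<\tfrac23 l$.'' The triangle inequality gives the opposite bound: $|mp|+|mq|\ge|pq|$ for \emph{every} $m$, with equality exactly on $[pq]$. Under the hypothesis of the Key Lemma you only know $|pq|\le|xp|+|xq|<l$, and $|pq|$ can be anywhere in $[\tfrac23 l, l)$, in which case no point $m$ anywhere in $M$ (let alone on the two segments you name) satisfies $|mp|+|mq|<\tfrac23 l$. So the hypothesis cannot be invoked at such an $m$. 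The near-degenerate case you mention ($\alpha$ close to $\pi$, conclusion trivial) is not the only obstruction: one can have $|pq|\ge\tfrac23 l$ with $\alpha$ bounded well away from $\pi$, and then the conclusion is genuinely nontrivial while your barrier machinery is unavailable.

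There is also a second, more structural obstacle that would remain even if the size constraint were satisfied. The standard way to build an upper barrier for $F$ at a cut-locus point $\sigma(T)$ is to take $a\in[p\,\sigma(T)]$ close to $\sigma(T)$ and use $d_p(\sigma(t))\le d(p,a)+d_a(\sigma(t))=:c+G(t)$, with $G$ smooth near $T$. But the function $\mdk(c+G)$ does \emph{not} satisfy $(\,\cdot\,)''+\kappa(\,\cdot\,)\le 1$: a direct computation using $\hess_{d_a}\le\ctk(d_a)\pi_{d_a}$ gives
\begin{equation*}
\bigl(\mdk(c+G)\bigr)''+\kappa\,\mdk(c+G)\le \csk(c+G)\bigl(G'\bigr)^2+\snk(c+G)\,\ctk(G)\bigl(1-(G')^2\bigr)+\kappa\,\mdk(c+G),
\end{equation*}
and since $\ctk$ is decreasing, $\ctk(G)\ge\ctk(c+G)$, so the right-hand side is $\ge\csk(c+G)=1-\kappa\,\mdk(c+G)$ plus a nonnegative error; the inequality goes the wrong way whenever $c>0$ and $(G')^2<1$. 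This is exactly why the paper establishes the $(\kappa,1)$-concavity of $\mdk\circ d_p$ (Theorem~\ref{thm: l-k theorem}) as a \emph{consequence} of the global Toponogov theorem rather than as an independent input, and why the globalization step itself is done combinatorially. To repair your approach you would need a genuinely different mechanism for producing barriers from the small-hinge hypothesis; as stated, the plan does not close.
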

This key lemma allows us to extend the hinge comparison from  small hinges to bigger hinges. We want to prove this key lemma via contradiction. Firstly, we assume the local hinge comparison fails \ref{thm: hinge} for a hinge $[x_q^p]$ satisfying \ref{eq: key assumption 2}, i.e.
\begin{equation}\label{eq: key lemma contra assumption}
	\abs{x - p} + \abs{x - q} < l \quad\&\quad \abs{p - q} > \modcvee^\kappa[x_p^q]
\end{equation}
Notice that $\mangle[x_q^p] < \modangle^\kappa(x_q^p) \iff \abs{p - q} > \modcvee^\kappa[x_p^q]$. 

Our idea is to construct a sequence of smaller and smaller hinges satisfying \ref{eq: key lemma contra assumption}. And at the limits, the hinge admits the length in \ref{eq: key assumption 1}, which is impossible because we cannot have both $\abs{p - q} > \modcvee^\kappa[x_p^q]$ and $\abs{p - q} \leq \modcvee^\kappa[x_p^q]$. 

\subsection{Step 1: The Construction of a Smaller Hinge}
Equivalently, we can modify the assumption \ref{eq: key lemma contra assumption} a little bit. Suppose there exists a hinge $[x_q^p]$ such that 
\begin{equation}\label{eq: key lemma contra assumption}
	\frac{2}{3}l \leq \abs{x - p} + \abs{x - q} < l \quad\&\quad \abs{p - q} > \modcvee^\kappa[x_p^q]
\end{equation}
We want to show that we can find a smaller hinge such that the comparison still fails. WLOG Suppose $\abs{x - p} \leq \abs{x - q}$. Our first step is pick $x' \in [xq]$ such that 
    \begin{equation*}
        \abs{p-x} + 3\abs{x-x'} = \frac{2}{3}l
    \end{equation*}
    It is easy to see such $x'$ exists. Denote $\gamma$ the geodesic such that $\gamma(0) = x$ and $\gamma(1) = q$, we want to take $x' = \gamma(t)$ for some $t$. Look at 
    \begin{equation*}
    	f(t) = \abs{p - x} + 3\abs{x - \gamma(t)}
    \end{equation*}
    a continuous on $[0, 1]$. At $t = 0, 1$, we have 
    \begin{align*}
    	f(0) &= \abs{p - x} \leq \frac{l}{2} \leq \frac{2}{3}l\\
    	f(1) &= \abs{p - x} + 3\abs{x - q} \geq \frac{2}{3}l
    \end{align*}
    Therefore, by the intermediate value theorem, there exists $t$ such that $f(t) = \frac{2}{3}l$ and we denote $\gamma(t)$ as $x'$. Note that $x' \in B_l(p)\cap B_l(q)$. 
    
    Now we draw the comparison triangle $[\tilde{x}\tilde{x}'\tilde{p}] = \modtriangle^\kappa(\abs{x - p}, \abs{p - x'}, \abs{x' - x})$. Then we can extend $[\tilde{x}\tilde{x}']$ beyond $\tilde{x}'$ to $\tilde{q}$ such that $\abs{\tilde{x} - \tilde{q}} = \abs{x - q}$. denote 
    \begin{align*}
       & \alpha = \mangle[x_p^{x'}], \quad \Tilde{\alpha} = \modangle^\kappa(x_p^{x'})\\
       & \alpha' = \mangle[{x'}_p^{x}], \quad \Tilde{\alpha}' = \modangle^\kappa({x'}_p^{x})
    \end{align*}
    
    \begin{figure}[htbp]
    \centering
        \includegraphics[width=0.6\textwidth]{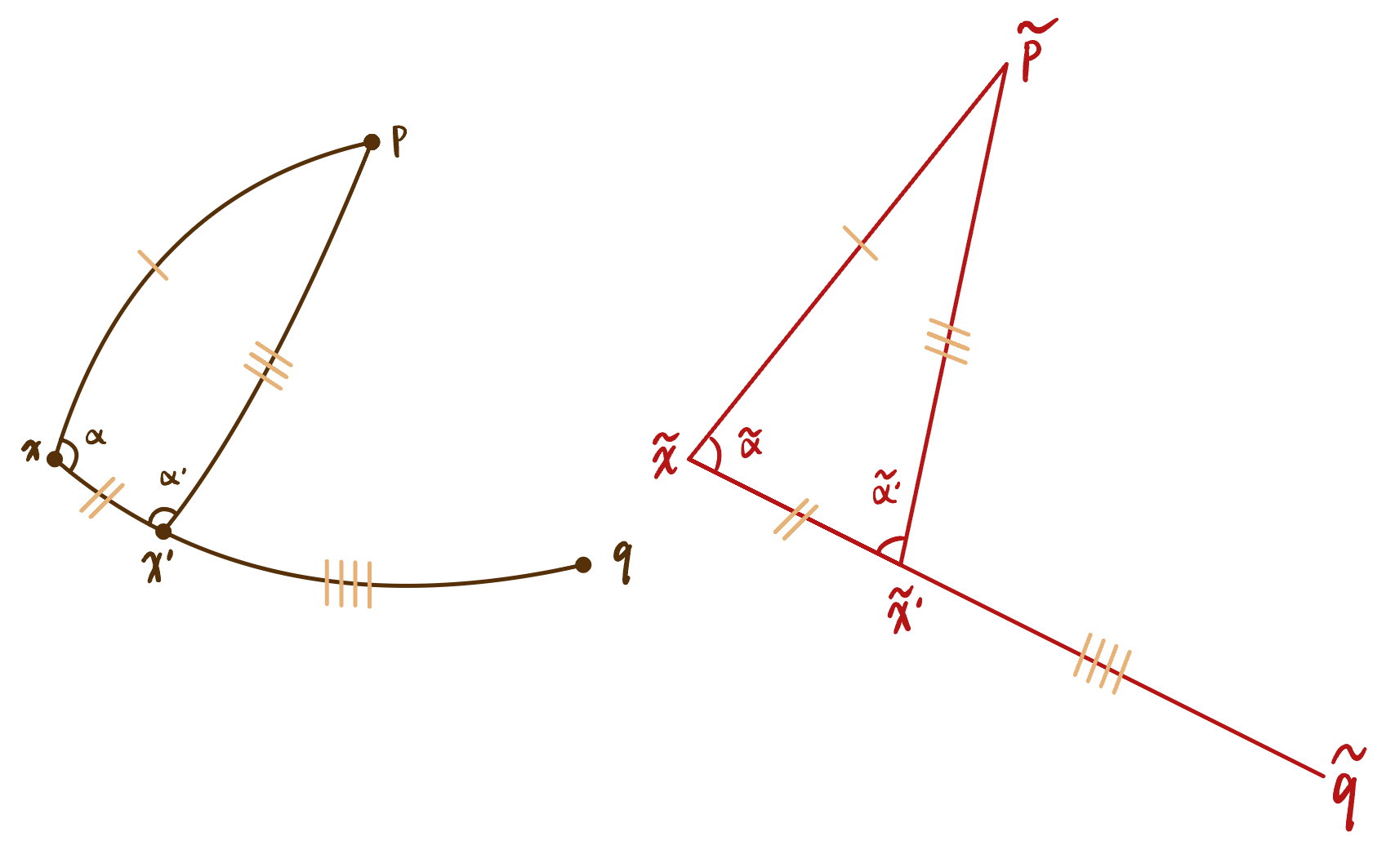}
    \end{figure}
    
    \begin{claim}
    	The hinge comparison fails on the smaller hinge $[{x'}_p^q]$, i.e. $\modcvee^\kappa[{x'}_p^q] < \abs{p - q}$
    \end{claim}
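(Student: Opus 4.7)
The plan is to push the assumed failure $\abs{p-q}>\modcvee^\kappa[x_p^q]$ from the bigger hinge down to the smaller hinge $[{x'}_p^q]$ by invoking the hypothesis of the Key Lemma at the two vertices $x$ and $x'$ of the auxiliary triangle $[xx'p]$, and then combining the resulting angle comparisons with the monotonicity of $\alpha\mapsto\modcvee^\kappa(\alpha;s,t)$ on $(0,\pi)$ already exploited in the proof that hinge comparison implies angle comparison.

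First I would verify that the two small hinges $[x_p^{x'}]$ and $[{x'}_p^x]$ lie in the scope of the hypothesis. From the construction $\abs{x-p}+3\abs{x-x'}=\tfrac{2}{3}l$ with $\abs{x-x'}>0$ one immediately gets $\abs{x-p}+\abs{x-x'}<\tfrac{2}{3}l$, and the triangle inequality gives
\[
\abs{x'-p}+\abs{x'-x}\le\abs{x-p}+2\abs{x-x'}=\tfrac{2}{3}l-\abs{x-x'}<\tfrac{2}{3}l;
\]
moreover $x,x'\in B_l(p)$ since $\abs{x-p}<l$ and $\abs{x'-p}\le\abs{x-p}+\abs{x-x'}<l$. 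Applying the hypothesis of the Key Lemma to these two hinges therefore yields
\begin{align*}
\alpha=\mangle[x_p^{x'}] &\ge \modangle^\kappa(x_p^{x'})=\tilde\alpha,\\
\alpha'=\mangle[{x'}_p^x] &\ge \modangle^\kappa({x'}_p^x)=\tilde\alpha'.
\end{align*}

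Next, since $x'$ lies on the geodesic $[xq]$, the initial direction of $[xx']$ at $x$ coincides with that of $[xq]$, so $\mangle[x_p^q]=\alpha$, and the direction of $[x'q]$ at $x'$ is opposite to that of $[x'x]$, so $\mangle[{x'}_p^q]=\pi-\alpha'$. In the comparison picture $\tilde q$ was placed on the extension of $[\tilde x\tilde x']$ past $\tilde x'$ with $\abs{\tilde x-\tilde q}=\abs{x-q}$, hence $\abs{\tilde{x}'-\tilde q}=\abs{x'-q}$, and the model angles at $\tilde x$ and $\tilde{x}'$ in the hinges $[\tilde x_{\tilde p}^{\tilde q}]$ and $[{\tilde{x}'}_{\tilde p}^{\tilde q}]$ are $\tilde\alpha$ and $\pi-\tilde\alpha'$ respectively. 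The cosine law thus gives the two equivalent expressions
\[
\abs{\tilde p-\tilde q}=\modcvee^\kappa(\tilde\alpha;\abs{x-p},\abs{x-q})=\modcvee^\kappa(\pi-\tilde\alpha';\abs{x'-p},\abs{x'-q}).
\]
Using the monotonicity of $\modcvee^\kappa(\cdot;s,t)$, the contradiction hypothesis $\abs{p-q}>\modcvee^\kappa[x_p^q]=\modcvee^\kappa(\alpha;\abs{x-p},\abs{x-q})$ together with $\alpha\ge\tilde\alpha$ gives $\abs{\tilde p-\tilde q}<\abs{p-q}$, and $\alpha'\ge\tilde\alpha'$ (equivalently $\pi-\alpha'\le\pi-\tilde\alpha'$) gives
\[
\modcvee^\kappa[{x'}_p^q]=\modcvee^\kappa(\pi-\alpha';\abs{x'-p},\abs{x'-q})\le\modcvee^\kappa(\pi-\tilde\alpha';\abs{x'-p},\abs{x'-q})=\abs{\tilde p-\tilde q}<\abs{p-q},
\]
which is the desired inequality. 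The main subtlety to watch is the bookkeeping of the $\tfrac{2}{3}l$ size bound for the two small hinges, which is exactly why $x'$ is chosen via the relation $\abs{x-p}+3\abs{x-x'}=\tfrac{2}{3}l$ with the factor $3$ (rather than $1$): it leaves enough slack for both hinges, including the one at $x'$, to fit under the hypothesis after a single application of the triangle inequality.
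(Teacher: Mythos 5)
Your proof is correct and follows the same approach as the paper: you verify the size bounds on the auxiliary hinges $[x_p^{x'}]$ and $[{x'}_p^x]$ so the hypothesis applies, invoke the resulting angle comparisons, observe that the two model-side expressions agree because $\tilde q$ lies on the extension of $[\tilde x \tilde{x}']$, and close the chain with the monotonicity of $\modcvee^\kappa(\cdot;s,t)$ and the contradiction hypothesis. This is precisely the paper's chain $d' \le \tilde d' = \tilde d \le d < \abs{p-q}$, merely written out with explicit $\modcvee^\kappa$ arguments instead of the shorthand letters.
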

    \begin{proof}
    	For $[x_p^{x'}]$ and $[{x'}_p^x]$, we can check that
    	\begin{align*}
    		\abs{p - x} + \abs{x - x'} < \abs{p - x} + 3\abs{x - x'} \leq \frac{2}{3} l\\
    		\abs{x - x'} + \abs{x' - p} \leq \abs{p - x} + 2\abs{x - x'} < \frac{2}{3} l
    	\end{align*}
    	Thus, the the hinge comparison holds for both $[x_p^{x'}]$ and $[{x'}_p^x]$, i.e. 
    	\begin{align*}
    		& \alpha \geq \tilde{\alpha}\\
    		& \alpha' \geq \tilde{\alpha}' \iff \pi - \alpha' \leq \pi - \tilde{\alpha}'
    	\end{align*}
    	We draw the following picture and annotate the length of each side.
    \begin{equation*}
    	t = \abs{x - p}, \quad w = \abs{x' - p}, \quad s_1 = \abs{x - x'}, \quad s_2 = \abs{x' - q}, \quad s = s_1 + s_2 = \abs{x - q}
    \end{equation*}
    Also, for opposite sides, we denote
    \begin{align*}
    	& d = \modcvee^\kappa\br{\alpha; t, s}, \quad \tilde{d} = \modcvee^\kappa\br{\tilde{\alpha}; t, s}\\
    	& d' = \modcvee^\kappa\br{\pi - \alpha'; w, s_2}, \quad \tilde{d}' = \modcvee^\kappa\br{\pi - \tilde{\alpha}'; w, s_2}
    \end{align*}
    \begin{figure}[htbp]
    \centering
        \includegraphics[width=0.9\textwidth]{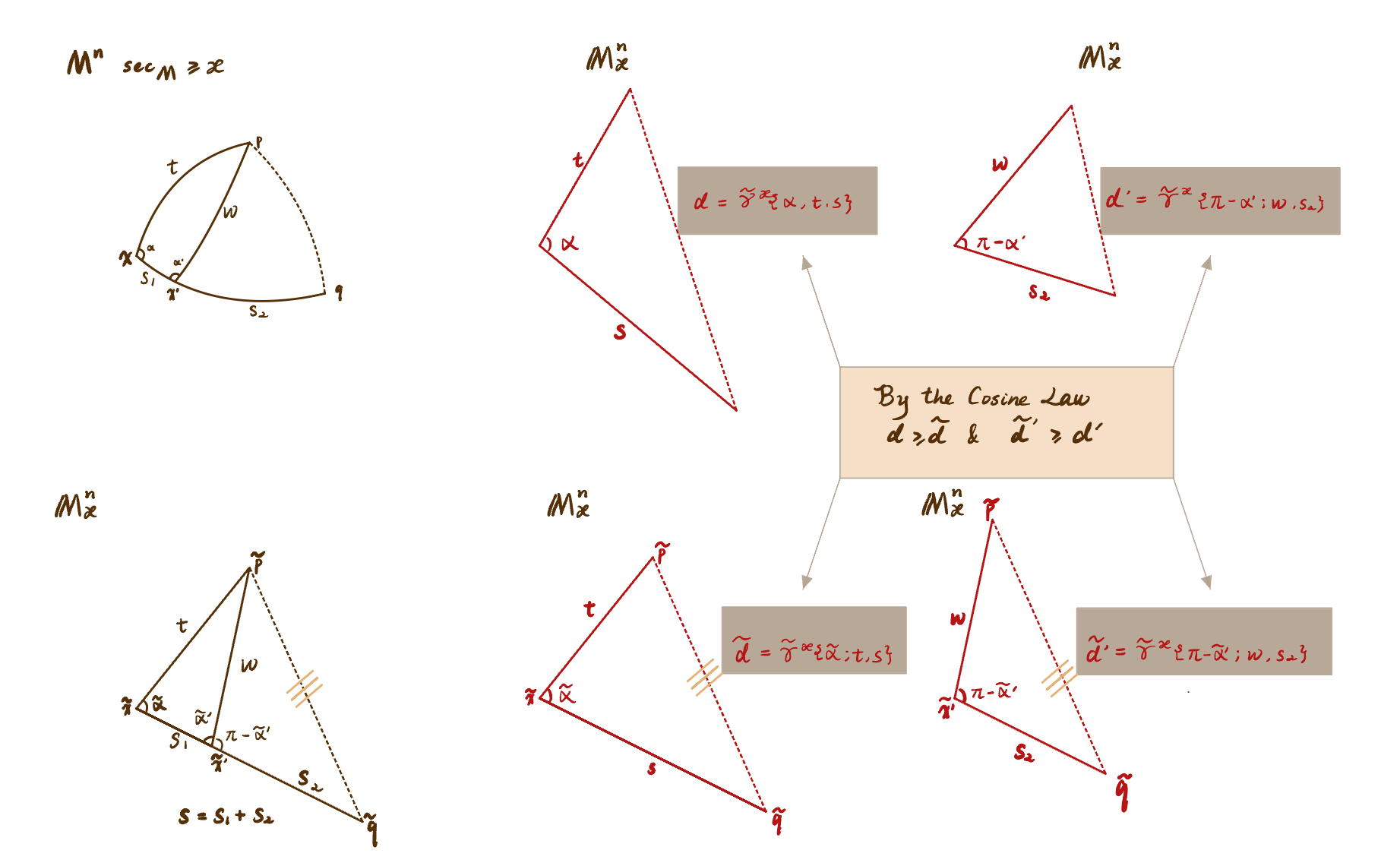}
    \end{figure}
    It is easy to observe that  both $\tilde{d}$ and $\tilde{d}'$ indicate the length of the side $[\tilde{x}\tilde{q}]$, thus $\tilde{d} = \tilde{d}'$. By the cosine law, we have both $\tilde{d} \leq d$ and $d' \leq \tilde{d}'$. Therefore, we have
    \begin{equation*}
    	d' \leq \tilde{d}' = \tilde{d} \leq d
    \end{equation*}
    And by the assumption \ref{eq: key lemma contra assumption}, we can conclude that $d' \leq d < \abs{p - q}$. That means we can construct a ``smaller hinge'' $[{x'}_p^q]$ where the hinge comparison fails.
    \end{proof}
    
\subsection{Step 2: Construction of a Sequence of Smaller Hinges}

In this step, we repeat our construction. We denote $x$ by $x_0$, $x'$ by $x_1$, $d$ by $d_0$, $d'$ by $d_1$. Then we can construct a smaller hinge $[{x_2}_p^q]$ from $[{x_1}_p^q]$. 
\begin{itemize}
    \item If $\abs{x_1 - p} \leq \abs{x_1 - q}$, then again, we take $x_2 \in [x_1 q]$ such that 
    \begin{equation*}
        \abs{p - x_1} + 3\abs{x_1-x_2} = \frac{2}{3}l;
    \end{equation*}
    \item If $\abs{x_1 - p} \geq \abs{x_1 - q}$, then we take $x_2 \in [x_1 p]$ such that 
    \begin{equation*}
        \abs{q - x_1} + 3\abs{x_1-x_2} = \frac{2}{3}l. 
    \end{equation*}
\end{itemize}
By continuing this process, we will construct a sequence of hinges $\br{[{x_i}_p^q]}_{i = 0}^\infty$ and a corresponding sequence $\br{d_i=\modcvee^\kappa[{x_i}_p^q]}_{i = 0}^\infty$ such that 
\begin{equation*}
	\abs{p - q} > d_0 > d_1 > d_2 > \dots
\end{equation*}

\subsection{Step 3: How Far This Process Can Go?}

The question now is how far this process can go. We are going to discuss this in cases: 
\begin{itemize}
    \item \textbf{Case 1: }If there exists some $N$ such that the hinge $[{x_N}_p^q]$ satisfies
    \begin{equation*}
    \abs{p - x_N} + \abs{x_N - q} \leq \frac{2}{3}l.
\end{equation*}
    Then by assumption, the hinge comparison holds, i.e.
    \begin{equation}\label{eq: key x_N}
    \mangle[{x_N}_p^q] \geq \modangle^\kappa({x_N}_p^q)
\end{equation}
    However, the contradiction happens because
    \begin{equation*}
    \abs{p - q} > d_0 > d_1 > \cdots > d_N > \cdots
\end{equation*}
But \ref{eq: key x_N} is equivalent to $\abs{p - q} \leq d_N$. 
    
    \item  \textbf{Case 2: } If the \textbf{Case 1} doesn't happen, consider the sequence $r_n:= \abs{p - x_n} + \abs{x_n - q}$. Notice that $r_n \in (\frac{2}{3}l, l]$ is non-increasing and the limits $r_\infty \in [\frac{2}{3}l, l]$. Moreover, 
    \begin{equation*}
        r_n - r_{n + 1} = \abs{x_n - x_{n + 1}} + \abs{x_n - p} - \abs{x_{n + 1} - p} \to 0\quad \text{as $n \to \infty$}
    \end{equation*} 
    It is not hard to check that all the sides of the triangle $\triangle(x_n, x_{n + 1}, p)$ remain bounded away from $0$ for large $n$, i.e. we can find $N$ such that
    \begin{equation}\label{eq: key lemma sides not too small}
        \abs{x_N - x_{N + 1}} \geq \frac{l}{100}, \quad \abs{x_{N + 1} - p} \geq \frac{l}{100}, \quad \abs{x_N - p} \geq \frac{l}{100}
    \end{equation}
    Let's explain why \ref{eq: key lemma sides not too small} is true for some large $n$. For example, if we have $[{x_n}_p^q]$ admits $\abs{x_n - p} < \frac{l}{100}$. Since $r_n \geq \frac{2}{3}l$ and 
    \begin{equation*}
    	\abs{x_n - p} + 3\abs{x_n - x_{n + 1}} = \frac{2}{3}l
    \end{equation*}
    Then $\abs{x_n - x_{n + 1}} > \frac{197l}{900} > \frac{l}{9}$, and by triangular inequality $\abs{p - x_{n + 1}} \geq \abs{x_n - x_{n + 1}} - \abs{x_n - p} \geq \frac{l}{9} - \frac{l}{100} > \frac{l}{100}$. Still by triangular inequality, $\abs{p - x_{n + 1}} \leq \abs{x_n - x_{n + 1}} + \abs{x_n - p} < \frac{297l}{900}$. Then since 
    \begin{equation*}
    	\abs{x_{n + 1} - p} + 3\abs{x_{n + 1} - x_{n + 2}} = \frac{2}{3}l,
    \end{equation*}
    it is easy to compute that $\abs{x_{n + 1} - x_{n + 2}} > \frac{l}{100}$ and $\abs{p - x_{n + 2}} > \frac{l}{100}$. Therefore we have \ref{eq: key lemma sides not too small} holds for $N = n + 1$. 
    \begin{figure}[htbp]
    \centering
        \includegraphics[width=0.6\textwidth]{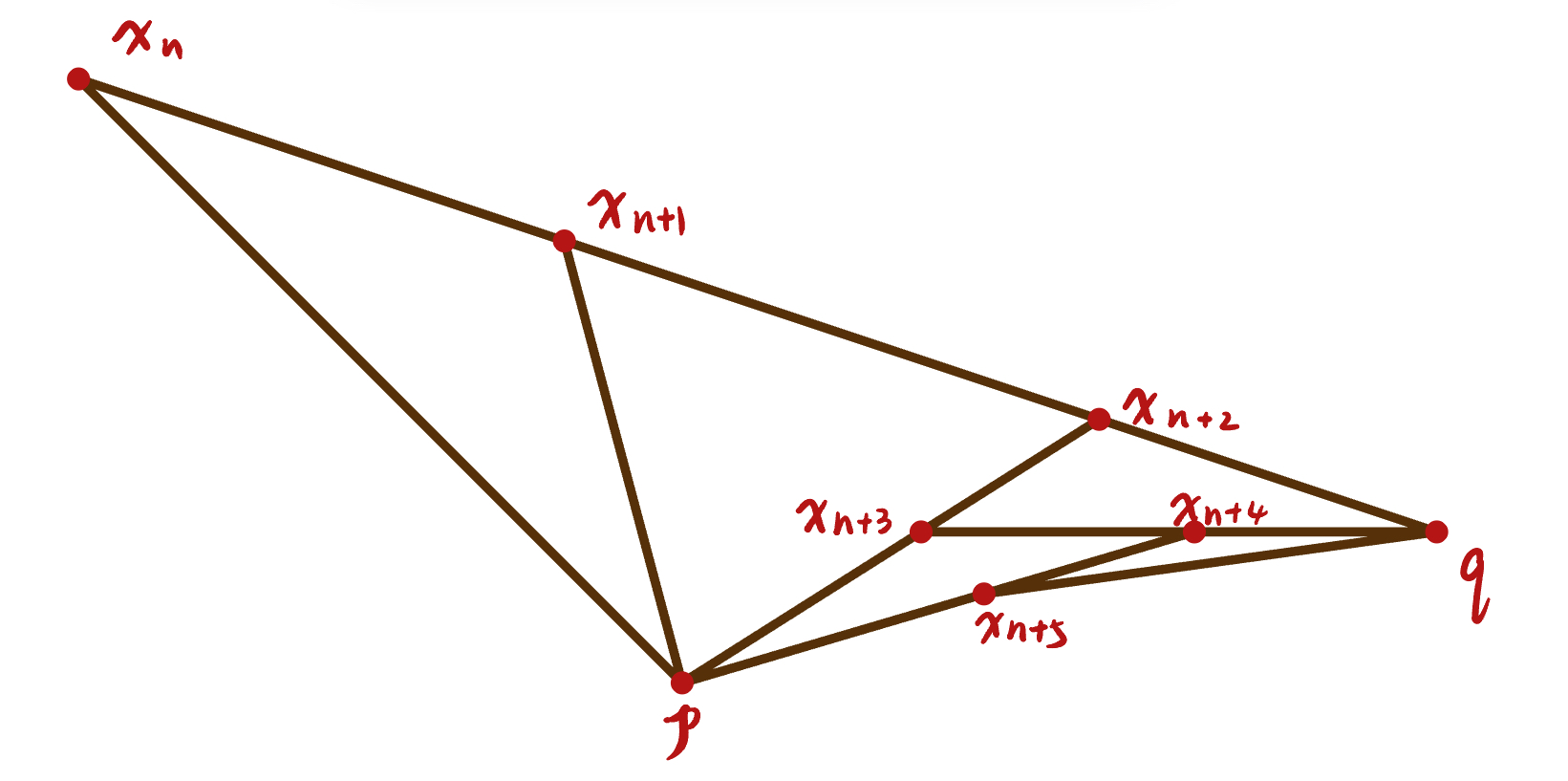}\caption{For example, if $\abs{x_{n} - p} < \frac{l}{100}$, we can pick the point $x_{n + 2}$ such that \ref{eq: key lemma sides not too small} holds.}
    \end{figure}
    We can also check that the sides $[x_np]$ and $[x_nq]$ are approximately the same, i.e. $\abs{\abs{x_n - p} - \abs{x_n - q}} \leq \frac{2l}{9}$. Use that $|p-x_n|\le |q-x_n|$ and $|p-x_n|+|q-x_n|\le l $. This implies that $|p-x_n|\le \frac{l}{2}$.   If $|x_n-x_{n+1}|$ is very small then  $\abs{p - x_n} + 3\abs{x_n-x_{n+1}} = \frac{2}{3}l$ is impossible because  $|p-x_n|\le \frac{l}{2}$.
    
     Consider the triangle $[x_n x_{n + 1}p]$ of all the sides $\geq \frac{l}{100}$ and $\abs{p - x_n} + \abs{x_n - x_{n + 1}} - \abs{p - x_{n + 1}} = r_n - r_{n + 1} \to 0$. In the model triangle $\Tilde{\alpha}_n \to \pi$. 
    Here $\alpha_n=\mangle[{x_n}_p^{x_{n+1}}]=\mangle[x_p^{q}]$ (recall that $x_{n+1}\in [x_n,q]$,  else we switch the roles of $p$ and $q$)
     But we also know that $\alpha_n \geq \Tilde{\alpha}_n=\Tilde\mangle[{x_n}_p^{x_{n+1}}]$ because 
    \begin{equation*}
        \abs{p - x_n} + \abs{x_n - x_{n + 1}} < \abs{p - x_n} + 3\abs{x_n - x_{n + 1}} = \frac{2}{3}l
    \end{equation*}
    Thus $\alpha_n \geq \Tilde{\alpha}_n \to \pi$ as well. Therefore, $d_n - (\abs{p - x_n} - \abs{x_n - q}) \to 0$. That means $d_n = \underbrace{(\abs{p - x_n} - \abs{x_n - q})}_{\text{$\geq \abs{p - q}$ by triangular inequality}} \pm \epsilon_n \to 0$. Therefore, 
    \begin{equation*}
        \lim_{n \to \infty}d_n \geq \abs{p- q}
    \end{equation*}
    But recall that \begin{equation*}
    \abs{p - q} > d_0 \geq d_1 > \cdots \geq d_n \geq \cdots
    \end{equation*}
    So we have the contradiction. 
\end{itemize}
Now we finished the proof of the key lemma. 

\begin{remark}
    The techniques in this lemma can be generalized to Alexandrov spaces because they do not depend on Riemannian geometry, see \cite{AKP22}.
\end{remark}

\section{The Global Toponogov's Comparison}
In this section, we want to generalize the various local Toponogov comparison theorems to the entire manifolds.  We will present an argument in a simplified situation of compact $M$ and $\kappa\le 0$. For the proof in general cases see \cite{AKP22}.

\begin{definition}[Comparison Radius]
	Let $M^n$ be a compact Riemannian manifold with $\sect_M \geq \kappa$. For $p \in M$, we define the \textbf{comparison radius at $p$} as 
	\begin{equation*}
		\CR(p) = \sup{\br{l: \text{Comparison holds for $[x_p^q]$ with $\abs{x - p} + \abs{p - q} < l$}}}
	\end{equation*}
\end{definition}
We know that $\CR(p) > 0$ for any $p \in M$ since the Toponogov comparison theorem holds locally. In fact, if $M^n$ is compact, then 
\begin{equation}\label{eq: inf comp rad}
    \inf_{p \in M}\br{\CR(p)} = r > 0
\end{equation}
This is because for any $\br{p_i}$ such that $\CR(p_i)$ converges to $r$, there exists a $p_\infty \in M$ the subsequential limits of $\br{p_i}$. Since near $p_\infty$, the comparison holds. Then $\CR(p_i) > \eps > 0$ for all large $p_i$ in the subsequence. 
\begin{remark}
	If $M$ is not compact, then \ref{eq: inf comp rad} may not hold. To deal with the case of noncompact $M$ one needs to use the Lemma on Almost Minimum  \cite[8.38]{AKP22}.
\end{remark}
Suppose $\CR(p_i) \to r$ and $p_i \to p_\infty$. Suppose for simplicity that $\CR(p_\infty) = r$, For $p_\infty$, $\CR(p_\infty)$ is the smallest possible. Then by applying the key lemma \ref{lem: The Key Lemma}, the comparison will hold for all hinge $[x_p^q]$ of size $\leq \frac{3}{2}r$ because $\frac{2}{3}(\frac{3}{2}r) = r$. This implies $\CR(p) \geq \frac{3}{2}\CR(p)$ a contradiction. In general, take large $i$ so that $\CR(p_i)$ is very close to $r$, if we take $l = \frac{3}{2}(r -\eps)$ for some fixed small $\eps$, then $\CR(p_i) > r - \frac{3}{2}l$ for all large $i$. 

To finish the proof for $\kappa > 0$ one also needs the Short Hinge Lemma~\cite{AKP22} to deal with hinges longer than $ \varpi^\kappa$.

Also, for $\kappa > 0$ the perimeter of any triangle in the model space does not exceed $2\varpi^\kappa$.
Hence the global Toponogov theorem proves the comparison for triangles of perimeter $< 2\varpi^\kappa$. Is this a serious restriction? We will see that, in Lemma~\ref{lem: perimeter bound}, the answer is No!

\section{The Global Functional Toponogov Comparison Theorem}
\subsection{Myer's Theorem}
Let's start this chapter with the following famous result due to Myer \cite{My35}.
\begin{theorem}[Myer]\label{thm: meyer's theorem}
    Let $(M^n, g)$ be a Riemannian manifold with $\sect_M \geq \kappa > 0$, then $\diam(M) \leq \varpi^\kappa = \frac{\pi}{\sqrt{\kappa}}$
\end{theorem}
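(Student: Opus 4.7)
The plan is to argue by contradiction: assume $\diam(M)>\pi/\sqrt{\kappa}$ and produce a conjugate point strictly interior to a minimizing geodesic, which is impossible. Assuming $M$ is complete (the natural setting for Myers' theorem), Hopf--Rinow supplies a unit-speed minimizing geodesic $\gamma\colon [0,L]\to M$ of length $L>\pi/\sqrt{\kappa}$ joining $p=\gamma(0)$ and $q=\gamma(L)$. I would then pick a nontrivial normal Jacobi field $J$ along $\gamma$ with $J(0)=0$ and $|D_tJ(0)|=1$; this is possible because we may assume $n\ge 2$ (for $n=1$ the hypothesis $\sect_M\ge\kappa>0$ is vacuous). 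The corresponding model Jacobi field $\widetilde J$ in $\modsp{\kappa}$ satisfies $|\widetilde J(t)|=\snk(t)=\sin(t\sqrt{\kappa})/\sqrt{\kappa}$, which vanishes precisely at $t_0=\pi/\sqrt{\kappa}$.

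Next I would apply the Rauch~I comparison theorem (Theorem~\ref{thm: Rauch}) on the interval $[0,t_0]\subset[0,L)$. Its ``no conjugate points on $[0,t_0)$'' hypothesis is available for free from the standard fact that a minimizing geodesic has no interior conjugate points (a fact tacitly used already in the remark following Theorem~\ref{thm: Rauch}). Rauch~I then yields $|J(t)|\le\snk(t)$ on $[0,t_0]$, and in particular $|J(t_0)|\le 0$, forcing $J(t_0)=0$. Since $J$ is not identically zero, $p$ and $\gamma(t_0)$ are conjugate along $\gamma$ with $t_0$ strictly in the interior of $[0,L]$, contradicting the minimality of $\gamma$.

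The step I expect to be most delicate is the appeal to ``minimizing implies no interior conjugate points,'' which the excerpt uses only implicitly. A fully self-contained alternative is to stay inside the Riccati machinery already developed: on $(0,L)$ the shape operator $S(t)$ of the $d_p$-level sets exists with $S(t)\sim(1/t)\id$ as $t\to 0^+$ (see Remark~\ref{exist-S-I}), and for any unit parallel normal field $Y$ along $\gamma$ the scalar $y(t)=\inp{S(t)Y,Y}$ satisfies $y'+y^2+\kappa\le 0$ together with $y(t)\to+\infty$ as $t\to 0^+$. Lemma~\ref{lem: g(t) < ct_k} then gives $y(t)\le\ctk(t)=\sqrt{\kappa}\cot(t\sqrt{\kappa})$ throughout the domain of $y$, but $\ctk(t)\to-\infty$ as $t\to(\pi/\sqrt{\kappa})^-$; hence $y$ itself must blow down to $-\infty$ at some $t\le\pi/\sqrt{\kappa}$, contradicting the smoothness of $S$ on $(0,L)$ and closing the argument without invoking the minimizing/conjugate dictionary as a primitive.
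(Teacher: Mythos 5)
Your argument via Rauch I is correct, but it takes a genuinely different route from the paper's. The paper fixes a pair $p,q$ at maximal distance, picks the midpoint $x$ of $[pq]$ and a small perpendicular variation $c(s)=\exp_x(sv)$, and then uses the Toponogov hinge comparison together with the concavity of $d_{\tilde p}$ on the outer hemisphere of $\modsp{1}$ (i.e.\ $\hess_{d_{\tilde p}}=\cot(d_{\tilde p})\,\pi_{d_{\tilde p}}<0$ for $d_{\tilde p}\in(\pi/2,\pi)$) to show $|p-c(s)|+|c(s)-q|<|p-q|$ for small $s$, contradicting that $[pq]$ is shortest. Your approach instead lives entirely at the Jacobi-field level: you apply Rauch I on $[0,\pi/\sqrt\kappa]\subset[0,L)$ to squeeze the norm of a normal Jacobi field $J$ with $J(0)=0$ below the model Jacobi field $\snk$, forcing $J(\pi/\sqrt\kappa)=0$ and hence an interior conjugate point. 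The paper's route is synthetic and carries over to Alexandrov spaces (indeed it only uses the Toponogov comparison); your route is the classical variational one and gives the slightly stronger intermediate fact that no minimizing geodesic can exceed length $\pi/\sqrt{\kappa}$. Both are standard and correct.

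One inaccuracy in the prose: you describe the Riccati variant as closing the argument ``without invoking the minimizing/conjugate dictionary as a primitive.'' It does not. To have $S(t)$ defined and smooth on $(0,L)$ you appeal to Remark~\ref{exist-S-I}, and that remark's hypothesis is precisely that there are no conjugate points along $\gamma$ — which, for a minimizing geodesic, you get from the very dictionary you claim to be avoiding. The Riccati computation is exactly the mechanism inside the proof of Rauch I in this set of notes, so it does not offer an independent foundation; it merely restates the first argument at the shape-operator level. Both versions of your proof therefore stand on the same (perfectly acceptable, and acknowledged in the remark following Theorem~\ref{thm: Rauch}) fact that a minimizing geodesic has no interior conjugate points. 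You are also right to flag that completeness is needed even though the theorem statement omits it; without completeness one cannot even invoke Hopf--Rinow, and the paper's own proof tacitly uses the global Toponogov theorem, which is proved for complete $M$.
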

\begin{proof}
    Suppose this theorem fails for some $M$ with $\sect_M \geq \kappa > 0$, by rescaling, we can assume $\kappa = 1$, so that $\sect_M \geq 1$ and $\diam > \pi$. Then there exist $p, q \in M$ such that $\abs{p - q} = \pi + \eps$ for some small $\eps$. Let $x \in [pq]$ be the midpoint, i.e. $\abs{x - p} = \abs{q - x} = \frac{\pi}{2} + \frac{\eps}{2}$. Let $v \in T_xM$ be a unit vector perpendicular to the geodesics $[pq]$, Denote $c(s) = \exp_x(sv)$. 
    \begin{claim}
        We claim that 
        \begin{equation*}
            \abs{p - c(s)} + \abs{c(s) - q} < \abs{p - q}
        \end{equation*}
        for small $s$. 
    \end{claim}
    By this claim, $[pq]$ is no longer the shortest, so we have the contradiction. 
    
    To prove this claim, we want to apply the hinge comparison to the $[x_{c(s)}^p]$, so that
    \begin{equation}\label{eq: Meyer claim 1}
    	\abs{p - c(s)} \leq \modcvee^1(\mangle[x_p^{c(s)}]; \abs{x - p}, \abs{x - c(s)}) =: \abs{\Tilde{p} - \Tilde{c}(s)}
    \end{equation}
    To check the perimeter condition holds, we first notice that $\abs{p - x} = \frac{\pi}{2} + \frac{\eps}{2}$ and $\abs{x - c(s)} = s$, by triangle inequality,
    \begin{equation*}
        \abs{p - c(s)} \leq \frac{\pi}{2} + \frac{\eps}{2} + s.
    \end{equation*}
    Thus the perimeter $\peri([xpc(s)]) \leq \frac{\pi}{2} + \frac{\eps}{2} + s + \frac{\pi}{2} + \frac{\eps}{2} + s = \pi + \eps + 2s  < 2\pi$ when $s$ and $\eps$ are small.
    
  	Consider $f(s) = d_{\tilde{p}}(\tilde{c}(s))$, it satisfies
  	\begin{equation*}
  		f'(0) = 0
  	\end{equation*}
  	and
  	\begin{equation*}
  		f''(0) = \hess_{d(\cdot, \Tilde{p})}(\Tilde{v}, \Tilde{v}) = \cot({\underbrace{d(\Tilde{p}, \Tilde{x})}_{\in (\frac{\pi}{2} , \pi)}}) < 0.
  	\end{equation*}
  	Therefore, 
  	\begin{equation}\label{eq: Meyer claim 2}
  		f(s) < f(0) \quad\text{for small $s$}
  	\end{equation}
  	By \ref{eq: Meyer claim 1} and \ref{eq: Meyer claim 2}, we have for $s > 0$ small
  	\begin{equation*}
  		\abs{p - c(s)} \leq \abs{\tilde{p} - \tilde{c}(s)} < \abs{\tilde{p} - \tilde{x}} = \abs{p - x}
  	\end{equation*}
	Similarly, 
    \begin{equation*}
        \abs{q - c(s)} \leq \abs{\tilde{q} - \tilde{c}(s)} < \abs{\Tilde{q} - \Tilde{x}} = \abs{q - x}
    \end{equation*}
    Then, by adding them together, we have 
    \begin{equation*}
        \underbrace{\abs{p - c(s)} + \abs{q - c(s)}}_{\text{$\geq \abs{p - q}$ by triangular inequality}} < \underbrace{\abs{p-x} + \abs{q - x}}_{= \abs{p - q}},
    \end{equation*}
the contradiction.
\end{proof}

\begin{lemma}\label{lem: perimeter bound}
    If $(M^n, g)$ has $\sect_g \geq \kappa > 0$, then for any triangle $[xyz]$ in $M$, we have
    \begin{equation*}
        \peri([xyz]) \leq \frac{2\pi}{\sqrt{\kappa}}
    \end{equation*}
\end{lemma}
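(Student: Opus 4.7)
The plan is to combine Myers' theorem with the global hinge form of Toponogov's comparison and the elementary perimeter bound in the model sphere. Write $a = \abs{y-z}$, $b = \abs{x-z}$, $c = \abs{x-y}$; after rescaling it suffices to treat $\kappa=1$, so $\varpi^\kappa = \pi$.

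First I would invoke Myers' theorem~\ref{thm: meyer's theorem} to conclude $\diam(M) \le \pi$, hence $a,b,c \le \pi$. In particular the two shortest segments $[zx]$ and $[zy]$ have length at most $\varpi^\kappa$ and form a bona fide hinge $[z_x^y]$ with angle $\gamma \in [0,\pi]$. Next I would apply the global hinge comparison to $[z_x^y]$ (for hinges with $a+b \le \varpi^\kappa$ this is given by the key lemma~\ref{lem: The Key Lemma}; for $a+b > \varpi^\kappa$ one must invoke the Short Hinge Lemma of \cite{AKP22}), obtaining
\begin{equation*}
c \;\le\; \modcvee^1(\gamma;a,b).
\end{equation*}
The function $\gamma \mapsto \modcvee^1(\gamma;a,b)$ is monotone non-decreasing on $[0,\pi]$ (immediate from the spherical cosine law $\cos d = \cos a\cos b + \sin a\sin b\cos\gamma$, since $\cos\gamma$ is decreasing), so $c \le \modcvee^1(\pi;a,b)$. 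Setting $\gamma=\pi$ in the cosine law gives $\cos d = \cos(a+b)$, whence
\begin{equation*}
\modcvee^1(\pi;a,b) \;=\;
\begin{cases}
a+b, & a+b \le \pi,\\
2\pi-(a+b), & a+b > \pi.
\end{cases}
\end{equation*}
In either case $a+b+\modcvee^1(\pi;a,b) \le 2\pi$, so $a+b+c \le 2\pi$. Undoing the rescaling produces $\peri([xyz]) \le 2\pi/\sqrt{\kappa} = 2\varpi^\kappa$.

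The main obstacle is the applicability of the hinge comparison when the total hinge length $a+b$ exceeds $\varpi^\kappa$: the key lemma in the notes extends the comparison only from scale $\tfrac{2}{3}l$ to scale $l \le \varpi^\kappa$, and the sketched proof of global Toponogov explicitly is valid only for triangles of perimeter $< 2\varpi^\kappa$, which is precisely what we are trying to establish. This apparent circularity is broken by the Short Hinge Lemma quoted from \cite{AKP22}, whose role is exactly to handle hinges with $a+b > \varpi^\kappa$ at a single vertex. Once that tool is in hand, the remaining ingredients -- monotonicity of $\modcvee^\kappa$ in the hinge angle and the perimeter inequality on $\modsp{1}$ (itself contained in Proposition~\ref{prop: model triangle defined}) -- are routine.
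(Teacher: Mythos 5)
Your proof is correct, but it takes a genuinely different route from the one in the notes, and the difference is worth spelling out. The notes argue by contradiction: assuming $\peri([xyz]) = 2\pi/\sqrt{\kappa}$ for some $\kappa \in (0,1)$ (after normalizing $\sect_M\ge 1$), they build the comparison triangle in $\modsp{\kappa-\eps}$ rather than $\modsp{1}$. Since $2\pi/\sqrt{\kappa} < 2\varpi^{\kappa-\eps}$, the comparison triangle exists for free, and the point-on-a-side comparison in $\modsp{\kappa-\eps}$ then produces a point at distance $> \pi$ from $x$, contradicting Myers. The crucial payoff of this $\kappa \rightsquigarrow \kappa-\eps$ trick is that it sidesteps exactly the circularity you identify: the perimeter of the offending triangle is automatically subcritical for the weakened model $\modsp{\kappa-\eps}$, so no Short Hinge Lemma is needed and the argument stays within the tools the notes actually prove.

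Your argument instead applies the global hinge comparison directly at $\kappa=1$ to the hinge $[z_x^y]$, uses monotonicity of $\gamma\mapsto\modcvee^1(\gamma;a,b)$ to push the hinge angle to $\pi$, and reads off $a+b+c\le 2\pi$ from the two-case formula for $\modcvee^1(\pi;a,b)$. This is cleaner and constructive rather than by contradiction, but it does genuinely lean on the Short Hinge Lemma of~\cite{AKP22} when $a+b > \pi$ — a tool the notes cite as an input to the globalization theorem but never develop. You are right that this is not circular (the hinge comparison needs only a model hinge, not a model triangle, so the $\peri \le 2\varpi^\kappa$ restriction that this lemma is designed to remove never enters), and you are honest in flagging the dependency. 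So: both proofs are sound, both use Myers essentially; the notes trade a perturbation argument for self-containedness, while you trade reliance on an external lemma for a direct, shorter derivation.

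One small imprecision: the Key Lemma in the notes gives the comparison for hinges with $|xp|+|xq| < \varpi^\kappa$, strictly; the boundary case $a+b=\varpi^\kappa$ must either be folded into the Short Hinge Lemma regime or obtained by a limiting argument. It does not affect the conclusion.
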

\begin{proof}
By rescaling assume $\sect_M \geq 1$. Suppose this lemma fails for some triangle $[xyz]$ in $M$, i.e 
    \begin{equation*}
        \peri([xyz]) > 2\pi
    \end{equation*}
         Then, there exists $\kappa \in (0, 1)$ such that 
    \begin{equation*}
        \peri([xyz]) = \frac{2\pi}{\sqrt{\kappa}}    \end{equation*}
    So that for $\eps > 0$ sufficient small, we can find 
    \begin{equation*}
    	 \peri([xyz]) < \frac{2\pi}{\sqrt{\kappa - \eps}}
    \end{equation*} 
    Now, we can do comparison for $\sect_M \geq \kappa - \eps$ with the comparison triangle $[\bar{x}\bar{y}\bar{z}] = \modtriangle^{\kappa - \eps}\br{\abs{x - y}, \abs{y - z}, \abs{x - z}}$. It also holds that 
    \begin{equation*}
        \abs{\overline{x} - \overline{z}} + \abs{\overline{y} - \overline{z}} + \abs{\overline{x} - \overline{z}} = \frac{2\pi}{\sqrt{\kappa}} < \frac{2\pi}{\sqrt{\kappa - \eps}},
    \end{equation*}
    which means $\overline{x}, \overline{y}, \overline{z}$ almost lie on a great circle in $\modsp{\kappa - \eps}$. Moreover, this means we can find $\bar{q} \in [\bar{y}\bar{z}]$ almost opposite to $\bar{x}$. By point-on-a-side comparison, $d \geq \bar{d}$, so
    \begin{equation*}
    	d(x, q) \geq \bar{d}(\bar{x}, \bar{q}) \geq \frac{\pi}{\sqrt{\kappa - \eps}} \pm \delta > \pi
    \end{equation*}
    since $\kappa < 1$ and $\eps, \delta$ are very small. Now the contradiction rises since $\diam(M) \leq \pi$ due to Meyer's theorem. 
\end{proof}
\subsection{Functional Global Toponogov Comparison Theorem}
Recall that we proved if $(M^n, g)$ with $\sect_M \geq \kappa$, let $p \in M$, denote $f = \mdk\circ d_p$.  Near $p$ it satisfies
 \begin{equation*}
     \text{$f'' + \kappa f \leq 1$ along any geodesic} \iff \hess_f + \kappa f\id \leq \id. 
 \end{equation*}
We want to say that this holds globally so that we can globalize Jensen's inequality \ref{thm: Jensen's inequality} and prove the global version of the point-on-a-side comparison. However, $d(\cdot, p)$ need not be smooth outside of the ball of the $\inj(M)$. Therefore, we need to interpret the above inequality appropriately in the region where $f \in L^1([x, y])$ for $x < y \in \R$

Let $f \in L^1([x, y])$ be a function that we want to understand. Remember the derivative in general sense is also denoted by $f'$ such that for all $\phi \in C_c^\infty ([x, y])$, 
\begin{equation*}
	\int_x^y f(t)\phi'(t)dt = -\int_x^y f'(t)\phi(t)dt
\end{equation*}
Let $\lambda, \kappa \in \R$. We want to understand the inequality
\begin{equation*}
    f'' + \kappa f \leq \lambda
\end{equation*} 
The simplest case is when $\kappa = 0$ then $f'' \leq \lambda$ in the general sense.

\begin{definition}[$\lambda$-Concave(Convex)]
$f: [x, y] \to \R$ is \textbf{$\lambda$-concave(convex)} or  $f'' \leq \lambda$ ($f'' \geq \lambda$) in the general if $g(t) = f(t) - \frac{\lambda t^2}{2}$ is concave (convex). In other words, $g$ satisfies
 \[
   g((1 - t)x + ty) \geq(\leq) (1 - t)g(x) + t g(y)
 \]
 for any $ t \in [0, 1]$.
\end{definition}
\begin{corollary}[Jensen's inequality]
	Let $f: [x, y] \to \R$ be $\lambda$-concave. Then for any $\bar{f}: [x, y] \in \R$ such that $\bar{f}'' = \lambda$ in the general sense, $\bar{f}(x) = f(x)$ and $\bar{f}(y) = f(y)$, it holds that
	\begin{equation*}
		\bar{f}\leq f 
	\end{equation*}
	on $[x, y]$. 
\end{corollary}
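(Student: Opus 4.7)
The plan is to reduce to the $\lambda=0$ case (ordinary concavity) by the same shift already built into the definition of $\lambda$-concavity. Set
\[
g(t) = f(t) - \tfrac{\lambda t^2}{2}, \qquad \bar g(t) = \bar f(t) - \tfrac{\lambda t^2}{2}.
\]
By the definition of $\lambda$-concave, $g$ is concave on $[x,y]$. The assumption $\bar f'' = \lambda$ in the distributional sense translates to $\bar g'' = 0$ in the distributional sense, so $\bar g$ is an affine function of $t$.

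Next I would observe that $g$ and $\bar g$ agree at the endpoints, since $f(x)=\bar f(x)$ and $f(y)=\bar f(y)$ and we subtract the same quadratic term. Fix $t\in[0,1]$ and write $z_t=(1-t)x+ty$. By concavity of $g$,
\[
g(z_t) \;\ge\; (1-t)g(x) + t\,g(y),
\]
and by affineness of $\bar g$,
\[
\bar g(z_t) \;=\; (1-t)\bar g(x) + t\,\bar g(y) \;=\; (1-t)g(x) + t\,g(y).
\]
Combining these gives $g(z_t)\ge \bar g(z_t)$ on $[x,y]$, hence $f\ge \bar f$ on $[x,y]$ after adding $\tfrac{\lambda z_t^2}{2}$ back to both sides.

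The only real subtlety is the justification that $\bar f''=\lambda$ in the distributional sense forces $\bar g$ to be affine (not merely locally affine or only affine in some weak sense). This is a standard one-dimensional fact: if $\bar g'' = 0$ as a distribution on an interval, then $\bar g$ is a polynomial of degree at most one, by integrating the distributional identity twice against test functions and using that distributions with vanishing derivative are constants. Once this is in hand, the rest is a one-line comparison at interior points using the concavity inequality against an affine function with the same boundary values, which is the standard geometric content of Jensen's inequality.
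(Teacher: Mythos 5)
Your proof is correct and is exactly the argument the paper's definition of $\lambda$-concavity is designed to make immediate: subtract $\tfrac{\lambda t^2}{2}$ from both $f$ and $\bar f$, reducing to comparing a concave function with an affine function sharing the same endpoint values. The paper treats this corollary as following directly from the definition, and your write-up (including the remark that $\bar g''=0$ distributionally on an interval forces $\bar g$ to be affine) fills in precisely the intended reasoning.
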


\begin{definition}[Semi-concave(convex)]\label{def: semi-concave}
    $f$ is called \textbf{semi-concave(convex)} if $\forall t \in [x, y]$, $\exists \eps > 0$, $\lambda > 0$ such that $f|_{[t - \eps, t + \eps]\cap I}$ is $\lambda$-concave(convex). 
\end{definition}
\begin{lemma}\label{lem: semi-concave implies local lipschitz}
	If $f: (a, b) \to \R$ is $\lambda$-concave(convex), then $f$ is locally Lipschitz on $(a, b)$, i.e. for any $t_0 \in (a, b)$, there is a constant $L > 0$ and an closed interval $[c, d] \subseteq (a, b)$ contains $t_0$ such that 
	\begin{equation*}
		\abs{f(t) - f(t_0)} \leq L\abs{t - t_0}
	\end{equation*}
	for all $t \in [c, d]$. 
\end{lemma}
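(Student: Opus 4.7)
The approach is to reduce to the classical fact that a concave function on an open interval is locally Lipschitz, then absorb the smooth quadratic correction coming from $\lambda$. By Definition~\ref{def: semi-concave}, $f$ being $\lambda$-concave means that $g(t) := f(t) - \frac{\lambda t^2}{2}$ is concave on $(a,b)$; since the polynomial $\frac{\lambda t^2}{2}$ is $C^\infty$ and hence locally Lipschitz on any $[c,d] \subset (a,b)$, it suffices to prove the statement for $g$. The $\lambda$-convex case follows by replacing $f$ with $-f$, which is then $(-\lambda)$-concave.

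The key tool is the three-slopes inequality for a concave function $g$: for every $x < y < z$ in $(a,b)$,
\[
\frac{g(y) - g(x)}{y - x} \ \ge\ \frac{g(z) - g(x)}{z - x} \ \ge\ \frac{g(z) - g(y)}{z - y},
\]
which follows directly from the defining inequality $g(y) \ge \frac{z-y}{z-x} g(x) + \frac{y-x}{z-x} g(z)$. A standard consequence is that the secant slope $(u,v) \mapsto \frac{g(v) - g(u)}{v - u}$ (with $u < v$) is non-increasing in each argument when the other is held fixed.

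Given $t_0 \in (a,b)$, I would choose $\alpha, \beta, c, d$ with $a < \alpha < c < t_0 < d < \beta < b$. For any $s, t \in [c,d]$ with $s < t$, applying three-slopes to $\alpha < s < t$ together with monotonicity of the slope $\frac{g(\cdot) - g(\alpha)}{\cdot - \alpha}$ in its right endpoint yields
\[
\frac{g(t) - g(s)}{t - s} \ \le\ \frac{g(s) - g(\alpha)}{s - \alpha} \ \le\ \frac{g(c) - g(\alpha)}{c - \alpha} \ =:\ M_+.
\]
Symmetrically, applying three-slopes to $s < t < \beta$ and using monotonicity in the left endpoint gives
\[
\frac{g(t) - g(s)}{t - s} \ \ge\ \frac{g(\beta) - g(t)}{\beta - t} \ \ge\ \frac{g(\beta) - g(d)}{\beta - d} \ =:\ M_-.
\]
Setting $L_0 := \max(|M_+|, |M_-|)$ gives $|g(t) - g(s)| \le L_0 |t - s|$ for all $s, t \in [c,d]$, and adding the Lipschitz constant of $\frac{\lambda t^2}{2}$ on $[c,d]$ yields the desired constant $L$ for $f$.

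There is no substantive obstacle here; this is a classical fact and the argument is routine. The only thing requiring a little care is orienting the secant-monotonicity inequalities correctly so that $\alpha$ and $\beta$, which are fixed independently of $s, t$, produce uniform bounds on the interior subinterval $[c,d]$; the slight asymmetry of having to introduce both an outer pair $\alpha, \beta$ and an inner pair $c, d$ is precisely to avoid any endpoint blow-up of the slopes.
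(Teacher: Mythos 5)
Your proof is correct and takes a genuinely different route from the paper's. Both proofs make the same reduction — subtract $\frac{\lambda}{2}t^2$ and prove the classical fact that a concave (or convex) function on an open interval is locally Lipschitz — but the core estimate is obtained differently. The paper works with the convex case, first invokes boundedness of a convex function on a compact subinterval (a fact it states without proof), then fixes a single auxiliary point $z \in (d, b-\delta]$ and uses the convexity inequality to bound $g(y)-g(x) \le \frac{M-m}{z-d}|y-x|$, with the other direction handled by a somewhat terse ``switching the variables'' step. You instead use the three-slopes (secant monotonicity) inequality with two fixed auxiliary points $\alpha < c$ and $\beta > d$, which directly pins the difference quotient $\frac{g(t)-g(s)}{t-s}$ between the two fixed outer slopes $M_-$ and $M_+$. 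Your version avoids appealing to boundedness as a separate fact and handles the two-sided Lipschitz estimate symmetrically in one pass, which is arguably cleaner; the paper's version is marginally shorter once boundedness is granted. Both are sound and standard.
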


\begin{proof}
	We can prove this for convex functions, then the same applies to concave functions by adding the negative sign. Fix $[c, d]$ and let $x < y$ be arbitrary points in $[c, d]$. Take $\delta > 0$ such that
	\begin{equation*}
		a + \delta < c < d < b - \delta.
	\end{equation*}
	Firstly, we claim that for a function $g$ that is convex on $(a, b)$, it is also locally Lipschitz.  It is not hard to show that that a convex function is bounded on any closed subinterval of $(a,b)$. Hence, there exists $m, M$ such that 
	\begin{equation*}
		f(x) \in [m, M]
	\end{equation*} 
	for all $x \in [a + \delta, b - \delta]$. Take a fixed $z \in (d, b - \delta]$ and define $\kappa = \frac{y - x}{z - x}$. It follows that $0 < \kappa < 1$ and $y = \kappa z + (1 - \kappa)x$, and by the convexity
	\begin{equation*}
		g(y) \leq \kappa g(z) + (1 - \kappa)g(x) = g(x) + \kappa(g(z) - g(x))
	\end{equation*}
	Hence, 
	\begin{align*}
		g(y) - g(x) 
		&\leq \kappa(g(z) - g(x))\\
		&\leq \kappa(M - m)\\
		&= \frac{y - x}{z - x}(M - m)\\
		&< \frac{y - x}{z - d}(M - m)\\
		&< \frac{M - m}{z - d}\abs{y - x}
	\end{align*}
	Switching the variable we get
	\begin{equation*}
		-[g(y) - g(x)] = \frac{M - m}{z - d}\abs{y - x},
	\end{equation*}
	This implies,
	\begin{equation*}
		\abs{g(y) - g(x)} \leq L\abs{y - x}
	\end{equation*}
	where $L = \frac{M - m}{z - d}$. 
	
	And for an $\lambda$-convex function $f$  we have that $f(t)-\frac{\lambda}{2} t^2$ is convex and hence  by above it is locally Lipschitz on $(a,b)$.
	
	Hence $f$ is Locally Lipschitz as well.

\end{proof}

\begin{theorem}\label{thm: lambda-concave equivalent}
    The following are equivalent for $f: (a, b) \to \R$. 
    \begin{enumerate}
        \item $f$ is $\lambda$-concave;
        \item \label{lambda-generalized} $f'' \leq \lambda$ in generalized sense, i.e. $\forall \phi \in C_c^\infty((a, b))$, $\phi \geq 0$ we have
        \begin{equation*}
            \int_a^b (f'' - \lambda)\phi := \int_a^b f\phi'' - \lambda\phi \leq 0
        \end{equation*}
        if $f$ were smooth, then by integration by parts
        \begin{equation*}
            \int_a^b f''\phi = \int_a^b f \phi'',
        \end{equation*}
        In particular, when $\lambda = 0$, 
        \begin{equation*}
            \text{$f$ is concave} \iff \int_a^b f\phi'' \leq 0 \quad\forall \phi \in C_c^\infty((a, b)), \phi \geq 0
        \end{equation*}
        \item \textbf{Barrier Inequality:} $\forall x_0 \in (a, b)$, there exists open interval $I \ni x_0$ and $\overline{f}: I \to \R$ such that $\overline{f}'' = \lambda$, $f(x_0) = \overline{f}(x_0)$ and $f \leq \overline{f}$ on $(a, b)$. If $\lambda = 0$, then $\overline{f}$ is linear. 
    \end{enumerate}
\end{theorem}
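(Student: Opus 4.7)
The plan is to first reduce to the case $\lambda = 0$ via the substitution $g(t) := f(t) - \tfrac{\lambda t^2}{2}$. Since $(\lambda t^2/2)'' = \lambda$, two integrations by parts identify condition (2) for $f$ with the distributional inequality $\int g\,\phi'' \le 0$ for every nonnegative $\phi \in C_c^\infty((a,b))$; condition (1) for $f$ is by definition the concavity of $g$; and a barrier $\bar f$ in (3) with $\bar f'' = \lambda$ has the form $\lambda t^2/2 + at + b$, so $\bar g := \bar f - \lambda t^2/2$ is an affine local upper barrier for $g$. Thus it suffices to prove the three conditions are equivalent in the classical case $\lambda = 0$: characterizing concavity via local supporting lines and via the distributional inequality $g'' \le 0$.

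The equivalence (1) $\Leftrightarrow$ (3) is one-variable convex analysis. If $g$ is concave then at every interior $x_0$ the one-sided derivatives $g'_\pm(x_0)$ exist with $g'_+(x_0) \le g'_-(x_0)$, and any slope $m \in [g'_+(x_0), g'_-(x_0)]$ yields an affine upper barrier $\bar g(t) = g(x_0) + m(t-x_0)$. Conversely, assuming local affine upper barriers everywhere, suppose for contradiction that some chord $L$ from $(x, g(x))$ to $(y, g(y))$ exceeds $g$ at an interior point, and let $t^* \in (x,y)$ minimize the continuous function $h := g - L$ with $h(t^*) < 0$. Then $\bar g_{t^*} - L$ is affine, lies above $h$ near $t^*$, and has a local minimum at $t^*$, hence is constant equal to $h(t^*)$ on a neighborhood $I \ni t^*$; this forces $g \equiv L + h(t^*)$ on $I$. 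An openness-closedness argument on the minimum set of $h$ then shows $h$ is constant on $[x,y]$, contradicting $h(x) = h(y) = 0 \ne h(t^*)$.

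For (1) $\Leftrightarrow$ (2), I would use mollification from both sides. For (1) $\Rightarrow$ (2), convolution of concave $g$ with a nonnegative smooth kernel $\rho_\eps$ preserves concavity, so $g_\eps := g \ast \rho_\eps$ is smooth with $g_\eps'' \le 0$ classically on compact subintervals; two integrations by parts give $\int g_\eps\,\phi'' \le 0$, and $g_\eps \to g$ in $L^1_{\mathrm{loc}}$ (using that concave $g$ is locally Lipschitz by Lemma \ref{lem: semi-concave implies local lipschitz}) yields (2). The main obstacle is the reverse direction (2) $\Rightarrow$ (1): the key observation is that for $x$ at distance greater than $\eps$ from $\partial(a,b)$, the identity $g_\eps''(x) = \langle g, \rho_\eps''(x-\cdot)\rangle$ is precisely the distributional pairing of $g''$ against the nonnegative test function $\rho_\eps(x-\cdot) \in C_c^\infty((a,b))$, so hypothesis (2) yields $g_\eps''(x) \le 0$ pointwise. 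Each $g_\eps$ is therefore smooth and classically concave on shrunken subintervals, hence by Lemma \ref{lem: semi-concave implies local lipschitz} the family is locally uniformly Lipschitz, and a subsequence converges locally uniformly to a concave limit $\tilde g$ agreeing with $g$ almost everywhere. The delicate final step is to identify $\tilde g = g$ pointwise rather than merely a.e.; this is automatic in the geometric applications to $f = \mdk \circ d_p$ along a unit-speed geodesic, where $g$ is continuous, so concavity transfers from $\tilde g$ back to $g$ and (1) follows.
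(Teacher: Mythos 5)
Your proposal is correct, and it is considerably more thorough than the argument the paper actually gives, which only sketches $(1)\Leftrightarrow(2)$ via convolution (essentially just the direction $(1)\Rightarrow(2)$) and says nothing at all about the barrier condition (3). Your reduction to $\lambda=0$, the convolution proof of $(1)\Rightarrow(2)$, and the observation that $g_\eps''(x)=\langle g,\rho_\eps''(x-\cdot)\rangle$ is the pairing against the nonnegative test function $\rho_\eps(x-\cdot)$ — so that $(2)\Rightarrow g_\eps''\le 0$ pointwise — are exactly the mechanism the paper intends but does not spell out. Where you diverge is in adding a self-contained proof of $(1)\Leftrightarrow(3)$: the supporting-line construction for $(1)\Rightarrow(3)$ and, for $(3)\Rightarrow(1)$, the chord/minimum/open-closed argument, which is classical one-variable convex analysis and buys a direct elementary route that bypasses mollification entirely (you could even use it to close the loop $(2)\Rightarrow(3)\Rightarrow(1)$). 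The one caveat you raise — that the mollification argument for $(2)\Rightarrow(1)$ a priori only identifies $g$ with a concave function a.e. — is a genuine subtlety, but it is equally present in the paper's unstated converse; the paper implicitly assumes the regularity (continuity or semiconcavity, hence local Lipschitz by Lemma~\ref{lem: semi-concave implies local lipschitz}) needed to upgrade a.e. equality to pointwise equality, and in the later $(\kappa,\lambda)$ version it explicitly assumes lower semicontinuity. Your handling of this point is honest and appropriate; the only improvement would be to state the continuity hypothesis up front rather than deferring it to the applications.
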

\begin{proof}
    $1. \iff 2.$ comes from smoothing by convolutions. We observe that if a sequence of $\lambda_i$-concave functions $f_i$ point-wise convergent to $f$ and $\lambda_i$ convergence to $\lambda$. Then $f$ is $\lambda$-concave function immediately from the definition. We can use the following: If $f$ is concave, then $f$ can be approximated locally uniformly by a sequence of smooth functions, i.e. $\delta_i$ be a sequence of smooth bump functions converging to delta function $\delta$ so that $f$ is approximated by $f_i:= f * \delta_i$. Since $f_i'' \leq 0$ in the usual sense, $f'' \leq 0$ in the generalized sense. 
    \end{proof}
\begin{remark}
	Note the equivalence  $\text{1.} \iff \text{2.}$ in the theorem \ref{thm: lambda-concave equivalent} implies that $\lambda$-concavity in the sense of Jensen's inequality can be checked locally. 
	That is, to check that $f$ is $\lambda$-concave on $(a,b)$ it is enough to show that for any $x \in (a, b)$ there exists $\eps > 0$ such that Jensen's inequality holds on $(x - \eps, x + \eps)$. Then Jensen's inequality holds globally.  This follows because \eqref{lambda-generalized} can be checked locally using partition of unity. Indeed.
Let  $\phi: (a, b) \to \R$ be smooth with compact support. Then by POU, we can write $\phi = \sum \phi_i$ where $\phi_i\ge 0$. Moreover, we can choose the partition of unity so that  $\supp \phi_i$ is small enough so that \eqref{lambda-generalized}  holds on  $\supp \phi_i$ for every $i$. Then 
    \begin{equation*}
        \int(f'' - \lambda)\phi_i \leq 0 \implies \int(f'' - \lambda)\phi \leq 0
    \end{equation*}
    also. 
\end{remark}

This verifies that  $f''-\lambda\le 0$ holds on $(a, b)$.

It is immediate from definition that if we have a family of $\lambda$-concave functions $f_\alpha(x)$ $\alpha \in A$ such that 
\begin{equation*}
   f_\alpha|_{(a, b)} \ge C \quad\text{for some } C\in \R
\end{equation*}
Then $f(x) = \inf\br{f_\alpha(x): \alpha \in A}$ is also $\lambda$-concave on $(a,b)$. 
\begin{proposition}
    If $f: (a, b) \to \R$ is semi-concave and $\phi: \R \to \R$ is smooth and $\phi' \geq 0$. Then $\phi\circ f$ is also semi-concave.
\end{proposition}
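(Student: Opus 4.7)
The claim is local, so fix $t_0 \in (a,b)$; by semi-concavity of $f$ there exist $\eps > 0$ and $\lambda \in \R$ such that $f$ is $\lambda$-concave on $I = (t_0 - \eps, t_0 + \eps)$. My plan is to pass to the smooth regime by mollification, apply the classical chain rule there, and then take a limit. By Lemma~\ref{lem: semi-concave implies local lipschitz}, after shrinking $\eps$ we may assume $f$ is $L$-Lipschitz on $I$; in particular $f(I)$ lies in a compact interval $K \subset \R$, on which $C_1 := \sup_K |\phi'|$ and $C_2 := \sup_K |\phi''|$ are finite.

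Next, I would convolve $f$ with a symmetric smooth mollifier $\rho_n \geq 0$ supported in $(-1/n, 1/n)$ with $\int \rho_n = 1$, producing a smooth approximant $f_n := f * \rho_n$ on $I_n \subset I$ (a slightly smaller interval). The function $g(t) := f(t) - \lambda t^2/2$ is concave on $I$, and its convolution $g * \rho_n$ is concave as well (averaging preserves concavity). Since $\rho_n$ is even, a direct computation gives $(\lambda t^2/2) * \rho_n = \lambda t^2/2 + c_n$ with $c_n$ a constant, so $f_n - \lambda t^2/2$ is concave, i.e.\ $f_n$ is $\lambda$-concave. Also, since $|f'| \leq L$ almost everywhere (Lipschitz), $|f_n'| \leq L$ on $I_n$.

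The smoothness of $f_n$ now permits the classical chain rule:
\[
(\phi \circ f_n)'' = \phi''(f_n)(f_n')^2 + \phi'(f_n)\, f_n''.
\]
The hypothesis $\phi' \geq 0$ together with $f_n'' \leq \lambda$ yields $\phi'(f_n) f_n'' \leq C_1 \max(\lambda, 0)$, while $|\phi''(f_n)(f_n')^2| \leq C_2 L^2$. Adding these bounds gives $(\phi \circ f_n)'' \leq \mu$ on $I_n$ for some constant $\mu$ depending only on $C_1, C_2, L, \lambda$ and independent of $n$. Hence each $\phi \circ f_n$ is $\mu$-concave on $I_n$.

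Finally, $f_n \to f$ uniformly on compact subsets of $I$ (continuity of $f$ and $\rho_n$ being an approximate identity), and $\phi$ is continuous, so $\phi \circ f_n \to \phi \circ f$ pointwise on a neighborhood of $t_0$. Since the pointwise limit of $\mu$-concave functions is $\mu$-concave (the functions $\phi \circ f_n - \mu t^2/2$ are all concave and concavity passes to pointwise limits), $\phi \circ f$ is $\mu$-concave on a neighborhood of $t_0$. As $t_0$ was arbitrary, $\phi \circ f$ is semi-concave. The main obstacle is that $f''$ does not exist classically, making the naive chain rule computation invalid; the mollification step sidesteps this by letting us do the computation for $f_n$ with uniform bounds, after which the stability of $\mu$-concavity under pointwise limits closes the argument. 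The use of $\phi' \geq 0$ is essential precisely where we need $\phi'(f_n) f_n'' \leq \phi'(f_n) \lambda$.
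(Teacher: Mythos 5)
Your proof is correct and takes the same approach as the paper: the chain-rule identity $(\phi \circ f_n)'' = \phi''(f_n)(f_n')^2 + \phi'(f_n)f_n''$ with the two terms bounded by the Lipschitz bound on $f$ and the sign hypothesis $\phi' \geq 0$, followed by an approximation argument. The paper states the smooth computation and says only that ``for general semiconcave $f$ the result then follows by approximation''; you have filled in that approximation carefully via mollification, checking that convolution preserves $\lambda$-concavity and the Lipschitz bound uniformly in $n$ and that $\mu$-concavity passes to pointwise limits.
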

\begin{proof}
	It is sufficient for us to prove $\phi\circ f$ is locally $\lambda$-concave for some constant.
	By directly taking the derivatives, we have \begin{equation}\label{eq-second-der}
			(\phi\circ f)'' = \phi''(f)(f')^2 + \phi'(f)f''
	\end{equation} 
	By the smoothness of $\phi$, we know that there is a local bound for  $\phi, \phi'$ and $\phi''$. By Lemma~\ref{lem: semi-concave implies local lipschitz}, we know that $f$ is locally bounded and locally Lipschitz. Thus, first term $\phi''(f)(f')^2$ is locally bounded.  On the other hand, we know that $f''$ also has a local bound above since $f$ is semi-concave. Since $\phi'\ge 0$ and $\phi'$ is locally bounded this gives that $\phi'(f)f''$ is locally bounded above. This argument works if $f$ is smooth. For general semiconcave $f$ the result then follows by approximation.
\end{proof}
\begin{remark}
	The above proposition also holds if we replace the assumption with $\phi$ being semi-concave and $\phi' \geq 0$. 
\end{remark}
\begin{definition}
    Now if $\kappa \neq 0$, \textbf{$f: (a, b) \to \R$ is $(\kappa, \lambda)$-concave} (satisfies $f'' + \kappa f \leq \lambda$) if Jensen's inequality holds, i.e. $\forall x, y \in (a, b)$ such that $\abs{x - y} < \frac{2\pi}{\sqrt{\kappa}}$ if $\kappa > 0$ take $\overline{f}$ satisfies 
    \begin{equation*}
        \overline{f}'' + \kappa \overline{f} = \lambda
    \end{equation*}
    $\overline{f}(x) = f(x)$ and $\overline{f}(y) = f(y)$. Then $f \geq \overline{f}$ on $[xy]$. (We are not assuming that $f$ is smooth) Indeed, we assume that $f$ is semi-continuous.
\end{definition}
Recall that we proved that if $f$ is smooth, then $f'' + \kappa f \le  \lambda \iff \text{Jensen's inequality holds for $f$}$.  This justifies the above definition.
\begin{theorem}
    Let $f: (a, b) \to \R$ be LSC(lower semi-continuous). Then the following are equivalent.
    \begin{itemize}
        \item $f'' + \kappa f \leq \lambda$ in the above sense of Jensen's inequality.
        \item  $f'' + \kappa f \leq \lambda$ in generalized sense, i.e. $\forall \phi \in C_c^\infty((a, b))$, $\phi > 0$, 
        \begin{equation*}
            \int_a^b (f'' + \kappa f - \lambda)\cdot \phi := \int f\cdot \phi' + \kappa f \phi - \lambda \phi \leq 0. 
        \end{equation*}
        \item \textbf{Barrier Inequality: } $\forall x_0 \in (a, b)$, then $\exists \overline{f}: I \to \R$ where $I \ni x_0$ is an open interval and $\length(I) < \frac{\pi}{\sqrt{\kappa}}$ if $\kappa > 0$ and $\overline{f}'' + \kappa \overline{f} = \lambda$, $\overline{f}(x_0) = f(x_0)$, $f \leq \overline{f}$ on $I$. And $\overline{f}$ solves exact equation $\overline{f}'' + \kappa \overline{f} = \lambda$. 
    \end{itemize}
\end{theorem}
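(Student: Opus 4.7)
The plan is to follow the strategy of Theorem~\ref{thm: lambda-concave equivalent} (the $\kappa = 0$ analogue), proving the equivalences by a combination of mollification and a maximum-principle argument. As a preliminary reduction, subtract a fixed smooth solution $h$ of $h'' + \kappa h = \lambda$ so that $\tilde{f} = f - h$ satisfies each of the three conditions with $\lambda$ replaced by $0$; when $\kappa > 0$ this is carried out on each subinterval shorter than the period of the ODE, which is allowed because all three statements are local. Hence it suffices to treat the homogeneous case $\lambda = 0$.

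For $(1) \iff (2)$, convolve $f$ with a smooth nonnegative bump $\rho_\eps$ of mass one. Both Jensen's inequality and the distributional inequality are linear and translation-invariant in $f$, so the smoothed functions $f_\eps = f * \rho_\eps$ satisfy Jensen's whenever $f$ does (each $f_\eps$-inequality is an average of translated $f$-inequalities), and conversely Jensen's for every $f_\eps$ passes to $f$ via lower semicontinuity. Being smooth, Lemma~\ref{thm: Jensen's inequality} converts Jensen's for $f_\eps$ into the pointwise inequality $f_\eps'' + \kappa f_\eps \le 0$. Pairing this pointwise inequality with a test function $\phi \ge 0$ and letting $\eps \to 0$ delivers the distributional inequality for $f$; the converse direction runs the smoothing in reverse, noting that $f_\eps$ inherits $f_\eps'' + \kappa f_\eps \le 0$ from the distributional hypothesis on $f$ by a standard duality computation.

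For $(3) \Rightarrow (1)$, suppose Jensen's fails on $[x,y]$, so that the difference $\bar{f}_{x,y} - f$ attains a strictly positive maximum at some interior point $x_\ast \in (x,y)$ (attained since $-f$ is upper semicontinuous on the compact interval). Apply the barrier at $x_\ast$ to produce a solution $g$ of the ODE with $g(x_\ast) = f(x_\ast)$ and $g \ge f$ on a neighborhood $I$ of $x_\ast$. Then $w = \bar{f}_{x,y} - g$ solves $w'' + \kappa w = 0$ on $I$ with $w(x_\ast) > 0$ and $x_\ast$ an interior local maximum. For $\kappa \le 0$ this is an immediate contradiction, since the second derivative test forces $w''(x_\ast) \le 0$ while $w''(x_\ast) = -\kappa w(x_\ast) \ge 0$ with strict inequality when $\kappa < 0$, and when $\kappa = 0$ the linear function $w$ cannot have an interior strict max. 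For $\kappa > 0$ one invokes the $\sin$-trick from the proof of Lemma~\ref{thm: Jensen's inequality} (shrinking $I$ if necessary so it fits strictly inside a half-period of the ODE) to contradict positivity of $w$ at its interior max. Conversely, for $(1) \Rightarrow (3)$, Jensen's combined with Lemma~\ref{lem: semi-concave implies local lipschitz}-style arguments shows $f$ is locally Lipschitz; pick any slope $s$ lying between the one-sided derivatives of $f$ at $x_0$ (these exist for semiconcave functions in the appropriate transform), and let $\bar{f}$ be the unique solution of $\bar{f}'' + \kappa \bar{f} = 0$ with $\bar{f}(x_0) = f(x_0)$ and $\bar{f}'(x_0) = s$. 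Applying Jensen's on the chord intervals $[y, x_0]$ and $[x_0, z]$ for $y, z$ near $x_0$ and letting $y \to x_0^-$, $z \to x_0^+$ pins $\bar{f}$ above $f$ on a small interval around $x_0$.

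The main obstacle is the $\kappa > 0$ step of $(3) \Rightarrow (1)$: when $\kappa > 0$, nontrivial solutions of $w'' + \kappa w = 0$ of the form $B\cos(\sqrt{\kappa}(x - x_\ast))$ genuinely have positive interior maxima, so the naive second-derivative test fails and one must deploy the $\sin$-trick (dividing by a carefully chosen positive solution $v(t) = \sin(t - \eps)$) exactly as in the proof of Lemma~\ref{thm: Jensen's inequality}. A subsidiary technical issue is that $f$ is only LSC rather than continuous, so attainment of maxima of $\bar{f}_{x,y} - f$ and the convergence of mollifications must be handled via upper-semicontinuous envelopes, but both are standard.
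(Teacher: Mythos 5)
The paper does not in fact supply a proof of this theorem; the only argument it gives is the brief mollification sketch attached to the $\kappa = 0$ analogue (Theorem~\ref{thm: lambda-concave equivalent}), which you are essentially reproducing and extending. So your work on the barrier equivalence is genuinely new, but it contains a gap.

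The main problem is in the direction $(3) \Rightarrow (1)$. The auxiliary function $w = \bar f_{x,y} - g$ satisfies the \emph{equality} $w'' + \kappa w = 0$, not a strict inequality, and this defeats both the second-derivative test and the $\sin$-trick as you invoke them. For $\kappa = 0$ you write that ``the linear function $w$ cannot have an interior strict max,'' but nothing guarantees the maximum is strict: a linear function certainly has a (non-strict) interior maximum by being constant, which is exactly what happens here when $\bar f_{x,y} - f$ is flat near $x_\ast$. For $\kappa > 0$ the situation is worse: in the $\sin$-trick computation one finds
\begin{equation*}
  \Bigl(\tfrac{w}{v}\Bigr)''(x_\ast) = \frac{w''v - wv''}{v^2}(x_\ast) = \frac{(-\kappa w)v - w(-\kappa v)}{v^2}(x_\ast) = 0,
\end{equation*}
since $w$ and $v$ both solve the homogeneous equation exactly, so the critical point of $w/v$ at $x_\ast$ is degenerate and no contradiction follows from the second-derivative test. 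The correct continuation in the degenerate case is to observe that a solution of $w'' + \kappa w = 0$ with a non-strict interior local maximum relative to the comparison family must be a constant multiple of the chosen positive solution $v$ (for $\kappa = 0$, constant), hence that $\bar f_{x,y} - f \equiv M\,v$ on a neighbourhood of $x_\ast$, then propagate this identity to all of $(x,y)$ by an open/closed argument, and finally contradict $M > 0$ by letting $t \to x^+$ (resp.\ $t \to y^-$) and using the endpoint condition together with the lower semicontinuity of $f$. As written your argument stops at the second-derivative test and therefore does not close the case $\kappa \ge 0$.

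A secondary issue is your claim in $(2) \Rightarrow (1)$ that Jensen's for every $f_\eps$ ``passes to $f$ via lower semicontinuity.'' Mollifications of an LSC function do not converge pointwise to it at downward jumps — one only has $\liminf_\eps f_\eps(x) \ge f(x)$, which is the \emph{wrong} direction for passing a lower bound on $f_\eps$ to a lower bound on $f$. In fact an LSC function that is modified downward on a null set still satisfies (2) while failing (1), so the passage genuinely needs an additional identification of $f$ with its semi-concave representative, not just a limiting argument. The student should either establish continuity of $f$ first (e.g.\ from (2) plus the LSC hypothesis, explaining why this forces $f$ to coincide with the continuous representative) or restrict attention to the locally Lipschitz functions that actually arise in the paper's applications.
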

\begin{remark}
    We have some comments. If $\lambda - \kappa f \leq c$ on $(a, b)$ and
\begin{align*}
    f'' + \kappa f \leq \lambda \implies &  f'' \leq \lambda - \kappa f \leq c\\
    \implies & f'' \leq c\\
    \implies & \text{$f$ is $c$-concave functions. $(\lambda, \kappa)$-concave functions. }
\end{align*} 
$f$ is semi-concave in particular locally Lipschitz on $(a, b)$. 
\begin{itemize}
    \item As before if $f_\alpha(x)$, $\alpha \in A$ are $(\lambda, \kappa)$ concave functions on $(a, b)$, 
    \begin{equation*}
       (f_\alpha(x)|_{(a, b)}) \geq c\quad \text{ for some } c\in \R
    \end{equation*}
    then $\inf\br{f_\alpha(x): \alpha \in A}$ is again $(\lambda, \kappa)$ concave follows from Jensen's inequality.
    \item  If $f_i \to f$ pointwise, and $f_i$ is $(\lambda_i, \kappa_i)$ concave and $\kappa_i \to \kappa$ and $\lambda_i \to \lambda$. Then $f$ is $(\lambda, \kappa)$-concave.
    \item Let $f:(a, b) \to \R$, then $(\lambda, \kappa)$-concavity can be checked locally because $f'' + \kappa f \leq \lambda$n in generalized sense if local. 
\end{itemize}
\end{remark}
\begin{definition}
    Let $M$ be a complete Riemannian manifold, then \textbf{$f: M \to \R$ is $\lambda$-concave} if for any geodesics $\gamma(t)$, $f(\gamma(t))$ is $\lambda$-concave. Moreover, \textbf{$f$ is $(\lambda, \kappa)$-concave} if for any unit speed geodesic $\gamma(t)$, $f(\gamma(t))$ is $(\lambda, \kappa)$ concave. 
\end{definition}
\begin{theorem}\label{thm: l-k theorem}
    Let $(M^n, g)$ be complete and $\sect_M \geq \kappa$, $p \in M$, $f = \md_\kappa(d(\cdot, p))$. Then $f$ is $(\lambda, \kappa)$-concave if $f'' + \kappa f \leq \lambda$ on $M$. ($f$ need not be globally smooth.)
\end{theorem}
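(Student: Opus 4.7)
The plan is to verify the barrier-inequality criterion for $(1,\kappa)$-concavity from the preceding theorem along every geodesic of $M$. By the remark that $(1,\kappa)$-concavity is a local property, it suffices to construct, for each $q \in M$ and each unit-speed geodesic $\sigma$ with $\sigma(t_0) = q$, a smooth upper barrier $\bar f$ on a neighborhood of $t_0$ with $\bar f(t_0) = f(\sigma(t_0))$, $\bar f \geq f \circ \sigma$ locally, and $\bar f'' + \kappa \bar f = 1$. Outside $\cut(p)$ the Hessian Comparison Theorem~\ref{cor: Hess ineq} does this for us (in fact gives the differential inequality pointwise), so the real work is at points of the cut locus, where $f$ is only Lipschitz and one must manufacture the barrier by an outside argument.

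The barrier will be produced by transporting a minimizing hinge in $M$ into the model space $\modsp{\kappa}$. By completeness of $M$, fix a minimizing unit-speed geodesic $\gamma\colon [0,L]\to M$ from $p$ to $q$, with $L = d(p,q)$, and let $\alpha$ denote the angle at $q$ between $-\dot\gamma(L)$ and $\dot\sigma(t_0)$. In $\modsp{\kappa}$, choose points $\bar p,\bar q$ with $|\bar p-\bar q| = L$ together with a unit-speed geodesic $\bar\sigma$ satisfying $\bar\sigma(t_0) = \bar q$ and making angle $\alpha$ at $\bar q$ with $[\bar q\,\bar p]$. Define
\[
\bar f(t) := \mdk\bigl(d(\bar\sigma(t),\bar p)\bigr).
\]
Corollary~\ref{eq: modcvee eq} shows that $\bar f'' + \kappa \bar f = 1$ wherever $d(\bar\sigma(\cdot),\bar p)<\varpi^\kappa$, and obviously $\bar f(t_0) = \mdk(L) = f(q)$. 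The dominance $\bar f \geq f \circ \sigma$ near $t_0$ is then supplied by the global Toponogov hinge comparison applied to the hinge $[q_p^{\sigma(t)}]$ in $M$:
\[
d(\sigma(t),p) \leq \modcvee^\kappa(\alpha_t;\, L,\, |t-t_0|) = d(\bar\sigma(t),\bar p),
\]
with $\alpha_t = \alpha$ for $t > t_0$ and $\alpha_t = \pi-\alpha$ for $t < t_0$, both angles arising naturally as the angle of $\bar\sigma$ with $[\bar q\,\bar p]$ on the two sides of $\bar q$. Since $\mdk$ is monotone non-decreasing on $[0,\varpi^\kappa]$ and, by Myer's Theorem~\ref{thm: meyer's theorem}, distances in $M$ lie in this range when $\kappa > 0$, the desired inequality $f(\sigma(t)) \leq \bar f(t)$ follows, completing the barrier.

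The main obstacle lies in the perimeter hypothesis of the global Toponogov theorem when $\kappa > 0$, which requires $L + |t-t_0| + d(\sigma(t),p) < 2\varpi^\kappa$. For $t$ close to $t_0$ this is automatic provided $L < \varpi^\kappa$, but it is borderline when $q$ realizes $d(p,\cdot) = \varpi^\kappa$. This extremal case can be handled by approximating $q$ by nearby points $q_n$ with $d(p,q_n) < \varpi^\kappa$, applying the construction at each $q_n$, and passing to the limit using the stability of $(1,\kappa)$-concavity under pointwise convergence noted after the definition; alternatively, the Short Hinge Lemma from \cite{AKP22} handles hinges of full perimeter directly. A minor secondary care point is the non-uniqueness of the minimizer $\gamma$ when $q\in\cut(p)$, but since the construction succeeds for any choice, fixing a single minimizer is enough.
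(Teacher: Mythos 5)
Your proof is correct, and it takes a genuinely different route from the paper's. The paper verifies Jensen's inequality directly: fixing a geodesic $\gamma$ and endpoints $t_0<t_1$, it applies the global point-on-a-side Toponogov comparison to get $d_p(\gamma(t))\ge d_{\bar p}(\bar\gamma(t))$ on $[t_0,t_1]$, then composes with the monotone function $\mdk$ to obtain $f\circ\gamma\ge\bar f$, which is precisely Jensen's inequality for $(1,\kappa)$-concavity. You instead verify the barrier inequality (the third equivalent characterization in the preceding theorem) at each individual point, producing the barrier from a model-space hinge via the global hinge Toponogov comparison, with the Hessian Comparison Theorem dispensing with points away from $\cut(p)$. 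Both routes rest on the same global Toponogov machinery — indeed the hinge, angle, point-on-a-side, and monotonicity comparisons are all equivalent — but your decomposition into pointwise barriers is a different, and slightly more verbose, way of packaging the argument. The paper's version has the advantage of brevity and of treating the cut locus uniformly (the point-on-a-side comparison applies directly to the segment, so no case split is needed); yours has the advantage of making the reduction to one-sided hinge geometry explicit, which is closer to how one might argue on a general Alexandrov space where the cut locus is not available as an organizing device.

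Two minor remarks on your handling of the extremal perimeter case when $\kappa>0$. First, you do not actually need Myer's theorem to justify monotonicity of $\mdk$: the function $\mdk$ as defined in \eqref{eq: mdk} is non-decreasing on all of $[0,\infty)$, so the implication $d(\sigma(t),p)\le d(\bar\sigma(t),\bar p)\Rightarrow f(\sigma(t))\le\bar f(t)$ requires no diameter bound. Second, your approximation argument ``apply the construction at each $q_n$ and pass to the limit'' is not quite literal — a barrier at $q_n$ does not converge to a barrier at $q$ in the required pointwise sense — but the idea is salvageable: the barrier inequality holds at every $\sigma(t)$ with $d(p,\sigma(t))<\varpi^\kappa$, which by Lemma~\ref{lem: perimeter bound} (or, more sharply, by the maximal-diameter rigidity theorem, under which $M$ is isometric to $\modsp\kappa$) excludes at most one point. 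Since the barrier inequality is equivalent to the generalized-sense distributional inequality $\int(f''+\kappa f-1)\phi\le 0$, which is insensitive to a single exceptional point, the concavity extends across that point and the Short Hinge Lemma is not strictly needed. Either way, the argument closes; the paper itself leaves the $\kappa>0$ perimeter boundary case to the citation of \cite{AKP22}, so you are not missing anything the paper supplies.
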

\begin{proof}[Proof of Theorem~\ref{thm: l-k theorem}]
    We only need to check the Jensen's inequality. 
    By Toponogov, 
    \begin{equation*}
        d_p(\gamma(t)) \geq d_{\overline{p}}(\overline{\gamma}(t))
    \end{equation*}
    for $t \in [t_0, t_1]$ and since $\md_\kappa$ is monotone, then
    \begin{equation*}
        f(t) = \md_\kappa(d_p(\gamma(t))) \geq \md_\kappa(d_p(\overline{\gamma}(t))) = \overline{f}(t). 
    \end{equation*}
    This implies that $d_p$ is semi-concave on $M \backslash\br{p}$ since
    \begin{equation*}
        d_p = (\md_\kappa^{-1})\circ (\md_\kappa(d_p))
    \end{equation*}
    Therefore, $\md_\kappa(d_p)$ is $(\lambda, \kappa)$-concave. Thus it is semi-concave. Therefore $d_p$ is also semi-concave on $M\setminus \{p\}$. 
\end{proof}
\begin{remark}
 The last result holds on any complete Riemannian manifold.  That is if $(M, g)$ is complete and $p\in M$ then $d_p$ is semi-concave on $M\setminus \{p\}$. 
\end{remark}

\chapter{Appliction of Toponogov's Comparison Theorem}
\section{Curvature bounds and Topological Complexity}
There are many results when the curvature bound implies a topological bound. In this section, we introduce the work of Gromov on bounding topological complexity (growth of the size of the fundamental group) using the short basis method.
\subsection{Gromov's Estimate Theorem}
\begin{theorem}[Gromov's Estimates Theorem]\label{thm: gromov's theorem}
   Let $(M^n, g)$ be a complete, connected Riemannian manifold. 
    Then
    \begin{enumerate}
\item If $\sect_M \geq \kappa$ for any $\kappa \in \R$ and $\diam(M) \leq D$, then $\pi_1(M)$ can be generated by at most $C(n, \kappa, D)$ elements. 
\item If $(M^n, g)$ has $\sect_M \geq 0$ then $\pi_1(M)$ can be generated by at most $C(n)$ elements. 
\end{enumerate}
\end{theorem}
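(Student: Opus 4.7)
The plan is to use Gromov's \textbf{short basis} method. Pass to the universal cover $\pi \colon \tilde{M} \to M$ with deck transformation group $\Gamma = \aut(\pi) \cong \pi_1(M)$. Fix a basepoint $\tilde{p} \in \tilde{M}$ and define generators $g_1, g_2, \ldots$ of $\Gamma$ inductively: having chosen $g_1, \ldots, g_{k-1}$, let $g_k \in \Gamma \setminus \langle g_1, \ldots, g_{k-1}\rangle$ minimize $d(\tilde{p}, g_k \tilde{p})$ among such elements. Stop the process once the minimum displacement exceeds $2D$, yielding a finite (a priori) collection $g_1, \ldots, g_N$.

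First I would verify that this truncated short basis still generates $\Gamma$. Here I would use that, because $\diam(M) \le D$, any element $h \in \Gamma$ can be ``halved'': taking the midpoint $m$ of $[\tilde{p}, h\tilde{p}]$, one finds some $h_1 \in \Gamma$ with $d(m, h_1 \tilde{p}) \le D$, so both $h_1$ and $h_1^{-1} h$ have displacement $\le \tfrac{1}{2} d(\tilde{p}, h\tilde{p}) + D$. Iterating, every $h \in \Gamma$ is a product of elements of displacement $\le 2D$. If the shortest element outside $\langle g_1, \ldots, g_N\rangle$ had displacement $> 2D$, then decomposing it would produce two strictly shorter elements, at least one of which is still outside the subgroup, contradicting minimality.

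Next comes the key angular estimate: for $i<j$, let $\sigma_i, \sigma_j$ be minimizing geodesics from $\tilde{p}$ to $g_i \tilde{p}, g_j \tilde{p}$, and let $\alpha_{ij}$ be the angle at $\tilde{p}$ between their initial tangents. I would show that $\alpha_{ij} \geq \alpha_0 > 0$ for a positive constant $\alpha_0$ depending only on $\kappa$ and $D$ (and $\alpha_0 = \pi/3$ when $\kappa \ge 0$). Suppose not. Since $\sect_{\tilde M} \ge \kappa$, the global hinge Toponogov comparison applied to $[\tilde{p}_{\,g_j\tilde p}^{\,g_i\tilde p}]$ gives
\begin{equation*}
d(g_i \tilde{p}, g_j \tilde{p}) \;\le\; \modcvee^\kappa\bigl(\alpha_{ij};\, d(\tilde p, g_i\tilde p),\, d(\tilde p, g_j\tilde p)\bigr).
\end{equation*}
By direct computation in $\modsp{\kappa}$ (Euclidean cosine law for $\kappa\ge 0$; hyperbolic/spherical cosine law for $\kappa < 0$ combined with $d(\tilde p, g_i \tilde p), d(\tilde p, g_j \tilde p) \le 2D$) one checks that there is a threshold angle $\alpha_0 = \alpha_0(\kappa, D)$, equal to $\pi/3$ when $\kappa \ge 0$, such that if $\alpha_{ij} < \alpha_0$ then the right-hand side is strictly less than $\max(d(\tilde p, g_i \tilde p), d(\tilde p, g_j \tilde p)) = d(\tilde p, g_j \tilde p)$. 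Then $g_i^{-1} g_j \notin \langle g_1, \ldots, g_{j-1}\rangle$ (else $g_j$ would be too) has displacement strictly less than $g_j$, contradicting the choice of $g_j$.

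Finally, the $N$ unit vectors $v_i := \dot\sigma_i(0) \in T_{\tilde p}\tilde M \cong \R^n$ are pairwise separated by Euclidean angle $\geq \alpha_0$. A standard volume-packing argument on $S^{n-1}$ bounds $N$ by a constant $C(n, \alpha_0) = C(n, \kappa, D)$, giving (1); when $\kappa \ge 0$ we have $\alpha_0 = \pi/3$, so $N \le C(n)$ and (2) follows.

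The main obstacle is the last angular estimate in the case $\kappa < 0$: the hyperbolic cosine law gives $\cos\alpha_0 \to 1$ as $L = 2D \to \infty$, so $\alpha_0$ genuinely depends on $\kappa D$ and care is needed to extract the correct quantitative threshold. Everything else is either the Toponogov comparison already established (for which the global hinge form in Riemannian manifolds was proved in the preceding chapter) or standard covering-space and sphere-packing bookkeeping.
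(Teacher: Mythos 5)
Your argument is correct and uses the same overall strategy as the paper---short basis in the universal cover, Toponogov angle separation, sphere packing on $S^{n-1}$---but two steps are executed by different routes. To bound the lengths of short-basis elements, the paper first proves (Lemma~\ref{lem: loop length bound.}) that $\pi_1(M, p)$ is generated by loops of length $\leq 2\diam(M)$ by subdividing an arbitrary loop in $M$ and joining subdivision points to $p$ by geodesics, and then argues by induction that each short-basis element $\gamma_i$ must also have length $\leq 2\diam(M)$. You instead truncate the short basis at displacement $2D$ and run the midpoint/halving argument directly in $\tilde M$: if the shortest $h \notin \langle g_1,\ldots,g_N\rangle$ had $|h| > 2D$, picking a $\Gamma$-orbit point within $D$ of the midpoint of $[\tilde p, h\tilde p]$ produces $h_1$ and $h_1^{-1}h$ both of displacement $\leq \frac{1}{2}|h| + D < |h|$, one of which still lies outside the subgroup. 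Both are correct and give equivalent conclusions; yours avoids the loop-subdivision detour. For the angle estimate, the paper isolates the purely group-theoretic fact $l_{ij} \geq l_j \geq l_i$ (else $\gamma_i^{-1}\gamma_j$ would beat $\gamma_j$), and only then reads off $\tilde\alpha_{ij} \geq \pi/3$ (resp.\ $\geq \eps(D,\kappa)$) from the cosine law together with the angle form of Toponogov; you use the hinge form and derive a contradiction with the minimality of $g_j$ directly. These are logically equivalent---your contradiction is a proof of $l_{ij}\ge l_j$ in disguise---but the paper's split makes it clearer that the monotonicity of side lengths is a pure consequence of the short-basis construction, with curvature entering only when translating side lengths into angles. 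One presentational point to tighten: for part (2) ($\sect_M \geq 0$, no diameter hypothesis) there is no $2D$ to truncate at; there you should not truncate, and finiteness of the short basis with $N \leq C(n)$ comes from the sphere-packing bound alone, which forces the construction to terminate with $\langle g_1,\ldots,g_N\rangle = \Gamma$. Since your writeup is framed entirely around the truncated basis, this distinction should be made explicit.
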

\begin{remark}
Note that in case 2 $M$ is not assumed to be compact.	
\end{remark}
\begin{remark}
 Constants $C(n)$ and $C(n, \kappa, D)$ can be made explicit (See \ref{eq: explicit C(n)} and \ref{eq: explicit C(D, k, n)}). 
\end{remark}
We are going to prove this theorem in the next section. 
\begin{lemma}\label{lem: loop length bound.}
    Let $(M,g)$ be compact. Then for any  $p \in M$,  $\pi_1(M, p)$ is generated by loops of length $\leq 2\diam(M)$. 
\end{lemma}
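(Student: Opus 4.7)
The plan is to define a length $|g|$ on each $g \in \pi_1(M,p)$ via a shortest loop representing $g$, and to show that any $g$ with $|g|>2\diam(M)$ splits as a product of two strictly shorter classes; the proof then closes by a discreteness argument on $\{|g|\}$.

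Let $\pi\colon \tilde M \to M$ be the Riemannian universal cover and fix a lift $\tilde p$ of $p$. Since $M$ is compact, $\tilde M$ is complete, and for each $g \in \pi_1(M,p)$ with corresponding deck transformation $T_g$ Hopf--Rinow yields a minimizing geodesic from $\tilde p$ to $T_g\tilde p$ whose projection is a shortest loop at $p$ in the homotopy class $g$. Set $|g| := d_{\tilde M}(\tilde p, T_g\tilde p)$ and parametrize such a representative $\sigma_g$ by unit speed on $[0,|g|]$. Assume now that $|g| > 2\diam(M)$, let $x = \sigma_g(|g|/2)$ be the midpoint, and pick a minimizing geodesic $\tau$ in $M$ from $p$ to $x$, so that $\length(\tau) = d(p,x) \le \diam(M)$. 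Then the concatenations
\begin{equation*}
\alpha = \sigma_g|_{[0,\,|g|/2]} \cdot \tau^{-1}, \qquad \beta = \tau \cdot \sigma_g|_{[|g|/2,\,|g|]}
\end{equation*}
are loops at $p$ satisfying $g = [\alpha]\cdot[\beta]$ in $\pi_1(M,p)$ (because inserting $\tau^{-1}\tau$ in the middle of $\sigma_g$ is homotopic rel endpoints to the constant path at $x$), and each has length at most $|g|/2 + \diam(M) < |g|$, so $|[\alpha]|,\,|[\beta]| < |g|$.

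To terminate the induction I use that the deck action of $\pi_1(M,p)$ on $\tilde M$ is free and properly discontinuous, so the orbit $\{T_g\tilde p : g \in \pi_1(M,p)\}$ is a discrete subset of $\tilde M$; in particular only finitely many $g$ satisfy $|g|\le L$ for any $L > 0$, and the set of values $\{|g|: g\in\pi_1(M,p)\}$ has no finite accumulation point. If the conclusion failed, some $g\in\pi_1(M,p)$ would not be expressible as a product of classes of length $\le 2\diam(M)$; among all such $g$ discreteness selects one of minimal $|g|$, necessarily satisfying $|g| > 2\diam(M)$. Applying the splitting step to this $g$ writes it as $[\alpha][\beta]$ with $|[\alpha]|,|[\beta]|<|g|$, and by minimality both $[\alpha]$ and $[\beta]$ are products of generators of length $\le 2\diam(M)$, contradicting the choice of $g$. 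The only nontrivial ingredient is the discreteness of the length spectrum $\{|g|\}$, which rests on proper discontinuity of the deck action and compactness of closed metric balls in $\tilde M$; the geometric splitting via a midpoint is otherwise elementary.
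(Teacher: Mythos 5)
Your proof is correct, but it takes a genuinely different route from the paper's. The paper argues by taking an arbitrary loop, finely subdividing it into pieces of length $\leq\eps$, and coning each subdivision point to $p$ with a shortest geodesic; this writes the class as a product of loops of length $\leq 2\diam(M)+\eps$, and then ``$\eps$ is arbitrary'' is invoked. You instead work with shortest representative loops $\sigma_g$ (realized via Hopf--Rinow on the universal cover), perform a single binary split at the midpoint of a long $\sigma_g$, and then close the argument by a well-founded recursion on the discrete length spectrum $\{|g|\}$. The two approaches trade off as follows: the paper's fine subdivision handles all classes at once but its concluding limit step is terse as stated --- one still needs to know that the finite generating sets $S_\eps = \{g : |g| \le 2\diam(M)+\eps\}$ stabilize as $\eps\to 0$, which is exactly the discreteness you make explicit. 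Your midpoint-split plus minimal-counterexample argument produces generators of length $\leq 2\diam(M)$ \emph{exactly}, with no limiting step, at the cost of being more explicitly tied to the properly discontinuous deck action (though the paper uses the same fact elsewhere in this section). Both ultimately rest on compactness of $M$ and completeness of $\tilde M$; yours is arguably the more self-contained and rigorous of the two. One small point worth noting: your splitting step is essentially the $k=2$ case of the paper's subdivision, but applied to a \emph{geodesic} representative rather than an arbitrary loop, which is what makes the strict length decrease $|[\alpha]|,|[\beta]|<|g|$ come out cleanly.
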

\begin{proof}
   	Let $\gamma$ be any loop based at $p$ and $\gamma: [0, 1] \to M$ such that $\gamma(0) = \gamma(1) = p$ and $\gamma$ is continuous. Fix $\eps > 0$ and we can subdivide $[0, 1]$ into small intervals such that 
    \begin{equation*}
        0 = t_0 < t_1 < \cdots < t_k = 1
    \end{equation*}
    such that for each $i = 0, \dots, k$,
    \begin{equation*}
        \length(\gamma|_{[t_i, t_{i + 1}]}) \leq \eps.
    \end{equation*}

    
    For each $i$, we connect $p$ with $\gamma(t_i)$ with geodesic $c_i$, then 
    \begin{equation*}
    	\pi_1(M, p) \ni [\gamma] = \prod_i(\underbrace{c_i\gamma|_{[t_i, t_{i + 1}]}c_{i + 1}^{-1}}_{\alpha_i})
    \end{equation*}
    Notice that each $c_i$ is a geodesic, $\length(c_i) \leq \diam(M)$, and thus
    \begin{equation*}
        \length(\alpha_i) \leq \eps + 2\diam(M)
    \end{equation*}
    Therefore, we can conclude that $\pi_1(M)$ is generated by loops shorter than $\eps + 2\diam(M)$. Remember that $\eps > 0$ is arbitrary, as a consequence, $\pi_1(M)$ is generated by loops of length $\leq 2\diam(M)$. 
\end{proof}
\subsection{Short Basis}
In this section, we are going to introduce a construction called short basis due to Gromov \cite{Gr82}. This construction is important in the proof of Theorem \ref{thm: gromov's theorem}.

By assumption, since $M$ is a connected Riemannian manifold, we know the universal cover $\tilde{M}$ and the projection map $\pi: \tilde{M} \to M$ exist. As a universal cover, $\tilde{M}$ must be simply connected, which means we have the following isomorphism between the deck transformation group (automorphism group) and the fundamental group
\begin{equation*}
	\aut_{\pi}(\tilde{M}) \cong \pi_1(M)
\end{equation*}
This isomorphism identifies each $[\gamma] \in \pi_1(M)$ with an unique deck transformation $\varphi_\gamma$, i.e. $\varphi_\gamma$ is an homeomorphism acting on the universal cover $\tilde{M}$ such that $\pi\circ\varphi_\gamma = \pi$. Thus, we can say the fundamental group $\pi_1(M)$ acting on the universal cover $\tilde{M}$ and for each $p \in M$, we denote $\varphi_{\gamma}(p)$ by $\gamma(p)$. 

We would like to recall that the fundamental group action on the universal cover admits many good properties. Denote $\Gamma = \pi_1(M)$, 
\begin{itemize}
	\item $\Gamma$ acts freely on $\tilde{M}$, i.e. for any $[\gamma] \in \Gamma$, if $\gamma(a) = a$ for some $a \in \tilde{M}$, then $[\gamma]$ must be the identity element in $\Gamma$; This just the property of Deck transformation on any covering space.
	\item $\Gamma$ acts transitively on each fiber $\pi^{-1}(x)$ with $x \in M$, i.e. for any $a, b \in \pi^{-1}(x)$, there exists a $[\gamma] \in \Gamma$ such that $\gamma(a) = b$; This because a universal cover must be a normal cover, which acts transitively on each fiber.
	\item $\Gamma$ acts properly discontinuously on $\tilde{M}$; Namely, for all compact subset $K \subseteq M$, the set
	\begin{equation*}
		\br{[\gamma] \in \pi_1(M): \gamma K \cap K \neq \varnothing}.
	\end{equation*}
	is finite. Indeed, if this is not true. Then there is a compact set $K$ and an infinite sequence of $[\gamma_i] \in \Gamma$ such that $\gamma_iK \cap K \neq \varnothing$. Then for each $[\gamma_i]$, we can find $k_i \in \gamma_iK \cap K$ for each $i$, which produces a sequence $\br{k_i} \subseteq K$. Since $K$ is compact, we know that there is a sub-convergent limit $k \in K$. Taking a small neighborhood $U$ of $k$, then $\pi(U)$ is evenly covered by $\coprod_{[\gamma] \in \pi_1(M)}\gamma(U)$. But by the construction of $k$, there are infinitely many $k_i \in U$ so that $\gamma_ik_i \in \gamma_iU\cap K$ for infinitely many $\gamma_i$. Then the set $\br{\gamma_ik_i}\cap K$ is an infinite discrete subset in $K$. Moreover, the set $\br{\gamma_ik_i}\cap K$ must also be compact since it is a closed subset of the compact set $K$. Then the contradiction arises as we know there is no discrete space with an infinite number of points that is compact.
	
	\item $\Gamma$ acts isometrically on $\tilde{M}$ if we endowed $\tilde{M}$ with the induced metric, say $\tilde{g}$ from $M$. i.e. for any $a, b \in \tilde{M}$, 
		\begin{equation*}
			d_{\tilde{g}}(a, b) = d_{\tilde{g}}(\gamma(a), \gamma(b))
		\end{equation*}
		for any $[\gamma] \in \pi_1(M)$. Indeed, if we denote $\Gamma(a, b)$ and $\Gamma(\gamma(a), \gamma(b))$ the family of paths from $a$ to $b$ and the family of paths from $\gamma(a)$ to $\gamma(b)$ respectively, by the definition of deck transformation ($\pi\circ\varphi_\gamma = \pi$), we have
		\begin{equation*}
			\pi(\Gamma(a, b)) = \pi(\Gamma(\gamma(a), \gamma(b)))
		\end{equation*}
		therefore, 
		\begin{align*}
			d_{\tilde{g}}(a, b) 
			&= \inf_{\sigma\in \pi(\Gamma(a, b))}\length(\sigma)\\
			& = \inf_{\sigma \in \pi(\Gamma(\gamma(a), \gamma(b)))}\length(\sigma)\\
			& =  d_{\tilde{g}}(\gamma(a), \gamma(b))
		\end{align*}
\end{itemize}

 One can construct a \textbf{short basis of $\Gamma$ at $p$} as the following. Fix a base point $\tilde{p}$ in $\tilde{M}$ such that $p = \pi(\tilde{p})$ via the projection $\pi: \tilde{M} \to M$. For each $[\mathfrak{g}] \in \Gamma$, we can define 
\begin{equation*}
	\abs{\mathfrak{g}} := d_{\tilde{g}}(\tilde{p}, \mathfrak{g}(\tilde{p}))
\end{equation*}
the length of $[\mathfrak{g}]$. 
\begin{remark}
	the length of $[\mathfrak{g}]$ is dependent on the choice of $\tilde{p}$, i.e. in general, we CAN NOT have
	\begin{equation*}
		d_{\tilde{g}}(\tilde{p}, \mathfrak{g}(\tilde{p})) = d_{\tilde{g}}(\mathfrak{h}(\tilde{p}), \mathfrak{g}(\mathfrak{h}(\tilde{p})))
	\end{equation*}
	for any other $[\mathfrak{h}] \in \Gamma$
	This is because 
	\begin{align*}
		d_{\tilde{g}}(\tilde{p}, \mathfrak{g}(\tilde{p})) = d_{\tilde{g}}(\mathfrak{h}(\tilde{p}), \mathfrak{h}(\mathfrak{g}(\tilde{p})))
	\end{align*}
	but in general, the fundamental group is not abelian. Thus it is not necessarily true that $[\mathfrak{h}\cdot\mathfrak{g}] = [\mathfrak{g}\cdot\mathfrak{h}]$ and therefore
	\begin{equation*}
		d_{\tilde{g}}(\mathfrak{h}(\tilde{p}), \mathfrak{g}(\mathfrak{h}(\tilde{p}))) \neq d_{\tilde{g}}(\mathfrak{h}(\tilde{p}), \mathfrak{h}(\mathfrak{g}(\tilde{p})))
	\end{equation*}
\end{remark} 
Take $\tilde{\gamma}$ the geodesics connecting $\tilde{p}$ and $\mathfrak{g}(\tilde{p})$. Since $\pi(\tilde{p}) = \pi(\mathfrak{g}(\tilde{p}))$. Then $\gamma = \pi(\tilde{\gamma})$ is the geodesic loop at $p$ representing $\mathfrak{g}$. Notice that $ \abs{\mathfrak{g}} = \abs{\gamma}_g$ (the length of the geodesic loop in metric $g$). 

\begin{figure}[htbp]
    \centering
        \includegraphics[width=0.5\textwidth]{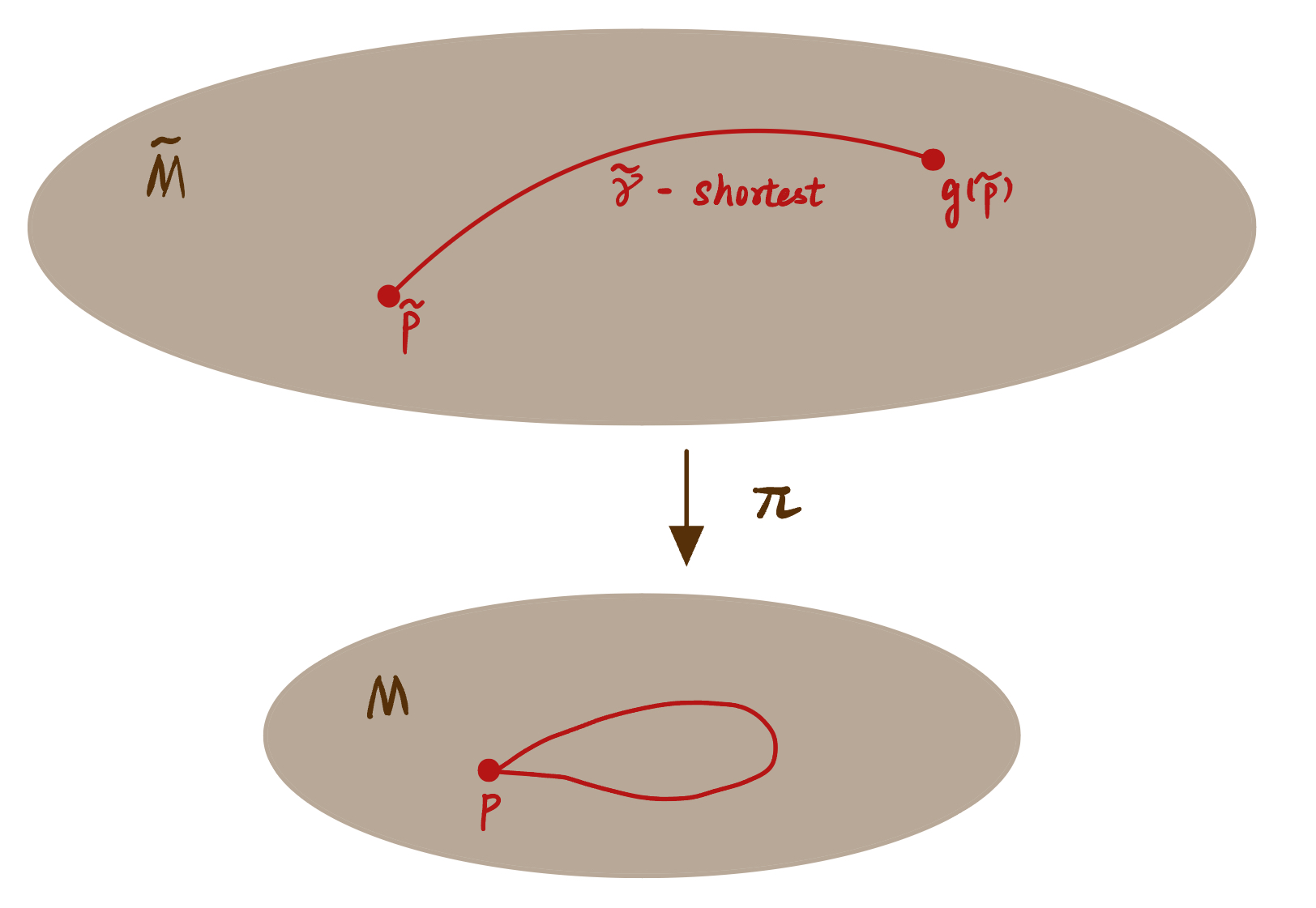}
    \end{figure}

Now we want to construct a sequence of geodesic loops at $p$ via the following procedure: 
    \begin{align*}
    	&\text{Take $\gamma_1 \neq e$ to be a  shortest in $\Gamma$. Denote $\Gamma_1 = \inp{\gamma_1}$}\\
    	&\text{Take $\gamma_2 \neq e$ to be a  shortest in $\Gamma \backslash \Gamma_1$. Denote $\Gamma_2 = \inp{\gamma_1, \gamma_2}$}\\
    	&\dots \\
    	&\text{Take $\gamma_i \neq e$ to be a  shortest in $\Gamma \backslash \Gamma_{i - 1}$ where $\Gamma_i = \inp{\gamma_1, \gamma_2, \dots, \gamma_i}$}\\
    	&\dots 
    \end{align*}
    The sequence $\br{\gamma_1, \gamma_2, \dots}$ is called a \textbf{short basis} of $\Gamma$ at $p$. 

\begin{lemma}\label{short-basis-diam}
      Let $(M,g)$ be compact. Then each of its short basis $\gamma_i$ has the length $|\gamma_i| \leq 2\diam(M)$.
    \end{lemma}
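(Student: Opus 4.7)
The plan is to contradict the minimality that defines each $\gamma_i$ by pitting it against the generators produced by Lemma~\ref{lem: loop length bound.}. First, from Lemma~\ref{lem: loop length bound.} I obtain a generating set $S$ of $\Gamma = \pi_1(M,p)$ in which every element, viewed as a geodesic loop at $p$, has length at most $2\diam(M)$. Lifting to $\tilde M$, each $[\alpha] \in S$ therefore satisfies $|\alpha| = d_{\tilde g}(\tilde p, \alpha(\tilde p)) \leq 2\diam(M)$.

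Before running the main argument I would verify the short basis construction is well-defined, i.e.\ that a shortest element of $\Gamma \setminus \Gamma_{i-1}$ actually exists whenever $\Gamma_{i-1} \subsetneq \Gamma$. For this I would invoke the properly discontinuous isometric action of $\Gamma$ on $\tilde M$ noted just before the construction: for any $R>0$ only finitely many $\mathfrak g \in \Gamma$ satisfy $|\mathfrak g| \leq R$, so the set of lengths $\{|\mathfrak g| : \mathfrak g \in \Gamma \setminus \Gamma_{i-1}\}$ is a discrete subset of $[0,\infty)$ and its infimum is attained.

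Now the main step, by contradiction: suppose some $\gamma_i$ has $|\gamma_i| > 2\diam(M)$. Since $\gamma_i$ is defined, $\Gamma_{i-1} \subsetneq \Gamma$. Because $S$ generates $\Gamma$, if every element of $S$ lay in $\Gamma_{i-1}$ we would have $\Gamma_{i-1} = \Gamma$; hence there is some $[\alpha] \in S \setminus \Gamma_{i-1}$. Then $[\alpha] \in \Gamma \setminus \Gamma_{i-1}$ and
\[
|\alpha| \leq 2\diam(M) < |\gamma_i|,
\]
contradicting the choice of $\gamma_i$ as a shortest element of $\Gamma \setminus \Gamma_{i-1}$.

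I do not anticipate any serious obstacle: the statement is essentially a bookkeeping consequence of combining the loop-length bound from Lemma~\ref{lem: loop length bound.} with the defining minimality in the short basis construction. The only point requiring care is the (easy) well-definedness of each $\gamma_i$, which rests on proper discontinuity of the deck action rather than on any curvature hypothesis.
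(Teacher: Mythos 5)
Your proof is correct and follows essentially the same approach as the paper's: both locate a generator of length at most $2\diam(M)$ that lies outside $\Gamma_{i-1}$ and use it to contradict the minimality defining $\gamma_i$. Your phrasing --- arguing directly that the generating set $S$ from Lemma~\ref{lem: loop length bound.} cannot lie entirely in $\Gamma_{i-1}$, rather than expanding $\gamma_i$ as a word in generators --- is slightly cleaner and dispenses with the paper's unnecessary induction wrapper, but the underlying idea is identical.
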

    \begin{proof}
    	We should prove this lemma by induction. The base case is to show $\abs{\gamma_{1}} \leq 2\diam(M)$. Suppose not, i.e. $\abs{\gamma_{1}} > 2\diam(M)$, after writing $\gamma_1$ in terms of the generators of the fundamental group
    	\begin{equation*}
    		\gamma_1 = \sigma_1^1\dots\sigma_{l_1}^1
    	\end{equation*}
    given by 	 Lemma~\ref{lem: loop length bound.}, $\abs{\sigma^1_{i_1}} \leq 2\diam(M)$ for each $i_1 = 1, \dots, l_1$. This implies $\abs{\gamma_1} > \abs{\sigma_{i_1}^!}$, which is impossible since $\gamma_1$ is  shortest in $\Gamma$. 
    
    	Now, assume $\gamma_1, \dots, \gamma_{k - 1}$ all have length no bigger than $2\diam(M)$. And we want to show $\abs{\gamma_{k + 1}} \leq 2\diam(M)$. 
    	Suppose not, i.e. $\abs{\gamma_{k}} > 2\diam(M)$. We can again write $\gamma_{k}$ in terms of its generators
    	\begin{equation*}
    		\gamma_{k} = \sigma_1^{k}\dots\sigma_{l_{k}}^{k}
    	\end{equation*}
    	We know one of its generators $\sigma_{i_k}^k$ belongs to $\Gamma\backslash\Gamma_{k - 1}$. By Lemma~\ref{lem: loop length bound.}, the length of the generator must be less than $2\diam(M)$. Thus we have
    	\begin{equation*}
    		\abs{\sigma_{1_k}^k} \leq 2\diam(M) < \abs{\gamma_{k}}
    	\end{equation*}
    	This contradicts the fact that $\gamma_{k}$ is the shortest in $\Gamma\backslash\Gamma_k$.
    \end{proof}
    \begin{figure}[htbp]
    \centering
        \includegraphics[width=0.8\textwidth]{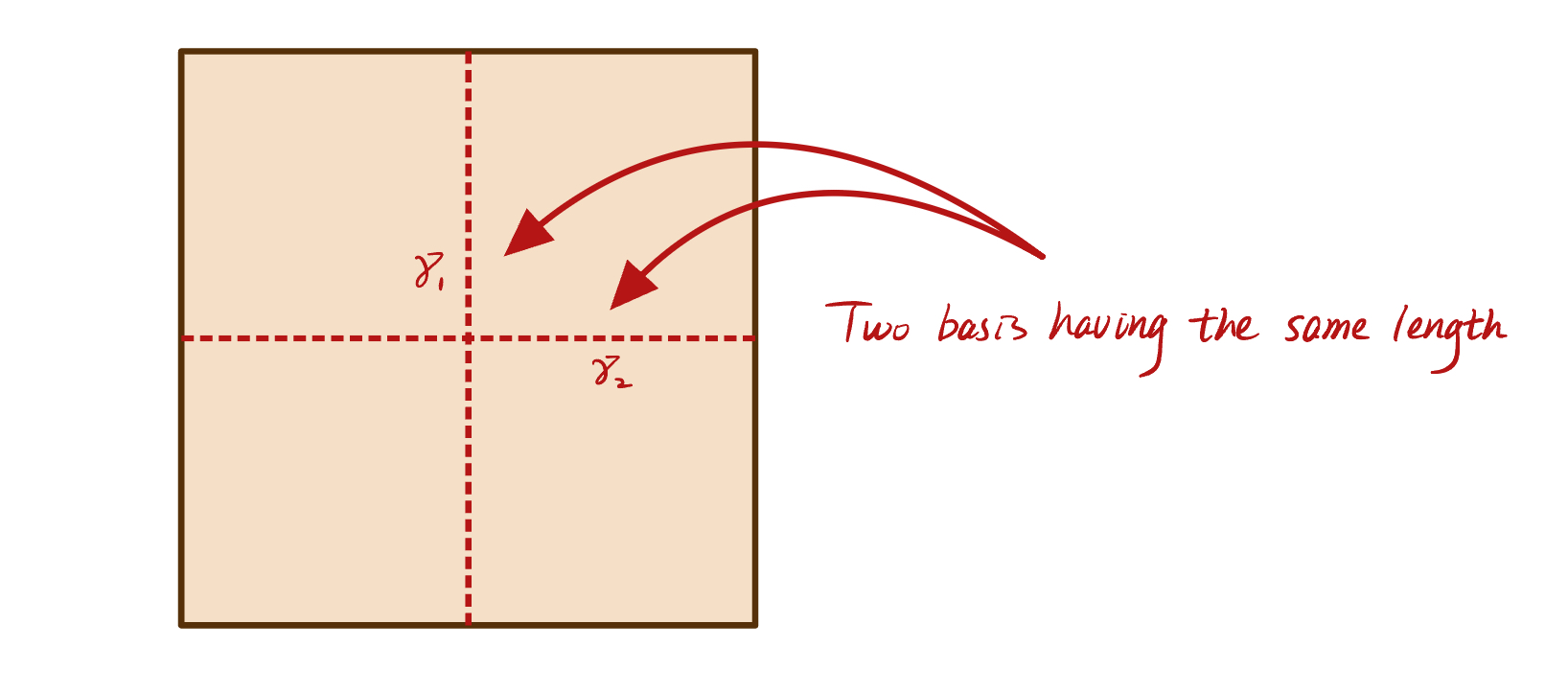}
        \caption{By Remark~\ref{rmk: equal length bases}, two elements of a short  basis of a flat torus might have the same length}
    \end{figure}
    \begin{remark}\label{rmk: equal length bases}
   		By construction
   		\begin{equation*}
   			|\gamma_1|\le |\gamma_2|\le\ldots
   		\end{equation*}
   		but in general, these inequalities need not be strict, i.e. it is possible that $|\gamma_i| =|\gamma_{i+1}|$ for some $i$. For example, the two generators of the fundamental group of a square torus might have the same length.
   \end{remark}

   	We would like to mention that while a short basis at $p$ is not unique, its length spectrum $\br{\abs{\gamma_1}_g, \abs{\gamma_2}_g, \dots} \subseteq \R$ \emph{is} unique. Denote $\Gamma(r)$ the subgroup of $\Gamma$ generated by all loops of length at most $r$. And define 
   	\begin{equation*}
   		G(r) := \inp{\text{$\gamma_i$ is in the  short basis}: \abs{\gamma_i}_g \leq r}
   	\end{equation*}
   	the subgroup generated by the short basis of length at most $r$. Since the subgroups $\Gamma(r)$ are invariantly defined and do not depend on  the choice of a short basis,  the uniqueness on the length spectrum  easily follows the following proposition
   	\begin{proposition}\label{prop: G(r) = Gamma(r)}
   		$G(r) = \Gamma(r)$ for any $r>0$. 
   	\end{proposition}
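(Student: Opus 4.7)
The plan is to prove the two inclusions separately. The inclusion $G(r)\subseteq \Gamma(r)$ is essentially by definition, since every generator $\gamma_i$ of $G(r)$ is itself a loop at $p$ of length $\le r$, so it already lies in $\Gamma(r)$; a product of such generators therefore lies in $\Gamma(r)$.

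For the non-trivial inclusion $\Gamma(r)\subseteq G(r)$, it suffices to show that every single $\alpha\in\Gamma$ with $|\alpha|\le r$ lies in $G(r)$, since such $\alpha$'s generate $\Gamma(r)$. The key step is to let $k$ be the smallest index such that $\alpha\in\Gamma_k$ (we discuss why such $k$ exists below). If $k=0$ then $\alpha=e\in G(r)$ trivially. Otherwise $\alpha\in\Gamma_k\setminus\Gamma_{k-1}$, and by the defining property of the short basis, $\gamma_k$ was chosen to be a shortest element of $\Gamma\setminus\Gamma_{k-1}$, so
\begin{equation*}
|\gamma_k|\ \le\ |\alpha|\ \le\ r.
\end{equation*}
Because the lengths $|\gamma_1|\le|\gamma_2|\le\cdots$ are nondecreasing by construction (see Remark~\ref{rmk: equal length bases}), this forces $|\gamma_i|\le r$ for every $i\le k$. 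Hence $\gamma_1,\dots,\gamma_k\in G(r)$, and therefore $\alpha\in\Gamma_k=\langle\gamma_1,\dots,\gamma_k\rangle\subseteq G(r)$.

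The main obstacle I expect is justifying that every $\alpha\in\Gamma$ lies in some $\Gamma_k$, i.e., that the short-basis construction actually exhausts $\Gamma$ (a priori the $\gamma_i$'s only generate $\bigcup_k\Gamma_k$). I would argue this by contradiction: suppose $\alpha\notin\bigcup_k\Gamma_k$. Then at every stage $\alpha$ is a valid candidate for $\gamma_k$, so by minimality $|\gamma_k|\le|\alpha|$ for all $k$. But $\Gamma=\pi_1(M)$ acts properly discontinuously and isometrically on $\widetilde M$, so the closed ball of radius $|\alpha|$ around $\tilde p$ meets only finitely many orbit points $\mathfrak{g}(\tilde p)$; equivalently, only finitely many $\mathfrak{g}\in\Gamma$ satisfy $|\mathfrak{g}|\le|\alpha|$. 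This contradicts having infinitely many distinct $\gamma_k$ with $|\gamma_k|\le|\alpha|$, completing the proof. Note this argument works for both the compact case and the $\sect_M\ge 0$ (possibly noncompact) case of Theorem~\ref{thm: gromov's theorem}, since only proper discontinuity of the deck transformation action is used.
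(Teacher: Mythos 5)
Your proof is correct and uses the same essential idea as the paper — the minimality of each $\gamma_k$ in $\Gamma\setminus\Gamma_{k-1}$ combined with the monotonicity $|\gamma_1|\le|\gamma_2|\le\cdots$ — just arranged in the contrapositive direction: you show a short $\alpha$ lands in $G(r)$, whereas the paper picks the largest $k$ with $\gamma_k\in G(r)$ and shows everything outside $G(r)=\Gamma_k$ has length greater than $r$. One genuine improvement is that you explicitly justify $\bigcup_k\Gamma_k=\Gamma$ via proper discontinuity of the deck action; the paper's phrase ``pick the largest $k$ such that $\gamma_k\in G(r)$'' tacitly relies on the corresponding finiteness fact without comment.
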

   	\begin{proof}
   		It is obvious that $G(r) \subseteq \Gamma(r)$ since each $\gamma_i \in \Gamma(r)$.
   		
   		Now we want to show the converse inclusion. Pick the largest $k$ such that $\gamma_k\in G(r)$. Then   $\gamma_1, \gamma_2, \dots, \gamma_k$ belong to $G(r)$ but  $\gamma_{k + 1}$ does not.		By construction  $\gamma_{k + 1}$ is a shortest element of $\Gamma$ outside of $\inp{\gamma_1, \dots, \gamma_k} = G(r)$. Moreover, consider any other $\mathfrak{g} \in \Gamma\backslash\inp{\gamma_1, \dots, \gamma_k}$, we find
   		\begin{equation*}
   			\abs{\gamma_1}_g \leq \abs{\gamma_2}_g \leq  \dots \leq \abs{\gamma_k}_g \leq r < \abs{\gamma_{k + 1}}_g \leq \abs{\mathfrak{g}}
   		\end{equation*}
   		Let $h \in \Gamma(r)$, then by definition
   		\begin{equation*}
   			h = \sigma_1\cdots\sigma_m
   		\end{equation*}
   		and $\abs{\sigma_i}_g \leq r$ for each $i \in \br{1, \dots, m}$. Suppose for contradiction that $h \notin G(r)$. This means at least one of $\sigma_i \notin G(r)$. Thus our previous discussion, 
   		\begin{equation*}
   			\abs{\sigma_i}_g \geq \abs{\gamma_{k + 1}}_g > r.
   		\end{equation*}
   	On the other hand by above $\abs{\sigma_i}_g \leq r$. This is a contradiction and hence $G(r)=\Gamma(r)$.
   	\end{proof}

\begin{proof}[Proof of Theorem \ref{thm: gromov's theorem}.]    
    Since $M$ is complete, we have a possibly infinite sequence of loops 
    \begin{equation*}
        \gamma_1, \gamma_2, \dots, 
        \gamma_i, \dots, \gamma_j, \dots
    \end{equation*}
   which forms a short basis of $M$ at $p$.
    Suppose $i < j$, consider the triangle $[\tilde{p}\gamma_i(\tilde{p})\gamma_j(\tilde{p})]$, we denote 
    \begin{equation*}
        l_i = \abs{\gamma_i} = d(\Tilde{p}, \gamma_i(\Tilde{p})), \quad l_j = \abs{\gamma_i} = d(\Tilde{p}, \gamma_j(\Tilde{p})), \quad l_{ij} = d(\gamma_i(\Tilde{p}), \gamma_j(\Tilde{p}))
    \end{equation*}    
    We claim that $l_{ij} \geq l_j \geq l_i$. The second inequality is trivial. Let us  show  that $l_{ij} \geq l_j$.
    It is also easy to check that $l_{ij} < l_j$. To see this, we only need to show $l_{ij} = d_{\tilde{g}}(\tilde{p}, \gamma_i^{-1}\gamma_j(\tilde{p}))$.
    Because $\Gamma$ acts on $M$ by isometries, i.e. or any $x, y \in \tilde{M}$, $\mathfrak{g} \in \Gamma$, $d_{\tilde{g}}(x, y) = d_{\tilde{g}}(\mathfrak{g}(x), \mathfrak{g}(y))$, we have
    \begin{align*}
    	l_{ij} &= d_{\tilde{g}}(\gamma_i(\tilde{p}), \gamma_j(\tilde{p})) \\
    	&= d_{\tilde{g}}(\gamma_i^{-1}\gamma_i(\tilde{p}), \gamma_i^{-1}\gamma_j(\tilde{p}))\\
    	&= d_{\tilde{g}}(\tilde{p}, \gamma_i^{-1}\gamma_j(\tilde{p})).
    \end{align*} 
    Then assume $l_{ij} < l_j$, that is $d_{\tilde{g}}(\tilde{p}, \gamma_i^{-1}\gamma_j(\tilde{p})) < \abs{\gamma_i}$.
    Since $\gamma_{j} \notin \Gamma_{j - 1}$, then $\gamma_i^{-1}\gamma_j \notin \Gamma_{j - 1}$, 
    When choosing the $j$'s elements of the shortest basis from $\Gamma\backslash\Gamma_{j - 1}$, we need to take $\gamma_i^{-1}\gamma_j$ instead of $\gamma_j$, which gives us the contradiction. 

	\begin{figure}[htbp]
    \centering
        \includegraphics[width=0.5\textwidth]{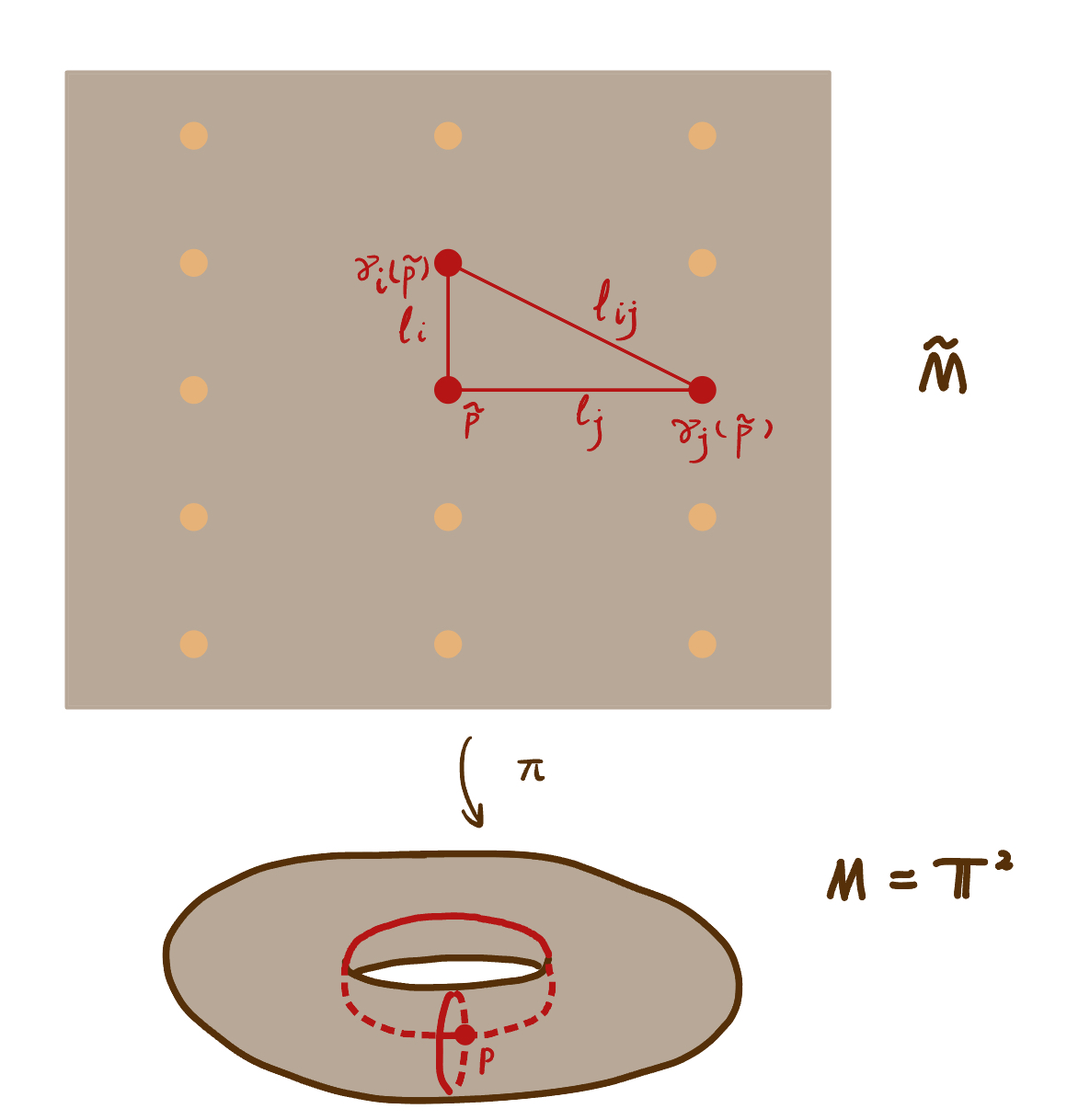}
        \caption{Example of $l_{ij} \geq l_j \geq l_i$, we take $M = \mathbb{T}^2$}
    \end{figure}

    Consider the case when $\kappa = 0$. By Toponogov's angle comparison~\ref{thm: angle}, since $l_{ij}$ is the longest side of this triangle, we have $\alpha_{ij} \geq \Tilde{\alpha}_{ij} \geq \frac{\pi}{3}$ by the cosine law. To see why, we firstly notice that    
    \begin{align*}
        \cos{\Tilde{\alpha}_{ij}} = & \frac{l_i^2 + l_j^2 - l_{ij}^2}{2l_il_j}\\
        \leq & \frac{l_j^2 + l_j^2 - l_{ij}^2}{2l_j^2}\\
        \leq & \frac{2l_j^2 - l_i^2}{2l_j^2} = \frac{1}{2} \implies \tilde{\alpha}_{ij} \geq \frac{\pi}{3}
    \end{align*}
    And by Toponogov's angle comparison~\ref{thm: angle}, we have $\alpha_{ij} \geq \Tilde{\alpha}_{ij} \geq \frac{\pi}{3}$. We draw the initial vectors $v_i$ for each $\gamma_i$. This tells us the angles between any of these vectors $\geq \frac{\pi}{3}$. (See the following picture)
    \begin{figure}[htbp]
    \centering
        \includegraphics[width=0.4\textwidth]{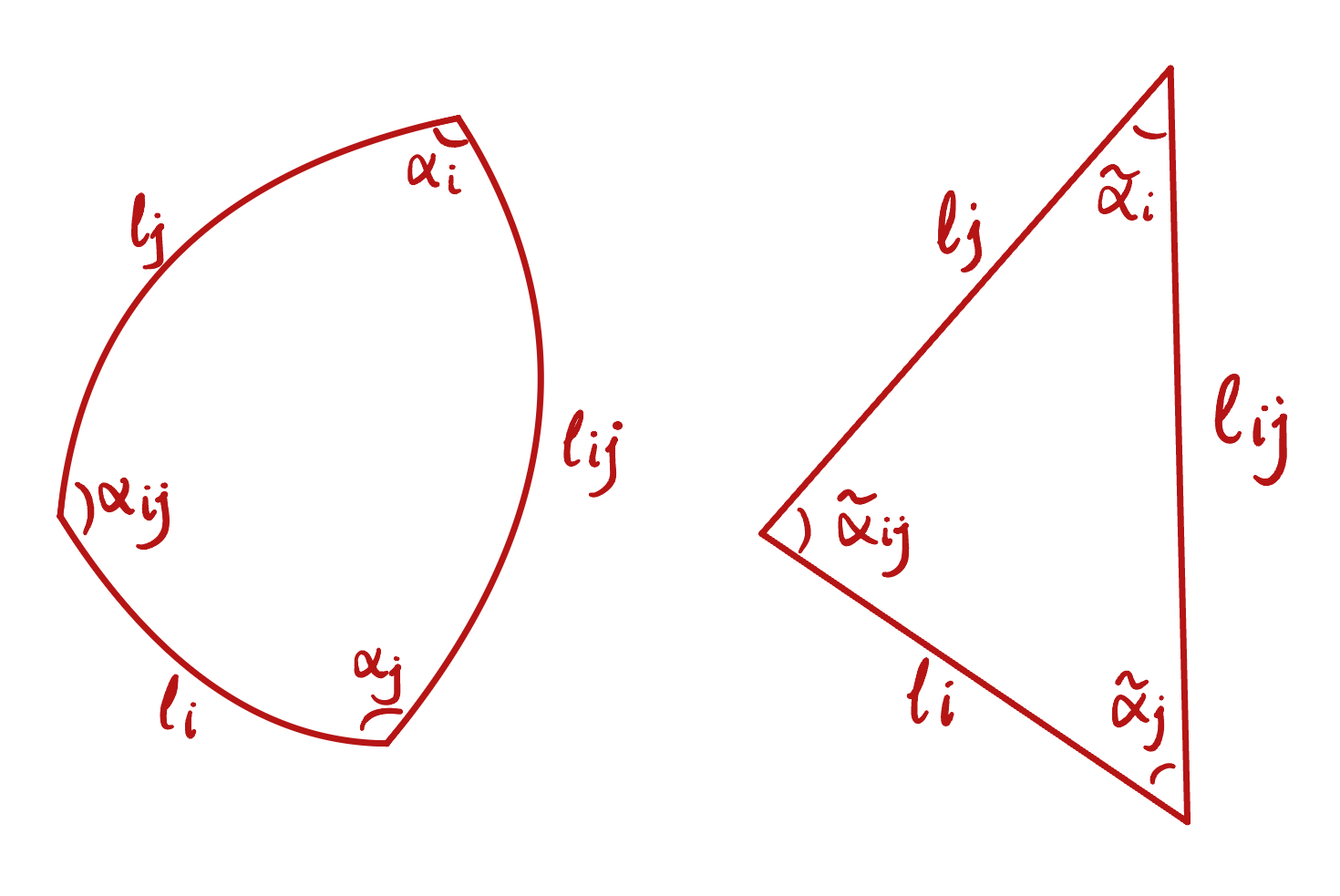}
    \end{figure}
 	
 	Look at balls of radius $\frac{\pi}{6}$ in $S^{n - 1}$ around $v_i$, they are disjoint, i.e. $B_{\frac{\pi}{6}}(v_i) \cap B_{\frac{\pi}{6}}(v_j) = \varnothing$. 
    \begin{figure}[htbp]
    \centering
        \includegraphics[width=0.5\textwidth]{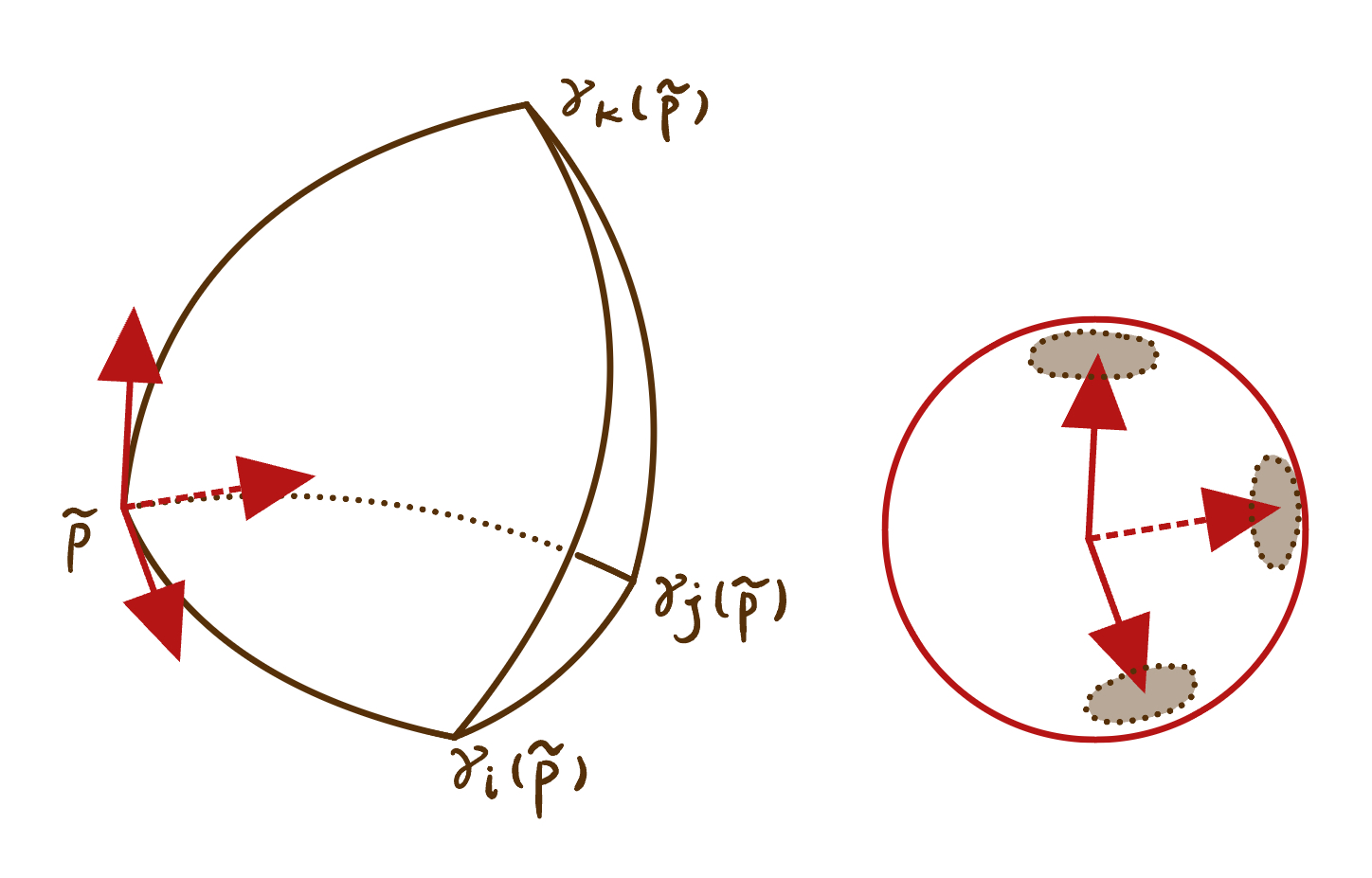}
    \end{figure}
    
    Since $\vol(S^{n - 1}) \geq m\vol(B_{\frac{\pi}{6}}(v_i)$ where $m$ is the number of the balls. We know that 
    \begin{equation}\label{eq: explicit C(n)}
    	m \leq \frac{\vol(S^{n - 1})}{\vol(B_{\frac{\pi}{6}}(v_i))} = C(n), 
    \end{equation}
    which is the second part of Gromov's theorem. Notice that $m$ is the number of the elements in the short basis of $\Gamma$ in the special case when $\sect_M \geq 0$. 

    Now, we consider the general case $M^n$ of $\sect_M \geq \kappa$, $\diam(M) \leq D$, but $\kappa < 0$ (The case $\kappa \geq 0$ has been done in the above.).
    By rescaling, we only need to consider the case when $\sect_M \geq -1$.
    \begin{align*}
        & \cosh{l_{ij}} = \cosh{l_i}\cosh{l_j} - \sinh{l_i}\sinh{l_j}\cos{\Tilde{\alpha}_{ij}}\\
        \implies & \cos{\Tilde{\alpha}_{ij}} = \frac{\cosh{l_i}\cosh{l_j} - \cosh{l_{ij}}}{\sinh{l_i}\sinh{l_j}} \leq \frac{\cosh{l_j}^2 - \cosh{l_{ij}}}{\sinh{l_j}^2} \leq \frac{\cosh{l_j}^2 - \cosh{l_j}}{\sinh{l_j}^2} \\
        & \leq \frac{\cosh{2D}^2 - \cosh{2D}}{\sinh{2D}} = \eps(D)\\
        \implies & \cos{\Tilde{\alpha}_{ij}} \leq \eps(D)\\
        \implies & \Tilde{\alpha}_{ij} \geq \arccos{\eps(D)} =: \eps'(D) \\
        \implies & \alpha_{ij} \geq \Tilde{\alpha}_{ij} \geq \eps(D) 
    \end{align*}
    For general lower curvature bound $\kappa < 0$, we conclude that $\alpha_{ij} \geq \tilde{\alpha}_{ij} \geq \eps(D, \kappa)$. Following the same procedure, the balls $B_{\frac{\eps(D, \lambda)}{2}}(v_i)$ are disjoint in $S^{n - 1}$. These $m$ vectors $v_1, \dots, v_m$ with pairwise angles $\geq \eps(D, \kappa)$. Then, we can conclude that 
    \begin{equation}\label{eq: explicit C(D, k, n)}
        m \leq \frac{\vol(S^{n - 1})}{\vol(B_{\frac{\eps(D, \kappa)}{2}}(v))} = C(D, \kappa, n)
    \end{equation}
\end{proof}
\begin{remark}
Notice  $\Gamma$ is finitely generated if there is  $m$ such that $\Gamma_m = \Gamma$. This is always the case for compact manifolds as Gromov's theorem implies since every compact Riemannian manifold has a finite diameter and satisfies some lower curvature bound (If $(M^n, g)$ is compact then this is a $\kappa \in \R$ such that $\sec_M \geq K$ \footnote{This is not a geometric argument at all. The infimum of a continuous function over a compact set can always be attained.}). It can also be seen in a more elementary way as follows.  Recall that if $M$ is compact and $\diam(M) \leq D$, then $\pi_1(M, p)$ is generated by loops of length $\leq 2D$. And hence $\Gamma = \Gamma(2D)$, i.e. all $\gamma_i$ has length at most $2D$ by Lemma~\ref{lem: loop length bound.}. In other words, in the construction of $\gamma_1, \gamma_2, \dots, \gamma_n$, we only consider the loop of length $\leq 2D$. Since $\overline{B_{2D}(\Tilde{p})}$ is compact, then there are only finitely many elements of $\overline{\Gamma}$ such that $\gamma(p) \in B_{2D}(p)$. Thus we have a finite short basis. However, if $M$ is not compact then a short basis need not be finite.
\end{remark}
\begin{corollary}
    $\sect_M \geq 0 \implies \beta_1 = \rank(H_1) \leq C(n)$. If $M^n$ admits $\sect_M \geq \kappa$, $\diam(M) \leq D \implies \beta_1 \leq C(n, \kappa, D)$. This is because
    \begin{align*}
        & H_1(M) = \pi_1(M)/[\pi_1(M), \pi_1(M)]\text{(Abelinazation)} \\ \implies & \text{$\rank(H_1) \leq $ number of generators of $\pi_1$}
    \end{align*} 
\end{corollary}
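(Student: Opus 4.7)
The plan is to use Gromov's Estimates Theorem~\ref{thm: gromov's theorem} to bound the number of generators of $\pi_1(M)$, and then pass from $\pi_1$ to $H_1$ by abelianization. This is a purely algebraic topology step once the fundamental group bound is in hand, so the heavy lifting has already been done.

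First I would recall the standard Hurewicz isomorphism in degree one: for any path-connected space $M$, there is a canonical surjection $\pi_1(M, p) \twoheadrightarrow H_1(M; \mathbb{Z})$ whose kernel is the commutator subgroup $[\pi_1(M),\pi_1(M)]$, so $H_1(M;\mathbb{Z}) \cong \pi_1(M)^{\mathrm{ab}}$. In particular, if $\pi_1(M)$ is generated by $N$ elements, then so is its abelianization $H_1(M;\mathbb{Z})$.

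Next I would invoke the elementary classification of finitely generated abelian groups: any abelian group generated by $N$ elements is a quotient of $\mathbb{Z}^N$, and hence has rank at most $N$. Therefore $\beta_1(M) = \rank H_1(M;\mathbb{Z}) \leq N$, where $N$ is any bound on the number of generators of $\pi_1(M)$.

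Finally I would apply Theorem~\ref{thm: gromov's theorem}: in case (2), $\sect_M \geq 0$ yields $N \leq C(n)$ generators, giving $\beta_1(M) \leq C(n)$; in case (1), $\sect_M \geq \kappa$ together with $\diam(M)\leq D$ yields $N \leq C(n,\kappa,D)$ generators, giving $\beta_1(M) \leq C(n,\kappa,D)$. There is no serious obstacle here since both ingredients (Hurewicz and Gromov's theorem) are already available; the only thing to be careful about is that in case (2) the manifold $M$ need not be compact, but Gromov's theorem is stated without compactness in that case, and Hurewicz holds for arbitrary path-connected spaces, so the argument goes through unchanged.
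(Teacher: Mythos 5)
Your proposal is correct and follows exactly the same route as the paper: apply Gromov's Estimates Theorem to bound the number of generators of $\pi_1(M)$, then pass to $H_1(M)$ via the abelianization $H_1(M)\cong \pi_1(M)/[\pi_1(M),\pi_1(M)]$ to bound $\beta_1$. The paper states this argument more tersely, but the content is the same.
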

Later, Gromov proved that the higher Betti numbers are also bounded by $C(n, \kappa, D)$.

\section{Grove-Shiohama Sphere theorem}
\subsection{Topological Rigidity Problem of Curvature Bound}
The study of controlling the topology of a manifold via the Riemannian structure (curvature) of a Riemannian manifold has a long history. 

Here is a trivial example followed by the Gauss-Bonnet theorem. A 2-dimensional orientable compact and simply connected Riemannian manifold $M^2$ with positive sectional curvature $K = \sect_{M^2} > 0$ (positive Gauss curvature) must be homeomorphic to a $2$-sphere. Indeed,
\begin{equation*}
	2 - 2g = \chi(M^2) = \int_{M^2}K dA > 0 \implies g < 1.
\end{equation*}
Since $M^2$ is simply connected, it must be orientable as well. Therefore, $M^2$ must be homeomorphic to $S^2$ by the classification theorem. 
\begin{remark}
	Hamilton \cite{H82} proved that any closed $3$-dimensional simply connected Riemannian manifold $M^3$ with positive sectional curvature must also be diffeomorphic to $S^3$. The Ricci flow method was first introduced in this paper. In fact, what is proved is a stronger argument: Any closed $3$-manifold that admits strictly positive Ricci curvature and also admits a metric of constant curvature. As a corollary: Any simply connected closed $3$-manifold that admits a metric of strictly positive Ricci curvature is diffeomorphic to the $3$-sphere. 
\end{remark}

As for higher dimension, one result that we have studied is Myer's theorem \ref{thm: meyer's theorem}: For Riemannian manifold $(M^n, g)$, if $\sect_{M^n} \geq \kappa > 0$ implies the Riemannian manifold compact, in particular, $\diam(M^n) \leq \frac{\pi}{\sqrt{\kappa}}$. By rescaling the Riemannian metric, we have 
\begin{equation*}
	\sect_{M^n} \geq 1 \implies \diam(M^n) \leq \pi
\end{equation*}

Since this also applies to the universal cover $\tilde M$ it follows that $\tilde M$ is compact and hence $\pi_1(M)$ is finite.

Next, the following sphere theorem, which characterized the topology of the bounded sectional curvature manifold more precisely, was posed by Rauch \cite{Ra51} and later resolved by Berger \cite{Be60} and Klingberg \cite{Kl61}.
\begin{theorem}[The Quarter Pinched Sphere Theorem]\label{thm: sphere}
	If $(M^n, g)$ is a complete, simply connected, Riemannian manifold with $ \sect_M \in (1, 4]$, then $M^n \stackrel{\text{homeo}}{\simeq} S^n$.
\end{theorem}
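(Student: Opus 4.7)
The plan is to execute the classical Berger--Klingenberg proof of the sphere theorem: produce two open metric balls of radius $\pi/2$ around a pair of diameter-realizing points whose union covers $M$ and each of which is a smooth $n$-disk, then recognize $M$ as the union of two topological disks glued along their common boundary sphere. I would begin with Myers' theorem~\ref{thm: meyer's theorem}, which, since $\sect_M > 1$, gives $\diam(M) \leq \pi$. Pick $p, q \in M$ realizing the diameter $D := d(p,q)$.

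The main technical input -- and in my view the decisive obstacle -- is \emph{Klingenberg's injectivity radius estimate}: for a simply connected Riemannian manifold with $0 < \sect \leq K$, one has $\inj(M) \geq \pi/\sqrt{K}$. Applied here with $K = 4$, this yields $\inj(M) \geq \pi/2$, and in particular $D \geq \pi/2$. This is the only step where the upper curvature bound $\sect_M \leq 4$ enters, and it is \emph{not} a consequence of the lower-curvature Toponogov theory developed in the excerpt. Its proof uses Rauch's comparison for upper curvature bound to bound from below the length of a shortest closed geodesic, combined with a Morse-theoretic argument on the loop space (using simple connectivity) to rule out short non-closed geodesic loops.

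With $\inj(M) \geq \pi/2$ in hand, I carry out the covering step via the global hinge comparison in $\modsp{1}$. For any $x \in M$ with $d(x,p) \geq \pi/2$ I want $d(x,q) < \pi/2$. Since $p$ realizes the maximum of $d(\cdot, q)$, it is a critical point of $d(\cdot, q)$: the set of initial directions at $p$ of minimal segments $[pq]$ is not contained in any open half-space of $T_pM$. Hence one can choose a minimal $[pq]$ whose initial direction makes angle $\alpha \leq \pi/2$ with a fixed minimal $[px]$. Apply the global hinge comparison in $\modsp{1}$:
\[
\cos d(x,q) \;\geq\; \cos d(p,x)\cos D \,+\, \sin d(p,x)\sin D\, \cos \alpha.
\]
With $d(p,x), D \geq \pi/2$ and $\alpha \leq \pi/2$, both summands on the right are nonnegative, giving $d(x,q) \leq \pi/2$; the strict inequality needed to get an \emph{open} cover follows from an additional argument ruling out the borderline $D = d(x,p) = d(x,q) = \pi/2$, which corresponds to $\mathbb{CP}^k$-type rigidity and is excluded by the strict pinching $\sect_M > 1$.

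Finally, since $\inj(M) \geq \pi/2$, the exponential maps $\exp_p$ and $\exp_q$ restrict to diffeomorphisms from Euclidean balls of radius $\pi/2$ onto $B_{\pi/2}(p)$ and $B_{\pi/2}(q)$, so each is an open $n$-disk. Thus $M$ is a compact $n$-manifold written as a union of two open $n$-disks, and the classical Brown--Mazur theorem -- equivalently, the Reeb-type observation that any compact manifold carrying a function with precisely two critical points is a topological sphere -- yields $M \cong_{\mathrm{homeo}} S^n$. The two remaining subtleties are the borderline analysis inside the covering step and citing the topological gluing theorem; both are routine once Klingenberg's estimate is granted, which I reiterate is the main obstacle.
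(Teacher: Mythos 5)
The paper does not prove this theorem: it attributes the result to Rauch, Berger, and Klingenberg, and the remark immediately after the statement simply refers the reader to do Carmo's book \cite{dCa} for a proof. So there is no "paper's own proof" against which to compare; what you have written is essentially the classical Berger--Klingenberg argument that \cite{dCa} presents, and you have correctly identified that its only non-Toponogov ingredient -- Klingenberg's lower bound $\inj(M) \geq \pi/\sqrt{K_{\max}}$ under simply connectedness and an \emph{upper} curvature bound -- lies outside the lower-curvature-bound comparison toolkit the notes develop. That diagnosis is exactly right: the surrounding chapters (Rauch, Toponogov, critical point theory, Grove--Shiohama) all flow from $\sect \geq \kappa$, whereas Klingenberg's estimate needs Rauch-type comparison in the other direction together with a loop-space argument, neither of which appears here.

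Two remarks on the remaining steps you flagged as routine. First, once $\inj(M) \geq \pi/2$ is in hand, an alternative to the Berger covering lemma plus Brown--Mazur is to invoke the Grove--Shiohama theorem (Theorem~\ref{thm: Grove-Shiohama sphere}), which the notes \emph{do} prove in full via critical point theory -- provided you can establish $\diam(M) > \pi/2$ \emph{strictly}. Second, that strict inequality is where the argument is genuinely not routine. From $\inj(M) \geq \pi/2$ you only get $\diam(M) \geq \pi/2$, and ruling out equality is precisely the borderline where strict quarter-pinching must enter; the $\diam = \pi/2$ case is, as the notes' own remark on Berger's rigidity observes, the situation of the rank-one symmetric spaces. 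Your hinge computation at $p$ as written only yields $d(x,q) \leq \pi/2$ and does not by itself exclude points equidistant at $\pi/2$ from both $p$ and $q$; closing this requires either the sharper form of Berger's covering lemma or an independent proof that strict pinching forces $\diam > \pi/2$. So the outline is faithful to the classical proof the paper cites, but the passage from "$\diam \geq \pi/2$" to "strict open cover" deserves a full argument rather than a one-line appeal to "routine rigidity."
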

\begin{remark}
	The proof of the classical sphere theorem \ref{thm: sphere} in the compact setting can also be found in do Carmo's book \cite{dCa}.  
\end{remark}
\begin{remark}
	If we replace the assumption by $\sect_M \in [1, 4]$, the classical sphere theorem \ref{thm: sphere} fails. Here is a counter-example (See Exercise 12, Chap. 8 in \cite{dCa}): The sectional curvature of complex projective space $\C P^n$, $n > 1$ lies in the interval $[1, 4]$ but is not homeomorphic to a sphere. In fact, it was proved by Berger \cite{Be60}, that if $\sect_M \in [1, 4]$, then either 
	\begin{itemize}
		\item $\diam(M^n) > \pi$ and $M^n \stackrel{\text{homeo}}{\simeq} S^n$, or
		\item $\diam(M^n) = \pi$ and $M^n$ is isometric to a rank $1$ symmetric space (See \cite{CE75})
	\end{itemize}
\end{remark}

In this section, we are going to study, in detail, the sphere theorem due to Grove and Shiohama \cite{GS77}, in which they replacing the upper bound of the sectional curvature with a lower diameter bound.

\begin{theorem}[Grove-Shiohama Sphere Theorem]\label{thm: Grove-Shiohama sphere}
    If $(M^n, g)$ is a complete Riemannian manifold with $\sect_M \in [1, \infty)$ and $\diam(M) > \frac{\pi}{2}$, then $M^n \stackrel{\text{homeo}}{\simeq} S^n$.
\end{theorem}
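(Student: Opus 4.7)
The plan is to reduce the theorem to a statement about critical points of the distance function and then appeal to the deformation lemma for distance functions: if $p, q \in M$ realize the diameter, I will show that $d_p$ has no critical points on $M \setminus \{p, q\}$; critical point theory will then give $M \stackrel{\text{homeo}}{\simeq} S^n$.

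First, Myers' theorem \ref{thm: meyer's theorem} gives that $M$ is compact with $\diam(M) \le \pi$, so one may pick $p, q \in M$ realizing $D := \diam(M) \in (\pi/2, \pi]$. The central step is the following. Fix $x \in M \setminus \{p, q\}$ and suppose for contradiction that $x$ is a critical point of $d_p$ in the sense of Grove--Shiohama: for every $v \in T_xM$ there is a minimizing geodesic from $x$ to $p$ whose initial direction makes angle $\le \pi/2$ with $v$. Let $w \in T_xM$ be the initial velocity of a minimizing unit-speed geodesic from $x$ to $q$; by criticality there is a minimizing unit-speed geodesic from $x$ to $p$ with initial velocity $\xi$ satisfying $\alpha := \mangle(w, \xi) \le \pi/2$.

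Apply the global hinge comparison with $\kappa = 1$ to the hinge $[x_p^q]$ (the perimeter bound Lemma~\ref{lem: perimeter bound} ensures the global Toponogov theorem applies). Writing $a = |xp|$ and $b = |xq|$, this yields
\[
\cos D \;\ge\; \cos a\cos b + \sin a\sin b\cos\alpha \;\ge\; \cos a\cos b,
\]
while $\cos D < 0$ since $D > \pi/2$. A quick sign analysis rules out $a, b$ both $\le \pi/2$ (right side nonnegative) and both $> \pi/2$ (right side positive), so WLOG $a > \pi/2 \ge b > 0$. Since $p, q$ realize the diameter, $a \le D$, and as cosine is strictly decreasing on $(\pi/2, \pi]$ one has $|\cos a| \le |\cos D|$. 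Rewriting the above using $\cos a < 0$, $\cos b \ge 0$, $\cos\alpha \ge 0$ gives $|\cos a|\cos b \ge |\cos D|$, and combined with $|\cos a| \le |\cos D|$ this forces $\cos b \ge 1$, i.e.\ $b = 0$, contradicting $x \neq q$.

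Having shown that $d_p$ has only the two critical points $p$ and $q$, the deformation lemma for distance functions produces a continuous gradient-like vector field on $M \setminus \{p, q\}$ whose flow trivializes the regular level sets. This exhibits $M \setminus \{q\}$ as homeomorphic to an open metric ball about $p$ and gives a closed disk neighborhood of $q$ inside $M \setminus \{p\}$; the two closed disks meet along their common boundary sphere, so $M \approx S^n$. The main obstacle is the case analysis in the key inequality, where one must use both the sharpness of $\alpha \le \pi/2$ and the diameter maximality $a, b \le D$ to force $b = 0$; a secondary point is that one needs the \emph{global} hinge comparison (applicable via Lemma~\ref{lem: perimeter bound}) rather than only the local one from Chapter~3.
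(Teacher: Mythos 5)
Your proof is correct, and the crucial Step~2 (ruling out critical points of $d_p$ away from $\{p,q\}$) is organized differently from the paper's argument. The paper splits into three cases: when $d(p,x)>\pi/2$ it invokes concavity of $d_p$ along the geodesic $\gamma_2$ from $x$ to $q$; symmetrically when $d(q,x)>\pi/2$; and only in the residual case $d(p,x),d(q,x)\le\pi/2$ does it apply the hinge comparison plus the spherical cosine law. You instead run a single, uniform cosine-law computation from the hinge $[x_p^q]$: the inequality $\cos D\ge\cos a\cos b+\sin a\sin b\cos\alpha$ with $\cos\alpha\ge 0$ kills the cases $a,b\le\pi/2$ and $a,b>\pi/2$ by a sign check, and in the mixed case the diameter maximality $a\le D$ yields $|\cos a|\le|\cos D|$, which together with $|\cos a|\cos b\ge|\cos D|$ forces $\cos b\ge 1$, i.e.\ $b=0$. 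This is cleaner: it sidesteps the concavity bookkeeping (in particular, the paper's case~1 tacitly needs $\gamma_2$ to remain outside $B_{\pi/2}(p)$ for $f\circ\gamma_2$ to be concave on the whole interval, a point your argument never has to confront), and it also subsumes the paper's separate Step~1 on uniqueness of the maximum, since a second maximum would itself be a critical point of $d_p$ and thus excluded. What the paper's route buys in exchange is a more geometric picture of \emph{why} the angles open up, via the concavity of $d_p$ and $d_q$. One small presentational point: your closing sentence passes directly from ``two closed disks glued along their boundary sphere'' to $M\approx S^n$; strictly speaking this is a twisted sphere, and the homeomorphism requires extending the boundary homeomorphism radially to the disk (Alexander's trick), which the paper spells out and you should at least mention.
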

\begin{remark}
	In 1993, Grove and Peterson \cite{GP93} generalized Theorem \ref{thm: Grove-Shiohama sphere} to Alexandrov spaces, which is going to be introduced in Chapter \ref{Ch13} of this lecture notes. In this generality to get a sphere theorem one needs to replace the diameter lower bound with a stronger assumption of a lower radius bound. Given a metric space $(X, d)$, the radius is defined by 
	\begin{equation*}
		\textbf{rad}(X) = \inf{\br{\text{R > 0: $B_R(x)=X$ for some $x \in X$}}}
	\end{equation*}
	It's easy to see that $\diam(X)/2\le \rad(X)\le \diam (X)$.
	Therefore, the assumption on $\textbf{rad}(X) > \frac{\pi}{2}$ is stronger than $\diam(M) > \frac{\pi}{2}$. Grove and Petersen proved that if $X^n$ is an n-dimensional Alexandrov space of $\curv\ge 1$ and $\rad>\pi/2$, then $X$ is homeomorphic to $S^n$.
\end{remark}
\begin{remark}
	For further developments on topological sphere theorem and differential sphere theorem, see surveys by Wilking \cite{Wi07B}, Brendle and Schoen \cite{BS09}.
\end{remark}
\begin{remark}
	The lower diameter bound $\diam(M^n) > \frac{\pi}{2}$, given in the assumption of Grove-Shiohama theorem \ref{thm: Grove-Shiohama sphere} is sharp. For examples, $\R P^n$, $\C P^n$ and $\h P^n$ has $\sect \geq 1$ and $\diam = \frac{\pi}{2}$ but they are not homeomorphic to $S^n$. The work of Gromoll-Grove-Wilking \cite{GG87, Wi07A} fully classify the rigidity of the manifolds when $\diam(M) = \frac{\pi}{2}$. That is, if $(M^n, g)$ is a compact manifold with $\sect_M \geq 1$ and $\diam(M) = \frac{\pi}{2}$, then either $M^n \stackrel{\text{homeo}}{\simeq} S^n$ or it is isometric to one of $\R P^n$, $\C P^n$, $\h P^n$ and $\mathbb{C}aP^2$. 
\end{remark}

\subsection{The Critical Points Theory of the Distance Function}
Our goal in this section is to prove the Grove-Shiohama sphere theorem \ref{thm: Grove-Shiohama sphere}. The key is the critical point theory of the distance function \cite{GS77, Gr93}. 
\begin{remark}
	The critical points theory is also called the Morse theory of distance functions. In the Morse theory, one relates the topology of a Riemannian manifold $M$ to the critical points of a Morse function on $M$. 
\end{remark}

Let $(M^n, g)$ be a complete manifold, $A \subseteq M$ a closed subset. Let $f = d(\cdot, A)$ the distance function to $A$. Given a $p \notin A$, $f(p) > 0$, We denote the family of unit initial vectors of the distance minimizing geodesics from $p$ to $A$ by $\Uparrow_p^A$. To clarify  $\Uparrow_p^A$ consists of directions of shortest geodesics $[p,a]$ such that $a\in A$ and $d(p,a)=d(p,A)$.

\begin{figure}[htbp]
    \centering
        \includegraphics[width=0.3\textwidth]{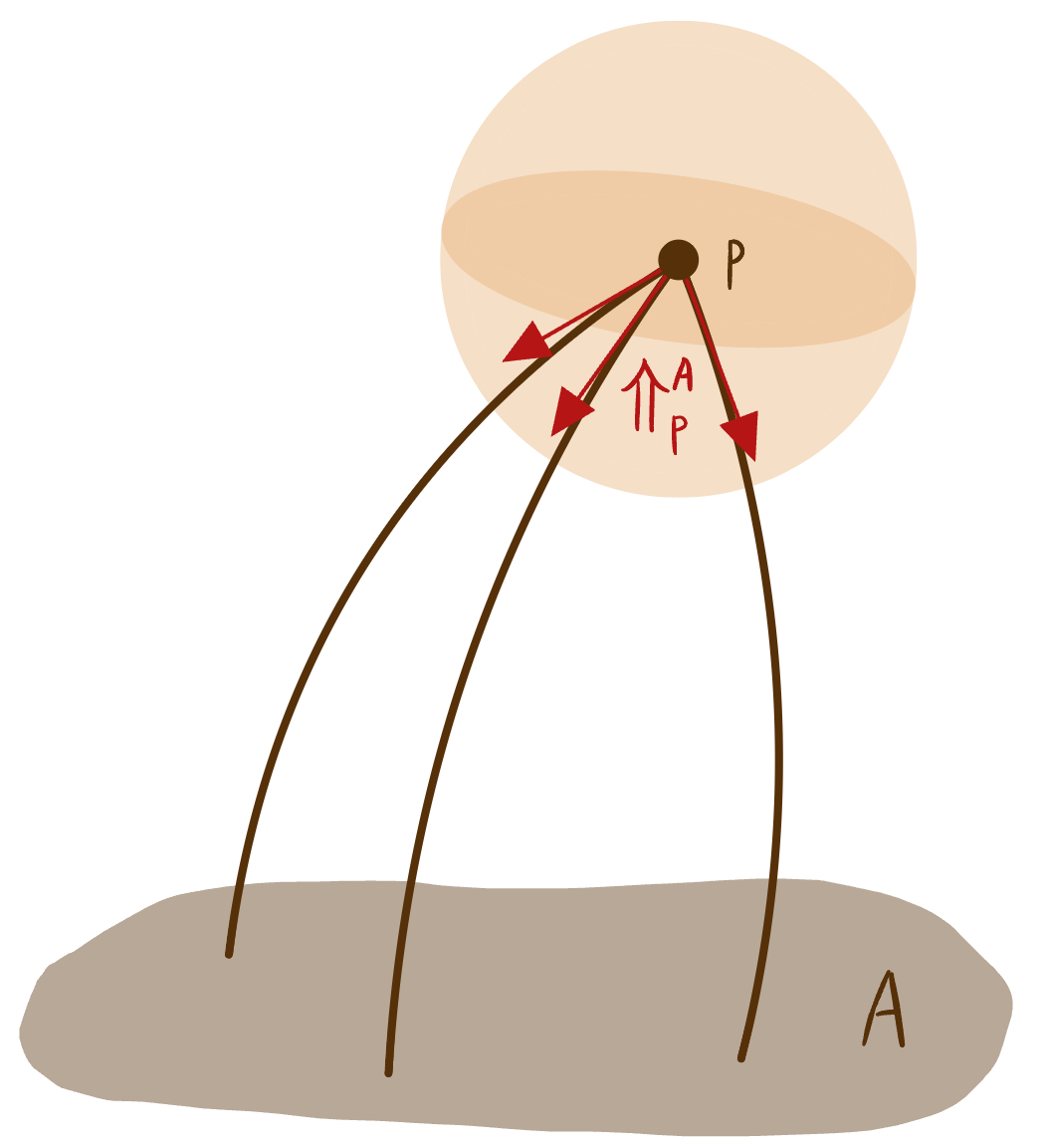}
        \caption{For each $w \in \Uparrow_p^A$, $w = \dga(0)$ for some distance minimizing geodesic $\gamma$ starting from $p$ to the closed set $A$.}
    \end{figure}
    
Firstly, we are going to introduce the first variation formula.
 \begin{theorem}[First Variation Formula]\label{thm: the first variation formula}
 	Given $(M^n, g)$ with $\sect_M \geq \kappa$, $A \subseteq M$ a closed subset, $f = d(\cdot, A)$, $p \in M \backslash A$ as above. 
 	Let $v \in T_pM$ be a unit vector at $p$. 
 	Then $f'_+(v) := \frac{df(\exp(tv))}{dt}|_{t = 0^+}$ exists and 
 	\begin{equation}\label{eq: the first variation formula}
 		df_p(v): = f_+'(v) =  -\cos{(\alpha)} 		 = \inf\br{\inp{v, u}: w \in \Uparrow_p^A}.
 		 \footnote{Taking $-\cos{(\alpha)}$ because it is monotone increasing with respect to $\alpha$ in $[0, \pi]$.}
 		 \footnote{In the first variation formula, we write $df_p(v) = f'_+(v)$. 
 		 It doesn't mean the function is differentiable. 
 		 They both represent the directional derivative in the direction of $v$.}
 	\end{equation}
 	where $\alpha$ is the smallest angle between $v$ and the shortest geodesic from $p$ to $A$, i.e. $\alpha = \inf\br{\mangle(v, w): w \in \Uparrow_p^A}$. 
 	The equation \ref{eq: the first variation formula} is called the \textbf{first variation formula of $f$ along $v$}.
 \end{theorem}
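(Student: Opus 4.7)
The plan is to establish the existence of $f'_+(v)$ together with the stated formula by proving two matching one-sided bounds on the difference quotient $(f(c(t))-f(p))/t$, where $c(t)=\exp_p(tv)$: namely $\limsup_{t\to 0^+}\le -\cos\alpha$ and $\liminf_{t\to 0^+}\ge -\cos\alpha$. Note that $-\cos(\mangle(v,w))=-\langle v,w\rangle$ for unit vectors, and since $-\cos$ is increasing on $[0,\pi]$, the infimum over $w\in\Uparrow_p^A$ of $-\cos(\mangle(v,w))$ equals $-\cos\alpha$; so the two sides of the identity already agree as quantities.

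For the $\limsup$ bound, fix any $w\in\Uparrow_p^A$, with associated foot $a_w\in A$ satisfying $d(p,a_w)=f(p)$ and initial direction $w$. The triangle inequality gives $f(c(t))\le d(c(t),a_w)$, and Toponogov hinge comparison (Theorem~\ref{thm: hinge}) applied to the hinge $[p_{c(t)}^{a_w}]$ with included angle $\mangle(v,w)$ yields
\[
d(c(t),a_w)\le \modcvee^\kappa(\mangle(v,w);\,t,\,f(p))=f(p)-t\cos\mangle(v,w)+O(t^2),
\]
where the last equality is the Taylor expansion of the model side length in $t$ about $0$ (a cosine-law computation in $\modsp{\kappa}$, valid for $t$ small relative to $f(p)>0$). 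Dividing by $t$ and letting $t\to 0^+$ gives $\limsup\le -\cos\mangle(v,w)$; taking the infimum over $w\in\Uparrow_p^A$ gives $\limsup\le -\cos\alpha$.

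For the $\liminf$ bound, select $t_n\to 0^+$ realizing the $\liminf$ and, for each $n$, pick $a_n\in A$ with $d(c(t_n),a_n)\le f(c(t_n))+t_n^2$. By completeness and Hopf-Rinow, take a shortest geodesic $\gamma_n$ from $p$ to $a_n$ of length $\ell_n=|pa_n|$ and initial direction $u_n\in S_pM$. The 1-Lipschitz property of $f$ and the triangle inequality $\ell_n\le t_n+d(c(t_n),a_n)$ force $\ell_n\to f(p)$. Compactness of $S_pM$ gives (along a subsequence) $u_n\to u_\infty$, $a_n\to a_\infty\in A$, with $\gamma_\infty$ a shortest from $p$ to $A$; thus $u_\infty\in\Uparrow_p^A$ and $\mangle(v,u_n)\to\mangle(v,u_\infty)\ge\alpha$. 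The remaining task is to show that $(f(c(t_n))-f(p))/t_n \ge -\cos\mangle(v,u_n)+o(1)$, for then passing to the limit gives $\liminf\ge -\cos\mangle(v,u_\infty)\ge -\cos\alpha$.

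The main obstacle is this last inequality, which amounts to expanding $d(c(t_n),a_n)$ to first order in $t_n$ with a uniform-in-$n$ remainder. I would handle it by applying Toponogov hinge comparison at the vertex $c(t_n)$ to the hinge with sides $[c(t_n),p]$ (length $t_n$) and $[c(t_n),a_n]$ (length $r_n$), together with the cosine-law manipulation in $\modsp{\kappa}$ and the fact that $\ell_n\ge f(p)$; this produces a bound on $\cos\beta_n$ (the angle at $c(t_n)$). The small triangle $[p,c(t_n),a_n]$ degenerates as $t_n\to 0$ because the angle $\gamma_n$ at $a_n$ satisfies $\sin\gamma_n=O(t_n/f(p))\to 0$ (its opposite side $t_n$ shrinks while the other two sides stay near $f(p)>0$), so the angles at $p$ and $c(t_n)$ become asymptotically supplementary and the bound on $\cos\beta_n$ converts into the desired bound on $\cos\mangle(v,u_n)$. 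Combining the two bounds shows $f'_+(v)$ exists and equals $-\cos\alpha=\inf_{w\in\Uparrow_p^A}\langle v,-w\rangle$.
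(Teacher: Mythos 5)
Your upper bound argument matches the paper's, and your overall plan (two one-sided bounds, compactness to extract a limit direction in $\Uparrow_p^A$) is the right skeleton. However, the lower bound contains a genuine gap in the ``asymptotically supplementary'' step.

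You introduce $u_n=\uparrow_p^{a_n}$ and aim to show $\bigl(f(c(t_n))-f(p)\bigr)/t_n \ge -\cos\mangle(v,u_n)+o(1)$. The Toponogov hinge comparison applied at the vertex $c(t_n)$ produces, after the Taylor expansion of the model side,
\begin{equation*}
\frac{f(c(t_n))-f(p)}{t_n}\ \ge\ \frac{r_n-\ell_n}{t_n}\ \ge\ \cos\beta_n - O(t_n),
\end{equation*}
where $\beta_n=\mangle\bigl(\uparrow_{c(t_n)}^{p},\uparrow_{c(t_n)}^{a_n}\bigr)$ is the \emph{actual} angle in $M$ at $c(t_n)$. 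You then want to replace $\cos\beta_n$ by $-\cos\mangle(v,u_n)$, which requires $\alpha_n+\beta_n\le\pi+o(1)$ for the actual angles at $p$ and $c(t_n)$. But Toponogov's angle comparison only gives the one-sided estimates $\alpha_n\ge\tilde\alpha_n$, $\beta_n\ge\tilde\beta_n$, $\gamma_n\ge\tilde\gamma_n$: from $\tilde\gamma_n=O(t_n)$ you get $\alpha_n+\beta_n\ge\pi-O(t_n)$, with no upper bound. Nothing rules out the actual angle at $a_n$ being large (and hence $\alpha_n+\beta_n$ strictly exceeding $\pi$) even though the comparison angle there degenerates, so ``supplementary'' cannot be deduced. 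Note also that $u_n$ is the direction of a shortest geodesic $p\to a_n$, while the hinge comparison at $c(t_n)$ involves the direction of a shortest geodesic $c(t_n)\to a_n$; these need not be the same geodesic even in the limit, since $p$ may have several shortest geodesics to $A$.

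The clean fix --- and this is what the paper does --- is to not introduce $u_n$ at all and work directly with $\beta_n$. Writing $w_n:=\uparrow_{c(t_n)}^{a_n}$, the shortest geodesics from $c(t_n)$ to $a_n$ subconverge (since $r_n\to f(p)$ and $c(t_n)\to p$) to a shortest geodesic from $p$ to $A$, so $w_n\to w_\infty\in\Uparrow_p^A$; and $\uparrow_{c(t_n)}^p\to -v$. Hence $\cos\beta_n\to\cos\mangle(-v,w_\infty)=-\cos\mangle(v,w_\infty)\ge-\cos\alpha$, and the inequality above passes to the limit directly. This bypasses any comparison of angles at two different vertices and uses only the one-sided Toponogov estimate in the direction in which it is available.
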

 
\begin{proof}[Proof of the First Variation Formula \ref{thm: the first variation formula}]
 The distance function $d(\cdot, a)$ is semi-concave on $M\setminus \{a\}$. Hence, the function $d(\cdot, A)$ as the infimum of the distance functions over $A$, must also be semi-concave. Hence the directional derivative $f'_+(v) $ exists. 
 
  For simplicity, assume $\sect_M \geq 0$. Denote the unit speed geodesic starting at $p$ in the direction $v$ by $\gamma(t)$. $\alpha$ is the minimum angle between $\gamma$ and the initial vector of the geodesics starting at $p$ to $A$. We denote $d = d(p, A)$ and $d(t)=d(\gamma(t), A)$. (see the following picture).

\begin{figure}[htbp]
    \centering
        \includegraphics[width=0.5\textwidth]{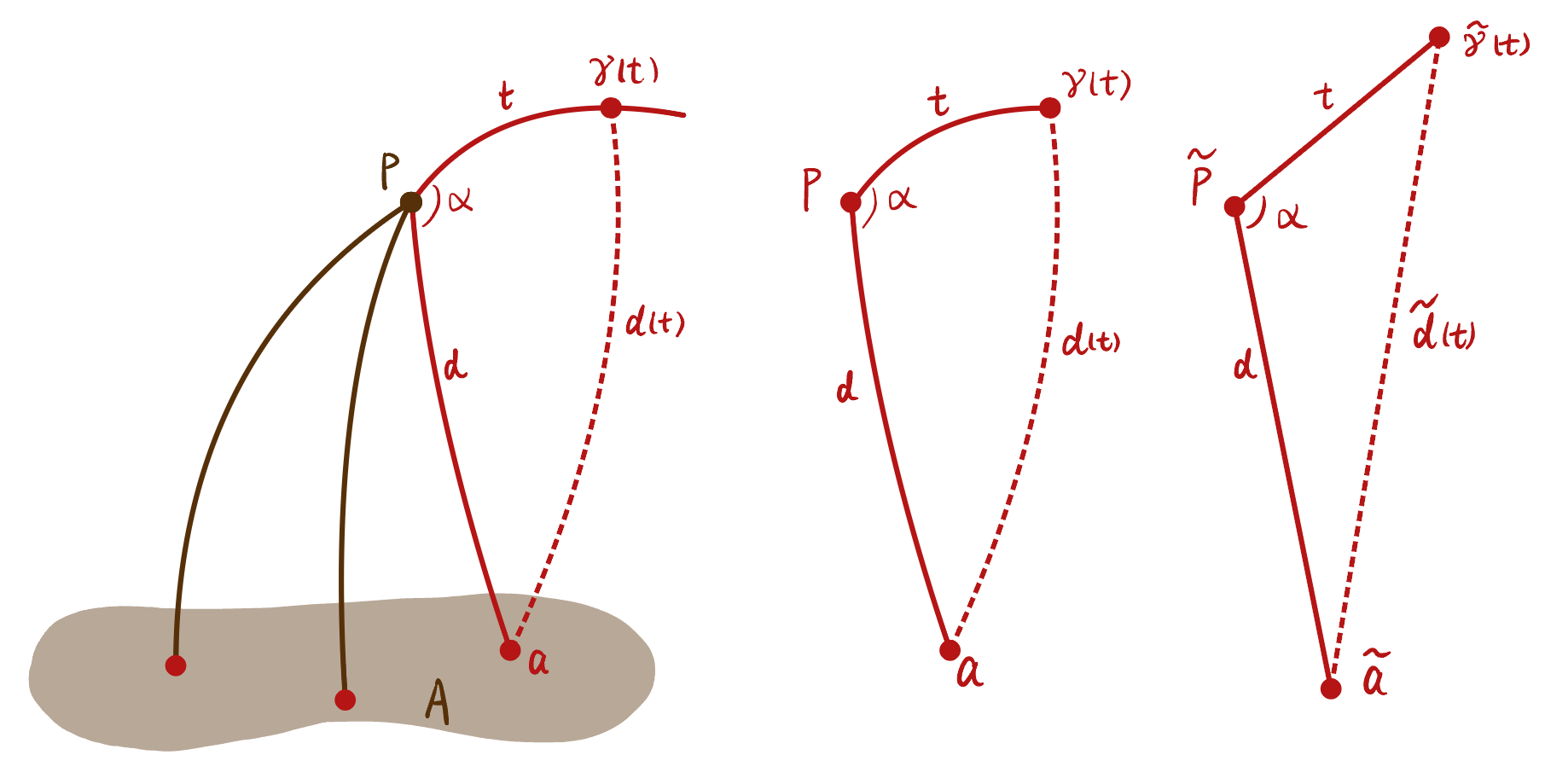}
    \end{figure}

And by the hinge comparison
\begin{equation*}
    d(t) = d(\gamma(t), A) \leq d(\gamma(t), a) \leq \tilde{d}(\tilde{\gamma}(t), \tilde{a}) =: \tilde{d}(t)
\end{equation*}
Then by the cosine law 
\begin{equation*}
	\tilde{d}(t)^2 = t^2 + d^2 - 2t\cdot d\cos{\alpha} \implies \tilde{d}(t) = \sqrt{d^2 + t^2 - 2t\cdot d\cos{\alpha}}
\end{equation*}
which can be differentiated at $t = 0$. That is
\begin{equation*}
    \tilde{d}'(t)|_{t = 0} = \frac{2t - 2d\cos{\alpha}}{2\sqrt{d^2}}\Big|_{t = 0} = -\cos{\alpha}
\end{equation*}
Therefore, we can conclude that $d(t) \leq d - t\cos{\alpha} + ct^2$, where $c = c(d)$ a constant depends on $d$. Thus
\begin{equation*}
 	d(t)=  d(\gamma(t), A) \leq d(\gamma(t), a) \leq d - t\cos{\alpha} + ct^2
\end{equation*}
for $t$ near $0$, more precisely $t \leq \eps$ for some $\eps = \eps(d)$. We can make $\eps(d)$ more precise. Recall that $d = d(p, A) = d(\gamma(t), A)$. Let $f(t) = d(\gamma(t), a)$, then 
\begin{equation*}
    f_+'(0) \leq -\cos{\alpha}\end{equation*}
This estimate follows the hinge version of Toponogov which gives that $d(t)\le \tilde{d}(t)$ for all small $t$.

\begin{figure}[htbp]
    \centering
        \includegraphics[width=0.5\textwidth]{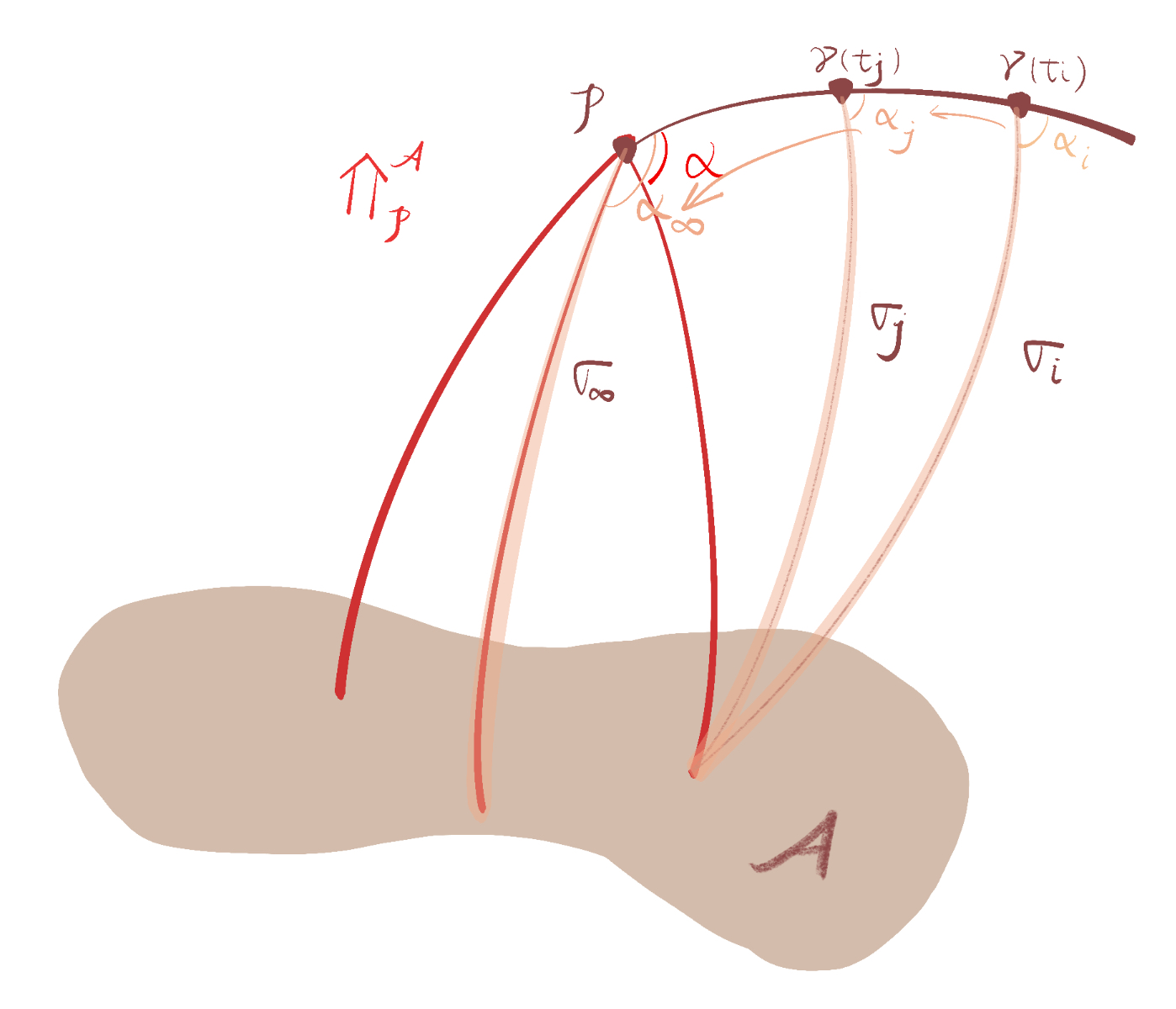}
    \end{figure}

Next, we are going to show $f_+'(0) \geq -\cos{\alpha}$ so that we can conclude thet $f_+'(0) = -\cos{\alpha}$. Let $t_i \to 0^+$ and look at $d(\gamma(t_i), A)$ where $\alpha_i$ is the angle between $\gamma(t)$ and the shortest geodesic $\sigma_i$ from $\gamma(t_i)$ to $A$. Then $\sigma_i \to \sigma$ where $\sigma$ is some shortest geodesic from $p$ to $A$. And $\alpha_i \to \alpha_\infty$. Note that by the definition of $\alpha$, we have that $\alpha\le \alpha_\infty$.
By the same argument as above, 
\begin{align*}
    d &\leq d_i - \cos(\pi - \alpha_i)t_i + ct_i^2\\
    & = d_i + \cos(\alpha_i)t_i + ct_i^2\\
    \implies & d_i \geq d - \cos{\alpha_i}t + ct^2 \quad\text{for large $i$}
\end{align*}
Therefore $\frac{d_i - d}{t_i} \geq -\cos{\alpha_i} + ct_i$. Then taking  $t_i \to 0$ we conclude that $f'(0)_+ \geq -\cos{\alpha_\infty}$. Since $\alpha_\infty\ge \alpha$ this implies that  $f'(0)_+ \geq -\cos{\alpha}$. 
 \end{proof}
\begin{definition}[Regular/Singular Points]
Let $(M^n, g)$ be complete. Let $A \subseteq M$ be a closed subset. Let $f: M \to \R$ be given by $f= d(\cdot, A)$. 

Now we want to give the definition of the critical points of the distance function. A point $p \in M\backslash A$ is called a \textbf{regular point of $f$} if there exists $v \in T_pM$ such that $df_p(v) > 0$. If no such point vector exists, then $p$ is called a \textbf{singular or critical point of $f$}.

$R \geq 0$ is a \textbf{regular value of $f = d(\cdot, A)$} if the level set $\br{f = R}$ is a set of regular points. Otherwise, $R$ is a \textbf{singular/critical value}.
\end{definition}

\begin{remark}
    This definition for singular and regular points can also be applied to general semi-concave functions. 
\end{remark}
\begin{remark}
	If a point $p \in M\backslash A$ is a regular point of $f$, by the first variation formula, we know that there exists a unit $v \in T_pM$,  $df_p(v) > 0 \iff -\cos{(\alpha)} > 0 \implies \alpha > \frac{\pi}{2}$. That means $\mangle{(v, u)} > \frac{\pi}{2}$ for any $u \in \Uparrow_p^A$. If $p$ is singular, then for any unit $v \in T_pM$, we can always find a vector $u \in \Uparrow_p^A$ such that the angle between $u$ and $v$ is less than or equal to $\frac{\pi}{2}$. 
	
	\begin{figure}[htbp]
    \centering
        \includegraphics[width=0.5\textwidth]{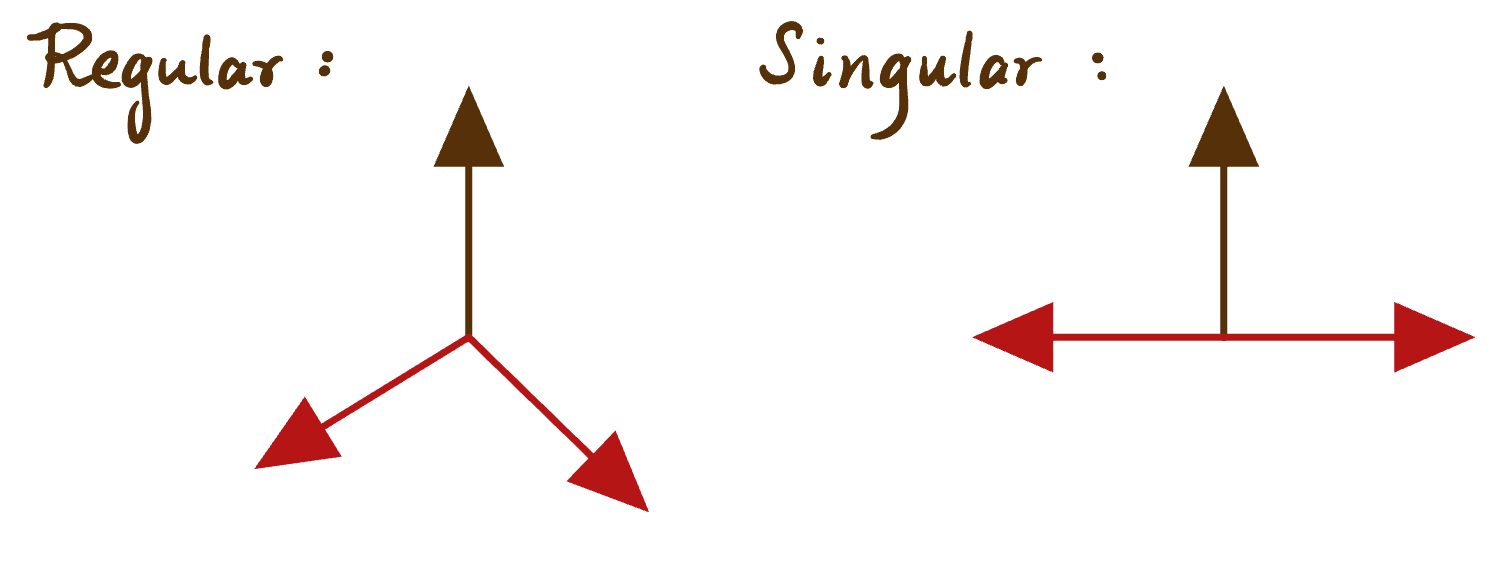}
    \end{figure}
\end{remark}
\begin{example}
    Consider $M = S \times \R$. Let $A = \br{(a, 0)}$. Then all points of $L = \cut{(\br{A})}$ except for $(-a, 0)$ are regular.  
    \begin{figure}[htbp]
    \centering
        \includegraphics[width=0.4\textwidth]{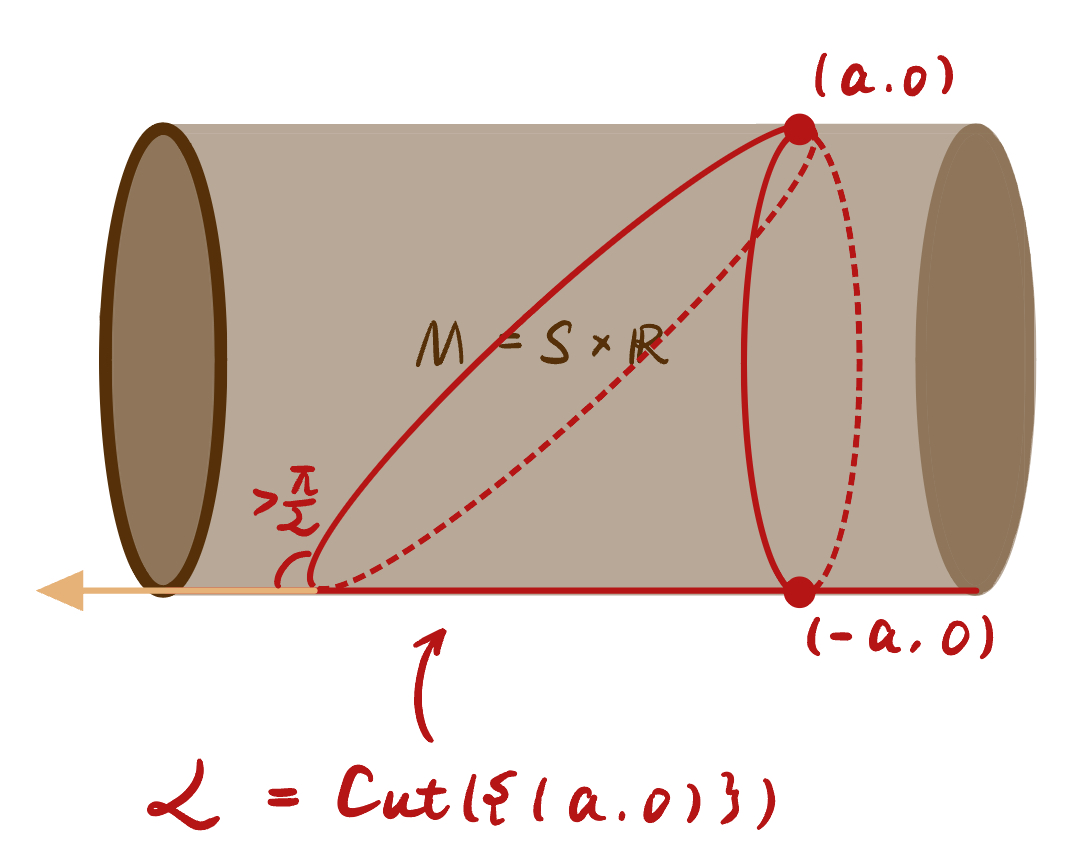}
    \end{figure}
    Note that $d(\cdot, p)$ must be regular outside of the cut locus of a point $p$. But this example shows that it can also be regular at some points of cut locus.
\end{example}

\begin{example}
	If $\Uparrow_p^A$ consists of only a single vector corresponding to the unique shortest geodesic from $p$ to $A$ then $p$ is regular. This is because we can take $v = -u$ so that by the first variation formula, we have $df_p(v) > 0$.
	
	\begin{figure}[htbp]
    \centering
        \includegraphics[width=0.4\textwidth]{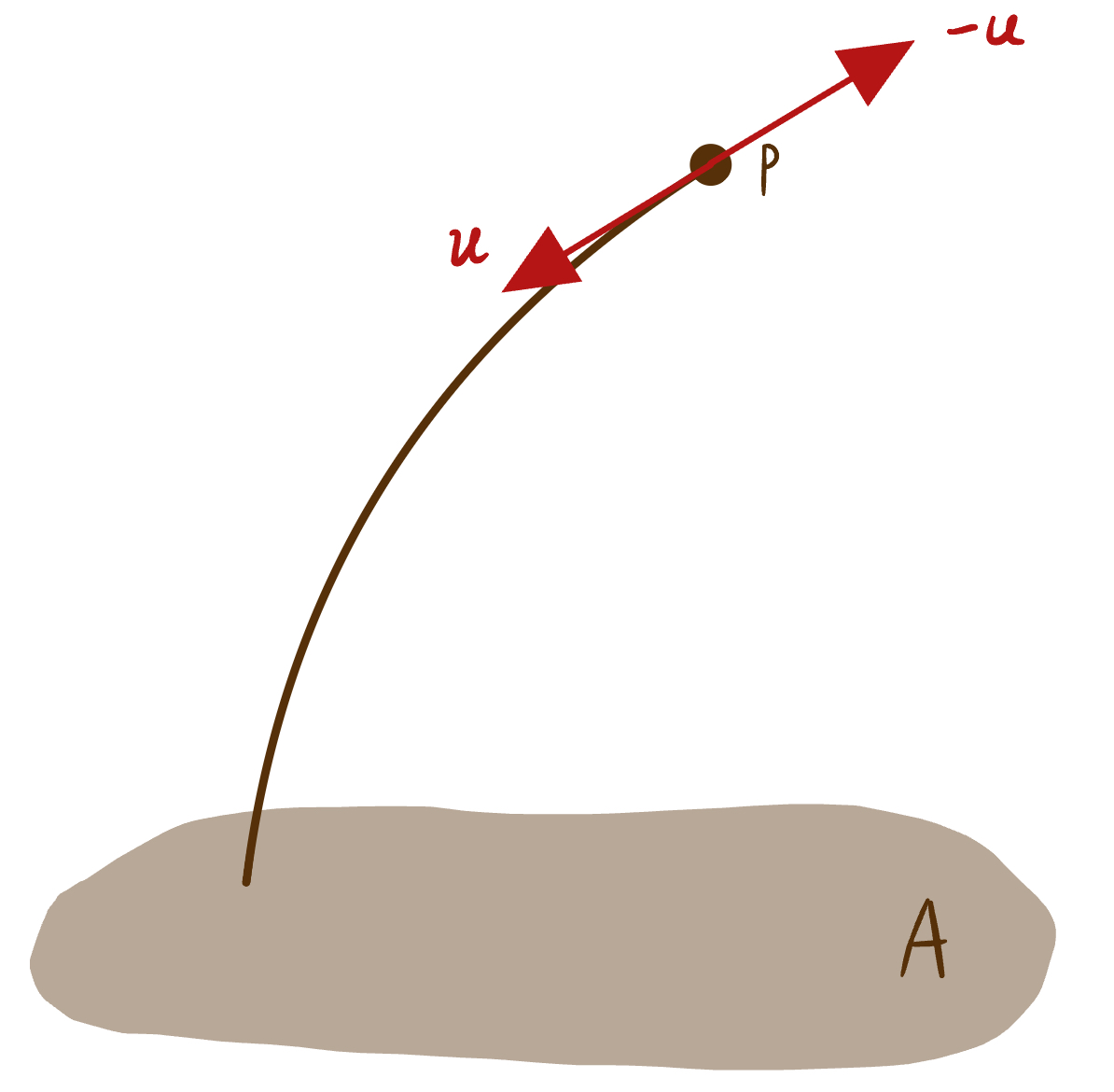}
    \end{figure}
    
\end{example}

\begin{example}
    Consider $T^2 = \R^2/\Z^2$. 
    
    \begin{figure}[htbp]
    \centering
        \includegraphics[width=0.4\textwidth]{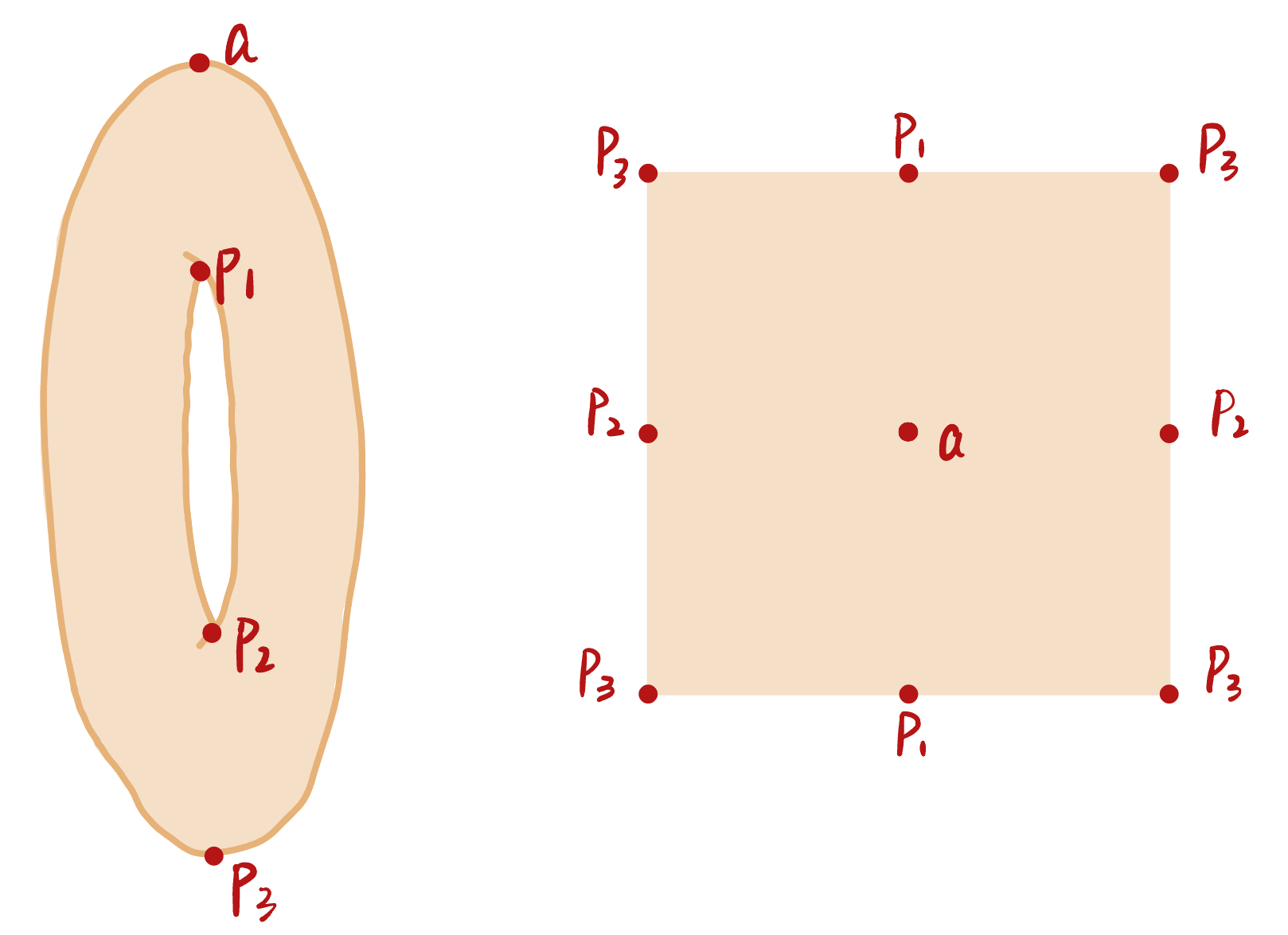}
    \end{figure}
     Let $a$ is the center of the square, $f = d(\cdot, \br{a})$ on $T^2$. 
     Then, $q_1, q_2, q_3$ are critical points. 
     The other points are regular. 
     This is easy to verify by drawing the following pictures.
    \begin{figure}[htbp]
    \centering
        \includegraphics[width=0.8\textwidth]{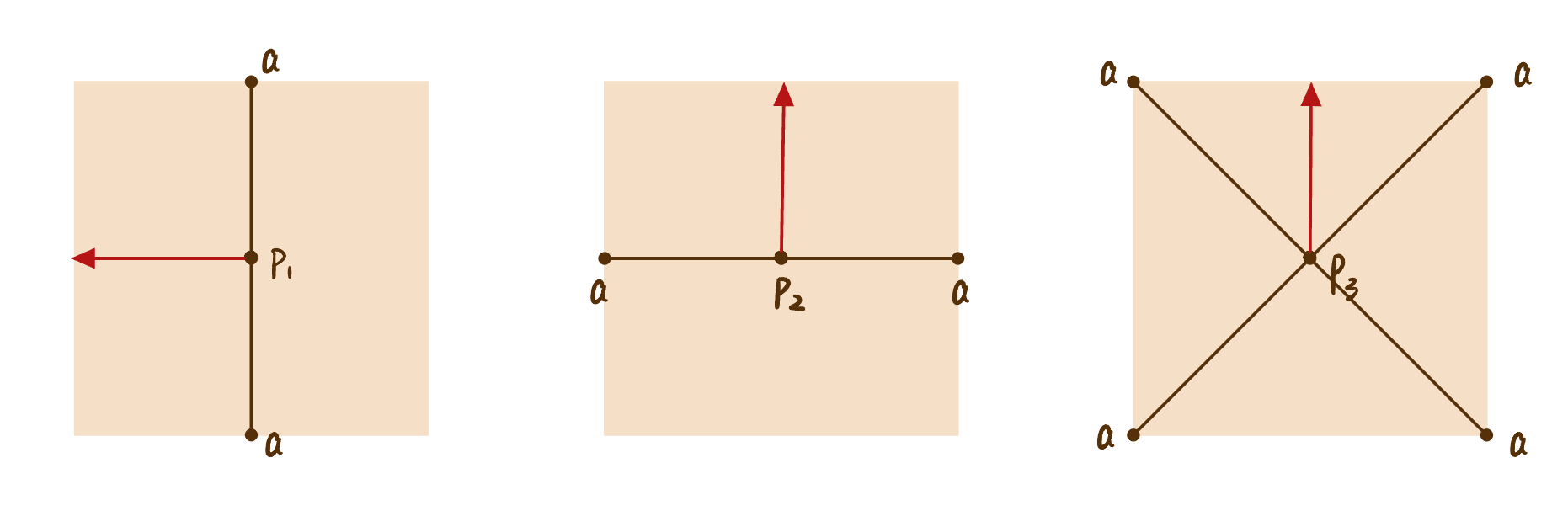}
    \end{figure}
\end{example}

\begin{definition}
Let $(M^n, g)$ be complete, $A \subseteq M$ a closed subset. 
Denote $f = d(\cdot, A)$ and consider an open subset $U \subseteq M\backslash A$ such that $f$ is regular on $U$.
A $C^\infty$-vector field $V$ on $U$ is called \textbf{gradient-like for $f$} if $df_p(V_p) > 0$ for any $p \in U$.
\end{definition}

\begin{proposition}\label{prop: gradient-like extension}
    Let $f$ be regular on an open subset $U \subseteq M$. 
    Then there exists a smooth gradient-like vector field $V$ for $f$ on $U$.
\end{proposition}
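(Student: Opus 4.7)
The plan is to construct $V$ by patching together local gradient-like extensions using a partition of unity, relying on the first variation formula (Theorem \ref{thm: the first variation formula}) and an upper semi-continuity property of the set-valued map $q \mapsto \Uparrow_q^A$.

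First, I would build a local gradient-like vector field near each point $p \in U$. By regularity of $p$, there is a unit vector $w_p \in T_pM$ with $df_p(w_p) > 0$; by the first variation formula this means
\begin{equation*}
\inf\{\langle w_p, u\rangle : u \in \Uparrow_p^A\} \;=:\; 2\delta_p \;>\; 0 .
\end{equation*}
Using a normal coordinate chart at $p$ (or parallel transport along radial geodesics from $p$), I would extend $w_p$ to a smooth vector field $W^p$ defined on some open neighborhood $U_p \subseteq U$ of $p$, with $W^p_p = w_p$.

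The main obstacle, and the heart of the argument, is to show that $W^p$ remains gradient-like on a sufficiently small neighborhood of $p$. This will follow from the upper semi-continuity of $\Uparrow^A$: if $q_i \to p$ and $u_i \in \Uparrow_{q_i}^A$ with $u_i \to u_\infty$, then $u_\infty \in \Uparrow_p^A$. Indeed, each $u_i$ is the initial velocity of a unit-speed shortest geodesic $\gamma_i$ from $q_i$ to some $a_i \in A$ with $d(q_i,a_i) = d(q_i,A) \to d(p,A)$; the points $a_i$ stay in a bounded (hence, by Hopf--Rinow, precompact) subset of the closed set $A$, so a subsequence of $\gamma_i$ converges to a unit-speed shortest geodesic from $p$ to a point of $A$ realizing $d(p,A)$, with initial vector $u_\infty$. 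Combining this with continuity of the inner product and of the extension $W^p$, shrinking $U_p$ if necessary, I obtain
\begin{equation*}
df_q(W^p_q) \;=\; \inf\{\langle W^p_q, u\rangle : u \in \Uparrow_q^A\} \;>\; \delta_p \;>\; 0 \qquad \text{for every } q \in U_p.
\end{equation*}

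Next, I would use paracompactness of $U$ to choose a locally finite subcover $\{U_{p_\alpha}\}_{\alpha \in \mathcal{A}}$ of $U$ together with a smooth partition of unity $\{\phi_\alpha\}$ subordinate to it, and set
\begin{equation*}
V \;=\; \sum_{\alpha \in \mathcal{A}} \phi_\alpha\, W^{p_\alpha},
\end{equation*}
which is a smooth vector field on $U$. To verify $V$ is gradient-like, observe that although $v \mapsto df_q(v)$ is not linear, it is concave and positively homogeneous: for any finite nonnegative combination $v = \sum c_i v_i$,
\begin{equation*}
df_q(v) \;=\; \inf_{u \in \Uparrow_q^A} \sum_i c_i \langle v_i, u\rangle \;\ge\; \sum_i c_i \inf_{u \in \Uparrow_q^A} \langle v_i, u\rangle \;=\; \sum_i c_i\, df_q(v_i).
\end{equation*}
Applying this at each $q \in U$ to the (locally finite) combination defining $V$, and using that at least one $\phi_\alpha(q) > 0$ with $df_q(W^{p_\alpha}_q) > 0$, gives $df_q(V_q) > 0$, completing the construction.
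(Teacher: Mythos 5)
Your proposal is correct and follows essentially the same route as the paper: extend a favorable direction at each regular point to a smooth local vector field, use a compactness argument — your upper-semicontinuity of $q \mapsto \Uparrow_q^A$ is exactly the paper's proof-by-contradiction limiting of shortest geodesics — to show the field remains gradient-like on a small neighborhood, then glue by a partition of unity and invoke concavity and positive homogeneity of $v \mapsto df_q(v)$ to conclude the sum is still gradient-like. The only cosmetic difference is that you record an explicit lower bound $\delta_p$ on the local neighborhood, which is not needed for the stated conclusion but harmless.
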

\begin{proof}
    Firstly, we want to prove the gradient-like vector field exists locally. 
    From the classical theory of differentiable manifold, we know that given a vector $v \in T_pM$ for some $p \in M$, we can always extend $v$ to a smooth global vector field $V$ on $M$ such that $V_p = v$. Based on this fact, we claim that when $p$ is a regular point so that $df_p(v) > 0$, we can find a sufficiently small neighborhood $U_p$ of $p$ such that the restriction of $V|_{U_p}$ is a gradient-like vector field. 
    
    Suppose not, then we can find a sequence of points $p_i \to p$ such that $df_{p_i}(V_{p_i}) \leq 0$. 
    \begin{figure}[htbp]
    \centering
        \includegraphics[width=0.4\textwidth]{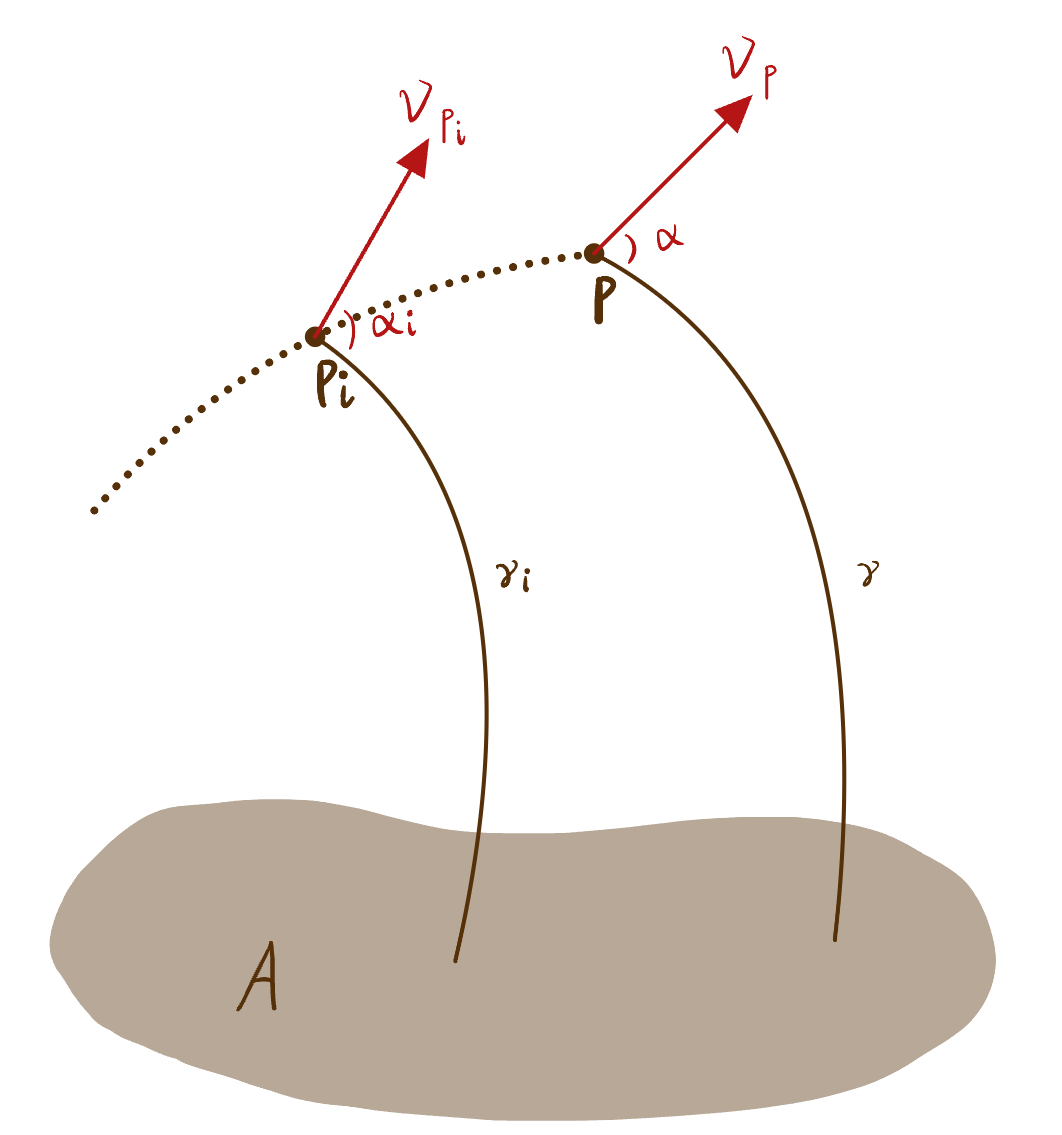}
    \end{figure} 
    And for each index $i$, there exists shortest geodesic $\gamma_i$ from $p_i$ to $A$ such that $\alpha_i = \mangle{(V_{p_i}, \dga_{i}(0))} \leq \frac{\pi}{2}$. Since the initial vector $\dga_i(0)$ all lies in a unit sphere, which is certainly compact, then we can extract sub-sequences from $\br{V_{p_i}}$ and $\br{\dga_i(0)}$ such that 
    \begin{align*}
    	& V_{p_{ij}} \to V_p\\
    	& \dga_{ij}(0) \to \dga(0)
    \end{align*}
    And this implies the sub-convergence of the angles:
    \begin{equation*}
    	\frac{\pi}{2}\geq \alpha_{ij} \to \alpha \leq \frac{\pi}{2}
    \end{equation*}
  Also as a limit of shortest geodesic from $p_i$ to$A$  $\gamma$ is the shortest geodesic from $p$ to $A$.
    
    Therefore, $p$ is also critical. And this is a contradiction. Therefore, we know that
    \begin{equation*}
        df_q(V_q) > 0\quad\text{for $q$ near $p$}.
    \end{equation*}
    Since $f$ is regular on $U$ we can do this near any point in $U$. Hence we can cover $U$ by open sets $ U_\alpha$ such that each $U_\alpha$ $f$ admits a gradient-like vector field $V_\alpha$.

    Then we can glue them into a global vector field $V$ on $U$ using partition of unity $\phi_\alpha$ subordinate to this cover $\br{U_\alpha}$ of $U$. 
    And take $V = \sum_{\alpha}\phi_\alpha V_\alpha$ with $\phi_\alpha \geq 0$ and $\sum_{\alpha}\phi_\alpha \equiv 1$ on $U$. 
    And we know that $\supp \phi_\alpha\subset U_\alpha$ and locally there are only finitely many of $\phi_\alpha \neq 0$. We claim that $V$ is gradient-like for $f$. 
    Recall that $df_p$ is concave on $T_pM$  for any  $p \in U$, 
    \begin{align*}
        df_p(V) = & df_p(\sum_{\alpha}\phi_\alpha V_\alpha)\\
        \geq & \sum_{\alpha}\phi_\alpha(p) df_p(V_\alpha) \\
        \geq & 0 
    \end{align*}
 	Moreover, the last inequality is strict since there exists $\alpha$ such that $\phi_\alpha(p)>0$.
	Note that if $V$ is gradient like for $f$ on $U$, then $\frac{V}{\abs{V}}$ is also gradient like. WLOG, we can conclude that there exists a unit gradient-like vector field. 
\end{proof}
 
\begin{corollary}\label{cor: regular neighborhood}
	From the first part of the proof of Proposition~\ref{prop: gradient-like extension}, we know that if $p \in M\backslash A$ is a regular point, then we can find a sufficiently small neighborhood $U_p$ of $p$ such that every point $q \in U_p$ is regular.
\end{corollary}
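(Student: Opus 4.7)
The plan is to read this corollary as the statement that the regularity locus is open, and to extract it directly from the argument already used in the first half of the proof of Proposition~\ref{prop: gradient-like extension}. Here is how I would organize it as a self-contained proof.

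First, I would start with a regular point $p \in M\setminus A$ and use the definition to pick a unit vector $v \in T_pM$ with $df_p(v) > 0$. Using a local chart (or simply a smooth frame around $p$), I would extend $v$ to a smooth vector field $V$ defined on some open neighborhood $W$ of $p$ with $V_p = v$. The goal is then to shrink $W$ to a (possibly smaller) neighborhood $U_p$ on which $df_q(V_q) > 0$ for every $q \in U_p$; once this is done, each such $q$ is regular, witnessed by the vector $V_q$ itself.

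The main step is the openness argument, which is the contradiction used in the proof of Proposition~\ref{prop: gradient-like extension}. Suppose no such neighborhood exists. Then one can find a sequence $q_i \to p$ with $df_{q_i}(V_{q_i}) \le 0$. By the first variation formula~\eqref{eq: the first variation formula},
\begin{equation*}
df_{q_i}(V_{q_i}) = \inf\{\langle V_{q_i}, u\rangle : u \in \Uparrow_{q_i}^A\} \le 0,
\end{equation*}
so for each $i$ there exists a unit initial vector $u_i \in \Uparrow_{q_i}^A$ of some shortest geodesic $\gamma_i$ from $q_i$ to $A$ with $\langle V_{q_i}, u_i\rangle \le 0$, i.e.\ the angle $\alpha_i := \measuredangle(V_{q_i}, u_i)$ satisfies $\alpha_i \ge \pi/2$ — equivalently $-\cos\alpha_i \le 0$.

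The hard part is making this limiting argument clean, and it relies on two compactness facts that I would spell out. Since $V$ is smooth and $q_i \to p$, $V_{q_i} \to V_p = v$. The vectors $u_i$ lie in the unit sphere of $T_{q_i}M$, which, after trivializing the tangent bundle in a chart near $p$, is compact, so along a subsequence $u_i \to u_\infty \in T_pM$ with $|u_\infty| = 1$. I would then argue that $u_\infty \in \Uparrow_p^A$: the geodesic $\gamma_i$ has length $d(q_i, A) \to d(p, A)$ by continuity of $f$, and the limit of shortest geodesics of bounded length (with converging endpoints and initial directions) in a complete Riemannian manifold is again a shortest geodesic, so $\exp_p(t u_\infty)$ reaches $A$ at distance $d(p, A)$. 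Passing to the limit in $\langle V_{q_i}, u_i\rangle \le 0$ gives $\langle v, u_\infty\rangle \le 0$, and then the first variation formula at $p$ yields
\begin{equation*}
df_p(v) = \inf\{\langle v, u\rangle : u \in \Uparrow_p^A\} \le \langle v, u_\infty\rangle \le 0,
\end{equation*}
contradicting the regularity of $p$. This contradiction produces the desired neighborhood $U_p \subseteq W$ on which $df_q(V_q) > 0$ everywhere, and hence on which every point is regular.
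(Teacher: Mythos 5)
Your proposal is correct and follows exactly the same compactness-and-limits contradiction argument that appears in the first half of the paper's proof of Proposition~\ref{prop: gradient-like extension}: extend $v$ to a smooth field near $p$, suppose no such neighborhood exists, extract convergent subsequences of the unit initial vectors $u_i\in\Uparrow_{q_i}^A$ and of $V_{q_i}$, argue that the limiting direction $u_\infty$ belongs to $\Uparrow_p^A$ because limits of shortest geodesics with converging endpoints are shortest, and pass to the limit to contradict $df_p(v)>0$. One small slip worth fixing: the parenthetical ``equivalently $-\cos\alpha_i\le 0$'' should read $-\cos\alpha_i\ge 0$, since $\alpha_i\ge\pi/2$ gives $\cos\alpha_i\le 0$; in fact, with the (intended) sign convention of Theorem~\ref{thm: the first variation formula} — $df_p(v)=-\cos\alpha=\inf\{-\langle v,u\rangle: u\in\Uparrow_p^A\}$, the displayed formula there has a sign typo — one gets $\langle V_{q_i},u_i\rangle\ge 0$ and $\alpha_i\le\pi/2$, matching the paper's proof, but the final contradiction $df_p(v)\le 0$ goes through either way.
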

From now on, we will assume $V$ is gradient-like and $\abs{V} = 1$.
\begin{lemma}\label{lem: integral curve}
    For $f = d(\cdot, A)$ and $f$ being regular on open subset $U \subseteq M \backslash A$. Let $K \subseteq U$ be compact. Then there exists $C(K) > 0$ such that $f$ increases with speed bigger than $C(K)$ along the integral curves of $V$ contained in $K$, i.e. if $\sigma(t)$ is an integral curve of $V$ and $\sigma(t) \subseteq K$, we have for $t_1 > t_2$, 
    \begin{equation*}
        f(c(t_1)) - f(c(t_2)) \geq C(K)\cdot(t_1 - t_2).
    \end{equation*}
\end{lemma}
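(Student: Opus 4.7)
The plan is to show that the function $\psi(p) := df_p(V_p)$ is lower semi-continuous on $U$, and hence attains a positive minimum on the compact set $K$; this constant will serve as $C(K)$.

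First, I would set up lower semi-continuity of $\psi$. Fix $p \in U$ and a sequence $p_n \to p$ in $U$. By the first variation formula (Theorem~\ref{thm: the first variation formula}), $\psi(p_n) = -\cos(\alpha_n)$, where $\alpha_n$ is the infimum over $w \in \Uparrow_{p_n}^A$ of $\mangle(V_{p_n}, w)$. For each $n$, choose a shortest geodesic $\gamma_n$ from $p_n$ to $A$ realizing (or nearly realizing) this infimum. By compactness of unit spheres and continuity of $V$, a subsequence of the initial vectors $\dot\gamma_n(0)$ converges to some unit vector $u \in T_pM$, and by continuity of the distance function $u$ is the initial vector of a shortest geodesic from $p$ to $A$; thus $u \in \Uparrow_p^A$. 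Hence $\alpha_{n_j} \to \mangle(V_p, u) \geq \alpha_p$, where $\alpha_p$ is the minimizing angle at $p$. Since $-\cos$ is increasing on $[0,\pi]$, this gives $\liminf_n \psi(p_n) \geq -\cos(\alpha_p) = \psi(p)$, proving lower semi-continuity.

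Next, since $V$ is gradient-like on $U$, we have $\psi(p) > 0$ for every $p \in U$. A lower semi-continuous function attains its infimum on the compact set $K$, so there is a constant
\[
C(K) := \min_{p \in K} \psi(p) > 0.
\]

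Finally, let $\sigma$ be an integral curve of $V$ with $\sigma([t_2,t_1]) \subseteq K$. Because $f = d(\cdot,A)$ is $1$-Lipschitz, $f\circ \sigma$ is Lipschitz on $[t_2,t_1]$ and therefore absolutely continuous. At every $t$, the first variation formula gives the one-sided derivative
\[
(f \circ \sigma)'_+(t) \;=\; df_{\sigma(t)}(\dot\sigma(t)) \;=\; df_{\sigma(t)}(V_{\sigma(t)}) \;=\; \psi(\sigma(t)) \;\geq\; C(K),
\]
and at almost every $t$ this right derivative coincides with $(f\circ\sigma)'(t)$. Integrating from $t_2$ to $t_1$ via the fundamental theorem of calculus for Lipschitz functions yields
\[
f(\sigma(t_1)) - f(\sigma(t_2)) \;=\; \int_{t_2}^{t_1} (f\circ\sigma)'(s)\, ds \;\geq\; C(K)\,(t_1 - t_2),
\]
which is the required estimate. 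The only delicate point is the lower semi-continuity of $\psi$, which hinges on the observation that limits of shortest geodesics to $A$ remain shortest geodesics to $A$ — the same compactness argument already used in Proposition~\ref{prop: gradient-like extension}.
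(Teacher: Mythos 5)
Your proof is correct and follows essentially the same route as the paper's: the paper also establishes that $\inf_{x\in K} df_x(V_x) > 0$ via the same compactness argument (a minimizing sequence $x_i\to x$, limits of shortest geodesics to $A$ remaining shortest, hence a contradiction with $V$ being gradient-like at $x$), and then integrates using Rademacher's theorem. Framing the key step as lower semi-continuity of $\psi(p) = df_p(V_p)$ rather than as a contradiction is only a cosmetic repackaging of the identical argument.
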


\begin{proof}
    Recall the Rademacher theorem, since $h: I\subseteq \R \to \R$ is locally Lipschitz, $h$ is differentiable almost everywhere and 
    \begin{equation*}
        h(t_2) - h(t_1) = \int_{t_1}^{t_2}h'(t)dt
    \end{equation*}
    consider $t_1 < t_2$, $f(\sigma(t))$ Lipschitz, then 
    \begin{equation*}
        f(\sigma(t)) - f(\sigma(t)) = \int_{t_1}^{t_2}\frac{d}{dt} f(\sigma(t))dt = \int_{t_1}^{t_2}df_{\sigma(t)}(\underbrace{\sigma'(t)}_{V_{\sigma(t)}})dt 
    \end{equation*}
    We claim that $\inf{\br{df_x(V_x): x \in K}} > 0$. Suppose not, that is $I = \inf{\br{df_x(V_x): x \in K}} = 0$ (it is obvious that $I\ge 0$).
    We take $x_i \in K$ a minimizing sequence for $df_{x_i}(V_{x_i}) \to 0$. 
    By compactness of $K$, we can say that there exists $x \in K$ such that $x_i \to x \in K$ up to subsequence. 
    \begin{figure}[htbp]
    \centering
        \includegraphics[width=0.6\textwidth]{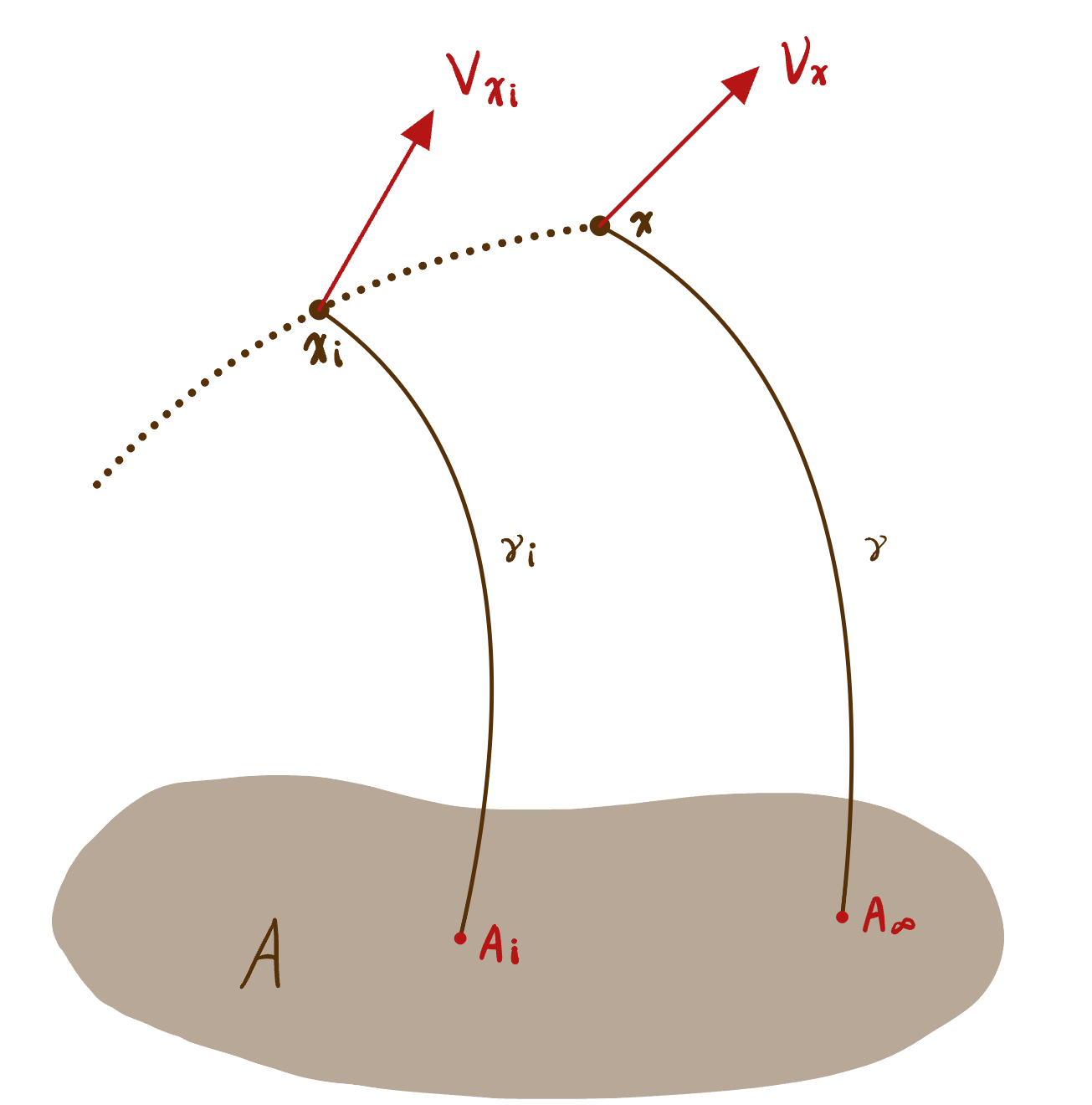}
    \end{figure}
    Since $df_{x_i}(V_{x_i}) = \inp{V_{x_i}, u_i}$, where $\gamma_i$ the shortest geodesic from $x_i$ to $A$ which touches $A$ at $A_i$ and $u_i = \dga_i(0)$. 
    Then $\inp{V_{x_i}, u_i} \to \inp{V_x, u}$. Hence $ \inp{V_x, u}=0$. This contradicts that $V$ is gradient-like for $f$ on $K$.
    
\end{proof}
\begin{lemma}\label{lem: regular level set theorem}
    Let $f = d(\cdot, A): M^n \to \R$. Let $C$ be a regular value of $f$ thus $\br{f = C}$ has no critical points. Then $\br{f = C}$ is an $(n - 1)$-dimensional topological manifold. 
\end{lemma}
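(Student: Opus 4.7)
The plan is to show that near any point $p\in\{f=C\}$, the level set is homeomorphic to an open set in $\R^{n-1}$ by using the flow of a gradient-like vector field as a kind of rectifying coordinate system. This is the topological analogue of the regular value theorem, where instead of using differentiability we exploit strict monotonicity of $f$ along flow lines (Lemma~\ref{lem: integral curve}).

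Fix $p\in\{f=C\}$. By Corollary~\ref{cor: regular neighborhood}, there is an open neighborhood $U$ of $p$ on which every point is regular, and by Proposition~\ref{prop: gradient-like extension} there exists a smooth unit gradient-like vector field $V$ for $f$ on $U$. Let $\phi_t$ denote its local flow, defined on $U'\times(-\eps,\eps)$ for some smaller neighborhood $U'$ of $p$ and some $\eps>0$. Choose a small smooth embedded codimension-one disk $D\subset U'$ passing through $p$ and transverse to $V_p$; concretely one can take $D=\exp_p(B_r(0)\cap V_p^\perp)$ for small $r$. Then the map
\[
\Phi:D\times(-\eps,\eps)\to M,\qquad \Phi(x,t)=\phi_t(x),
\]
is a homeomorphism onto an open neighborhood $W$ of $p$ in $M$ (for $r$ and $\eps$ small), by the standard flowbox argument applied to a smooth vector field transverse to a smooth hypersurface.

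Next I study how $f$ behaves in these coordinates. For each fixed $x\in D$, the curve $t\mapsto \phi_t(x)$ lies in the compact set $\overline{W}\subset U$ (shrink if necessary), so by Lemma~\ref{lem: integral curve} the function $g_x(t):=f(\phi_t(x))$ is strictly increasing in $t$, with a uniform positive lower bound on its rate of increase. Since $g_p(0)=C$ and $g_x$ depends continuously on $(x,t)$ (as $f$ is $1$-Lipschitz and $\Phi$ is continuous), a uniform continuity plus strict-monotonicity argument gives, for $x$ in a possibly smaller neighborhood $D_0\subset D$ of $p$, a unique $\tau(x)\in(-\eps,\eps)$ with $f(\phi_{\tau(x)}(x))=C$. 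The uniform lower bound on $g_x'$ and continuity of $g_x$ in $x$ force $\tau$ to be continuous on $D_0$.

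Thus $\{f=C\}\cap \Phi(D_0\times(-\eps,\eps))$ is exactly the graph $\{(x,\tau(x)):x\in D_0\}$ under the coordinates $\Phi$. The composition $x\mapsto \Phi(x,\tau(x))$ is therefore a homeomorphism from $D_0$ onto a neighborhood of $p$ in $\{f=C\}$, with continuous inverse given by $\Phi^{-1}$ followed by projection onto the first factor. Since $D_0$ is homeomorphic to an open subset of $\R^{n-1}$, this provides the required local chart. The main technical point is the continuity of the implicit time $\tau(x)$; because $f$ is only Lipschitz rather than smooth one cannot appeal to the smooth implicit function theorem, but strict monotonicity with uniform speed along flow lines (which is exactly what Lemma~\ref{lem: integral curve} guarantees) is sufficient to deduce it.
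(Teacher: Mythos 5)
Your proof is correct and follows essentially the same route as the paper's: construct a gradient-like vector field near $p$, take a transversal disk $D$ (the paper uses $L^{n-1}$), apply Lemma~\ref{lem: integral curve} to get strict uniform increase of $f$ along flow lines, and use the resulting monotone implicit-time function $\tau$ to build a local chart $x\mapsto\Phi(x,\tau(x))$ onto $\{f=C\}$. The only cosmetic difference is that you invoke the flowbox theorem explicitly and spell out the inverse of the chart map, both of which the paper leaves implicit.
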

\begin{proof}
By definition, we need to show that for any $p \in \br{f = C}$, we can find a neighborhood $U_p$ homeomorphic, say via $h$, to an open subset of $\R^{n-1}$.


By Proposition~\ref{prop: gradient-like extension}, we can construct a unit gradient-like vector field $V$  near $p$. Take $L^{n - 1} \subseteq M$ be a smooth submanifold transversal to $V$ near $p$, i.e. $L^{n - 1} \subseteq M$ such that $V_p \perp T_pL$. 
    \begin{figure}[htbp]
    \centering
        \includegraphics[width=0.6\textwidth]{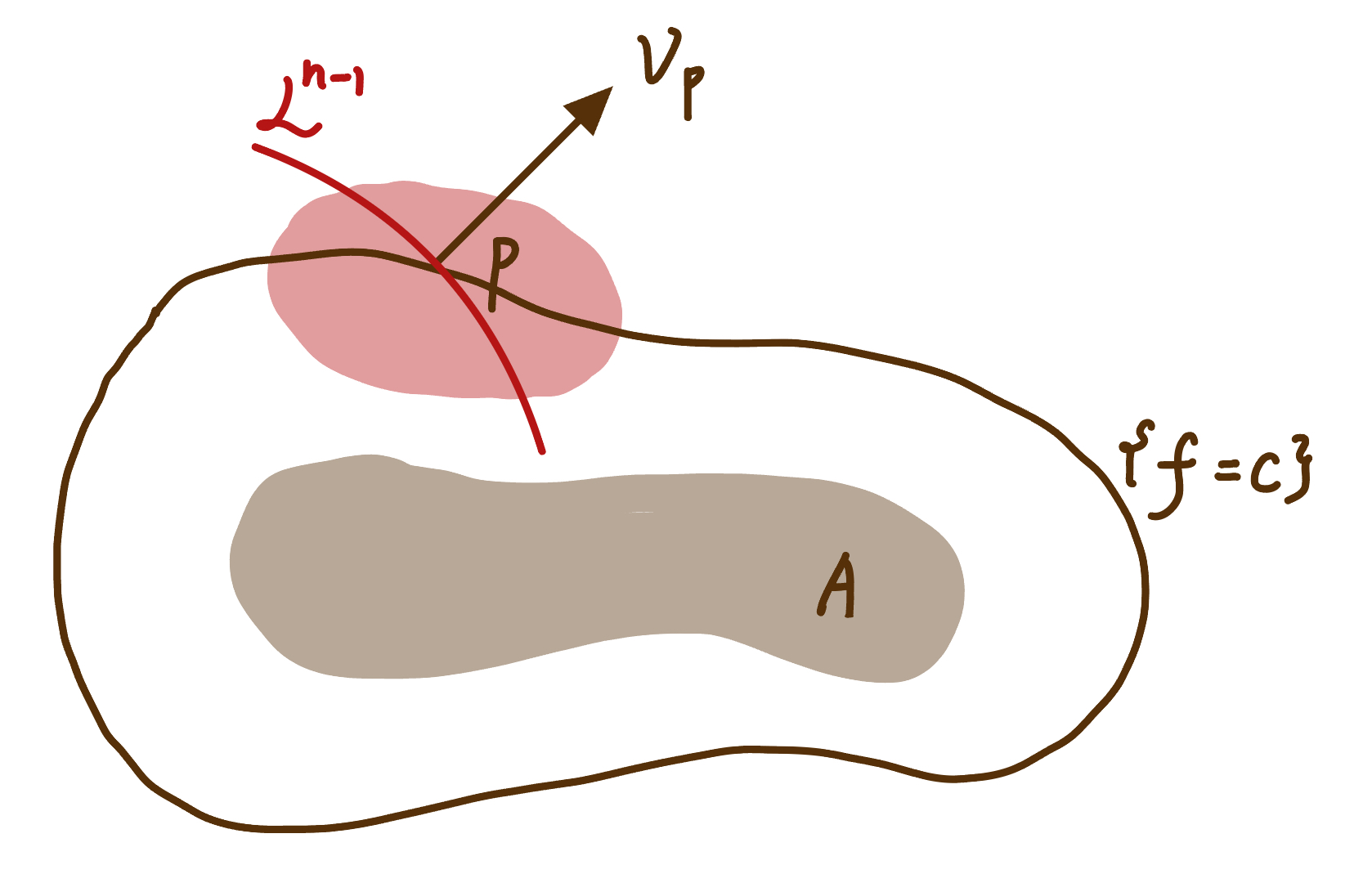}
    \end{figure}
    
    Look at a small neighborhood $\overline{B_R(p)}$ of $p$. on which $f$ is regular.
    Notice that $\overline{B_R(p)}$ is compact. 
    Then by the previous lemma \ref{lem: integral curve}, $f$ increases with speed $\geq C =: C(\overline{B_R(p)})$ along the integral curves of $V$ in $\overline{B_R(p)}$. 
   Thus, there exists $\delta > 0, 0<r\ll R$ such that 
     \begin{equation}
    f(\Phi(x, +\delta)) > C = f(p) = C > f(\Phi(x, -\delta))\, \text{ for any } x \in B_r(p)\cap L
    \end{equation}
    where $\Phi_t$ is the integral flow of $V$.

    \begin{figure}[htbp]
    \centering
        \includegraphics[width=0.6\textwidth]{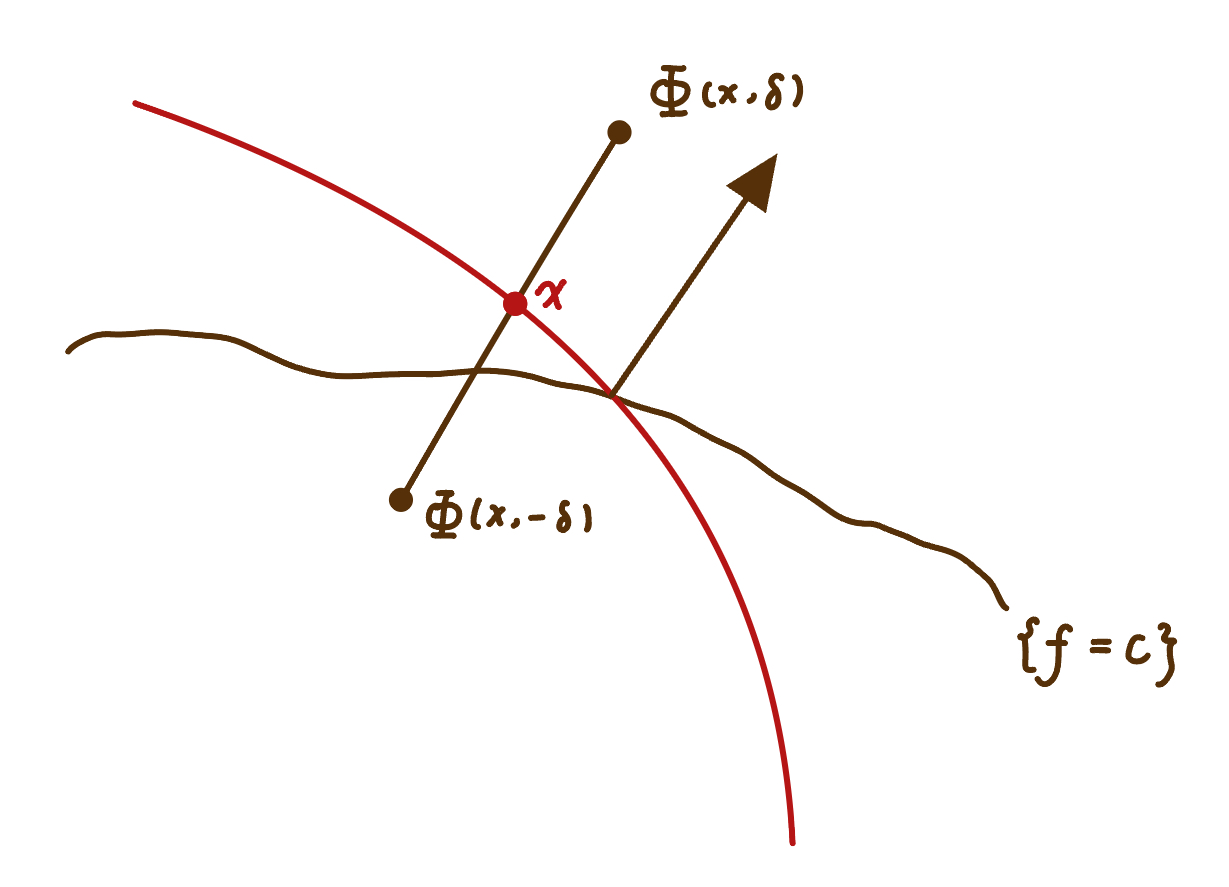}
    \end{figure}
    
    Therefore, $t \mapsto \Phi(x, t)$ has unique intersection with $\br{f = C}$, which means there exists a unique $\tau(x)$ such that $\Phi(x, \tau(x)) \in \br{f = C}$. 
    
    It is easy to see that $x \mapsto \tau(x)$ is continuous since $\Phi(x, t)$ is. Here is why: If it is not continuous at some $x$. Then  
    \begin{align*}
        &\text{$\exists x_i \to x$ such that $\tau(x_i) \to \tau_\infty \neq \tau(x)$}\\
        \implies & C = \Phi(x_i, \tau(x_i)) \to \Phi(x, \tau_\infty(x))
    \end{align*}
    However, by the uniqueness of $\tau$, we know that $\tau_\infty(x) = \tau(x)$. 
    Therefore, $\tau$ is continuous. 
    This gives us a map $L \cap B_\eps(p) \to \br{f = C}$ such that $x \mapsto \Phi(x, \tau(x))$ is a local homeomorphism near $p$. 
    Therefore, the level set $\br{f = C}$ is an $(n - 1)$-dimensional topological manifold. 
\end{proof}
\begin{theorem}[Morse Type Theorem 1]\label{thm: morse 1}
    Let $f: M\setminus A \to \R$ be given by $f = d(\cdot, A)$ or more generally, $f$ is semi-concave and Lipschitz. 
    Let $c_1 < c_2$, assume $K = \br{c_1 \leq f \leq c_2}$ is compact and $f$ is regular in $K$.
    Then 
    \begin{equation*}
    	K \stackrel{\text{homeo}}{\simeq} \br{f = c_1} \times [0, 1]
    \end{equation*}
    and
    \begin{align*}
    	\br{f = c_1}\times \br{0} \to \br{f = c_1}\\
    	\br{f = c_2}\times \br{1} \to \br{f = c_2}
    \end{align*}
    In particular,
    \begin{equation*}
        \br{f = c_1}\stackrel{\text{homeo}}{\simeq}\br{f = c_2}
    \end{equation*}
\end{theorem}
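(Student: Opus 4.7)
The plan is to use the flow of a gradient-like vector field to trivialize $K$ as a product. Since $f$ is regular on the compact set $K$, for every $p\in K$ Corollary~\ref{cor: regular neighborhood} supplies a regular neighborhood, and by compactness $K$ is contained in an open set $U$ on which $f$ is regular. Apply Proposition~\ref{prop: gradient-like extension} to obtain a smooth unit gradient-like vector field $V$ on $U$, and let $\Phi_t$ denote its (local) flow. Lemma~\ref{lem: integral curve} applied to the compact set $K$ yields a constant $C=C(K)>0$ such that
\[
f(\Phi_t(x))-f(\Phi_s(x))\geq C(t-s)
\]
whenever $s<t$ and $\Phi_{[s,t]}(x)\subset K$.

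Next I would define, for each $x\in\{f=c_1\}$, the exit time $\tau(x)=\sup\{t\geq 0:\Phi_s(x)\in K\text{ for all }0\leq s\leq t\}$. The speed estimate above forces $\tau(x)\leq (c_2-c_1)/C$, in particular $\tau(x)$ is finite, and by compactness of $K$ the integral curve $s\mapsto\Phi_s(x)$ extends up to $s=\tau(x)$ with $\Phi_{\tau(x)}(x)\in\partial K$. Since $f\circ\Phi_s(x)$ is strictly increasing and continuous with value $c_1$ at $s=0$, it must hit $c_2$ at $s=\tau(x)$; otherwise $\Phi_{\tau(x)}(x)$ would be a point of $\partial K\cap\{c_1<f<c_2\}$, but such a point has a whole $V$-neighborhood in $U$ along which the flow is defined, contradicting the definition of $\tau(x)$. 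Uniqueness of $\tau(x)$ as the first time $f\circ\Phi_s(x)=c_2$ then follows from strict monotonicity.

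Continuity of $\tau$ is the mild technical point. If $x_i\to x$ in $\{f=c_1\}$ and $\tau(x_i)\to\sigma$ (along a subsequence), then by continuity of the flow $\Phi_{\tau(x_i)}(x_i)\to\Phi_\sigma(x)$, and since $f(\Phi_{\tau(x_i)}(x_i))=c_2$ for every $i$ we get $f(\Phi_\sigma(x))=c_2$, which by the uniqueness above forces $\sigma=\tau(x)$; the upper bound $\tau\leq(c_2-c_1)/C$ supplies the necessary pre-compactness. With $\tau$ continuous, define
\[
H:\{f=c_1\}\times[0,1]\longrightarrow K,\qquad H(x,s)=\Phi_{s\tau(x)}(x).
\]
Then $H$ is continuous, maps $\{f=c_1\}\times\{0\}$ and $\{f=c_1\}\times\{1\}$ identically onto $\{f=c_1\}$ and $\{f=c_2\}$ respectively, and is injective because flow lines are pairwise disjoint and $f$ is strictly increasing along them. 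Surjectivity follows by running the flow backward from any $y\in K$: the integral curve through $y$ decreases $f$ at speed $\geq C$, so it reaches $\{f=c_1\}$ at some unique $x$, whence $y=H(x,s)$ with $s=(f\text{-parameter of }y)/\tau(x)$ after a reparameterization. A continuous bijection from the compact domain $\{f=c_1\}\times[0,1]$ (compact by Lemma~\ref{lem: regular level set theorem} applied to the closed subset of $K$) to the Hausdorff space $K$ is automatically a homeomorphism, finishing the proof.

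The main obstacle I expect is the continuity and uniform boundedness of $\tau$, together with the verification that flow lines cannot leave $K$ through the sides $\partial K\setminus(\{f=c_1\}\cup\{f=c_2\})$; both issues are handled by combining the strict monotonicity from Lemma~\ref{lem: integral curve} with the compactness of $K$ and standard ODE continuity of $\Phi_t$.
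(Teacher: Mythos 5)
Your proposal is correct and follows essentially the same route as the paper: build a unit gradient-like vector field via Proposition~\ref{prop: gradient-like extension}, use Lemma~\ref{lem: integral curve} for the uniform speed estimate and the resulting bound $\tau\le(c_2-c_1)/C(K)$, show the hitting time $\tau$ is continuous by a subsequence argument, define $(x,t)\mapsto\Phi_{t\tau(x)}(x)$, and conclude it is a homeomorphism since it is a continuous bijection between compact Hausdorff spaces. The only cosmetic difference is that you phrase $\tau(x)$ as an exit time from $K$ and then argue the exit occurs on $\{f=c_2\}$ (which is automatic since $\partial K\subseteq\{f=c_1\}\cup\{f=c_2\}$), whereas the paper directly picks $T=(c_2-c_1)/C(K)$ and takes $\tau(x)$ to be the unique time in $[0,T]$ where $f\circ\phi_\cdot(x)$ equals $c_2$; the two formulations coincide.
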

\begin{proof}
Take $V$ as a smooth unit speed gradient-like vector field on $K$. Since $f$ is regular in $K$, then there exists a constant $C(K) > 0$ such that $df_p(V_p) > C(K)$ for any $p \in K$. This means our function along the gradient curves in $K$ increases with the speed of at least $C(K)$. 
We claim that the for any $x \in K$, there exists a continuous function $\tau$ such that $\phi_{\tau(x)}(x) \in \br{f = c_2}$

\begin{figure}[htbp]
    \centering
        \includegraphics[width=0.4\textwidth]{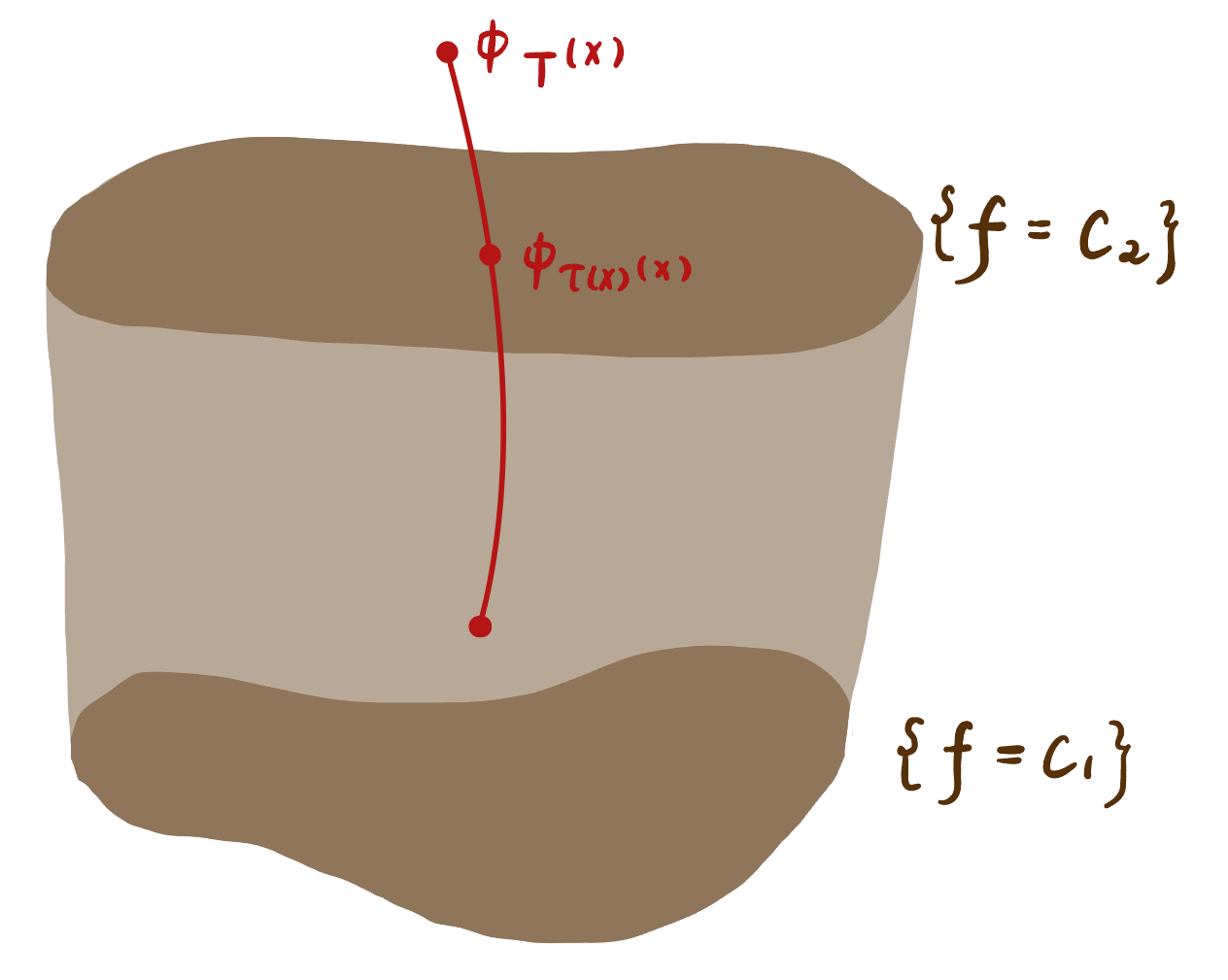}
    \end{figure} 

Firstly we can find a $T > 0$ such that we can flow any $x \in K$ by the gradient flow $\phi_T(x)$ to somewhere beyond the level set $\br{f = c_2}$, i.e $f(\phi_T(x)) \geq c_2$. 
This is possible since $f(\phi_t(x)) \geq f(x) + C(K)t$ for $\phi_t(x) \in K$ by the lemma \ref{lem: integral curve}. 
So we can take $T = \frac{c_2 - c_1}{C(K)}$. 
Suppose $f(\phi_T(x)) < c_2$, then $f(\phi_T(x)) - f(x) \geq T\cdot C(K) = c_2 - c_1$. 
As $f(x) \geq c_1$, we can conclude that $f(\phi_T(x)) \geq c_2$. 
Hence, there exists $\tau(x) \leq T$ such that $f(\phi_{\tau(x)}(x)) = c_2$ for any $x \in K$.
And this $\tau(x)$ is unique since $f(\phi_{\tau(x)}(x))$ is monotone increasing for $t \in [0, T]$ and $\tau(x)$ is continuous in $x$ by the same arguments as in the lemma \ref{lem: regular level set theorem}.
Now we want to construct a homeomorphism $F: \br{f = c_1}\times [0, 1] \to K$ to finish our proof. The homeomorphism can be constructed as follows, 
    \begin{equation*}
        F(x, t) = \phi_{t \cdot \tau(x)}(x)
    \end{equation*}
    In this map, $F(x, 0) = x \in \br{f = c_1}$ and $F(x, 1) = \phi_{\tau(x)}(x) \in \br{f = c_2}$. 
    
    We claim that $F$ is a homeomorphism. 
    To show that $F$ is onto, for $y \in K$, we can flow $y$ back to some point $x \in \br{f = c_1}$. 
    Thus we can find $x$. Next, we can construct the integral curve via the flow passing through $y$ at some time $t<\tau(x)$. 
    
    \begin{figure}[htbp]
    \centering
        \includegraphics[width=0.4\textwidth]{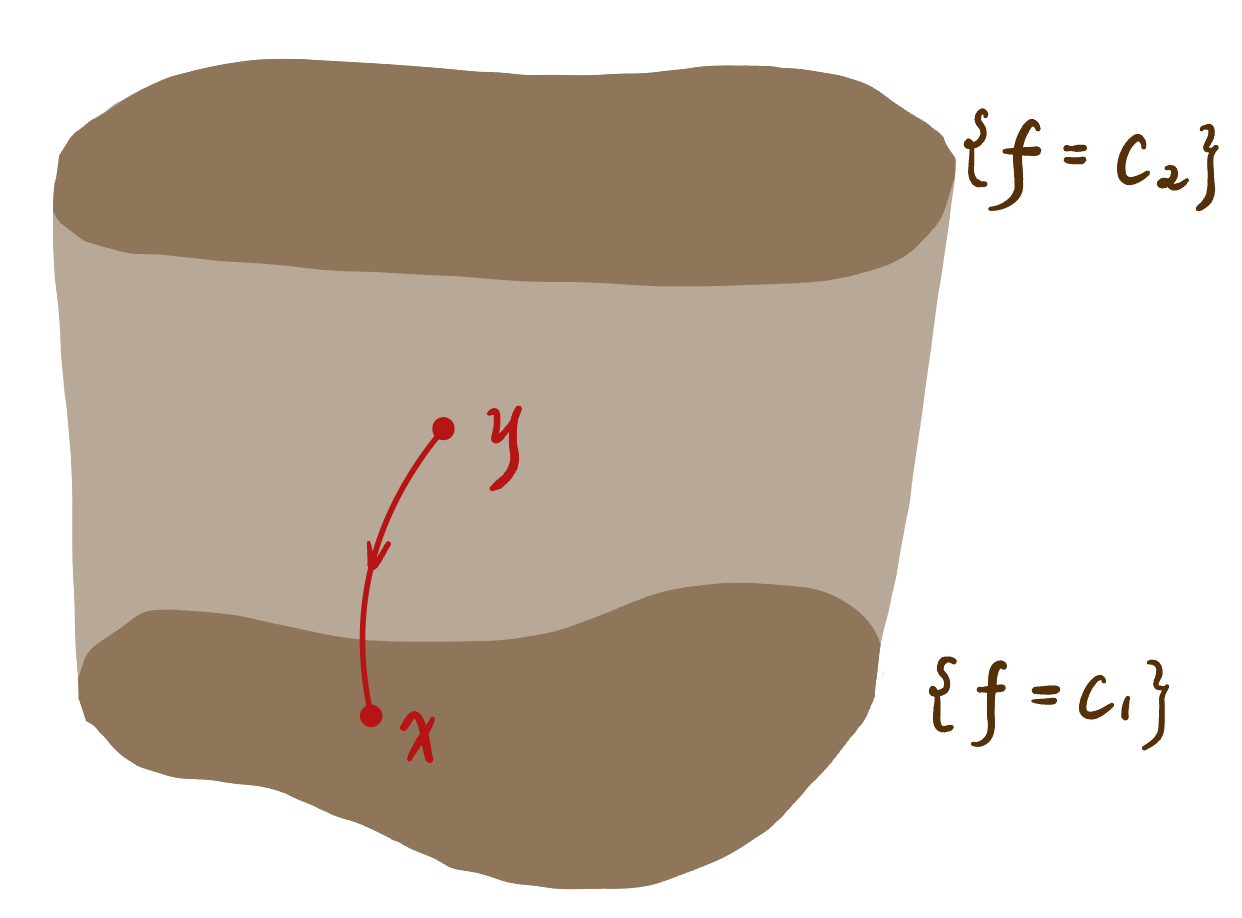}
    \end{figure}
    these spaces are Hausdorff and compact so it's enough to prove that $F$ is a continuous bijection so only injectivity needs to be checked. Indeed, suppose $F(x_1, t_1) = F(x_2, t_2)$, then it must be $x_1 = x_2$. Otherwise, the flow lines would not intersect all the way along $[t_1, t_2]$ and thus we will have $F(x_1, t_1) \neq F(x_2, t_2)$. It is also impossible that $t_1 \neq t_2$ due to the fact that $f$ is strictly monotone with respect to $t$ along the flow lines.
\end{proof}
\begin{theorem}[Morse Type Theorem 2]\label{thm: morse 2}
	Using the same proof as in Theorem~\ref{thm: morse 1}, we can also show that if $f$ is regular over $\br{f \geq R}$, then
	\begin{equation*}
		\br{f \geq R} \stackrel{\text{homeo}}{\simeq} \br{f = R} \times [0, \infty)
	\end{equation*}
	and
	\begin{equation*}
		\br{f = R} \times \br{0} \to \br{f = R}
	\end{equation*}
	If in addition $\br{f = R}$ is smooth, then $\br{f \geq R} \stackrel{\text{diffeo}}{\simeq} \br{f = R} \times [0, \infty)$.
\end{theorem}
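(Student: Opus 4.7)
The plan is to mimic the proof of Theorem~\ref{thm: morse 1} while replacing the compact region $K$ by an exhaustion of $\{f \geq R\}$ by compact slabs, and then concatenating the resulting product structures.

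First, I will use Proposition~\ref{prop: gradient-like extension} to produce a smooth unit gradient-like vector field $V$ on an open neighborhood of $\{f \geq R\}$, and consider its flow $\phi_t$. Since $f$ is strictly increasing along integral curves of $V$, the forward flow never exits $\{f \geq R\}$; combined with completeness of $M$, an integral curve starting in $\{f \geq R\}$ is defined on $[0,\infty)$ (either genuinely or in the sense of escaping to infinity along a unit-speed curve, which causes no harm below).

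Next, for each integer $n \geq 0$, I will apply Theorem~\ref{thm: morse 1} to the slab $K_n := \{R + n \leq f \leq R + n + 1\}$, which is compact under the (mild) properness assumption that is implicit here, e.g.\ when $M$ is compact, or more generally when the sublevel sets of $f$ within $\{f \geq R\}$ are compact. This yields, via the flow of $V$, a homeomorphism
\[
h_n : \{f = R+n\} \times [0,1] \;\longrightarrow\; K_n
\]
that restricts to the identity on the two bounding level sets. Iterating the level-set identifications produced by the $h_k$ for $k<n$ gives canonical homeomorphisms $\eta_n : \{f = R\} \to \{f = R+n\}$. I will then define
\[
F : \{f = R\} \times [0,\infty) \;\longrightarrow\; \{f \geq R\}, \qquad F(x,s) = h_n\bigl(\eta_n(x),\, s-n\bigr) \ \text{ for } s \in [n,n+1].
\]
Compatibility of the $h_n$ at integer heights (both restrict to the identity on $\{f = R+n\}$) makes $F$ well-defined and continuous, and $F|_{\{f=R\}\times\{0\}}$ is the identity. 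Bijectivity follows from the same argument as in Theorem~\ref{thm: morse 1}: distinct starting points lie on distinct integral curves, while strict monotonicity of $f$ along each curve rules out collisions at different times. Continuity of $F^{-1}$ then follows by the local inversion argument used in Lemma~\ref{lem: regular level set theorem}, applied slab by slab.

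The main obstacle I expect is the loss of global compactness. Unlike Theorem~\ref{thm: morse 1}, there is no single positive lower bound on $df(V)$ over all of $\{f \geq R\}$, so one cannot parametrize the homeomorphism by a single global time function as in that proof; the fix is precisely to localize the argument to the compact slabs $K_n$ and patch, which is why the properness of $f$ is the essential hypothesis to clarify. Once this is handled, the smooth case is immediate: if $\{f = R\}$ is a smooth hypersurface, then $V$ can be chosen smooth and transverse near it, the time-of-arrival functions $\tau_n$ are smooth by the implicit function theorem (applied to the regular value $R+n$ of the smooth proxy of $f$ built from the Hessian comparison and Theorem~\ref{lem: regular level set theorem}), and $F$ is a diffeomorphism.
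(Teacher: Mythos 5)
Your argument takes the same route as the paper's: decompose $\{f \geq R\}$ into the compact slabs $\{R+n \le f \le R+n+1\}$, apply Theorem~\ref{thm: morse 1} to each, and glue the resulting product structures (your explicit $\eta_n$ just record the identifications that the paper's chain of homeomorphisms leaves implicit), with the smooth case handled by flowing $\{f=R\}$ forward along the smooth gradient-like field. Two small adjustments: the parenthetical ``both restrict to the identity on $\{f=R+n\}$'' is not literally true --- $h_{n-1}$ restricted to the top face is a homeomorphism onto $\{f=R+n\}$, not the identity, which is exactly what your $\eta_n$ absorb --- and the closing reference to a ``smooth proxy of $f$ built from the Hessian comparison'' is unnecessary, since transversality of the smooth hypersurface $\{f=R\}$ to the smooth field $V$ already makes $\phi:\{f=R\}\times[0,\infty)\to\{f\ge R\}$ a local diffeomorphism with no smoothing of $f$ required; your flag on slab compactness is correct and is supplied in the paper's applications by $f=d(\cdot,A)$ with $A$ compact and $M$ complete.
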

\begin{proof}
Let us first treat the case when $ \br{f = R}$ is smooth. Let $V$ be a smooth gradient-like vector field of $f$ on  $\br{f \geq R}$. Then $ \br{f = R}$ must be transverse to $V$. Let $\phi_t$ be the flow of $V$. Then we claim that  $\phi: \br{f = R}\times [0, \infty) \to \br{f \geq R}$ is a diffeomorphism. Since $ \br{f = R}$ is transverse to $V$ the map $\phi$ is a local diffeomorphism. Hence we only need to claim that this map is bijective. The proof for $\phi$ being a bijection would be the same as the proof for Morse Theorem 1~\ref{thm: morse 1}. To show $\phi$ is surjective, that is for $y$ such that $d(A, y) =: \overline{R} > R$, $K = \br{R \leq f \leq \overline{R}}$ compact. On $K$, since $f'(v) > \text{const} > 0$, in the finite time flowing backward, we hit $\br{f = R}$ at some $x$. Therefore $\phi$ is surjective. Moreover, $\phi$ is injective by the uniqueness of $ODE$.
	
	To show the homeomorphism case, we can write
	\begin{equation*}
		\br{f \geq R} = \bigcup_{i = 0}^\infty \br{R + i \leq f \leq R + i + 1}.
	\end{equation*}
	By Morse Theorem 1~\ref{thm: morse 1}, each $\br{R + i \leq f < R + i + 1}$ can be written as the produce $\br{f = R + i} \times [0, 1)$ which is homeomorphic to $\br{f = R + i} \times [i, i + 1)$. By gluing these together, we have
	\begin{align*}
		\br{f \geq R} 
		&= \bigcup_{i = 0}^\infty \br{R + i \leq f < R + i + 1}\\
		&\stackrel{\text{homeo}}{\simeq}\bigcup_{i = 0}^\infty \br{f = R + i} \times [i, i + 1)\\
		&= \br{f = R} \times [0, \infty)
	\end{align*}
	\end{proof}

\begin{corollary}\label{cor: homeo to disk}
    If $f = d(\cdot, p)$ has no critical point in $\overline{B_R(p)}\backslash\br{p}$. Then 
    \begin{equation*}
        \overline{B_R(p)}\stackrel{\text{homeo}}{\simeq} \overline{D}^n 
    \end{equation*}
    where $\overline{D}^n$ is the closed unit disk in $\R^n$.
\end{corollary}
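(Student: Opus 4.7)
The plan is to split $\overline{B_R(p)}$ into a small inner ball, where the exponential map trivializes everything, and an outer shell on which the Morse-type deformation theorem already established applies. Completeness of $M$ (implicit throughout this section) ensures $\overline{B_R(p)}$ is compact by Hopf--Rinow, and the hypothesis ensures $f$ is regular on all of $\overline{B_R(p)}\setminus\{p\}$.

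First, I would fix $\eps\in(0,\min\{R,\inj(p)\})$. Inside the injectivity radius, $\exp_p\colon\overline{B_\eps(0)}\subseteq T_pM\to\overline{B_\eps(p)}$ is a diffeomorphism, so $\overline{B_\eps(p)}\stackrel{\mathrm{diffeo}}{\simeq}\overline{D}^n$ and the sphere $S_\eps(p)=\{f=\eps\}$ is homeomorphic to $S^{n-1}$. Next, I would apply Theorem~\ref{thm: morse 1} to the compact annular set $K=\{\eps\le f\le R\}$, on which $f$ is regular by assumption. This yields a homeomorphism
\[
K\stackrel{\mathrm{homeo}}{\simeq}S_\eps(p)\times[0,1]\cong S^{n-1}\times[0,1],
\]
sending $S_\eps(p)$ to $S^{n-1}\times\{0\}$ and $\{f=R\}$ to $S^{n-1}\times\{1\}$.

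Finally, I would glue these two pieces along $S_\eps(p)$ to build a homeomorphism $\overline{B_R(p)}\to\overline{D}^n$. Parametrize $\overline{D}^n$ as the union of $\overline{B_{1/2}(0)}$ with the annulus $\overline{B_1(0)}\setminus B_{1/2}(0)\cong S^{n-1}\times[\tfrac12,1]$, glued along $S^{n-1}\times\{\tfrac12\}$. Transport the two local models on $\overline{B_\eps(p)}$ and on $K$ respectively to these two pieces; reconcile the two identifications of $S_\eps(p)$ with $S^{n-1}$ (which differ a priori by some self-homeomorphism $\varphi$ of $S^{n-1}$) by precomposing the annular identification with the self-homeomorphism $(x,t)\mapsto(\varphi(x),t)$ of $S^{n-1}\times[0,1]$. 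The resulting map is a continuous bijection between compact Hausdorff spaces, hence a homeomorphism.

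The main obstacle is precisely this gluing step: the natural coordinates on $\overline{B_\eps(p)}$ coming from $\exp_p$ and those on $K$ coming from the flow of a gradient-like vector field induce two different identifications of the common sphere $S_\eps(p)$ with $S^{n-1}$. Once one observes that any self-homeomorphism of $S^{n-1}$ trivially extends to $S^{n-1}\times[0,1]$ as a product, this discrepancy can be absorbed on the annular side without disturbing the target-space gluing, and continuity across the seam is automatic because the two sides then agree on $S_\eps(p)$ by construction.
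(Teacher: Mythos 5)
Your proposal is correct and follows essentially the same route as the paper: choose $\eps<\inj(p)$ so $\overline{B_\eps(p)}$ is a standard disk via $\exp_p$, apply Theorem~\ref{thm: morse 1} to the compact regular annulus $\{\eps\le f\le R\}$ to get a product collar $S^{n-1}\times[0,1]$, and glue. Your treatment of the seam (absorbing the discrepancy between the two identifications of $S_\eps(p)$ with $S^{n-1}$ by a product self-homeomorphism of the collar) is a bit more explicit than the paper's, which simply asserts $\overline{D}^n\cup(S^{n-1}\times[0,1])\simeq\overline{D}^n$, but it is the same argument.
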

\begin{proof}
    The first step is to take a sufficient small $0<\eps <\inj(p)$.  Then
    \begin{equation*}
        \overline{B_\eps(p)}\stackrel{\text{diff}}{\cong} \overline{D}^n \qquad \text{ and }  \qquad   S_\eps(p) \stackrel{\text{diff}}{\cong} S^{n-1}.
    \end{equation*}
    Next, by Morse Theorem 1~\ref{thm: morse 1}, since $f = d(\cdot, p)$ has no critical point in $\overline{B_R(p)}\backslash\br{p}$. 
    We can write the annulus $\br{\eps \leq f \leq R}$ as 
    \begin{equation*}
        \br{\eps \leq f\leq R} \stackrel{\text{homeo}}{\simeq}\br{f = \eps}\times [0, 1] \stackrel{\text{homeo}}{\simeq} S^{n - 1}\times [0, 1]
    \end{equation*}
    \begin{figure}[htbp]
    \centering
        \includegraphics[width=0.7\textwidth]{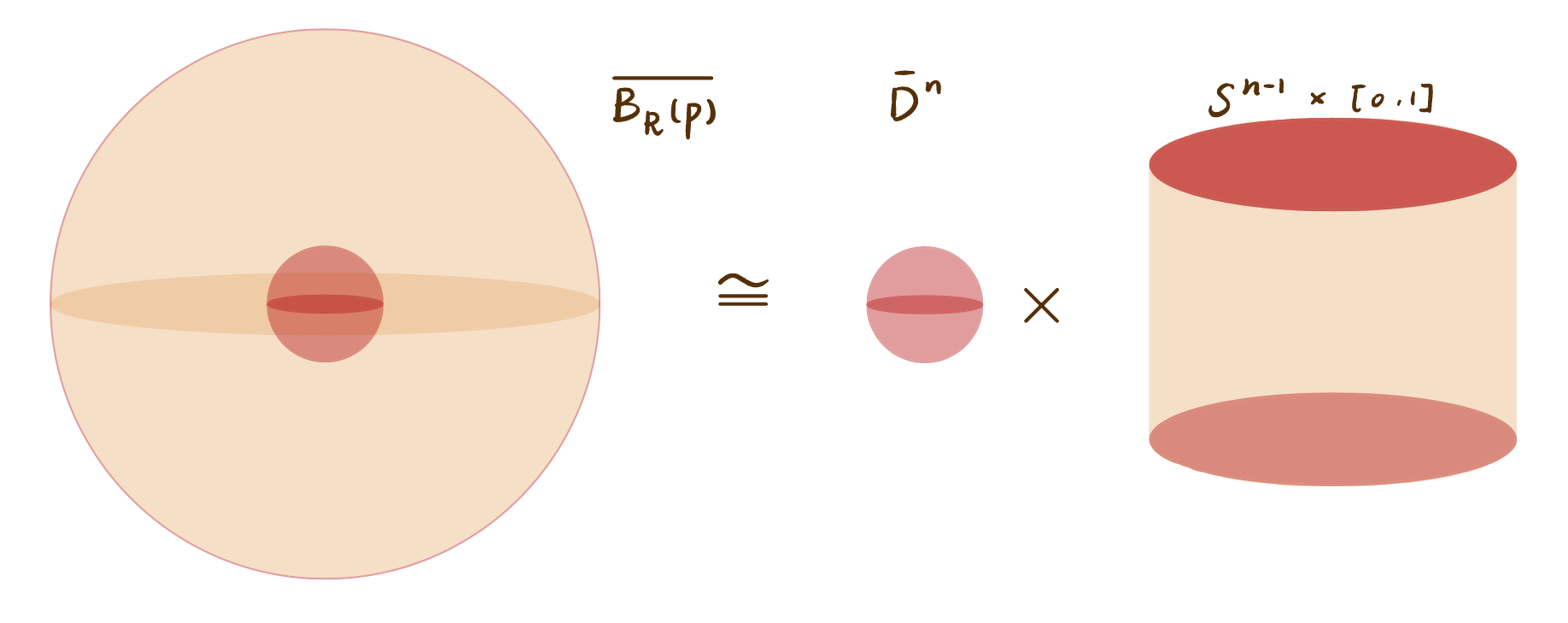}
    \end{figure} 
    
    Therefore, $\overline{B_R(p)} \simeq \overline{D}^n \cup (S^{n - 1}\times[0, 1]) \simeq \overline{D}^n$. 
    
    \end{proof}

\begin{lemma}\label{lem: R - eps}
	Let $M^n$ be a complete manifold, $A \subseteq M$ a compact subset, let $f = d(\cdot, A)$. If $R$ is a regular value of $f$, i.e. $\br{f = R}$ is a set of regular points. Then $R - \eps$ is a regular value for all sufficiently small $\eps > 0$. 
	\begin{proof}
		Suppose not, then we can construct a sequence $\br{q_i}$ from the compact set $\overline{B_R(A)}$ such that $d(q_i, \br{f = R}) \to 0$. This sequence sub-converges in $\overline{B_R(A)}$ and thus the limit point $q_{\infty}$ lies in $\br{f = R}$. Here we have a contradiction, since $q_{\infty}$ must be a regular point and the set of regular points is open by Corollary ~\ref{cor: regular neighborhood}. 
	\end{proof}
\end{lemma}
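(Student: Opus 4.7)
The plan is to argue by contradiction along the lines of the proof sketch already included, but making each step explicit. Suppose there is a sequence $\eps_i \to 0^+$ such that every $R - \eps_i$ fails to be a regular value; then for each $i$ I can pick a critical point $q_i \in \{f = R - \eps_i\}$.

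First I would show that $\{q_i\}$ lies in a compact set. Since $A$ is compact and $f(q_i) = R - \eps_i \le R$, we have $d(q_i, A) \le R$, so $q_i \in \overline{B_R(A)}$. Because $M$ is complete and $A$ is compact, $\overline{B_R(A)}$ is closed and bounded, hence compact by the Hopf--Rinow theorem. Passing to a subsequence, I get $q_i \to q_\infty$ for some $q_\infty \in \overline{B_R(A)}$, and by continuity of $f$ we get $f(q_\infty) = \lim (R - \eps_i) = R$, so $q_\infty \in \{f = R\}$.

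Next I would invoke the hypothesis that $R$ is a regular value: $q_\infty$ is a regular point of $f$. By Corollary~\ref{cor: regular neighborhood}, the set of regular points is open, so there is a neighborhood $U$ of $q_\infty$ consisting entirely of regular points. For $i$ large enough $q_i \in U$, so $q_i$ is regular, contradicting the choice of $q_i$ as a critical point. This contradiction yields the existence of $\eps_0 > 0$ such that $R - \eps$ is a regular value for all $\eps \in (0, \eps_0)$.

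The only real step that requires thought is the compactness argument that locates the would-be critical points in $\overline{B_R(A)}$; once that is in hand, the openness of the set of regular points (the content of Corollary~\ref{cor: regular neighborhood}) finishes the proof mechanically. No sharper information is needed about the structure of critical points or about the geometry beyond what has already been established, so I do not anticipate any serious obstacle beyond making sure the compactness statement uses completeness of $M$ and compactness of $A$ correctly.
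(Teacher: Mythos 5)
Your proposal is correct and takes essentially the same route as the paper's proof: negate, extract a sequence of critical points $q_i$ from $\overline{B_R(A)}$, pass to a convergent subsequence using compactness (via Hopf--Rinow and compactness of $A$), conclude by continuity that the limit $q_\infty$ lies in $\{f = R\}$ and is therefore regular, and derive the contradiction from openness of the set of regular points (Corollary~\ref{cor: regular neighborhood}). Your version is slightly more explicit about why the $q_i$ lie in $\overline{B_R(A)}$ and why the limit has $f = R$, but there is no genuine difference in approach.
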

\begin{remark}
	Lemma~\ref{lem: R - eps} might not hold if $A$ is not compact. One can construct a counterexample when the radius of the regular neighborhoods for each point of $\br{f = R}$ asymptotically vanishes. 
\end{remark}

\begin{definition}[Compact Type]\label{def: compact type}
	A manifold $M$ is said to be of \textbf{compact type} if it is homeomorphic to the interior of a compact manifold with a boundary. 
\end{definition}
\begin{proposition}\label{prop: compact type}
Let $M^n$ be an open complete manifold. Let $f = d(\cdot, A)$ where $A$ is a compact subset in $M$. Let $R > 0$ be a regular value of $f$. Suppose that $f$ has no critical point outside of $\overline{B_R(A)}$. 
Then $M$ has a compact type. In fact $M \stackrel{\text{homeo}}{\simeq} (B_R(A))^\circ$.
Notice that $B_R(A)$ is a compact manifold with a boundary in this case.
\end{proposition}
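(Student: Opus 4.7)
The plan is to apply Theorem~\ref{thm: morse 2} to the exterior region $\{f \geq R\}$ to identify it with a half-infinite cylinder over $\{f = R\}$, and then absorb this cylinder into an open collar inside $\overline{B_R(A)}$, thereby producing a homeomorphism between $M$ and the open ball $B_R(A)$.

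First I would verify the basic structural facts. Since $A$ is compact and $M$ is complete, the closed bounded set $\{f \leq R\} = \overline{B_R(A)}$ is compact by Hopf--Rinow. Since $R$ is a regular value, Lemma~\ref{lem: regular level set theorem} tells us $\{f = R\}$ is an $(n-1)$-dimensional topological manifold. By hypothesis, $f$ has no critical points in $\{f > R\}$, and $R$ being regular means no critical points on $\{f = R\}$ either; so $f$ is regular on all of $\{f \geq R\}$. Applying Theorem~\ref{thm: morse 2} yields a homeomorphism
\[
  \Phi:\ \{f \geq R\} \ \xrightarrow{\ \cong\ }\ \{f = R\} \times [0,\infty)
\]
sending $\{f = R\}$ to $\{f = R\} \times \{0\}$.

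Next I would produce a one-sided collar of $\{f = R\}$ from the inside of $\overline{B_R(A)}$. By Corollary~\ref{cor: regular neighborhood} regularity is an open condition, and by Lemma~\ref{lem: R - eps} the value $R - \eps$ is also regular for all sufficiently small $\eps > 0$; choose such an $\eps$ so that $f$ is regular on the compact set $\{R - \eps \leq f \leq R\}$. Theorem~\ref{thm: morse 1} then gives a homeomorphism
\[
  \Psi:\ \{R - \eps \leq f \leq R\} \ \xrightarrow{\ \cong\ }\ \{f = R\} \times [-1, 0]
\]
matching the level sets appropriately. Together with $\Phi$, this gives a neighborhood $U$ of $\{f = R\}$ in $M$ homeomorphic to $\{f = R\} \times [-1, \infty)$, where the ``inside'' collar part $\{R-\eps \le f\le R\}$ corresponds to $[-1, 0]$ and the exterior corresponds to $[0, \infty)$. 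This simultaneously confirms that $\overline{B_R(A)}$ is a compact topological manifold with boundary $\{f = R\}$, so $M$ is of compact type.

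Finally, I would construct the explicit homeomorphism $M \to B_R(A)$. Fix a homeomorphism $h:[-1,\infty) \to [-1, 0)$ that is the identity on $[-1, -1/2]$; this gives a homeomorphism $\{f = R\} \times [-1,\infty) \to \{f = R\} \times [-1, 0)$. Transport this via $\Phi$ and $\Psi$ to a map $H: U \to \{R-\eps \le f < R\}$ that is the identity on $\{R-\eps \le f \le R - \eps/2\}$. Extending $H$ by the identity on $\{f \leq R - \eps\}$ yields a homeomorphism
\[
  M = \{f \leq R - \eps\} \cup U \ \xrightarrow{\ \cong\ }\ \{f < R\} = B_R(A) = (B_R(A))^{\circ}.
\]
The main obstacle is the bookkeeping in this last step—making sure that the cylinder compression agrees with the interior collar along their overlap and that the resulting map is a well-defined global homeomorphism; however, this is a standard ``absorbing the collar'' argument once the two Morse-theoretic product decompositions are in hand.
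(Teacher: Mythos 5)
Your proposal is correct and follows essentially the same route as the paper: establish compactness of $\overline{B_R(A)}$, invoke Lemma~\ref{lem: R - eps} to get a regular collar $\{R-\eps \le f \le R\}$, apply Morse Theorem 1 to that collar and Morse Theorem 2 to the unbounded region, then compress the half-infinite cylinder into the open collar. The only cosmetic difference is that you make the ``absorbing the collar'' homeomorphism explicit (the map $h$ that is the identity near $-1$), while the paper decomposes both $M$ and $\{f<R\}$ over the common piece $\{f\le R-\eps\}$ and leaves the compatibility at $\{f=R-\eps\}$ implicit; your extra care here is a genuine improvement in rigor but not a different argument.
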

\begin{proof}
   	Consider the embedded topological submanifold $N^{n - 1}\subseteq M^n$ where $N^{n - 1} := S_R(A) = \partial B_R(A) = \br{f = R}$. 
    By Lemma~\ref{lem: R - eps}, for some small $\eps>0$ $f$ has no critical point in $\br{R - \eps \leq f \leq R}$, which is compact since it's a closed subset of $\br{f \leq R}$ which is compact.
    
    Then by Morse Theorem 1~\ref{thm: morse 1}, 
    \begin{align*}
        &\br{R - \eps \leq f \leq R} \stackrel{\text{homeo}}{\simeq} N^{n - 1}\times [0, 1].\\
        &\br{R - \eps \leq f < R} \stackrel{\text{homeo}}{\simeq} N^{n - 1}\times [0, 1).
    \end{align*}
    And by Morse Theorem 2~\ref{thm: morse 2}, 
    \begin{equation*}
    	\br{f \geq R - \eps} \stackrel{\text{homeo}}{\simeq} \br{f = R - \eps} \times [0, \infty)
    \end{equation*}
        Thus, we proved that
    \begin{align*}
    	\br{f < R} 
    	&= \br{f \leq R - \eps}\cup\br{R - \eps \leq f < R}\\
    	&\stackrel{\text{homeo}}{\simeq} \br{f \leq R - \eps}\cup\br{f = R - \eps}\times[0, 1)
    \end{align*}
    and on the other hand, 
    \begin{align*}
    	M &= \br{f \leq R - \eps}\cup\br{f \geq R - \eps}\\
    	&\stackrel{\text{homeo}}{\simeq} \br{f \leq R - \eps}\cup\br{f = R - \eps}\times[0, \infty).
    \end{align*}
    And thus, $M \stackrel{\text{homeo}}{\simeq} \br{f < R}$.
\end{proof}
\begin{remark}
	Moreover, it follows from the above proof that $M$ is homotopically equivalent to $\br{f \leq R - \eps}$ and thus homotopically equivalent to $\br{f \leq R}$, which is a compact manifold with boundary. 
\end{remark}
\begin{remark}\label{rem: crit-diffeomophism}
    If in the above proposition $\br{f = R} = N^{n - 1}\subseteq M^n$ is a smooth submanifold, then 
    \begin{equation*}
        \Phi: N \times [0, \infty) \to \br{f \geq R}
    \end{equation*}
    is a diffeomorphism. In this case, $M \stackrel{\text{diffeo}}{\simeq} \br{f \leq R}\cup  N\times [0, \infty)$ where $\br{f \leq R}$ is a compact manifold with boundary. In this case, we can conclude that $M \stackrel{\text{diff}}{\cong}\br{f < R}$ by a similar argument as in  theorem~\ref{thm: morse 1} and \ref{thm: morse 2}. 
\end{remark}

\subsection{Proof of Grove-Shiohama Sphere Theorem}

    Denote $d = \diam(M) > \frac{\pi}{2}$. Let $p, q \in M$ be the points such that $d(p, q) = \diam(M)$. 
    Denote $f = d(\cdot, p)$.
    Since we assume $\sect_M \geq 1$, we consider the model space $\modsp{1} = S^n$. For $\tilde{p} \in S^n$, the function $\tilde{f} = d(\cdot, \tilde{p})$ satisfies
    \begin{equation*}
        \hess_{\tilde{f} = d(\cdot, \tilde{p})} = 
        \begin{bmatrix}
            0 & 0\\
            0 & \cot{\tilde{f}}I
        \end{bmatrix}
    \end{equation*}
    For $\frac{\pi}{2} < \alpha \leq \pi$, $\cot{\alpha} = \frac{\cos{\alpha}}{\sin{\alpha}} < 0$. 
    As a consequence, we know that $\tilde{f}$ is concave in $S^n$. 
    Then by the point-on-a-side comparison \ref{thm: point-on-a-side} and the Jensen's inequalities \ref{thm: Jensen's inequality}, $f = d(\cdot, p)$ is concave outside of $B_{\frac{\pi}{2}}(p)$.
    
    \textbf{Step 1:}

    We claim that $f = d(\cdot, p)$ has a unique maximum at $q$. 
    
    When $d = \pi$, then the uniqueness of the maximum follows since the perimeter of any triangle in $M$ is at most $2\pi$. For instance, consider $q_1, q_2$ such that $d(q_1, p) = d(q_2, p) = \pi$, then the length of the side $[q_1, q_2]$ must be zero, which implies $q_1$ and $q_2$ must be equal. 
    
    Suppose $d<\pi$ and there are two distinct maximal points $q_1, q_2 \in M$ such that $d(q_1, p) = d(q_2, p) = d$. We draw the mid-point $m$ of the segment $[q_1q_2]$. From the following picture, we can conclude that $l \geq \overline{l} > d$ since $\overline{f}$ is strictly concave along non-radial geodesics.  Therefore, $l > d$  which contradicts to the fact that $\diam(M) = d$.
    \begin{figure}[htbp]
    \centering
        \includegraphics[width=0.5\textwidth]{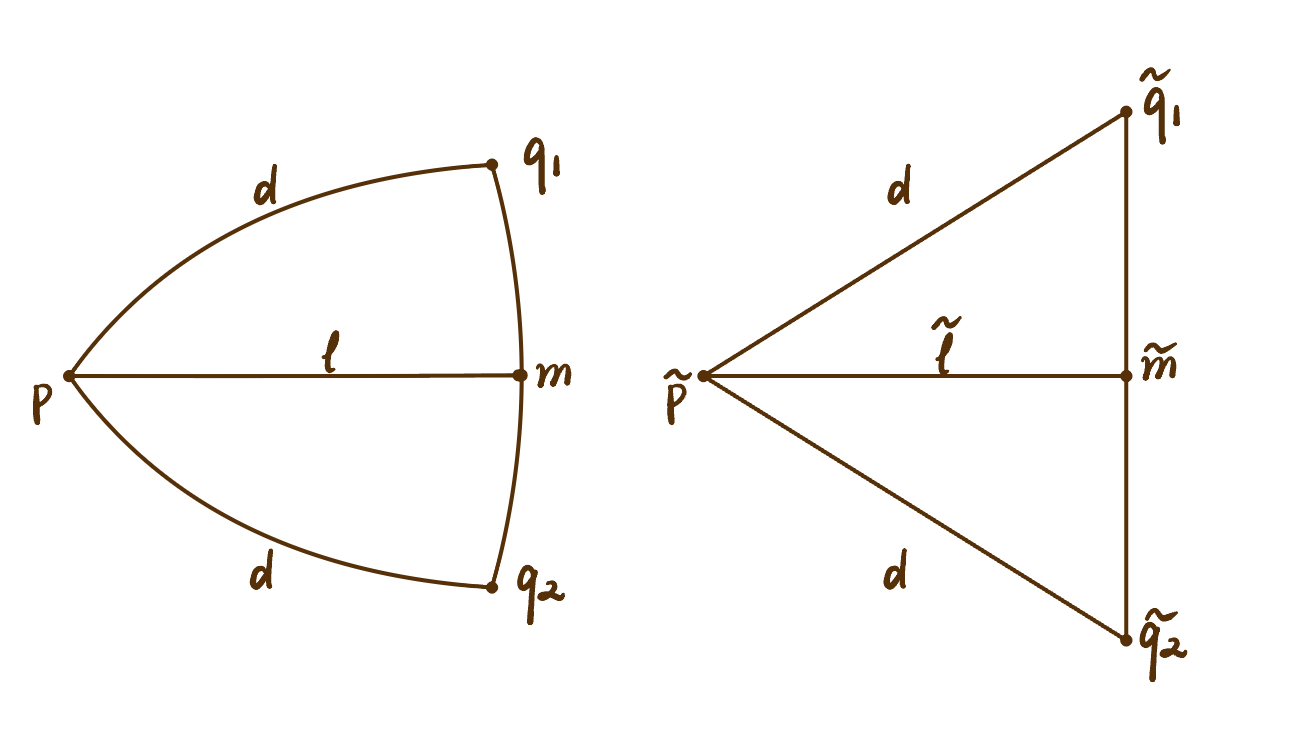}
    \end{figure}
    
    \textbf{Step 2:}
    Next, we claim that $f = d(\cdot, p)$ has no critical point outside $p$ and $q$. Suppose not, then there exists $x \in M \backslash\br{p, q}$ that is critical for $f$. Denote $\gamma_1$ be any shortest geodesic from $x$ to $p$ and $\gamma_2$ be any shortest geodesic from $x$ to $q$. 
    \begin{figure}[htbp]
    \centering
        \includegraphics[width=0.5\textwidth]{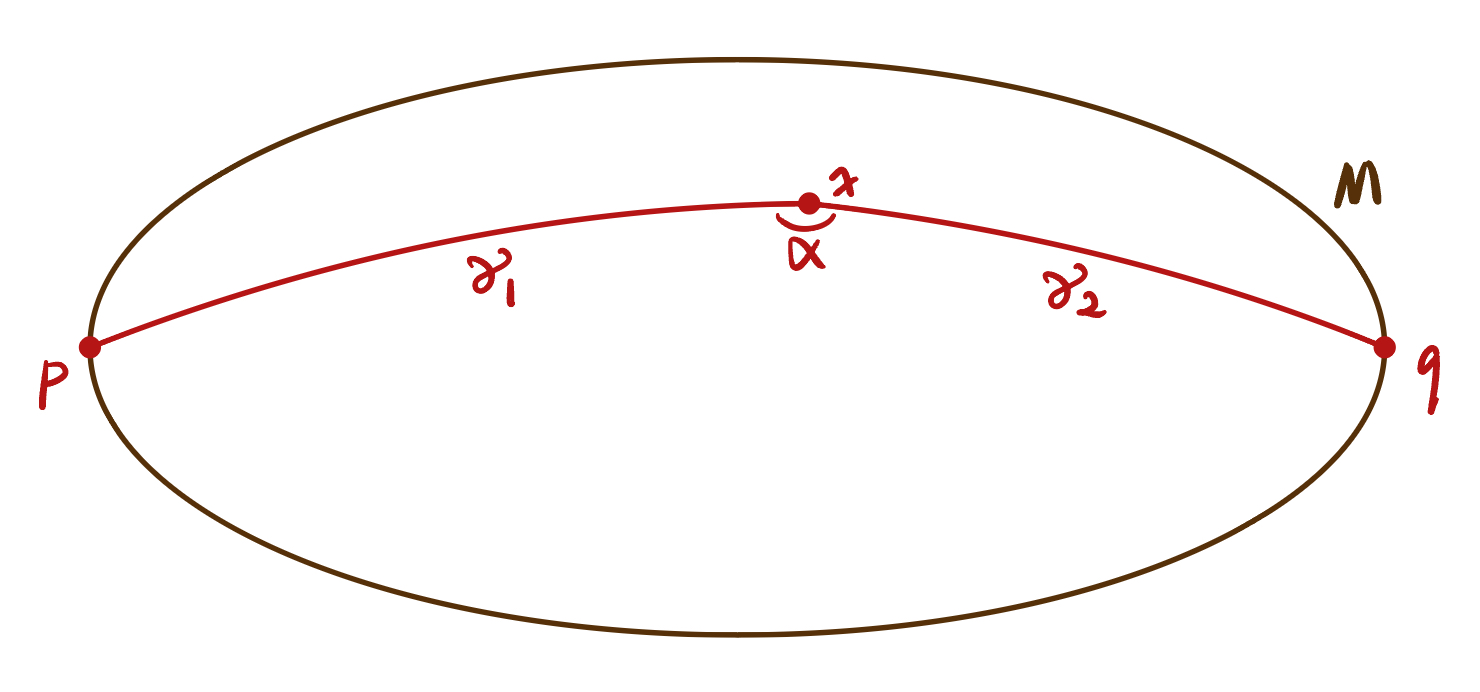}
    \end{figure}
    We claim that the angle $\alpha$, in the picture, is bigger that $\frac{\pi}{2}$. 
    We want to show $\alpha > \frac{\pi}{2}$ by cases. 
    The first case is that when $d(p, x) > \frac{\pi}{2}$, where $\gamma_2$ is a geodesic outside of $B_{\frac{\pi}{2}}(p)$. 
    \begin{figure}[htbp]
    \centering
        \includegraphics[width=0.5\textwidth]{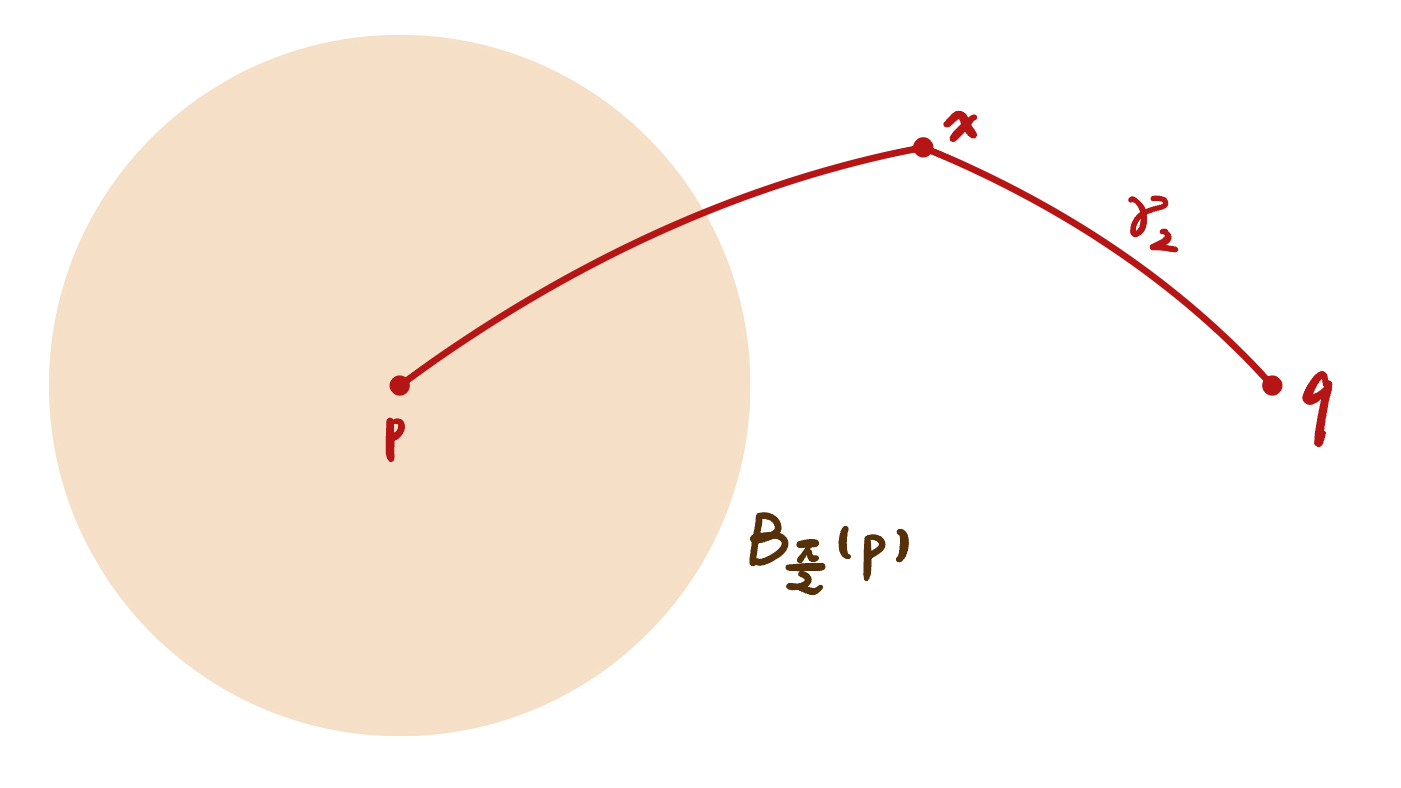}
    \end{figure}
    In this case, $f(\gamma_2(t))$ is concave. And because $f(x) < f(q)$, we have $\frac{d}{dt}\big|_{t = 0^+}f(\gamma_2(t)) > 0$.\footnote{Otherwise, if $\frac{d}{dt}\big|_{t = 0^+}f(\gamma_2(t)) \leq 0$, because concave function is below the tangent line, $f(\gamma_2(t)) \leq f(\gamma_2(0))$ for all $t \geq 0$, thus $f(q) \leq f(x)$. Contradiction.}
    By the first variation formula, $\alpha > \frac{\pi}{2}$. 
    On the other hand, we apply the same argument to $d(\cdot, q)$ to prove the case when $d(p, x) < \frac{\pi}{2}$ and conclude $\alpha > \frac{\pi}{2}$.
    In the last case where $d(x, p) \leq \frac{\pi}{2}, d(x, q) \leq \frac{\pi}{2}$ and $\alpha \leq \frac{\pi}{2}$, apply the hinge comparison, we have $d(p, q) \leq d(\tilde{p}, \tilde{q})$. 
   	However, in the model space $S^n$, since both $l_1, l_2 \leq \frac{\pi}{2}$, we can conclude that $d(p, q) \leq d(\tilde{p}, \tilde{q}) \leq \frac{\pi}{2}$. 
   	By the cosine law, since $l_1, l_2, \alpha \leq \frac{\pi}{2}$, we have
    \begin{equation*}
    	\cos{(d(\tilde{p}, \tilde{q}))} = \cos{(l_1)}\cos{(l_2)} + \sin{(l_1)}\sin{(l_2)}\cos{(\alpha)} \geq 0 \implies y \leq \frac{\pi}{2}
    \end{equation*}
    Then the contradiction arises since $d(p, q) = \diam{M} > \frac{\pi}{2}$ but $d(p, q) \leq d(\tilde{p}, \tilde{q}) \leq \frac{\pi}{2}$. 
    Therefore, in any cases, we have $\alpha > \frac{\pi}{2}$, which implies $x$ is not critical for $d(\cdot, p)$ (and also for $d(\cdot, q)$).

    \textbf{Step 3:} Consider small $\eps$-neighborhoods of $p$ and $q$.
    Construct gradient-like unit vector field for $d(\cdot, p)$ on $M \backslash\br{p, q}$, which is radial for $d(\cdot, p)$ on $B_\eps(p)$. 
    On $B_\eps(q)\backslash \br{q}$. 
    by Step 3 we can take $V = -\nabla d(\cdot, q)$. By Lemma~\ref{lem: integral curve}  $f'(V) \geq c$ on $M\backslash (B_\eps(q)\cup B_\eps(p))$. 
    On $B_\eps(p)\backslash\br{p}$, $f'(V) = 1$ and $M\backslash \underbrace{(B_\eps(p)\cup B_\eps(q))}_{=: W}$ is compact. 
    We claim that $W$ is homeomorphic to $\br{f = \eps} \times [0, 1]$. 
    For any $x \in W$, we can extend the integral curve through $x$ backward and forward, it hits both $\br{f = \eps} = S_\eps(p)$ and $S_\eps(q)$ exactly once. 
    This is because it will eventually hit both $p$ and $q$. 
    Thus we will have to intersect both $S_\eps(p)$ and $S_\eps(q)$. 
    It is impossible for us to intersect $S_\eps(p)$ and $S_\eps(q)$ more than once since the vector field is transversal to both of the surfaces which separate the manifold into two components. Thus there exists a unique $\tau(x)$ such that $\phi_{\tau(x)}(x) \subseteq S_\eps(q)$. As before, $\tau$ is continuous. Now, we construct $F: S_\eps(p) \times [0, 1] \to W$
    \begin{equation*}
        F(x, t) = \phi_{t \cdot\tau(x)}(x) \in S_\eps(q)
    \end{equation*}
    We can easily see that $F$ is onto. And $F$ is continuous since $\phi$ is continuous and $\tau$ is continuous. 
    To show $F$ is one-to-one, i.e. $F(x, t) = F(y, s) \iff x = y$ and $t = s$.
  	Suppose $x \neq y$, then flow lines $\phi$ through $x, y$ do not intersect, which is impossible. 
  	On the other hand, since $x = y$, then $F(x, t) = F(x. s) \iff t = s$
    Therefore, $F$ is a homeomorphism. 
    
    \textbf{Step 4:} Now we are going to finish our proof. 
    Since $F$ is a homeomorphism, 
    \begin{equation*}
    	U := W \cup B_\eps(p) = M \backslash B_\eps(q) \simeq \underbrace{\overline{D}^n \cup (S^{n - 1} \times [0, 1])}_{\simeq \overline{D}_1^n}
    \end{equation*}
  	Because $M^n = U \cup B_\eps(q)$ so that $M^n$ is a twisted sphere, i.e. $M^n \simeq \overline{D}_1^n \stackrel{h}{\cup}\overline{D}_2^n$ where $h : S^{n - 1} \to S^{n-1}$ is an homeomorphism as well.
  	We need to introduce the following lemma
  	\begin{lemma}[Alexander's Trick]
		Any homeomorphism $h: S^{n - 1} \to S^{n - 1}$ can be extended to a homeomorphism between disks $\overline{h}: \overline{D}^n \to \overline{D}^n$ by the formula
		\begin{equation*}
    		\overline{h}(t, x) = th(x)
		\end{equation*}
	\end{lemma}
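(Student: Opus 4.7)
The plan is to write every point $y \in \overline{D}^n \setminus \{0\}$ uniquely in polar form $y = tx$ with $t = |y| \in (0,1]$ and $x = y/|y| \in S^{n-1}$, and then observe that the stated formula $\overline{h}(tx) = t h(x)$ together with $\overline{h}(0) = 0$ produces a well-defined map $\overline{D}^n \to \overline{D}^n$. Its image lies in $\overline{D}^n$ because $|\overline{h}(y)| = |y|\cdot|h(y/|y|)| = |y|\le 1$, and by construction $\overline{h}$ restricts to $h$ on $S^{n - 1}$ (the case $t=1$).

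Next I would check continuity. On $\overline{D}^n \setminus \{0\}$, the map $y \mapsto (|y|, y/|y|)$ is continuous, and $\overline{h}$ is the composition of this with $(t,x) \mapsto th(x)$, so continuity there is immediate from continuity of $h$. The only point to verify separately is continuity at $0$: if $y_k \to 0$, then since $h(y_k/|y_k|) \in S^{n-1}$ we have $|\overline{h}(y_k)| = |y_k| \to 0$, so $\overline{h}(y_k) \to 0 = \overline{h}(0)$.

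For bijectivity, I would define $\overline{h^{-1}}$ by the same radial formula applied to $h^{-1}$ and check $\overline{h^{-1}} \circ \overline{h} = \mathrm{id}$ and $\overline{h} \circ \overline{h^{-1}} = \mathrm{id}$ directly in polar coordinates: on a point $tx$ with $x \in S^{n-1}$, one computes $\overline{h^{-1}}(\overline{h}(tx)) = \overline{h^{-1}}(t\,h(x)) = t\, h^{-1}(h(x)) = tx$, and symmetrically in the other order. Applying the continuity argument above to $h^{-1}$ in place of $h$ shows that $\overline{h^{-1}}$ is itself continuous, so $\overline{h}$ is a homeomorphism.

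There is no real obstacle here; the only mildly delicate step is continuity at the origin, which works precisely because $h$ takes values in the unit sphere and so is automatically bounded, making the radial scaling collapse to $0$ as $|y| \to 0$.
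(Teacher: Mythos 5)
Your proof is correct and is the standard argument for Alexander's trick; the paper states this lemma without giving a proof, treating it as a classical fact. One small economy you could note: since $\overline{h}$ is a continuous bijection between compact Hausdorff spaces, it is automatically a homeomorphism, so the separate continuity check for $\overline{h^{-1}}$ can be omitted.
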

	\begin{corollary}
		Any twisted sphere is homeomorphic to a sphere. 
	\end{corollary}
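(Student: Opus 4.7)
The plan is to verify directly that the formula $\overline{h}(v)=|v|\cdot h\bigl(v/|v|\bigr)$ for $v\ne 0$ and $\overline{h}(0)=0$ defines a self-homeomorphism of $\overline{D}^n$ extending $h$. Writing points of $\overline{D}^n$ in polar form $v=tx$ with $t\in[0,1]$ and $x\in S^{n-1}$, this is exactly $\overline{h}(tx)=th(x)$, and on the boundary $t=1$ we recover $\overline{h}(x)=h(x)$, so the extension property is immediate.

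First I would check continuity. Away from the origin, the map $v\mapsto(|v|,v/|v|)$ is a homeomorphism $\overline{D}^n\setminus\{0\}\to(0,1]\times S^{n-1}$, so $\overline{h}$ is continuous on $\overline{D}^n\setminus\{0\}$ as a composition of continuous maps. Continuity at $0$ follows from the key observation that $|\overline{h}(v)|=|v|\cdot|h(v/|v|)|=|v|$, since $h$ takes values in $S^{n-1}$. Hence $\overline{h}(v)\to 0=\overline{h}(0)$ as $v\to 0$.

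Next I would produce the inverse. Since $h\colon S^{n-1}\to S^{n-1}$ is a homeomorphism, $h^{-1}$ exists and is continuous; define $g(v)=|v|\cdot h^{-1}(v/|v|)$ for $v\ne 0$ and $g(0)=0$. By the same argument $g$ is continuous on $\overline{D}^n$. A direct computation gives
\begin{equation*}
g(\overline{h}(tx))=g(th(x))=t\cdot h^{-1}(h(x))=tx,
\end{equation*}
and similarly $\overline{h}\circ g=\mathrm{id}$, so $g=\overline{h}^{-1}$. Thus $\overline{h}$ is a continuous bijection with continuous inverse, i.e.\ a homeomorphism of $\overline{D}^n$ extending $h$.

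There is no real obstacle here; the only subtle point is continuity at the origin, and that is handled by the norm-preserving identity $|\overline{h}(v)|=|v|$, which rests entirely on $h$ mapping $S^{n-1}$ into itself. The corollary that any twisted sphere is homeomorphic to $S^n$ then follows by using $\overline{h}$ to upgrade the gluing homeomorphism $h\colon S^{n-1}\to S^{n-1}$ along the common boundary of the two disks to a homeomorphism of one disk that intertwines the two gluings, producing a homeomorphism of the twisted sphere with the standard sphere $\overline{D}^n\cup_{\mathrm{id}}\overline{D}^n=S^n$.
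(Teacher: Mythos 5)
Your argument is correct and follows the same route as the paper: prove Alexander's trick (that $\overline{h}(tx)=th(x)$ extends $h$ to a self-homeomorphism of $\overline{D}^n$) and then use $\overline{h}$ on one disk and the identity on the other to intertwine the two gluings. The paper simply states Alexander's trick as a lemma without proof and then applies it exactly this way; you additionally supply the norm-preserving argument for continuity at the origin and the explicit inverse, but the overall strategy is identical.
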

	By Alexander's trick, we can extend $h$ and get the homeomorphism from $S^n = \overline{D}^n\stackrel{\id}{\cup}\overline{D}^n$ to $M^n \simeq \overline{D}_1^n \stackrel{h}{\cup}\overline{D}_2^n$. 
	By the following picture, we denote $\overline{D}_{\pm}^{n}$ the upper and lower halves of $S^n$.
	\begin{figure}[htbp]
    \centering
        \includegraphics[width=0.5\textwidth]{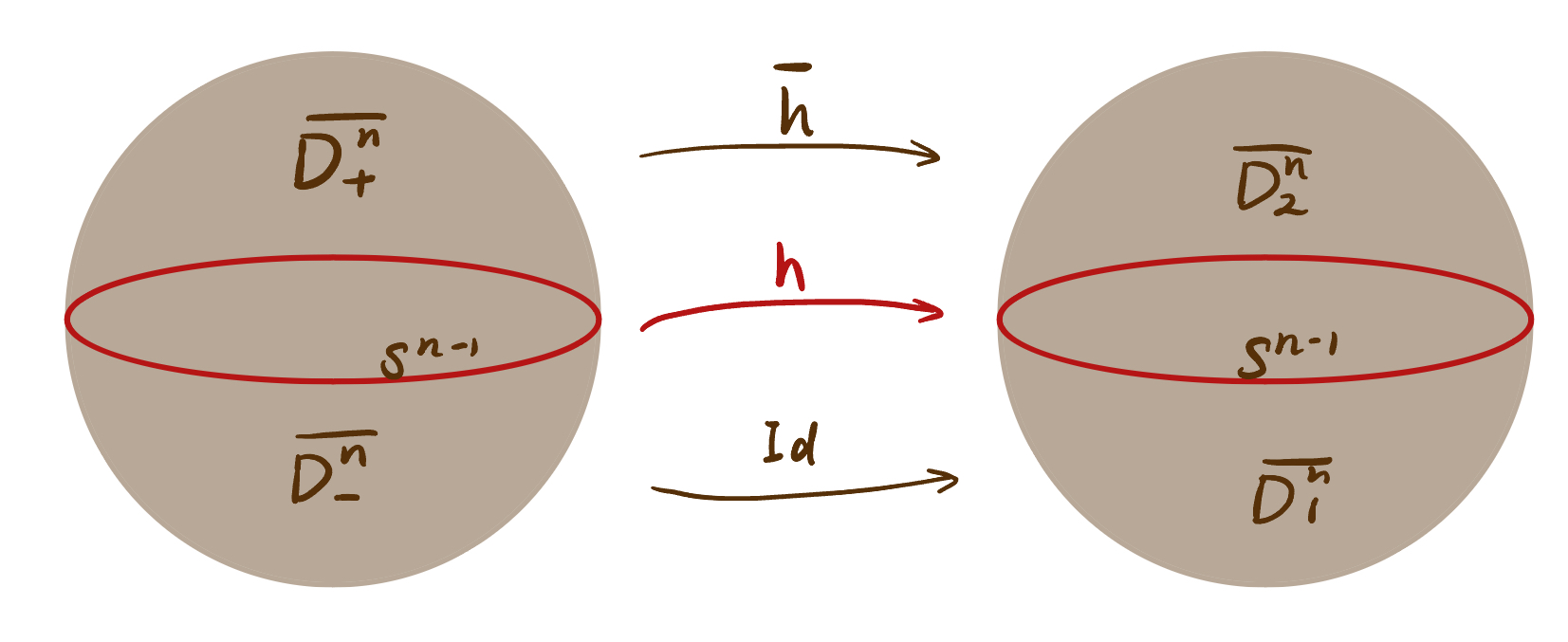}
    \end{figure}
	
	 And we map $\overline{D}_{-}^{n}$ to $\overline{D}_2^n$. 
	 And we glue the image via the homeomorphism $h$. 
	 Finally, we apply the Alexandrov trick to extend the upper sphere so that we construct the homeomorphism for $\overline{D}_+^n$ to $\overline{D}_1^n$.

\begin{remark}
    What if $M^n$ is a smooth twisted sphere and $h: S^{n - 1} \to S^{n - 1}$ is a diffeomorphism. Is $M$ still diffeomorphic to $S^n$? Notice that our proof using the extension of $h$ no longer works. Since the extension is not smooth at the center of the ball. However, in 1956. Milnor proved the following theorem.
\begin{theorem}[Milnor \cite{Mil56}]
   There exists a exotic twisted 7-sphere, that is there exists  exists $h: S^6 \to S^6$ diffeomorphism such that $M \simeq \overline{D}^7 \stackrel{h}{\cup} \overline{D}^7\stackrel{\text{homeo}}{\simeq} S^7$. However, $M$ is not diffeomorphic to $S^7$. Therefore, $M$ is an exotic sphere.
\end{theorem}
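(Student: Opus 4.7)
The plan is to construct Milnor's original example from \cite{Mil56}: a family of smooth $7$-manifolds arising as sphere bundles over $S^4$, show that many of them are homeomorphic to $S^7$ by Morse theory, and distinguish them from the standard smooth $S^7$ via a diffeomorphism invariant extracted from the Hirzebruch signature theorem.

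First I would build the candidates. For integers $(h,j)$ with $h+j=1$, consider the principal $SO(4)$-bundle over $S^4$ whose clutching function $f_{h,j}\colon S^3 \to SO(4)$ is given in quaternionic coordinates by $f_{h,j}(u)\cdot v = u^h v u^j$, and let $M_{h,j}^7$ be the associated $S^3$-bundle. Since $\pi_3(SO(4))\cong \Z\oplus \Z$, these realize a rich family of bundles, and each $M_{h,j}^7$ is a smooth closed $7$-manifold. The condition $h+j=1$ ensures the Euler class of the bundle is a generator of $H^4(S^4;\Z)$.

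Next I would show each such $M_{h,j}^7$ is homeomorphic to $S^7$. The idea is to exhibit, using the bundle projection and an auxiliary height function on $S^4$, an explicit smooth function $f\colon M_{h,j}^7 \to \R$ with exactly two non-degenerate critical points (one minimum and one maximum). By Reeb's theorem (a classical Morse-theoretic result: a closed manifold admitting a Morse function with exactly two critical points is homeomorphic to a sphere), $M_{h,j}^7$ is homeomorphic to $S^7$. Moreover this argument presents $M_{h,j}^7$ as two closed $7$-disks glued along their boundary via some diffeomorphism $h\colon S^6\to S^6$, which is exactly the twisted-sphere form required by the statement.

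The main step, and the hard part, is to produce a smooth invariant that distinguishes $M_{h,j}^7$ from the standard $S^7$. Following Milnor, I would define the $\lambda$-invariant as follows: if $\Sigma^7$ is any smooth homotopy sphere bounding a smooth compact oriented $8$-manifold $W$ with $H^2(W;\Z)$ torsion-free, set
\[
\lambda(\Sigma) \equiv 2\,p_1(W)^2 - \tau(W) \pmod{7},
\]
where $p_1$ is the first Pontryagin class (pulled back via any extension, using that $\Sigma$ is a homotopy sphere to kill indeterminacy) and $\tau(W)$ is the signature. The Hirzebruch signature theorem gives $\tau(W) = \tfrac{1}{45}(7 p_2(W)-p_1(W)^2)$ when $\partial W = S^7$ is standard and bounds a spin manifold, forcing integrality of a certain combination, and the reduction mod $7$ produces a well-defined invariant that vanishes on the standard $S^7$.

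Finally, I would compute $\lambda(M_{h,j}^7)$ explicitly by taking $W$ to be the associated $D^4$-bundle over $S^4$ with $\partial W = M_{h,j}^7$. Direct calculation of its Pontryagin number and signature (using that $p_1$ is determined by $h-j$ and $\tau(W)=\pm 1$) gives, up to sign,
\[
\lambda(M_{h,j}^7) \equiv (h-j)^2 - 1 \pmod{7}.
\]
Choosing, for example, $(h,j)=(-1,2)$ yields $\lambda \equiv 8 \equiv 1 \not\equiv 0 \pmod 7$, while the standard $S^7$ gives $0$; hence $M_{-1,2}^7$ is homeomorphic but not diffeomorphic to $S^7$. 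The main obstacle throughout is genuinely the construction and well-definedness of $\lambda$: one must verify independence of the choice of bounding $W$ (using Novikov additivity for signatures and the vanishing of Pontryagin numbers on closed $8$-manifolds modulo the relevant denominators dictated by the signature theorem), which is the deep content of Milnor's argument.
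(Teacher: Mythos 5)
The paper states this result as a citation to Milnor's 1956 paper and does not include a proof, so there is no in-text argument to compare against; your sketch is, in spirit, a reproduction of Milnor's own proof. The architecture is correct: realize the candidates as total spaces $M_{h,j}^7$ of $S^3$-bundles over $S^4$ via the quaternionic clutching maps $v\mapsto u^h v u^j$ with $h+j=1$, use a Morse function with two critical points and Reeb's theorem to see each is a twisted sphere, then distinguish smooth structures by the mod $7$ invariant $\lambda$ built from the signature theorem. One point to tighten: as written you apply the Hirzebruch signature formula $45\,\tau = 7p_2 - p_1^2$ to $W$ with $\partial W = S^7$ and invoke a spin hypothesis, but the signature theorem holds only for closed manifolds and spin plays no role in Milnor's argument. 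The well-definedness of $\lambda$ comes from taking two coboundaries $W_1, W_2$ of $M$, gluing along $M$ to form a closed oriented $8$-manifold $C$, computing $45\,\tau(C) + p_1^2[C] = 7p_2[C]\equiv 0 \pmod 7$, and using Novikov additivity $\tau(C)=\tau(W_1)-\tau(W_2)$ together with the analogous additivity for the relative Pontryagin number; this is precisely the passage where $2q-\tau$ becomes a coboundary-independent mod $7$ invariant, and you should state it that way rather than as an integrality statement for a manifold with boundary. With that repair, the sketch is a faithful outline of Milnor's proof, including the final computation $\lambda(M_{h,j}^7)\equiv (h-j)^2-1 \pmod 7$, which is nonzero for, e.g., $h-j=\pm 3$.
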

\end{remark}
\begin{openquestion}
	If $M^n$ admits $\sect_M \geq 1$ and $\diam(M) > \frac{\pi}{2}$, then $M$ is diffeomorphic to $S^n$.
\end{openquestion}
By \ref{Gromoll-Meyer-sphere}  there exists an exotic sphere which admits a metric of $\sec\ge 0$.
\begin{openquestion}
	Does there exist an exotic sphere that admits a metric of  $\sec > 0$?

\end{openquestion}

\chapter{Manifolds of Nonnegative Curvature}

\begin{example}
    For $\sect \equiv 0$. we have $\R^n$ and $T^n$. For $\sect > 0$. we have $S^n$, $\h P^n$, $\C P^n$.
    
     Let $G$ be a compact Lie group with a bi-invariant metric
    \begin{definition}[Left(Right) Invariant Metric]
        Let $(G, g)$ be a compact Lie group. Then
        \begin{itemize}
            \item $g$ is called \textbf{left invariant metric} if $\inp{u, v}_b = \inp{(dL_a)_bu, (dL_a)_b v}$;
            \item $g$ is called a \textbf{right invariant metric} if $\inp{u, v}_b = \inp{(dR_a)_bu, (dR_a)_b v}$
        \end{itemize}
        for all $a, b \in G$ and $u, v \in T_bG$. $g$ is called a \textbf{bi-invariant metric} if it is both left and right invariant.
    \end{definition}
    \begin{fact}
        If $(G, g)$ is a Lie group with a bi-invariant metric, then $\sect_g \geq 0$.
        Every compact Lie group has a bi-invariant metric.
        
        For example, $SO(n), U(n), SU(n), Sp(n) = \br{A: \text{$n\times n$ unitary quaternion matrix}}$. 
    \end{fact}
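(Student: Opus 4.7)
The plan is to prove the two assertions separately. For existence of a bi-invariant metric on a compact Lie group $G$, I would start with any inner product $\langle\cdot,\cdot\rangle_0$ on $\mathfrak{g} = T_e G$ and translate it to a left-invariant metric on $G$. To make it right-invariant as well, I would average over $G$ using the (bi-invariant) Haar measure $d\mu$: define
\begin{equation*}
\langle u, v\rangle := \int_G \langle (dR_a)_e u, (dR_a)_e v\rangle_0 \, d\mu(a), \quad u, v \in \mathfrak{g},
\end{equation*}
which is finite by compactness of $G$. The resulting inner product on $\mathfrak{g}$ is $\mathrm{Ad}(G)$-invariant, and extending it by left translations produces a bi-invariant Riemannian metric on $G$. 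The only subtlety is verifying $\mathrm{Ad}$-invariance from right-invariance of the Haar measure, which is a direct change-of-variable computation.

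For the curvature inequality, I would work with left-invariant vector fields $X, Y, Z$, which form a natural parallelism on $G$. The key identity is that for a bi-invariant metric the Levi-Civita connection satisfies
\begin{equation*}
\nabla_X Y = \tfrac{1}{2}[X,Y],
\end{equation*}
which I would derive from the Koszul formula together with the fact that bi-invariance is equivalent to $\mathrm{ad}_X$ being skew-adjoint, i.e.\ $\langle [X,Y], Z\rangle + \langle Y, [X,Z]\rangle = 0$. A short computation using $[X,Y]$-bracket identities and the Jacobi identity then yields
\begin{equation*}
R(X,Y)Z = -\tfrac{1}{4}[[X,Y], Z]
\end{equation*}
for left-invariant $X, Y, Z$.

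Plugging this into the sectional curvature formula and using skew-adjointness of $\mathrm{ad}$ once more,
\begin{equation*}
\langle R(X,Y)Y, X\rangle = -\tfrac{1}{4}\langle [[X,Y],Y], X\rangle = \tfrac{1}{4}\langle [X,Y], [X,Y]\rangle = \tfrac{1}{4}|[X,Y]|^2 \geq 0.
\end{equation*}
Therefore for any $2$-plane $\sigma = \mathrm{span}(X,Y) \subset T_p G$,
\begin{equation*}
\sect(\sigma) = \frac{|[X,Y]|^2}{4\,(|X|^2|Y|^2 - \langle X, Y\rangle^2)} \geq 0,
\end{equation*}
with equality precisely when $X$ and $Y$ commute.

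The main obstacle, or at least the step requiring the most care, is the formula $\nabla_X Y = \tfrac{1}{2}[X,Y]$: I would need to verify $\mathrm{ad}$-skew-adjointness from bi-invariance (by differentiating the conjugation action at the identity) and then feed it into the Koszul formula, using that $X\langle Y, Z\rangle = 0$ for left-invariant fields of constant inner product. Once this identity is in hand, the curvature computation and the sign of $\langle R(X,Y)Y, X\rangle$ follow mechanically, and the non-negativity of sectional curvature drops out.
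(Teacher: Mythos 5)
The paper states this as a \emph{Fact} without proof, pointing the reader to exercise 1, page 103 of do~Carmo for the curvature formula $\langle R(X,Y)Y,X\rangle = \tfrac{1}{4}\norm{[X,Y]}^2$. Your proposal is precisely the argument that fills in that reference: averaging an inner product on $\mathfrak{g}$ by the Haar measure to produce an $\mathrm{Ad}$-invariant inner product (hence a bi-invariant metric) on a compact Lie group, and then using the Koszul formula together with skew-adjointness of $\mathrm{ad}_X$ to obtain $\nabla_X Y = \tfrac{1}{2}[X,Y]$, $R(X,Y)Z = -\tfrac{1}{4}[[X,Y],Z]$, and finally $\langle R(X,Y)Y,X\rangle = \tfrac{1}{4}|[X,Y]|^2 \geq 0$. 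Every step checks out (the Jacobi identity computation for $R$, the sign from skew-adjointness of $\mathrm{ad}_Y$, and the constancy of $\langle Y,Z\rangle$ for left-invariant fields), and the observation that $\sect(\sigma)=0$ exactly when the spanning left-invariant fields commute is also correct. This is the same approach the paper defers to; there is nothing to add.
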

    We denote $Sp(n)$ the set of $n\times n$  unitary quaternion matrices. In particular, $Sp(1) = \br{q \in \mathds{H}: \abs{q} = 1} = S^3$ is a compact Lie group. Here $\mathds{H}$  denotes the division ring of quaternions. 
    \begin{equation*}
        \mathds{H} = \br{a + bi + cj + dk: a. b, c, d \in \R}
    \end{equation*}
    And
    \begin{equation*}
        Sp(2) = \br{A = 
        \begin{bmatrix}
            q_{11} & q_{12}\\
            q_{21} & q_{22}
        \end{bmatrix}
        : AA^* = \id, q_{ij} \in \mathds{H}}
    \end{equation*}
    We know that $\dim(Sp(2)) = 10$ and $Sp(2)$ have nonnegative curvature with respect to the canonical bi-invariant metric (See Remark~\ref{rmk: canonical metric}).
\end{example}
\begin{remark}\label{rmk: canonical metric}
	We would like to explain what we mean by canonical metric here. 
	We denote $M^{n \times n}(R)$ where $R = \R, \C, \mathds{H}$ the space of $n \times n$ matrixes with $\R, \C$ and $\mathds{H}$ entries. The metric for these spaces are defined as 
	\begin{equation*}
		\inp{A, B} = \mathfrak{R}(\tr{(AB^*)})\quad(\text{$\mathfrak{R}$ means the real part})
	\end{equation*}
	where $B^*$ is the Hermitian transpose (conjugate transpose) so that in the case of $R = \R$, $B^* = B^T$. 
	Notice, that the above metric is just the Euclidean metric if we consider an $n \times n$-matrix as a vector with $n^2$-entries. Thus, one can induce the metric of $O(n), U(n)$, and $Sp(n)$ canonically using the above metric. 
\end{remark}
\section{Review: Riemannian Submersion}
\begin{definition}[Riemannian Submersion]
    Let $\pi: (M^n, g) \to (N^m, h)$ be a smooth submersion ($m \geq n$).
    For each $p \in M$, we can decompose $T_pM$ into two components:
    \begin{itemize}
    	\item The \textbf{vertical tangent space at $p$} is $\nu_pM = \ker{(d\pi_p)}$;
    	\item The \textbf{horizontal tangent space at $p$} is $H_pM = (\nu_pM)^\perp$.
    \end{itemize} 
    
Since $\pi$ is a submersion, $d\pi_p|_{H_pM}: H_pM \to T_{\pi(p)}N$ is an isomorphism.

    Then $\pi$ is a \textbf{Riemannian submersion} if and only if for any $p \in M$,  the isomorphism $d\pi_p|_{H_pM}: H_pM \to T_{\pi(p)}N$ is an isometry, i.e. 
    \begin{equation*}
    	g_p(X, Y) = h_{\pi(p)}(d\pi_p(X), d\pi_p(Y))
    \end{equation*}
    for any $X, Y \in H_pM$
\end{definition}
\begin{remark}
	At each $p \in M$, we  have an orthogonal decomposition for $T_pM$ as a direct sum  $T_pM = \nu_pM \oplus H_pM$.
\end{remark}
\begin{remark}
	By the submersion level set theorem, at each $q \in N$, each fiber $M_q = \pi^{-1}(q)$ is an embedded smooth submanifold of $M$. Thus we can also write $\nu_pM = T_p(M_{\pi(x)})$. 
\end{remark}

\begin{proposition}
	A Riemannian submersion is $1$-Lipschitz.
\end{proposition}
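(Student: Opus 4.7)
The plan is to reduce the 1-Lipschitz property to a pointwise estimate on tangent vectors, then pass to distances via the characterization of the Riemannian distance as an infimum of lengths of piecewise smooth curves.

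First I would fix $p,q\in M$ and an arbitrary piecewise smooth curve $\gamma:[0,1]\to M$ with $\gamma(0)=p$, $\gamma(1)=q$. The composition $\pi\circ\gamma$ is then a piecewise smooth curve in $N$ joining $\pi(p)$ to $\pi(q)$, so it is enough to show $\length(\pi\circ\gamma)\le\length(\gamma)$ and then take the infimum over all such $\gamma$.

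The pointwise step uses the orthogonal splitting $T_{\gamma(t)}M=H_{\gamma(t)}M\oplus\nu_{\gamma(t)}M$ to write $\dot\gamma(t)=\dot\gamma^H(t)+\dot\gamma^V(t)$. Since $\nu_{\gamma(t)}M=\ker(d\pi_{\gamma(t)})$, we have $d\pi(\dot\gamma(t))=d\pi(\dot\gamma^H(t))$, and because $d\pi$ restricted to $H_{\gamma(t)}M$ is a linear isometry onto $T_{\pi(\gamma(t))}N$, it follows that $|d\pi(\dot\gamma(t))|_h=|\dot\gamma^H(t)|_g$. The Pythagorean identity for the orthogonal decomposition gives $|\dot\gamma^H(t)|_g\le|\dot\gamma(t)|_g$, and integrating this inequality over $[0,1]$ yields the length comparison.

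Finally, taking the infimum over all piecewise smooth $\gamma$ from $p$ to $q$ produces $d_N(\pi(p),\pi(q))\le d_M(p,q)$. There is no real obstacle here: the argument is essentially forced by the definitions, and the only substantive content is the Pythagorean bound $|\dot\gamma^H|\le|\dot\gamma|$, which is immediate from the orthogonal decomposition. The only mildly subtle point to check is that the length of $\pi\circ\gamma$ is computed by the integral of $|d\pi(\dot\gamma)|_h$, which is just the chain rule applied to a piecewise smooth curve.
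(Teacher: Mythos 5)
Your proof is correct and follows exactly the same route as the paper: decompose $\dot\gamma$ into horizontal and vertical components, use that $d\pi$ kills the vertical part and is a linear isometry on the horizontal part, bound $|\dot\gamma^H|\le|\dot\gamma|$, integrate, and take the infimum over curves. Nothing to add.
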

\begin{proof}
	We want to show $d_h(\pi(x), \pi(y)) \leq d_g(x, y)$ for any pairs of $x, y \in M$. Let $\gamma: [0, l] \to M$ be any curve connects $x$ and $y$, then
	\begin{align*}
		\length(\gamma) 
		&= \int_{0}^l\norm{\dga(t)}_g dt\\
		& \geq \int_{0}^l\norm{\dga^{\parallel}(t)}_g dt\\
		&= \int_{0}^l\norm{d\pi_{\pi(\gamma(t))}(\dga^{\parallel}(t))}_h dt\\
		&= \length(\pi \circ\gamma)
	\end{align*}
	By taking the infimum among all such $\gamma$ it follows that $\pi$ is $1$-Lipschitz.
\end{proof}
Here we introduce some important examples of Riemannian submersions. 
\begin{example}\label{homog-riem-submersion}
Let $G$ be a Lie group that acts freely, isometrically and with closed orbits on $M$. Then $M/G$ inherits a natural Riemannian metric $h$ such that $\pi: (M, g) \to (M/G, h)$ is a Riemannian submersion. 
\end{example}
\begin{example}[Warpped products]

    Suppose $(B, g_B)$ and $(F, g_F)$ are Riemannian manifolds. 
    Let $f: B \to \R_+$ be smooth, then the projection $\pi: M = (B \times F, dg_B + f(x)dg_F) \to (B, dg_B)$ is a Riemannian submersion.
\end{example}
\begin{definition}[Horizontal Lift]
	Let $\pi: (M^n, g) \to (N^m, h)$ be a smooth submersion, a vector field $X \in \mathfrak{X}(M)$ is a \textbf{horizontal vector field} (resp. \textbf{vertical vector field}) if for each $p \in M$, $X_p \in H_pM$ (resp. $X_p \in \nu_pM$). 
	
	Given $Y \in \mathfrak{X}(N)$, a vector field $X \in \mathfrak{X}(M)$ is called a \textbf{horizontal lift of $Y$} if for each $p \in M$, 
	\begin{equation*}
		d\pi_p(X_p) = Y_{\pi(p)}.
	\end{equation*}
	In particular, for each $q \in N$, let $\gamma$ be a curve starting at $q$. 
	A curve $\overline{\gamma}$ starting at some $p \in \pi^{-1}(q)$ is called a \textbf{horizontal lift of $\gamma$} if for each $t$, $\pi\circ \overline{\gamma} = \gamma$ and 
	\begin{equation*}
	\dot{\overline{\gamma}}(t) \, \text{is horizontal for all } t
	\end{equation*}
\end{definition}
\begin{notation}
	We denote $\mathfrak{X}^\nu(M)$ the family of smooth vertical vector fields of $M$ and $\mathfrak{X}^H(M)$ the family of smooth horizontal vector fields of $M$.
\end{notation}
The following is the principal property of horizontal vector fields
\begin{proposition}[See Proposition 2.25 in \cite{Lee19}]\label{prop: principle properties of Horiz}
	Let $\pi: (M^n, g) \to (N^m, h)$ be a smooth submersion between two Riemannian manifolds. Then
	\begin{enumerate}
		\item Every $X \in \mathfrak{X}(M)$ can be uniquely expressed in the form $X = X^H + X^\nu$ where $X^H \in \mathfrak{X}^H(M)$ and $X^\nu \in \mathfrak{X}^V(M)$;
		\item Every $Y \in \mathfrak{X}(N)$ has a unique smooth horizontal lift to $M$;
		\item For every $p \in M$ and $v \in H_pM$, there is a vector field $Y \in \mathfrak{X}(N)$ whose horizontal lift $\tilde{Y}$ satisfies $\tilde{Y}_p = v$.
	\end{enumerate}
\end{proposition}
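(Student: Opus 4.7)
The plan is to work pointwise first to establish existence and uniqueness of the decomposition and the lift, and then upgrade to smoothness by using coordinates adapted to the submersion. The underlying observation throughout is that since $\pi$ has constant rank $m$, the kernel bundle $\nu M = \ker(d\pi) \subseteq TM$ is a smooth subbundle of rank $n-m$, and consequently the orthogonal complement $HM = (\nu M)^\perp$ (with respect to $g$) is also a smooth subbundle of rank $m$. This gives a smooth orthogonal direct-sum decomposition $TM = HM \oplus \nu M$ as a vector bundle.

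For part (1), I would apply the fiberwise orthogonal projections $\pi_H : TM \to HM$ and $\pi_\nu : TM \to \nu M$. Both are smooth bundle homomorphisms because the subbundles $HM, \nu M$ are smooth and $g$ is smooth. Given $X \in \mathfrak{X}(M)$, define $X^H = \pi_H \circ X$ and $X^\nu = \pi_\nu \circ X$; these are smooth vector fields, lie in $\mathfrak{X}^H(M)$ and $\mathfrak{X}^\nu(M)$ respectively, and $X = X^H + X^\nu$. Uniqueness is immediate from $H_pM \cap \nu_pM = \{0\}$ at each $p$.

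For part (2), existence and uniqueness of $\tilde{Y}$ pointwise is clear: at each $p \in M$, since $d\pi_p|_{H_pM} : H_pM \to T_{\pi(p)}N$ is a linear isomorphism, we are forced to set $\tilde{Y}_p = (d\pi_p|_{H_pM})^{-1}(Y_{\pi(p)})$. The only real content is smoothness of the resulting section $\tilde{Y}$, and this is the step where genuine work is needed. I would verify it locally using the rank theorem: around any $p \in M$, choose adapted coordinates $(x^1,\ldots, x^m, u^1,\ldots, u^{n-m})$ on a neighborhood $U$ and coordinates $(y^1,\ldots, y^m)$ on $\pi(U)$ such that $\pi(x,u) = x$. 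In these coordinates, $\nu_qM = \mathrm{span}\{\partial_{u^i}\}$ varies smoothly in $q$, so $HM|_U$, being the $g$-orthogonal complement of a smooth subbundle, also varies smoothly. Writing $Y = \sum_k Y^k(y)\,\partial_{y^k}$ with $Y^k$ smooth, the horizontal lift must take the form $\tilde{Y} = \sum_k Y^k(\pi(\cdot))\bigl(\partial_{x^k} + \sum_i a^{k}_{i}(x,u)\partial_{u^i}\bigr)$ for unique coefficients $a^k_i$ determined by the conditions that $\tilde{Y} \perp \partial_{u^j}$ for each $j$, i.e.\ a linear system whose matrix is the Gram matrix of $\{\partial_{u^j}\}$, hence invertible with smooth inverse. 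Thus each $a^k_i$ is smooth, and $\tilde{Y}$ is smooth on $U$; by uniqueness these local lifts agree on overlaps and glue to the global smooth lift.

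For part (3), set $w = d\pi_p(v) \in T_{\pi(p)}N$ and use any standard extension result for smooth manifolds: there exists a smooth vector field $Y \in \mathfrak{X}(N)$ with $Y_{\pi(p)} = w$ (e.g.\ pick coordinates around $\pi(p)$, take $Y$ constant in those coordinates, and extend globally using a bump function). Let $\tilde{Y}$ be its horizontal lift from part (2). Then $d\pi_p(\tilde{Y}_p) = Y_{\pi(p)} = w = d\pi_p(v)$, and since both $\tilde{Y}_p$ and $v$ lie in $H_pM$ where $d\pi_p$ is injective, we conclude $\tilde{Y}_p = v$. The main obstacle in the whole proof is really just the smoothness argument in (2); everything else is either pointwise linear algebra or a direct application of standard extension lemmas for smooth vector fields.
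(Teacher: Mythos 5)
The paper does not give a proof of this proposition — it cites it directly from Lee's \emph{Introduction to Riemannian Manifolds} (Proposition 2.25), so there is no in-paper argument to compare against. Your proof is correct and is essentially the standard one used there: the constant-rank hypothesis makes $\ker(d\pi)$ a smooth subbundle, hence so is its $g$-orthogonal complement; the fiberwise orthogonal projections are then smooth bundle maps, giving part (1); the horizontal lift is forced pointwise by the isomorphism $d\pi_p|_{H_pM}$, and smoothness is checked locally in submersion-adapted coordinates via the invertibility of the Gram matrix of the vertical coordinate fields; part (3) is an extension lemma plus the pointwise injectivity of $d\pi_p$ on $H_pM$. One small point of phrasing: in part (2) the orthogonality conditions should be imposed on each coordinate lift $\partial_{x^k}+\sum_i a^k_i\partial_{u^i}$ individually (giving, for each fixed $k$, a square linear system with Gram matrix), rather than on $\tilde{Y}$ itself, which alone would underdetermine the $a^k_i$; your intent is clearly the former, and with that reading the argument is complete.
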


\begin{proposition}\label{curve-hor-lift}
    Let $\pi: (M^n, g) \to (N^m, h)$ be a Riemannian submersion between two Riemannian manifolds. 
    Fix $p \in M$, then for each regular curve $\gamma$ starting at $\pi(p)$, there is a unique regular horizontal lift $\overline{\gamma}$ starting at $p$. 
\end{proposition}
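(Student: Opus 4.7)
The plan is to reduce the existence and uniqueness of the horizontal lift to a first-order ODE in adapted coordinates, then extend by maximality. First, I would appeal to the submersion normal form: around any point of $M$ there exist coordinates $(x^1,\ldots,x^m,y^1,\ldots,y^{n-m})$ on an open neighborhood, together with coordinates $(x^1,\ldots,x^m)$ on the image in $N$, such that $\pi$ is simply projection onto the $x$-factor. In these coordinates the vertical distribution is spanned by the $\partial_{y^j}$, so by applying Gram--Schmidt (with respect to $g$) to $\partial_{x^i}$ against the $\partial_{y^j}$ I obtain a smooth local frame of horizontal vector fields of the form $X_i = \partial_{x^i} - \sum_j A^j_i(x,y)\,\partial_{y^j}$, where $A^j_i$ are smooth functions determined by $g$. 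Note that $d\pi(X_i) = \partial_{x^i}$, so $X_i$ is the horizontal lift of $\partial_{x^i}$ guaranteed by part (2) of Proposition \ref{prop: principle properties of Horiz}.

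Next, any curve of the form $\overline{\gamma}(t) = (\gamma(t), y(t))$ automatically satisfies $\pi\circ\overline{\gamma} = \gamma$, and the horizontality condition $\dot{\overline{\gamma}}(t) \in H_{\overline{\gamma}(t)} M$ is equivalent to demanding that $\dot{\overline{\gamma}}(t) = \sum_i \dot{x}^i(t)\, X_i|_{\overline{\gamma}(t)}$. Comparing $\partial_{y^j}$-components yields the first-order system
\[
\dot{y}^j(t) \;=\; -\sum_i \dot{x}^i(t)\, A^j_i(\gamma(t), y(t)), \qquad y(0) = y(p),
\]
which is smooth in $(t,y)$ because $\gamma$ is regular and the $A^j_i$ are smooth. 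Standard ODE existence and uniqueness therefore give a unique maximal solution $y(t)$ on an open interval around $0$, and hence a unique local horizontal lift $\overline{\gamma}$ starting at $p$. Regularity of $\overline{\gamma}$ is automatic because $d\pi$ restricts to an isometry on $H$, so $|\dot{\overline{\gamma}}(t)| = |\dot{\gamma}(t)| \ne 0$.

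The final step is to glue these local lifts into a lift defined on the full parameter interval of $\gamma$. One covers a compact sub-interval of the domain of $\gamma$ by finitely many adapted coordinate charts along $\gamma$, solves the ODE in each chart starting from where the previous lift ended, and uses ODE uniqueness in overlaps to see that the pieces match. Uniqueness globally follows from the connectedness of the parameter interval and local ODE uniqueness: the set where two lifts agree is both open and closed.

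The main technical obstacle I expect is not the local existence but the global extension: a priori the maximal lift might be defined only on a proper sub-interval of the domain of $\gamma$, since an integral curve of the time-dependent horizontal ODE could in principle escape any compact set in finite parameter time. In the setting likely intended here this issue is handled either by working over a compact sub-interval (where one can bound $|\dot{\overline{\gamma}}|=|\dot{\gamma}|$ and run a standard escape-lemma argument), or by an implicit completeness hypothesis on the fibers of $\pi$. Modulo this standard point, the proof is a routine combination of the submersion normal form with the existence and uniqueness theorem for ODEs.
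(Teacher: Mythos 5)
Your proof is correct but takes a genuinely different route from the paper's. You work in adapted (submersion normal form) coordinates and derive an explicit time-dependent first-order ODE for the fiber coordinates $y^j(t)$, solving and gluing locally. The paper instead argues coordinate-free: it first reduces (by subdividing) to the case where $\gamma$ is an embedding, extends the velocity field $t\mapsto\dot\gamma(t)$ to a global vector field $X$ on $N$, takes its unique horizontal lift $\overline{X}\in\mathfrak X(M)$ (part 2 of Proposition~\ref{prop: principle properties of Horiz}), and defines $\overline\gamma$ as the integral curve of $\overline{X}$ through $p$; uniqueness then falls out of uniqueness of integral curves. Your approach has the advantage of treating non-embedded regular curves directly as a time-dependent ODE, so no preliminary splitting step is needed, and it makes the dependence on the metric (through the coefficients $A^j_i$) completely explicit. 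The paper's approach is shorter and avoids coordinates, at the cost of the ad hoc reduction to embeddings and a less explicit mechanism. One point in your favor: you explicitly flag that the maximal solution might, a priori, fail to exist on the full parameter interval, and that one needs the equality $|\dot{\overline\gamma}|=|\dot\gamma|$ together with compactness of the sub-interval (or completeness of $M$) to rule out finite-time escape. The paper's proof tacitly skips this existence-on-the-whole-interval issue; it is in fact the same gap in both arguments, and your version at least identifies it and gives the standard way to close it.
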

\begin{proof}
Since the regular curve $\gamma$ is a locally embedding. WLOG, we only need to consider the case that $\gamma: [0, 1] \to N$ is a smooth embedding and has no self-intersection, since otherwise we just split $\gamma$ into many small pieces. Consider the vector field $t \mapsto \dga(t)$, we can have a global extension $X$ of this vector field over $N$. And then by 2. in the proposition \ref{prop: principle properties of Horiz}, we have a unique horizontal lift of $X$, say $\overline{X}$ over $M$. Then by taking the integral curve of $\overline{X}$, we can 
we have a unique smooth horizontal lift of $\dga$. 
	And we just take $\overline{\gamma}$ be the integral curve of the smooth horizontal lift of $\dga$ starting at $p$. 
	The uniqueness follows by the uniqueness of  integral curves. 
\end{proof}
By construction, an important observation is that 
    \begin{equation*}
        \length{(\overline{\gamma})} = \length{(\gamma)}
    \end{equation*}
In particular, if $\gamma$ is a shortest geodesic from $\pi(p)$ to $q$, then the horizontal lift $\overline{\gamma}$ is a shortest geodesic as well. Indeed. Let $x\in\pi^{-1}(q)$ be the endpoint of $\bar \gamma$. Then
    \begin{align*}
        d(p, x) \leq & \length{(\overline{\gamma})} = \length{(\gamma)}\\
        = & d(\pi(p), q)\\
        \leq & d(p, x)\quad\text{Because $\pi$ is $1$-Lipschitz}
    \end{align*}
    Thus, all the inequalities above are equalities, and hence $\bar\gamma$ is length minimizing between its endpoints. Therefore, we can conclude the following corollary. 
\begin{corollary}\label{cor: geodesic horizontal lift}
	Every horizontal lift of a shortest geodesic by a Riemannian submersion is also a shortest geodesic.  
\end{corollary}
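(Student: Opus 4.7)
The plan is essentially to formalize the argument sketched immediately before the corollary statement, which establishes that the horizontal lift is length-minimizing between its endpoints; I then need to upgrade this to the statement that it is in fact a geodesic.

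First, I would apply Proposition \ref{curve-hor-lift} to the shortest geodesic $\gamma : [0,l] \to N$ from $\pi(p)$ to $q$: this produces a unique horizontal lift $\overline{\gamma} : [0,l] \to M$ starting at $p$. Let $x = \overline{\gamma}(l)$. Since $\pi$ is a Riemannian submersion, its restriction to horizontal vectors is a linear isometry, so $|\dot{\overline{\gamma}}(t)|_g = |d\pi(\dot{\overline{\gamma}}(t))|_h = |\dga(t)|_h$ for every $t$. In particular $\overline{\gamma}$ and $\gamma$ are parametrized at the same (constant) speed, and
\[
\length(\overline{\gamma}) = \length(\gamma) = d(\pi(p), q).
\]

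Next I would use that $\pi$ is $1$-Lipschitz, as established in the excerpt. This gives $d(\pi(p), q) = d(\pi(p), \pi(x)) \le d(p,x)$. Combining with the trivial inequality $d(p,x) \le \length(\overline{\gamma})$ yields
\[
d(p,x) \le \length(\overline{\gamma}) = d(\pi(p),q) \le d(p,x),
\]
so all inequalities are equalities and $\overline{\gamma}$ realizes the distance $d(p,x)$.

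The final step, which is the only genuinely nontrivial point, is to conclude that $\overline{\gamma}$ is actually a geodesic, not merely a length-minimizing curve. Because the horizontal lift has constant speed (by the isometry property above, together with the fact that $\gamma$ is a geodesic and hence has constant speed), the curve $\overline{\gamma}$ is a constant-speed minimizer between $p$ and $x$; any such curve is a geodesic by the standard characterization of geodesics as locally length-minimizing constant-speed curves. I do not expect any serious obstacle: the entire argument fits on half a page and relies only on the isometry property of $d\pi$ on horizontal vectors, the $1$-Lipschitz bound on $\pi$, and existence/uniqueness of horizontal lifts, all of which are already available.
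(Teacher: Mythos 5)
Your proposal is correct and follows the same route as the paper: preserve length under the horizontal lift, squeeze with the $1$-Lipschitz property of $\pi$ to conclude $\overline{\gamma}$ is length-minimizing between $p$ and $x$. Your added observation that $\overline{\gamma}$ has constant speed (so that a length-minimizer is genuinely a geodesic) is a reasonable clarification of a step the paper leaves implicit, but it does not change the substance of the argument.
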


\section{Comparison Theory  for Riemannian Submersions}
\begin{theorem}\label{thm: Riem-submersion}
    If $\sect_M \geq \kappa$ and $\pi: M \to N$ is a Riemannian submersion, then $\sect_N \geq \kappa$ as well. 
\end{theorem}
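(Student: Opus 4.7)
The plan is to deduce $\sect_N \geq \kappa$ from the local Toponogov hinge comparison in $N$, which in turn I will establish by lifting triangles in $N$ horizontally to $M$ and applying the local Toponogov hinge comparison already proved there (Theorem~\ref{thm: hinge}). The exercise following Theorem~\ref{thm: angle} tells us that verifying the local angle (equivalently, hinge) comparison with parameter $\kappa$ in a neighborhood of every point of $N$ is enough to conclude $\sect_N \geq \kappa$.

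First I would fix $q_0 \in N$ and pick $\tilde q_0 \in \pi^{-1}(q_0)$. Let $\eps > 0$ be small enough that the local hinge comparison (Theorem~\ref{thm: hinge}) holds on $B_{3\eps}(\tilde q_0) \subset M$, and restrict attention to hinges $[p_q^r]$ in $N$ with $p \in B_\eps(q_0)$ and $|p-q|, |p-r| < \eps$. Pick a lift $\tilde p \in \pi^{-1}(p)\cap B_{2\eps}(\tilde q_0)$ (possible by shrinking the neighborhood in $N$ using continuity of the submersion). Using Proposition~\ref{curve-hor-lift}, horizontally lift the shortest geodesics $[pq]$ and $[pr]$ starting at $\tilde p$; by Corollary~\ref{cor: geodesic horizontal lift} the lifts are themselves shortest, so the endpoints $\tilde q, \tilde r$ satisfy $d_M(\tilde p, \tilde q) = d_N(p,q)$ and $d_M(\tilde p, \tilde r) = d_N(p,r)$. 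Because $d\pi_{\tilde p}$ restricts to an isometry $H_{\tilde p}M \to T_p N$, the initial vectors of the lifts make the same angle in $T_{\tilde p}M$ as the original geodesics do in $T_p N$, and hence
\[
\mangle[\tilde p_{\tilde q}^{\tilde r}] = \mangle[p_q^r].
\]
By construction all three of $\tilde p, \tilde q, \tilde r$ lie in $B_{3\eps}(\tilde q_0)$, so Theorem~\ref{thm: hinge} applies to the hinge $[\tilde p_{\tilde q}^{\tilde r}]$ in $M$:
\[
d_M(\tilde q, \tilde r) \le \modcvee^\kappa\bigl(\mangle[\tilde p_{\tilde q}^{\tilde r}]\,;\, d_M(\tilde p,\tilde q),\, d_M(\tilde p,\tilde r)\bigr).
\]

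Finally, since $\pi$ is $1$-Lipschitz we have $d_N(q,r) \le d_M(\tilde q,\tilde r)$. Combining this with the previous inequality and the identifications of side lengths and hinge angle yields
\[
d_N(q,r) \le \modcvee^\kappa\bigl(\mangle[p_q^r]\,;\, d_N(p,q),\, d_N(p,r)\bigr),
\]
which is precisely the local hinge comparison in $N$ at $q_0$. Since $q_0$ was arbitrary, the local hinge (equivalently, angle) comparison with parameter $\kappa$ holds throughout $N$, and by the exercise after Theorem~\ref{thm: angle} this gives $\sect_N \ge \kappa$. The main technical point to be careful with is the existence of the horizontal lift $\tilde p$ inside a controlled neighborhood of $\tilde q_0$ and the size bookkeeping needed so that the lifted hinge sits inside the ball where the local comparison in $M$ is applicable; both follow from continuity of $\pi$ and the triangle inequality and do not require any further curvature input.
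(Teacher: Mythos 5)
Your proof is correct, but it takes a genuinely different route from the paper's. The paper verifies $\sect_N \geq \kappa$ via the \emph{four-point} comparison (Theorem~\ref{thm: four-point}): it lifts all three geodesics from $p$ to $x,y,z$ horizontally from a single $\tilde p$, notes $|px|=|\tilde p\tilde x|$ etc.\ while $|xy|\le|\tilde x\tilde y|$ by $1$-Lipschitzness of $\pi$, uses monotonicity of the model angle in the opposite side to get $\modangle^\kappa(p_x^y)\le\modangle^\kappa(\tilde p_{\tilde x}^{\tilde y})$, and then quotes the four-point inequality in $M$. Crucially, that argument never needs to compare \emph{angles} in $N$ and $M$ directly; it only uses distances and $1$-Lipschitzness, so it would go through unchanged for a submetry between Alexandrov spaces. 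Your argument instead lifts the two arms of a hinge and invokes the extra structure of a Riemannian submersion --- namely that $d\pi_{\tilde p}|_{H_{\tilde p}M}$ is a linear isometry, so the lifted hinge has the same opening angle --- and then applies the hinge comparison in $M$ and $1$-Lipschitzness in the reverse direction. This is a cleaner, more geometric argument in the smooth category and lands directly on the hinge comparison in $N$; the trade-off is that the angle-identification step has no analogue without smooth structure, so it wouldn't port to the Alexandrov-space version the way the paper's distance-only argument does. One small simplification: you don't need ``continuity of the submersion'' to produce $\tilde p$ near $\tilde q_0$ --- just horizontally lift the shortest geodesic $[q_0 p]$ starting at $\tilde q_0$; by Corollary~\ref{cor: geodesic horizontal lift} its endpoint is a lift of $p$ at distance $d_N(q_0,p)<\eps$ from $\tilde q_0$, which also makes the radius bookkeeping trivial.
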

\begin{proof}
    We only need to use a four-point condition to check that $N$ satisfies $\sect_N \geq \kappa$. 
    That is, for $p, x, y, z \in N$, we want to show
    \begin{equation*}
        \modangle^\kappa(p_x^y) + \modangle^\kappa(p_y^z) + \modangle^\kappa(p_z^x) \leq 2\pi.
    \end{equation*}
    Since $\pi$ is a Riemannian submersion, by the previous discussions on the horizontal lift of the shortest geodesics, given the initial point $\tilde{p} = \pi^{-1}(p)$ we can find $\tilde{x} \in \pi^{-1}(x), \tilde{y} \in \pi^{-1}(y), \tilde{z} \in \pi^{-1}(z)$ such that
    \begin{align*}
        d^N(p, x) = d^M(\pi^{-1}(p), \tilde{x}),\\
        d^N(p, y) = d^M(\pi^{-1}(p), \tilde{y}),\\
       	d^N(p, z) = d^M(\pi^{-1}(p), \tilde{z}).
    \end{align*}
    By assumption, $\sect_M \geq \kappa$, thus we have 
    \begin{equation*}
        \modangle^\kappa(\tilde{p}_{\overline{x}}^{\tilde{y}}) + \modangle^\kappa(\tilde{p}_{\tilde{y}}^{\tilde{z}}) + \modangle^\kappa(\tilde{p}_{\tilde{z}}^{\tilde{x}}) \leq 2\pi.
    \end{equation*}
We  claim that 
    \begin{equation*}
        \modangle^\kappa(p_x^y) \leq \modangle^\kappa(\overline{p}_{\overline{x}}^{\overline{y}})
    \end{equation*}
    Once we prove that, the four-point comparison in $N$ holds, and the result follows. 
    The key observation now is that $\pi$ is $1$-Lipschitz and we should use hinge comparison, thus
    \begin{align*}
        \abs{x - y} \leq &\abs{\tilde{x} - \tilde{y}}\quad\text{Since $\pi$ is $1$-Lipschitz}\\
        \leq  &\modcvee^\kappa[\tilde{p}_{\tilde{x}}^{\tilde{y}}]\quad \text{Since $\sect_M \geq \kappa$ and by hinge comparison}
    \end{align*}
    Then we immediately conclude that $\modangle^\kappa(p_x^y) \leq \modangle^\kappa(\tilde{p}_{\tilde{x}}^{\tilde{y}})$.
    \end{proof}
\begin{remark}
    There is another method can be found in \cite{Bes07} using O'Neil's formula. 
\end{remark}
We are going to list some applications of theorem \ref{thm: Riem-submersion}. 
\begin{corollary}[See Corollary 2.29 in \cite{Lee19}]\label{cor: quotient group}
	Suppose $G$ is a Lie group acting on $M$ smoothly, freely,  isometrically, and with closed orbits then by \ref{homog-riem-submersion} $\Pi:  M\to M/G$ is a Riemannian submersion. If $\sect_M \geq \kappa$, then by  theorem \ref{thm: Riem-submersion}, $\sect_{M/G} \geq \kappa$ as well.
\end{corollary}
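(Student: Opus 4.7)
The plan is to reduce the corollary essentially to a direct application of Theorem~\ref{thm: Riem-submersion}, with the main work being to verify that the quotient $M/G$ carries a well-defined smooth structure and metric for which $\Pi: M \to M/G$ is a Riemannian submersion. Since this latter fact is already asserted in Example~\ref{homog-riem-submersion}, the corollary will follow almost formally; nevertheless, it is worth outlining what underlies that example.

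First I would invoke the quotient manifold theorem: when a Lie group $G$ acts smoothly, freely, and properly on a smooth manifold $M$ with closed orbits, the orbit space $M/G$ admits a unique smooth manifold structure so that $\Pi: M \to M/G$ is a smooth surjective submersion whose fibers are precisely the $G$-orbits. The properness hypothesis is implicit in our setup since the orbits are closed and, in the cases of interest, $G$ can be assumed to act properly; this guarantees that $M/G$ is Hausdorff.

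Next I would equip $M/G$ with a Riemannian metric $h$. For $[p] \in M/G$ and tangent vectors $u, v \in T_{[p]}(M/G)$, pick any $p \in \Pi^{-1}([p])$ and define
\begin{equation*}
    h_{[p]}(u, v) = g_p(\tilde u, \tilde v),
\end{equation*}
where $\tilde u, \tilde v$ are the unique horizontal lifts at $p$, i.e.\ the preimages of $u, v$ under the isomorphism $d\Pi_p|_{H_pM}: H_pM \to T_{[p]}(M/G)$. The key point to check is that $h$ is independent of the choice of $p$ in the fiber. This follows because $G$ acts by isometries: if $a \in G$ and $a \cdot p = p'$, then $(da)_p: T_pM \to T_{p'}M$ is a linear isometry that preserves the vertical distribution (it permutes orbits), and hence also the orthogonal horizontal distribution, and moreover satisfies $\Pi \circ a = \Pi$. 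Therefore $(da)_p$ carries horizontal lifts at $p$ to horizontal lifts at $p'$ while preserving the inner product, which gives well-definedness. Smoothness of $h$ follows from the existence of smooth local sections of $\Pi$. By construction, $d\Pi_p|_{H_pM}$ is an isometry, so $\Pi$ is a Riemannian submersion.

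Finally, with $\Pi: (M, g) \to (M/G, h)$ established as a Riemannian submersion and $\sect_M \geq \kappa$, Theorem~\ref{thm: Riem-submersion} immediately yields $\sect_{M/G} \geq \kappa$. The main obstacle in turning this into a rigorous argument is the well-definedness of $h$, which rests entirely on the isometric $G$-action; apart from that, everything is either quoted (the quotient manifold theorem) or an immediate appeal to Theorem~\ref{thm: Riem-submersion}.
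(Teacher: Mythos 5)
Your proposal is correct and takes essentially the same route as the paper: the corollary is proved by citing Example~\ref{homog-riem-submersion} for the Riemannian submersion structure on $\Pi: M \to M/G$ and then applying Theorem~\ref{thm: Riem-submersion} directly. The only difference is that you have additionally unpacked the content of Example~\ref{homog-riem-submersion} — the quotient manifold theorem, the construction of $h$ via horizontal lifts, and well-definedness from the isometric $G$-action — which the paper treats as quoted background.
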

\begin{fact}\label{biinv-curv}
Suppose $(G, g)$ is a Lie group with a bi-invariant Riemannian metric. Then for any $X, Y \in \mathfrak{X}(G)$ it holds that $\langle R(X,Y)Y,X\rangle  = \frac{1}{4}\norm{[X, Y]}^2 $ (See exercise 1 page 103 in \cite{dCa}). In particular,
$\sect_G\ge 0$.
\end{fact}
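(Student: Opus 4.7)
The plan is to reduce to computing with left-invariant vector fields, derive a clean formula for the Levi-Civita connection on such fields from bi-invariance, and then compute the curvature tensor directly via the Jacobi identity.

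First I would observe that sectional curvature is a pointwise quantity depending only on the 2-plane spanned by two tangent vectors, and that at any point $e \in G$ any pair of tangent vectors extends uniquely to a pair of left-invariant vector fields. So without loss of generality we may assume $X, Y$ (and a third auxiliary $Z$) are left-invariant. The key advantage is that for left-invariant $X, Y, Z$ the inner product $\langle X, Y\rangle$ is a \emph{constant} function on $G$ (because left translations are isometries and $X, Y$ are left-invariant).

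Next I would extract the algebraic consequence of bi-invariance. Right-invariance plus left-invariance gives that the conjugation action $\mathrm{Ad}_g$ on $\mathfrak{g}$ is an isometry for every $g$. Differentiating at the identity yields the skew-adjointness of $\mathrm{ad}_X$, i.e.\ the ad-invariance identity
\[
\langle [X, Y], Z\rangle + \langle Y, [X, Z]\rangle = 0.
\]
Using this in the Koszul formula and remembering that $X\langle Y, Z\rangle = Y\langle X, Z\rangle = Z\langle X, Y\rangle = 0$ on left-invariant fields, all three derivative terms drop out and the three bracket terms collapse (two of them equal $-\langle [X, Y], Z\rangle$ each and one equals $+\langle [X, Y], Z\rangle$), giving
\[
2\langle \nabla_X Y, Z\rangle = \langle [X, Y], Z\rangle \quad \text{for all left-invariant } Z,
\]
hence $\nabla_X Y = \tfrac12 [X, Y]$ for left-invariant $X, Y$.

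With this formula in hand the curvature computation is short. Substituting into $R(X, Y)Z = \nabla_X \nabla_Y Z - \nabla_Y \nabla_X Z - \nabla_{[X, Y]} Z$ gives
\[
R(X, Y)Z = \tfrac14 [X, [Y, Z]] - \tfrac14 [Y, [X, Z]] - \tfrac12 [[X, Y], Z].
\]
By the Jacobi identity the first two terms combine to $\tfrac14 [[X, Y], Z]$, so $R(X, Y)Z = -\tfrac14 [[X, Y], Z]$. Taking the inner product with $X$ and applying ad-invariance twice (first with $\mathrm{ad}_{[X, Y]}$ to move $Y$ over, then with $\mathrm{ad}_X$ applied to $\langle [X, Y], [X, Y]\rangle$) yields $\langle [[X, Y], Y], X\rangle = -\|[X, Y]\|^2$, so
\[
\langle R(X, Y)Y, X\rangle = -\tfrac14 \langle [[X, Y], Y], X\rangle = \tfrac14 \|[X, Y]\|^2.
\]
Since the right-hand side is nonnegative and the numerator of the sectional curvature of the plane $\mathrm{span}(X, Y)$ is exactly this quantity, we conclude $\sect_G \geq 0$. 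The only delicate step is the two-fold application of ad-invariance to identify $\langle [[X, Y], Y], X\rangle$ with $-\|[X, Y]\|^2$; the sign bookkeeping is the one place where errors typically creep in, and I would write that identity out explicitly rather than quoting it.
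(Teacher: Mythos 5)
Your proof is correct and is the standard argument that the paper itself does not reproduce: the paper states this as a Fact and cites it out to do Carmo's exercise rather than proving it, and your reduction to left-invariant fields, derivation of $\nabla_X Y = \tfrac12[X,Y]$ from ad-invariance and the Koszul formula, the computation $R(X,Y)Z = -\tfrac14[[X,Y],Z]$ via Jacobi, and the final two applications of ad-invariance to get $\langle[[X,Y],Y],X\rangle = -\|[X,Y]\|^2$ are exactly what that exercise has in mind. There is one small, non-propagating slip in the exposition: the parenthetical \emph{``two of them equal $-\langle[X,Y],Z\rangle$ each and one equals $+\langle[X,Y],Z\rangle$''} would sum to $-\langle[X,Y],Z\rangle$, contradicting the (correct) displayed conclusion $2\langle\nabla_X Y,Z\rangle = \langle[X,Y],Z\rangle$. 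The count should be two $+$ and one $-$: after ad-invariance,
\[
\langle[X,Y],Z\rangle - \langle[X,Z],Y\rangle - \langle[Y,Z],X\rangle
= \langle[X,Y],Z\rangle + \langle[X,Y],Z\rangle - \langle[X,Y],Z\rangle.
\]
Since all downstream steps use the correct formula $\nabla_X Y=\tfrac12[X,Y]$, nothing else is affected; but as you yourself note, this is exactly the place where sign errors creep in, so it is worth correcting the parenthetical.
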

\begin{example}
Suppose  $G$ is a Lie group with a bi-invariant metric then by \ref{biinv-curv} $\sec_G\ge 0$. Then if $H$ is a closed subgroup of $G$ then it's a closed Lie subgroup of $G$  (theorem 20.21 in \cite{Lee03}) and its left action on $G$ is free, isometric, and has closed orbits. Therefore by above
	  $\sect_{G/H} \geq 0$ as well. 
\end{example}

\begin{example}[Bi-quotients]
    Still let $G$ be a Lie group and let $H < G\times G$ be a closed subgroup. Then $H \acts G$ such that for $h = (h_1, h_2) \in H$, 
    \begin{equation*}
        h(g) = h_1 g h_2^{-1}
    \end{equation*}
    
This action is called a \emph{bi-quotient action}. In a special case when $H$ belongs to either $e\times G$ or $G\times e$ this is a homogeneous space action. Unlike homogeneous space actions, bi-quotient actions need not be free in general.
    
If $H$ is compact then the bi-quotient action is proper.  If the bi-quotient action of $H$ on $G$ is also \emph{free} then the quotient space $G//H$ of the action above is a smooth manifold.
    
    Suppose $G$ admits a bi-invariant metric (note that such a metric always exists if $G$ is compact). Then by above $\sect_G \geq 0$ and by   theorem \ref{thm: Riem-submersion},
    \begin{align*}
& \text{$G \to G//H$ is a Riemannian submersion}\\
        \implies & \text{$G//H$ will have $\sect_{G//H}\geq 0$}
    \end{align*}
\end{example}
\begin{remark}
In the special case of $H = K_1 \times K_2$ where $K_1 < G\times\br{e}$ and $K_2 < \br{e}\times G$  the action of $H$ on $G$ is given by the formula
 \begin{equation*}
        (k_1, k_2)(g) = k_1 g k_2^{-1}.
    \end{equation*}
    where $(k_1, k_2) \in K_1 \times K_2$.
\end{remark}

\begin{example}\label{Gromoll-Meyer-sphere} [Gromoll-Meyer in \cite{GM74}]
Consider 
\begin{equation*}
	G = Sp(2) = \br{A = 
    \begin{bmatrix}
    q_{11} & q_{12}\\
    q_{21} & q_{22}
    \end{bmatrix}: \overline{A^t} =: A^* = A^{-1}}
\end{equation*}
and the unit quaternion $H = Sp(1) = S^3$.

And consider a biquotient action  $H \acts G = S^3 \acts Sp(2)$ given by
    \begin{equation*}
        q(A) = 
        \begin{bmatrix}
          q & 0\\
          0 & q
        \end{bmatrix}A
        \begin{bmatrix}
          1 & 0\\
          0 & \overline{q}
        \end{bmatrix}
    \end{equation*}
    where $\overline{q} = q^{-1}$. It is not hard to check this action is free. 
    Notice that $\dim(Sp(2)) = 10$. 
    Take $M^7 = Sp(2)//Sp(1)$. Then by \cite{GM74}, $M^7$ is an exotic sphere of $\sect_{M^7} \geq 0$. 
    That means $M^7 \stackrel{homeo}{\cong}S^7$ but not diffeomorphic. 
    An open question related to that is whether there exists an exotic sphere of $\sect > 0$. 
\end{example}


\chapter{Cheeger-Gromoll Soul Theorem}
\section{Introduction to the Soul Theorem}

The soul theorem describes the topological structure of open manifolds of non-negative sectional curvature. By open manifolds we mean non-compact complete manifolds without boundary.

\begin{theorem}[Cheeger-Gromoll Soul Theorem]\label{thm: soul theorem}
    Let $(M^n, g)$ be a Riemannian manifold of $\sect_M \geq 0$ and it is complete and non-compact. Then there exists a closed totally convex subset $S \subseteq M^n$ which is also a totally geodesic submanifold such that 
    \begin{equation*}
        M^n \stackrel{\text{diff}}{\cong} \nu(S)
    \end{equation*}
    where $\nu(S)$ is the total space of the normal bundle of $S$. In particular, $M^n$ has compact type (See Definition~\ref{def: compact type}) since
    \begin{equation*}
        M^n \stackrel{\text{diff}}{\cong} \text{interior of unit disk bundle of $\nu(S)$}
    \end{equation*}
    where the unit disk bundle of $\nu(S)$ is also a compact manifold with a boundary.
    \end{theorem}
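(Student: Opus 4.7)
The plan is to construct the soul $S$ via a recursive procedure producing a nested family of compact totally convex sets, and then to use the critical-point theory of $d(\cdot, S)$ developed in the previous chapter to argue that $M$ is diffeomorphic to the normal bundle $\nu(S)$.

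First I would fix a basepoint $p \in M$ and, for each ray $\gamma\colon [0,\infty) \to M$ starting at $p$, introduce the Busemann function $b_\gamma(x) = \lim_{t \to \infty}\bigl(t - d(x,\gamma(t))\bigr)$. The central technical input is that under $\sect_M \geq 0$ every superlevel set $\{b_\gamma \geq c\}$ is totally convex, which follows because complements of open metric balls centered at points marching out along $\gamma$ are totally convex (if a geodesic with endpoints outside such a ball dipped inside, a Berger-comparison rigidity argument as in Proposition~\ref{lem: rigidity case} would contradict $\gamma$ being a ray). Taking
\begin{equation*}
    C_t \;=\; \bigcap_{\gamma \text{ ray from } p}\{\, b_\gamma \geq -t\,\},
\end{equation*}
I would obtain a nested family of closed totally convex sets, each compact because $M$ is noncompact and hence rays exist in every asymptotic direction at infinity.

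Next I would iterate the Cheeger-Gromoll shrinking procedure. Given any compact totally convex $C \subseteq M$ with nonempty relative boundary, the function $f_C(x) = d(x, \partial C)$ is concave on $C$, a consequence of $\sect_M \geq 0$ via Jensen's inequality (Lemma~\ref{thm: Jensen's inequality}) applied along geodesics in $C$. Hence its maximum set $C' = \{f_C = \max f_C\}$ is itself totally convex and of strictly smaller intrinsic dimension. Starting from $C_0$ and applying this step finitely many times, I would arrive at a compact totally convex $S$ with empty relative boundary; a local analysis then forces $S$ to be an embedded totally geodesic submanifold of $M$. For the diffeomorphism statement, I would apply the critical-point theory of Chapter on Applications of Toponogov's Theorem to $f = d(\cdot, S)$. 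Using the first variation formula~\ref{thm: the first variation formula} together with a Toponogov hinge comparison, one checks that $f$ has no critical points on $M \setminus S$: a critical $x$ would provide, for every unit $v \in T_xM$, a nearest-point direction $u \in \Uparrow_x^S$ making angle at most $\pi/2$ with $v$, which, upon choosing $v$ to point back along a shortest geodesic from $x$ to $S$ and flowing across $S$, contradicts the total convexity of $S$. Simultaneously, since $S$ is totally geodesic and $\sect_M \geq 0$, Rauch~II gives that the normal exponential map of $S$ is free of focal points and hence a local diffeomorphism, while total convexity of $S$ forces unique nearest points in $M$ and hence bijectivity. Combined with the gradient-flow construction of Theorem~\ref{thm: morse 2} and Remark~\ref{rem: crit-diffeomophism}, this yields $M \stackrel{\text{diff}}{\cong} \nu(S)$.

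The main obstacle, in my view, lies in the first step: establishing that complements of metric balls centered along a ray are totally convex, and from this that each $C_t$ is compact, is where the $\sect_M \geq 0$ hypothesis really bites, and it requires a careful control of how geodesics behave against an entire family of escaping balls rather than against a single convex set. A secondary subtlety is that the sets $C'$ appearing in the inductive shrinking are not a priori smooth submanifolds — they are only topological/Lipschitz objects — so one must track an appropriate notion of totally-convex-set-with-boundary through the induction to guarantee strict descent in dimension and, at the last step, enough regularity to conclude that the terminal $S$ is smooth and totally geodesic rather than merely a totally convex closed set.
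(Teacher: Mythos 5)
Your overall architecture matches the paper's (Busemann functions, iterated shrinking via $d(\cdot,\partial C)$, then critical-point theory for $d(\cdot,S)$), but several of the justifications you offer are incorrect as stated, and two of them would sink the proof.

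First, a sign slip: you define $b_\gamma(x)=\lim_t\bigl(t-d(x,\gamma(t))\bigr)$, which is the negative of the paper's Busemann function. With your convention $b_\gamma$ is \emph{convex}, so $\{b_\gamma\ge c\}$ would be a sublevel set of a concave function and not totally convex. More substantively, your justification for total convexity — that "complements of open metric balls centered at points marching out along $\gamma$ are totally convex" — is false. In $\R^2$ already, $\R^2\setminus B_r(q)$ is not even convex: a straight segment whose endpoints lie outside the ball can pass through it. The correct statement is only that the \emph{intersection} $\bigcap_{t}\bigl(M\setminus B_t(\gamma(t+c))\bigr)=\{b_\gamma\ge -c\}$ is totally convex, and the reason is the concavity of $b_\gamma$, which the paper derives from the Hessian comparison $\hess_{d_q}\le \frac{1}{R}\,\pi_{d_q}$ for $d_q\ge R$ applied to $d(\cdot,\gamma(t))$ and passed to the limit $t\to\infty$. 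Berger rigidity plays no role there. The compactness claim you then need is also not a freebie; one argues by contradiction using a limit ray contained in the superlevel set, on which $b$ strictly decreases without bound.

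Your argument that $d(\cdot,S)$ has no critical points off $S$ is garbled. Choosing $v$ to point toward $S$ trivially gives an angle $\le\pi/2$ with some $u\in\Uparrow_x^S$ and produces no contradiction; nor does "flowing across $S$" interact with total convexity in the way you suggest. The actual argument needs the full nested family $C^0\supset C^1\supset\cdots\supset S$: given $x\notin S$ one locates a totally convex set $C$ in this family (a superlevel set of $b$ or of some $d(\cdot,\partial C^j)$) such that $x$ lies in the manifold boundary of $C$; concavity of that $d(\cdot,\partial C)$ along geodesics starting at $x$ then forces every $u\in\Uparrow_x^S$ to point strictly into the interior of $T_xC$, giving a direction $v$ with $\mangle(v,u)>\pi/2$ for all such $u$. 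Your proposal does not track the intermediate sets $C^j$ and so cannot run this argument.

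Finally, your claim that "total convexity of $S$ forces unique nearest points in $M$ and hence bijectivity" of the normal exponential map is false, and the paper points this out explicitly: in the remark after Corollary~\ref{cor: positive curvature manifold is Euclidean}, a bumpy paraboloid of revolution has $S=\{p\}$ yet $\exp_p$ is not globally injective. The diffeomorphism $M\cong\nu(S)$ is \emph{not} obtained from the normal exponential map. The paper takes a small $\eps$ so that $\overline{B_\eps(S)}$ is a tubular neighborhood (ordinary tubular neighborhood theorem; Rauch~II is not needed here), observes that $d(\cdot,S)$ has no critical points outside $B_\eps(S)$, and then uses the gradient-flow argument (Theorem~\ref{thm: morse 2} and Remark~\ref{rem: crit-diffeomophism}) to extend the diffeomorphism outward. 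Dropping the exponential-map bijectivity claim and relying purely on the tubular-neighborhood-plus-gradient-flow mechanism is both necessary and sufficient.
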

There are some suggestive examples to explain the theorem. 
Given $(M^n, g)$ a Riemannian manifold with $\sect_M \geq 0$. 
Then $\sect_{M \times \R^n} \geq 0$.
Suppose $G < O(n)$ be a closed subgroup such that $G \acts M^n$ by isometries freely. 
Also $G \acts \R^n$ by matrix multiplication.
We can take the diagonal action of $G$ on $M \times \R^n$ , i.e. $g(x, v) = (gx, gv)$. Then $\sect_{(M \times \R^n)/G} \geq 0$ by ~\ref{cor: quotient group}.
    
    Consider the following vector bundle. 
    \begin{equation*}
    \begin{tikzcd}
        \R^n \arrow{r}{\empty} & (M^n \times \R^n)/G \arrow{d}{\empty} \\%
        & M^n/G
    \end{tikzcd}
\end{equation*}
We can conclude that both the total space and the base of this bundle are manifolds of non-negative sectional curvature and the projection map $(M \times \R^n)/G\to M/G$ is a Riemannian submersion. Note that the total space  $(M \times \R^n)/G$ is necessarily non-compact and the base is compact if and only if $M$ is.

The following two examples are special cases of the above construction. 
\begin{example}
    $M = SO(n + 1)$, $G = SO(n)\acts \R^n$. We equip $ SO(n + 1)$ with the canonical bi-invariant metric.

    We know that $SO(n) < SO(n + 1)$ by
    \begin{equation*}
        SO(n) \ni A \mapsto 
        \begin{bmatrix}
            1 & 0\\
            0 & A
        \end{bmatrix}
    \end{equation*}
    Then $G \acts M$ on the left. This action is obviously free and isometric.  Consider the vector bundle
    \begin{equation*}
    \begin{tikzcd}
        \R^n \arrow{r}{\empty} & (SO(n + 1)\times \R^n)/G \arrow{d}{\empty} \\%
        & SO(n + 1)/SO(n) = S^n
    \end{tikzcd}
\end{equation*}
\begin{fact}
    We want to show this vector bundle is the tangent bundle to $S^n$ in any dimension and $TS^n$ admits a metric of complete Riemannian manifold of $\sect \geq 0$.
\end{fact}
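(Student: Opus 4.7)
The plan is to exhibit an explicit vector bundle isomorphism $\Phi : (SO(n+1) \times \mathbb{R}^n)/SO(n) \to TS^n$ and then to deduce the curvature and completeness assertions from the Riemannian submersion structure established above.

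First I would fix conventions matching the example. Embed $SO(n) \hookrightarrow SO(n+1)$ as $A \mapsto \begin{bmatrix} 1 & 0 \\ 0 & A \end{bmatrix}$, so that $SO(n)$ is the stabilizer of $e_1$ under the standard action on $\mathbb{R}^{n+1}$. Identify the orbit space $SO(n+1)/SO(n)$ of the left action with $S^n$ via $B \mapsto B^{-1} e_1$; this is well-defined because $g^{-1} e_1 = e_1$ for $g \in SO(n)$. Under this identification, $T_{e_1} S^n = e_1^{\perp}$, canonically identified with $\mathbb{R}^n$ via $v \mapsto \tilde v := (0,v)$, and the isotropy representation of $SO(n)$ on $T_{e_1} S^n$ is exactly its standard representation on $\mathbb{R}^n$.

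Second, define $\Phi([B,v]) := (B^{-1} e_1,\ B^{-1} \tilde v) \in TS^n \subset S^n \times \mathbb{R}^{n+1}$. Since $B^{-1}$ is an orthogonal transformation and $\tilde v \perp e_1$, we have $B^{-1}\tilde v \perp B^{-1} e_1$, so $\Phi$ lands in $TS^n$. Well-definedness under the diagonal action $g\cdot(B,v) = (gB, gv)$ follows from the calculation $\Phi(gB, gv) = ((gB)^{-1} e_1,\ (gB)^{-1} \widetilde{gv}) = (B^{-1} g^{-1} e_1,\ B^{-1} g^{-1} \widetilde{gv}) = (B^{-1} e_1,\ B^{-1} \tilde v)$, using $g^{-1} e_1 = e_1$ and $g^{-1} \widetilde{gv} = \tilde v$ for our embedding of $SO(n)$. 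The map $\Phi$ is smooth, fiberwise linear, and fiberwise a linear isometry onto $(B^{-1}e_1)^{\perp}$; it is surjective on the base because $SO(n+1)$ acts transitively on $S^n$. Hence $\Phi$ is a vector bundle isomorphism $(SO(n+1) \times \mathbb{R}^n)/SO(n) \cong TS^n$.

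Third, for curvature and completeness: by Fact~\ref{biinv-curv} the bi-invariant metric on $SO(n+1)$ satisfies $\sect \ge 0$, and $\mathbb{R}^n$ is flat, so the product has $\sect \ge 0$. The diagonal action of $SO(n)$ is free (already on the first factor via left multiplication), isometric (left multiplication preserves the bi-invariant metric and the standard action on $\mathbb{R}^n$ is orthogonal), and proper (since $SO(n)$ is compact). By Example~\ref{homog-riem-submersion} the quotient projection is a Riemannian submersion, and Theorem~\ref{thm: Riem-submersion} then yields $\sect \ge 0$ on $TS^n$. For completeness, the total space $SO(n+1) \times \mathbb{R}^n$ is complete as a product of a compact manifold and a Euclidean factor, and horizontal lifts of geodesics through a Riemannian submersion from a complete total space exist for all time, so the quotient $TS^n$ is geodesically complete by Hopf--Rinow. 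The main subtlety here is the verification that $\Phi$ is well-defined and bijective, which rests on matching the diagonal $SO(n)$-action with the isotropy representation on $T_{e_1}S^n$; everything else is a direct application of results already established in the text.
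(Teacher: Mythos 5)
Your proof is correct. Note first that the paper itself does not supply an argument for this Fact — it is stated inside the example and left without proof — so there is no in-text argument to compare against; your proof fills that gap.

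Your construction is sound throughout. The identification $SO(n)\backslash SO(n+1) \to S^n$, $[B] \mapsto B^{-1}e_1$, matches the embedding $A \mapsto \begin{bmatrix} 1 & 0 \\ 0 & A \end{bmatrix}$ given in the example (since this embedded copy of $SO(n)$ is precisely the stabilizer of $e_1$), and the map $\Phi([B,v]) = (B^{-1}e_1, B^{-1}\tilde v)$ lands in $TS^n$ because $B^{-1}$ is orthogonal and $\tilde v \perp e_1$. Your well-definedness check correctly uses both $g^{-1}e_1 = e_1$ and $g^{-1}\widetilde{gv} = \tilde v$; this is exactly the statement that the diagonal $SO(n)$-action on the $\R^n$ factor matches the isotropy representation on $T_{e_1}S^n$, which is the essential ingredient. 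Smoothness, fiberwise linearity, and bijectivity (injectivity on fibers via the same computation run backward, surjectivity on the base by transitivity of $SO(n+1)$ on $S^n$) give a vector bundle isomorphism. The curvature claim is then a direct combination of Fact~\ref{biinv-curv}, the fact that a Riemannian product of nonnegatively curved manifolds is nonnegatively curved, Example~\ref{homog-riem-submersion}, and Theorem~\ref{thm: Riem-submersion}, all already in the text. Your completeness argument is also correct: geodesics in the base lift horizontally, completeness of $SO(n+1)\times\R^n$ (compact times Euclidean) lets these horizontal geodesics extend to all of $\R$, and their projections remain geodesics, so the quotient is geodesically complete. One might phrase the last step slightly more carefully — one extends a geodesic $\gamma$ in the base by choosing a horizontal lift through some preimage of $\gamma(0)$, extending it in the total space, and projecting — but the content of your argument is exactly this.
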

\end{example}
\begin{example}
    Consider $M = S^{n + 1}$, $G = S^1 = \br{z \in \C: \abs{z} = 1}$. $G$ acts on $\C^n \cong \R^{2n}$ by
    \begin{equation*}
        \lambda(z_1, \dots, z_n) = (\lambda z_1, \dots, \lambda z_n).
    \end{equation*}
    Also, $G$  acts on the unit sphere $S^{2n + 1} \subseteq \C^{n + 1}$ by the formula above. We take the bundle
    \begin{equation*}
    \begin{tikzcd}
        \R^2 \arrow{r}{\empty} & S^{2n + 1}\times \C/S^1 \arrow{d}{\empty} \\%
        & S^{2n + 1}/S^1 = \C P^n
    \end{tikzcd}
\end{equation*}
This is the Hopf bundle and by above it admits a metric of nonnegative sectional curvature. 
\end{example}

\section{Busemann Functions}

\begin{definition}[Ray]
    A globally distance minimizing unit speed geodesic $\gamma: [0, \infty) \to M$ is called a \textbf{ray} i.e. 
    \begin{equation*}
        d(\gamma(t), \gamma(s)) = \abs{t - s}\quad\forall t, s \geq 0
    \end{equation*}
\end{definition}

\begin{lemma}\label{lem:ray existence}
    If $M$ is complete and non-compact, then $M$ contains a ray starting at any point $p \in M$.
\end{lemma}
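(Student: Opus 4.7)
The plan is to produce the ray as a limit of a sequence of shortest geodesics from $p$ to points escaping to infinity, using completeness (via Hopf--Rinow) to guarantee existence of such segments and compactness of the unit sphere in $T_pM$ to pass to a convergent subsequence of initial directions.

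First I would use non-compactness to pick a sequence $\{q_i\}\subset M$ with $d(p,q_i)\to\infty$. By Hopf--Rinow (completeness), for each $i$ there exists a unit speed minimizing geodesic $\gamma_i:[0,L_i]\to M$ from $p$ to $q_i$, where $L_i=d(p,q_i)$. Let $v_i=\dga_i(0)\in T_pM$; these are unit vectors in $T_pM$, hence lie in the compact sphere $S^{n-1}\subset T_pM$. Extract a subsequence (still denoted $v_i$) with $v_i\to v\in T_pM$, $|v|=1$, and define $\gamma(t)=\exp_p(tv)$ for $t\in[0,\infty)$; this is defined on $[0,\infty)$ by completeness.

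The key step is to show $\gamma$ is minimizing on every compact $[0,T]$. For any fixed $T>0$, since $L_i\to\infty$, we have $L_i>T$ for large $i$, so $\gamma_i|_{[0,T]}$ is a minimizing unit-speed geodesic. By continuous dependence of $\exp_p$ on the initial vector, $\gamma_i(T)=\exp_p(Tv_i)\to\exp_p(Tv)=\gamma(T)$. Using continuity of the distance function and the fact that each $\gamma_i|_{[0,T]}$ is minimizing,
\[
d(p,\gamma(T))=\lim_{i\to\infty}d(p,\gamma_i(T))=\lim_{i\to\infty}T=T=\length(\gamma|_{[0,T]}),
\]
so $\gamma|_{[0,T]}$ is minimizing. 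Since $T$ was arbitrary, $d(\gamma(t),\gamma(s))=|t-s|$ for all $s,t\ge 0$ (apply the above to $[0,\max(s,t)]$ and use that a minimizing unit-speed geodesic restricted to any subinterval is also minimizing). Hence $\gamma$ is a ray starting at $p$.

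There is no real obstacle here; the only point worth being careful about is justifying that the pointwise limit of minimizing segments of growing length is itself minimizing on every bounded interval, which is why I isolate a fixed $T$ and use the continuity of $\exp_p$ together with continuity of $d(\cdot,\cdot)$ rather than trying to pass to a limit on $[0,\infty)$ all at once.
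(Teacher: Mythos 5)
Your proof is correct and follows essentially the same approach as the paper: pick $q_i$ escaping to infinity, take minimizing geodesics from $p$ to $q_i$, extract a convergent subsequence of initial unit vectors by compactness of the unit sphere in $T_pM$, and verify that the limiting geodesic $\exp_p(tv)$ is minimizing on every bounded interval. You spell out the limiting argument (fixing $T$, using continuity of $\exp_p$ and of the distance function) somewhat more carefully than the paper's terse claim, but the underlying idea is identical.
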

\begin{proof}
    By non-compactness of $M$ there is  a sequence $q_n \in M$ such that $d(p, q_n) \to \infty$. Connect $p, q_n$ using  a shortest geodesic $\gamma_n$, denote $v_n = \gamma_n'(0) \in S^{n - 1}$ the unit vector in $T_pM$. By compactness of $S^{n - 1} \subseteq T_pM$, we can find a $v \in S^{n - 1}$ such that $v_n \to v$ upto subsequence. Thus
    \begin{equation*}
        \exp(tv_n) \to \exp(tv)
    \end{equation*}
    \begin{claim}
        $t \mapsto \exp(tv)$ is a ray. Fix $l > 0$, then $\gamma_n(l) \to \gamma(l)$. So 
        \begin{align*}
            l = d(p, \gamma_n(l)) \to d(p, \gamma(l)) \implies & d(p,  \gamma(l)) = l\quad \forall l;\\
            \implies & \text{$\gamma$ is a ray.}
        \end{align*}
    \end{claim}
\end{proof}
\begin{definition}[Busemann Function]
    Let $\gamma:[0, \infty) \to M$ be a ray, $M \ni p = \gamma(0)$. Then a Busemann function for $\gamma$ is defined as
    \begin{equation*}
        b_\gamma(x) = 
        \lim_{t \to \infty}(d(x, \gamma(t)) - t)
    \end{equation*}
\end{definition}
\begin{claim}\label{cla: Busemann finite}
    We claim that a Busemann function is always well-defined and finite. 
\end{claim}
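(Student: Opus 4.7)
The plan is to show that the function $\varphi_x(t) := d(x, \gamma(t)) - t$ is monotonically non-increasing in $t$ and bounded below, so that the limit defining $b_\gamma(x)$ exists as a finite real number for every $x \in M$.

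First, I would establish monotonicity via the triangle inequality. For $0 \le s \le t$, since $\gamma$ is a unit speed ray, we have $d(\gamma(s), \gamma(t)) = t - s$. Applying the triangle inequality,
\begin{equation*}
    d(x, \gamma(t)) \le d(x, \gamma(s)) + d(\gamma(s), \gamma(t)) = d(x, \gamma(s)) + (t - s),
\end{equation*}
which rearranges to $\varphi_x(t) \le \varphi_x(s)$. Hence $\varphi_x$ is non-increasing in $t$.

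Next, I would show $\varphi_x$ is bounded below on $[0, \infty)$. Again using the triangle inequality with $p = \gamma(0)$,
\begin{equation*}
    d(x, \gamma(t)) \ge d(p, \gamma(t)) - d(x, p) = t - d(x, p),
\end{equation*}
so $\varphi_x(t) \ge -d(x, p)$ for all $t \ge 0$. A monotone non-increasing function bounded below has a finite limit, so $b_\gamma(x) = \lim_{t \to \infty} \varphi_x(t)$ exists and satisfies $-d(x,p) \le b_\gamma(x) \le d(x,p)$ (the upper bound coming from $\varphi_x(0) = d(x,p)$).

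There is no real obstacle here; the argument is purely metric and uses only the triangle inequality together with the fact that $\gamma$ is distance-minimizing on all of $[0, \infty)$. The essential feature being exploited is that the ``unboundedness'' of $d(x, \gamma(t))$ as $t \to \infty$ is exactly cancelled by the subtracted $t$, up to a fluctuation controlled by $d(x, p)$.
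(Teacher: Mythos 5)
Your proof is correct and follows essentially the same approach as the paper's: establish via the triangle inequality that $t \mapsto d(x,\gamma(t)) - t$ is non-increasing, use the triangle inequality once more to bound it below by $-d(x,\gamma(0))$, and conclude the limit exists and is finite.
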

\begin{proof}
   Since $t=d(\gamma(0), \gamma(t))$,  by the triangular inequality we have 
    \begin{equation*}
        -d(x, \gamma(0)) \leq d(x, \gamma(t)) - t \leq d(x, \gamma(0)),
    \end{equation*}
This means that the quantity $d(x, \gamma(t)) - t$ is bounded.

Let $t_1 < t_2$, then
    \begin{equation*}
        d(x, \gamma(t_2)) \leq d(x, \gamma(t_1)) + t_2 - t_1 \iff d(x, \gamma(t_2)) - t_2 \leq d(x, \gamma(t_1)) - t_1
    \end{equation*}
    Therefore, $t \mapsto d(x, \gamma(t)) - t$ is always non-increasing and bounded. Thus, it has a limit as $t \to \infty$. Therefore, $b_\gamma(x)$ is always defined and finite. 
\end{proof}
\begin{property}\label{property: b 1 lip}
    $b_\gamma$ is $1$-Lipschtiz.   
\end{property}
\begin{proof}
	For each $t$, let $x, y \in M$, by the triangule inequality we have
	\begin{equation*}
		\abs{(d(x, \gamma(t)) - t) - (d(y, \gamma(t)) - t)} \leq d(x, y).
	\end{equation*}
	Therefore we can conclude that the Busemann function is always 1-Lipschitz as a limit of 1-Lipschitz functions.
	\end{proof}
\begin{property}\label{property: b = -s}
    For any $s \in [0, \infty)$, $b_\gamma(\gamma(s)) = -s$.
\end{property}
\begin{proof}
	By direct substitution, for each $s$, we have
	\begin{align*}
		b_\gamma(\gamma(s)) = \lim_{t \to \infty}d(\gamma(t), \gamma(s)) - t =  \lim_{t \to \infty}(t - s) - t = -s
	\end{align*}
\end{proof}
\begin{example}
We are going to give a qualitative example to strengthen our intuition on the sub-level set and the super-level set of the Busemann function. We draw the following picture, Let $P$ be a paraboloid in $\R^3$ and let $x \in P$ be a point. Consider a ray $\gamma$ starting at $x$ and going upwards. 

\begin{figure}[htbp]
    \centering
        \includegraphics[width=0.7\textwidth]{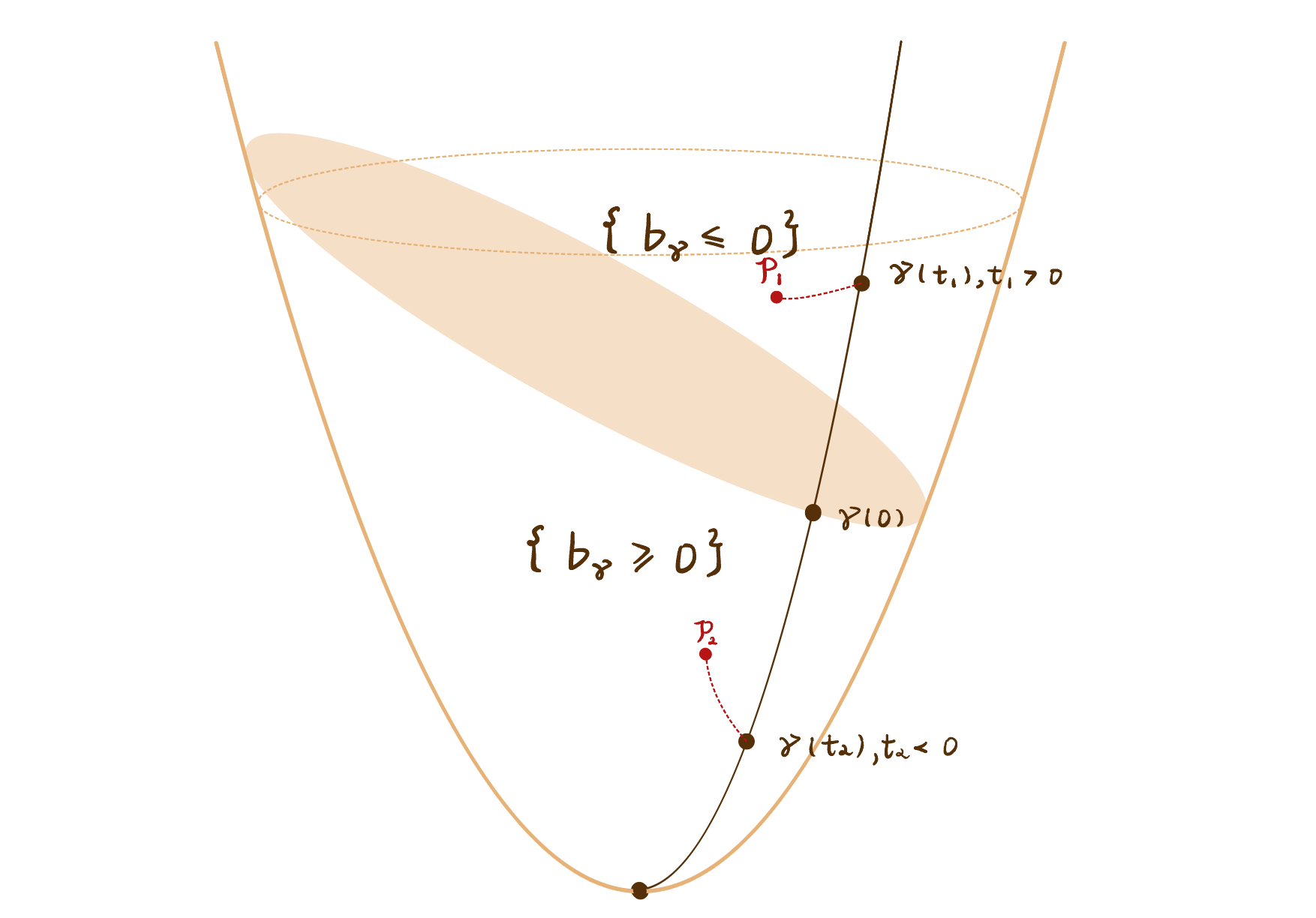}
    \end{figure}

By the previous property \ref{property: b = -s}, we know that $b_\gamma(\gamma(t)) = -t \leq 0$ for any $t \leq 0$.     Because the Busemann function is $1$-Lipschitz, then for any $P \in M$, take the shortest geodesic from $P$ to $\gamma$, which intersect $\gamma$ at some $t$, then we know that 
    \begin{equation*}
    	 b_{\gamma}(\gamma(t)) -  d(\gamma(t), P) \leq b_{\gamma}(P) \leq b_{\gamma}(\gamma(t)) +  d(\gamma(t), P)
    \end{equation*}
    
    If $t_1 >> 0$, any point $P_1$ near $\gamma(t_1)$, $b_\gamma(P_1) \leq 0$. This is because
    \begin{equation*}
    	b_{\gamma}(P_1) \leq b_{\gamma}(\gamma(t_1)) +  d(\gamma(t_1), P_1) = d(\gamma(t_1), P_1) -t_1 \leq 0
    \end{equation*}

     On the other hand, we can extend $\gamma$ in the opposite direction. It will remain globally shortest until it hits the apex of the parabola. Then
	\begin{equation*}
		b_{\gamma}(\gamma(-s)) = \lim_{t \to \infty}d(\gamma(t), \gamma(-s)) - t = t + s - t = s > 0
	\end{equation*}
	If we pick some $t_2 < 0$ before the geodesic hits the apex, we can still take some $P_1$ near $\gamma(t_2)$. In this case, 
	\begin{equation*}
		b_{\gamma}(P_2) \geq b_{\gamma}(\gamma(t_2)) -  d(\gamma(t_2), P_2) = -t_2 - d(\gamma(t_2), P_2) \geq 0	\end{equation*}

\end{example}

\begin{property}\label{asymptotic-ray}
    Given $x \in M$ any point, For a sequence $t_n \to \infty$, denote $v_n$ the initial vector of $[x\gamma(t_n)]$ starting at $x$. 
    Then by compactness $v_n \to v$ sub-converges. 
    We claim that 
    \begin{equation}\label{eq: Busemann decreases with unit speed}
        b_\gamma(\exp_x(tv)) = b_\gamma(x) - t.
    \end{equation}
    That is along $\exp_x(tv)$, $b_\gamma$ decreases with unit speed. 
\end{property}
\begin{proof}
    Fix $l > 0$, $t_n \to \infty$, denote $c_n(t) = \exp_x(tv_n)$ for each $n$. 
    \begin{figure}[htbp]
    \centering
        \includegraphics[width=0.7\textwidth]{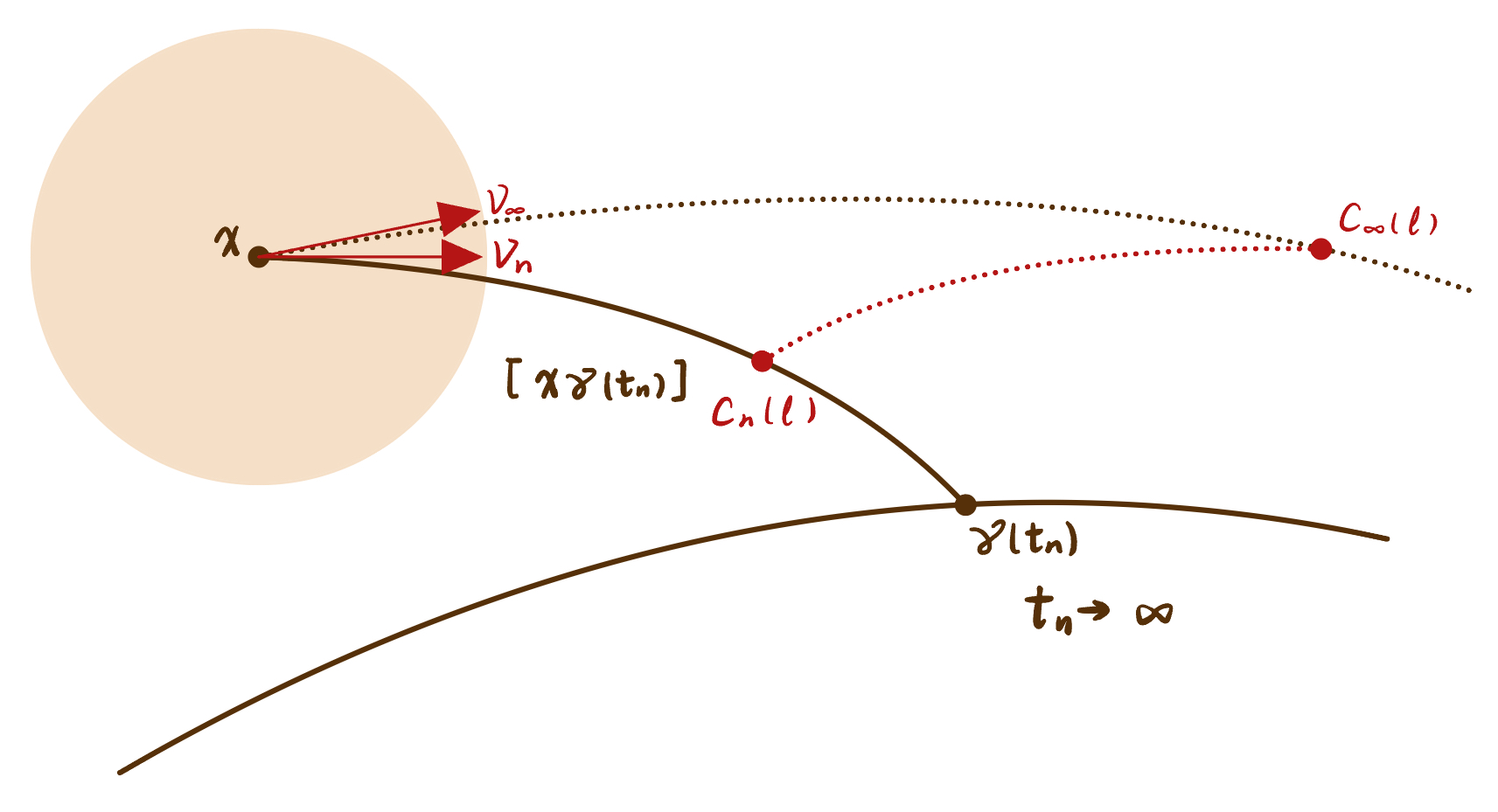}
    \end{figure}
    
    Since the unit initial vectors $v_n$ sub-converge to $v$, and $c(t)  = \exp_x(tv)$, $c_n(l) \to c(l)$ sub-converges as well. 
    Then
    \begin{align*}
    	b_\gamma(c(l)) 
    	& = \lim_{n \to \infty} d(c_n(l), \gamma(t_n)) - t_n\\
    	& = \lim_{n \to \infty} d(x, \gamma(t_n)) - l - t_n\\
    	& = b_\gamma(x) - l
    \end{align*}   	
    \end{proof}
Moreover, the equation \ref{eq: Busemann decreases with unit speed} also implies that $c$ is a ray, i.e. $d(c(0), c(l)) = l$ for any $l > 0$.
   	We can show this by contradiction. Suppose not, $d(c(0), c(l)) < l$.
    Since $b_\gamma$ is $1$-Lipschitz, then $\abs{b_\gamma(c(l)) - b_\gamma(c(0))} \leq d(c(l), c(0)) < l$. 
    However, it is known that $\abs{b_\gamma(c(l)) - b_\gamma(c(0))} = l$.

\begin{lemma}
	For $t_1 \leq t_2$, $B_{t_1}(\gamma(t_1)) \subseteq B_{t_2}(\gamma(t_2))$. 
\end{lemma}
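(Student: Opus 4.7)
The plan is to prove the inclusion directly from the triangle inequality, using the defining property of a ray that $d(\gamma(s), \gamma(t)) = |t - s|$ for all $s, t \geq 0$.

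First, I would take an arbitrary point $x \in B_{t_1}(\gamma(t_1))$, which by definition means $d(x, \gamma(t_1)) < t_1$. The goal is then to show $d(x, \gamma(t_2)) < t_2$. By the triangle inequality applied to the three points $x$, $\gamma(t_1)$, $\gamma(t_2)$, we have
\begin{equation*}
d(x, \gamma(t_2)) \leq d(x, \gamma(t_1)) + d(\gamma(t_1), \gamma(t_2)).
\end{equation*}
Since $\gamma$ is a ray and $t_1 \leq t_2$, the second term equals $t_2 - t_1$. Substituting the strict bound $d(x, \gamma(t_1)) < t_1$ gives
\begin{equation*}
d(x, \gamma(t_2)) < t_1 + (t_2 - t_1) = t_2,
\end{equation*}
so $x \in B_{t_2}(\gamma(t_2))$, which establishes the inclusion.

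There is really no obstacle here: the entire content of the lemma is that a ray advances with unit speed, so the centers of the two balls are exactly $t_2 - t_1$ apart, and the radii also grow by $t_2 - t_1$. This is exactly the right balance for the nested structure, and the same argument would fail for an arbitrary unit speed curve where $d(\gamma(t_1), \gamma(t_2)) < t_2 - t_1$ is possible only up to equality for rays. This monotonicity is precisely what makes the sublevel sets $\{b_\gamma \leq c\}$ (the horoballs) realizable as increasing unions of metric balls, which is presumably how the lemma will be used in the sequel to study the structure of Busemann functions.
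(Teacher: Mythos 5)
Your proof is correct and follows exactly the paper's argument: apply the triangle inequality to $x$, $\gamma(t_1)$, $\gamma(t_2)$ and use the ray property $d(\gamma(t_1),\gamma(t_2)) = t_2 - t_1$. The only cosmetic difference is that you carefully track strict inequalities for open balls whereas the paper writes $\leq$ throughout; the substance is identical.
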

\begin{figure}[htbp]
    \centering
        \includegraphics[width=0.5\textwidth]{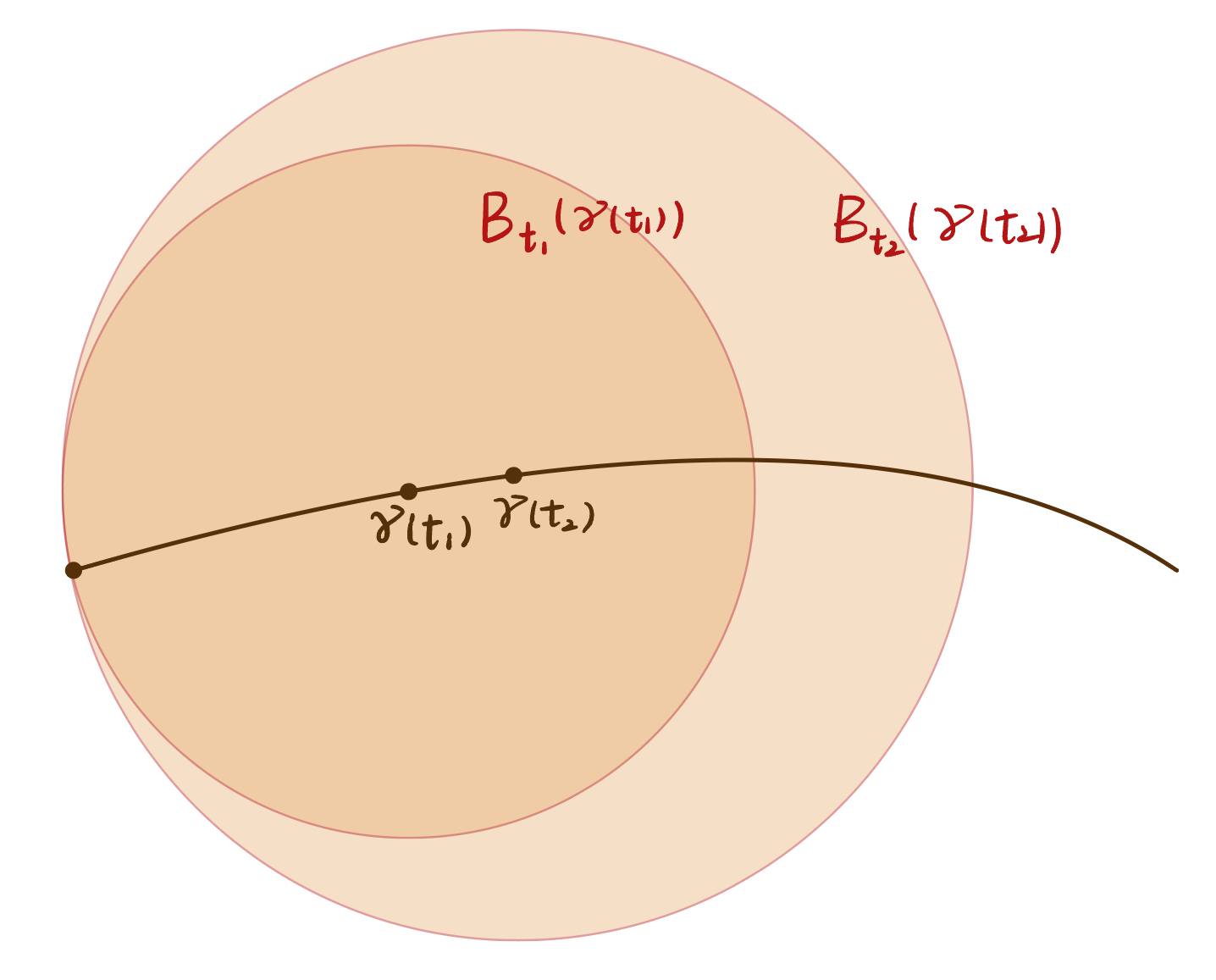}
    \end{figure}
\begin{proof}
	Let $x \in B_{t_1}(\gamma(t_1))$, we want to show $d(x, \gamma(t_2)) \leq t_2$, this follows by the triangular inequality: 
	\begin{align*}
		d(x, \gamma(t_2)) \leq d(x, \gamma(t_1)) + d(\gamma(t_1), \gamma(t_2)) \leq d(\gamma(0), \gamma(t_2)) = t_2
	\end{align*}
\end{proof}

\begin{proposition}
 $\br{b_\gamma < 0} = \bigcup_{t > 0}{B}_{t}(\gamma(t))$
\end{proposition}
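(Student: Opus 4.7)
The plan is to prove the two set inclusions separately, in both cases exploiting the monotonicity of the function $t \mapsto d(x, \gamma(t)) - t$ that was established in the proof of Claim~\ref{cla: Busemann finite}. Recall from that claim that for any fixed $x \in M$ the map $t \mapsto d(x, \gamma(t)) - t$ is non-increasing on $[0, \infty)$ and converges to $b_\gamma(x)$ as $t \to \infty$.

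For the inclusion $\bigcup_{t > 0} B_t(\gamma(t)) \subseteq \{b_\gamma < 0\}$, I would take $x \in B_t(\gamma(t))$ for some $t > 0$. By definition $d(x, \gamma(t)) < t$, so the quantity $d(x, \gamma(t)) - t$ is strictly negative. By the monotonicity of $s \mapsto d(x, \gamma(s)) - s$, for every $s \geq t$ we have $d(x, \gamma(s)) - s \leq d(x, \gamma(t)) - t < 0$, and passing to the limit gives $b_\gamma(x) \leq d(x, \gamma(t)) - t < 0$.

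For the reverse inclusion $\{b_\gamma < 0\} \subseteq \bigcup_{t > 0} B_t(\gamma(t))$, suppose $b_\gamma(x) < 0$ and set $\eps = -b_\gamma(x) > 0$. Since $d(x, \gamma(s)) - s$ decreases to $b_\gamma(x) = -\eps$, for all sufficiently large $s$ we have
\begin{equation*}
d(x, \gamma(s)) - s \leq -\eps/2 < 0,
\end{equation*}
that is, $d(x, \gamma(s)) < s$, which means $x \in B_s(\gamma(s)) \subseteq \bigcup_{t > 0} B_t(\gamma(t))$.

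There is no serious obstacle here — this is essentially a bookkeeping exercise that rearranges the definition of $b_\gamma$ together with the monotonicity fact already proved. The only point to be attentive to is that the inequalities are handled in the correct (strict vs.\ non-strict) direction so that the open-ball condition $d(x,\gamma(t)) < t$ is genuinely produced in both directions of the argument.
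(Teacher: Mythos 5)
Your proof is correct and follows essentially the same two-inclusion structure as the paper. The only minor difference is in the forward inclusion $\bigcup_{t>0}B_t(\gamma(t))\subseteq\{b_\gamma<0\}$: the paper invokes the $1$-Lipschitz property of $b_\gamma$ together with the identity $b_\gamma(\gamma(t))=-t$, whereas you exploit directly the monotonicity of $s\mapsto d(x,\gamma(s))-s$ already established in the proof of Claim~\ref{cla: Busemann finite}; both are valid and yield the same conclusion with comparable effort. The reverse inclusion is handled identically in both.
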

\begin{proof}
Let $x \in \bigcup_{t > 0}B_{t}(\gamma(t))$, then $x \in B_{t}(\gamma(t))$ for some $t > 0$, which means $d(x, \gamma(t)) < t$. And since $b_\gamma$ is $1$-Lipschitz, we know that
	\begin{align*}
		& \abs{b_{\gamma}(x) - b_{\gamma}(\gamma(t))} \leq d(x, \gamma(t)) < t\\
		\implies & b_{\gamma}(x) < t + \underbrace{b_{\gamma}(\gamma(t))}_{= -t} = 0
	\end{align*}
	Therefore, $\br{b_\gamma < 0} \supseteq \bigcup_{t > 0}B_{t}(\gamma(t))$,
	
	The opposite direction is trivial, let $x \in \br{b_\gamma < 0}$, then we know that $d(x, \gamma(s)) - s < 0$ for sufficiently large $s$. 
\end{proof}

\begin{remark}
	For any $c > 0$, we have  
	\begin{align*}
		&\br{b_\gamma < c} = \bigcup_{t > 0}B_{t + c}(\gamma(t))\\
		&\br{b_\gamma < -c} = \bigcup_{t > 0}B_{t}(\gamma(t + c))
	\end{align*}
\begin{figure}[htbp]
    \centering
        \includegraphics[width=0.9\textwidth]{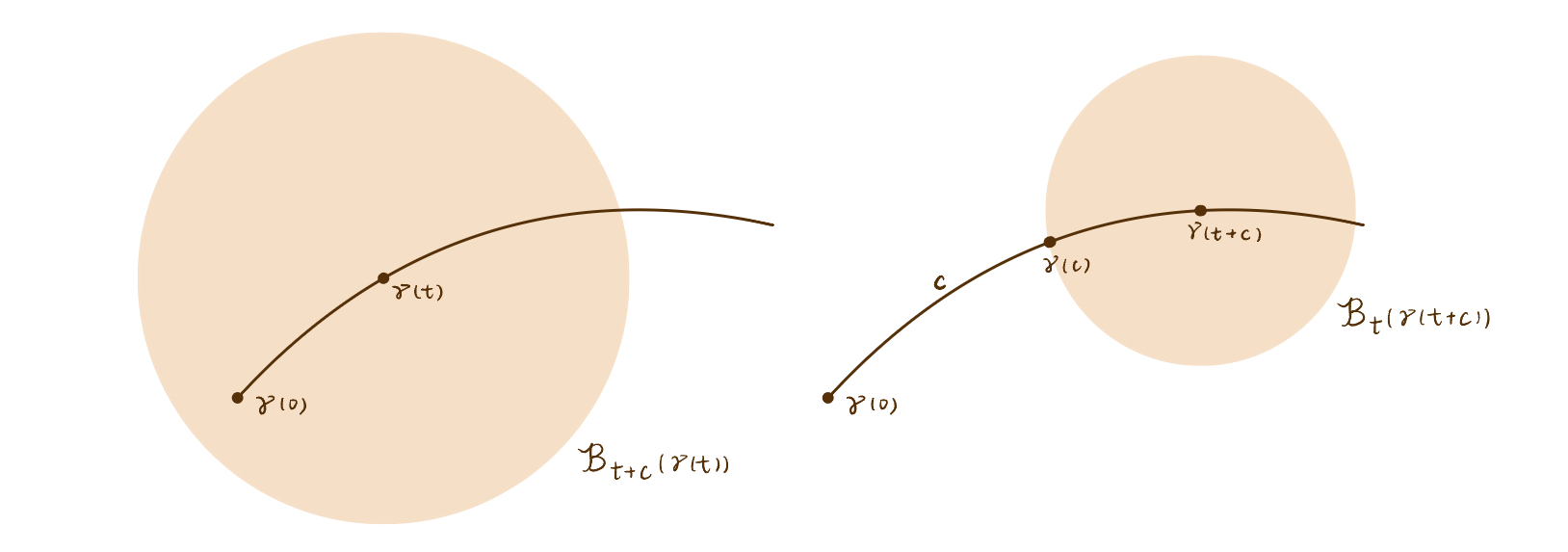}
    \end{figure}
\end{remark}

\begin{property}
    If $\sect_M \geq 0$, then $b_\gamma$ is concave for any ray $\gamma$.
\end{property}
\begin{proof}
	In $\R^n$, take $x = (x_1, x_2, \dots, x_n)$, $\gamma(t) = (t, 0, \dots, 0)$. 
	We can claim that the Busemann function is $b_\gamma(x) = -x_1$ a linear function. 
	Let us verify this for the case $x_1=0$ i.e $x=(0,, x_2, \dots, x_n)$ (the general case is similar).
    Notice that
    \begin{equation*}
        d(x, \gamma(t)) = \sqrt{\abs{x}^2 + t^2}.
    \end{equation*}
    And since
    \begin{align*}
        (d(x, \gamma(t)) - t) 
        & = (d(x, \gamma(t)) - t)\cdot\frac{(d(x, \gamma(t)) + t)}{(d(x, \gamma(t)) + t)}\\
        & = \frac{\abs{x}^2}{d(x, \gamma(t)) + t}.
    \end{align*}
    Taking $t \to \infty$, we have 
    \begin{equation*}
        b_\gamma(x)
        = \lim_{t \to \infty}(d(x, \gamma(t)) - t)
        = \lim_{t \to \infty} \frac{\abs{x}^2}{(d(x, \gamma(t)) + t)}
        = 0
    \end{equation*}
    The general case is by a similar computation. We can conclude that the Busemann function in $\R^n$ is affine, then concave as well.  
    
    Now we want to use the Toponogov theorem to show $b_\gamma$ is concave in $M$ with $\sect_M \geq 0$.
    
As the following picture shows, fix some point $x \in M$, by triangular inequality $d(x, \gamma(t)) > t - d(x, \gamma(0))$. Take $\eps$ sufficiently small, for any $y \in B_\eps(x)$, by the triangular inequality again, after fixing any $R > 0$, we can find a sufficiently large $t$ such that
\begin{align*}
	d(y, \gamma(t)) 
	&\geq d(x, \gamma(t)) - d(x, y)\\
	&> t - d(x, \gamma(0)) - \eps\\
	& > R
\end{align*}

\begin{figure}[htbp]
    \centering
        \includegraphics[width=0.6\textwidth]{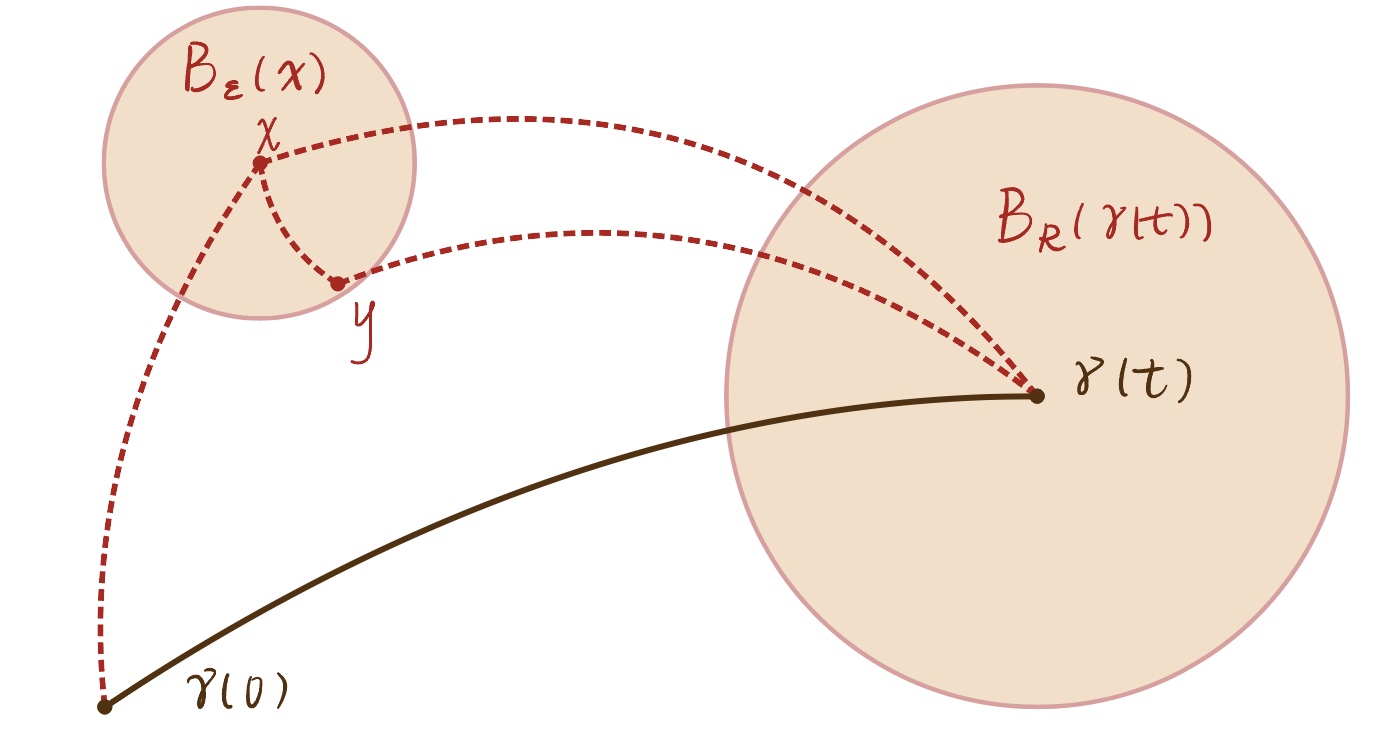}
    \end{figure}

That means $d(y, \gamma(t)) > R$ for large $t$. We claim that this implies that $d(\cdot, \gamma(t))$ is $\frac{1}{R}$-concave outside of $B_R(\gamma(t))$. This is because if we have $p \in M$ and $\tilde{p} \in \R^n$, by the Hessian comparison theorem~\ref{cor: Hess ineq} (in particular inequality~\ref{eq: Hessian inequality for d_p}), we know that 
\begin{equation*}
	\hess_{d_p} \leq \frac{1}{d_p}\pi_{d_p} \leq \frac{1}{R}\id \quad\text{for $d_p \geq R$}
\end{equation*}
where $\pi_p$ is the orthogonal projection onto the tangent space of the sphere centered at $p$ of radius $d_p$. Since $B_\eps(x) \subseteq M\backslash B_R(\gamma(t))$ for all large $t$, we can conclude that $d(\cdot, \gamma(t))$ is $\frac{1}{R}$-concave on $B_\eps(x)$ for all large $t$. Therefore, 
\begin{center}
	$d(\cdot, \gamma(t)) - t$ is also $\frac{1}{R}$-concave on $B_\eps(x)$ for all large $t$. 
\end{center} 
By the definition of the Busemann function, we know that
\begin{equation*}
    b_\gamma(y) = \lim_{t \to \infty}(d(y, \gamma(t)) - t)
\end{equation*}
is also $\frac{1}{R}$-concave on $B_\eps(x)$ for all $R > 0$. Therefore, $b_\gamma(y)$ is concave on $B_\eps(x)$.  Since $b_\gamma$ is concave near $x$ and concavity is a local property, we conclude that $b_\gamma$ is globally concave. \end{proof}
\begin{remark}
There is a corresponding statement about Busemann functions on spaces of nonpositive curvature. Namely, if $M^n$ is complete, simply connected, and has $\sect_M \le 0$ then for any ray $\gamma$ its Busemann function $b_\gamma$ is  \textbf{convex}. 
\end{remark}

\begin{definition}[Total Busemann Function]
    Let $M$ be a noncompact, complete Riemannian manifold without boundary and $\sect_M \geq 0$. Let $p \in M$, we can define the \textbf{total Busemann function} as
    \begin{equation*}
        b(x) = \inf\br{b_\gamma(x): \text{$\gamma$ is a ray starting at $p$}}.
    \end{equation*}
    for each $x \in M$.
\end{definition}

\begin{remark}\label{inf=min}
Since a limit of rays is a ray it easily follows that this infimum is always achieved, i.e. 
\begin{equation*}
        b(x) = \min\br{b_\gamma(x): \text{$\gamma$ is a ray starting at $p$}}
    \end{equation*}
    for any $x\in M$.
\end{remark}

\begin{definition}[Negative Gradient Ray]
    Given a 1-Lipschitz function $f\: M \to \R$ a ray $\sigma$ is called a  \textbf{negative gradient} ray for $f$ if
    \begin{equation*}
        f(\sigma(t)) = f(x) - t.
    \end{equation*}
 \end{definition}
    Note that if $\sigma$ is a negative gradient ray for $f$ then $f$ decreases along $\sigma$ with maximal possible speed (=1). This justifies the term "negative gradient ray".

\begin{proposition}\label{prop:neg-grad-ray}
    Both  $b$ and $b_\gamma$ have the property that for any $x \in M$ each of them admits a negative gradient ray starting at $x$.

\end{proposition}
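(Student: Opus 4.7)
The plan is to handle $b_\gamma$ first and then bootstrap from it to handle $b$. The case of $b_\gamma$ has essentially been prepared in Property~\ref{asymptotic-ray}: fix $x\in M$ and choose a sequence $t_n\to\infty$. For each $n$, let $v_n\in S^{n-1}\subset T_xM$ be the initial unit vector of a shortest geodesic from $x$ to $\gamma(t_n)$. By compactness of the unit sphere, $v_n\to v$ along a subsequence, and I would set $\sigma(t)=\exp_x(tv)$. Property~\ref{asymptotic-ray} already shows that $b_\gamma(\sigma(t))=b_\gamma(x)-t$ and, as noted right after that property, the identity forces $\sigma$ to be a ray (since $b_\gamma$ is $1$-Lipschitz by Property~\ref{property: b 1 lip}, any drop by $t$ of the function over distance $t$ requires the curve to be distance-minimizing). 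So $\sigma$ is a negative gradient ray for $b_\gamma$ starting at $x$.

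For the total Busemann function $b$, the first step is to record that $b$ is itself $1$-Lipschitz and finite: each $b_\gamma$ is $1$-Lipschitz by Property~\ref{property: b 1 lip}, and $b_\gamma(x)\ge -d(x,p)$ (since $b_\gamma(p)=0$ by Property~\ref{property: b = -s}), so the infimum is finite and, as an infimum of a family of $1$-Lipschitz functions bounded below, $1$-Lipschitz. By Remark~\ref{inf=min} the infimum is attained, so I may pick a ray $\gamma_0$ from $p$ with $b(x)=b_{\gamma_0}(x)$.

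Next I would apply the $b_\gamma$ case above to the ray $\gamma_0$ at the basepoint $x$, producing an asymptotic ray $\sigma$ with $b_{\gamma_0}(\sigma(t))=b_{\gamma_0}(x)-t=b(x)-t$. The two inequalities needed are then immediate. For the upper bound, the definition of $b$ gives
\begin{equation*}
b(\sigma(t))\le b_{\gamma_0}(\sigma(t))=b(x)-t.
\end{equation*}
For the lower bound, since $\sigma$ is unit-speed and $1$-Lipschitzness of $b$ yields
\begin{equation*}
b(\sigma(t))\ge b(x)-d(x,\sigma(t))=b(x)-t.
\end{equation*}
Combining these gives $b(\sigma(t))=b(x)-t$, so $\sigma$ is a negative gradient ray for $b$ starting at $x$.

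There is no real obstacle here; the only subtlety worth being careful about is that the asymptotic ray $\sigma$ constructed for $\gamma_0$ is automatically a ray (not just a geodesic) — but this is forced by the matching $1$-Lipschitz lower bound, exactly as in the $b_\gamma$ argument. Everything else is a direct invocation of Property~\ref{asymptotic-ray}, Property~\ref{property: b 1 lip}, and Remark~\ref{inf=min}.
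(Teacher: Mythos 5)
Your proof is correct and follows essentially the same route as the paper: reduce to a minimizing ray $\gamma_0$ via Remark~\ref{inf=min}, take the asymptotic ray $\sigma$ from Property~\ref{asymptotic-ray}, and then sandwich $b(\sigma(t))$. The only cosmetic difference is that you package the lower bound as ``$b$ itself is $1$-Lipschitz'' rather than, as the paper does, bounding each $b_\sigma(c(t))$ from below and taking the infimum; these are the same argument unwound.
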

\begin{proof}
The result for $b_\gamma$ follows by  Property~\ref{asymptotic-ray}.

Let $x\in M$ and let us prove that $b$ has a negative gradient ray starting at $x$.

	By Remark \ref{inf=min}, we know that there exists a ray $\gamma$ starting at $p$ such that $b(x) = b_{\gamma}(x)$. By the Property~\ref{asymptotic-ray}, we can construct a ray $c$ starting at $x$ such that $b_\gamma(c(t))=b_\gamma(x)-t$ for any $\gamma$. Now we want to show this implies $b(c(t)) = b(x) - t$, i.e that $c$ is a negative gradient ray for $b$ starting at $x$. Take $\sigma$ any other ray starting at $p = \gamma(0)$, then
\begin{equation*}
	\abs{b_{\sigma}(c(t)) - b_{\sigma}(x)} \leq t
\end{equation*}
since $b_{\sigma}$ is $1$-Lipschitz. This means 
\begin{align*}
	b_{\sigma}(c(t)) 
	&\geq b_{\sigma}(x) - t\\
	&\geq b(x) - t \\
	&\geq b_{\gamma}(x) - t
\end{align*}
Remember that $b_{\gamma}(c(t)) = b(x) - t$. Therefore, we can conclude that for any ray $\sigma$ starting at $x$, 
\begin{equation*}
	b_{\sigma}(c(t)) \geq b_{\gamma}(c(t))
\end{equation*}
this means the minimum is achieved when $\sigma = \gamma$. $c(t)$ is a negative gradient ray for $b$ and hence $b(c(t))=b_{\gamma}(c(t))=b_{\gamma}(x)-t=b(x)-t$.
\end{proof}

\begin{definition}[Totally Convex Subsets]\label{def: totally convex}
	A subset $C \subset M$ is called  \textbf{totally convex} if for any $x, y \in C$ and any geodesic $\gamma$  (not necessarily shortest ) from $x$ to $y$, it holds that   $\gamma \subset C$. 
\end{definition}
\begin{proposition}\label{prop: total busemann convex and cpt}   
    Suppose $b$ is a total Busemann function and let $c \in \R$, the set 
    \begin{equation*}    
    	C_c = \br{x\in M: b(x) \geq c}
    \end{equation*}
    is compact and totally convex. The total convexity tells us that for any $x, y \in C_c$, the geodesics (not necessarily shortest) connecting $x$ and $y$ lies in $C_c$.
\end{proposition}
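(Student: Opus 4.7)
The plan is to handle total convexity and compactness as two separate matters, both reducing to already-established properties of Busemann functions.

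For total convexity, I would observe that $b$ is a concave function on $M$ since each $b_\gamma$ is concave (as proved earlier using $\sect_M\ge 0$ via the Hessian comparison theorem) and the pointwise infimum of concave functions is concave. Concavity here is concavity along every geodesic, not just shortest ones, because the proof of concavity of each $b_\gamma$ proceeded via a global semi-concavity estimate. Hence for any geodesic $\sigma\colon[0,1]\to M$ with $\sigma(0),\sigma(1)\in C_c$, one has $b(\sigma(t))\ge (1-t)b(\sigma(0))+tb(\sigma(1))\ge c$, so $\sigma(t)\in C_c$. This gives total convexity in the sense of Definition~\ref{def: totally convex}.

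For compactness, $b$ is $1$-Lipschitz (for $x,y\in M$, choose a ray $\gamma$ achieving the infimum at $x$; then $b(y)-b(x)\le b_\gamma(y)-b_\gamma(x)\le d(x,y)$ by Property~\ref{property: b 1 lip}, and symmetrize), so $C_c$ is closed. The substantive content is boundedness, which I would prove by contradiction. Suppose there is a sequence $x_n\in C_c$ with $R_n := d(p,x_n)\to\infty$. Choose shortest unit-speed geodesics $\sigma_n\colon[0,R_n]\to M$ from $p$ to $x_n$ with initial vectors $v_n\in T_pM$. By compactness of the unit sphere in $T_pM$, along a subsequence $v_n\to v$, and the argument of Lemma~\ref{lem:ray existence} shows that $\gamma(t):=\exp_p(tv)$ is a ray starting at $p$.

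The key estimate comes from the hinge comparison applied to the hinge at $p$ with legs $[px_n]$ and $[p\gamma(R_n)]$, both of length $R_n$, and opening angle $\alpha_n = \mangle(v_n,v)$. Since $\sect_M\ge 0$, this yields
\[
d(x_n,\gamma(R_n))\le 2R_n\sin(\alpha_n/2).
\]
By the monotonicity established in the proof of Claim~\ref{cla: Busemann finite}, $b_\gamma(x_n)\le d(x_n,\gamma(R_n))-R_n\le R_n\bigl(2\sin(\alpha_n/2)-1\bigr)$. Since $\alpha_n\to 0$, the bracketed quantity tends to $-1$, so for $n$ large the right-hand side is at most $-R_n/2$, which diverges to $-\infty$. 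But $b(x_n)\le b_\gamma(x_n)$ then forces $b(x_n)\to-\infty$, contradicting $x_n\in C_c$. Hence $C_c$ is bounded, and by Hopf-Rinow (closed, bounded, complete) compact.

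The main obstacle is selecting the right auxiliary ray and the right evaluation time to convert the statement "the direction $v_n$ is close to that of the limit ray" into a quantitative bound pushing $b_\gamma(x_n)$ to $-\infty$; evaluating at $t=R_n$ makes the comparison triangle isoceles with small opening angle, which is exactly what causes the two legs to nearly coincide in the model and yields the requisite linear-in-$R_n$ decay.
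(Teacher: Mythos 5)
Your treatment of total convexity matches the paper's: concavity of $b$ (as an infimum of concave $b_\gamma$) immediately gives that superlevel sets are totally convex. The compactness argument, however, takes a genuinely different route. The paper argues as follows: having already shown total convexity, it takes a sequence $p_i\in C_c$ with $d(p,p_i)\to\infty$, observes the segments $[pp_i]\subseteq C_c$ sub-converge to a ray $\gamma_\infty$ which must lie in $C_c$ (using convexity plus closedness), and then contradicts by evaluating $b(\gamma_\infty(t))\le b_{\gamma_\infty}(\gamma_\infty(t))=-t<c$ for $t$ large. Your argument instead keeps working with the original points $x_n$ and never needs the limit ray $\gamma$ to lie in $C_c$: you invoke the hinge comparison in $\sect\ge 0$ to get the quantitative bound
\begin{equation*}
b_\gamma(x_n)\le d(x_n,\gamma(R_n))-R_n\le R_n\bigl(2\sin(\alpha_n/2)-1\bigr),
\end{equation*}
and since $\alpha_n\to 0$ this tends to $-\infty$, forcing $b(x_n)\le b_\gamma(x_n)\to-\infty$, contradicting $x_n\in C_c$. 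Both are correct. The paper's version is slightly shorter and uses the just-proved convexity as a tool, which is an economical internal reuse; yours is more quantitative, decoupling compactness from convexity entirely, and makes explicit the geometric mechanism (the isoceles model triangle with vanishing opening angle) that the paper leaves implicit in the passage to the limit ray. One caveat: your hinge comparison requires both legs $[px_n]$ and $[p\gamma(R_n)]$ to be shortest; that holds here because $\sigma_n$ is chosen shortest and $\gamma$ is a ray, but it is worth saying.
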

\begin{proof}
    Firstly, we claim that $C_c$ is totally convex. 
    Let $x, y \in C_c$, and $\gamma$ any geodesic segment connecting $x$  and $y$ such that $\gamma(0) = x$ and $\gamma(1) = y$, because $b$ is concave, 
    \begin{equation*}
        b(\gamma(t)) \geq (1 - t)b(x) + tb(y) \geq (1 - t)c + tc = c. 
    \end{equation*}
    This means $\gamma \subset C_c$, thus $C_c$ is totally convex. 
    
    Next, we show the set $C_c$ is compact by contradiction. It's enough to prove the statement for $c\le 0$. Note that $b(p)=0$ and hence for any $c\le 0$ the set $C_c=\{b\ge c\}$ is nonempty.
    Suppose the set $C_c$  is non-compact.  Then there exists a sequence $\br{p_i} \in C_c$ such that $d(p, p_i) \to +\infty$. 
    Because $C_c$ is convex, $[pp_i] \subseteq C_c$ for every $i$. 
    By the previous argument, by taking the unit initial vector of $[pp_i]$, which sub-converge in a sphere, the segments $[pp_i]$ also sub-converge to a ray starting at $p$, saying $\gamma_\infty(0) = p$. 
    And $\gamma_\infty \subseteq C_c$. 
    However $b_{\gamma_\infty}(\gamma_\infty(t)) = -t < c$ for large $t$ and $b = \inf\br{b_\gamma: \text{$\gamma$ is a ray starting at $p$}}$. 
    Therefore, $b(\gamma_\infty(t)) = -t < c$, which brings us to a  contradiction.
\end{proof}

\begin{notation}
	By the compactness of Proposition~\ref{prop: total busemann convex and cpt}, we know that $b$ attains the maximum on $C_c$ for each real number. We denote $b_{\max}$ the maximum value and $C_{\max} = \br{b \geq b_{\max}}$ the maximum level set. 
\end{notation}

\begin{proposition}\label{prop-level-sets}
    For $s > t$, $C_s  \subseteq C_t$. Then
    \begin{equation*}
        C_s = \br{x \in C_t: d(x, \partial C_t) \geq s - t},
    \end{equation*}
    Therefore, in particular,
    \begin{align*}
    	\partial C_s 
    	&= \br{x \in M: b(x) = s}\\
    	&= \br{x \in C_t: d(x, \partial C_t) = s - t}.
    \end{align*}
    Thus $b(x) = d(x, \partial C_t) + t$  for $x\in C_t$. 
\end{proposition}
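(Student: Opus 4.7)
My plan is to prove the identity $C_s = \{x \in C_t : d(x,\partial C_t) \geq s-t\}$ via two inclusions, one using the $1$-Lipschitz property of $b$ and the other using the existence of negative gradient rays for $b$ established in Proposition~\ref{prop:neg-grad-ray}. The final assertions about $\partial C_s$ and the formula $b(x) = d(x,\partial C_t) + t$ will then follow as essentially free consequences.

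First I would observe that since $b$ is continuous (being $1$-Lipschitz by Property~\ref{property: b 1 lip}, inherited by $b$ as an infimum of such), one has $\partial C_t \subseteq \{b=t\}$. For the reverse inclusion, take any $x$ with $b(x)=t$, apply Proposition~\ref{prop:neg-grad-ray} to obtain a negative gradient ray $c$ starting at $x$, and note that $b(c(\tau))=t-\tau < t$ for $\tau>0$, so $c(\tau)\notin C_t$ and hence $x\in\partial C_t$. Thus $\partial C_t = \{b=t\}$, and similarly $\partial C_s = \{b=s\}$. The trivial inclusion $C_s \subseteq C_t$ is immediate from $s>t$.

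For the nontrivial identity, the forward inclusion $C_s \subseteq \{x\in C_t : d(x,\partial C_t)\geq s-t\}$ follows from the $1$-Lipschitz property: if $x\in C_s$ and $y\in\partial C_t$, then $d(x,y) \geq b(x)-b(y) \geq s-t$, so taking infimum over $y$ gives $d(x,\partial C_t)\geq s-t$. For the reverse inclusion, suppose $x\in C_t$ with $d(x,\partial C_t)\geq s-t$; I would argue by contradiction, assuming $b(x) < s$. Using Proposition~\ref{prop:neg-grad-ray}, pick a negative gradient ray $c$ for $b$ starting at $x$ and set $\tau_0 = b(x)-t \geq 0$. Then $b(c(\tau_0)) = b(x)-\tau_0 = t$, so $c(\tau_0)\in\partial C_t$. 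Since $c$ is a unit-speed ray, $d(x,c(\tau_0))=\tau_0 = b(x)-t < s-t$, contradicting $d(x,\partial C_t)\geq s-t$.

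The formula $b(x) = d(x,\partial C_t) + t$ for $x\in C_t$ then comes out of the same two ingredients applied sharply: the $1$-Lipschitz property gives $b(x)-t \leq d(x,\partial C_t)$, while the negative gradient ray terminating at $c(b(x)-t)\in\partial C_t$ provides the matching upper bound $d(x,\partial C_t)\leq b(x)-t$. I do not anticipate a serious obstacle here since all the work has been front-loaded into Proposition~\ref{prop:neg-grad-ray}; the only point requiring slight care is ensuring that the negative gradient ray really does hit $\partial C_t$ at the expected parameter, which is where the identification $\partial C_t = \{b=t\}$ is used.
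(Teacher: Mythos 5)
Your proof is correct and follows essentially the same two-ingredient strategy as the paper's: the $1$-Lipschitz property of $b$ for the forward inclusion, and a negative gradient ray of $b$ (via Proposition~\ref{prop:neg-grad-ray}) for the reverse inclusion and the sharp formula. Two small differences, both to your credit: you make explicit the preliminary identification $\partial C_t = \{b=t\}$ (which the paper uses implicitly when it asserts $\sigma(\theta)\in\partial C_t$ from $b(\sigma(\theta))=t$), and in the reverse inclusion you directly compute the parameter $\tau_0=b(x)-t$ at which the gradient ray reaches level $t$, whereas the paper argues less directly by evaluating at $\tau=s-t$ and invoking the intermediate value theorem to locate $\theta\in[0,s-t)$. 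Your streamlined version is slightly cleaner but represents the same underlying argument.
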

\begin{proof}

	\textbf{Step 1 (Show $C_s \subseteq \br{x \in C_t: d(x, \partial C_t) \geq s - t}$):} Suppose this is false, there exists $x \in C_s$ such that $d(x, \partial C_t) < s - t$, then for some $y \in \partial C_t$, $d(x, y) < s - t$. Since the total Busemann function $b$ is $1$-Lipschitz, we have
\begin{align*}
	b(x) \leq b(y) + d(x, y) < t + s - t = s.
\end{align*}
This means $x \notin C_s$. Contradiction.

\textbf{Step 2 (Show $C_s \supseteq \br{x \in C_t: d(x, \partial C_t) \geq s - t}$):} 
Suppose this is false, then there exists $x \in C_t$ such that $d(x, \partial C_t) \geq s - t$ and $b(x) < s$. 
In fact, we should keep in mind that 
\begin{equation*}
	t \leq b(x) < s
\end{equation*}
Let $\sigma$ be a negative gradient ray of $b$ starting at $x$ such that $b(\sigma(\tau)) = b(x) - \tau$. 
Then, on the other hand, $b(\sigma(s - t)) = b(x) + t - s < t$ (remember $b(x) < s$). 
Therefore, by the intermediate value theorem, there exists $\theta \in [0, s - t)$ such that $b(\sigma(\theta)) = t$. 
This means $\sigma(\theta) \in \partial C_t$. 
By our assumption, we have $d(x, \sigma(\theta)) \geq s - t$. 
However, by the definition of ray, $d(x, \sigma(\theta)) = \theta < s - t$.
Contradiction.
\end{proof}

\begin{proposition}\label{prop: C_max}
$C_{\max}$ has an empty interior.
\end{proposition}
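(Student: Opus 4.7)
The plan is to argue by contradiction using the existence of a negative gradient ray for $b$ at every point (Proposition~\ref{prop:neg-grad-ray}), together with the fact that such a ray decreases $b$ at unit speed. The idea is that the existence of such a ray at any point of $M$ is incompatible with the Busemann function being maximized on an open set, because the ray provides a direction of strict decrease.

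More concretely, suppose toward a contradiction that $C_{\max}$ has nonempty interior and pick $x \in \mathrm{int}(C_{\max})$, so there is some $\delta>0$ with $B_\delta(x)\subseteq C_{\max}$. By Proposition~\ref{prop:neg-grad-ray} applied to $b$, there exists a negative gradient ray $\sigma\colon [0,\infty)\to M$ with $\sigma(0)=x$ and
\begin{equation*}
b(\sigma(t)) = b(x) - t = b_{\max} - t \quad \text{for all } t\ge 0.
\end{equation*}
Since $\sigma$ is a unit speed ray, $d(x,\sigma(t)) = t$ for all $t\ge 0$. Pick any $0<\epsilon<\delta$. Then $\sigma(\epsilon)\in B_\delta(x)\subseteq C_{\max}$, which forces $b(\sigma(\epsilon))\ge b_{\max}$. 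On the other hand, the defining property of $\sigma$ gives $b(\sigma(\epsilon)) = b_{\max}-\epsilon<b_{\max}$. This contradiction shows $\mathrm{int}(C_{\max})=\varnothing$.

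There is no real obstacle here — the proof is a two-line application of Proposition~\ref{prop:neg-grad-ray}. The only nontrivial input is the existence of a negative gradient ray at every point, which was already established via Property~\ref{asymptotic-ray} together with the minimization argument in Proposition~\ref{prop:neg-grad-ray}; everything else is just the definition of a ray (unit speed) and the hypothesis that $b_{\max}$ is the maximum of $b$.
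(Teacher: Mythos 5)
Your proof is correct, and it takes a more direct route than the paper's. The paper deduces Proposition~\ref{prop: C_max} from Proposition~\ref{prop-level-sets} (the identity $C_s = \{x \in C_t : d(x, \partial C_t) \ge s - t\}$): an interior point $x$ has $d(x, \partial C_{\max}) = \eps > 0$, so by that identity $b(x) \ge b_{\max} + \eps$, a contradiction. Your argument instead applies Proposition~\ref{prop:neg-grad-ray} directly at a hypothetical interior point: the negative gradient ray $\sigma$ from $x$ decreases $b$ strictly while, for small $\eps < \delta$, the point $\sigma(\eps)$ stays inside $B_\delta(x) \subseteq C_{\max}$ (since $\sigma$ is unit-speed distance-minimizing), giving $b_{\max} - \eps = b(\sigma(\eps)) \ge b_{\max}$. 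Since Proposition~\ref{prop-level-sets} is itself proved via negative gradient rays, the two proofs ultimately rest on the same engine; your version simply cuts out the intermediary. The paper's detour through Proposition~\ref{prop-level-sets} pays off elsewhere (the explicit relation between level sets is reused), but for this particular statement your shortcut is cleaner and self-contained.
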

\begin{proof}
If $\br{b \geq b_{\max}}^\circ \neq \varnothing$, then we can take an interior point $x \in \br{b \geq b_{\max}}^\circ$ so that $d(x, \partial\br{b \geq b_{\max}})=\eps > 0$. Take $t=b_{\max}$ and $s=b_{\max}+\eps$. By Proposition \ref{prop-level-sets}, $x\in C_s$ i.e. $b(x)\ge b_{\max}+\eps$. This contradicts the maximality of $b_{\max}$.
Therefore, the set $\br{b \geq b_{\max}}$ has an empty interior.
\end{proof}

\begin{remark}
    The set $\br{b \geq b_{\max}}$ having an empty interior doesn't mean the set must be a point. 
    
    \begin{figure}[htbp]
    \centering
        \includegraphics[width=0.6\textwidth]{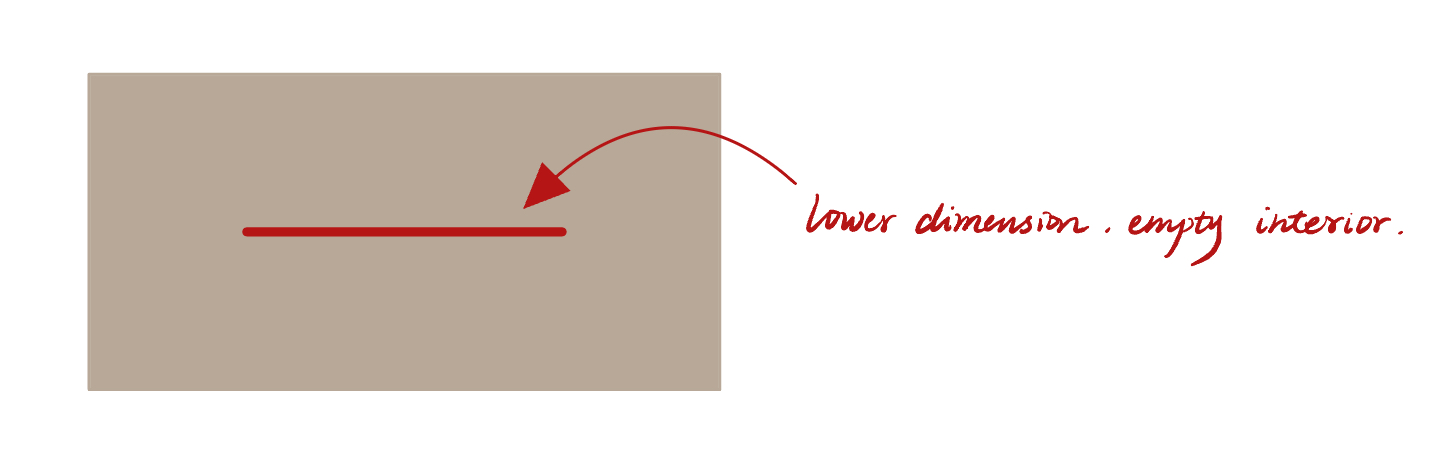}
    \end{figure}

\end{remark}

\subsection{A Brief Plan on the Proof of the Soul Theorem}
The reason why we need to introduce the total Busemann function is that the construction of the set $\br{b \geq c}$ is the key construction in the proof of the Soul theorem. To see this let's briefly discuss our plan for proving the theorem. 

We denote $C_{\max}$ by $C^0$, which is a totally convex subset (By Proposition~\ref{prop: total busemann convex and cpt}). Considering the function $f_0 = d(\cdot, \partial C^0)$, notice that $f_0$ is concave on $C^0$ (We need to check this concavity later) so that $f_0$ attains the maximum. Then we can take $C^1$ to be the maximum level set of $f_0$. Take $f_1 = d(\cdot, \partial C^1)$, ...... We will repeat this construction as long as $C^i$ has a boundary.
    
    Then we obtain a sequence of $\br{C^i}$ such that
    \begin{equation*}
        C^0 \supseteq C^1 \supseteq C^2 \supseteq \cdots
    \end{equation*}
    and
    \begin{equation*}
        \dim(C^0) > \dim(C^1) > \dim(C^2) > \cdots 
    \end{equation*}
    Since the dimensions $C^i$ are strictly decreasing, we eventually will get a $C^i$ such that $\partial C^i = \emptyset$. Which is indeed the Soul of the manifold. Then we will show that $M$ is diffeomorphic to the total space of $\nu(S)$ where $\nu(S)$ means the normal vector bundle of $S$.

    To carry out this process, we need to 
    \begin{itemize}
        \item Understand convex subsets;
        \item  Show that $d(\cdot, \partial C)$ is concave on $S$ is $C$ is a convex subsets of $M$ of $\sect_M \geq 0$.
    \end{itemize}
\section{Convex Subsets in Riemannian Manifolds}
\begin{definition}[Convex Subset]
    $C \subseteq M$ is called \textbf{convex} if $\forall x, y \in C$, there exists a shortest geodesic $[xy]$ in $M$ such that $[xy] \subseteq C$. 
\end{definition}
\begin{definition}[Strongly Convex Subset]
    Let $C \subseteq M$, if $\forall x, y \in C$, any shortest geodesics $[xy]$ connecting $x$ and $y$ will be contained in $C$, then $C$ is called \textbf{strongly convex}.
\end{definition}

\begin{example}
    This is an example of a convex by not strongly convex subset.    Consider $M = S^n$, $C = S_+^n$ an upper hemisphere, then $C$ is convex but not strongly convex.
    \begin{figure}[htbp]
    \centering
        \includegraphics[width=0.3\textwidth]{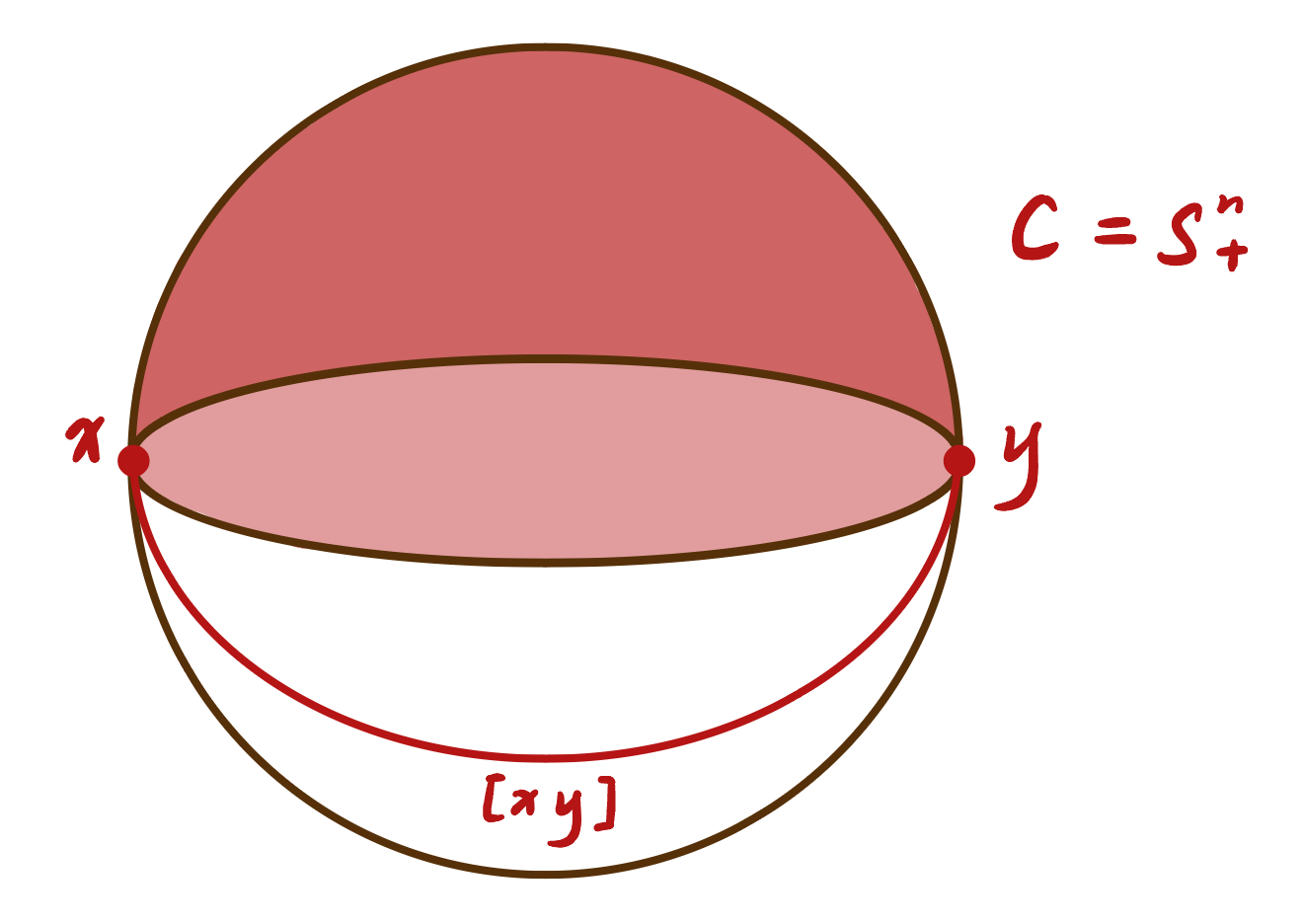}
        \caption{An upper hemisphere is convex but not strongly convex.}
    \end{figure}
\end{example}
\begin{fact}
Any convex subset $C \subseteq M$ is locally strongly convex in the following sense. For any $p \in M$, there exist $\eps > 0$ such that $B_\eps(p)$ is strongly convex. Thus, if $C \subseteq M$ is convex, then for any $p \in C$, there is $\eps>0$ such that $B_\eps(p)\cap C$ is strongly convex. 
   
   \begin{figure}[htbp]
    \centering
        \includegraphics[width=0.3\textwidth]{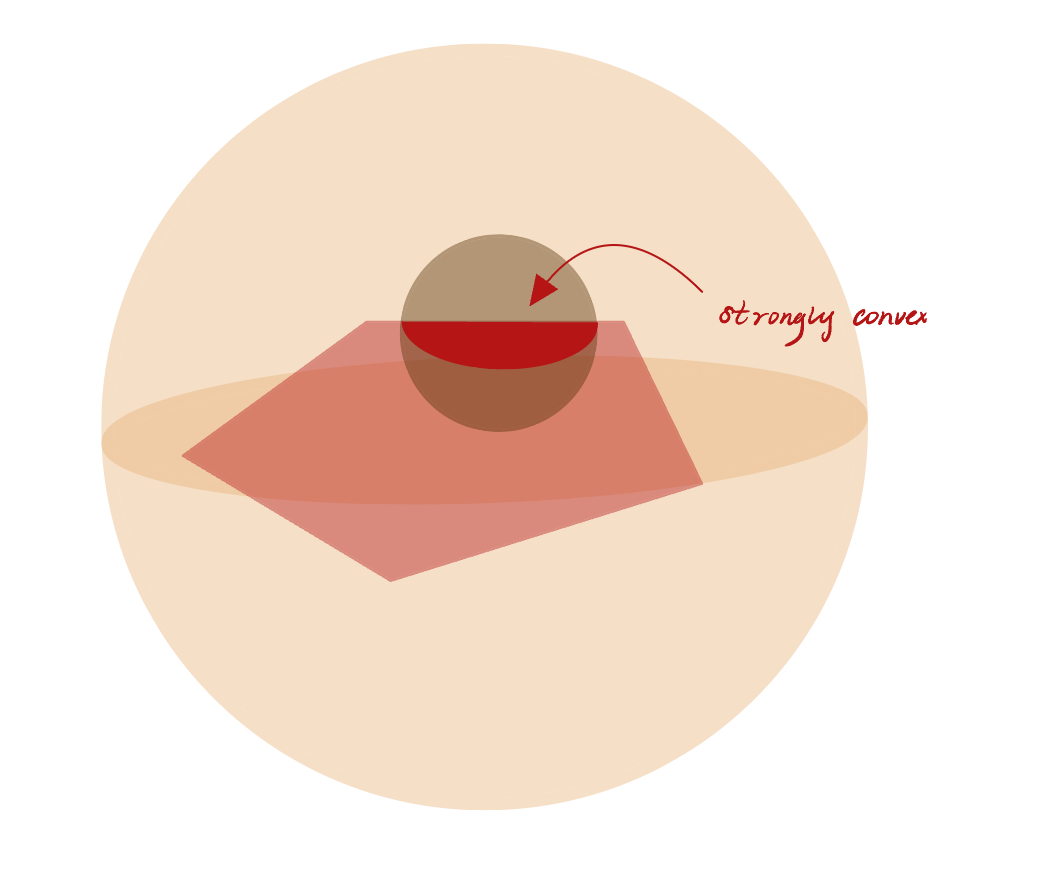}
        \caption{Any convex subset $C \subseteq M$ is locally strongly convex.}
    \end{figure}
    
\end{fact}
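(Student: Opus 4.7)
The plan is to combine two standard ingredients: Whitehead's theorem that every point has a strongly convex metric ball, together with local uniqueness of shortest geodesics.

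For the first assertion, fix $p \in M$ and work in normal coordinates $(x^1,\dots,x^n)$ on $B_{\inj(p)}(p)$. Along any geodesic $\gamma(t)$ in this chart, the squared Euclidean norm $\psi(t):=\sum_i(\gamma^i(t))^2$ satisfies
\[
\psi''(t)=2|\dot\gamma(t)|^2_{\mathrm{Eucl}}-2\sum_{i,j,k}\Gamma^k_{ij}(\gamma(t))\,\gamma^k(t)\,\dot\gamma^i(t)\dot\gamma^j(t),
\]
and since $\Gamma^k_{ij}$ vanishes at $p$ (the origin in normal coordinates), continuity produces $\eps_1>0$ such that $\psi''>0$ on $B_{\eps_1}(p)$ for every nonconstant geodesic. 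I would then choose $\eps\ll\eps_1$ so that any shortest geodesic between two points of $\overline{B_\eps(p)}$ has length much less than $\eps_1$ and is forced to remain inside $B_{\eps_1}(p)$ by a connectedness argument on the open set $\{t:\sigma(t)\in B_{\eps_1}(p)\}$. Strict convexity of $\psi\circ\sigma$ then forces $\psi$ to attain its maximum at the endpoints, and since in normal coordinates the Euclidean and Riemannian distances to $p$ are comparable within a factor close to one (for small $\eps$), this yields $\sigma\subset B_\eps(p)$, establishing strong convexity of $B_\eps(p)$.

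For the second assertion, given $C$ convex and $p\in C$, I would further shrink $\eps$ so that $B_\eps(p)$ is strongly convex as above \emph{and} every pair of points in $B_\eps(p)$ is joined by a \emph{unique} shortest geodesic in $M$. This uniform uniqueness follows from the existence of a totally normal neighborhood of $p$: there is $\delta>0$ so that for each $q\in \overline{B_\eps(p)}$, $\exp_q$ is a diffeomorphism on the ball of radius $\delta$ in $T_qM$, and if $\eps<\delta/2$ then the unique shortest geodesic between any two points of $B_\eps(p)$ is the image under $\exp_q$ of a radial segment. Now take $x,y\in B_\eps(p)\cap C$ and any shortest geodesic $\sigma$ from $x$ to $y$. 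Strong convexity of $B_\eps(p)$ yields $\sigma\subset B_\eps(p)$. On the other hand, convexity of $C$ produces \emph{some} shortest geodesic $\sigma'$ from $x$ to $y$ with $\sigma'\subset C$; applying strong convexity of $B_\eps(p)$ to $\sigma'$ gives $\sigma'\subset B_\eps(p)$, and then uniqueness forces $\sigma=\sigma'\subset C$, so $\sigma\subset B_\eps(p)\cap C$ as required.

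The main obstacle is the uniform-uniqueness step: without it, the convexity hypothesis on $C$ only supplies \emph{one} shortest geodesic in $C$ between $x$ and $y$, whereas the conclusion requires control of \emph{every} shortest geodesic between them. Once $\eps$ is taken below both the Whitehead radius produced in the first step and the radius of a totally normal neighborhood of $p$, this obstruction disappears and the remaining work (strict convexity of $\psi$, trapping $\sigma$ inside $B_{\eps_1}(p)$, and comparing Euclidean and Riemannian distances in normal coordinates) is a routine verification.
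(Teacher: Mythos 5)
The paper records this as an unproven ``Fact,'' so there is no in-text argument to compare against; on its own terms your proof is correct. The first part is the standard Whitehead argument, and the second part correctly isolates the key point---that the convexity hypothesis on $C$ only delivers \emph{one} shortest geodesic inside $C$, so one must further shrink $\eps$ to guarantee uniqueness of shortest geodesics, which you obtain from a totally normal neighbourhood.

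Two small presentational remarks rather than gaps. First, the step where you trap $\sigma$ inside $B_{\eps_1}(p)$ is most cleanly handled by a length estimate rather than the ``connectedness argument'' you gesture at: since $\sigma$ is a minimizing geodesic with endpoints in $B_\eps(p)$, its total length is $d(x,y)<2\eps$, so every point of $\sigma$ lies within $\eps+2\eps=3\eps$ of $p$, which is inside $B_{\eps_1}(p)$ once $\eps<\eps_1/3$; the set $\{t:\sigma(t)\in B_{\eps_1}(p)\}$ being open is true but does not by itself force the inclusion. Second, in normal coordinates centered at $p$ the Euclidean distance to the origin is \emph{equal} to the Riemannian distance $d(p,\cdot)$ within the injectivity radius (Gauss lemma), so $\psi(\sigma(t))=d(p,\sigma(t))^2$ exactly; invoking only comparability within a factor close to one is weaker than what you actually have and, if taken literally, would require tracking the error when you convert the strict-convexity bound $\psi(\sigma(t))<\eps^2$ into $\sigma(t)\in B_\eps(p)$. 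Using the exact identity makes that final step immediate.
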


\begin{proposition}\label{cla: convex interior}
    If $C \subseteq M$ is convex and closed, then $C$ is a manifold with  boundary, and its manifold interior $C^\circ \subseteq M$ is convex and totally geodesic.
\end{proposition}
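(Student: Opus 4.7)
The plan is to give $C$ a topological manifold-with-boundary structure via tangent cones. For each $p \in C$, define $T_pC \subseteq T_pM$ to be the set of initial velocities $v$ such that $\exp_p(tv) \in C$ for all sufficiently small $t \ge 0$; by convexity of $C$ this is a closed convex cone. Let $V_p = T_pC - T_pC$ be its linear span, set $k_p = \dim V_p$, and let $k = \max_{p \in C} k_p$. Tentatively define
\[
C^\circ = \{\, p \in C : k_p = k \text{ and } T_pC \text{ has nonempty interior in } V_p \,\}.
\]
I will show that $C^\circ$ is an open, convex, totally geodesic $k$-dimensional submanifold of $M$, and that $(C, C^\circ)$ is a topological manifold-with-boundary pair with $\partial C = C \setminus C^\circ$.

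The first step is a local chart at $p \in C^\circ$. Pick $v_1, \ldots, v_k \in T_pC$ spanning $V_p$; by iterated local strong convexity (small enough balls in $M$ are strongly convex and meet $C$ in strongly convex sets), the geodesic $k$-simplex with vertex $p$ and short edges along the $v_i$ lies in $C$. Since $p \in C^\circ$, we may arrange that $\exp_p$ maps an open neighborhood of $0$ in $V_p$ into $C$. Conversely, any $q \in C$ sufficiently close to $p$ is joined to $p$ by a unique shortest geodesic in $M$, which lies in $C$ by strong convexity; its initial velocity therefore lies in $T_pC \subseteq V_p$. Hence $\exp_p$ is a homeomorphism from a neighborhood of $0$ in $V_p$ onto a neighborhood of $p$ in $C$. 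The local model of $(C, C^\circ)$ near $p$ is the pair (closed convex body, its interior) inside $V_p \cong \R^k$, which is homeomorphic to (closed half ball, open half ball); this yields the manifold-with-boundary structure at once and gives $T_pC^\circ = V_p$. Because $p$ lies in the manifold interior, both $v$ and $-v$ can be traveled into $C$, so in fact $T_pC = V_p$ at every $p \in C^\circ$.

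The convexity of $C^\circ$ is the main obstacle. Given $x, y \in C^\circ$, pick a shortest geodesic $\gamma: [0,1] \to M$ from $x$ to $y$ that lies in $C$, and set $A = \{ t \in [0,1] : \gamma(t) \in C^\circ \}$. For openness at $t_0 \in A$: a full-dimensional simplex $[\gamma(t_0); y_1, \ldots, y_k] \subseteq C$ witnesses $\gamma(t_0) \in C^\circ$, and for $t$ near $t_0$ the simplex $[\gamma(t); y_1, \ldots, y_k]$ still lies in $C$ by convexity, with edge vectors $\exp_{\gamma(t)}^{-1}(y_i)$ remaining linearly independent by continuity of $\exp^{-1}$; this forces $k_{\gamma(t)} \ge k$ and that $T_{\gamma(t)} C$ has nonempty interior in $V_{\gamma(t)}$, hence $\gamma(t) \in C^\circ$. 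The same simplex-transport construction, with the roles of $t_0$ and $t_n$ reversed, handles closedness: if $t_n \to t_0$ with $t_n \in A$, a full-dimensional simplex at $\gamma(t_n)$ for large $n$ can be coned to $\gamma(t_0)$ to exhibit $\gamma(t_0) \in C^\circ$. Thus $A = [0,1]$.

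Total geodesy follows easily: for $x \in C^\circ$ and $v \in T_xC^\circ = V_x$, set $y = \exp_x(\eps v)$ for small $\eps$; then $y \in C$ since $v \in T_xC$, and by the convexity of $C^\circ$ just established, the unique shortest geodesic from $x$ to $y$, which is $t \mapsto \exp_x(tv)$ for $t \in [0, \eps]$, lies entirely in $C^\circ$. The hard step is clearly the closedness of $A$ in the convexity argument: it is the only place where one must transfer a witnessing full-dimensional simplex from a varying base point to a limit point, and the whole argument relies on strong local convexity of $M$ plus continuity of $\exp^{-1}$, with no curvature hypothesis needed.
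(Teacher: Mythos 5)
Your approach via tangent cones is genuinely different from the paper's, which works with a maximal-dimensional $C^\infty$ submanifold $N \subseteq C$ and a technical lemma (Lemma~\ref{lem-tech}): for $q \in N$ and $p' \in C$ the open geodesic $[q, p')$ lies in $N$, and if $p' \notin N$ the extension past $p'$ exits $C$. That lemma carries both convexity and density of $N$; nothing playing its role appears in your write-up, and the gap shows up precisely where that force is needed.

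The trouble starts with the definition of $C^\circ$. A convex cone spanning $V_p$ automatically has nonempty interior in $V_p$, so your stated condition reduces to $k_p = k$ --- but that includes boundary points: for $C$ a closed half-plane in $\R^2$ and $p$ on its edge, $T_pC$ is a closed half-plane, $V_p = \R^2$, $k_p = 2 = k$. The condition you actually need, and apparently use later (e.g.\ in asserting $\exp_p$ sends an open neighborhood of $0$ in $V_p$ into $C$, and in concluding $T_pC = V_p$), is that $T_pC$ is a full linear subspace. Under that corrected reading, the openness and closedness steps of your convexity argument both fall short: transporting or coning a full-dimensional simplex \emph{with vertex at} $\gamma(t)$ certifies only $k_{\gamma(t)} \geq k$, which boundary points also satisfy, and gives no information on whether $T_{\gamma(t)}C$ is a linear subspace rather than a pointed cone. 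The closedness step is the serious one: all you produce at $\gamma(t_0)$ is a full-dimensional pointed cone, which is exactly what a boundary point has. The total-geodesy step has the same issue, as it invokes convexity of $C^\circ$ between $x$ and $y = \exp_x(\eps v)$ while only having shown $y \in C$, not $y \in C^\circ$. Finally, you give charts only at $p \in C^\circ$ and never at $\partial C$, so the manifold-with-boundary claim is not actually established; the local-model sentence is also off, since for $p \in C^\circ$ the model pair is a full ball, not a half ball, and the half-ball chart needed at $\partial C$ is never constructed. A smaller point: $T_pC$ as you defined it need not be closed; take $C$ to be the region $x \geq y^2$ in $\R^2$ and look at the origin.
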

\begin{proof}    
     Let $C \subseteq (M^n, g)$ be closed. 
     Take $k$ as the largest possible integer such that there is a $C^\infty$ submanifold $N^k$ of $M$  contained in $C$. Let $p \in N \subseteq C$, we claim that there exists $\eps > 0$ such that $B_\eps(p)\cap N = B_\eps(p)\cap C$. 
     
     \begin{figure}[htbp]
    \centering
        \includegraphics[width=0.6\textwidth]{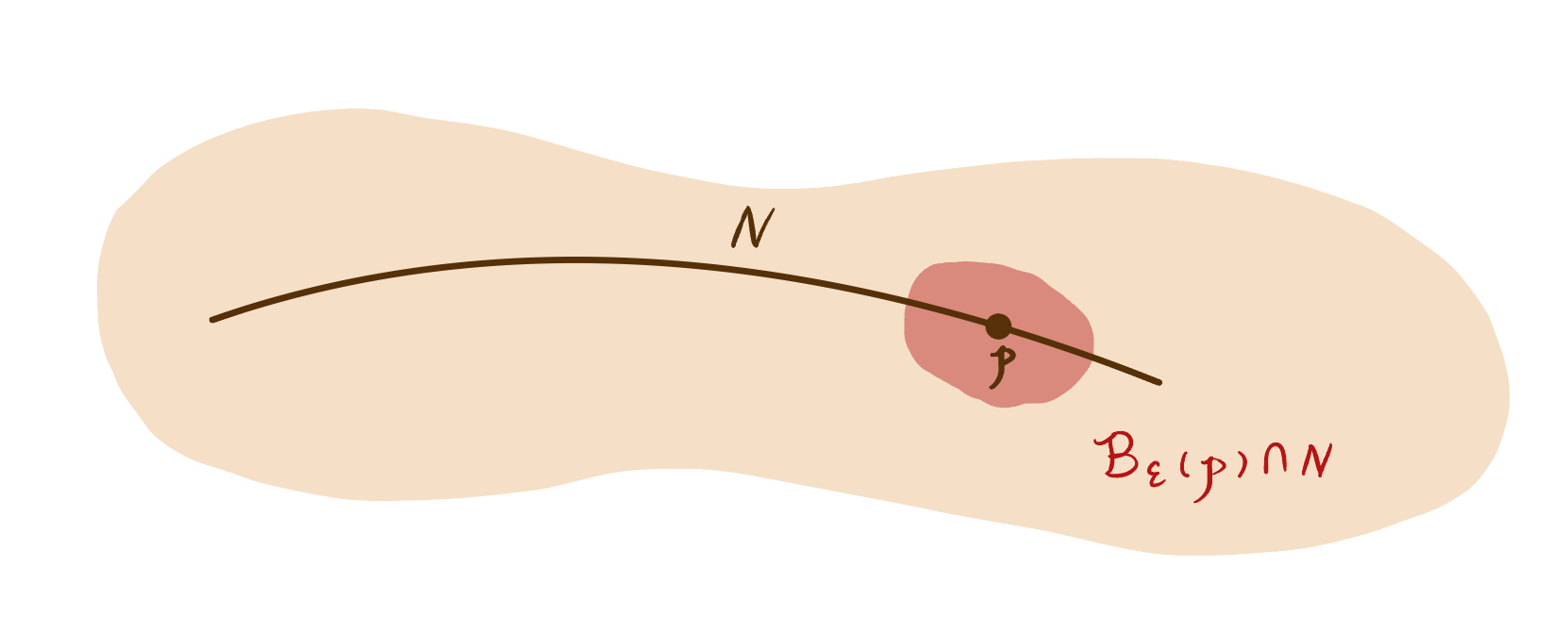}   
     \end{figure}
    
	We prove this claim by contradiction. Suppose not. 
    Then there exists a sequence $p_i \to p$ where $p_i \in C\backslash N$. We can cone off $N$ at $p_i$, take $x \in N$ near $p$. 
    Then $\br{[p_ix]}$ is the collection of shortest geodesics. 
    Take the union (cone)
    \begin{equation*}
        \textbf{Cone}(p, B_\delta(p)\cap N) = \bigcup_{x \in B_\delta(p)\cap N}[p_ix]
    \end{equation*}
    for $\delta$-small. 
    This union will contain $(k + 1)$-dimensional submanifold of $M$. 
    However, we know that $\textbf{Cone}(p, B_\delta(p)\cap N) \subseteq C$ and this contradicts the maximality of $k$.  Therefore, $B_\eps(p) \cap N = B_\eps(p)\cap C$ for small $\eps$. 
    Therefore, $N$ is totally geodesic and locally convex.
    In particular $N$ is $C^\infty$.
    
     \begin{figure}[htbp]
    \centering
        \includegraphics[width=0.8\textwidth]{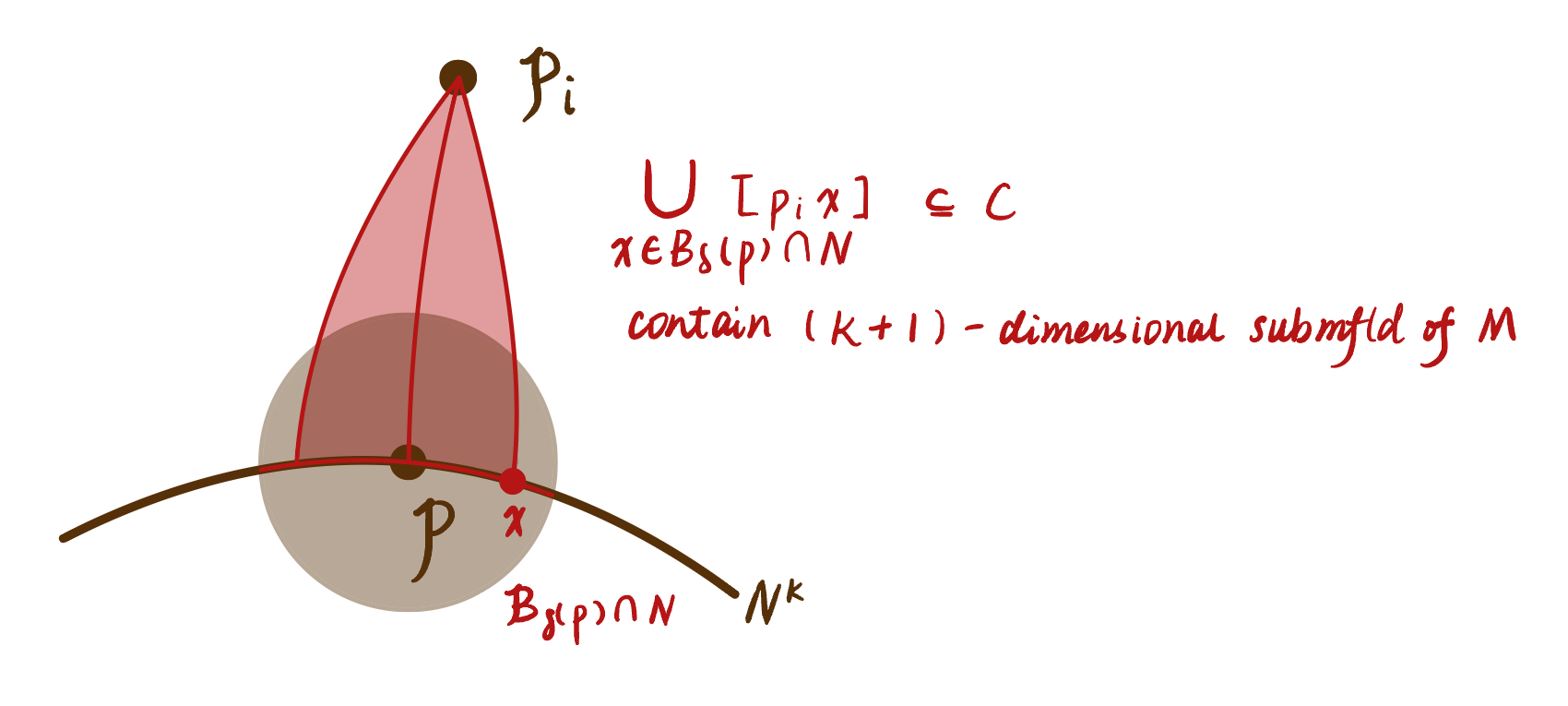}   
     \end{figure}

    We need the following technical Lemma.
    
    \begin{lemma}\label{lem-tech}
    let $C$ be convex and closed, $p\in C\cap \bar N$ and $\delta\ll injrad(p)$ so that for any two points in $B_\delta(p)$ there is a unique shortest geodesic between them and it is contained in $B_\delta(p)$  (such $\delta$ always exists).
    
    Suppose $p', q\in B_\delta(p)$ such that $p'\in C$ and $q\in N$. Let $l=d(p', q)$ and let $\sigma: [0, l]\to M$ be the  unique unit speed shortest geodesic with $\sigma(0)=q, \sigma(l)=p'$.
    
    Then $\sigma([0,l))\subset N$. Moreover, if $p'\notin N$ then $\sigma(l+s)\notin C$ for all small $s$.
    \end{lemma}
    
    \begin{proof}[Proof of Lemma \ref{lem-tech}]
    Let $W^{k-1}\subset C$ be a small  smooth $k-1$-submanifold containing $q$ and transverse to $\sigma$. Such $W$ exists since $q\in N$. Then the punctured cone  $\textbf{Cone}(p', W)\setminus \{p'\}$  is a $k$-manifold, it is contained in $C$ and hence it is contained in $N$. By construction it contains  $\sigma([0,l))$.

    This proves the first part of the lemma. Now suppose $p'\notin N$. If some $p''=\sigma(l+s)\in C$ for some small $s$ then we can repeat the cone construction with $p''$ instead of $p'$ and get that  $p'=\sigma(l)\in N$ which is a contradiction.
    \end{proof}
    
    We are now ready to proceed with the proof of Proposition \ref{cla: convex interior}.
    
    Lastly, let us show that $N$ is convex (and hence is obviously connected).

Let $x,y\in N$. Since $C$ is convex there exists a shortest unit speed geodesic $c: [0, d(x,y)]\to M$ such that $\sigma\subset C$. We claim that $\sigma\subset N$. If not let $t_0$ be the smallest $t$ such that $c(t_0)\in C\setminus N$. Then applying Lemma  \ref{lem-tech} to $p=c(t_0), q=c(t_0-\eps), p'=c(t_0+\eps)$ for a small $\eps$ we conclude that $p\in N$ which is a contradiction. This proves the convexity of $N$.

Next, let us show that $N$ is dense in $C$. Suppose not.
Then there is $p\in C\cap \bar N$ and a small $\delta$ as in Lemma \ref{lem-tech} such that there are $p', q\in B_\delta(p)$ where $q\in N$ and $p'\in C\setminus \bar N$. But then the whole geodesic $ [q ,p')$ is contained in $N$  by  Lemma  \ref{lem-tech} and therefore $p'\in \bar N$. This is a contradiction and hence  $\bar N=C$.

It remains to show that $C$ is a topological manifold with boundary. We will only sketch the argument.

 Look at $\overline{N}\backslash N$, we want to show this is a $(n - 1)$ dimension topological submanifold of $N \subseteq C$ and is relativity open in the topology of $C$. 
    Take $p \in \overline{N}\backslash N$, take $x \in N$ near $p$, take $B_\eps(x)$ for some $\eps$ sufficiently small such that $B_\eps(x) \cap C \subseteq N$. 
    
     \begin{figure}[htbp]
    	\centering
       	\includegraphics[width=0.8\textwidth]{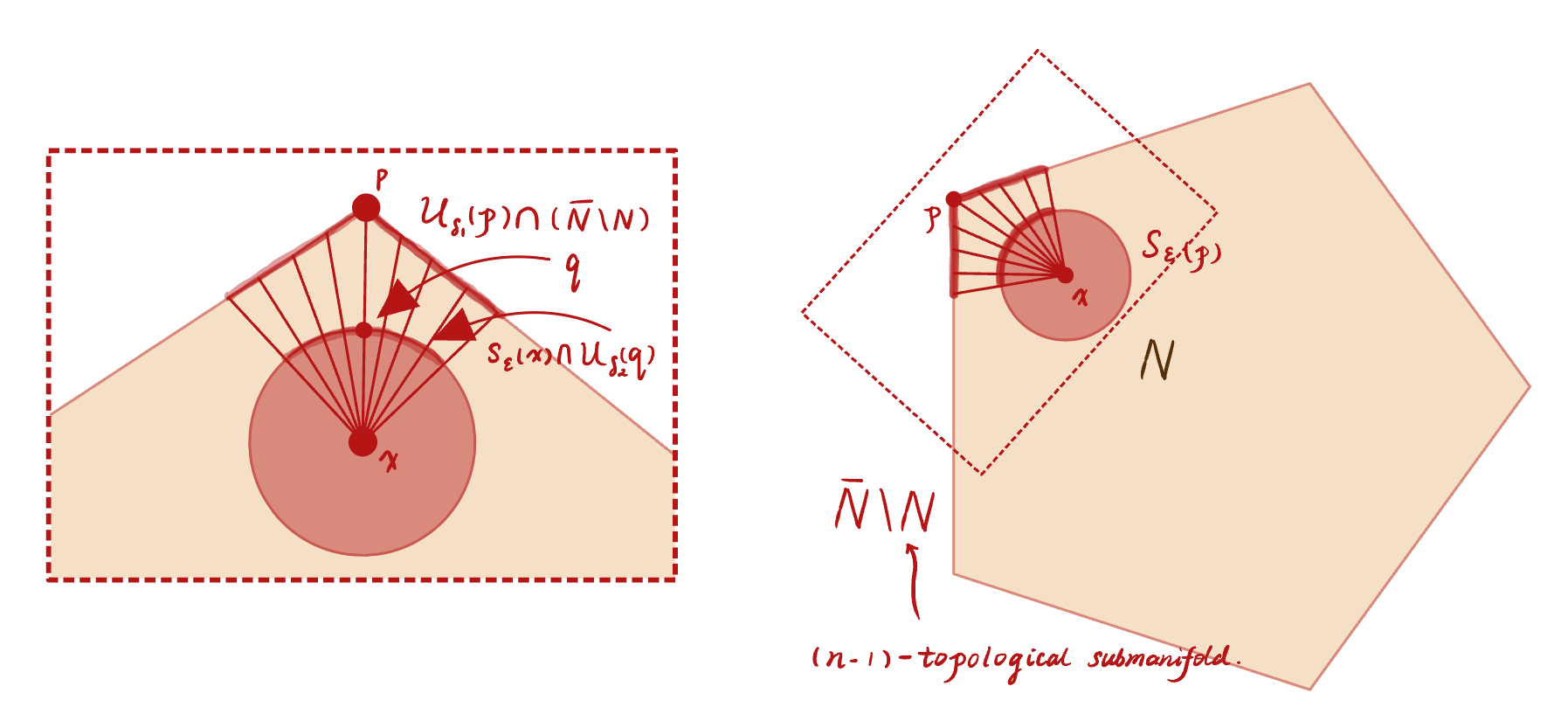}   
     	\end{figure}

    Connect points near $p$ by shortest geodesics with $x$, we can get a homeomorphism 
    \begin{equation*}
        \Psi: U_{\delta_1}(p)\cap(\overline{N}\backslash N) \to S_\eps(x)\cap U_{\delta_2}(q),
    \end{equation*}
    moreover, this map has an inverse map from the neighborhood of $q$ in $S_\eps(x)$ to $U_\eps(p)\cap(\overline{N}\backslash N)$. 
    Then it is not hard to show $\Psi$ is the well-defined and continuous inverse of each other (followed by the uniqueness of points of intersection.). 
    Finally, we can conclude that $C$ is $k$-dimension manifold with boundary. 
    (This is the general result of the convex set of the Riemannian manifold.)
\end{proof}

\begin{example}
	Let $C \subseteq \R^n$ be a closed and convex subset. Then there is an affine subspace $\R^k \subseteq \R^n$ such that $C \subseteq \R^k$ is a $k$-dimensional convex body. It's a manifold with a boundary but the boundary need not be smooth.
\end{example}

\begin{definition}
    Let $C \subseteq M$ be a closed convex subset. Take $p \in C$, and denote
    \begin{equation*}
        T_pC = \br{v \in T_pM: \text{$\exp_p{(tv)} \subseteq C$ for all small $t$}}
    \end{equation*}
    the \textbf{tangent cone of $C$ at $p$}.
\end{definition}
\begin{figure}[htbp]
    \centering
        \includegraphics[width=0.8\textwidth]{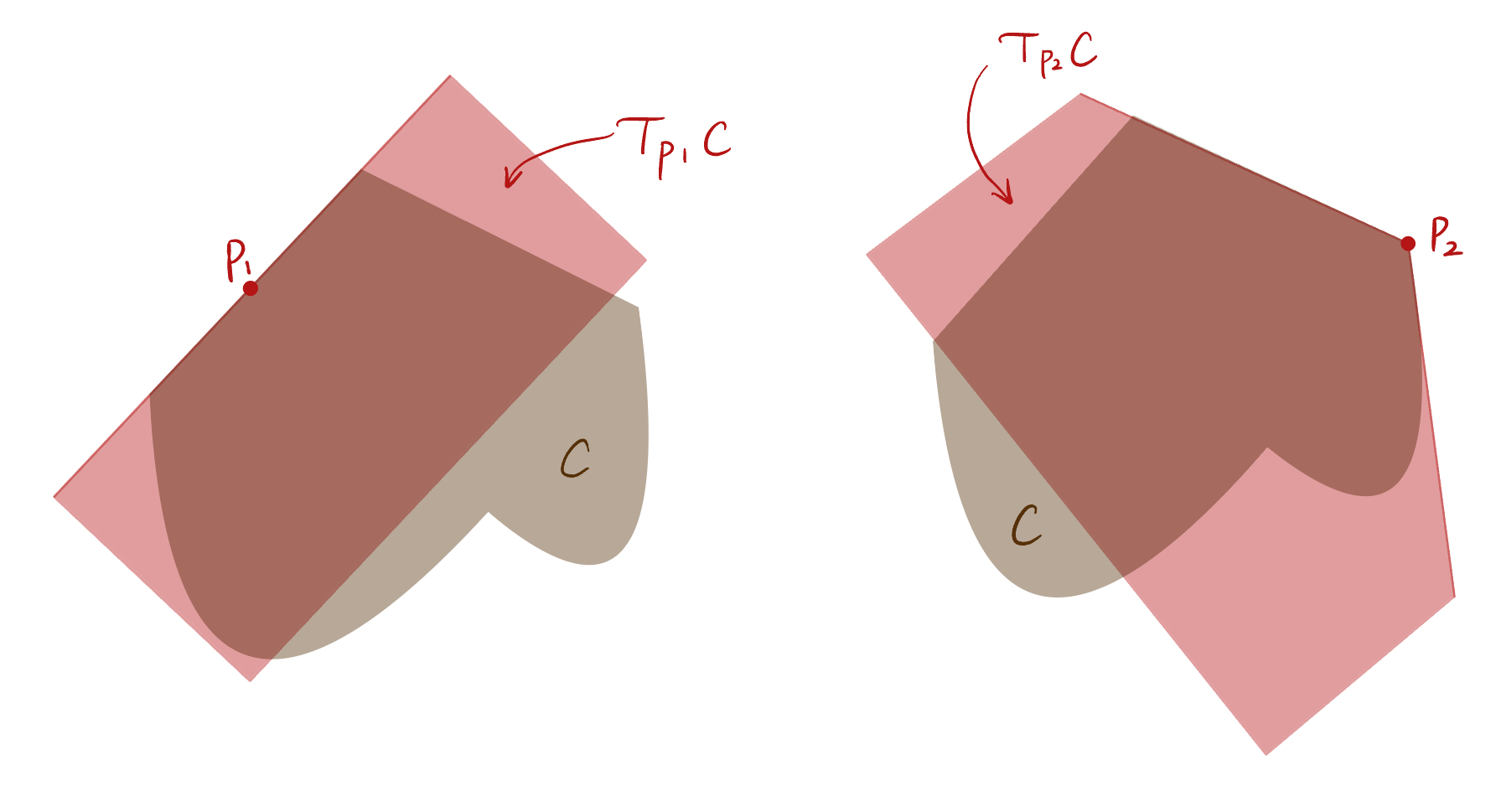}
    \end{figure}

\begin{example}
    Consider the above region $C$, at $p_1$, the tangent cone $T_{p_1}C$ is a half-space. At $p_2$, the tangent cone $T_{p_2}C$ has an angle. 
\end{example}

\section{Concavity of the distance function}
\begin{theorem}\label{thm: distance function to C convex is concave}
    Let $M$ be a manifold with $\sect_M \geq 0$. $C \subseteq M$ be a closed, convex subset with boundary (as a manifold). Denote $f = d(\cdot, \partial C)$ on $C$. Then $f$ is concave on $C$. i.e. for any geodesic $\gamma: [0, l] \to C$, $f(\gamma(t))$ is concave.
\end{theorem}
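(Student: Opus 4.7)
To establish concavity of $f\circ\gamma$, I would reduce the problem to a local statement and construct a concave upper barrier at each interior point. By Theorem~\ref{thm: lambda-concave equivalent} (combined with the remark that $(\lambda,\kappa)$-concavity is a local condition via the generalized differential inequality), it suffices to exhibit, for each $t_0 \in (0, l)$ with $q_0 := \gamma(t_0) \in C^{\circ}$, a concave function $b$ defined near $t_0$ with $b(t_0) = f(q_0)$ and $b(s) \geq f(\gamma(s))$ nearby. The central tool will be Berger's comparison theorem in nonnegative curvature (Corollary~\ref{berger-k=0}).

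Concretely, pick a closest point $p_0 \in \partial C$ to $q_0$, set $r_0 = f(q_0) = d(q_0, p_0)$, and let $\sigma: [0, r_0] \to C$ be a unit speed shortest geodesic with $\sigma(0) = p_0$, $\sigma(r_0) = q_0$ (contained in $C$ by total convexity). Decompose $\dot\gamma(t_0) = u + \lambda\,\dot\sigma(r_0)$ into components perpendicular and parallel to $\dot\sigma(r_0)$, and parallel transport $u$ back along $\sigma$ to obtain a parallel normal field $V$ along $\sigma$ with $V(r_0) = u$. The Berger variation $\Gamma(s, \tau) = \exp_{\sigma(\tau)}(sV(\tau))$ then gives, for small $s$, a curve $\tau \mapsto \Gamma(s,\tau)$ from $\Gamma(s, 0) = \exp_{p_0}(sV(0))$ to $\Gamma(s, r_0) = \exp_{q_0}(su)$, of length at most $r_0$ by Corollary~\ref{berger-k=0}. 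Granting temporarily that $\Gamma(s, 0) \in \partial C$, combining this length bound with the hinge comparison applied to the triangle $\Gamma(s, r_0),\, \gamma(t_0+s),\, \Gamma(s, 0)$ (to account for the residual $\lambda\,\dot\sigma(r_0)$ component) yields
\begin{equation*}
f(\gamma(t_0+s)) \;\leq\; d(\gamma(t_0+s), \Gamma(s, 0)) \;\leq\; r_0 + \lambda s + O(s^2),
\end{equation*}
with the second-order term having nonpositive leading coefficient. This matches the first-order behavior $(f\circ\gamma)'(t_0) = \lambda$ predicted by Theorem~\ref{thm: the first variation formula}, and is the desired concave barrier.

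The principal technical hurdle is verifying that $\Gamma(s, 0) = \exp_{p_0}(sV(0))$ actually lies on $\partial C$ for small $s$. Because $\partial C$ need not be smooth, the fact that $V(0) \perp \dot\sigma(0)$ does not directly place $V(0)$ in a tangent space of $\partial C$. To handle this, I would argue via total convexity of $C$ and the first variation formula at $p_0$: the direction $\dot\sigma(0)$ realizes the optimal inward direction, so $V(0)$ lies in the tangent cone to $\partial C$ at $p_0$ in the generalized sense afforded by convexity, whence $d(\Gamma(s,0), \partial C) = O(s^2)$; this discrepancy is absorbable into the barrier's quadratic correction. Closing this step rigorously — especially at non-smooth boundary points — is the main difficulty, and would be achieved by a careful approximation argument, likely invoking the rigidity case of Berger's comparison (Proposition~\ref{lem: rigidity case}) to preserve concavity of the barrier through the limiting procedure.
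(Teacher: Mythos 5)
Your overall strategy coincides with the paper's: fix an interior point $q_0 = \gamma(t_0)$, pick a closest boundary point $p_0$ with joining segment $\sigma$, split $\dot\gamma(t_0)$ into components perpendicular and parallel to $\sigma$, apply Berger's comparison to the perpendicular part, and control the parallel part with a hinge comparison. (Where the paper treats the angle $\beta$ in three cases, you fold them into a single orthogonal decomposition, which is a reasonable simplification.) You have also correctly identified the genuine technical issue: the Berger endpoint $\Gamma(s,0)=\exp_{p_0}(sV(0))$ need not lie exactly on $\partial C$, since $\partial C$ is not smooth.

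But your proposed resolution of this issue has a real gap. You claim $d(\Gamma(s,0),\partial C)=O(s^2)$ and that this discrepancy is \emph{absorbable} into the barrier's quadratic correction. That cannot work: the triangle inequality gives
\begin{equation*}
f(\gamma(t_0+s)) \;\le\; d(\gamma(t_0+s), \Gamma(s,0)) + d(\Gamma(s,0), \partial C),
\end{equation*}
and the correction enters with a \emph{positive} sign, so if $\Gamma(s,0)$ were to land in $C^\circ$ you would obtain a bound of the form $r_0 + \lambda s + c\, s^2$ with $c\ge 0$ — a convex, not concave, barrier. There is no compensating negative second-order gain: Berger's comparison at $\kappa=0$ gives only $\length(\gamma_s)\le\length(\gamma_0)$, with no built-in strict decay, so \emph{absorption} fails precisely in the flat case. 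Invoking the rigidity case of Berger's comparison does not help either; rigidity identifies when equality holds but cannot manufacture a strict negative $O(s^2)$ term.

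The correct observation — and what the paper's proof is implicitly using — is that $\Gamma(s,0)$ lies \emph{outside} $C^\circ$, which makes the additive correction unnecessary. The first variation formula at the closest point $p_0$ shows that the tangent cone $T_{p_0}C$ is a half-space $\mathbf{H}_+$ with $\dot\sigma(0)\perp\partial\mathbf{H}_+$, hence $\partial T_{p_0}C=\partial\mathbf{H}_+=\dot\sigma(0)^{\perp}$. Since parallel transport preserves the angle with $\dot\sigma$, your $V(0)\perp\dot\sigma(0)$ and so $V(0)\in\partial T_{p_0}C$. For such a boundary direction of the tangent cone of a closed convex set, $\exp_{p_0}(sV(0))\notin C^\circ$ for small $s\ne 0$: otherwise, nearby directions would also exponentiate into $C$ by convexity of $C$, forcing $V(0)$ into the interior of the cone. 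Consequently the geodesic from $\gamma(t_0+s)\in C^\circ$ to $\Gamma(s,0)$ crosses $\partial C$ at some point $Q$, and
\begin{equation*}
f(\gamma(t_0+s)) \;\le\; d(\gamma(t_0+s), Q) \;\le\; d(\gamma(t_0+s), \Gamma(s,0))
\end{equation*}
with no extra term at all — the Berger and hinge estimates then close the argument directly. Replacing your ``$O(s^2)$ absorbable'' step with this tangent-cone argument is what is needed to make the proof go through.
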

\begin{proof}
    Fix $t_0 \in (0, l)$, it is enough to prove there exists a supporting linear function $y$ that is always bigger than $f \circ \gamma$ near $t_0$. i.e. $f(\gamma(t_0)) = y(t_0)$ and $f(\gamma(t)) \leq y(t)$ for all $t$ near $t_0$ as in Figure~\ref{fig: supporting concave}. 
    \begin{figure}[htbp]
    \centering
        \includegraphics[width=0.5\textwidth]{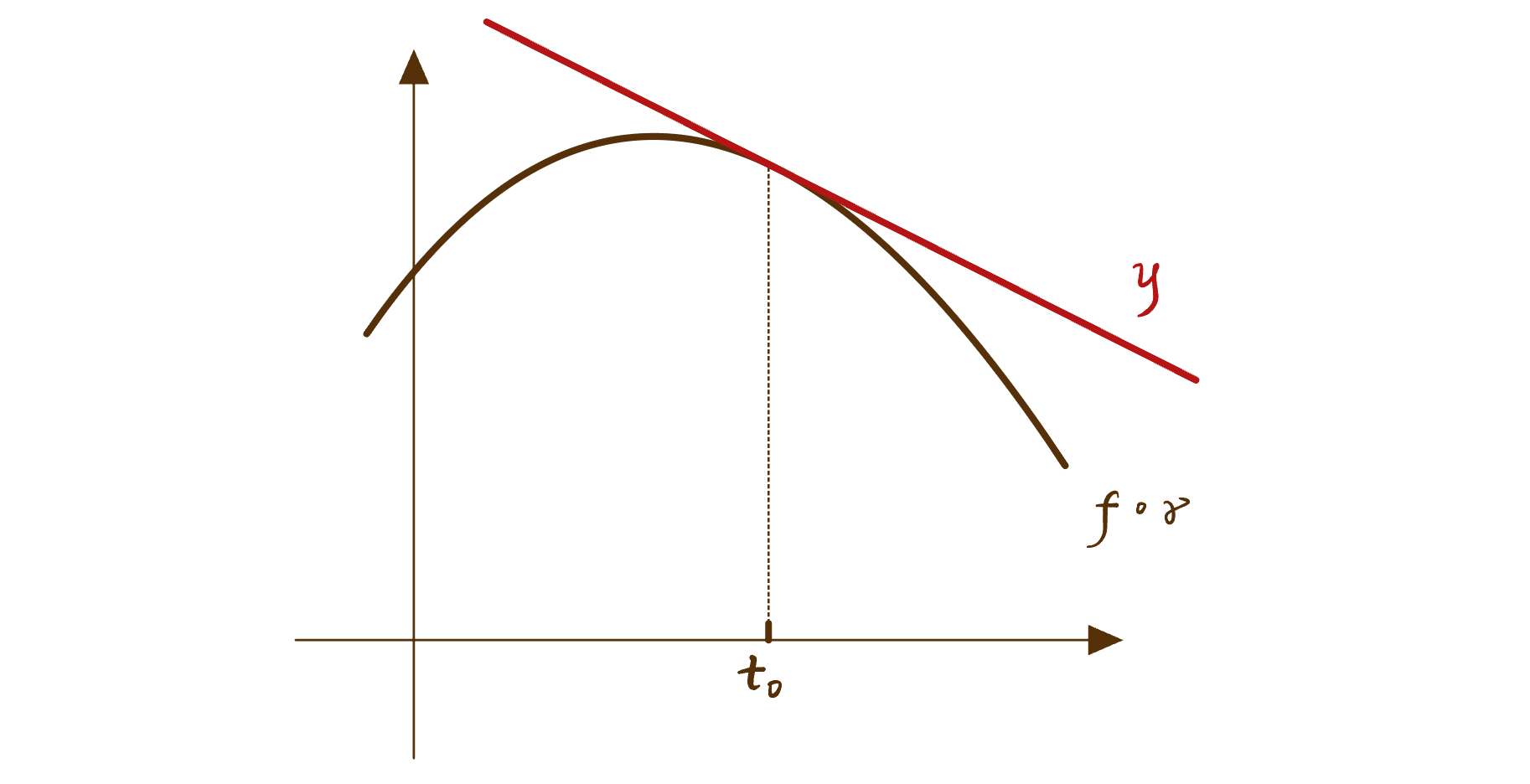}
        \caption{The supporting linear function of the concave function}
        \label{fig: supporting concave}
    \end{figure}

    We are going to start the proof by discussing some geometrical setting. Let $q$ be the closest to $\gamma(t_0)$ on $\partial C$ so that $d(\gamma(t_0), q) = f(\gamma(t_0)) = d(\gamma(t_0), \partial C)$. Let $v \in \partial T_qC$, then we claim that the angle $\alpha$ between $v$ and the initial vector of the geodesic segment $[q\gamma(t_0)]$ starting at $q$, i.e. $\alpha = \mangle(v, \uparrow_q^{\gamma(t_0)})$, must be at least $\frac{\pi}{2}$. 
    
    \begin{figure}[htbp]
    \centering
        \includegraphics[width=0.5\textwidth]{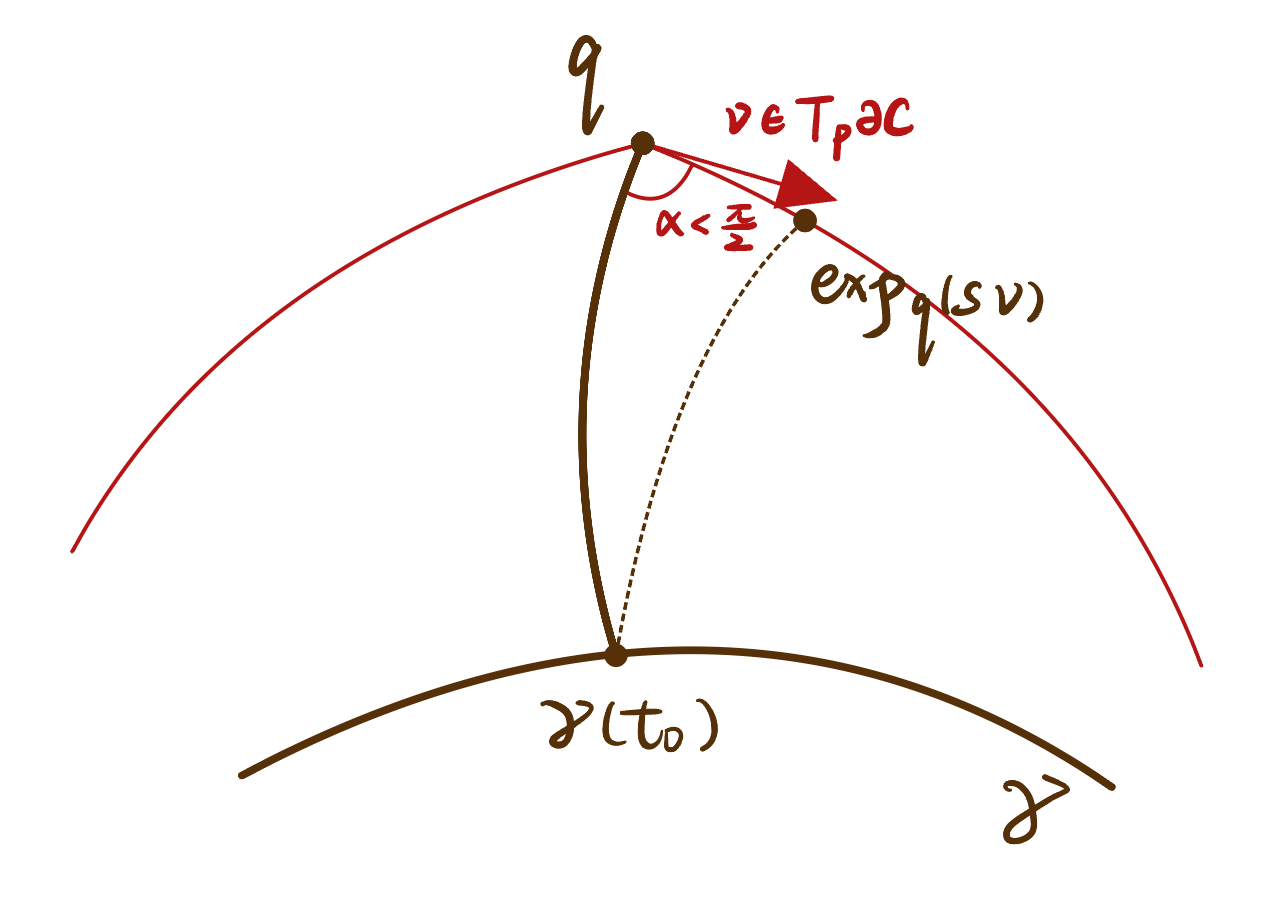}
        \caption{$\alpha < \frac{\pi}{2}$ brings contradiction.}
    \end{figure}

    Suppose not, i.e. $\alpha < \frac{\pi}{2}$. Then by the first variation formula:
    \begin{equation*}
    	\frac{d}{ds}d(\gamma(t_0), \exp_q(sv)) = - \cos{\alpha} < 0.
    \end{equation*}
    This means for sufficiently small $s$, $q$ is no longer the closest point to $\gamma(t_0)$ on $\partial C$, i.e.
    \begin{equation*}
    	d(q, \gamma(t_0)) > d(\exp_q(sv), \gamma(t_0)) \geq d(\partial C, \gamma(t_0))
    \end{equation*}
    Contradiction. 
    
    Remember that $T_qC$ is convex and the subset of some half space $\mathbf{H}_+$. Now we want to claim for $u$ the initial vector of $[q\gamma(t_0)]$, we mush have $u \perp \partial \mathbf{H}_+$. 
    
    \begin{figure}[htbp]
    \centering
        \includegraphics[width=0.6\textwidth]{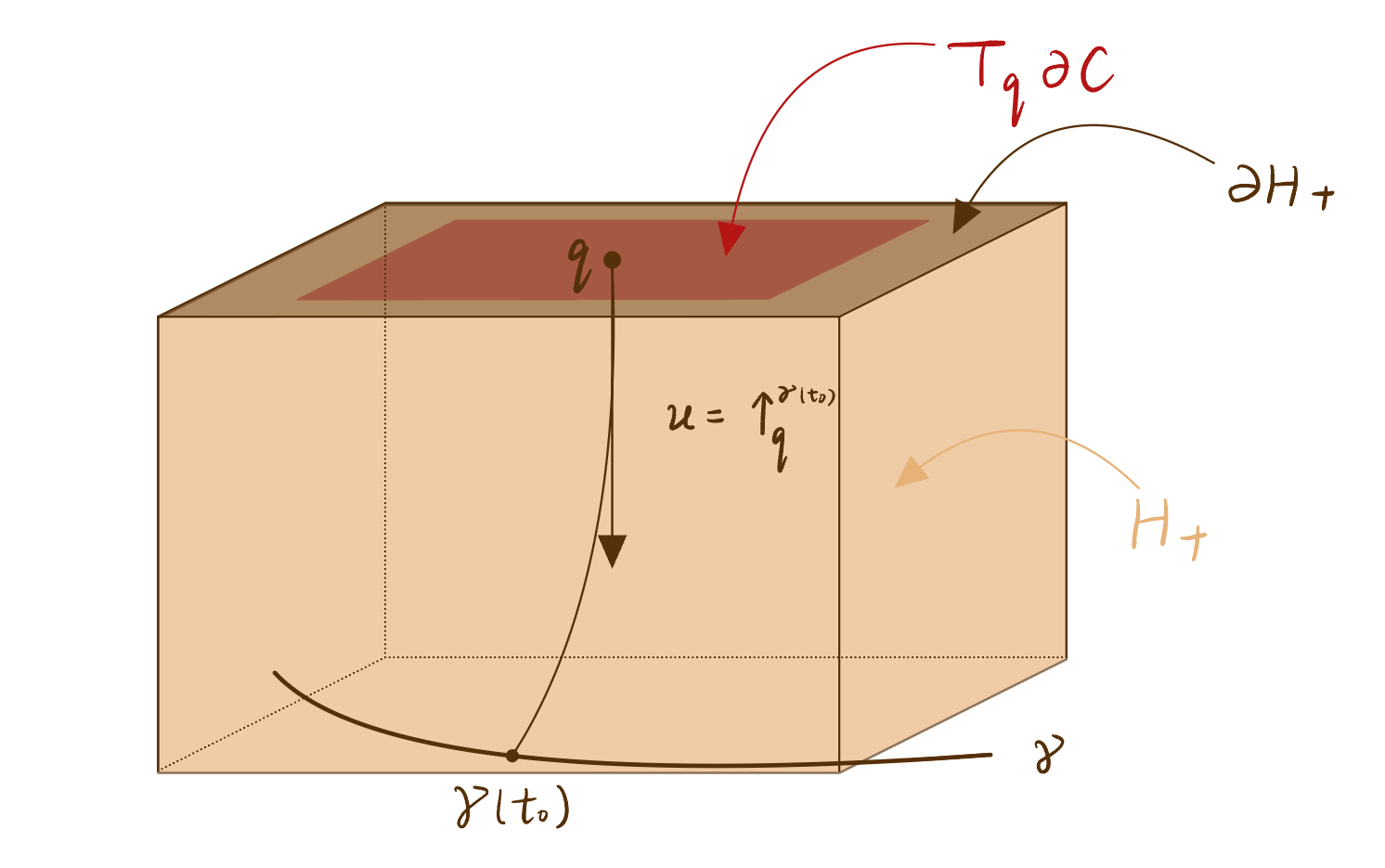}
        \caption{$u \perp \partial \mathbf{H}_+$}
    \end{figure}
    
    Suppose not. Then there will be direction $w\in T_qC$ such that 
    \begin{equation*}
        \alpha = \mangle (u, w) < \frac{\pi}{2}
    \end{equation*}
    which is impossible due to our previous claim. That means $\partial T_pC = \partial \mathbf{H}_+$. Then $[q\gamma(t_0)]$ is unique since it must be perpendicular to $\partial \mathbf{H}_+$. This also shows that the half-space $ \mathbf{H}_+$ containing $T_q(\partial C)$ is unique as well.

    Next, we are going to define the supporting linear function $y(t)$ and show that $f(\gamma(t)) \leq y(t)$ near $t_0$. 
    We parameterize the geodesic segment $[\gamma(t_0)q]$ by a unit speed curve $\sigma: [0, l] \to C$ and consider the angle $\beta = \mangle(\dga(t_0), \uparrow_{\gamma(t_0)}^q)$.
    We define the linear function $y$ as
    \begin{equation}\label{eq: y(t)}
    	y(t) = f(\gamma(t_0)) - (t - t_0)\cos{\beta}
    \end{equation}
    the slope term $\cos{\beta}$ is motivated from the first variation formula \ref{thm: the first variation formula}. Because $f\circ \gamma$ has a derivative at $t_0$, then by the first variation formula
    \begin{equation*}
        f(\gamma(t))' = -\cos{\beta}.
    \end{equation*}
    
We consider 3 cases depending on whether $\beta=\pi/2, >\pi/2$ or $<\pi/2$.
    
 {\bf Case 1. } $\beta = \frac{\pi}{2}$.
    Take $Y(0) = \dga(t_0)$ and extend it to a parallel vector field $Y(t)$ along $\sigma$. Then  $Y(s) \perp \sigma$ for each $s \in [0, l]$. In particular, $Y(l)\perp \sigma(l)$ so that $Y(l) \in T_q(\partial C)$.
    
    \begin{figure}[htbp]
    \centering
        \includegraphics[width=0.6\textwidth]{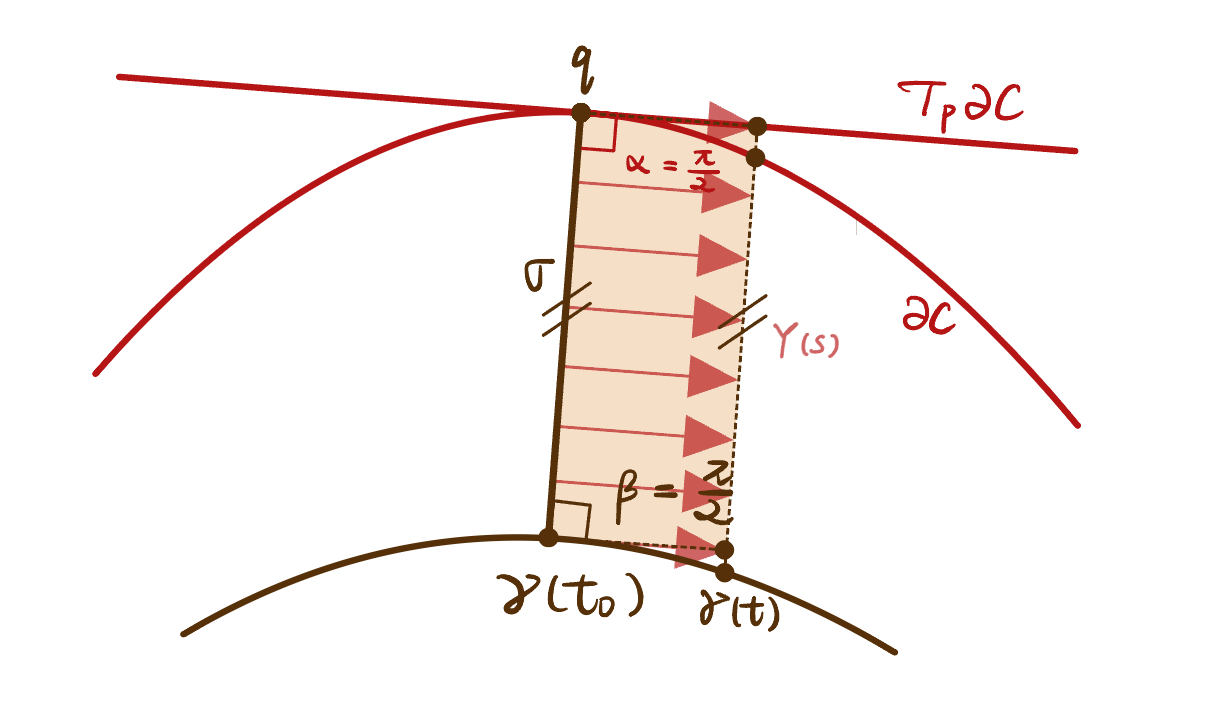}
        \caption{The case when $\beta = \frac{\pi}{2}$}
    \end{figure}
    
     Let $\Sigma(t, s) = \exp_{\sigma(s)}((t - t_0)Y(s))$. We quickly got that 
    \begin{align*}
    	f(\gamma(t)) &\leq \length([Q \gamma(t)])\\
    	&\leq \length([\Sigma(t, l) \gamma(t)]) \quad\text{By Figure~\ref{fig:beta>pi/2}}\\
    	&\leq \length([\tilde{\Sigma}(t, l)\tilde{\gamma}(t)])\quad\text{By Berger's comparison}\\
    	&= \length{(\sigma)} \quad\text{In model space, we have a rectangle when $\beta = \frac{\pi}{2}$}\\
    	& = f(\gamma(t_0)) = y(t) \quad\text{since $\cos{\frac{\pi}{2}} = 0$}
    \end{align*}
    In the Figure~\ref{fig:beta>pi/2} below, because $C$ is concave so that the geodesic connecting $\gamma(t)$ and $\Sigma(t, l)$ intersect $\partial C$ at some point say $Q$. Here we denote $\tilde{\Sigma}(t, l)$ and $\tilde{\gamma}(t)$ the corresponding points of $\Sigma(t, l)$ and $ \gamma(t)$ in the model space.
    
    \begin{figure}[htbp]
    \centering
        \includegraphics[width=0.6\textwidth]{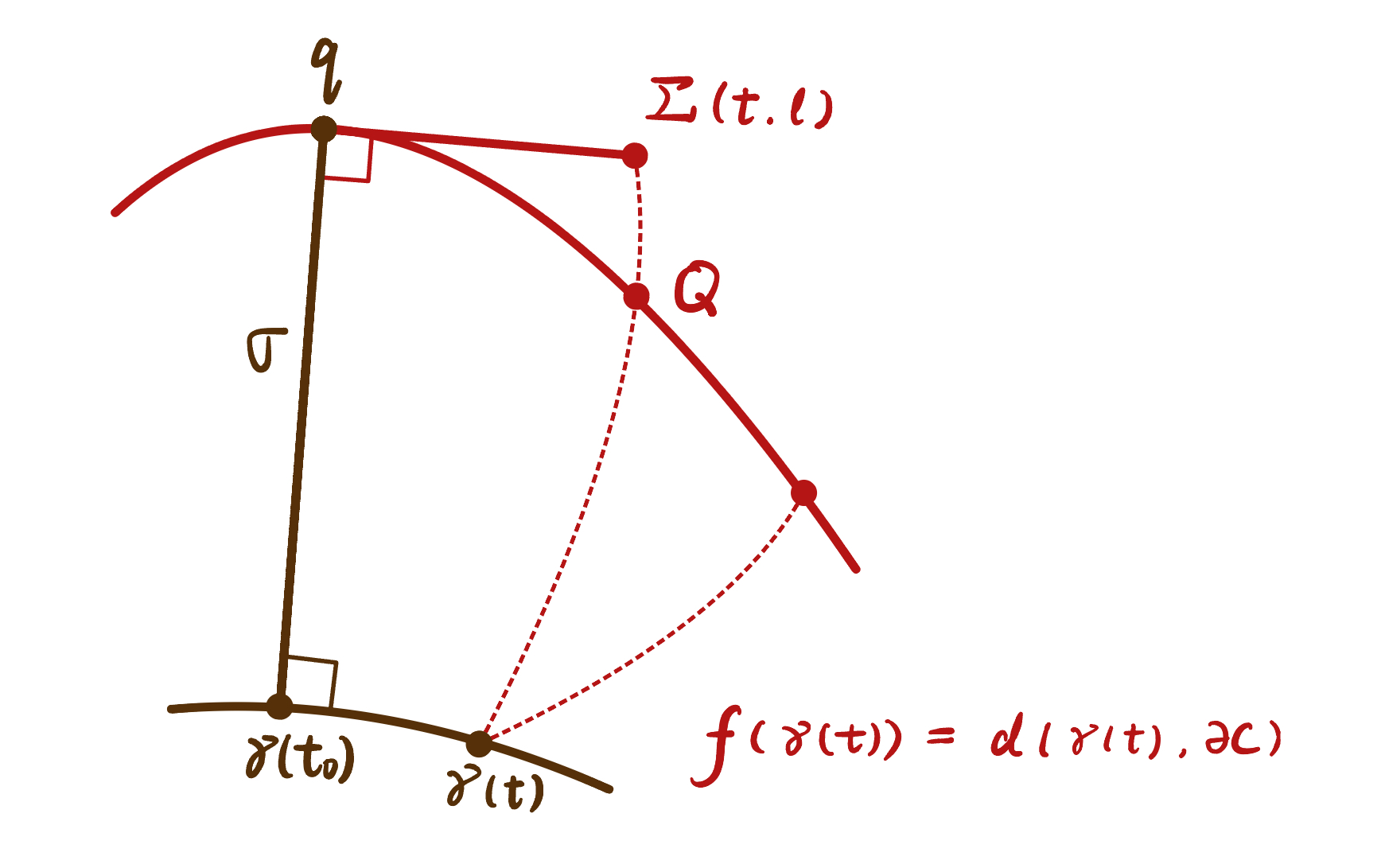}
        \caption{$\length([\gamma(t)Q]) \leq \length([\Sigma(t, l)\gamma(t)])$}
        \label{fig:beta>pi/2}
    \end{figure}

          {\bf Case 2. } $\beta > \frac{\pi}{2}$. In this case, we need to show 
          \begin{equation*}
          	f(\gamma(t)) \leq f(\gamma(t_0))  - (t - t_0)\cos{\beta}
          \end{equation*}
          
          We denote $w = \uparrow_{\gamma(t_0)}^q$ and $w^\perp$ the hyperplane perpendicular to $w$ at $\gamma(t_0)$. We denote $Y(0) = \textbf{Proj}_{w^{\perp}}(\dga(t_0))$ the projection vector of $\dga(t_0)$ to $w^\perp$. Since $\beta > \frac{\pi}{2}$, the vector $\dga(t_0)$ is below the plane $w^\perp$ as we draw in the following picture. 
    
    \begin{figure}[htbp]
    \centering
        \includegraphics[width=1.0\textwidth]{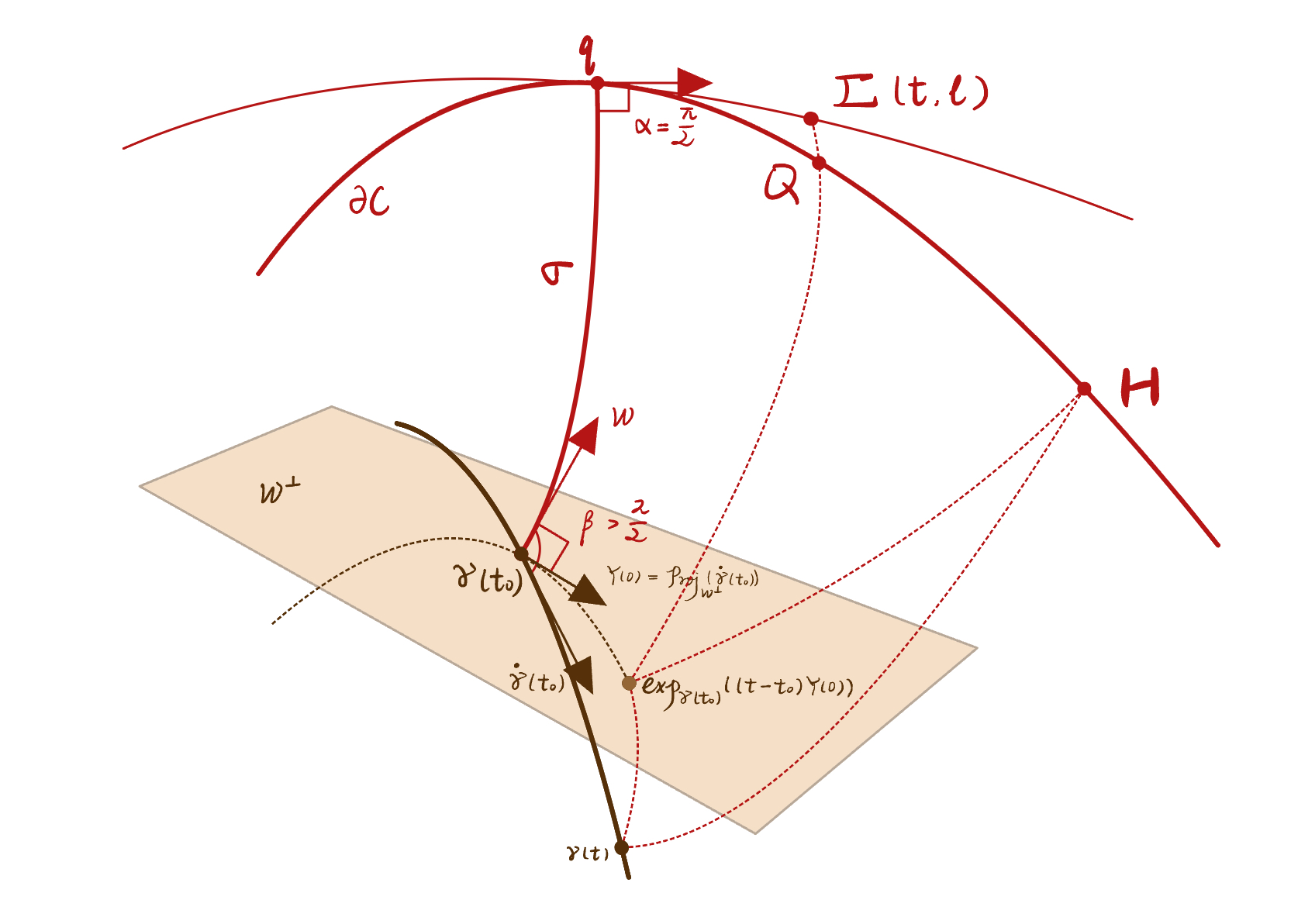}
        \caption{The case when $\beta > \frac{\pi}{2}$}
    \end{figure}

    We also connected $\exp_{\gamma(t_0)}((t - t_0)Y(0))$ with $\gamma(t)$ and with $H \in \partial C$ the point attains the minimum distance from $\gamma(t)$ to $\partial C$. Then by the triangle inequality, we have
    \begin{equation}\label{eq: case beta > pi/2 tri ineq}
    	f(\gamma(t)) \leq d(\exp_{\gamma(t_0)}((t - t_0)Y(0)), H) + d(\gamma(t),\exp_{\gamma(t_0)}((t - t_0)Y(0)))
    \end{equation}
    
    \begin{figure}[htbp]
    \centering
        \includegraphics[width=0.6\textwidth]{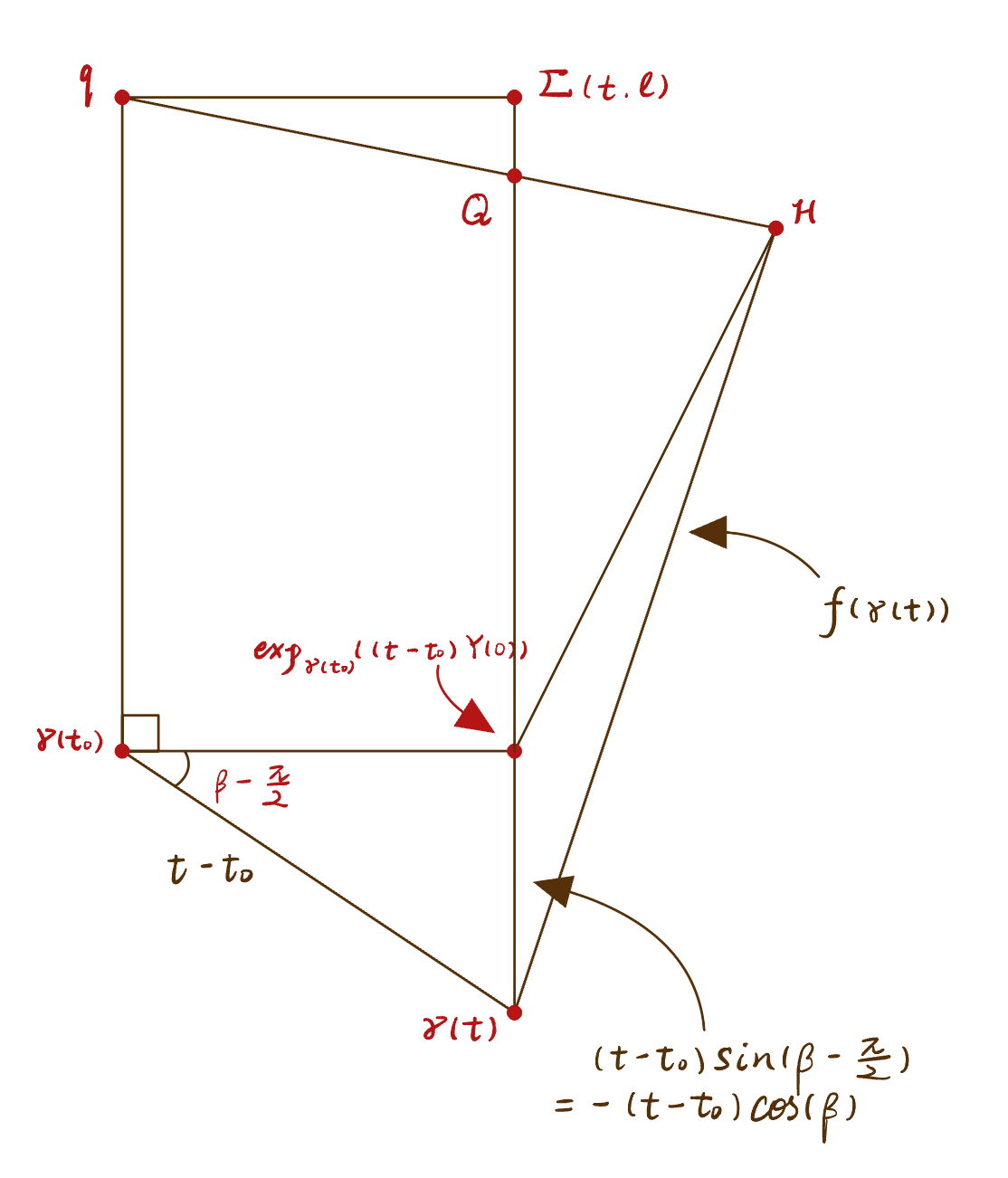}
    \end{figure}
    
    The estimate of the first term in the inequality \ref{eq: case beta > pi/2 tri ineq} is followed straight from the case when $\beta = \frac{\pi}{2}$, we extend $Y(0)$ along $\sigma$ via the parallel transport and construct the map $\Sigma(t, s) = \exp_{\sigma(s)}((t - t_0)Y(s))$. 
    By the case $\beta = \frac{\pi}{2}$, we know that 
    \begin{equation*}
    	d(\exp_{\gamma(t_0)}((t - t_0)Y(0)), H) \leq f(\gamma(t_0)).
    \end{equation*}
    By the Toponogov's hinge comparison \ref{thm: hinge}, we can estimate the second term in the inequality \ref{eq: case beta > pi/2 tri ineq}
    \begin{equation*}
    	d(\gamma(t),\exp_{\gamma(t_0)}((t - t_0)Y(0))) \leq (t - t_0)\sin(\beta - \frac{\pi}{2}) = -(t - t_0)\cos(\beta). 
    \end{equation*}
    Therefore, we can conclude that 
    \begin{equation*}
    	f(\gamma(t)) \leq f(\gamma(t_0)) - (t - t_0)\cos(\beta) = y(t)
    \end{equation*}
    
    {\bf Case 3. } $\beta < \frac{\pi}{2}$. This case is similar to the case when $\beta > \frac{\pi}{2}$. We connect $\gamma(t)$ to $\sigma$ by a geodesic segment $a$ such that $a(0) = \sigma(s_t)$ for some $0 < s_t \leq \length(f(\gamma_0))$. 
    \begin{figure}[htbp]
    \centering
        \includegraphics[width=0.6\textwidth]{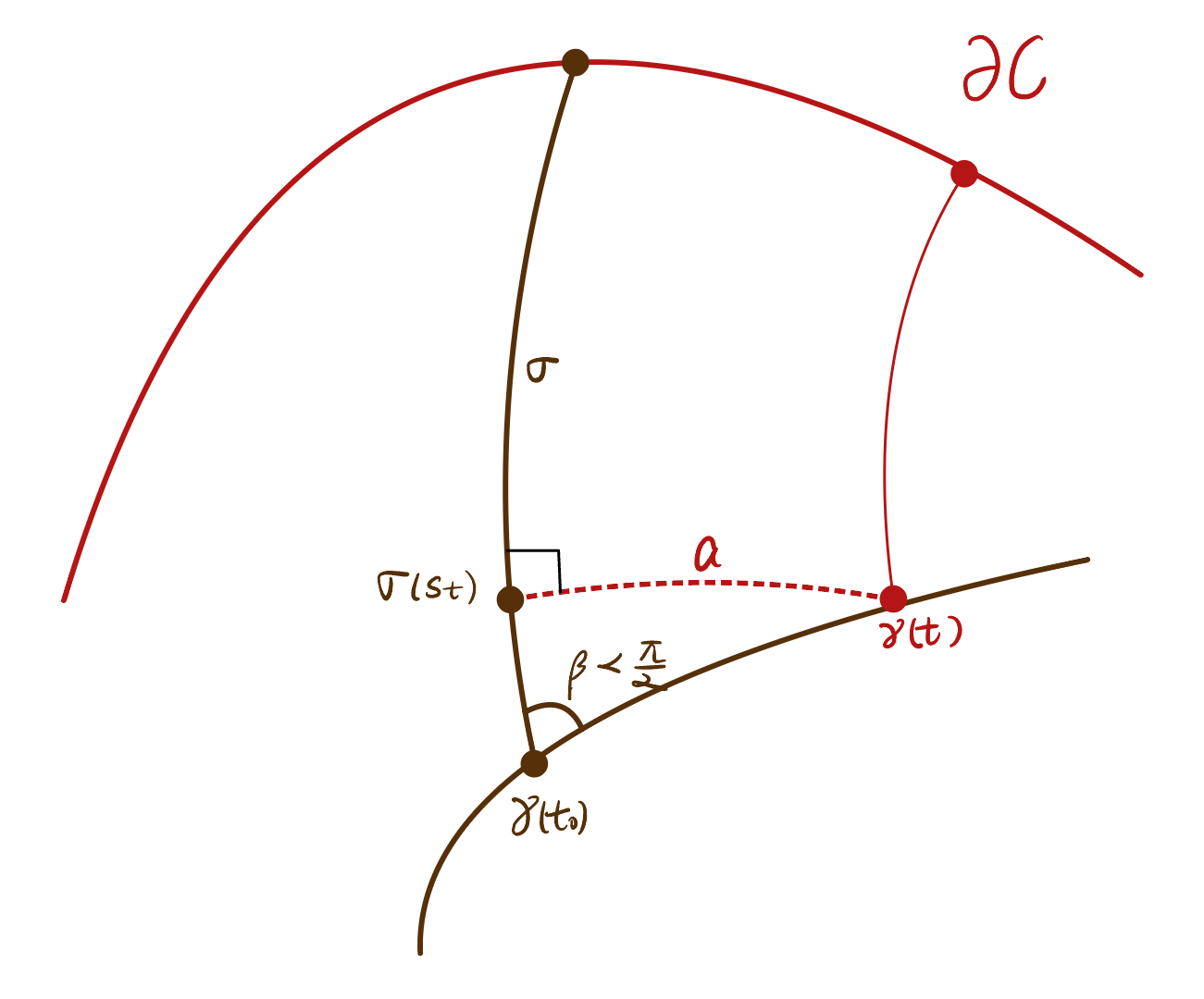}
        \caption{Case 3: $\beta < \frac{\pi}{2}$}
    \end{figure}

    Moreover, we know that $\Dot{a}(0)\perp \Dot{\gamma}(s_t)$ by the first variation formula, thus, from Step 1, we know that
    \begin{equation}\label{eq: concave case 3}
    	f(\gamma(t)) \leq f(\gamma(t_0)) - s_t
    \end{equation}
    On the other hand, by the law of cosine, we have
    \begin{equation*}
    	d(\sigma(s_t), \gamma(t))^2 \leq s_t^2 + (t - t_0)^2 - 2s_t(t - t_0)\cos{(\beta)} 
    \end{equation*}
    Also, since $\Dot{a}(0)\perp \Dot{\gamma}(s_t)$, we also have
    \begin{equation*}
    	(t - t_0)^2 \leq d(\sigma(s_t), \gamma(t))^2 + s_t^2
    \end{equation*}
    Therefore, by adding the above two inequalities, we have
    \begin{align*}
    	& d(\sigma(s_t), \gamma(t))^2 + (t - t_0)^2 \leq s_t^2 + (t - t_0)^2 - 2s_t(t - t_0)\cos{\beta} + d(\sigma(s_t), \gamma(t))^2 + s_t^2\\
    	\implies & s_t(t - t_0)\cos{(\beta)} \leq s_t^2\\
    	\implies & (t - t_0)\cos{(\beta)}  \leq s_t
    \end{align*}
    Therefore, the inequality~\ref{eq: concave case 3} becomes,
    \begin{equation*}
    	f(\gamma(t)) \leq f(\gamma(t_0)) - (t - t_0)\cos{(\beta)} = y(t)
    \end{equation*}

\end{proof}


\section{The Proof of the Soul Theorem}
In the last section, we proved that the distance function to the boundary of a convex subset of a manifold $M$ of $\sect_M \geq 0$ is concave on that subset. That is  theorem \ref{thm: distance function to C convex is concave}. We are going to prove the soul theorem in this section. 

\begin{lemma}\label{lem: flat rectangle}
	Let $(M, g)$ be a Riemannian manifold of $\sect_M \geq 0$, $C \subseteq M$ a closed  convex subset. We denote $f: d(\cdot, \partial C): C \to \R$. Suppose there exists $\gamma: [0, a] \to C$ such that $f\circ \gamma \equiv d$ is a constant. Then there is a flat totally geodesic rectangle $\Gamma: [0, a] \times [0, d] \to C$ such that $\Gamma(t, 0) = \gamma$ and $\Gamma(t, d) \subseteq \partial C$. 
\end{lemma}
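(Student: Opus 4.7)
The plan is to construct $\Gamma$ locally near each $t_0\in [0,a]$ by flowing a minimizing geodesic from $\gamma(t_0)$ to $\partial C$ in the direction of $\dot\gamma(t_0)$, show via the Case~1 argument of Theorem~\ref{thm: distance function to C convex is concave} that all intervening distance inequalities become equalities, and then invoke the rigidity case of Berger's comparison (Proposition~\ref{lem: rigidity case}) to conclude that the local rectangle is flat and totally geodesic. Finally the local rectangles are glued into a global one. Throughout I shall treat $\gamma$ as a geodesic (which is the case in the intended application within the soul construction, and can in any event be reduced to by parametrizing by a minimizing geodesic between the endpoints once the perpendicularity below is established).

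First, I would record the infinitesimal consequence of $f\circ\gamma\equiv d$. Since $f\circ\gamma$ has derivative zero in both directions, the first variation formula~\ref{thm: the first variation formula} applied with $\pm\dot\gamma(t)$ gives
\[
-\cos\measuredangle(\dot\gamma(t),u)\le 0\quad\text{and}\quad -\cos\measuredangle(-\dot\gamma(t),u)\le 0
\]
for every $u\in\Uparrow_{\gamma(t)}^{\partial C}$, hence $\measuredangle(\dot\gamma(t),u)=\pi/2$ for every such $u$. Now fix $t_0\in[0,a]$, choose a nearest point $q\in\partial C$ to $\gamma(t_0)$, and let $\sigma\colon[0,d]\to C$ be the unit-speed minimizing geodesic from $\gamma(t_0)$ to $q$. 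Let $Y(s)$ be the parallel transport of $\dot\gamma(t_0)$ along $\sigma$; then $Y(s)\perp\dot\sigma(s)$ for all $s$. Define $\Sigma(t,s)=\exp_{\sigma(s)}\bigl((t-t_0)Y(s)\bigr)$, so that $\Sigma(t,0)=\gamma(t)$ (using that $\gamma$ is the geodesic in direction $\dot\gamma(t_0)=Y(0)$) and $\Sigma(t_0,s)=\sigma(s)$.

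Next I would run the Case~1 chain of inequalities from the proof of Theorem~\ref{thm: distance function to C convex is concave}: for $t$ close to $t_0$ the shortest geodesic from $\gamma(t)$ to $\Sigma(t,d)$ meets $\partial C$ in some point $Q$, and
\[
d=f(\gamma(t))\le d(\gamma(t),Q)\le d(\gamma(t),\Sigma(t,d))\le d(\sigma(0),\sigma(d))=d,
\]
where the last inequality is Berger's comparison (Corollary~\ref{berger-k=0}) applied with base geodesic $\sigma$, which is shortest, and parallel normal field $Y$, with $\kappa=0$. Every inequality is therefore an equality; this forces $Q=\Sigma(t,d)\in\partial C$, and produces the rigidity hypothesis of Proposition~\ref{lem: rigidity case}. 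Applying that proposition yields a small $\eps>0$ such that $\Sigma\colon[t_0,t_0+\eps]\times[0,d]\to M$ is a flat totally geodesic isometric immersion; by the convexity of $C$, each fiber $\Sigma(t,\cdot)$ joins $\gamma(t)\in C$ to $\Sigma(t,d)\in\partial C$ and therefore lies in $C$, so this local rectangle takes values in $C$.

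The hardest step, and the one I expect to be the main obstacle, is patching these local rectangles into a single global map $\Gamma\colon[0,a]\times[0,d]\to C$. My plan is a standard open--closed argument: the set
\[
T=\{\tau\in[0,a]:\text{a flat totally geodesic rectangle of the required form exists on }[0,\tau]\times[0,d]\}
\]
contains $0$ by the above and is open by repeating the local construction at its supremum. Closedness requires two ingredients. First, along the interior $\{0<s<d\}$ the rectangle is uniquely determined by $\dot\gamma(t)$ and parallel transport along the fiber, so the local pieces agree on overlaps wherever the minimizing geodesic $\sigma_t$ is unique; uniqueness on a dense set follows because equality in Berger already pins down $\Sigma(t,d)$. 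Second, a limit of flat totally geodesic isometric immersions from a fixed compact rectangle is again flat totally geodesic, which gives closedness at the endpoint. Combining openness and closedness, $T=[0,a]$, and the resulting $\Gamma$ has the stated properties: $\Gamma(t,0)=\gamma(t)$ by construction and $\Gamma(t,d)\in\partial C$ by the equality case above.
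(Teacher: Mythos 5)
Your proof follows the same essential route as the paper's: take the minimizing geodesic $\sigma$ from a point of $\gamma$ to its nearest boundary point, parallel-transport the direction $\dot\gamma$ along $\sigma$, form the variation surface $\exp_{\sigma(s)}(tY(s))$, run Berger's flat comparison to force equalities, and invoke the rigidity case (Proposition~\ref{lem: rigidity case}) to identify the surface as a flat, totally geodesic, isometrically immersed rectangle. What you add that the paper does not spell out is helpful: (i) the first-variation computation confirming $\dot\gamma(t)\perp\Uparrow^{\partial C}_{\gamma(t)}$, which the paper only implicitly uses via the setup of Theorem~\ref{thm: distance function to C convex is concave}; and (ii) the explicit observation that Berger's rigidity is a priori a ``small $s$'' statement, so the global rectangle over $[0,a]\times[0,d]$ must be patched together by an open--closed argument. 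The paper simply writes down the global $\Gamma$ and the chain of inequalities ``for each $t\in[0,a]$'' without commenting on this, so your version is actually more complete than the source on this point.

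The one place I would tighten is your closedness step. You assert that ``the local pieces agree on overlaps wherever the minimizing geodesic $\sigma_t$ is unique; uniqueness on a dense set follows because equality in Berger already pins down $\Sigma(t,d)$.'' It is true that the equality case pins down the \emph{foot point} on $\partial C$, but a priori there could be more than one minimizing geodesic from $\gamma(t)$ to that foot point. What actually gives agreement of overlapping local rectangles is that, once $[0,\tau)\times[0,d]$ carries a flat totally geodesic immersion, each fiber $\Sigma(t,\cdot)$, $t<\tau$, is itself a minimizing geodesic from $\gamma(t)$ to $\partial C$ (by the equality chain), and these fibers converge as $t\to\tau^-$ to a minimizing geodesic $\sigma_\tau$ from $\gamma(\tau)$ to $\partial C$. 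Re-running the local construction at $\tau$ with this particular $\sigma_\tau$ produces a local rectangle that matches the existing one on the overlap by the uniqueness of geodesics with given initial data; no separate density/uniqueness claim is needed. With that adjustment the argument is clean, and otherwise it matches the paper's proof.
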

\begin{proof}
We connect $q \in \partial C$ and $p = \gamma(0) \in C^\circ$ by the shortest geodesic segment $\sigma: [0, d] \to C$. Take $Y = \frac{\dga(0)}{\norm{\dga(0)}}$ the unit initial vector along $\gamma: [0, a] \to C$. If we extend $Y$ to a  parallel vector field along $\sigma$. The extension is denoted as $Y(t)$. Then we take the exponential map along $\gamma$: 
\begin{equation*}
    \Gamma(t, s) = \exp_{\gamma(t)}(sY(t)).
\end{equation*}
In particular, $\Gamma(0, s) = \sigma(s)$ for $s \in [0, d]$. 

\begin{figure}[htbp]
    \centering
        \includegraphics[width=0.6\textwidth]{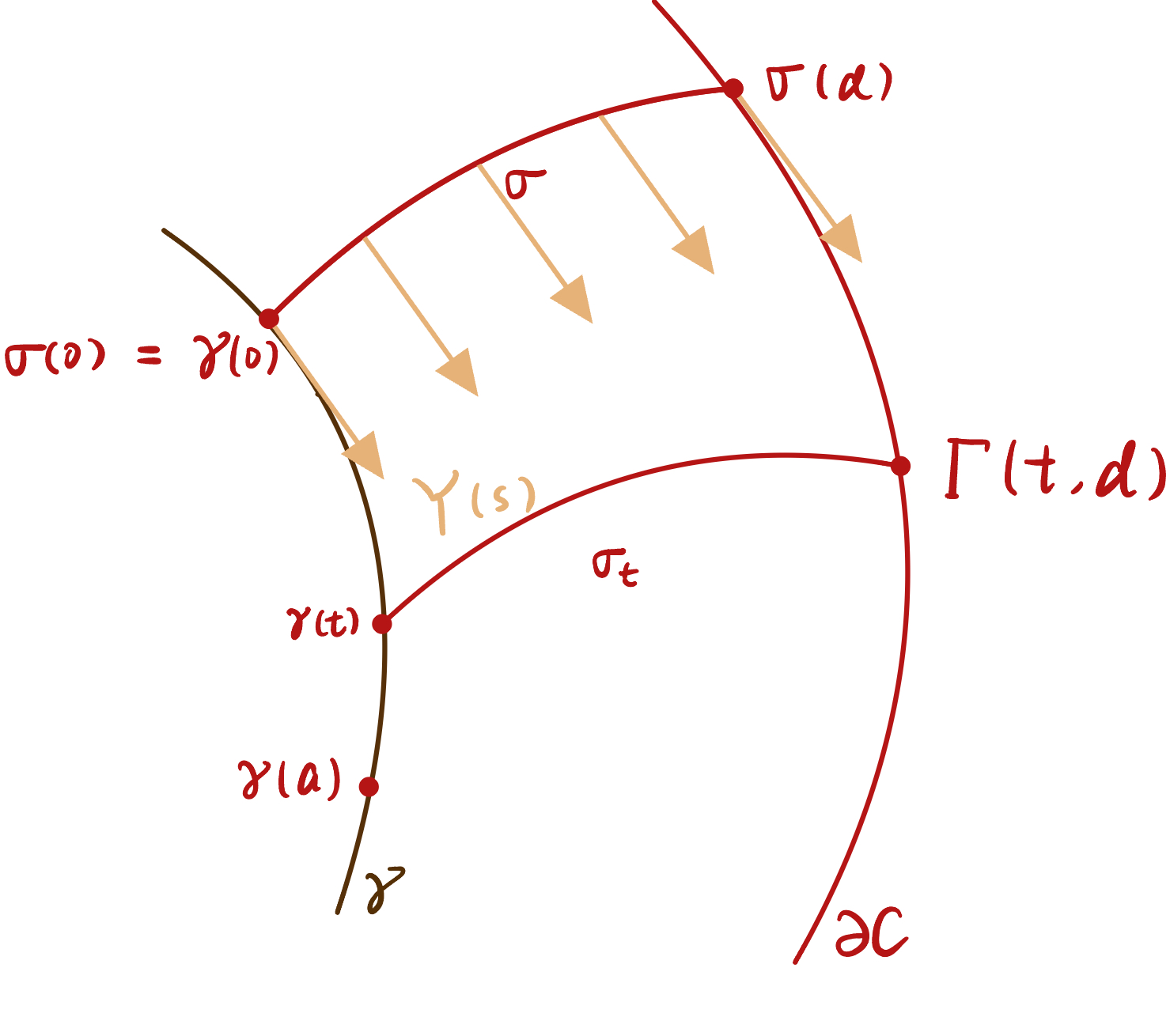}
        \caption{For Lemma~\ref{lem: flat rectangle}}
    \end{figure}

Fix $t \in [0, a]$, and we denote $\sigma_t(s) = \Gamma(t, s)$ to represent the curve connecting $\gamma(t)$ to the boundary. 
, we have, 
\begin{align*}
	f(\gamma(0)) &= f(\gamma(t)) \quad\text{since $f\circ \gamma \equiv d$}\\
	&\leq \length{(\sigma_t)}\\
	&\leq \length{(\sigma)}\quad\text{by Berger's comparison for $\sect_M \geq 0$~\ref{berger-k=0}}\\
	&= f(\gamma(0))
\end{align*}
Thus all the above inequalities are equalities. Then by the Rigidity case of Berger's comparison~\ref{lem: rigidity case}, we can conclude that 
\begin{equation*}
    R = \br{\Gamma(t, s) : t \in [a, b], s \in [0, d]}
\end{equation*}
is a totally geodesic flat rectangle in $C$ and in $M$. 
\end{proof}

Now we are ready to prove the soul theorem by Cheeger and Gromoll. 
Let's first recall the statement of the soul theorem. 
\begin{theorem}[Soul Theorem]\label{thm: soul theorem}
    Let $(M^n, g)$ be a Riemannian manifold of $\sect_M \geq 0$ and it is complete and non-compact. Then there exists a closed totally convex subset $S \subseteq M^n$ which is also a totally geodesic submanifold such that 
    \begin{equation*}
        M^n \stackrel{\text{diff}}{\cong} \nu(S)
    \end{equation*}
    where $\nu(S)$ is the total space of the normal bundle of $S$. In particular, $M^n$ has compact type since 
    \begin{equation*}
    	M^n \stackrel{\text{diff}}{\cong} \nu^1(S)^\circ
    \end{equation*}
    where 
    \begin{equation*}
    	\nu^1(S)^\circ = \br{(x,v)\mid x\in S, v\perp T_xS, \abs{v}<1}
    \end{equation*}
    is the interior of the unit disk bundle
    \begin{equation*}
    	\nu^1(S) = \br{(x,v)\mid x\in S, v\perp T_xS, \abs{v} \le 1}
    \end{equation*} 
    which is a compact manifold with a boundary. 
\end{theorem}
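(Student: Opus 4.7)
The plan is to carry out the iterative ``ridge'' construction indicated in the sketch before the theorem. Starting from a base point $p$ with total Busemann function $b$, set $C^0 := C_{\max} = \{b \ge b_{\max}\}$. By Proposition \ref{prop: total busemann convex and cpt} $C^0$ is compact and totally convex, and by Proposition \ref{prop: C_max} it has empty interior in $M$. Proposition \ref{cla: convex interior} then says $C^0$ is a topological manifold with (possibly empty) boundary whose manifold interior is a totally geodesic submanifold of $M$. If $\partial C^0 = \emptyset$ we are done with the soul candidate; otherwise set $f_0 = d(\cdot, \partial C^0)$ on $C^0$ and let $C^1$ be the maximum-level set of $f_0$. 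Iterate: given $C^i$ with nonempty boundary, let $f_i = d(\cdot, \partial C^i)$ and let $C^{i+1}$ be its maximum-level set.

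The key step is the claim that each $C^{i+1}$ is closed, totally convex in $M$, and $\dim C^{i+1} < \dim C^i$. Total convexity is inherited: since $f_i$ is concave on $C^i$ by Theorem \ref{thm: distance function to C convex is concave}, its top level set is totally convex in $C^i$, and total convexity is transitive. The strict dimension drop is the exact analog of Proposition \ref{prop: C_max} applied relative to $C^i$: if $C^{i+1}$ contained a relative interior point of $C^i$, then near such a point $f_i$ would still attain its maximum, contradicting the ridge structure forced by Lemma \ref{lem: flat rectangle} (a point where $f_i$ is maximal on a geodesic generates a totally geodesic flat rectangle to $\partial C^i$, and continuing past the maximum produces strictly smaller values of $f_i$). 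Since dimensions strictly decrease, the process terminates at some totally convex $S = C^k$ with empty boundary; by Proposition \ref{cla: convex interior} $S$ is then a closed totally geodesic submanifold of $M$ with empty boundary. This is the soul.

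The remaining and harder task is the global diffeomorphism $M \stackrel{\mathrm{diff}}{\cong} \nu(S)$. The strategy is to set $\rho = d(\cdot, S)$ and prove $\rho$ has no critical points on $M \setminus S$. I would argue this via the first variation formula (Theorem \ref{thm: the first variation formula}) combined with the concavity of $b$ and of the successive $f_i$'s: if $x \notin S$ were critical for $\rho$, then every unit $v \in T_x M$ would make an angle $\le \pi/2$ with some shortest direction from $x$ to $S$. Picking a negative gradient ray for $b$ starting at $x$ via Proposition \ref{prop:neg-grad-ray} and tracing it through the successive concave functions $f_0, \ldots, f_{k-1}$ produces a direction $v$ along which $\rho$ strictly decreases (because such a ray pushes $x$ further away from every $\partial C^i$ and hence from $S$), giving a contradiction. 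Once regularity of $\rho$ on $M \setminus S$ is established, Proposition \ref{prop: compact type} and Remark \ref{rem: crit-diffeomophism} applied to $\rho$ yield that $M$ is homeomorphic, and in fact diffeomorphic once the level sets are smooth, to the interior of a tubular neighborhood of $S$; the normal exponential map then identifies this interior with $\nu(S)$.

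The step I expect to be the main obstacle is the last one: upgrading from a topological decomposition to a diffeomorphism $M \cong \nu(S)$. The gradient-like vector field furnished by Proposition \ref{prop: gradient-like extension} is only smooth on $M \setminus S$ and need not extend smoothly across $S$; moreover, the concave functions $f_i$ are in general only $C^0$, so their level sets are not automatically smooth submanifolds. The standard fix is Sharafutdinov's retraction: one chooses the gradient-like field carefully (combining rescaled gradients of $b$ and of the $f_i$'s) and smooths it so that its flow defines a $C^\infty$ strong deformation retraction $\pi : M \to S$ whose fibers are transverse to $S$. Combining such a $\pi$ with the normal exponential $\exp^\perp : \nu(S) \to M$, which is a local diffeomorphism near the zero section since $S$ is totally geodesic, produces the required global diffeomorphism $M \stackrel{\mathrm{diff}}{\cong} \nu(S)$, and the final statement $M \stackrel{\mathrm{diff}}{\cong} \nu^1(S)^\circ$ follows by radial rescaling in the fibers.
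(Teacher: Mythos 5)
Your overall architecture matches the paper: iterate the ridge construction $C^0 \supset C^1 \supset \cdots$ via concave distance functions until $\partial C^k = \varnothing$, define the soul $S = C^k$, show $d(\cdot,S)$ has no critical points on $M\setminus S$, and conclude by critical-point theory plus the tubular neighborhood theorem. However, there are two substantive gaps.

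First, the regularity argument for $\rho = d(\cdot, S)$ is both imprecise and carries a sign error. To contradict criticality at $x \notin S$ you need a direction $v$ along which $\rho$ \emph{increases} (i.e.\ $d\rho_x(v) > 0$, equivalently $v$ makes angle $>\pi/2$ with all of $\Uparrow_x^S$); you write that the negative gradient ray for $b$ gives a direction of decrease, while your parenthetical (``pushes $x$ further away from $S$'') actually describes increase. More importantly, ``tracing through the successive concave functions'' does not explain why the angular inequality holds. The precise step is: locate the totally convex level set $C$ (one of $\{b \ge c\}$ or $\{d(\cdot,\partial C^i)\ge c\}$) with $x$ on its manifold boundary, and show that no shortest geodesic $\gamma$ from $x$ to $S$ can be tangent to $\partial C$. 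The concavity of $t\mapsto d(\gamma(t),\partial C)$ together with $d(\gamma(0),\partial C)=0 < d(\gamma(d),\partial C)$ forces $(d(\gamma(\cdot),\partial C))'_+(0)>0$, so $\gamma'(0)$ points strictly into the interior of the tangent half-space $T_xC$. The outward boundary normal then makes angle $>\pi/2$ with all of $\Uparrow_x^S$, establishing regularity. Your sketch does not pin this down.

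Second, the obstacle you anticipate in the final step is a non-issue, and the Sharafutdinov retraction is not what the proof requires. You do not need the level sets of the (merely semi-concave) $f_i$'s to be smooth. One simply takes $\eps$ small enough that the normal exponential map gives a \emph{smooth} diffeomorphism from the $\eps$-disk bundle in $\nu(S)$ onto $\overline{B_\eps(S)}$, so $\{d(\cdot,S)=\eps\}$ is a smooth hypersurface. Outside that tube $d(\cdot,S)$ is regular, a smooth unit gradient-like field exists by Proposition~\ref{prop: gradient-like extension}, and Remark~\ref{rem: crit-diffeomophism} (the smooth version of the Morse-type decomposition, available precisely because the level set is smooth) yields $M \stackrel{\text{diff}}{\cong} B_\eps(S) \cong \nu(S)$. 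Note moreover that, as the paper itself points out, Sharafutdinov's retraction is a priori only $C^{1,1}$; its $C^\infty$ regularity is a much later and much harder result of Wilking, so invoking it as a ``$C^\infty$ strong deformation retraction'' is unjustified and in any case unnecessary here. Finally, the dimension drop in the iteration follows from the empty-interior argument of Proposition~\ref{prop: C_max} applied to $f_i$; Lemma~\ref{lem: flat rectangle} is not needed for that and is only used later for the $\sect>0$ corollary.
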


\begin{proof}
    Take $p \in M$ and $b$ the total Busemann function at $p$, i.e.
    \begin{equation*}
        b(x) = \inf{\br{b_\gamma(x): \text{$\gamma$ is a ray starting at $p$}}}
    \end{equation*}
    We know that 
    \begin{itemize}
        \item $b$ is concave;
        \item The superlevel set of $b$ are compact,i.e.
        \begin{equation*}
            C_t = \br{b \geq t}
        \end{equation*}
        is compact for every $t$.
    \end{itemize}
    Thus, we can take the maximum value $b_{\max}$ of $b$.
    By concavity of $b$ the set $C_{\max} =\{b\ge b_{\max}\}=\{b= b_{\max}\}$ is compact and totally convex.
    Moreover, by Proposition~\ref{prop: C_max}  $C_{\max}$ has empty interior.
    Set $C^0: = C_{\max{\empty}}$,  the maximum superlevel set. 
    Take $f^0 = d(\cdot, \partial C^0)$ on $C^0$. And then we take $C^1$ to be the maximum level set of $f^0$ on $C^0$. And we know that $C^1$ is also totally convex in $C^0$. Hence in $M$. We keep going through this process until we get to an $i$ such that $\partial C^i = \varnothing$. We have that
    \begin{equation*}
        \dim{(C^1)} > \dim{(C^2)} > \cdots > \dim{(C^i)}.
    \end{equation*}
    The dimension goes down because $C^{j + 1}$ has an empty interior in $C^j$ for  each $j = 1, 2, \dots$
    Then at some point, we must stop because we have some $i$ such that $\partial C^i = \emptyset$. And we denote $S = C^i$ where $C^i = \emptyset$ is the \textbf{soul} of $M$. We are going to finish our proof by showing the following two claims.
    
        \begin{claim}\label{cla: dist soul has non critical point}
        $d(\cdot, S)$ has no critical points on $M\backslash S$. 
    \end{claim}
    \begin{claim}\label{cla: soul diff V(S)}
        For this $S = C^i$, 
        \begin{equation*}
            M \stackrel{\text{diff}}{\cong} \nu(S)
        \end{equation*}
    \end{claim}

    Our plan is to prove  claim \ref{cla: dist soul has non critical point} first, then use this claim to prove  claim \ref{cla: soul diff V(S)}. 
    \begin{proof}[ Proof of Claim \ref{cla: dist soul has non critical point}]
	Let $x\in M\setminus S$.  By our construction, we can find $C$ to be one of $\br{d(\cdot, \partial C^i)\ge c}$ or $C=\br{b \geq c}$ such that $x$ lies in the manifold boundary of $C$. we can take all shortest geodesic from $x$ to $S$, and $u$ the initial vectors of such shortest geodesics ($u \in \Uparrow_x^S$), then $u \in \R^n_+ = \R^{n - 1}\times [0, \infty)$ and points straightly inside. Then we can conclude that $u \notin \partial \R^n_+ = \R^{n - 1}$ because $u \in T_xC^\circ$ and $\overline{T_xC} \subseteq \R^n_+$. We can argue by contradiction to see why this conclusion is true. If $u$ is not in the interior $T_xC$, then we can take $f = d(\cdot, \partial C)$ and $\gamma(t) = \exp_x(tu)$. If $f$ is concave along $\gamma$ and $l(\gamma) = d\cdot f(\gamma(0)) < f(\gamma(d)) \implies \text{$f(\gamma(t))$ is concave.}$ This implies that $f(\gamma(t))'|_{t = 0} > 0$ by concavity. Then $\gamma$ cannot be tangent to $\partial C$ otherwise $f(\gamma(t))'|_{t = 0} = 0$. Contradiction. So we can conclude that $u$ is not tangent to $\partial C$, then
        \begin{equation*}
            \alpha = \mangle (u, N) > \frac{\pi}{2} \quad \forall u \in \Uparrow_x^S
        \end{equation*}
        where $N$ is the outward normal vector at $x$ to the hyperplane $\partial \R^n_+$. Then $d(\cdot, S)$ is regular at $x$. 
    \end{proof}
    \begin{proof}[Proof of Claim~\ref{cla: soul diff V(S)}]
        Take a small $\eps > 0$ so that 
        \begin{equation*}
            \overline{B_\eps(S)} \stackrel{\text{diff}}{\cong} \overline{B_\eps(\text{$0$-section in $\nu(S)$})}
        \end{equation*}
        via the normal exponential map. Then the boundary of the $\eps$-neighbourhood of $S$ $\partial(\overline{B_\eps(S)}) = \br{f = \eps}$ where $f = d(\cdot, S)$ on $M$ and $    \overline{B_\eps(S)} = \br{f \le \eps}$ is also a manifold with boundary. And outside $B_\eps(S)$, $f$ has no critical points. That implies
        \begin{equation*}
            M^n \stackrel{\text{diff}}{\cong}\overline{B_\eps(S)}\cap (S_{\eps}(S)\times {[0,\infty)})\stackrel{\text{diff}}{\cong} B_\eps(S).
        \end{equation*}
        Here we apply the Remark~\ref{rem: crit-diffeomophism} in the critical point theory that we discussed before. 
    \end{proof}
\end{proof}
\begin{corollary}
    $\pi_i(M^n)$ ad $H_i(M^n)$ are finitely generated for any $i$. because compact manifolds with boundaries have finitely generated homotopy and homology groups. 
\end{corollary}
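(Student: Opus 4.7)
The plan is to deduce finite generation of $\pi_i(M^n)$ and $H_i(M^n)$ directly from the Soul Theorem~\ref{thm: soul theorem}, since that theorem packages all of the curvature hypothesis into a clean topological description of $M$. First I would observe that the soul $S$ produced in the proof of the Soul Theorem is compact: by construction $S = C^i \subseteq C^0 = C_{\max}$, and $C_{\max}$ is a compact totally convex subset by Proposition~\ref{prop: total busemann convex and cpt}. Since $S$ is additionally closed (it is the final level in a nested sequence of closed totally convex sets) and is a submanifold by Proposition~\ref{cla: convex interior}, it follows that $S$ is a closed compact manifold without boundary.

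Next I would invoke the diffeomorphism $M^n \stackrel{\text{diff}}{\cong} \nu(S)$ from the Soul Theorem. The total space of any vector bundle over a base deformation retracts onto its zero section via the fiberwise scaling $(x,v) \mapsto (x, tv)$, $t \in [0,1]$. Hence $\nu(S)$ is homotopy equivalent to $S$, and by the diffeomorphism we conclude $M \simeq S$ as topological spaces. Homotopy and homology groups are homotopy invariants, so $\pi_i(M) \cong \pi_i(S)$ and $H_i(M) \cong H_i(S)$ for every $i$.

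Finally, I would appeal to the standard fact that a compact smooth manifold (with or without boundary) admits a finite CW structure --- for example, via a triangulation, or via Morse theory applied to a smooth function with finitely many nondegenerate critical points. A space with the homotopy type of a finite CW complex has finitely generated homology groups in every degree, and finitely presented (in particular finitely generated) homotopy groups; this transfers back to $M$ via the homotopy equivalence above. Alternatively, one can use the second formulation in the statement of the Soul Theorem --- that $M \stackrel{\text{diff}}{\cong} \nu^1(S)^\circ$ is the interior of the compact disk bundle $\nu^1(S)$, which is itself a compact manifold with boundary and strongly deformation retracts onto $M$ --- which matches the phrasing of the corollary precisely.

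The whole argument is essentially formal once the Soul Theorem is in hand; there is no serious obstacle beyond checking compactness of $S$, which is transparent from the construction. If I were worried about anything, it would only be ensuring that the deformation retraction from $\nu(S)$ onto $S$ is a genuine homotopy equivalence (not merely a weak one), but the fiberwise scaling map is a strong deformation retract, so this is automatic.
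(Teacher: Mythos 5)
Your argument is the same as the paper's: the paper's one-line justification is exactly your alternative phrasing, namely that $M \stackrel{\text{diff}}{\cong} \nu^1(S)^\circ$ exhibits $M$ as the interior of a compact manifold with boundary (i.e., $M$ has compact type in the sense of Definition~\ref{def: compact type}), from which the claim is asserted to follow.

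There is, however, a subtle issue --- inherited from the paper's own phrasing rather than introduced by you --- in the step asserting that a finite CW complex has finitely generated homotopy groups. This is false for $\pi_i$ with $i \ge 2$ once $\pi_1$ is infinite: $S^1 \vee S^2$ has $\pi_2 \cong \Z[\Z]$, which is not finitely generated as an abelian group, and $(S^1 \times S^2) \# (S^1 \times S^2)$ gives a closed-manifold example with the same phenomenon. What a finite CW complex does give you is that each $H_i$ is a finitely generated abelian group, that $\pi_1$ is finitely presented, and that each $\pi_i$ for $i\ge2$ is finitely generated \emph{as a $\Z[\pi_1]$-module}, but not in general as an abelian group. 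To get finite generation of $\pi_i(M)$ as abelian groups in the present setting one should invoke more than the soul theorem: by the Cheeger--Gromoll covering theorem (Theorem~\ref{thm: split to torus}) a finite cover of the compact soul $S$ is diffeomorphic to $\bar{S} \times T^k$ with $\bar{S}$ compact and simply connected, so for $i \ge 2$ one has $\pi_i(M) \cong \pi_i(S) \cong \pi_i(\bar{S})$, which is finitely generated by Serre's finiteness theorem for simply connected finite complexes; and $\pi_1(M) \cong \pi_1(S)$ is finitely presented since $S$ is a closed manifold. Everything else in your proposal (compactness of $S$, the deformation retraction of $\nu(S)$ onto the zero section, and the homology claim) is correct and matches the paper's intent.
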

\begin{corollary}\label{cor: point soul}
    If $S = \br{pt}$, then $M^n \stackrel{\text{diff}}{\cong} \R^n$. 
\end{corollary}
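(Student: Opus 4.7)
The plan is to apply the Soul Theorem directly and unpack what the normal bundle looks like in the special case when the soul is a single point.

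First, I would invoke Theorem~\ref{thm: soul theorem}: since $(M^n, g)$ is complete, non-compact, and has $\sect_M \geq 0$, there is a soul $S \subseteq M$ which is a closed, totally geodesic, totally convex submanifold, and $M^n \stackrel{\mathrm{diff}}{\cong} \nu(S)$, where $\nu(S)$ denotes the total space of the normal bundle of $S$ in $M$. By hypothesis, $S = \{p\}$ for some $p \in M$, so $S$ is a $0$-dimensional submanifold consisting of a single point.

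Next I would identify $\nu(\{p\})$ explicitly. For a $0$-dimensional submanifold $\{p\} \subseteq M^n$ the tangent space $T_pS$ is trivial, so the fiber of the normal bundle at $p$ is the full tangent space $T_pM$. Since the base is a single point, the total space is simply this single fiber:
\begin{equation*}
\nu(\{p\}) = T_pM.
\end{equation*}
This is canonically an $n$-dimensional real vector space, and hence diffeomorphic to $\R^n$ via any choice of linear isomorphism (for instance, by picking an orthonormal basis of $T_pM$).

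Combining the two observations gives
\begin{equation*}
M^n \stackrel{\mathrm{diff}}{\cong} \nu(\{p\}) = T_pM \stackrel{\mathrm{diff}}{\cong} \R^n,
\end{equation*}
which is the desired conclusion. There is no real obstacle here, as this is a direct unpacking of the Soul Theorem in the degenerate case where the soul has dimension zero; the only thing one might wish to double-check is the convention that for a $0$-dimensional submanifold the normal bundle is indeed the full tangent bundle of the ambient manifold restricted to the point, which follows immediately from the definition $\nu_x S = (T_xS)^\perp$ with $T_pS = \{0\}$.
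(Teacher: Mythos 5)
Your argument is correct and is exactly the intended one: the paper states this corollary without proof as an immediate consequence of Theorem~\ref{thm: soul theorem}, and unpacking $\nu(\{p\}) = T_pM \cong \R^n$ is precisely that unstated step.
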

\begin{corollary}\label{cor: positive curvature manifold is Euclidean}
    If $M$ is an open complete manifold of $\sect_M > 0$, then $S = \br{pt}$ and hence $M^n \cong \R^n$. 
\end{corollary}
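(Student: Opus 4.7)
The plan is to apply the Soul Theorem~\ref{thm: soul theorem} to get a soul $S$, and then show that strict positivity of $\sect_M$ forces $\dim S = 0$. Once $S$ is a single point, Corollary~\ref{cor: point soul} immediately yields $M^n \stackrel{\text{diff}}{\cong}\R^n$.

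So suppose for contradiction that $\dim S \geq 1$. I will pick a nontrivial unit-speed geodesic $\gamma : [0,a] \to S$ and derive a $2$-dimensional totally geodesic flat piece inside $M$, contradicting $\sect_M > 0$. Recall the inductive soul construction: $S = C^i$, where $C^0 = C_{\max}$ is the maximum superlevel set of the total Busemann function $b$, each $C^{j+1}$ is the maximum level set of $f^j := d(\cdot, \partial C^j)$ on $C^j$, and the process terminates at the first $i$ with $\partial C^i = \emptyset$. Since $S$ is totally geodesic in $M$, $\gamma$ is a geodesic of $M$.

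Case $i \geq 1$: Here $S \subseteq C^{i-1}$ lies in the maximum level set of $f^{i-1}$, so $f^{i-1}\circ \gamma$ is identically equal to $\max f^{i-1}$. Lemma~\ref{lem: flat rectangle} then directly produces a totally geodesic flat rectangle $\Gamma:[0,a]\times[0,d]\to C^{i-1}\subseteq M$. The tangent $2$-plane to this rectangle has sectional curvature $0$, contradicting $\sect_M > 0$.

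Case $i = 0$: Here $S = C_{\max} = \{b = b_{\max}\}$, and $b\circ\gamma \equiv b_{\max}$ is constant. By Proposition~\ref{prop:neg-grad-ray} there is, at each point of $\gamma$, a negative gradient ray $\sigma$ of $b$ along which $b$ decreases at unit speed. I would parallel transport $\dot\sigma(0)$ along $\gamma$ to a vector field $V(s)$ and set $\Gamma(s,t) = \exp_{\gamma(s)}(tV(s))$. The constancy of $b$ on $\gamma$ together with the $1$-Lipschitz property of $b$ forces $b(\Gamma(s,t)) = b_{\max} - t$, which plays the role that $f^{i-1}\circ\gamma \equiv$ const plays in Lemma~\ref{lem: flat rectangle}. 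Applying the Berger rigidity (Proposition~\ref{lem: rigidity case}) on each finite piece $[0,a]\times[0,T]$ of this variation and then letting $T\to\infty$ produces a flat totally geodesic half-strip in $M$, again contradicting $\sect_M > 0$.

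The main obstacle is Case $i=0$: Lemma~\ref{lem: flat rectangle} as stated applies only to a distance function to the boundary of a compact convex set, not to the Busemann function. The adaptation requires checking that (i) the analogue of Berger's comparison, namely that $\length(\Gamma_t)\le \length(\gamma)$, persists along the entire negative gradient ray and not just near $\gamma$, which follows from the monotonicity built into the definition of the Busemann function, and (ii) the rigidity case of Berger's theorem can be propagated out to $t=\infty$ by extracting a limiting flat strip from finite totally geodesic flat rectangles. Everything else in the argument is a direct application of previously established material.
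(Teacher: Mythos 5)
Your Case $i \geq 1$ is a correct and direct application of Lemma~\ref{lem: flat rectangle} to $f^{i-1} = d(\cdot, \partial C^{i-1})$ on the compact totally convex set $C^{i-1}$. But the Case $i=0$ argument has a genuine gap: the claim that the $1$-Lipschitz property of $b$ together with $b\circ\gamma \equiv b_{\max}$ \emph{forces} $b(\Gamma(s,t)) = b_{\max} - t$ does not hold. The $1$-Lipschitz bound only gives $b(\Gamma(s,t)) \geq b_{\max} - t$, since $d(\Gamma(s,t), \gamma(s)) \leq t$. Equality would require each curve $t \mapsto \Gamma(s,t)$ to actually be a negative gradient ray of $b$, which is not guaranteed for a variation defined by parallel transport of a single initial direction. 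Without that equality you cannot trigger the rigidity case of Berger's comparison, so the flat rectangle does not appear. (The limiting flat half-strip as $T\to\infty$ is also unnecessary even if it did work: a single bounded flat totally geodesic rectangle already gives a zero-curvature $2$-plane.)

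The paper avoids the entire case split and the Busemann-function adaptation by a short trick: it proves directly that $C_{\max}$ itself is a point. Fix any $c < b_{\max}$ and consider the compact totally convex set $C_c = \br{b \geq c}$. By Proposition~\ref{prop: C_max}, $C_{\max}$ has empty interior, and by Proposition~\ref{prop-level-sets}, every $x \in C_{\max}$ satisfies $d(x, \partial C_c) = b_{\max} - c$. If $C_{\max}$ contained a nonconstant geodesic $\gamma$, then $d(\gamma(t), \partial C_c)$ would be constant, and Lemma~\ref{lem: flat rectangle} applied to $f = d(\cdot, \partial C_c)$ on $C_c$ — exactly the setting the lemma is designed for — yields a flat totally geodesic rectangle in $M$, contradicting $\sect_M > 0$. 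Hence $C_{\max} = \br{\text{pt}}$, the soul construction terminates at $C^0$, and $S = \br{\text{pt}}$. This makes your Case $i \geq 1$ vacuous and dispenses with Case $i=0$ entirely; in fact the cleanest repair of your Case $i=0$ is to observe, via Proposition~\ref{prop-level-sets}, that on $C_c$ the Busemann function equals $d(\cdot, \partial C_c) + c$, which reduces your situation to precisely the one Lemma~\ref{lem: flat rectangle} handles.
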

\begin{proof}[Proof of Corollary~\ref{cor: positive curvature manifold is Euclidean}]
    We claim that $C_{\max}$ of $b$ is a point. If not, then there exists a non-constant geodesic $\gamma: [0, a] \to C_{\max}$. Fix $c < b_{\max}$, so that
    \begin{equation*}
    	C_c = \br{b \geq c} \supseteq \br{b \geq b_{\max}} = C_{\max}. 
    \end{equation*}
    By Propositions \ref{prop: C_max} and \ref{prop-level-sets}, we know that $C_{\max}$ has an empty interior. So that for any $x \in C_{\max}$, 
    \begin{align*}
    	d(x, \partial C_c) = b_{\max} - c
    \end{align*}
    That means $d(\gamma(t), \partial C_c)$ is a constant. Thus by Lemma~\ref{lem: flat rectangle} we know that there exists a flat totally geodesic rectangle $R$ parametrized by $\Gamma: [0, a] \times [0, d] \to C_c$ such that $\Gamma(t, 0) = \gamma$ and $\Gamma(t, d) \subseteq \partial C_c$. Since the $R$ is flat, for any tangent plane $\sigma$ on $R$, we have $\sect^M(\sigma) = 0$. Because this is totally geodesic, then by the Gauss formula the intrinsic sectional curvature equals the extrinsic sectional curvature. To see why, take $\sigma = \mathbf{Span}\br{(X, Y)}$ tangent to $N$.
    \begin{align*}
        X, Y \in T_pN \implies & \sect^N(\sigma) = \sect^M(\sigma) + \inp{II(X, X), II(Y, Y)} - \inp{II(X, Y), II(X, Y)}
    \end{align*}
    Because, 
    \begin{equation*}
        \text{Totally Geodesic} \iff II = 0
    \end{equation*}
    Therefore, 
    \begin{equation*}
        \sect^N(\sigma) = \sect^M(\sigma).
    \end{equation*}
    In our case, $N$ is a flat rectangle, then 
    \begin{equation*}
        \sect^N(\sigma) = \sect^M(\sigma) = 0.
    \end{equation*}
    This is impossible since $\sect_M > 0$. Therefore, on the very first step in the soul construction. $C_{\max} =\br{pt} \implies S = \br{pt}$. Therefore, by the corollary \ref{cor: point soul}, we have $M^n \cong \R^n$.
\end{proof}
\section{Examples and Open Problems}
\begin{example}
    If $N$ is closed and $\sect_N \geq 0$ and $G$ a compact Lie group $G \acts N$ freely by isometries. Suppose further that we have a representation $G \stackrel{P}{\longrightarrow} O(k)$ so that $G \acts \R^k$ by isometries. Consider $M = (N \times \R^k)/G$ by the diagonal action. Then $\sect_M \geq 0$ and its soul is $ S = (N \times \br{0})/G = N/G \subseteq N \times \R^k/G$.
\end{example}
\begin{example}
    Take $N = S^1$, $k = 1$, $S^1 \times R$ admits a soul $S^1 \times \br{t}$ for any $t$. Let $G = \Z_2 = \br{\pm 1}$. $-1$ acts on $S^1 = \br{z \in \C: \abs{z} = 1}$ such that $-1(z) = -z$ and on $x \in \R$, $-1(x) = -1$. So $\Z_2 \acts (S^1 \times \R)$ by
    \begin{equation*}
        -1(z, t) = (-z, -1)
    \end{equation*}
Then $M =(S^1 \times \R)/\Z_2$ is an M\"obius band and by construction, it has $\sec\equiv 0$. The  soul  of $M$ is the circle $S^1 \times \br{0}/\Z_2$.
\end{example}
\begin{remark}
    The difference between $\sect_M \geq 0$ and  $\sect_M > 0$ is indeed large for noncompact manifolds. 
    \begin{itemize}
        \item If $M$ is open then
        \begin{equation*}
            \begin{cases}
                \text{$\sect_M > 0$ is very restrictive $M \stackrel{\text{diff}}{\cong} \R^n$}\\
                \text{$\sect_M \geq 0$, then there are many non-trivial examples.}
            \end{cases}
        \end{equation*}
        \item  If $M^n$ is closed, if $\sect_M > 0$ then since the manifold is closed (compact), we have a universal constant $0 < \delta \leq \sect_M$. Hence the same holds for the universal covering space $\tilde{M}^n$, then by Bonnet-Myer's theorem, the diameter of $\tilde{M}^n$ is finite, so $\tilde{M}^n$ must also be compact and hence $\pi_1(M)$ is finite. 
        In particular, $M=T^n$ admits a metric of $\sect_M \equiv 0$ but does not admit a metric of $\sect_M >0$.

But it's an open question is whether there exists a closed simply connected manifold $M^n$ which admits a metric of $\sect_g \geq 0$ but does not admit a metric $h$ of $\sect_h > 0$.

There exist many examples of closed simply connected  $(M^n, g)$ with  $\sect_g \geq 0$ and $\sect_g > 0$ on some open dense set. In particular, such a metric exists on the Gromoll-Meyer sphere \cite{GM74} and on $S^2\times S^3$ \cite{Wil02}. 
However, attempts to construct metrics of positive sectional curvature on these manifolds have been proved unsuccessful.

Hopf conjectured that  $S^2 \times S^2$ does not admits a metric $g$ of $\sect_g > 0$. A more general version of Hopf's conjecture says that if $M_1, M_2$ are closed manifolds then $M_1\times M_2$ does not admit a metric of positive sectional curvature. If this conjecture is true then $S^2 \times S^2$ would provide an example of a simply connected manifold that admits a metric  $\sect\ge 0$ but does not admit a metric of $\sect>0$. Moreover, this suggests that the above result of $S^2 \times S^3$ by Wilking is sharp.
    \end{itemize}
\end{remark}

Next, let us mention some questions related to the Soul theorem. 

Given a closed manifold $S$ of $\sect_S \geq 0$ and a vector bundle 
\begin{equation*}
    \begin{tikzcd}
        \R^k \arrow{r}{\empty} & E \arrow{d}{\empty} \\%
        & S
    \end{tikzcd}
\end{equation*}
in view of the soul theorem, it is natural to ask whether $E$ admits a metric $g$ of $\sect_g \geq 0$ such that $S$ is a soul. This question is partially solved. For example if $\pi_1(S) \neq 0$, then there are easy counterexamples that follow from the splitting theorem.

 However, the question is still open when $\pi_1(S) = 0$. 

A special case of question is whether all vector bundles over spheres admit $\sect_E \geq 0$. 
By a result of Rigas \cite{Rig78} it is known that this is true stably. That is, give a vector bundle $\R^k\to E\to S^n$  there exists $m$ such that $E\oplus \eps^m$ (trivial bundle) does admit a nonnegatively curved metric.


\chapter{Cheeger-Gromoll Soul Conjecture}

In this lecture, we are going to study the Soul conjecture of Cheeger and Gromoll\cite{CG72}. This is an essential question in Riemannian geometry about the structure of complete manifolds with non-negative sectional curvature. 

\begin{corollary}[Soul Conjecture of Cheeger and Gromoll]\label{cor: Soul Conj}
    Let $(M^n, g)$ be a complete open Riemannian manifold with $\sect_M \geq 0$. Suppose there exists $x \in M$ such that $\sect > 0$ near $x$. Then the soul $S$ is a point and 
    \begin{equation*}
        M^n \stackrel{\text{diffeo}}{\cong}\R^n
    \end{equation*}
\end{corollary}

This conjecture was proved by Perelman \cite{Per94} in 1994 in 4 pages. The proof depends on Berger's comparison theorem \ref{lem: Berger comparison} and the Sharafutdinov Retraction in \cite{Sha77}, which will be introduced soon in this chapter.

\section{Distance Estimates}
\subsection{Gradients of Semi-concave Functions}
Let $f: M^n \to \R$ be semi-concave and locally Lipschitz function. Then for every $p \in M$, the directional derivative map $df_p$ is defined. And
\begin{equation*}
    df_p: T_pM \to \R
\end{equation*}
is also Lipschtiz, concave and positively homogeneous, i.e. $df_p(\lambda v) = \lambda df_p(v)$ for all $\lambda \geq 0$ (See Example~\ref{ex: GH}). 

\begin{example}
Let  $A \subseteq M$ be a closed subset. Let $f = d(\cdot, A)$. Let $p \in M\backslash A$. Then by the first variation formula,
    \begin{equation*}
        df_p(v) = \min{\br{-\inp{u, v}: u \in \Uparrow_p^A \cap T_p^1M}}.
    \end{equation*}
    Notice that this is the minimum of $1$-Lipschitz linear functions of the form $v \mapsto -\inp{u, v}$, which are all concave. Therefore $df_p$ is concave too.  This provides an alternative explanation for the concavity of $df_p$ for distance functions.
\end{example}
Now, we want to define the gradient $\nabla f_p$. We know that when $f$ is smooth then $df_p$ is linear and $\nabla f_p$ is easily defined by the Riesz representation theorem, i.e.
\begin{equation*}
    df_p(v) = \inp{\nabla f_p, v} \quad \forall v
\end{equation*}
What about the general case?
\begin{definition}\label{def: grad of Lip-semi-concave}
    Let $f: M \to \R$ be locally Lipschitz and semi-concave. Let $p \in M$. Then a vector $h \in T_pM$ is called the \textbf{gradient} of $f$ at $p$, denoted by $\nabla f_p$, if 
    \begin{itemize}
        \item $df_p(h) = \abs{h}^2$;
        \item $df_p(v) \leq \inp{h, v}$ for every $v \in T_pM$
    \end{itemize}
\end{definition}
Notice that if $df_p$ is linear, then it is not hard to show the existence and the uniqueness of $h = \nabla f_p$  in the usual sense. In general,  existence and uniqueness still hold. However, this is non-trivial. And we are going to show this here. 
\subsection{Existence and Uniqueness of the Gradient for Semi-concave Functions}
\begin{theorem}[Existence and Uniqueness of Gradients]\label{thm: existence and the uniqueness of grad}
    The gradient $\nabla f_p$ in the definition \ref{def: grad of Lip-semi-concave} exists at every $p \in M$ for locally Lipschitz and semi-concave functions $f: M \to \R$ and it is unique. 
\end{theorem}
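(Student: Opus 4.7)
The plan is a variational one: I would construct the gradient as the maximizer of the coercive concave functional
\[
\phi(v) := df_p(v) - \tfrac{1}{2}\abs{v}^2
\]
on the finite-dimensional inner product space $T_pM$. Concavity of $\phi$ is inherited from $df_p$ together with the strict concavity of $-\tfrac{1}{2}\abs{v}^2$. Coercivity follows from the Lipschitz bound on $df_p$: if $\abs{df_p(v)} \leq L\abs{v}$, then $\phi(v) \leq L\abs{v} - \tfrac{1}{2}\abs{v}^2 \to -\infty$ as $\abs{v} \to \infty$, so $\phi$ attains its maximum at some $h \in T_pM$, and this $h$ will be the candidate gradient.

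Uniqueness is independent of the construction and comes directly from the two defining axioms. If $h_1, h_2$ both satisfy them, then applying the second axiom for $h_1$ to the vector $h_2$ and vice versa gives $\abs{h_1}^2 = df_p(h_1) \leq \inp{h_2, h_1}$ and $\abs{h_2}^2 = df_p(h_2) \leq \inp{h_1, h_2}$; summing produces $\abs{h_1 - h_2}^2 \leq 0$, hence $h_1 = h_2$.

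For existence, I would verify the two axioms for the maximizer $h$ via first-order optimality and the positive homogeneity of $df_p$. Because $df_p$ is concave on $T_pM$, its one-sided directional derivatives $D_v\, df_p(h) := \lim_{t \to 0^+} t^{-1}\bigl(df_p(h+tv) - df_p(h)\bigr)$ exist for every $v$, and concavity of $\phi$ with maximum at $h$ yields the first-order inequality $D_v\, df_p(h) \leq \inp{h, v}$ for all $v \in T_pM$. Positive homogeneity gives the identities $D_h\, df_p(h) = df_p(h)$ and $D_{-h}\, df_p(h) = -df_p(h)$, so substituting $v = h$ and $v = -h$ into the first-order inequality forces $df_p(h) = \abs{h}^2$, which is the first axiom. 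The second axiom then follows from the subgradient inequality for concave functions: for any $v \in T_pM$,
\[
df_p(v) = df_p\bigl(h + (v-h)\bigr) \leq df_p(h) + D_{v-h}\, df_p(h) \leq \abs{h}^2 + \inp{h, v-h} = \inp{h, v}.
\]

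The only real technical point is the justification of the subgradient inequality $df_p(h + w) \leq df_p(h) + D_w\,df_p(h)$ and the one-sided first-order necessary condition at the maximum; both are standard facts about concave Lipschitz functions on a finite-dimensional inner product space, and they do not require any Riemannian input beyond the three properties of $df_p$ already recorded (Lipschitz, concave, positively homogeneous). Thus no curvature assumption enters, and no new geometric tool is needed — the argument is essentially a finite-dimensional convex-analysis calculation applied pointwise to $df_p$.
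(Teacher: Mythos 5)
Your proof is correct, and it takes a genuinely different route from the paper on the existence half. The paper first locates the direction of steepest ascent by compactness of the unit sphere, sets $h = S\,v_{\max}$ with $S = \sup_{|v|=1} df_p(v)$, checks the first axiom by direct homogeneity, and proves the second axiom via an auxiliary inequality $df_p(u)+df_p(v) \leq S\,|u+v|$ (its Lemma~\ref{lem: inequality of existence}, derived from concavity and homogeneity) followed by a Taylor-expansion perturbation in a small parameter $\eps$. You instead maximize the strictly concave, coercive functional $\phi(v) = df_p(v) - \tfrac{1}{2}|v|^2$ and extract both axioms from first-order optimality plus the supergradient inequality for concave functions, with positive homogeneity entering only to compute $D_{\pm h}\,df_p(h)$. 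Both constructions produce the same vector (maximizing $\phi$ over rays $v = ru$, $|u|=1$, one sees the optimum is $r = S$, $u = v_{\max}$), but your verification is more systematic: it replaces the paper's ad hoc lemma and $\eps$-expansion with two textbook facts of finite-dimensional convex analysis, and it treats the trivial and nontrivial cases ($S\le 0$ versus $S>0$) uniformly. The paper's argument is more elementary and self-contained for a reader without a convex-analysis background. Your uniqueness argument is identical to the paper's.
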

\begin{proof}
    We are first going to show uniqueness and then prove its existence. 
    \begin{itemize}
        \item \textbf{Step 1: Uniqueness}\\
      Suppose $h_1$ and $h_2$ are  two vectors satisfying  definition \ref{def: grad of Lip-semi-concave}. Then 
        \begin{align*}
            df_p(h_1) &= \abs{h_1}^2 \leq \inp{h_1, h_2} \quad\text{Since $h_2$ is a gradient};\\
            df_p(h_2) &= \abs{h_2}^2 \leq \inp{h_2, h_1} \quad\text{Since $h_1$ is a gradient}.
        \end{align*}
        \begin{align*}
            0 \leq \abs{h_1 - h_2}^2 = & \inp{h_1 - h_2, h_1 - h_2}\\
            = &\abs{h_1}^2 - 2\inp{h_1, h_2} + \abs{h_2}^2\\
            =& (\abs{h_1}^2 - \inp{h_1, h_2}) + (\abs{h_2}^2 - \inp{h_1, h_2})\\
            \leq & 0.
        \end{align*}
        All the inequalities above are equalities. Therefore
        \begin{equation*}
            \abs{h_1 - h_2}^2 = 0 \implies h_1 = h_2
        \end{equation*}
        \item \textbf{Step 2: Existence}\\
        Denote $S = \sup{\br{df_p(v): v \in T_pM, |v|=1}}$. 
        \begin{itemize}
            \item \textbf{Trivial Case:}\\
            If $S \leq 0$, then $df_p \leq 0$ on $T_pM$, thus we claim that $h = 0 \in T_pM$ is the gradient. And we just write $\nabla f_p = 0$. To check this, take $h = 0$, then
            \begin{equation*}
                df_p(0) = \abs{0}^2 = 0
            \end{equation*}
            and 
            \begin{equation*}
                df_p(v) \leq \inp{v, 0} = 0 \quad\forall v \in T_pM
            \end{equation*}
            Thus we can conclude in this case that $\nabla f_p = 0$.
            \item \textbf{Nontrivial Case:} If $S > 0$, then by the compactness of $S^{n - 1} \subseteq T_pM$, there exists unit $v_{\max} \in T_p^1M$ such that 
            $S = df_p(v_{\max}) > 0$. We claim that
            \begin{claim}
                $h= S\cdot v_{\max}$ is a gradient.
            \end{claim}

            We can check that 
            \begin{align*}
                df_p(S\cdot v_{\max}) = & S\cdot df_p(v_{\max}) = S^2 = S^2\abs{v_{\max}}^2\\
                = & \inp{S\cdot v_{\max}, S\cdot v_{\max}} = \abs{h}^2
            \end{align*}
            Next, we want to check that $\forall v \in T_pM, df_p(v) \leq \inp{h, v}$ for $h = S\cdot v_{\max}$. To check this, we need the following lemma.
            \begin{lemma}\label{lem: inequality of existence}
                $\forall u, v \in T_pM$, $df_p(v) + df_p(u) \leq S\cdot |u + v|$.
              
            \end{lemma}
              We are going to verify this once we proved the existence of the gradient.
            
            Fix $v \in T_pM$, by the lemma \ref{lem: inequality of existence}, we have
            \begin{equation*}
                df_p(h) + df_p(\eps v) = df_p(h) + \eps df_p(v) \leq S\cdot\abs{h + \eps v}\quad \forall \eps \geq 0
            \end{equation*}
            
            \corrv{ Note that $|h|=|S\cdot v_{\max}|=S$. }
            Consider $\abs{h + \epsilon v}$, 
            \begin{align*}
                \abs{h + \eps v} = & \sqrt{\inp{h + \eps v, h + \eps v}} = \sqrt{\abs{h}^2 + 2\eps \inp{h, v} + \eps^2}\\
                = & \abs{h} + \frac{\eps}{2\sqrt{\abs{h^2}}}\cdot 2\inp{h, v} + O(\eps^2)\\
             = &\abs{h} + \frac{\eps\inp{h, v}}{\abs{h}} + O(\eps^2) 
            \end{align*}
            Thus
            \begin{align*}
                df_p(h) + \eps df_p(v)\leq & S\brac{\abs{h} + \frac{\eps\inp{h, v}}{\abs{h}} + O(\eps^2)}\\
                \leq &\abs{h}^2 + \eps\inp{h, v} + O(\eps^2) \quad\forall \eps\geq 0
            \end{align*}
            
    	Since $ df_p(h) =|h|^2$  we can conclude that $df_p(v) \leq \inp{h, v}$.
        \end{itemize}
    \end{itemize}
    
    This concludes the proof of the existence and uniqueness of the  gradient and hence of Theorem \ref{thm: existence and the uniqueness of grad} modulo Lemma  \ref{lem: inequality of existence}.
\end{proof}
Let's verify lemma \ref{lem: inequality of existence} above. 
\begin{proof}
    \textbf{Case 1:} Suppose $u + v = 0$, $u = -v$. Since $df_p$ is concave we have
    \begin{equation*}
        0 = df_p(0) = df_p\brac{\frac{u + v}{2}} \geq \frac{1}{2}df_p(v) + \frac{1}{2}df_p(u).
    \end{equation*}
    \textbf{Case 2:} Suppose $u + v \neq 0$, then since $df_p$ is concave and positively 1-homogeneous we have
    \begin{equation*}
        \frac{1}{2}df_p(u + v) = df_p(\frac{u + v}{2}) \geq \frac{1}{2}df_p(u) + \frac{1}{2}df_p(v).
    \end{equation*}
    That implies
    \begin{align*}
        df_p(u) + df_p(v) \leq &df_p\brac{\abs{u + v}\frac{u + v}{\abs{u + v}}}\\
        = & \abs{u + v}df_p\brac{\frac{u + v}{\abs{u + v}}}\\
        \leq & S\abs{u + v}.
    \end{align*}
\end{proof}
This allows us to define the gradient for locally Lipschitz functions and semi-concave functions and in particular for distance functions. Let’s consider the following example. 
\begin{example}
    Take $f = d(\cdot, A)$ where $A$ is a closed subset in $M$ and let $p \notin A \subseteq M$. We want to compute $\nabla f_p$. 
    \begin{itemize}
        \item If $p$ is a critical point, then for any $v \in T_pM$, $df_p(v) \leq 0$. Geometrically, this means the angle $\alpha = \mangle(v, \Uparrow_p^A) \leq \frac{\pi}{2}$ so that $df_p(v) = -\cos{(\alpha)}\leq 0$. From the trivial case in the existence part of the theorem \ref{thm: existence and the uniqueness of grad}, this means $\nabla f_p = 0$;
        \item If $p$ is a regular point for $f$, then there exists $v \in T_pM$ such that $df_p(v) > 0$. Then by the first variation formula, for every $u \in \Uparrow_p^A$, we have $\mangle(u, v) > \frac{\pi}{2}$. By the compactness of $S^{n - 1} \subseteq T_pM$, there exists $v \in S^{n - 1}$ such that $v$ attains the largest angle with $\Uparrow_p^A$. We denote this vector $v_{\max{\empty}}$. Then this $v$ maximize $d(\cdot, A)|_{S^{n - 1}}$. 
        
        Let $\alpha = d(v_{\max{\empty}}, \Uparrow_p^A)|_{S^{n - 1}} = \mangle(v_{\max{\empty}}, \Uparrow_p^A)$. Now $S = df_p(v_{\max{\empty}}) = -\cos{\alpha} > 0$. Hence  $\nabla f_p = S\cdot v_{\max{\empty}} = -\cos{\alpha}\cdot v_{\max{\empty}}$.  Note that uniqueness of gradients implies that the vector $v_{\max}$ is unique.
    \end{itemize}
\end{example}
\begin{example}
    Let $M = \R^2, A=\{(x,y)\in\R^2\mid x\le 0$  or $y\le 0\}$. Then $f=d(\cdot, A)$ is 0 on $A$ while on the first quadrant $\{x>0,y>0\}$ it is given by the formula 
    $f(x,y)= \min{(x, y)}$ on $x, y \geq 0$. Then
    \begin{equation*}
        f(x,y) = 
        \begin{cases}
            y \quad \text{if } x>0, y>0, x>y  \implies \text{$\nabla f_p = (0, 1)$} \\
         x \quad \text{if } x>0, y>0, y>x  \implies \text{$\nabla f_p = (1, 0)$ }
        \end{cases}
    \end{equation*}
    Let us now compute the gradient of $f$ at points in the first quadrant with $x=y$.
 By our construction
    \begin{equation*}
        \Uparrow_p^A = \br{(0, -1), (-1, 0)}.
    \end{equation*} 
    By the picture, we know that $v_{\max{\empty}} = \brac{\frac{1}{\sqrt{2}}, \frac{1}{\sqrt{2}}}$.
    
    \begin{figure}[htbp]
    \centering
    \includegraphics[width=0.4\textwidth]{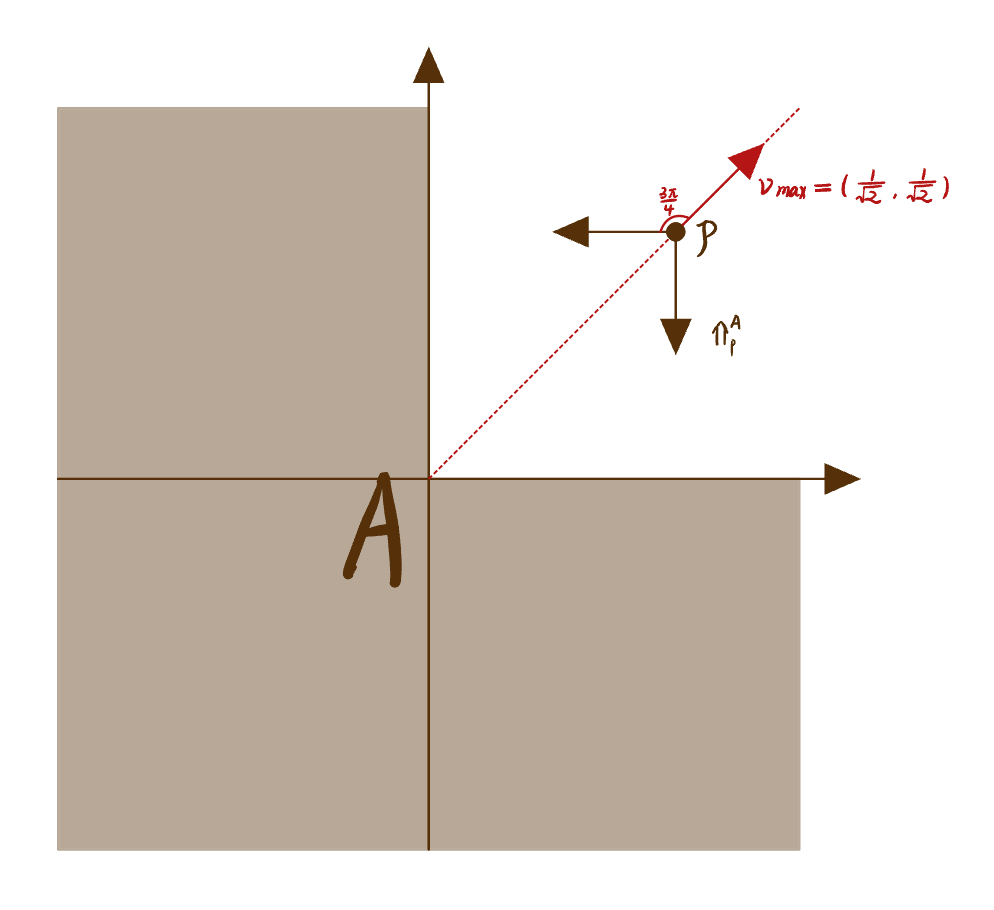}
    \end{figure}
	And $S = df_p(v_{\max{\empty}}) = -\cos{\alpha} = -\cos{\frac{3\pi}{4}} = \frac{1}{\sqrt{2}}$. Therefore
    \begin{equation*}
        \nabla f_p = S\cdot v_{\max{\empty}} = \frac{1}{\sqrt{2}}\brac{\frac{1}{\sqrt{2}}, \frac{1}{\sqrt{2}}} = \brac{\frac{1}{2}, \frac{1}{2}}
    \end{equation*}
  Note that $\abs{\nabla f_p} = \frac{1}{\sqrt{2}} < 1$.
  \begin{figure}[htbp]
    \centering
    \includegraphics[width=0.8\textwidth]{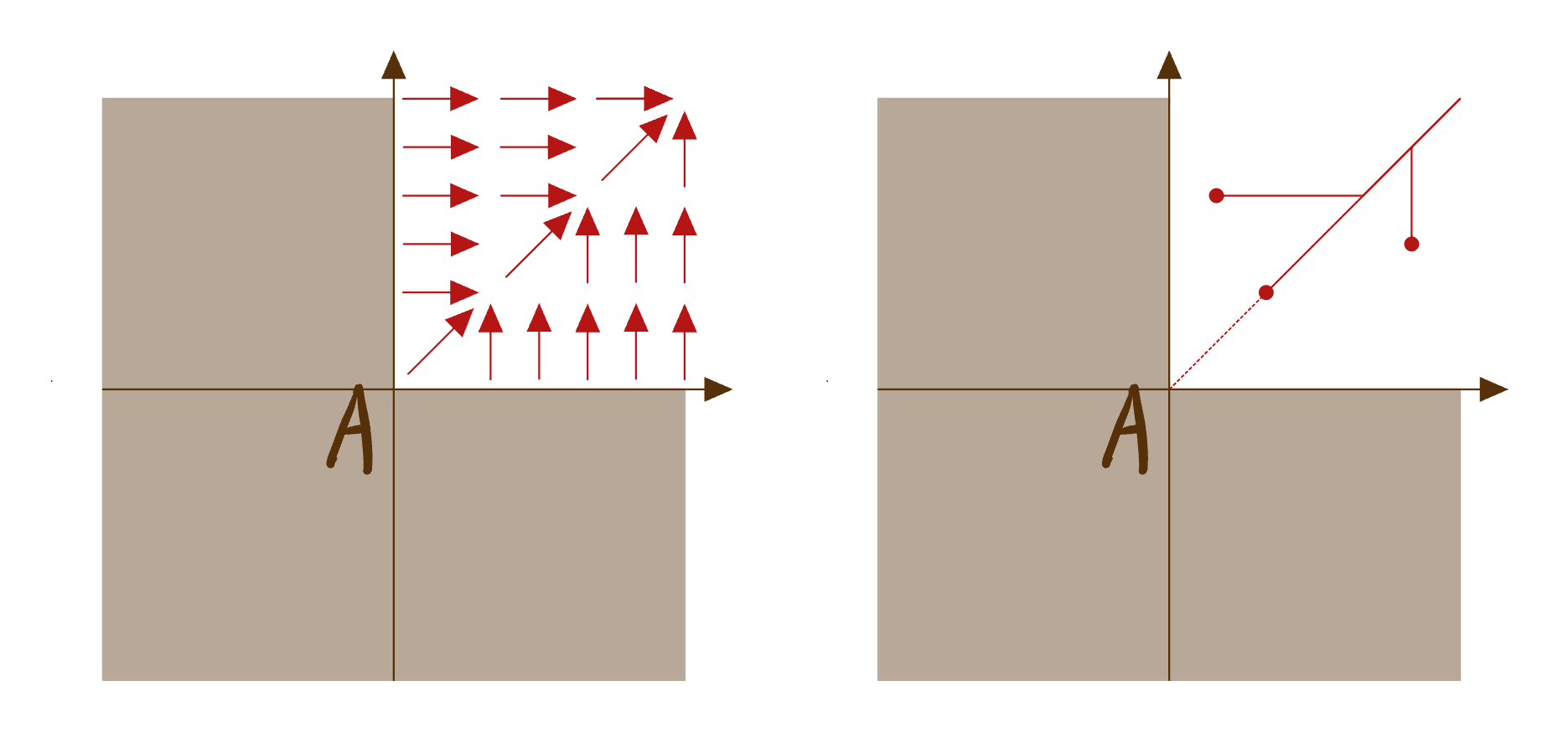}
    \end{figure}
\end{example}
\subsection{Gradient Flows of  Semi-concave Functions}
In this section, we are going to define gradient flows of semi-concave functions.
\begin{definition}
Let $M^n$ be a Riemannian manifold and $U\subset M$ be open. Let $f:M \to \R$ be a function that is semi-concave on $U$. An absolute continuous curve $\gamma: I \to U $, where $I$ is an interval, is a gradient curve of $f$ if its right derivative satisfies $\Dot{\gamma}_+(t) = \nabla f_{\gamma(t)}$ for every $t \in I$.
\end{definition}
The next theorem is the existence and uniqueness theorem of gradient curves. We are not going to present the proof of the existence part which was originally proved by Petrunin and Perelman in \cite{PP95}, and a simplified proof was given by Lytchak in \cite{Ly05}.

Uniqueness will follow from Lipschitz properties of the gradient flow and we will prove this below.
\begin{theorem}\label{thm: gradient curve exists and unique}
Let $M, U, f$ be as above. 
 Then for any $ p \in U$,  there is an $\eps > 0$ such that there exists a unique gradient curve of $f$, $\gamma: [0, \eps) \to U$, such that $\gamma(0) = p$. 
\end{theorem}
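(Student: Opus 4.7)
The existence half of Theorem~\ref{thm: gradient curve exists and unique} is handled by the results of Petrunin--Perelman~\cite{PP95} and Lytchak~\cite{Ly05} already cited in the statement, so my plan concentrates entirely on uniqueness, which I would deduce from a Gronwall-type contraction estimate assembled out of the defining inequality of the gradient (Definition~\ref{def: grad of Lip-semi-concave}) together with semi-concavity of $f$.

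Suppose $\gamma_1, \gamma_2: [0,\eps) \to U$ are two gradient curves with $\gamma_1(0)=\gamma_2(0)=p$. Shrinking $\eps$, I would arrange that $f$ is $\lambda$-concave on a geodesically convex neighborhood $V\subset U$ containing both traces, that $L(t):=d(\gamma_1(t),\gamma_2(t))$ stays below the injectivity radius at $p$, and that the unit-speed shortest geodesic $\sigma_t$ from $\gamma_1(t)$ to $\gamma_2(t)$ is unique and depends continuously on $t$. Local Lipschitz continuity of $f$ forces $|\nabla f|$ to be uniformly bounded on $V$ (since $|\nabla f_q|^2 = df_q(\nabla f_q) \le \mathrm{Lip}(f)\,|\nabla f_q|$), so each $\gamma_i$ is Lipschitz with the same constant and its right derivative equals $\nabla f_{\gamma_i(t)}$ at every $t$.

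Writing $u_t := \dot\sigma_t(0)\in T_{\gamma_1(t)}M$ and $w_t := -\dot\sigma_t(L(t))\in T_{\gamma_2(t)}M$ for the unit vectors pointing at each endpoint toward the opposite curve, I would apply the tangent-line barrier for the $\lambda$-concave function $s \mapsto f(\sigma_t(s))$ from both ends:
\begin{align*}
f(\gamma_2(t)) &\le f(\gamma_1(t)) + L(t)\,df_{\gamma_1(t)}(u_t) + \tfrac{\lambda}{2}L(t)^2,\\
f(\gamma_1(t)) &\le f(\gamma_2(t)) + L(t)\,df_{\gamma_2(t)}(w_t) + \tfrac{\lambda}{2}L(t)^2.
\end{align*}
Adding these and using $df_q(v)\le\inp{\nabla f_q, v}$ from Definition~\ref{def: grad of Lip-semi-concave} yields $\inp{\dga_1(t),u_t}+\inp{\dga_2(t),w_t}\ge -\lambda L(t)$. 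The first variation formula for the distance between two Lipschitz curves gives, for our unique $\sigma_t$,
\[
\frac{d^+}{dt}L(t) \;\le\; -\inp{\dga_1(t),u_t} - \inp{\dga_2(t),w_t},
\]
and combining the two bounds produces $\frac{d^+}{dt}L(t) \le \lambda\, L(t)$ with $L(0)=0$. A standard right-derivative Gronwall argument (differentiate $e^{-\lambda t}L(t)$) then forces $L\equiv 0$, and hence $\gamma_1 \equiv \gamma_2$ on $[0,\eps)$.

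The main obstacle is justifying the two key inequalities at the level of one-sided derivatives when the curves are only Lipschitz rather than $C^1$: the first variation formula must be applied pointwise in $t$ as an inequality valid for any choice of shortest geodesic, and the semi-concavity barrier must hold along $\sigma_t$ for every $t$ rather than merely almost everywhere. Both difficulties dissolve once everything is localized inside the injectivity radius, where $\sigma_t$ is unique and depends smoothly on its endpoints; the genuinely geometric content of the argument is the interaction between the semi-concavity barrier and the gradient inequality, which together produce the contraction constant $\lambda$ that powers Gronwall.
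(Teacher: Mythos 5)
Your proposal is correct and follows essentially the same route as the paper: the paper likewise defers existence to the cited references and obtains uniqueness as the $l(0)=0$ case of the distance-contraction estimate (Proposition~\ref{prop: distance estimates}), which it proves by exactly your mechanism of combining the $\lambda$-concavity barrier along $\sigma_t$, the gradient inequality $df_q(v)\le\inp{\nabla f_q,v}$, and a first-variation bound to get $l'_+(t)\le\lambda\, l(t)$ and then Gronwall. The only cosmetic difference is that you apply the first variation directly to the distance between the two moving endpoints, while the paper moves each endpoint toward the fixed midpoint of $\sigma_t$ and finishes with the triangle inequality; otherwise the two arguments are the same.
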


\subsection{More Comments on Gradients}
If $\gamma$ is a gradient curve, and $c$ is a constant then $\gamma(t + c)$ is also a gradient curve by the same argument that works for integral curves of smooth vector fields.  Hence we can define the positive gradient flow $\Phi(x, t) = \gamma_x(t)$ where $\gamma_x$ means the gradient curve starting at $x$. Namely,
\begin{equation*}
    \gamma_x(0) = x = \Phi(x, 0).
\end{equation*}
Denote $\phi_t(x) = \Phi(x, t)$. It also has the semigroup property, i.e.
\begin{equation*}
    \phi_{t + s} = \phi_t\circ \phi_s \quad\forall t, s \ge  0
\end{equation*}
This follows since for any fixed $x$, if $\gamma_x(t)$ is a gradient curve then $\eta(t) \gamma_x(t + s)$ is also a gradient curve starting at $\gamma_s(x)=\phi_s(x)$. If $\gamma_x(0) = x$, $\eta(t)=\gamma_x(s + t) = \phi_{s + t}(x)$. Consider $\sigma(t)$ which is also a gradient curve such that $\sigma(0) = \gamma_x(0 + s) = \gamma_x(s) = \phi_s(x)$, $\sigma(t) = \phi_t(\sigma(0)) = \phi_t\circ\phi_s(x)$. 

We see that both $\sigma(t)$ and  $\eta(t)$  are gradient curves starting at $\phi_s(x)$. By uniqueness of gradient curves this implies that $\sigma(t)=\eta(t)$ 
Since $\sigma(t) = \gamma(s + t) = \phi_{s + t}(x)$  for all $t\ge 0$ which by above means that 
\begin{equation*}
    \phi_{s + t}(x) = \phi_t\circ\phi_s(x).
\end{equation*}

Our next comment is that in general, we cannot define a flow for negative $t$. Let $f(x, y) = \min{\br{x, y}}$ on $\R_{+}^2$. Then 
\begin{equation*}
\nabla f_p = 
    \begin{cases}
        (1, 0) \quad\text{On $\br{x < y}$};\\
        (0, 1) \quad\text{On $\br{x > y}$};\\
        (\frac{1}{\sqrt{2}}, \frac{1}{\sqrt{2}}) \quad\text{On $\br{x = y}$};
    \end{cases}
\end{equation*}
we can find the gradient flow of $f$ actually converges. The flow map is 
\begin{equation*}
	\Phi(t, (x, y)) = 
	\begin{cases}
		f_1 = 
		\begin{cases}
			(x + t, y) \quad\text{for $t < y - x$}\\
			(\frac{x + t}{\sqrt{2}}, \frac{x + t}{\sqrt{2}}) \quad\text{for $t \geq y - x$}
		\end{cases}
		\quad\text{for $x < y$}\\
		f_2 = 
		\begin{cases}
			(x, y + t) \quad\text{for $t < x - y$}\\
			(\frac{y + t}{\sqrt{2}}, \frac{y + t}{\sqrt{2}}) \quad\text{for $t \geq x - y$}
		\end{cases}\quad\text{for $x > y$}\\
		f_3 = (\frac{x+t}{\sqrt{2}}, \frac{x+t}{\sqrt{2}}) \quad\text{for $x = y$}
	\end{cases}
\end{equation*}
\begin{figure}[htbp]
    \centering
    \includegraphics[width=0.8\textwidth]{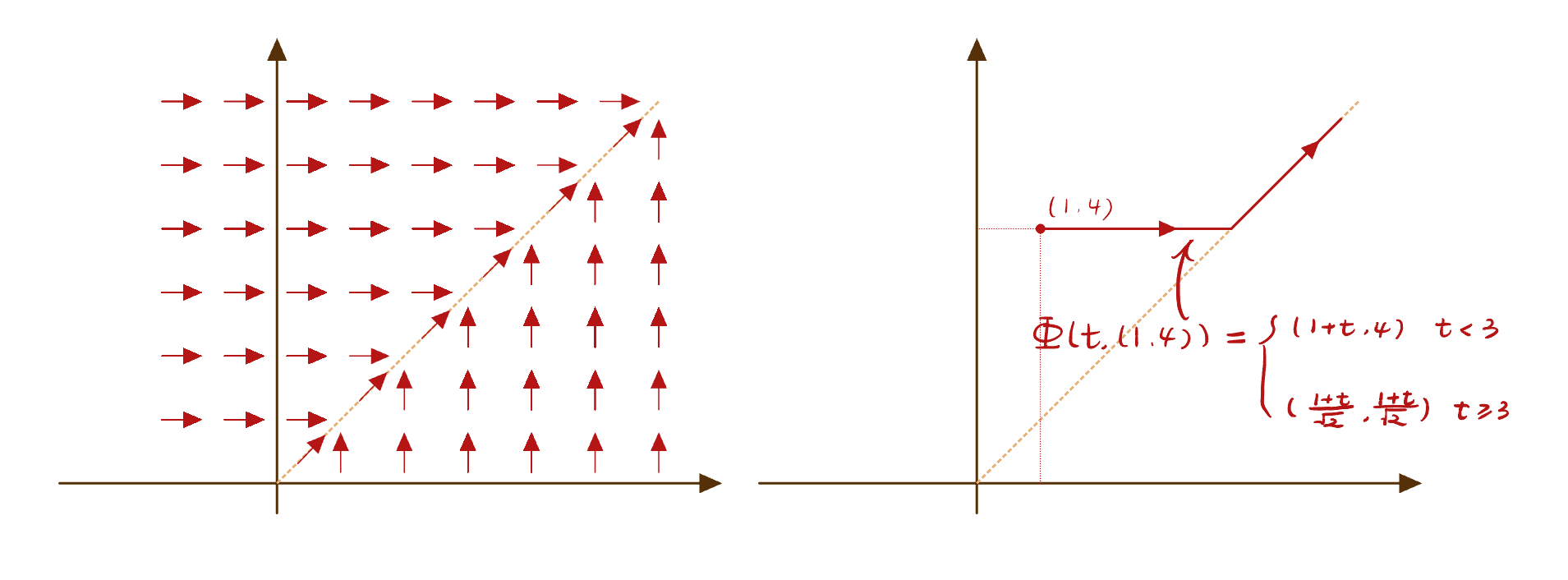}
    \end{figure}

\begin{remark}
Gradient flows of semi-concave functions can be defined on Alexandrov spaces where the theory is largely the same as on smooth manifolds.
They can also be defined on Wasserstein spaces of Alexandrov and more generally $RCD(K,N)$ spaces. 
In particular, the heat flow is constructed as the gradient flow of entropy functionals which are semi-convex (but not locally Lipschitz) in the above setting. 
However, the constructions of gradient flow in those settings are more complicated because unlike in the case of manifolds, we are considering point-wise gradients can not be defined, and measure-theoretic arguments are necessary.
 
\end{remark}
\begin{remark}
	Our next comment is that if $\gamma(t)$ is a gradient curve, then $(f(\gamma(t)))_+' = df_{\gamma(t)}(\gamma_+'(t)) = \abs{\nabla f_{\gamma(t)}}^2 \geq 0$. Hence, $f$ is non-decreasing along gradient curves. In the next lecture, we are going to prove a  distance contraction estimate and show the uniqueness of gradient curves. 
\end{remark}

\subsection{Uniqueness of Gradient Curves}
\begin{lemma}\label{lem: grad-estimates of lambda concave function}
Let $f$ be $\lambda$-concave, and $p, q \in M$. For the directional unit vector of the shortest curve from $p$ to $q$, $\uparrow_p^q \in T_pM$, we have
    \begin{equation}\label{eq: grad-estimates of lambda concave function}
        \inp{\nabla f_p, \uparrow_p^q} \geq \frac{f(p) - f(q) - \frac{\lambda l^2}{2}}{l}
    \end{equation}
\end{lemma}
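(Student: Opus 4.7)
The plan is to reduce the statement to a one-dimensional inequality by restricting $f$ to a shortest geodesic between $p$ and $q$, then combine the resulting concavity bound with the defining inequality of the gradient from Definition~\ref{def: grad of Lip-semi-concave}.

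First I would fix a unit-speed shortest geodesic $\gamma : [0, l] \to M$ with endpoints $p$ and $q$. Because $f$ is $\lambda$-concave on $M$, its restriction to $\gamma$ satisfies $(f \circ \gamma)'' \leq \lambda$ in the generalized sense, so equivalently
$$\tilde h(t) := f(\gamma(t)) - \tfrac{\lambda}{2} t^{2}$$
is concave on $[0, l]$.

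Second, I would apply the supporting-line inequality for the concave function $\tilde h$ at the appropriate endpoint of $[0, l]$. This yields an upper bound for $\tilde h$ at the opposite endpoint in terms of the value and one-sided derivative of $\tilde h$ at the chosen endpoint. Unwinding the definition of $\tilde h$ and identifying the relevant one-sided derivative of $f \circ \gamma$ with $df_p(\uparrow_p^q)$ via the chain rule, and collecting the quadratic contributions from the $\tfrac{\lambda}{2} t^{2}$ shift at the two endpoints, the inequality rearranges into
$$l \cdot df_p(\uparrow_p^q) \;\geq\; f(p) - f(q) - \tfrac{\lambda}{2} l^{2}.$$

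Third, I would invoke the gradient characterization from Definition~\ref{def: grad of Lip-semi-concave}: for every $v \in T_pM$, the directional derivative obeys $df_p(v) \leq \inp{\nabla f_p, v}$. Applied with $v = \uparrow_p^q$ and divided by $l > 0$, this upgrades the bound on the directional derivative into the stated inequality for the gradient inner product.

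The hardest step will be the sign bookkeeping in the second paragraph. The directional derivative $df_p$ is only defined as a forward one-sided derivative at $p$, and concavity of $\tilde h$ gives one-sided supporting-line inequalities at each endpoint of $[0, l]$. Ensuring that the endpoint of the tangent-line bound and the parameterization of $\gamma$ are chosen so that the resulting inequality gives a lower bound on $df_p(\uparrow_p^q)$ with precisely the numerator $f(p) - f(q) - \tfrac{\lambda}{2} l^{2}$ requires a careful orientation check; once this is in place, the defining inequality of $\nabla f_p$ converts the bound for the one-sided derivative directly into the stated bound for $\inp{\nabla f_p, \uparrow_p^q}$.
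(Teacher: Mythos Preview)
Your approach matches the paper's exactly: restrict to a unit-speed geodesic $\gamma$ with $\gamma(0)=p$, $\gamma(l)=q$, use concavity of $h(t)=f(\gamma(t))-\tfrac{\lambda}{2}t^{2}$ to write $h(l)\le h(0)+h'_+(0)\,l$, identify $h'_+(0)=df_p(\uparrow_p^q)$, and then apply $df_p(v)\le\inp{\nabla f_p,v}$ from Definition~\ref{def: grad of Lip-semi-concave}. Your concern about sign bookkeeping is justified, but it stems from a typo in the lemma's stated inequality rather than from any orientation subtlety: the supporting line at $t=0$ yields $f(q)-\tfrac{\lambda l^{2}}{2}\le f(p)+l\,df_p(\uparrow_p^q)$, so the correct numerator is $f(q)-f(p)-\tfrac{\lambda l^{2}}{2}$ (and there is only one quadratic contribution, at $t=l$, since $\tfrac{\lambda}{2}t^{2}$ vanishes at $t=0$); this is exactly what the paper's own proof derives and what Corollary~\ref{cor: symmetric sum concave grad} then uses.
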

\begin{proof}
    Let $\gamma(t) = \exp_p(t\uparrow_p^q)$ such that $\gamma(0) = p$ and $\gamma(l) = q$, where $l = \length(\gamma)$. By the definition of $\lambda$-concavity  $f(\gamma(t))$ is also $\lambda$-concave. Therefore,
    \begin{equation*}
        h(t) = f(\gamma(t)) - \frac{\lambda t^2}{2}
    \end{equation*}
    is concave. Thus
    \begin{equation*}
        h(t) \leq h(0) + h_+'(0)t \quad\text{for $t > 0$}
    \end{equation*}
    and we should keep in mind that $h(0) = f(p)$ and $h_+'(0) = f_+'(\gamma(t))$. Plug $t = l$ into the inequality, and we have
    \begin{align*}
        f(q) - \frac{\lambda l^2}{2} = h(l) \leq & f(p) + f_+'(\gamma(t))|_{t = 0}\cdot l\\
        = & f(p) + df_p(\dga(0))\cdot l\\
        = & f(p) + df_p(\uparrow_p^q)\\
        \leq & f(p) + \inp{\nabla f_p, \uparrow_p^q}\cdot l
    \end{align*}
    By switching the order, we have
    \begin{equation*}
        \frac{f(q) - f(p) - \frac{\lambda l^2}{2}}{l} \leq \inp{\nabla f_p, \uparrow_p^q}
    \end{equation*}
\end{proof}
\begin{corollary}\label{cor: symmetric sum concave grad}
Let $f$ be $\lambda$-concave,  $p, q \in M$.   Let  $[pq]$  a shortest geodesic, then 
    \begin{equation}\label{eq: symmetric sum concave grad}
        \inp{\nabla f_p, \uparrow_p^q} + \inp{\nabla f_q, \uparrow_q^p} \geq -\lambda l
    \end{equation}
\end{corollary}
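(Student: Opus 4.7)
The plan is to apply Lemma \ref{lem: grad-estimates of lambda concave function} twice, once to the ordered pair $(p,q)$ and once to the ordered pair $(q,p)$, and then add the two inequalities.

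Concretely, I would first observe that the proof of Lemma \ref{lem: grad-estimates of lambda concave function} actually establishes the inequality
\begin{equation*}
    \inp{\nabla f_p, \uparrow_p^q} \geq \frac{f(q) - f(p) - \frac{\lambda l^2}{2}}{l},
\end{equation*}
since the key concavity estimate gives $h(l)\le h(0)+h'_+(0)\cdot l$ with $h(l)=f(q)-\frac{\lambda l^2}{2}$ and $h(0)=f(p)$, combined with $df_p(\uparrow_p^q)\le \inp{\nabla f_p,\uparrow_p^q}$ from the definition of the gradient. Next, I would note that the geodesic $[qp]$ obtained by reversing $[pq]$ is also a shortest geodesic of the same length $l$, so the hypotheses of the lemma are equally satisfied with the roles of $p$ and $q$ exchanged. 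This yields
\begin{equation*}
    \inp{\nabla f_q, \uparrow_q^p} \geq \frac{f(p) - f(q) - \frac{\lambda l^2}{2}}{l}.
\end{equation*}

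Adding the two inequalities, the terms $f(p)$ and $f(q)$ cancel, and one is left with
\begin{equation*}
    \inp{\nabla f_p, \uparrow_p^q} + \inp{\nabla f_q, \uparrow_q^p} \geq \frac{-\lambda l^2}{l} = -\lambda l,
\end{equation*}
which is exactly the claimed estimate \eqref{eq: symmetric sum concave grad}.

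There is essentially no obstacle here: the corollary is a symmetrization of Lemma \ref{lem: grad-estimates of lambda concave function}, and the only point that needs a moment of care is making sure the sign convention in the lemma is used correctly (the quantity $f(q)-f(p)$ appears on the right, so that when we swap $p$ and $q$ we get its negative, leading to cancellation). Note also that no smoothness of $f$ is required; everything goes through for semi-concave $f$ using the definition of $\nabla f_p$ given in Definition \ref{def: grad of Lip-semi-concave}, which is what makes this inequality useful in the later distance-contraction estimates for gradient flows.
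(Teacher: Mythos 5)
Your proposal is correct and takes essentially the same approach as the paper: apply the gradient estimate of Lemma~\ref{lem: grad-estimates of lambda concave function} once for the pair $(p,q)$ and once for $(q,p)$, add, and let $f(p)$ and $f(q)$ cancel. You also correctly noticed that the inequality actually established in the proof of the lemma reads $\inp{\nabla f_p,\uparrow_p^q}\ge \frac{f(q)-f(p)-\lambda l^2/2}{l}$ (with $f(q)-f(p)$ rather than $f(p)-f(q)$ as the displayed statement of the lemma has it), and you used the correct version; the paper's own proof of the corollary quietly does the same.
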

\begin{proof}{Corollary}{\ref{cor: symmetric sum concave grad}}
    By switching the order of $p$ and $q$ in \ref{eq: grad-estimates of lambda concave function}, we have
    \begin{equation}\label{eq1: grad est}
        \frac{f(q) - f(p) - \frac{\lambda l^2}{2}}{l} \leq \inp{\nabla f_p, \uparrow_p^q}
    \end{equation}
    and
    \begin{equation}\label{eq2: grad est}
        \frac{f(p) - f(q) - \frac{\lambda l^2}{2}}{l} \leq \inp{\nabla f_q, \uparrow_q^p}
    \end{equation}
    Thus by summing \ref{eq1: grad est} and \ref{eq2: grad est}, we have
    \begin{equation*}
        \inp{\nabla f_p, \uparrow_p^q} + \inp{\nabla f_q, \uparrow_q^p} \geq -\lambda l
    \end{equation*}
\end{proof}
\begin{proposition}[Distance Estimates]\label{prop: distance estimates}
    Let $f$ is a $\lambda$-concave function, and $\alpha(t), \beta(t)$ the gradient curves of $f$ starting at $p$ and $q$. Namely $\alpha, \beta: [0, \eps) \to M$ such that $p = \alpha(0)$ and $q = \beta(0)$. Denote 
    \begin{equation*}
        l(t) = d(\alpha(t), \beta(t))
    \end{equation*}
    Then 
    \begin{equation*}
        l(t) \leq l(0)e^{\lambda t}
    \end{equation*}
\end{proposition}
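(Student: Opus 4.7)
The plan is to derive the differential inequality $l'_+(t) \leq \lambda\, l(t)$ via the first variation formula for arc length, and then integrate it using a Gr\"onwall-type monotonicity argument.

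First I would work at a time $t$ where $l(t) > 0$. Pick any unit-speed shortest geodesic from $\alpha(t)$ to $\beta(t)$, so its initial direction at $\alpha(t)$ is $\uparrow_{\alpha(t)}^{\beta(t)}$ and its terminal tangent at $\beta(t)$ is $-\uparrow_{\beta(t)}^{\alpha(t)}$. Since $\alpha'_+(t) = \nabla f_{\alpha(t)}$ and $\beta'_+(t) = \nabla f_{\beta(t)}$ by the definition of gradient curves, the first variation formula for the distance between two moving endpoints gives the one-sided upper bound
\[
l'_+(t) \;\leq\; -\inp{\nabla f_{\alpha(t)}, \uparrow_{\alpha(t)}^{\beta(t)}} \;-\; \inp{\nabla f_{\beta(t)}, \uparrow_{\beta(t)}^{\alpha(t)}}.
\]
Applying Corollary \ref{cor: symmetric sum concave grad} with $p = \alpha(t)$ and $q = \beta(t)$ to the same shortest geodesic, the right-hand side is at most $\lambda\, l(t)$. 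Hence $l'_+(t) \leq \lambda\, l(t)$ wherever $l > 0$.

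The second step is to set $h(t) = l(t)\,e^{-\lambda t}$ and observe that its upper right Dini derivative satisfies $D^+ h(t) \leq 0$ at every $t$ with $l(t) > 0$. Since $l$ is Lipschitz (being a distance between two locally Lipschitz curves), $h$ is Lipschitz, and the standard one-sided monotonicity criterion implies that $h$ is non-increasing on $[0,\eps)$. Rearranging gives $l(t) \leq l(0)\,e^{\lambda t}$. Points where $l$ vanishes cause no trouble: $h$ cannot increase across a zero, and in the boundary case $l(0) = 0$ the inequality forces $l \equiv 0$, which is exactly the uniqueness half of Theorem \ref{thm: gradient curve exists and unique}.

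The main technical obstacle is justifying the first variation formula when the shortest geodesic between $\alpha(t)$ and $\beta(t)$ is non-unique and when $\nabla f$ is only defined in the generalized sense of Definition \ref{def: grad of Lip-semi-concave}. Non-uniqueness is harmless since we only need an upper bound on $l'_+(t)$, so any single shortest geodesic suffices. To make this rigorous without smoothness, I would estimate $l(t+\tau)$ from above by the length of the concatenation of a short minimizer from $\alpha(t+\tau)$ to $\alpha(t)$, the chosen shortest geodesic from $\alpha(t)$ to $\beta(t)$, and a short minimizer from $\beta(t)$ to $\beta(t+\tau)$; expanding each piece to first order in $\tau$ using $\alpha'_+(t)$ and $\beta'_+(t)$ recovers the displayed bound directly from the triangle inequality, and absorbs the fact that the gradient need not be continuous.
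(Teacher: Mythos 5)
Your overall strategy matches the paper's: derive $l'_+(t)\leq\lambda\,l(t)$ from a first variation bound combined with Corollary~\ref{cor: symmetric sum concave grad}, then integrate via a Gr\"onwall argument. The Gr\"onwall step and the remark about $l(0)=0$ giving uniqueness are fine.

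The gap is in how you obtain the differential inequality. You invoke a \emph{two-endpoint} first variation formula,
\[
l'_+(t)\ \leq\ -\inp{\nabla f_{\alpha(t)},\,\uparrow_{\alpha(t)}^{\beta(t)}}\ -\ \inp{\nabla f_{\beta(t)},\,\uparrow_{\beta(t)}^{\alpha(t)}},
\]
but the paper only establishes the one-endpoint version (Theorem~\ref{thm: the first variation formula}), and your proposed justification by concatenating short minimizers does not deliver it. The concatenated path from $\alpha(t+\tau)$ to $\alpha(t)$, then along $[\alpha(t)\beta(t)]$, then from $\beta(t)$ to $\beta(t+\tau)$, has length $d(\alpha(t+\tau),\alpha(t)) + l(t) + d(\beta(t),\beta(t+\tau))$, which to first order in $\tau$ is $l(t) + \tau\bigl(\abs{\nabla f_{\alpha(t)}} + \abs{\nabla f_{\beta(t)}}\bigr)$. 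This gives $l'_+(t)\leq\abs{\nabla f_{\alpha(t)}}+\abs{\nabla f_{\beta(t)}}$, which has the wrong sign structure entirely: there are no cosine factors, so the bound is far too weak (and typically positive even when both gradient directions point toward each other). The triangle inequality alone cannot see the angle, which is the whole content of first variation.

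The paper sidesteps this with a midpoint trick: take $m$ the midpoint of a shortest geodesic $[\alpha(t)\beta(t)]$, apply the \emph{one}-endpoint first variation formula twice to obtain $d(\alpha(t+\tau),m)\leq\frac{l(t)}{2}-\tau\inp{\nabla f_{\alpha(t)},\uparrow_{\alpha(t)}^{\beta(t)}}+o(\tau)$ and the analogous bound for $d(\beta(t+\tau),m)$, and then use the triangle inequality $l(t+\tau)\leq d(\alpha(t+\tau),m)+d(\beta(t+\tau),m)$. Summing recovers exactly the two-endpoint estimate you wanted from the one-endpoint formula that the paper has actually proved. Replacing your concatenation argument with this midpoint reduction closes the gap; the rest of your proof then goes through unchanged.
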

\begin{proof}
    Firstly, we claim that
    \begin{equation}\label{eq 1: distance estimates}
        l_+'(t) \leq \lambda l(t) \quad \forall t \in [0, \eps)
    \end{equation}
    And if this is true, then the conclusion follows
    \begin{align*}
        & (l_+' - \lambda l)\cdot e^{-\lambda t} \leq 0 \cdot e^{-\lambda t}\\
        \implies & l_+'\cdot e^{-\lambda t} - \lambda l\cdot e^{-\lambda t} \leq 0\\
        \implies & (l \cdot e^{-\lambda t})_+' \leq 0\\
        \implies & \text{$l \cdot e^{-\lambda t}$ is non-increasing}\\
        \implies & l \cdot e^{-\lambda t}|_t \leq l\cdot e^{-\lambda t}|_0\\
        \implies & l(t)\cdot e^{-\lambda t} \leq l(0)\\
        \implies & l(t) \leq l(0)e^{\lambda t}
    \end{align*}
    Thus, the only thing that remains to be proven is inequality \eqref{eq 1: distance estimates}. Let’s check this at $t = 0$, $l = l(0) = \abs{pq}$. the proof for other $t$ is the same. 
    
     Let $m$ be the mid-point of a shortest geodesic $\gamma$ from $p$ to $q$. That is $\gamma: [0, l] \to M$, $\gamma(0) = p$, $\gamma(l) = q$ and $m = \gamma(\frac{1}{2})$.   By the first variation formula, we have
    \begin{equation}\label{eq 2: distance estimates}
        d(\alpha(t), m) \leq \frac{l(0)}{2} - \inp{\nabla f_p, \uparrow_p^q}t + o(t);
    \end{equation}
    \begin{equation}\label{eq 3: distance estimates}
        d(\beta(t), m) \leq \frac{l(0)}{2} - \inp{\nabla f_q, \uparrow_q^p}t + o(t)
    \end{equation}
  We used  that
    \begin{align*}
        \nabla f_p = \alpha_+'(0);\\
        \nabla f_q = \beta_+'(0)
    \end{align*}
    Then we add \ref{eq 2: distance estimates} and \ref{eq 3: distance estimates} together to conclude our result. By the triangule inequality, we have
    \begin{equation*}
        l(t) = d(\alpha(t), \beta(t)) \leq d(\alpha(t), m) + d(\beta(t), m),
    \end{equation*}
    which by \eqref{eq 2: distance estimates} and \eqref{eq 3: distance estimates}  gives
    \begin{align*}
        l(t) \leq d(\alpha(t), \beta(t)) \leq & l(0) - \inp{\nabla f_p, \uparrow_p^q} - \inp{\nabla f_q, \uparrow_q^p} + o(t)\\
        \leq & l(0) + (\lambda l(0))(t) + o(t) \quad\text{By the corollary \ref{cor: symmetric sum concave grad}}
    \end{align*}
    Thus, we have
    \begin{equation*}
        l_+'(0) \leq \lambda l(0)
    \end{equation*}
    and by the same argument, we have
    \begin{equation*}
        l_+'(t) \leq \lambda l(t) \quad \forall t \in [0, \eps)
    \end{equation*}
By the argument earlier this implies that
    \begin{equation*}
        l(t) \leq l(0)e^{\lambda t}
    \end{equation*}
\end{proof}

Thus, we have finished our proof of the uniqueness of the gradient curves. And we are going to discuss the gradient flow.

\begin{remark}
    The above proof works in large generality and only needs the $1$-st variation inequality
    \begin{equation*}
        d(\alpha(t), q)'|_{t = 0}^+ \leq -\inp{\alpha'(0), \uparrow_p^q},
    \end{equation*}
    which is much easier to prove compared with the first variation formula. 
    The first variation inequality holds in more general cases, i.e. infinitesimally Hilbertian spaces. 
    \begin{example}
        For example, suppose a metric measure space $(X, d, m)$ is $RCD(0,\infty)$. Consider the entropy function $E: P_2(X) \to \R$ and $\nu = \rho\cdot m$ the absolutely continuous measure. Then 
        \begin{equation*}
            E(v) = \int_X \rho\log{\rho} dm
        \end{equation*}
         is convex on $(P_2(X), W_2)$ ($-E$ is concave on $(P_2(X), W_2)$) where $W_2$ is the Wasserstein metric. And we can conclude that its gradient flow of $E$  (which is equal to the heat flow) is contracting because $(P_2(W), W_2)$ satisfies the first variation inequality. 
    \end{example}
\end{remark}
\subsection{Gradient Flow for $\lambda$-Concave Functions}\label{sect: gradient flow of lambda concave}
Let $\Phi_t$ be the gradient flow of $f$ such that 
\begin{equation*}
    \Phi_t(p) = \alpha(t)
\end{equation*}
where $\alpha(t)$ is a gradient curve starting at $p$. Then, $\Phi_t$ satisfies 
\begin{equation*}
    \begin{cases}
        \Phi_0 = \id\\
        \Phi_t\circ \Phi_s = \Phi_{t + s} \quad\text{for $t + s \geq 0$}
    \end{cases}
\end{equation*}
And by our distance estimate for $\lambda$-concave function, we know that 
\begin{equation*}
    d(\Phi_t(x), \Phi_s(y)) \leq e^{\lambda t}d(x, y)
\end{equation*}
That means the flow $\Phi_t$ is a Lipschitiz map with Lipschitz constant $e^{\lambda t}$ when $f$ is $\lambda$-concave. In particular, if $\lambda = 0$ ($f$ is concave), then $\Phi_t$ is $1$-Lipschitz for all $t \geq 0$ i.e.  $\Phi_t$ is distance non-increasing for all $t$.

\section{Sharafutdinov Retraction}
Let’s recall the definition of strong deformation retraction. 
\begin{definition}[Strong Deformation Retraction]\label{def: sdr}
	Let $X$ be a topological space and $A \subseteq X$ its topological subspace. $A$ is a \textbf{strong deformation retraction} if there is a family of maps $\Pi:[0, 1] \times X \to X$, $t \in [0, 1]$ such that
	\begin{itemize}
		\item $\Pi$ is a homotopy;
		\item $\Pi_0 = \Pi(0, \cdot) = \id_X$;
		\item $\Pi_t|_A = \Pi(t, \cdot) = \id_A$ for all $t \in [0, 1]$;
		\item $\Pi_1(x) = \Pi(1, x) \in A$ for all $x \in X$.
	\end{itemize}
	We call $\Pi$ the \textbf{strong deformation retraction map induced by $A$}. 
\end{definition}
\begin{notation}
	We will abuse the notation by writing the strong deformation retraction map induced by $A$ as $\Pi: X \to A$ for simplicity. 
\end{notation}

\begin{corollary}\label{cor: retraction exists}
    Let $b: M \to \R$ be concave and $(M^n, g)$ is a complete Riemannian manifold. Assume all level sets of $b$ are compact. Then for any $c \in b(M)$, $\br{b \geq c}$ is a strong deformation retraction, i.e. there exists a $\Pi: M \to S$ as in Definition~\ref{def: sdr}.
\end{corollary}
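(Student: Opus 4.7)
The plan is to construct $\Pi$ from the forward gradient flow $\Phi_t$ of $b$ itself. By Theorem~\ref{thm: gradient curve exists and unique} this flow is defined locally; since $b$ is concave, Proposition~\ref{prop: distance estimates} with $\lambda = 0$ makes each $\Phi_t$ distance-nonincreasing for $t \geq 0$, and the identity $(b \circ \Phi_t)_+'(x) = |\nabla b_{\Phi_t(x)}|^2$ from Section~\ref{sect: gradient flow of lambda concave} shows $b$ is nondecreasing along gradient curves. Because each trajectory stays in the compact superlevel set $\{b \geq b(x)\}$, the flow is defined globally in forward time.

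Next I would prove the asymptotic lemma $b(\Phi_t(x)) \to b_{\max} := \sup_M b$ as $t \to \infty$ (finite and attained by compactness of level sets). Monotonicity yields a limit $b_\infty$, and integration of $(b \circ \Phi_t)_+' = |\nabla b|^2$ gives $\int_0^\infty |\nabla b_{\Phi_t(x)}|^2 \, dt = b_\infty - b(x) < \infty$, so $|\nabla b_{\Phi_{t_n}(x)}| \to 0$ along some $t_n \to \infty$. Extract a sub-sequential limit $y_\infty = \lim_n \Phi_{t_n}(x)$. For any $q \in M$, Lemma~\ref{lem: grad-estimates of lambda concave function} with $\lambda = 0$ gives
\[
|\nabla b_{\Phi_{t_n}(x)}| \geq \inp{\nabla b_{\Phi_{t_n}(x)}, \uparrow_{\Phi_{t_n}(x)}^q} \geq \frac{b(q) - b(\Phi_{t_n}(x))}{d(\Phi_{t_n}(x), q)},
\]
and passing to the limit yields $b(q) \leq b(y_\infty)$. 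So $y_\infty$ attains $b_{\max}$, whence $b_\infty = b_{\max}$.

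With this lemma in hand, define the retraction by reparameterizing the flow by $b$-values. For $x$ with $b(x) < c$, let $\tau(s, x)$ be the unique $t \geq 0$ with $b(\Phi_t(x)) = b(x) + s(c - b(x))$; existence uses the asymptotic lemma and the hypothesis $c \leq b_{\max}$, and uniqueness uses that $t \mapsto b(\Phi_t(x))$ is strictly increasing as long as $\Phi_t(x) \notin C_{\max}$ (otherwise $|\nabla b|$ would vanish on an interval, forcing the trajectory into $C_{\max}$ by the argument in the previous paragraph). For $x$ with $b(x) \geq c$ set $\tau(s, x) \equiv 0$. Then
\[
\Pi(s, x) = \Phi_{\tau(s, x)}(x)
\]
satisfies $\Pi(0, \cdot) = \id_M$, $\Pi(s, \cdot) \equiv \id$ on $\{b \geq c\}$, and $\Pi(1, x) \in \{b = c\} \subset \{b \geq c\}$, checking the three set-theoretic conditions of Definition~\ref{def: sdr}.

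The main obstacle is joint continuity of $\Pi$ on $[0, 1] \times M$. Away from $\{b = c\}$, continuity is routine from continuity of $\Phi$ and the implicit-function definition of $\tau$. The delicate case is near a point $x_0$ with $b(x_0) = c$: when $|\nabla b_{x_0}| > 0$, the estimate displayed above applied at $y$ near $x_0$ provides a uniform lower bound $|\nabla b|^2 \geq \epsilon$ on nearby trajectories, so $\tau(s, y) \leq s(c - b(y))/\epsilon \to 0$ as $y \to x_0$ with $b(y) < c$, giving $\Pi(s, y) \to x_0$. The hardest sub-case is $c = b_{\max}$: every $x_0 \in C_{\max}$ is then a fixed point of $\Phi_t$, $\tau(s, y)$ may blow up as $s \to 1$, and one must define $\Pi(1, x) := \lim_{t \to \infty} \Phi_t(x)$. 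This limit exists because for any sub-sequential limit $y_\infty \in C_{\max}$ provided by the asymptotic lemma one has $\Phi_s(y_\infty) = y_\infty$ and hence $t \mapsto d(\Phi_t(x), y_\infty)$ is nonincreasing by the $1$-Lipschitz estimate; together with the sub-sequential convergence $\Phi_{t_n}(x) \to y_\infty$, this forces $\Phi_t(x) \to y_\infty$ along the whole trajectory. Continuity at the terminal time then follows from the contraction $d(\Pi(s, y), x_0) \leq d(y, x_0)$ whenever $x_0 \in C_{\max}$ is a fixed point.
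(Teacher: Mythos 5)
Your proposal is correct and takes a genuinely different route from the paper. The paper replaces $b$ by the truncated function $f=\min\{b,c\}$, whose gradient vanishes identically on $A=\{b\ge c\}$ (including on $\{b=c\}$, since $df_p(v)=\min\{db_p(v),0\}\le 0$ there), so the gradient flow of $f$ automatically fixes $A$; a uniform lower bound on $|\nabla f|$ on the compact strip $\{c_2\le b\le c_1\}$ with $c_1<c$ shows every trajectory enters $A$ in finite time, and a reparameterization of $[0,\infty]$ onto $[0,1]$ by an arctangent finishes, with the case $c=b_{\max}$ handled by passing to a limit of the retractions onto $\{b\ge c-\eps\}$. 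You instead flow $b$ itself and synchronize trajectories by prescribing the $b$-level $b(x)+s(c-b(x))$ at homotopy parameter $s$, fixing $A$ by declaring $\tau\equiv 0$ there. This costs you the asymptotic lemma $b(\Phi_t(x))\to b_{\max}$ -- a real piece of work that the truncation sidesteps -- but in return it lets you handle $c=b_{\max}$ in a single unified argument (via $\Pi(1,x)=\lim_{t\to\infty}\Phi_t(x)$) and is geometrically more transparent. Both constructions produce the same time-$1$ map $\Pi(1,\cdot)$, which is $1$-Lipschitz as a limit of the $1$-Lipschitz flow maps. The one property your construction does not obviously give is $1$-Lipschitzness of the intermediate maps $\Pi(s,\cdot)$ for $0<s<1$: you flow different points for point-dependent times $\tau(s,x)$, and the contraction estimate of Proposition~\ref{prop: distance estimates} only compares points flowed for the \emph{same} time. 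The paper's $\Pi_t=\phi_{\alpha^{-1}(t)}$ is $1$-Lipschitz at every homotopy time for free, which is what the paper's proof of Corollary~\ref{cor: Sharafutdinov} explicitly invokes; since Definition~\ref{def: sdr} does not require this and the Sharafutdinov composition only uses the time-$1$ maps, your proof establishes the stated corollary and also suffices for that downstream application. Your handling of uniqueness of $\tau$ via strict monotonicity of $b\circ\Phi_t$ away from $C_{\max}$, and of continuity near $\{b=c\}$ via the gradient lower bound from Lemma~\ref{lem: grad-estimates of lambda concave function} and the contraction toward fixed points, is sound, though like the paper it tacitly uses compactness of the super-level sets $\{b\ge c'\}$ rather than just of the level sets.
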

\begin{proof}
    Firstly, we consider the case when $c < b_{\max}$ where $b_{\max}$ is the maximum value of $b$. Let $f = \min\br{b, c}$. 
    \begin{figure}[htbp]
    \centering
    \includegraphics[width=0.8\textwidth]{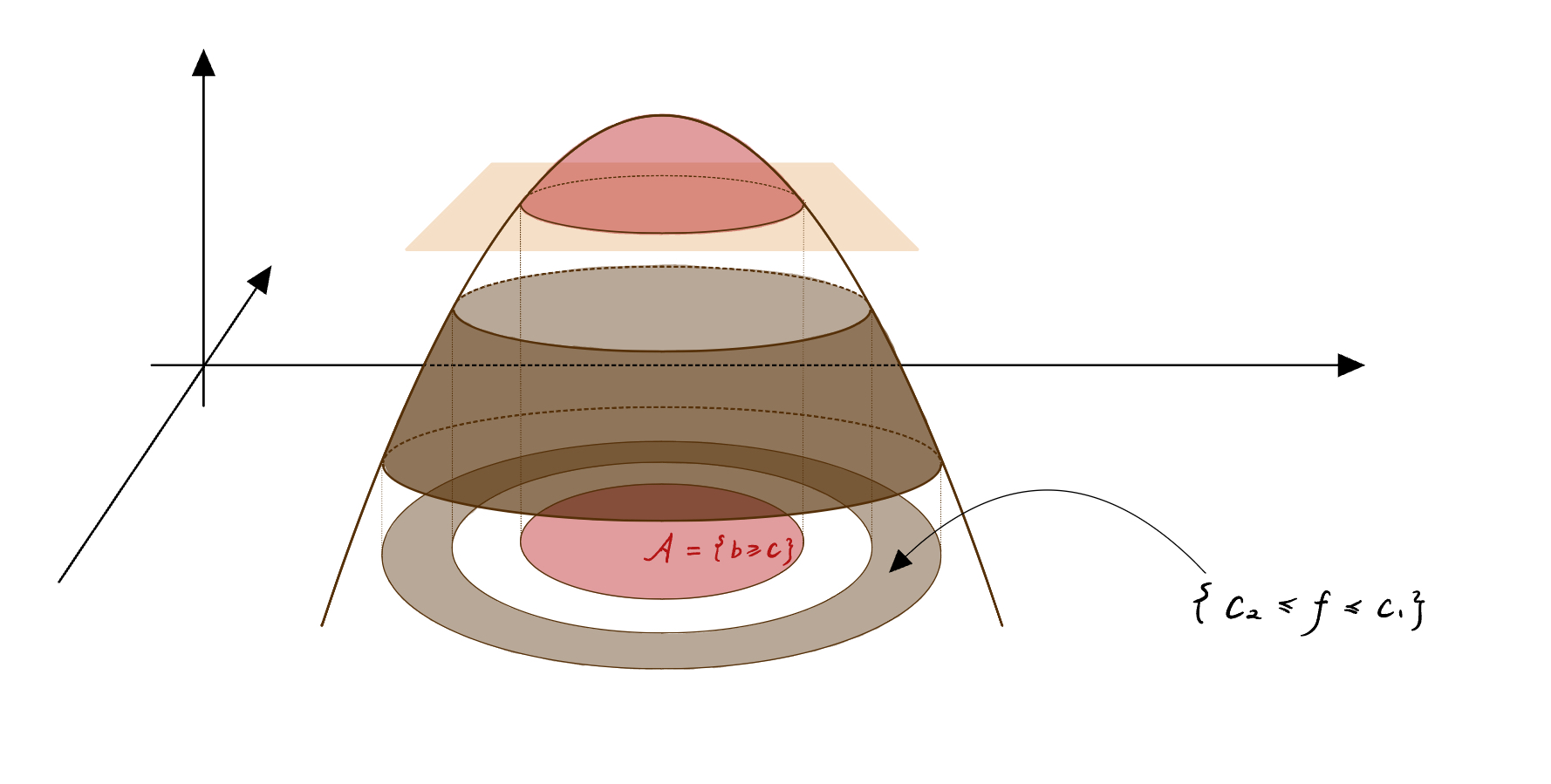}
    \end{figure}
    We denote $A = \br{b \geq c}$. Notice that $\br{c_2 \leq f \leq c_1}$ is compact for $c_1 \leq c_2 \leq c$, so we have a uniform lower bound for $\abs{\nabla f}$ over this region, i.e. for every $p \in \br{c_2 \leq f \leq c_1}$, $\abs{\nabla f|_p} > \delta > 0$ for some constant $\delta$, thus, there exists some finite time $T_x$ such that $\phi_t(x) \in A$ where $\phi_t$ is the flow map generated by the gradient vector field.
    
  	Let $\alpha(t)=\frac{\arctan(t)}{\pi/2}$. Then  $\alpha : [0, \infty) \to [0, 1)$ is a bijection. We can continuously extend it to a bijection  $ [0, \infty] \to [0, 1]$ which we still denote by $\alpha$.  then we can define the strong deformation retraction map $\Pi: M\times [0,1]\to M$   by the formula $\Pi_t= \phi_{\alpha^{-1}(t)}$.
    \begin{itemize}
    	\item $\Pi_t$ is $1$-Lipschitiz: This is because $f$ is still concave as the minimum of two concave functions. By the previous discussion, the flow map $\phi_t$ of the concave function $f$ is $1$-Lipschitz for any $t \ge 0$ (Section~\ref{sect: gradient flow of lambda concave});
    	\item $\Pi_0 = \id_M$: This is trivial;
    	\item $\Pi_t|_{A} = \id_{A}$ for $t \in [0, 1]$. This is because $\nabla f|_{A} \equiv 0$ on $A$ and thus $\phi_t$ is constant on $A$;
    	\item $\Pi_1(x) \in A$ for all $x \in M$: This is because for any $x \in M$, $T_x$ is finite, thus $\alpha^{-1}(t) > T_x$ when $t \to 1$. 
    \end{itemize}
    It is clear that $\Pi$ is a homotopy. Thus we have proved the corollary for $c < b_{\max}$. 
       
    In general, if $c = b_{\max}$, then we can take $A_\eps = \br{b \geq c - \eps}$ for some small $\eps > 0$. And repeat the above arguments producing a family of deformations $\br{\Pi^\eps}$. Then we can show that $\br{b = b_{\max}}$ is a strong deformation retraction by taking $\Pi^0 = \lim_{\eps \to 0}\Pi^\eps$ (The limits of 1-Lipshcitz maps are also 1-Lipschitz). 
\end{proof}
\begin{example}
	Here is an example that explains why we need to consider $\br{b = b_{\max}}$ as a separate case in the above proof. Consider $b: \R \to \R$ such that $b = -x^2$. In this case $0 = b_{\max}$. Thus, $f = \min{\br{b, 0}} - b$ and $\nabla f(x) = -2x$. The flow map solves $x(t)$ the following ODE:
	\begin{equation*}
		x' = 2x \implies x(t) = ce^{-2t} 
	\end{equation*}
	for constant $c$. Therefore, it takes $t = \infty$ for the flow map sending $x$ to the maximum point $0$. By uniqueness of solutions of IVP this phenomena always happens if $b$ is smooth and $c=b_{\max}$.
\end{example}
Next, we are going introduce the corollary concluded by Sharafutdinov which is an essential construction in the proof of the soul conjecture. 
\begin{corollary}[Sharafutdinov]\label{cor: Sharafutdinov}
    Let $(M^n, g)$ be complete, $\sect_M \geq 0$. $S \subseteq M$ is the soul. Then there is a $1$-Lipschitz strong deformation retraction map
    \begin{equation*}
        \sh: M \to S.
    \end{equation*} 
\end{corollary}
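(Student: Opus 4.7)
The plan is to iterate the construction from Corollary \ref{cor: retraction exists}, producing a $1$-Lipschitz strong deformation retraction at each stage of the soul construction and concatenating the resulting homotopies. Recall that the soul $S = C^k$ is built inductively as $C^0 = C_{\max}$ (the maximum set of the total Busemann function $b$), and $C^{i+1} = \{f_i = \max f_i\}$ where $f_i = d(\cdot, \partial C^i)$ on $C^i$, and that this process terminates after finitely many steps since $\dim C^{i+1} < \dim C^i$.

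First, I would apply Corollary \ref{cor: retraction exists} directly to the function $b$ on $M$: since $b$ is concave on $M$ and all its superlevel sets are compact by Proposition \ref{prop: total busemann convex and cpt}, the corollary produces a strong deformation retraction $\Pi^{(0)}: M \times [0,1] \to M$ onto $C^0$ such that each slice $\Pi^{(0)}_t$ is $1$-Lipschitz (as it arises from the gradient flow of the concave function $\min(b,b_{\max}-\eps)$, which is $1$-Lipschitz by the distance estimate in Proposition \ref{prop: distance estimates} with $\lambda=0$, then passing to the limit $\eps \to 0$).

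Next, I would iterate: for each $i \geq 0$, the function $f_i = d(\cdot, \partial C^i)$ is concave on $C^i$ by Theorem \ref{thm: distance function to C convex is concave}, and its superlevel sets are closed subsets of the compact set $C^0$, hence compact. The same argument as in the proof of Corollary \ref{cor: retraction exists} — run the $1$-Lipschitz gradient flow of $\min(f_i, \max f_i - \eps)$ inside $C^i$, then let $\eps \to 0$ — yields a $1$-Lipschitz strong deformation retraction $\Pi^{(i+1)}: C^i \times [0,1] \to C^i$ onto $C^{i+1}$. After $k$ steps, $C^k = S$. Set
\begin{equation*}
\sh := \Pi^{(k)}_1 \circ \Pi^{(k-1)}_1 \circ \cdots \circ \Pi^{(0)}_1 : M \to S,
\end{equation*}
and concatenate the homotopies on the subintervals $[i/(k+1),(i+1)/(k+1)]$ to obtain a single homotopy $M \times [0,1] \to M$ from $\id_M$ to $\sh$, fixing $S$ at every time. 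Since each $C^i$ is totally convex in $M$ (indeed, total convexity is inherited inductively: $C^0$ is totally convex in $M$ by Proposition \ref{prop: total busemann convex and cpt}, and each $C^{i+1}$ is totally convex in $C^i$ by Proposition \ref{cla: convex interior} or directly from the concavity of $f_i$), the induced metric on $C^i$ from $M$ coincides with its intrinsic metric. Hence each factor $\Pi^{(i)}_1$ is $1$-Lipschitz as a map into $(M,g)$, and the composition $\sh$ is $1$-Lipschitz.

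The main obstacle will be justifying the gradient-flow step on each $C^i$ rigorously, since $C^i$ is a compact totally convex set (a topological manifold with boundary), not a smooth complete Riemannian manifold to which Theorem \ref{thm: gradient curve exists and unique} and Corollary \ref{cor: retraction exists} were originally stated. The three ingredients used in those results — semi-concavity of $f_i$, compactness of level sets, and the first variation inequality needed for the $1$-Lipschitz estimate of Proposition \ref{prop: distance estimates} — all survive on $C^i$: concavity is Theorem \ref{thm: distance function to C convex is concave}; compactness is inherited from $C^0$; and the first variation inequality holds because shortest curves in $C^i$ between points of $C^i$ coincide with shortest $M$-geodesics (by total convexity). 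Thus the gradient flow of $f_i$ is well defined on $C^i$ and $1$-Lipschitz, and the proof proceeds as outlined.
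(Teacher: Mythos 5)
Your proposal is correct and follows essentially the same route as the paper's own proof: iterate Corollary \ref{cor: retraction exists} through the nested totally convex sets $C^0 \supset C^1 \supset \cdots \supset C^m = S$ and compose the resulting $1$-Lipschitz retractions. You are in fact more careful than the paper about the two technical points that the paper glosses over with ``as before'' --- namely that the gradient-flow construction must be carried out on the compact convex sets $C^i$ rather than on a complete smooth manifold, and that total convexity of each $C^i$ guarantees the $1$-Lipschitz bound on each factor holds with respect to the ambient metric on $M$.
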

\begin{proof}
Recall the construction of the Soul. We first consider a concave function with compact super-level sets. 
$b: M \to \R$, $b = {{\min{\empty}}_{\gamma}}\br{b_\gamma}$, where $\gamma$ is a ray starting at $p$.

Then  $b$ is concave with compact super-level sets. 
Denote $C^0 = C_{\max} = \br{b = b_{\max}}$ the maximum level set of $b$.
Then by the above corollary \ref{cor: retraction exists}, there exists a $1$-Lipschitz strong deformation retraction $\Pi_0: M \to C^0$.
Next, if the boundary $\partial C^0 \neq \emptyset$. 
Then we take $f_0 = d(\cdot, \partial C^0)$ on $C^0$ and $f_0$ is concave.
So we can take the $C^1$,  the second maximal level set again. 
As before, there is a strong deformation $\Pi_1: C^0 \to C^1$ (by corollary \ref{cor: retraction exists}). Then we know the soul $S$ can be found in
\begin{equation*}
        C^0 \supset C^1 \supset C^2 \supset \cdots \supset C^m =: S 
    \end{equation*}
    and remember that $C^m$ has no boundary. Again, we can construct the retraction. $\Pi_j: C^{j - 1} \to C^j$ for each $j = 0, \dots, m$. We can define the Sharafutdinov retraction to be the composition of these retractions. Namely,
    \begin{equation*}
        \sh = \Pi_m\circ \Pi_{m - 1}\circ \cdots \Pi_2 \circ \Pi_1 \circ \Pi_0
    \end{equation*}
    which is clearly also a $1$-Lipschitz strong deformation retraction. 
\end{proof}
\begin{definition}
The map $\sh: M \to S$ from the above corollary is called a \textbf{Sharafutdinov retraction}.
\end{definition}
Note that at this point we don't know if a 1-Lipschitz deformation retraction onto the soul is unique, hence for now we will say a Sharafutdinov retraction and not the Sharafutdinov retraction.
\section{Perelman Theorem and the Soul Conjecture}
Now, we can study Perelman's proof of the  Cheeger-Gromoll soul conjecture. The key is the following theorem by Perelman. 
\begin{theorem}[Perelman \cite{P94}]\label{thm: perelman's theorem}
    Let $(M^n, g)$ be complete, noncompact Riemannian manifold without boundary with $\sect_M \geq 0$. By the Soul theorem, \ref{thm: soul theorem}, there exists $S \subseteq M$ which is the soul. Let $\sh: M \to S$ be the $1$-Lipschitz Sharafutdinov retraction in \ref{cor: Sharafutdinov}. Then
    \begin{itemize}
    	\item \textbf{Part 1:} For any $p \in S$, and any $v \in \nu_pS = (T_pS)^\perp$, we have 
    		\begin{equation}\label{eq: perelman pi(ex) = p}
        		\sh(\exp_p(tv)) = p \quad\text{for all $t \geq 0$.}
    		\end{equation}
    	\item \textbf{Part 2:} For any $x \in M$, there is a unique $p \in S$ closest to $x$ and  $\sh(x) = p$. In other words, $\sh$ is the nearest point projection onto the soul $S$.
    	\item \textbf{Part 3:} Moreover, if $\gamma: [0, l] \to S$ is a nonconstant geodesic starting at $p, v \in \nu_pS \subseteq T_pM$, we can extend $v$ parallelly along $\gamma$, say $\nu(\gamma(t))$, and exponentiate to a flat totally geodesicically immersed rectangle $R$ in $M$ via $\Gamma(t, s) = \exp(s\nu(\gamma(t)))$, i.e.
    	\begin{equation*}
    		R = \br{\Gamma(t, s): t \in [0, l], s \in [0, \infty)}
    	\end{equation*}
    \end{itemize}
\end{theorem}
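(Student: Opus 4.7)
The plan is to establish the three parts together, with Part 3 as the central technical ingredient from which Parts 1 and 2 follow. The main tools are Berger's comparison (Theorem \ref{lem: Berger comparison}) with its rigidity case (Proposition \ref{lem: rigidity case}) and the 1-Lipschitz property of the Sharafutdinov retraction $\sh$ from Corollary \ref{cor: Sharafutdinov}.

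For Part 3, fix $p \in S$, $v \in \nu_p S$, and a nonconstant geodesic $\gamma \colon [0,l] \to S$ with $\gamma(0) = p$. Since $S$ is totally geodesic, the normal bundle $\nu S$ is preserved by parallel transport in $M$ along curves in $S$, so transporting $v$ along $\gamma$ yields $V(t) \in \nu_{\gamma(t)} S$. Set $\Gamma(t,s) = \exp_{\gamma(t)}(sV(t))$ and $c_s(t) = \Gamma(t,s)$. By Berger's comparison with $\kappa = 0$ (Corollary \ref{berger-k=0}), $\length(c_s) \leq l$ for all small $s$. To invoke Berger rigidity and conclude that $\Gamma$ is a flat totally geodesic rectangle, I need to establish the equality $\length(c_s) = l$. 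The 1-Lipschitz map $\sh$ supplies the reverse inequality: writing $\alpha(s) = \sh(\Gamma(0,s))$ and $\beta(s) = \sh(\Gamma(l,s))$, both in $S$, I get $d(\alpha(s), p) \leq s$ and $d(\beta(s), \gamma(l)) \leq s$; combining this with the triangle inequality in $S$ and the bound $d(\alpha(s), \beta(s)) \leq \length(c_s)$ yields $\length(c_s) \geq l - 2s$. I would close the remaining gap by iterating the construction, restarting from the minimizing segment in $S$ between $\alpha(s)$ and $\beta(s)$ with parallel-transported normal vectors close to $V(0)$ and $V(l)$, and using compactness of the unit normal bundle over a neighborhood of $\gamma$ in $S$ to force the equality. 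Berger rigidity then produces the flat totally geodesic immersed rectangle on some $[0,l]\times [0,s_0]$, and an open–closed continuation argument on the maximal interval of flatness extends the rectangle to all $s \in [0,\infty)$.

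Given Part 3, Part 1 follows by contradiction. Suppose $x = \exp_p(tv)$ with $v \in \nu_p S$ and $\sh(x) = q$ with $q \neq p$. Connect $p$ to $q$ by a shortest geodesic in $S$ and apply Part 3 to build a flat totally geodesic strip containing $x$ along one edge; the rigid Pythagorean geometry of the strip forces $d(x, q)^2 = t^2 + d(p,q)^2 > t^2$, which is incompatible with the 1-Lipschitz bound $d(\sh(x), p) = d(q, p) \leq d(x, p) = t$ when combined with the structure of the flat rectangle. Part 2 follows immediately: given $x \in M$, let $p \in S$ realize $d(x, S)$. The first variation formula (Theorem \ref{thm: the first variation formula}) forces any shortest segment $[xp]$ to meet $S$ orthogonally at $p$, so $x = \exp_p(tv)$ for some $v \in \nu_p S$, and Part 1 gives $\sh(x) = p$. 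Uniqueness of $p$ as the nearest point then follows again from the flat rectangle structure: two distinct feet of perpendiculars for $x$ would produce overlapping flat strips whose Pythagorean distance formula would contradict minimality of one of them.

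The main obstacle is the self-improvement step in Part 3 that upgrades the Berger inequality $\length(c_s) \leq l$ to equality. A priori one gets only $l - 2s \leq \length(c_s) \leq l$, and controlling how the iteration composes so that quantitative losses do not accumulate requires careful compactness and bookkeeping over the unit normal bundle along $\gamma$. The remaining ingredients — extending flatness in $s$, the Pythagorean calculation in the flat strip, and the perpendicularity from the first variation formula — are comparatively routine.
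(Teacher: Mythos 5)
The core obstruction in your plan is precisely the step you flag as ``the main obstacle,'' and I do not think it can be repaired by iteration in the way you describe. Your bound $\length(c_s) \geq l - 2s$ loses an additive $2s$ because you only know $d(\sh(\exp_p(sv)),p)\le s$; restarting from $\alpha(s),\beta(s)$ reproduces the same lossy estimate on the new, shorter segment and never reaches the equality $\length(c_s)=l$ that Berger rigidity requires. In fact, the inequality $d(\sh(\exp_p(sv)),p)\le s$ becoming equality \emph{is} Part~1, so trying to obtain Part~3 first and then deduce Part~1 from it is logically inverted: for an arbitrary geodesic in $S$, the estimate $d(\Gamma(0,s),\Gamma(l,s)) \ge d(\sh\Gamma(0,s),\sh\Gamma(l,s))$ only gives $l$ on the right if you already know $\sh$ fixes $\Gamma(0,s)$ and $\Gamma(l,s)$, i.e.\ Part~1.

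What the paper does that you are missing is an \emph{extremal} argument. It sets
$m(t)=\max\{\,d(p,\sh(\exp_p(v))): p\in S,\ v\in\nu_pS,\ |v|=t\,\}$
and then exploits the maximality of the chosen $v$. Concretely, extending the shortest geodesic $[\,\sh(\exp_p(v))\,p\,]$ in $S$ past $p$ to a point $q$, and parallel transporting $v$ to $w$ at $q$, the maximality of $v$ gives $d(\sh(\exp_q(w)),q)\le d(\sh(\exp_p(v)),p)$; together with the $1$-Lipschitz bound $d(\sh(\exp_p(v)),\sh(\exp_q(w)))\le d(\exp_p(v),\exp_q(w))\le d(p,q)$ (Berger), the triangle inequality collapses to a chain of equalities, and Berger rigidity yields a flat totally geodesic rectangle over that one critical direction. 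The paper then uses this rectangle and a perturbation $w'=(1-\eps/t)w$ to prove $m'_-(t)\le 0$, which with $m\ge 0$ and $m(0)=0$ forces $m\equiv 0$ for small $t$, and a maximal-interval argument extends this to all $t$. Your iteration-and-compactness sketch does not contain an analogue of this maximality step, and without it the quantitative loss does not close.

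Two smaller issues. First, in your derivation of Part~1 from Part~3 you assert $d(x,q)^2=t^2+d(p,q)^2$ from ``the rigid Pythagorean geometry of the strip''; but the rectangle is only totally geodesically \emph{immersed}, so the ambient distance satisfies $d(x,q)\le\sqrt{t^2+d(p,q)^2}$, not equality, and your contradiction evaporates. Second, once Part~1 is in hand, the uniqueness in Part~2 is immediate without invoking overlapping flat strips: any shortest segment from $x$ to $S$ is normal to $S$ by the first variation formula, so $x=\exp_p(tv)$ for any such foot $p$, and Part~1 gives $\sh(x)=p$; since $\sh$ is a single-valued map, $p$ is determined. Your instinct to use Berger comparison, Berger rigidity, and the $1$-Lipschitz property of $\sh$ is exactly right; the missing ingredient is to organize the argument around the extremal quantity $m(t)$ and prove $m\equiv 0$ before attempting to build flat rectangles in arbitrary directions.
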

\begin{remark}
	\textbf{Part 2} is a trivial implication from \textbf{Part 1}. Let $x \in M$, we can emanate a geodesic $\gamma$ from $x$ to $S$ at $p$. We parameterize $\gamma: [0, 1]$ by $\gamma(0) = p$ and $\gamma(1) = x$, then 
	By the first variation formula, we know that $v = \dga(0) \in \nu_pS$, thus by \textbf{Part 1}, we know that for all $t \geq 0$
	\begin{equation*}
		\sh(\exp_p(tv)) = p 
	\end{equation*}
	which gives us the uniqueness in \textbf{Part 2}. and tells us $\sh$ is the nearest point projection.  This shows that the map  $\sh$ satisfying the conditions of the theorem is unique. therefore from now on we will refer to it as \emph{the} Sharafutdinov retraction.
\end{remark}
\subsection{The Proof of the Soul Conjecture}
Using this theorem \ref{thm: perelman's theorem}, the soul conjecture is an easy corollary.
\begin{corollary}[Soul Conjecture of Cheeger and Gromoll]\label{cor: Soul Conj}
    Let $(M^n, g)$ be a complete open Riemannian manifold with $\sect_M \geq 0$. Suppose there exists $x \in M$ such that $\sect > 0$ near $x$. Then the soul $S$ is a point and 
    \begin{equation*}
        M^n \stackrel{\text{diffeo}}{\cong}\R^n
    \end{equation*}
\end{corollary}
\begin{proof}
    The proof uses the Sharafutdinov retraction $\sh: M \to S$.
    \begin{figure}[htbp]
    \centering
    \includegraphics[width=0.4\textwidth]{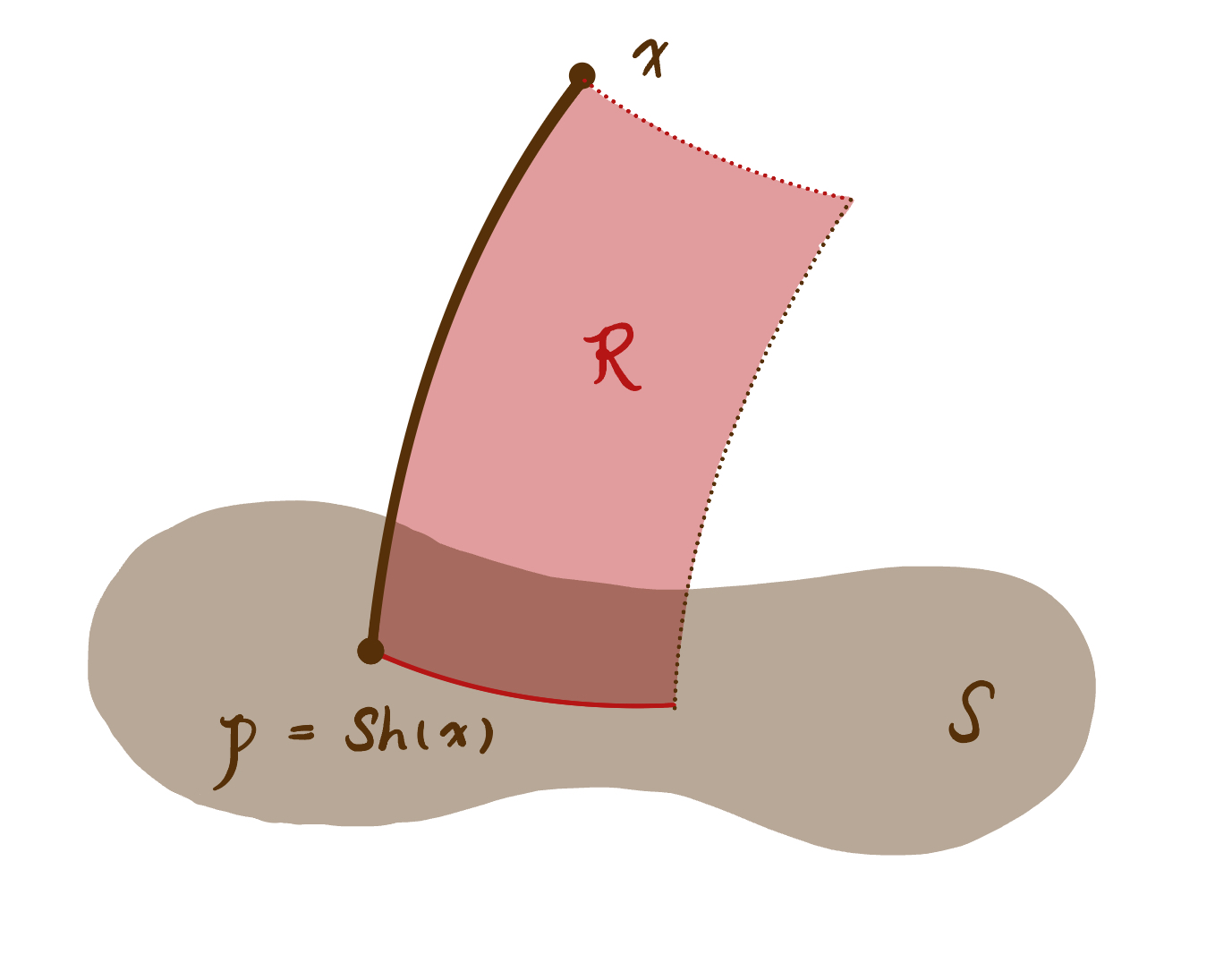}
    \end{figure}
    The existence of the Sharafutdinov retraction $\sh$ follows from the corollary~\ref{cor: Sharafutdinov}. 
    And by 2. in the Perelman theorem \ref{thm: perelman's theorem}, there exists a unique $p \in S$ closest to $x$ such that $\sh(x) = p$. 
    Suppose $S \neq \br{pt}$. Then we can take a nontrivial geodesic $\gamma$ in $S$ starting at $p$. 
    Then by 3. in Perelman's theorem \ref{thm: perelman's theorem}, we can obtain a flat totally geodesic rectangle $R$. The above picture shows that $x \in R$ at the top right corner. 
    Thus, if $\sigma$ is a $2$-dimensional plane at $x$ tangent to $R$, then $\sect(\sigma) = 0$ because  $R$ is flat. 
    However, this contradicts the fact that $
    \sect_M > 0$ near $x$. So $S = \br{pt}$ and therefore,
    \begin{equation*}
        M^n \stackrel{\text{diffeo}}{\cong}\R^n.
    \end{equation*}
\end{proof}
\section{The Proof of Perelman Theorem}
	Define a function $F_p: \nu_pS \to S$ such that $F_p(v) = \sh(\exp_p(v))$ where $\sh$ is the Sharafutdinov retraction. 
	
	In particular, when $v = 0$ and $F_p(0) = p$, we have $F_p = \id$ on all the zero sections. 
	This implies that for any $v \in \nu_pS$ such that $\abs{v} = t$, we have 
    \begin{equation}\label{eq: m(t) leq t}
    	\begin{split}
    	 	d(p, F_p(v)) 
    	 	&= d(\sh(p), \sh(\exp_p(v)))\\
    	 	&\leq d(p, \exp_p(v))\\
    	 	& \leq t 
    	\end{split}
    \end{equation}
    because $\sh$ is $1$-Lipchitz and $\sh(p) = F_p(0) = p$.
    \subsection{Perelman Theorem for Small $t$ (Part 3.)}

	We first consider the case when $t$ is small, i.e. $t < \inj(S)$. In this case, we define a map such that for each $t < \inj(S)$, 
    \begin{equation*}
        m(t) = \max{\br{d(p, F_p(v)): p \in S, v \in \nu_pS, \abs{v} = t}}.
    \end{equation*}	
    One readily checks that $m$ has the following properties
    \begin{itemize}
    	\item $m(0) = 0$;
        \item  $m(t) \leq t$ by \ref{eq: m(t) leq t};
        \item  $m \geq 0$.
    \end{itemize}
    
   	Because $d(p, F_p(v)) \leq t < \inj(S)$, we connect $F_p(v)$ to $p$ via the shortest geodesic $\gamma$ and we can extend $\gamma$ a little past $p$ to some point $q$ so that it remains shortest. Because the three points $p, q, F_p(v)$ lies on the same shortest geodesic, we have precisely 
    \begin{equation*}\label{eq: extension trivial}
        d(F(v), q) = d(F(v), p) + d(p, q). 
    \end{equation*}
    Now, we extend $v$ to a parallel vector field $V$ along $\gamma$ and denote the vector at $q$ by $w$. Then, we can project both $v$ and $w$ to $S$ by $F$. This gives us the triangle $\triangle(F(v), F(w), q)$. 
    \begin{figure}[htbp]
    \centering
    \includegraphics[width=0.8\textwidth]{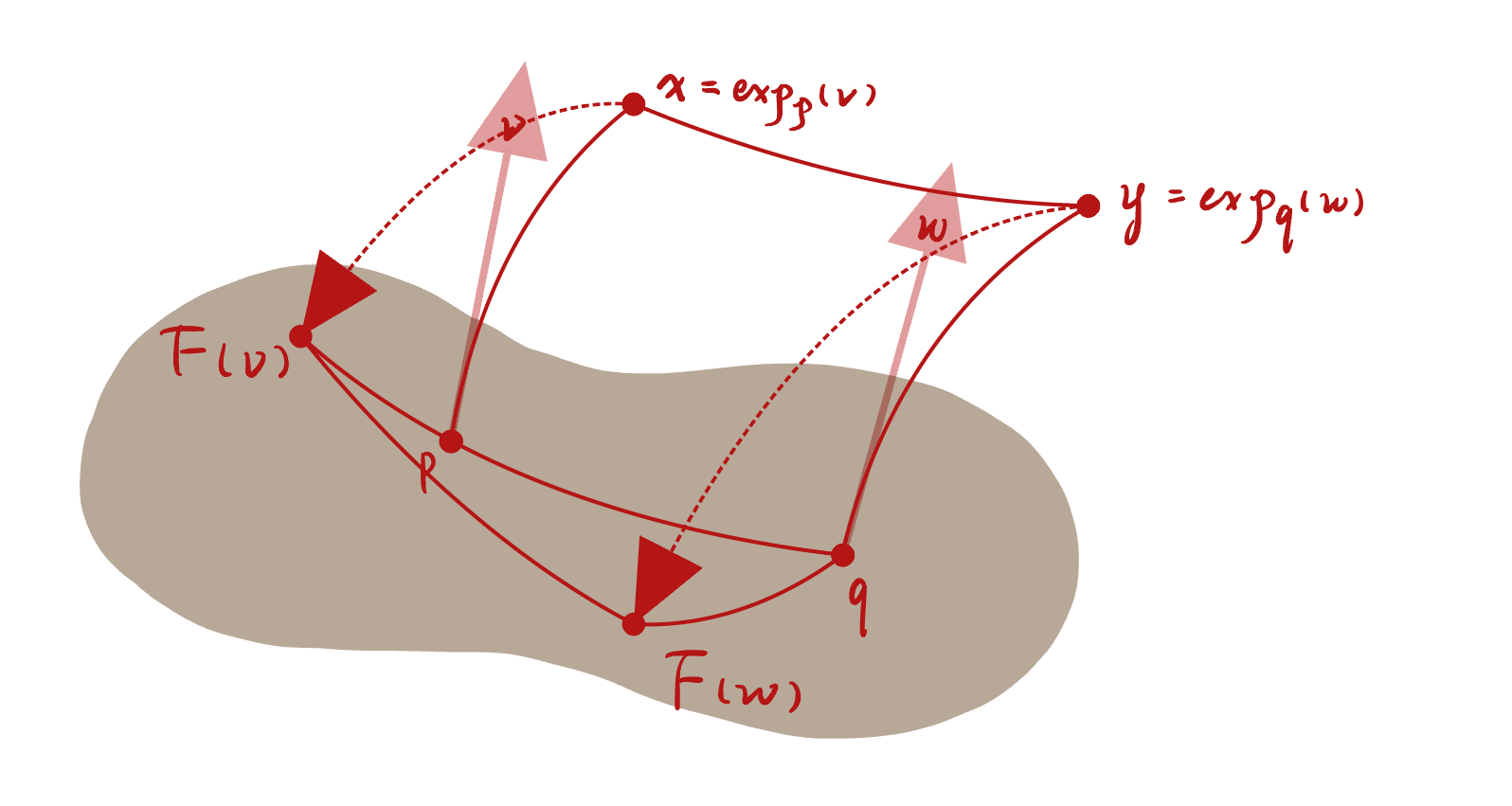}
    \caption{The construction of the triangle $\triangle(F(v), F(w), q)$}
    \end{figure}
    
    By the triangle inequality, we have
    \begin{equation}\label{eq: tri-ineq-perelman}
        d(F(v), q) \le d(F(v), F(w)) + d(F(w), q)
    \end{equation}
    By the maximality of the choice of $v$, we know
    \begin{equation}\label{eq: maximality v and w}
        d(F(w), q) \leq d(F(v), p).
    \end{equation}
    On the other hand, we can check that
    \begin{equation}\label{eq: d(F(v), F(w))}
        d(F(v), F(w))  \leq d(p, q).
    \end{equation}
    This is because 
    \begin{align*}
        d(F(v), F(w)) = 
        & d(\sh(\exp_p(v)),\sh(\exp_q(w)))\\
        \leq & d(\exp_p(v), \exp_q(w)) \quad\text{Because $\sh$ is $1$-Lipschitz}\\
        \leq & d(p, q)\quad\text{By Berger's comparison}
    \end{align*}
    Next, we can continue the triangle inequality \ref{eq: tri-ineq-perelman} by the estimates \ref{eq: maximality v and w} and \ref{eq: d(F(v), F(w))}
    \begin{align*}
        d(F(v), q) = & d(F(v), F(w)) + d(F(w), q) \\
        \leq & d(p, q) + d(F(v), p)\\
        =& d(F(v), q)\quad\text{Remember we extend $d(F(v), p)$ to $q$, thus \ref{eq: extension trivial}.}
    \end{align*}
    We find the inequalities above must be equal! In particular, we conclude that
    \begin{equation}\label{eq: perelman-rectangle}
        d(p, q) = d(\exp_p(v), \exp_q(w)).
    \end{equation}
    By the rigidity case of Berger's comparison equality \ref{eq: perelman-rectangle} tells us that $\Gamma(s, t) = \exp(sV(\gamma(t)))$ is a flat totally geodesic rectangle. 
    
	\subsection{Perelman Theorem for Small $t$ (Part 1.)}
    The following lemma implies the \ref{eq: perelman pi(ex) = p} directly. 
    \begin{lemma}\label{lem: perelman theorem lem 1}
    	$m'_{-}(t) \leq 0$. Then by the properties of $m$, we can conclude that $m \equiv 0$ for all small $t$. 
    \end{lemma}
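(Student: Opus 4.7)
The plan is to exhibit, for each fixed admissible extension length $\delta > 0$, an upper bound $m(t) - m(t - \epsilon) \leq \sqrt{\delta^2 + \epsilon^2} - \delta$ valid for all small $\epsilon > 0$, and then take $\epsilon \to 0^+$ with $\delta$ held fixed to conclude $m'_-(t) \leq 0$.

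First, I will choose a maximizer $(p, v) \in S \times \nu_p S$ with $|v| = t$ and $d(p, F_p(v)) = m(t)$; such a pair exists by compactness of the unit normal sphere bundle over the compact soul $S$. Applying the portion of Perelman's theorem already established for $t < \inj(S)$, I will extend the soul geodesic $\gamma$ past $p$ by a small $\delta > 0$ to $q = \gamma(-\delta) \in S$, parallel-transport $v$ along $\gamma$ to $w \in \nu_q S$, and obtain the flat totally geodesic rectangle $R = \Gamma([-\delta, 0] \times [0, t])$ in $M$ with corners $p, q, \exp_p(v), \exp_q(w)$. From the equality case of the triangle inequalities saturated in the proof of that rigidity, the points $F(v), p, q$ lie on a single shortest geodesic in $S$ with $p$ between $F(v)$ and $q$, so $d(q, F(v)) = d(q, p) + d(p, F(v)) = \delta + m(t)$.

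Next, for $\tau \in (0, t)$, I will consider the diagonal point $\Gamma(-\delta, \tau) = \exp_q((\tau/t)\, w)$, which lies in $R$ at distance $\tau$ from $q$. Since $R$ is totally geodesic in $M$ and isometric to the planar rectangle $[-\delta, 0] \times [0, t]$, the distance in $M$ from $\Gamma(-\delta, \tau)$ to $\Gamma(0, t) = \exp_p(v)$ equals $\sqrt{\delta^2 + (t - \tau)^2}$. The $1$-Lipschitz property of $\sh$ together with the triangle inequality then gives
\begin{equation*}
m(\tau) \;\geq\; d\bigl(q,\, F_q\bigl(\tfrac{\tau}{t} w\bigr)\bigr) \;\geq\; d(q, F(v)) - d\bigl(F(v),\, F_q\bigl(\tfrac{\tau}{t} w\bigr)\bigr) \;\geq\; m(t) + \delta - \sqrt{\delta^2 + (t-\tau)^2}.
\end{equation*}

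Setting $\epsilon = t - \tau$, this rearranges to $m(t) - m(t-\epsilon) \leq \sqrt{\delta^2 + \epsilon^2} - \delta$, and dividing by $\epsilon$ and letting $\epsilon \to 0^+$ with $\delta$ fixed yields $m'_-(t) \leq \lim_{\epsilon \to 0^+} (\sqrt{\delta^2 + \epsilon^2} - \delta)/\epsilon = 0$. The main obstacle will be ensuring the flat-rectangle rigidity genuinely applies for the chosen maximizer $(p, v)$ at the given parameter $t < \inj(S)$, together with correctly identifying the collinearity positions of $F(v), p, q$ in $S$; both are precisely the equality-case information extracted from Part 3 above. Conceptually, a naive $1$-Lipschitz estimate applied along the vertical edge $\Gamma(0, \cdot)$ of $R$ only yields $m'_-(t) \leq 1$, and the essential improvement to $0$ comes from routing through the diagonal vertex $\Gamma(-\delta, \tau)$: the horizontal extension by $\delta$ inflates $d(q, F(v))$ by the full amount $\delta$, while the diagonal $\exp_p(v)$--$\Gamma(-\delta, \tau)$ exceeds $\delta$ only by $O(\epsilon^2/\delta)$.
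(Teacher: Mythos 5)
Your proof is correct and is essentially the argument in the paper: you take the same maximizer $(p,v)$, extend the soul geodesic from $F(v)$ past $p$ to $q$ by a fixed $\delta$, invoke the flat totally geodesic rectangle from Part 3, and compare $m(t-\epsilon)$ with $m(t)$ by routing the distance estimate through the diagonal from $\exp_p(v)$ to $\exp_q\bigl((1-\tfrac{\epsilon}{t})w\bigr)$, your bound $\sqrt{\delta^2+\epsilon^2}-\delta$ being exactly the quantity the paper then loosens to $C\epsilon^2$ with $C=2/|pq|$. Two small wordings to tighten: the collinearity $d(q,F(v))=d(q,p)+d(p,F(v))$ follows directly from the construction of $q$ as an extension of $[F(v),p]$ that remains a shortest geodesic, not from the rigidity equality case; and the ambient distance from $\exp_p(v)$ to the diagonal point is only $\leq\sqrt{\delta^2+(t-\tau)^2}$ (not equal), since the rectangle is isometrically immersed but not necessarily embedded in $M$ --- which is the direction your estimate needs anyway.
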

    \begin{proof}
    	We prove this lemma via the perturbation method. 
   	Pick small some $\eps$, and define $w' := (1 - \frac{\eps}{t})w$ a vector slightly shorter than $w$, i.e. $\abs{w'} = (1 - \frac{\eps}{t})\cdot t = t - \eps$. Thus
    \begin{align*}
        d(F(v), F(w')) = & d(\sh(\exp_p(v)), \sh(\exp_q(w')))\\
        \leq & d(\exp_p(v), \exp_q(w'))\quad \text{Again because $\sh$ is $1$-Lipschitz}\\
        \leq & \sqrt{d(p, q)^2 + \eps^2}\\
        = & d(p, q)\sqrt{1 + \frac{\eps^2}{d(p, q)^2}}\\
        \leq & d(p, q)\brac{1 + \frac{2 \eps^2}{d(p, q)^2}} \quad\text{$\sqrt{(1 + \lambda) \leq 1 + 2\lambda}$}\\
        = & d(p, q) + \frac{2\eps^2}{d(p, q)} = d(p, q) + C\eps^2\quad\text{Where $C = \frac{2}{\abs{pq}}$}.
    \end{align*}
    \begin{figure}[htbp]
    \centering
    \includegraphics[width=0.6\textwidth]{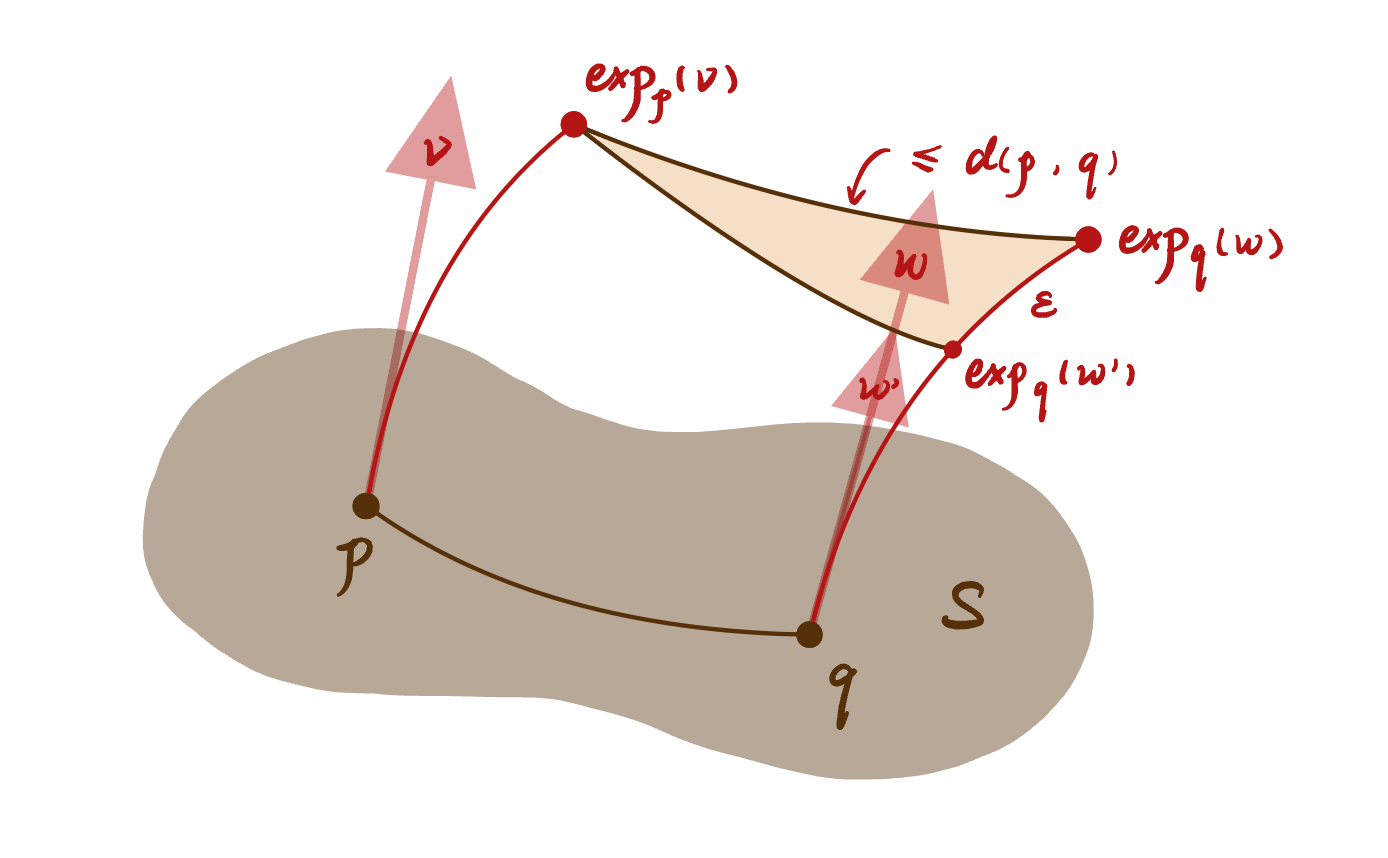}
    \end{figure}
    By the triangular inequality 
    \begin{align*}
        d(F(w'), q) \geq & d(F(v), q) - d(F(v), F(w'))\\
        \geq & d(F(v), p) - C\eps^2\\
        = & m(t) - C\eps^2
    \end{align*}
    Still, by the definition of $m$, we have
    \begin{equation*}
        m(t - \eps) \geq d(F(w'), q) \quad\text{By the maximality of $m$}
    \end{equation*}
    Thus
    \begin{align*}
        & m(t - \eps) \geq m(t) - C\eps^2\\
        \implies & m_-'(t) \leq 0\\
        \implies & \text{$m$ is nonincreasing}
    \end{align*}
Since $m\ge 0$ and  $m(0)=0$ we can conclude that $m \equiv 0$ for all small $t$.
    \end{proof}
	\begin{remark}
   		Notice that it is possible that the above inequality 
   		\begin{equation*}
   			d(\exp_p(v), \exp_q(w')) \leq \sqrt{d(p, q)^2 + \eps}
   		\end{equation*}
   		is a strict inequality. This is because the rectangle is not necessarily convex so the shortest geodesic of the rectangle is not necessarily the shortest geodesic in the manifold. 
   	\end{remark}

\subsection{Perelman's Theorem for Large $t$}

The proof of the equality \ref{eq: perelman pi(ex) = p} can be generalized to large $t$ in \cite{Per94}. 
Let 
\begin{equation*}
	T = \max{\br{t : m|_{[0, t]} = 0}}. 
\end{equation*}
We want to show that $T = \infty$. Suppose $T < \infty$. Then for small $0<\eps<\inj(S)$ we have that for any  $0<t<\eps, p \in S, v \in \nu_pS  $ a unit vector it holds that $d( F((T+t)v), p)=d( F((T+t)v), F(Tv))<\eps<\inj(S)$.

 Then Lemma~\ref{lem: perelman theorem lem 1} shows that there exist a small $\eps > 0$ such that $m \equiv 0$ on $[T, T + \eps)$. This means that $T$ is not the maximum. Therefore, $T = \infty$.



\section{Regularity of Sharafutdinov Rectraction}

The proof of the Perelman theorem implies that the Sharafutdinov retraction $\sh: M \to S$ is a \emph{submetry}, which implies some regularity of $\sh$. What we mean by a submetry is rigorously defined below.
\begin{definition}[Submetry]
    Let $(X, d^X)$ and $(Y, d^Y)$ be two metric spaces. Then $f: (X, d^X) \to (Y, d^Y)$ is a \textbf{submetry} if
    \begin{equation*}
        f(B^X_R(p)) = B^Y_R(f(p)) \quad \text{$\forall p \in X$ and $\forall R > 0$}. 
    \end{equation*}
    (Notice that submetries are $1$-Lipschitz by  definition.)
\end{definition}
\begin{example}
 Here are some examples of submetries.
    \begin{itemize}
        \item The first coordinate projection from $X = Y \times F$ to $Y$ is a submetry.
        \item Any Riemannian submersion between two complete and connected Riemannian manifolds is a submetry. Let $f: (M^m, g) \to (N^n, h)$ be a Riemannian submersion where $M$ and $N$ are  complete. By the definition of a Riemannian submersion, it is trivial that 
        \begin{equation*}
            f(B_R(p)) \subseteq B_R(f(p)).
        \end{equation*}
        Now we want to show 
        \begin{equation*}                       B_R(f(p))\subseteq f(B_R(p)) 
        \end{equation*}
     This is because we can lift the geodesics horizontally. Fix $q \in B_R(f(p))$, Denote by $\gamma$ the shortest geodesic from $f(p)$ to $q$ parameterized by its arc-length $l:= \length(\gamma) = d(f(p), q)$. We can lift $\gamma$ horizontally to $\overline{\gamma}$ starting at $p$, there is $\overline{\gamma}$ and $\overline{p}$ such that
        \begin{itemize}
            \item $f(\overline{\gamma}(t)) = \gamma(t)$ for each $t$;
            \item $f(\overline{\gamma}(0)) = f(p)$ and $f(\overline{\gamma}(l)) = f(\overline{q}) = q$;
            \item $\length(\gamma) = \length(\overline{\gamma})$
        \end{itemize}
        The last point implies that
        \begin{equation*}
          	d(p, \overline{q}) \le d(f(p), q) < R \implies \overline{q} \in f(B_R(p))
        \end{equation*} 
        therefore, we can conclude that
        \begin{equation*}
            B_R(f(p)) = f(B_R(p))
        \end{equation*}
        so that $f$ is a submetry. 
        
       In fact this shows that  $d(p, \overline{q}) = d(f(p), q)$ since the 1-Lipschitz condition implies that  $d(p, \overline{q}) \ge d(f(p), q)$.
        This means that the fibers if $f$ are equidistant and the distance between them is equal to the distance between their images. That is given $y_1,y_2\in Y$ for any $p\in f^{-1}(y_1)$ it holds that $d(p,  f^{-1}(y_1))=d(y_1,y_2)$.
        This property more generally holds for all submetries, not just Riemannian submersions.

        \item Suppose $G$ is a compact group acting on $X$  by isometries where $X$ is a complete geodesic metric space. Take $Y = X/G$. Then the orbits of this action are equidistant and $X/G$ admits a natural quotient metric
        and the projection $\pi: X \to Y$ is a submetry. 
	\end{itemize}
\end{example}
We used the following result by Guijarro and Walschap as a fact.
\begin{theorem}[Regularity of Submetries between manifolds~\cite{GW09}]\label{thm: submetry regularity}
    Suppose $f: M^m \to N^n$ is a submetry between two smooth compact Riemannian manifolds. Then $f$ is a $C^{1,1}$-Riemannian submersion, where $C^{1,1}$ means $\frac{\partial f_i}{\partial x_j}$ are locally Lipschitz in coordinates.
\end{theorem}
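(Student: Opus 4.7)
The plan is to first extract geometric structure from the metric definition of a submetry, upgrade it to smooth structure on the fibers and on a horizontal distribution, and then piece together a Riemannian submersion from local data.

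First, I would prove that submetries admit horizontal geodesic lifts: for any unit-speed shortest geodesic $\gamma\colon[0,l]\to N$ and any $p\in f^{-1}(\gamma(0))$ there exists a curve $\bar\gamma\colon[0,l]\to M$ with $f\circ\bar\gamma=\gamma$ and $\length(\bar\gamma)=l$, which is automatically a shortest geodesic in $M$. Existence follows by discretizing $\gamma$, using the balls-to-balls property of the submetry to lift each small segment, and passing to the limit using compactness of $M$. As an immediate corollary the fibers $F_y=f^{-1}(y)$ are equidistant, i.e.\ $d(p,F_{y'})=d(y,y')$ for every $p\in F_y$, since the horizontal lift realizes this distance and $f$ is $1$-Lipschitz in the other direction.

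Second, and this is the hard part, I would show that each fiber $F_y$ is a $C^{1,1}$ embedded submanifold of $M$ of dimension $m-n$. The idea is that $F_y$ has positive reach: on a uniform tubular neighborhood $U$ of $F_y$ the function $d(\cdot,F_y)$ is smooth with unit gradient. Indeed, for $p\in U\setminus F_y$ any shortest $[p,F_y]$ arises as a horizontal lift of the shortest $[f(p),y]$, so uniqueness of horizontal lifts through a given point, combined with a Rauch-type focal-point estimate controlled by a uniform curvature bound on the compact manifold $M$, shows that $d(\cdot,F_y)$ has no critical points and its level sets are $C^{1,1}$ hypersurfaces equidistant from $F_y$. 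Sending the level down to $0$, the tubular neighborhood theorem for sets of positive reach identifies $F_y$ as a $C^{1,1}$ submanifold whose normal bundle is trivialized by the normal exponential map.

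Third, with the fibers identified as $C^{1,1}$ submanifolds, I would define the horizontal distribution $\mathcal{H}_p=(T_pF_{f(p)})^{\perp}$ and verify that $df_p\colon\mathcal{H}_p\to T_{f(p)}N$ is a linear isometry. Surjectivity follows from the horizontal-lift property (every $v\in T_{f(p)}N$ is realized as the initial vector of a horizontal lift), and isometry follows because the lift preserves arclength; a dimension count then forces $\ker df_p=T_pF_{f(p)}$. Finally, working in coordinates where $N$ is straightened and a tubular neighborhood of a fiber is trivialized by the normal exponential map, one writes $f$ as the composition of a $C^{1,1}$ chart map with a linear projection, which yields the claimed $C^{1,1}$ Riemannian submersion structure.

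The principal obstacle is the second step: inferring $C^{1,1}$ manifold structure for the fibers from a purely metric hypothesis. A weaker regularity such as $C^1$ or Lipschitz-graph can be extracted relatively easily from the linear tangent cone produced by horizontal lifts, but the Lipschitz bound on the normal-coordinate derivatives requires a uniform lower bound on the reach of the fibers, which in turn needs compactness of $M$ together with a quantitative focal-point estimate in the spirit of Rauch II. Once this regularity is established, the horizontal distribution and the submersion structure follow without further difficulty.
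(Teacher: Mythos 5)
The paper treats this theorem as a cited external result from Guijarro and Walschap \cite{GW09} and gives no internal proof, so there is no argument of the paper to compare against. Your sketch does follow the broad strategy of that reference: establish horizontal geodesic lifts, show the fibers have positive reach, and deduce $C^{1,1}$ regularity.

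However, two links in your chain are thinner than you present them. The claim that positive reach, via the tubular neighborhood theorem, ``identifies $F_y$ as a $C^{1,1}$ submanifold'' is not by itself correct: a set of positive reach need not be a manifold at all (a closed half-line in $\R^2$ has infinite reach), and what Federer's theory gives directly is only a $C^{1,1}$ distance function and a unique nearest-point projection on a tube. To conclude that $F_y$ is a $C^{1,1}$ submanifold of dimension $m-n$ one must additionally show that its tangent cone at every point is a linear subspace of the right dimension; this has to be extracted from the equidistance of fibers together with uniqueness of horizontal lifts, and the latter is a property you invoke as though it were definitional for submetries, but it is not and requires its own proof. Second, your concluding remark that once fiber regularity is in hand ``the horizontal distribution and the submersion structure follow without further difficulty'' underestimates the remaining work: knowing that each individual fiber is $C^{1,1}$ does not automatically give that the horizontal distribution $p\mapsto (T_pF_{f(p)})^\perp$ depends Lipschitz-continuously on $p$, and it is precisely that uniform Lipschitz dependence---a bound on the shape operators simultaneously across \emph{all} fibers, not just one---which upgrades ``the level sets of $f$ are $C^{1,1}$'' to ``$f$ is a $C^{1,1}$ map.'' That step is the crux of the Guijarro-Walschap argument and cannot be dispatched in a single clause.
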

\begin{remark}
    The $C^{1,1}$ regularity in the above theorem is sharp and can not be improved in general.
\end{remark}
\begin{proposition}\label{cla: soul submetry}
    The Sharafutdinov retraction $\sh: M \to S$ in  theorem \ref{thm: perelman's theorem}  is a submetry
\end{proposition}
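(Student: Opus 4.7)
The inclusion $\sh(B_R(p)) \subseteq B_R(\sh(p))$ is immediate: $\sh$ is $1$-Lipschitz by Corollary \ref{cor: Sharafutdinov}, and $\sh(M) \subseteq S$, so any point $\sh(x)$ with $d(x,p)<R$ satisfies $d(\sh(x),\sh(p))<R$ and lies in $S$. The entire content of the proposition is therefore the reverse inclusion $B_R(\sh(p))\cap S \subseteq \sh(B_R(p))$, which I plan to establish by a direct geometric construction built on Parts 1--3 of Theorem \ref{thm: perelman's theorem}.

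The plan is as follows. Fix $q\in S$ with $l := d(q,\sh(p)) < R$; I need to produce $x\in M$ with $d(x,p)<R$ and $\sh(x)=q$. Since $\sh(p)$ is the (unique) nearest point of $S$ to $p$ by Part 2, any shortest geodesic from $\sh(p)$ to $p$ meets $S$ perpendicularly at $\sh(p)$ (first variation formula applied at the minimizer), so I may write $p = \exp_{\sh(p)}(v)$ for some $v \in \nu_{\sh(p)}S$ with $|v| = d(p,S)$. Choose a shortest geodesic $\gamma \colon [0,l]\to S$ from $\sh(p)$ to $q$ (if $q=\sh(p)$ take $x=p$ and we are done); since $S$ is totally geodesic, $\gamma$ is also a geodesic in $M$, and parallel translation in $M$ along $\gamma$ preserves both $TS$ and $\nu S$, producing a parallel normal field $t \mapsto \nu(\gamma(t)) \in \nu_{\gamma(t)}S$ with $\nu(\gamma(0))=v$.

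Now apply Part 3 of Theorem \ref{thm: perelman's theorem} to $\gamma$ and $v$: the map $\Gamma(t,s) = \exp_{\gamma(t)}(s\,\nu(\gamma(t)))$ is a flat, totally geodesic, isometrically immersed rectangle on $[0,l]\times [0,\infty)$. Set $x := \Gamma(l,|v|) = \exp_q(|v|\,\nu(\gamma(l)))$. Then $\nu(\gamma(l))\in \nu_qS$, so Part 1 (applied at $q \in S$) gives $\sh(x) = q$. On the other hand, $p = \Gamma(0,|v|)$ and $x = \Gamma(l,|v|)$ are images under the isometric immersion $\Gamma$ of two points at Euclidean distance $l$ in the model rectangle, so $d_M(p,x) \leq l < R$, whence $x \in B_R(p)$ and $q=\sh(x) \in \sh(B_R(p))$. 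This completes the reverse inclusion.

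The main point is really conceptual rather than technical: Parts 1 and 3 of Perelman's theorem are tailor-made for exactly this kind of construction, so the proof becomes a matter of assembling the pieces. The only subtle checks are that the initial vector $v$ of the shortest segment from $\sh(p)$ to $p$ genuinely lies in the normal bundle (forced by the nearest-point property) and that the isometric-immersion property of the flat rectangle (not merely the rectangle being flat as a submanifold) is what bounds $d_M(p,x)$ by $l$; once these are in place, the inclusion follows without further estimates.
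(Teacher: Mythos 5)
Your proof is correct and follows essentially the same route as the paper: the inclusion $\sh(B_R(p))\subseteq B_R(\sh(p))$ from $1$-Lipschitzness, and for the reverse inclusion, build the flat totally geodesic rectangle from Part 3 on the shortest geodesic $\gamma\subset S$ from $\sh(p)$ to $q$ with normal initial vector $v=\exp_{\sh(p)}^{-1}(p)$, take the far corner $x=\Gamma(l,|v|)$, use Part 1 to get $\sh(x)=q$, and bound $d_M(p,x)\le l<R$ from the isometric immersion. One minor remark: the paper asserts the equality $d(x,y)=d(p,q)$ across the rectangle, which would require the top edge to be shortest in $M$ (not just a geodesic), whereas you correctly note that only the inequality $d_M(p,x)\le l$ follows from the isometric immersion and that this suffices — so your write-up is in fact a bit more careful on this point.
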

\begin{proof}[Proof of Proposition~\ref{cla: soul submetry}]
It is easy to see that $\sh(B_R(x)) \subseteq B_R(\sh(x))$ since $\sh$ is $1$-Lipschitz. Now we want to show $B_R(\sh(x)) \subseteq \sh(B_R(x))$. Firstly we denote $p = \sh(x)$ and fix $q \in B_R(p) \subseteq S$ such that $\abs{pq} < R$. To show $q \in \sh(B_R(x))$ we take $\sigma$ as the shortest geodesic from $p$ to $x$ and we know that $\sigma \perp S$. Also, we take $\gamma$ to be the shortest geodesic from $p$ to $q$.

\begin{figure}[htbp]
    \centering
    \includegraphics[width=0.6\textwidth]{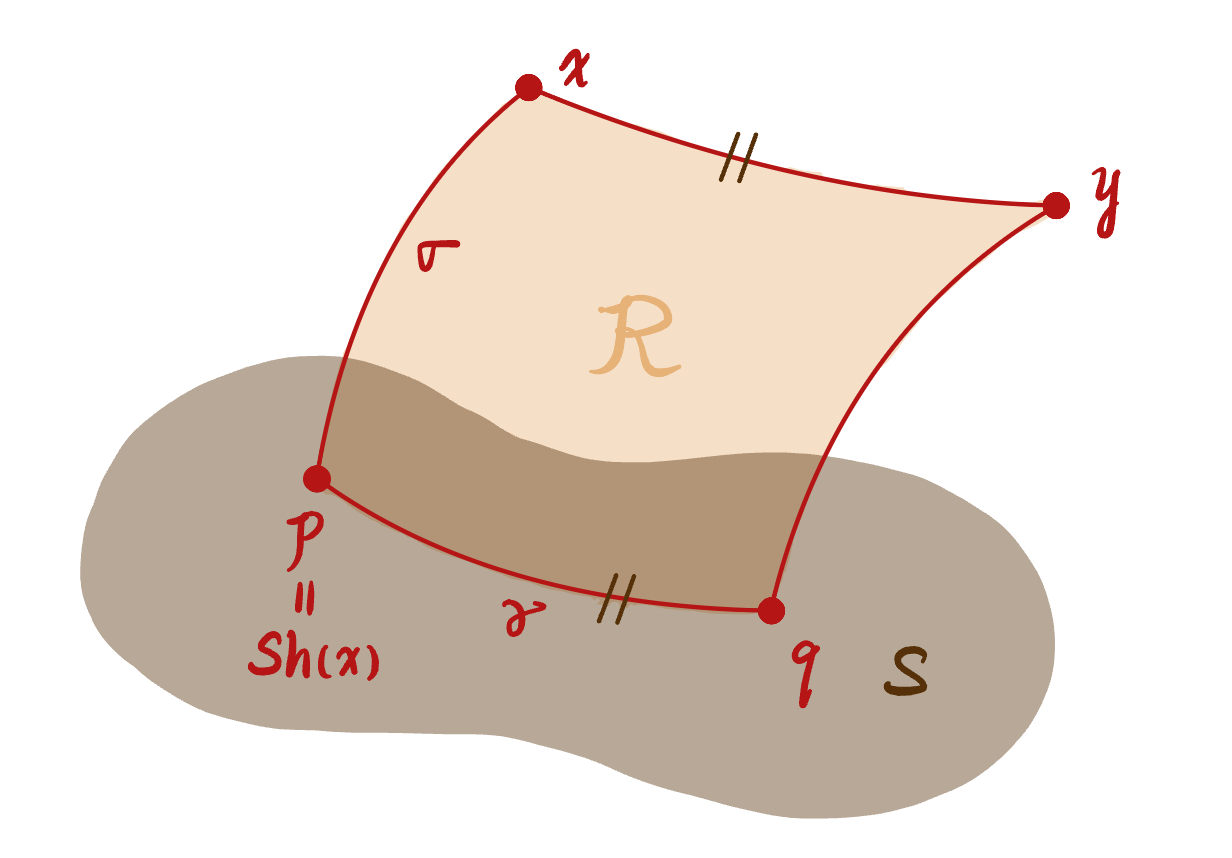}
    \end{figure}
    
 Then by Perelman's theorem \ref{thm: perelman's theorem}, we have a flat totally geodesic rectangle with sides $\gamma$ and $\sigma$. Then by the picture, we know that there is $y \in B_R(x)$ such that $\sh(y) = q$ and $d(x, y) = d(p, q) < R$.

This is equivalent to say $q = \Pi(y) \in B_R(\Pi(x))$.
\end{proof}
Combining the claim \ref{cla: soul submetry} and the regularity result of submetry \ref{thm: submetry regularity}, we can conclude the following corollary.
\begin{corollary}
    The Sharafutdinov retraction $\Pi$ in the Soul theorem is a $C^{1,1}$ Riemannian submersion.
\end{corollary}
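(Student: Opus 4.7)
The proof is essentially a two-line combination of results already in hand, but there is a small subtlety to address because $M$ need not be compact. My plan is as follows.

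First, I would invoke Proposition~\ref{cla: soul submetry}, which establishes that $\sh: M \to S$ is a submetry. The proof there relied on Part 3 of Perelman's theorem (the flat totally geodesic rectangle) together with the fact that $\sh$ is $1$-Lipschitz and retracts $S$ to itself: given any $q\in B_R(\sh(x))\subset S$, one builds a flat rectangle with sides $[\sh(x)q]\subset S$ and $[\sh(x)x]\perp S$ and reads off a point $y$ with $\sh(y)=q$ and $d(x,y)=d(\sh(x),q)<R$, so that $B_R(\sh(x))\subseteq \sh(B_R(x))$. Combined with the trivial inclusion $\sh(B_R(x))\subseteq B_R(\sh(x))$ from the $1$-Lipschitz property, this gives equality, which is the definition of a submetry.

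Second, I would apply Theorem~\ref{thm: submetry regularity} of Guijarro--Walschap, which asserts that any submetry between smooth Riemannian manifolds is automatically a $C^{1,1}$ Riemannian submersion. Since being a $C^{1,1}$ Riemannian submersion is a local property on both the source and the target, the compactness hypothesis in Theorem~\ref{thm: submetry regularity} can be removed by restricting to precompact open sets: for each $x\in M$ pick relatively compact open neighborhoods $U\ni x$ in $M$ and $V\ni \sh(x)$ in $S$ with $\sh(\overline U)\subset V$, and apply the cited theorem to $\sh|_{\overline U}:\overline U\to \overline V$, which is a submetry between compact Riemannian manifolds with boundary. Note that $S$ itself is automatically compact (it was built as the intersection of a descending chain of compact convex sets $C^0\supset C^1\supset\cdots$ starting from the compact sublevel set $C_{\max}$ of the total Busemann function), so covering $M$ by such neighborhoods causes no difficulty on the target side.

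The main obstacle, and really the only one, is the localization step above: one has to confirm that the Guijarro--Walschap regularity statement, whose hypotheses as quoted involve compact manifolds without boundary, extends to the local setting we need. I expect this to be essentially automatic since their proof proceeds by studying horizontal lifts and shape operators near each individual fiber, a local analysis, but verifying this carefully is the only nontrivial point in the argument. Once that is in place, combining the submetry property from Proposition~\ref{cla: soul submetry} with the local $C^{1,1}$-Riemannian-submersion conclusion gives the corollary immediately.
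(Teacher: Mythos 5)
Your proposal follows the paper's argument exactly: combine Proposition~\ref{cla: soul submetry} (the Sharafutdinov retraction is a submetry) with Theorem~\ref{thm: submetry regularity} (Guijarro--Walschap regularity of submetries). You additionally flag, correctly, that the version of the regularity theorem as quoted in the notes carries a compactness hypothesis that $M$ does not satisfy; the paper silently elides this, and your localization remark is a legitimate way to patch it. In fact the original Guijarro--Walschap theorem applies to complete, not merely compact, Riemannian manifolds, so the compactness in the quoted statement is not optimal and no localization is actually needed; but the concern you raise is genuine given what the notes state, and your fix would work.
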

Indeed the full regularity result that $\sh$ is $C^\infty$ is proved by Wilking in 2007 \cite{Wi07}. This is a very difficult result and we are not going to prove this in the lecture. 

In the Soul conjecture \ref{cor: Soul Conj}, we know that $\nu(S)$ is diffeomorphic to $M$. Does that mean the normal exponential map $\nu(S) \to M$ is a diffeomorphism? The answer to this is NO. The map is generally not a diffeomorphism even when $S = \br{pt}$. Consider the following example.
\begin{example}
   \corrv{ Take $M \subseteq \R^3$ to be a surface obtained by rotating the parabola $y=x^2$ around the $y$-axis.} This surface has $\sect_M > 0$. And the Soul of the surface is $S = \br{p}$. Now the normal exponential map $\exp_p: T_pM \to M$ is a diffeomorphism. 
    
    Now we deform this metric to be slightly bumpy and still keep the sectional curvature strictly positive. This is possible since the curvature of the paraboloid of revolution is positive away from the origin. Then there exists a $q \in M$ with non-unique shortest geodesic from $p$ to $q$ and that makes the exponential map no longer diffeomorphism.
\end{example}
In this example, because $S = \br{p}$, thus $T_pM = \nu(S)$ is the normal bundle. We should learn from this example that $\nu(S) \stackrel{\text{diffeo}}{\cong} M$ does not imply that the normal exponential map $\exp_p: \nu(S) \to M$ is a diffeomorphism.

\chapter{Cheeger-Gromoll Splitting Theorem}
We now going to move to our next topic - The splitting theorem of the manifolds of non-negative sectional curvature. The initial version of the splitting theorem is proved in the same paper of Cheeger-Gromoll where they proved the Soul theorem. 
\begin{theorem}[Splitting Theorem by Cheeger-Gromoll]\label{thm: splitting-cg}
Let $(M^n, g)$ be a complete open manifold with $\sect_M \geq 0$ and $M$ contains a line $\gamma: \R \to M$ (global geodesic) such that 
\begin{equation*}
    d(\gamma(t), \gamma(s)) = \abs{t - s} \quad \forall s, t \in \R
\end{equation*}
Then there is an isometric splitting on $M$
\begin{equation*}
    M^n \stackrel{\text{isom}}{\cong} N^{n - 1} \times \R
\end{equation*}
where $N^{n - 1}$ also a complete open manifold with $\sect_N \geq 0$.
\end{theorem}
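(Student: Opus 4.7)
The plan is to exploit the two ends of the line $\gamma$ to produce two Busemann functions whose sum is concave and non-negative on $M$, upgrade this to identical vanishing via a maximum-principle argument, deduce that one Busemann function is affine along every geodesic, and then read off the splitting from the parallel unit gradient so produced.

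First I would set $\gamma^+(t) = \gamma(t)$ and $\gamma^-(t) = \gamma(-t)$ for $t\ge 0$; both are rays by the line hypothesis. Let $b^\pm$ be the associated Busemann functions. Each $b^\pm$ is $1$-Lipschitz (Property \ref{property: b 1 lip}) and concave (by the concavity property of Busemann functions for $\sect_M \geq 0$ proved earlier in the notes). For any $x\in M$, the triangle inequality gives $d(x,\gamma(t))+d(x,\gamma(-t))\ge d(\gamma(t),\gamma(-t))=2t$ since $\gamma$ is a line; letting $t\to\infty$ yields $b^++b^-\ge 0$ on $M$. Using Property \ref{property: b = -s} together with $b^+(\gamma(-s))=s$ (a direct computation), one checks $(b^++b^-)(\gamma(s))=0$ for every $s\in\R$, so the concave function $h:=b^++b^-\ge 0$ attains its infimum $0$ on the entire line $\gamma$.

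The next step is to show $h\equiv 0$ via a strong maximum principle for concave functions: if $h$ is concave along every geodesic of a complete Riemannian manifold and attains its infimum $m$ at a point $p$, then $h\equiv m$. Indeed, for arbitrary $y\in M$, pick a unit-speed geodesic $\sigma:\R\to M$ with $\sigma(0)=p$ and $\sigma(d(y,p))=y$ (Hopf–Rinow); the concave function $h\circ\sigma$ on $\R$ has minimum $m$ at $0$, and using the midpoint concavity inequality $m=h(p)\ge\tfrac12 h(\sigma(a))+\tfrac12 h(\sigma(-a))\ge m$ forces equality, so $h\circ\sigma\equiv m$ and $h(y)=m$. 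Applied to $h=b^++b^-$ with $m=0$, we obtain $b^-=-b^+$ everywhere. Consequently $b^+$ is simultaneously concave (as $b_{\gamma^+}$) and convex (since $-b^+=b^-$ is concave) along every geodesic, hence \emph{affine} along every geodesic of $M$. Equivalently, $\nabla^2 b^+\equiv 0$ in the distributional sense.

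It remains to promote the affine function $b^+$ to a smooth function with unit parallel gradient and then read off the splitting. Property \ref{asymptotic-ray} provides, through every point $x$, an asymptotic ray along which $b^+$ decreases with unit speed; combined with affinity this forces $|\nabla b^+|\equiv 1$ and smoothness of $b^+$ (either by elliptic regularity applied to the harmonic equation $\Delta b^+=\operatorname{tr}\nabla^2 b^+=0$, or by using Property \ref{asymptotic-ray} on both sides to construct the flow of $\nabla b^+$ explicitly as unit-speed bilateral geodesics and invoking the rigidity case of Berger (Proposition \ref{lem: rigidity case}) on the flat totally geodesic rectangles spanned by these rays). Setting $N=(b^+)^{-1}(0)$, parallelism of $\nabla b^+$ shows that $N$ is a smooth embedded totally geodesic hypersurface and that the flow $\phi_t$ of $\nabla b^+$ is defined for all $t\in\R$ by completeness and is an isometry for each $t$. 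The map $\Phi:N\times\R\to M$, $\Phi(x,t)=\phi_t(x)$, is then a bijective local isometry pulling $g$ back to $g_N+dt^2$, so it is the desired isometric splitting; $N$ is complete as a closed subset of $M$ and inherits $\sect_N\ge 0$ via the Gauss equation because its second fundamental form vanishes.

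The principal obstacle is the regularity step inside the third paragraph: Busemann functions are \emph{a priori} only Lipschitz, and the jump from "affine along every geodesic" to "$C^\infty$ with parallel gradient" is the subtle point. The cleanest route is to combine the distributional identity $\nabla^2 b^+=0$ with $|\nabla b^+|=1$ (which pins down the gradient uniquely wherever differentiability holds) and invoke elliptic regularity for $\Delta b^+=0$; the geometric alternative uses Berger rigidity on the flat rectangles produced by pairs of asymptotic rays to exhibit $\nabla b^+$ directly as a smooth parallel vector field without passing through PDE machinery.
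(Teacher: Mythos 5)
Your proposal tracks the paper's argument exactly through the crucial first half: producing the two Busemann functions $b^\pm$, showing $b^++b^-\ge 0$, showing $(b^++b^-)\circ\gamma\equiv 0$, invoking the midpoint/concavity maximum principle (the paper's Method~1 of Proposition~\ref{prop: b = 0}, which as you note uses Hopf--Rinow extendability of geodesics), and concluding that $b^+=-b^-$ is affine along every geodesic. From that point on, your proof and the paper's diverge genuinely. The paper deliberately avoids any regularity upgrade of $b^+$: it constructs the gradient flows $\Phi_t^\pm$ of the semiconcave functions $b^\pm$, observes that $\Phi_t^+\circ\Phi_t^-=\id$ forces each flow to be an isometry (two $1$-Lipschitz maps composing to the identity are both isometries), and then obtains the isometric splitting metrically by applying Petrunin's contraction estimate (Proposition~\ref{prop: petrunin}) to both $b^+$ and $b^-$; this produces the Pythagorean identity $d(p,\Phi_t(q))^2=d(p,q)^2+t^2$ without ever discussing smoothness. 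The advantage the paper buys is that its argument carries over verbatim to Alexandrov spaces, as the authors explicitly remark. Your route is instead the classical Cheeger--Gromoll one: deduce $\Delta b^+=0$ distributionally from the simultaneous concavity and convexity of $b^+$, upgrade $b^+$ to $C^\infty$ by Weyl's lemma, conclude $\nabla^2 b^+\equiv 0$ so $\nabla b^+$ is a unit parallel (hence Killing) field, and read off the product structure from its flow. That is a perfectly valid argument in the smooth Riemannian setting, arguably shorter there, but it leans on elliptic PDE and does not generalize to the singular context the paper targets.

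One point worth tightening: the step from ``$b^+$ is affine along every geodesic'' to ``$\nabla^2 b^+\equiv 0$ distributionally'' deserves a sentence. The cleanest bridge is the one you gesture at: a geodesically concave Lipschitz function is distributionally superharmonic (it satisfies the local sub-mean-value inequality, as one checks by approximation or convolution), so $\Delta b^+\le 0$; applying the same to $-b^+$ gives $\Delta b^+\ge 0$, hence $\Delta b^+=0$; \emph{then} Weyl's lemma yields smoothness, and only \emph{after} that is it legitimate to assert $\nabla^2 b^+=0$ pointwise and to identify $\nabla b^+$ with the direction of the asymptotic ray to nail $|\nabla b^+|=1$. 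As written, the proposal briefly inverts this order (asserting the distributional Hessian vanishes before establishing the regularity that makes the pointwise Hessian meaningful). Your alternative geometric route via Berger rigidity would need more detail to be self-contained --- one must say which geodesic plays the role of the transverse side of the rectangle and why the rectangles assemble into a global product --- but it is the same rigidity mechanism the paper does use elsewhere (e.g.\ in Lemma~\ref{lem: flat rectangle}), so it is not implausible, just not fully spelled out here.
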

\begin{example}
    Consider $M^n = S^{n - 1} \times \R^n$ where $\gamma$ in this case are just the vertical lines $t \to (p, t)$ for each $p \in S^{n - 1}$.
\end{example}
\begin{remark}
    The splitting theorem~\ref{thm: splitting-cg} also holds for Alexandrov spaces of nonnegative curvature (We will study Alexandrov spaces in Chapter~\ref{Ch13}).  The proof of the splitting theorem in the setting of nonnegative Ricci curvature manifolds can be found in \cite{EH84, CG71}. The splitting theorem for $RCD(0, N)$ space was also discussed in Gigli's lecture in Fields Institute in the Fall of  2022 ~\cite{Gig22}. 
    \end{remark}
\section{The Proof of the Splitting Theorem in the Smooth Setting}
    Let $\gamma: \R \to M$ be a line. We can define two rays $\gamma_+, \gamma_-: [0, \infty) \to M$, i.e. 
    $\gamma_+(t) := \gamma(t)$ for all $t \geq 0$ and $\gamma_{-}(t) := \gamma(-t)$ for all $t \geq 0$. Then, we can associate the corresponding Busemann functions $b_+$ and $b_-$ to them. Remember that $b_+$ and $b_-$ are concave $1$-Lipschitz. 
    Firstly, we claim that $b_+ + b_- \geq 0$ on $M$.
    \begin{lemma}\label{cla:splitting-busemann nonnegative}
        $b_+ + b_- \geq 0$ on $M$. 
    \end{lemma}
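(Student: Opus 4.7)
The proof is a direct application of the triangle inequality together with the key property of being a line. My plan is as follows.

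First, I would unwind the definitions. By the definition of the Busemann function applied to the rays $\gamma_+(t) = \gamma(t)$ and $\gamma_-(t) = \gamma(-t)$, for any $x \in M$ we have
\begin{equation*}
    b_+(x) + b_-(x) = \lim_{t \to \infty}\bigl[d(x, \gamma(t)) - t\bigr] + \lim_{t \to \infty}\bigl[d(x, \gamma(-t)) - t\bigr].
\end{equation*}
Since both limits exist and are finite (by Claim~\ref{cla: Busemann finite}), I can combine them into a single limit:
\begin{equation*}
    b_+(x) + b_-(x) = \lim_{t \to \infty}\bigl[d(x, \gamma(t)) + d(x, \gamma(-t)) - 2t\bigr].
\end{equation*}

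Next, I would invoke the line hypothesis. Because $\gamma$ is a line, $d(\gamma(t), \gamma(-t)) = 2t$ for every $t \geq 0$. Combining this with the ordinary triangle inequality applied to the triple $\gamma(t), x, \gamma(-t)$ gives
\begin{equation*}
    d(x, \gamma(t)) + d(x, \gamma(-t)) \geq d(\gamma(t), \gamma(-t)) = 2t.
\end{equation*}
Hence each term in the limit above is nonnegative, and passing to the limit yields $b_+(x) + b_-(x) \geq 0$, as desired.

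There is really no obstacle here; the lemma is a direct consequence of the triangle inequality and the defining property of a line. The substance of the splitting theorem will come later, when one upgrades this inequality to the sharp identity $b_+ + b_- \equiv 0$ using the concavity of $b_+$ and $b_-$ (from the nonnegative curvature assumption) together with the fact that $b_+(\gamma(s)) + b_-(\gamma(s)) = 0$ along the line itself by Property~\ref{property: b = -s}. The present lemma just records the easy half of that eventual equality.
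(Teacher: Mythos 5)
Your proof is correct and follows the same route as the paper: use $d(\gamma(t),\gamma(-t)) = 2t$ from the line property, apply the triangle inequality to obtain $d(x,\gamma(t)) + d(x,\gamma(-t)) - 2t \geq 0$, and pass to the limit. There is no material difference from the argument given in the text.
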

    \begin{proof}
    For any $t \geq 0$, we have 
    \begin{equation*}
    	d(\gamma_+(t), \gamma_-(t)) = \gamma(\gamma(t), \gamma(-t)) = 2t.
    \end{equation*}
    Therefore, for any $x \in M$, by the triangular inequality, we have
    \begin{equation*}
    	d(x, \gamma_+(t)) + d(x, \gamma_-(t)) \geq 2t
    \end{equation*}
	Notice that this gives us
	\begin{equation*}
		\brac{d(x, \gamma_+(t)) - t} + \brac{d(x, \gamma_-(t)) - t} \geq 0
	\end{equation*}
	By taking limits $t \to \infty$, we got 
	\begin{equation*}
		b_+(x) + b_-(x) = \lim_{t \to \infty}(d(x, \gamma(t)) - t) + \lim_{t \to \infty}(d(x, \gamma(-t)) - t) \geq 0. 
	\end{equation*}
    \end{proof}

    \begin{lemma}
    	In particular, we have 
    	\begin{equation*}
    		 b_+(\gamma(t)) + b_-(\gamma(t)) = 0 \quad \forall t \in \R
    	\end{equation*}
    \end{lemma}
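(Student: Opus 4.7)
The plan is to prove this lemma by a direct computation using the definition of the Busemann functions together with the defining property of $\gamma$ being a line, namely that $d(\gamma(s), \gamma(t)) = |s-t|$ for all $s,t \in \R$. This is a much simpler claim than Lemma~\ref{cla:splitting-busemann nonnegative}, since we are working at points that lie on the line itself, where distances are known exactly rather than only estimated by the triangle inequality.

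First I would unpack the definitions. Recall $\gamma_+(s) = \gamma(s)$ and $\gamma_-(s) = \gamma(-s)$ for $s \geq 0$, so
\begin{align*}
b_+(\gamma(t)) &= \lim_{s \to \infty}\bigl(d(\gamma(t), \gamma(s)) - s\bigr),\\
b_-(\gamma(t)) &= \lim_{s \to \infty}\bigl(d(\gamma(t), \gamma(-s)) - s\bigr).
\end{align*}
Using that $\gamma$ is a line, for all $s > |t|$ we have $d(\gamma(t), \gamma(s)) = s - t$ and $d(\gamma(t), \gamma(-s)) = s + t$. Therefore, once $s$ is sufficiently large, the quantities inside the limits equal $-t$ and $t$ respectively, so both limits are attained (even eventually constant) and
\begin{equation*}
b_+(\gamma(t)) = -t, \qquad b_-(\gamma(t)) = t.
\end{equation*}
Adding these gives $b_+(\gamma(t)) + b_-(\gamma(t)) = 0$ for every $t \in \R$, as desired.

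There is no real obstacle here; the proof is a one-line computation once the defining equality $d(\gamma(s),\gamma(t)) = |s-t|$ is invoked. The only thing to be careful about is to use the line property (the global equality) rather than just the ray property, since $b_-$ requires evaluating distances in the "negative" direction. The substantive content of the setup lies not in this lemma but in what it is paired with: combined with Lemma~\ref{cla:splitting-busemann nonnegative} it shows that the nonnegative concave function $b_+ + b_-$ vanishes identically along the line $\gamma$, which will be the starting point for propagating the equality $b_+ + b_- \equiv 0$ to all of $M$ in the subsequent steps of the splitting theorem.
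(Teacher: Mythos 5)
Your proof is correct and takes essentially the same approach as the paper: a direct computation of $b_\pm(\gamma(t))$ from the definition of the Busemann function together with the line property $d(\gamma(s),\gamma(t)) = |s-t|$. The only cosmetic difference is that you treat all $t \in \R$ uniformly, whereas the paper splits into $t \geq 0$ and $t \leq 0$ and invokes the earlier Property on rays for the $b_+$ computation.
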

    \begin{proof}
    	We first consider the case when $t \geq 0$, In this case, 
    	\begin{equation*}
    		b_+(\gamma(t)) = b_+(\gamma_+(t)) = -t
    	\end{equation*}
    	and
    	\begin{align*}
    		b_-(\gamma(t)) 
    		&= b_+(\gamma_+(t))\\
    		&= \lim_{s \to \infty} \brac{d(\gamma_+(t), \gamma_-(s)) - s}\\
    		&= \lim_{s \to \infty} \brac{d(\gamma(t), \gamma(-s)) - s}\\
    		&= \lim_{s \to \infty}\abs{t - (-s)} - s\\
    		&= t
    	\end{align*}
    	Thus, 
    	\begin{equation*}
    		b_+(\gamma(t)) + b_-(\gamma(t)) = 0. 
    	\end{equation*}
    	The proof for $t \leq 0$ is exactly the same. 
    \end{proof}

    Denote
    \begin{equation*}
    	b=b_+ + b_-.
    \end{equation*}
    We notice that $b$ is concave,  nonnegative, and $b(p)=0$ (since $p = \gamma(0)$).
    
    \begin{proposition}
    	\label{prop: b = 0}
   		$b\equiv 0$ on $M$ 
    \end{proposition}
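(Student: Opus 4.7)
The plan is to argue by contradiction, exploiting concavity of $b$ together with the fact that $b$ vanishes on an entire geodesic line. Suppose, for the sake of contradiction, that $b(x_0) = c > 0$ for some $x_0 \in M$. Recall that since $\sect_M \ge 0$, each of the Busemann functions $b_+$ and $b_-$ attached to the rays $\gamma_+, \gamma_-$ is globally concave on $M$ (by the earlier property on concavity of Busemann functions in nonnegative curvature), so the sum $b = b_+ + b_-$ is likewise concave. In particular, for every geodesic $\eta$ in $M$, the real-valued function $t \mapsto b(\eta(t))$ is concave in the usual sense.

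Next, set $p = \gamma(0)$, so that by the previous lemma $b(p) = 0$. Using completeness of $M$, fix a unit-speed minimizing geodesic $\sigma : [0, L] \to M$ from $p$ to $x_0$, where $L = d(p, x_0) > 0$, and extend it backward past $p$ to a geodesic $\tilde\sigma : [-\varepsilon, L] \to M$ for some $\varepsilon > 0$. Define
\[
f(t) = b(\tilde\sigma(t)), \qquad t \in [-\varepsilon, L].
\]
By the concavity of $b$ along $\tilde\sigma$, the function $f$ is concave on $[-\varepsilon, L]$, with $f(0) = 0$ and $f(L) = c > 0$.

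Now write $0$ as the convex combination $0 = \tfrac{L}{L+\varepsilon}(-\varepsilon) + \tfrac{\varepsilon}{L+\varepsilon}\, L$. Concavity of $f$ then yields
\[
0 \;=\; f(0) \;\geq\; \frac{L}{L+\varepsilon}\, f(-\varepsilon) \;+\; \frac{\varepsilon}{L+\varepsilon}\, c,
\]
which rearranges to $f(-\varepsilon) \leq -\varepsilon c / L < 0$. But $b \geq 0$ on all of $M$ by Lemma~\ref{cla:splitting-busemann nonnegative}, so $f(-\varepsilon) = b(\tilde\sigma(-\varepsilon)) \geq 0$, a contradiction. Hence no such $x_0$ exists and $b \equiv 0$.

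The argument is short and has no real obstacle: it uses only the three facts already assembled in the excerpt (global concavity of $b_\pm$ under $\sect_M \ge 0$, the inequality $b \geq 0$, and the vanishing of $b$ on $\gamma$), together with geodesic completeness of $M$ to permit backward extension of $\sigma$ past $p$. The only mild subtlety is that one must extend $\sigma$ as a geodesic (not merely as a shortest curve), since concavity of $b$ is the property we need along the extension, and this is guaranteed by completeness and the concavity of $b$ along \emph{any} geodesic.
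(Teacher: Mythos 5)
Your argument is correct and is essentially the paper's first proof of this proposition: both extend a geodesic through $p = \gamma(0)$ in the backward direction and combine concavity of $b$ along that geodesic with $b \ge 0$ and $b(p) = 0$ to force $b(x_0) = 0$. (The paper also records a second proof, via the $2$-concavity of $d(\cdot,\gamma(t))^2$, precisely because it avoids the appeal to bi-infinite geodesic extendability and thus generalizes to Alexandrov spaces.)
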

    There are two approaches to proving this lemma, the first one relies on the extendability of a geodesic in the complete Riemannian manifold. This approach relies on the smoothness of the space. Nevertheless, the second approach doesn't, which therefore can also be applied to the setting of Alexandrov space. 
    
    \begin{proof}[Proof of Proposition~\ref{prop: b = 0} (Method 1)]
    	We only need to show the following statement:
    	\begin{center}
    		Let $M$ be a complete Riemannian manifold and $f: M\to \R$ be a concave, non-negative function. And, we require $f(p)=0$ for some $p\in M$. Then $f\equiv 0$.
    	\end{center}
    	    Let $x\in M$. Let $\sigma: \R\to M$ be an infinite geodesic such that $\sigma(0)=p$ and $\sigma(l)=x$. Then $f(\sigma(t))$ is concave, nonnegative, and vanishes at 0. Since
    	    \begin{equation*}
    	    	0 =f(\sigma(0))\geq \frac{1}{2}f(\sigma(l)) + \frac{1}{2}f(\sigma(-l)) 
    	    \end{equation*}
    	    And because $f$ is non-negative, we must have
    	    \begin{equation*}
    	    	f(\sigma(l)) = f(\sigma(-l)) = 0 
    	    \end{equation*}
    	    We therefore can conclude that $f(x) = f(\sigma(l)) = 0$. 
    \end{proof}
    \begin{proof}[Proof of Proposition~\ref{prop: b = 0} (Method 2)]
        We know that $d(x, \gamma(t))^2$ is $2$-concave since $\sect_M \geq 0$, i.e. $d^2(x, \gamma(t)) - t^2$ is concave in $t$. And we know that every concave function $\leq$ linear function. Thus, there exist $A, B \in \R$ such that for any $t \in \R$
        \begin{align*}
            &d^2(x, \gamma(t)) - t^2 \leq At + B \\
            \iff& d^2(x, \gamma(t)) \leq t^2 + At + B
        \end{align*}
        Also, we have that
        \begin{equation*}
            d^2(x, \gamma(-t)) \leq t^2 - At + B
        \end{equation*}
    Then we can add them together such that
    \begin{align*}
        d(x, \gamma(-t)) + d(x, \gamma(t)) - 2t
        \leq & \sqrt{t^2 + At + B} + \sqrt{t^2 - At + B} - 2t\\
        =& (\sqrt{t^2 + At + B} - t) + (\sqrt{t^2 - At + B} - t)\\
        =& \brac{t\sqrt{1 + \frac{A}{t} + \frac{B}{t^2}} - t} + \brac{t\sqrt{1 - \frac{A}{t} + \frac{B}{t^2}} - t}\\
    \end{align*}
    Since $\sqrt{1 + x} \leq 1 + \frac{x}{2}$, 
    \begin{align*}
    	& d(x, \gamma(-t)) + d(x, \gamma(t)) - 2\\
        \leq & \brac{t\brac{ 1 + \frac{1}{2}\brac{\frac{A}{t} + \frac{B}{t^2}}} - t} + \brac{t\brac{ 1 - \frac{1}{2}\brac{\frac{A}{t} - \frac{B}{t^2}}} - t}\\
        \leq& \frac{1}{2}\brac{A + \frac{B}{t}} - \frac{1}{2}\brac{A - \frac{B}{t}}\\
        =& \frac{B}{2t} + \frac{B}{2t} = \frac{B}{t}
    \end{align*}
    We notice that
    \begin{align*}
    	b(x) 
    	&= b_-(x) + b_+(x)\\
    	&= \lim_{t \to \infty}\brac{(d(x, \gamma(-t)) - t) + (d(x, \gamma(t)) - t)}\\
    	&= \lim_{t \to \infty}\brac{d(x, \gamma(-t)) + d(x, \gamma(t)) - 2t}\\
    	&\leq \lim_{t \to \infty}\frac{B}{t} = 0
    \end{align*} 
   	This means, $b  \leq 0$ on $M$.
	However, we already know that $b\geq 0$ by the lemma~\ref{cla:splitting-busemann nonnegative}. Thus, we can conclude $b \equiv 0$.
    \end{proof}
{ The advantage of method 2 of the proof is that unlike method 1 it does not rely on existence of bi-infinite geodesics in all directions and generalizes to Alexandrov spaces.}
    
    Since both $b_+$ and $b_-$ are concave, then $-b_+$ and $-b_-$ are convex. However, since $b_+ = - b_-$, these two Busemann functions are both concave and convex. Therefore, they are affine. That means for any geodesics $\sigma$ in $M$, 
    \begin{equation*}
        b_+(\sigma(t)) = at + c
    \end{equation*}
    for some constants $a$ and $c$. 
    \begin{lemma}\label{cla: busemann convex level set}
        The set $\br{b_+ = c}$ is totally convex in $M$ for every $c \in \R$
    \end{lemma}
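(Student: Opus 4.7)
The plan is to exploit the fact, established immediately before the statement of the lemma, that $b_+$ is affine along every geodesic of $M$. Recall that this follows because $b_+$ is concave as a Busemann function of the ray $\gamma_+$, while $b_+ = -b_-$ is the negative of a concave function (namely $b_-$), hence convex. Being both concave and convex along every geodesic forces $b_+\circ\sigma$ to be an affine function of the parameter for every geodesic $\sigma$ in $M$.

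With this in hand, the verification of total convexity is direct. I would take arbitrary $x, y \in \{b_+ = c\}$ and an arbitrary geodesic $\sigma\colon [0,l] \to M$ (not necessarily shortest, as required by Definition~\ref{def: totally convex}) with $\sigma(0) = x$ and $\sigma(l) = y$. By the affineness statement, there exist constants $a, c' \in \R$ such that
\begin{equation*}
b_+(\sigma(t)) = at + c' \quad \text{for all } t \in [0,l].
\end{equation*}
Evaluating at the endpoints gives $c' = b_+(x) = c$ and $al + c' = b_+(y) = c$, so $a = 0$ (since $l > 0$). Hence $b_+(\sigma(t)) \equiv c$ on $[0,l]$, which is exactly the assertion that $\sigma \subseteq \{b_+ = c\}$.

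There is essentially no obstacle once the affineness of $b_+$ is in hand; the only point that deserves attention is the distinction between ``convex'' and ``totally convex'' in Definition~\ref{def: totally convex}, where we must allow \emph{all} geodesic segments between the two points, not merely shortest ones. This causes no difficulty here because affineness of $b_+$ holds along every geodesic, shortest or not. The same argument, applied with $b_-$ in place of $b_+$, shows that each level set $\{b_- = c\}$ is also totally convex, and hence so is every intersection $\{b_+ = c_1\} \cap \{b_- = c_2\}$, which will be relevant for the subsequent identification of $M$ with a product.
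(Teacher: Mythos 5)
Your proof is correct and rests on the same key fact the paper uses, namely that $b_+$ is both concave (being a Busemann function) and convex (being $-b_-$), hence affine along every geodesic. The paper packages this by intersecting the totally convex super- and sub-level sets $\{b_+ \ge c\}$ and $\{b_+ \le c\}$, whereas you evaluate the affine restriction directly at the endpoints; both are valid and amount to the same observation.
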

    \begin{proof}
        Since
        \begin{align*}
            \text{$b_+$ is concave} \implies \text{$\br{b_+ \geq c}$ is totally convex}\\
            \text{$b_+$ is convex} \implies \text{$\br{b_+ \leq c}$ is totally  convex}
        \end{align*}
        Therefore, their intersection
        \begin{equation*}
            \br{b_+ = c} = \br{b_+ \geq c}\cap\br{b_+ \leq c}
        \end{equation*}
        is totally convex.
    \end{proof}
    \begin{remark}
    	However, the intersection of two convex subsets is not necessarily convex. In fact, the intersection is not necessarily connected. By Figure~\ref{fig: intersection of A and B}, $A$ and $B$ are both convex subsets of $S^2$. However, their intersection is just a set containing two points.
    	\begin{figure}[htbp]
    \centering
    \includegraphics[width=0.7\textwidth]{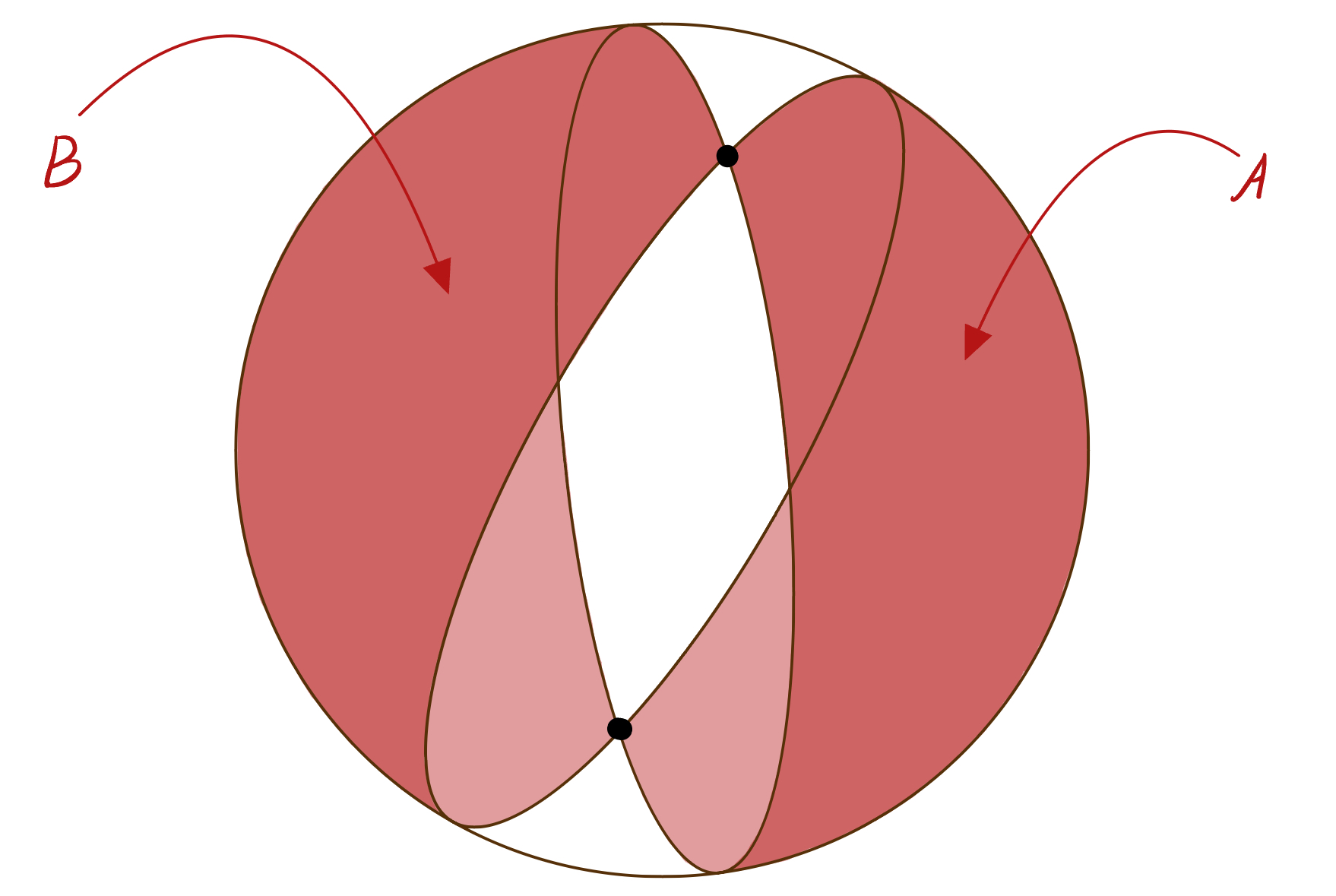}
    	\caption{The intersection of two convex subsets of $S^2$}
    	\label{fig: intersection of A and B}
    \end{figure}
    \end{remark}
\subsection{The Construction of the Gradient Flow}
Recall that both  $b_+$ and $b_-$ are affine. 
Therefore if $x, y \in \br{b_+ = c}$ and $\sigma: \R\to M$ is a geodesic passing through $x,y$ then $b_+\equiv c$ on $\sigma$,
Therefore the convex set $ \br{b_+ = c}$  is a totally geodesic submanifold without boundary.
    
Denote $\gamma_+^x$ and  $\gamma_-^x$ be a pair of negative gradient rays of $b_+$ and $b_-$ respectively starting from $x$. 
Since for every $t$, 
    \begin{equation*}
        b_+(\gamma_+^x(t)) = b_+(x) - t, \quad b_-(\gamma_-^x(t)) = b_-(x) - t
    \end{equation*}
    and $b_+ = -b_-$ (By proposition~\ref{prop: b = 0}), we have 
    \begin{align*}
        b_-(\gamma_+^x(t)) = (-b_+)(\gamma_+^x(t)) = -b_+(x) + t = b_-(x) + t. 
    \end{align*}
    and
    \begin{align*}
        b_+(\gamma_-^x(t)) = (-b_-)(\gamma_-^x(t)) = -b_-(x) + t = b_+(x) + t. 
    \end{align*}
    We summarize the results here
    \begin{itemize}
    	\item $b_+(\gamma_+^x(t)) = b_+(x) - t$
        \item $b_-(\gamma_-^x(t)) = b_-(x) - t$
        \item $b_+(\gamma_-^x(t)) = b_+(x) + t$
        \item $b_-(\gamma_+^x(t)) = b_-(x) + t$
    \end{itemize}
    Now, let's construct the joint curve
    \begin{align}
        \gamma^x(t) = 
        \begin{cases}
            \gamma_+^x(t)\quad t \geq 0\\
            \gamma_-^x(-t)\quad t < 0
        \end{cases}
    \end{align}
    So along $\gamma^x$, because $b_\pm$ are $1$-Lipschitz
    \begin{align}\label{eq-along-flow}
        & b_-(\gamma^x(t)) = 
        \begin{cases}
            b_-(\gamma_+^x(t)) = b_-(x) + t \quad t \geq 0 \\
            b_-(\gamma_-^x(-t)) = b_-(x) + t \quad t < 0 
        \end{cases}
        \\
        \implies& b_-(\gamma^x(t)) = b_-(x) + t \quad \forall t \in \R\\
        \implies& \text{$\gamma^x(t)$ is the gradient curve of $b_-$}
    \end{align}
    and
    \begin{align*}
        &b_+(\gamma^x(t)) = 
        \begin{cases}
            b_+(\gamma_+^x(t)) = b_+(x) - t \quad t \geq 0 \\
            b_+(\gamma_-^x(-t)) = b_+(x) - t \quad t < 0 
        \end{cases}
        \\
        \implies& b_+(\gamma^x(-t)) = b_+(x) + t \quad \forall t \in \R\\
        \implies& \text{$\gamma^x(-t)$ is the gradient curve of $b_+$}
    \end{align*}    
    Denote $\Phi_t^+$ and $\Phi_t^-$ the gradient flows for $b_+$ and $b_-$ respectively. The lines $\gamma^x$ are trajectories of the flows passing through $x \in M$. Because for every $\gamma^x$-line such that $\gamma^x(0) = x$. 
    \begin{equation*}
        \Phi_t^{-1}(\gamma^x(t)) = x \implies \Phi_t^+ = (\Phi_t^-)^{-1},
    \end{equation*}
    we can say that $\forall t \geq 0$,
    \begin{equation*}
        \Phi_t^+ = (\Phi_t^-)^{-1}.
    \end{equation*}
    Moreover, because both $b_+$ and $b_-$ are concave, both $\Phi_t^+$ and $\Phi_t^-$ are 1-Lipschitz. However, $\Phi_t^+ \circ \Phi_t^- = \id$. Then $\forall x, y \in M$, 
    \begin{align*}
        d(x, y) 
        =& d(\Phi_t^+ \circ \Phi_t^-(x), \Phi_t^+ \circ \Phi_t^-(y))\\
        \leq & d(\Phi_t^+(x), \Phi_t^+(y)) \quad\text{Because $\Phi_t^\pm$ are $1$-Lipschitz}\\
        \leq & d(x, y) \quad \text{Because $\Phi_t^\pm$ are $1$-Lipschitz}
    \end{align*}
    Therefore, all the inequalities are equal. Thus
    \begin{align*}
        d(\Phi_t^+(x), \Phi_t^+(y)) = d(x, y)\\
        d(\Phi_t^-(x), \Phi_t^-(y)) = d(x, y)
    \end{align*}
    Therefore, we can define an isometric global flow 
    \begin{equation*}
        \Phi_t(x) = 
        \begin{cases}
            \Phi_t^-(x) \quad t \geq 0\\
            \Phi_{-t}^+(x) \quad t < 0
        \end{cases}
    \end{equation*}
    This is clearly a flow since
    \begin{equation*}
        \Phi_s\circ\Phi_t = \Phi_{s + t}\quad \forall s, t \in \R
    \end{equation*}
    
    \subsection{The Flow Maps are Bijective}
  	We can think of it as the gradient flow of $b_-$ but defined on all of $\R$ as opposed to $\R_{\ge 0}$ only. By above we have that $b_-$ increases with speed 1 along the flow $\Phi_t$.
   	That is 
   	\[
   	b_-(\Phi_t(x))=b_-(x)+t \quad \forall  t \in \R
   	\]
   	Similarly $b_+$ increases with speed $1$ along its gradient flow $\Phi_{-t}(x)$. By Lemma~\ref{cla: gradient curve exp}, for each $x \in M$, the curve $t \mapsto \Phi_t(x)$ is a line as $\Phi_{\mp t}(x)$ are the unit speed gradient curves for $b_\pm$ respectively.

    Let's look at the map $F: N \times \R \to M$ where we define $N = \br{b_+ = 0} = \br{b_- = 0}$. For each $(x, t) \in N \times \R$
    \begin{equation*}
        F(x, t) := \Phi_t(x).
    \end{equation*}
    Because our manifold is foliated by the gradient curves of $b_-$ it easily follows that $F$ is a bijection since all of our flow maps are invertible. 

  Suppose $F(x_1,t_1)=F(x_2,t_2)$. This means  $\Phi_{t_1} (x_1)= \Phi_{t_2} (x_2)$. Applying $\Phi_{-t_2}$ to both sides gives
  $\Phi_{t_1-t_2}(x_1)=x_2$. We have $b_-(x_2)=0$ and $b_-( \Phi_{t_1-t_2}(x_1))=t_1-t_2$.  Hence $t_1-t_2=0$, $x_1=\Phi_0(x_1)=x_2$ and hence $x_1=x_2, t_1=t_2$. This shows that $F$ is 1-1. Now we want to show $F$ is onto. Let $y \in M$, we want to show there exists $x \in N$ and $t \in \R$ such that $F(x, t) = \Phi_t(x) = y$. Let $t = b_-(y)$, and consider $\gamma^y$ the line passing through $y$, by applying $b_-$, we have
  \begin{equation*}
  		b_-(\gamma^y(-t)) = b_-(y) - t = t - t = 0
  \end{equation*}
  thus we know that $x = \gamma^y(-t) \in N$ thus we can conclude that
  \begin{equation*}
  	F(x, t) = \Phi_t(\gamma^y(-t)) = y.
  \end{equation*}
  \subsection{The Flow Maps are Isometries}
  	Now we only need to verify that to $F$ is an isometry. Assuming we know that $\nabla b_-$ is $C^\infty$, we can conclude the argument as follows.  Consider the map
        \begin{equation*}
            dF_{(x, t)}: T_xN \times \R \to T_{\phi_t(x)}M
        \end{equation*}
    For any fixed $t$ look at the map  $N\times t \to \br{b_- = b_-(x) + t}$. 
    Since $\Phi_t$ is an isometry this map is an isometry as well and the same is true for its differential at any point in $N$.
    Also, we know that $\Phi_t(x) = \gamma^x(t)$ is a line orthogonal to the level sets of $b_-$.
   	In particular, its derivative is a unit vector for any $t$. Note that the tangent space $T_{(x,t)}(N\times \R)$ is a direct orthogonal sum of $T_xN$ and $\R$ and by above the differential preserves their orthogonality and is an isometry on each component. 
   	Thus  $dF_{(x,t)}$ is a block-diagonal matrix with each block an orthogonal matrix. 
   	Hence $dF_{(x,t)}$ is an isometry.
   	Since $(x,t)$ is arbitrary and $F$ is a bijection this means that $F$ is an isometry.
    And thus we finished the proof.     

    Therefore, $F$ is an isometry and $M \stackrel{\text{isom}}{\cong} N \times \R$. 

\begin{remark}
   The arguments in the last part of the proof only work if we know the flow is smooth (at least $C^2$). It can be shown that this is the case in our situation but we are going to present a different argument that does not rely on the smoothness of the flow and works on Alexandrov spaces and not just on the Riemannian manifold. 
\end{remark}
\begin{remark}
    The splitting theorem in $RCD(0, N)$ space is much harder to proof because the lower Ricci curvature bound is rather a weak assumption.
\end{remark}


\section{The Proof of the Splitting Theorem in the Non-Smooth Setting}
In the last lecture, we proved the splitting theorem for nonnegative sectional curvature Riemannian manifolds. However, the proof last time can only be made of only when the flow is $C^\infty$. Therefore, we need an alternative proof to generalize the theorem to Alexandrov space of nonnegative curvature in this lecture. Still, our goal of this section is to show $F(x, t) = \Phi_t(x)$ is an isometry. 
\subsection{Petrunin's Estimate}
Recall from the last section, we already show that $F$ is a bijection. And we know for each $t$, $\Phi_t$ is an isometry from $M$ to itself. This means if we define $N = \br{b_- = 0}$, the restriction map
    \begin{equation*}
        F|_{N \times {t_0}}: N \times \br{t_0} \to \br{b_- = t_0}
    \end{equation*}
    is an isometry. 
    
    However, this only shows that $F$ preserves distances between points in the same slice, i.e. for $p, q \in N \times \br{t_0}$, we have $d(p, q) = d(F(p), F(q))$.

    What if $p$ and $q$ are in different slices where $p = (y_1, t_1) \in N \times \br{t_1}$ and $q = (y_2, t_2) \in N \times \br{t_2}$? This is nontrivial. To estimate the distance between $\Phi_{t_1}(y_1)$ and $\Phi_{t_2}(y_2)$. We need the following proposition by Petrunin.
    \begin{proposition}[Petrunin]\label{prop: petrunin}
        Let $(M^n, g)$ be a Riemannian manifold with $\sect_M \geq 0$. Let $f$ be a $1$-Lipschitz and concave function over $M$. And $\Phi_t$ the gradient flow of $f$ for $t \geq 0$. Let $p, q \in M$ and $s, t \in M$. Then
        \begin{equation}\label{eq: flow estimate}
            d(\Phi_s(p), \Phi_t(q))^2 \leq d(p, q)^2 + 2(f(p) - f(q))(s - t) + (s - t)^2
        \end{equation}
    \end{proposition}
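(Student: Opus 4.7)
The plan is to prove \eqref{eq: flow estimate} in two steps: reduce to the case where one of the time parameters vanishes using the 1-Lipschitz property of the flow, and then integrate a pointwise bound on the distance derivative obtained from the first variation formula and the Lipschitz estimate $|\nabla f| \leq 1$.

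Since $f$ is concave, Proposition~\ref{prop: distance estimates} applied with $\lambda = 0$ shows that each flow map $\Phi_c$ is 1-Lipschitz for $c \geq 0$. The right-hand side of \eqref{eq: flow estimate} depends only on $s - t$, so setting $c = \min(s, t)$ and using the semigroup identity $\Phi_s = \Phi_c \circ \Phi_{s-c}$ gives
\[
    d(\Phi_s(p), \Phi_t(q))^2 \;=\; d\bigl(\Phi_c(\Phi_{s-c}(p)),\, \Phi_c(\Phi_{t-c}(q))\bigr)^2 \;\leq\; d(\Phi_{s-c}(p), \Phi_{t-c}(q))^2.
\]
Since $\min(s-c, t-c) = 0$, after possibly swapping $p$ and $q$ it suffices to prove the special case $t = 0$, namely
\[
    d(\Phi_s(p), q)^2 \;\leq\; d(p, q)^2 + 2(f(p) - f(q))\, s + s^2 \qquad \text{for all } s \geq 0.
\]

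To prove this reduced estimate, set $\alpha(s) = \Phi_s(p)$ and $\psi(s) = d(\alpha(s), q)^2$. By the first variation formula (Theorem~\ref{thm: the first variation formula}) applied to $d(\cdot, q)$ at $\alpha(s)$, combined with $\alpha_+'(s) = \nabla f_{\alpha(s)}$, for any $u \in \Uparrow_{\alpha(s)}^q$ one has
\[
    \frac{d^+}{ds} d(\alpha(s), q) \;\leq\; -\inp{\nabla f_{\alpha(s)}, u}.
\]
Applying Lemma~\ref{lem: grad-estimates of lambda concave function} with $\lambda = 0$ along the shortest geodesic in the direction $u$ yields $\inp{\nabla f_{\alpha(s)}, u} \geq \bigl(f(q) - f(\alpha(s))\bigr)/d(\alpha(s), q)$. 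Combining these two inequalities and multiplying by $2 d(\alpha(s), q)$ gives the key estimate
\[
    \frac{d^+}{ds} \psi(s) \;\leq\; 2\bigl(f(\alpha(s)) - f(q)\bigr).
\]

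The final ingredient is that $f$ being 1-Lipschitz forces $|\nabla f| \leq 1$ everywhere, since $|\nabla f_x|^2 = df_x(\nabla f_x) \leq |\nabla f_x|$. Hence $\frac{d}{ds} f(\alpha(s)) = |\nabla f_{\alpha(s)}|^2 \leq 1$, so $f(\alpha(s)) \leq f(p) + s$, and substitution produces
\[
    \frac{d^+}{ds} \psi(s) \;\leq\; 2(f(p) - f(q)) + 2s.
\]
Integrating from $0$ to $s$, using that $\psi$ is Lipschitz and hence absolutely continuous, gives $\psi(s) \leq \psi(0) + 2(f(p) - f(q)) s + s^2$, which is the desired bound. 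The main technical point will be justifying the integration step when the shortest geodesic from $\alpha(s)$ to $q$ is not unique (or when $\alpha(s) = q$); this is handled by the semi-concavity of $\psi$, which ensures that the pointwise right-derivative inequality is enough to control the integral.
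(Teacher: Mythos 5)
Your proof is correct and follows essentially the same route as the paper: reduce via the semigroup property and the $1$-Lipschitz property of $\Phi_t$ to the case where one time parameter is zero, then differentiate the squared distance using the first variation formula together with Lemma~\ref{lem: grad-estimates of lambda concave function}, bound the growth of $f$ along the gradient flow by $|\nabla f|\le 1$, and integrate. The only cosmetic difference is that you flow $p$ while the paper flows $q$, which is a symmetric choice with no mathematical content.
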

    The proof of Petrunin's estimate will be given in the next section. In this section, we want to apply Petrunin's estimate to show $F: N \times \R \to M$ is an isometry. Indeed, it is enough to show that 
    \begin{equation*}
    	d(F(p, s), F(q, t)) = \sqrt{d(p, q)^2 + (t - s)^2}
    \end{equation*}
    for $p, q \in N$, $s, t \in \R$. 
    
    The key point here is to apply Petrunin's estimate applies both to the flow $t\to \Phi_t$ where $t \ge 0$ which is the gradient flow of $b_-$ and to the flow $t\to \Phi_{-t}$ where $t\ge 0$ which is the gradient flow of $b_+$.
    
    Without loss of generality, suppose $s \leq t$, then 
    \begin{align*}
    	F(p, s) &= \Phi_s(p);\\
    	F(q, t) &= \Phi_t(q) = \Phi_s(\Phi_{t - s}(q))
    \end{align*}
    As we mentioned in the beginning, $\Phi_s$ is an isometry. We therefore have
    \begin{equation*}
    	d(F(p, s), F(q, t)) = d(\Phi_s(p), \Phi_t(q)) = d(\Phi_s(p), \Phi_s(\Phi_{t - s}(q))) = d(p, \Phi_{t - s}(q)). 
    \end{equation*}
 Therefore it is enough to prove the case when $s = 0$, $t \geq 0$, i.e.
    \begin{equation}\label{eq: equation of isometry}
    	d(p, \Phi_t(q)) = \sqrt{d(p, q)^2 + t^2}.
    \end{equation}
    To verify equation \ref{eq: equation of isometry}, we apply Petrunin's estimate to both $b_-$ and $b_+$. Since we assume $t \geq 0$, we have (corresponding to $b_-$ and $b_+$)
    \begin{equation*}
    	\Phi_t^-(x) = \Phi_t(x), \quad \Phi_t^+(x) = \Phi_{-t}(x) 
    \end{equation*}
    Thus, by applying Petrunin's estimate twice to both $\Phi_t(x)$ and $\Phi_{-t}(x)$, we have
    \begin{align*}
    	d(p, q)^2 
    	&= d(p, \Phi_{-t}(\Phi_t(q)))^2 = d(p, \Phi_t^+(\Phi_t^-(q)))^2\\
    	&\leq d(p, \Phi_t^-(q))^2 + 2(b_+(p) - b_{+}(\Phi_t^-(q)))(0 - t) + (0 - t)^2\\
    	&= d(p, \Phi_t^-(q))^2 - t^2\\
    	&\leq d(p, q)^2 + 2(b^-(p) - b^-(q))(0 - t)^2 + (0 - t)^2-t^2\\
    	&= d(p, q)^2
	\end{align*}
	This is because $\Phi^\pm_t$ are the gradient flows of $b_\pm$ and $b_+ = -b_-$. And remember that $b_\pm$ are zero on $N \ni p, q$. 
	
	Thus, all the inequalities above are actually equalities. 
    In particular, we have
    \begin{equation*}
    	d(p, q)^2 = d(p, \Phi_t^-(q))^2 - t^2 = d(p, \Phi_t(q))^2 - t^2
    \end{equation*}
    By rearranging the terms and the square root, we prove that the equality \ref{eq: equation of isometry} is true. 
    \begin{remark}
        The equation \ref{eq: flow estimate} is an equality for the Busemann function on $\R$ or more generally if we consider $\R \times Y$ and $\gamma(t) = (y_0, -t)$ the geodesic passing through $y_0 \in Y$. Then for $f = b_\gamma(y, t) = t$, the equation \ref{eq: flow estimate} admits the equality. Here we check that. Denote $p = (y_1, t_1)$ and $q = (y_2, t_2)$, then 
        \begin{equation*}
            d(p, q)^2 = (t_1 - t_2)^2 + \abs{y_1 - y_2}^2.
        \end{equation*}
        So $\Phi_s(p) = (y_1, t_1 + s)$ and $\Phi_t(q) = (y_2, t_2 + t)$. Then we can write
        \begin{align*}
           d(\Phi_s(p), \Phi_t(q))^2 
            =& (t_1 + s - (t_2 + t))^2 + \abs{y_1 - y_2}^2\\
            =& ((t_1 - t_2) + (s - t))^2 + \abs{y_1 - y_2}^2\\
            =& (t_1 - t_2)^2 + (s - t)^2 + 2(t_1 - t_2)(s - t) + \abs{y_1 - y_2}^2\\
            =& d(p, q)^2 + 2(b_\gamma(y_1, t_1) - b_\gamma(y_2, t_2))(s - t) + (s - t)^2\\
            =& d(p, q)^2 + 2(f(p) - f(q))(s - t) + (s - t)^2
        \end{align*}
        That means equation \ref{eq: flow estimate} can be written as
        \begin{align*}
            d(\Phi_s(p), \Phi_t(q)) \leq d(\overline{\Phi}_s(\overline{p}), \overline{\Phi}_t(\overline{q}))
       	\end{align*}
        where $\overline{p}, \overline{q} \in \R^n = \R^{n - 1} \times \R$ and $d(\overline{p}, \overline{q}) = d(p, q)$.
        So that $\overline{\Phi}_t$ is the gradient flow of $b_-$ on $\R^n$.         
    \end{remark}
          
\subsection{The Proof of Petrunin's Estimates}
Let $s \leq t$, otherwise, just flip the roles of $p$ and $q$. It is enough to prove the inequality \ref{eq: flow estimate} for $s = 0$. Since $\Phi_t = \Phi_s \circ \Phi_{t - s}$, 
\begin{align*}
    d(\Phi_t(q), \Phi_s(p)) 
    =& d((\Phi_s \circ \Phi_{t - s}))(q), \Phi_s(p))\\
    \leq& d(\Phi_{t-s}(q), p)\quad \text{Since $f$ is concave so that $\Phi_s$ is $1$-Lipschitz }
\end{align*}
Now, it is enough to prove the inequality \ref{eq: flow estimate} for $\abs{\Phi_{t-s}(q)p}$ from above. Equivalently it is enough to prove the inequality \ref{eq: flow estimate} for $\abs{\Phi_t(q)p}$ for $t \geq 0$. Namely, we want to show
\begin{equation}\label{eq: simplified flow estimate}
    d(\Phi_t(q), p)^2 \leq d(p, q)^2 + 2t(f(q) - f(p)) + t^2
\end{equation}
Notice that $\Phi_t(q)$ is the gradient flow passing through $q$, denote $l(t) = \abs{\Phi_t(q)p}$. Then the left hand side of the inequality can be denoted as $l^2(t)$. Consider
\begin{equation}\label{eq: l^2}
    (l^2(t))_+' \leq 2l(t)l_+'(t)
\end{equation}
Let $\sigma$ be the geodesic from $p$ to $q$, then $-\sigma'(l) = \uparrow_{\Phi_t(q)}^p$. Then we have the first variantion formula, which implies 
\begin{equation}\label{eq: l'(t)}
    l_+'(t) \leq -\inp{\uparrow_{\Phi_t(q)}^p, \nabla f_{\Phi_t(q)}}
\end{equation}
Recall that for the geodesic $\sigma$ between $x$ and $y$ and $f$ is concave, we proved before that along $\sigma$, we have
\begin{equation*}
    \inp{\nabla f_y, \uparrow_y^x} \geq \frac{f(x) - f(y)}{l}
\end{equation*}
Apply this to the inequality \ref{eq: l'(t)}, we have 
\begin{equation*}
    l_+'(t) \leq -\brac{\frac{f(p) - f(\Phi_t(q))}{l(t)}} = \frac{f(\Phi_t(q)) - f(p)}{l(t)}
\end{equation*}
So apply this to the inequality \ref{eq: l^2}, we have
\begin{equation*}
    (l^2(t))_+' \leq 2l(t)l_+'(t) \leq 2l(t)\frac{f(\Phi_t(q)) - f(p)}{l(t)} = 2(f(\Phi_t(q)) - f(p))
\end{equation*}
Remember that $f$ is $1$-Lipshchitz, we have $f(\Phi_t(q)) \leq f(q) + t$ because
\begin{equation*}
    \abs{\frac{\partial}{\partial t}f(\Phi_t(q))} = \abs{\nabla_{\Phi_t(q)} f}^2 \leq 1
\end{equation*}
Therefore, 
\begin{align*}
    &(l^2(t))_+' \le 2(f(q) - f(p) + t) = 2(f(q) - f(p)) + 2t\\
    \implies & l^2(t) \leq l^2(0) + 2(f(q) - f(p))t + t^2.
\end{align*}
Then we finish the proof since $l^2(0) = d(p, q)^2$


\chapter{Cheeger-Gromoll Covering Theorem}
We are going to study the space of the isometries of the non-negatively curved manifolds. 
\section{Preliminaries}

\begin{definition}
	Let $C$ be a subset of $(M, g)$ and $G$ a group acting on $C$, then we denote $G\cdot C = \br{gx: g \in G, x \in C}$ the family of the orbits of $C$. 
\end{definition}

\begin{definition}[$G$-invariant]
	Let $C$ be a subset of a Riemannian manifold, and $G$ be a group. Then $C$ is called \textbf{$G$-invariant} if for any  $x \in C$ and any $g \in G$ it holds that  $gx \in C$.
	
	\end{definition}

\begin{lemma}
	It is easy to check that $G\cdot C$ is $G$-invariant. 
	\begin{proof}
		For each $g_1 x \in G \cdot C$, $g_2(g_1 x) = (g_2g_1)x \in G\cdot C$. 
	\end{proof}
\end{lemma}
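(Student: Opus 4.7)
The plan is to unpack definitions and use the group action axioms directly; no geometric input is required at all. The lemma is essentially a purely algebraic statement: saturating a subset $C$ under the orbit map produces a set that is automatically closed under the $G$-action.

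First I would fix an arbitrary element $y \in G \cdot C$. By the definition of $G \cdot C$ in the preceding notation, this means there exist $g_1 \in G$ and $x \in C$ such that $y = g_1 x$. Next, let $h \in G$ be arbitrary; the goal is to show $hy \in G \cdot C$.

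The key computation is then
\[
hy = h(g_1 x) = (h g_1) x,
\]
where the first equality is the definition of $y$ and the second uses the fact that $G$ acts on (the ambient manifold containing) $C$, so the action is associative in the sense $(h g_1) x = h(g_1 x)$. Since $G$ is a group, $h g_1 \in G$, and since $x \in C$, the element $(h g_1) x$ lies in $G \cdot C$ by the very definition of that set. Hence $hy \in G \cdot C$, proving $G$-invariance.

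The ``main obstacle,'' if one insists on naming one, is simply being careful to invoke the associativity of the group action rather than the group multiplication on its own; but this is immediate from the standard definition of a group action and requires no further work. No curvature hypothesis, no manifold structure, and no appeal to earlier results in the notes are needed.
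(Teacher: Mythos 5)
Your proof is correct and follows the same approach as the paper: take $g_1 x \in G\cdot C$, apply $h \in G$, and use associativity of the action to get $h(g_1 x) = (hg_1)x \in G\cdot C$. You are a bit more explicit about invoking the group-action axiom, but this is the same one-line argument.
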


\begin{definition}[Totally Convex Hull]
	Let $C$ be a subset of a Riemannian manifold $(M, g)$, we denote $\hat{C}$ the \textbf{closed totally convex hull} of $C$ if it is the smallest closed totally convex subset of $M$ containing $C$. 
\end{definition}

\section{Introduction to the Covering Theorem}

\begin{theorem}
	Let $(M, g)$ be a complete Riemannian manifold of $\sect_M \geq 0$. If
	\begin{equation*}
		M \stackrel{isom}{\cong} N \times \R^k
	\end{equation*}
	and $N$ has no lines, then the isometry group $
	\isom(N)$ is compact. Moreover, 
	\begin{equation*}
		\isom(M) \stackrel{isom}{\cong} \isom(N)\times \isom(\R^k), 
	\end{equation*}
	i.e. any isometry $f: M \to M$ looks like $(h_1, h_2)$ where $h_1$ is an isometry of $N$ and $h_2$ an isometry of $\R^k$.
\end{theorem}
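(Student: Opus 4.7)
My plan is to first establish the factorization $\isom(M)\cong\isom(N)\times\isom(\R^k)$ by showing every isometry of $M$ respects the product splitting, and then handle compactness of $\isom(N)$ by producing a line in $N$ under the assumption that $\isom(N)$ is noncompact, which would contradict the hypothesis via the Cheeger--Gromoll splitting theorem just established.

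For the factorization, I will first characterize the lines of $M = N\times\R^k$. For any unit-speed line $\gamma(t) = (\gamma_N(t),\gamma_E(t))$ with $|\dot\gamma_N|\equiv a\in[0,1]$, the product distance formula combined with $d_M(\gamma(t),\gamma(s))=|t-s|$ and the trivial bound $d_N(\gamma_N(t),\gamma_N(s))\le a|t-s|$ forces equality throughout, so $\gamma_N$ is itself a line in $N$ whenever $a>0$. Since $N$ has no lines by hypothesis, $a=0$, and every line of $M$ lies in a single fiber $\{n\}\times\R^k$. This intrinsically characterizes the vertical distribution $\mathcal{V}_p = T_p(\{n\}\times\R^k)$ as the span of initial directions of lines through $p$, so every $f\in\isom(M)$ preserves $\mathcal{V}$ and permutes the vertical leaves, inducing $\bar f:N\to N$ with $\pi\circ f = \bar f\circ\pi$, where $\pi$ is the first-factor projection. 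Writing $f(n,v) = (\bar f(n), F_n(v))$, the isometry identity
\[
d_N^2(n_1,n_2) + |v_1-v_2|^2 = d_N^2(\bar f(n_1),\bar f(n_2)) + |F_{n_1}(v_1)-F_{n_2}(v_2)|^2
\]
applied at $v_1=v_2=0$ (and symmetrically to $f^{-1}$) shows $\bar f\in\isom(N)$ and that $b:=F_n(0)$ is independent of $n$. Writing $F_n(v) = A_nv + b$ with $A_n\in O(k)$, polarization of the identity above collapses it to $\langle A_{n_1}v_1,A_{n_2}v_2\rangle = \langle v_1,v_2\rangle$ for all $v_1,v_2\in\R^k$, forcing $A_{n_1}^{T}A_{n_2}=I$ and hence $A_n\equiv A$ constant. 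Thus $f=(\bar f, h_2)$ with $h_2(v) = Av+b\in\isom(\R^k)$, and since every such product is obviously an isometry of $M$, this establishes the group isomorphism $\isom(M)\cong\isom(N)\times\isom(\R^k)$.

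For compactness of $\isom(N)$, I will argue by contradiction. Suppose $\isom(N)$ is noncompact; by Myers--Steenrod it is a Lie group acting continuously on $N$, and an Arzela--Ascoli argument using that isometries are 1-Lipschitz produces $f_i\in\isom(N)$ with $L_i := d(p,f_i(p))\to\infty$ for some $p\in N$. The main obstacle---and the heart of the compactness proof---is to extract from this data a line in $N$, which would contradict the standing hypothesis. My approach passes to iterates: by Fekete's subadditive lemma applied to the sequence $n\mapsto d(p,f_i^n(p))$, the translation length $\tau_i := \lim_n d(p,f_i^n(p))/n$ exists. If some $\tau_i>0$, then for large $n$ the triangle $\triangle(f_i^{-n}(p), p, f_i^n(p))$ has side lengths comparable to $n\tau_i,n\tau_i,2n\tau_i$, so Toponogov's angle comparison (using $\sect_N\ge 0$) forces the angle at $p$ to tend to $\pi$; the initial unit vectors of $[p,f_i^{\pm n}(p)]$ then subconverge to antipodal limits in $T_pN$, and exponentiation yields a line through $p$. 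If every $\tau_i=0$ while $L_i\to\infty$, a Lie-theoretic reduction---a noncompact connected Lie group always contains a one-parameter subgroup whose generator has positive asymptotic displacement---returns us to the previous case. Alternatively, one may invoke directly the classical Cheeger--Gromoll theorem that a complete Riemannian manifold with $\ric\ge 0$ and noncompact isometry group must contain a line, which together with the splitting theorem above completes the argument.
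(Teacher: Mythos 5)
Your factorization argument (part 2) is essentially the paper's: both rest on characterizing lines of $N\times\R^k$ as living in a single $\R^k$-fiber because $N$ has no lines, concluding that an isometry must permute the $\R^k$-fibers and therefore also the $N$-slices, and then checking by an elementary tangent-space computation that the isometry factors. You are a bit more explicit than the paper (making $A_n$ constant via polarization rather than invoking "two isometries with the same value and differential at a point agree"), but the route is the same. One minor point worth flagging: to deduce $\gamma_N$ is globally minimizing you need the Euclidean factor to satisfy $d_{\R^k}(\gamma_E(t),\gamma_E(s))=\sqrt{1-a^2}\,|t-s|$ exactly (true because Euclidean geodesics are globally minimizing); you used $d_N\le a|t-s|$ together with this, so that is fine.

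The compactness argument is where your proposal genuinely diverges from the paper and where it has gaps. The paper proves that if $G\cdot\{p\}$ is noncompact then $M$ contains a line via the Busemann/convex-hull mechanism: the totally convex hull $\hat C$ of $G\cdot p$ is $G$-invariant and, crucially, Proposition~\ref{prop: C-hat in neighborhood} (which uses concavity of Busemann functions in $\sect\ge 0$) bounds $\hat C\subset U_d(G\cdot p)$ by a \emph{uniform} width $d$; one then takes a ray $\gamma\subset\hat C$ from $p$ and "pulls it back to $p$" with the isometries $g_i^{-1}$, obtaining a sequence of longer and longer minimizing segments anchored in a fixed compact set, which subconverge to a line. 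This argument needs no translation-length dichotomy. Your translation-length route has two holes. First, in the case some $\tau_i>0$ you produce two rays $\gamma^\pm$ from $p$ with angle $\pi$ and then assert that "exponentiation yields a line through $p$." Two rays at angle $\pi$ do \emph{not} obviously concatenate into a line; the correct justification is that $\gamma^\pm$ arise as limits of segments $[p,q_{\pm n}]$ whose comparison angle $\modangle^0(a_{2n};a_n,a_n)$ tends to $\pi$, and the global monotonicity of the comparison angle for $\kappa\le 0$ (the globalization of Theorem~\ref{thm: monotonicity comparison}) then forces $\modangle^0(d(\gamma^+(t),\gamma^-(s));s,t)=\pi$ for all $s,t$, i.e.\ $d(\gamma^+(t),\gamma^-(s))=s+t$. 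This step should be spelled out; without it, the claim is unsupported. Second, the case where every $\tau_i=0$ while $d(p,f_i(p))\to\infty$ is not handled: the quoted "Lie-theoretic reduction" (a noncompact connected Lie group contains a generator of positive asymptotic displacement) is not a standard result, depends on the action rather than the group alone, and no proof is offered. Falling back on "the classical Cheeger--Gromoll theorem that $\ric\ge 0$ plus noncompact isometry group implies a line" is circular, since that statement is exactly the contrapositive of what the theorem is asking you to prove. In short: the factorization is sound and matches the paper; the compactness part needs either the paper's convex-hull argument or a complete repair of the $\tau_i=0$ case and of the angle-$\pi$-implies-line step.
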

There are several statements that we need to show here. The first one is to show $\isom(N)$ is compact. Then we will use this theorem to prove the following theorem: 
\begin{theorem}
	If $(M, g)$ is a compact Riemannian manifold of $\sect_M \geq 0$, then its universal cover $(\tilde{M}, \tilde{g})$ is isomorphic to $\overline{M} \times \R^k$ where $\overline{M}$ is also of $\sect_{\overline{M}} \geq 0$. And a finite cover of $M$ is diffeomorphic to $\overline{M}\times T^k$. In fact, $\pi_1(M)$ up to finite index is just $\Z^k$
\end{theorem}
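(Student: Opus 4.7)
The plan is to apply the splitting theorem~\ref{thm: splitting-cg} to $\tilde M$ repeatedly, then analyze the deck transformation action using the isometry group splitting together with a Bieberbach-type argument on the Euclidean factor.

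First I would observe that the universal cover $(\tilde M,\tilde g)$ inherits $\sect_{\tilde M}\ge 0$ from $M$ and is complete (since $M$ is complete and compact, $\tilde M$ is complete). If $\tilde M$ contains a line, the splitting theorem gives an isometric splitting $\tilde M\cong M_1\times \R$ with $\sect_{M_1}\ge 0$. If $M_1$ still contains a line, split again. This process must terminate: each splitting strictly lowers the dimension, and the dimension is bounded by $\dim M=n$. Hence we obtain an isometric splitting
\begin{equation*}
\tilde M\stackrel{\text{isom}}{\cong} \bar M\times \R^k
\end{equation*}
with $0\le k\le n$ and $\bar M$ a complete Riemannian manifold of $\sect_{\bar M}\ge 0$ containing no lines. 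Note that $\bar M$ has compact isometry group by the theorem immediately preceding this one.

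Next, consider $\Gamma=\pi_1(M)$ acting on $\tilde M$ by deck transformations, which are isometries and act freely and properly discontinuously with compact quotient $M$. By the preceding theorem we have
\begin{equation*}
\isom(\tilde M)\cong \isom(\bar M)\times \isom(\R^k),
\end{equation*}
so each $\gamma\in\Gamma$ decomposes as $\gamma=(\gamma_1,\gamma_2)$ with $\gamma_1\in\isom(\bar M)$ and $\gamma_2\in\isom(\R^k)=O(k)\ltimes\R^k$. Let $\rho\colon \Gamma\to\isom(\R^k)$ be projection onto the second factor and set $H=\rho(\Gamma)$. Since $\isom(\bar M)$ is compact and $\Gamma$ acts properly discontinuously, the kernel $K=\ker\rho$ is finite: an infinite sequence in $K$ would accumulate in the compact group $\isom(\bar M)$ and (since the $\R^k$-components agree) contradict proper discontinuity on, e.g., any orbit. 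Moreover $H$ acts properly discontinuously and cocompactly on $\R^k$: cocompactness follows from compactness of $M=\tilde M/\Gamma$ by projecting onto the $\R^k$-factor.

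Now I would invoke Bieberbach's theorem: any discrete cocompact subgroup $H\subset\isom(\R^k)$ contains a finite-index normal subgroup $H_0\cong\Z^k$ acting by translations, with $H/H_0$ finite (the holonomy). Pull $H_0$ back to a finite-index subgroup $\Gamma_0:=\rho^{-1}(H_0)\subset\Gamma$. Since $K$ is finite, $\Gamma_0$ has a further finite-index subgroup $\Gamma_1$ mapping isomorphically onto $H_0\cong\Z^k$; explicitly, $\Gamma_1$ may be taken as the kernel of the composition $\Gamma_0\to H_0\to H_0/mH_0$ for appropriate $m$, or by intersecting with a complement. Thus $\Gamma_1\cong\Z^k$ has finite index in $\Gamma$, proving the last assertion that $\pi_1(M)$ is virtually $\Z^k$.

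Finally, the finite cover $\tilde M/\Gamma_1$ realizes the desired product. Indeed $\Gamma_1$ acts trivially on the $\bar M$-factor (its image in $\isom(\bar M)$ must be trivial, as an element with trivial translation part and finite order in the projection would still be in $K$, but by construction $\Gamma_1\cap K=\{e\}$, and translations act trivially on $\bar M$), and it acts on $\R^k$ as a lattice of translations $\cong\Z^k$. Hence
\begin{equation*}
\tilde M/\Gamma_1\stackrel{\text{isom}}{\cong}\bar M\times (\R^k/\Z^k)=\bar M\times T^k,
\end{equation*}
which is the desired finite cover of $M$. The main obstacle in this plan is the careful verification that one can extract a finite-index subgroup of $\Gamma$ which acts by pure translations on the $\R^k$-factor and trivially on the $\bar M$-factor; this uses both Bieberbach's theorem and the fact that the kernel $K$ is finite together with an index argument. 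Once that is in place, the rest is a direct assembly of the splitting theorem, the isometry group decomposition, and Bieberbach.
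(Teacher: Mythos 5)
Your overall plan follows the paper's proof closely through the splitting, the isometry group decomposition, finiteness of $K=\ker\rho$, and the application of Bieberbach to extract a translation lattice $\Gamma_1\cong\Z^k$ of finite index. The gap is in the very last step: you claim that $\Gamma_1$ acts trivially on the $\bar M$-factor and hence that $\tilde M/\Gamma_1$ is \emph{isometric} to $\bar M\times T^k$, and the parenthetical justification you give does not support that claim. An element of $\Gamma_1$ is a pair $(\gamma_1,\gamma_2)\in\isom(\bar M)\times\isom(\R^k)$. You have arranged for $\gamma_2$ to be a pure translation and for $\gamma_2=\id$ to force $\gamma=e$ (since $\Gamma_1\cap K=\{e\}$), but this says nothing at all about $\gamma_1$: the element $\gamma$ still acts on the $\bar M$-slice through $\gamma_1$, which can be an arbitrary isometry of $\bar M$. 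Saying ``translations act trivially on $\bar M$'' only concerns the second coordinate of the pair, not the first. The paper even flags exactly this pitfall: for $\tilde M=S^2\times\R$ with $\Z$ acting by $n\cdot(p,x)=(R_{n\alpha}p,\,x+n)$, $\alpha/\pi$ irrational, the group acts by translations on $\R$ yet no finite cover is \emph{isometric} to $S^2\times S^1$.

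Consequently, at this stage you can only conclude that $\tilde M/\Gamma_1$ is a mapping-torus-like quotient of $\bar M\times\R^k$ by a diagonal $\Z^k$-action; obtaining a \emph{diffeomorphism} to $\bar M\times T^k$ requires additional work which you have skipped. In the paper's argument this is done by first passing to yet another finite-index subgroup so that the image $\rho(\Gamma_1)$ lands in the identity component $H_0\cong T^l$ of the compact abelian closure $H=\overline{\rho(\Gamma_1)}\subset\isom(\bar M)$, then extending the homomorphism $\rho\colon\Z^k\to T^l$ to a continuous homomorphism $\R^k\to T^l$ (possible because everything is abelian), and finally using that extension to write down an explicit $\Gamma_1$-equivariant diffeomorphism between the diagonal $\Z^k$-action and the action on the second factor alone. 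Without this last device there is no diffeomorphism, only a fibre bundle over $T^k$ with fibre $\bar M$. You also never verify that $\bar M$ is compact, which is needed for the conclusion (the paper deduces it from compactness of $H$ together with compactness of $M$); that step is less serious but should be filled in.
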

The first theorem we want to prove is 
\begin{theorem}
	If $(M, g)$ has no line and satisfies $\sect_M \geq 0$, then $\isom(M)$ is compact.
\end{theorem}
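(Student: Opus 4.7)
The plan is to prove the contrapositive: assuming $\isom(M)$ is non-compact, I will construct a line in $M$, which will contradict the hypothesis. The isotropy subgroup $\isom(M)_p$ at any $p \in M$ injects into $O(T_pM)$ and is therefore compact, and isometries of a Riemannian manifold are equicontinuous, so Arzela--Ascoli shows that $\phi \mapsto \phi p$ is proper. Consequently, non-compactness of $\isom(M)$ forces a sequence $\phi_n \in \isom(M)$ with $L_n := d(p, \phi_n p) \to \infty$.

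From this sequence I would produce two rays emanating from $p$ in ``opposite'' directions. Let $\sigma_n : [0, L_n] \to M$ be a shortest geodesic from $p$ to $\phi_n p$, and set $\tilde\sigma_n(s) := \phi_n^{-1}(\sigma_n(L_n - s))$, which is a shortest geodesic from $p$ to $\phi_n^{-1}p$ of length $L_n$. Passing to a subsequence (using compactness of the unit tangent sphere at $p$), the initial velocities converge: $\dot\sigma_n(0) \to v$ and $\dot{\tilde\sigma}_n(0) \to \tilde v$. The argument of Lemma~\ref{lem:ray existence} then shows that $\gamma_+(t) := \exp_p(tv)$ and $\gamma_-(t) := \exp_p(t\tilde v)$ are rays on $[0,\infty)$.

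The main step, and the hardest part, is to show that $\tilde v = -v$, so that $\gamma_+$ and the reverse of $\gamma_-$ fit together into a bi-infinite geodesic through $p$. The chief tool is the global Toponogov hinge comparison (Theorem~\ref{thm: hinge} in its global form): applied to the hinge at $p$ with legs $\sigma_n,\tilde\sigma_n$ of common length $L_n$ and enclosed angle $\alpha_n$, the bound $\sect_M \geq 0$ gives
\[
d(\phi_n p,\, \phi_n^{-1}p) \;\leq\; 2 L_n \sin(\alpha_n/2).
\]
Since $\phi_n$ is an isometry, $d(\phi_n p,\phi_n^{-1}p) = d(p,\phi_n^{-2}p)$, so forcing $\alpha_n \to \pi$ amounts to arranging $d(p,\phi_n^{-2}p)/L_n \to 2$. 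This is the main obstacle: the bare sequence $\phi_n$ need not satisfy this (for example, if the $\phi_n$ are involutions the right-hand side degenerates to $0$). The standard device to force it is the translation length $\ell(\phi) := \lim_{k \to \infty} d(p, \phi^k p)/k$, which exists by subadditivity; if some $\phi \in \isom(M)$ has $\ell(\phi) > 0$, then the iterates $\phi^n$ realize $L_n \sim n \ell(\phi)$ and $d(p, \phi^{-2n}p) \sim 2n\ell(\phi) \sim 2 L_n$, yielding $\alpha_n \to \pi$. If no such $\phi$ exists, a supplementary argument exploiting the Lie group structure of $\isom(M)$ together with compactness of point stabilizers is needed, either to produce an element of positive translation length or to extract a line directly from a bounded-orbit limiting procedure.

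Once $\tilde v = -v$, the curve $\gamma(s) := \exp_p(sv)$, $s \in \R$, is a smooth geodesic (no break at $p$) that is globally distance-minimizing on each half-line by the ray property. For $s_1 < 0 < s_2$, the triangle inequality through $p$ gives $d(\gamma(s_1), \gamma(s_2)) \leq s_2 - s_1$, and the matching lower bound follows by passing to the limit as $n \to \infty$ in the approximating distances $d(\tilde\sigma_n(-s_1), \sigma_n(s_2))$, using $\alpha_n \to \pi$ together with the Euclidean hinge formula (or equivalently point-on-a-side comparison, Theorem~\ref{thm: point-on-a-side}). Thus $\gamma$ is a line in $M$, contradicting the no-line hypothesis, so $\isom(M)$ must be compact.
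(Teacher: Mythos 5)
Your approach is genuinely different from the paper's, but the step you yourself flag is a real gap, not a loose end. The paper never tries to fit two rays through a common base point. Instead (Proposition~\ref{prop: C-hat in neighborhood} and Corollary~\ref{cor: contains a line}) it forms the closed totally convex hull $\hat C$ of the orbit $G\cdot\{p\}$, where $G=\isom(M)$, shows via Busemann functions of rays inside $\hat C$ that $\hat C\subseteq U_d(G\cdot\{p\})$ for some finite $d$, finds a single ray $\gamma\subset\hat C$, and then slides: the translates $\tilde\gamma_i(t)=g_i^{-1}\gamma(t+i)$, defined on $[-i,\infty)$, have base points $\tilde\gamma_i(0)$ trapped in the compact set $U_d(\{p\})$ because of the $d$-neighborhood inclusion, so a subsequential limit is a bi-infinite minimizing geodesic. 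Compactness of $\isom(M)$ then follows from $G\cdot\{p\}\cong G/G_p$ and compactness of $G_p$, exactly as you observe at the start.

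The obstruction to your plan is that forcing $\alpha_n\to\pi$ requires $d(p,\phi_n^{-2}p)/L_n\to 2$, and this simply does not hold for a generic divergent sequence $\phi_n$, as your own involution example shows. Your proposed repair via translation length is circular: in a manifold with no lines there is no a priori reason any $\phi\in\isom(M)$ should have $\ell(\phi)>0$ (once a line exists, $M$ splits off an $\R$ factor by Theorem~\ref{thm: splitting-cg}, and the translations along that factor have $\ell>0$, so producing such a $\phi$ is essentially the whole theorem). The ``supplementary argument'' you reserve for the case that no such $\phi$ exists is the part that would have to carry the entire proof, and it is not given. The virtue of the paper's convex-hull construction is that it manufactures arbitrarily long minimizing segments passing near a fixed compact set without ever having to match up velocities at a single point; the total convexity of $\hat C$, its $G$-invariance, and the uniform bound $\hat C\subseteq U_d(G\cdot\{p\})$ do the matching automatically.
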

For proof, we need the following proposition

\begin{proposition}\label{prop: C-hat in neighborhood}
	Let $(M^n, g)$ be a complete open manifold with $\sect_M \geq 0$, $C \subseteq M$ be a compact subset. Let $G = \isom(M)$ and let $\hat{C}$ be the closed totally convex hull of  $G \cdot C$ i.e. the smallest closed totally convex set containing $G \cdot C$  where $G$ is $\isom(M)$. Then there exists $d > 0$ finite such that $\hat{C} \subseteq U_d(G\cdot C)$. 
\end{proposition}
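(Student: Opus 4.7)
The plan is to argue by contradiction, producing a ray inside $\hat{C}$ whose Busemann function would carve out a strictly smaller closed totally convex set still containing $G\cdot C$, contradicting the minimality of $\hat{C}$.

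First I would observe that $\hat C$ is itself $G$-invariant: for any $g\in G$ the set $g(\hat C)$ is closed and totally convex (since $g$ is an isometry) and contains $g(G\cdot C)=G\cdot C$, so minimality forces $\hat C\subseteq g(\hat C)$, and the reverse inclusion follows by applying $g^{-1}$. Suppose no such $d$ exists. Then there are $y_n\in\hat C$ with $r_n:=d(y_n,G\cdot C)\to\infty$, realized by nearest points $x_n=g_n(p_n)\in G\cdot C$ with $p_n\in C$. Using the $G$-invariance of $\hat C$, I would replace $y_n$ by $g_n^{-1}(y_n)\in\hat C$, so we may assume the foot $p_n$ lies in the compact set $C$ itself (all relevant distances are preserved by the isometry $g_n^{-1}$). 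Passing to a subsequence, $p_n\to p\in C$.

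Next, let $\sigma_n:[0,r_n]\to M$ be a unit-speed minimizing geodesic from $p_n$ to $y_n$; by total convexity, $\sigma_n\subset\hat C$. Along a further subsequence, $\sigma_n'(0)\to v\in T_pM$, and $\sigma_n\to\sigma$ uniformly on compact subsets of $[0,\infty)$, where $\sigma(t)=\exp_p(tv)$. Being a local uniform limit of shortest geodesics whose lengths tend to infinity, $\sigma$ is a ray, and $\sigma\subset\hat C$ by closedness.

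The decisive estimate is that the Busemann function $b_\sigma$ satisfies $b_\sigma\ge 0$ on $G\cdot C$. Indeed, fix $q\in G\cdot C$ and $T>0$; once $r_n\ge T$, the triangle inequality and the defining property $d(y_n,q)\ge r_n$ give
\[
d(\sigma_n(T),q)\ \ge\ d(y_n,q)-d(y_n,\sigma_n(T))\ \ge\ r_n-(r_n-T)\ =\ T.
\]
Letting $n\to\infty$ yields $d(\sigma(T),q)\ge T$, and then $T\to\infty$ gives $b_\sigma(q)\ge 0$. Since $\sect_M\ge 0$ makes $b_\sigma$ concave, the superlevel set $\{b_\sigma\ge 0\}$ is closed and totally convex, and it contains $G\cdot C$, so the minimality of $\hat C$ forces $\hat C\subseteq\{b_\sigma\ge 0\}$. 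But $\sigma(t)\in\hat C$ with $b_\sigma(\sigma(t))=-t<0$ for $t>0$—the desired contradiction.

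The main obstacle I expect is the extraction of a genuine ray inside $\hat C$ from the diverging minimizing segments; this is precisely why I need both the compactness of $C$ (to produce a convergent basepoint after the $G$-twist) and the closedness plus total convexity of $\hat C$ (to guarantee the limit ray stays inside it). Once the ray is in hand, everything else is the standard Busemann machinery available under $\sect_M\ge 0$.
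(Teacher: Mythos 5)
Your proof is correct and follows essentially the same strategy as the paper: establish $G$-invariance of $\hat{C}$, argue by contradiction, twist divergent points back by $g_n^{-1}$ so the nearest feet accumulate in the compact $C$, extract a ray $\sigma\subset\hat{C}$, show $b_\sigma\geq 0$ on $G\cdot C$, and contradict minimality of $\hat{C}$ using the concave superlevel set of $b_\sigma$. The only cosmetic difference is that you derive $b_\sigma\geq 0$ on $G\cdot C$ via the triangle inequality on the finite approximants $\sigma_n,y_n$ and contradict directly with $\{b_\sigma\geq 0\}$, whereas the paper asserts $d(\sigma(n),G\cdot C)=n$ for the limit ray and tests against $\{b_\sigma\geq -n\}$; these are the same idea.
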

\begin{proof}[Proof of Proposition~\ref{prop: C-hat in neighborhood}]

	The first thing we need to claim is that $\hat{C}$ is also $G$-invariant.  Let $g\in G$. Since $\hat{C}$ is the closed totally convex hull of $G\cdot C$ and $g(G\cdot C)=G\cdot C$ it follows that  $g(\hat{C})$ is also a closed totally convex subset containing $G\cdot C$. Since $\hat{C}$ is the smallest of such sets, then $\hat{C} \subseteq g(\hat{C})$. Applying $g^{-1}$, we have
	\begin{equation*}
		\hat{C} \subseteq g^{-1}(\hat{C}) \subseteq g^{-1}(g(\hat{C})) \subseteq \hat{C}. 
	\end{equation*}
	Therefore, we actually have
	\begin{equation*}
		\hat{C} = g(\hat{C})
	\end{equation*}
	And this means $\hat{C}$ is $G$-invariant. 

	We next claim that for any $y \in M$ and $g \in G$, 
	\begin{equation*}
		d(y, G\cdot C) = d(gy, G\cdot C).
	\end{equation*}
	Let $x \in G\cdot C$ be such that 
	\begin{equation*}
		d(y, x) = d(y, G\cdot C)
	\end{equation*}
	Such $g$ exists since. $G\cdot C$ is closed. Since $g$ is an isometry, we know that
	\begin{equation*}
		d(y, x) = d(gy, gx). 
	\end{equation*}
	Then,
	\begin{equation*}
		d(gy, G\cdot C) \leq d(gy, gx) = d(y, x) = d(y, G\cdot C).
	\end{equation*}
	To show the opposite inequality, we can apply $g^{-1}$, so that
	\begin{equation*}
		d(y, G\cdot C) = d(g^{-1}(g y), C\cdot C) \leq d(gy, G\cdot C)
	\end{equation*}
	Therefore, all the inequalities are actually equal. Hence our second claim is also true. 
	
	Suppose it is not true that there exists a finite $d$ such that $\hat{C} \subset U_d(G\cdot C)$. That means we can construct a sequence of $q_n \in \hat{C}$ such that 
	\begin{equation*}
		d(q_n, G\cdot C) \geq n
	\end{equation*}
	By our claim above, we know that 
	\begin{equation*}
		d(g_n^{-1}q_n, G\cdot C) \geq n
	\end{equation*}
	for each $n$ as well. For each $n$, denote $x_n \in G\cdot C$ the closest point to $g_n^{-1}q_n$. 

	Then by the compactness of $C$, by passing $x_n$ up to the sub-sequence, $x_n\to x \in C$ and segments $[x_n, g_n^{-1}q_n]\subseteq \hat C$ produce a ray $\sigma$ starting at $x$ such that
	\begin{equation*}
		d(\sigma(n), G\cdot C) = d( \sigma(n), x) = n
	\end{equation*}
		Note that the whole segment $[x_n, g_n^{-1}q_n]$ is contained in $\hat C$ and since $\hat C$ is closed this implies that the entire ray $\sigma$ is contained in $\hat C$ as well.

	Take $b_\sigma$ the corresponding Busemann function. We can check that 
	\begin{equation*}
		G\cdot C \subseteq \br{b_\sigma \geq 0}
	\end{equation*}
	this is because for each $y \in G\cdot C$,
	\begin{equation*}
		b_{\sigma}(y) = \lim_{n \to \infty}d(\sigma(n), y) - n \geq d(\sigma(n), x)-n = 0. 
	\end{equation*}
	Hence, take some $n \in \N$, the set $\br{b_\sigma \geq -n}$ is a closed totally convex subset containing $G \cdot C$. This means that the set $\br{b_\sigma \geq -n} \supseteq \hat{C}$. Hence, considering $\sigma(n + 1)$. On one hand, we have 
	\begin{equation*}
		b_{\sigma}(\sigma(n + 1)) = -n - 1 \leq \implies \sigma(n + 1) \notin \br{b_{\sigma} \geq -n}.
	\end{equation*}
	However, we know the entire ray $\sigma$ is contained in $\hat{C}$, thus in particular $\sigma(n + 1) \in \hat{C} \subseteq \br{b_\sigma \geq -n}$. Contradiction!
\end{proof}
\begin{remark}
	Without the assumption of $\sect_M \geq 0$, the totally convex hull $\hat{C}$ need not lie in $U_d(G\cdot C)$
\end{remark}

\begin{corollary}\label{cor: contains a line}
    If $G\cdot C$ is non-compact, then $M$ contains a line.
\end{corollary}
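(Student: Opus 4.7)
The plan is to use the Proposition \ref{prop: C-hat in neighborhood} to convert large-scale non-compactness of $G\cdot C$ into a sequence of arbitrarily long shortest geodesics whose midpoints can be "translated back" into a compact region by elements of $G$, and then extract a line as a sub-limit.

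First, I would reduce to showing that the orbit of a single point is unbounded. Fixing any $p_0 \in C$, if $G\cdot C$ is non-compact then there exist $y_n \in G\cdot C$ with $d(p_0,y_n)\to\infty$; writing $y_n = g_n x_n$ with $x_n\in C$ and using compactness of $C$ together with the isometry property, one checks that $r_n := d(p_0, g_n p_0)$ is also unbounded (so we may pass to a subsequence with $r_n\to\infty$). Let $\gamma_n\colon [0,r_n]\to M$ be a shortest geodesic from $p_0$ to $g_n p_0$, and reparametrize as $\tilde\gamma_n\colon [-r_n/2,r_n/2]\to M$ so the midpoint sits at parameter $0$. Since $p_0, g_n p_0\in G\cdot C\subseteq \hat C$ and $\hat C$ is totally convex, each segment $\tilde\gamma_n$ lies entirely in $\hat C$.

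Next, invoke Proposition \ref{prop: C-hat in neighborhood}: $\hat C\subseteq U_d(G\cdot C)$ for some finite $d$. In particular, the midpoint $\tilde\gamma_n(0)$ lies within distance $d$ of some $h_n w_n$ with $h_n\in G$ and $w_n\in C$. Applying the isometry $h_n^{-1}$ to $\tilde\gamma_n$, I obtain shortest geodesics $\sigma_n := h_n^{-1}\tilde\gamma_n\colon [-r_n/2,r_n/2]\to M$ whose midpoints $\sigma_n(0)$ satisfy $d(\sigma_n(0), w_n)\le d$. Since $w_n\in C$ and $C$ is compact, the midpoints $\sigma_n(0)$ stay in a fixed compact set.

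Finally, extract a limit. Passing to a subsequence, $\sigma_n(0)\to q\in M$ and the unit tangent vectors $\dot\sigma_n(0)\in T_{\sigma_n(0)}M$ converge (using compactness of the unit tangent bundle over a compact region) to some $v\in T_qM$. Completeness of $M$ lets me define $\gamma_\infty(t) := \exp_q(tv)$ for all $t\in\R$, and on any fixed compact interval $[-T,T]$ we have $\sigma_n|_{[-T,T]}\to \gamma_\infty|_{[-T,T]}$ uniformly once $r_n/2 \ge T$. Because each $\sigma_n$ is globally distance-minimizing on $[-r_n/2, r_n/2]$, the limit $\gamma_\infty$ is distance-minimizing on every finite sub-interval, hence it is a line.

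The only non-routine point is the use of Proposition \ref{prop: C-hat in neighborhood}: without the uniform bound $\hat C\subseteq U_d(G\cdot C)$, the midpoints of $\tilde\gamma_n$ could escape every compact set even after applying isometries, and no sub-limit of the geodesics would exist. The entire argument hinges on pulling these midpoints back into a compact region via $G$, which is precisely what Proposition \ref{prop: C-hat in neighborhood} guarantees. Everything else is a standard Arzel\`a--Ascoli/limit-of-geodesics argument.
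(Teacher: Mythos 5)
Your proof is correct, and it leans on Proposition~\ref{prop: C-hat in neighborhood} exactly where the paper does, but it reaches the line along a slightly different path. The paper works directly with the unboundedness of $\hat C$: it first builds a ray $\gamma\subseteq\hat C$ (from segments $[p p_i]$ with $p_i\in\hat C$ escaping to infinity), then uses $\hat C\subseteq U_d(G\cdot C)$ to choose $g_i$ with $d(q_i,g_i^{-1}\gamma(i))\le d$, and finally shifts and translates the ray via $\tilde\gamma_i(t)=g_i^{-1}\gamma(t+i)$ so that $\tilde\gamma_i(0)$ lands in the compact set $U_d(C)$, from which a two-sided limit is extracted. You instead work with the unboundedness of a single orbit: you first argue $d(p_0,g_np_0)\to\infty$ (a short but genuine extra step, needing compactness of $C$ to pass from unboundedness of $G\cdot C$ to unboundedness of $G\cdot p_0$), take shortest segments from $p_0$ to $g_np_0$, use total convexity of $\hat C$ to keep them inside $\hat C$, and then apply the proposition to pull their midpoints into $U_d(C)$ before extracting the limit. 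The trade is essentially neutral: the paper invokes the standard fact that an unbounded convex set contains a ray and then doubles it into a line by shifting; you skip the ray construction entirely by centering on midpoints, at the cost of the preliminary orbit-unboundedness argument. Both hinge on the same three ingredients (the uniform $d$ from the proposition, total convexity, and the isometric $G$-action), and your Arzel\`a--Ascoli closing argument is the same in substance as the paper's sub-convergence of $\tilde\gamma_i$.
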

\begin{proof}[Proof of Corollary~\ref{cor: contains a line}]
Suppose that $G \cdot C$ is non-compact, then the closed totally convex hull $\hat{C}$ of $G \cdot C$ must be non-compact as well. By Hopf-Rinow theorem $\hat{C}$ must be unbounded since, in complete manifolds, closed bounded sets are compact.

Since $\hat{C}$ is also convex, this means it must contain a ray, i.e. for each point $p \in \hat{C}$, there must be a ray, say $\gamma$ starting from $p$, such that $\gamma \subseteq \hat{C}$. Indeed, since $\hat C$ is unbounded  we have a sequence of points $p_i\in \hat{C}$ such that $d(p, p_i) \to \infty$ and the shortest geodesic segments $[pp_i] \subseteq \hat{C}$ going to sub-converge to a ray $\gamma:[0, \infty) \to \hat{C}$ (since the directional vector start from $p$ sub-converge in the unit sphere). 

\begin{figure}[htbp]
    \centering
    \includegraphics[width=0.6\textwidth]{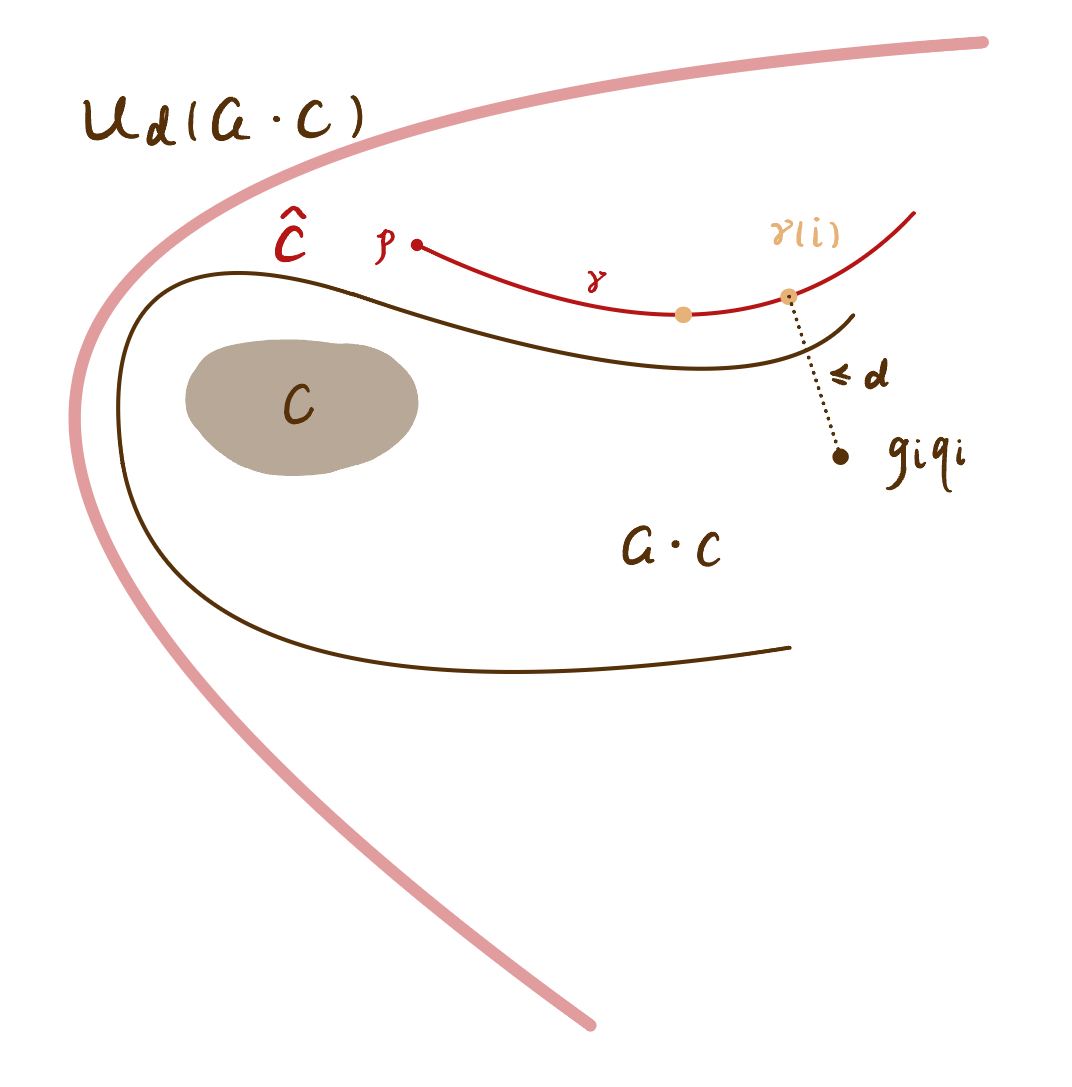}
    \end{figure}

Recall that  $\hat{C} \subseteq U_d(G\cdot C)$. Hence, for each natural number $i \in \N$, $\gamma(i) \in U_d(G \cdot C)$. This means there exists $g_i \in G, q_i \in C$ such that $d(g_iq_i, \gamma(i)) \leq d$.

    Notice that
    \begin{equation*}
        d(g_iq_i, \gamma(i)) = d(q_i, g_i^{-1}(\gamma(i))) \leq d
    \end{equation*}
    Taking $\gamma_i := g_i^{-1}\circ\gamma$,  and we can shift $\gamma_i$ to $\Tilde{\gamma}_i: [-i, \infty) \to M$ such that $\Tilde{\gamma}_i(t) = g_i^{-1}(\gamma(t + i))$. Notice that now
    \begin{equation*}
    	d \geq d(q_i, g_i^{-1}(\gamma(i))) = d(q_i, \gamma_i(i)) = d(q_i, \tilde{\gamma}_i(0))
    \end{equation*}
    
    \begin{figure}[htbp]
    \centering
    \includegraphics[width=0.8\textwidth]{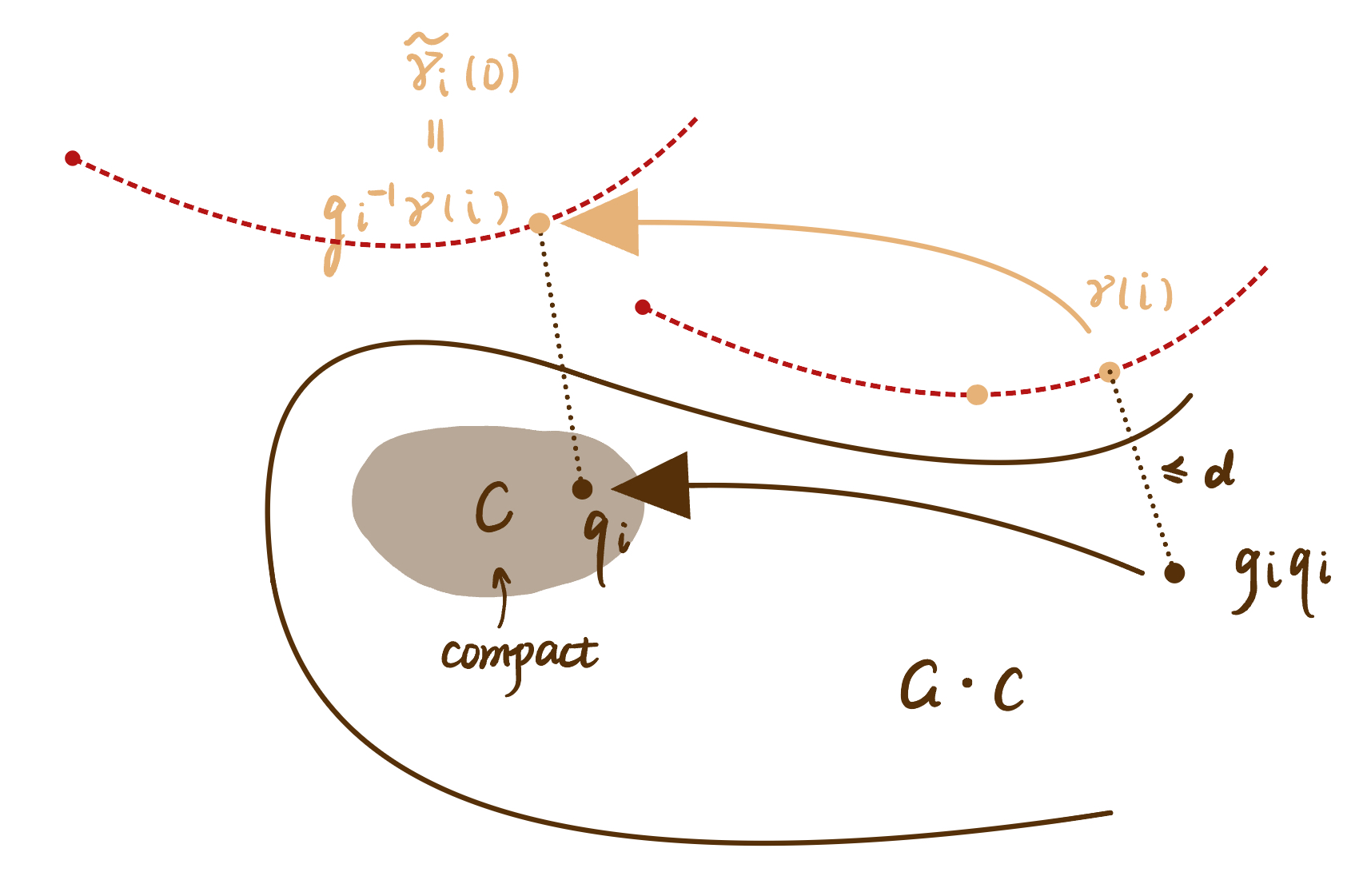}
    \end{figure}
    
    This means $\Tilde{\gamma}_i(0) \in \underbrace{U_d(C)}_{\text{compact.}}$ for any $i$.
   
    Since $C$ is compact, by passing to a subsequence we can make $\Tilde{\gamma}_i$ sub-converge to a line $\Tilde{\gamma}:(-\infty, \infty) \to M$. 
    \end{proof}
\begin{corollary}\label{cor: isom compact}
   If $M$ is complete with  $\sect_M \geq 0$ and $M$ has no lines, then $\isom(M)$ is compact.
\end{corollary}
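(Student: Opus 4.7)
The plan is to apply Corollary~\ref{cor: contains a line} in the contrapositive direction with a well-chosen compact set $C$, and then upgrade compactness of a single orbit to compactness of the full isometry group using standard rigidity of Riemannian isometries.

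First, I would fix any point $p \in M$ and set $C = \{p\}$, which is certainly compact. Write $G = \isom(M)$. Corollary~\ref{cor: contains a line} says that if $G \cdot C$ is non-compact, then $M$ contains a line. By our hypothesis $M$ contains no line, so the orbit $G \cdot p$ must be compact in $M$. This is the one place the curvature assumption $\sect_M \geq 0$ and the no-line assumption enter.

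Next, I would recall that the isotropy subgroup $G_p = \{g \in G : gp = p\}$ is compact. This is the classical fact that an isometry of a connected Riemannian manifold fixing a point is determined by its differential at that point, and the resulting injective homomorphism $G_p \hookrightarrow O(T_pM) \cong O(n)$ is continuous and has closed image, exhibiting $G_p$ as a closed subgroup of a compact group.

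To conclude, I would show sequential compactness of $G$ in the compact-open topology. Let $\{g_n\} \subset G$ be any sequence. Since $g_n(p) \in G \cdot p$ and the orbit is compact, after passing to a subsequence $g_n(p) \to q$ for some $q \in M$. Identifying $T_{g_n(p)}M$ with $T_qM$ via parallel transport along short geodesics (which is well-defined once $g_n(p)$ is close enough to $q$), the differentials $dg_n|_p$ become a sequence of linear isometries $T_pM \to T_qM$, and by compactness of $O(n)$ a further subsequence converges to a linear isometry $A : T_pM \to T_qM$. Since an isometry of a connected complete Riemannian manifold is determined by its $1$-jet at a single point, the pair $(q, A)$ defines a unique isometry $g_\infty \in G$, and one checks $g_n \to g_\infty$ uniformly on compact sets. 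Hence $G$ is sequentially compact and thus compact.

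The only real content beyond what is already in the excerpt is the passage from compactness of one orbit to compactness of $G$; all of this is classical (Myers--Steenrod, or an Arzel\`a--Ascoli argument exploiting that isometries are $1$-Lipschitz and equicontinuous). The genuine geometric input — controlling the size of an orbit using the no-lines hypothesis — has already been carried out in Proposition~\ref{prop: C-hat in neighborhood} and Corollary~\ref{cor: contains a line}.
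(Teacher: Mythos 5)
Your proof follows essentially the same route as the paper: both take $C=\{p\}$, invoke Corollary~\ref{cor: contains a line} to get compactness of the orbit $G\cdot p$, and then pass from orbit compactness to compactness of $\isom(M)$ using that an isometry is determined by its $1$-jet at a point (equivalently, that $G_p$ embeds as a closed subgroup of $O(n)$). The paper phrases this last step as a one-line argument via the homeomorphism $G/G_p \cong G\cdot p$ and argues by contradiction, while you unwind the same fact into a direct Arzel\`a--Ascoli/Myers--Steenrod-style sequential-compactness argument, but the underlying content is identical.
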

\begin{proof}[Proof of Corollary~\ref{cor: isom compact}]
    Let $C = \br{p}$ be a point in $M$, which is certainly compact. Suppose $G = \isom(M)$ is non-compact, then $G\cdot \br{p}$ is non-compact. Otherwise, the whole group is compact because $G\cdot \br{p} \cong G/G_p$ and the isotropy group of $p$  $G_p = \br{g \in G: g(p) = p} $  is compact. Indeed, any  $g\in G$ is determined by $g(p)$ and $dg_p$. In particular for $g\in G_p$ the map $g$ is determined by  $dg_p \in O(n)$ (the orthogonal group in the tangent space.). And since $G_p$ is a closed subgroup of $O(n)$ which is compact it follows that $G_p$ is also compact. So $\underbrace{G}_{\text{non-compact}}/\underbrace{G_p}_{\text{compact}}$ is non-compact. Hence, if $G\cdot\br{p}$ is non-compact, by the previous corollary \ref{cor: contains a line} with $C = \br{p}$, $M$ has a line. So contradiction.
\end{proof}
So far, we can conclude that when $M$ has $\sect_M \geq 0$ then
\begin{equation*}
    \text{$M$ has no line} \iff \text{$\isom(M)$ is compact.}
\end{equation*}
And we know that
\begin{align*}
    &\text{If $M$ has a line}\\
    \implies & \text{$M \cong N \times \R$ (splits)}\\
    \implies & \text{This implies that translations along $\R$ factor are isometries.}\\
    \implies & \text{$\isom(M)$ is non-compact}
\end{align*}
Now, let's summarize everything in a theorem
\begin{theorem}\label{thm: theorem on complete rm}
    Let $(M^n, g)$ be a complete Riemannian manifold with $\sect_M \geq 0$. Then 
    \begin{enumerate}
        \item $M \stackrel{\text{isom}}{\cong} N \times \R^k$ where $N$ has $\sect_N \geq 0$ has no lines and $\isom(N)$ is compact. This splitting is unique in a strong sense. Namely, if $f: N \times \R^k \to M$ is an isometry, for $p \in M$, $p = f(q)$, then $q = (x, v)$ where $x \in N$, $v \in \R^k$ the factors $f(\br{x}\times \R^k)$ and $f(N \times \br{v})$ are uniquely determined and do not depends on the choice of $f$.
        \item $\isom(M) \cong \isom(N)\times \isom(\R^k)$. Namely, if $f \in \isom(N)$ and $h \in \isom(\R^k)$ then $(f, h): N\times \R^k \to N \times \R^k$ is an isometry. This result means all the isometries of $M$ are like this. 
    \end{enumerate}
\end{theorem}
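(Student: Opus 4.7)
The plan is to combine the splitting theorem (Theorem~\ref{thm: splitting-cg}) with the compactness-of-isometry-group result (Corollary~\ref{cor: isom compact}), iterate to produce a maximal Euclidean factor, and then use a canonical characterization of that factor in terms of lines to extract both the uniqueness of the splitting and the product structure on $\isom(M)$.

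First I would prove existence of the splitting by induction on the dimension. If $M$ contains no line, set $N = M$ and $k=0$; by Corollary~\ref{cor: isom compact}, $\isom(M)$ is compact. Otherwise, Theorem~\ref{thm: splitting-cg} produces an isometric splitting $M \cong M_1 \times \R$ with $\sect_{M_1} \geq 0$. Repeat with $M_1$: if $M_1$ has a line, split again to obtain $M \cong M_2 \times \R^2$. Because $\dim M_i$ strictly decreases and is bounded below by $0$, the process terminates with $M \cong N \times \R^k$ where $N$ has no line, and $\sect_N \geq 0$ follows because each factor inherits nonnegative curvature (e.g.\ from the product formula for sectional curvature). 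Compactness of $\isom(N)$ then follows directly from Corollary~\ref{cor: isom compact}.

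The technical heart of the proof is the canonical characterization of the Euclidean factor, which I would formulate as the following lemma: \emph{in $N\times \R^k$ with $N$ line-free, the lines through a point $p=(x,v)$ are exactly the curves $t\mapsto(x, v+tu)$ with $u\in \R^k$, $|u|=1$.} The proof is straightforward: if $\gamma(t)=(\alpha(t),\beta(t))$ is a line with $|\dot\alpha|=a$, $|\dot\beta|=b$, $a^2+b^2=1$, then for any $s<t$
\begin{equation*}
|t-s|^2 = d_M(\gamma(s),\gamma(t))^2 = d_N(\alpha(s),\alpha(t))^2 + |\beta(t)-\beta(s)|^2 \le (a|t-s|)^2 + (b|t-s|)^2 = |t-s|^2,
\end{equation*}
so equality forces $d_N(\alpha(s),\alpha(t))=a|t-s|$. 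Hence if $a>0$ then $\alpha$ reparametrizes to a line in $N$, contradicting the hypothesis, so $a=0$ and $\alpha\equiv x$. From this lemma, the fiber $\{x\}\times \R^k$ through $p$ is intrinsically recovered as the union of all lines of $M$ passing through $p$ (together with $p$ itself), so if $f\:N\times\R^k\to M$ is any isometric splitting then the image $f(\{x\}\times\R^k)$ depends only on $f(x,v)$, not on the particular isometry $f$. This proves (1) in the strong form stated, and in particular determines $k$ canonically.

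For (2), any $g\in\isom(M)$ maps lines to lines, hence by the lemma permutes the canonical fibers $\{x\}\times\R^k$. Thus there is a well-defined induced map $h_1\:N\to N$ given by $g(x,v)\in\{h_1(x)\}\times\R^k$; since $g$ is an isometry and the fibers are equidistant in the product metric, $h_1\in\isom(N)$. To recover $h_2\in\isom(\R^k)$ I would use the complementary foliation: the slices $N\times\{v\}$ are canonically characterized as the maximal connected totally geodesic submanifolds everywhere orthogonal to the line-fibers, so $g$ also permutes them, yielding a second quotient isometry $h_2\in\isom(\R^k)$, and $g=(h_1,h_2)$ by construction. The map $g\mapsto(h_1,h_2)$ is a continuous group homomorphism $\isom(M)\to\isom(N)\times\isom(\R^k)$, and the obvious inclusion in the other direction is its inverse, giving the claimed isomorphism.

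The main obstacle I expect is the canonical lemma on lines, and more subtly, verifying that the induced maps $h_1,h_2$ are themselves isometries of $N$ and $\R^k$ respectively (rather than just bijections). The distance formula $d_M((x_1,v_1),(x_2,v_2))^2 = d_N(x_1,x_2)^2+|v_1-v_2|^2$ together with the fact that $g$ takes fibers to fibers and orthogonal slices to orthogonal slices will force both components to preserve their respective distances; the cleanest way to see this is to note that the shortest segment between points in different fibers projects isometrically to shortest segments on both factors, which follows from the equidistance of the fibers in a metric product.
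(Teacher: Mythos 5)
Your proposal is correct and follows essentially the same route as the paper: existence via iterating the splitting theorem, a characterization of lines in $N\times\R^k$ showing they lie entirely in the $\R^k$-fibers (which makes those fibers, and hence both foliations, intrinsic), and then reconstruction of any isometry from its action on the two canonical foliations. The only cosmetic difference is at the end of (2), where you argue metrically that the induced maps on the factors are isometries, while the paper invokes the rigidity of isometries via the value and differential at a point; both are standard and equivalent here.
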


Let's quickly explain \textit{1.} and then prove \textit{2.}.
The only thing that is left to be proved is the uniqueness of the splitting factors.

Let $\gamma:\R \to \subseteq N \times \R^k$ be a line. Then $\gamma=(\gamma_1, \gamma_2) $ where $\gamma_1$ and $\gamma_2$ are geodesics in $N$ and $\R^k$ respectively. Suppose $\gamma_1(t)$ is not constant. Then $\gamma_1$ cannot be globally shortest. Eventually, this means $\gamma$ is not minimizing, i.e. there are $t_1, t_2 \in \R$ such that the segment $[\gamma_1(t_1)\gamma_1(t_2)]$ can be replaced by a shorter curve, which implies $\gamma$ is also not shortest. Contradiction! This means that  $\gamma(t) = (x, \gamma_2(t))$ for some $x \in N$ and $\gamma_2$ is a line in $\R^k$. Now, since $f$ is an isometry it sends lines to lines. Hence  the above discussion tells us that $f(\gamma(t)) = (y, \tilde{\gamma_2}(t)) \subseteq \br{\text{pt}}\times \R^k$. So $f(\br{\text{pt}} \times \R^k) = \br{\text{pt}} \times \R^k$. Since isometries preserve angles between vectors this implies that geodesics orthogonal to the $\R^k$ factors must map to geodesics orthogonal to $\R^k$ factors.  Hence  $f(N \times \br{\text{pt}}) = N \times \br{\text{pt}}$. 
This finishes the proof of  \textit{1.}

\begin{proof}[Proof of 2. in theorem \ref{thm: theorem on complete rm}]

Since $M$ is isometric to  $N \times \R^k$ to understand its isometry group we can just assume that $M = N \times \R^k$.
    It is sufficient to consider $\Phi: \isom(N)\times \isom(\R^k) \to \isom(N \times \R^k)$. Remember how this map is constructed, for $h_1 \in \isom(N)$ and $h_2 \in \isom(\R^k)$, we have
    \begin{equation*}
    	\Phi(h_1, h_2) := h_1 \times h_2 = h \in \isom(N \times \R^k)
    \end{equation*}
    
    This is an obvious homomorphism. We want to show that $\Phi$ is moreover a bijection.
    \begin{itemize}
        \item \textbf{$\Phi$ is one-to-one:} This is obvious.
        \item \textbf{$\Phi$ is onto:}Let $f \in \isom(N\times \R^k)$, $f: N \times \R^k \to N \times \R^k$ maps lines to lines, $\R^k$ factors to $\R^k$ factors and $N$ factors to $N$ factors isometrically. Fix $x \in N$, $v \in \R^k$, then 
        \begin{equation*}
            \br{x} \times \R^k \stackrel{f}{\longrightarrow} N \times \R^k
        \end{equation*}
        is an isometrically embedding onto $\br{y} \times \R^k$. Also, 
        \begin{equation*}
            N \times \br{v} \stackrel{f}{\longrightarrow} N \times \R^k
        \end{equation*}
        is an isometrically embedding on to some $N \times \br{w}$. The linear map $df_{(x, v)}$ looks like $(A, B)$ where $A$ and $B$ are both isometries $A: T_xN \to T_yN$ and $B: T_v\R^k \to T_w\R^k$. Hence, we can have $h_1: N \to N$ isometry such that $d{h_1}_x = A$ and $h_2: \R^k \to \R^k$ isometry such that $d{h_2}_v = B$. If we take the isometry $h = (h_1,h_2): N \times \R^k \to N\times \R^k$ and $dh_{(x, v)} = df_{(x, v)}$. Then $h \equiv f$ since two isometries that have the same values and the same differentials at a point must be equal.

        Therefore $f=\Phi(h_1,h_2)$ which proves that $\Phi $ is onto.
    \end{itemize}
\end{proof}
\begin{theorem}[Cheeger-Gromoll Covering Theorem]\label{thm: split to torus}
    Let $(M^n, g)$ be compact and $\sect_M \geq 0$, then the universal cover $\Tilde{M} \stackrel{\text{isom}}{\cong} \overline{M} \times \R^k$ where $\bar M$ is compact and simply connected. And a finite cover of $M$ is diffeomorphic  to $\underbrace{\overline{M}}_{\text{simply connected and compact}} \times \underbrace{T^k}_{\text{torus}}$
\end{theorem}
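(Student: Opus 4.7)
The plan is to lift everything to the universal cover, apply the splitting theorem, and then use the structural theorem on isometries of product manifolds (Theorem~\ref{thm: theorem on complete rm}) to control the deck transformation action.

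First, I would lift $g$ to $\tilde g$ on $\tilde M$. Since $M$ is compact and complete with $\sect_M\ge 0$, so is $(\tilde M,\tilde g)$. Applying the strong splitting part of Theorem~\ref{thm: theorem on complete rm} to $\tilde M$ yields a canonical isometric decomposition $\tilde M\stackrel{\text{isom}}{\cong} \overline M\times \R^k$, where $\overline M$ has no lines and $\isom(\overline M)$ is compact. Moreover, part 2 of that theorem identifies $\isom(\tilde M)\cong \isom(\overline M)\times\isom(\R^k)$, so every deck transformation $\gamma\in\pi_1(M)\hookrightarrow\isom(\tilde M)$ splits as a pair $\gamma=(\gamma_1,\gamma_2)$ with $\gamma_1\in\isom(\overline M)$, $\gamma_2\in\isom(\R^k)$. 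Let $\Gamma=\pi_1(M)$, and let $p_1,p_2$ be the projections to the two factors, with images $\Gamma_1,\Gamma_2$.

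Next I would show $\overline M$ is compact (and so is automatically simply connected, since $\tilde M$ is and $\R^k$ is). Pick a compact set $K\subset\tilde M$ with $\Gamma\cdot K=\tilde M$, which exists since $M$ is compact. Projecting to $\overline M$ gives $\Gamma_1\cdot \pi_{\overline M}(K)=\overline M$, and since $\overline{\Gamma_1}\subset\isom(\overline M)$ is compact (closure in a compact group), the set $\overline{\Gamma_1}\cdot \pi_{\overline M}(K)$ is compact and equal to $\overline M$. So $\overline M$ is compact, establishing the first claim with $\overline M$ as the desired factor.

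For the finite cover statement, I would analyze $\Gamma_2\subset\isom(\R^k)$. Since $\Gamma$ acts properly discontinuously and $\isom(\overline M)$ is compact, the projection $\Gamma_2$ is discrete in $\isom(\R^k)$: if $\gamma_n=(\alpha_n,\beta_n)\in\Gamma$ had $\beta_n\to e$, compactness of $\isom(\overline M)$ would let us pass to a subsequence with $\alpha_n$ convergent, contradicting proper discontinuity of $\Gamma$ on $\tilde M$ at any point. Cocompactness of $\Gamma_2$ on $\R^k$ follows from $M$ being compact together with the fact that $\overline M$ is compact. Thus $\Gamma_2$ is a crystallographic group, so Bieberbach's theorem supplies a finite-index subgroup $L\subset\Gamma_2$ that is a pure translation lattice $L\cong\Z^k$.

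Let $\Gamma'=p_2^{-1}(L)\cap\Gamma$, a finite-index subgroup of $\Gamma$. Its intersection with the kernel of $p_2$ is a subgroup $K\subset\isom(\overline M)$ acting properly on compact $\overline M$, hence finite. The plan's last step, which I expect to be the main technical obstacle, is to pass to a further finite-index subgroup $\Gamma''\subset\Gamma'$ for which $p_2|_{\Gamma''}$ is an isomorphism onto a finite-index sublattice of $L$ (so $\Gamma''$ itself is $\Z^k$ acting purely by translations) and $p_1(\Gamma'')$ is trivial. This is a standard group-cohomology argument: the conjugation action of $L\cong\Z^k$ on the finite $K$ factors through a finite quotient of $\Z^k$, so after passing to the preimage of this finite-index subgroup the extension $1\to K\to\Gamma'_0\to\Z^k_0\to 1$ has $K$ central, and multiplication by $|K|$ kills its class in $H^2(\Z^k_0,K)$, yielding a splitting on a further finite-index subgroup. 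Once $\Gamma''\cong\Z^k$ acts as pure translations on $\R^k$ and trivially on $\overline M$, the quotient $\tilde M/\Gamma''=(\overline M\times\R^k)/\Z^k\cong\overline M\times T^k$ is the desired finite cover of $M$.
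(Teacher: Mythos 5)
Your argument parallels the paper's up through the Bieberbach step: splitting the universal cover via Theorem~\ref{thm: theorem on complete rm}, showing $\overline M$ is compact by projecting a cocompact fundamental domain through the compact group $\overline{\rho(\Gamma)}$, and extracting a finite-index pure translation lattice $L\cong\Z^k$ inside the crystallographic group $\Gamma_2$. All of that is sound.

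The fatal gap is in the last step. Even if the group-cohomology argument you sketch is carried out correctly, splitting the extension $1\to K\to\Gamma'_0\to\Z^k_0\to 1$ only produces a finite-index subgroup $\Gamma''\cong\Z^k$ on which $p_2$ is \emph{injective}. It gives you no control whatsoever over $p_1(\Gamma'')$: the section $s:\Z^k_0\to\Gamma'_0$ lands back inside $\isom(\overline M)\times\isom(\R^k)$, and its $\overline M$-component is whatever it is. You have conflated ``the extension splits as an abstract group'' with ``the deck group acts trivially on the $\overline M$-factor.'' The latter is in general \emph{impossible} to arrange by passing to finite index, and the paper's own example shows this: take $\tilde M=S^2\times\R$ and $\Gamma=\Z$ generated by $(p,x)\mapsto(R_\alpha p,\,x+1)$ with $R_\alpha$ a rotation by an irrational multiple of $\pi$. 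Here $p_2$ is already injective, so your cohomology step is vacuous and $\Gamma''=\Gamma$, yet $p_1(\Gamma'')$ is an infinite dense subgroup of $SO(2)$ and no finite-index subgroup kills it. With your reasoning you would conclude that some finite cover is \emph{isometric} to $S^2\times S^1$, which is false.

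The paper resolves this without ever attempting to trivialize $p_1$. After reducing to a $\Z^k$ acting on $\R^k$ by translations, it looks at $H=\overline{\rho(\Z^k)}\subset\isom(\overline M)$, a compact abelian Lie group, passes to a further finite-index subgroup so that $\rho$ lands in the identity component $H_0\cong T^l$, and then uses the key fact that a homomorphism $\rho:\Z^k\to T^l$ extends to a continuous homomorphism $\rho:\R^k\to T^l$. This extension allows the construction of an explicit diffeomorphism $f(p,a)=(\rho(a)\cdot p,\,a)$ of $\overline M\times\R^k$ which is equivariant for the diagonal $\Gamma''$-action on the target and the translation-only action on the source; it therefore descends to a diffeomorphism $\overline M\times T^k\to\tilde M/\Gamma''$. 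The action of $\Gamma''$ on $\overline M$ is untwisted, not killed. That ``untwisting via extension of $\rho$'' is the idea your proposal is missing, and without it the proof does not close.
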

\begin{remark}
    \textbf{Warning:} It is not true that the finite cover must be isometric to $\overline{M} \times T^k$. It is in general only diffeomorphic.
\end{remark}
\begin{example}
    Take $S^2 \times \R$. And $\Gamma$ is $\Z$-actions, i.e. $\Z$ acts on $\R$ by translations in standard way: $\overline{1}(x) = x + 1$. 
    Moreover, we take the action on $S^2$ to be irrational rotation $R_\alpha$ around the  $z$-axis, i.e. take $\frac{\alpha}{2\pi}$ irrational, so that the order of the rotation $\abs{R_\alpha} = \infty$.
    
    \begin{figure}[htbp]
    \centering
    \includegraphics[width=0.3\textwidth]{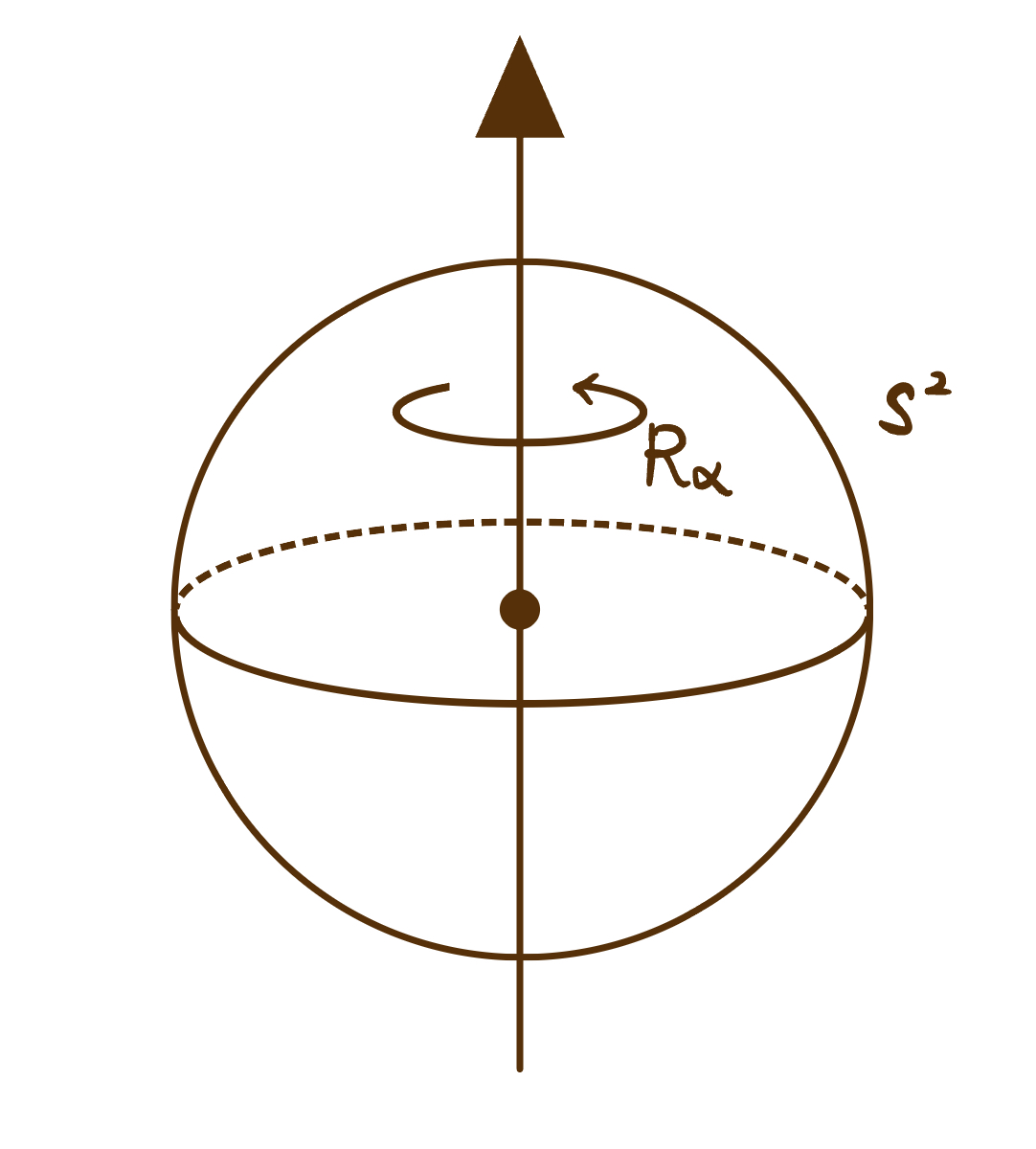}
    \end{figure}
    
    Take the diagonal action of $\Gamma$ on $S^2 \times \R$, i.e. $g \in G$, $g(\underbrace{p, x}_{\in S^2 \times \R}) = (gp, gx)$. 
    More explicitly, if $g = n$, then $n(p, x) = (R_{n\alpha}(p), x + n)$. Let $M =(S^2\times \R)/\Z$. 
    Then $M$ is compact, and the universal cover is just $S^2 \times \R$ isometrically, $M \stackrel{\text{diffeo}}{\cong} S^2 \times S^1$ but not isometric. 
    Moreover, no finite conver is isometric to $S^2 \times S^1$ since $\frac{\alpha}{\pi}$ is irrational and the rotation $R_{n\alpha}$ can never be equal to identity for $n \neq 0$. 
\end{example}
\section{The Proof of the Covering Theorem}
   let $\Tilde{M}$ be the universal cover of $M$.  Then $\Tilde{M} \cong \overline{M}\times \R^k$ where $\overline{M}$ has no lines and compact isometry group. 
    \begin{claim}
        $\overline{M}$ is compact. 
    \end{claim}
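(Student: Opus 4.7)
The plan is to use the product structure of $\tilde M = \bar M \times \R^k$ together with the compactness of $\isom(\bar M)$ and the cocompactness of the deck transformation action to show that $\bar M$ must be bounded, and then invoke Hopf--Rinow.

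Let $\Gamma = \pi_1(M)$ act on $\tilde M$ by deck transformations. This action is free, properly discontinuous, isometric, and cocompact with quotient $M$. By Theorem~\ref{thm: theorem on complete rm}, every isometry of $\tilde M = \bar M \times \R^k$ splits as a product, so there is a homomorphism
\[
p_1 : \Gamma \longrightarrow \isom(\bar M), \qquad \gamma \longmapsto \gamma_1,
\]
where $\gamma = (\gamma_1, \gamma_2)$ with $\gamma_1 \in \isom(\bar M)$ and $\gamma_2 \in \isom(\R^k)$.

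Now I would fix basepoints $\bar p \in \bar M$ and $0 \in \R^k$, set $\tilde p = (\bar p, 0)$, and use that $M$ is compact to produce a constant $D := \diam(M)$ such that the orbit $\Gamma \cdot \tilde p$ is $D$-dense in $\tilde M$. Projecting onto the $\bar M$ factor (which is $1$-Lipschitz), this shows that the orbit
\[
p_1(\Gamma)\cdot \bar p = \{\gamma_1(\bar p) : \gamma \in \Gamma\}
\]
is $D$-dense in $\bar M$: indeed, given any $y \in \bar M$, applying $D$-density to $(y,0) \in \tilde M$ produces some $\gamma \in \Gamma$ with $d_{\tilde M}((\gamma_1(\bar p), \gamma_2(0)), (y,0)) \le D$, hence $d_{\bar M}(\gamma_1(\bar p), y) \le D$.

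The key step is then to observe that $p_1(\Gamma)\cdot \bar p$ has compact closure in $\bar M$. Since $\bar M$ has no lines, Corollary~\ref{cor: isom compact} applies and $\isom(\bar M)$ is a compact topological group. The closure $\overline{p_1(\Gamma)} \subseteq \isom(\bar M)$ is therefore compact, and the orbit map $\isom(\bar M) \to \bar M$, $h \mapsto h(\bar p)$, is continuous, so $K := \overline{p_1(\Gamma)} \cdot \bar p$ is a compact subset of $\bar M$. Combining the two previous points, $\bar M \subseteq U_D(K)$, so $\bar M$ is a bounded subset of itself. Because $\bar M$ is complete (as a factor of the complete manifold $\tilde M$), Hopf--Rinow forces $\bar M$ to be compact.

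The main potential obstacle is the verification that the cocompactness of $\Gamma \curvearrowright \tilde M$ really does pass to $D$-density of $p_1(\Gamma)\cdot \bar p$ in $\bar M$; but since the projection $\tilde M \to \bar M$ is $1$-Lipschitz and $\Gamma$-equivariant with respect to $p_1$, this is essentially immediate once one writes it out carefully. Everything else is a direct application of results already established: the splitting of the isometry group, the compactness of $\isom(\bar M)$ in the absence of lines, and Hopf--Rinow.
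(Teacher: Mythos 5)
Your proof is correct and takes essentially the same approach as the paper: project $\Gamma$ onto $\isom(\bar M)$, note that the closure there is compact (Corollary~\ref{cor: isom compact}), and combine this with cocompactness of the $\Gamma$-action to conclude that $\bar M$ is compact. The only difference is cosmetic: the paper phrases the final step abstractly via the chain of quotient maps $M = (\bar M \times \R^k)/\Gamma \to \bar M/\Gamma \to \bar M/H$ with $H = \overline{\rho(\Gamma)}$ compact, whereas you make the same point concretely by showing $\bar M$ lies in the $D$-neighborhood of the compact orbit closure and invoking Hopf--Rinow.
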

 Let $\Gamma = \pi_1(M) \acts \tilde M$ by deck transformation. $\Tilde{M}/\Gamma = M$ but $\isom(\Tilde{M}) = \isom(\overline{M}) \times \isom(\R^k)$. Then $\Gamma$ acts diagonally 
    \begin{align*}
        &\rho: \Gamma \to \isom(\overline{M});\\
        &\phi: \Gamma \to \isom(\R^k)
    \end{align*}
    For $g \in \Gamma$, then 
    \begin{equation*}
        g(x, v) = (g(x), g(v)) = (\rho(g)(x), \rho(g)(v))
    \end{equation*}
    \begin{remark}
      	$h: \R^k \to \R^k$ isometry, then $\exists A \in O(k)$, $u \in \R^k$ such that $h(v) = Av + u$.
    \end{remark}

    We have $\Gamma\acts \overline{M} \times \R^k$ diagonally.  We are given that $(\overline{M} \times \R^k)/\Gamma = M$ is compact.  For $\rho: \Gamma \to \isom(\overline{M})$ let $H = \overline{\rho(\Gamma)}$. It is a  closed subgroup of $\isom(\overline{M})$ which is compact and hence $H$ is compact too.  We have $(\bar M \times \R^k)/\Gamma \stackrel{\text{onto}}{\to} \bar M/\Gamma \to \bar M/H$. Since $M=\bar M/\Gamma$ is compact this implies that $\bar M/H$ is compact too. Since  $H$ is compact this implies that $\overline{M}$ is compact as well. So we can conclude that 
    \begin{equation*}
        \Tilde{M}\cong\underbrace{\overline{M}}_{\text{ simply connected \& no lines \& compact}}\times \R^k. 
    \end{equation*}
    Let $\hat{\Gamma} = \ker(\phi) \lhd \Gamma$, $\hat{\Gamma}$ acts trivially on $\R^k$ and $\hat \Gamma \subseteq \Gamma$ acts freely and properly discontinuously on $\Tilde{M}$.  This action must be free since $\hat{\Gamma}$ acts freely on $M$ and the action of $\hat{\Gamma}$ on $\R^k$ is trivial. But $\overline{M}$ is compact. $\overline{M}$ is closed manifold, $\hat{\Gamma}\acts \overline{M}$ freely \& properly discontinuously. Therefore, $\hat{\Gamma}$ is finite, otherwise, the action would not act freely and properly discontinuously. Indeed,  If $\hat{\Gamma}$ is not compact, $\hat{\Gamma}\cdot x \in \overline{M}$ has an accumulation point and the action is not properly discontinuous. Thus $\hat{\Gamma}\lhd \Gamma$ is a  finite subgroup. Let $\Tilde{M}_1 = \Tilde{M}/\hat{\Gamma} = \underbrace{\overline{M}/\Gamma}_{=: \overline{M}_1} \times \R^k$. Notice that $\overline{M}_1$ is compact but not simply connected unless $\hat \Gamma$ i trivial. Then $\pi_1(\overline{M}_1) \cong \hat{\Gamma}$ which is finite and $\Gamma_1 = \Gamma/\hat{\Gamma}$ acts diagonally in $\overline{M}_1\times \R^k$ and $\overline{M}_1\times \R^k/\Gamma_1=M$.
    \begin{claim}
        A finite cover of $M$ is diffeomorphic to $\overline{M_1}\times T^k$. 
    \end{claim}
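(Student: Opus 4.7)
The plan is to identify a finite-index subgroup of $\Gamma_1$ whose action on the Euclidean factor is by pure translations, and then to smoothly untwist the resulting diagonal $\Z^k$-action using the compactness of $\isom(\bar M_1)$.

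First I would verify that $\phi\colon\Gamma_1\to\isom(\R^k)$ realizes $\Gamma_1$ as a discrete, cocompact group of Euclidean isometries. Injectivity of $\phi|_{\Gamma_1}$ holds by construction since $\hat\Gamma=\ker\phi$ was modded out. Discreteness follows because any accumulation in $\phi(\Gamma_1)$ would, by compactness of $\bar M_1$, lift to an accumulation of $\Gamma_1$ on $\tilde M_1$, contradicting proper discontinuity. Cocompactness follows since $\R^k/\phi(\Gamma_1)$ is a continuous image of the compact space $M=\tilde M_1/\Gamma_1$ under projection to the second factor. Bieberbach's first theorem then supplies a finite-index subgroup $\Lambda\leq\phi(\Gamma_1)$ consisting of pure translations, with $\Lambda\cong\Z^k$ a lattice in $\R^k$. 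The corresponding $\Gamma_1':=\phi^{-1}(\Lambda)$ is a finite-index subgroup of $\Gamma_1$ isomorphic to $\Z^k$, so the intermediate quotient $M':=\tilde M_1/\Gamma_1'$ is a finite cover of $M$.

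The diagonal action of $g\in\Gamma_1'$ on $\bar M_1\times\R^k$ now has the form $g(x,v)=(\rho(g)x,\,v+t_g)$, where $g\mapsto t_g$ identifies $\Gamma_1'$ with $\Lambda$ and $\rho\colon\Gamma_1'\to\isom(\bar M_1)$ is the induced homomorphism. The closure $H:=\overline{\rho(\Gamma_1')}\subset\isom(\bar M_1)$ is a compact abelian Lie group since $\Gamma_1'$ is abelian. Passing once more to the finite-index subgroup $\Gamma_1'':=\rho^{-1}(H_0)$ (still isomorphic to $\Z^k$ as any finite-index subgroup of $\Z^k$ is), we may assume $\rho(\Gamma_1'')\subset H_0\cong T^r$. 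Since $\Gamma_1''\cong\Z^k$ is free abelian and $T^r$ is divisible, $\rho$ extends to a continuous homomorphism $\tilde\rho\colon\R^k\to T^r\subset\isom(\bar M_1)$: fix a basis of $\Gamma_1''$, lift the images to $\R^r$, extend linearly, and reduce mod $\Z^r$. Then the smooth diffeomorphism $F\colon\bar M_1\times\R^k\to\bar M_1\times\R^k$ given by $F(x,v)=(\tilde\rho(v)^{-1}x,\,v)$ conjugates the twisted $\Gamma_1''$-action into the pure translation action on the $\R^k$ factor, yielding $(\bar M_1\times\R^k)/\Gamma_1''\stackrel{\text{diff}}{\cong}\bar M_1\times(\R^k/\Z^k)=\bar M_1\times T^k$, a finite cover of $M$.

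The principal obstacle is the existence of the continuous extension $\tilde\rho$; this is precisely what forces the two successive finite-index reductions, first to pure translations on $\R^k$ (via Bieberbach), then to image in the torus component $H_0$. It is also the reason the final identification is only a diffeomorphism and not in general an isometry, in agreement with the Warning in the Remark preceding the theorem: the untwisting map $F$ is constructed from $\tilde\rho$ and, while $\tilde\rho(v)$ acts on $\bar M_1$ isometrically for each fixed $v$, the assembled map $F$ moves points on $\bar M_1$ by amounts depending on $v$ and so need not preserve the product metric.
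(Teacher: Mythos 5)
Your proposal is correct and follows essentially the same route as the paper: Bieberbach to reduce to a translation lattice, passing to the preimage of the identity component of $\overline{\rho(\Gamma_1')}$, extending $\rho$ from the lattice to all of $\R^k$ by choosing logarithms on a basis, and then using the resulting homomorphism to build an equivariant ``untwisting'' diffeomorphism of $\bar M_1\times\R^k$. The only cosmetic difference is the direction of the conjugating map (your $F(x,v)=(\tilde\rho(v)^{-1}x,v)$ versus the paper's $f(p,a)=(\rho(a)p,a)$), which are simply inverse to one another.
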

    
    Note that $\overline{M_1}\times T^k$. has a finite cover diffeo to $\overline{M} \times T^k$. Hence   this claim implies our theorem.

    It is remain to prove this claim. We have $\Gamma_1$ action, and $\rho_1: \Gamma_1 \to \isom(\overline{M}_1)$ and $\phi_1: \Gamma_1 \to \isom(\R^k)$ but now this $\phi_1$ is injective, therefore, 
    \begin{equation*}
        \underbrace{\phi_1(\Gamma_1)}_{\approx \Gamma_1} < \isom(\R^k)
    \end{equation*}
    is a discrete subgroup. The quotient  $\R^k /\Gamma_1$ must be compact. Otherwise, as before, we get that $(\overline{M}_1 \times \R^k)/\Gamma_1$ is also non-compact which we know is false. Also, $\Gamma_1 \acts \R^k$ the discrete subgroup of $\isom(\R^k)$ and  $\R^k/\Gamma_1$ is compact. By Bieberboch Theorem, a finite index subgroup $\Z^k \cong \Gamma'_1 < \Gamma$ generated by translations by linearly independend $v_1, \dots, v_k$ vectors in $\R^k$. $(n_1, \dots, n_k) \in \Z^k$, $(x_1, \dots, x_k) \in \R^k$, then 
    \begin{equation*}
        (n_1, \dots, n_k)(x_1, \dots, x_k) = (x_1, \dots, x_k) + \sum_{i = 1}^kn_iv_i.
    \end{equation*}
    $\R^k/\Z^k = T^k$. By passing to a finite cover $M'$ of $M$, we have $\underbrace{\Gamma_1'}_{= \Z^k}$ acts on $\overline{M}_1\times \R^k$ diagonally action on $\R^k$ translations generated by $v_1, \dots, v_k$ basis of $\R^k$. Then $M' = (\overline{M}_1\times \R^k)/\Gamma_1'$. 

    Consider 
    \begin{align*}
        \rho: \Gamma_1' \to \isom(\overline{M}_1);\\
        \underbrace{\phi: \underbrace{\Gamma_1'}_{\Z^k} \to \isom(\R^k)}_{\text{We now know this map.}}
    \end{align*}
    Let $H = \overline{\rho(\Gamma_1')}$, so $\Gamma_1' = \Z^k$ is Abelian, then $H$ is Abelian Lie group,  $H$ is compact. Let $H_0$ be the identity component of $H$, which is compact, Abelian and connected. Therefore, $H_0 \cong T^l$ for some $l$. And $\abs{H: H_0} < \infty$ since $H$ is compact and hence has finitely many components. Take $\Gamma_1'' = \rho^{-1}(H_0)$, then $\Gamma_1'' < \Gamma_1'$ finite index subgroup. then the image of $\Gamma_1'' \subseteq H_0$ finite index subgroup of $\Z^k$ is isomorphic to $\Z^k$. 

    $\Gamma_1'' = \Z^k \acts \overline{M}_1\times \R^k$ still acts  cocompactly. Consider  $M_1=\overline{M}_1\times \R^k/ \Gamma_1'$. It is a the finite cover of $M$ and it is enough to understand 
    $M_1$. 
    \begin{claim}
        $M_1=(\overline{M_1}\times \R^k) /\Gamma_1'' \stackrel{\text{diffeo}}{\cong} \overline{M}_1\times T^k$
    \end{claim}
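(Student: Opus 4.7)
The idea is to construct an explicit diffeomorphism of $\overline{M}_1 \times \R^k$ onto itself that is equivariant from the diagonal $\Gamma_1''$-action (that we are quotienting out on the left) to the trivial-on-$\overline{M}_1$ action (whose quotient is $\overline{M}_1 \times T^k$). The key observation is that because we already arranged $\rho(\Gamma_1'') \subseteq H_0 \cong T^l$ and $H_0$ is a connected abelian Lie group, the homomorphism $\rho\colon \Gamma_1'' \to H_0$ can be \emph{extended} to a continuous homomorphism $\psi\colon \R^k \to H_0$; twisting by $\psi$ will untangle the diagonal action.

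First, identify $\Gamma_1'' \cong \Z^k$ with generators $g_1,\ldots,g_k$ acting on $\R^k$ by translations along a basis $w_1,\ldots,w_k$ (cocompactness of the action of $\Gamma_1''$ on $\R^k$ plus the Bieberbach-type argument already used guarantees this). Write $H_0 = \R^l/\Lambda$ for some lattice $\Lambda$ and lift each $\rho(g_i) \in H_0$ to a vector $\tilde{\rho}_i \in \R^l$. Let $L\colon \R^k \to \R^l$ be the unique linear map with $L(w_i)=\tilde{\rho}_i$ and set $\psi = \pi\circ L\colon \R^k \to H_0$ where $\pi\colon \R^l \to \R^l/\Lambda$ is the quotient. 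Then $\psi$ is a smooth group homomorphism with $\psi(w_i) = \rho(g_i)$ for each $i$.

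Now define
\begin{equation*}
\Phi\colon \overline{M}_1 \times \R^k \longrightarrow \overline{M}_1 \times \R^k, \qquad \Phi(x,u) = \bigl(\psi(u)\cdot x,\, u\bigr).
\end{equation*}
It is a diffeomorphism with inverse $(x,u) \mapsto (\psi(u)^{-1}\cdot x, u)$. Let $\Gamma_1''$ act on the source by the diagonal action $g_i\cdot(x,u) = (\rho(g_i)x, u+w_i)$ and on the target by the product action $g_i\cdot(x,u)=(x,u+w_i)$. Since $H_0$ is abelian,
\begin{equation*}
\Phi(x, u+w_i) = \bigl(\psi(u)\psi(w_i)\cdot x, u+w_i\bigr) = \bigl(\rho(g_i)\psi(u)\cdot x, u+w_i\bigr),
\end{equation*}
which equals $g_i\cdot \Phi(x,u)$ under the diagonal action. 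Hence $\Phi$ is equivariant and descends to a diffeomorphism between the two quotients, the second of which is exactly $\overline{M}_1 \times (\R^k/\Gamma_1'') = \overline{M}_1 \times T^k$.

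The only subtlety is verifying that one can lift $\rho$ across $\Z^k \hookrightarrow \R^k$ into $H_0$; this is where the earlier reduction from $H$ to its identity component $H_0\cong T^l$ is essential, because without connectedness one cannot choose continuous lifts $\tilde{\rho}_i$ on all of $\R^k$. Once that is in place, the verification of equivariance is a one-line computation using the abelianness of $H_0$, and the rest is a standard descent of diffeomorphisms to quotients by free, properly discontinuous actions.
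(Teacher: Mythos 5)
Your proposal is correct and takes essentially the same approach as the paper: extend $\rho$ to a homomorphism on all of $\R^k$ using the fact that $H_0$ is a torus (the paper does this coordinate-by-coordinate in $T^l$ via $\rho_i\bigl(\sum_j t_j v_j\bigr)=e^{i\sum_j t_j\theta_{ij}}$, you do it by lifting to the universal cover $\R^l$ and taking a linear map; these are the same construction), then define the twisting diffeomorphism $\Phi(x,u)=(\psi(u)\cdot x,u)$ of $\overline{M}_1\times\R^k$ and check it intertwines the diagonal action with the second-factor action. One small slip worth noting: you declare the diagonal action on the source and the product action on the target, but your displayed computation starts from $\Phi(x,u+w_i)$, which is $\Phi$ applied to the \emph{product}-translated point, and identifies the result with the \emph{diagonal} action applied to $\Phi(x,u)$ — so your $\Phi$ as built with $\psi(w_i)=\rho(g_i)$ is actually equivariant from the product action to the diagonal action (exactly as in the paper), and hence descends to $\overline{M}_1\times T^k\to M_1$ rather than the other direction. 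If you really want the stated direction you should use $\Phi^{-1}$, i.e.\ $(x,u)\mapsto(\psi(u)^{-1}\cdot x,u)$. Either way the quotients are diffeomorphic, so the substance is fine; only the labeling of source and target needs to be made consistent with the computation.
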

    to show this claim, consider
    \begin{equation*}
        \rho: \underbrace{\Gamma_1''}_{\Z^k} \to \underbrace{H_0}_{\text{$T^l$}} \underbrace{\subseteq}_{\text{dense subgroup}} \isom(\overline{M}_1)
    \end{equation*}
  Also   $\phi(\Gamma_1'')=\Z^k \subseteq \R^k = L$ is a cocompact lattice.

       Observe that  we can extend the homomorphism $\rho: \Z^k \to H_0$  to a homomorphism $\rho: \R^k \to H_0$

    To see why, it is enough to check the extension of one of the component of $\rho = (\rho_1, \dots, \rho_l)$. Namely, we only need to extend $\rho_i: \Z^k \to S^1$ to $\rho_i: \R^k \to S^1$. This is easy since everything here is Abelian. So if $\Z^k = \inp{v_1, \dots, v_k} \subseteq \R^k$, we have
    \begin{equation*}
        \rho_i(v_j) = e^{i\theta_{ij}} \in S^1, \quad\theta_{ij} \in \R
    \end{equation*}
    Set $\rho_i: \R^k \to S^1$ by the formula, $\rho_i(\sum_{i = 1}^kt_jv_j) := e^{i(\sum_{i = 1}^kt_j \theta_{ij})}$, $t_j \in \R$. It is easy to see this is a homomorphism. Now we get $\rho: \Gamma_1'' \to \isom(\overline{M}_1)$ extending to $\rho: \R^k \to \isom(\overline{M}_1)$ still homomorphism. Now we can produce a diffeomorphism $\overline{M}_1\times T^k \to( \overline{M}_1\times \R^k/)\Gamma_1''$ as follows.  

    First look at 
    \begin{equation*}
        f: \overline{M}_1 \times \R^k \to \overline{M}_1\times \R^k
    \end{equation*}
    given by 
    \begin{equation*}
        f(p, q)\stackrel{\text{def}}{:=} (\rho(a)\cdot p, a)
    \end{equation*}
    This is a diffeomorphims since we have the inverse map
    \begin{equation*}
        f^{-1}(p, a) = (\rho(a^{-1})\cdot p, a). 
    \end{equation*}
    \begin{claim}
        $f$ is $\Gamma_1''$ equivariant if we take the diagonal actions of $\Gamma_1''$ on the target $\overline{M}_1\times \R^k$ and on the domain, we just take the action on $\R^k$. $\overline{M}_1\times \R^k$

    $\Gamma_1'' = \Z^k \acts \overline{M}_1\times \R^k$ on the second factor only $g(p, a) = (p, ga)$.  In other words we claim that 
     \[f(g(p, a)) = g(f(p, q))\]
         \end{claim}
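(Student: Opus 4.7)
The plan is to verify the equivariance identity $f(g\cdot(p,a)) = g\cdot f(p,a)$ by a direct computation, once two ingredients from the preceding paragraphs are in place: (i) the extension, just constructed, of $\rho$ to a group homomorphism $\rho:\R^k\to H_0\subseteq\isom(\overline{M}_1)$, and (ii) the identification of $\Gamma_1''$ with the lattice $\phi(\Gamma_1'')\cong\Z^k\subset\R^k$, under which $g\in\Gamma_1''$ acts on $\R^k$ by the translation $a\mapsto a+g$. The action on the domain is the product action (trivial on $\overline{M}_1$, translation on $\R^k$), while on the target it is the diagonal action given by $(\rho,\phi)$.

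Writing the group operation on $\R^k$ additively, I will then expand both sides. For the left-hand side,
\begin{align*}
f\bigl(g\cdot(p,a)\bigr) \;=\; f(p,\,a+g) \;=\; \bigl(\rho(a+g)\cdot p,\; a+g\bigr).
\end{align*}
For the right-hand side,
\begin{align*}
g\cdot f(p,a) \;=\; g\cdot\bigl(\rho(a)\cdot p,\; a\bigr) \;=\; \bigl(\rho(g)\cdot\rho(a)\cdot p,\; a+g\bigr).
\end{align*}
These two expressions agree because $\rho$ is a group homomorphism on $\R^k$, which gives $\rho(a+g)=\rho(g)\cdot\rho(a)$. So there is really no obstacle beyond bookkeeping: the content was concentrated in the extension of $\rho$ to all of $\R^k$, which has already been carried out, and in identifying the correct two actions on source and target.

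Once the equivariance is verified, I will pass to the quotients. The quotient of the domain by the source action is $\overline{M}_1\times(\R^k/\phi(\Gamma_1''))=\overline{M}_1\times T^k$, since the action is trivial on the first factor and acts on $\R^k$ by a cocompact lattice of translations. The quotient of the target by the diagonal action is $M_1$ by construction. Since $f$ is already a diffeomorphism with explicit smooth inverse $(p,a)\mapsto(\rho(-a)\cdot p, a)$, the induced map $\bar f$ on quotients is a diffeomorphism as well. This yields the desired identification $\overline{M}_1\times T^k\stackrel{\mathrm{diff}}{\cong}M_1$, and combined with the earlier finite cover $\overline{M}\times T^k\to\overline{M}_1\times T^k$ completes the proof of the covering theorem.
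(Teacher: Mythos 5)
Your proof is correct and takes essentially the same route as the paper: verify the equivariance by expanding both sides and using that the extended $\rho:\R^k\to H_0$ is a homomorphism (into an abelian group), then pass to the quotient to obtain the diffeomorphism $\overline{M}_1\times T^k\cong M_1$. The only cosmetic difference is that you write the lattice action additively, which makes the identity $\rho(a+g)=\rho(g)\rho(a)$ more transparent than the paper's terse multiplicative bookkeeping.
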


    i.e. $f$ is $\Gamma_1''$ equivariant. $g(p, a) = (p, ga)$, $f(g(p, a)) = f(p, ga) = ((ga)\cdot p, ga) = g(ap, a) = g(f(p, a))$. This means $f: \overline{M}_1 \times \R^k \to \overline{M}_1\times \R^k$ passes to a quotient by $\Gamma_1'' = \Z^k$. This we get $\overline{f}: \underbrace{\overline{M}_1\times (\R^k/\Gamma_1'')}_{= \overline{M}_1\times T^k} \to (\overline{M_1}\times \R^k)/\Gamma_1''$ diffeomorphism (diagonal action, finite cover of $M$). Therefore, $\overline{f}: \overline{M}_1 \times T^k \to M_1$ is a diffeomorphism where $M_1$ is the finite conver of $M$. 

\chapter{Bishop-Gromov Volume Comparison}
In this chapter, we are going to state and prove the classical Bishop-Gromov volume comparison theorem.
\begin{lemma}\label{lem: monotone average integral}
	Let $q: \R \to \R$ be a monotonous non-increasing function. Then the average integral
	\begin{equation*}
		f(t) = \dashint_{0}^t q(s)ds
	\end{equation*}
	must also be non-increasing. 
	
	\begin{figure}[htbp]
        \centering
       	\includegraphics[width=1.0\textwidth]{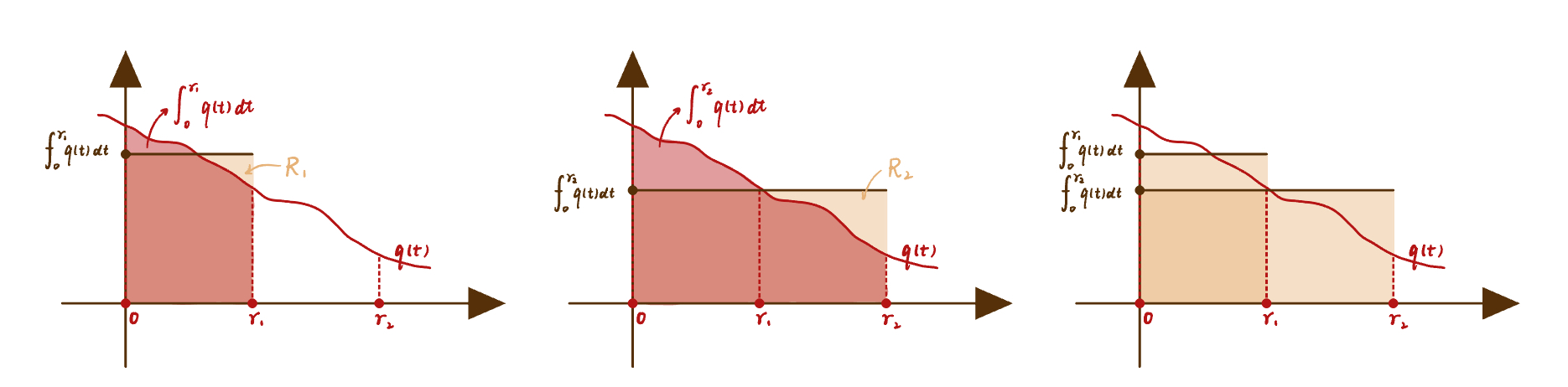}
       	\caption{Average integral of a non-increasing function is also non-increasing}
    \end{figure}
	\begin{proof}
		Indeed, for $T > t > 0$
    \begin{equation*}
    	\begin{split}
    		\dashint_0^Tq
    		&= \frac{1}{m([0, T])}\brac{\int_0^tq + \int_t^T q}\\
    		&= \underbrace{\frac{m([0, t])}{m([0, T])}}_{:= \lambda_1}\dashint_0^tq  + \underbrace{\frac{m([t, T])}{m([0, T])}}_{:= \lambda_2} \dashint_t^T q 
     	\end{split}
    \end{equation*}
    Notice that $\lambda_1 + \lambda_2 = 1$ and $\lambda_1, \lambda_2 > 0$. Also notice that since $q$ is non-increasing, then 
    \begin{equation*}
    \dashint_t^Tq \leq \dashint_0^t q 
    \end{equation*}
    Therefore, 
    \begin{equation*}
    	\begin{split}
    		\dashint_0^Tq 
    		&= \lambda_1\dashint_0^t q +\lambda_2\dashint_t^T q \\
    		&\leq \lambda_1\dashint_0^t q +\lambda_2\dashint_0^t q\\
    		&= \dashint_0^t q
     	\end{split}
    \end{equation*}
   	for $0 < t < T$. 
	\end{proof}
\end{lemma}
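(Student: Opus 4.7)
The plan is to show directly that for any $0 < t < T$ one has $f(T) \le f(t)$, which is exactly the non-increasing property. First I would split the integral at $t$, writing
\[
\int_0^T q(s)\,ds = \int_0^t q(s)\,ds + \int_t^T q(s)\,ds,
\]
and then divide by $T$. Rescaling each piece by its own length gives
\[
\dashint_0^T q = \lambda_1 \dashint_0^t q + \lambda_2 \dashint_t^T q,
\]
where $\lambda_1 = t/T$ and $\lambda_2 = (T-t)/T$ are positive and sum to $1$. Thus $\dashint_0^T q$ is exhibited as a convex combination of the two sub-averages.

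Next I would invoke monotonicity of $q$ to compare these two averages. Since $q$ is non-increasing, every value of $q$ on $[t,T]$ is at most every value of $q$ on $[0,t]$; taking averages preserves this, so $\dashint_t^T q \le q(t) \le \dashint_0^t q$. Substituting into the convex combination,
\[
\dashint_0^T q \le \lambda_1 \dashint_0^t q + \lambda_2 \dashint_0^t q = \dashint_0^t q,
\]
which is the desired inequality $f(T) \le f(t)$.

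I do not anticipate any real obstacle here: the argument is a one-line observation that the average on a longer interval is a weighted mean of averages on sub-intervals, combined with the fact that the later sub-interval has the smaller average when $q$ is non-increasing. The only thing worth being careful about is handling the endpoint $t=0$, but since $f(0)$ is a vacuous limit one only needs the statement for $0<t<T$, and continuity of $f$ on $(0,\infty)$ handles the rest.
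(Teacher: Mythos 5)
Your proof is correct and uses essentially the same argument as the paper: write the average over $[0,T]$ as a convex combination of the averages over $[0,t]$ and $[t,T]$, then use monotonicity of $q$ to bound the second by the first. The only cosmetic difference is that you insert $q(t)$ as an intermediate comparison point, and that the paper writes its weights as $m([0,t])/m([0,T])$ rather than $t/T$ (which keeps the argument valid for the weighted measure $m=\bar j(t)\,dt$ actually used later in the Bishop--Gromov proof, though for the lemma as stated the two are identical).
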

\begin{theorem}[Bishop-Gromov Relative Volume Comparison ]\label{thm:bishop-gromov}
Let $(M^n, g)$ be a complete manifold with $\ric_M \geq (n - 1)\kappa$. Recall that $\modsp{\kappa}$ denotes the complete simply connected $n$-dimensional space of $\sect \equiv \kappa$ (thus $\ric_{\modsp{\kappa}} \equiv (n - 1)\kappa$). Let $p \in M$ and $\overline{p} \in \modsp{\kappa}$. Then the function $f_p: \R^+ \to \R^+$, where
\begin{equation}\label{eq:bishop-gromov}
    f_p(r) = \frac{\vol_M(B_p(R))}{\vol_{\modsp{\kappa}}(B_{\overline{p}}(R))}
\end{equation}
is non-increasing with respect to $r$. 
\end{theorem}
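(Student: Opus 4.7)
\medskip

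\noindent\textbf{Proof Plan.} The plan is to reduce the statement to a pointwise monotonicity property of the Jacobian density of the exponential map along each radial direction, and then integrate. Working in polar coordinates around $p$, let $S_p\subset T_pM$ be the unit sphere and, for $v\in S_p$, let $r(v)\in(0,\infty]$ denote the distance from $p$ to its cut point in direction $v$. Let $J(t,v)$ be the density such that $\exp_p^*(d\vol_g)=J(t,v)\,dt\,dv$ on $\{tv: 0<t<r(v)\}$, and extend by $J(t,v):=0$ for $t\ge r(v)$; since the cut locus has measure zero, one has
\begin{equation*}
    \vol_M(B_R(p))=\int_{S_p}\int_0^R J(t,v)\,dt\,dv.
\end{equation*}
Analogously, in $\modsp{\kappa}$ the density is the radial factor $\bar J(t)=\snk(t)^{n-1}$, so that $\vol_{\modsp{\kappa}}(B_R(\bar p))=\omega_{n-1}\int_0^R \bar J(t)\,dt$. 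Thus it suffices to prove that for every $v\in S_p$, the ratio $J(t,v)/\bar J(t)$ is non-increasing in $t$, and then derive the monotonicity in $R$ of \eqref{eq:bishop-gromov} by a weighted-averaging argument (see below).

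\medskip

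\noindent\textbf{Pointwise Riccati comparison.} Along the unit speed radial geodesic $\gamma_v(t)=\exp_p(tv)$, for $t<r(v)$, let $S(t)$ be the shape operator of the sphere $\{d_p=t\}$ at $\gamma_v(t)$ (Remark~\ref{exist-S-I}), normalized so that $S(t)\sim \tfrac{1}{t}\id$ as $t\to 0^+$. A short computation using normal Jacobi fields gives the ODE
\begin{equation*}
    \frac{\partial_t J(t,v)}{J(t,v)}=\tr S(t),
\end{equation*}
and in the model space the same formula holds with $\tr \bar S(t)=(n-1)\ctk(t)=(\log \bar J)'(t)$. Next, tracing the matrix Riccati equation $D_tS+S^2+R_\nu=0$ yields
\begin{equation*}
    (\tr S)'+\tr(S^2)+\ric(\dga,\dga)=0,
\end{equation*}
and by Cauchy–Schwarz $\tr(S^2)\ge (\tr S)^2/(n-1)$ (since $S$ acts on an $(n-1)$-dimensional space). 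Writing $h(t)=\tr S(t)$ and using $\ric\ge (n-1)\kappa$ we obtain the scalar Riccati \emph{inequality}
\begin{equation*}
    h'+\frac{h^2}{n-1}+(n-1)\kappa\le 0,
\end{equation*}
while $\bar h(t):=(n-1)\ctk(t)$ satisfies equality. Both blow up like $(n-1)/t$ at $0$, so applying Lemma~\ref{lem: g(t) < ct_k} (after substituting $u=h/(n-1)$, or equivalently repeating that proof with the rescaled Riccati) gives $h(t)\le \bar h(t)$ for all $t\in(0,r(v))$. Hence $(\log J-\log \bar J)'\le 0$ on $(0,r(v))$, which together with $J(t,v)/\bar J(t)\to 1$ as $t\to 0^+$ shows that $J(t,v)/\bar J(t)$ is non-increasing on $(0,r(v))$.

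\medskip

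\noindent\textbf{Past the cut locus and integration.} Because $J(t,v)$ drops to zero at $t=r(v)$ (either continuously, if $r(v)$ is a conjugate point, or discontinuously, by our convention, at a cut point), the extended ratio $J(t,v)/\bar J(t)$ remains non-increasing on all of $(0,\infty)$. To pass from this pointwise monotonicity to the monotonicity of $f_p$, set $\phi_v(t)=J(t,v)/\bar J(t)$; then
\begin{equation*}
    f_p(R)=\frac{1}{\omega_{n-1}}\int_{S_p}\frac{\int_0^R \phi_v(t)\bar J(t)\,dt}{\int_0^R \bar J(t)\,dt}\,dv,
\end{equation*}
so $f_p(R)$ is the average over $v\in S_p$ of a weighted mean of $\phi_v$ against $\bar J$. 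A variant of Lemma~\ref{lem: monotone average integral} applied with the positive weight $\bar J\,dt$ (the same convex-combination argument still works) shows that for any non-increasing $\phi_v\ge 0$ the weighted average $\int_0^R \phi_v\bar J\big/\int_0^R \bar J$ is non-increasing in $R$; averaging over $v$ preserves this and yields the claim.

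\medskip

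\noindent\textbf{Main obstacle.} The conceptually cleanest part is the Riccati/Cauchy-Schwarz comparison; the real subtlety is treating the cut locus correctly so that the \emph{extended} ratio $J(t,v)/\bar J(t)$ remains non-increasing across $t=r(v)$ despite the possible jump. This must be justified by checking that $r(v)$ is lower semicontinuous in $v$ and that $\{(t,v):t<r(v)\}$ is open with measure-zero complement in $\{tv:t<\inj(p)^{\text{along }v}\}$, so that the Fubini-type expression for $\vol_M(B_R(p))$ really does reduce to $\int_{S_p}\int_0^R J(t,v)\,dt\,dv$ with the zero-extension.
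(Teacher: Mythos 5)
Your proposal is correct and follows essentially the same route as the paper's proof: pass to polar coordinates and the Jacobian density $J(t,v)$, derive $(\log J)'=\tr S$, trace the matrix Riccati equation and bound $\tr(S^2)$ from below by $(\tr S)^2/(n-1)$ to obtain a scalar Riccati inequality, compare with the model via Lemma~\ref{lem: g(t) < ct_k}, zero-extend past the cut locus, and finish with the weighted-averaging argument of Lemma~\ref{lem: monotone average integral}. The only differences are cosmetic: you invoke Cauchy–Schwarz directly for $\tr(S^2)\ge(\tr S)^2/(n-1)$ where the paper uses the orthogonal decomposition $S=S_0+aI$, and you work with $h=\tr S$ rather than the normalized $a=\tr S/(n-1)$.
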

Relative volume comparison has the following  immediate corollary
\begin{corollary}[Absolute Volume Comparison] \label{cor: bishop-gromov}
Under the assumptions of Theorem \ref{thm:bishop-gromov} for any $r \geq 0$ it holds that
\begin{equation*}
    \vol_M(B_p(r)) \leq \vol_{\modsp{\kappa}}(B_{\overline{p}}(r))
\end{equation*}
\end{corollary}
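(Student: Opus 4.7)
The plan is to derive the absolute comparison directly from the relative comparison in Theorem~\ref{thm:bishop-gromov} by showing that the monotone function $f_p(r)$ has value $\leq 1$, which will follow from computing its limit as $r \to 0^+$.

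First, I would invoke Theorem~\ref{thm:bishop-gromov} to conclude that $f_p(r) = \frac{\vol_M(B_p(r))}{\vol_{\modsp{\kappa}}(B_{\overline{p}}(r))}$ is monotone non-increasing on $(0, \infty)$. Since a monotone non-increasing function on $(0, \infty)$ satisfies $f_p(r) \leq \lim_{s \to 0^+} f_p(s)$ whenever this limit exists, the entire problem reduces to computing
\[
\lim_{r \to 0^+} \frac{\vol_M(B_p(r))}{\vol_{\modsp{\kappa}}(B_{\overline{p}}(r))}.
\]

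Next, I would compute this limit using normal coordinates. Fix any $n$-dimensional Riemannian manifold $(N, h)$ and a point $q \in N$. For $r < \inj(q)$, one can write $B_q(r) = \exp_q(B_0(r))$ where $B_0(r) \subset T_qN$ is the Euclidean ball of radius $r$. In geodesic polar coordinates the volume element is $dV = J(r, \theta)\, dr\, d\theta$, where $J(r, \theta) = r^{n-1} + O(r^{n+1})$ as $r \to 0$ by the standard Jacobi field expansion (since $J(r,\theta)/r^{n-1} \to 1$ uniformly in $\theta$). Integrating gives
\[
\vol_N(B_q(r)) = \omega_n r^n + O(r^{n+2}),
\]
where $\omega_n$ denotes the volume of the Euclidean unit ball in $\R^n$. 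Applying this both to $M$ at $p$ and to $\modsp{\kappa}$ at $\overline{p}$, the leading $\omega_n r^n$ factor cancels in the ratio and we obtain $\lim_{r \to 0^+} f_p(r) = 1$.

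Combining the two steps, $f_p(r) \leq 1$ for every $r > 0$, which is exactly the asserted inequality $\vol_M(B_p(r)) \leq \vol_{\modsp{\kappa}}(B_{\overline{p}}(r))$. The only genuinely nontrivial input is the short-distance asymptotics of the volume of small balls, and this is a standard computation with the Jacobi field expansion of the exponential map; it does not require any curvature assumption and therefore applies verbatim to both $M$ and $\modsp{\kappa}$. No obstacle beyond bookkeeping is expected.
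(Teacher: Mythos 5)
Your proof is correct and follows essentially the same route as the paper: monotonicity of $f_p$ from Theorem~\ref{thm:bishop-gromov} together with $\lim_{r\to 0^+} f_p(r) = 1$ (the paper phrases the limit as $M$ being "asymptotically Euclidean" since $d\exp_p|_0 = \id$). You simply spell out the small-ball volume asymptotics more explicitly; no substantive difference.
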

\begin{proof} [Proof of Corollary \ref{cor: bishop-gromov}] 
  	Because $M$ is asymptotically Euclidean for small $r$ ( since $d\exp_p|_0 = \id$) we have that 
    \begin{align*}
        & \lim_{r \to 0}f(p, r) = 1
    \end{align*}
    Now monotonicity of $f$ immediately implies the result.
\end{proof}
\begin{proof}[Proof of Theorem \ref{thm:bishop-gromov}]
    Denote by $\omega$ the volume form on $M$ and we want to compute the volume form in exponential coordinates. For each $p$, $T_pM\cong \R^n$, thus we can define polar coordinates on $\R^n\setminus\{0\}$. Namely, if $S^{n - 1} \subseteq \R^n$ and $r \in \R^+$, then 
    \begin{equation*}
        \psi: S^{n - 1}\times \R^+ \to \R^n; \quad (v, r) \mapsto r\cdot v
    \end{equation*}
    is the polar coordinates. Take $\phi = \exp \circ \psi$. Thus
    \begin{equation*}
        \phi: S^{n - 1}\times \R^+ \stackrel{\psi}{\to} \R^n \cong T_pM \stackrel{\exp}{\to} M
    \end{equation*}
    We can pull back the volume form by $\phi$. 
    \begin{remark}
        Let's recall the definition of the pullback: For $f: M^n_1 \to M^n_2$, $\omega \in \Omega^n(M)$, $\br{v_1, \dots, v_n} \subseteq T_pM^n_1$ is the basis. Thus we can define the pullback as
        \begin{equation*}
            (f^*\omega(p))(v_1, \dots, v_n) = \omega(f(p))(df_p(v_1), \dots, df_p(v_n))
        \end{equation*}
    \end{remark}
    Consider $\R^n$ which is parameterized by $S^{n -1}\times \R^+$ and consider $\overline{\gamma}(t) = t\cdot v$, where $v \in S^n$ is the unit vector at the origin. Then $v^\perp = T_vS^{n - 1}$. Let $v_1, \dots, v_{n-1}$ be the orthonormal basis of $T_vS^{n - 1}$. Since $S^{n-1} \times \R^+$ has the volume form $dS^{n - 1}\wedge dr$, i.e. $dS^{n - 1}\wedge dr(v_1, v_2, \dots, v_{n - 1}, \frac{\partial}{\partial r}) = 1$ for the volume form $\omega \in \Omega^n(M)$, there exists $j: S^{n - 1}\times \R^+ \to \R$ such that 
    \begin{equation*}
        \phi^*(\omega) = j(v, r)dS^{n - 1}\wedge dr
    \end{equation*}
    We want to understand  $j(v, r)$. By the definition of the pullback form
    \begin{align*}
        j(v, r) =&  j(v, r)dS^{n - 1}\wedge dr(v_1, \dots, v_{n - 1}, \frac{\partial}{\partial r})\\
        = & \phi^*(\omega(v_1, \dots, v_{n - 1}, \frac{\partial}{\partial r}))\\
        = & \omega(d\phi(v_1), \dots, d\phi(v_{n - 1}), d\phi(\frac{\partial}{\partial r}))
    \end{align*}
    By the definition of $\psi$, we know that 
    \begin{equation*}
        d\psi: (v_1, \dots, v_{n - 1}, \frac{\partial}{\partial v}) \mapsto (rv_1, \dots, rv_{n - 1}, v)
    \end{equation*}
    On $\R^n \cong T_pM$, $\overline{\gamma}_v(t) = tv = \psi(v, t)$. For vectors $tv_1, \dots, tv_{n - 1}, v$, we apply $d\exp$ to them. Look at the  Jacobi fields in the flat metric on $\R^n$ given by $\overline{J}_i(t) = tv_i$. $d\psi_{(v, t)}(v_i) = \overline{J}_i(t)$, which are normal to $\overline{\gamma}_v(t)$. Then
    \begin{equation*}
        d\exp_p|_{tv}(\overline{J}_i(t)) = J_i(t)
    \end{equation*}
    is the Jacobi field along $\gamma_v(t) = \exp(tv)$ satisfying the initial conditions $\bar J_i(0)=0,  \bar J_i'(0)=v_i, i=1,\ldots, n-1$.
    \begin{remark}
        Recall that Jacobi fields that vanish at $0$ can be  obtained by taking the variation of the exponential map. Thus we denote $\overline{J}_i(t) := \frac{\partial}{\partial s} = d\exp_{\overline{\gamma}_v(t)}(J_i(t))$. And also $d\exp_{\overline{\gamma}_v(t)}(v) = \Dot{\gamma}_v(t)$. Notice that $J_i(t)\perp \Dot{\gamma}_v(t)$ for all $t$. Recall that if $J$ is Jacobi along $\gamma(t)$. Then 
        \begin{equation*}
            \inp{J, \Dot{\gamma}} = at + b, \quad\text{where $b = \inp{J(0), \Dot{\gamma}(0)}$, $a = \inp{\Dot{J}(0), \Dot{\gamma}(0)}$}.
        \end{equation*}
        In our case, $J_i(0) = 0 \implies b = 0$, $\Dot{J}_i(0) = v_i\perp v \implies a = 0 \implies \inp{J_i(t), \Dot{\gamma}_v(t)} \equiv 0$. 
    \end{remark}
    So we have $\inp{J_i(t), \Dot{\gamma}_v(t)}\equiv 0$. Now by the definition of $\phi$, we know that 
    \begin{equation*}
        d(\exp\circ\psi)(v_1, \dots, v_n, \frac{\partial}{\partial t}) = (J_1(t), \dots, J_{n - 1}(t), \Dot{\gamma}_v(t))
    \end{equation*}
    Therefore, 
    \begin{equation*}
        j(v, t) = \omega(d\phi(v_1), \dots, d\phi(v_{n - 1}), d\phi(\frac{\partial}{\partial t})) = \omega(J_1(t), \dots, J_{n - 1}(t), \Dot{\gamma}_v(t))
    \end{equation*}
    which is just the volume of the cuboid spanned by $\br{J_1(t), \dots, J_{n - 1}(t), \Dot{\gamma}_v(t)} \subseteq T_pM$.
    \begin{remark}
        If $\omega_1, \dots, \omega_n$ in $\R^n$, $E = \br{e_1, \dots, e_n}$ are orthonormal basis of $\R^n$, we can write $[\omega_1]_E, \dots, [\omega_n]_E$ the column vectors in this basis, then 
        \begin{equation*}
            d\vol_{\R^n}(\omega_1, \dots, \omega_n) = \det{[\omega_1]_E| \dots| [\omega_n]_E}
        \end{equation*}
    \end{remark}
    Therefore, let $v_1, \dots, v_n$ be parallel vector fields along $\gamma_v(t)$, $v_i(0) = v_i$. Start with $\underbrace{v_1, \dots, v_{n - 1}}_{\text{Orthonormal Basis}} \in T_VS^{n - 1}$, and $v_n(t) = \Dot{\gamma}_v(t)$. Therefore, we can write $\br{J_1(t), \dots, J_{n - 1}(t), \Dot{\gamma}_v(t)}$ as colume vectors with respect to the orthonormal basis $\Gamma = \br{v_1(t), \dots, v_{n - 1}(t), v_n(t)}$. And in the basis $\Gamma$, we have
    \begin{equation*}
        [[J_1(t)]_\Gamma|\cdots|[J_n(t)]_{\Gamma}] = 
        \begin{bmatrix}
            \begin{matrix}
            \vdots & \empty & \vdots\\
            [J_1(t)]_\Gamma & \cdots & [J_{n - 1}(t)]_\Gamma\\
            \vdots & \empty & \vdots
            \end{matrix} & \rvline & \bigzero\\
            \hline
            \bigzero & \rvline & 1
        \end{bmatrix}
    \end{equation*}
    Then $j(v, t) = \det{[J_1(t)]_{\Gamma}| \dots| [J_{n - 1}(t)]_{\Gamma}}$, which is the determinant of the $(n - 1)\times (n - 1)$-block. Remember that for 
    \begin{equation*}
        A(t) = 
        \begin{bmatrix}
            a_{11}(t) & \dots & a_{1n}(t)\\
            \vdots & \ddots & \vdots\\
            a_{n1}(t) & \dots & a_{nn}(t)
        \end{bmatrix}
        = \begin{bmatrix}
            \vdots & \empty & \vdots\\
            a_1(t) & \cdots & a_n(t)\\
            \vdots & \empty & \vdots
        \end{bmatrix}
    \end{equation*}
    we have
    \begin{equation*}
        \frac{d}{dt}\det{A(t)} = \sum_{i = 1}^n\det{a_1(t)|\dots|a_i'(t)|\dots |a_n(t)}
    \end{equation*}. 
    Therefore,
    \begin{equation*}
        \partial_tj(v, t) = \sum_{i = 1}^{n - 1}\det{(J_1(t)| \dots| J_i'(t)|\dots|J_{n - 1}(t))}. 
    \end{equation*}
    Recall the Jacobi equation along $\gamma_v(t)$
    \begin{equation*}
      J'' + R_{\Dot{\gamma}(t)}(J) = 0 \quad J(0)=0\quad \text{where $R_{\Dot{\gamma}(t)}(J) = R(J, \Dot{\gamma}(t))\Dot{\gamma}(t)$}
    \end{equation*}
    can be split into 
   	\begin{equation*}
    \begin{cases}
    J(0)=0\\
        \Dot{J} = SJ \quad\text{$S$ is the symmetric shape operator of $t$-sphere}\\
        \Dot{S} + S^2 + R_{\Dot{\gamma}} = 0\quad\text{Riccati Equation}
    \end{cases}
    \end{equation*}
    where $S$ satisfies the initial condition $S(t)\sim \frac{1}{t}\id$ as $t\to 0^+$.
    
    Then 
    \begin{equation*}
        \partial_tj(v, t) = \sum_{i = 1}^{n - 1} \det{[J_1(t)]_\Gamma| \dots| [J_{i - 1}(t)]_\Gamma| [SJ_i(t)]_\Gamma| [J_{i + 1}(t)]_\Gamma| \dots  |[J_{n - 1}(t)]_\Gamma}.
    \end{equation*}
    We will only need $t \leq \cut(v)$, i.e. $\gamma_v(t)$ is shortest on $[0, \cut(v)]$. That implies there are no conjugate points on $[0, \cut(v))$. Therefore, $J_1(t), \dots, J_{n - 1}(t)$ are linear independent for all $t \in [0, \cut(v))$ (This is a basis of $\Dot{\gamma}(t)^\perp$ and we can write $S$ as a matrix in this basis.). 

    Since
    \begin{align*}
    &
    \begin{cases}
        & S(J_1) = \sum_{k = 1}^{n - 1} S_{k1}\cdot J_k\\
        & \vdots \\
        & S(J_i) = \sum_{k = 1}^{n - 1} S_{ki}\cdot J_k
    \end{cases}
        \\
        \implies & \det{[J_1(t)]_\Gamma| \dots| [J_{i - 1}(t)]_\Gamma| \underbrace{[SJ_i(t)]_\Gamma}_{= \sum_{k = 1}^nS_{ki}\cdot J_k}| [J_{i + 1}(t)]_\Gamma| \dots  |[J_{n - 1}(t)]_\Gamma}\\
        & =   S_{ii}\cdot\det{[J_1(t)]_\Gamma|\dots|[J_i(t)]_\Gamma|\dots|[J_{n - 1}(t)]_\Gamma}\\
        \implies & \partial_t j(v, t) = \sum_{i = 1}^{n - 1}S_{ii}\cdot\det{[J_1(t)]_\Gamma|\dots|[J_i(t)]_\Gamma|\dots|[J_{n - 1}(t)]_\Gamma}\\
        \implies &  \partial_t j(v, t) = \tr(S)j(v, t)
    \end{align*}
    Then we can take the trace on the matrix equation $S' + S^2 + R_{\Dot{\gamma}(t)} = 0$. We get
    \begin{equation}\label{eq: trace of the shape operator}
        \tr(S') + \tr(S^2) + \tr(R_{\Dot{\gamma}(t)}) = 0
    \end{equation}
    Notice that
    \begin{itemize}
        \item $\tr(S') = \tr(S)'$;
        \item $\tr(R_{\Dot{\gamma}(t)}) = \ric(\Dot{\gamma}(t), \Dot{\gamma}(t)) \geq (n - 1)\kappa\abs{\Dot{\gamma}(t)}^2 = (n - 1)\kappa$;
        \item $\tr(S^2) = \inp{S, S} = \abs{S_0}^2 + a\abs{I}^2 = \underbrace{\abs{S_0}^2}_{\geq 0} + a^2(n - 1)$. where $S_0$ is defined as $S = S_0 + aI$ and $a = \frac{\tr(S)}{n - 1}$. Then $\tr(aI) = (n - 1)a \implies \tr(S_0) = \tr(S) - \tr(S) = 0$. And the inner product on square matrices is defined as $\inp{A, B} = \tr(A\cdot B^t)$. We can check that $S_0 \perp I$ under this inner product:
        \begin{align*}
            \inp{S_0, I} = \tr(S_0\cdot I^t) = \tr(S_0) = 0
        \end{align*}
    \end{itemize}
    Therefore, we can write the equation \ref{eq: trace of the shape operator} of the trace of the shape operator as
    \begin{align*}
        & \underbrace{\tr(S')}_{= \tr(S)'} + \underbrace{\tr(S^2)}_{= \abs{S_0}^2 + a^2(n - 1)} + \underbrace{\tr(R_{\Dot{\gamma}(t)})}_{\geq \kappa(n - 1} = 0\\
        \implies & \underbrace{\tr(S)'}_{= (n - 1)a'} + \abs{S_0}^2 + a^2(n - 1) + \kappa(n - 1) \leq 0\\
        \implies & (n - 1)a' + a^2(n - 1) + \kappa(n - 1) \leq 0\\
        \implies & a' + a^2 + \kappa \leq 0\quad\text{where $a = \frac{\tr(S)}{n - 1}$}
    \end{align*}
    And now we have a Riccati equation on the scalar function $a$. Notice that we have $a(t) \to \infty$ as $t \to 0^+$ because $S(t) \sim\frac{1}{t}\id$ as $t \to 0^+$. Remember that $\overline{a}(t) := \ctk(t)$ is the solution of $\overline{a}' + \overline{a}^2 + \kappa = 0$ and $\lim_{t \to 0^+}\overline{a}(t) = +\infty$ on the interval of existence. (The interval of $\overline{a}$ is bigger.) Therefore, $a(t) \leq \overline{a}(t)$. 
    \begin{remark}
        In the case of the model space $\modsp{\kappa}$. $\sect \equiv \kappa$, $\overline{S}_0 \equiv 0$. So we get equality on $\overline{a} = \tr(\overline{S})$. 
        \begin{equation*}
            \overline{a}' + \overline{a}^2 + \kappa \equiv 0 \implies \overline{a} = \ctk(t)
        \end{equation*}
        By Riccati comparison, $a(t) \leq \overline{a}(t) = \ctk(t)$ before the first conjugate point along $\gamma_v$. Just as in Rauch's comparison, we get an inequality on $\underbrace{j(t)}_{j(t, v)}$ and $\overline{j}(t)$. Therefore, we have
        \begin{align*}
            &
            \begin{cases}
                j'(t) = (n - 1)aj(t)\\
                \overline{j}'(t) = (n - 1)\overline{a}\overline{j}(t)
            \end{cases}
             \implies
            \begin{cases}
                (n - 1)a = \frac{j'}{J}= \ln(j)' \\
                (n - 1)a = \frac{\overline{j}'}{\overline{j}}= \ln(\overline{j})'
            \end{cases}\\
            \implies & \ln(\frac{j}{\overline{j}})' = \ln(j)' - \ln(\overline{j})' = (a - \overline{a})(n - 1) \leq 0\\
            \implies & \text{$\ln(\frac{j}{\overline{j}})$ is non-increasing}\\
            & and \lim_{t \to 0^+}(j(t)) = \lim_{t \to 0^+}(\overline{j}(t)) = 1
        \end{align*}
    \end{remark}
    Therefore, $\abs{j(t)} \leq \abs{\overline{j}(t)}$ up to the first zero of $j(t)$ which must occur before the first zero of $\overline{j}$. So 
    \begin{equation*}
        j' = \underbrace{(n - 1)a}_{= \tr(S)} j
    \end{equation*}
    and
    \begin{equation*}
        \overline{j} = (n - 1)\underbrace{\overline{a}}_{=\ctk(t)}\overline{j} \implies \overline{j}(t) = (\snk(t))^{n - 1}
    \end{equation*}
    Then we can conclude the following
    \begin{itemize}
        \item If $\kappa \leq 0$, then $\overline{j}(t) > 0$ for $t > 0$;
        \item If $\kappa > 0$, then the first zero of $\snk(t)$ is $\frac{\pi}{\sqrt{\kappa}} = \diam(\mathbb{M}^2(\kappa))$. Therefore $j$ must have a zero before  $\frac{\pi}{\sqrt{\kappa}}$. Which means that the first conjugate point along $\gamma_v$ occurs before  $\frac{\pi}{\sqrt{\kappa}}$. This immediately gives. 
        	\begin{corollary}
            If $\ric_M \geq (n - 1)\kappa > 0$, then $\diam(M) \leq \frac{\pi}{\sqrt{\kappa}} = \diam(\modsp{\kappa})$. \end{corollary}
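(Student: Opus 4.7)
The plan is to deduce the diameter bound from the Jacobian vanishing result just established, essentially arguing that past a conjugate point a geodesic cannot be minimizing, and then invoking Hopf--Rinow.

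First, I would argue by contradiction. Fix $p \in M$ and suppose there exists $q \in M$ with $d(p,q) > \pi/\sqrt{\kappa}$. Since $M$ is complete (implicit, since $\ric_M$ is bounded below and the Bishop--Gromov analysis has been applied), Hopf--Rinow provides a unit speed minimizing geodesic $\gamma_v(t) = \exp_p(tv)$, $t \in [0, d(p,q)]$, joining $p$ to $q$. The preceding Bishop--Gromov analysis gave us the monotonicity $\ln(j/\bar j)' \le 0$ together with $\lim_{t\to 0^+} j(t)/\bar j(t) = 1$, and hence $j(v,t) \le \bar j(t) = \snk(t)^{n-1}$ on the interval of existence. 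For $\kappa > 0$, the comparison Jacobian $\bar j(t)$ vanishes at $t = \pi/\sqrt{\kappa}$, and the monotonicity of $j/\bar j$ forces $j(v, \cdot)$ to possess a zero at some $t_0 \in (0, \pi/\sqrt{\kappa}]$.

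Next, I would translate this analytic vanishing into geometric information. By Theorem~\ref{thm: Ja vanishes at a point}, the normal Jacobi fields along $\gamma_v$ vanishing at $0$ are exactly $J_i(t) = d(\exp_p)_{tv}(tv_i)$, whose determinant in a parallel orthonormal frame is (up to sign) the quantity $j(v,t)$. Hence $j(v, t_0) = 0$ means that $d(\exp_p)_{t_0 v}$ is singular, i.e.\ there is a nontrivial Jacobi field along $\gamma_v$ vanishing at both $0$ and $t_0$. By the proposition characterizing conjugate points quoted earlier in the notes (from Lee, Prop.\ 10.20), this says precisely that $\gamma_v(t_0)$ is conjugate to $p$ along $\gamma_v$, with $t_0 \le \pi/\sqrt{\kappa} < d(p,q)$.

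Finally, I would invoke the classical fact that a geodesic ceases to be minimizing strictly past its first conjugate point: if $\gamma_v(t_0)$ is conjugate to $p$, then for any $\eps > 0$, $d(p,\gamma_v(t_0 + \eps)) < t_0 + \eps$ (this is a standard second variation argument; see, e.g., the discussion of the cut locus in \cite{dCa, Lee19}). Since $t_0 < d(p,q)$, this contradicts the fact that $\gamma_v|_{[0, d(p,q)]}$ was minimizing. Therefore $d(p,q) \le \pi/\sqrt{\kappa}$ for every $q$, and taking the supremum over $p,q$ gives $\diam(M) \le \pi/\sqrt{\kappa}$.

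The only genuine obstacle is verifying that the excerpt's apparatus gives us the "no minimizing past first conjugate point" principle for free; it is not proved explicitly in the notes, but is quoted from the references, and the rest of the argument is a direct reading of what the Jacobian comparison already established.
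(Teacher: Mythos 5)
Your argument is correct and is essentially the proof the paper gives, just with the intermediate steps made explicit. The paper's own ``proof'' is the terse chain ``$j$ must have a zero before $\pi/\sqrt{\kappa}$ $\Rightarrow$ first conjugate point occurs before $\pi/\sqrt{\kappa}$ $\Rightarrow$ this immediately gives the corollary''; you have filled in exactly the two steps it elides, namely that $j(v,t_0)=0$ forces $d(\exp_p)_{t_0 v}$ to be singular (hence a conjugate point via the characterization quoted after Theorem~\ref{thm: Ja vanishes at a point}), and that a geodesic cannot minimize past its first conjugate point, which with Hopf--Rinow pins $\diam(M)\le\pi/\sqrt{\kappa}$. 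Your observation that this last ``no minimizing past a conjugate point'' principle is taken from the references rather than re-proved in the notes is accurate, and it is indeed the only external ingredient the corollary relies on.
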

	\end{itemize}
    Let's collect what we have so far. For $v \in T_pM$, $\abs{v} = 1$, $\gamma_v(t) = \exp(tv)$. Because $\abs{v}$ is a unit vector, $\cut(v) = \max(T)$ such that $\gamma_v(t)$ is shortest on $[0, T]$. ($\cut(v)$ could be $\infty$ but for compact manifolds $\cut(v) < \infty$). 
    \begin{align*}
        T_pM & \supseteq \overline{\cut(p)}:=\bigcup_{: v \in T_pM, \abs{v} = 1}\br{\cut(v)\cdot v}\\
        M & \supseteq \cut(p) := \bigcup_{: v \in T_pM, \abs{v} = 1}\br{\exp(\cut(v)\cdot v)}\\
        C_p & = \br{tv: t < \cut(v): v \in T_pM, \abs{v} = 1} = \cut(p)^\circ \subseteq M
    \end{align*}
    We know that $\exp_p: C_p \to M \backslash \cut(p)$ is a diffeomorphism.
    \begin{fact}
        $\cut(p)$ has measure $0$, for computing volumes $\cut(p)$ is irrelevant 
        \begin{equation*}
            \vol(B(p, r)) = \vol(B(p, r)\backslash\underbrace{\cut(p)}_{\text{has measure}}) = \vol(\exp(C_p\cap B(0, r)))
        \end{equation*}
    \end{fact}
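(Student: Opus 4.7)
The plan is to reduce the claim to a statement in the tangent space $T_pM$ and then transfer it to $M$ via the exponential map. Define the \emph{tangential cut locus}
\begin{equation*}
\widetilde{\cut}(p) = \{\cut(v)\cdot v : v \in T_pM,\ |v|=1,\ \cut(v)<\infty\} \subseteq T_pM,
\end{equation*}
so that by construction $\cut(p) = \exp_p(\widetilde{\cut}(p))$. The strategy is to show that $\widetilde{\cut}(p)$ has Lebesgue measure zero in $T_pM\cong\R^n$, and then push the conclusion to $M$ using that $\exp_p$ is smooth, hence locally Lipschitz, hence sends null sets to null sets (a standard consequence of $C^1$ regularity and the area formula, or equivalently Lipschitz behavior on compacta).

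To see that $\widetilde{\cut}(p)$ is a null set, I would first establish that the cut-radius function $v\mapsto \cut(v)$, defined on the unit sphere $S^{n-1}\subseteq T_pM$ with values in $(0,\infty]$, is continuous. The classical characterization of a cut point (either the first conjugate point along $\gamma_v$, or the first time $t$ such that there is a second distinct minimizing geodesic from $p$ to $\gamma_v(t)$) yields both upper and lower semicontinuity: upper semicontinuity follows because a limit of minimizing segments of length $t_i$ is a minimizing segment of length $\lim t_i$, while lower semicontinuity follows because a sequence $v_i\to v$ with $\cut(v_i)\to T<\cut(v)$ would produce either a pair of distinct limiting minimizing geodesics from $p$ to $\gamma_v(T)$ or a conjugate point along $\gamma_v$ at $T$, contradicting $T<\cut(v)$. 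Once $\cut(\cdot)$ is continuous, $\widetilde{\cut}(p)$ is the image in $T_pM$ of the graph of $\cut$ under polar coordinates.

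Now I would apply Fubini in polar coordinates on $T_pM$: for each direction $v\in S^{n-1}$, the radial slice of $\widetilde{\cut}(p)$ is at most the single point $\{\cut(v)\cdot v\}$, which is a one-dimensional null set. Since the polar-coordinate Jacobian is locally bounded and $\cut$ is measurable (even continuous), Fubini's theorem gives $\mathcal H^n(\widetilde{\cut}(p))=0$. Transporting by the smooth map $\exp_p$, which is locally Lipschitz on any compact subset of $T_pM$, then yields $\vol(\cut(p))=0$.

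For the final volume identity, recall from the excerpt that $\exp_p\colon C_p \to M\setminus\cut(p)$ is a diffeomorphism. For any $q\in B(p,r)\setminus\cut(p)$, there is a unique minimizing geodesic from $p$ to $q$, so $q=\exp_p(v)$ with $v\in C_p$ and $|v|=d(p,q)<r$; conversely, for $v\in C_p\cap B(0,r)$, one has $d(p,\exp_p(v))=|v|<r$, so $\exp_p(v)\in B(p,r)\setminus\cut(p)$. Hence $B(p,r)\setminus\cut(p)=\exp_p(C_p\cap B(0,r))$, and since $\cut(p)$ has measure zero,
\begin{equation*}
\vol(B(p,r)) = \vol(B(p,r)\setminus\cut(p)) = \vol\bigl(\exp_p(C_p\cap B(0,r))\bigr).
\end{equation*}
The main technical obstacle is the continuity of $\cut(\cdot)$; once that is in hand, the Fubini/Lipschitz argument is routine.
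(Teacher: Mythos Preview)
The paper states this as a \emph{Fact} without proof; it is used as a black box inside the Bishop--Gromov argument. So there is no paper proof to compare against, and your proposal should be judged on its own merits.

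Your argument is the standard one and is correct. A couple of minor remarks: (i) In the Fubini step you only need measurability of $v\mapsto\cut(v)$, not full continuity, so the semicontinuity discussion, while true and classical, is more than strictly necessary for the measure-zero claim. (ii) For the transfer step via $\exp_p$, note that $\widetilde{\cut}(p)$ need not be bounded when $M$ is noncompact; you should say explicitly that $\widetilde{\cut}(p)=\bigcup_k \widetilde{\cut}(p)\cap \overline{B(0,k)}$ and apply the locally-Lipschitz image argument on each compact piece. Both points are routine, and with them your proof is complete.
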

    So $\exp_p: B(0, r)\cap C_p \to B(p, r)\backslash\cut(p)$ is a diffeomorphism (This is for computing volume). For computing volume, we just want to use this part of the map. In short, 
    \begin{equation*}
        \vol(\underbrace{B(p, r)}_{\text{in polar coordinate}}) = \int_{S^n}\int_0^{\min{\br{r, \cut(v)}}}j(v, t)dtd\vol^{n - 1}
    \end{equation*}
    Let's redefine $j(v, t)$ to be $0$ for $t \geq \cut(v)$. And do the same for $\overline{j}(t)$:
    \begin{itemize}
        \item If $\kappa \leq 0$, then $\overline{j}(t) > 0$ for $t > 0$ (Don't need to redefine $\overline{j}$);
        \item  If $\kappa > 0$, then $\overline{j}(t) = (\snk(t))^{n - 1}$. We set it to be zero for $t > \frac{\pi}{\sqrt{\kappa}} = \diam(\modsp{\kappa})$
    \end{itemize}
    Recall the first zero of $J(t)$ is $\leq$ than the first zero of $\overline{J}$, then $\frac{j(t, v)}{\overline{j}(t)}$ is still non-increasing with this definition, for $\tau \geq \cut(v)$, we set it to be zero. Then we have redefined $j(t, v)$ and $\overline{j}(t)$. 
    
    Since we can express
    \begin{align*}
        \vol(B(p, r)) = & \int_0^r\brac{\int_{S^{n - 1}}j(t, v)dt}d\vol^{n - 1}\\
        = & \int_0^r\brac{\int_{S^{n - 1}}\frac{j(t, v)}{\overline{j}(t)}\overline{j}(t)d\vol^{n - 1}}dt\\
        = & \int_0^r \overline{j}(t)\cdot\brac{\int_{S^{n - 1}}\frac{j(t, v)}{\overline{j}(t)}d\vol^{n - 1}}dt
    \end{align*}
    and
    \begin{align*}
        \vol(\overline{B}(p, r)) = &\int_0^r\brac{\int_{S^{n - 1}}\underbrace{\overline{j}(t)}_{\text{doesn’t depends on $v$}}dt}d\vol^{n - 1}\\
        = & \vol(S^{n - 1})\int_0^r\overline{j}(t)dt
    \end{align*}
    Thus, we have
    \begin{equation*}
        \frac{\vol(B(p, r))}{\vol(\overline{B}(p, r))} = \frac{\int_0^r\overline{j}(t)\cdot\brac{\int_{S^{n - 1}}\frac{j(t, v)}{\overline{j}(t)}d\vol^{n - 1}}dt}{\vol(S^{n - 1})\int_0^r\overline{j}(t)dt} =: \frac{\int_0^rq(t)\overline{j}(t)dt}{\int_0^r\overline{j}(t)dt}
    \end{equation*}
    where we denote $q(t) = \frac{1}{\vol(S^{n - 1})}\int_{S^{n - 1}}\frac{j(t, v)}{\overline{j}(t)}d\vol^{n - 1}$. Since we know that $\frac{j(t, v)}{\overline{j}(t)}$ is non-increasing for all $t$ and $q(t)$ is also non-increasing concerning $t$. 
    Thus the ratio $\frac{\vol(B(p, r))}{\vol(\overline{B}(p, r))}$ is also non-increasing. This is because 
    \begin{equation}\label{eq: volume ratio}
        \frac{\vol(B(p, r))}{\vol(\overline{B}(p, r))} = \frac{\int_0^rq(t)dm}{m([0, r])} = \dashint_0^rq(t)dm
    \end{equation}
    and the integrand $q(t)$ is non-increasing (by Lemma~\ref{lem: monotone average integral}), where $m = \overline{j}(t)dt$ the measure on $\R^+$ and $m(A) = \int_A\overline{j}(t)dt$ for $A \subseteq \R^+$.
    \end{proof}

\chapter{Gromov-Hausdorff Convergence of Metric Spaces}
In this section, we are going to introduce the Gromov-Hausdorff topology. This concept is generalized from the Hausdorff topology for metric spaces. The reason that we need this generalized concept is that we want to study a family of compact Riemannian manifolds, or more generally, a family of metric spaces, sharing the same properties on curvature bound, volume bound, and diameter bound, for example, the family of Riemannian manifolds
	\begin{equation*}
        M_{ric}(n, \kappa, D) = \br{(M^n, g) : \ric_M \geq (n - 1)\kappa, \diam_M \le D}. 
    \end{equation*}
    We will see that this class is pre-compact in Gromov-Hausdorff topology. This fact has a number of interesting applications.
    
    \section{Hausdorff Distance}
Let us briefly review the Hausdorff distance here. The Hausdorff distance allows us to measure the distance of two subsets intrinsically from the point of view of metric. Intuitively, the Hausdorff distance gives us the least radius that we can shrink one subset to touch another subset. 

\begin{notation}
	For a metric space $(X, d)$, and a subset $A \subseteq X$. For any $x \in X$, we can denote the distance from $x$ to $A$ as 
	\begin{equation*}
		d(x, A) = \inf_{y \in A}\br{d(x, y)}.
	\end{equation*}
\end{notation}
\begin{definition}[$\varepsilon$-neighborhood]
	Let $(X, d)$ be a metric space and $A \subseteq X$. For $\varepsilon > 0$, we can define the \textbf{$\varepsilon$-neighborhood} of $A$ as
	\begin{equation*}
		U_\varepsilon(A) = \br{x \in (X, d): d(x, A) < \varepsilon}.
	\end{equation*}
	where $\varepsilon$ is called the \textbf{radius} of the neighborhood. 
\end{definition}
\begin{definition}[Hausdorff Distance]\label{def:haus-distance}
Let $Z$ be a metric space, and let $X, Y \subseteq Z$ be subsets. The the \textbf{Hausdorff distance of $X$ and $Y$}, denoted by $d_H(X, Y)$, is defined as 
\begin{equation}
    d_H(X, Y) = \inf\br{\varepsilon > 0: Y \subseteq U_\varepsilon(X), X \subseteq U_\varepsilon(Y)}.
\end{equation}
where $U_\varepsilon(X)$ and $U_\varepsilon(Y)$ are the $\varepsilon$-neighborhood of $X$ and $Y$.
\end{definition}

We can provide an equivalent definition of the Hausdorff distance by thinking of the smallest radius of the neighborhood of $A$ covering $B$ as the largest distance from some point of $A$ to $B$ (See Figure~\ref{fig: haus-dist-equiv}).  That is
 \begin{definition}\label{def: haus-distance 2}
 	Let $Z$ be a metric space, and $X, Y \subseteq Z$ be subsets, the \textbf{Hausdorff distance of $X$ and $Y$} can also be defined as
	\begin{equation}
    d_H(X, Y) = \max\br{\sup_{y \in Y}\br{d(y, X)}, \sup_{x \in X}\br{d(x, Y)}}.
\end{equation}
 \end{definition}
\begin{figure}[htbp]
        \centering
       	\includegraphics[width=0.8\textwidth]{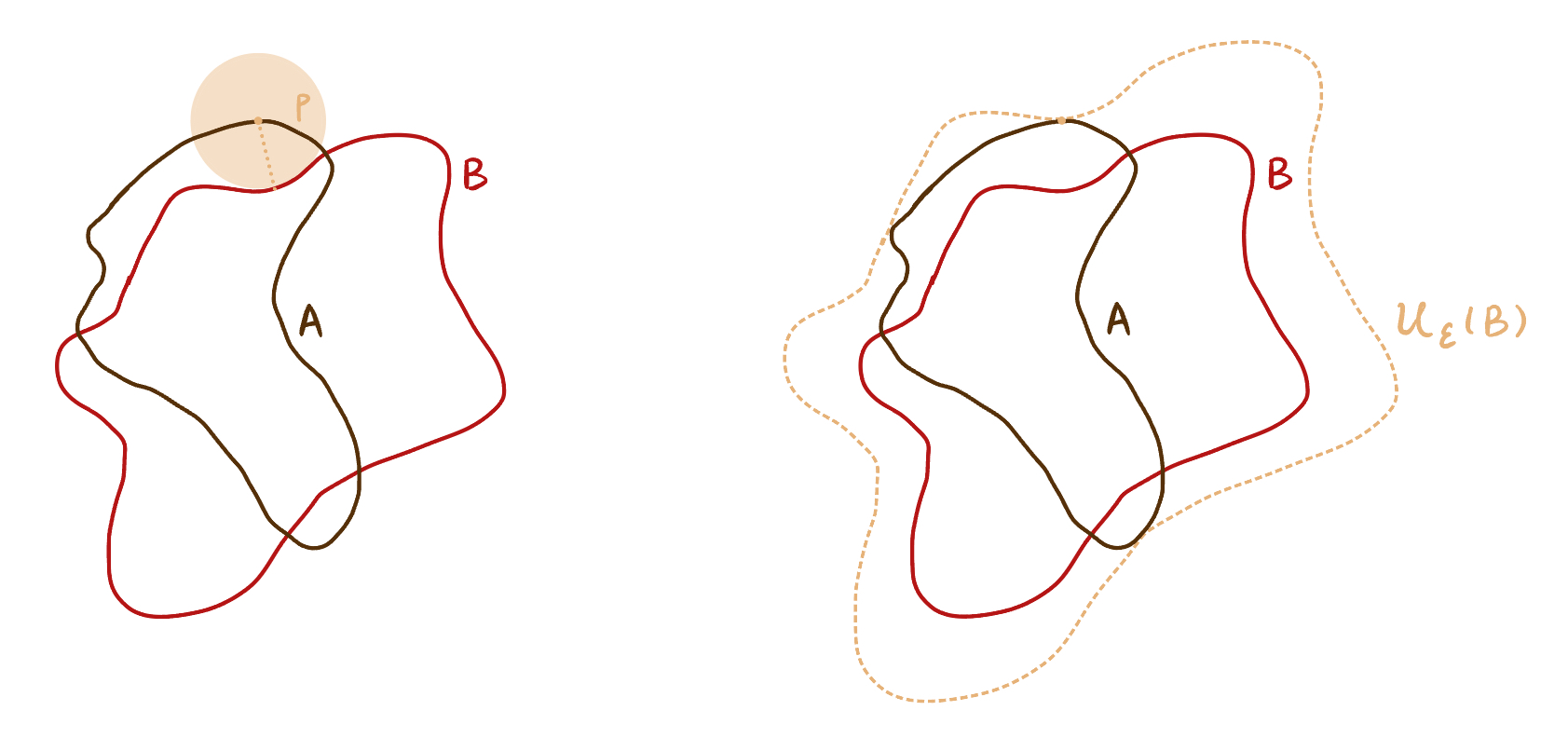}
       	\caption{The largest distance from the fixed point $p$ of $A$ to $B$ (left) is equal to the smallest radius of $A$ covers $B$ (right)}
       	\label{fig: haus-dist-equiv}
    \end{figure}

\begin{remark}
It is possible that the $d_H(X, Y) = + \infty$
\end{remark}
\begin{proposition}
	Let $(Z, d)$ be a metric space, then $d_H$ is a distance fuction on closed subsets of $Z$, i.e.
\begin{itemize}
    \item The triangle inequality holds: For  closed $X_1, X_2, X_3 \subseteq Z$,
    \begin{equation*}
        d_H(X_1, X_3) \leq d_H(X_1, X_2) + d_H(X_2, X_3). 
    \end{equation*}    
    \item For $X, Y$ closed subsets of $Z$, we have
    \begin{equation*}
        d_H(X, Y) = 0  \Longleftrightarrow X = Y
    \end{equation*}
    \item $d_H$ is symmetric, i.e  $d(X,Y)=d(Y,X)$. 
\end{itemize}
\begin{proof}
	 The idea of proving the triangule inequality is the following: For any $\varepsilon, \delta \in \R$ be two numbers such that 
    \begin{equation*}
    	\varepsilon > d_H(X_1, X_2) \quad\text{and} \quad \delta > d_H(X_2, X_3).
    \end{equation*}
    This implies that
    \begin{equation*}
    	X_3 \subseteq U_\delta(X_2) \quad\text{and}\quad X_2 \subseteq U_\varepsilon(X_1)
    \end{equation*}
    We claim that $X_3 \subseteq U_{\varepsilon + \delta}(X_2)$. Let $x_3 \in X_3$, then there exists $x_2 \in X_2$ such that $d(x_3, x_2) \leq \delta$. And moreover there exists $x_1 \in X_1$ such that $d(x_1, x_2) < \varepsilon$. Therefore, by the triangular inequality in the metric space, we have $d(x_1, x_3) \leq \varepsilon + \delta$, which means $X_3 \subseteq U_{\varepsilon + \delta}(X_1)$. Similarly, we can check that $X_1 \subseteq U_{\varepsilon + \delta}(X_3)$. 
\end{proof}
\end{proposition}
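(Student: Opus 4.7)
The plan is to verify the three axioms separately, noting that closedness of the subsets is only needed for the identity axiom while symmetry and the triangle inequality hold for arbitrary subsets of $Z$.

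First I would dispose of symmetry, which is the easiest property: both formulations of $d_H$ in Definitions \ref{def:haus-distance} and \ref{def: haus-distance 2} are manifestly symmetric in the two arguments. Indeed, the condition $X\subseteq U_\varepsilon(Y) \text{ and } Y\subseteq U_\varepsilon(X)$ is symmetric in $X,Y$, so the infimum over such $\varepsilon$ is invariant under swapping.

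Next I would complete the triangle inequality sketched in the text. Given closed $X_1,X_2,X_3\subseteq Z$, fix any $\varepsilon>d_H(X_1,X_2)$ and $\delta>d_H(X_2,X_3)$, so that $X_1\subseteq U_\varepsilon(X_2)$, $X_2\subseteq U_\varepsilon(X_1)$, $X_2\subseteq U_\delta(X_3)$ and $X_3\subseteq U_\delta(X_2)$. For any $x_1\in X_1$ choose $x_2\in X_2$ with $d(x_1,x_2)<\varepsilon$, and then $x_3\in X_3$ with $d(x_2,x_3)<\delta$; the triangle inequality in $Z$ gives $d(x_1,x_3)<\varepsilon+\delta$, so $X_1\subseteq U_{\varepsilon+\delta}(X_3)$. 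The reverse inclusion $X_3\subseteq U_{\varepsilon+\delta}(X_1)$ follows by the same argument with the indices reversed. Hence $d_H(X_1,X_3)\le \varepsilon+\delta$, and taking the infimum over admissible $\varepsilon,\delta$ yields $d_H(X_1,X_3)\le d_H(X_1,X_2)+d_H(X_2,X_3)$.

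For the identity axiom, one direction is trivial: $X=Y$ gives $X\subseteq U_\varepsilon(Y)$ and $Y\subseteq U_\varepsilon(X)$ for every $\varepsilon>0$, so $d_H(X,Y)=0$. Conversely, suppose $d_H(X,Y)=0$. Using Definition \ref{def: haus-distance 2}, this forces $\sup_{x\in X}d(x,Y)=0$ and $\sup_{y\in Y}d(y,X)=0$, so every $x\in X$ satisfies $d(x,Y)=0$, which means $x\in \overline{Y}$. This is the point where closedness enters the argument: since $Y$ is closed, $\overline{Y}=Y$, so $X\subseteq Y$, and symmetrically $Y\subseteq X$, giving $X=Y$. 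The main (mild) obstacle is simply being careful that this last step really requires closedness — without it, one only obtains $\overline{X}=\overline{Y}$, which is why the statement of the proposition is restricted to closed subsets.
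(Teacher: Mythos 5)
Your proof is correct and for the triangle inequality it takes essentially the same route as the paper: fix $\varepsilon > d_H(X_1,X_2)$ and $\delta > d_H(X_2,X_3)$, chase a point through the three sets via the metric triangle inequality in $Z$, and then take the infimum. You go further than the paper by also spelling out the symmetry and identity axioms (the paper's proof only treats the triangle inequality); in particular your observation that $d_H(X,Y)=0$ only gives $X\subseteq\overline{Y}$ and $Y\subseteq\overline{X}$, so closedness of the sets is genuinely needed to conclude $X=Y$, is exactly the right point to emphasize and is not spelled out in the paper.
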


\section{Gromov-Hausdorff Distance}
In this section, we generalize the concept of Hausdorff distance. This generalization allows us to define the distance between two metric spaces that are not necessarily subsets of the same metric space. 
\begin{definition}[Gromov-Hausdorff Distance]\label{def: gh-distance}
    Let $X, Y$ be compact metric spaces. Then the \textbf{Gromov-Hausdorff distance between $X$ and $Y$}, denoted by $d_{GH}(X, Y)$, is defined as 
    \begin{equation*}
    	d_{GH}(X, Y) = \inf\br{\varepsilon > 0: \text{$d_H^Z(i(X), i(Y)) \leq \varepsilon$ for some metric space $Z$}}
    \end{equation*}
    where $i: X \hookrightarrow Z$ and $j: Y \hookrightarrow Z$ are both distance-preserving embeddings. 
\end{definition}
\begin{remark}
	The compactness of $X$ and $Y$ implies that $i(X)$, $j(Y)$ are closed subset of $Z$.
\end{remark}
\begin{fact}
We have the following observations regarding the Gromov-Hausdorff distance:
\begin{itemize}
    \item In the definitionn of $d_{GH}(X, Y)$ it is sufficient to consider $Z$ as the disjoint union of $X$ and $Y$.
    \item In the original definition, if $Z$ is arbitrary, then it is possible that 
    \begin{equation*}
        i(X)\cap j(Y) \neq \emptyset
    \end{equation*}
    In this situation, we can take $\Tilde{Z} = Z \times \R$ and take $\hat{i}:= (i, 0)$, $\hat{j} = (j, \eps)$ so that
    \begin{equation*}
        \hat{i}(X)\cap \hat{j}(Y) = \emptyset
    \end{equation*}
    This construction allows us to consider only the disjoint case. So we can always take the disjoint union of $X$ and $Y$. 

    \begin{figure}[htbp]
        \centering
        \includegraphics[width=0.4\textwidth]{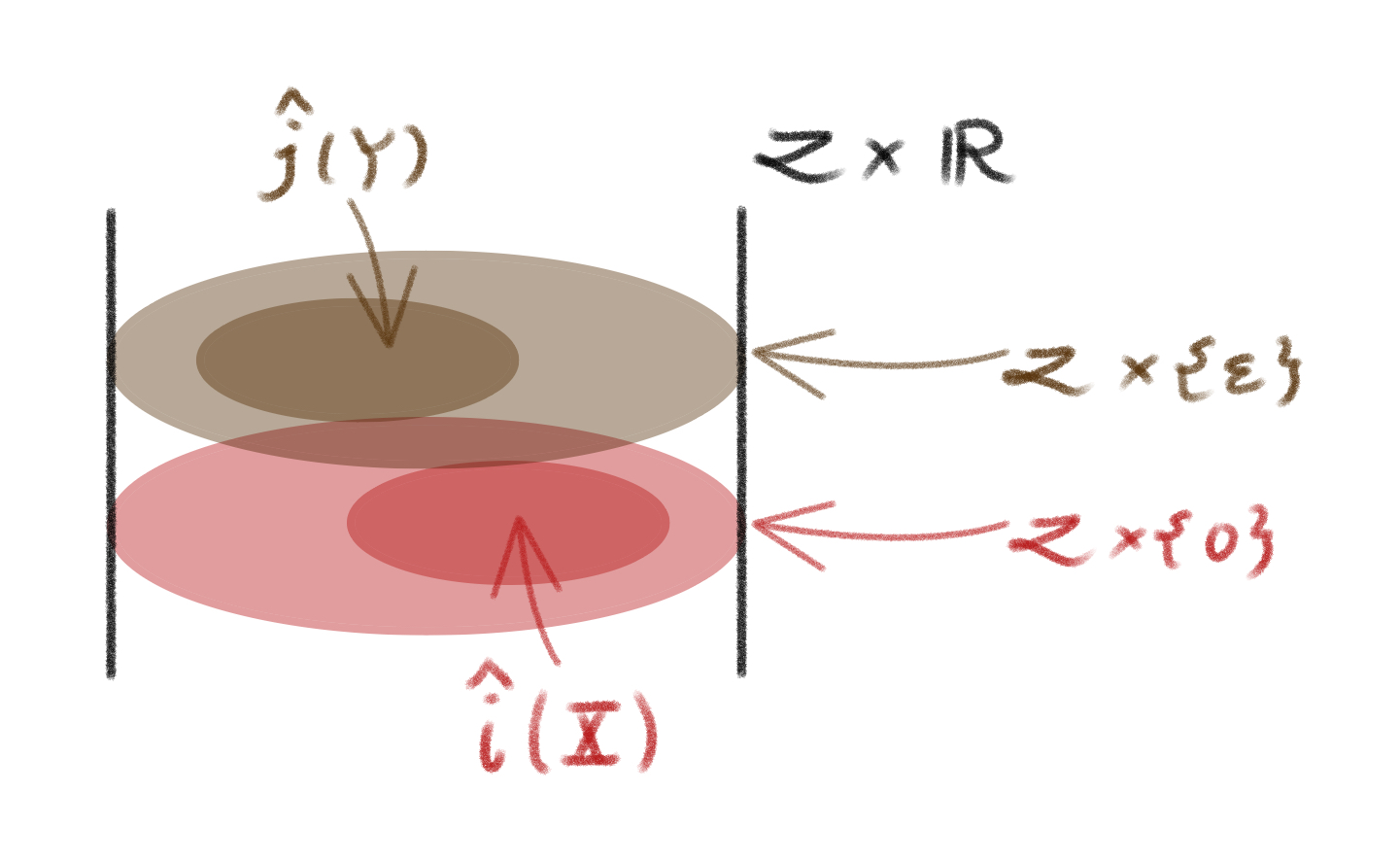}
        \caption{Construction of $\Tilde{Z}$}
    \end{figure}
    
    \item $d_{GH}$ satisfies the triangle inequality. Fix $X_1, X_2, X_3$ and we claim that 
    \begin{equation*}
        d_{GH}(X_1, X_3) \leq d_{GH}(X_1, X_2) + d_{GH}(X_2, X_3)
    \end{equation*}
    Take $Z = X_1\coprod X_2 \coprod X_3$, since we have metrics on $X_1\coprod X_2$ and $X_2\coprod X_3$, say
    \begin{equation*}
        \eps = d_{GH}(X_1, X_2), \quad \delta = d_{GH}(X_2, X_3)
    \end{equation*}
    in order to get the distance on $Z$, for $x_1 \in X_1$, $x_3 \in X_3$, we define 
    \begin{equation*}
        d^Z(x_1, x_3) = \inf_{y \in X_2}\br{d(x_1, y) + d(y, x_3)}
    \end{equation*}
  	It is easy to check that this is a metric on $Z$ and in this metric
    \begin{equation*}
        d_H^Z(X_1, X_3) \leq d_H^Z(X_1, X_2) + d_H^Z(X_2, X_3)
    \end{equation*}
    Via the same approach of triangle inequality of $d_H$, we can get the triangular inequality for $d_{GH}$ as well.
\end{itemize}
\end{fact}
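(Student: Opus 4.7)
The plan is to prove the triangle inequality for $d_{GH}$ by constructing, for any $\varepsilon > d_{GH}(X_1,X_2)$ and $\delta > d_{GH}(X_2,X_3)$, a single metric space $Z$ containing isometric copies of $X_1, X_2, X_3$ in which the Hausdorff triangle inequality can be applied directly. By the definition of $d_{GH}$ together with the first observation in the Fact (which we may take as granted from context), there exist metric structures on the disjoint unions $X_1 \sqcup X_2$ and $X_2 \sqcup X_3$ — extending the original metrics of $X_1, X_2, X_3$ — such that $d_H^{X_1 \sqcup X_2}(X_1, X_2) < \varepsilon$ and $d_H^{X_2 \sqcup X_3}(X_2, X_3) < \delta$. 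The only nontrivial ingredient is therefore the gluing step.

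First I would set $Z = X_1 \sqcup X_2 \sqcup X_3$ as a set, keep the given distances inside $X_1 \sqcup X_2$ and $X_2 \sqcup X_3$, and define the missing piece by the natural pushforward distance
\[
d_Z(x_1, x_3) := \inf_{y \in X_2} \bigl( d_{X_1 \sqcup X_2}(x_1, y) + d_{X_2 \sqcup X_3}(y, x_3) \bigr)
\]
for $x_1 \in X_1$, $x_3 \in X_3$. The first thing to verify is that $d_Z$ is a genuine metric on $Z$. Symmetry and the fact that the original distances on $X_1 \sqcup X_2$ and $X_2 \sqcup X_3$ are unchanged are immediate. The triangle inequality for $d_Z$ splits into several cases depending on which sets the three points lie in; each case reduces to the triangle inequality within one of the two given pseudo-metric spaces together with the infimum definition above. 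Positive-definiteness is where one must be slightly careful: one needs $d_Z(x_1, x_3) > 0$ whenever $x_1 \in X_1, x_3 \in X_3$, and this follows because $X_2$ is compact so the infimum is attained at some $y_0 \in X_2$, and at least one of the two summands must be strictly positive (otherwise $y_0$ would coincide with both $x_1$ and $x_3$ in the respective disjoint unions, which is impossible).

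Once $d_Z$ is known to be a metric, the proof finishes quickly. By construction $d_H^Z(X_1, X_2) < \varepsilon$ and $d_H^Z(X_2, X_3) < \delta$, so the already-established triangle inequality for ordinary Hausdorff distance (proved earlier in this section) gives
\[
d_H^Z(X_1, X_3) \le d_H^Z(X_1, X_2) + d_H^Z(X_2, X_3) < \varepsilon + \delta.
\]
Since $X_1$ and $X_3$ embed isometrically into $Z$, the definition of $d_{GH}$ yields $d_{GH}(X_1, X_3) < \varepsilon + \delta$. Letting $\varepsilon \downarrow d_{GH}(X_1,X_2)$ and $\delta \downarrow d_{GH}(X_2,X_3)$ concludes the argument.

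The main obstacle is the gluing step: one must check that the formula defining $d_Z$ across $X_1$ and $X_3$ actually produces a metric, and in particular is positive. In our setting this is handled by compactness of $X_2$, but it is worth emphasizing that the argument is really a shortest-path construction on the disjoint union and that without compactness one would need to argue slightly differently (e.g.\ work with strict inequalities and an $\eta$-almost minimizer $y \in X_2$). Everything else — the Hausdorff triangle inequality and the passage from the infimum defining $d_{GH}$ to genuine $\varepsilon$-witnesses — is essentially bookkeeping.
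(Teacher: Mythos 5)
Your argument for the triangle inequality is correct and uses the same gluing construction as the paper: form $Z = X_1 \sqcup X_2 \sqcup X_3$, keep the distances inside $X_1 \sqcup X_2$ and $X_2 \sqcup X_3$, define cross-distances by the infimum-over-$X_2$ formula, and then invoke the Hausdorff triangle inequality inside $Z$. You make two small but genuine technical improvements over the paper's sketch. First, you work with witnesses $\varepsilon > d_{GH}(X_1,X_2)$ and $\delta > d_{GH}(X_2,X_3)$ and pass to the limit at the end, whereas the paper writes $\eps = d_{GH}(X_1,X_2)$ as though the infimum in the definition of $d_{GH}$ is automatically attained; that attainment is in fact a nontrivial theorem whose standard proof already uses the triangle inequality, so the paper's phrasing is a bit circular and yours is the careful version. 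Second, you explicitly verify positive-definiteness of the glued distance using compactness of $X_2$ (the infimum is attained at some $y_0$ and neither summand can vanish since the disjoint-union metrics are genuine metrics), a point the paper leaves as ``easy to check.'' One caveat: your proposal only addresses the third bullet of the Fact. You do not discuss the claim that the disjoint union always suffices as the ambient $Z$, nor the $\tilde Z = Z \times \R$ trick for separating overlapping images, though both are used implicitly in your gluing step and the first is precisely what licenses taking $Z = X_1 \sqcup X_2 \sqcup X_3$ at the start.
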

\section{The Topology of Isometry Class}
Recall that we want to ensure that the Gromov-Hausdorff distance $d_{GH}$ is a distance on the isometry classes of compact metric spaces. 
We have shown that it satisfies the triangle inequality. It is also obviously symmetric. Thus to check that it is a distance it remains to 
verify the following lemma. 
\begin{lemma}\label{lem:gh_distance zero}
Let $X$ and $Y$ be compact metric spaces. Then 

\begin{equation*}
    d_{GH}(X, Y) = 0 \iff X \stackrel{isom}{\cong} Y.
\end{equation*}
(We will prove this lemma later.)
\end{lemma}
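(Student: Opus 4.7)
The ($\Leftarrow$) direction is immediate: any isometry $f: X \to Y$ lets us embed both spaces into $Y$ itself (via $f$ and the identity), so their images coincide and the Hausdorff distance is $0$.

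For ($\Rightarrow$), my plan is to build an isometry as a limit of almost-isometries. Suppose $d_{GH}(X, Y) = 0$. Then for each $n \geq 1$ there is a metric space $Z_n$ with isometric embeddings of $X$ and $Y$ such that $d_H^{Z_n}(X, Y) < 1/n$. For each $x \in X$ pick $f_n(x) \in Y$ with $d_{Z_n}(x, f_n(x)) < 1/n$; then for any $x, x' \in X$ two applications of the triangle inequality in $Z_n$ give
\[
\bigl|\,d_Y(f_n(x), f_n(x')) - d_X(x, x')\,\bigr| < \tfrac{2}{n},
\]
so $f_n$ is a $(2/n)$-isometry (though not necessarily continuous).

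Next I would fix a countable dense subset $\{x_k\}_{k=1}^\infty \subset X$ (which exists because compact metric spaces are separable). Using compactness of $Y$, a standard diagonal extraction gives a subsequence along which $f_n(x_k) \to f(x_k) \in Y$ for every $k$. Passing to the limit in the displayed inequality, $f$ is distance-preserving on $\{x_k\}$. Since $\{x_k\}$ is dense in $X$ and $Y$ is complete, $f$ extends uniquely to a distance-preserving map $f : X \to Y$, which is in particular continuous and injective.

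It remains to show $f$ is surjective. The Hausdorff bound also says that for each $y \in Y$ there is $x \in X$ with $d_{Z_n}(x, y) < 1/n$, and then $d_Y(f_n(x), y) \leq d_{Z_n}(f_n(x), x) + d_{Z_n}(x, y) < 2/n$, so $f_n(X)$ is $(2/n)$-dense in $Y$. Taking the limit shows $f(X)$ is dense in $Y$; but $f(X)$ is the continuous image of a compact set, hence closed, so $f(X) = Y$. The main technical obstacle is the diagonal extraction, which requires the approximate-isometry bound to be uniform in the pair $(x, x')$, not just pointwise in $x$ — this is exactly what is produced by the ambient metric on $Z_n$, so the setup of Definition \ref{def: gh-distance} is what makes the argument go through.
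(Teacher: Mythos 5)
Your proof is correct, and up to the surjectivity step it matches the paper's argument almost exactly: both construct a sequence of almost-isometries (you directly from the ambient $Z_n$, the paper via $\varepsilon_n$-Gromov--Hausdorff approximations, which is the same thing by Proposition~\ref{prop: GH and GH-ap}), apply a diagonal extraction on a countable dense subset of $X$, and extend by density and completeness to a distance-preserving $f: X \to Y$. Where you part ways is in the surjectivity argument. The paper also builds a distance-preserving $g: Y \to X$ in the other direction, then invokes a separate lemma (Lemma~\ref{lem: h onto}) stating that any distance-preserving self-map of a compact metric space is onto, applying it to $f\circ g$ and $g\circ f$ to conclude both are bijections. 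You instead observe that each $f_n(X)$ is $(2/n)$-dense in $Y$, deduce that $f(X)$ is dense, and finish by noting that $f(X)$ is compact hence closed. Your route is more economical in that it needs only the one direction and no auxiliary lemma about self-maps; the paper's route, in exchange, produces the self-map lemma as a reusable tool and makes the bijectivity manifest on both sides symmetrically. One sentence of yours deserves a little more support: ``taking the limit shows $f(X)$ is dense'' needs to be unpacked, since the diagonal convergence $f_{n_j}(x_k) \to f(x_k)$ is only pointwise on the dense set $\{x_k\}$. The fix is short: given $y \in Y$ and $\eta > 0$, pick $j$ with $2/n_j < \eta$ and $x \in X$ with $d_Y(f_{n_j}(x), y) < \eta$; choose $x_k$ in the dense set with $d_X(x, x_k) < \eta$; then $d_Y(f_{n_j}(x_k), y) \le d_Y(f_{n_j}(x_k), f_{n_j}(x)) + d_Y(f_{n_j}(x), y) < (\eta + 2/n_j) + \eta < 3\eta$, and since $f_{n_j}(x_k) \to f(x_k)$ along the subsequence, $d_Y(f(x_k), y) \le 3\eta$. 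This makes $f(X)$ dense and closes the gap.
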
 
Usually, computing the exact value of Gromov-Hausdorff distance is complicated and unnecessary. And mostly we are only interested in the topology induced by such metric, in particular in the notion of convergence. 
\begin{definition}[Gromov-Hausdorff Convergnce]\label{def:gh-convergence}
    Let $\br{X_n}_{n\in \N}$ and $Y$ be compact metric spaces, then we say $X_n$ converges to $Y$ in the sense of Gromov-Hausdorff topology or  $X_n \ghto Y $ if 
    \begin{equation*}
        \lim_{n \to \infty}d_{GH}(X_n, Y) = 0
    \end{equation*}
\end{definition}
\begin{example}
One can check that
\begin{equation*}
    X_n \ghto \br{pt} \iff \diam(X_n) \to 0
\end{equation*}
In order to verify this example, we only need to check
\begin{equation*}
    d_{GH}(X, \br{pt}) = \frac{1}{2}\diam(X)
\end{equation*}
\end{example}
\begin{example}\label{ex:gh-convergence on torus}
    Let $(M^n, g)$ be a compact Riemannian manifold. Take a sequence $\eps_n \downarrow 0$. Denote $X_n := \eps_nM := (M^n, \eps_n^2 g)$ (so that the distance in $M$ are multiplied by $\eps_n$ for each $n$). 
    Then $\diam(X_n)\to 0$ and hence $  X_n \ghto \br{pt} $.
 All the fine structures such as topology and curvature, etc. are forgotten. The only information left is the diameter. Therefore, we can say the Gromov-Hausdorff convergence is somehow a ``weak'' convergence.

    \begin{figure}[htbp]
        \centering
        \includegraphics[width=0.9\textwidth]{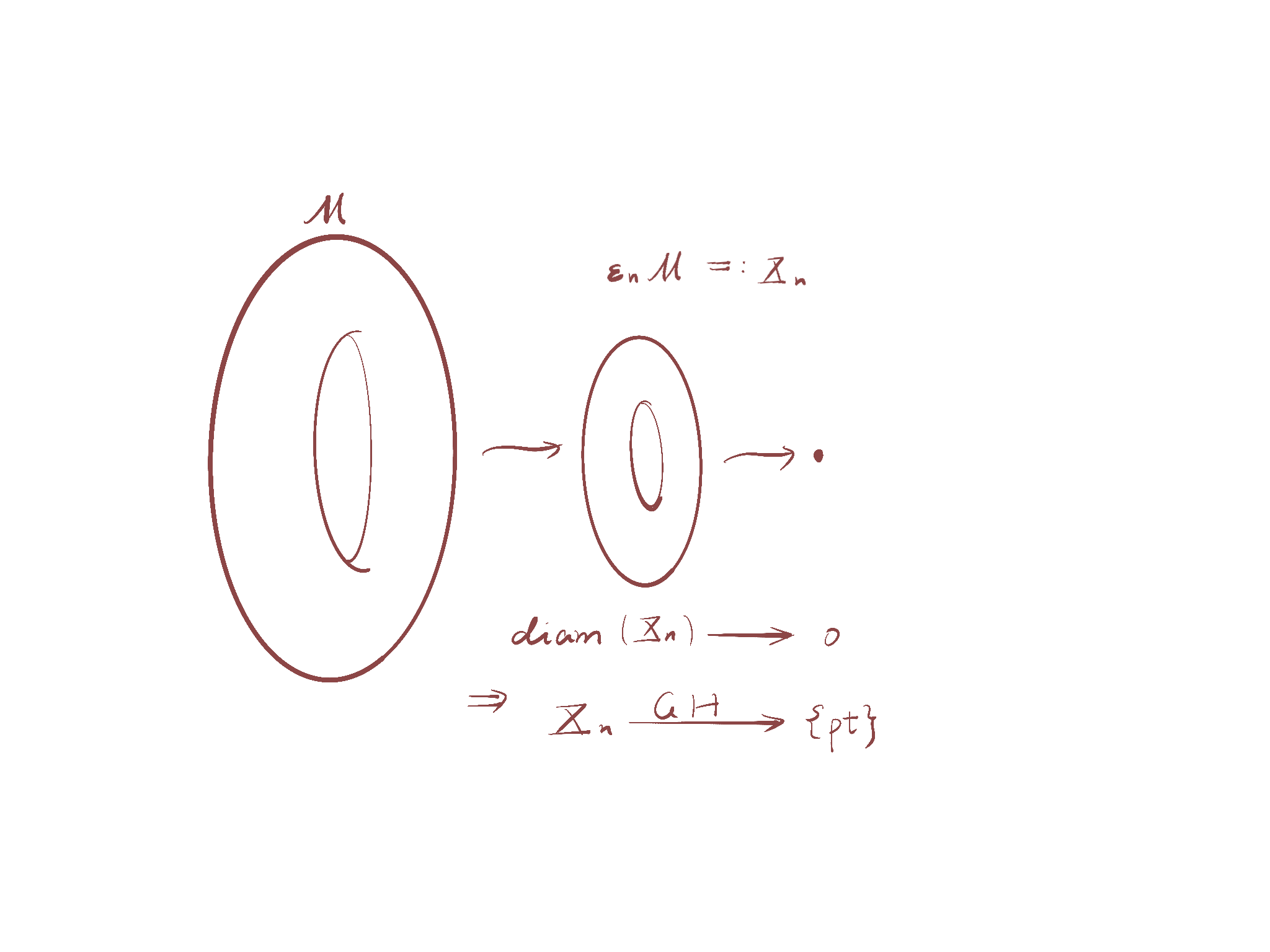}
        \caption{Gromov-Hausdorff convergence on torus}
    \end{figure}
    
    The above picture shows how the topological information is lost at the limit of the Gromov-Hausdorff convergence. 
\end{example}
\begin{example}\label{ex: GH}
	Consider $f: \R \to \R$ with $f(0) = 0$. Then as $\eps \to 0$, 
\begin{equation*}
	\frac{1}{\eps}f(t\eps) \to f'(0). 
\end{equation*}
Since 
\begin{equation*}
	\brac{\frac{1}{\eps}f(t\eps)}' = \frac{1}{\eps}f'(t\eps)\eps = f'(t\eps) 
\end{equation*}
and
\begin{equation*}
	\brac{\frac{1}{\eps}f(t\eps)}'' = \brac{f'(t\eps)}' = \eps f''(t\eps)
\end{equation*}
Thus, if $f'' \leq \lambda$, we have $\brac{\frac{1}{\eps}f(t\eps)}'' \leq \eps\lambda$. This means if $f$ is $\lambda$-concave, then $\frac{1}{\eps}f(t\eps)$ is $\eps\lambda$-concave. Thus, at the limits, we know that $f'$ is $0$-concave, which just means concave. 

Later, we will introduce the pointed-Gromov-Hausdorff convergence. And one can show that given a sequence of $\lambda_i$-concave function $f_i: (X_i, p_i) \to \R$ with $\lambda_i \to \lambda$ and $(X_i, p_i) \pghto (X, p)$. We can conclude that $f_i$ pointwise converges to a $\lambda$-concave function $f: (X, p) \to \R$.

In our case, consider $f: (M, p) \to \R$, by the previous argument, we construct the following sequence of $\eps\lambda$-concave functions $\frac{1}{\eps}f: (\frac{1}{\eps}M, p) \to \R$. Since $(\frac{1}{\eps}M, p) \pghto (T_pM, 0)$, we know that at $\eps \to 0$, $df_p: (T_pM, 0) \to \R$ must be a concave function.
\end{example}
\subsection{Gromov-Hausdorff Approximations}
As we discussed in the example \ref{ex:gh-convergence on torus}, even though the notion of the Gromov-Hausdorff convergence is ``weak'', it is still useful because of the pre-compactness as we mentioned before (It is, in particular, useful to the class of manifolds with lower Ricci curvature or sectional curvature bound). In the example \ref{ex:gh-convergence on torus}, where we start scaling down the Riemannian metric. We basically lose all the curvature bound (the curvature blows up). Thus, if we want to keep inside some class of lower curvature bound, we should be more careful and the Gromov-Hausdorff convergence will become more useful. 

Now, we are going to list some more meaningful equivalent definitions that are going to help us to prove the lemma \ref{lem:gh_distance zero}.

\begin{definition}[$\varepsilon$-net]
Let $(X, d)$ be a metric space, a subset $S = \br{x_\alpha: \alpha \in I}$ is called an \textbf{$\varepsilon$-net} in $X$ if $U_\varepsilon(S) = X$, i.e. 
	\begin{equation*}
		X = \bigcup_{\alpha \in I} B_\varepsilon(x_\alpha)
	\end{equation*}
\end{definition}
\begin{definition}[Totally Bounded]
	A metric space is totally bounded if it has a finite $\eps$-net for every $\eps > 0$. 
\end{definition}
\begin{theorem}
	A metric space is sequentially compact if and only if it is complete and totally bounded. 
\end{theorem}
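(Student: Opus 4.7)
The plan is to prove both directions of the equivalence, handling each implication separately with standard metric space techniques.

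For the forward direction (sequentially compact implies complete and totally bounded), I would first show completeness. Given a Cauchy sequence $\{x_n\}$ in $X$, sequential compactness provides a subsequence $\{x_{n_k}\}$ converging to some $x \in X$. A standard $\eps/2$ argument then shows the full Cauchy sequence converges to $x$: for large $n$ and large $k$, $d(x_n, x) \leq d(x_n, x_{n_k}) + d(x_{n_k}, x)$, with both terms controllable. For total boundedness, I would argue by contradiction. If $X$ admits no finite $\eps$-net for some $\eps > 0$, then I construct inductively a sequence $\{x_n\}$ with $d(x_i, x_j) \geq \eps$ for all $i \neq j$: having chosen $x_1, \ldots, x_n$, the set $\{x_1, \ldots, x_n\}$ is not an $\eps$-net, so some $x_{n+1}$ lies outside $\bigcup_{i \leq n} B_\eps(x_i)$. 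Such a sequence has no Cauchy subsequence and therefore no convergent subsequence, contradicting sequential compactness.

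For the reverse direction (complete and totally bounded implies sequentially compact), I would use the standard diagonal/nested-subsequence construction. Given a sequence $\{x_n\} \subseteq X$, apply total boundedness with $\eps = 1/2$ to cover $X$ by finitely many balls of radius $1/2$; at least one such ball contains $x_n$ for infinitely many $n$, yielding a subsequence $\{x_n^{(1)}\}$ with all terms within distance $1$ of each other. Now cover $X$ by finitely many balls of radius $1/4$ and extract a further subsequence $\{x_n^{(2)}\} \subseteq \{x_n^{(1)}\}$ whose terms are pairwise within distance $1/2$. Iterating produces, for each $k \geq 1$, a subsequence $\{x_n^{(k)}\}$ with pairwise distances bounded by $2^{1-k}$. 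The diagonal subsequence $y_k := x_k^{(k)}$ is then Cauchy, and completeness delivers a limit in $X$.

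The main obstacle, such as it is, lies in ensuring the diagonal argument is written carefully: one must verify that for $k \geq N$, the terms $y_k$ all lie in the $N$-th subsequence (apart from possibly finitely many indices), so that the diameter estimate $d(y_j, y_k) \leq 2^{1-N}$ applies for $j, k \geq N$. This is routine but is the only place where a little bookkeeping is needed; everything else is a direct application of the definitions of total boundedness, completeness, and Cauchy sequences.
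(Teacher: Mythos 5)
The paper states this theorem as a standard fact without giving a proof, so there is no in-text argument to compare against. Your proposal is a correct and complete proof following the standard textbook route: extracting a convergent subsequence from a Cauchy sequence plus the $\eps/2$ argument for completeness, the $\eps$-separated sequence for total boundedness, and the nested-subsequence plus diagonalization argument (with the bookkeeping point you flag handled correctly, since for $k \geq N$ the diagonal term $y_k = x_k^{(k)}$ does lie in the $N$-th subsequence) for the converse.
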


\begin{definition}[$\eps$-Gromov-Hausdorff Approximation]\label{def:epsilon-GH-approx}
    Let $X$ and $Y$ be metric spaces and let $\varepsilon > 0$. Then $f: X \to Y$ is an \textbf{$\varepsilon$-Gromov-Hausdorff approximation} if
    \begin{enumerate}
        \item For any $x_1, x_2 \in X$, 
        \begin{equation*}
            \abs{d^Y(f(x_1), f(x_2)) - d^X(x_1, x_2)} \leq \eps;
        \end{equation*}
        \item $f(X)$ is \textbf{$\varepsilon$-dense} in $Y$, i.e. for any $y \in Y$, there exists $x \in X$ such that $d^Y(y, f(x)) \leq \varepsilon$. (Y = $U_\varepsilon(f(X))$ every point is not too far away from the image of $f$.)
    \end{enumerate}
\end{definition}
\begin{remark}
    In the definition \ref{def:epsilon-GH-approx} of $\varepsilon$-Gromov-Hausdorff approximation, there is no assumption on the continuity of the $f$. 
\end{remark}
We will see the definition \ref{def:epsilon-GH-approx} is related to the Gromov-Hausdorff distance. But first of all, we want to list some of its properties
\begin{proposition}[See page 258 in \cite{BBI01}]\label{prop: GH and GH-ap}
	If $d_{GH}(X, Y) \leq \varepsilon$, then there exists a $2\varepsilon$-Gromov-Hausdorff approximation $f: X \to Y$. Conversely, if $f: X \to Y$ is $\varepsilon$-Gromov-Hausdorff approximation, then $d_{GH}(X, Y) \leq 2\varepsilon$
\end{proposition}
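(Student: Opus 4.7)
The plan is to prove both implications by producing the relevant object (an approximation or a metric on a disjoint union) using the fact that is assumed.

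For the forward direction, I would start from the definition of $d_{GH}(X,Y)\le \varepsilon$: for every $\delta>0$ there is a metric space $Z$ and isometric embeddings $i\colon X\hookrightarrow Z$, $j\colon Y\hookrightarrow Z$ with $d_H^Z(i(X),j(Y))\le \varepsilon+\delta$. Identifying $X$ and $Y$ with their images in $Z$, for each $x\in X$ pick some $f(x)\in Y$ with $d^Z(x,f(x))\le \varepsilon+\delta$ (possible since $X\subseteq U_{\varepsilon+\delta}(Y)$). Two applications of the triangle inequality in $Z$ give
\[
|d^Y(f(x_1),f(x_2))-d^X(x_1,x_2)|\le 2(\varepsilon+\delta),
\]
and for any $y\in Y$ there is $x\in X$ with $d^Z(x,y)\le \varepsilon+\delta$, so $d^Y(f(x),y)\le d^Z(f(x),x)+d^Z(x,y)\le 2(\varepsilon+\delta)$, which gives the $2\varepsilon$-density after letting $\delta\to 0$ (or choosing $\delta$ small enough to absorb into $2\varepsilon$). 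This yields a $2\varepsilon$-Gromov--Hausdorff approximation. This part is short and routine.

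For the converse direction, the idea is to put an admissible metric on $Z:=X\sqcup Y$ built from $f$. I propose the definition
\[
d^Z(x,y)\;:=\;\inf_{x'\in X}\Bigl\{d^X(x,x')+d^Y(f(x'),y)\Bigr\}+\varepsilon,\qquad x\in X,\ y\in Y,
\]
and $d^Z=d^X$, $d^Z=d^Y$ on the respective factors. The mixed-type triangle inequalities $d^Z(x_1,y)\le d^Z(x_1,x_2)+d^Z(x_2,y)$ and $d^Z(x,y_1)\le d^Z(x,y_2)+d^Y(y_2,y_1)$ are immediate from the definition of the infimum. The key case, which is the main obstacle, is the $Y$-$X$-$Y$ inequality
\[
d^Y(y_1,y_2)\;\le\;d^Z(y_1,x)+d^Z(x,y_2).
\]
Expanding the right-hand side, using $d^X(x,x_1')+d^X(x,x_2')\ge d^X(x_1',x_2')$, and then applying the approximation property $d^X(x_1',x_2')\ge d^Y(f(x_1'),f(x_2'))-\varepsilon$, reduces the required inequality to
\[
d^Y(f(x_1'),y_1)+d^Y(f(x_2'),y_2)+d^Y(f(x_1'),f(x_2'))+\varepsilon\;\ge\;d^Y(y_1,y_2),
\]
which follows from the triangle inequality in $Y$. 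This is exactly where the $+\varepsilon$ offset in the definition is used, and it is also why only an $\varepsilon$-approximation (not a stronger notion) is needed.

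Finally, to read off the Hausdorff bound, take $x\in X$: by construction $d^Z(x,f(x))\le \varepsilon\le 2\varepsilon$, so $X\subseteq U_{2\varepsilon}(Y)$. For $y\in Y$, $\varepsilon$-density of $f(X)$ gives some $x\in X$ with $d^Y(f(x),y)\le \varepsilon$, hence $d^Z(x,y)\le \varepsilon+\varepsilon=2\varepsilon$, so $Y\subseteq U_{2\varepsilon}(X)$. Therefore $d_H^Z(X,Y)\le 2\varepsilon$, which gives $d_{GH}(X,Y)\le 2\varepsilon$ as required. The only slightly delicate point worth double-checking is that $d^Z$ restricted back to $X\times X$ or $Y\times Y$ agrees with the original metrics (it does, because the $+\varepsilon$ term only appears for cross-pair distances), so no collapse occurs between points of the same space.
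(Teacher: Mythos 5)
The paper itself gives no proof for this proposition (it is cited from the reference), so there is no route to compare against; the question is only whether your argument is correct. It is, modulo two small points worth tightening.

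In the converse direction, your metric on $Z=X\sqcup Y$ is well-defined and the $Y$-$X$-$Y$ triangle inequality is checked correctly, but the case you call ``immediate'' leaves out the genuinely symmetric partner, the $X$-$Y$-$X$ inequality $d^X(x_1,x_2)\le d^Z(x_1,y)+d^Z(y,x_2)$. This one is not immediate from the infimum alone; it needs the \emph{other} direction of the distortion bound, namely $d^Y(f(x_1'),f(x_2'))\ge d^X(x_1',x_2')-\varepsilon$, plugged into
\[
d^X(x_1,x_1')+d^Y(f(x_1'),y)+d^X(x_2,x_2')+d^Y(f(x_2'),y)+2\varepsilon
\;\ge\; d^X(x_1,x_2)+\varepsilon,
\]
using the triangle inequalities in $Y$ and in $X$. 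It works by an argument mirroring your $Y$-$X$-$Y$ computation, but you should state it, since a reader checking that $d^Z$ is a metric will notice the case is missing.

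In the forward direction your estimate is right, but the final step ``letting $\delta\to 0$, or choosing $\delta$ small enough to absorb into $2\varepsilon$'' does not quite close the argument as written: for each $\delta$ you get a possibly different map $f_\delta$ with distortion and density bounded by $2(\varepsilon+\delta)$, and no single $\delta>0$ yields the bound $2\varepsilon$. To land exactly on $2\varepsilon$ one either (a) passes to a limit of the $f_\delta$ using compactness of $Y$ and a diagonal argument over a countable dense subset of $X$, or (b) replaces the hypothesis by the strict inequality $d_{GH}(X,Y)<\varepsilon$, in which case there is a single $Z$ with $d_H^Z<\varepsilon$ and the $2\varepsilon$-approximation comes out directly. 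In practice this distinction is harmless for the applications (one always has room to shrink $\varepsilon$), but it is a genuine, if minor, gap in the argument as stated.
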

\begin{proposition}
	If $f: X \to  Y$ is an $\varepsilon$-Gromov-Hausdorff approximation, then there exists $g: Y \to X$ which is a $7\varepsilon$-Gromov-Hausdorff approximation. 
\end{proposition}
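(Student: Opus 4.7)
The plan is to construct $g$ as a quasi-inverse of $f$, exploiting the $\varepsilon$-density of $f(X)$ in $Y$. For each $y \in Y$, condition~(2) of Definition~\ref{def:epsilon-GH-approx} guarantees the existence of some $x \in X$ with $d^Y(y, f(x)) \leq \varepsilon$; using the axiom of choice I would pick one such $x$ for every $y$ and declare $g(y) := x$. No continuity is required of $g$, so this set-theoretic selection is all that is needed.

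To verify the distance-distortion condition, fix $y_1, y_2 \in Y$ and set $x_i := g(y_i)$. By construction $d^Y(y_i, f(x_i)) \leq \varepsilon$ for $i = 1,2$, so two applications of the triangle inequality in $Y$ give
\begin{equation*}
\bigl|d^Y(y_1, y_2) - d^Y(f(x_1), f(x_2))\bigr| \leq 2\varepsilon.
\end{equation*}
Combining this with the $\varepsilon$-distortion bound for $f$, namely $|d^Y(f(x_1), f(x_2)) - d^X(x_1, x_2)| \leq \varepsilon$, yields $|d^X(g(y_1), g(y_2)) - d^Y(y_1, y_2)| \leq 3\varepsilon$, comfortably below the claimed $7\varepsilon$.

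For the density of $g(Y)$ in $X$, I would simply test against the point $y := f(x)$ for an arbitrary $x \in X$. By the definition of $g$, the point $g(f(x))$ satisfies $d^Y(f(x), f(g(f(x)))) \leq \varepsilon$, and then the distortion bound for $f$ upgrades this to $d^X(x, g(f(x))) \leq 2\varepsilon$. Hence $g(Y)$ is even $2\varepsilon$-dense, again well inside the $7\varepsilon$ margin.

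There is no genuine obstacle: the statement is essentially a routine symmetry property of the notion of GH-approximation, and the constant $7\varepsilon$ is deliberately loose (my construction gives the sharper value $3\varepsilon$). The only point worth flagging is that one must not insist on continuity or measurability of $g$; once this is accepted, the argument is pure bookkeeping with the triangle inequality.
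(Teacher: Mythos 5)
Your argument is correct and in fact sharper than what the paper proves: you get a $3\varepsilon$-approximation (with $g(Y)$ even $2\varepsilon$-dense), whereas the stated bound is $7\varepsilon$. The route is genuinely different from the paper's. The paper first fixes a finite $\varepsilon$-net $\{x_1,\dots,x_N\}$ in $X$, pushes it to $\{y_i = f(x_i)\}$, shows this is a $3\varepsilon$-net in $Y$, and then defines $g$ by sending each $y\in Y$ to one of the finitely many $x_i$; the loss of an extra $\varepsilon$ at each stage (choosing a nearby net point, then comparing to $f$) is what produces the $7\varepsilon$ constant. You skip the discretization entirely: for each $y$ you use the $\varepsilon$-density of $f(X)$ to select some $x$ with $d^Y(y,f(x))\le\varepsilon$ directly and set $g(y)=x$. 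Two triangle inequalities plus the distortion bound for $f$ then give $3\varepsilon$ straight away. Your version buys a cleaner constant and works even when $X$ admits no finite $\varepsilon$-net (no total boundedness needed); the paper's version buys a $g$ with finite image, which is sometimes convenient but not required by the definition. Both are valid proofs, and neither needs continuity of $g$, which you correctly flag.
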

    \begin{proof}
    Suppose $f: X \to Y$ is an $\eps$-Gromov-Hausdorff approximation. Take $\br{x_1, \dots, x_N}$ an $\eps$-net in $X$ and set $y_i = f(x_i)$ for each $i$.
    
    We claim that $\br{y_1, \dots, y_M}$ is an $3\eps$-nets in $Y$. Indeed, for any $y \in Y$ since $f(X)$ is $\eps$-dense in $Y$, there exists $x \in X$ such that $d^Y(y, f(x)) \leq \eps$ and we know that this $x \in B_\eps(x_i)$ for some $i \in \br{1, \dots, N}$. Thus, 
    \begin{align*}
    	d^Y(y, y_i) 
    	&= d^Y(y, f(x_i))\\
    	&\leq d^Y(y, f(x)) + d^Y(f(x), f(x_i)) \\
    	&\leq \eps + d^X(x, x_i) + \eps \\
    	&< 3\eps.
    \end{align*}
    Next, we can define $g: Y \to X$ as follows. For each $y \in Y$, if $y = y_i$ for some $i$, we take $g(y) = x_i$ and for other $y$, we can pick the $y_i$ with $d^Y(y, y_i) \leq 3\varepsilon$ and set $g(y) = x_i$. In the case when $f(x_i) = f(x_j) = y$ for $i < j$, we define $g(y) = x_i$ (We just pick the one with a smaller index so that $g$ is well defined.). Then it is easy to see that $g$ is a $7\varepsilon$-Gromov-Hausdorff approximation. Indeed. Let $y, y' \in Y$, then we know that there are $y_i, y_j$ such that $d^Y(y, y_i) \leq 3\eps$ and $d^Y(y', y_j) \leq 3\eps$, thus $g(y) = x_i$ and $g(y') = x_j$. Then
   \begin{equation*}
   	\begin{split}
   		&\abs{d^X(g(y), g(y')) - d^Y(y, y')}\\ 
   		\leq & \abs{d^X(x_i, x_j) - d^Y(y_i, y_j)} + \abs{d^Y(y_i, y_j) - d^Y(y, y')}\\
   		\leq & \abs{d^X(x_i, x_j) - d^Y(f(x_i), f(x_j))} + d^Y(y, y_i) + d^Y(y', y_j)\\
   		\leq & \eps + 3\eps + 3\eps = 7\eps   
   	\end{split}
   \end{equation*}
   We still need to show $g(Y)$ is $7\eps$-dense in $X$. Let $x \in X$, we can find $x_i$ such that $d(x, x_i) \leq \eps$. Take $y_i = f(x_i)$. By the definition of $g$ it is possible that $g(y_i) = x_j$ for some $j \neq i$ such that $f(x_i) = f(x_j)$. By the definition of $\eps$-Gromov-Hausdorff approximation, we know that
   \begin{equation*}
   	\abs{d^X(x_i, x_j)} = \abs{d^Y(f(x_i), f(x_j)) - d^X(x_i, x_j)} \leq \eps
   \end{equation*}
   Therefore, 
   \begin{equation*}
   		d^X(x, g(y_i)) = d^X(x, x_j) \leq d^X(x, x_i) + d^X(x, x_j) \leq 2\eps < 7\eps
   \end{equation*}
Therefore, $g(Y)$ is also $7\eps$ dense.
\end{proof}
\begin{remark}
    The above facts reflect that the definition of $\varepsilon$-Gromov-Hausdorff approximation is almost symmetric. And the function in the opposite direction is slightly worse.
\end{remark}
The two properties above allow us to conclude the following corollaries.
\begin{corollary}\label{cor:GH-con iff GH-approx}
Let $X, X_n, n\in \mathbb N$ be compact metric spaces. Then $X_n \ghto X$ as $n \to \infty$ if and only if there exists a sequence $\varepsilon_n \downarrow 0$ and there exists $f_n: X_n \to X$ are $\varepsilon_n$-Gromov-Hausdorff approximation.
\end{corollary}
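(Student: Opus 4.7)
The plan is to deduce Corollary \ref{cor:GH-con iff GH-approx} directly from Proposition \ref{prop: GH and GH-ap}, which gives a two-way translation between the numerical Gromov–Hausdorff distance and the existence of $\varepsilon$-Gromov–Hausdorff approximations up to a factor of $2$. Since the corollary is exactly a sequential reformulation of this translation, there is essentially no new geometric content to extract, and no obstacle beyond unpacking the definition of convergence.

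For the forward direction, I would assume $X_n \ghto X$, so by Definition~\ref{def:gh-convergence} we have $\delta_n := d_{GH}(X_n, X) \to 0$. Applying the first half of Proposition~\ref{prop: GH and GH-ap} to each pair $(X_n, X)$ yields a $2\delta_n$-Gromov–Hausdorff approximation $f_n : X_n \to X$. Setting $\varepsilon_n := 2\delta_n + \tfrac{1}{n}$ (the small perturbation just to ensure strict positivity and a clean $\downarrow 0$ sequence in case $\delta_n = 0$), we see that $\varepsilon_n \downarrow 0$ and $f_n$ is in particular an $\varepsilon_n$-approximation, since the approximation conditions in Definition~\ref{def:epsilon-GH-approx} are monotone in $\varepsilon$.

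For the reverse direction, I would assume the existence of $\varepsilon_n \downarrow 0$ and $\varepsilon_n$-Gromov–Hausdorff approximations $f_n : X_n \to X$. Applying the second half of Proposition~\ref{prop: GH and GH-ap} gives $d_{GH}(X_n, X) \leq 2\varepsilon_n$, and since $2\varepsilon_n \to 0$, we conclude $d_{GH}(X_n, X) \to 0$, i.e.\ $X_n \ghto X$ by Definition~\ref{def:gh-convergence}.

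The only thing to be mildly careful about is that Proposition~\ref{prop: GH and GH-ap} produces an approximation when $d_{GH}(X_n, X) \leq \varepsilon$, not when $d_{GH}(X_n, X) < \varepsilon$; but this is harmless because we can always enlarge $\delta_n$ slightly (as above) without losing the fact that $\delta_n \to 0$. There is no real hard step: the proof is a two-line appeal to the proposition in each direction, and I would present it as such.
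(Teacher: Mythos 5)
Your proof is correct and takes exactly the approach the paper intends: the corollary is stated immediately after Proposition~\ref{prop: GH and GH-ap} precisely because it is its direct sequential reformulation, and the paper leaves the argument to the reader. Your appeal to the two halves of that proposition in the respective directions, together with the observation that being an $\varepsilon$-approximation is a monotone condition in $\varepsilon$, is all that is needed; the only cosmetic remark is that if $\varepsilon_n \downarrow 0$ is read strictly as a monotone sequence, your $\varepsilon_n := 2\delta_n + \tfrac{1}{n}$ may fail to be monotone, but this is repaired harmlessly by replacing it with $\sup_{m \ge n}\bigl(2\delta_m + \tfrac{1}{m}\bigr)$.
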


\begin{corollary}\label{cor: limits of GH-con are metric spaces}
    Every compact metric space is the limit of some sequence of finite metric spaces in the sense of Gromov-Hausdorff convergence.
\end{corollary}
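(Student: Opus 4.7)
The plan is to exhibit an explicit sequence of finite metric spaces converging to a given compact metric space $X$ by extracting finer and finer finite nets inside $X$ itself. Since $X$ is compact, it is totally bounded, so for each $n \in \N$ we may choose a finite $\tfrac{1}{n}$-net $S_n = \{x_1^n, \dots, x_{k_n}^n\} \subseteq X$. Equip $S_n$ with the restricted metric $d^{S_n} = d^X|_{S_n \times S_n}$; this makes $S_n$ a finite metric space.

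Next I would verify that $S_n \ghto X$ by producing an explicit Gromov-Hausdorff approximation and invoking Corollary~\ref{cor:GH-con iff GH-approx}. Let $f_n : S_n \to X$ be the inclusion map. By construction of the metric on $S_n$, condition (1) in Definition~\ref{def:epsilon-GH-approx} is automatic with error $0$:
\begin{equation*}
\abs{d^X(f_n(x), f_n(x')) - d^{S_n}(x, x')} = 0 \quad \text{for all } x, x' \in S_n.
\end{equation*}
Condition (2) is exactly the $\tfrac{1}{n}$-net property: for every $y \in X$ there exists $x_i^n \in S_n$ with $d^X(y, f_n(x_i^n)) = d^X(y, x_i^n) \le \tfrac{1}{n}$. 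Hence $f_n$ is a $\tfrac{1}{n}$-Gromov-Hausdorff approximation.

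Applying Corollary~\ref{cor:GH-con iff GH-approx} with $\eps_n = \tfrac{1}{n} \downarrow 0$ yields $S_n \ghto X$, which proves the claim. There is essentially no obstacle here: the only nontrivial ingredient is the existence of finite $\eps$-nets in a compact metric space, which is the standard equivalence between compactness and being complete plus totally bounded, already invoked in the excerpt. Everything else is a direct application of the definitions together with the already-established equivalence between Gromov-Hausdorff convergence and the existence of $\eps$-approximations.
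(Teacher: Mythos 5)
Your proof is correct and matches the paper's argument: both take a finite $\varepsilon_n$-net in $X$, note that the inclusion is an $\varepsilon_n$-Gromov-Hausdorff approximation, and conclude via Corollary~\ref{cor:GH-con iff GH-approx}. The only cosmetic difference is your explicit choice $\varepsilon_n = 1/n$.
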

\begin{proof}[Proof of Corollary~\ref{cor: limits of GH-con are metric spaces}]
    Let $X$ be a compact metric space.  Consider any sequence $\varepsilon_n \downarrow 0$. Then for each $\varepsilon_n$, We can take 
    \begin{equation*}
        x_1, \dots x_{N(n)} \in X
    \end{equation*}
    to be an $\varepsilon_n$-net in $X$. Then the inclusion
    \begin{equation*}
        X_n := \bigcup_{i = 1}^{N(n)}\br{x_i}\xhookrightarrow{i_n} X
    \end{equation*}
    is an $\varepsilon_n$-Gromov-Hausdorff approximation. Thus by the corollary \ref{cor:GH-con iff GH-approx}. We can conclude that
    \begin{equation*}
        d_{GH}(X_n, X) \to 0 \quad\text{as $n \to \infty$}.
    \end{equation*}
\end{proof}
Now we can prove the lemma \ref{lem:gh_distance zero} which confirms the $d_{GH}$ distance on the isometry classes of compact metric spaces.
\begin{proof}[Proof of Lemma~\ref{lem:gh_distance zero}]
    Suppose $d_{GH}(X, Y) = 0$, for $\varepsilon_n \downarrow 0$, we know that there exists $f_n: X \to Y$ being $\varepsilon_n$-Gromov-Hausdorff approximation for some $\varepsilon_n \downarrow 0$. And we also have $\br{g_n: X \to Y}$ the corresponding $\varepsilon_n$-Gromov-Hausdorff approximation. 
    
    Since $X$ is a compact metric space, it admits a countable dense subset say $S = \br{x_1, x_2, \dots }$. We are going to use the diagonal argument to show that $f_n|S: S \to Y$ sub-converge to some $f: S \to Y$ which preserves the distances. 
    
    By the compactness of $Y$, the sequence $f_n(x_i)$ sub-converges to some $y_i$. As we define $f$ as $f(x_i) = y_i$ we can easily show that $f$ is distance preserving, i.e. 
    \begin{equation*}
    	d(y_i, y_j) = d(x_i, x_j).
    \end{equation*}
    This is because as $n \to \infty$(distance function is continuous)
    \begin{equation*}
    	\abs{d(f_n(x_i), f_n(x_j)) - d(x_i, x_j)} < \varepsilon_n \to 0
    \end{equation*}
    We can extend $f:S \to Y$ to $f: X \to Y$ which is also distance preserving. This is because for each $x \in X$, by the compactness of $X$ and the fact that $S$ is compact, we can find a subsequence $x_{ij}$ converges to $x$. Moreover, $f(x_{ij})$ is Cauchy which since
    \begin{equation*}
    	d(f(x_{ij_1}), f(x_{ij_2})) = d(x_{ij_1}, x_{ij_2}) \to 0 
    \end{equation*}
    By the completeness of $Y$, we know that there exists $y \in Y$ such that $\lim_{j \to \infty}f(x_{ij}) = y$. Thus we define $f(x) = y$. This extension is distance-preserving as well
    \begin{align*}
    	d(f(x^1), f(x^2)) 
    	&= \lim_{j, k \to \infty}d(f(x^1_{ij}), f(x^2_{ik}))	\\
    	&= \lim_{j, k \to \infty}d(x^1_{ij}, x^2_{ij})\\
    	&= d(x^1, x^2)
    \end{align*}

    Similarly, we get $g: Y \to X$ preserves the distance. However, to finish our proof, as we need to know that $f$ and $g$ are both surjective.  This is not obvious and in fact can fail for noncompact spaces 
    as the following example shows
        \begin{example}
        Consider $X = [0, \infty)$ and $f: X \to X$ such that $f(x) = x + 1$. This $f$ is distance preserving but not surjective.
    \end{example}
    The following lemma shows that this can not happen if $X$ is compact.
    
    \begin{lemma}\label{lem: h onto}
        Let $X$ be a compact metric space. If $h: X \to X$ is distance preserving, then $h$ is onto.
    \end{lemma}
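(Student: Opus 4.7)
The plan is to argue by contradiction: assume $h$ is not surjective and construct a sequence in $X$ that has no convergent subsequence, contradicting compactness.

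First I would observe that $h$, being distance preserving, is automatically continuous and injective. Since $X$ is compact, $h(X)$ is a compact, hence closed, subset of $X$. If $h$ is not onto, there exists a point $y \in X \setminus h(X)$, and by closedness
\[
\delta := d(y, h(X)) > 0.
\]

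Next I would consider the forward orbit of $y$ under $h$, namely the sequence $y_n := h^n(y)$ for $n \geq 0$ (with $y_0 = y$). The key computation is that for any integers $n > m \geq 0$, the distance-preserving property, applied $m$ times, gives
\[
d(y_n, y_m) = d(h^n(y), h^m(y)) = d(h^{n-m}(y), y).
\]
Since $n - m \geq 1$, the point $h^{n-m}(y)$ lies in $h(X)$, so $d(h^{n-m}(y), y) \geq \delta$. Therefore every pair of distinct terms in the sequence $\{y_n\}$ satisfies $d(y_n, y_m) \geq \delta$.

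Finally I would invoke compactness: such a sequence can have no Cauchy subsequence and therefore no convergent subsequence, contradicting the sequential compactness of $X$. Hence $h$ must be surjective. I do not foresee a real obstacle here; the only subtle point is noticing that one should iterate $h$ rather than look only at $y$ and $h(y)$, so that the orbit inherits the uniform separation $\delta$ between all pairs of points, which is exactly what prevents subsequential convergence.
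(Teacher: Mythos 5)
Your proof is correct, and it takes a genuinely different route from the one in the paper. You iterate $h$ on a point $y \notin h(X)$ and use the distance-preserving property to show the forward orbit $\{h^n(y)\}$ is uniformly $\delta$-separated, which cannot have a convergent subsequence — a contradiction with sequential compactness. The paper instead fixes a \emph{maximal} finite $\varepsilon$-separated net $\{x_1,\dots,x_N\}$ in $X$, observes that $\{h(x_1),\dots,h(x_N)\}$ is again $\varepsilon$-separated, and then notes that the excluded point $x_0$ is at distance $\geq \varepsilon$ from every $h(x_i)$, so adjoining $x_0$ gives an $\varepsilon$-separated set with $N+1$ points, contradicting maximality. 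Both arguments hinge on compactness, but yours invokes it as sequential compactness (no infinite uniformly separated sequence) while the paper invokes it as total boundedness (every $\varepsilon$-separated set is finite and of bounded size). The orbit argument you chose is the more classical one, found e.g.\ in Burago--Burago--Ivanov, and requires producing an infinite sequence; the paper's version stays entirely within finite sets, which is arguably more self-contained and slightly more in the spirit of the $\varepsilon$-net machinery developed nearby for Gromov--Hausdorff approximations. Either is a complete and valid proof.
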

    \begin{proof}[Proof of lemma~\ref{lem: h onto}]
    Suppose the lemma is not true. That is $h(X) \subsetneq X$. Since $X$ is compact and Hausdorff, we know that the image $h(X)$ is compact and closed. Let $x_0 \in X\backslash h(X)$ be an element in this open set (Notice that $h(X)$ is closed). Thus there exists $\eps > 0$ such that $B(x_0, \eps)\cap h(X) = \emptyset$. 

 Let $x_1, \dots, x_N$ be a maximal $\eps$-separated net in $X$. The Finiteness of such net  follows by the compactness of $X$ and $N$ is the maximal possible number of points in an $\eps$-separated net. Now take $h(x_1), \cdots, h(x_N)$, we know that since $h$ is distance preserving
    \begin{equation*}
        d(h(x_i), h(x_j)) = d(x_i, x_j) \geq \eps, \quad\forall i \neq j \in \br{1, \dots, N}
    \end{equation*}
    which is again an $\eps$-separated net. However, we know that 
    \begin{equation*}
        d(x_0, h(x_i)) \geq \eps, \quad\forall i \in \br{1, \dots, N}
    \end{equation*}
    So that implies $\br{x_0, x_1, \dots, x_N}$ is also an $\eps$-separated net but with $N + 1$ elements, which contradicts the maximality of $\br{x_1, \dots, x_N}$
\end{proof}

    Now, we know that both $f$ and $g$ are distance preserving, both $X$ and $Y$ are compact. Therefore $f\circ g: Y \to Y$ and $g \circ f: X \to X$ are also both distance preserving. Then by lemma \ref{lem: h onto}, 
    $f\circ g: Y \to Y$ and $g \circ f: X \to X$ are both surjective. Hence both $f$ and $g$ are onto and hence bijective then we can conclude
    \begin{equation*}
        X \stackrel{isom}{\cong} Y.
    \end{equation*}
    In particular, $f$ is a distance preserving bijective map.
\end{proof}

In the last lecture, we proved that the Gromov-Hausdorff distance $d_{GH}$ is indeed a distance on the isometry classes of compact metric spaces.  It is natural to wonder if it remains a metric on the isometry classes on complete metric spaces. It is easy to see that on this bigger class, $d_{GH}$  is symmetric and satisfies the triangle inequality. However Lemma \ref{lem:gh_distance zero} fails for the class of metric spaces as the following example shows.

\begin{example}\label{ex-complete-GH=0}[By Kapovitch]
We can construct an example that $X$ and $Y$ are both complete metric spaces of finite diameter with $d_{GH}(X, Y) = 0$ but $X$ is not isometric to $Y$. 

For each $q \in \Q\cap (1, 2)$, we construct interval an $I^X_q$  such that $\length\brac{I^X_q} = q$ and denote ${I^X_q}^+$ and ${I^X_q}^-$ the left and the right end of the interval. We construct $X$ by gluing ${I^X_q}^+$ for each $q$ together and all the ${I^X_q}^-$ for each $q$ together. It is not hard to see that $X$ is a complete metric space with a finite diameter. The only difference between the construction of $Y$ from $X$ is that it has no interval $I^Y_{\frac{3}{2}}$. Notice that $X$ is not isometric to $Y$ since an isometry must map an interval of length $\frac{3}{2}$ to an interval of length $\frac{3}{2}$, which is impossible in our case.

To show that $d_{GH}(X, Y) = 0$, we can show that $d_{GH}(X, Y) < \eps$ for each $\eps > 0$. Let $\eps > 0$, we can construct an $\eps$-Gromov-Hausdorff approximation $f:X \to Y$ in the following way. Consider the set $J_{\eps}(\frac{3}{2}) = (\frac{3}{2} - \eps, \frac{3}{2} + \eps)\cap \Q \subseteq \Q\cap (1, 2)$, for each $q \notin J_{\eps}(\frac{3}{2})$, $f$ is the identity map from $I_q^X$ to $I_q^Y$. Since $J_\eps(\frac{3}{2})$ has countably many elements, we can list them as $q_0, q_1, q_2, \dots$ and in particular we take $q_0 = \frac{3}{2}$. Then for each $q_i$ we  have that $\abs{q_i - q_{i + 1}} < \eps$. We can map $I^X_{q_i}$ to $I^Y_{q_{i + 1}}$ bijectively by an affine map sending endpoints to endpoints. Since $\abs{q_i - q_{i + 1}} < \eps$ it is easy to see that any such affine map is an $\eps$-Gromov-Hausdorff approximation. Doing this   for each $q_i \in J_\eps(\frac{3}{2})$ an $\eps$-Gromov-Hausdorff approximation $f:X\to Y$. 

Since this works for every $\eps$ we can conclude that $d_{GH}(X, Y) = 0$.
\end{example}

We mentioned that in general Gromov-Hausdorff convergence is a rather weak convergence that is unable to preserve fine properties such as topology and curvature of the space. However, it becomes useful when we impose extra geometric restrictions on the metric spaces were are considering.  In this section, we will see the class of compact spaces with an upper bound on diameter and a lower curvature bound is pre-compact in Gromov-Hausdorff topology.

\begin{definition}[Inner/Intrinsic Metric Space]
	Let $(X, d)$ be a metric space, it is called \textbf{inner} or \textbf{intrinsic} if for any $x, y \in X$, 
	\begin{equation*}
		d(x, y) = \inf\br{\length_d(\gamma): \text{$\gamma$ connects $x$ and $y$}}.
	\end{equation*}
\end{definition}

\begin{definition}[Geodesic Metric Space]
An inner metric space is called \textbf{geodesic} if for any $x, y \in X$ in the above definition the infimum is a minimum i.e. $d(x, y)$ is realized by the length of a geodesic connecting $x$ and $y$.
\end{definition}

\begin{example}
Inner metric space is not necessarily geodesic. For example, for $x = (1, 0)$ and $y = (-1, 0)$ in $\R^2\backslash\br{(0, 0)}$ (This space is not complete). The distance $d(x, y)$ cannot be realized by any geodesic connecting $x$ and $y$. However, the infimum exists and equals $2$. The space $X$ in Example \ref{ex-complete-GH=0} is complete, inner but not geodesic.
\end{example}
Given any metric space $(M, d)$, we can induce an inner metric space $(M, d^{inn})$. Since we can define 
\begin{equation*}
	d^{inn}(x, y) = \inf\br{\length_d(\gamma): \text{$\gamma$ connects $x$ and $y$}}. 
\end{equation*}

Note however that $d^{inn}(x, y) $ might be equal to $+\infty$.

For example,  we can define the inner distance on $S^n \subseteq \R^{n + 1}$ which is realized by the length of the geodesics lie in $S^n$ that connects points in $S^n$. 

\begin{example}
	Any Riemannian manifold is certainly an inner metric space. If it is complete, it is geodesic. This is followed by the Hopf-Rinow Theorem in Riemannian geometry.
\end{example}


\begin{proposition}\label{prop:mid-point geodesic}
Let $(X, d)$ be a complete metric space. Then it is geodesic if and only if it admits the \textbf{mid-point property}, i.e. for any $x, y \in X$, there exists an $m \in X$ such that
\begin{equation*}
    d(x, m) = d(m, y) = \frac{1}{2}d(x, y).
\end{equation*}
\end{proposition}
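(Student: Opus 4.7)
The forward direction is immediate: if $(X,d)$ is geodesic then any two points $x,y\in X$ are joined by a unit-speed shortest geodesic $\gamma:[0,d(x,y)]\to X$, and $m=\gamma(d(x,y)/2)$ witnesses the midpoint property. So the content is in the converse. My plan is to build a shortest geodesic from $x$ to $y$ by iteratively bisecting on dyadic rationals, then invoke completeness to extend.

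Let $L=d(x,y)$ and $D$ denote the set of dyadic rationals in $[0,1]$. I will construct a map $\gamma:D\to X$ satisfying $d(\gamma(s),\gamma(t))=L\abs{s-t}$ for all $s,t\in D$. Start by setting $\gamma(0)=x$ and $\gamma(1)=y$. Apply the midpoint property to get $m_{1/2}\in X$ with $d(x,m_{1/2})=d(m_{1/2},y)=L/2$, and set $\gamma(1/2)=m_{1/2}$. At stage $n$, given $\gamma$ on dyadic rationals of the form $k/2^n$ satisfying the length identity, apply the midpoint property to each consecutive pair $\gamma(k/2^n),\gamma((k+1)/2^n)$ to define $\gamma((2k+1)/2^{n+1})$. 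The triangle inequality together with the equalities $d(\gamma(k/2^{n+1}),\gamma((k+1)/2^{n+1}))=L/2^{n+1}$ forces all pairwise distances at the new stage to be $L\cdot(\text{dyadic gap})$: indeed, for any $s<t$ in $D_n$ (the $n$-th dyadic partition), summing the $2^n(t-s)$ consecutive length-$L/2^n$ segments shows $d(\gamma(s),\gamma(t))\le L(t-s)$, while a single application of the triangle inequality with the two endpoints $x,y$ gives the reverse inequality.

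Now extend $\gamma$ to $[0,1]$. The map $\gamma:D\to X$ is $L$-Lipschitz, hence uniformly continuous. Since $D$ is dense in $[0,1]$ and $X$ is complete, $\gamma$ extends uniquely to a continuous map $\bar\gamma:[0,1]\to X$, still satisfying $d(\bar\gamma(s),\bar\gamma(t))=L\abs{s-t}$ for all $s,t\in[0,1]$ by continuity of the metric. In particular $\bar\gamma(0)=x$, $\bar\gamma(1)=y$, and
\[
\length(\bar\gamma)=\sup_{0=t_0<\cdots<t_N=1}\sum_{i=0}^{N-1}d(\bar\gamma(t_i),\bar\gamma(t_{i+1}))=\sup\sum_{i} L(t_{i+1}-t_i)=L=d(x,y),
\]
so $\bar\gamma$ is a shortest geodesic realizing $d(x,y)$. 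Reparametrizing by arclength gives a unit-speed shortest geodesic; since $x,y$ were arbitrary, $(X,d)$ is geodesic.

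The only subtle step is verifying inductively that all pairwise dyadic distances come out exactly $L\abs{s-t}$ rather than merely $\le L\abs{s-t}$; this is the main obstacle but it follows cleanly by combining the additive upper bound from the triangle inequality (summing consecutive midpoint gaps) with the lower bound $d(\bar\gamma(s),\bar\gamma(t))\ge d(x,y)-d(x,\bar\gamma(s))-d(\bar\gamma(t),y)=L(t-s)$, forcing equality at every dyadic scale. Completeness is used exactly once, to extend from $D$ to $[0,1]$, and this is where the hypothesis is essential — as Example \ref{ex-complete-GH=0} and the punctured-plane example show, without completeness the midpoint property can fail to produce geodesics and with completeness but without the midpoint property one can have complete spaces (like the space of Example \ref{ex-complete-GH=0}) with finite diameter that are not geodesic.
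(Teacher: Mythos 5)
Your proof is correct and follows the same approach as the paper: dyadic bisection via the midpoint property to define $\gamma$ on the dyadic rationals, then completeness to extend to all of $[0,1]$. You supply more detail than the paper (in particular the careful verification that all dyadic pairwise distances equal $L\abs{s-t}$ rather than merely $\le L\abs{s-t}$, via the triangle-inequality lower bound through $x$ and $y$), but the underlying argument is the same.
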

\begin{proof}[Proof of Proposition~\ref{prop:mid-point geodesic}]
    \begin{itemize}
        \item ``$\implies$'': This is trivial. If $X$ is geodesic, then we can just pick the mid-point of the geodesic $[xy]$.
        \item ``$\impliedby$'': Conversely, we want to use the mid-point property and the completeness assumption to produce a geodesics between $x$ and $y$ in $X$. If the mid-point $m$ exists for any two points in $X$,  we can keep picking the mid-points again from the pairs $x, m$ and $m, y$.
        
        \begin{figure}[htbp]
        \centering
        \includegraphics[width=0.4\textwidth]{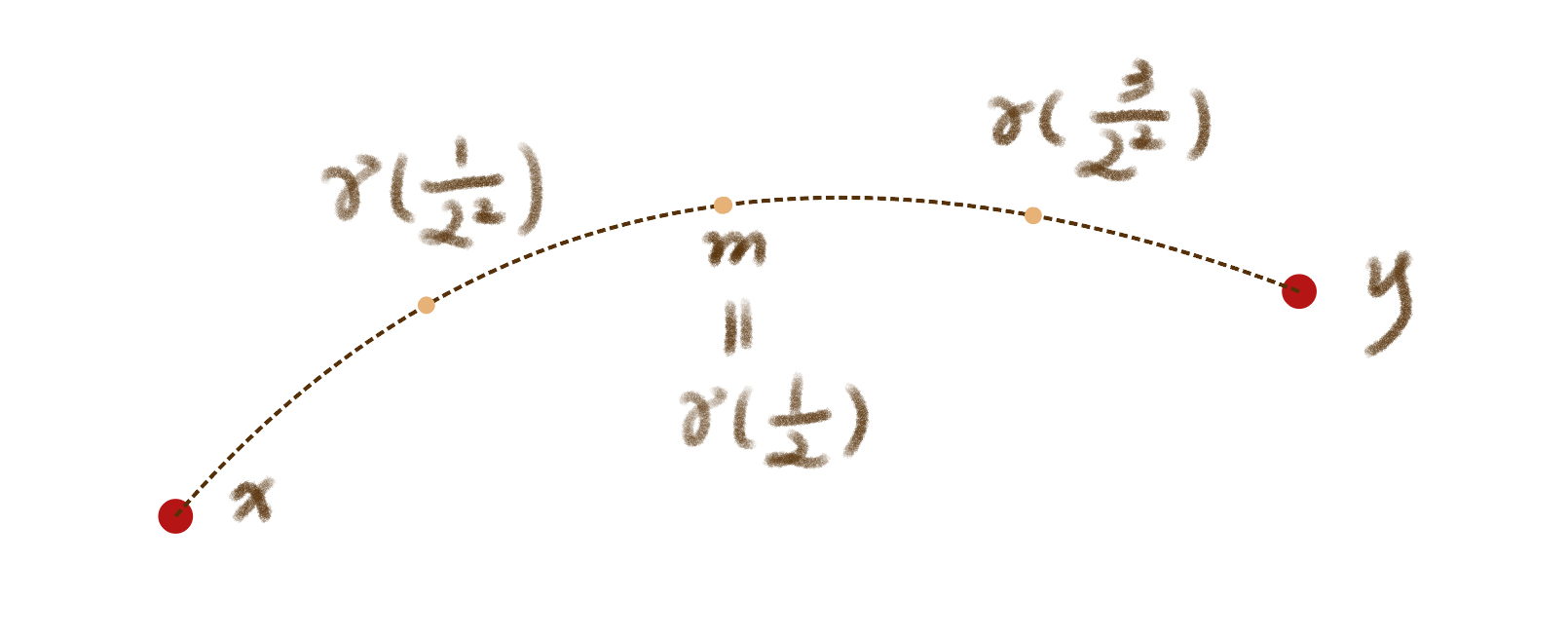}
        \end{figure}
        
        Keep this procedure, we can define geodesics for diadic rational $t$, i.e. get the position of the geodesics $\gamma$ at $\frac{m}{2^n}$ a prior for each $0 \leq \frac{m}{2^n} \leq 1$. This ensures us to define the geodesic on this dense set, say $S$ of numbers of $[0, 1]$. By the denseness of this set $S$ we can sub-sequentially approximate any $t \in [0, 1]$ using the sequence of these diadic times, i.e. 
        \begin{equation*}
        	t = 
        	\lim_{i \to \infty}\frac{m_i}{2^{n_i}}
        \end{equation*}
        And because $X$ is complete, the Cauchy sequence
        \begin{equation*}
        	d(\gamma(\frac{m_{i_1}}{2^{n_{i_1}}}), \gamma(\frac{m_{i_2}}{2^{n_{i_2}}})) = \abs{\frac{m_{i_1}}{2^{n_{i_1}}} - \frac{m_{i_2}}{2^{n_{i_2}}}}\to 0
        \end{equation*}
        ensures the existence of $z \in X$ such that $\gamma(t) = z$. Hence we extend $\gamma$ to entire $[0, 1]$. The resulting extension is easily seen to be a geodesic connecting $x$ to $y$.
    \end{itemize}
\end{proof}
\begin{remark}
	 Completeness is necessary to define $\gamma$ for all $t\in [0.1]$, not just for diadic rational $t$. For example, $\Q$ has the midpoint property since for any $x,y\in \Q$ the point $\frac{x+y}{2}$ is rational too.
	 
	 But there are no geodesics between any two distinct points (because the cardinality from $[0, 1]$ to an interval of rational numbers is different).
\end{remark}
\begin{proposition}\label{prop: gh-con implies mid-point convergence}
   	Suppose $X_n \ghto X$. Then if all $X_n$ have the mid-point property then so does $X$. Therefore if all $X_n$ are geodesic then so is $X$.
    
\end{proposition}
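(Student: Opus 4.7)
The plan is to establish the midpoint property for $X$ using the approximate midpoints from the $X_n$, and then invoke Proposition~\ref{prop:mid-point geodesic} for the geodesic statement. By Corollary~\ref{cor:GH-con iff GH-approx}, we may fix a sequence of $\varepsilon_n$-Gromov-Hausdorff approximations $f_n : X_n \to X$ with $\varepsilon_n \downarrow 0$. Note also that $X$ is compact by hypothesis, so sequences in $X$ admit convergent subsequences.

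Fix $x, y \in X$. Since $f_n(X_n)$ is $\varepsilon_n$-dense in $X$, I can choose points $x_n, y_n \in X_n$ with $d^X(f_n(x_n), x) \le \varepsilon_n$ and $d^X(f_n(y_n), y) \le \varepsilon_n$. Combining the almost-isometry property of $f_n$ with the triangle inequality gives
\begin{equation*}
  \bigl| d^{X_n}(x_n, y_n) - d^X(x, y) \bigr| \le 3\varepsilon_n.
\end{equation*}
Using the midpoint property in $X_n$, pick $m_n \in X_n$ with $d^{X_n}(x_n, m_n) = d^{X_n}(m_n, y_n) = \tfrac{1}{2} d^{X_n}(x_n, y_n)$, and set $\tilde m_n := f_n(m_n) \in X$.

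Next, by the almost-isometry property of $f_n$ and the triangle inequality again,
\begin{equation*}
  d^X(x, \tilde m_n) \le d^X(x, f_n(x_n)) + d^X(f_n(x_n), f_n(m_n)) \le \varepsilon_n + d^{X_n}(x_n, m_n) + \varepsilon_n \le \tfrac{1}{2} d^X(x, y) + C \varepsilon_n
\end{equation*}
for some absolute constant $C$, and analogously for $d^X(\tilde m_n, y)$. Since $d^X(x, \tilde m_n) + d^X(\tilde m_n, y) \ge d^X(x, y)$, each of the two summands must also satisfy a matching lower bound $\tfrac{1}{2} d^X(x, y) - C\varepsilon_n$. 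By compactness of $X$, extract a subsequence with $\tilde m_{n_k} \to m \in X$; passing to the limit yields $d^X(x, m) = d^X(m, y) = \tfrac{1}{2} d^X(x, y)$, which is the midpoint property for $X$.

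For the geodesic statement, observe that $X$ is complete (being compact), so Proposition~\ref{prop:mid-point geodesic} applies: a complete metric space with the midpoint property is geodesic. The only mildly delicate point in the argument is the $\varepsilon$-bookkeeping in the estimate above — in particular, the fact that $f_n$ need not be surjective forces us to choose $x_n, y_n$ using the $\varepsilon_n$-density of $f_n(X_n)$ rather than working with a literal inverse — but once this is handled, the rest is standard compactness and passage to the limit.
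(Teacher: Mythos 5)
Your proof is correct and follows essentially the same route as the paper's: lift $x,y$ to nearby points $x_n, y_n \in X_n$ via the $\varepsilon_n$-density of $f_n(X_n)$, take midpoints $m_n$, push them forward, and extract a convergent subsequence of $f_n(m_n)$ by compactness of $X$. The paper states the final step as "easy to show"; you supply the bookkeeping that makes it rigorous, and your estimates are all correct.
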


\begin{proof}[Proof of proposition~\ref{prop: gh-con implies mid-point convergence}]
   Let $x, y$ be points in $X$. we need to prove that there is a midpoint between them.
   
    By the corollary \ref{cor:GH-con iff GH-approx}, we can find $\eps_n \downarrow 0$ and $\eps_n$-Gromov-Hausdorff convergence $f_n: X_n \to X$. Therefore, for any $x_n, y_n \in X_n$, $d^X(f(x_n), x) \leq \eps_n$ and $d^X(f(x_n), y) \leq \eps_n$. Now we can pick mid-point $m_n$ between $x_n, y_n$. And it is easy to show $f_n(m_n)$ sub-converges to a mid-point between $x$ and $y$.
    \end{proof}
    
\subsection{The Pre-compactness Theorem}

\begin{definition}[Pre-compactness]
Given $(X, d)$ a metric space, a subspace $(A, d)$ is called \textbf{pre-compact} if its closure $\overline{A}$ is compact. Equivalently, every sequence $\br{x_i} \subseteq A$ sub-converge to some limit point $x \in \overline{A}$.
\end{definition}

\begin{definition}[Totally Bounded]
	Recall that a metric space $(X, d)$ is \textbf{totally bounded} if, for every $\varepsilon > 0$, it admits a finite $\varepsilon$-net. 
\end{definition}
We consider the following special family of metric spaces. Then, we are going to show this family of compact metric spaces is pre-compact under the Gromov-Hausdorff convergence. 

\begin{definition}[Uniformly Totally Bounded]
Let $D > 0$, a family of compact metric space is called \textbf{uniformly totally bounded by $D$}, which is denoted as $M_c(D, N)$, if it is
\begin{itemize}
	\item \textbf{Uniformly bounded diameter:} $\diam(X) \leq D$ for each $(X, d) \in M_c(D, N)$;
	\item \textbf{Uniformly totally bounded:} and for each $\varepsilon > 0$, there is a finite number $N(\varepsilon) > 0$ such that any $X \in M_c(D, N)$ admits an $\varepsilon$-net with carnality less than or equal to $N(\varepsilon)$. 
\end{itemize}
where $N: \R^+ \to \R^+$ is called a covering function.
\end{definition}

\begin{theorem}[Precompcact Theorem of the Gromov-Hausdorff Convergence]\label{thm: precompact gh-con}
    Let $N: \R^+ \to \R^+$ be a non-increasing covering function. Let $D > 0$. Then $M_c(D, N)$ is pre-compact in the Gromov-Hausdorff topology. Namely, For any $\br{X_n} \subseteq M$, there exists a convergent subsequence $X_{n_k} \ghto X$ for some $X \in M_c(D, N)$ and $X$ admits the same covering function $N$. 
\end{theorem}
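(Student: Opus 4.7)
The plan is to construct the limit space $X$ by a diagonal argument on finite $\varepsilon$-nets, then verify that a subsequence of $\{X_n\}$ converges to it in Gromov-Hausdorff topology, and finally check $X \in M_c(D,N)$.

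First I would fix a sequence $\varepsilon_k = 1/k \downarrow 0$. For each $k$ and each $n$, the uniform total boundedness gives a finite $\varepsilon_k$-net $S_n^k = \{x_{n,1}^k, \ldots, x_{n,N_k}^k\} \subseteq X_n$ with $N_k \le N(\varepsilon_k)$ (padding with repeated points if necessary so that the cardinality is exactly $N_k$, independent of $n$). We may also arrange $S_n^1 \subseteq S_n^2 \subseteq S_n^3 \subseteq \cdots$ by taking $S_n^{k+1}$ to contain $S_n^k$ (refining nets is harmless). All pairwise distances $d^{X_n}(x_{n,i}^k, x_{n,j}^k)$ lie in the compact interval $[0,D]$.

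Next I would run a diagonal extraction. For $k=1$, the array of $\binom{N_1}{2}$ distances defines a point in $[0,D]^{\binom{N_1}{2}}$, which is compact; pass to a subsequence on which these distances converge. Then refine to a further subsequence that makes all $\binom{N_2}{2}$ distances in $S_n^2$ converge, and so on. Diagonalize to obtain one subsequence $X_{n_j}$ along which, for every $k$ and every $i,i'\le N_k$, the limit $\rho_k(i,i') := \lim_j d^{X_{n_j}}(x_{n_j,i}^k, x_{n_j,i'}^k)$ exists. The nesting $S_n^k \subseteq S_n^{k+1}$ is preserved in the limit, so the formal points $\{y_i^k\}$ with pseudo-distance $\rho_k$ assemble into an increasing sequence of finite pseudo-metric spaces $Y^1 \subseteq Y^2 \subseteq \cdots$. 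Identify points at $\rho$-distance $0$ and let $Y = \bigcup_k Y^k$ with the induced metric; define $X$ to be the metric completion of $Y$.

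Now I would verify the three required properties. For the Gromov-Hausdorff convergence, I would show that the natural map $f_j^k : S_{n_j}^k \to X$ sending $x_{n_j,i}^k$ to $y_i^k$ is, for $j$ large, a $(2\varepsilon_k + o(1))$-Gromov-Hausdorff approximation: it is $2\varepsilon_k$-isometric on the net by construction of $\rho_k$, and its image is $2\varepsilon_k$-dense in $X$ since $Y^k$ is $\varepsilon_k$-dense in $Y$ and hence $2\varepsilon_k$-dense in $X$. Extending $f_j^k$ to all of $X_{n_j}$ by sending each point to the closest net point, with extra error $\varepsilon_k$, yields an $O(\varepsilon_k)$-Gromov-Hausdorff approximation $X_{n_j} \to X$ for all large $j$, so $X_{n_j} \ghto X$ by Corollary~\ref{cor:GH-con iff GH-approx}. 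Compactness of $X$ follows because $Y^k$ is a finite $\varepsilon_k$-net in $Y$, hence in $X$, making $X$ complete and totally bounded. The diameter bound $\diam(X)\le D$ and the covering function $N$ transfer to $X$ because $Y^k \subseteq X$ has at most $N(\varepsilon_k)$ points and is a $2\varepsilon_k$-net; a standard argument replaces the factor $2$ by $1$ using $\varepsilon/2$-nets at the cost of working with $N(\varepsilon/2)$, so $X \in M_c(D, N')$ for a slightly adjusted covering function, and one can show $N$ itself works.

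The main obstacle will be the bookkeeping in the diagonal step, specifically ensuring that the nested net structure $S_n^k \subseteq S_n^{k+1}$ survives passage to the limit so that the finite pseudo-metric spaces $Y^k$ form a coherent direct system, and ensuring that the extension of $f_j^k$ from the net to all of $X_{n_j}$ genuinely produces a Gromov-Hausdorff approximation with error going to $0$. A secondary subtlety is verifying that the limit $X$ inherits the same covering function $N$ and not merely some worse one; this requires a small covering-function adjustment or a careful choice of nets at scale $\varepsilon/3$ (say) when setting up the extraction.
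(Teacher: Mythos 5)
Your proposal follows essentially the same strategy as the paper: build nested finite nets at scales $1/k$ with cardinality controlled by $N$, diagonally extract a subsequence along which all pairwise net distances converge in $[0,D]$, assemble the resulting finite pseudo-metric spaces into a countable limit space, take the metric completion to define $X$, and verify convergence via $\varepsilon_k$-Gromov-Hausdorff approximations. The bookkeeping issues you flag (reconciling padding to constant cardinality with nesting, and whether the limit has covering function $N$ or a slightly adjusted one) are real but routine, and the paper's proof is itself only a sketch that does not fully spell them out either.
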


\begin{proof}[Sketch of the Proof of Theorem \ref{thm: precompact gh-con}]
    Fix an integer $k > 1$ and let $\br{X_n} \subseteq M_c(D, N)$ be a sequence. By the above, for any $m \leq k$, we have find a $\frac{1}{m}$-net in $X_n$ with $\leq N(\frac{1}{m})$ elements. 

    Take a union of these for $m = 1, 2, \dots, k$, which has $N_k := \sum_{n = 1}^k N(\frac{1}{n})$ elements in $X_n$, say 
    \begin{equation*}
        X_{n, k} := \br{x_1^n, \dots, x_{N_k}^n} \subseteq X_n.
    \end{equation*} 

    Therefore, whenever we fix $k$, we can have fixed number of points whose union is a finite metric space $X_{n, k} \subseteq X_n$ (finite subset of $X_n$). Recall that $\diam(X_n) \leq D$, so all the pairwise distance for the points in $X_{n, k}$ are $\leq D$, i.e. $d^{X_n}(x_i^n, x_j^n) \leq D < \infty$ for any $i, j \leq N_k$. Then by compactness of $[0, D]$ we can conclude $\br{d^{X_n}(x_i^n, x_j^n)}_{n = 1}^\infty$ subsequentially converge to a number $d_{ij} \in [0, D]$ for any $i, j \leq N_k$. This $d_{ij}$ is a limit metric on the set of finite points with cardinality less than  or equal to $N_k$.

    Since
    \begin{equation*}
        X_{n,k} \subseteq X_{n, k + 1} \subseteq X_{n, k + 2} \subseteq \cdots,
    \end{equation*}
    by the diagonal argument, we can get a metric on a countable set $S$ by passing to a sub-sequence. Here 
    \begin{equation*}
        S_1 \subseteq S_2 \subseteq S_3 \subseteq \cdots \subseteq S
    \end{equation*}
    and $S_i$ has $N_i$ elements for each $i \in \N$. Then we take the metric completion of $S$ say $X = \overline{S}$ and check that
    \begin{equation*}
        X_n \ghto X, \quad n \to \infty
    \end{equation*}
\end{proof}

Recall that if we fix $n, \kappa, D$, and consider the following family of closed Riemannian manifolds:
\begin{equation*}
    M_{ric}(n, \kappa, D) := \br{\text{$(M^n, g)$: $\ric_M \geq (n - 1)\kappa$ and $\diam(M) \leq D$}},
\end{equation*}
then by the Bishop-Gromov volume comparison theorem, for any $\eps>0$  there exists $N(\eps) = N(\eps, n, \kappa, D)$ such that for any $M \in M(n, \kappa, D)$, $M$ admits an $\eps$-net with $\leq N(\eps)$ elements. And thus we have a natural corollary by the theorem \ref{thm: precompact gh-con}. 
\begin{corollary}\label{cor: ric-precompact}
    $M_{ric}(n, \kappa, D)$ is precompact in Gromov-Hausdorff topology. Being pre-compact means every sequence 
    \begin{equation*}
        \br{(X_i, d)} = \br{(M_i^n, g)} \subseteq M_{ric}(n, \kappa, D) 
    \end{equation*}
    admits a convergent sub-sequence in  Gromov-Hausdorff topology.  But the limit may not be smooth or even a manifold.
\end{corollary}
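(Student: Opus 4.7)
The plan is to reduce the corollary to Theorem \ref{thm: precompact gh-con} by producing a uniform covering function $N(\varepsilon)$ that works for every $M \in M_{ric}(n,\kappa,D)$. The diameter bound $\diam(M) \le D$ is built into the definition, so only the uniform total boundedness requires work, and this is exactly where Bishop–Gromov will enter.

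For a fixed $\varepsilon>0$ and any $M \in M_{ric}(n,\kappa,D)$, I would start with a maximal $\varepsilon/2$-separated subset $\{x_1,\dots,x_N\}\subseteq M$. Maximality guarantees that this set is automatically an $\varepsilon$-net, so it suffices to bound $N$ from above by a quantity depending only on $n,\kappa,D,\varepsilon$. Because the points are $\varepsilon/2$-separated, the balls $B(x_i,\varepsilon/4)$ are pairwise disjoint, and since $\diam(M)\le D$ we have $M = B(x_i,D)$ for each $i$. Summing volumes gives
\[
\vol(M) \;\ge\; \sum_{i=1}^N \vol\bigl(B(x_i,\varepsilon/4)\bigr).
\]

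The key step is to bound each summand from below by a uniform fraction of $\vol(M)$ using the relative volume comparison Theorem \ref{thm:bishop-gromov}. Applied at $x_i$ with radii $\varepsilon/4 \le D$, the monotonicity of $r\mapsto \vol(B(x_i,r))/\vol_{\modsp{\kappa}}(B(\bar p,r))$ yields
\[
\vol\bigl(B(x_i,\varepsilon/4)\bigr) \;\ge\; \vol\bigl(B(x_i,D)\bigr)\cdot \frac{\vol_{\modsp{\kappa}}(B(\bar p,\varepsilon/4))}{\vol_{\modsp{\kappa}}(B(\bar p,D))} \;=\; \vol(M)\cdot \frac{\vol_{\modsp{\kappa}}(B(\bar p,\varepsilon/4))}{\vol_{\modsp{\kappa}}(B(\bar p,D))}.
\]
Combining with the displayed inequality and canceling $\vol(M)$ produces
\[
N \;\le\; \frac{\vol_{\modsp{\kappa}}(B(\bar p,D))}{\vol_{\modsp{\kappa}}(B(\bar p,\varepsilon/4))} \;=:\; N(\varepsilon),
\]
which depends only on $n,\kappa,D,\varepsilon$ and is manifestly non-increasing in $\varepsilon$.

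With this covering function $N(\varepsilon)$ in hand, every $M \in M_{ric}(n,\kappa,D)$ lies in the class $M_c(D,N)$ of uniformly totally bounded spaces, so Theorem \ref{thm: precompact gh-con} immediately yields Gromov–Hausdorff precompactness. I do not expect a serious obstacle here: the only subtlety is the clean cancellation of $\vol(M)$, which works because the unknown quantity $\vol(M)$ appears on both sides of the Bishop–Gromov estimate, leaving behind a purely comparison-space ratio. The price, as noted in the corollary statement, is that the limit metric space need not be smooth or even a manifold — Theorem \ref{thm: precompact gh-con} only guarantees a compact metric space limit with the same covering function $N(\varepsilon)$.
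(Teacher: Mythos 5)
Your proof is correct and follows the same strategy the paper sketches: use Bishop--Gromov relative volume comparison to manufacture a uniform covering function $N(\varepsilon)$ (via disjoint $\varepsilon/4$-balls around a maximal $\varepsilon/2$-separated set, and the cancellation of $\vol(M)$), then invoke Theorem~\ref{thm: precompact gh-con}. You have merely filled in the standard details that the paper leaves implicit with the phrase ``by the Bishop-Gromov volume comparison theorem.''
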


\begin{fact}
If $A\subset (X,d)$ is precompact then any $B\subset A$ is precompact too.
\end{fact}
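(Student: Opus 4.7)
The plan is to use the standard characterization of precompactness via the compactness of the closure, together with the basic topological fact that a closed subset of a compact space is itself compact. So the argument will be essentially immediate once I unpack the definitions.

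First, I would note that $A$ being precompact means, by definition, that the closure $\overline{A}$ (taken in $(X,d)$) is compact. Then for $B \subset A$, I would observe that $\overline{B} \subset \overline{A}$, since the closure is monotone with respect to inclusion: if $x$ is a limit of a sequence in $B$, then it is a limit of a sequence in $A$, hence lies in $\overline{A}$. Now $\overline{B}$ is closed in $X$ by construction, so in particular it is a closed subset of the compact space $\overline{A}$. A closed subset of a compact metric space is compact, so $\overline{B}$ is compact, which is exactly the statement that $B$ is precompact.

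Alternatively, and perhaps more transparently, I could argue via the sequential characterization (which is equivalent in metric spaces). Given any sequence $\{b_n\} \subset B$, it is also a sequence in $A$, and by precompactness of $A$ it admits a subsequence $\{b_{n_k}\}$ that is Cauchy (equivalently, converges in $X$ after identifying $\overline{A}$ as compact). This immediately shows that every sequence in $B$ has a convergent subsequence in $X$, i.e., $B$ is precompact.

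There is essentially no obstacle here: the statement is a formal consequence of the definition of precompactness and the elementary fact that closed subsets of compact spaces are compact. The only thing worth being careful about is keeping straight the ambient space in which the closure is taken — the closures $\overline{A}$ and $\overline{B}$ must both be computed in $(X,d)$ for the inclusion $\overline{B} \subset \overline{A}$ to make sense and for $\overline{B}$ to be closed in $\overline{A}$.
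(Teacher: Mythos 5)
Your proof is correct, and since the paper states this as a \emph{Fact} without proof, you are simply supplying the routine argument the authors omitted. Both of your approaches (the closure argument and the sequential one) match the two equivalent characterizations of precompactness the paper itself records in its definition, so either is perfectly aligned with the text.
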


An easy corollary from this fact is:
\begin{corollary}\label{cor: sec-precompact}
    The following family of closed Riemannian manifolds
    \begin{equation*}
        M_{sec}(n, \kappa, D) := \br{(M^n, g): \diam(M) \leq D, \sect_M \geq \kappa}
    \end{equation*}
    is also precompact in the Gromov-Hausdorff topology. 
    \begin{proof}
	Because $\sect_M \geq \kappa$ implies $\ric_M \geq (n - 1)\kappa$. 
\end{proof}
\end{corollary}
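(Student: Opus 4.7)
The plan is to deduce this statement directly from Corollary \ref{cor: ric-precompact} by showing that $M_{sec}(n,\kappa,D)$ sits inside $M_{ric}(n,\kappa,D)$ and invoking the fact that subsets of precompact sets are precompact.

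First I would verify the inclusion $M_{sec}(n,\kappa,D) \subseteq M_{ric}(n,\kappa,D)$. This is a pointwise linear-algebra fact: at each point $p \in M$, the Ricci tensor applied to a unit vector $v$ equals the trace of the sectional curvature operator restricted to $v^\perp$, i.e.\ $\ric(v,v) = \sum_{i=1}^{n-1}\sect(v, e_i)$ for any orthonormal basis $e_1,\ldots,e_{n-1}$ of $v^\perp$. If $\sect_M \geq \kappa$, each summand is at least $\kappa$, giving $\ric_M(v,v) \geq (n-1)\kappa$. Combined with the diameter bound $\diam(M) \leq D$, this places any $(M^n,g) \in M_{sec}(n,\kappa,D)$ in the Ricci family.

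Then I would cite the general metric-topology fact recorded just before the statement: if $A$ is precompact in a metric space $(X,d)$ and $B \subseteq A$, then $B$ is precompact as well. Applied to $A = M_{ric}(n,\kappa,D)$ and $B = M_{sec}(n,\kappa,D)$ inside the space of isometry classes of compact metric spaces equipped with $d_{GH}$, this yields precompactness of $M_{sec}(n,\kappa,D)$ immediately. There is essentially no obstacle here; the substantive content was Bishop--Gromov, which was already used to establish the Ricci case. The only thing to note is that Gromov--Hausdorff limits of Riemannian manifolds with $\sect \geq \kappa$ are in general only metric spaces, not manifolds — in fact these limits form the class of Alexandrov spaces of curvature $\geq \kappa$ studied in the next chapter, but asserting this goes beyond what the corollary requires.
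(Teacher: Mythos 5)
Your proof is correct and takes exactly the route the paper intends: deduce $\ric_M \geq (n-1)\kappa$ from $\sect_M \geq \kappa$ by tracing, so $M_{sec}(n,\kappa,D) \subseteq M_{ric}(n,\kappa,D)$, and then invoke the fact (recorded immediately before the corollary) that subsets of precompact sets are precompact, together with Corollary~\ref{cor: ric-precompact}. You have simply spelled out the pointwise linear-algebra step that the paper's one-line proof leaves implicit.
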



\chapter{Alexandrov Spaces}\label{Ch13}

\section{Introduction}
Recall that precompact means every sequence has a convergent subsequence. And it is natural to ask what the limit looks like, of sub-sequences in Corollary~\ref{cor: sec-precompact}. To answer this question, we need to study Alexandrov geometry. 
\begin{definition}[Alexandrov Space]\label{def: alex-space}
    Let $(X, d)$ be a complete geodesic metric space is called \textbf{Alexandrov of curvature $\geq \kappa$} if the following four-point condition in the Toponogov comparison holds. Namely, for any four points $p, a, b, c \in X$, we have 
    \begin{equation*}
        \modangle^\kappa(p_a^b) + \modangle^\kappa(p_a^c) + \modangle^\kappa(p_b^c) \leq 2\pi
    \end{equation*}
    if such comparison triangles exist.
\end{definition}

Therefore, by the Toponogov comparison, with this definition, every complete Riemannian manifold $(M^n, g)$ with $\sect_M \geq \kappa$ is an Alexandrov space of curvature $\geq \kappa$. 

\begin{remark}
	More generally, one can consider inner metric instead of geodesic Alexandrov spaces. But in finite dimensions the two notions are equivalent.
\end{remark}

Alexandrov's condition is preserved under the Gromov-Hausdorff convergence. 
\begin{proposition}\label{prop: alex-space and GH-convergence}
    Suppose $X_n \ghto X$ where $X_n$ are Alexandrov spaces of curvature $\geq \kappa$, then $X$ is also an Alexandrov space of curvature $\geq \kappa$. 
\end{proposition}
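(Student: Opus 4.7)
The plan is to verify the three defining properties of an Alexandrov space on $X$: completeness, the geodesic property, and the four-point comparison. Completeness is immediate because the Gromov-Hausdorff limit of a convergent sequence of compact metric spaces is compact (being the limit of a Cauchy sequence in the Gromov-Hausdorff metric on compact isometry classes), and compact metric spaces are complete. The geodesic property is handled by Proposition~\ref{prop: gh-con implies mid-point convergence}: each $X_n$, being an Alexandrov space, is geodesic and hence has the midpoint property; midpoints survive passage to the GH-limit; and since $X$ is complete, the midpoint property upgrades back to being geodesic via Proposition~\ref{prop:mid-point geodesic}.

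The substantive part is the four-point condition. Given four points $p, a, b, c \in X$, I would use Corollary~\ref{cor:GH-con iff GH-approx} to fix a sequence $\eps_n \downarrow 0$ and $\eps_n$-GH approximations $g_n : X \to X_n$ (obtained by composing $f_n : X_n \to X$ with an almost-inverse as in the proof that $\eps$-approximations come in both directions). Setting $p_n = g_n(p)$, $a_n = g_n(a)$, $b_n = g_n(b)$, $c_n = g_n(c)$, every pairwise distance satisfies
\begin{equation*}
\bigl| d^{X_n}(x_n, y_n) - d^X(x,y) \bigr| \le \eps_n \longrightarrow 0.
\end{equation*}
Since each $X_n$ is an Alexandrov space of curvature $\ge \kappa$, the four-point condition
\begin{equation*}
\modangle^\kappa({p_n}_{a_n}^{b_n}) + \modangle^\kappa({p_n}_{a_n}^{c_n}) + \modangle^\kappa({p_n}_{b_n}^{c_n}) \le 2\pi
\end{equation*}
holds for all $n$ large enough that the relevant model triangles are defined. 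Passing to the limit uses that the model angle $\modangle^\kappa(\alpha;\beta,\gamma)$ is a continuous function of the side lengths $(\alpha,\beta,\gamma)$ wherever the corresponding model triangle $\modtriangle^\kappa(\alpha,\beta,\gamma)$ is uniquely defined; this continuity is transparent from the cosine law stated in the excerpt.

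The main obstacle is the case $\kappa > 0$, where model triangles are only defined under a perimeter bound $\le 2\pik$, and where the model-angle function degenerates at the boundary of its domain. To handle this, I would first observe that if the model triangles needed to define $\modangle^\kappa(p_a^b)$, etc., in $X$ are well-defined, then for large $n$ the corresponding model triangles in $X_n$ are also well-defined, because distances converge and the relevant strict inequalities are open conditions. The degenerate case where some distance in $X$ equals $\pik$ or a triangle has perimeter exactly $2\pik$ can be approached by a standard approximation: slightly perturb the distances by taking the inequality in $X_n$ (where triangles are uniquely defined for large $n$) and letting $n \to \infty$; continuity of $\modangle^\kappa$ on the interior of its domain, together with upper semicontinuity at the boundary, suffices to obtain the inequality in $X$. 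In particular, the proof only needs to handle the configurations in which all the model angles appearing in the four-point condition are defined for $X$, which is exactly the hypothesis built into Definition~\ref{def: alex-space}.
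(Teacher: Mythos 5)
Your proposal is correct and follows essentially the same route as the paper: pick quadruples in $X_n$ close to the given quadruple in $X$ via the GH approximations, observe that the six pairwise distances converge, invoke the four-point condition in each $X_n$, and pass to the limit using continuity of the model angle (via the cosine law). You are slightly more thorough than the paper: you explicitly verify completeness and the geodesic property (the paper takes these for granted, though Proposition~\ref{prop: gh-con implies mid-point convergence} handles the geodesic part), and you flag the boundary cases for $\kappa>0$ where the comparison triangles may fail to be uniquely defined — an issue the paper's proof silently elides.
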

\begin{proof}[Proof of Proposition~\ref{prop: alex-space and GH-convergence}]
    Firstly, we just pick $p, a, b, c \in X$. Since $X_n \ghto X$, then by the corollary \ref{cor:GH-con iff GH-approx}, we have a sequence $\eps_n \downarrow 0$ and the corresponding $\eps_n$-approximation $f_n$ for each $n$. Fix $n$, since $f_n$ is $\eps_n$ dense in $X$ for each $n$, we can pick the corresponding $p_n, a_n, b_n, c_n$ in $X_n$ such that
\begin{align*}
   	& d^X(f_n(p_n), p) \leq \eps_n, \quad d^X(f_n(a_n), a) \leq \eps_n,\\
   	& d^X(f_n(b_n), b) \leq \eps_n, \quad d^X(f_n(c_n), c) \leq \eps_n.
\end{align*}
Moreover, since $(X^n, d^{X_n})$ are Alexandrov of curvature $\geq \kappa$, the four-points comparison holds for each group of $\br{p_n, a_n, b_n, c_n}$, say
\begin{equation*}
    \modangle^\kappa({p_n}_{a_n}^{b_n}) + \modangle^\kappa({p_n}_{c_n}^{b_n}) + \modangle^\kappa({p_n}_{a_n}^{c_n}) \leq 2\pi
\end{equation*}
Since the Gromov-Hausdorff convergence implies the convergence of the geodesics \ref{prop:mid-point geodesic}, say
\begin{equation*}
    \abs{a_n p_n} \to \abs{ap}, \quad \abs{a_n b_n} \to \abs{ab}, \cdots, etc
\end{equation*}
Then, by the cosine law, the sequence of angles in the model space $S_\kappa^n$ will certainly converges.
\begin{figure}[htbp]
    \centering
    \includegraphics[width=0.7\textwidth]{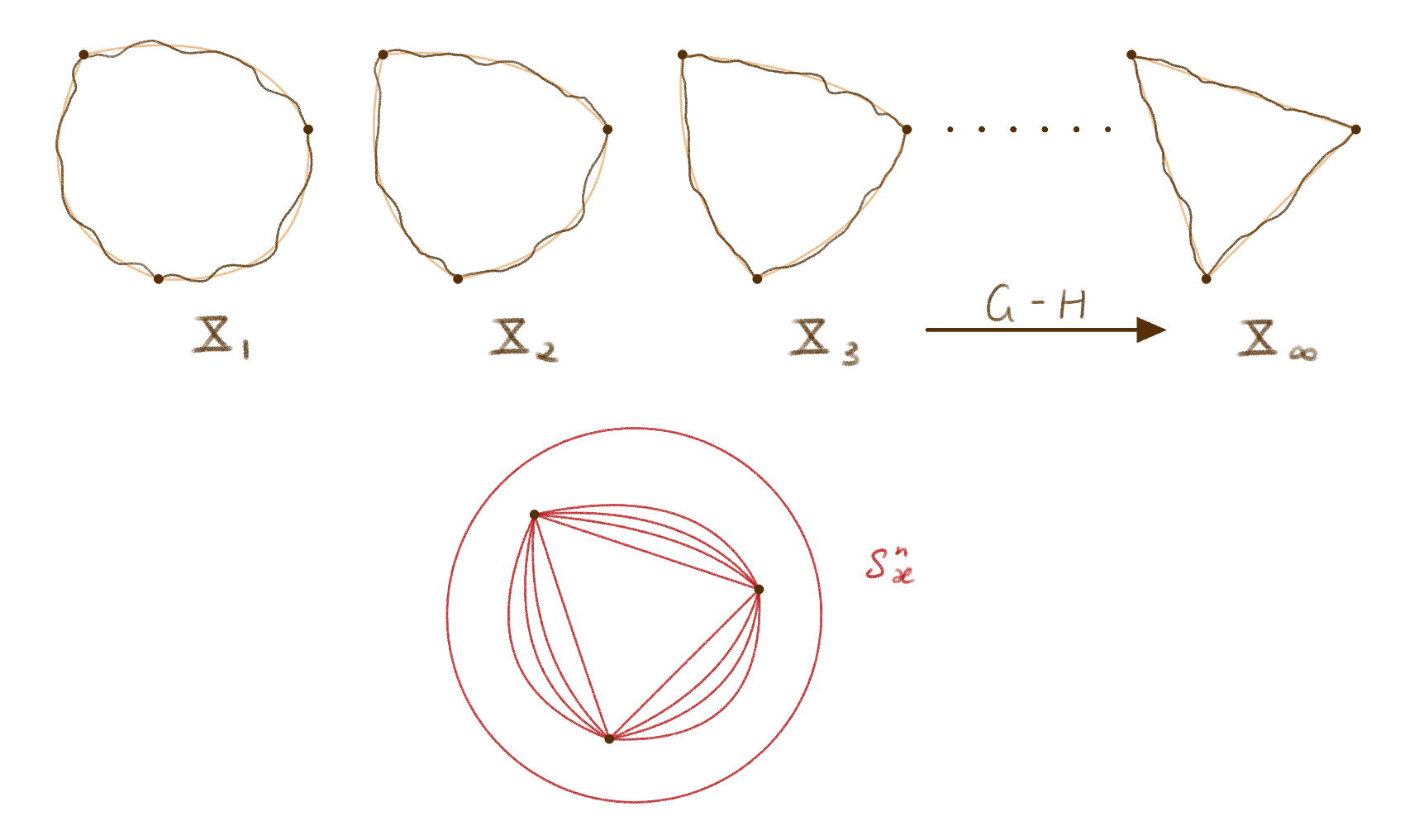}\caption{ Convergence of angles in the model space $S_\kappa^n$ (Proposition \ref{prop: alex-space and GH-convergence})}
\end{figure}

Therefore, the convergence of angles implies
\begin{equation*}
    \modangle^\kappa({p}_{a}^{b}) + \modangle^\kappa({p}_{c}^{b}) + \modangle^\kappa({p}_{a}^{c}) \leq 2\pi
\end{equation*}
Then we can conclude that $X$ is also an Alexandrov space of curvature $\geq \kappa$.
\end{proof}
Now we can answer our question rises from the corollary \ref{cor: sec-precompact}, the sub-sequential limits of the manifolds with $\sect_{M_n} \geq \kappa$ are Alexandrov spaces of curvature $\geq \kappa$.

\section{Examples}
So far we have introduced the notion and the motivition of Alexandrov spaces. In words, the study of Alexandrov geometry is the study of the Toponogov comparison theorem in the context of geodesic metric spaces. Alexandrov spaces have many properties similar to manifolds. We are going to list some of them. 
\begin{example}\label{ex:alex-mod by lie group}
    Let $(M^n, g)$ be compact manifolds of $\sect_M \geq \kappa$ where $\kappa \leq 0$. 
    Consider a connected compact Lie group acting by isometries $G\acts M$. 
    As every compact Lie group  $G$ has a bi-invariant Riemannian metric $h$ so that $\sect_G \geq 0$. 
    Take any $\eps > 0$, take $(M\times \eps G)$ where $\eps G := (G, \eps^2 h)$. The space $(M\times \eps G)$ still has $\sect_{(M\times \eps G)} \geq \kappa$. 
    And $G\acts (M\times \eps G)$ diagonally as isometries. 
    This action is free and isometric. 
    Take $M_\eps = (M \times \eps G)/G$, which is diffeomorphic to $M$ with different metrics, and orbits of $G$ are shrunk. 
    We have a Riemannian submersion $M\times \eps G\to M_\eps$. 
    By theorem~\ref{thm: Riem-submersion} it holds that  $\sect_{M_\eps}\ge \kappa$ for any $\eps > 0$. In particular, it is an Alexandrov space of $\curv\ge \kappa$.
    
    Finally, 
    \begin{equation*}
        M_\eps \ghto M/G, \quad \eps \to 0. 
    \end{equation*}
    Therefore, we can show that $M/G$ is an Alexandrov space of curvature $\geq \kappa$. 
\end{example}
The construction in the example \ref{ex:alex-mod by lie group} allows us to produce Alexandrov spaces that are not manifolds. Below is a more explicit example of such non-manifold Alexandrov space. 
\begin{example}
    Notice that $\sect_{\R^{2n + 2}}\geq 0$. So $\R^{2n + 2}$ is an Alexandrov space of curvature $\geq 0$. Consider $S^1 \acts \C^{n + 1} \cong \R^{2n + 2}$, which is an isometric action
    \begin{equation*}
        z(z_1, \dots, z_{n + 1}) = (zz_1, \dots, zz_{n + 1}). 
    \end{equation*}
    By the last example \ref{ex:alex-mod by lie group}, we know that $\R^{2n + 2}/S^1$ is an Alexandrov space of curvature $\geq 0$. Indeed $\R^{2n + 2}/S^1 \cong C(\C P)^n$ the cone of $\C P^n$ which is not a manifold. Intuitively, $\C P^n \cong S^{2n + 1}/S^1$ and the cone structure comes from $C(S^{2n + 1})\cong \R^{2n + 2}$.  And this example is non-compact.
\end{example}
\begin{example}\label{ex: convolution-con}
    Let $(M^n, g)$ be complete with $\sect_M \geq \kappa$. Suppose $f: M \to \R$ is convex. Then the level set $\br{f = c}$ with respect to the intrinsic metric is an Alexandrov space of curvature $\geq \kappa$. 
    For a smooth $f$ this follows by the Gauss formula which implies that $\{f=c\}$ is a smooth manifold of $\sect\ge \kappa$. The general case follows by approximation.

In particular, for any convex body $C \subseteq \R^n$, its boundary $\partial C$ is an Alexandrov space of curvature $\geq 0$. In particular,  the boundaries of the convex body having infinite many corners are also  Alexandrov spaces of $\curv\ge 0$.
. To see why this is true, there exits $f: \R^n \to \R$ a convex function such that $C=\{f\le c\}$. The function $f$ can be smoothed out by taking convolution with smooth kernels. This gives $C^\infty \ni f_\eps \to f$ point-wise and all $f_\eps$ are smooth and convex. We claim that $\br{f_\eps = c} \ghto \br{f = c}$ in intrinsic metrics. Therefore, $\br{f = c}$ is also an Alexandrov space of curvature $\geq 0$.

    \begin{figure}[htbp]
        \centering
        \includegraphics[width=0.8\textwidth]{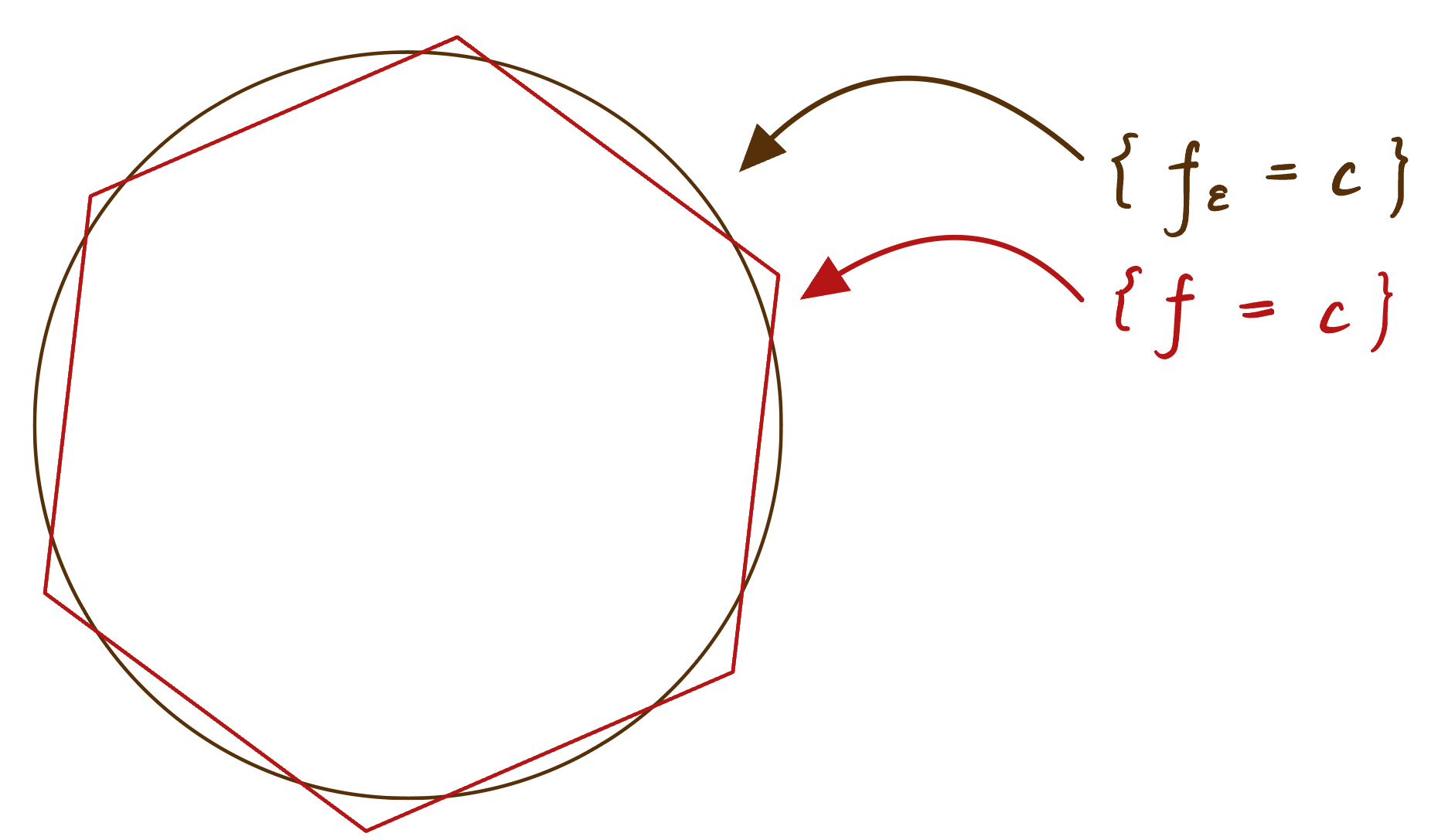}\caption{Point-wise convergence implies $\mathbf{G-H}$-convergence in intrinsic metric  (Example \ref{ex: convolution-con})}
    \end{figure}

    To show this, notice that $f_\eps \to f$-point-wise implies $\br{f_\eps = c} \to \br{f = c}$ in the Hausdorff sense as subsets of $\R^n$. However showing convergence in intrinsic metrics is less obvious, even in low dimensions. 
    For example, for convex function $f$ on $\R$ and smooth convex functions $f_\eps$ point-wise converge to $f$. Then over the closed interval $[a, b]$, 
    \begin{equation*}
        \length_{\R^2}(f_\epsilon([a, b])) \to \length_{\R^2}(f([a, b]))
    \end{equation*}
    is not an even trivial convergence. 

    \begin{figure}[htbp]
    \centering
    \includegraphics[width=0.4\textwidth]{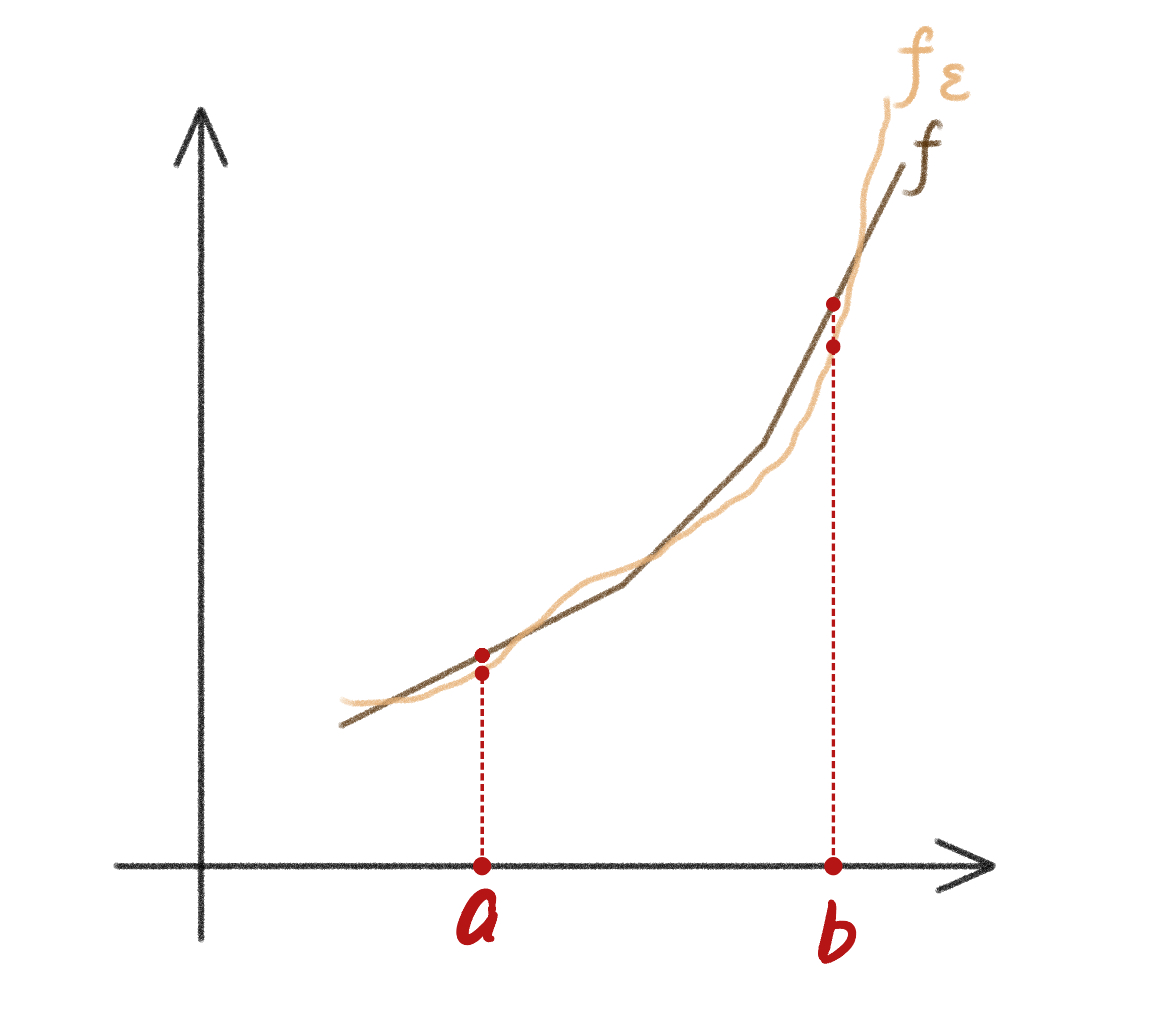}\caption{Convergence in length}
    \end{figure}
    
    To show this convergence in intrinsic metric is true, we have to use Sharafutdinov retraction. Since $\sect_{\R^n} \geq 0$,
    
    we have the Sharafutdinov retraction $\Gamma$, which is a $1$-Lipschitz retraction from $\Gamma: \R^n \to \br{f \leq c}$. 
    
    We consider the restriction $\Gamma|_{\br{f_\eps = c + \delta}}: \br{f_\eps = c + \delta} \to \br{f = c}$ which is also a surjective $1$-Lipschitz map. And similarly we get another Sharafutdinov retraction $\Phi: \R^n \to \br{f_\eps \leq c - \delta}$ which is also surjective $1$-Lipschitz and its restriction 
    ${\Phi|_{\br{f = c}} : \br{f = c} \to \br{f = c - \delta}}$
    is also surjective $1$-Lipschitz.
    Since $f$ and $f_\eps$ can be chosen to be norms on $\R^n$, different level sets are homeomorphic to each other.
    The end result is that we have
    \begin{align*}
        &\Gamma_\eps: \br{f_\eps = c + \delta} \to \br{f = c}\quad\text{is surjective $1$-Lipschitz};\\
        &\Phi_\eps: \br{f = c} \to \br{f_\eps = c + \delta} \quad\text{is surjective $(1 + o(\epsilon))$-Lipschitz.}
    \end{align*}
    Because they are all proportional to each other, we can just write
    \begin{align*}
        &\Gamma_\varepsilon: \br{f_\varepsilon = c} \to \br{f = c}\quad\text{is surjective $1$-Lipschitz};\\
        &\Phi_\varepsilon: \br{f = c} \to \br{f_\varepsilon = c} \quad\text{is surjective $(1 + o(\varepsilon))$-Lipschitz.}
    \end{align*}
    Roughly speaking, we have the almost $1$-Lipschitz functions between $\br{f_\varepsilon = c}$ and $\br{f = c}$ for both directions. 
    
    \begin{figure}[htbp]
        \centering
        \includegraphics[width=0.7\textwidth]{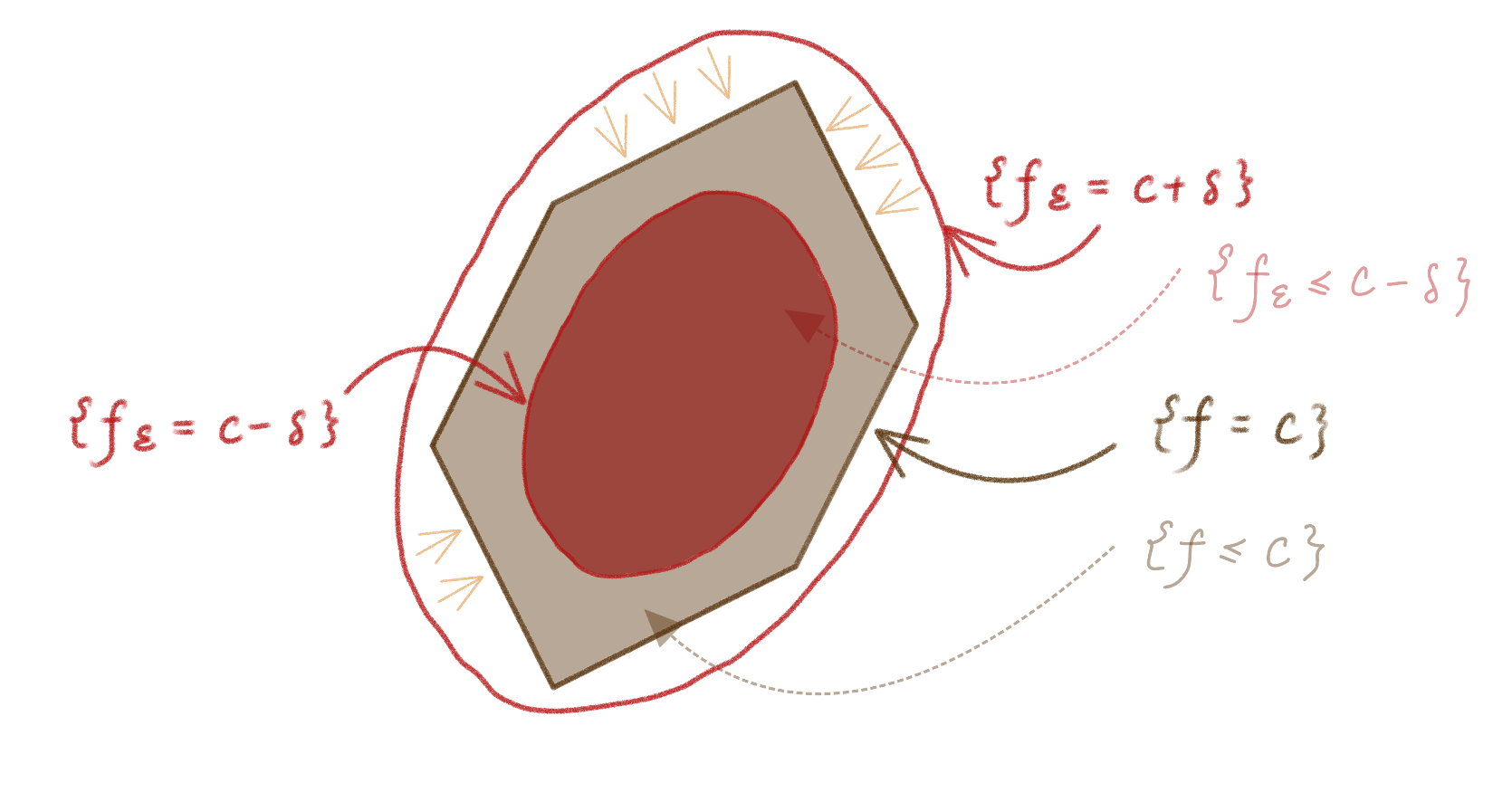}\caption{Construction of the Sharafutdinov retractions and the restrictions}
    \end{figure}
    
    Take the composition of the two maps,
    \begin{equation*}
        \Gamma_\varepsilon\circ \Phi_\varepsilon: \br{f = c} \to \br{f = c}
    \end{equation*}
    is surjective $(1 + o(\varepsilon))$-Lipschitz. And as $\varepsilon \to 0$, the composition will converge to some map $F: \br{f = c} \to \br{f = c}$ which is also surjective $1$-Lipschitz. Then by the following lemma (The proof is left as an exercise, which can also be found in \cite{BBI01}) 
    \begin{lemma}
        Let $(X, d)$ be a compact metric space. Then if $f: X \to X$ is surjective $1$-Lipschitz, then $f$ is an isometry. 
    \end{lemma}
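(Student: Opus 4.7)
My plan is to argue by contradiction. Suppose $f$ is not an isometry. Since being $1$-Lipschitz already implies $d(f(x), f(y)) \le d(x, y)$, and distance-preservation automatically yields injectivity, it suffices to find a point where the $1$-Lipschitz inequality is strict and derive a contradiction. So assume there exist $x_0, y_0 \in X$ and $\delta > 0$ with $d(f(x_0), f(y_0)) \le d(x_0, y_0) - \delta$.

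The key idea is to exploit near-recurrence of the iterates. Every $f^n$ is $1$-Lipschitz, hence the family $\{f^n\}$ is equicontinuous, and $X$ being compact it is pointwise bounded. Arzel\`a--Ascoli gives a subsequence $f^{n_j}$ converging uniformly, so we can pick $n < m$ with $\sup_{x \in X} d(f^n(x), f^m(x)) < \delta/3$. Let $k = m - n \ge 1$. Writing $f^m = f^k \circ f^n$ and using surjectivity of $f^n$ (every iterate of a surjective map is surjective), the estimate transfers to
\[
d(y, f^k(y)) < \delta/3 \quad \text{for every } y \in X.
\]

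Next I use surjectivity of $f^{k-1}$ (with the convention $f^0 = \mathrm{id}$, which is trivially surjective) to pick $x_0', y_0' \in X$ with $f^{k-1}(x_0') = x_0$ and $f^{k-1}(y_0') = y_0$. The $1$-Lipschitz property of $f^{k-1}$ yields $d(x_0', y_0') \ge d(x_0, y_0)$. At the same time $f^k(x_0') = f(x_0)$ and $f^k(y_0') = f(y_0)$, so the near-recurrence estimate gives $d(x_0', f(x_0)) < \delta/3$ and $d(y_0', f(y_0)) < \delta/3$. The triangle inequality then produces
\[
d(x_0', y_0') \le d(x_0', f(x_0)) + d(f(x_0), f(y_0)) + d(f(y_0), y_0') < \tfrac{\delta}{3} + (d(x_0, y_0) - \delta) + \tfrac{\delta}{3} = d(x_0, y_0) - \tfrac{\delta}{3},
\]
contradicting $d(x_0', y_0') \ge d(x_0, y_0)$.

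There is no real obstacle here; the argument is essentially a compactness--plus--pigeonhole trick. The one step that requires care is the transfer from ``$f^n$ and $f^m$ are uniformly close'' to ``$f^k$ is uniformly close to the identity,'' which is precisely where surjectivity of the iterates is used. Once that uniform recurrence is in hand, surjectivity also supplies preimages $x_0', y_0'$ that both witness the strict decrease through $f$ and inherit at least the original distance from below, closing the argument.
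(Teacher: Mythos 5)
Your proof is correct. Since the paper itself does not prove this lemma (it is explicitly ``left as an exercise'' with a pointer to Burago--Burago--Ivanov), there is no in-paper argument to compare against; what follows compares your route with the standard one.

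Your argument goes through Arzel\`a--Ascoli: the iterates $f^n$ form an equicontinuous family into a compact space, so some pair $n<m$ has $\sup_x d(f^n(x), f^m(x)) < \delta/3$; surjectivity of $f^n$ then upgrades this to $d(y, f^k(y)) < \delta/3$ for \emph{all} $y$ with $k = m-n$, and a triangle inequality through $f^{k-1}$-preimages of $x_0, y_0$ closes the contradiction. Every step is justified: $f^n$ surjective, $f^n$ $1$-Lipschitz, the $k=1$ case handled by the $f^0 = \mathrm{id}$ convention, and the crucial transfer to a uniform near-identity estimate is exactly where surjectivity enters. The more common proof (the one in \cite{BBI01}) avoids Arzel\`a--Ascoli and instead fixes $\eps>0$, takes $N(\eps)$ to be the maximal cardinality of an $\eps$-separated set, maximizes $\sum_{i<j} d(x_i,x_j)$ over $\eps$-separated $N(\eps)$-tuples (compactness of $X^{N(\eps)}$), and observes that replacing a maximizer by $f$-preimages can only increase each pairwise distance while staying admissible --- forcing equality, hence $f$ preserves distances on an $\eps$-net, which by $\eps\to 0$ gives distance-preservation everywhere. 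The net/extremal argument uses only compactness of finite products and matches the flavor of the paper's own proof of the companion Lemma~\ref{lem: h onto}; your argument uses somewhat heavier function-space machinery but trades a small combinatorial maximization for a clean dynamical near-recurrence, and the uniformity supplied by Arzel\`a--Ascoli is precisely what lets you pull the estimate back through $f^{k-1}$. Both are sound; yours is a valid and correct alternative.
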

   	By passing to a limit as $\eps\to 0$ we can conclude that $\Phi_\varepsilon$ and $\Gamma_\varepsilon$ are  $o(1)$-Gromov-Hausdorff approximations.
   \end{example}
\section{Pointed-Gromov-Hausdorff Convergence}
Let $(X_n, p_n)$, $(X, p)$ be proper metric spaces, i.e. closed balls are compact. Here $p_n$ and $p$ are points in the space. For example,  complete Riemannian manifolds are proper metric spaces. 

\begin{figure}[htbp]
        \centering
        \includegraphics[width=0.6\textwidth]{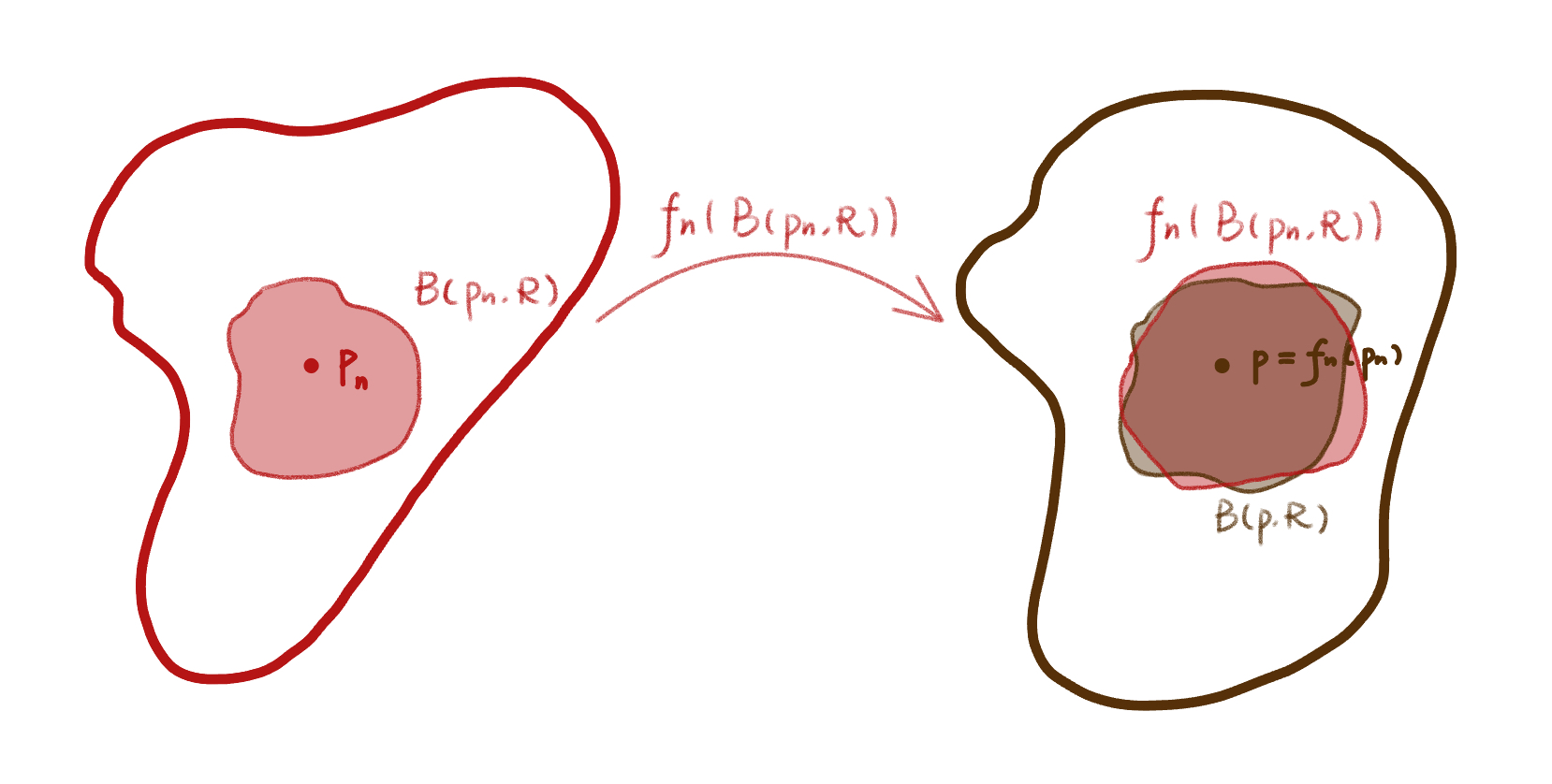}\caption{The image $f_n(B(p_n, R))$ is Hausdorff close to but not equal to $B(p, R)$ (Definition \ref{def: pgh-con})}\label{fig: pgh-con}
    \end{figure}
\begin{definition}[Pointed-Gromov-Hausdorff Convergence]\label{def: pgh-con}
    Let $(X_n, p_n)$, $(X, p)$ be proper metric spaces. The convergence
    \begin{equation*}
        (X_n, p_n) \pghto (X, p)
    \end{equation*}
    is called a \textbf{pointed-Gromov-Hausdorff convergence} if $\exists f_n: X_n \to X$ such that $f_n(p_n) = p$ and $\forall R > 0$ fixed, $f_n|_{B(p_n, R)}$ is an $\varepsilon_n(R)$-Gromov-Hausdorff approximation onto $B(p, R)$ where $\varepsilon_n(R) \downarrow 0$ as $n \to \infty$. 

    In the definition, $f_n(B(p_n, R))$ is $\varepsilon_n(R)$-Hausdorff closed to $B(p, R)$ and for every $x, y \in B(p_n, R)$, 
    \begin{equation*}
        \abs{d^X(f_n(x), f_n(y)) - d^X(x, y)} \leq \varepsilon_n(R).
    \end{equation*}
    (See figure \ref{fig: pgh-con} for visualization.)
\end{definition}

\begin{example}\label{ex: pointed-gh-con tangent space}
    If $(M^n, g)$ is a Riemannian manifold. Let $p \in M^n$ and $\lambda_i \to \infty$ a sequence. Then 
    \begin{equation*}
        (\lambda_i M^n, p_i) \pghto (T_pM, 0)
    \end{equation*}
    where $p_i$ is the point corresponding to $p$ in $\lambda_i M$. And $(\lambda_i M^n)$ means the rescaled manifolds of $M^n$ by factor $\lambda_i$. Thus any ball of radius $\frac{1}{\lambda_i}$ in $M^n$ becomes a unit ball in $\lambda_i M^n$. Finally, the nontrivial Riemannian metric $g$ will become flat at the limit of the pointed Gromov-Hausdorff convergence.
\end{example}

The phenomenon in this example \ref{ex: pointed-gh-con tangent space} can be generalized to Alexandrov spaces with a lower curvature bound. The generalization is constructed in the next lecture. 

Let $X$ be an Alexandrov space of curvature $\geq \kappa$. Moreover, we assume $X$ has a finite Hausdorff dimension. 
\begin{note}
    Remember that $\haus^n$ is denoted as the $n$-dimensional Hausdorff measure. And for some space $X$
 \begin{equation*}
        \dim_{\haus}(X) < \infty \implies \dim_{\haus}(X) = n
    \end{equation*}
    for some integer $n$. 
    Furthermore $n=\dim_{top}X$, the topological dimension of $X$. We will refer to $n$ as \emph{dimension} of $X$ and will denote it by $\dim X$. 
    
    And $\haus^n$ behaves like volume on manifolds. Here are some properties of the Hausdorff measure and the Hausdorff dimensions we would like to mention:
    \begin{itemize}
        \item Moreover, $\haus^n$ is continuous with respect to the Gromov-Hausdorff topology. Namely, suppose we have a sequence of Alexandrov spaces $\br{X_i^n}_{i \in \N}$ of curvature $\geq \kappa$ and $\dim=n$. If
        \begin{equation*}
            X_i^n \ghto X, \quad i \to \infty
        \end{equation*}
        to some Alexandrov space $X$ of curvature $\geq \kappa$. Then we can conclude that
        \begin{itemize}
            \item $\dim_{\haus}(X) \leq n$; 
            \item and if $\dim_{\haus}(X) < n$, then 
            \begin{equation*}
                \haus^n(X_i^n) \to 0 = \haus^n(X).
            \end{equation*}
        \end{itemize}
        \item The Bishop-Gromov comparison holds for $\haus^n$. Proving this is similar to the Bishop-Gromov comparison for $\sect \geq \kappa$. This theorem is generalized to the Alexandrov spaces. 
    \end{itemize}
\end{note}


\section{Properties}
We can summarize those properties as the followings. Fix $X^n$, an $n$-dimensional Alexandrov space of curvature $\geq \kappa$. 
\begin{itemize}
    \item We can define angles in $X$ using the comparison angle in the model space $S_\kappa^n$. Namely, for two geodesics $\gamma_1, \gamma_2$ starting at $p \in X$ we can define the angle between $\gamma_1$ and $\gamma_2$ at $p$ as the limits of the angle in the model space:
    \begin{equation*}
        \lim_{t, s \to 0}\modangle^\kappa\brac{p_{\gamma_1(t)}^{\gamma_2(s)}};
    \end{equation*}
    \item Other versions of Toponogov comparison theorem hold;
    \item Globalization theorem holds: if Toponogov holds locally then it holds globally.
    \item We can define tangent space for Alexandrov space of curvature $\geq \kappa$. More detailed construction of the tangent space can be found in the section \ref{sec: tangent space for alex space}.
    \item If $\dim(X)< \infty$, then $\dim_{top}X=\dim_{\haus}(X) = n$  is an integer and the $n$-th Hausdorff measure behaves like the volume on manifolds. In particular, the Bishop-Gromov volume comparison holds. Absolute volume comparison holds as well.
    \item $\haus^n$ the volume function on Alexandrov space is continuous concerning the Gromov-Hausdorff convergence. Let $\br{X_i^n}$ be a sequence of $n$-dimensional Alexandrov space of curvature $\geq \kappa$ with $\diam(X_i^n) \leq D$. Then the Gromov-Hausdorff limit of the sequence is again an Alexandrov space of curvature $\geq \kappa$,  $\diam(X) \leq D$, $\dim_{\haus^n}(X) \leq n$. And 
    \begin{equation*}
        \haus^n(X_i^n) \to \haus^n(X), \quad\text{as $i \to \infty$}
    \end{equation*}
    and in particular, if $\dim_{\haus}(X) < n$, the sequence of volumes  converges to $0$. 
    \begin{example}[Example of Volume Collapsing]\label{ex: collapsing volume}
        Notice that $\sect_{S^3} \equiv 1$. Consider the Hopf action $S^1 \acts S^3$: Since we can consider $S^2 \subseteq \C^2$. And $z = z_1 + i z_2 = (z_1, z_2) \in S^1$ as unit complex number. So that for we can write any complex number as $\lambda(z_1, z_2) = (\lambda z_1, \lambda z_2)$. This action $S^1 \acts S^3$ is free and isometric.
        Now we can construct our volume collapsing example. Let $S^1_\eps$ be the circle of length $\eps$. Consider $(S^3 \times S_\eps^1)/S^1 = (S^3, g_\eps) = (S_\eps^3)$, for each $\eps$, this space has nonnegative sectional curvature. Let $\eps \to 0$, then
        \begin{equation*}
            S_\eps^3 \ghto S^2
        \end{equation*}
        because $S^3/S^1 = S^2$. Hence we have the volume collapsing. 
    \end{example}
    \item Let $X$ be an Alexandrov space of curvature $\geq \kappa$. Let $f: X \to \R$ be a semi-concave function. Fix $p \in X$, we can have the gradient $\nabla f_p$ defined in the same way as we did for manifolds. So the gradient flow exists and has the same contraction properties as manifolds for the same reason.
    \item The first variation formula holds for Alexandrov spaces. Let $X$ be an Alexandrov space of curvature $\geq \kappa$. Let $f: X \to \R$ be an $\lambda$-concave function. The its gradient flow $\Phi_t$ is $e^{\lambda t}$-Lipschitz. Toponogov condition implies there are many semi-concave functions in Alexandrov spaces. 
    \begin{example}
        Let $X$ be an Alexandrov space of curvature $\geq 0$. And $A \subseteq X$ a closed subset. Then $d^2(\cdot, A)$ is $2$-concave just like for Riemannian manifolds of nonnegative sectional curvature. 
    \end{example}
    \item The splitting theorem holds. The same proof works. Let $X$ be an Alexandrov space of curvature $\geq 0$. If $X$ has a line then 
    \begin{equation*}
        X \stackrel{isom}{\cong} Y \times \R
    \end{equation*}
    where $Y$ is also an Alexandrov space of $\curv\ge 0$.
\end{itemize}

Remember that we want to generalize the phenomenon of example \ref{ex: pointed-gh-con tangent space} of Riemannian manifolds to Alexandrov spaces with lower curvature bound. That motivates us to define the notion of tangent spaces for Alexandrov spaces. 
\section{Tangent Spaces and Cones}\label{sec: tangent space for alex space}
We can also generalize the concept of tangent space to Alexandrov spaces. Given $p \in X$ a point in an Alexandrov space. We can look at geodesic directions starting at $p$, say $S_p$, and take a metric completion denoted as $\Sigma_p = \overline{S}_p$ with respect to angle metric. We call $\Sigma_p$ the space of directions at $p$. It is an analogue of unit sphere in $T_pM$ if $M^n$ is a Riemannian manifold. Now the tangent space of $X$ is defined as $C(\Sigma_p)$ which is the Euclidean cone over the space $\Sigma_p$. 

Recall that the Euclidean cone is constructed as the following. If $(\Sigma, d_\Sigma)$ is a metric space then its Euclidean cone $C(\Sigma)$ is defined as $(\Sigma\times [0, \infty))/[(\Sigma \times \br{0}) \sim \br{pt}]$. 

\begin{figure}[htbp]
        \centering
        \includegraphics[width=0.8\textwidth]{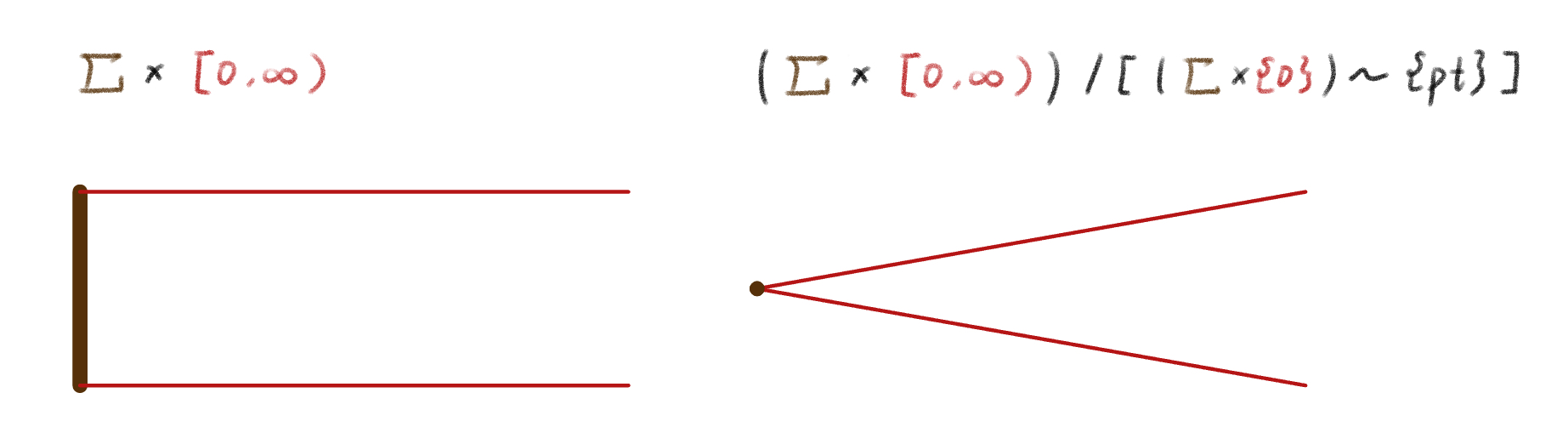}\caption{Euclidean cone for metric space $\Sigma$.}
\end{figure}

$C(\Sigma)$ is again a metric space as well and we can define the distance function $d^{C(\Sigma)}(\cdot, \cdot)$.

\begin{figure}[htbp]
        \centering
        \includegraphics[width=0.5\textwidth]{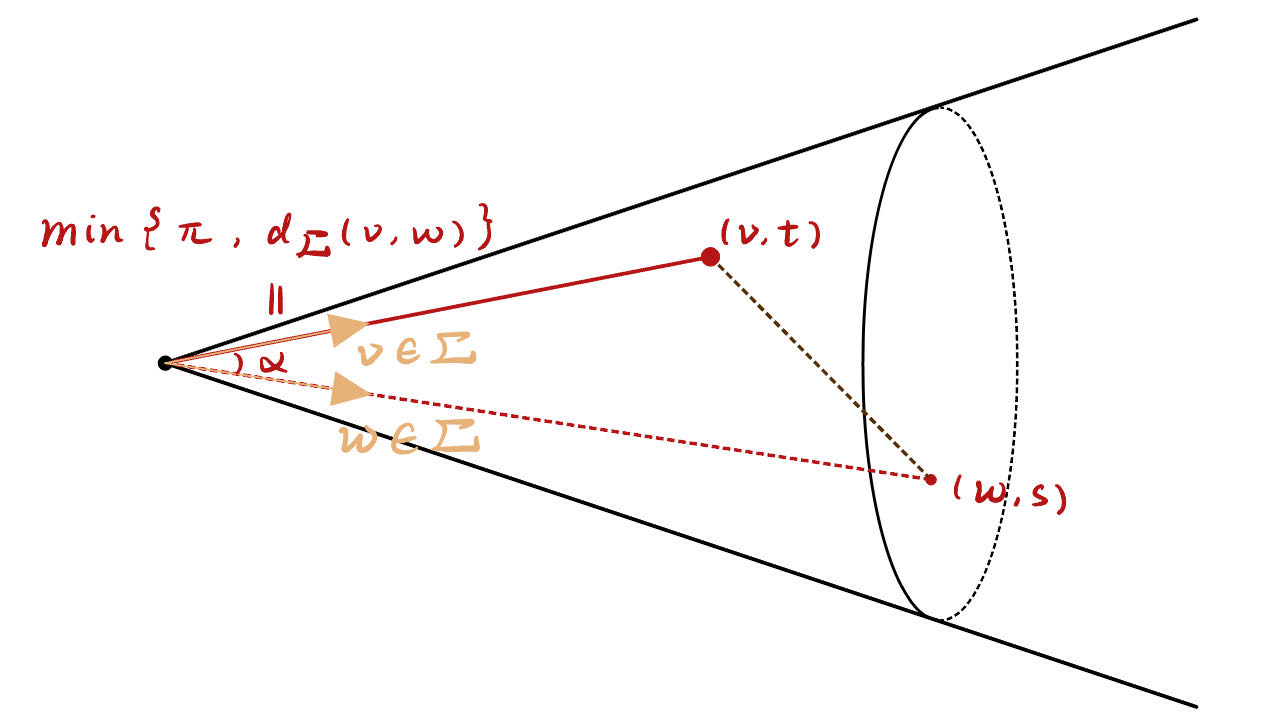}\caption{The construction of the distance function on $C(\Sigma)$.}
    \end{figure}

For each $v, w \in \Sigma$ and $t, s \geq 0$, 
\begin{equation*}
    [d^{C(\Sigma)}((v, t), (w, s))]^2 = t^2 + s^2 - 2ts\cdot \cos{\alpha}
\end{equation*}
where $\alpha = \min\br{\pi, d_\Sigma(v, w)}$. We should consider $d_\Sigma$ as the angular metric. 
\begin{definition}[Tangent Space of Alexandrov Space]
    Let $X$ be a $n$-dimensional Alexandrov space. By our construction above, the tangent space, denoted by $T_pX$, of $X$ at $p \in X$ is defined as
    \begin{equation*}
        T_pX := C(\Sigma_p).
    \end{equation*}
    $T_pX$ is again a metric space equipped with the metric $d^{C(\Sigma_p)}$, which also has been constructed already. 
\end{definition}

\begin{theorem}[See \cite{AKP22}]\label{sigma-iff-cone}
	$\Sigma$ is Alexandrov of curvature $\ge 1$ if and only if $C(\Sigma)$ is Alexandrov of curvature $ \ge 0$.
\end{theorem}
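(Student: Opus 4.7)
The plan is to verify the four-point comparison of Definition~\ref{def: alex-space} in both directions, exploiting the observation that the cone distance formula
\[
d^{C(\Sigma)}((v,t),(w,s))^2=t^2+s^2-2ts\cos(\min\{\pi,d_\Sigma(v,w)\})
\]
is exactly the planar law of cosines with ``apex angle'' $\min\{\pi,d_\Sigma(v,w)\}$ at the cone point $o$. Consequently every triangle $\triangle(o,(v,t),(w,s))$ in $C(\Sigma)$ is isometric to a flat Euclidean triangle, and its Euclidean comparison angle at $o$ equals $\min\{\pi,d_\Sigma(v,w)\}$.

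First I would handle the four-point comparison with apex at $o$. For $p_i=(v_i,t_i)$, $i=1,2,3$, with $t_i>0$, the observation above says that the Euclidean comparison angle at $o$ between $p_i$ and $p_j$ equals $\min\{\pi,d_\Sigma(v_i,v_j)\}$. Hence the four-point condition at $o$ is equivalent to the ``truncated perimeter'' bound $\sum_{i<j}\min\{\pi,d_\Sigma(v_i,v_j)\}\le 2\pi$ for every triple $v_1,v_2,v_3\in\Sigma$. This bound is equivalent to $\Sigma$ having curvature $\ge 1$ by the Alexandrov analogue of Lemma~\ref{lem: perimeter bound} (the perimeter of any triangle in a curvature-$\ge 1$ space is at most $2\pi$), which already disposes of the backward direction completely (take $o$ as apex) and of the apex-at-$o$ case of the forward direction.

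For the remaining case, the four-point comparison in $C(\Sigma)$ with apex $p_0=(v_0,t_0)$, $t_0>0$, I would develop the three flat triangles $\triangle(o,p_0,p_i)$, $i=1,2,3$, into a common Euclidean plane and glue them along the shared edge $[o,p_0]$. The planar angle at $p_0$ in each such triangle is determined by $t_0,t_i$ and $\alpha_{0i}:=\min\{\pi,d_\Sigma(v_0,v_i)\}$ via the planar cosine law. The Euclidean comparison angle at $p_0$ between $p_i$ and $p_j$ is then a monotone increasing function of $d^{C(\Sigma)}(p_i,p_j)$, which is a monotone function of $d_\Sigma(v_i,v_j)$ with $t_i,t_j$ fixed, which by the spherical law of cosines is a monotone function of $\modangle^1$ at $v_0$ between $v_i$ and $v_j$ when $d_\Sigma(v_0,v_i)$ and $d_\Sigma(v_0,v_j)$ are held fixed. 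Chaining these monotonicities reduces the desired $\sum_{\mathrm{cyc}}\modangle^0\le 2\pi$ at $p_0$ to $\sum_{\mathrm{cyc}}\modangle^1\le 2\pi$ at $v_0$; the equality case on both sides corresponds to the four points sitting on a round sphere in $\Sigma$, where the restricted cone isometrically embeds into $\R^{n+1}$ via the polar-coordinates identification $C(S^n_1)\cong\R^{n+1}$ and the four-point condition becomes a trivial Euclidean identity.

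The main obstacle will be making the monotonicity chain fully rigorous when some $d_\Sigma(v_i,v_j)\ge\pi$, where the $\min$ activates and the cone metric ``flattens'' in the sense that a shortest path from $p_i$ to $p_j$ passes through $o$; in that regime the cone distance loses its dependence on $d_\Sigma(v_i,v_j)$ and the configuration reduces to an instance already controlled by the apex-at-$o$ analysis above. A case split based on which pairwise $\Sigma$-distances exceed $\pi$ handles this cleanly, and the converse of the general forward direction follows by reversing each monotonicity link in the chain together with the backward part of Step~1.
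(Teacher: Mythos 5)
The paper states this theorem without proof, citing \cite{AKP22}, so there is no in-text argument to compare against and your proposal must be judged on its own. Its skeleton is sound — every triangle in $C(\Sigma)$ with a vertex at the cone point $o$ is flat, the Euclidean comparison angle at $o$ between $(v_i,t_i)$ and $(v_j,t_j)$ equals $\min\{\pi,\abs{v_i-v_j}\}$, and the right comparison target for $C(\Sigma)$ at an interior apex is $\R^{n}\cong C(S^{n-1})$. But two of the steps you lean on do not actually close the argument.

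For the backward direction you assert that the ``truncated perimeter'' bound $\sum_{i<j}\min\{\pi,\abs{v_i-v_j}\}\le 2\pi$ for all triples in $\Sigma$ is \emph{equivalent} to $\Sigma$ having curvature $\ge 1$, and that this alone disposes of that direction. Only one implication is true: curvature $\ge 1$ does imply the perimeter bound, which is the Alexandrov analogue of Lemma~\ref{lem: perimeter bound}. The converse fails: a geodesic disc of diameter $<\pi/3$ in the hyperbolic plane satisfies the bound vacuously yet has curvature $-1$. So curvature $\ge 0$ of the cone applied only at apex $o$ yields just the perimeter bound in $\Sigma$, not the full four-point condition. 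You would instead have to extract the $\Sigma$-comparison from the cone comparison at interior apexes $p_0=(v_0,t_0)$, which your proposal does not do.

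For the forward direction the monotonicity chain is correct as far as it goes, but ``chaining these monotonicities reduces $\sum\modangle^0\le 2\pi$ at $p_0$ to $\sum\modangle^1\le 2\pi$ at $v_0$'' is a non sequitur: monotonicity together with the equality case on a round sphere does not produce the inequality. Indeed the termwise bound $\modangle^0(p_0{}_{p_i}^{p_j})\le\modangle^1(v_0{}_{v_i}^{v_j})$, which would suffice and which the monotonicity picture suggests, is simply false. Take $t_0=1$, $t_1=2$, $\abs{v_0-v_1}=0.1$, $t_2=1$, $\abs{v_0-v_2}=\pi/2$, $\modangle^1(v_0{}_{v_1}^{v_2})=0.1$; the cone distances give $\abs{p_0p_1}\approx 1.01$, $\abs{p_0p_2}=\sqrt 2$, $\abs{p_1p_2}\approx 2.15$, and the planar cosine law gives $\modangle^0(p_0{}_{p_1}^{p_2})\approx 2.16\gg 0.1$. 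The actual argument must instead produce, for every configuration, three directions in a Euclidean sphere with \emph{all} of the required comparison angles realized simultaneously (so that the sum is bounded by $2\pi$ for the elementary geometric reason, not by a chain of one-variable monotonicities), or equivalently construct a distance-nonexpanding map from the four points in $\R^{n}\cong C(S^{n-1})$ to the four points in $C(\Sigma)$ fixing the apex. That global step, not a variable-by-variable monotone substitution, is what is missing.
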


Now we can generalize the example \ref{ex: pointed-gh-con tangent space} to the Alexandrov space with a lower curvature bound. 
\begin{theorem}
	Let $X$ be a finite-dimensional Alexandrov space of curvature $\geq \kappa$, we have the following pointed Gromov-Hausdorff convergence
\begin{equation*}
    (\lambda_i X, p_i) \pghto (T_pX, o), \quad \text{as $\lambda_i \to \infty$}
\end{equation*}
where $o$ is the cone point. 
\end{theorem}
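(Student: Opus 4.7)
The plan is to exhibit explicit pointed Gromov-Hausdorff approximations $f_i \colon B_{\lambda_i X}(p_i, R) \to B_{T_pX}(o, R)$ for each fixed radius $R > 0$, with error tending to zero as $i \to \infty$. Given a point $x \in X$ with $s := d(p,x) \leq R/\lambda_i$, pick any shortest geodesic $[px]$ (which exists since $X$ is a geodesic space) and let $v(x) \in S_p \subset \Sigma_p$ be its initial direction; define $f_i(x) := (v(x), \lambda_i s) \in C(\Sigma_p) = T_pX$, with $f_i(p_i) = o$. This is the natural map: distances from the basepoint are exactly preserved, and directions are recorded in $\Sigma_p$.

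To verify the near-isometry condition, take $x, y \in B(p, R/\lambda_i)$ with $s = d(p,x)$, $t = d(p,y)$, and directions $v = v(x)$, $w = v(y)$. In $\lambda_i X$ the distance is $\lambda_i d(x,y)$, while in $T_pX$ it is $\sqrt{(\lambda_i s)^2 + (\lambda_i t)^2 - 2\lambda_i^2 s t \cos\alpha}$ where $\alpha = \min(\pi, d_{\Sigma_p}(v,w))$. The central input is the Toponogov comparison in Alexandrov spaces: the model angle $\tilde\angle^\kappa(d(x,y); s, t)$ is monotone non-increasing in $s, t$ and, by definition of the angle metric on $\Sigma_p$, converges to $d_{\Sigma_p}(v,w)$ as $s, t \to 0^+$. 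Rescaling by $\lambda_i$ converts the model triangle in $\modsp{\kappa}$ into one in $\modsp{\kappa/\lambda_i^2}$ with sides $\lambda_i s, \lambda_i t, \lambda_i d(x,y)$ and the \emph{same} vertex angle at $p$. Since $\lambda_i s, \lambda_i t \leq R$ are bounded while $\kappa/\lambda_i^2 \to 0$, this model space approaches the Euclidean plane; combining the cosine law in $\modsp{\kappa/\lambda_i^2}$ with $\tilde\angle^\kappa \to \alpha$ yields
\[
\lambda_i d(x,y) \;=\; \sqrt{(\lambda_i s)^2 + (\lambda_i t)^2 - 2\lambda_i^2 s t \cos\alpha} \;+\; o(1),
\]
which is exactly the cone distance $d_{T_pX}(f_i(x), f_i(y))$ up to a uniform error.

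For the $\epsilon$-density condition, fix $(w, r) \in B(o, R) \subset T_pX$. Since $S_p$ is dense in $\Sigma_p$ by construction, pick $w_0 \in S_p$ with $d_{\Sigma_p}(w_0, w) < \epsilon/R$, and let $\sigma$ be a geodesic starting at $p$ with initial direction $w_0$, defined on some interval $[0, L]$ with $L > 0$. For $\lambda_i$ large we have $r/\lambda_i < L$, so set $x := \sigma(r/\lambda_i)$; then $f_i(x) = (w_0, r)$, and the cone distance between $(w_0, r)$ and $(w, r)$ is $2r\sin(d_{\Sigma_p}(w_0, w)/2) \leq r \cdot d_{\Sigma_p}(w_0, w) < \epsilon$. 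This establishes density of the image within $\epsilon$.

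The principal obstacle is uniformity in the near-isometry step: one needs $\tilde\angle^\kappa(d(x,y); s, t)$ to approach $d_{\Sigma_p}(v(x), v(y))$ \textbf{uniformly} over all pairs $x, y \in B(p, R/\lambda_i)$ as $i \to \infty$. Monotonicity delivers only pointwise convergence, and extracting a uniform rate requires a compactness argument that uses finite-dimensionality of $X$ (so that $\Sigma_p$ is compact) plus equicontinuity of the cosine-law identity with respect to the vanishing curvature bound $\kappa/\lambda_i^2$. A secondary technical point is that in the density step, points of the form $(w, r)$ with $w \in \Sigma_p \setminus S_p$ must be approximated via nearby geodesic directions whose geodesics extend far enough; for large $\lambda_i$ the required length $r/\lambda_i$ shrinks, so this is a one-sided issue that can be handled by fixing the approximating $w_0$ first and then taking $i$ large. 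Once uniformity is secured, the two estimates combine to give $\epsilon_i$-GH-approximations with $\epsilon_i \to 0$, yielding the claimed pointed convergence $(\lambda_i X, p_i) \pghto (T_pX, o)$.
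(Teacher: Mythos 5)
Your construction of $f_i(x)=(v(x),\lambda_i\,d(p,x))$ is the natural candidate, and both the density step and the reduction via rescaling to the $\modsp{\kappa/\lambda_i^2}$ cosine law are fine; the hinge version of Toponogov also gives the upper bound $\lambda_i\,d(x,y)\le d_{T_pX}(f_i(x),f_i(y))+o(1)$ without trouble. The gap is exactly where you flag it: the lower bound requires
\begin{equation*}
	\modangle^\kappa\bigl(d(x,y);\,d(p,x),\,d(p,y)\bigr)\;\longrightarrow\; d_{\Sigma_p}\bigl(v(x),v(y)\bigr)
\end{equation*}
\emph{uniformly} over all $x,y\in B(p,R/\lambda_i)\setminus\{p\}$ as $i\to\infty$, while monotonicity gives only pointwise convergence along each fixed pair of geodesics. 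The fix you propose does not work as stated. Compactness of $\Sigma_p$ does not control the rate of convergence, because the map sending a direction to an emanating geodesic is neither well-defined nor continuous in an Alexandrov space, geodesics in a given direction may exist only at direction-dependent length scales, and ``equicontinuity of the cosine-law identity'' is not a standing fact one can invoke. Making that uniform estimate rigorous is, in effect, the entire technical content of the theorem.

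The paper states this result without a proof, deferring to \cite{AKP22}, and the argument there is indirect rather than an explicit approximation: one uses Bishop--Gromov precompactness to extract a pointed GH-subsequential limit $(Y,o)$ of the rescaled sequence, shows $Y$ has curvature $\geq 0$ and is invariant under further rescaling (since $(\mu\lambda_i X, p_i)$ has the same limit for any fixed $\mu>1$), hence is a Euclidean cone, and only then identifies its link with $\Sigma_p$. This replaces the quantitative uniform estimate you need with rigidity of the limit object. If you want to pursue the direct route, you must actually prove the uniformity, which is considerably more delicate than a one-line appeal to compactness.
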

\begin{corollary}
	Moreover, if the curvature of $X$ is $\geq \kappa$, then the curvature of $\lambda X$ is $\geq \frac{\kappa}{\lambda^2}$ just like the situation for manifolds. Therefore, $T_pX$ is of the curvature $\geq 0$ at the Gromov-Hausdorff limit, which means $T_pX$ is an Alexandrov space of curvature $\geq 0$. By Theorem ~\ref{sigma-iff-cone} that implies that $\Sigma_p$ has curvature $\geq 1$.  
\end{corollary}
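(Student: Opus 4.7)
My plan is to establish the corollary in three steps, each building on the preceding theorem and on earlier results in the excerpt.

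First, I would verify the scaling identity $\mathrm{curv}(\lambda X)\ge \kappa/\lambda^{2}$ purely from Definition~\ref{def: alex-space}. The key observation is that the model plane $\mathbb{M}^{2}(\kappa/\lambda^{2})$ is obtained from $\mathbb{M}^{2}(\kappa)$ by rescaling distances by the factor $\lambda$; equivalently, the explicit trigonometric formulas $\snk$ and $\ctk$ satisfy the homogeneity $\mathbf{sn}_{\kappa/\lambda^{2}}(\lambda t)=\lambda\,\snk(t)$ and similarly for $\csk$. Consequently, given any four points $p,a,b,c\in\lambda X$, the model angle $\modangle^{\kappa/\lambda^{2}}(p_{a}^{b})$ computed from the $\lambda X$-distances equals the model angle $\modangle^{\kappa}(p_{a}^{b})$ computed from the original $X$-distances, and likewise for the other two angles. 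The four-point condition in $X$ therefore transfers verbatim to $\lambda X$ with curvature bound $\kappa/\lambda^{2}$.

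Second, I would promote Proposition~\ref{prop: alex-space and GH-convergence} to a pointed version allowing the curvature bounds to vary: if $(X_{n},p_{n})\pghto (X_{\infty},p_{\infty})$ with each $X_{n}$ Alexandrov of curvature $\ge \kappa_{n}$ and $\kappa_{n}\to\kappa_{\infty}$, then $X_{\infty}$ is Alexandrov of curvature $\ge \kappa_{\infty}$. The argument mirrors Proposition~\ref{prop: alex-space and GH-convergence}: any quadruple $p,a,b,c\in X_{\infty}$ lies inside some fixed ball $B(p_{\infty},R)$, and the pointed GH-approximations $f_{n}\colon B(p_{n},R+1)\to B(p_{\infty},R+1)$ lift these points to quadruples $p_{n},a_{n},b_{n},c_{n}\in X_{n}$ whose pairwise distances converge to those of $p,a,b,c$. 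Applying the four-point condition at level $\kappa_{n}$ and using joint continuity of the model angle $\modangle^{\kappa}(a;b,c)$ in $(\kappa,a,b,c)$ on its domain of definition, one passes to the limit. Combined with the preceding theorem $(\lambda_{i}X,p_{i})\pghto(T_{p}X,o)$ and step one applied with $\kappa_{n}=\kappa/\lambda_{n}^{2}\to 0$, this yields that $T_{p}X$ is Alexandrov of curvature $\ge 0$.

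Third, since $T_{p}X=C(\Sigma_{p})$ is now known to be Alexandrov of curvature $\ge 0$, Theorem~\ref{sigma-iff-cone} directly gives that $\Sigma_{p}$ is Alexandrov of curvature $\ge 1$, completing the proof.

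The main obstacle is the second step, namely the pointed, varying-$\kappa$ version of stability of the four-point condition. Two subtleties arise: one must ensure that the lifted quadruples and all distances involved stay uniformly inside a ball where the GH-approximation is quantitative (which is automatic since a fixed quadruple lies in a bounded region around $p_{\infty}$), and one must handle the possibility that some model angle $\modangle^{\kappa_{n}}$ is undefined at small $n$ because of the model-space perimeter restriction for $\kappa>0$ (irrelevant here since $\kappa_{n}=\kappa/\lambda_{n}^{2}\to 0$, so for large $n$ the model plane $\mathbb{M}^{2}(\kappa_{n})$ accommodates any fixed quadruple). Once these technicalities are dispatched the conclusion follows from continuity.
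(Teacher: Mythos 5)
Your proposal is correct and follows essentially the route the paper intends: rescaling the four-point condition via the homogeneity of the model angles, passing to the limit along $(\lambda_i X, p_i)\pghto (T_pX,o)$, and invoking Theorem~\ref{sigma-iff-cone}. You rightly flag that Proposition~\ref{prop: alex-space and GH-convergence} as stated is for compact spaces with a fixed curvature bound, so the argument actually needs the pointed, varying-$\kappa$ version you sketch in step two; the paper's corollary silently uses this extension, and your identification and resolution of it is exactly the detail that makes the argument complete.
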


\section{More Examples}
\begin{example}
    In particular, if $X = M^n$ is a Riemannian manifold. Then $\forall p \in X$, 
    \begin{align*}
        &\Sigma_p \cong S^{n - 1};\\
        &T_pX \cong \R^n
    \end{align*}
\end{example}
\begin{fact}
    We would like to mention the fact that if $X^n$ is an $n$-dimensional Alexandrov space, then $T_pX \cong \R^n$ for almost all $p \in X$ with respect to $\haus^n$.
\end{fact}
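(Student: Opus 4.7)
The plan is to characterize regular points by a volume-density equality and then use Bishop-Gromov monotonicity together with its rigidity case to show that this equality holds $\haus^n$-almost everywhere.

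First, for each $p \in X$, the relative Bishop-Gromov theorem (available in Alexandrov geometry as noted in the properties list above) yields that $r \mapsto \haus^n(B_r(p))/(\omega_n r^n)$ is non-increasing in $r$, so the volume density
$$\theta(p) := \lim_{r \to 0^+} \frac{\haus^n(B_r(p))}{\omega_n r^n} \in [0,1]$$
exists and equals the supremum of this ratio. Using the pointed convergence $(\lambda_i X, p) \pghto (T_pX, o) = (C(\Sigma_p), o)$ together with continuity of $\haus^n$ under non-collapsed Gromov-Hausdorff convergence, I obtain
$$\theta(p) = \frac{\haus^n(B_1^{T_pX}(o))}{\omega_n} = \frac{\haus^{n-1}(\Sigma_p)}{\haus^{n-1}(S^{n-1})}.$$
Since $\Sigma_p$ is an $(n-1)$-dimensional Alexandrov space of $\mathrm{curv} \geq 1$, applying absolute Bishop-Gromov to $\Sigma_p$ against the model $S^{n-1}$ gives $\theta(p) \leq 1$, with equality if and only if $\Sigma_p$ is isometric to $S^{n-1}$, equivalently $T_pX \cong \R^n$. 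Thus the regular set coincides with $\{p \in X : \theta(p) = 1\}$, and it remains to prove this set has full $\haus^n$-measure.

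The second step is to verify $\theta = 1$ almost everywhere. The key input is the strainer machinery of Burago-Gromov-Perelman: call $p$ an \emph{$(n,\delta)$-strained point} if there exist $n$ pairs $(a_i,b_i)$ with $\mangle(a_i p b_i) > \pi - \delta$ and all cross-angles $\mangle(a_i p a_j), \mangle(a_i p b_j), \mangle(b_i p b_j) > \pi/2 - \delta$ for $i \neq j$. Using Toponogov comparison in Alexandrov geometry, one proves (i) the set $S_\delta$ of such points is open; (ii) $S_\delta$ is dense, since strainers can be iteratively extended inside any open set once $\dim X = n$; and (iii) at an $(n,\delta)$-strained point, the tangent cone $T_pX$ is at Gromov-Hausdorff distance $o_\delta(1)$ from $\R^n$, forcing $\theta(p) \geq 1 - o_\delta(1)$. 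The complement $X \setminus \bigcap_{\delta > 0} S_\delta$ is shown to have zero $\haus^n$-measure by a Vitali-type covering argument, legitimate since Alexandrov spaces are doubling by Bishop-Gromov, combined with an explicit volume-deficit bound at non-strained points.

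The principal obstacle is precisely this second step. Both the density of strained points and the measure-zero estimate for their complement are substantive geometric inputs; the former uses Toponogov's comparison to produce new strainer directions inside any open ball, while the latter compares the Hausdorff volume of the cone $C(\Sigma_p)$ with that of $\R^n$ and exploits the strict deficit whenever $\Sigma_p \neq S^{n-1}$. An alternative route, avoiding strainers altogether, would combine upper semicontinuity of $\theta$ (as a decreasing limit of continuous ratios) with the self-similar identity $T_o(C(\Sigma_p)) = C(\Sigma_p)$ and the Lebesgue differentiation theorem in the doubling space $(X, \haus^n)$; but in either approach the crux is promoting the pointwise density-rigidity statement into a genuine measure-theoretic one.
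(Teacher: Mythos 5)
The paper records this as a citation-level \textbf{Fact} (attributable to Burago--Gromov--Perelman and Otsu--Shioya) and does not prove it, so there is no in-paper argument to compare against; you are offering a stand-alone sketch. Your first step is sound and is the right way to set things up: Bishop--Gromov monotonicity in Alexandrov spaces makes the density $\theta(p)\in[0,1]$ well-defined, blow-up plus volume continuity under non-collapsed $\mathbf{G\text{-}H}$ convergence identifies $\theta(p)=\haus^{n-1}(\Sigma_p)/\haus^{n-1}(S^{n-1})$, and the rigidity case of Bishop--Gromov for the $(n-1)$-dimensional space $\Sigma_p$ of $\curv\ge 1$ gives $\theta(p)=1\iff\Sigma_p\cong S^{n-1}\iff T_pX\cong\R^n$.

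The genuine gap is exactly where you flag it. Openness and density of $S_\delta$ (your (i)--(ii)) cannot by themselves give full measure --- a dense open set can have arbitrarily small measure --- and the phrase ``Vitali-type covering argument ... combined with an explicit volume-deficit bound at non-strained points'' is not a proof but a placeholder for the whole theorem: one actually needs the Burago--Gromov--Perelman induction that the set of non-$(n,\delta)$-strained points has Hausdorff dimension $\le n-1$ (BGP \S 10, or Otsu--Shioya's theorem that the metrically singular set is $\haus^n$-null), and that result is not derivable from anything developed in these notes. I would also push back on the proposed alternative route: Lebesgue differentiation in a doubling metric measure space controls the density of an $L^1$ function relative to $\haus^n$, not the normalized ratio $\theta(p)=\lim_r\haus^n(B_r(p))/(\omega_n r^n)$ relative to Euclidean scaling, and the self-similarity $T_o C(\Sigma_p)\cong C(\Sigma_p)$ tells you $\theta$ is constant along the cone direction at a \emph{single} blow-up, not that $\theta\equiv 1$ $\haus^n$-a.e.\ on $X$; so as sketched, that shortcut does not close the argument.
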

\begin{example}
    If $(M^n, g)$ is a complete Riemannian manifold of $\sect_M \geq \kappa$, then it is an Alexandrov space of curvature $\geq \kappa$. 
\end{example}
\begin{example}
    Let $X$ be an Alexandrov space of curvature $\geq \kappa$. And $G$ is a compact Lie group and $G \acts X$ by isometries. Then $X/G$ gives us again an Alexandrov space of curvature $\geq \kappa$. Many singular examples can be produced in this way. More generally if $X$ is an Alexandrov space of curvature $\geq \kappa$ and $f\o X\to Y$ is a submetry then $Y$ is also an Alexandrov space of curvature $\geq \kappa$.
\end{example}
\begin{example}
    If $(X, d)$ is an Alexandrov space of curvature $\geq \kappa$ and $A \subseteq X$ a closed convex subset. Then $(A, d)$ is also an Alexandrov space of curvature $\geq \kappa$. In particular, any convex body in $\R^n$ is an Alexandrov space of curvature $\geq 0$. 
\end{example}
\begin{example}\label{ex: Alex level set of Riem mfld}
    Let $(M^n, g)$ be a Riemannian manifold of $\sect_M \geq \kappa$ and let $f: M \to \R$ be a convex function. Then $\br{f = c}$ is an Alexandrov space of curvature $\geq \kappa$.
\end{example}
\begin{conj}
    The statement of the example \ref{ex: Alex level set of Riem mfld} is unknown but widely expected to be true if we weaken the assumption that the ambient space is a Riemannian manifold to an Alexandrov space. 
    \end{conj}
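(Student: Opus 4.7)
The plan is to reduce the conjecture to a purely intrinsic statement about boundaries of convex subsets, and then attack it via a doubling argument paralleled by a Sharafutdinov-type retraction modeled on Example \ref{ex: convolution-con}.

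First I would set $A := \{f\le c\}$. Since $f$ is convex, $A$ is closed and totally convex in the sense of Definition \ref{def: totally convex}, and the level set in question is exactly $\partial A$. Total convexity of $A$ in $X$ guarantees that the intrinsic metric on $\partial A$ viewed as a subset of $X$ agrees with the intrinsic metric of $\partial A$ as the boundary of $A$. The problem is thus reduced to the following: the boundary of a closed totally convex subset $A$ of an Alexandrov space of curvature $\geq \kappa$, endowed with the induced intrinsic metric, is itself an Alexandrov space of curvature $\geq \kappa$.

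For this reduced statement the natural tool is the Perelman--Petrunin doubling construction. One forms the metric gluing $DA := A \sqcup_{\partial A} A$ of two isometric copies of $A$ along their common boundary; the gluing theorem asserts that $DA$ is again an Alexandrov space of curvature $\geq \kappa$. The reflection $\tau: DA \to DA$ that swaps the two copies is an isometric involution whose fixed set is precisely $\partial A$, and the metric induced by $DA$ on this fixed set agrees with the intrinsic metric on $\partial A$. The task is then to verify the four-point comparison of Definition \ref{def: alex-space} on this fixed set: given $p,a,b,c\in\partial A$, I would symmetrize shortest $DA$-segments between them by averaging each segment with its $\tau$-image, producing curves contained in $\partial A$ whose lengths differ from the $DA$-distances only by a higher-order error, and then transfer the $DA$-four-point inequality to $\partial A$.

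In parallel, mirroring the convolution argument of Example \ref{ex: convolution-con}, one could try to approximate $f$ by $(1/k)$-strongly convex perturbations $f_k$. The gradient flows of the semi-concave functions $-f_k$ exist on $X$ by Theorem \ref{thm: gradient curve exists and unique} and are $1$-Lipschitz by the distance estimates of Proposition \ref{prop: distance estimates}; used as in Example \ref{ex: convolution-con} they furnish mutually $(1+o(1))$-Lipschitz retractions between $\{f_k = c\}$ and $\{f=c\}$ and, together with the Gromov--Hausdorff stability of the Alexandrov condition (Proposition \ref{prop: alex-space and GH-convergence}), would reduce the statement for $f$ to the statement for $f_k$, which is easier because the level sets of $f_k$ are strongly convex hypersurfaces.

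The main obstacle is that neither route is under complete control. For the symmetrization route one must quantify how far a shortest $DA$-segment between two fixed points of $\tau$ can stray from $\partial A$: the lower curvature bound on $DA$ should force the deviation to be $o(d^2)$, but extracting this estimate rigorously requires a delicate comparison of angles at the seam. For the approximation route the fundamental difficulty is that an Alexandrov space admits no convolution-style smoothing, so constructing the convex approximations $f_k$ with level sets known a priori to be Alexandrov is itself essentially the original problem in a slightly weakened form. Resolving this circularity---typically through an induction on dimension together with a careful use of the splitting theorem on tangent cones $T_p X$, combined with the fact that $T_p X$ has curvature $\geq 0$ whenever $X$ has curvature $\geq \kappa$---is what I expect to be the hard part, and is presumably why the conjecture remains open.
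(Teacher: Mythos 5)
This statement is presented in the paper as an open conjecture, with no proof supplied; there is therefore nothing in the paper to compare your proposal against, and your own closing sentence correctly acknowledges that you have produced a research plan rather than a proof. That said, let me point out where the two routes you sketch actually break down, since the specific gaps matter.

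The doubling route replaces the original open problem with another open problem of comparable difficulty. After forming $DA = A \sqcup_{\partial A} A$ and noting that $\partial A$ is the fixed set of the reflection $\tau$, what you need is precisely the assertion that the fixed-point set of an isometric involution of an Alexandrov space of curvature $\geq\kappa$, equipped with its intrinsic metric, is again Alexandrov of curvature $\geq\kappa$. This is itself a well-known open question in Alexandrov geometry, not a lemma one can invoke. Your proposed symmetrization of shortest $DA$-segments does not obviously close this gap: the averaged path you describe need not be a geodesic of $\partial A$, and quantifying its length defect against the $DA$-distance requires controlling second-order behavior of geodesics near the seam, which is exactly the kind of regularity that Alexandrov spaces do not furnish for free. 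In a Riemannian manifold the seam is smooth and the Gauss equation delivers this control; in the Alexandrov setting you have no second fundamental form to work with, and the four-point inequality you want on $\partial A$ does not follow by a limiting argument from the four-point inequality on $DA$ unless you can show that the comparison angles of $\partial A$-triangles dominate those of the same vertices computed in $DA$ — but a priori the $\partial A$-distances are \emph{larger} than the $DA$-distances, which pushes the model angles in the wrong direction for that implication.

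The approximation route you describe is, as you yourself note, circular, but it is worth being precise about why. In Example~\ref{ex: convolution-con} the convolution-smoothed $f_\eps$ has smooth level sets to which the Gauss formula applies, so the intrinsic lower curvature bound on $\{f_\eps = c\}$ is obtained from \emph{Riemannian} data, not from Alexandrov machinery. On a general Alexandrov space there is no ambient smooth structure, no Gauss formula, and no smoothing operator that produces a family $f_k$ whose level sets are known a priori to be Alexandrov; the most one can extract from Theorem~\ref{thm: gradient curve exists and unique} and Proposition~\ref{prop: distance estimates} is the existence and Lipschitz contractivity of gradient flows of semi-concave functions, which is not enough to seed the induction. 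Your suggestion of inducting on dimension via tangent cones (using Theorem~\ref{sigma-iff-cone} and the splitting theorem) is the natural place to look, but passing from an intrinsic curvature bound on $T_p(\partial A)$ at every $p$ to a global lower curvature bound on $\partial A$ requires a local-to-global (globalization) theorem whose hypotheses you would also have to verify, and establishing the needed \emph{local} comparison near a boundary point of a convex set in an Alexandrov space is again essentially the original question. None of this means the conjecture is false or your heuristics are misguided; it means the proposal, as written, does not contain the new idea that would be needed, consistent with the paper's description of the statement as open.
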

\chapter{Grove-Peterson Homotopy Type Theorem}
\section{Preliminaries}
The following fact~\ref{fact: compact alex in gh} explains why the Alexandrov space of lower curvature bound is better in Gromov-Hausdorff convergence. 
\begin{notation}
	We denote the family of the Alexandrov spaces of curvature $\geq \kappa$ as $\alex_\kappa$. 
\end{notation}
\begin{notation}\label{notation: A space}
	Fix $\kappa \in \R, n \in \N, D > 0$. We denote
\begin{equation*}
    A(n, \kappa, D) := \br{\text{$(X, d) \in \alex_\kappa$: $\dim(X) \leq n$, $\diam(X) \leq D$}}
\end{equation*}
the family of Alexandrov spaces with bounded dimension and diameter. 
And in addition, we denote
\begin{equation*}
    A(n, \kappa, D, V) := \br{\text{$(X, d) \in \alex_\kappa$: $\dim(X) \leq n$, $\diam(X) \leq D$, $\vol(X) \geq V$}}
\end{equation*}
\end{notation}
\begin{fact}\label{fact: compact alex in gh}
	$A(n, \kappa, D)$ and $A(n, \kappa, D, V)$ are both compact subsets of $\alex_\kappa$ in the Gromov-Hausdorff topology.
\end{fact}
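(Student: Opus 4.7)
The plan is to deduce compactness from the general precompactness theorem (Theorem \ref{thm: precompact gh-con}) plus a closedness argument, both of which reduce to the properties of Alexandrov spaces listed earlier (Bishop--Gromov volume comparison, continuity of $\haus^n$ under Gromov--Hausdorff convergence, and stability of the four-point curvature condition under GH-limits given in Proposition \ref{prop: alex-space and GH-convergence}).

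First I would verify uniform total boundedness of $A(n,\kappa,D)$. Given $X\in A(n,\kappa,D)$ and $\varepsilon>0$, pick a maximal $\varepsilon$-separated set $\{x_1,\ldots,x_N\}\subset X$; the balls $B(x_i,\varepsilon/2)$ are pairwise disjoint, and by maximality the $B(x_i,\varepsilon)$ cover $X$. Since relative Bishop--Gromov holds on $X$ (as stated in the properties list for Alexandrov spaces), for each $i$
\[
\frac{\haus^n(B(x_i,\varepsilon/2))}{\vol_{\modsp{\kappa}}(B(\varepsilon/2))} \;\ge\; \frac{\haus^n(B(x_i,D))}{\vol_{\modsp{\kappa}}(B(D))} \;=\; \frac{\haus^n(X)}{\vol_{\modsp{\kappa}}(B(D))},
\]
using $B(x_i,D)=X$ since $\diam X\le D$. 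Summing over $i$ and using disjointness gives
\[
N \;\le\; \frac{\vol_{\modsp{\kappa}}(B(D))}{\vol_{\modsp{\kappa}}(B(\varepsilon/2))} \;=:\; N(\varepsilon),
\]
a bound depending only on $n,\kappa,D,\varepsilon$. Hence $\{x_1,\ldots,x_N\}$ is an $\varepsilon$-net of cardinality at most $N(\varepsilon)$, so $A(n,\kappa,D)$ is uniformly totally bounded and Theorem \ref{thm: precompact gh-con} yields precompactness in the GH-topology. The same $N(\varepsilon)$ works for $A(n,\kappa,D,V)\subset A(n,\kappa,D)$, so it is precompact too.

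Next I would show closedness. Suppose $X_i\in A(n,\kappa,D)$ and $X_i\ghto X$. By Proposition \ref{prop: alex-space and GH-convergence}, $X\in\alex_\kappa$. The diameter is continuous under GH-convergence (immediate from the definition of $\varepsilon$-approximations), so $\diam X\le D$. Finally the continuity property of $\haus^n$ stated in the Alexandrov properties list gives $\dim X\le n$. Thus $X\in A(n,\kappa,D)$, proving it is closed, hence compact. For $A(n,\kappa,D,V)$, the additional datum $\haus^n(X_i)\ge V>0$ rules out the collapsing alternative $\dim X<n$ (which would force $\haus^n(X_i)\to 0$); we are therefore in the non-collapsing case of the continuity property, so $\haus^n(X_i)\to\haus^n(X)$ and $\haus^n(X)\ge V$. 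Thus $X\in A(n,\kappa,D,V)$ and this class is also closed, hence compact.

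The only nontrivial step is the one I am treating as a citation from the earlier properties list: that Bishop--Gromov and the dimension/volume continuity of $\haus^n$ survive the passage from smooth Riemannian manifolds to Alexandrov spaces. These are standard but substantial results (see for instance \cite{AKP22}); once they are granted, the argument above is essentially a packaging of the Riemannian precompactness proof (Corollary \ref{cor: ric-precompact}) together with the GH-stability of the four-point condition.
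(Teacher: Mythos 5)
The paper states this result as a \emph{Fact} without proof, so there is no author argument to compare against; your argument supplies the standard one (Bishop--Gromov packing bound for uniform total boundedness, plus closedness of the class under GH-limits via Proposition~\ref{prop: alex-space and GH-convergence} and the stated properties of $\haus^n$). The overall strategy is correct.

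There is, however, a small but real gap in the packing-number step for the class $A(n,\kappa,D)$. The definition of $A(n,\kappa,D)$ requires only $\dim X\le n$, and if $\dim X=m<n$ then $\haus^n(X)=0$, so the chain
\[
\frac{\haus^n(B(x_i,\varepsilon/2))}{\vol_{\modsp{\kappa}}(B(\varepsilon/2))}\ge\frac{\haus^n(X)}{\vol_{\modsp{\kappa}}(B(D))}
\]
collapses to $0\ge 0$ and yields no bound on $N$. You must instead apply Bishop--Gromov in the intrinsic dimension $m$ of $X$, with the model space $\modsp{\kappa}$ replaced by the $m$-dimensional model; this gives a bound of the form $\vol_{\mathbb M^m_\kappa}(B(D))/\vol_{\mathbb M^m_\kappa}(B(\varepsilon/2))$, and you then take the maximum over $m\in\{0,1,\dots,n\}$ to obtain a single covering function $N(\varepsilon)$ valid on all of $A(n,\kappa,D)$. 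The same point arises in the closedness step: the continuity property of $\haus^n$ quoted from the paper is stated for sequences of spaces of dimension exactly $n$, so you should first pass to a subsequence with constant dimension $m\le n$ before invoking it (harmless, since any subsequence of $X_i$ still converges to the same limit $X$). For $A(n,\kappa,D,V)$ the issue does not arise: the hypothesis $\haus^n(X)\ge V>0$ already forces $\dim X=n$, so your argument there is complete as written.
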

\begin{remark}
	The spaces $A(n, \kappa, D)$ and $A(n, \kappa, D, V)$ are bigger comparing with $M_{sec}(n, \kappa, D)$ and $M_{sec}(n, \kappa, D, V)$
\end{remark}
\begin{remark}
	However, by the previous corollaries \ref{cor: ric-precompact}
	 and \ref{cor: sec-precompact} the family of Riemannian manifolds with lower curvature bound and bounded diameter is only pre-compact in the Hausdorff Topology. 
\end{remark} 
\begin{remark}
    There is a similar metric measure generalization to $RCD(\kappa, n)$ spaces, i.e. the family of Riemannian manifolds of $\ric \geq \kappa$ and bounded dimension. But we are not going to discuss that in this course. 
\end{remark}
In this section, we want to prove the following theorem. 
\begin{theorem}[Grove-Peterson]\label{thm: Grove-Peterson}
    Fix $n \in \N$, $\kappa \in \R$ and $D, V > 0$. Consider the following family of $n$-dimensional Riemannian manifolds. 
    \begin{equation*}
        M_{sec}(n, \kappa, D, V) := \br{(M^n, g): \diam(M) \leq D, \vol(M) \geq V, \sect_M \geq \kappa}
    \end{equation*}
    Then $M_{sec}(n, \kappa, D, V)$ contains only finitely many homotopy types of manifolds. That is, up to homotopy equivalence, it has only finitely many elements. 
\end{theorem}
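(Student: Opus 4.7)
The plan is to combine Gromov--Hausdorff precompactness (from Corollary~\ref{cor: sec-precompact} and Fact~\ref{fact: compact alex in gh}) with a uniform local contractibility estimate and a nerve-type argument. Roughly, because the class $M_{sec}(n,\kappa,D,V)$ sits inside the compact class $A(n,\kappa,D)$ in Gromov--Hausdorff topology, it suffices to show that any two manifolds in $M_{sec}(n,\kappa,D,V)$ that are sufficiently Gromov--Hausdorff close must be homotopy equivalent. Given this, finite homotopy type follows immediately: the class is covered by finitely many small Gromov--Hausdorff balls, and each such ball contributes at most one homotopy type.

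The first major step is to establish a \emph{uniform contractibility function} $\rho=\rho_{n,\kappa,D,V}:(0,r_0]\to \R_{>0}$ with $\rho(r)\ge r$ and $\rho(r)\to 0$ as $r\to 0$, such that for every $M\in M_{sec}(n,\kappa,D,V)$ and every $p\in M$, the ball $B_r(p)$ is contractible inside $B_{\rho(r)}(p)$. My approach is via the critical point theory of distance functions from Theorems~\ref{thm: morse 1} and \ref{thm: morse 2}: if $d(\cdot,p)$ has no critical points in the annulus $B_{\rho(r)}(p)\setminus B_r(p)$, then the gradient-like flow retracts $B_{\rho(r)}(p)$ onto $\overline{B_r(p)}$, and an additional radial contraction (produced again by critical point theory on small enough scales inside the injectivity radius) collapses $B_r(p)$ to a point. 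A critical point $q$ of $d(\cdot,p)$ at scale $r$ would, by the first variation formula and Toponogov's hinge comparison (Theorem~\ref{thm: hinge}), force many shortest segments $[qp]$ making large pairwise angles, and combined with Bishop--Gromov (Theorem~\ref{thm:bishop-gromov}) this yields an upper bound on $\vol(M)$ that contradicts the hypothesis $\vol(M)\ge V$ once $r$ is small relative to $\rho(r)$ in a controlled fashion; this is the mechanism that converts the volume lower bound into a uniform contractibility scale.

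The second major step is the nerve/good-cover argument. Fix $r>0$ small enough that $\rho(r)$ is also small (in a sense to be quantified below), and choose a maximal $r$-separated net $\{p_1,\dots,p_N\}$ in $M$; by Bishop--Gromov and $\diam M\le D$, $N$ is bounded by a constant $N_0=N_0(n,\kappa,D,V)$. Then $\{B_{2r}(p_i)\}$ covers $M$, and for $r$ sufficiently small relative to $\rho$, every nonempty intersection $B_{2r}(p_{i_0})\cap\cdots\cap B_{2r}(p_{i_k})$ is contained in some $B_{\rho(4r)}(p_{i_0})$ and hence contractible. The nerve lemma then gives $M\simeq \mathcal{N}(\{B_{2r}(p_i)\})$. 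If $M,M'\in M_{sec}(n,\kappa,D,V)$ satisfy $d_{GH}(M,M')<\delta$ for $\delta\ll r$, the $\delta$-Gromov--Hausdorff approximation sends the net in $M$ to an approximate net in $M'$ whose pairwise distances differ by at most $\delta$; choosing the quantifiers so that all intersection/non-intersection relations are preserved, the resulting nerves are isomorphic as simplicial complexes, so $M\simeq \mathcal{N}\cong \mathcal{N}'\simeq M'$.

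The main obstacle is step one: producing the contractibility function $\rho$ with quantitative control depending only on $n,\kappa,D,V$. The critical-point/Toponogov/volume argument sketched above is the delicate part, because one must rule out critical points of $d(\cdot,p)$ not only near $p$ but in a definite annulus, uniformly over the class. An alternative, perhaps cleaner, route is a contradiction/compactness argument: if no such $\rho$ existed, one could extract a pointed Gromov--Hausdorff limit $(X_\infty,p_\infty)$ inside $A(n,\kappa,D)$ together with points violating any candidate contractibility estimate; the volume lower bound plus continuity of $\haus^n$ under Gromov--Hausdorff convergence (from the properties of Alexandrov spaces of $\curv\ge\kappa$) forces $\dim X_\infty=n$, and the Alexandrov-geometric structure of $(X_\infty,p_\infty)$ (in particular that small metric balls are contractible) yields the contradiction. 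Once $\rho$ is in hand, the remaining nerve-and-precompactness steps are comparatively routine.
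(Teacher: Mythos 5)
Your high-level plan (precompactness, then show two sufficiently Gromov--Hausdorff-close manifolds in the class are homotopy equivalent) is the right one, and a uniform contractibility function can indeed be extracted from the volume lower bound. However, the nerve-lemma step contains a genuine gap: you claim that a nonempty intersection $B_{2r}(p_{i_0})\cap\cdots\cap B_{2r}(p_{i_k})$ ``is contained in some $B_{\rho(4r)}(p_{i_0})$ and hence contractible.'' Containment in a contractible ball does not make a set contractible, and the nerve lemma requires the intersections themselves to form a good cover. Your function $\rho$ only says $B_r(p)$ is null-homotopic \emph{inside} the larger ball $B_{\rho(r)}(p)$; it gives no control over the topology of multiple intersections. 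In $M_{sec}(n,\kappa,D,V)$ the injectivity radius is not bounded below (take a round sphere with a sharp, bounded-below-curvature spike), so you cannot fall back on small balls being convex to force intersections to be contractible. This is precisely why the finiteness-from-contractibility-function theorem (Grove--Petersen, and Petersen's later generalization) is \emph{not} proved by the nerve lemma: the homotopy equivalence is built inductively skeleton by skeleton, using $\rho$ at a geometric sequence of scales to extend maps over cells. As written, ``the nerve lemma then gives $M\simeq\mathcal{N}$'' does not follow.

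On the comparison with the paper: the paper bypasses both the contractibility function and any nerve argument. The key geometric input is Theorem~\ref{thm: cannot be too closed}: there exist $\eps,\delta$ depending only on $(n,\kappa,D,V)$ such that mutually $\eps$-critical points are at distance at least $\delta$. Transferring this to $M\times M$ shows $d(\cdot,\Delta M)$ is $\eps$-regular on $U_\delta(\Delta M)\setminus\Delta M$ (Corollary~\ref{cor:eps-reg-diag}), and the gradient flow then produces a \emph{continuous family of short curves} $\gamma_{pq}$ between any two $\delta$-close points, of length at most $2d(p,q)/\eps$ (Corollary~\ref{cor:short-curves}). These curves give directly that $\delta$-close maps are homotopic (Corollary~\ref{cor: delta-closed to homotopy}) and feed a center-of-mass construction that extends a map defined on a $\delta'$-net of $M_{i_1}$ to all of $M_{i_1}$; the resulting $f_{i_1 i_2}$, $f_{i_2 i_1}$ between Gromov--Hausdorff-close manifolds have compositions close to the identity and hence are mutual homotopy inverses. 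The paper's route is both more elementary and more self-contained than the nerve route, because it never needs intersections to be contractible. Incidentally, your heuristic for step one (a single critical point of $d(\cdot,p)$ forcing many shortest segments with large pairwise angles, hence a volume bound) is not quite the right mechanism: the volume estimate comes from \emph{mutual} $\eps$-criticality, which pins the angles at both $p$ and $q$ against any third point $x$ near $\pi/2$ simultaneously and squeezes almost all of $M$ into the exponential image of a thin spherical annulus; one-way criticality alone does not yield this.
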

Note that  $M_{sec}(n, \kappa, D, V) \subseteq A(n, \kappa, D, V)$. And it is also true that $A(n, k, D, V)$ contains finitely many homotopy types, In fact, also homeomorphism types. This follows from the Stability Theorem of Perelman.

The scheme of the proof of the Grove-Peterson theorem \ref{thm: Grove-Peterson} is an argument by contradiction:
\begin{proof}[Sketch Proof of Theorem~\ref{thm: Grove-Peterson}]
    Suppose the theorem is not true. That is, we have a sequence of $M_i \in M_{sec}(n, \kappa, D, V)$ such that $\forall i \neq j$, $M_i$ is not homotopy equivalent to $M_j$. Then by compactness, we have a subsequence 
    \begin{equation*}
        M_{i_j}^n \ghto X, \quad j \to \infty
    \end{equation*}
    where $X \in M_{sec}(n, \kappa, D, V)$. The reason for $X$ being dimensional $n$ is that we have the lower volume bound $V$ for each $j$ so there is no volume collapse at the limit. Next, we want to try to show that for large $i$ all $M_i$ are homotopically equivalent to $X$ and hence contradiction.
    
    It requires some work to show for large $i$ all $M_i$ are homotopically equivalent to $X$. 
\end{proof}
\section{Grove-Bishop-Gromov Volume Comparison}
To prove the theorem \ref{thm: Grove-Peterson} rigorously, we need a generalization of the Bishop-Gromov volume comparison theorem.
\begin{theorem}[Grove-Bishop-Gromov Volume Comparison]\label{thm: BG-comparison by Grove}
    Let $(M^n, g)$ be a complete Riemannian manifold with $\ric_M \geq (n - 1)\kappa$. And $\modsp{\kappa}$ the simply connected model space of $\sect \equiv \kappa$. Let $A \subseteq M$ be a compact set and $B_R(A) = B(A, R)$ the $R$-neighborhood of $A$. Let $\bar{p}\in  \modsp{\kappa}$.  Then the quantity
    \begin{equation}\label{eq:GBG-comparison}
        f(R) = \frac{\vol(B_R(A))}{\vol(B_R(\overline{p}))}
    \end{equation}
    is non-increasing in $R$. The classical Bishop-Gromov volume comparison now becomes the special case when $A = p \in M$. This result is true for any compact subset $A \subseteq M$. 
\end{theorem}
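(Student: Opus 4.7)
The plan is to adapt the classical Bishop-Gromov argument of Theorem~\ref{thm:bishop-gromov} by replacing the exponential map from a single point with the normal exponential map from $A$. Applying the coarea formula to the $1$-Lipschitz function $d_A$ (which has $|\nabla d_A|=1$ almost everywhere on $M\setminus A$) yields
\begin{equation*}
\vol(B_R(A)) = \vol(A) + \int_0^R \haus^{n-1}(\Sigma_t)\,dt, \qquad \vol(B_R(\bar p)) = \omega_{n-1}\int_0^R \snk(t)^{n-1}\,dt,
\end{equation*}
where $\Sigma_t := \{d_A = t\}$ and $\omega_{n-1}=\vol(S^{n-1})$. Repeating the averaging argument leading to (\ref{eq: volume ratio}) from Lemma~\ref{lem: monotone average integral}, the monotonicity of $f(R)$ will follow once I show that
\begin{equation*}
\phi(t) := \frac{\haus^{n-1}(\Sigma_t)}{\omega_{n-1}\snk(t)^{n-1}}
\end{equation*}
is non-increasing in $t>0$; the extra contribution $\vol(A)/\int_0^R\omega_{n-1}\snk^{n-1}$, present only when $\vol(A)>0$, is obviously non-increasing in $R$.

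To establish the monotonicity of $\phi$, I would run the Riccati analysis of the shape operator along unit-speed geodesics normal to $A$. For a.e.\ regular value $t>0$, the level set $\Sigma_t$ is a smooth hypersurface outside a measure-zero cut/focal locus of $A$; at each regular point $x\in\Sigma_t$ there is a unique $a\in A$ and unit $v\in T_a M$ with $x=\exp_a(tv)$, and $\gamma_v(s):=\exp_a(sv)$ realizes $d(\gamma_v(s),A)=s$ on $[0,t]$. Let $\tau(a,v)$ be the supremum of times for which this holds, and let $S(s)$ denote the shape operator of $\Sigma_s$ at $\gamma_v(s)$ with respect to the outward unit normal $\nu=\dot\gamma_v(s)$. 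The Riccati equation $S'+S^2+R_\nu=0$ along $\gamma_v$ is independent of the source submanifold, so as in the proof of Theorem~\ref{thm:bishop-gromov}, taking the trace and using Cauchy--Schwarz ($\tr(S^2)\ge \tr(S)^2/(n-1)$) together with $\ric\ge(n-1)\kappa$ yields the scalar Riccati inequality
\begin{equation*}
a'(s)+a(s)^2+\kappa\le 0, \qquad a(s):=\tr(S(s))/(n-1).
\end{equation*}
Since the model $\ctk$ satisfies the corresponding equality and $\ctk(s)\to\infty$ as $s\to 0^+$, for any $s_*\in(0,\tau(a,v))$ I can pick $s_0\in(0,s_*)$ small enough that $a(s_0)<\ctk(s_0)$; Lemma~\ref{lem-riccati} then propagates this forward to $a(s)\le \ctk(s)$ on $[s_0,s_*]$, and letting $s_0\to 0^+$ and $s_*\to\tau(a,v)$ gives $a(s)\le \ctk(s)$ throughout $(0,\tau(a,v))$. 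Consequently the Jacobian $j(v,s)$ of the normal exponential map along $\gamma_v$ satisfies $(\ln j)'(s)=\tr(S(s))\le (n-1)\ctk(s)=(\ln\snk^{n-1})'(s)$, so $s\mapsto j(v,s)/\snk(s)^{n-1}$ is non-increasing on $(0,\tau(a,v))$, and extending $j(v,\cdot)$ by zero past the cut time preserves this monotonicity.

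The final step is to integrate over the valid normal directions to recover $\haus^{n-1}(\Sigma_t)$, and this is where the main technical obstacle lies: for a smooth submanifold $A$ the integration is against the natural volume element of the unit normal bundle, but for a general compact set there is no canonical ``base measure'' on the set of outward minimizing directions from $A$ that makes the area-coarea identity automatic. The cleanest bypass is to argue directly on the level sets, without introducing any parametrization: for a.e.\ regular $t$, the smooth part of $\Sigma_t$ flows to (part of) the smooth part of $\Sigma_{t+h}$ under the unit-speed gradient flow of $d_A$, and the pointwise bound $\tr(S(x,t))\le(n-1)\ctk(t)$ combined with the first-variation formula for hypersurface area gives
\begin{equation*}
\frac{d}{dt}\haus^{n-1}(\Sigma_t)\le \int_{\Sigma_t}\tr(S)\,d\haus^{n-1}\le (n-1)\ctk(t)\,\haus^{n-1}(\Sigma_t),
\end{equation*}
where the first inequality also accounts for area loss from trajectories entering the cut locus of $A$ (which only improves the bound). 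Integrating this ODE inequality gives $(\ln\phi)'\le 0$, yielding the required monotonicity of $\phi$ and hence of $f(R)$.
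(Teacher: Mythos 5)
Your argument is correct in outline but takes a genuinely different route from the paper. The paper first proves the statement for $A$ a finite set $\{x_1,\dots,x_m\}$ by decomposing $M$ into Voronoi cells $V(x_i)$, showing each cell is star-shaped about $x_i$ (Lemma~\ref{lem: Voronoi star-shape}), running the classical Jacobian comparison on each cell with the Jacobian cut off once the radial geodesic leaves $V(x_i)$, and then approximating a general compact $A$ from within by finite $\eps$-nets, passing to a pointwise limit of the ratios. You instead keep $A$ arbitrary from the start and argue directly on the level sets $\Sigma_t=\{d_A=t\}$ via the coarea formula and the first-variation formula for hypersurface area. Both routes ultimately rest on the same scalar Riccati comparison, but the division of labor differs: the Voronoi/finite-set reduction sidesteps all regularity questions about $\Sigma_t$ for a general compact set, whereas you confront them head on. To close the argument you must justify that $t\mapsto\haus^{n-1}(\Sigma_t)$ is regular enough (e.g.\ show its upper right Dini derivative satisfies the claimed bound, with downward jumps from area absorbed by the cut locus only reinforcing the conclusion) so that the differential inequality really integrates to monotonicity of $\phi$; this is a nontrivial piece of measure theory that the finite-set route avoids entirely. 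Also, your remark that one can pick $s_0$ small with $a(s_0)<\ctk(s_0)$ because $\ctk(s_0)\to\infty$ is not watertight when $a(s)$ blows up too, which happens whenever $A$ is thin in direction $v$ (e.g.\ $A$ a single point, where $a(s)\sim 1/s$ as well); as written it risks circularity, since justifying that $a(s_0)-\ctk(s_0)$ is eventually nonpositive is exactly what you are trying to prove. The right tool is Lemma~\ref{lem: g(t) < ct_k}, whose perturbation argument (comparing to $G(t)=\ctk(t-\eps)$) delivers $a\le\ctk$ with no hypothesis at $s=0^+$ at all — indeed the stated assumption $\lim_{s\to 0^+}g(s)=\infty$ is never actually used in its proof. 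With those two points tightened, your coarea/level-set route is a valid and arguably more structural alternative to the paper's Voronoi-cell reduction plus finite-set approximation.
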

The idea is that we first prove the result for any finite set $A = \br{x_1, \dots, x_m}$. And if this is true, we can take denser and denser finite subsets of points for general $A$. Then we can apply the theorem to the denser set and pass to the limit of these denser and denser sets. 

\subsection{Voronoi Cells Construction}
As we mentioned, we first consider the case when $A$ is a finite set. An important idea here is to construct Voronoi cells for $A$ and decompose the manifolds into pieces having nice geometric features. 
\begin{definition}[Voronoi Cell]
	Given a finite subset $A = \br{x_1, \dots, x_m} \subseteq (M, g)$, we can define corresponding \textbf{Voronoi cell $V(x_i)$} which is denoted as 
    \begin{equation*}
        V(x_i) = \br{x \in M: \text{$d^M(x, x_i) \leq d^M(x_i, x_j)$ for all $j \neq i$}}
    \end{equation*}
    It is clear that $M = \bigcup_{i = 1}^m 
    V(x_i)$.
\end{definition}

Here we list two important facts of the Voronoi cells. 
    \begin{fact}\label{fact: Voronoi perpendicular}
   If $A=\{x,y\}\subset \R^n$ then $V(x)$ and $V(y)$ are half-spaces bounded by the hyperplane perpendicular to $[x,y]$ and passing through its middle.
    Therefore if $A = \br{x_1, \dots, x_m}\subset \R^n$ then each Voronoi cell $V(x_i)$ is an intersection of several half spaces. In particular it is convex.

    \end{fact}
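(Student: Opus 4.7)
The plan is to reduce the general case to the two-point case and then verify the two-point case by a short squared-distance computation. The only real work is in handling $|A|=2$; everything else is a formal consequence of the definition of the Voronoi cell as an intersection.

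First, I would treat the case $A=\{x,y\}$. By definition, $V(x)=\{p\in\R^n : |p-x|\le |p-y|\}$. Since both sides are nonnegative, this is equivalent to $|p-x|^2\le |p-y|^2$. Expanding $|p-x|^2=|p|^2-2\langle p,x\rangle+|x|^2$ and similarly for $y$, the $|p|^2$ terms cancel and the inequality becomes $2\langle p,\,y-x\rangle\le |y|^2-|x|^2$. This defines a closed half-space $H_{xy}$ whose boundary is the hyperplane $\Pi_{xy}=\{p:2\langle p,y-x\rangle=|y|^2-|x|^2\}$. I would then observe two things: (i) the normal vector to $\Pi_{xy}$ is $y-x$, so $\Pi_{xy}\perp [xy]$; and (ii) the midpoint $m=(x+y)/2$ satisfies $2\langle m,y-x\rangle=\langle x+y,y-x\rangle=|y|^2-|x|^2$, so $m\in\Pi_{xy}$. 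Together these give the claim for two points, and swapping the roles of $x$ and $y$ shows $V(y)$ is the opposite half-space.

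Second, for the general case $A=\{x_1,\dots,x_m\}$, I would simply rewrite the definition as
\[
V(x_i)=\bigcap_{j\ne i}\{p\in\R^n : d(p,x_i)\le d(p,x_j)\}.
\]
Each set in this intersection is exactly the two-point Voronoi cell of $x_i$ against $x_j$ in the pair $\{x_i,x_j\}$, hence a closed half-space bounded by the perpendicular bisector of $[x_ix_j]$ by the first step. Since any intersection of closed half-spaces is closed and convex, $V(x_i)$ is a (possibly unbounded) closed convex polyhedron, which is the desired conclusion.

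I do not expect a real obstacle here: the only ``step'' is the squared-distance expansion, which is a one-line calculation, and the passage to many points is purely set-theoretic. The one subtlety worth flagging in writing is that the inequalities are non-strict, so adjacent cells share their common bounding hyperplane rather than being disjoint; this is consistent with the definition used in the excerpt and does not affect convexity.
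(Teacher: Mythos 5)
Your argument is correct. The paper states this as a \emph{Fact} without giving a proof, so there is no argument to compare against; what you wrote is the standard and essentially unique approach: square the distance inequality, cancel the $|p|^2$ terms, read off the half-space with normal $y-x$, verify the midpoint lies on the bounding hyperplane, and then observe that $V(x_i)=\bigcap_{j\ne i} H_{x_i x_j}$ is an intersection of half-spaces, hence convex. One small remark worth being aware of: as printed, the paper's definition of the Voronoi cell reads $d^M(x,x_i)\le d^M(x_i,x_j)$, which is almost certainly a typo for $d^M(x,x_i)\le d^M(x,x_j)$; your proof correctly uses the intended definition, and that is the one under which the stated fact (and everything downstream, such as Lemma~\ref{lem: Voronoi star-shape}) makes sense.
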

    \begin{fact}\label{fact: Voronoi disjoint upto measure 0}
    Given a finite subset $A = \br{x_1, \dots, x_m} \subseteq (M, g)$. we can show $\haus^n(V(x_i)\cap V(x_j)) = 0$. Thus up to measure $0$, we can think of $\br{V(x_i)}_i$ as a disjoint collection of subsets. 
    \end{fact}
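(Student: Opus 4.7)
The plan is to reduce the measure-zero claim to showing that the bisector $\Sigma_{ij} := \br{x \in M : d(x,x_i) = d(x,x_j)}$ has $\haus^n$-measure zero, and then exploit the fact that $\Sigma_{ij}$ is locally a smooth hypersurface away from the cut loci of $x_i$ and $x_j$.

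The inclusion $V(x_i) \cap V(x_j) \subseteq \Sigma_{ij}$ is immediate from the definition of Voronoi cells (any point in the intersection satisfies both $d(\cdot,x_i) \le d(\cdot,x_j)$ and $d(\cdot,x_j) \le d(\cdot,x_i)$), so it suffices to prove $\haus^n(\Sigma_{ij}) = 0$. First I would discard the set $C := \cut(x_i) \cup \cut(x_j) \cup \br{x_i, x_j}$, which has $\haus^n$-measure zero by the classical fact that the cut locus of a point in a complete Riemannian manifold has Hausdorff dimension at most $n-1$. It then suffices to show $\Sigma_{ij} \setminus C$ has measure zero.

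On the open set $U := M \setminus C$, both $d(\cdot,x_i)$ and $d(\cdot,x_j)$ are smooth, and $\nabla d(\cdot, x_k)|_x$ equals the unit tangent vector at $x$ to the (unique) minimizing geodesic from $x_k$ to $x$, continued past $x$. Consequently $f_{ij} := d(\cdot,x_i) - d(\cdot,x_j)$ is smooth on $U$, and the key step is to show that $\nabla f_{ij}$ does not vanish on $\Sigma_{ij} \cap U$. If $\nabla f_{ij}(x) = 0$ at such a point, then those two unit tangent vectors coincide, so reversing both geodesics one finds a single geodesic issuing from $x$ that passes through $x_i$ at time $d(x,x_i)$ and through $x_j$ at time $d(x,x_j)$. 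Since $x \in \Sigma_{ij}$ these two times are equal, forcing $x_i = x_j$ and contradicting $i \ne j$.

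Thus $0$ is a regular value of $f_{ij}$ on $U$, so $\Sigma_{ij} \cap U = f_{ij}^{-1}(0) \cap U$ is a smooth embedded $(n-1)$-submanifold of $M$ and therefore has $\haus^n$-measure zero. Combined with $\haus^n(C) = 0$, this yields $\haus^n(V(x_i) \cap V(x_j)) \le \haus^n(\Sigma_{ij}) = 0$. The main obstacle is the gradient non-vanishing step; everything else is routine once the measure-zero property of cut loci is taken as a standard black box.
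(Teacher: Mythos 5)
Your proof is correct, and the paper does not supply one (it states this only as a ``Fact''), so there is no argument of the paper's own to compare against. The route you take is the natural and standard one: reduce to the bisector $\Sigma_{ij}$, discard the cut loci (a null set, a fact the paper itself records elsewhere), and apply the regular value theorem to $f_{ij}=d(\cdot,x_i)-d(\cdot,x_j)$ on the complement. Your verification of the key step — that $\nabla f_{ij}$ cannot vanish on $\Sigma_{ij}$ outside the cut loci — is sound: equality of the two gradients produces a single unit-speed geodesic through $x$ that reaches $x_i$ at time $d(x,x_i)$ and $x_j$ at time $d(x,x_j)$, and since those times coincide on the bisector this forces $x_i=x_j$. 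Two minor remarks. First, you cite ``Hausdorff dimension at most $n-1$'' for the cut locus, which is a more refined statement than you need; the elementary fact that $\cut(p)$ has $\haus^n$-measure zero (quoted in the paper's Bishop--Gromov proof) is sufficient and avoids invoking the finer Itoh--Tanaka-type result. Second, the paper's definition of $V(x_i)$ contains a typo (``$d^M(x,x_i)\le d^M(x_i,x_j)$'' rather than ``$d^M(x,x_i)\le d^M(x,x_j)$''); you have correctly read it as the intended Voronoi condition, which is what makes the inclusion $V(x_i)\cap V(x_j)\subseteq\Sigma_{ij}$ immediate.
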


    \begin{figure}[htbp]
        \centering
            \includegraphics[width=0.7\textwidth]{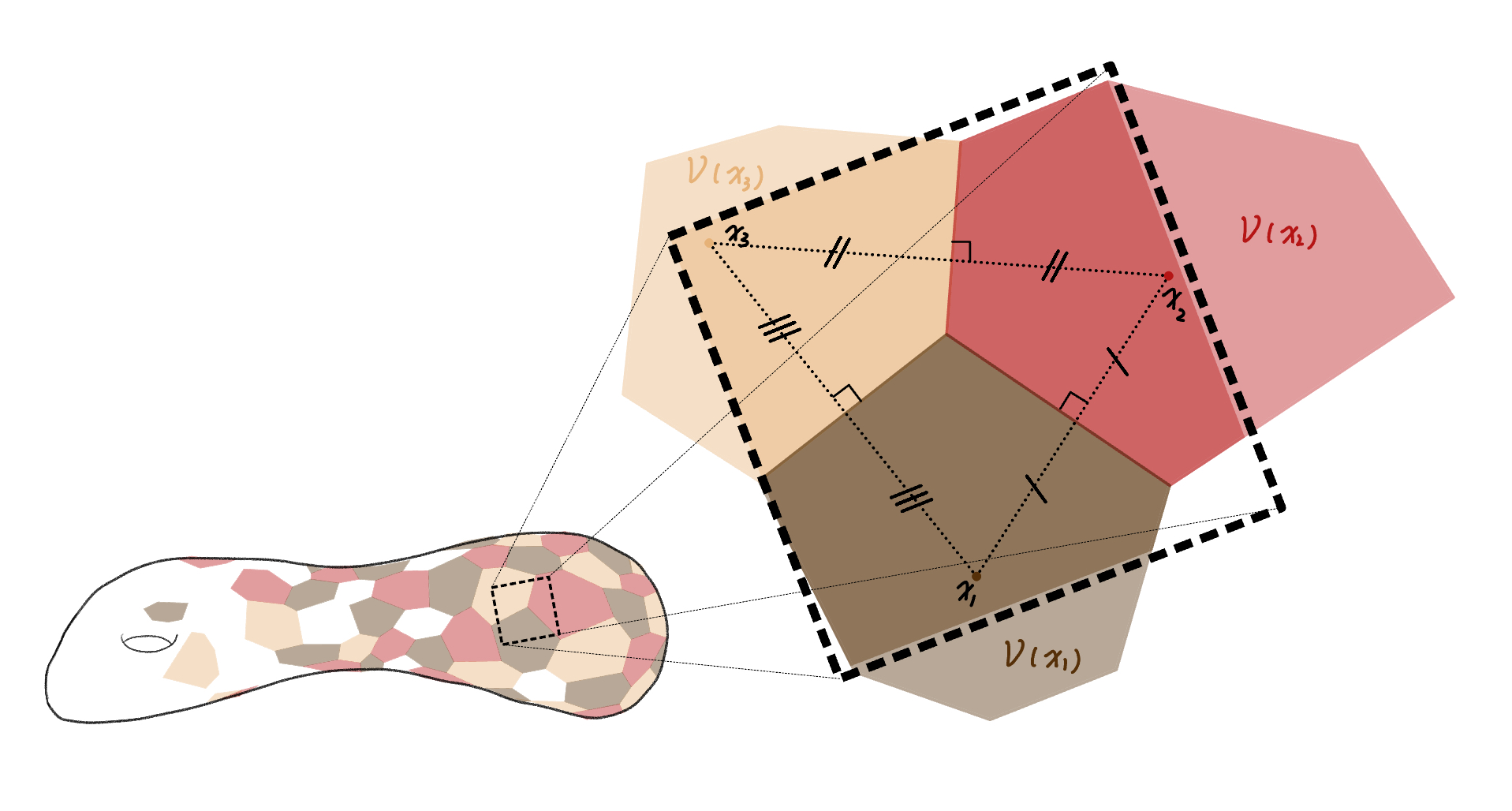}\caption{Voronoi Sets [Fact~\ref{fact: Voronoi perpendicular} and Fact~\ref{fact: Voronoi disjoint upto measure 0}]}

        \end{figure}
    In Riemannian geometry, we have the local uniqueness of geodesics if the initial condition is given. However, in a general geodesic metric space, this property may fail since two geodesics with the same initial condition may split at some point (Example~\ref{ex: splitting geodesics}). We will show that the property that geodesics do not split holds in Alexandrov spaces with lower curvature bound. 
    \begin{example}\label{ex: splitting geodesics}
    	Let $(X, d) = (\R^2, \norm{\cdot}_{1})$, i.e. for $x, y \in X$, $\norm{(x, y)}_1 = \abs{x} + \abs{y}$. Let $x = (1, 0)$, $x_2 = (0, 1)$, $x_3 = (0, -1)$. Then the geodesic segment $[xx_1] = \br{0}\times[0, -1]\cup[0, 1]\times \br{0}$ and $[xx_2] = \br{0}\times[0, 1]\cup[0, 1]\times \br{0}$ (See Figure~\ref{fig: splitting in r^2}). Notice that $[xx_1]\cap[xx_2] = [0, 1]\times\br{0}$, which is an interval. In this example, two geodesics branch at $(0, 0)$.
    	\begin{figure}[htbp]
        \centering
            \includegraphics[width=0.4\textwidth]{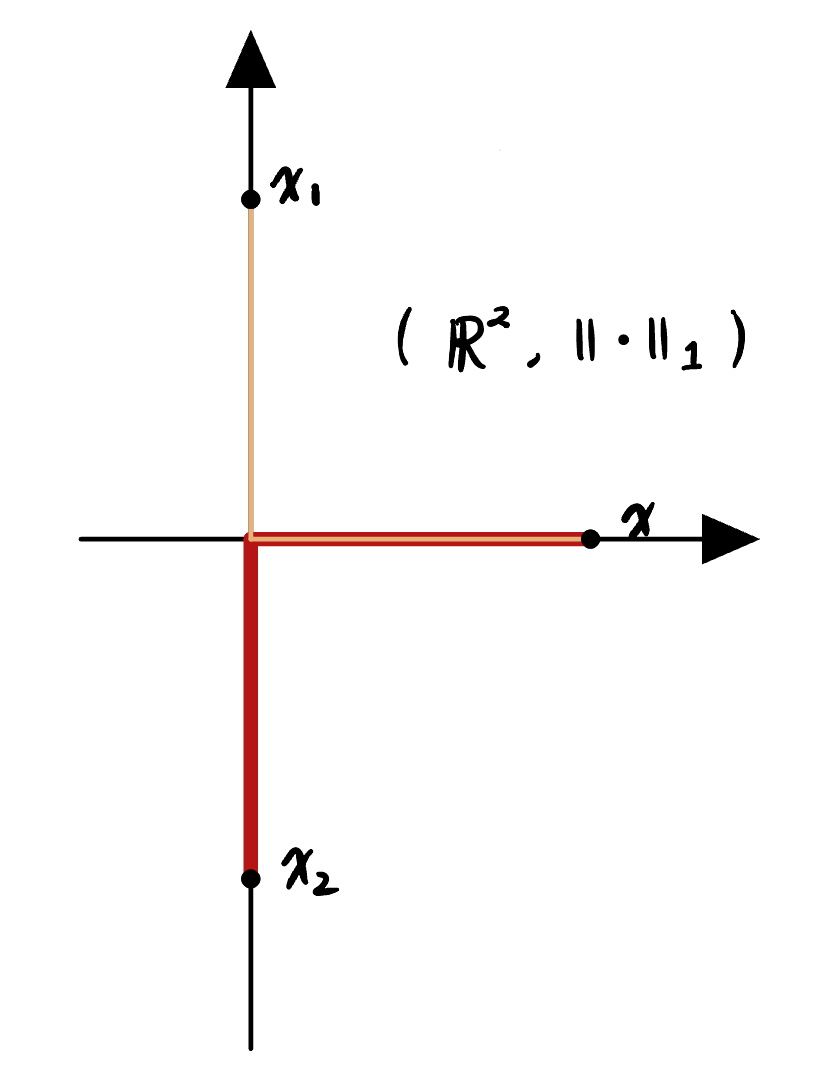}\caption{Geodesics may have branches in $(\R^2, \norm{\cdot}_{1})$}
            \label{fig: splitting in r^2}
        \end{figure}
    \end{example}
    \begin{definition}[Non-Branching Subset (From~\cite{RS12})]\label{def: non-branching}
    	Two geodesics $ \gamma_1, \gamma_2: [0,T]\to X$ are \textbf{branching} there exist $0<t_1<t_2$ such that $\gamma_1|_{[0,t_1]}= \gamma_2|_{[0,t_1]}$ but $\gamma_1(t_2)\ne \gamma_2(t_2)$. A metric space is called \textbf{non-branching} if there are no branching geodesics.
    \end{definition}
    \begin{lemma}[Non-Branching Property of Alexandrov Space]\label{lem: non-branching alex}
    	$\alex_{\kappa}$ is non-branching.
    \end{lemma}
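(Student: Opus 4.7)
The plan is to argue by contradiction directly from the four-point condition in Definition~\ref{def: alex-space}, by exhibiting a quadruple of points whose model angles violate the inequality. Suppose $\gamma_1, \gamma_2 \colon [0,T]\to X$ are branching geodesics, and set
\[
t^* := \sup\{t\in [0,T] : \gamma_1|_{[0,t]}=\gamma_2|_{[0,t]}\}.
\]
By hypothesis $t^*\ge t_1>0$ and $t^*<T$. Continuity of the two curves forces $\gamma_1(t^*)=\gamma_2(t^*)=:p$, while by the definition of the supremum one can choose arbitrarily small $s>0$ with $\gamma_1(t^*+s)\ne \gamma_2(t^*+s)$. Fix such an $s$, also requiring $s<t^*$, $s<T-t^*$, and (only when $\kappa>0$) $s<\varpi^\kappa/2$, and set $a:=\gamma_1(t^*-s)=\gamma_2(t^*-s)$, $b:=\gamma_1(t^*+s)$, $c:=\gamma_2(t^*+s)$. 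Because the restrictions $\gamma_i|_{[t^*-s,t^*+s]}$ are unit-speed geodesics of length $2s$ passing through $p$ at parameter $t^*$, we immediately obtain
\[
d(a,p)=d(p,b)=d(p,c)=s,\qquad d(a,b)=d(a,c)=2s.
\]

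Now look at the four-point configuration $(p; a, b, c)$. The triple $(p,a,b)$ satisfies $d(a,p)+d(p,b)=2s=d(a,b)$, so the comparison triangle $\tilde\triangle^\kappa\{s,s,2s\}$ in $\modsp{\kappa}$ is degenerate with $\tilde p$ lying on the segment between $\tilde a$ and $\tilde b$. Therefore $\tilde\angle^\kappa(p_a^b)=\pi$, and by the identical argument $\tilde\angle^\kappa(p_a^c)=\pi$. The Alexandrov four-point condition then gives
\[
\pi+\pi+\tilde\angle^\kappa(p_b^c)\le 2\pi,
\]
forcing $\tilde\angle^\kappa(p_b^c)=0$. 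But a triangle in $\modsp{\kappa}$ with two sides of length $s$ and vertex angle zero is degenerate with coincident opposite endpoints, so the model cosine law forces $d(b,c)=0$, contradicting $b\ne c$.

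The argument is essentially a one-line application of the four-point inequality to a carefully chosen quadruple, and the only real care needed is that the three comparison triangles involved actually exist when $\kappa>0$. By Proposition~\ref{prop: model triangle defined} this requires perimeters $\le 2\varpi^\kappa$; the perimeters here are $4s$ for $(p,a,b)$ and $(p,a,c)$, and at most $2s+d(b,c)\le 4s$ for $(p,b,c)$, which is ensured by the choice $s<\varpi^\kappa/2$. The only other point worth verifying carefully is that $t^*$ is well defined and the set $\{t : \gamma_1(t)=\gamma_2(t)\}$ is closed (so $p$ really is a common point), both of which follow immediately from the continuity of geodesics, and that after fixing $p$ we can still make $s$ small enough to meet all the constraints, which is possible because $t^*>0$ and the set of $s>0$ with $\gamma_1(t^*+s)\ne \gamma_2(t^*+s)$ has $0$ as an accumulation point by the maximality of $t^*$. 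Thus the proof is short and the only mildly delicate step is the bookkeeping around the definition of $t^*$; the geometric content is entirely in recognizing that two collinear configurations $(a,p,b)$ and $(a,p,c)$ saturate the four-point inequality and leave no room for $b\ne c$.
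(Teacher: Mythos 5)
Your proof is correct, and it takes a genuinely different (and in a couple of places cleaner) route than the one in the notes, even though both arguments ultimately exploit the same degenerate comparison configuration. The paper's proof is anchored at the common initial point $x=\gamma_1(0)=\gamma_2(0)$: it observes that the hinge angle $\mangle[x_{x_1}^{x_2}]$ between the two geodesics is zero (since they share a positive-length initial segment), invokes the Toponogov angle comparison to force the comparison angle to vanish, and concludes $|x_1-x_2|=0$. Your proof instead centers the picture at the branch time $t^*$ and feeds a symmetric quadruple $(p;a,b,c)$ directly into the four-point inequality that \emph{defines} an Alexandrov space. The differences matter: (a) the paper tacitly uses the equivalence of the four-point condition with the angle-comparison version, which for $\kappa>0$ rests on the globalization theorem, whereas your argument is a direct one-line consequence of Definition~\ref{def: alex-space}; (b) the paper's argument needs $\gamma_1(1)\neq\gamma_2(1)$ and dispatches this with a terse ``without loss of generality'' that really requires truncating the curves at a first point of divergence --- your localization at $p$ sidesteps this entirely, since $b\neq c$ is built into the choice of $s$; and (c) you explicitly verify the perimeter bound $4s<2\varpi^\kappa$ ensuring the model triangles exist when $\kappa>0$, a point the paper's proof does not address. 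One small remark worth making explicit if you write this up: after forcing $\modangle^\kappa(p_b^c)=0$, the conclusion $d(b,c)=0$ uses that the two adjacent sides have equal length $s$ (a zero angle with unequal sides would not force the opposite vertices to coincide); in your setup this equality is automatic because both geodesics are unit speed and $b,c$ are both at parameter distance $s$ from $p$, but it is the linchpin of the final step and deserves a sentence.
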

    \begin{figure}[htbp]
        \centering
        \includegraphics[width=0.6\textwidth]{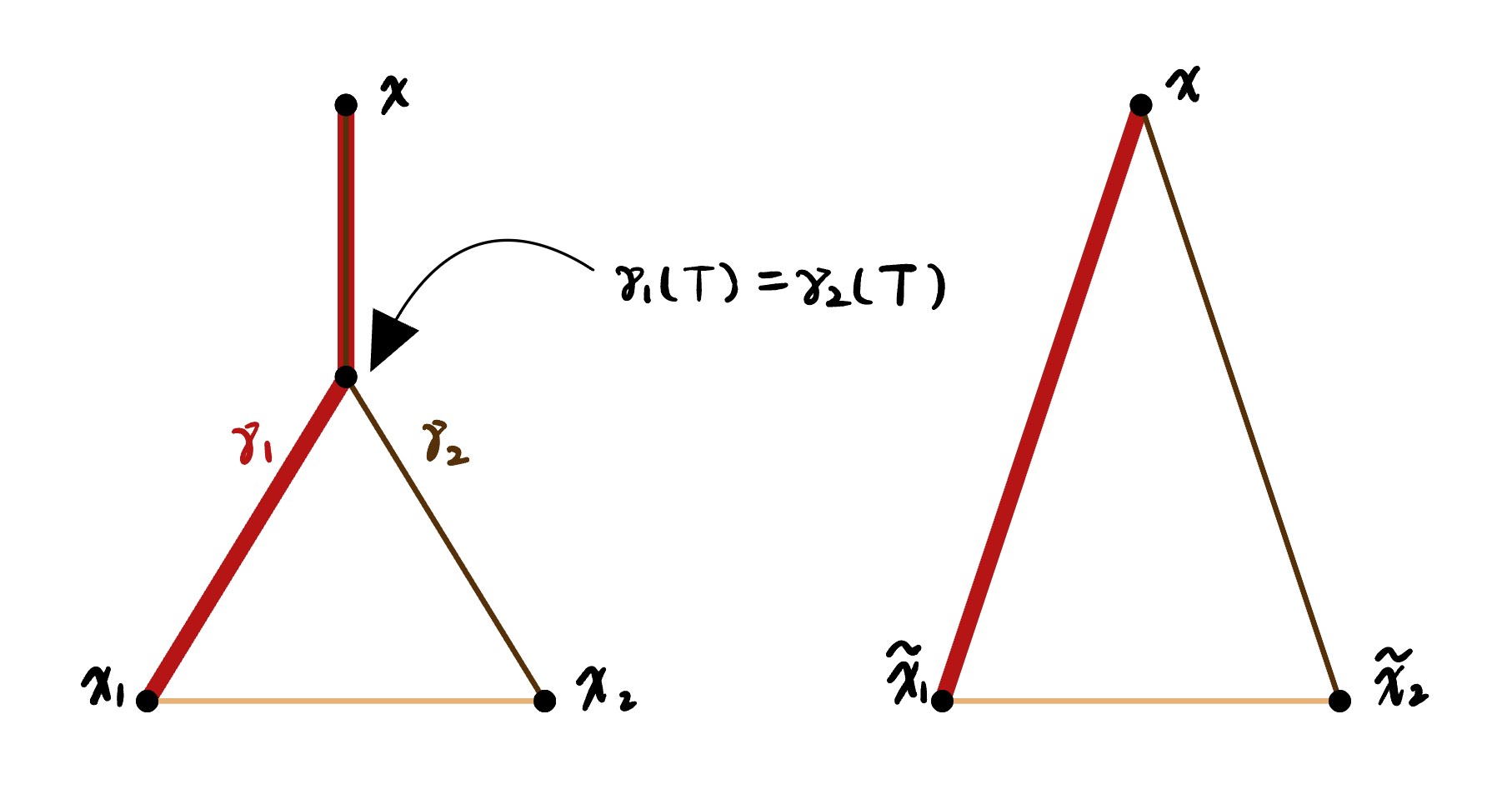}\caption{Branching geodesics Alexandrov space contradicts the Toponogov angle comparison.}
            \label{fig: alex-non branching}
        \end{figure}
    \begin{proof}
    Let $\gamma_1, \gamma_2 \in Geo(\alex_{\kappa})$ be two geodesic segments such that $\gamma_1|_{[0, t]} = \gamma_2|_{[0, t]}$ for some $0 < t < 1$,  We denote
    \begin{equation*}
    	T = \max{\br{t \in [0, 1]: \gamma_1|_{[0, t]} = \gamma_2|_{[0, t]}}}.
    \end{equation*}
    We can use a contradiction argument to show $\gamma_1 \equiv \gamma_2$. Since $\gamma_1|_{[0, T]} = \gamma_2|_{[0, T]}$, we know that $\gamma_1(0) = \gamma_2(0)$, without lost of generality, we suppose $\gamma_1(s) \neq \gamma_2(t)$ for all $s \in (T, 1]$. Especially, we denote $x = \gamma_1(0) = \gamma_2(0)$, $x_1 = \gamma_1(1)$ and $x_2 = \gamma_2(1)$. We can see from Figure~\ref{fig: alex-non branching} such that the angle $\mangle[x_{x_1}^{x_2}] = 0$ for the triangle $[xx_1x_2]$ (See Definition \ref{def: triangle} and Notation~\ref{not: angle}). By the definition of $\alex_{\kappa}$ and the equivalent statements of Toponogov comparisons. We know the Toponogov angle comparison holds in $\alex_{\kappa}$. Therefore, this means the comparison angle $\modangle^{\kappa}[x_{x_1}^{x_2}]$ in the model space is zero. However, since $x, x_1, x_2$ are three distinct points, then it is impossible to have $\abs{x_1 - x_2} = 0$. Contradiction! 
    \end{proof} 
	\begin{remark}
		The above Lemma~\ref{lem: non-branching alex} was generalized to $RCD(K, N)$ spaces in the thesis~\cite{Den21}.
	\end{remark}
    \begin{lemma}\label{lem: Voronoi star-shape}
        Given a finite subset $A = \br{x_1, \dots, x_m}$ of an Alexandrov space of curvature $\geq \kappa$, say $X$. Then $V(x_i)$ is star-shaped with respect to $x_i$, i.e. for any $x \in V(x_i)$, the geodesic segment $[xx_i] \subseteq V(x_i)$. 
        
        If we parametrize $[xx_i]$ using $\gamma$ such that $\gamma(0) = x$, if $d^M(\gamma(t), x_i) = d^M(\gamma(t), x_j)$ for some $t$, then $t = 0$ and $x \in V(x_i)\cap V(x_j)$. Furthermore, if $t > 0$, we must have $d^M(\gamma(t), x_i) < d^M(\gamma(t), x_j)$.
    \end{lemma}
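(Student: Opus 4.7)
The plan is to reduce the star-shape statement to a direct application of the triangle inequality, and to handle the equality (rigidity) statement by exhibiting a pair of branching geodesics and then invoking the non-branching property of Alexandrov spaces proved in Lemma~\ref{lem: non-branching alex}. Because the curvature assumption enters only through the second step, it is worth isolating the two parts.

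For the star-shape part, I would parametrize the shortest geodesic $\gamma: [0, T] \to X$ from $x$ to $x_i$ by arclength, so that $T = d(x, x_i)$ and $d(\gamma(t), x_i) = T - t$. For any other $x_j \in A$, the triangle inequality gives
\begin{equation*}
d(\gamma(t), x_j) \ge d(x, x_j) - d(x, \gamma(t)) = d(x, x_j) - t,
\end{equation*}
and the assumption $x \in V(x_i)$ gives $d(x, x_j) \ge T$. Combining,
\begin{equation*}
d(\gamma(t), x_j) \ge T - t = d(\gamma(t), x_i),
\end{equation*}
so $\gamma(t) \in V(x_i)$ for every $t \in [0, T]$. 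This argument uses neither curvature nor smoothness and works in any geodesic metric space.

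For the rigidity statement, suppose that $d(\gamma(t), x_i) = d(\gamma(t), x_j)$ for some $t \in [0, T]$. Then the chain of inequalities above must all be equalities, which forces both $d(x, x_j) = T = d(x, x_i)$ (so $x \in V(x_i) \cap V(x_j)$ automatically) and $d(\gamma(t), x_j) = d(x, x_j) - t$. The latter equality means that $\gamma|_{[0, t]}$, concatenated with any shortest geodesic from $\gamma(t)$ to $x_j$, is itself a shortest geodesic from $x$ to $x_j$. Combined with the shortest geodesic $\gamma|_{[0, T]}$ from $x$ to $x_i$, this produces two shortest geodesics leaving $x$ that coincide on $[0, t]$ but, if $t > 0$ and $x_i \neq x_j$, terminate at distinct endpoints; this is precisely a branching pair in the sense of Definition~\ref{def: non-branching}.

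The key step is the invocation of Lemma~\ref{lem: non-branching alex}: branching geodesics do not exist in an Alexandrov space of curvature $\ge \kappa$. This rules out the scenario $t > 0$ (with $i \ne j$), forcing $t = 0$ and therefore $x \in V(x_i) \cap V(x_j)$ as claimed. The main conceptual obstacle is verifying that the configuration genuinely produces a branching in the sense required by Lemma~\ref{lem: non-branching alex} — one must be careful to parametrize both geodesics by arclength from the common starting point $x$ and to observe that they agree as parametrized curves on $[0, t]$ rather than merely sharing the point $\gamma(t)$; once that is set up, the contradiction is immediate.
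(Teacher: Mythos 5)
Your proof is correct and follows essentially the same route as the paper's: the star-shape claim via the triangle inequality (the paper phrases it through the $1$-Lipschitz bound on the right derivative of $g_j(t)=d(\gamma(t),x_j)$, which is the infinitesimal version of exactly the same estimate), and the rigidity claim by observing that equality forces $\gamma|_{[0,t]}$ concatenated with a shortest segment $[\gamma(t)x_j]$ to be a shortest geodesic from $x$ to $x_j$, agreeing with $[xx_i]$ on $[0,t]$ but not at the endpoint, which contradicts the non-branching Lemma~\ref{lem: non-branching alex}. The only cosmetic difference is that you argue directly from the triangle inequality rather than through Lipschitz differentiability; the branching construction and the appeal to Lemma~\ref{lem: non-branching alex} are identical.
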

    \begin{proof}[Proof of Lemma~\ref{lem: Voronoi star-shape}]
        Denote $d = \dist(x, x_i)$ and $d^X$ the distance function of the alexandrov space $X$ and consider the unit speed geodesic $\gamma: [0, d] \to V(x_i)$. In particular, we set $x = \gamma(0)$, $x_i = \gamma(d)$ and thus
        \begin{equation*}
            g_i(t) := d^X(\gamma(t), x_i) = d-t
        \end{equation*}
        decreasing with speed $1$, i.e. $g_i'(t) \equiv -1$. Let $x_j \neq x_i$. We know that $g_j(t) = d^X(\gamma(t), x_j)$ is an $1$-Lipschitz function and hence its right derivative exists a.e.  satisfies
        \begin{equation*}
            {g_j}_+'(t) \geq -1
        \end{equation*}
        the function decreases with the speed at most $1$. 
        Thus we know for any $t \in [0, d]$, we have
        \begin{equation*}
        	g_j(t) \geq g_j(0) - t \geq g_i(0) - t = g_i(t)
        \end{equation*}
        Therefore we can conclude that 
        \begin{equation*}
        	d^M(\gamma(t), x_i) \leq d^M(\gamma(t), x_j)
        \end{equation*}
        for all $j \neq i$. Thus we have
        \begin{equation*}
        	\gamma(t) \in V(x_i)
        \end{equation*}
        for all $t$.
        
        Now if $d^X(\gamma(t), x_i) = d^X(\gamma(t), x_j)$ for some $t$, this means the following inequalities
        \begin{equation*}
        	g_j(t) \geq g_j(0) - t \geq g_i(0) - t = g_i(t)
        \end{equation*}
        are all equalities. This implies $g_i(0) = g_j(0)$, which means $x \in V(x_i) \cap V(x_j)$.
        
        \begin{figure}[htbp]
        \centering
            \includegraphics[width=0.7\textwidth]{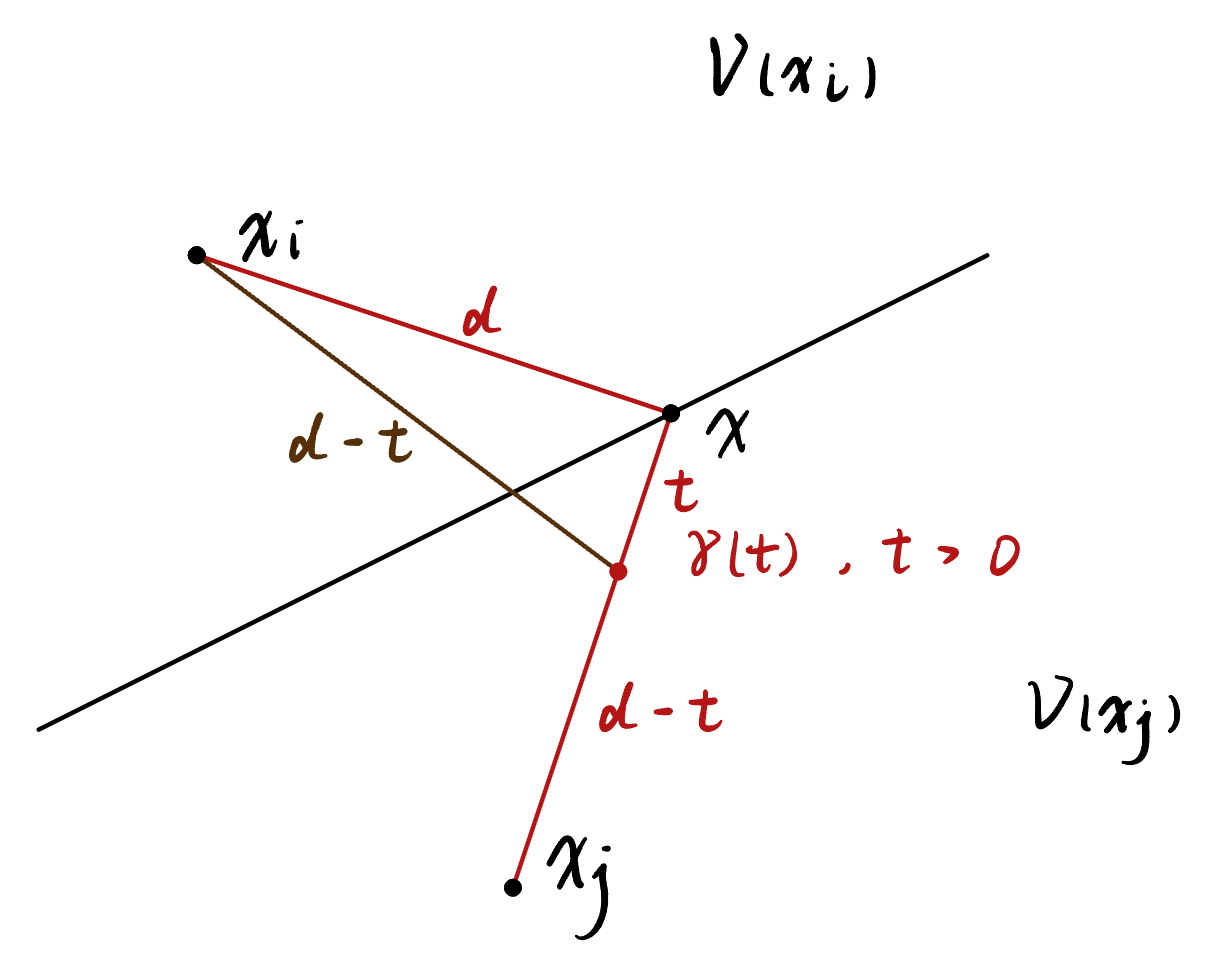}\caption{Geodesics are branching at $\gamma(t)$}
			\label{fig: branch at gamma(t)}
        \end{figure}
        Next we claim that if $g_i(t) = g_j(t)$ then $t = 0$. Suppose this is not true, i.e. $g_i(t) = g_j(t)$ for some $t > 0$. By above this implies  $g_j(0) = g_i(0)$, namely, $d^X(x, x_i) = d^X(x, x_j)$ and $x \in V(x_i)\cap V(x_j)$. By our assumption, we also know that $d^X(\gamma(t), x_i) = d^X(\gamma(t), x_j) = d - t$. This makes the length of the hinge $[\gamma(t)_{x}^{x_j}]$ equals to $d$ and thus both $[\gamma(t)_{x}^{x_j}]$ and $[xx_i]$ are geodesics from $x$ to $x_i$. However, we notice that this means the geodesics $[\gamma(t)_{x}^{x_j}]$ and $[xx_j]$ branching at $\gamma(t)$ as in Figure~\ref{fig: branch at gamma(t)}, which is impossible by the above Lemma~\ref{lem: non-branching alex}. 
    \end{proof}
\subsection{Volume Comparison for Finite Set}
In this section, we claim that the theorem~\ref{thm: BG-comparison by Grove} holds in the case when $A$ is finite. Namely, we need to show $f(R)$ in \ref{eq:GBG-comparison} is non-increasing. 

        By the fact \ref{fact: Voronoi disjoint upto measure 0}, we can essentially think of $B_R(A)$ as a  disjoint union decomposed by Voronoi sets upto measure $0$, i.e.
        \begin{equation*}
            B_R(A) = B_R(\br{x_1, \dots, x_n}) = \stackrel{\textbf{Ess}}{\bigcup}^n_{i = 1}B_R(A)\cap V(x_i). 
        \end{equation*} 
        That implies that 
        \begin{equation*}
            \vol(B_R(A)) = \sum_{i = 1}^n \vol(B_R(A)\cap V(x_i))
        \end{equation*}
        Therefore, it is enough to prove that 
        \begin{equation*}
            f_i(R) := \frac{\vol(B_R(A)\cap V(x_i))}{\vol(B_R(\overline{p}))}
        \end{equation*}
        is non-increasing for each $i$.

        Now, we need to define our cut function. Fix a unit vector $v \in T_{x_i}M$, we denote $t_{\text{cut}}(x_i, v)$ the cut-time of the pair $(x_i, v)$. Then, for the unit speed geodesic
        \begin{equation*}
            \gamma_v(t) = \exp{(tv)}, 
        \end{equation*}
        and define the function
        \begin{equation*}
            c(v) = 
            \begin{cases}
                t_{\text{cut}}(x_i, v) \quad\text{if $\gamma_v(t_{\text{cut}}(x_i, v)v) \in V(x_i)$}\\
                \max{\br{d \in \R: \gamma_v(d) \in V(x_i)}}\quad\text{if $\gamma_v(t_{\text{cut}}(x_i, v)v) \notin V(x_i)$}
            \end{cases}
        \end{equation*}
        Thus it is always true that $c(v) \leq t_{\text{cut}}(x_i, v)$. Then we proceed  as in the proof of the original Bishop-Gromov comparison. For the geodesic $\gamma_v(t)$, we can consider parallel unit orthonormal vector fields $\br{v_1, \dots, v_{n - 1}}$ along $\gamma_v(t)$ perpendicular to $\gamma_v(t)$. Thus we can get the Jacobi fields $J_1(t), \dots, J_{n-1}$ along $\gamma_v(t)$ such that for each $i = 1, \dots, n-1$,
        \begin{equation*}
            J_i(0) = 0, \quad J_i'(0) = v_i.
        \end{equation*}
        Then we can define 
        \begin{equation*}
            j_v(t) := \det{(J_1(t), \dots, J_{n - 1}(t))}
        \end{equation*}
        in $v_1, \dots, v_{n - 1}$ basis. Now we redefine $j_v(t)$ using $c(v)$. Namely, 
        \begin{equation*}
            \hat{j}_v(t) = 
            \begin{cases}
                j_v(t) \quad\text{if $t < c(v)$};\\
                0 \quad \text{if $t \geq c(v)$}
            \end{cases}
        \end{equation*}
        
        Next, we just proceed the proof as in Bishop-Gromov comparsion. Consider the quantity $\frac{\hat{j}_v(t)}{\overline{j}(t)}$, where $\overline{j}(t)$ is the corresponding function in the model space ($\overline{j}(t) = (\sn_\kappa(t))^{n - 1}$). Notice the quantity $\frac{\hat{j}_v(t)}{\overline{j}(t)}$ is non-increasing because we can apply the same proof to the star-shaped region $V(x_i)$. Therefore, we get
        \begin{equation*}
            \vol(B_R(A) \cap V(x_i)) = \int_{S^n}\brac{\int_0^{R}\hat{j}_v(t)dt}d\vol^{n - 1}.
        \end{equation*}
        Meanwhile, as in Bishop-Gromov comparison, this is divided by 
        \begin{equation*}
            \vol(B_R(\overline{p})) = \int_{S^n}\brac{\int_0^{R}\overline{j}(t)dt}d\vol_{n - 1}.
        \end{equation*}
        Thus we obtain the monotonicity by the same argument as in the original proof of Bishop-Gromov.
\subsection{Approximation Arguments by Finite Sets}
    As we mentioned in the beginning, we can pick denser and denser subsets so that the theorem \ref{thm: BG-comparison by Grove} is also true. For compact set $A$, we can pick a sequence of finite sets $A_n$ such that the Hausdorff distance
    \begin{equation*}
    	d_H(A_n, A) \to 0
    \end{equation*}
    as $n \to \infty$. This is possible since for any $\varepsilon > 0$, we can find a finite $\varepsilon$-net $A_n$ in $A$. 
    
    And thus, point-wisely, 
    \begin{equation*}
    	\vol(B_R(A_n)) \to \vol(B_R(A))
    \end{equation*}
    Hence, we have the point-wise convergence, 
    \begin{equation*}
    	f_n(R) = \frac{\vol(B_R(A_n))}{\vol(B_R(\overline{p}))} \to \frac{\vol(B_R(A))}{\vol(B_R(\overline{p}))} = f(R)
    \end{equation*}
    Thus, we can generalize the theorem~\ref{thm: BG-comparison by Grove} from finite set to compact set since  monotonicity is preserved under point-wise convergence.

\section{$\varepsilon$-Critical Points Theory}

From our new volume comparison theorem~\ref{thm: BG-comparison by Grove}, we have the following corollary which particularly estimates the volume of  small annuli in a sphere. This corollary is important for proving the theorem~\ref{thm: cannot be too closed}. And this theorem is essential for the Grove-Peterson theorem. 
\subsection{Volume Estimates for Small Annulus.}
\begin{corollary}\label{cor: annulus of closed set}
    Let $A \subseteq S^n$ be a closed subset. And denote $\an_{S^n}(A, r, R)$ the annulus in $S^n$ with respect to the set $A$, i.e. 
    \begin{equation*}
        \an_{S^n}(A, r, R) = \br{x \in S^n: r \leq d(x, A) \leq R}. 
    \end{equation*}
     Then for sufficiently small $\eps > 0$ it holds that
    \begin{equation*}
        \vol(\an_{S^n}(A, \frac{\pi}{2} - \eps, \frac{\pi}{2} + \eps)) \leq c(n)\cdot\eps
    \end{equation*}
    where $c(n)$ is some universal constant depends on $n$. 
\end{corollary}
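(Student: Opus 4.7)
The plan is to express the annulus as a difference of two tubular neighborhoods of $A$ and then apply Theorem~\ref{thm: BG-comparison by Grove} to $S^n$, which has $\ric = (n-1)g$, i.e.\ $\kappa = 1$, against a single reference point $\bar p \in S^n$. First I would note that
$$\an_{S^n}(A, \tfrac{\pi}{2}-\eps, \tfrac{\pi}{2}+\eps) \;\subseteq\; B_{\pi/2+\eps}(A) \setminus B_{\pi/2-\eps}(A)^{\circ},$$
so the volume in question is bounded above by $\vol(B_{\pi/2+\eps}(A)) - \vol(B_{\pi/2-\eps}(A))$.

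Next, applying the Grove-Bishop-Gromov monotonicity \eqref{eq:GBG-comparison} at the two radii $R_1 = \pi/2 - \eps$ and $R_2 = \pi/2 + \eps$, I get
$$\vol(B_{R_2}(A)) \leq \vol(B_{R_1}(A)) \cdot \frac{\vol(B_{R_2}(\bar p))}{\vol(B_{R_1}(\bar p))},$$
and subtracting $\vol(B_{R_1}(A))$ from both sides yields
$$\vol(B_{R_2}(A)) - \vol(B_{R_1}(A)) \;\leq\; \vol(B_{R_1}(A)) \cdot \frac{\vol(B_{R_2}(\bar p)) - \vol(B_{R_1}(\bar p))}{\vol(B_{R_1}(\bar p))}.$$
This is the key identity: subtracting the Bishop-Gromov inequality from itself at two nearby radii produces a one-sided bound on the annular volume in $A$ in terms of the annular volume of an ordinary geodesic ball around $\bar p$.

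Finally, on $S^n$ the reference ball volumes are explicit: $\vol(B_R(\bar p)) = \omega_{n-1}\int_0^R \sin^{n-1}(t)\,dt$, where $\omega_{n-1}$ denotes the volume of the standard $(n-1)$-sphere. The numerator satisfies
$$\vol(B_{\pi/2+\eps}(\bar p)) - \vol(B_{\pi/2-\eps}(\bar p)) \;=\; \omega_{n-1}\!\int_{\pi/2-\eps}^{\pi/2+\eps}\sin^{n-1}(t)\,dt \;\leq\; 2\omega_{n-1}\eps,$$
using $\sin(t) \leq 1$, and this is where the linear dependence on $\eps$ originates. The denominator is bounded below by $\vol(B_{\pi/4}(\bar p)) > 0$ as soon as $\eps \leq \pi/4$, and trivially $\vol(B_{R_1}(A)) \leq \vol(S^n)$. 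Combining these estimates yields
$$\vol(\an_{S^n}(A, \tfrac{\pi}{2}-\eps, \tfrac{\pi}{2}+\eps)) \;\leq\; \frac{2\omega_{n-1}\vol(S^n)}{\vol(B_{\pi/4}(\bar p))}\cdot \eps \;=:\; c(n)\cdot \eps.$$
There is no real obstacle here; the only point to watch is that $\bar p$ is chosen once and for all in $S^n$ itself (not in an abstract model space), so that ``$B_R(\bar p)$'' on both sides refers to the same geodesic balls in $S^n$, and the inequality becomes the desired linear-in-$\eps$ estimate with the constant $c(n)$ depending only on the dimension.
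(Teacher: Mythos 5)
Your argument is essentially the paper's: both apply the Grove--Bishop--Gromov monotonicity at the two radii $\pi/2\pm\eps$, rearrange to bound the annular volume around $A$ by a bounded multiple of the annular volume around a single reference point $\bar p$, and then observe the latter is $O(\eps)$. The only cosmetic difference is that you compute $\vol(\an_{S^n}(\bar p,\pi/2-\eps,\pi/2+\eps))$ by the explicit integral $\omega_{n-1}\int_{\pi/2-\eps}^{\pi/2+\eps}\sin^{n-1}t\,dt$ and spell out the constant $c(n)$, whereas the paper identifies this region geometrically as the $\eps$-neighborhood of an equator; these are the same estimate.
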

\begin{proof}[Proof of Corollary~\ref{cor: annulus of closed set}]
    This corollary is easily followed by the Bishop-Gromov volume comparison. According to the Bishop-Gromov's conparison, if $r < R$, then 
    \begin{align*}
        & \frac{\vol(B_r(A))}{\vol(B_R(A))} \geq \frac{\vol(B_r(\overline{p}))}{\vol(B_R(\overline{p}))}\\
        \iff & 1 - \frac{\vol(B_r(A))}{\vol(B_R(A))} \leq 1 - \frac{\vol(B_r(\overline{p}))}{\vol(B_R(\overline{p}))}
    \end{align*}
    Notice the LHS of the inequality above,
    \begin{align*}
        &1 - \frac{\vol(B_r(A))}{\vol(B_R(A))}\\
        =& \frac{\vol(B_R(A)) - \vol(B_r(A))}{\vol(B_R(A))}\\
        =& \frac{\vol(\an_{S^n}(A, r, R))}{\vol(B_R(A))}
    \end{align*}
    Similar things can be applied to the RHS. Therefore,
    \begin{align*}
        &\frac{\vol(\an_{S^n}(A, r, R))}{\vol(B_R(A))} \leq \frac{\vol(\an_{S^n}(p, r, R))}{\vol(B_R(\overline{p}))}\\
        \iff & \vol(\an_{S^n}(A, r, R)) \leq \frac{\vol(B_R(A))}{\vol(B_R(\overline{p}))}\cdot\vol(\an_{S^n}(\overline{p}, r, R))
    \end{align*}
    If $r = \frac{\pi}{2} - \eps$, $R = \frac{\pi}{2} + \eps$, $A \subseteq S^n$, $p \in S^n$. Then $\an_{S^n}(\overline{p}, \frac{\pi}{2} - \eps, \frac{\pi}{2} + \eps)$ is just the $\eps$-neighborhood of the equator of $S^n$. 
    \begin{figure}[htbp]
        \centering
            \includegraphics[width=0.6\textwidth]{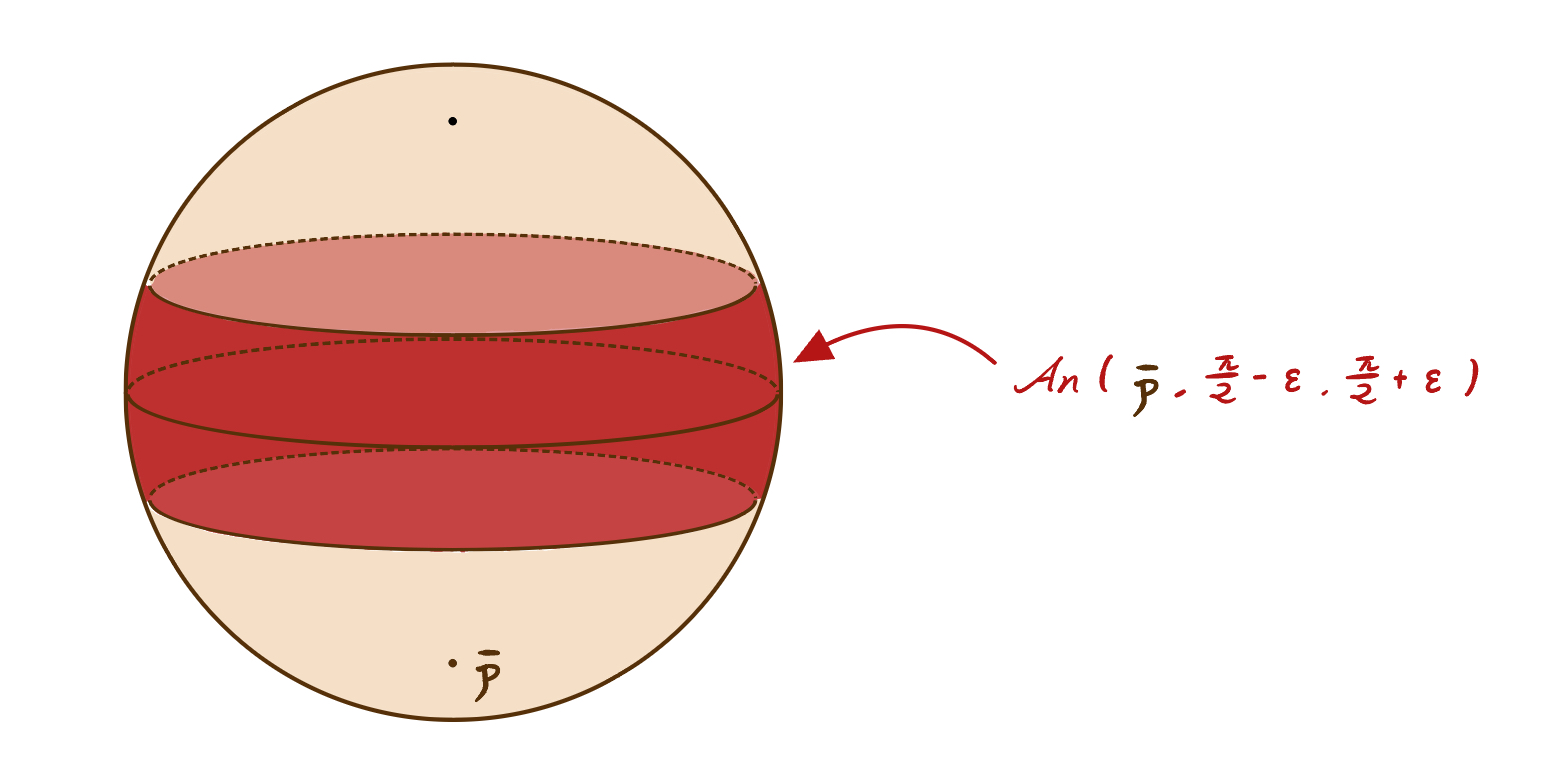}\caption{$\an_{S^n}(\overline{p}, \frac{\pi}{2}- \eps, \frac{\pi}{2}+ \eps)$ is the $\eps$-neighborhood of the equator of $S^n$.}
        \end{figure}
        Therefore, the volume of the $\eps$-neighborhood is $\leq c(n)\cdot \eps$. Therefore, it follows that  
        \begin{align*}
            &\vol(\an_{S^n}(A, \frac{\pi}{2} - \eps, \frac{\pi}{2} + \eps)) \\
            &\leq \frac{\vol(B_R(A))}{\vol(B_R(\overline{p}))}\cdot\vol(\an_{S^n}(p, \frac{\pi}{2} - \eps, \frac{\pi}{2} + \eps))\\
            &\leq c(n)\cdot\eps
        \end{align*}
\end{proof}
\subsection{Introduction to $\varepsilon$-Critical Points}

We will need the following relaxation of the notion of a critical point of  a distance function.

\begin{definition}[$\varepsilon$-Critical Point]\label{def: e-crit-pt}
    Let $(M^n, g)$ be a compact Riemannian manifold. Let $p, q \in M^n$ and let $\eps > 0$. Then $q$ is called \textbf{$\eps$-critical for $f^p := d(\cdot, p)$}, if 
    \begin{equation*}
        df^p_q(v) \leq \eps, \quad\text{for all unit $v \in T_qM$}
    \end{equation*}
    In particular, $q$ is just a critical point for $f$ when $\eps = 0$ as we commonly defined. 

    For general semiconcave function $f$ on $(M^n, g)$, let $\eps > 0$ and let $q \in M^n$, we say \textbf{$f$ is $\eps$-critical at $p$} if $df_q(v) \leq \eps$ for all unit $v \in T_qM$. Similarly, \textbf{$f$ is $\eps$-regular at $q$} is $df_q(v) > \eps$ for all unit  $v \in T_qM$.  
\end{definition}
Now by the first variation formula, consider all the shortest geodesics starting from $q$ to $p$ and fix a unit vector $v$ at $q$. Denote 
\begin{equation*}
    \alpha := \mangle(\Uparrow_q^p, v):= \min_{u \in \Uparrow_q^p}\br{\mangle(u, v)}
\end{equation*}
so $df^p_q(v) = -\cos{\alpha}$. Then we can see that $q$ is $\eps$-critical if
\begin{equation*}
    -\cos{\alpha} \leq \eps \iff \alpha \leq \frac{\pi}{2} + \eps. 
\end{equation*}

\begin{figure}[htbp]
        \centering
            \includegraphics[width=0.6\textwidth]{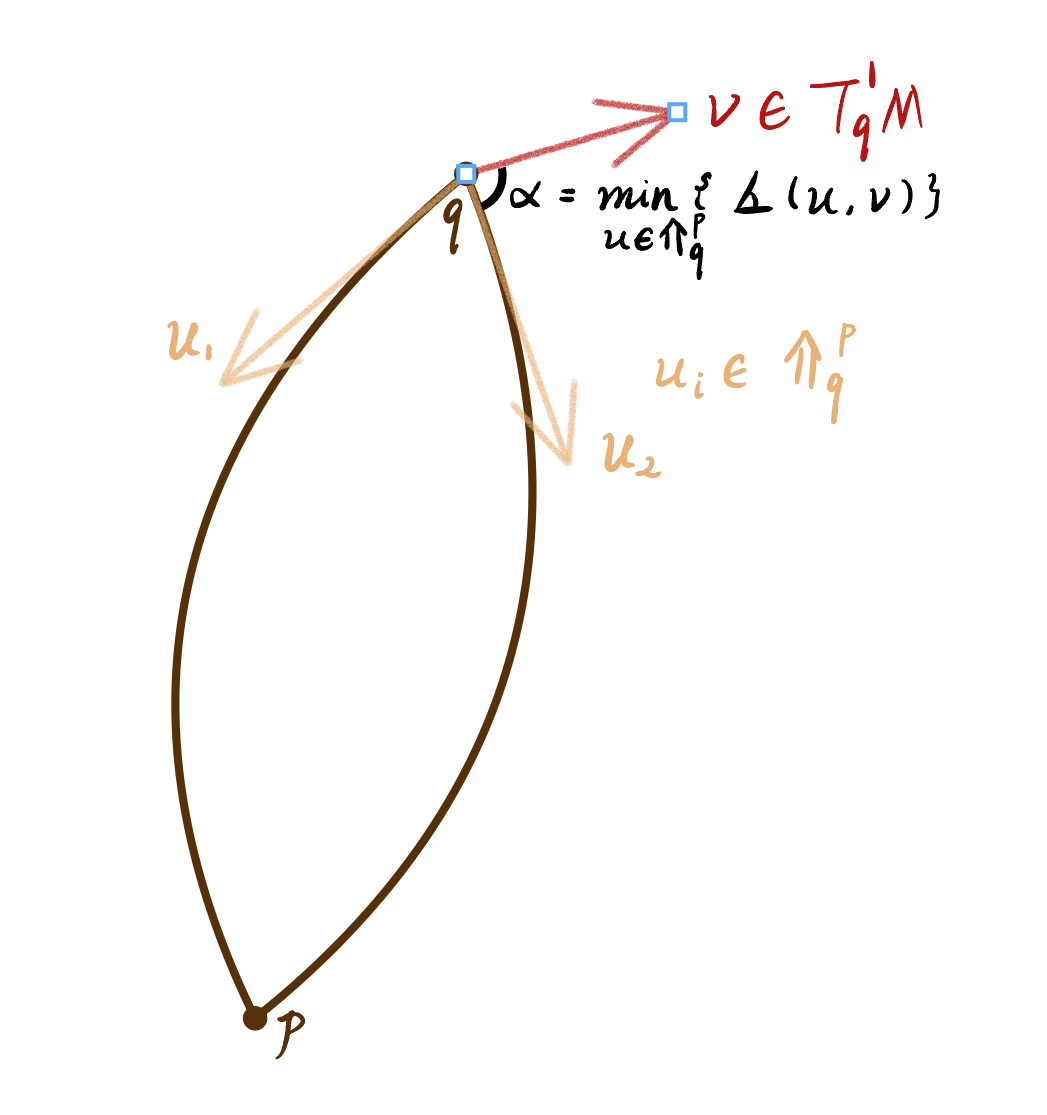}\caption{Equivalent definition of an $\eps$-critical point [Definition \ref{def: equi-e-crit-pt}].}
        \end{figure}
This gives us the following equivalent definition of an  $\eps$-critical point. 
\begin{definition}[Equivalent Definition of $\eps$-Critical Point]\label{def: equi-e-crit-pt}
    Let $(M^n, g)$ be a compact Riemannian manifold. Let $p, q \in M^n$ and let $\eps > 0$. Then $q$ is \textbf{$\eps$-critical for $f^p := d^M(\cdot, p)$} if for any $v$-unit vector $v$ at $q$, there exists $u \in \Uparrow_q^p$ such that 
    \begin{equation*}
        \mangle(u, v) \leq \frac{\pi}{2} + \eps
    \end{equation*}
\end{definition}
The following example shows that if there is no lower volume bound mutually $\eps$-critical points can be very close to each other.
\begin{example}
    Consider $S_\eps^n$ a sphere of radius $\eps$, then any pairs of points $p$ and $q$ that are opposite to each other are also $\eps$-critical to each other. 
\end{example}


After a brief discussion of $\eps$-critical points, we would like to use the following theorem to show how $\eps$-critical points are distributed in Riemannian manifolds. Indeed, we can show that if two points are mutually $\eps$-critical to each other, then these points cannot be too close to each other. 
\begin{theorem}\label{thm: cannot be too closed}
    Fix $n \in \N$, $\kappa \in \R$ and $D, V > 0$. Consider the family of Riemannian manifolds
    \begin{equation*}
        M_{sec}(n, \kappa, D, V) = \br{(M^n, g): \sect_M \geq \kappa, \diam(M) \leq D, \vol(M) \geq V}
    \end{equation*}
    Then there exists $\eps = \eps(n, \kappa, D, V) > 0$ and $\delta = \delta(n, \kappa, D, V) > 0$ such that if $p, q \in M \in M_{sec}(n, \kappa, D, V)$ are mutually $\eps$-critical to each other, then 
    \begin{equation*}
        d(p, q) \geq \delta
    \end{equation*}
\end{theorem}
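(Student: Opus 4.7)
The plan is to argue by contradiction using Gromov--Hausdorff precompactness together with the tangent cone structure of the limit. Suppose the conclusion fails: then there exist sequences $M_i \in M_{sec}(n,\kappa,D,V)$ and pairs $p_i, q_i \in M_i$ that are mutually $\varepsilon_i$-critical with $\varepsilon_i \downarrow 0$ and $l_i := d(p_i,q_i) \downarrow 0$. By the precompactness of $M_{sec}(n,\kappa,D)$ in Gromov--Hausdorff topology (Corollary~\ref{cor: sec-precompact}), after passing to a subsequence we may assume $M_i \ghto X$ for some compact Alexandrov space $X$ of curvature $\geq \kappa$ with $\diam X \leq D$. The lower volume bound $V$ combined with the absolute form of Bishop--Gromov (Corollary~\ref{cor: bishop-gromov}) rules out collapse, so $\dim X = n$; since $l_i \to 0$, both $p_i$ and $q_i$ converge to the same point $p_\infty \in X$.

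To retain the information that $p_i$ and $q_i$ are distinct I rescale: set $\lambda_i = 1/l_i$, so that in $(\lambda_i M_i, p_i)$ the distance from $\hat p_i$ to $\hat q_i$ equals $1$ while the curvature bound becomes $\geq l_i^2 \kappa \to 0$. The relative Bishop--Gromov inequality (Theorem~\ref{thm:bishop-gromov}) yields a uniform lower bound on the volumes of balls of any fixed radius around $\hat p_i$, so the rescaled sequence is non-collapsing. After a diagonal argument I extract a pointed Gromov--Hausdorff limit $(\lambda_i M_i, \hat p_i) \pghto (Y, o)$, where $Y$ is a complete $n$-dimensional Alexandrov space of $\curv \geq 0$. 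Because rescalings of a GH-convergent sequence with uniform lower sectional curvature bound recover the tangent cone at the basepoint, $(Y,o)$ is isometric to the tangent cone $T_{p_\infty} X = C(\Sigma_{p_\infty})$ with $o$ the apex, and the sequence $\hat q_i$ subconverges to some $\hat q = (v_0, 1) \in Y$ at distance $1$ from $o$.

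The key step is then to push the mutual $\varepsilon_i$-criticality to the limit and extract a contradiction from the cone structure. Since $\varepsilon_i \to 0$ and angles are continuous under pointed GH convergence of Alexandrov spaces, the limit point $\hat q$ must be $0$-critical for $f^o := d(\cdot, o)$ in $Y$: for every unit $w \in \Sigma_{\hat q} Y$ there exists $u \in \Uparrow_{\hat q}^o$ with $\mangle(u,w) \leq \pi/2$. However, in the cone $C(\Sigma_{p_\infty})$ the unique shortest geodesic from $\hat q = (v_0,1)$ to $o$ is the cone ray, so $\Uparrow_{\hat q}^o = \{u_0\}$ consists of a single inward radial direction. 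The outward radial direction $u_0^{\mathrm{op}}$ always lies in $\Sigma_{\hat q} Y$ (the cone extends past $\hat q$ to infinity) and satisfies $\mangle(u_0, u_0^{\mathrm{op}}) = \pi$. Taking $w = u_0^{\mathrm{op}}$ would then force $\mangle(u_0, u_0^{\mathrm{op}}) \leq \pi/2$, a contradiction.

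The main obstacle will be the careful handling of the pointed Gromov--Hausdorff blow-up: identifying the rescaled limit $(\lambda_i M_i, \hat p_i)$ with the tangent cone $T_{p_\infty} X$ (not merely with \emph{some} nonnegatively curved Alexandrov space), and verifying that the sets of initial directions $\Uparrow_{\hat q_i}^{\hat p_i}$ converge, in an appropriate Hausdorff sense inside the spaces of directions $\Sigma_{\hat q_i}(\lambda_i M_i)$, to $\Uparrow_{\hat q}^o \subseteq \Sigma_{\hat q} Y$. Both statements are standard in Alexandrov geometry but must be invoked carefully; together they guarantee that the $\varepsilon_i$-critical condition is preserved (with $\varepsilon$ passing to $0$) in the limit, which is what the contradiction above requires.
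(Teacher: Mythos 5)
Your strategy is genuinely different from the paper's. The paper gives a direct, quantitative argument entirely inside a single smooth manifold $M$: assuming $p,q$ mutually $\eps$-critical with $d(p,q)<\eps^3$, it shows via Toponogov and the Grove--Bishop--Gromov annulus estimate (Corollary~\ref{cor: annulus of closed set}) that every initial direction $\uparrow_p^x$ for $x$ outside $B_\eps(p)$ lies in a thin spherical shell around $\Uparrow_p^q$, and then integrates to get $\vol(M)<V$, a contradiction. Your argument instead rescales and passes to a blow-up limit, which is a legitimate and appealing strategy, but it relies on a claim that is false.

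The gap is the sentence ``Because rescalings of a GH-convergent sequence with uniform lower sectional curvature bound recover the tangent cone at the basepoint, $(Y,o)$ is isometric to the tangent cone $T_{p_\infty}X$.'' The true statement is that for a \emph{fixed} Alexandrov space $X$ one has $(\lambda X, p)\pghto (T_pX, o)$ as $\lambda\to\infty$; it is not true that $(\lambda_i M_i, p_i)\pghto (T_{p_\infty}X, o)$ when $M_i\ghto X$ and $\lambda_i\to\infty$ along an unrelated sequence. The rescaling factor $\lambda_i = 1/l_i$ is dictated by the critical pair, not chosen to be compatible with the rate of GH convergence, and the blow-up limit can be essentially anything: for instance, take $M_i=X$ a round $S^n$ with a smooth, positively curved bump of characteristic scale $l_i$ glued in near $p_\infty$; then $M_i\ghto X$, yet $(\lambda_i M_i,\hat p_i)$ converges to a paraboloid-like surface, not to $T_{p_\infty}X=\R^n$. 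So all you actually know is that $Y$ is a complete noncompact $n$-dimensional Alexandrov space of $\curv\ge 0$ with Euclidean volume growth $\haus^n(B_R(o))\ge cR^n$. Your contradiction, however, uses the cone structure essentially twice: once to see that $\Uparrow_{\hat q}^o$ is a single radial direction, and once to see that the outward radial direction $u_0^{\mathrm{op}}$ exists and makes angle $\pi$ with it. Neither holds for a general such $Y$. And this is not a cosmetic issue: without \emph{some} further structural input there do exist noncompact $\curv\ge 0$ spaces with full dimension containing critical points of distance functions at positive distance (e.g.\ a cylinder $Z\times\R$ with $Z$ compact has mutually critical pairs, and is only excluded here by the Euclidean volume growth, which your argument never invokes past the bare non-collapsing). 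To repair the proof one would need to prove, as a standalone lemma, that in a complete noncompact $n$-dimensional Alexandrov space of $\curv\ge 0$ with $\haus^n(B_R(o))\ge cR^n$, no point $\hat q\ne o$ is a critical point of $d(\cdot,o)$ --- this is nontrivial and amounts to most of the content of the theorem. The paper's direct volume estimate avoids the entire difficulty.
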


\begin{proof}[Proof of Theorem~\ref{thm: cannot be too closed}]
    Assume for simplicity $\kappa = 0$ and $\eps > 0$ sufficiently small and take $\delta = \eps^3$. Suppose $p, q \in M$ are mutually $\eps$-critical to each other and $d(p, q) < \eps^3$. We want to show that this implies the volume of the manifolds is small, i.e. 
    \begin{equation*}
        \vol(M) < V
    \end{equation*}
  which leads to a  contradiction. 

    Since $d(p, q) < \eps^3$ and $\eps^3 << \eps$, $q \in B_{\eps}(p)$. By Bishop-Gromov volume comparison,
    \begin{equation}\label{eq: vol est for eps small ball}
        \vol(B_\eps(p)) \leq \omega_n\eps^n
    \end{equation}
     We want to estimate the volume of $M\backslash B_{\eps}(p)$. Fix $x \in M\backslash B_{\eps}(p)$. Then we have
     \begin{align*}
         d(x, p)& \geq \eps\\
         d(x, q)& \geq \eps - \eps^3
     \end{align*}
  
     \begin{figure}[htbp]
        \centering
            \includegraphics[width=0.9\textwidth]{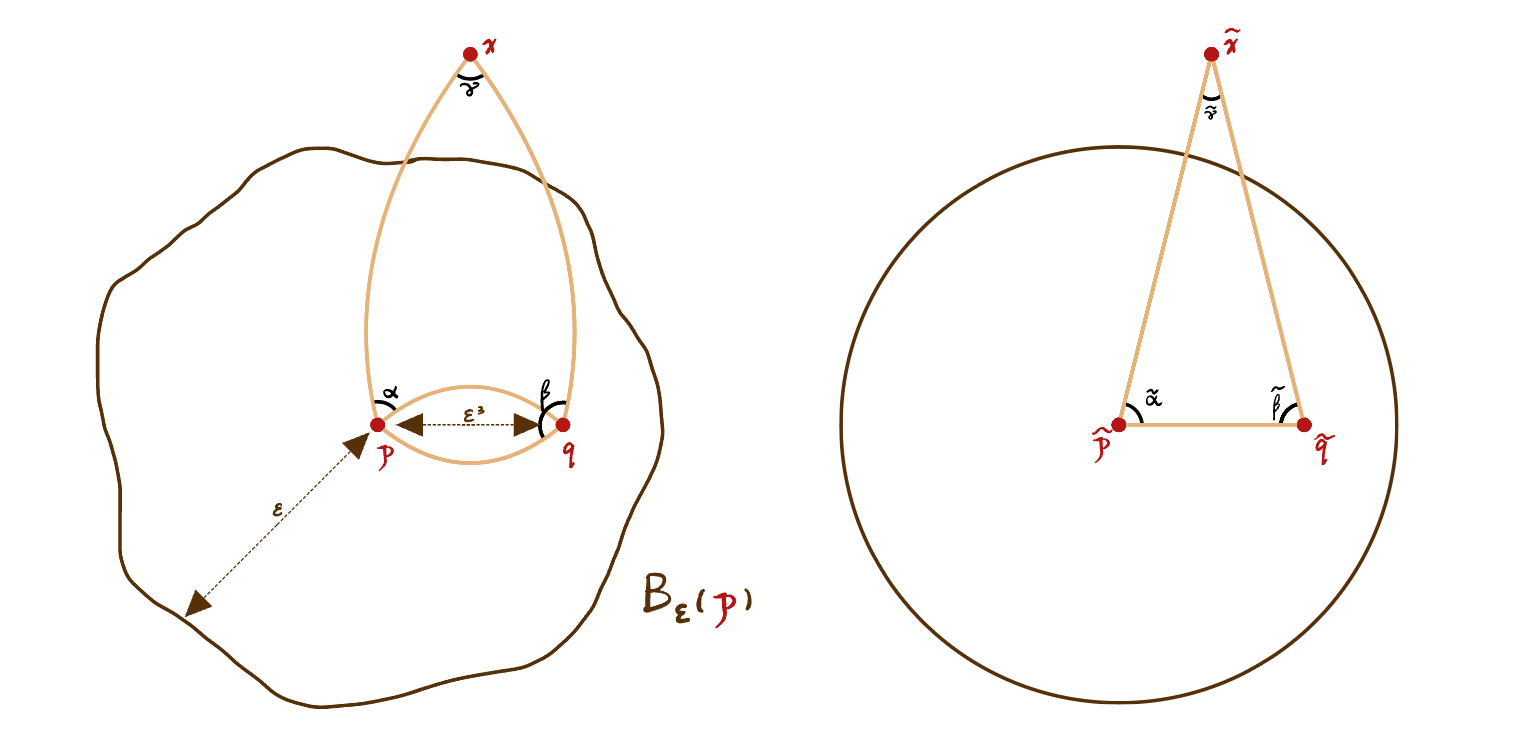}
    \end{figure}
    
    Connecting $p, q$ using shortest geodesics (May not be unique). We take $\alpha = \mangle(\Uparrow_q^p, d(p, x))$, $\beta = \mangle(\Uparrow_p^q, d(q, x))$ and $\gamma = \mangle(x_q^p)$ and denote $\Tilde{\alpha}, \Tilde{\beta}, \Tilde{\gamma}$ be the corresponding angles in the model space $\R^n$. Moreover, we denote $\Tilde{x}, \Tilde{p}, \Tilde{q}$ the corresponding points of $x, p, q$ in the model space $\R^n$ by requiring 
    \begin{align*}
         d(\Tilde{x}, \Tilde{q}) = d(x, q) > \eps - \eps^3\\
         d(\Tilde{x}, \Tilde{p}) = d(x, p) > \eps\\
         d(\Tilde{p},\Tilde{q})  = \d(p, q) \leq \eps^3
     \end{align*}
    Therefore, by the trigonometry in the Euclidean space, we have
    \begin{equation*}
        \Tilde{\gamma} \leq c\cdot \eps^2
    \end{equation*}
    for some constant $c$. By Toponogov comparison, we have 
    \begin{align*}
    &\begin{cases}
        \Tilde{\alpha} \leq \alpha \leq \frac{\pi}{2} + \eps\\
        \Tilde{\beta} \leq \beta \leq \frac{\pi}{2} + \eps
    \end{cases}\\
    \implies & \Tilde{\alpha} + \Tilde{\beta} \leq \pi + 2\eps
    \end{align*}
    On the other hand, since $\Tilde{\alpha} + \Tilde{\beta} + \Tilde{\gamma} = \pi \iff \Tilde{\alpha} + \Tilde{\beta} = \pi - \Tilde{\gamma}$, 
    \begin{equation*}
        \Tilde{\alpha} + \Tilde{\beta} \geq \pi - c\cdot\eps^2
    \end{equation*}
    Therefore, we can conclude that 
    \begin{equation*}
        \pi - c \cdot\eps^2 \leq \Tilde{\alpha} + \Tilde{\beta} \leq \pi + 2\eps
        \implies 
        \begin{cases}
            \Tilde{\alpha} \geq \frac{\pi}{2} - 2\eps\\
            \Tilde{\beta} \geq \frac{\pi}{2} - 2\eps        \end{cases}
    \end{equation*}
    then
    \begin{align*}
        \frac{\pi}{2} - 2\eps \leq \Tilde{\alpha} \leq \alpha \leq \frac{\pi}{2} + \eps\\
        \frac{\pi}{2} - 2\eps \leq \Tilde{\beta} \leq \beta\leq \frac{\pi}{2} + \eps
    \end{align*}
    Let $v := \uparrow_p^x$ be the unit vector at $p$ along the geodesic $[px]$ then by the inequalities above, we have
    \begin{equation*}
        \frac{\pi}{2} - 2\eps \leq \mangle(v, \Uparrow_p^q) \leq \frac{\pi}{2} + \eps
    \end{equation*}
    Recall the corollary \ref{cor: annulus of closed set}, there exists $c(n)$ such that for any closed subset $A \subseteq S^m$, we have
    \begin{equation*}
        \vol_m(\an(A, \frac{\pi}{2} - \eps, \frac{\pi}{2} + \eps)) \leq \eps\cdot c(m)
    \end{equation*}
    In our case, since we can think of unit vectors as elements of a unit sphere and have angular distance, we have 
    \begin{align*}
        &\frac{\pi}{2} - 2\eps \leq \mangle(v, \Uparrow_p^q) \leq \frac{\pi}{2} + \eps\\
        \implies & v = \uparrow_p^x \in \an(\Uparrow_p^q, \frac{\pi}{2} - 2\eps, \frac{\pi}{2} + \eps)
    \end{align*}
    where
    \begin{equation*}
        \vol_{n - 1}(\an(\Uparrow_p^q, \frac{\pi}{2} - 2\eps, \frac{\pi}{2} + \eps)) \leq \eps\cdot c(n - 1)
    \end{equation*}
    Recall by Rauch or Toponogov theorem, consider 
    \begin{equation*}
        \exp_p: T_pM \to M
    \end{equation*}
    and we choose $C_p \subseteq T_pM$ a set inside cut-locus. Then on $C_p$, $\exp_p$ is $1$-Lipschitz.  Therefore, it does not increase volume, i.e.
    \begin{equation*}
        \vol_n(\exp_p(U)) \leq \vol_n(U) , \quad\forall U \subseteq C_p
    \end{equation*}
    But $M^n \subseteq B(p, D)$ since $\diam(M) \leq D$. Also, since $B_D(0) \subseteq T_pM$, we can express each elements $u\in (B_D(0) \cap C_p) \backslash B_\eps(0)$ in polar coordinates, i.e. $u = (r, v)$ where $v$ is an unit vector. And $\eps \leq r \leq D$, $v \in \an(\Uparrow_p^q, \frac{\pi}{2} - 2\eps, \frac{\pi}{2}+ 2\eps) =: W$. Since $\vol_{n - 1}(W) \leq c(n) \cdot \eps$, 
    \begin{equation*}
        \vol_{n}((B_D(0)\cap C_p)\backslash B_\eps(0)) \leq \frac{D^n}{n}\cdot c(n)\cdot \eps
    \end{equation*}
    And again since $\exp_p$ is $1$-Lipschitz on this set. Then 
    \begin{equation*}
        \vol_n(B_D(p)\backslash B_\eps(p)) \leq D^n\cdot c(n)\cdot \eps
    \end{equation*}
    therefore, together with the inequality~\ref{eq: vol est for eps small ball}, we have 
    \begin{align*}
        \vol_n(M) 
        =& \vol_n(B_\eps(p)\cup B_D(p)\backslash B_\eps(p))\\
        \leq & \vol_n(B_\eps(p)) + \vol_n(B_D(p)\backslash B_\eps(p))\\
        \leq & \eps^n\cdot\omega_n + D^n\cdot c(n) \cdot \eps
    \end{align*}
    Finally, if $\eps$ is small enough so that the $RHS$ of the above inequality is $\leq V$, we can get the contradiction. Namely, if we take
    \begin{equation*}
        \eps < \frac{v}{\omega_n + D^n \cdot c(n)}
    \end{equation*}
    then $p, q$ are mutually $\eps$-critical and $d(p, q) \geq \eps^3$ which is just the contradiction.
\end{proof}
Let's use an example to understand the contradiction in this theorem \ref{thm: cannot be too closed} better. 
\begin{example}\label{ex: volume small}
    Let $M = S_\eps^n \times \R$. Let $D > 0$ be a fixed number so that $\vol(B(p, D)) \approx D \cdot \vol(S_\eps^n) = D\cdot \eps^n\cdot \omega_n$. Obviously, the volume of $B(p, D)$ is small if $\eps$ is small. Let $p, q$ be opposite points in $S^n$ which are certainly mutually $\eps$-critical to each other. The distance of $p$ and $q$ is $d(p, q) = \pi \eps$. 

    \begin{figure}[htbp]
        \centering
            \includegraphics[width=0.9\textwidth]{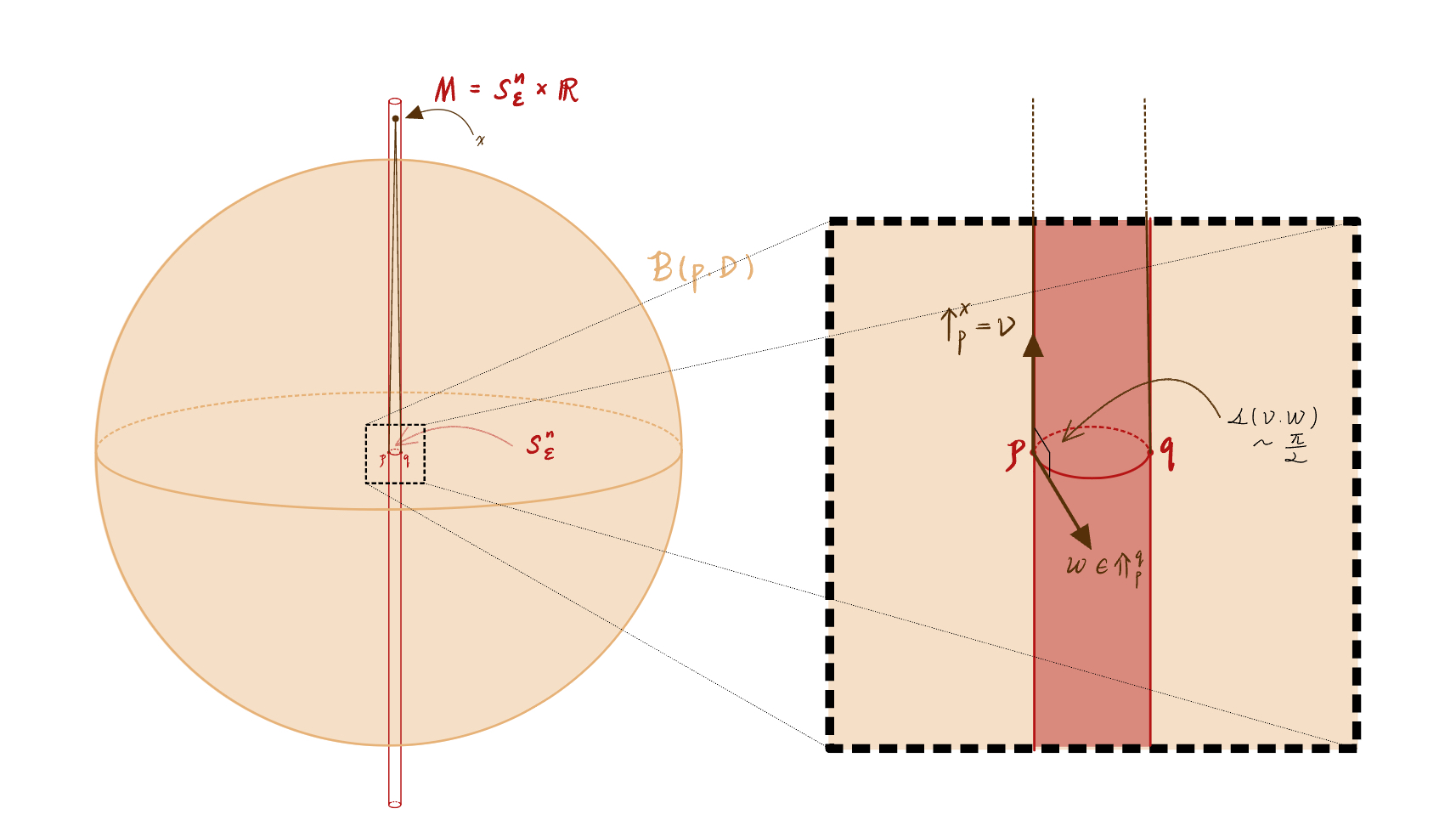}\caption{the points that are too close to each other makes the volume uniformly small [Example \ref{ex: volume small}]}
    \end{figure}
    
    Same as what we did in the proof of the theorem \ref{thm: cannot be too closed}, we pick $x \in M$ such that $\abs{xp} >> \eps$, then what happens is that for $v = \uparrow_p^x$, 
    \begin{equation*}
        \mangle (v, \Uparrow_p^q) \sim \frac{\pi}{2}
    \end{equation*}
    and this forces the volume of $B_D(p)$ to be uniformly small. 
\end{example}
\section{Homotopies and Diagonal}

\begin{notation}
	We define the notation for the distance function for the product of two spaces, i.e. for metric space $(X_1, d_1)$ and $(X_2, d_2)$, we can define and denote the distance function for $X_1\times X_2$ as
	\begin{equation*}
		d^{X_1\times X_2}((p_1, p_2), (q_1, q_2)) = \sqrt{d_1(p_1, q_1)^2 + d_2(p_2, q_2)^2} 
	\end{equation*}
\end{notation}

In case $(X,d)$ is equal to $M$ with a metric induced by a Riemannian metric $g$ on $M$ the distance $d^{X\times X}$ is induced by the Riemannian metric $g\times g$.
Also, note that if 
$(M, g)$ satisfies $\sect_M \ge \kappa$ then  $\sect_{M \times M} \geq \min\br{\kappa, 0}$
\begin{lemma}\label{lem: diagonal lemma}
	Let $(M, g)$ be a complete Riemannian manifold. Consider the product $M \times M$ with the product metric.
	
	 Take the diagonal
    \begin{equation*}
        \Delta(M) = \br{(x, x) \in M \times M: x \in M}. 
    \end{equation*}
    Then, for any $(p, q) \in M\times M$, we have
    \begin{equation*}
    	d^{M \times M}((p, q), \Delta(M)) = \frac{d^M(p, q)}{\sqrt{2}}.
    \end{equation*}
\end{lemma}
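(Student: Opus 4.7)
The plan is to reduce the claim to a direct optimization on $M$ and then use the midpoint property. By the definition of the product metric,
\[
d^{M\times M}((p,q), \Delta(M)) = \inf_{x\in M} \sqrt{d^M(p,x)^2 + d^M(q,x)^2},
\]
so it suffices to compute this infimum. My strategy is to establish matching upper and lower bounds, both equal to $d^M(p,q)/\sqrt{2}$.

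For the upper bound, I will exhibit an explicit minimizer. Since $(M,g)$ is complete, the Hopf--Rinow theorem (or equivalently the mid-point property discussed in Proposition~\ref{prop:mid-point geodesic}) supplies a midpoint $m$ of a shortest geodesic from $p$ to $q$, satisfying $d^M(p,m) = d^M(q,m) = d^M(p,q)/2$. Substituting this $x=m$ into the infimum gives $\sqrt{2\cdot (d^M(p,q)/2)^2} = d^M(p,q)/\sqrt{2}$, so the infimum is at most $d^M(p,q)/\sqrt{2}$ and is in fact attained at $(m,m)\in\Delta(M)$.

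For the lower bound, fix any $x\in M$ and set $a = d^M(p,x)$, $b = d^M(q,x)$. The triangle inequality on $M$ gives $a + b \ge d^M(p,q)$, and the elementary QM--AM inequality $a^2+b^2 \ge (a+b)^2/2$ yields
\[
d^M(p,x)^2 + d^M(q,x)^2 \;\ge\; \tfrac{1}{2}(a+b)^2 \;\ge\; \tfrac{1}{2}\,d^M(p,q)^2.
\]
Taking square roots and then the infimum over $x\in M$ gives $d^{M\times M}((p,q),\Delta(M)) \ge d^M(p,q)/\sqrt{2}$, completing the argument.

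There is no substantive obstacle here: the completeness hypothesis is used only to guarantee the existence of the midpoint realizing the upper bound, and the lower bound is purely metric. The slightly delicate point worth flagging is the correct normalization of the product metric; one must use the Euclidean (root-sum-of-squares) convention stated in the excerpt rather than, say, the sup or sum convention, since that is precisely what makes the factor $1/\sqrt{2}$ appear through QM--AM.
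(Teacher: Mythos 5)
Your proof is correct and follows essentially the same route as the paper's: the lower bound by combining the triangle inequality on $M$ with the QM--AM inequality (which the paper mislabels as AM--GM), and the upper bound by evaluating at the midpoint of a minimizing geodesic from $p$ to $q$. The only cosmetic difference is that you name the inequality correctly and organize the two bounds in the opposite order.
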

\begin{proof}
	Let $(p, q) \in M\times M$, we firstly notice that for any point $(y, y) \in \Delta(M)$, we have
	\begin{align*}
		d^{M\times M}((p, q), (y, y)) 
		=& \sqrt{d^M(p, y)^2 + d^M(q, y)^2}\\
		\geq & \frac{d^M(p, y) + d^M(q, y)}{\sqrt{2}} \quad\text{By AM-GM inequality}\\
		\geq & \frac{d^M(p, q)}{\sqrt{2}}.
	\end{align*}
	On the other hand, we can take $\gamma(t)$ the shortest geodesic from $p$ to $q$ in $M$ where $0 \leq t \leq 1$.

	And we pick the midpoint $x = \gamma(\frac{1}{2})$, so that
	\begin{align*}
		\brac{d^{M\times M}((p, q), (x, x))}^2 &= d^M(p, x)^2 + d^M(q, x)^2\\
		&= \brac{\frac{d^M(p, q)}{2}}^2 + \brac{\frac{d^M(p, q)}{2}}^2\\
		&= \frac{d^M(p, q)^2}{2}.
	\end{align*}
	Thus, 
	\begin{equation*}
		d^{M\times M}((p, q), (x, x)) = \frac{d^M(p, q)}{\sqrt{2}}.
	\end{equation*}
	This means this point admits the shortest distance from the diagonal to $(p, q)$. Indeed,	 we can construct the geodesic from $\Delta(M)$ to $(p, q)$ as $(\gamma_1(t), \gamma_2(t))$ where $t \in [0, \frac{1}{2}]$ such that 
    \begin{align*}
        \gamma_1(t) &= \gamma(t)\\
        \gamma_2(t) &= \gamma(t + \frac{1}{2}).
    \end{align*}
\end{proof}

\begin{remark}
    More generally, this corollary can be extended to any geodesic metric space $(X, d)$. 
\end{remark}

\begin{proposition}\label{prop: diagonal and e-crit}
Consider $(p, q) \in M \times M\setminus \Delta(M)$ such that $p$ is $\eps$-regular to $q$ or $q$ is $\eps$ regular to $p$ ($p$ and $q$ cannot be mutually $\eps$-critical to each other.)
\\
Let  $f = d^{M \times M}(\cdot, \Delta(M))$. 
Then
\begin{equation*}
    \abs{\nabla f_{(p, q)}} > \eps 
\end{equation*}   

\end{proposition}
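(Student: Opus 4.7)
The plan is to exhibit an explicit unit tangent vector $V \in T_{(p,q)}(M \times M)$ with $df_{(p,q)}(V) > \eps$, and then invoke the formula $|\nabla f_{(p,q)}| = \sup_{|V'|=1} df_{(p,q)}(V')$ coming from Theorem~\ref{thm: existence and the uniqueness of grad}. Without loss of generality assume $q$ is $\eps$-regular for $f^p := d(\cdot, p)$; by definition and the first variation formula (Theorem~\ref{thm: the first variation formula}), there is a unit $v \in T_qM$ with $\inp{u, v} < -\eps$ for every $u \in \Uparrow_q^p$.

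Next I would identify the closest points on $\Delta(M)$ to $(p,q)$ and the corresponding unit initial directions of the minimizing geodesics. From the proof of Lemma~\ref{lem: diagonal lemma}, these closest points are precisely $(x,x)$ with $x$ a midpoint of some shortest geodesic from $p$ to $q$, and the minimizing geodesic from $(p,q)$ to $(x,x)$ has the form $t \mapsto (\gamma_1(t), \gamma_2(t))$ with each factor running at speed $1/\sqrt 2$. Its unit initial vector is therefore
\[
U_x \;=\; \Bigl(\tfrac{\uparrow_p^x}{\sqrt 2},\; \tfrac{\uparrow_q^x}{\sqrt 2}\Bigr) \in T_pM \oplus T_qM,
\]
and crucially $\uparrow_q^x \in \Uparrow_q^p$, because the geodesic from $q$ through the midpoint $x$ continues on to $p$.

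Applying the first variation formula to $f = d^{M\times M}(\cdot, \Delta(M))$ at $(p,q)$ with the unit test vector $V := (0, v) \in T_pM \oplus T_qM$ then gives
\[
df_{(p,q)}(V) \;=\; \min_{x\text{ midpoint}} -\inp{U_x, V} \;=\; \min_{u \in \Uparrow_q^p} \frac{-\inp{u, v}}{\sqrt 2} \;=\; \frac{df^p_q(v)}{\sqrt 2} \;>\; \frac{\eps}{\sqrt 2}.
\]
Combined with the supremum characterization of $|\nabla f_{(p,q)}|$, this produces the claimed gradient bound, up to the intrinsic factor $1/\sqrt 2$ coming from the Pythagorean product metric, which can be absorbed by rescaling $\eps$ in the hypothesis.

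The main point to address is that the coupling in $U_x$ -- the directions at $p$ and at $q$ come from the \emph{same} shortest geodesic -- prevents independent optimization of the two components of $V$. The choice $V = (0, v)$ sidesteps this cleanly by exploiting only the $q$-side structure, which is precisely what the one-sided $\eps$-regularity hypothesis supplies. Any attempt to include a nonzero first component runs into an uncontrolled Cauchy--Schwarz bound on $\inp{\uparrow_p^x, w}$ for elements of $\Uparrow_p^q$, so the $\eps/\sqrt{2}$ bound is sharp in this approach absent a symmetric regularity assumption on $p$.
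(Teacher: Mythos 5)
Your argument follows the same route as the paper's proof: test the first variation of $f = d^{M\times M}(\cdot,\Delta(M))$ at $(p,q)$ against the unit vector $(0,v)$, where $v$ is the $\eps$-regular direction for $d(\cdot,p)$ at $q$, and use that the $q$-component of any unit direction in $\Uparrow_{(p,q)}^{\Delta(M)}$ lies along $\Uparrow_q^p$. You are in fact more precise than the paper, which simply asserts $df_{(p,q)}((0,v))\ge\eps$: as you correctly observe, the unit initial directions $(\uparrow_p^x/\sqrt{2},\,\uparrow_q^x/\sqrt{2})$ of shortest geodesics from $(p,q)$ to $\Delta(M)$ carry a $1/\sqrt{2}$ normalization from the product metric, so the bound one actually obtains is $|\nabla f_{(p,q)}|>\eps/\sqrt{2}$, which (as you note) is sharp under a one-sided regularity hypothesis and is harmless for the subsequent applications once $\eps$ is renamed.
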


\begin{proof}[Proof of Proposition\ref{prop: diagonal and e-crit}]
    Without the loss of generality, we assume $q$ being $\eps$-regular for $p$, then $\exists v \in T_qM$ such that
    \begin{equation}
        \mangle(\Uparrow_q^p, v) > \frac{\pi}{2} + \eps.
    \end{equation}
    Thus, if we take $w \in T_{(p, q)}M$, $w = (0, v)$. we have
    \begin{equation*}
        df_{(p, q)}(w) \geq \eps. 
    \end{equation*}
    Similarly for $p$ being $\eps$-regular for $q$, we take $w = (v, 0)$. 

    In either case, we can see that 
    \begin{equation*}
        \abs{\nabla f_{(p, q)}} \geq \eps
    \end{equation*}
\end{proof}
Combining the proposition \ref{prop: diagonal and e-crit} with the previous theorem \ref{thm: cannot be too closed}, we can conclude the following corollary.
\begin{corollary}\label{cor:eps-reg-diag}
    Let $M \in M_{sec}(n, \kappa, D, V)$. Let $\eps$, $\delta$ be given by Theorem~\ref{thm: cannot be too closed}, 
    \begin{align*}
        \eps &= \eps(n, \kappa, D, V);\\
        \delta &= \delta(n, \kappa, D, V)
    \end{align*}
    then $f = d^{M \times M}(\cdot, \Delta(M))$ is $\eps$-regular on the $\delta$-neighborhood of the diagonal minus the diagonal, i.e. 
    \begin{equation*}
        U_\delta(\Delta(M))\backslash \Delta(M)
    \end{equation*}
    and $\abs{\nabla f} \geq \eps$ on $U_\delta(\Delta(M))\backslash \Delta(M)$. 
\end{corollary}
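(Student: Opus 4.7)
The proof plan is to directly combine Lemma \ref{lem: diagonal lemma}, Theorem \ref{thm: cannot be too closed}, and Proposition \ref{prop: diagonal and e-crit}. The idea is that points close to the diagonal correspond to pairs of points close in $M$, and such pairs cannot be mutually $\eps$-critical by Theorem \ref{thm: cannot be too closed}, which feeds exactly into the hypothesis of Proposition \ref{prop: diagonal and e-crit}.

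More concretely, I would start by fixing an arbitrary $(p,q) \in U_\delta(\Delta(M))\setminus\Delta(M)$, so in particular $p\ne q$. By Lemma \ref{lem: diagonal lemma}, we have $d^{M\times M}((p,q),\Delta(M)) = d^M(p,q)/\sqrt{2}$, so the condition $(p,q)\in U_\delta(\Delta(M))$ translates into a bound of the form $d^M(p,q) < \sqrt{2}\,\delta$. One small bookkeeping point is that Theorem \ref{thm: cannot be too closed} is stated with $d^M(p,q) < \delta$, so in order to make the argument clean I would take the radius of the neighborhood to be $\delta/\sqrt{2}$ (equivalently, replace $\delta$ in the conclusion by $\delta/\sqrt{2}$; the hypotheses of Theorem \ref{thm: cannot be too closed} still supply the same $\eps$). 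With this choice, $(p,q) \in U_{\delta/\sqrt{2}}(\Delta(M))$ gives $d^M(p,q) < \delta$.

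Next, I would apply Theorem \ref{thm: cannot be too closed} in contrapositive form: since $0 < d^M(p,q) < \delta$, the pair $p,q$ cannot be mutually $\eps$-critical, so at least one of them is $\eps$-regular for the other. This is exactly the hypothesis required by Proposition \ref{prop: diagonal and e-crit}, whose conclusion then gives $|\nabla f_{(p,q)}| \geq \eps$, where $f = d^{M\times M}(\cdot,\Delta(M))$. Finally, by the defining property of the gradient of a semi-concave Lipschitz function (Definition \ref{def: grad of Lip-semi-concave}), taking $v = \nabla f_{(p,q)}/|\nabla f_{(p,q)}|$ yields $df_{(p,q)}(v) = |\nabla f_{(p,q)}| \geq \eps$, so $f$ is $\eps$-regular at $(p,q)$.

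There is essentially no obstacle; the content has already been built in the preceding two results. The only mildly subtle point is the factor of $\sqrt{2}$ coming from the product metric, which is handled by adjusting the radius of the neighborhood. It is also worth remarking (but not proving) that this corollary is the reason one works on $M\times M$ rather than on $M$: the squared distance to a single point can have large critical sets, but distance to the diagonal has none near $\Delta(M)$, which in the subsequent development allows one to use the gradient flow of $f$ to construct a homotopy between $\id_M$ and a map factoring through a lower-dimensional object, and thereby bound the number of homotopy types in $M_{sec}(n,\kappa,D,V)$.
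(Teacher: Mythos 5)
Your proof is correct and is exactly the argument the paper intends: it explicitly says the corollary follows by combining Theorem~\ref{thm: cannot be too closed} with Proposition~\ref{prop: diagonal and e-crit}, and offers no further details, so you have filled in precisely what is missing. Your observation about the factor of $\sqrt{2}$ from Lemma~\ref{lem: diagonal lemma} is a genuine (if small) sharpening: as stated, $(p,q)\in U_\delta(\Delta(M))$ only gives $d^M(p,q)<\sqrt{2}\,\delta$, which is not quite the hypothesis of Theorem~\ref{thm: cannot be too closed}, so either the radius should be $\delta/\sqrt{2}$ or the $\delta$ produced by that theorem should be rescaled; your fix is the right one and the paper silently elides it. One tiny remark: Proposition~\ref{prop: diagonal and e-crit} is stated with strict inequality $\abs{\nabla f_{(p,q)}}>\eps$ while its proof and the corollary use $\geq\eps$; this does not affect anything downstream, but it is worth being consistent about which version one needs.
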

This corollary allows us to construct a smooth unit gradient-like vector field $V$ on $U_\delta(\Delta(M))\backslash \Delta(M)$
such that for every $p \in U_\delta(\Delta(M))\backslash \Delta(M)$ and $v = V(p)$, we have
\begin{equation*}
    df_p(v) \geq \eps
\end{equation*}

Note that near the diagonal $\Delta(M)$, namely, inside the normal injective radius, we can just take $V = \nabla f$ which has length $1$. This is the region where the normal exponential map is diffeomorphism, we can just take unique shortest geodesics in the image of the exponential map. Then along integral curves of $v$, $f$ increases with speed $\geq \eps$.

Along integral curves of $-v$, $f$ decreases with speed at least $-\eps$, that means if we have a point $(p, q) \in U_\delta(\Delta(M))\backslash \Delta(M)$ and $\phi_t$ the backward flow of $-v$, then 
\begin{equation*}
    \phi_t(p, q) \in \Delta(M) \quad\text{for all $t \leq \frac{d^M(p, q)}{\eps}$}
\end{equation*}
which means the flow hits the diagonal. Therefore, we can conclude that $\phi_t$ is a deformation retraction from the neighborhood $U_\delta(\Delta(M))$ to $\Delta(M)$ and length of all integral curves $t \to \phi_t(p, q) = (\gamma_1(t), \gamma_2(t))$ starting at $(p, q)$ satisfies 
\begin{equation*}
    \length(\phi_t(p, q)) \leq \frac{d^M(p, q)}{\eps}.
\end{equation*}
Remember that $\gamma_1(0) = p, \gamma_1(1) = x$ and $\gamma_2(0) = q, \gamma_2(0) = x$ and $(x, x) \in \Delta(M)$. Thus we can construct a new curve $\gamma_{pq}(t)$, $t \in [0, 2]$ such that
\begin{equation*}
    \gamma_{pq}(t)
    \begin{cases}
        \gamma_1(t) \quad t \in [0, 1];\\
        \gamma_2(2 - t) \quad t \in [1, 2].
    \end{cases}
\end{equation*}
The family of curves $\br{\gamma_{pq}(t)}$, whose length is continuous with respect to the distance of $p, q$.  Given any pairs $p, q \in M$ such that $d^M(p, q) < \delta$, we have
\begin{equation*}
    \abs{\length(\gamma_{pq})} \leq \frac{2d^M(p, q)}{\eps}
\end{equation*}

\begin{corollary}\label{cor:short-curves}
Under the assumptions of Corollary~\ref{cor:eps-reg-diag} there is a continuous map $\gamma: U_\delta(\Delta M)\times[0,1]\to M$ such that for any $(p,q)\in  U_\delta(\Delta M)$ it holds that 
\begin{equation*}
    \abs{\length(\gamma_{pq})} \leq \frac{2d^M(p, q)}{\eps}
\end{equation*}
\end{corollary}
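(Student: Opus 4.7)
The plan is to build $\gamma$ directly from the backward flow of a unit gradient-like vector field for $f=d^{M\times M}(\cdot,\Delta M)$, and then reparametrize. By Corollary~\ref{cor:eps-reg-diag}, $f$ is $\eps$-regular on the open set $W:=U_\delta(\Delta M)\setminus\Delta M$. Applying Proposition~\ref{prop: gradient-like extension} in $M\times M$, I would extract a smooth unit gradient-like vector field $V$ on $W$ with $df(V)\ge \eps$ everywhere on $W$. Consider the backward flow $\phi_t$ generated by $-V$. Along any trajectory starting at a point $(p,q)\in W$ the function $f$ decreases with speed at least $\eps$, so by Lemma~\ref{lem: integral curve} the trajectory reaches the boundary value $f=0$, that is, hits $\Delta M$, at some time $T(p,q)\le f(p,q)/\eps$. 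By Lemma~\ref{lem: diagonal lemma}, $f(p,q)=d^M(p,q)/\sqrt{2}$, so
\begin{equation*}
T(p,q)\le \frac{d^M(p,q)}{\sqrt{2}\,\eps}.
\end{equation*}

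Next I would write $\phi_t(p,q)=(c_1(t),c_2(t))$ with $c_1,c_2\colon[0,T(p,q)]\to M$. By construction $c_1(0)=p$, $c_2(0)=q$, and $c_1(T(p,q))=c_2(T(p,q))=:x\in M$. Because $V$ is a unit vector field in the product metric, $|c_1'(t)|^2+|c_2'(t)|^2=1$, so each $c_i$ has speed at most $1$ and length at most $T(p,q)$. I then define the concatenation $\gamma_{pq}\colon[0,1]\to M$ by
\begin{equation*}
\gamma_{pq}(s)=
\begin{cases}
c_1(2sT(p,q)),& s\in[0,\tfrac12],\\[2pt]
c_2(2(1-s)T(p,q)),& s\in[\tfrac12,1].
\end{cases}
\end{equation*}
Then $\length(\gamma_{pq})=\length(c_1)+\length(c_2)\le 2T(p,q)\le \sqrt{2}\,d^M(p,q)/\eps\le 2d^M(p,q)/\eps$, which is the desired bound. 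On the diagonal I set $\gamma_{pp}\equiv p$.

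Continuity of $(p,q,s)\mapsto \gamma_{pq}(s)$ on $W\times[0,1]$ follows from smooth dependence of the flow $\phi_t$ on initial data together with continuity of the hitting time $T$ (standard, since $V$ is transverse to the level sets of $f$ and $|\nabla f|$ is bounded below by $\eps$ on $W$). The only delicate point, and in my view the main obstacle, is extending this continuously across the diagonal. Here $V$ is not defined and the flow picture breaks down, but the length estimate saves us: for $(p,q)\to (p_0,p_0)\in\Delta M$ both halves $c_1,c_2$ have total length tending to $0$, hence stay inside a ball of radius $O(d^M(p,q)/\eps)$ around $p_0$. Thus $\gamma_{pq}(s)\to p_0$ uniformly in $s$, matching the constant curve $\gamma_{p_0p_0}$. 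Combining the interior continuity from ODE theory with this uniform collapse near $\Delta M$ gives continuity of $\gamma$ on all of $U_\delta(\Delta M)\times[0,1]$, completing the construction.
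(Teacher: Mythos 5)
Your construction — backward flow of a unit gradient-like field for $f=d^{M\times M}(\cdot,\Delta M)$, hitting time bounded by $f(p,q)/\eps$, then concatenating the two coordinate projections — is precisely the argument the paper gives immediately after Corollary~\ref{cor:eps-reg-diag} (the paper parametrizes on $[0,2]$ and you on $[0,1]$, which is cosmetic). Your treatment of continuity across $\Delta M$ via the length estimate is slightly more explicit than the paper's, but it is the same idea, so the two arguments are essentially identical.
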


\begin{corollary}\label{cor: delta-closed to homotopy}
    For $M \in M_{sec}(n, \kappa, D, V)$ and $\eps$, $\delta$ as above, $(X, d)$ a metric space, $f, g: X \to M$ are $\delta$-close, i.e. $d(f(x), g(x)) < \delta$ for any $x \in X$. Then we can conclude that $f \stackrel{hmtp}{\sim} g$ (homotopic). 
\end{corollary}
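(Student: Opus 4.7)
The plan is to realize the homotopy as the composition $X \times [0,1] \xrightarrow{(f,g)\times \mathrm{id}} U_\delta(\Delta M) \times [0,1] \xrightarrow{\gamma} M$, using the short-curve family produced in Corollary~\ref{cor:short-curves}. Since $d(f(x), g(x)) < \delta$ for every $x \in X$, the pair map $(f,g)\colon X \to M \times M$ lands inside $U_\delta(\Delta M)$, so this composition is well defined.

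First I would define $H\colon X \times [0,1] \to M$ by $H(x,t) = \gamma_{(f(x),g(x))}(t)$. By construction of the curves $\gamma_{pq}$ in the paragraph preceding Corollary~\ref{cor:short-curves}, the endpoint conditions $\gamma_{pq}(0) = p$ and $\gamma_{pq}(1) = q$ (after the obvious affine reparameterization of $[0,2]$ to $[0,1]$) hold, which gives $H(x,0) = f(x)$ and $H(x,1) = g(x)$. Continuity of $H$ follows from continuity of $(f,g)$ and from the joint continuity of $\gamma$ on $U_\delta(\Delta M) \times [0,1]$ guaranteed by Corollary~\ref{cor:short-curves}.

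The only substantive point to verify is that $\gamma$ really is jointly continuous in $(p,q)$ and $t$; this is inherited from the construction, since the half-curves $\gamma_1, \gamma_2$ are obtained by flowing $(p,q)$ under the gradient-like vector field $-V$ for the semi-concave function $f = d^{M\times M}(\cdot, \Delta M)$ until it hits $\Delta M$, and the hitting time depends continuously on $(p,q)$ because the flow has uniform speed $\ge \eps$ on $U_\delta(\Delta M)\setminus \Delta M$ (Corollary~\ref{cor:eps-reg-diag}) and extends continuously across $\Delta M$ (where the constant curve is assigned). Once this continuity is in hand, the composition $H$ is a genuine continuous homotopy from $f$ to $g$, so $f \simeq g$, as claimed.

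I expect the main (minor) obstacle to be precisely this continuity across the diagonal: the vector field $V$ is only defined off $\Delta M$, so one should check that as $(p,q) \to \Delta M$, the curves $\gamma_{pq}$ shrink uniformly to the constant curve at the common limit point. This follows from the length bound $\length(\gamma_{pq}) \le 2 d^M(p,q)/\eps$ in Corollary~\ref{cor:short-curves}, which forces $\gamma_{pq}$ to be contained in a ball of radius $2d^M(p,q)/\eps$ around $p$, hence collapses continuously as $d^M(p,q) \to 0$. No curvature or volume estimate is needed beyond what has already been packaged into $\eps$ and $\delta$; the work has all been done in Theorem~\ref{thm: cannot be too closed} and its consequences.
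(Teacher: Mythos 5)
Your proposal follows essentially the same route as the paper: define the homotopy by $H(x,t) = \gamma_{f(x)g(x)}(t)$ using the family of short curves from Corollary~\ref{cor:short-curves} and check endpoints and continuity. The extra care you take about continuity of $\gamma$ across the diagonal (via the length bound forcing curves to collapse) is a correct and welcome refinement of a point the paper's one-line proof glosses over, but it is the same argument.
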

\begin{proof}[Proof of Corollary~\ref{cor: delta-closed to homotopy}]
    Take $F(t, x) = \gamma_{f(x)g(x)}(t)$. It is a curve connecting $f(x)$ and $g(x)$ which is continuous in both $x$ and $t$. Thus it is homotopy from $f$ to $g$.
\end{proof}
\begin{remark}
    In the corollary \ref{cor: delta-closed to homotopy}, $\length(\gamma_{f(x)g(x)}(t)) \leq \frac{\delta}{\eps}$. More precisely, 
    \begin{equation*}
        \length(\gamma_{f(x)g(x)}(t))  \leq \frac{d^M(f(x), g(x))}{\eps}
    \end{equation*}
\end{remark}
We are going to use that to prove the Grove-Peterson theorem \ref{thm: Grove-Peterson}. We need the following lemma
\begin{lemma}\label{lem: GP-univeral constant}
    Let $M \in M_{sec}(n, \kappa, D, V)$ and $\eps$ be small, i.e. $(\eps^2\cdot \kappa > -1)$ \footnote{If $\kappa<0$ then $\eps$ needs to be sufficiently small in order for this inequality to be satisfied} Let $x_1, \dots, x_N$ be  a $\eps$-net in $M$. If, by maximality $\cup_{i = 1}^N B_\eps(x_i) = M$. Then the multiplicity of this cover is controlled by a universal constant $L(n)$. Namely, for any $p \in M$, by the maximality, we know there is a constant $K = K(p)$ such that 
    \begin{equation*}
        p \in \cap_{i_j = 1}^K B_\eps(x_{i_j})
    \end{equation*}
    We can always say this $K \leq L(n)$.
\end{lemma}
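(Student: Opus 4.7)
The plan is to reduce the multiplicity bound to a packing problem in a metric ball and then apply the Bishop--Gromov volume comparison (Theorem~\ref{thm:bishop-gromov}) to see that the packing number is controlled by the analogous ratio in the model space, which under the smallness assumption on $\eps$ depends only on $n$.

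First I would note that since the net is maximal, the points $x_1,\ldots,x_N$ are pairwise $\eps$-separated: indeed, if $d(x_i,x_j)<\eps$ then $x_j\in B_\eps(x_i)$ could have been omitted and the remaining set would still cover $M$, contradicting maximality (or, more cleanly, one takes the net to be a maximal $\eps$-separated subset, which automatically $\eps$-covers $M$). Now fix $p\in M$ and let $x_{i_1},\ldots,x_{i_K}$ be all net points with $p\in B_\eps(x_{i_j})$. Then $x_{i_j}\in B_\eps(p)$ for every $j$, and the open balls $B_{\eps/2}(x_{i_j})$ are pairwise disjoint because $d(x_{i_j},x_{i_k})\ge \eps$. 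Also, each of these smaller balls lies in $B_{3\eps/2}(p)$, since $d(\cdot,p)\le d(\cdot,x_{i_j})+d(x_{i_j},p)<\eps/2+\eps$.

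Next I would invoke Bishop--Gromov at each $x_{i_j}$. Since $B_{3\eps/2}(p)\subset B_{5\eps/2}(x_{i_j})$ (by a similar triangle inequality), Theorem~\ref{thm:bishop-gromov} gives
\begin{equation*}
\vol\bigl(B_{\eps/2}(x_{i_j})\bigr)\ \ge\ \frac{\vol\bigl(B_{\eps/2}(\bar p_\kappa)\bigr)}{\vol\bigl(B_{5\eps/2}(\bar p_\kappa)\bigr)}\,\vol\bigl(B_{5\eps/2}(x_{i_j})\bigr)\ \ge\ \frac{\vol\bigl(B_{\eps/2}(\bar p_\kappa)\bigr)}{\vol\bigl(B_{5\eps/2}(\bar p_\kappa)\bigr)}\,\vol\bigl(B_{3\eps/2}(p)\bigr),
\end{equation*}
where $\bar p_\kappa\in\modsp{\kappa}$. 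Summing over $j=1,\ldots,K$ and using disjointness together with $\bigsqcup_j B_{\eps/2}(x_{i_j})\subset B_{3\eps/2}(p)$ yields
\begin{equation*}
K\cdot \frac{\vol\bigl(B_{\eps/2}(\bar p_\kappa)\bigr)}{\vol\bigl(B_{5\eps/2}(\bar p_\kappa)\bigr)}\,\vol\bigl(B_{3\eps/2}(p)\bigr)\ \le\ \vol\bigl(B_{3\eps/2}(p)\bigr),
\end{equation*}
so $K\le \vol(B_{5\eps/2}(\bar p_\kappa))/\vol(B_{\eps/2}(\bar p_\kappa))$.

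Finally I would remove the dependence on $\kappa$ and $\eps$. For $\kappa\ge 0$ the model-space ratio is at most its Euclidean value $5^n$ (since nonnegative curvature makes balls grow at most as fast as in $\R^n$). For $\kappa<0$, the assumption $\eps^2\kappa>-1$, i.e.\ $\eps<1/\sqrt{-\kappa}$, puts us well inside the radius of curvature of $\modsp{\kappa}$, where a direct computation with $\mathbf{sn}_\kappa$ shows that the ratio $\vol(B_{5\eps/2}(\bar p_\kappa))/\vol(B_{\eps/2}(\bar p_\kappa))$ is bounded by, say, $2\cdot 5^n$, uniformly in such $\eps$ and $\kappa$. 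Taking $L(n):=2\cdot 5^n$ (or any explicit constant from this computation) gives $K\le L(n)$, as required. The only nontrivial step is the last one --- verifying the uniform bound on the model-space volume ratio under $\eps^2\kappa>-1$ --- but this is a direct computation with the explicit formula $\vol(B_r(\bar p_\kappa))=\omega_{n-1}\int_0^r \mathbf{sn}_\kappa(s)^{n-1}\,ds$, so there is no real obstacle.
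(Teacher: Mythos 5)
Your argument is correct and is essentially the paper's: a packing argument for the $\eps$-separated net points in $B_\eps(p)$, with disjoint half-balls compared to a big ball via Bishop--Gromov, giving a bound on $K$ by a model-space volume ratio. The only cosmetic difference is that the paper first rescales the metric by $1/\eps$, which normalizes the curvature bound to $\ge -1$ and the radii to fixed constants so that the universality of $L(n)$ is immediate, whereas you work at the original scale and verify separately that the ratio $\vol(B_{5\eps/2}(\bar p_\kappa))/\vol(B_{\eps/2}(\bar p_\kappa))$ is uniformly bounded under the constraint $\eps^2\kappa>-1$; the two are equivalent bookkeeping.
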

\begin{proof}[Proof of Lemma~\ref{lem: GP-univeral constant}]
    If $p \in \cap_{i_j}^K B_\eps(x_{i_j})$, then for each pairs of $x_{i_{j_1}}, x_{i_{j_2}} \in B_\eps(p)$ we have $d^M(x_{i_{j_1}}, x_{i_{j_2}}) \geq \eps$.     We can rescale $M$ by $\frac{1}{\eps}$, i.e. $\frac{1}{\eps}M$ so that $\sect_{\frac{1}{\eps}X} \geq \eps^2\kappa \geq -1$. Then in $\frac{1}{\eps}M$, for any $x_{i_{j_1}}, x_{i_{j_2}} \in B_1(p) \subseteq \frac{1}{\eps}M$, we have
    \begin{equation*}
        d^{\frac{1}{\eps}M}(x_{i_{j_1}}, x_{i_{j_2}}) \geq 1
    \end{equation*}
    Therefore, by Bishop-Gromov volume comparison, the number of such $x_{i_j}$s is $\leq L(n)$. Indeed, we know that there is a constant $c(n)$ such that 
    \begin{equation*}
    	\frac{\vol{B_{\frac{1}{2}}(x_{i_j})}}{\vol{B_3(x_{i_j})}} \geq \frac{\vol{B_{\frac{1}{2}}(\bar{p})}}{\vol{B_3(\bar{p})}} = c(n)
    \end{equation*}
    Notice that since $d^{\frac{1}{\eps}M}(x, x_{i_j}) \leq 1$,
    \begin{equation*}
    	B_{3}(x_{i_j}) \supseteq B_2(x)
    \end{equation*}
    Thus, 
    \begin{equation*}
    	\vol{B_{\frac{1}{2}}(x_{i_j})} \geq c(n)\vol{B_3(x_{i_j})} \geq c(n)\vol{B_2(x)}. 
    \end{equation*}
    On the other hand, because $\cup_{i_j = 1}^m B_{\frac{1}{2}}(x_{i_j}) \subset B_2(x)$. We can conclude that
    \begin{equation*}
    	\vol{B_2(x)} \geq \vol{\bigcup_{i_j = 1}^m B_{\frac{1}{2}}(x_{i_j})} \geq mc(n)\vol{B_2(x)}.
    \end{equation*}
    Therefore, we have $m \leq \frac{1}{c(n)} := L(n)$. 

\end{proof}


\section{The Proof of the Grove-Peterson Theorem}
In this section, we are going to finish the proof of the theorem by Grove-Peterson on Finite Homotopy Types~\ref{thm: Grove-Peterson}

Suppose the theorem is not true. That is, if the class $M_{sec}(n, \kappa, D, V)$ has infinitely many homotopy types of manifolds, i.e. there are a sequence of manifolds $M_i \in M_{sec}(n, \kappa, D, V)$ that are not pair-wisely homotopy equivalent. 
    
    By our conclusion on the pre-compactness \ref{cor: sec-precompact}, by passing to a subsequence we can assume that $M_i \ghto X \in A(n, \kappa, D, V)$ (See Notation~\ref{notation: A space}) by passing to a subsequence.
    
    Let $\delta, \eps$ and $L(n)$ be given by Theorem~\ref{thm: cannot be too closed} and Lemma~\ref{lem: GP-univeral constant}. Take $\delta' = \delta\cdot\eps^{10L(n)}$
    And consider $x_1, \dots, x_N$ be the maximal $\frac{1}{2}\delta'$-separated net in $X$ such that $\bigcup B_{\frac{1}{2}\delta'}(x_i) = X$. 
    
    By Corollary~\ref{cor:GH-con iff GH-approx}, we know that there is a sequence $\eps_i \downarrow 0$ and there exists $f_i: M_i \to X$ the $\eps_i$-Gromov-Hausdorff approximations. Recall that from the Definition~\ref{def:epsilon-GH-approx}, this implies $f_i(M_i)$ is $\eps_i$-dense in $X$. This means, for each $x_1, \dots, x_N$, we can have the corresponding $x_1^i, \dots, x_N^i$ in $M_i$ such that 
    \begin{equation*}
    	d^X(f(x_j^i), x_j) \leq \eps_i
    \end{equation*}
    for each $j \in \br{1, \dots, N}$. Indeed we can show that $\br{x_1^i, \dots, x_N^j}$ is an $\delta'$-nets for $M_i$. Indeed, for each $\tilde{x} \in M^i$, $f_i(\tilde{x})\in X$, thus there exists $j \in \br{1, \dots, N}$ such that $f_i(\tilde{x}) \in B_{\frac{1}{2}\delta'}(x_j)$. Since $f_i$ is an $\eps_i$-Gromov-Hausdorff approximation, we can find $x_j^i \in M_i$ such that $d^X(f(x_j^i), x_j) \leq \eps_i$. We claim that $\tilde{x} \in B_{\delta'}(x_j^i)$. To show this, remember that we have 
    \begin{equation*}
    	\abs{d^{X}(f_i(\tilde{x}), f_i(x_j^i)) - d^X(\tilde{x}, x_j^i)} \leq \eps_i
    \end{equation*}
    following from the definition of $\eps_i$-Gromov-Hausdorff approximation. This implies that 
    \begin{align*}
    	d^X(\tilde{x}, x_j^i) 
    	&\leq d^X(f_i(\tilde{x}), f_i(x_j^i)) + \eps_i\\
    	&\leq d^X(f_i(\tilde{x}), x_j) + d^X(f_i(x_j^i), x_j) + \eps_i\\
    	&= \frac{1}{2}\delta' + 2\eps_i
    \end{align*}
    Since we have $\eps_i\downarrow 0$, we can take $\eps_i$ sufficiently small such that 
    \begin{equation*}
    	d^X(\tilde{x}, x_j^i) < \delta'
    \end{equation*}
    
    We want to extend $f_i$ from the set $\br{x_1^i, \dots, x_N^i}$ to the entire $M_i$ continuously in a controlled way (we are going to explain what we mean by ``a controlled way'' later). 
    
    \subsection{Center of Mass Construction}
    
    The idea of the extension is to use the center of mass construction. 
    
    In the case when $X \subseteq \R^n$, take $\phi_j^i$ the partition of unity of $\id$ on $M_i$ subordinates to our cover $M_i = \bigcup_{j = 1}^N B_{\delta'}(x_j^i)$. That is
    \begin{itemize}
        \item For each $x \in B_{\delta'}(x_j^i)$, $0 \leq \phi_j^i(x) \leq 1$;
        \item And when $x = x_j^i$, $\phi_j^i(x_j^i) = 1$;
        \item $\supp(\phi_j^i) \subseteq B_{\delta'}(x_j^i)$;
        \item $\sum_{j = 1}^N \phi_j^i \equiv 1$ over $M_i$.
    \end{itemize}
    By the Lemma \ref{lem: GP-univeral constant}, for each $x \in M_i$, then the number of $\phi_j^i(x) \neq 0$ is at most $L(n)$ of these. And now we only look at those $L(n)$ points. By considering $\lambda_j^i = \phi_j^i(x) \in [0, 1]$ as a number, we can take the extension, still denoted by $f_i$, as a convex linear combination of $x_j$'s, i.e.
    \begin{equation*}
        f_i(x) = \sum_{j = 1}^{L(n)}\phi_j^i(x) x_j = \sum_{j = 1}^{L(n)} \lambda_j^ix_j 
    \end{equation*}
    , which provides us a point in a simplex spanned by $x_1, \dots, x_{L(n)}$. Our extension is continuous and $\delta'$-Gromov-Hausdorff approximation.

    We want to generalize this to our situation when $X$ is not necessarily $\R^n$. Let's initially assume $X \in M_{sec}(n, \kappa, D, V)$ a smooth manifold. Eventually,  instead of a map $M_i \to X$, we will do the center of mass construction of $f_{i_1i_2}: M_{i_1} \to M_{i_2}$ for $i_1, i_2$ large\footnote{The center of mass construction can be made to work in general Alexandrov spaces but that requires developing more tools and we don't present it here}
    The question is how to do a convex linear combination of points $\br{x_1, \dots, x_{L(n)}}$ in $X$.
    
   Suppose we only have to take a convex linear combination of two points $x_1,x_2\in X$ with nonnegative weights $\lambda_1,\lambda_2$ satisfying    $\lambda_1+\lambda_2=1$.
    
Note  that  $d^{X}(x_1, x_2) \leq \delta' = \delta\cdot\eps^{10 L(n)}$ which is very small. By corollary ~\ref{cor:short-curves} we can find a curve $\gamma_{x_1 x_2}(t)$ connecting $x_1$ and $x_2$. This curve depends  continuously on $x_1$ and $x_2$ and satisfies
    \begin{equation*}
        \length(\gamma_{x_1x_2}) \leq \frac{d^X(x_1, x_2)}{\eps}\le \frac{\delta'}{\eps}.
    \end{equation*}
    Then  we can simply take $x = \gamma_{x_1x_2}(\lambda_1)$ to play the role of $\lambda_1x_1+\lambda_2x_2$. call this point the center of mass  of $x_1$ and $x_2$  with weights $\lambda_1,\lambda_2$.
    \begin{equation*}
        \gamma_{x_1x_2}(\lambda_2) = 
        \begin{cases}
            x_1 \quad \text{if $\lambda_1 = 1$};\\
            x_2 \quad \text{if $\lambda_1 = 0$}.
        \end{cases}
    \end{equation*}
    Consider the case when there are three points, say $x_1, x_2, x_3 \in X$ with $d(x_i,x_j)\le  \delta'$.  Suppose $\lambda_1 + \lambda_2 + \lambda_3 = 1$ where $\lambda_i \geq 0$ so $\lambda_1+\lambda_2 + \lambda_3 = 1$. Then in $\R^n$, we can define the center of mass of $x_1,x_2,x_3$ with wights $\lambda_1,\lambda_2,\lambda_3$ by the formula
    \begin{equation*}
        \lambda_1 x_1 + \lambda_2 x_2 + \lambda_3 x_3 = \lambda_1 x_1 + \frac{\lambda_2}{1 - \lambda_2}x_2 + \frac{\lambda_3}{1 - \lambda_1}x_3
    \end{equation*}
    Denote $\lambda_2' = \frac{\lambda_2}{1 - \lambda_2}$ and $\lambda_3' = \frac{\lambda_3}{1 - \lambda_1}$, they are both $\geq 0$ and $\lambda_2' + \lambda_3' = 1$. Denote the point $x' := \lambda_2' x_2 + \lambda_3' x_3$, then we take $\lambda_1 x_1 + (1 - \lambda_1)x' = \sum_{i = 1}^3 \lambda_i x_i$. Then we can do the same thing in our case.
    \begin{figure}[htbp]
        \centering
            \includegraphics[width=0.7\textwidth]{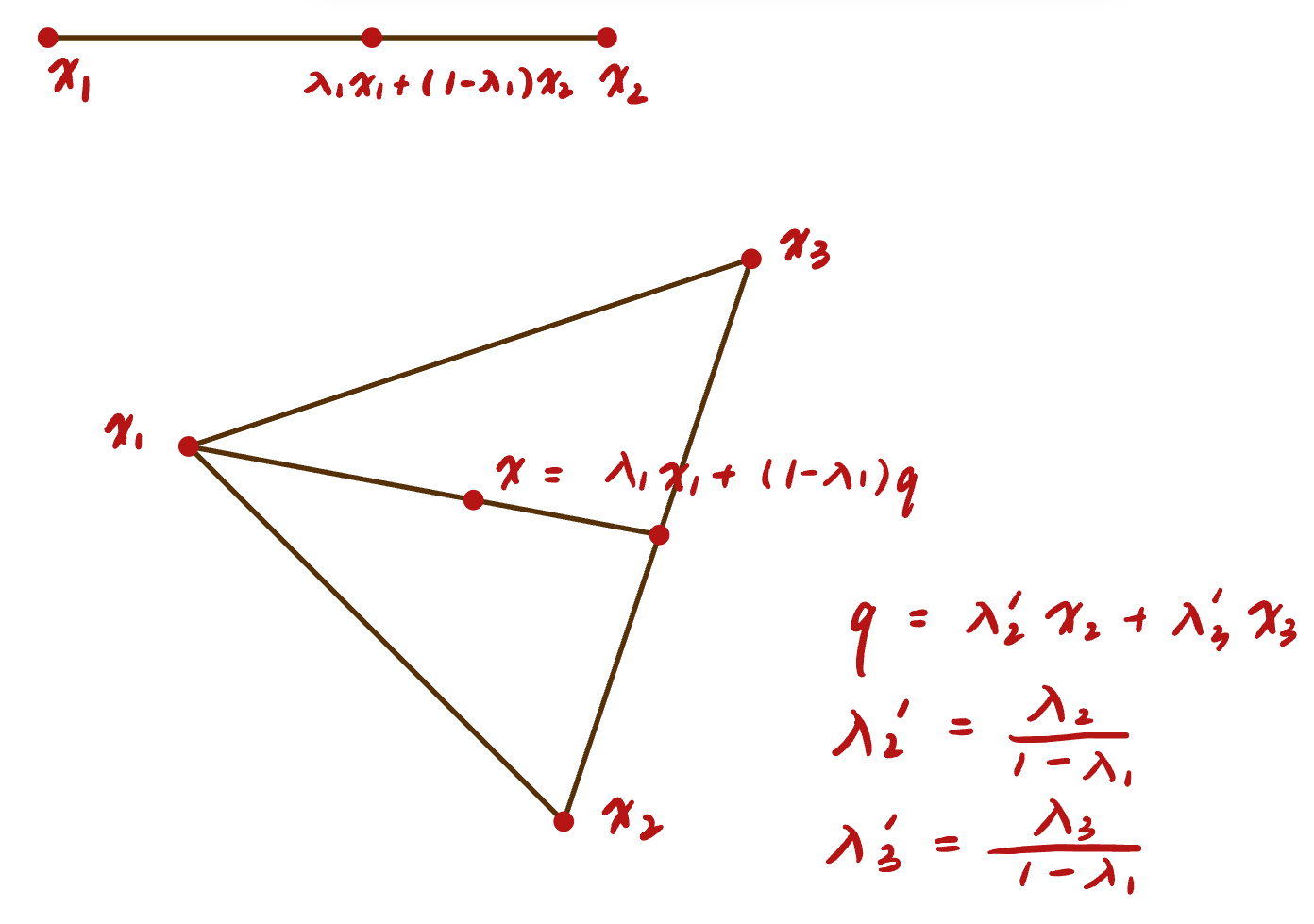}\caption{The Center of Mass Construction in $\R^n$}
    \end{figure}
    
    In our case, $X$ is not $\R^n$ so we do the following. First we take the center of mass $z$  of $x_2,x_3$ with weights $\lambda_2',\lambda_3'$.
    
    Then we take the center of mass of $x_1$ and $z$ with weights $\lambda_1,1-\lambda_1$. 
    
    More explicitly, we first take $z=\gamma_{x_2,x_3}(\lambda_2')$. Then we take  $\gamma_{x_1,z}(\lambda_1)$. We call this point the center of mass of $x_1,x_2,x_3$ with weights $\lambda_1,\lambda_2,\lambda_3$.
    
    We can iterate this procedure to get the center of mass of points $x_1,x_2,\ldots, x_{L(n)}$ with nonnegative weights $\lambda_1,\ldots, \lambda_n$ satisfying $\lambda_1+\lambda_2+\ldots \lambda_{L(n)}=1$.  This produces a singular simplex with vertices $x_1,x_2,\ldots, x_n$  in $X$. By Corollary ~\ref{cor:short-curves} the diameter of this simplex is $\le \frac{\delta'}{\eps^{L(n)}}$.

    Remember that we want to extend $f_i$ of $x \in M_i$ continuously. By Lemma~\ref {lem: GP-univeral constant} $x$ belongs to at most $L(n)$ balls $B_{\delta'}(x_j^i)$. Say these are the balls $B_{\delta'}(x_1^i), \ldots B_{\delta'}(x_{L(n)}^i)$.
    
    Let $\lambda_i = \phi_j^i(x)$ for $j = 1, \dots, N$. Then  $\lambda_1+\lambda_2+\ldots \lambda_{L(n)}=1$.  
    
    \begin{equation*}
        d^X(x_i, x_j) \leq d^X(x_i, x) + d^X(x, x_j) \leq 2 \delta'
    \end{equation*}
    
    Using the above construction we can construct the center of mass of  $x_1,x_2,\ldots, x_{L(n)}$ with weights $\lambda_1,\ldots, \lambda_n$. We set $f_i(x)$ to be equal to this center of mass.
    
    This produces a map $f_i:  M_i\to X$ which is maps $x_j^i$ to $x_j$, is continuous and is a $  2\frac{\delta'}{\eps^{L(n)}} =2 \delta\cdot\eps^{9 L(n)}$-GH approximation.
This works if $X$ is a smooth manifold. However, in general, $X$ is only an Alexandrov space and we can not carry out this construction without more work because we have only proved Corollary ~\ref{cor:eps-reg-diag} for smooth manifolds and not for Alexandrov spaces. However, we can carry out this procedure and construct maps as 
  $f_{i_1i_2}: M_{i_1} \to M_{i_2}$ for $i_1, i_2$ large enough.

Likewise, we can also do the inverse $f_{i_2i_1}: M_{i_2} \to M_{i_2}$ which is also $2\delta\cdot \eps^{9 L(n)}$-Gromov-Hausdorff approximation.

    \begin{claim}\label{cla:hmtp-equiv}
        $f_{i_1i_2}$ and $f_{i_2i_1}$ are homotopy equivalences.
    \end{claim}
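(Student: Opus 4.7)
The strategy is to show that the compositions $f_{i_2 i_1}\circ f_{i_1 i_2}\colon M_{i_1}\to M_{i_1}$ and $f_{i_1 i_2}\circ f_{i_2 i_1}\colon M_{i_2}\to M_{i_2}$ are each uniformly $\delta$-close to the respective identity maps, and then to invoke Corollary~\ref{cor: delta-closed to homotopy} to upgrade $\delta$-closeness to homotopy equivalence. Once this is achieved, the $M_{i}$'s fall into finitely many homotopy types along the subsequence, contradicting the assumption that no two of them are homotopy equivalent and thereby finishing the proof of Theorem~\ref{thm: Grove-Peterson}.

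First I would unwind the construction. By the choice $\delta'=\delta\cdot\eps^{10L(n)}$ and the preceding paragraph, both $f_{i_1 i_2}$ and $f_{i_2 i_1}$ are continuous $2\delta\eps^{9L(n)}$-Gromov--Hausdorff approximations, and by construction they send corresponding net points to each other, i.e.\ $f_{i_1 i_2}(x_j^{i_1})=x_j^{i_2}$ and $f_{i_2 i_1}(x_j^{i_2})=x_j^{i_1}$. Therefore $f_{i_2i_1}\circ f_{i_1i_2}$ fixes the net $\{x_j^{i_1}\}_{j=1}^N$. For an arbitrary $x\in M_{i_1}$, pick $x_j^{i_1}$ with $d^{M_{i_1}}(x,x_j^{i_1})<\delta'$ (using that the net is $\delta'$-dense). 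Applying the Gromov--Hausdorff approximation property of $f_{i_1 i_2}$ gives $d^{M_{i_2}}(f_{i_1i_2}(x),x_j^{i_2})\le\delta'+2\delta\eps^{9L(n)}$, and then applying the Gromov--Hausdorff approximation property of $f_{i_2 i_1}$ gives $d^{M_{i_1}}(f_{i_2i_1}\circ f_{i_1i_2}(x),x_j^{i_1})\le\delta'+4\delta\eps^{9L(n)}$. Hence
\[
d^{M_{i_1}}\bigl(x,\,f_{i_2i_1}\circ f_{i_1i_2}(x)\bigr)\le 2\delta'+4\delta\eps^{9L(n)}\le 6\delta\eps^{9L(n)}<\delta,
\]
provided $\eps$ was chosen sufficiently small (which is harmless because $\eps$ only depends on $n,\kappa,D,V$). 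The same argument yields $d^{M_{i_2}}(y,f_{i_1i_2}\circ f_{i_2i_1}(y))<\delta$ for all $y\in M_{i_2}$.

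Now I would apply Corollary~\ref{cor: delta-closed to homotopy}: since both $f_{i_2i_1}\circ f_{i_1i_2}$ and the identity map of $M_{i_1}$ are continuous maps from $M_{i_1}$ to $M_{i_1}\in M_{sec}(n,\kappa,D,V)$, and they are $\delta$-close pointwise, the corollary (applied with $X=M_{i_1}$ and target $M=M_{i_1}$) produces a homotopy between them; and symmetrically on the other side. Therefore $f_{i_1i_2}$ and $f_{i_2i_1}$ are mutually inverse homotopy equivalences, so $M_{i_1}\simeq M_{i_2}$ for all sufficiently large $i_1,i_2$, contradicting the standing assumption.

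The main delicate point is the bookkeeping that guarantees the composition stays strictly below the threshold $\delta$ at which Corollary~\ref{cor: delta-closed to homotopy} begins to apply: this is precisely why one builds the centre-of-mass map between the $M_i$'s rather than between $M_i$ and the limit $X$ (where Corollary~\ref{cor:eps-reg-diag} is not available in the Alexandrov setting), and why the net scale $\delta'$ was taken as small as $\delta\eps^{10L(n)}$ rather than $\delta$ itself, so that the $L(n)$ nested iterations of Corollary~\ref{cor:short-curves} and the two successive Gromov--Hausdorff approximations together still leave a large safety margin below $\delta$. A secondary technical point to verify, which I would fold into a brief check rather than a separate lemma, is that the centre-of-mass extension really is continuous in its inputs; this reduces to the continuity of the curves $\gamma_{pq}$ in $(p,q)$ afforded by Corollary~\ref{cor:short-curves} together with the continuity of the partitions of unity $\phi_j^{i_k}$.
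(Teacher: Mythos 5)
Your proposal is correct and follows essentially the same route as the paper: show the compositions are $\delta$-close to the identity and invoke Corollary~\ref{cor: delta-closed to homotopy}. You carry out the triangle-inequality bookkeeping more explicitly (arriving at $6\delta\eps^{9L(n)}$ rather than the paper's stated $2\delta\eps^{9L(n)}$), but either bound is comfortably below $\delta$ for small $\eps$, so the conclusion is the same.
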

    \begin{proof}{Claim}{\ref{cla:hmtp-equiv}}
        Notice that their composition
        \begin{equation*}
            f_{i_2i_1}\circ f_{i_1i_2}: M_{i_1} \to M_{i_2}
        \end{equation*}
        by construction it is $2\cdot\delta\cdot\eps^{9L(n)}$-close to identity. Since this number is smaller than $\delta$, by the result \ref{cor: delta-closed to homotopy}, we get $f_{i_2i_1}\circ f_{i_1i_2}$ is homotopic to the identity $\id_{M_{i_2}}$.  Similarly,  $f_{i_1i_2}\circ f_{i_2i_1}\sim \id_{M_{i_1}}$.
    \end{proof}
    Then we finished the proof.

\bibliographystyle{alpha} 
\bibliography{reference.bib}
\end{document}